\numberwithin{equation}{subsection}
\renewcommand\theequation{\maybe{\arabic{subsection}}\arabic{equation}}
\DeclareRobustCommand\maybe[1]{\ifnum#1=\value{subsection}\relax\else#1.\fi}
\titleformat{\subsection}[runin]{\large\bfseries}{\thesubsection}{0em}{. }[.]
\titleformat{\subsubsection}[runin]{\normalfont\bfseries}{\thesubsubsection}{0em}{}[.]
\titleformat{\paragraph}[runin]{\normalfont\bfseries}{\theparagraph}{0em}{}[.]
\newtheoremstyle{mytheoremstyle} 
    {\topsep}                    
    {\topsep}                    
    {\itshape}                   
    {}                           
    {}                   
    {.}                          
    {0.5em}                       
    {(#2) #1\thmnote{ (#3)}}    
\theoremstyle{mytheoremstyle}
\newtheorem{theorem}[equation]{THEOREM}
\newtheorem{example}[equation]{EXAMPLE}
\newtheorem{definition}[equation]{DEFINITION}
\newtheorem{corollary}[equation]{COROLLARY}
\newtheorem{lemma}[equation]{LEMMA}
\newtheorem{remark}[equation]{REMARK}
\newtheorem{exercise}[equation]{EXERCISE}
\newtheorem{proposition}[equation]{PROPOSITION}
\newcommand{\MyQuote}[1]{\vspace{1cm}\refstepcounter{equation}\noindent(\theequation)\hspace*{0.5cm}
     \parbox{\textwidth-3cm}{\em #1}\\[1cm]}
\renewcommand{\cal}[1]{\mathcal{#1}}
\newcommand{\bb}[1]{\mathbb{#1}}
\newcommand{\C}{\mathcal{C}}
\newglossaryentry{vee}
{
  name={$a\vee b$},
  description={maximum of two real numbers $a$ and $b$},
  sort=c,
  type=gen
}
\newglossaryentry{wedge}
{
  name={$a\wedge b$},
  description={minimum of two real numbers $a$ and $b$},
  sort=c,
  type=gen
}
  \newglossaryentry{floor}
{
  name={$\lfloor a\rfloor$},
  description={largest integer no greater than $a$},
  sort=c,
  type=gen
}
  \newglossaryentry{ceil}
{
  name={$\lceil a\rceil$},
  description={smallest integer no lesser than $a$},
  sort=c,
  type=gen
}
\newglossaryentry{1}
{
  name={$=$},
  description={the left-hand side equals the right-hand side},
  sort=1,
  type=gen
}
  \newglossaryentry{3}
{
  name={$:=$},
  description={the left-hand side is defined to be the right-hand side},
  sort=1,
  type=gen
}
  \newglossaryentry{4}
{
  name={$=:$},
  description={the right-hand side is defined to be the left-hand side},
  sort=1,
  type=gen
}
\renewcommand{\r}{\mathbb{R}}
\newglossaryentry{r}
{
  name={$\mathbb{R}$},
  description={set of real numbers},
  sort=a,
  type=gen
}
\newglossaryentry{re}
{
  name={$\mathbb{R}_E$},
  description={set of extended real numbers},
  sort=b,
  type=gen
}
\newglossaryentry{rp}
{
  name={$[0,\infty)$},
  description={set of non-negative real numbers},
  sort=a,
  type=gen
}
\newglossaryentry{rpe}
{
  name={$[0,\infty]$},
  description={set of extended non-negative real numbers},
  sort=b,
  type=gen
}
\newglossaryentry{rpp}
{
  name={$(0,\infty)$},
  description={set of positive real numbers},
  sort=a,
  type=gen
}
\newglossaryentry{rppe}
{
  name={$(0,\infty]$},
  description={set of extended positive real numbers},
  sort=b,
  type=gen
}
\newcommand{\n}{\mathbb{N}}
\newglossaryentry{n}
{
  name={$\mathbb{N}$},
  description={set of natural numbers},
  sort=a,
  type = gen
}
\newglossaryentry{ne}
{
  name={$\mathbb{N}_E$},
  description={set of extended natural numbers},
  sort=b,
  type = gen
}
\newcommand{\z}{\mathbb{Z}}
\newglossaryentry{z}
{
  name={$\z$},
  description={set of integers},
  sort=a,
  type=gen
}
\newglossaryentry{ze}
{
  name={$\z_E$},
  description={set of extended integers},
  sort=b,
  type=gen
}
\newcommand{\zp}{\mathbb{Z}_+}
\newglossaryentry{zp}
{
  name={$\zp$},
  description={set of positive integers},
  sort=a,
  type=gen
}
\newglossaryentry{rat}
{
  name={$\mathbb{Q}$},
  description={set of rational numbers},
  sort=a,
  type=gen
}
\newglossaryentry{inda}
{
  name={$1_A$},
  description={indicator function of $A$},
  sort=x,
  type=gen
}
  \newglossaryentry{preimage}
{
  name={$\{f\in B\}$},
  description={pre-image of $B$ through $f$},
  sort=x,
  type=gen
}
\newcommand{\mmag}[1]{\left|#1\right|}
\newglossaryentry{2om}
{
  name={$2^A$},
  description={power set of $A$},
  sort=d,
  type=gen
}
\newglossaryentry{borel}
{
  name={$\cal{B}(A)$},
  description={Borel sigma-algebra on $A$},
  sort=d,
  type=gen
}
\newcommand{\supp}[1]{\operatorname{supp}\left(#1\right)}
\newglossaryentry{supp}
{
  name={$\supp{\cdot}$},
  description={support of $\cdot$},
  sort=z,
  type=gen
}
\newcommand{\norm}[1]{\left|\left|{#1}\right|\right|}
\newglossaryentry{normtv}
{
  name={$\norm{\cdot}$},
  description={total variation norm of $\cdot$},
  sort=z,
  type=gen  
}
\newcommand{\s}{\mathcal{S}}
\newglossaryentry{s}
{
  name={$\s$},
  description={countable state space},
  sort=a,
  type=both
}
\newcommand{\tws}{2^\mathcal{S}}
\newglossaryentry{gamma}
{
  name={$\gamma$},
  description={initial distribution $(\gamma(x))_{x\in\s}$},
  sort=a,
  type=both
}
\newglossaryentry{of}
{
  name={$(\Omega,\cal{F})$},
  description={underlying measurable space},
  sort=a,
  type=both
}
\newglossaryentry{Pl}
{
  name={$\mathbb{P}_\gamma$},
  description={probability measure describing the chain's statistics if its starting state is sampled from $\gamma$},
  sort=a,
  type=both
}
\newglossaryentry{El}
{
  name={$\mathbb{E}_\gamma$},
  description={expectation with respect to $\mathbb{P}_\lambda$},
  sort=a,
  type=both
}
\newglossaryentry{Px}
{
  name={$\mathbb{P}_x$},
  description={probability measure on describing the chain's statistics if its starting state is $x$},
  sort=a,
  type=both
}
  \newglossaryentry{Ex}
{
  name={$\mathbb{E}_x$},
  description={expectation with respect to $\mathbb{P}_x$},
  sort=a,
  type=both
}
  \newglossaryentry{Pspace}
{
  name={$\cal{P}$},
  description={path space},
  sort=b,
  type=both
}
    \newglossaryentry{E}
{
  name={$\cal{E}$},
  description={cylinder sigma-algebra on $\cal{P}$},
  sort=b,
  type=both
}
  \newglossaryentry{Lgam}
{
  name={$\mathbb{L}_\gamma$},
  description={path law if the chain's starting state is sampled from $\gamma$},
  sort=b,
  type=both
}
  \newglossaryentry{Lx}
{
  name={$\mathbb{L}_x$},
  description={path law if the chain's starting state is $x$},
  sort=b,
  type=both
}
\newglossaryentry{accessibility}
{
  name={$x\to y$},
  description={state $y$ is accessible from state $x$},
  sort=c,
  type=both
}
  \newglossaryentry{accessibility2}
{
  name={$A\to B$},
  description={set $B$ is accessible from set $A$},
  sort=c,
  type=both
}
  \newglossaryentry{D}
{
  name={$\cal{D}$},
  description={domain},
  sort=d,
  type=both
}
  \newglossaryentry{mu}
{
  name={$\mu$},
  description={stopping/exit distribution},
  sort=d,
  type=both
}
    \newglossaryentry{nu}
{
  name={$\nu$},
  description={occupation measure},
  sort=d,
  type=both
}
\newglossaryentry{mus}
{
  name={$\mu_S$},
  description={space marginal of stopping/exit distribution},
  sort=d,
  type=both
}
\newglossaryentry{nus}
{
  name={$\nu_S$},
  description={space marginal of occupation measure},
  sort=d,
  type=both
}
\newglossaryentry{rec}
{
  name={$\cal{R}$},
  description={set of recurrent states},
  sort=f,
  type=both
}
\newglossaryentry{recp}
{
  name={$\cal{R}_+$},
  description={set of positive recurrent states},
  sort=f,
  type=both
}
\newglossaryentry{nrec}
{
  name={$\cal{T}$},
  description={set of states not positive recurrent},
  sort=f,
  type=both
}
\newglossaryentry{ci}
{
  name={$\cal{C}_i$},
  description={$i$th positive recurrent closed communicating class},
  sort=f,
  type=both
}
\newglossaryentry{pi}
{
  name={$\pi$},
  description={stationary distribution},
  sort=g,
  type=both
}
\newglossaryentry{pii}
{
  name={$\pi_i$},
  description={ergodic distribution associated with $\cal{C}_i$},
  sort=g,
  type=both
}
\newglossaryentry{pigamma}
{
  name={$\pi_\gamma$},
  description={limit of the time-varying law},
  sort=g,
  type=both
}
\newglossaryentry{epinf}
{
  name={$\epsilon_\infty$},
  description={limit of the empirical distribution},
  sort=g,
  type=both
}
\newglossaryentry{P}
{
  name={$P$},
  description={one-step matrix $(p(x,y))_{x,y\in\s}$},
  sort=a,
  type=dt
}
\newglossaryentry{Un}
{
  name={$(U_n)_{n\in\zp}$},
  description={i.i.d. sequence of $(0,1)$-valued uniform random variables used in the chain's definition},
  sort=b,
  type=dt
}
\newglossaryentry{Xn}
{
  name={$X$},
  description={discrete-time chain $(X_n)_{n\in\n}$},
  sort=b,
  type=dt
}
  \newglossaryentry{filtrationdt}
{
  name={$(\cal{F}_n)_{n\in\n}$},
  description={filtration generated by the chain},
  sort=c,
  type=dt
}
      \newglossaryentry{pn}
{
  name={$p_n$},
  description={time-varying law $(p_n(x))_{x\in\s}$},
  sort=d,
  type=dt
}
\newglossaryentry{Pn}
{
  name={$P_n$},
  description={$n$-step matrix $(p_n(x,y))_{x,y\in\s}$},
  sort=d,
  type=dt
}
\newglossaryentry{epn}
{
  name={$\epsilon_N$},
  description={empirical distribution $(\epsilon_N(x))_{x\in\s}$},
  sort=d,
  type=dt
}
\newglossaryentry{varsigma}
{
  name={$\varsigma$},
  description={$(\cal{F}_n)_{n\in\n}$-stopping time},
  sort=c,
  type=dt
}
\newglossaryentry{Fsig}
{
  name={$\cal{F}_\varsigma$},
  description={pre-$\varsigma$ sigma-algebra},
  sort=c,
  type=dt
}
\newglossaryentry{Xvarsig}
{
  name={$X^\varsigma$},
  description={$\varsigma$-shifted chain},
  sort=c,
  type=dt
}
\newglossaryentry{sigmaa}
{
  name={$\sigma_A$ ($\sigma_x$)},
  description={hitting time of a set $A$ (resp.~state $x$)},
  sort=f,
  type=dt
}
\newglossaryentry{sigma}
{
  name={$\sigma$},
  description={exit time from the domain $\cal{D}$},
  sort=f,
  type=dt
}
\newglossaryentry{sigmar}
{
  name={$\sigma_r$},
  description={exit time from the truncation $\s_r$},
  sort=f,
  type=dt
}
\newglossaryentry{phiA}
{
  name={$\phi_A$ ($\phi_x$)},
  description={first entrance time to a set $A$ (resp. state $x$)},
  sort=f,
  type=dt
}
\newglossaryentry{phiAk}
{
  name={$\phi_A^k$ ($\phi_x^k$)},
  description={$k$th entrance time to a set $A$ (resp. state $x$)},
  sort=f,
  type=dt
}
\newglossaryentry{Q}
{
  name={$Q$},
  description={rate matrix $(q(x,y))_{x,y\in\s}$ of a continuous-time chain},
  sort=a,
  type=ct
}
\newglossaryentry{Pct}
{
  name={$P$},
  description={jump matrix $(p(x,y))_{x,y\in\s}$},
  sort=a,
  type=ct
}
\newglossaryentry{lambda}
{
  name={$\lambda$},
  description={jump rates $(\lambda(x))_{x\in\s}$},
  sort=a,
  type=ct
} 
\newglossaryentry{Unct}
{
  name={$(U_n)_{n\in\zp}$},
  description={i.i.d. sequence of $(0,1)$-valued uniform random variables used in the chain's definition},
  sort=b,
  type=ct
}
\newglossaryentry{xi}
{
  name={$(\xi_n)_{n\in\zp}$},
  description={i.i.d. sequence of unit-mean exponential random variables used in the chain's definition},
  sort=b,
  type=ct
}
\newglossaryentry{Xt}
{
  name={$X$},
  description={continuous-time chain $(X_t)_{t\geq0}$},
  sort=b,
  type=ct
}
\newglossaryentry{Y}
{
  name={$Y$},
  description={jump chain $(Y_n)_{n\in\n}$},
  sort=b,
  type=ct
}
\newglossaryentry{Sn}
{
  name={$S_n$},
  description={$n$th waiting time},
  sort=b,
  type=ct
}
  \newglossaryentry{Tinf}
{
  name={$T_\infty$},
  description={explosion time},
  sort=b,
  type=ct
}
\newglossaryentry{Tn}
{
  name={$T_n$},
  description={$n$th jump time},
  sort=b,
  type=ct
}
\newglossaryentry{filtrationct}
{
  name={$(\cal{F}_t)_{t\geq0}$},
  description={filtration generated by the chain},
  sort=c,
  type=ct
}
\newglossaryentry{eta}
{
  name={$\eta$},
  description={$(\cal{F}_t)_{t\geq0}$-stopping time a continuous-time chain},
 sort=c,
  type=ct
}
\newglossaryentry{Feta}
{
  name={$\cal{F}_\eta$},
  description={pre-$\eta$ sigma-algebra},
  sort=c,
  type=ct
}
\newglossaryentry{Xeta}
{
  name={$X^\eta$},
  description={$\eta$-shifted chain},
  sort=c,
  type=ct
}
\newglossaryentry{pt}
{
  name={$p_t$},
  description={time-varying law $(p_t(x))_{x\in\s}$},
  sort=d,
  type=ct
}
\newglossaryentry{Pt}
{
  name={$P_t$},
  description={$t$-transition matrix $(p_t(x,y))_{x\in\s}$},
  sort=d,
  type=ct
}
\newglossaryentry{ept}
{
  name={$\epsilon_T$},
  description={empirical distribution $(\epsilon_T(x))_{x\in\s}$},
  sort=d,
  type=ct
}
\newglossaryentry{taua}
{
  name={$\tau_A$ ($\tau_x$)},
  description={hitting time of set a $A$ (resp.~state $x$)},
  sort=e,
  type=ct
}
  \newglossaryentry{tau}
{
  name={$\tau$},
  description={exit time from the domain $\cal{D}$},
 sort=e,
  type=ct
} 
  \newglossaryentry{taur}
{
  name={$\tau_r$},
  description={exit time from the truncation $\s_r$ of a continuous-time chain},
 sort=e,
  type=ct
} 
\newglossaryentry{varphiA}
{
  name={$\varphi_A$ ($\varphi_y$)},
  description={first entrance time to a set $A$ (resp. state $x$)},
  sort=e,
  type=ct
}
\newglossaryentry{varphiAk}
{
  name={$\varphi_A^k$ ($\varphi_y^k$)},
  description={$k$th entrance time to a set $A$ (resp. state $x$)},
  sort=e,
  type=ct
}
  \newglossaryentry{filtrationdtct}
{
  name={$(\cal{G}_n)_{n\in\n}$},
  description={filtration generated by the jump times and jump chain},
  sort=f,
  type=ct
}
\newglossaryentry{varsigmact}
{
  name={$\varsigma$},
  description={$(\cal{G}_n)_{n\in\n}$-stopping time},
  sort=f,
  type=ct
}
\newglossaryentry{sigmact}
{
  name={$\sigma$},
  description={exit time from the domain $\cal{D}$ of the jump chain $Y$},
  sort=f,
  type=ct
}
\newglossaryentry{sigmaact}
{
  name={$\sigma_A$ ($\sigma_x$)},
  description={hitting time of a set $A$ (resp.~state $x$) of the jump chain $Y$},
  sort=f,
  type=ct
}
\newglossaryentry{sigmarct}
{
  name={$\sigma_r$},
  description={exit time from the truncation $\s_r$ of the jump chain $Y$},
  sort=f,
  type=ct
}
\newglossaryentry{phiAct}
{
  name={$\phi_A$ ($\phi_x$)},
  description={first entrance time to a set $A$ (resp. state $x$) of the jump chain $Y$},
  sort=f,
  type=ct
}
\newglossaryentry{phiAkct}
{
  name={$\phi_A^k$ ($\phi_x^k$)},
  description={$k$th entrance time to a set $A$ (resp. state $x$) of the jump chain $Y$},
  sort=f,
  type=ct
}
\newglossaryentry{skeleton}
{
  name={$X^\delta$},
  description={skeleton chain $(X^\delta_n)_{n\in\n}$},
  sort=g,
  type=ct
}
\newglossaryentry{sy:space1}{name=$\qquad\qquad$, description={}, sort=zzzzzz, type=gen}
\newglossaryentry{sy:space2}{name=$\qquad\qquad$, description={}, sort=zzzzzz, type=both}
\newglossaryentry{sy:space3}{name=$\qquad\qquad$, description={}, sort=zzzzzz, type=dt}
\newglossaryentry{sy:space4}{name=$\qquad\qquad$, description={}, sort=zzzzzz, type=ct}
\newcommand{\Pb}{\mathbb{P}}
\newcommand{\Eb}{\mathbb{E}}
\newcommand{\Pbb}[1]{\mathbb{P}\left(#1\right)}
\newcommand{\Ebb}[1]{\mathbb{E}\left[#1\right]}
\newcommand{\Pbx}[1]{\mathbb{P}_x\left(#1\right)}
\newcommand{\Ebx}[1]{\mathbb{E}_x\left[#1\right]}
\newcommand{\Pby}[1]{\mathbb{P}_y\left(#1\right)}
\newcommand{\Eby}[1]{\mathbb{E}_y\left[#1\right]}
\newcommand{\Pbz}[1]{\mathbb{P}_z\left(#1\right)}
\newcommand{\Ebz}[1]{\mathbb{E}_z\left[#1\right]}
\newcommand{\Pbl}[1]{\mathbb{P}_\gamma\left(#1\right)}
\newcommand{\Ebl}[1]{\mathbb{E}_\gamma\left[#1\right]}
\newcommand{\Pbp}[1]{\mathbb{P}_\pi\left(#1\right)}
\newcommand{\Ebp}[1]{\mathbb{E}_\pi\left[#1\right]}
\newcommand{\Lbp}[1]{\mathbb{L}_\pi\left(#1\right)}
\newcommand{\q}{\mathbb{Q}}
\renewcommand{\a}{\mathcal{A}}
\newif\ifdraft
\begin{document}\pagenumbering{roman}

\pagestyle{emptyjk}
~
\vspace{25pt}
\begin{center}\textbf{\LARGE{Markov chains revisited}}\end{center}\vspace{-10pt}

\begin{center}\LARGE{Juan Kuntz}\end{center}



\newpage
\pagestyle{preface}
\addtocontents{toc}{\protect\thispagestyle{preface}} 

%
%

\vspace*{\fill}
\begin{center}\large{Copyright$\enskip$\copyright$\enskip$2020 Juan Kuntz Nussio}\end{center}
\vspace*{\fill}

\newpage

\vspace{200pt}
%


\vspace{160pt}

\newpage

\section*{Preface --- please read!}
\addcontentsline{toc}{section}{\protect\numberline{}Preface -- please read!}
%
%
%
%
\paragraph{What this book is intended to be} Although I've disguised it very well, 
 this book is actually written for those interested in the applications of  Markov chains (discrete-time and continuous-time processes that satisfy the Markov property and take values in countable sets). I have two main goals in writing it:
\begin{enumerate}
\item To equip graduate students and researchers with the tools and intuition necessary to overcome the theoretical headaches encountered in the modelling of real-world phenomena using Markov chains with infinite  state spaces.
%
%
My aim here is not so much to introduce novel results as to gather scattered ones and make the existing theory more accessible.
\begin{quotation}``Although only a special case of the strong Markov
property is involved in the culminating Theorem 8, a ``prodigious''
amount of preparation, it seems, enters into the proof of this intuitively
obvious result. One wonders if the present theory of stochastic process
is not still too difficult for applications.''\end{quotation}
\begin{quotation}\emph{Kai Lai Chung in \citep{Chung1967}}.\end{quotation}
%
%
\item To survey and consolidate the disparate numerical techniques available for the practical use of these type of models (this material is currently absent from the manuscript).
\end{enumerate}
I also have several other aims:
\begin{enumerate}
\item Help abridge the gap between undergraduate-level texts, such as Norris's fine book \citep{Norris1997}, and more advanced treatments of the type in \citep{Chung1967,Rogers2000a}. To this end, I introduce early on the path space and path law of a chain and give the  strong Markov property in its most applicable (albeit ugly) form which involves these concepts.
\item Give a comprehensive account of the long-term behaviour of chains 
and the associated Foster-Lyapunov criteria similar to that given in \citep{Meyn2009} for general discrete-time Markov processes (but in the much simplified and more accessible setting of countable state spaces). In particular, an account free of the irreducibility assumptions pervasive in the introductory Markov chain literature. 
These assumptions are generally not necessary and, unfortunately, can hinder the practical use of the results that feature them.
Not just because they limit the applicability of the stated result to the irreducible case, but also because, frustratingly,  arguing in practice that irreducible chains are indeed irreducible sometimes proves very challenging.


\item Highlight the edges of the theory where unresolved issues and open questions creep. 
\begin{quotation}
``The mistakes and unresolved difficulties of the past in mathematics have always been the opportunities of its future; and should analysis ever appear to be without flaw or blemish, its perfection might only be that of death. ''
\end{quotation}
\begin{quotation}
\emph{Eric~Temple~Bell in \citep{Bell1945}.}
\end{quotation}
\end{enumerate}

\paragraph{What this book currently is} A rigorous and largely self-contained account of (a) the bread-and-butter concepts and techniques in Markov chain theory and (b)  the long-term behaviour of chains. As much as possible, the treatment is probabilistic instead of analytical (I stay away from semigroup theory). Personally, I tend to find that the intuition lies with the former and not the latter.  As stated above, this manuscript is geared towards those interested in the use of Markov chains as models of real-life phenomena. For this reason, I focus on the type of chains most commonly encountered in practice (time-homogeneous, minimal, and right-continuous in the discrete topology) and choose a starting point very familiar to this audience: the (Kendall-) Gillespie Algorithm commonly used to simulate these chains. In order to keep the prerequisite knowledge and technical complications to a minimum, I take a `bare-bones' approach that keeps the focus on chains (instead of more general processes) to an almost pathological degree: I use  the `jump and hold' structure of chains extensively; almost no martingale theory; minimal coupling; no stochastic  calculus; and, even though regeneration and renewal (of course!) feature in the book, they do so exclusively in the context of chains. I have also taken some extra steps to avoid imposing certain assumptions encountered in other texts that sometimes prove to be stumbling blocks in practice (e.g., irreducibility of the state space, boundedness of test functions and of stopping times). 
\begin{quotation}``The traveller often has the choice between climbing a peak or using
a cable car.''\end{quotation} 
\begin{quotation}\emph{William Feller in \citep{Feller1971}.}\end{quotation}

To facilitate the cable car experience for those who do not wish to drag themselves through the proofs, I try to give the main results and ideas at the beginning of each section and leave the more technical  arguments until the end (some sections are more amenable to this than others). This said, to those with the luxury of time, I heartily recommend climbing.

\ifdraft
\paragraph{The types of Markov chains typically encountered in practice}F

I have several aims in writing this book:

\begin{enumerate}
\item\textbf{Find a more seamless way to dealing with explosions than expanding the state space with a ``dead'' or ``infinity'' state:} While there's nothing wrong with this approach, it always felt like a somewhat artificial and cumbersome\footnote{In this approach, we end up having to do things like extending any real-valued function $f$ on the state $\s$ to the \emph{extended state space} $\s\cup\{\Delta\}$ by setting $f(\Delta)=0$ (and similarly for rate matrices, initial distributions, one-step matrices, etc.).} way of dispatching with the issue of explosions. It can trip up inexperienced theoreticians (even some experienced ones as well) and, in this minimal case I consider where no returns from infinity are allowed, it is unnecessary.

\item
\item\textbf{Giving a thorough accounting of Foster-Lyapunov criteria:} In practice, ruling out unstable behaviours such as explosivity or transience and establishing stable ones such as positive recurrence (or equivalently, the existence of stationary distributions), is a difficult task. Similarly, when it comes to establishing finiteness of exit times and their means. If we scan over the results of the previous two sections, a pattern emerges: very few of these results are constructive in the sense that they do not give us hints on how to verify in practice that their premises hold for a chain of interest. Very often, we need to resort to Foster-Lyapunov criteria to test for these properties. These are sufficient (and sometimes necessary too) conditions for these properties that amount to finding a function that satisfies various inequalities. They are named jointly after Aleksandr Lyapunov who first introduced these types of conditions in his study of the solutions of ordinary differential equations and F. Gordon Foster who first ported them to a stochastic setting (in particular, that of discrete-time chains) in \citep{Foster1953}. These type of criteria are also known as \emph{drift conditions}. When it comes to this type of criteria, we like the works of Sean P. Meyn and Richard L. Tweedie \citep{Meyn1993b,Meyn2009} and Rafail Z. Khasminskii \citep{Khasminskii2012}. A secondary reason for our interest in Foster-Lyapunov functions is that they allow us to establish the integrability with respect to the various distributions introduced in the previous sections of unbounded test functions; something that will be important in Chapters \ref{mcmom} and \ref{sdemom} where we focus on the moments of these distributions. We now survey a few of these criteria that we will use in this thesis.  Most of the criteria we give are borrowed from elsewhere, however those related to the exit times of continuous-time chains we have had to derive ourselves.

\item\textbf{Delve into the methods used in practice to analyse these chains.}

\item\textbf{Make rigour more accessible.} Heavy use of jump times (Chung's words \emph{taming the continuum} come to mind).
\end{enumerate}

\fi

\paragraph{This is not a finished product! Read at your own peril} This book is  a work in progress to which I plan to add material over time. 
%
To facilitate its use in the interim, I will post updated drafts on arXiv. 
If you find any mistakes or typos in the latest draft posted, or have some general thoughts on the book, please let me know about them. I can be reached at
\begin{center}
juan.kuntz-nussio@warwick.ac.uk
\end{center} 
\paragraph{Prerequisites} The only prerequisites are a competence with the measure theoretic foundations of modern probability and a familiarity with conditional expectation, its properties, and discrete-time martingales; all to a level no further than that of \citep[Chapters~1--10]{Williams1991}. See the ``Some important notation and preliminaries'' section for details.

\paragraph{How to use this book}
This paragraph is for the non-expert reader, the expert reader of course will be able to judge this matter for him-or-herself! The manuscript, as it stands, consists of two parallel developments of the theory, the first for discrete-time chains and the second for continuous-time ones. Each is intended to be read linearly, although sections marked with an `*' are of little importance to later sections. Readers not interested in their contents can skip these sections without jeopardising their future comprehension. Readers whose interests lie entirely within the continuous-time case  may be tempted to skip Sections~\ref{sec:dtdef}--\ref{sec:flgeodt} dealing with the discrete-time one. I, however, would advise against this. Not only are the proofs for the continuous-time case often reliant on, or analogous to, those for the discrete-time one, but discrete-time chains are technically simpler and the underlying ideas are exactly the same (ignore, however, any mention of periodicity in the discrete-time treatment). To minimise the amount of repetition, I often give the intuitive discussion only for the discrete-time case (for instance, compare Sections~\ref{sec:dtmarkov}~and~\ref{sec:dtstrmarkov} with Section~\ref{sec:markovprop}).
%

\paragraph{Referencing within the book}Within individual sections, objects (equations, theorems, examples, etc.) are labelled sequentially with a single number. Outside a section, the section's number is prefixed to the labels of that section's objects. For instance, Equation~(7) in Section~1 would be referenced as ``(7)'' throughout Section~1 but as ``(1.7)'' throughout Sections 2, 3,$\dots$

\paragraph{On the book's format}Readers acquainted with David Williams's books might find the formatting of this manuscript suspiciously familiar and they would be right to do so. You know what they say about imitation and flattery.
%
%
%
%
\paragraph{Bland assurances and exercises} I, too, will occasionally adhere to the following convention (a convention I used to find despicable as a student, it appears that I may have lived long enough to become the villain):
\begin{quotation}``Convention dictates that It\^o's formula should be proved for $n=1$, the general case being left as an exercise, amid bland assurances that only the notation is any more difficult.''\end{quotation}\begin{quotation}
\emph{Chris Rogers and David Williams in \citep{Rogers2000b}}.
\end{quotation}
The first few applications in the book of a given result tend to be quite detailed while, as is perhaps inevitable, later applications are more cavalier or shamelessly left as an exercise for the reader. 
If you find yourself hopelessly lost in one of these bland assurances or offhand applications, 
you can always email me at the address given above and I'll try my best to get back to you. Who knows, you may have found a mistake (in which case I would be in your debt!). Indeed, in my experience, such applications and assurances tend to be the weakest points of books and articles.
\begin{quotation}
```Obvious' is the most dangerous word in mathematics.''
\end{quotation}
\begin{quotation}Eric Temple Bell in \citep{Bell1952}.
\end{quotation}


\paragraph{I claim no originality of the results presented in this book}
Even though I may have ironed out certain corners of the theory while writing this book, by and large everything I have (so far!) covered in the manuscript is well-known within the community. 
\begin{quotation}``Nothing of me is original. I am the combined effort of everybody I've ever known.''\end{quotation}
\begin{quotation}
\emph{Chuck Palahniuk in Invisible Monsters.}\end{quotation}
%
%
%
Markov chain theory is a classical and very mature subject; the literature on it is vast. 
 Many of the results I give in this manuscript can be found verbatim, or close to, elsewhere. 
Whenever   the history of a given result (concept, approach, etc.) seems clear to me, 
I append to the end of the section containing the result (concept, approach, etc.) a `Notes and references' blurb commenting on this history. However, regretfully, these are mostly lacking, at least for the time being. 
%
%
\begin{quotation}``We hope that we always have told the truth, but realize that it is seldom the whole truth. It is not our intention to give anyone less than his full measure of credit and we apologize in advance to anyone who may feel slighted.''
\end{quotation}
\begin{quotation}
\emph{Robert~M.~Blumenthal and Ronald~K.~Getoor in \citep{Blumenthal2007}.}
\end{quotation}

\paragraph{Acknowledgements}
There are many I should thank here, and I will in due course.
\begin{flushright} Juan Kuntz 

London, \monthyeardate\today\end{flushright}

\ifdraft
\paragraph{Analytical characterisations} In particular, we introduce the distributions relevant to Questions Q1--3
(the time-varying law, the exit distributions, and the stationary distributions, respectively) and we \emph{characterise} each of them \emph{analytically}. In other words, we take each of these objects that are defined \emph{probabilistically} in terms of the chain and the underlying probability measure and we derive an equivalent description of the object in terms of the solutions of various equations and inequalities (in particular, descriptions that involve no probabilistic notions). These analytical characterisations are the starting point of many of the deterministic numerical schemes discussed in Chapter \ref{introthe} that are used in practice to compute these distributions (or approximations thereof should exact computation not be feasible). For example, the schemes of Chapters \ref{mcmom}--\ref{dists} focus on approximating the solutions of these equations and inequalities (and essentially forget about the origin of these equations and inequalities), while those of Chapter \ref{fspchap} modify the original chain in such a way that these equations and inequalities become computationally tractable, solve them, and then relate the distributions of the modified chain to those of the original chain. 

Our focus on these analytical characterisations and our use of the words ``analytical characterisation" are motivated by analogous linear programming characterisations pervasive in the optimal control literature, see \citep{Kurtz1998,Cho2002,Lasserre2008,Hernandez-Lerma1999,Hernandez-Lerma1996,Manne1960,Wolfe1962,Borkar1996} and the many references therein.  These types of characterisations and their use are a cornerstone of optimal control, have been one of the main driving forces behind the development of high-quality LP solvers\footnote{Solvers these days are advertised as being able to handle hundreds of thousands, if not millions, of variables and constraints. In our experience, this very much depends on the linear program at hand and if it is well-conditioned.}, and date back to Alan Manne \citep{Manne1960}, George Dantzig \citep{Wolfe1962}, and their contemporaries. Indeed, it is not difficult to see that each of the characterisations we give in this chapter can be recast in terms of a linear program (in the case of the characterisations involving a ``minimal solution'' one only needs to note that this minimal solution is also the solution with minimum mass). The characterisations of the stationary distributions in particular can be viewed as a special case of those used in  long-run average cost control problems \citep{Echeverra1982,Hernandez-Lerma1999,Hernandez-Lerma2003,Lasserre1994,Kurtz1998}. Those of the time-varying law and of the (joint) exit distributions can be viewed as special cases of those used in finite-horizon cost and first-passage cost control problems \citep{Kurtz1998}.

\fi

%

%
%
%
%
%

\newpage
\tableofcontents
%
%

\ifdraft
\newpage

\section*{Introduction: An overview: Markov chains, their analysis, and their numerical approximations}\addcontentsline{toc}{section}{\protect\numberline{}An overview: Markov chains, their analysis, and their numerical approximations}
\pagestyle{premain}\sectionmark{\MakeUppercase{Introduction}}

{\color{red}BRING THIS BACK ONCE WE HAVE SOME NUMERICAL METHODS DONE}

This thesis is a monograph on Markov chains, the problem of their analysis, and deterministic approximations schemes developed to tackle this problem. Let's take this sentence apart:
\subsubsection*{Markov chains:}By a Markov chain, we mean a time-homogeneous Markov process that takes values in a countable state space. We consider both the discrete-time and the continuous-time cases. In the latter, we restrict our attention to minimal chains with stable and conservative rate matrices. 

Our motivation behind the study of these chains is their real-life use. They are employed as models in an astonishingly wide range of scientific and engineering disciplines. These include chemical physics \citep{Schlogl1972,kampen2007,Goutsias2013,Gardiner2009}, computer science \citep{Kleinrock1976}, cellular biology \citep{Goutsias2013,Anderson2015,Kimmel2002}, queueing theory \citep{Asmussen2003,Meyn2008}, neuroscience \citep{Amit1992}, economics \citep{Aoki2002}, epidemiology \citep{Goutsias2013,Haccou2005}, quantum optics \citep{Lugiato1987}, bioinformatics \citep{Kimmel2002}, operations research \citep{Meyn2008}, pharmacokinetics \citep{Goutsias2013}, medicine \citep{Kimmel2002}, machine learning \citep{Watkins1992}, and ecology \citep{Gardiner2009,Goutsias2013,Haccou2005}. More generally, mathematical models are a pillar of both the scientific method and modern engineering techniques. Regarding the former, models allow us to test our hypotheses inexpensively and to infer information regarding the unobservable aspects of physical phenomena from the observable ones. Regarding the latter, models drastically cut down the monetary, time, and human cost of design cycles and, by doing so, make technical ventures feasible when they otherwise would not be. 
\begin{quotation}``Essentially, all models are wrong, but some are useful.''\end{quotation}
\begin{quotation}\emph{George Box in Empirical Model-Building and Response Surfaces.}\end{quotation}

A model is no more useful than it is tractable. If a model is so complex that we cannot extract any information from it, then its utility in guiding our real-life decisions is null. A trade-off between the practitioner's desire for faithful representations of real life with the theoretician's desire for tractable models is always necessary. Markov processes are often deemed to be a good comprise in this respect. While still flexible enough to be appropriate in many contexts, their basic theory is very mature (at least when it comes to the time-homogeneous case), we know what range of behaviours they exhibit, and, in many cases, we know how to discriminate between the possible behaviours in practice. However, \emph{quantifying} these behaviours is a task that often still proves elusive to this day.

\subsubsection*{The problem of analysis:}Broadly speaking, \emph{analysing} a model amounts to predicting its behaviour (or a particular aspect of interest thereof) from the model's definition. It is the foundation on which any real-life use of a model builds on. Such a use involves a \emph{modelling} process and, often, one of \emph{control} too. Modelling, or \emph{fitting}, consists of amending the model's definition so that its behaviour is consistent with what we have observed of the phenomenon that we wish to study. It reduces to iterated analysis: we predict the model's behaviour given the current definition, compare the predictions to the data gathered in real life and, based on this comparison, we alter the model so that it better emulates the phenomenon; repeating this process until the predictions match the data satisfactorily well. We then hope that the model not only behaves like the phenomenon in the aspects that we have observed but also in those that we have not. In this way, a good model enables us to extrapolate the data we have gathered so far and to predict how the phenomenon will behave in untested circumstances (especially, circumstances that are costly, dangerous, or outright infeasible to investigate).

In engineering applications, we are often not interested in predicting how the phenomenon will act in unfamiliar circumstances or filling in gaps in our knowledge as much as we are in \emph{controlling} the phenomenon. By controlling we mean influencing the physical process so that it  behaves in a desired manner. Using an (already fitted) model to this end consists of modifying the model's definition to reflect a potential real-life action that we could take to control the phenomenon, predicting the modified model's behaviour, and comparing these predictions to the desired behaviour; repeating this procedure until we find an action that makes the  predicted behaviour match the desired one. At this point, we undertake this action in real life and see if it works. In other words, control problems also consist of iterated analysis problems.
\begin{figure}[h!]
	\begin{center}
	\includegraphics[width=0.75\textwidth]{./Figures/g321.png}
	\vspace{-10pt}
	\end{center}
\caption[Modelling and control as iterated analysis]{}\label{fig:structure}
\end{figure}

Markov chains are defined in terms of a \emph{one-step matrix} (in discrete-time) or a \emph{rate matrix} (in continuous-time) and an \emph{initial condition}. The one-step/rate matrix describe where the chain will go next given where it is now while the initial condition describes where the chain is at the initial time. The analysis problems that we consider in this thesis amount to answering questions of the following type:
\begin{itemize}
\item[Q1:] Where will the chain be at a prescribed point in time?
\item[Q2:] When does the chain exit a given set, or \emph{domain}, of interest, and what part of domain's boundary does it cross while exiting?
\item[Q3:] In the long-run, which parts of the state space will the chain visit and how often?
\end{itemize}
In the field's jargon:
\begin{itemize}
\item[Q1:] What is the law of the chain at the prescribed point in time\footnote{For the lack of a better name, we refer to this distribution as the \emph{time-varying law} so not to conflate it with the distribution on the path-space induced by the chain sometimes referred to as the law of the process. Some texts refer to the time-varying law as the ``one-dimensional law'' of the process.}?
\item[Q2:] What is the exit distribution associated with the exit from the domain?
\item[Q3:] What are the stationary distributions of the chain?
\end{itemize}
As we discuss in Chapter \ref{Markovpre}, answering each of these questions amounts to finding the solutions of a set of linear equations and inequalities of one type or another. Unfortunately, no analytical expressions of these solutions can be obtained for the majority of chains, and we must resort to numerical methods. These equations and inequalities involve as many variables as there are states in the state space and a comparable number of equations and inequalities. If the state space is infinite, or even just large, direct numerical computation of these solutions is infeasible, and we must settle for approximations thereof. The end goal of this thesis is to study and develop reliable and systematic schemes that yield approximations of the distributions featuring in Questions Q1--3. In doing so, we aim to equip practitioners with the tools required to analyse these models and, consequently, further enable their use in real-life applications. This said, this thesis is dedicated to the theoretical aspects of chains and these  approximation schemes; we have largely left their applications and implementations as future work.
\subsubsection*{Deterministic approximation schemes:} Schemes that yield approximations of the distributions featuring in Questions Q1--3 posed above can be grouped into two types: stochastic schemes and deterministic ones. Stochastic schemes are those for which the approximation computed is a random object itself; if the scheme is run twice under identical conditions, the approximations it returns are not  identical (although, hopefully, the difference between them is negligible). The absence of this characterises deterministic schemes; if the scheme is run twice under the same conditions, then it yields the same approximation. This thesis is dedicated to the latter variety.

By and large, stochastic schemes are simulation and Monte Carlo methods. They are recipes for constructing a random variable known as an \emph{estimator} that takes values close to the object of interest. Often, the estimator consists of the empirical average of a number other random variables whose mean is the object of interest (in which case the estimator is said to be \emph{unbiased}), or something close to it. The random variables averaged are sampled independently of each other whenever possible, or something close to that otherwise. Various laws of large numbers and generalisations thereof imply that the empirical average converges almost surely to something close to the object of interest as more and more variables are added to it. In the unbiased and independent case, it converges to exactly the object of interest and the estimator is said to be \emph{consistent}. Of course, in practice only finitely many samples can be drawn, and so the object computed is only approximate. However, various central limit theorems and generalisations thereof can be employed to quantify the approximation error. For more on these schemes in the context of stochastic reaction networks (a type of chain that we discuss in Section \ref{SRNs}) see \citep[Chap.5]{Anderson2015}. In broader contexts, we find Gareth O. Roberts, Andrew M. Stuart, and their co-authors to be particularly eloquent orators on these matters \citep{Roberts2004,Stuart2010}.

There are two main types of deterministic schemes. Schemes of the first kind, known as \emph{system size expansions}, apply to chains which involve large numbers of individuals and large numbers of interactions between them. Even though the individual (or \emph{microscopic}) interactions are random, these tend to cancel each other out, and the overall collective (or \emph{macroscopic}) behaviour is approximately deterministic. In these cases, the distributions featuring in Questions Q1--3 tend to be unimodal and narrow so that their means, on their own, provide accurate depictions of the distribution. If this assumption holds exactly, then the means obey deterministic laws (recurrence equations or ordinary differential equations). The schemes involve replacing the distributions with their means and computing these using the appropriate deterministic law\footnote{Here, we are giving a very simplistic perspective of system size expansions. There are intermediate levels of these approximations in which the chains are approximated by other stochastic processes.}. We do not investigate these schemes in this thesis. On this matter, we like Thomas G. Kurtz \citep{Kurtz1970,Kurtz1971,Kurtz1976,Kurtz1978,Ethier1986}. 

Deterministic schemes of the second type are the subject of this thesis. In our opinion, they are formalisations of the following simple idea:
%
%

\MyQuote{\label{eq:idea}\hspace{10pt} Most Markov chains have many states. However, it is often the case that only a small fraction of these states are of any significant importance.
}
For instance, consider the classical gambler's ruin problem: a gambler repeatedly bets one pound, each time a coin is tossed, if it lands on heads she gets two pounds back, otherwise she losses her pound. Suppose that the coin is not biased in her favour: the probability that it lands on heads is at most one half. The classical model for this scenario is a biased random walk on the set of natural numbers $\n$: the walk being in a state $x\in\n$ represents the gambler having $x$ pounds in her possession. Famously, regardless of her initial wealth, the gambler will eventually go bankrupt with probability one. Intuitively, if the gambler starts betting with ten pounds in her pocket, it is unlikely that she will ever amass a thousand pounds or more. For this reason, it is ostensibly reasonable to expect that the states past $999$ are irrelevant to any question we may ask about the gambler's financial prospects. In other words, instead of dealing with the infinitely many states of the walk's state space, we only have to focus on $1000$ of them, a far more manageable number. Formally, we say that we are replacing the state space $\n$ with its \emph{truncation} $\{0,1,\dots,999\}$.

The difficulty in exploiting \eqref{eq:idea} lies in deciding which states are important for the question at hand. Even if something seems reasonable based on physical arguments, models are abstractions of real life and can be far from faithful representations of it. This is particularly prominent in the context of modelling where the model is not yet fitted. Indeed, many fitting algorithms involve automated sweeps of parameter sets, and the algorithms can easily step through parameter values that lead to poor models. Even if the model is well fitted, our intuition regarding the physical phenomenon of interest can often be  lacklustre. For instance, as we will verify in Example \ref{gam1000} of Chapter \ref{Markovpre}, if the coin is unbiased, then the gambler has a $1\%$ chance of ever amassing $1000$ pounds, a perhaps not so negligible number. The intuition here is that game is fair, so the gambler has a $1/100$ chance of multiplying her starting wealth a $100$-fold. This example underscores the importance taking care when dismissing states as inconsequential. In this thesis, we address this issue using one or both of the following:
\begin{enumerate}
\item \textbf{Bounds on the object of interest and computable errors:} The schemes we study do not just yield approximations on the object of interest, but also bounds. Combining a lower and an upper bound on a number of interest quantifies how far each of these is from the actual number (they are no further away than the difference of the two bounds): an observation that we exploit in Chapters \ref{mcmom}--\ref{sdemom}. Alternatively, by computing lower bounds on every entry of a probability distribution, it is possible to calculate exactly the total variation distance between the unknown distribution and the collection of lower bounds. We make repeated use of this in Chapters \ref{fspchap}, \ref{infdimlpsec}, and \ref{dists}. This error control does not directly tell us which states to pay attention to; however, if we have made poor choices, then it alerts of this fact \emph{a posteriori} (that is, after running the scheme and computing the bounds).
\item \textbf{Moment and tail bounds:} The approximation error of the schemes discussed in Chapter \ref{fspchap} is the mass of the tail of a distribution. By tail we mean the part of the distribution that lies outside of the truncation we have chosen. Suppose that we know one of the moments of this distribution. Choosing a truncation related to this moment, we can use Markov's inequality to bound the mass of the tail (we call these type of bounds \emph{tail bounds}). Unfortunately, these moments are typically not computable. However, in Chapter \ref{mcmom} we explain how to obtain upper (and lower) bounds on them. Replacing such an upper bound for the moment in Markov's inequality we recover a tail bound. In this manner, we can bound the approximation error \emph{a priori} (that is, before running the scheme) and our truncation choice is guided by the moment bound. Of course, this a priori error control comes at the expense of having to compute a moment bound. Along similar lines, the approximation error of the schemes in Chapter \ref{infdimlpsec} is at least the mass of the tail of the distribution being computed while that of the schemes in Chapter \ref{dists} is at least the tail bound (these schemes themselves employ a moment bound and a corresponding tail bound).
\end{enumerate}

As an aside, simulation and Monte-Carlo methods frequently work in circumstances that truncation-based methods fail because they tend to automatically resolve this issue of which states are important and which are not. By construction, simulations only visit states that the chain itself is likely to visit: these are typically the important states. 

The task of computing numerical approximations of these typically infinite-dimensional objects is daunting and involves several levels of approximation. A division of labour is required, and each of us must choose the level they work on. We draw the line by assuming that systems of linear ordinary differential equations (ODEs), linear programs (LPs), and semidefinite programs (SDPs) can be solved exactly. None of these things are true. Philosophically(-ish), they cannot be; at the end  of the day, we cannot carry out the real number arithmetic they rely on \emph{exactly}: we are always limited by machine precision. However, the theory behind ODE, LP, and SDP solvers is very rich: that behind ODE and LP solvers is classical, see \citep{Moler2003,Ben-Tal2001,Boyd2004} and references therein, while that behind SDP solvers was settled twenty-five years ago in the renown work \citep{Nesterov1994} of A. Nemirovski and Y. Nesterov on polynomial-time primal-dual interior point methods for convex optimisation problems. Moreover, ODE and LP solvers are mature technologies and large efforts to improve all three are ongoing. In our experience, issues at this level only become noticeable when the problem data is ill-conditioned. Unfortunately, when it comes to probability, the numbers involved change orders of magnitude very quickly, and, in our opinion, this ensures that ill-conditioning is not a rare occurrence. Consequently, this lower level of approximation that we by and large ignore (aside from the remarks made in Sections \ref{momprobthe} and \ref{apriori}) is very important for the success of these schemes and their practical use.

\subsubsection*{Thesis structure:}In the ensuing section we gather notation that we use throughout the thesis. Chapter \ref{Markovpre} introduces the chains and discusses the theory we need to answer Questions Q1--3 in practice. In particular, we review descriptions in terms of linear equations and inequalities of the distributions featuring in these questions. These \emph{analytical} descriptions are the starting point of the deterministic approximation schemes discussed in the thesis used to answer Q1--3.  In Chapter \ref{fspchap}, we capitalise on the descriptions and develop truncation-based schemes that yield approximations with controlled errors of the distributions in Questions Q1 and Q2. Chapter \ref{mcmom} discusses semidefinite programming schemes that yield bounds on the moments of the distributions in Questions Q2 and Q3 (although, we focus on the latter). In Chapter \ref{infdimlpsec}, we introduce truncation-based schemes that can be used to solve countably infinite linear programs (CILPs) and study the schemes' theoretical properties. In Chapter \ref{dists}, 
we exploit the fact that the analytical descriptions of Chapter \ref{Markovpre} can be rephrased in terms of CILPs and apply the schemes of Chapter \ref{infdimlpsec} to obtain approximations (accompanied by computable error bounds) of the distributions in Questions Q2 and Q3. Once again, we focus on Q3. Chapter \ref{sdemom} is a short foray out of the countable state space case that demonstrates that (at least some of) the schemes discussed in this thesis apply to more general Markov processes than chains. In particular, we present a scheme analogous to those in Chapter \ref{mcmom} that yields bounds on the moments of the distributions of Question Q3 for diffusion processes generated by stochastic differential equations. We conclude the thesis with some parting comments in Chapter \ref{conclusion}.

Although this thesis was written to be read linearly, this is not necessary: each chapter is presented of a self-contained unit beginning with an introduction, a review of the relevant literature, and delineation of the chapter's contributions and concluded with a discussion of its content and future work. We only recommend reading Chapter \ref{Markovpre} before Chapter \ref{fspchap}, \ref{mcmom}, or \ref{dists}. This said, readers well-versed in Markov chain theory or that are more concerned with the statement of the results of later chapters than their proofs may wish to only skim Chapter \ref{Markovpre} (in particular, just to get acquainted with the notation introduced in it). To facilitate the notation/results look-up process, we have appended  an index  and a glossary of symbols to the end of the thesis and a  glossary of acronyms to its start. 

Throughout the thesis, we assume that the reader is familiar with measure theory, the measure theoretic foundations of modern probability theory including conditional expectation and discrete-time martingales (the last two are only important in Chapter \ref{Markovpre}). If not, we recommend \citep{Tao2011} and \citep{Williams1991}. We also assume that the reader is familiar with the basics of linear programming and semidefinite programming. If not, we recommend \citep{Ben-Tal2001,Vandenberghe1996,Boyd2004,Renegar2001}. In Chapter \ref{sdemom}, we assume that the reader is familiar with the stochastic calculus for continuous semimartingales and euclidean-valued stochastic differential equations. Here, we recommend \citep{Rogers2000b}.

\fi

\newpage

\pagenumbering{arabic}
%
\sectionmark{\MakeUppercase{Some important notation and preliminaries}}
\titleformat{\subsection}[runin]{\large\bfseries}{\thesubsection}{0em}{. }[.]
\settocdepth{subsection}
\subsection{Some important notation and preliminaries}\label{notationthe}
\sectionmark{\MakeUppercase{Some important notation and preliminaries}}
\pagestyle{FUN}

The symbol 
$$\text{`}:=\text{'}$$
is\glsadd{3} used to define whatever precedes as whatever follows it, and vice versa for $\text{`}=:\text{'}$.\glsadd{4} 
\subsubsection*{Numbers}
\begin{itemize}[label={},leftmargin=*]
\item $\n$ denotes the set of natural numbers (including zero), \glsadd{n}
\item $\z$ that of integers, \glsadd{z}
\item $\zp$ that of positive integers, \glsadd{zp}
\item $\r$ that of real numbers, \glsadd{r}
\item $[0,\infty)$ that of non-negative real numbers, \glsadd{rp}
\item $(0,\infty)$ that of positive real numbers, \glsadd{rpp}
\item and $\q$ that of rational numbers.\glsadd{rat}
\end{itemize}
\subsubsection*{Extended numbers}
\begin{itemize}[label={},leftmargin=*]
\item $\n_E:=\n\cup\{\infty\}$ denotes the set of extended natural numbers\glsadd{ne}, 
\item $\z_E:=\z\cup\{-\infty,\infty\}$ that of extended integers\glsadd{ze},
\item $\r_E:=\r\cup\{-\infty,\infty\}$ that of extended real numbers\glsadd{re}, 
\item $[0,\infty]:=[0,\infty)\cup \{\infty\}$ that of non-negative numbers\glsadd{rpe}, 
\item $(0,\infty]:=(0,\infty)\cup \{\infty\}$ that of positive real numbers\glsadd{rppe}, 
\end{itemize}

Whenever I say that a number (function, random variable, etc.) is `non-negative', I mean that it takes values in, $[0,\infty]$ instead of $[0,\infty)$, while `non-negative and finite' means that it takes values in, $[0,\infty)$. Similarly with `positive', `non-positive', and `negative'.

We order the extended numbers as usual:  for all $a,b\in \r$, $a\geq b$ if and only if $a-b\in[0,\infty)$ and 
$$-\infty\leq c\leq \infty \qquad\forall c\in\r_E.$$
The rules we use to manipulate them are also the usual:
\begin{itemize}[label={},leftmargin=*]
\item infinity times $-1$ is minus infinity: $\infty\cdot (-1)=-\infty$,
\item infinity times a positive number $a\in(0,\infty]$ is itself: $\infty\cdot a=\infty$,
\item any real number $a\in\r$  divided by infinity is zero: $a/\infty=0$,
\item infinity plus a number $a\neq-\infty$ is itself: $a+\infty=\infty$, 
\item and infinity times zero is zero: $\infty\cdot 0=0$.
\end{itemize}

\subsubsection*{Inf/sup/min/max/floor/ceiling}
The infimum (resp. supremum) of the empty set $\emptyset$ is plus infinity (resp. minus infinity):
$$\inf\emptyset=\infty,\qquad\sup\emptyset=-\infty.$$
For any numbers $a,b\in\r_E$,\glsadd{vee}\glsadd{wedge}
$$a\wedge b:=\min\{a,b\}\quad a\vee b:=\max\{a,b\},$$
$\lfloor a\rfloor$ denotes the largest  number\glsadd{floor} in $\z_E$ no greater than $a$, and $\lceil a\rceil$ the smallest\glsadd{ceil} no smaller than $a$.

\subsubsection*{Sigma-algebras}
\begin{itemize}[label={},leftmargin=*]
\item $2^\Omega$ denotes power set $\{A:A\subseteq\Omega\}$ of any given set $\Omega$.\glsadd{2om}
\item $\cal{B}(\r)$ denotes the Borel sigma-algebra on $\r$ (topologised with the euclidean topology): the smallest sigma-algebra on $\r$ that contains all of its open subsets (i.e., the sigma-algebra \emph{generated} by the collection of open subsets).
\item $\cal{B}(\Omega):=\{A\cap\Omega:A\in\cal{B}(\r)\}$ denotes the Borel sigma-algebra on $\Omega$, for any Borel subset $\Omega$ in $\cal{B}(\r)$ of $\r$.\glsadd{borel}
\item $\cal{B}(\r_E)$ denotes the sigma-algebra on $\r_E$ generated by $\cal{B}(\r)\cup\{\{-\infty\},\{\infty\}\}$.
\end{itemize}
\subsubsection*{Functions} Throughout the following, let $f$ be a function mapping from $\Omega$ to $\cal{X}$ and let $\cal{F}$ and $\cal{G}$ be sigma-algebras on $\Omega$ and $\cal{X}$, respectively. 
\begin{itemize}[label={},leftmargin=*]
\item $f=g$ means\glsadd{1} that $f(\omega)=g(\omega)$ for all $\omega$ in $\Omega$ and similarly for $f\geq g$ and $f\leq g$.
\item For any subset $B$ of the co-domain $\cal{X}$, $\{f\in B\}$ denotes the pre-image\glsadd{preimage} of $B$: 
$$\{f\in B\}:=\{\omega\in\Omega:f(\omega)\in B\}.$$
\item $f$ is \emph{$\cal{F}/\cal{G}$-measurable} if it is a measurable mapping from $(\Omega,\cal{F})$ to $(\cal{X},\cal{G})$:
$$\{f\in B\}\in\cal{F}\quad\forall B\in\cal{G}.$$
If $B$ is a singleton $\{x\}$, then we write $\{f=x\}$ instead of $\{f\in B\}$. Similarly for $\{f\geq x\}$, $\{f\leq x\}$, and $\r_E$-valued functions $f$.
\item For any a subset $A$ of $\Omega$, $1_A$ denotes the indicator function\glsadd{inda} of $A$:
$$1_A(\omega):=\left\{\begin{array}{ll}1&\text{if }\omega\in A\\0&\text{otherwise}\end{array}\right.$$
For a single point $\omega$ in $\Omega$, we use the shorthand $1_\omega:=1_{\{\omega\}}$.
\end{itemize}

Sometimes, we will encounter $\r_E$-valued functions $f$ that are only defined  on a subset $A$ of $\Omega$. In these cases, I will use $1_Af$ to denote the function on $\Omega$ defined as
\begin{equation}\label{eq:partdef}(1_Af)(x):=\left\{\begin{array}{ll}f(x)&\text{if }x\in A\\0&\text{if }x\not\in A\end{array}\right.\end{equation}
\begin{lemma}\label{lem:partdefmeas}Suppose that $A$ belongs to $\cal{F}$ and that $f$ maps from $A$ to $\r_E$. Then $1_Af$ in \eqref{eq:partdef} is $\cal{F}/\cal{B}(\r_E)$-measurable if 
$$\{\omega\in A: f(\omega)\in B\}\in \cal{F},\quad\forall B\in\cal{B}(\r_E).$$
\end{lemma}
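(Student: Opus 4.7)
The plan is to unpack the definition of measurability and verify it by a case split on whether $0$ lies in the target Borel set. Fix an arbitrary $B \in \mathcal{B}(\mathbb{R}_E)$; we want $\{1_A f \in B\} \in \mathcal{F}$. Because the function $1_A f$ agrees with $f$ on $A$ and equals $0$ off $A$, its preimage of $B$ decomposes cleanly according to whether the value $0$ is captured by $B$.

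First I would handle the case $0 \notin B$. Here any point outside $A$ maps to $0 \notin B$, so it contributes nothing, and $\{1_A f \in B\} = \{\omega \in A : f(\omega) \in B\}$, which lies in $\mathcal{F}$ by the standing hypothesis on $f$. Next, in the case $0 \in B$, every point of $A^c$ lies in the preimage, so $\{1_A f \in B\} = \{\omega \in A : f(\omega) \in B\} \cup A^c$; the first set is in $\mathcal{F}$ by hypothesis and $A^c \in \mathcal{F}$ because $A \in \mathcal{F}$ and $\mathcal{F}$ is a sigma-algebra, hence so is their union. Since $B$ was arbitrary, $1_A f$ is $\mathcal{F}/\mathcal{B}(\mathbb{R}_E)$-measurable.

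There is essentially no obstacle here; the argument is a one-line bookkeeping check. The only things worth watching are (i) making sure that the decomposition of $\{1_A f \in B\}$ above is exhaustive and disjoint for the two cases, and (ii) remembering that $\mathcal{B}(\mathbb{R}_E)$ is generated so as to contain $\{0\}$, so the dichotomy "$0 \in B$ or $0 \notin B$" is legitimate for every Borel set $B$. No appeal to a generating $\pi$-system is needed, though one could alternatively verify the condition only on a generating family such as $\{[-\infty, x] : x \in \mathbb{R}\} \cup \{\{\pm\infty\}\}$ and invoke the standard measurability-via-generators lemma; the direct approach above is simpler.
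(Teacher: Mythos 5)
Your proof is correct and matches the paper's argument essentially verbatim: fix $B$, split on whether $0\in B$, write $\{1_Af\in B\}$ as $\{\omega\in A:f(\omega)\in B\}$ alone or unioned with $A^c$, and close with the hypothesis plus closure of $\mathcal{F}$ under complements and unions. Nothing to add.
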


\begin{proof}Fix any $B$ in $\cal{B}(\r_E)$. If $0$ does not belong to $B$, then 
$$\{1_Af\in B\}=\{\omega\in A:f(\omega)\in B\}\in \cal{F}.$$
Otherwise,
$$\{1_Af\in B\}=(\Omega\backslash A) \cup\{\omega\in A:f(\omega)\in B\},$$
and it follows that $\{1_Af\in B\}$ belongs to $\cal{F}$ as sigma-algebras are closed under unions and intersections.
\end{proof}

\subsubsection*{Well-defined integrals and sums} For any unsigned measure $\rho$ on $(\Omega,\cal{F})$ and $\r_E$ valued function $f$ on $\Omega$, I say that the integral
$$\rho(f):=\int f(x)\rho(dx)$$
is \emph{well-defined} if
$$\rho(f\wedge 0)=\int (f(x)\wedge 0 )\rho(dx)>-\infty\quad\text{or}\quad\rho(f\vee 0)=\int (f(x)\vee 0 )\rho(dx)<\infty.$$
The terminology also applies to sums: setting $\rho$ to be the counting measure on a countable set $\s$, we have that
$$\sum_{\omega\in\Omega}f(\omega)$$
is well-defined if
$$\sum_{\omega\in\Omega}f(\omega)\wedge 0 >-\infty \quad\text{or}\quad \sum_{\omega\in\Omega}f(\omega)\vee 0<\infty.$$
\subsubsection*{Countable sets and vector notation}

\begin{itemize}[label={},leftmargin=*]
\item I always use the power set $\tws$ of a countable set $\s$ as a sigma-algebra on $\s$: whenever I say a measure or a distribution on $\s$, we really mean a measure or distribution on $(\s,\tws)$.

\item Given any measure $\rho$ on $\s$, I commit the slight abuse of notation of using $\rho$ to denote both the measure and its density with respect to the counting measure on $(\s,\tws)$. That is, I write $\rho(x):=\rho(\{x\})$ for each $x$ in $\cal{S}$, so that
$$\rho(A)=\sum_{x\in A}\rho(x)$$
for each subset $A$ of $\cal{S}$, and 
$$\rho(f)=\sum_{x\in\s}f(x)\rho(x),$$
for every $\r_E$-valued function $f$ on $\s$ such that the above is well-defined. 

\item I treat $\r_E$-valued functions $f$  on $\s$ as `column vectors'  $(f(x))_{x\in\s}$, 
and  measures $\rho$ on $\s$ as `row vectors', of (extended) real numbers indexed by the elements of $\s$. In particular, given a matrix $A:=(a(x,y))_{x,y\in\s}$ of (extended) real numbers indexed by $\s$, I use $Af$ to denote the $\r_E$-valued function defined by
$$Af(x):=\sum_{y\in\cal{S}}a(x,y)f(y)\quad\forall x\in\s,$$
and $\rho A$ to denote the measure defined by
$$\rho A(x):=\sum_{y\in\cal{S}}\rho(y)a(y,x)\quad\forall x\in\s,$$
assuming, of course, that the above sums are all well-defined.
\end{itemize}
\subsubsection*{Preliminaries}Throughout the text, I assume that the reader is familiar with the following:
\begin{itemize}[label={},leftmargin=*]
\item Sigma-algebras, generating sets, measurable functions, and the operations that preserve measurability (e.g., the composition of two measurable functions are measurable).
\item The Lebesgue integral, properties thereof (linearity, etc.), and the standard machine.
\item Fatou's lemma and the monotone, dominated, and bounded convergence theorems.
\item Product measures, Tonelli's theorem, and Fubini's theorem.
\item Probability triplets and the definition of random variables as measurable functions.
\item The modes of convergence of random variables (almost surely, in probability, etc.).
\item The definition of conditional expectation as a random variables and its properties (see below in particular).
\item The definition of a discrete-time martingale.
\end{itemize}
For those who are not, I recommend \citep{Williams1991,Tao2011}.
\subsubsection*{Properties of conditional expectation}I will make repeated use of the following:
\begin{theorem}\label{thrm:condexpprops}Suppose that $X$ is non-negative ($X(\omega)\geq0$ for all $\omega\in\Omega$) or integrable ($\Ebb{\mmag{X}}<\infty$) $\r_E$-valued random variable on some probability triplet $(\Omega,\cal{F},\Pb)$.
\begin{enumerate}[label=(\roman*),noitemsep]
\item If $X$ is $\cal{G}$-measurable, then $\Ebb{X|\cal{G}}=X$, a.s.
\item {(Linearity)} If $X_1$ and $X_2$ are integrable random variables and $a_1,a_2\in\r$, 
$$\Ebb{a_1X_1+a_2X_2|\cal{G}}=a_1\Ebb{X_1|\cal{G}}+a_2\Ebb{X_2|\cal{G}},\quad\text{a.s.}$$
If $X_1,X_2,\dots$ are non-negative random variables and $a_1,a_2,\dots$ non-negative constants, then 
$$\Ebb{\sum_{n=1}^\infty a_nX_n|\cal{G}}=\sum_{n=1}^\infty a_n\Ebb{X_n|\cal{G}},\quad\text{a.s.}$$
\item {(Positivity)} If $X$ is non-negative, then $\Ebb{X|\cal{G}}$ is non-negative, a.s.
\item {(Tower property)} If $\cal{H}$ is a sigma-algebra contained in $\cal{G}$, then
$$\Ebb{\Ebb{X|\cal{G}}|\cal{H}}=\Ebb{X|\cal{H}},\quad\text{a.s.}$$
\item {(Take out what is known)} Suppose that $Z$ is $\cal{G}$-measurable. If $Z$ is bounded, or if both $Z$ and $X$ are non-negative, then
$$\Ebb{ZX|\cal{G}}=Z\Ebb{X|\cal{G}},\quad\text{a.s.}$$
%
\end{enumerate}
\end{theorem}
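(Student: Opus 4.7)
The plan is to work directly from the defining property of conditional expectation: $Y = \Ebb{X|\cal{G}}$ is the a.s.~unique $\cal{G}$-measurable random variable satisfying $\Ebb{Y 1_A} = \Ebb{X 1_A}$ for every $A \in \cal{G}$ (with the usual caveat that this equation is required for the integrable case and its natural analogue via the standard machine handles the non-negative case). Each of (i)--(v) will be verified by checking that a proposed candidate is $\cal{G}$- (or $\cal{H}$-) measurable and has the same integral as $X$ against every indicator $1_A$ for $A$ in the appropriate sub-sigma-algebra, then invoking the almost-sure uniqueness clause.

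First I would dispatch (i) by observing that $X$ itself is the candidate: it is $\cal{G}$-measurable by hypothesis and trivially satisfies $\Ebb{X 1_A} = \Ebb{X 1_A}$ for every $A\in\cal{G}$. For the integrable half of (ii), the candidate $Y := a_1 \Ebb{X_1|\cal{G}} + a_2 \Ebb{X_2|\cal{G}}$ is $\cal{G}$-measurable as an $\r$-linear combination of $\cal{G}$-measurable integrable random variables, and linearity of the ordinary expectation immediately gives $\Ebb{Y 1_A} = \Ebb{(a_1 X_1 + a_2 X_2) 1_A}$ for every $A \in \cal{G}$. For (iii), suppose for contradiction that on the $\cal{G}$-measurable set $A := \{\Ebb{X|\cal{G}} < 0\}$ one had $\Pb(A) > 0$; then $\Ebb{\Ebb{X|\cal{G}} 1_A} < 0$, contradicting the defining identity since $\Ebb{X 1_A} \geq 0$ by non-negativity of $X$. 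For (iv), the candidate is $\Ebb{X|\cal{H}}$, which is $\cal{H}$-measurable by definition; for any $A\in\cal{H}\subseteq\cal{G}$, I would chain the defining identities $\Ebb{\Ebb{X|\cal{G}} 1_A} = \Ebb{X 1_A} = \Ebb{\Ebb{X|\cal{H}} 1_A}$, where the first uses $A \in \cal{G}$ and the second uses $A\in\cal{H}$.

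The substantive work lies in (v) and in the countable-sum half of (ii), both of which require a conditional monotone convergence theorem. For (v), I would run the standard machine on $Z$: first take $Z = 1_B$ with $B\in\cal{G}$, where the claim reduces to showing $1_B \Ebb{X|\cal{G}}$ is $\cal{G}$-measurable (clear) and satisfies $\Ebb{1_B \Ebb{X|\cal{G}} 1_A} = \Ebb{X 1_{A\cap B}}$ for $A\in\cal{G}$, which is exactly the defining identity applied to $A\cap B \in\cal{G}$. Extend by linearity (from the integrable half of (ii), already proved) to $\cal{G}$-measurable simple $Z$, then to bounded $\cal{G}$-measurable $Z$ by uniform approximation by simples (using dominated convergence on the unconditional side and positivity (iii) to control the conditional side). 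For non-negative $Z$ and $X$, approximate $Z$ from below by simple $Z_n \uparrow Z$; conditional MCT, which itself follows by taking the candidate $\sup_n \Ebb{Z_n X|\cal{G}}$ and verifying its defining identity via ordinary MCT applied to both sides of $\Ebb{\Ebb{Z_n X|\cal{G}} 1_A} = \Ebb{Z_n X 1_A}$, then gives $\Ebb{Z_n X|\cal{G}} \uparrow \Ebb{ZX|\cal{G}}$ a.s., while $Z_n \Ebb{X|\cal{G}} \uparrow Z \Ebb{X|\cal{G}}$ pointwise. The same conditional MCT closes out countable linearity in (ii) by applying it to the partial sums $\sum_{n=1}^N a_n X_n$.

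The main obstacle I anticipate is setting up conditional monotone convergence cleanly, since it is the engine behind both the countable-linearity clause of (ii) and the extension of (v) beyond bounded $Z$. Everything else is a routine measurability check plus an application of the defining integral identity to a well-chosen set $A$.
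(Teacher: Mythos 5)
Your proof is correct and follows the standard route — the characterising integral identity plus the standard machine and a conditional monotone convergence theorem — which is exactly the argument the paper defers to: its "proof" is a one-line citation to \citep[Section~9.7]{Williams1991} with the remark that those proofs carry over almost verbatim to the non-negative $\r_E$-valued case. You have in effect written out the content of that citation, including the one genuinely non-routine ingredient (conditional MCT) needed to pass from the integrable setting to countable linearity and to unbounded non-negative $Z$ in (v), so nothing is missing.
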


\begin{proof}The above properties are proven in \citep[Section~9.7]{Williams1991} for the $\r$-valued integrable case. However, the proofs given therein hold almost verbatim our slightly more general case.
\end{proof}


%

%

\newpage

\part{Theory}\label{part:theory}
\pagestyle{premain}

\begin{quotation}``We have not succeeded in answering all our problems. The answers we have found only serve to raise a whole set of new questions. In some ways we feel we are as confused as ever, but we believe we are confused on a higher level and about more important things.''\end{quotation}
\begin{quotation}
\emph{Posted outside the mathematics reading room in Troms\o ~University, according to Bernt \O ksendal \citep{Oksendal2003}.}\end{quotation}

\section*{Introduction: Markov processes and their long-term behaviour}\addcontentsline{toc}{section}{\protect\numberline{}Introduction: Markov processes and their long-term behaviours}
\sectionmark{\MakeUppercase{Introduction: Markov processes and their long-term behaviour}}

\label{sec:qualbeh}

This book revolves around  Markov processes: collections of random variables $(X_t)_{t\in\mathbb{T}}$ (referred to as \emph{processes}) that are indexed by an ordered set $\bb{T}$ of time points, take values in some \emph{state space}, and satisfy the Markov property. Informally, the Markov property says that we are equally well-equipped to predict what will happen in the future if we only know what is going on right now (that is, the current \emph{state} of the process) or if we know
what is going on right now \emph{and} what happened in the past. In our case, the indexing set will either be the natural numbers, in which case we describe the process as \emph{discrete-time}, or the non-negative real numbers, in which case we classify it as \emph{continuous-time}. If the state space is a countable set, we refer to the process as a \emph{chain} (beware, it is equally as common for authors to call a Markov process a `chain' if it is a discrete-time process regardless of whether of the state space is countable or not). A Markov process is said to be \emph{time-homogeneous} if the probability that it transitions from any given subset of the state space to any other over a given interval of time depends only on the interval's length and not on it's endpoints.  Using the words `a Markov process' when one means `a time-homogeneous Markov process' is nearly a tradition in the field as John F. C. Kingman amusingly pointed out in a lovely talk 
%
he gave   at Imperial College in 2013. We will adhere to this `tradition' (throughout the book, we only treat the time-homogeneous case).

One of the most well-understood and extensively studied aspects of Markov processes is their \emph{long-term} (or \emph{asymptotic} or \emph{long-run}) behaviour. Without getting into technicalities (we will have plenty of these later!), the possible long-term behaviours can be roughly grouped into four types (see also Fig.~\ref{fig:cartoon}):
\begin{enumerate}
\item The sample paths of the process diverge to infinity in a finite amount of time. Because discrete-time processes can only change state once per step and single step transitions of infinite size are usually not allowed, only continuous-time processes exhibit this behaviour. In this case, the process is said to be \emph{explosive} and the time at which this event occurs is called the \emph{explosion time}. 
\item The process diverges to infinity in an infinite amount of time. In this case, the process is said to be \emph{transient}. Processes that are not explosive nor transient are said to be \emph{recurrent}.
\item The paths do not tend to infinity and keep revisiting certain small subsets  of the state space (for euclidean-valued processes `small' typically means compact in the usual topology, while for chains it usually means finite). However, the excursions from these states get larger and larger over time and the frequency of the visits decays to zero. In this case, the mass of the distribution describing process's location spreads out over the entire state space and the probability that the process lies inside any given small set tends to zero as time progresses.  In this case, the process is said to be \emph{null recurrent}.
\item None of the above. In this case, for each path, there are small subsets of the state space for which the frequency of the path's visits does not degenerate to zero. Consequently, the  mass of the distribution of the process's location does not vanish over time and instead concentrates around these small subsets. The process is then said to be \emph{positive recurrent}.
\end{enumerate}

The above can be viewed as progressive levels of stability of Markov processes. In this book, we often focus on the fourth type of behaviour (which we refer to as \emph{stable}) since it is the one typically exhibited by Markovian models of real-life phenomena. Or at least, it is often hoped to be exhibited by these models! I find the argument `the physical phenomenon exhibits no such unstable behaviour, and thus this model of it cannot either' sometimes given to be flawed. Models are abstractions of real life and can be far from faithful representations of it. An unstable model of stable phenomena is not something impossible; it is merely an indication of poor modelling choices. Indeed, modelling is a difficult task and mistakes happen. Moreover, many fitting algorithms involve automated sweeps of parameter sets, and these algorithms can easily step through sets that lead to poor models.
For these reasons, I believe care must be taken to rule out unstable behaviours even in models of stable phenomena. This can be done in practice using the Foster-Lyapunov criteria discussed throughout the book.

\begin{figure}
	\begin{center}
	\includegraphics[origin=c,width=1\textwidth]{./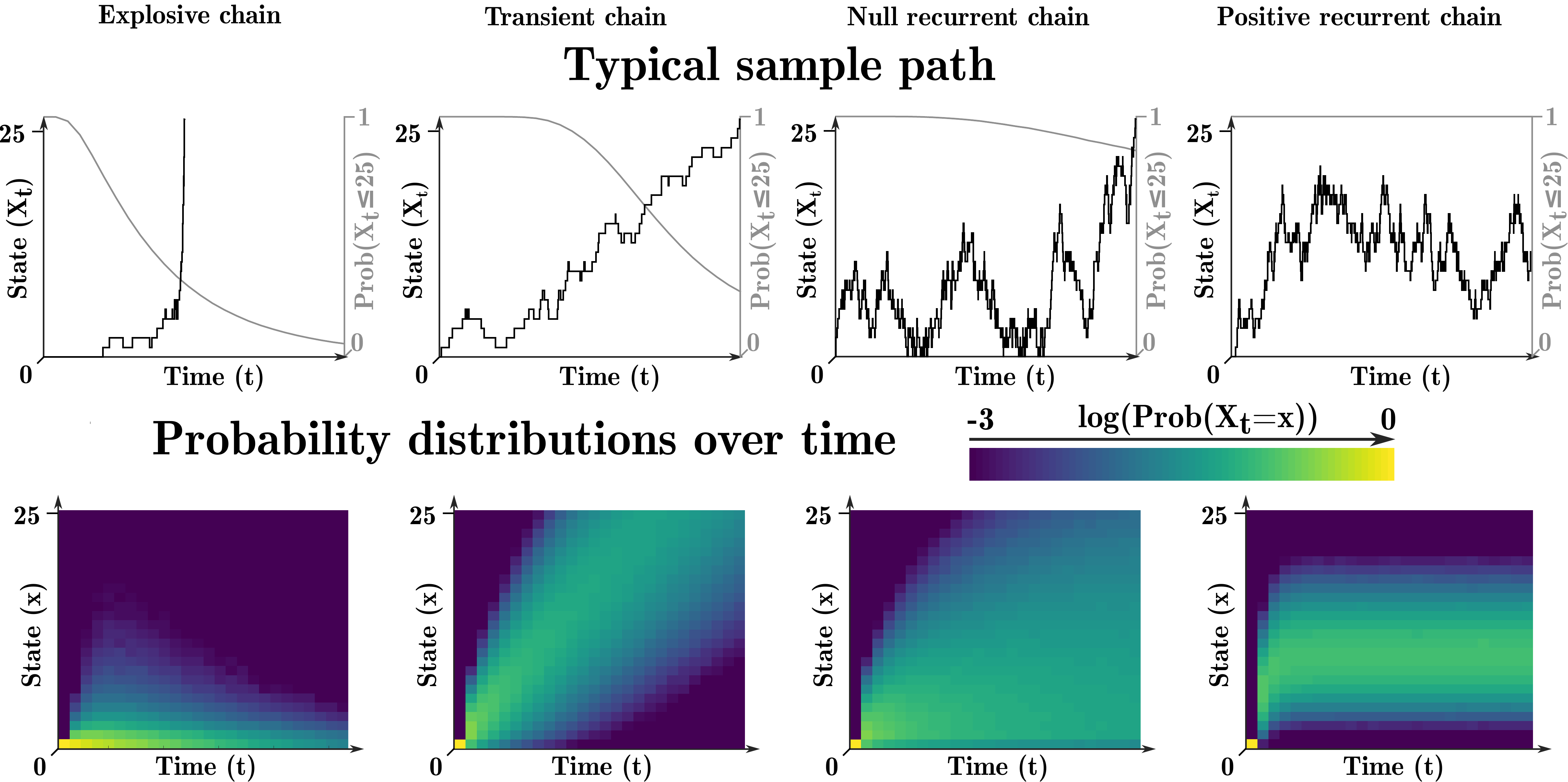}
	\vspace{-15pt}
	\end{center}
\caption[The long-term behaviours of continuous-time Markov chains.]{\textbf{The long-term behaviours of continuous-time Markov chains.} (Top, black) Typical sample paths $t\mapsto X_t(\omega)$ of an explosive chain, a transient chain, a null recurrent chain, and a positive recurrent chain. For all four chains, the state space is $\n$ but the plot only displays states $0$--$25$. (Top, shaded) The probability $Prob(X_t\leq 25)$ of the chains being within the plotted states at time $t$. (Bottom) Corresponding probability distributions (in log-scale) as a function of time; chains started in state $0$.

In the explosive case, the chain lingers awhile in certain states ($\sim0$--$5$). However, once it ventures out of these states, its path shoots off towards infinity. Thus, the probability mass quickly escapes towards infinity with any mass remaining in the plotted region being concentrated in states $0$--$5$. In the transient case, the paths steadily march towards infinity. Correspondingly, the probability mass moves away from the lower states and heads towards infinity. In the null recurrent case, the paths keep returning certain states (e.g.\ $0$), however these visits become ever rarer as time progresses. Consequently, the probability mass does not distinctly move away from these states but rather spreads out over an ever growing region in such a way that the probability of being  in any given state tends to zero.  In the positive recurrent case, the chain keeps visiting certain states (e.g.\ $10$) and the frequency of these visits does not decay to zero. After a bit of time, the probability distribution stabilises around a \emph{stationary distribution}.}\label{fig:cartoon}
\end{figure}

If the process is positive recurrent, then it has at least one \emph{stationary} (or \emph{invariant} or \emph{steady-state}) distribution. That is, a probability distribution $\pi$ on the state space such that sampling the process's starting location from $\pi$ ensures that the process is stationary (i.e.\ that its finite-dimensional distributions are unchanged by shifts in time). 

In the stable case, stationary distributions play an important role for all other starting distributions too. Various ergodic theorems, or generalised laws of large numbers, tell us that the \emph{time} (or \emph{empirical}) averages of the process converge to averages with respect to stationary distributions. In particular, in the continuous-time case, we have that for each sample path $t\mapsto X_t(\omega)$, there exists a stationary distribution $\pi$ such that the fraction of time the path spends in any given region $A$ of the state space converges to $\pi$'s mass in $A$:
\begin{equation}\label{eq:ergconvgen}\lim_{T\to\infty}\frac{1}{T}\int_0^T1_A(X_t(\omega))dt=\pi(A).\end{equation}
%
If $\bb{T}=\n$, then replace $\int_0^T1_A(X_t(\omega))dt$ with $\sum_{t=0}^{T-1}1_A(X_t(\omega))$. The stationary distribution in question generally depends on the sample path, but not on $A$. Furthermore, if the process is \emph{aperiodic}\index{aperiodic}\footnote{Note that continuous-time Markov chains are always aperiodic, and hence why there often is no mention of periodicity in texts discussing them, see Section \ref{sec:entct} for details.} (meaning that the distribution of the chain's location does not oscillate in time), then the distribution of the chain's location at time $t$ converges to some stationary distribution as $t$ approaches infinity. Which stationary distribution in particular depends on the starting distribution. If there is only one stationary distribution, then the probability that the chain lies in any particular region of the state space converges to the total fraction of time that any given path spends in the region.
%
%
That is, after enough time has passed
\begin{equation}\label{eq:time-space}
\text{time averages }\approx\text{ space averages}.
\end{equation}
For this reason, aperiodic positive recurrent Markov processes with a unique stationary distribution are said to be \emph{ergodic}.


The stationary distributions are intimately tied to the \emph{closed irreducible sets} contained in the state space. These are sets that the process, once inside, cannot escape from (hence, `closed') and that do not contain any proper subsets also fulfilling this property (hence `irreducible', these sets cannot be `reduced' further). All states not contained within a closed irreducible set are transient in the sense that the process will eventually leave each such state and never return. This implies that the paths  either diverge to infinity or get absorbed into one of the closed irreducible sets. If the process is positive recurrent, then the former is not possible and each sample path $t\mapsto X_t(\omega)$ will eventually enter a closed irreducible set $\cal{I}$. Because the path proceeds to spend the remainder of time inside $\cal{I}$, it must be the case that the stationary distribution $\pi$ featuring in~\eqref{eq:ergconvgen} has support contained in $\cal{I}$. The irreducibility of these sets further implies that there is no more than one such distribution and that it has support on the entire set. It is known as the \emph{ergodic distribution} associated with the closed irreducible set.


As mentioned before, if the process is positive recurrent and aperiodic, then the distribution of its location converges to a stationary distribution. This limiting distribution is just the weighted combination of the ergodic distributions where the weight given to each one of these distributions is the  probability that the process ever enters the irreducible set associated with that ergodic distribution. Consequently, if the starting distribution is confined to a closed irreducible set, we recover \eqref{eq:time-space}, hence justifying the name `ergodic distribution'.

Even though we have so far discussed Markov processes in general, this book is not intended to be a survey of general Markov process theory. For this, there is an abundance of far better resources:  \citep{Ethier1986,Kallenberg2001,Meyn1992,Meyn1993a,Meyn1993b,Rogers2000a,Rogers2000b} are all great. Instead, the main aim of the following Sections~\ref{sec:dtdef}--\ref{sec:expcri} is to formalise the above discussion for the cases of discrete-time  and continuous-time chains. By focusing on countable state spaces, we avoid a host of technical questions that arise with uncountable ones and often prove to only be a distraction from important ideas guiding the theory.  
In fact, virtually every aspect of the behaviour of Markov processes is also encountered in the study of chains. For this reason, possessing a mastery of theory pertinent to the chains proves to be a great boon even when dealing with more complicated cases (I am a firm believer of  that paradigm pervasive throughout science and mathematics that goes `a thorough understanding of simple situations is vital to tackle more complicated ones').

\begin{quotation}``Seulement, alors qu'autrefois il suffisait de deux heures d'expos\'e pour traiter l'int\'egrale d'It\^o, et qu'ensuite les belles applications commen\c caient, il faut \`a pr\'esent un cours de six mois sur les d\'efinitions. Que peut on y faire? Les math\'ematiques et les math\'ematiciens ont pris cette tournure. Il est temps de commencer.''\end{quotation}
\begin{quotation}
\emph{Paul-Andr\'e Meyer \citep{Meyer1976} on the dilemma of modern math education.}\end{quotation} 

\newpage
\thispagestyle{premain}
\section*{Discrete-time chains I: the basics}\addcontentsline{toc}{section}{\protect\numberline{}Discrete-time chains I: the basics}
\sectionmark{\MakeUppercase{Discrete-time chains I: the basics}}

The least technically involved type of Markov processes are discrete-time Markov processes on countable state spaces, or \emph{discrete-time Markov chains}, or \emph{discrete-time chains} for short. Our motivation behind their study is threefold:
\begin{enumerate}
\item Discrete-time chains are of interest in their own right and serve as models for a host 
of real life phenomena.
\item Many proofs in the continuous-time case require discrete-time chains.
\item By avoiding the (mostly technical) complications arising from an uncountable state space or time axis, discrete-time chains provide us with the ideal probabilistic playground to $(a)$ develop our understanding of the Markov property and its consequences;  $(b)$ acquire an intuition for the veracity of statements regarding more general Markov processes; and $(c)$ get exposure to the types of proofs and arguments pervasive in Markov chain theory. Moreover, as we will see 
when studying the long-term behaviour of continuous-time chains (Sections~\ref{sec:entct}--\ref{sec:expcri}), many proofs in the continuous-time case are analogous to their discrete-time counterparts (to the point of being a verbatim copy except for  some symbol and reference-call changes).
\end{enumerate} 
The aim of this chapter is to introduce discrete-time chains, their associated statistics (time-varying law, path law, stopping distributions, occupation measures, etc.), and the bread-and-butter tools (Chapman-Kolmogorov equation, martingale property, stopping times, Dynkin's formula, strong Markov property, etc.)  that form the foundation upon which the rest of discrete-time chain theory rests. 

The treatment here is geared towards those interested in proving results on Markov chains instead of those just interested in using them to study particular models of interest and, for this reason, is a bit technical at points. In particular, the measure-theoretic machinery required to introduce, and to comfortably
 use, the strong Markov property in its full splendour is heavier than that needed throughout the rest of the chapter. 
Past a certain point, working on Markov chain theory without the strong Markov property is like trying to wade your way through neck deep water instead of learning how to swim: sometimes do-able but needlessly tiring and frustrating. 
My advice is to bite the bullet and work your way through the details, even if only once in your life.

\subsubsection*{Overview of the chapter} We begin by defining the chains through an algorithm used to simulate them in practice (Section~\ref{sec:dtdef}). Next, we formalise the idea of the chain's history-to-date with the filtration it generates
and prove our first version of the Markov property stating that, conditioned on the present, the chain's past and future are independent (Section~\ref{sec:dtmarkov}). We then pause 
for a moment to introduce a very simple, but very instructive, example (Section~\ref{sec:gamblers}). We resume our treatment of the theory in Section~\ref{sec:dtlawsintro} where we introduce  the time-varying law (describing the chain's location as a function of time) and use the Markov property to derive a recursion expressing the time-varying law in terms of the \emph{one-step matrix} (describing the chain's next position conditioned on its present position) and the \emph{initial distribution} (describing the chain's starting location). 

In Section~\ref{sec:pathlaw}, we take a very important step: we stop viewing the chain as a sequence of random variables each taking values in the state space and instead regard it as a single random variable taking values in the space of possible sequences of states (the \emph{path space}). We then take a moment to give an equivalent description of the chains in terms of martingales (Section~\ref{sec:martin}) which, as we will see in Section~\ref{sec:dynkindt}, proves key when studying the statistics of the chain at random points in time (e.g.\ the first time that the chain enters some set of interest)  instead of deterministic one (e.g.\ time-step number $10$). Capitalising on our new path space and martingale perspectives, we show in Section~\ref{sec:otherdef} that we are not doing anything wrong by using the particular definition of the chain given in Section~\ref{sec:dtdef} instead other definitions found elsewhere: all these definitions are equivalent in the sense that they lead to the same distribution, or \emph{path law}, on the path space.


Next up are stopping times (Section~\ref{sec:stop}), which are the points in time that events of a certain type occur. In particular, events whose occurrence we can deduce the instant they happen by continuously monitoring the chain  (e.g.\ the first time the chain visits a given set, but not the last time it does). These events are non-predictive: their occurrence does not tell us anything about the future of the chain  that cannot be gleaned from the chain's position at the moment of occurrence. For this reason, stopping times combine seamlessly with the Markov property and we are able to generalise many of the results featuring deterministic times so that they also apply to stopping times. In particular,  we prove  Dynkin's formula (Section~\ref{sec:dynkindt}), a generalisation of the recurrence satisfied by the time-varying (Section~\ref{sec:dtlawsintro}), and the strong Markov property (Section~\ref{sec:dtstrmarkov}), a generalisation of the Markov property (Section~\ref{sec:dtmarkov}). Lastly, given any stopping time $\varsigma$, we introduce (Section~\ref{sec:morehitdt}) its \emph{stopping distribution} (describing the joint statistics of $\varsigma$ and of the chain's location at $\varsigma$) and its \emph{occupation measure} (describing the states visited by the chain prior to $\varsigma$ and the times of visit) and we characterise (Section~\ref{sec:exit}) them in terms of sets linear equations for the special, but important, case of an \emph{exit time} (i.e.\ the point in time when the chain first ventures outside of a given set of interest).

\subsection{The chain's definition}\label{sec:dtdef}
\pagestyle{main}
A discrete-time Markov chain (or \emph{discrete-time chain} for short)\index{discrete-time (Markov) chain} is a sequence $X=(X_n)_{n\in\n}$ of random variables that take values in a countable set $\s$ (the \emph{state space}\index{state space})\glsadd{s} and satisfy the Markov property. Out of convenience, we will assume throughout the book that the chain has been constructed in a particular way (in Section~\ref{sec:otherdef}, we will show that we lose nothing by picking this particular construction). The aim of this section is to describe the construction.

\subsubsection*{One-step matrices}For the definition, we need a \emph{one-step matrix} $P=(p(x,y))_{x,y\in\s}$\glsadd{P}\index{one-step matrix} describing where the chain will go next given its current location. In particular, $p(x,y)$ denotes the probability that the chain next visits $y$ if it currently lies at $x$. For this reason, the one-step matrix must satisfy:
\begin{equation}\label{eq:1step}p(x,y)\geq0,\quad\forall x,y\in\cal{S},\qquad \sum_{y\in\cal{S}}p(x,y)=1,\quad\forall x\in\cal{S}.\end{equation}
\subsubsection*{The technical set-up}Next, we require a measurable space\glsadd{of} $(\Omega,\cal{F})$, a collection $(\Pb_x)_{x\in\s}$\glsadd{Px} of probability measures on $(\Omega,\cal{F})$, an $\cal{F}/\tws$-measurable function $X_0$ from $\Omega$ to $\s$, and a sequence of $\cal{F}/\cal{B}((0,1))$-measurable functions\glsadd{Un} $U_1,U_2,\dots$ from $\Omega$ to $(0,1)$ such that, under $\Pb_x$,
\begin{enumerate}
\item $\{X_0=x\}$ occurs almost surely:  $\Pbx{\{X_0=x\}}=1$;
\item for each $n>0$, the random variable $U_n$ is uniformly distributed on $(0,1)$: $\Pbx{\{U_n\leq u\}}=u$, for all $u$ in $(0,1)$;
\item and the random variables $X_0,U_1,U_2,\dots$ are independent:
$$\Pbx{\{X_0=y,U_1\leq u_1,\dots,U_n\leq u_n\}}=\Pbx{\{X_0=y\}}\Pbx{\{U_1\leq u_1\}}\dots\Pbx{\{U_n\leq u_n\}}$$
for all $y$ in $\s$, $u_1,\dots,u_n$ in $(0,1)$, and $n>0$;
\end{enumerate}
for each $x$ in $\s$. For those of you inclined to such things, there's a detailed construction of this space, the measures, and the random variables at the end of the section.

Below, we will set the chain's starting location to be $X_0$. For this reason, $\Pb_x$ will describe the statistics of the chain were it to start from $x$. To instead describe its statistics were its starting location be sampled from a given probability distribution $\gamma=(\gamma(x))_{x\in\s}$\glsadd{gamma} on $\s$ (the \emph{initial distribution}), we define the probability measure $\Pb_\gamma$\glsadd{Pl} on $(\Omega,\cal{F})$ via
\begin{equation}\label{eq:pld}\Pb_\gamma(A):=\sum_{x\in\cal{S}}\gamma(x)\Pb_x(A)\quad\forall A\in\cal{F}.\end{equation}
Under this measure, the starting location of the chain has law $\gamma$:
$$\Pbl{\{X_0=y\}}=\sum_{x\in\cal{S}}\gamma(x)\Pbx{\{X_0=y\}}=\sum_{x\in\cal{S}}\gamma(x)1_y(x)=\gamma(y)\quad\forall y\in\s.$$
However, the random variables $U_1,U_2,\dots$   are still uniformly distributed:
$$\Pbl{\{U_n\leq u\}}=\sum_{x\in\cal{S}}\gamma(x)\Pbx{\{U_n\leq u\}}=u\left(\sum_{x\in\cal{S}}\gamma(x)\right)=u\quad\forall u\in(0,1),\enskip n>0;$$
and $X_0,U_1,U_2,\dots$ are still independent:
\begin{align*}
\Pbl{\{X_0=y,U_1\leq u_1,\dots,U_n\leq u_n\}}&=\sum_{x\in\cal{S}}\gamma(x)\Pbx{\{X_0=y,U_1\leq u_1,\dots,U_n\leq u_n\}}&\\
&=\sum_{x\in\cal{S}}\gamma(x)\Pbx{\{X_0=y\}}\Pbx{\{U_1\leq u_1\}}\dots\Pbx{\{U_n\leq u_n\}}\\
&=\left(\sum_{x\in\cal{S}}\gamma(x)\Pbx{\{X_0=y\}}\right)u_1\dots u_n\\
&=\Pbl{\{X_0=y\}}\Pbl{\{U_1\leq u_1\}}\dots\Pbl{\{U_n\leq u_n\}},
\end{align*}
for all $y$ in $\s$, $u_1,\dots,u_n$ in $(0,1)$, and $n>0$. Throughout the book, we use $\Eb_x$\glsadd{Ex} (resp. $\Eb_\gamma$\glsadd{El}) to denote expectation with respect to $\Pb_x$ (resp. $\Pb_\gamma$).

\subsubsection*{An algorithmic definition}
Armed with the one-step matrix and the random variables $X_0,U_1,U_2,\dots$, we define  our chain $X=(X_n)_{n\in\n}$ \glsadd{Xn}recursively by running Algorithm~\ref{dtmcalg} below. It goes as follows:  sample  a state $x$ from the initial distribution $\gamma$ and start the chain at $x$ (i.e., set $X_0:=x$). Next, sample $y$ from the probability distribution $p(x,\cdot)$ in \eqref{eq:1step} and update the chain's state to $y$. Repeat these steps starting from $y$ instead of $x$. All random variables sampled must be independent of each other. 

\begin{algorithm}[h]
\begin{algorithmic}[1]
\FOR{$n=1,2,\dots$}
\STATE{sample $U_{n}\sim\operatorname{uni}_{(0,1)}$ independently of $(X_0,U_1,\dots,U_{n-1})$}
\STATE{$i:=0$}
\WHILE{$U_n> \sum_{j=0}^ip(X_{n-1},x_j)$}
\STATE{$i:=i+1$}
\ENDWHILE
\STATE{$X_n:=x_i$}
\ENDFOR
 \end{algorithmic}
 \caption{An algorithm to construct discrete-time chains on $\s=\{x_0,x_1,\dots\}$}\label{dtmcalg}
\end{algorithm}

\subsubsection*{The underlying space}

In the above, we glossed over the details of the space $(\Omega,\cal{F})$, the measures $(\Pb_x)_{x\in\s}$, and the random variables $X_0,U_1,U_2,\dots$ We take a moment here to argue their existence. To do so, we require the following.
\begin{theorem}\label{seqindp}For each natural number $n$, let $\rho_n$ be a probability measure on a measurable space $(\Omega_n,\cal{G}_n)$. Define the sequence space
$$\Omega=\prod_{n=0}^\infty\Omega_n,$$
the coordinate functions 
$$W_n:\Omega\to\Omega_n\qquad W_n(\omega):=\omega_n\qquad\forall \omega\in\Omega,\enskip n\geq0,$$
and let $\cal{F}$ be the sigma-algebra on $\Omega$ generated by $\prod_{n=0}^\infty \cal{G}_n$. There exists a unique probability measure $\Pb$ on $(\Omega,\cal{F})$ such that 
$$\Pbb{\left(\prod_{n=0}^k A_n\right)\times\left(\prod_{n=k+1}^\infty \Omega_n\right)}=\prod_{n=0}^k\rho_n(A_n),$$
for all $k\geq0$ and $A_0$ in $\cal{G}_0$, $\dots$,  $A_k$ in $\cal{G}_k$. In particular, under $\Pb$, $W_0,W_1,\dots$ is a sequence of independent random variables such that $W_n$ has law $\rho_n$  for each $n\geq0$. 
\end{theorem}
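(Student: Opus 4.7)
My plan is to construct $\Pb$ via the Carath\'eodory extension theorem applied to the natural premeasure on the algebra of finite-dimensional cylinder sets. Let $\cal{A}$ be the algebra of subsets of $\Omega$ of the form $C=B\times\prod_{n>k}\Omega_n$ with $k\in\n$ and $B\in\bigotimes_{n=0}^k\cal{G}_n$ (equivalently, finite disjoint unions of ``rectangular'' cylinders of the type appearing in the theorem's statement). On $\cal{A}$ define
\begin{equation*}
\mu(C):=\rho_{0:k}(B),\qquad \text{where } \rho_{0:k}:=\rho_0\otimes\cdots\otimes\rho_k,
\end{equation*}
the finite product measure (which exists by the standard product-measure construction for finitely many factors). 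First I would check that this definition is unambiguous: if the same cylinder is represented by $B'\in\bigotimes_{n=0}^{k'}\cal{G}_n$ for some $k'>k$, then $B'=B\times\prod_{n=k+1}^{k'}\Omega_n$ and $\rho_{0:k'}(B')=\rho_{0:k}(B)$ by Tonelli, since each $\rho_n$ is a probability measure. Finite additivity on $\cal{A}$ is then routine by picking a common $k$ for disjoint cylinders.

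The heart of the argument is countable additivity of $\mu$ on $\cal{A}$, which (since $\mu$ is finite) I would phrase as: if $C_1\supseteq C_2\supseteq\cdots$ are in $\cal{A}$ with $\mu(C_m)\geq\varepsilon>0$ for all $m$, then $\bigcap_m C_m\neq\emptyset$. Write $C_m=B_m\times\prod_{n>k_m}\Omega_n$ with $k_m$ non-decreasing. Iteratively apply Tonelli to peel off coordinates: the function
\begin{equation*}
g_m^{(0)}(\omega_0):=\int\cdots\int 1_{B_m}(\omega_0,\dots,\omega_{k_m})\,d\rho_1(\omega_1)\cdots d\rho_{k_m}(\omega_{k_m})
\end{equation*}
is $\cal{G}_0$-measurable with $\int g_m^{(0)}d\rho_0=\mu(C_m)\geq\varepsilon$. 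Since $C_m\downarrow$, $g_m^{(0)}\downarrow g^{(0)}$ and by dominated convergence $\int g^{(0)}d\rho_0\geq\varepsilon$, so there exists $\omega_0^\star$ with $g^{(0)}(\omega_0^\star)\geq\varepsilon$ (in particular, $g_m^{(0)}(\omega_0^\star)\geq\varepsilon$ for every $m$). Repeat the argument on the sections $C_m(\omega_0^\star,\cdot)$ to produce $\omega_1^\star$ with the analogous lower bound, and so on. The resulting sequence $\omega^\star=(\omega_n^\star)_{n\in\n}$ satisfies $\omega^\star\in C_m$ for all $m$, giving the required nonemptiness. Carath\'eodory then extends $\mu$ to a measure $\Pb$ on $\sigma(\cal{A})=\cal{F}$, and uniqueness follows from the $\pi$-$\lambda$ theorem since $\cal{A}$ is a $\pi$-system generating $\cal{F}$ and $\Pb$ is a probability measure.

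Finally, the coordinate random variables $W_n$ are $\cal{F}/\cal{G}_n$-measurable by construction of the product $\sigma$-algebra, and the defining identity on rectangles gives
\begin{equation*}
\Pb(W_0\in A_0,\dots,W_k\in A_k)=\prod_{n=0}^k\rho_n(A_n),
\end{equation*}
which both identifies the law of each $W_n$ as $\rho_n$ and establishes independence of any finite sub-collection, hence of the whole sequence.

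The main obstacle is the countable-additivity step: without the diagonal construction above (which crucially uses that each $\rho_n$ is a \emph{probability} measure, so that Tonelli applies and the slicing functions are uniformly bounded) one cannot conclude that $\mu(C_m)\to 0$ for decreasing cylinders, and the extension would fail. The rest is bookkeeping.
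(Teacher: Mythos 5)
Your proof is correct and rests on the same foundation as the paper's: the paper's entire proof is a citation of Carath\'eodory's extension theorem via \citep[Appendix~A9]{Williams1991}, with a remark that Williams's $\r^{\n}$-specific argument ``holds identically'' for general $(\Omega_n,\cal{G}_n)$. You have actually carried out that argument in full, and your iterated-sections/monotone-limit proof of countable additivity on the cylinder algebra (using only that each $\rho_n$ is a probability measure, so the slicing functions are $[0,1]$-valued and Tonelli plus dominated convergence apply) is precisely the topology-free version that vindicates the paper's claim that no additional structure on the $\Omega_n$ is needed.
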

\begin{proof} This a consequence of Carath\'eodory's extension theorem, see \citep[Appendix~A9]{Williams1991}. The proof given therein is phrased for the case that $\Omega_n:=\r$ and $\cal{F}_n:=\cal{B}(\r)$ for all $n\geq0$, but it holds identically in our slightly more general setting.
\end{proof}

To construct the random variables in Algorithm \ref{dtmcalg}, let $(\Omega,\cal{F})$ be as in the theorem's premise after defining
$$\Omega_0:=\s,\quad \cal{G}_0:=2^\s,\quad\Omega_n:=(0,1),\quad \cal{G}_n:=\cal{B}((0,1)),\quad\forall n>0,$$
and set $X_0$ to be $W_0$ and $U_n$ to be $W_n$ for each $n>0$. Given any state $x$, set $\rho_0$ to be the point mass  $1_x$ at $x$ and $\rho_n$ to be the uniform measure on $(0,1)$ for each $n>0$. The theorem shows that a measure $\Pb_x$ on $(\Omega,\cal{F})$  satisfying the desired properties (1--3 listed after \eqref{eq:1step}) exists.

\subsubsection*{A word of warning} None of the probability spaces in $\{(\Omega,\cal{F},\Pb_x)\}_{x\in\s}$ will be complete. To see why, pick any (possible non-measurable) subset $A$ of $(0,1)$. If $(\Omega,\cal{F},\Pb_x)$ were to be complete for a given $x$, then, for any other $y\neq x$, the set $B:=\{y\}\times A\times \prod_{n=2}^\infty(0,1)$ would belong to $\cal{F}$ because $B$ is a subset of $\{y\}\times \prod_{n=1}^\infty(0,1)$ and the latter has $\Pb_x$ measure of zero:
$$\Pbx{\{y\}\times \prod_{n=1}^\infty(0,1)}=1_x(y)\prod_{n=1}^\infty \operatorname{uni}_{(0,1)}((0,1))=1_x(y)=0.$$
Define a real-valued function $\rho$ on $2^{(0,1)}$ via
$$\rho(A):=\Pby{\{y\}\times A\times \prod_{n=2}^\infty(0,1)},\qquad \forall A\subseteq (0,1)$$
It is straightforward to check that $\rho$ is a probability measure on $((0,1),2^{(0,1)})$ and that it coincides with the uniform distribution on $\cal{B}((0,1))$. However, this is impossible as it would contradict the existence of non-Lebesgue-measurable sets, e.g., see \citep{Tao2011}. 

The moral here is that, if we wish to work with collections of probability measures (one per possible starting state of the chain) on a \emph{single} measurable space, then we must resign ourselves to working with incomplete measures. In particular, this means that we must take some care when applying bounded and dominated convergence to sequences of random variables that only converge  $\Pb_\gamma$-almost surely (instead of pointwise). The way to mitigate this issue is to note that, for any $\cal{F}/\cal{B}(\r)$-measurable real-valued functions $V_1,V_2,\dots$  on a (possibly incomplete) space $(\Omega,\cal{F},\Pb)$, the set
\begin{equation}\label{eq:convmeas}A:=\{\omega\in\Omega:(V_n(\omega))_{n\in\zp}\text{ converges}\}=\bigcap_{k=1}^\infty\bigcup_{l=1}^\infty\bigcap_{n=1}^l\bigcap_{m=1}^l\left\{\omega\in\Omega:\mmag{V_n(\omega)-V_m(\omega)}\leq \frac{1}{k}\right\}\end{equation}
is measurable ($A\in\cal{F}$). Thus, even though the almost sure limit of $(V_n)_{n\in\zp}$ may not be measurable (or even defined everywhere), the sequence $(1_AV_n)_{n\in\zp}$ does converge everywhere and its limit is measurable (the pointwise limit of measurable functions is measurable). 
We then apply bounded or dominated convergence to $(1_AV_n)_{n\in\zp}$ instead of $(V_n)_{n\in\zp}$ and exploit that $\Pbl{A}=1$ to compute the expectation of the limit:
$$\lim_{n\to\infty}\Ebl{V_n}=\lim_{n\to\infty}\Ebl{1_AV_n}=\Ebl{\lim_{n\to\infty}1_AV_n},$$
under the usual integrability conditions for dominated convergence. In fact, many authors tacitly write `$\lim_{n\to\infty} V_n$' when they really mean `$\lim_{n\to\infty}1_AV_n$'.

\subsection{The Markov property}\label{sec:dtmarkov}
%
The Markov property states that conditioned on the present, the chain's past and future are independent. The `present' is described by the position $X_n$ of the chain $X$ at the current time-step $n$. For the time being, we limit the chain's `future' to be its position $X_{n+1}$ at the next time step $n+1$ (we will overcome this restriction in Section~\ref{sec:dtstrmarkov}). To formalise the notion of its `past', we employ the filtration generated by the chain.
\subsubsection*{The filtration generated by the chain}Throughout the book, we use $(\cal{F}_n)_{n\in\n}$ to denote the \emph{filtration}\index{filtration generated by the chain} generated by $X$\glsadd{filtrationdt}:

\begin{definition}[Filtration generated by the chain]\label{def:filt} The filtration  $(\cal{F}_n)_{n\in\n}$ generated by $X$ is the increasing sequence of sigma-algebras defined by
$$\cal{F}_n:=\text{ the sigma-algebra generated by }(X_0,X_1,\dots,X_n)\quad\forall n\geq0,$$
meaning that $\cal{F}_n$ is the smallest sigma-algebra such that $X_0,\dots,X_n$ are $\cal{F}_n$-measurable random variables. 
\end{definition}

An event $A$ belongs to $\cal{F}_n$ if and only if observing the chain up until time $n$ is sufficient to deduce whether $A$ has occurred. Formally, this is the case if and only if we are able to express the event's indicator function in terms of $X_0,X_1,\dots,X_n$:
$$1_A(\omega)=g(X_0(\omega),X_1(\omega),\dots,X_n(\omega))\quad\forall \omega\in\Omega,$$
for some $g:\s^n\to\{0,1\}$. Similarly, a random variable $Y$ is $\cal{F}_n$-measurable if and only if it can be expressed in terms of $X_0,\dots,X_n$ (i.e.\ if we can compute $Y(\omega)$ from $X_0(\omega),\dots,X_n(\omega)$, for each $\omega$ in $\Omega$). For these reasons, $\cal{F}_n$ represents all information that can be gleaned from observing the chain up until time $n$ and the filtration is viewed as the history of the chain.

\subsubsection*{The Markov property}The process $X$ defined via Algorithm \ref{dtmcalg} is referred to as a Markov chain because it satisfies the Markov property:\index{Markov property}
\begin{theorem}[The Markov property]\label{markovd} For any initial distribution $\gamma$,
\begin{equation}\label{eq:markovd0}\Pbl{\left.\{X_{n+1}=y\}\right|\cal{F}_n}=\Pbl{\left.\{X_{n+1}=y\}\right|X_n}=p(X_n,y)\quad\forall y\in\s,\enskip n\geq0,\enskip  \Pb_\gamma\text{-a.s.},\end{equation}
where $(\cal{F}_n)_{n\in\n}$ denotes the filtration generated by the chain (Definition~\ref{def:filt}).
\end{theorem}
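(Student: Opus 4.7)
The plan is to exploit the recursive structure of Algorithm \ref{dtmcalg}, which writes $X_{n+1}$ as a deterministic function $F(X_n,U_{n+1})$. Explicitly, if $\s=\{x_0,x_1,\ldots\}$, define $F\colon\s\times(0,1)\to\s$ by $F(x,u):=x_i$ where $i$ is the smallest index with $u\leq \sum_{j=0}^i p(x,x_j)$. Two features of $F$ drive everything: $F$ is deterministic (it involves no randomness beyond its two arguments), and for each fixed $x,y\in\s$ the set $F(x,\cdot)^{-1}(y)$ is a subinterval of $(0,1)$ of length $p(x,y)$, so $\operatorname{uni}_{(0,1)}(F(x,\cdot)^{-1}(y))=p(x,y)$.

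First I would show by induction that $X_n$ is $\sigma(X_0,U_1,\ldots,U_n)/2^\s$-measurable, which gives $\cal{F}_n\subseteq \sigma(X_0,U_1,\ldots,U_n)$. Combined with the joint independence of $X_0,U_1,U_2,\ldots$ under $\Pb_\gamma$ established in Section~\ref{sec:dtdef}, this yields the key fact that $U_{n+1}$ is independent of $\cal{F}_n$ under $\Pb_\gamma$.

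Next I would verify the outer equality. The right-hand side $p(X_n,y)$ is $\cal{F}_n$-measurable since $X_n$ is, so by the definition of conditional expectation it suffices to check that $\Ebl{1_A 1_{\{X_{n+1}=y\}}}=\Ebl{1_A\, p(X_n,y)}$ for every $A\in\cal{F}_n$. Decomposing by the value of $X_n$ and using independence of $U_{n+1}$ from $\cal{F}_n$,
\begin{align*}
\Ebl{1_A 1_{\{X_{n+1}=y\}}}
&=\sum_{x\in\s}\Ebl{1_{A\cap\{X_n=x\}}\, 1_{\{U_{n+1}\in F(x,\cdot)^{-1}(y)\}}}\\
&=\sum_{x\in\s}\Pbl{A\cap\{X_n=x\}}\,p(x,y)
=\Ebl{1_A\, p(X_n,y)},
\end{align*}
where interchanging the sum and expectation is justified by Theorem~\ref{thrm:condexpprops}(ii) (linearity for non-negative terms). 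The middle equality then follows from the tower property and ``take out what is known'' (Theorem~\ref{thrm:condexpprops}(iv)--(v)): since $\sigma(X_n)\subseteq\cal{F}_n$ and $p(X_n,y)$ is $\sigma(X_n)$-measurable,
$$\Pbl{\{X_{n+1}=y\}\mid X_n}=\Ebl{\Pbl{\{X_{n+1}=y\}\mid\cal{F}_n}\mid X_n}=\Ebl{p(X_n,y)\mid X_n}=p(X_n,y).$$

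The main obstacle I anticipate is the bookkeeping around the independence of $U_{n+1}$ from $\cal{F}_n$. The ingredients are standard (a measurability-by-induction argument plus a grouping lemma for independent families of sigma-algebras), but they must be assembled carefully given the incompleteness of the measure noted in Section~\ref{sec:dtdef}. Everything downstream, including the conditional-probability identity, is then a clean computation once the independence is in hand.
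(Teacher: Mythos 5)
Your proof is correct and takes essentially the same approach as the paper: both exploit that $X_{n+1}=F(X_n,U_{n+1})$ for a deterministic update map $F$ and that $U_{n+1}$ is independent of the chain's past, then obtain the middle equality via the tower property. The only organisational difference is that you first establish that $U_{n+1}$ is independent of $\cal{F}_n$ (via $\cal{F}_n\subseteq\sigma(X_0,U_1,\dots,U_n)$) and then verify the defining identity $\Ebl{1_A 1_{\{X_{n+1}=y\}}}=\Ebl{1_A\,p(X_n,y)}$ for arbitrary $A\in\cal{F}_n$, whereas the paper verifies it directly on the generating $\pi$-system of events $\{X_0=z_0,\dots,X_n=z_n\}$; the core computation --- decomposing by the value of $X_n$ and using that $F(x,\cdot)^{-1}(y)$ is an interval of length $p(x,y)$ --- is the same.
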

\begin{proof} Enumerate the state space $\{x_0,x_1,\dots\}$ as in Algorithm~\ref{dtmcalg}. We will show that
\begin{equation}\label{eq:fja70h3n8ayfnya3w}\Pbl{\left.\{X_{n+1}=x_i\}\right|\cal{F}_n}=p(X_n,x_i)\quad\forall i\geq0,\quad \Pb_\gamma\text{-almost surely}.\end{equation}
The other inequality in \eqref{eq:markovd0} then follows by taking expectations conditional on $X_n$ and using the tower property of conditional expectation (Theorem~\ref{thrm:condexpprops}$i,iv$).  Because the collection of events 
$$\{\{X_0=z_0,\dots,X_{n-1}=z_{n}\}:z_0,\dots,z_n\in\s\}$$
generates $\cal{F}_{n}$, to prove \eqref{eq:fja70h3n8ayfnya3w} it suffices to show that
$$\Pbl{\{X_0=z_0,\dots X_n=z_n,X_{n+1}=x_i\}}=\Ebl{1_{\{X_0=z_0,\dots X_n=z_n\}}p(X_n,x_i)}\quad\forall z_0,\dots,z_n\in\s,\enskip i\geq0.$$
The chain's definition in Algorithm~\ref{dtmcalg} implies that 
$$\{X_n=z_n,X_{n+1}=x_i\}=\left\{X_{n}=z_n,\sum_{j=0}^{i-1}p(z_n,x_j)\leq U_{n+1}<\sum_{j=0}^ip(z_n,x_j)\right\}\quad\forall z_n\in\s,\enskip i\geq0.$$
Because $X_1,\dots,X_n$ are functions of $(X_0,U_1,\dots,U_{n})$ and $U_{n+1}$ is independent of $(X_0,U_1,\dots,U_{n})$, $X_n$ and $U_{n+1}$ are independent. For this reason, \eqref{eq:fja70h3n8ayfnya3w} follows from the above:
\begin{align*}\Pb_\gamma(\{X_0=z_0&,\dots X_n=z_n,X_{n+1}=x_i\})\\
&=\Pbl{\{X_0=z_0,\dots X_n=z_n\}}\Pbl{\left\{\sum_{j=0}^{i-1}p(z_n,x_j)\leq U_{n+1}<\sum_{j=0}^ip(z_n,x_j)\right\}}\\
&=\Pbl{\{X_0=z_0,\dots X_n=z_n\}}p(z_n,x_i)\\
&=\Ebl{1_{\{X_0=z_0,\dots X_n=z_n\}}p(X_n,x_i)}\quad\forall z_0,\dots,z_n\in\s,\enskip i\geq0.\end{align*}
\end{proof}
In this time-homogeneous setting\footnote{($b$) falters in the time-inhomogeneous case.}, Markov property really means two things: conditioned on the present, ($a$)  the chain's future and past are independent  and ($b$)  the chain starts `afresh' from its current location. 

To see  ($a$), note that $\cal{F}_n$ is generated by $\cal{F}_{n-1}$ and $X_n$. For this reason, the first equality in \eqref{eq:markovd0} tells us that the chain's future (for now, modelled by $X_{n+1}$) conditioned on its past ($\cal{F}_{n-1}$) and present ($X_n$) equals that conditioned on only its present. In other words, if we are aware of the chain's present,  knowledge of its past does not improve our ability to predict its future. Because 
\begin{align*}\Pbl{\text{future}|\text{past and present}}&=\Pbl{\text{future}|\text{present}}\\
&\Leftrightarrow\\
\Pbl{\text{past and future}|\text{present}}&=\Pbl{\text{past}|\text{present}}\Pbl{\text{future}|\text{present}},\end{align*}
it follows that the chain's past and future are independent ($a$). For a formal proof, use
\begin{proposition}[Characterising conditional independence,~{\citealp[Proposition~6.6]{Kallenberg2001}}]\label{prop:condind} For any probability triplet $(\Omega,\cal{F},\Pb)$ and sigma-algebras $\cal{G},\cal{H},\cal{I}\subseteq\cal{F}$,
\begin{align*}\Pbb{I|\cal{GH}}=\Pbb{I|\cal{H}}&\quad\enskip\Pb\text{-almost surely,}\enskip\forall I\in\cal{I}\\
&\Leftrightarrow\\
\Pbb{G\cap I|\cal{H}}=\Pbb{G|\cal{H}}\Pbb{I|\cal{H}}&\quad\enskip\Pb\text{-almost surely,}\enskip\forall G\in\cal{G},\enskip I\in\cal{I},\end{align*}
where $\cal{GH}$ denotes the sigma-algebra generated by $\cal{G}$ and $\cal{H}$.
\end{proposition}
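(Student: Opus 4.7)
The plan is to prove the two implications directly from the defining property of conditional expectation, using repeatedly the tower property and \emph{take out what is known} recorded in Theorem~\ref{thrm:condexpprops}. For the reverse direction I will additionally need a standard monotone class argument, which I expect to be the only technical hurdle.

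For the forward direction ($\Rightarrow$), I would fix $G\in\cal{G}$ and $I\in\cal{I}$ and start from $\Pbb{G\cap I|\cal{H}}=\Ebb{1_G1_I|\cal{H}}$. Because $\cal{H}\subseteq\cal{GH}$, the tower property rewrites this as $\Ebb{\Ebb{1_G1_I|\cal{GH}}|\cal{H}}$. Since $1_G$ is $\cal{GH}$-measurable and $1_I\geq0$, take out what is known yields $\Ebb{1_G\Ebb{1_I|\cal{GH}}|\cal{H}}$. The hypothesis then replaces $\Ebb{1_I|\cal{GH}}$ by the $[0,1]$-valued, $\cal{H}$-measurable random variable $\Pbb{I|\cal{H}}$, which can in turn be pulled out of the outer conditional expectation, leaving $\Pbb{I|\cal{H}}\Pbb{G|\cal{H}}$.

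For the reverse direction ($\Leftarrow$), the random variable $\Pbb{I|\cal{H}}$ is $\cal{H}$-measurable and hence $\cal{GH}$-measurable, so to identify it with $\Pbb{I|\cal{GH}}$ it suffices to verify
\[
\Ebb{1_A\,\Pbb{I|\cal{H}}}=\Pbb{A\cap I}\qquad\forall A\in\cal{GH}.
\]
On the $\pi$-system $\mathscr{D}:=\{G\cap H:G\in\cal{G},H\in\cal{H}\}$ this is a short computation: taking $A=G\cap H$, conditioning on $\cal{H}$, pulling the $\cal{H}$-measurable factor $1_H\Pbb{I|\cal{H}}$ outside, and using the hypothesis give
\[
\Ebb{1_G1_H\Pbb{I|\cal{H}}}=\Ebb{1_H\Pbb{I|\cal{H}}\Pbb{G|\cal{H}}}=\Ebb{1_H\Pbb{G\cap I|\cal{H}}}=\Pbb{H\cap G\cap I}.
\]

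The main obstacle is then to upgrade the identity from $\mathscr{D}$ to all of $\cal{GH}=\sigma(\mathscr{D})$. I would handle this with Dynkin's $\pi$-$\lambda$ theorem: the collection of $A\in\cal{GH}$ for which the displayed identity holds contains $\Omega$, is closed under proper differences (by linearity, using that $\Pbb{I|\cal{H}}$ is integrable) and under increasing countable unions (by monotone convergence), so it is a $\lambda$-system containing $\mathscr{D}$ and therefore equals $\cal{GH}$. That delivers $\Pbb{I|\cal{GH}}=\Pbb{I|\cal{H}}$ almost surely, completing the proof.
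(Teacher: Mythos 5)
Your proof is correct. Note, however, that the paper does not actually prove this proposition --- it is stated as a citation to Kallenberg's \emph{Foundations of Modern Probability} (Proposition~6.6) and used as a black box --- so there is no "paper proof" to compare against. Your argument is the standard one: the forward implication falls out of the tower property and take-out-what-is-known, and the reverse implication is a monotone-class argument verifying the defining property of $\Pbb{I|\cal{GH}}$ on the generating $\pi$-system $\{G\cap H:G\in\cal{G},H\in\cal{H}\}$ and upgrading via Dynkin's $\pi$-$\lambda$ lemma. This is essentially the route Kallenberg himself takes (and the one the paper implicitly relies on through Lemma~\ref{lem:dynkinpl}-style reasoning), so your proposal fills in the omitted proof faithfully.
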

\noindent to show that \eqref{eq:markovd0} holds for all $x$ in $\s$ if and only if $\cal{F}_{n-1}$ and $X_{n+1}$ are conditionally independent:
\begin{equation}\label{eq:fneway8fne6a7f}\Pbl{A\cap\{X_{n+1}=y\}|X_n}=\Pbl{A|X_n}\Pbl{\{X_{n+1}=y\}|X_n}\quad\Pb_\gamma\text{-almost surely},\end{equation}
for any given $A$ in $\cal{F}_{n-1}$ and $y$ in $\s$. 

To see ($b$), notice that setting $n:=0$ and $\gamma:=1_x$ in \eqref{eq:markovd0} and taking expectations with respect to $\Pb_x$, we find that $p(x,y)=\Pbx{\{X_1=y\}}$. Plugging this back into \eqref{eq:markovd0}, we find that 
\begin{equation}\label{eq:markovstats}\Pbl{\left.\{X_{n+1}=y\}\right|X_n}=\Pb_{X_n}(\{X_{1}=y\})\quad\forall y\in\s,\enskip\Pb_\gamma\text{-almost surely}\end{equation}
That is, the statistics of the chain's future ($X_{n+1}$) conditioned on $X_n$ are identical to those of the chain started at $X_n$. Of course, $X_{n+1}$ is only a small sliver of the chain's future. As we will show in Section~\ref{sec:dtstrmarkov}, ($a$) and ($b$) also hold for the entirety of the chain's future. For the time being however, $X_{n+1}$ is all we need.

Before proceeding, it is worth pointing out that the second equality in \eqref{eq:markovd0} justifies the name `one-step matrix' afforded to $P$: $p(x,y)$ is the probability that the chain next transitions to $y$ if it currently lies at $x$, regardless of when this transition occurs and of the chain's history up until this point. 

We finish the section with one final re-writing of \eqref{eq:markovd0} that will prove of great use later on: For any bounded  real-valued function $f$ on $\s$ and natural number $n$,
\begin{equation}\label{eq:markovd}\Ebl{\left.f(X_{n+1})\right|\cal{F}_n}=\Ebl{\left.f(X_{n+1})\right|X_{n}}=Pf(X_{n})\quad \Pb_\gamma\text{-almost surely},\end{equation}
where
$$Pf(x):=\sum_{y\in\s}p(x,y)f(y)\quad\forall x\in\s.$$
\begin{exercise}\label{ex:markovd}Show that \eqref{eq:markovd0} holds if and only if \eqref{eq:markovd} holds for all bounded real-valued functions $f$ on $\s$. Hint: express $f$ as $f\vee0-f\wedge0$ and use Theorem~\ref{thrm:condexpprops}$ii$.
\end{exercise}
\ifdraft

\subsubsection*{Notes}The idea of filtrations is due to Joseph L. Doob \citep{Doob1953}.

\fi

\subsection{Gambler's ruin}\label{sec:gamblers} At various points in the book, we illustrate aspects of the theory using the famous gambler's ruin problem: A gambler, say her name is Alice, plays a game with multiple rounds. In each round, a coin is tossed. If it lands on heads, Alice wins a pound. Otherwise, she loses one. The coin may be biased so that the probability that it lands on heads is $a\in(0,1)$. Alice has bad credit; if she goes broke, no one will lend her money and she will no longer be allowed to play. Let $X_n$ denote Alice's wealth right after the $n$th round of betting and $X_0$ denote her initial wealth. Clearly, $(X_n)_{n\in\n}$ is a Markov chain with one-step matrix\index{gambler's ruin}
\begin{align*}p(0,y)=1_0(y)\quad\forall y\geq0,\qquad p(x,y)=(1-a)1_{x-1}(y)+a1_{x+1}(y),\quad\forall x>0,\enskip y\geq0.\end{align*}

\ifdraft
{\color{red} Move after time-varying law and work out time-varying law?}
\fi

\subsection{The time-varying law and its difference equation}\label{sec:dtlawsintro}

For any state $x$ in $\s$, let 
\begin{equation}\label{eq:timevardt}p_n(x):=\Pbl{\{X_n=x\}}\end{equation}
denote\glsadd{pn} the probability that the chain in $x$ at time $n$ if its initial position was sampled from  $\gamma$. Taking expectations of \eqref{eq:markovd0}, we find that the \emph{time-varying law}\index{time-varying law} $p=(p_n)_{n\in\n}=((p_n(x))_{x\in\s})_{n\in\n}$ of $X$ is the only solution to the difference equation
\begin{equation}\label{eq:dtlaw}p_{n+1}(x)=\sum_{x'\in\s}p_n(x')p(x',x)\quad\forall x\in\s,\enskip n\in\n,\qquad p_0(x)=\gamma(x)\quad\forall x\in\s,\end{equation}
or, in matrix notation,
$$p_{n+1}=p_n P\quad\forall n\in\n,\qquad p_0=\gamma.$$
This is the first of the \emph{analytical characterisations} that we will encounter in this book. We have taken an object ($p$) defined probabilistically \eqref{eq:timevardt} in terms of the chain ($X$) and the underlying probability measure ($\Pb_\gamma$) and derived an equivalent description of the object involving  analytical equations and inequalities but no probability. This non-probabilistic description is what we call the analytical characterisation of $p$. As we will see throughout the book, these types of characterisations are a recurring theme  in the Markov chain theory. For ease of reference, I re-state the above as:
\begin{theorem}[Analytical characterisation of the time-varying law]\label{dtlawchar} The time-varying law $p=(p_n)_{n\in\n}$ of $X$ defined by \eqref{eq:timevardt} is the unique solution of \eqref{eq:dtlaw}.
\end{theorem}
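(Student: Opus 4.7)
The plan is to verify the two halves of the claim separately: first that $p=(p_n)_{n\in\n}$ defined by \eqref{eq:timevardt} is a solution of \eqref{eq:dtlaw}, and second that the recurrence admits at most one solution.

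For existence, I would fix an initial distribution $\gamma$ and a state $x\in\s$. The initial condition $p_0(x)=\gamma(x)$ is immediate from property~(1) of the construction in Section~\ref{sec:dtdef} together with the definition \eqref{eq:pld} of $\Pb_\gamma$. For the recursion, I would start from the Markov property in the form \eqref{eq:markovd0}, take $\Pb_\gamma$-expectations on both sides, and apply the tower property (Theorem~\ref{thrm:condexpprops}$(iv)$) to obtain
\begin{equation*}
p_{n+1}(x)=\Pbl{\{X_{n+1}=x\}}=\Ebl{\Pbl{\{X_{n+1}=x\}\mid\cal{F}_n}}=\Ebl{p(X_n,x)}.
\end{equation*}
Because $X_n$ takes values in the countable set $\s$, the last expectation expands as $\sum_{x'\in\s}p_n(x')p(x',x)$, which is exactly the right-hand side of \eqref{eq:dtlaw}. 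No integrability worries arise since all summands are non-negative and $p(\cdot,x)$ is bounded by $1$.

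For uniqueness, I would argue by induction on $n$. If $q=(q_n)_{n\in\n}$ is any $[0,\infty]$-valued sequence of measures on $\s$ satisfying \eqref{eq:dtlaw}, then $q_0=\gamma=p_0$, and if $q_n=p_n$ then the recursion forces $q_{n+1}(x)=\sum_{x'}q_n(x')p(x',x)=\sum_{x'}p_n(x')p(x',x)=p_{n+1}(x)$ for every $x\in\s$. This is entirely routine and should be disposed of in one sentence.

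I do not anticipate any real obstacle here. The only point that might warrant a word of comment is the interchange of expectation with the countable sum in the computation above, but this is justified by Theorem~\ref{thrm:condexpprops}$(ii)$ (or equivalently Tonelli) since all terms are non-negative. Thus the whole proof should be a handful of lines, essentially just unpacking Theorem~\ref{markovd}.
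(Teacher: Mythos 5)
Your proposal is correct and follows the paper's own argument: the paper likewise obtains \eqref{eq:dtlaw} by taking $\Pb_\gamma$-expectations of the Markov property \eqref{eq:markovd0}, and treats uniqueness as immediate from the recursive form. Your more explicit unpacking (tower property, Tonelli for the non-negative interchange, induction for uniqueness) is just a fuller rendering of the same one-line argument.
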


Iterating \eqref{eq:markovd0} and using the tower and take-out-what-is-known properties of conditional expectation (Theorem~\ref{thrm:condexpprops}$iv,v$), we obtain an expression for the joint distribution of $X_0$, $X_1$, $\dots$, $X_n$:
\begin{equation}\label{eq:nstepthe}\Pbl{\{X_0=x_0,X_1=x_1,\dots,X_n=x_n\}}=\gamma(x_0)p(x_0,x_1)\dots p(x_{n-1},x_n),\end{equation}
for all  natural numbers $n$ and states $x_0,x_1,\dots,x_n$. Marginalising the above, we obtain expressions for the joint law of any finite subset of $(X_n)_{n\in\n}$ (the collection of these distributions is often referred to as the \emph{finite-dimensional distributions}\index{finite-dimensional distributions}). In particular, we find
$$p_n(x)=\sum_{x'\in\s}\gamma(x')p_n(x',x)\quad\forall x\in\s,$$
or $p_n=\gamma P_n$ in matrix notation, for the distribution $p_n=(p_n(x))_{x\in\s}$ of the chain at time $n$ in terms the initial distribution $\gamma$ and of the \emph{$n$-step matrix}\index{n-step matrix} $P_n=(p_n(x,y))_{x,y\in\s}$\glsadd{Pn} defined by:
\begin{equation}\label{eq:nstepdef}p_n(x,y):=\sum_{x_1\in\s}\dots\sum_{x_{n-1}\in\s}p(x,x_1)\dots p(x_{n-1},y)\quad\forall x,y\in\s\enskip n>0.\end{equation}
Out of notational convenience, we use $P_0$ to denote the identity matrix $(1_x(y))_{x,y\in\s}$ on $\s$. The nomenclature here is motivated by the equation
\begin{equation}\label{eq:fme7a80fjawfeawnmiufa}\Pbl{\{X_{n+m}=y\}|X_n}=p_m(X_n,y)\quad\forall y\in\s,\enskip n,m\geq 0,\enskip\Pb_\gamma\text{-almost surely.}\end{equation}
obtained by replacing in \eqref{eq:nstepthe} `$n$'  with `$n+m$' and `$x_{n+m}$' with `$y$', multiplying both sides by $1_{\{X_n=x_n\}}$, summing over all $x_0,\dots,x_{n-1},x_{n+1},\dots,x_{n+m-1}$ in $\s$, and taking expectations. In short, $p_m(x,y)$ is the probability that the chain will be at $y$ in $m$ steps if it currently lies in $x$.
\begin{exercise}Prove \eqref{eq:fme7a80fjawfeawnmiufa}.\end{exercise}

A good point to end this section is the celebrated \emph{Chapman-Kolmogorov equation}\index{Chapman-Kolmogorov equation}: for any natural numbers $n$ and $m$,
\begin{equation}
\label{eq:chap-kol}p_{n+m}(x,y)=\sum_{z\in\s}p_n(x,z)p_m(z,y)\quad\forall x,y\in\s,
\end{equation}
or $P_{n+m}=P_nP_m$ in matrix notation. It follows directly from the definition of the $n$-step matrix in \eqref{eq:nstepdef}.
\subsection{The path space and path law}\label{sec:pathlaw}Up until now, we have viewed the chain as a sequence of random variables each taking values in the state space $\s$. Sometimes, it is very useful to instead think of it  as a single random variable taking values in the space of all possible sequences of states (the \emph{path space})\index{path space}\glsadd{Pspace}:
$$\cal{P}:=\s^\n=\s\times\s\times\dots,$$
formally, the space of all functions from $\n$ to $\s$. To do so, we need to assign a sigma-algebra to $\s^\n$. We pick the \emph{cylinder sigma-algebra}\index{cylinder sigma-algebra}\glsadd{E} $\cal{E}$ generated by the collection of subsets of $\cal{P}$ of the form
\begin{equation}\label{eq:gensets}\{x_0\}\times\{x_1\}\times\dots\times\{x_n\}\times \s\times\s\times\dots,\end{equation}
where $n$ is any natural number and $x_0,x_1,\dots,x_n$ are any states in $\s$. Let $Y:=(Y_n)_{n\in\n}$ be a sequence of random variables $Y_n$ taking values in $\s$ and defined on some underlying measurable space $(\Omega,\cal{F})$. A simple exercise shows that the collection $Y$ (viewed as a function from $\Omega$ to $\cal{P}$) is $\cal{F}/\cal{E}$-measurable (i.e.\ a random variable taking values in $\cal{P}$) if and only if each $Y_n$ is $\cal{F}/\tws$-measurable where $2^\s$ denotes the power set of $\s$. In particular, the chain $X:=(X_n)_{n\in\n}$ defined in Section~\ref{sec:dtdef} is $\cal{F}/\cal{E}$-measurable. For this reason,
\begin{equation}\label{eq:pathlaw}\mathbb{L}_\gamma(A):=\Pbl{\{X\in A\}}\quad\forall A\in\cal{E},\end{equation}
is a well-defined probability measure on $(\cal{P},\cal{E})$ known as the \emph{path law}\index{path law}\glsadd{Lgam} of $X$ (some authors simply say the \emph{law} of $X$), where 
$$\{X\in A\}:=\{\omega\in\Omega:X(\omega)\in A\}$$
denotes the preimage of $A$ under $X$. Just as with $\Pb_x$ and $\Eb_x$, we write throughout the book $\mathbb{L}_x$\glsadd{Lx} as a shorthand for $\mathbb{L}_\gamma$ with $\gamma=1_x$.
\begin{exercise}Convince yourself that $\mathbb{L}_\gamma$ is a probability measure on $(\cal{P},\cal{E})$. \end{exercise}
For example, \eqref{eq:nstepthe} shows that
\begin{equation}
\label{eq:fme78ahfnea78fheau9fhna}\mathbb{L}_\gamma(\{x_0\}\times\{x_1\}\times\dots\times\{x_n\}\times \s\times\s\times\dots)=\gamma(x_0)p(x_0,x_1)\dots p(x_{n-1},x_n)
\end{equation}
for all natural numbers $n$ and states $x_0,x_1,\dots,x_n$. Remarkably, the above is all we need to fully specify $\mathbb{L}_\gamma$:
\begin{theorem}\label{pathlawuni}The path law $\mathbb{L}_\gamma$ defined in \eqref{eq:pathlaw} is the only measure on $(\cal{P},\cal{E})$ satisfying~\eqref{eq:fme78ahfnea78fheau9fhna} for all natural numbers $n$ and states $x_0,x_1,\dots,x_n$.
\end{theorem}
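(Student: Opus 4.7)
The plan is to invoke the uniqueness half of Carath\'eodory's extension theorem (equivalently, Dynkin's $\pi$-$\lambda$ theorem), which says that two probability measures that agree on a $\pi$-system generating a sigma-algebra must agree on the whole sigma-algebra. By construction, $\cal{E}$ is generated by the collection $\cal{C}$ of cylinder sets of the form \eqref{eq:gensets}, so it suffices to verify two things: first, that $\cal{C}$ is a $\pi$-system (closed under finite intersections); second, that the prescription \eqref{eq:fme78ahfnea78fheau9fhna} determines the mass that any measure satisfying it assigns to each element of $\cal{C}$.

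The second point is immediate: \eqref{eq:fme78ahfnea78fheau9fhna} literally specifies the measure of every set of the form \eqref{eq:gensets} in terms of $\gamma$ and $P$, so any two measures satisfying \eqref{eq:fme78ahfnea78fheau9fhna} agree on $\cal{C}$. For the first point, I would take two cylinder sets
$$A=\{x_0\}\times\cdots\times\{x_n\}\times\s\times\s\times\cdots,\qquad B=\{y_0\}\times\cdots\times\{y_m\}\times\s\times\s\times\cdots,$$
assume without loss of generality that $n\leq m$, and observe that $A\cap B$ is either empty (if $x_i\neq y_i$ for some $i\leq n$) or equals the longer cylinder $\{y_0\}\times\cdots\times\{y_m\}\times\s\times\cdots$ (if $x_i=y_i$ for all $i\leq n$). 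In either case $A\cap B$ lies in $\cal{C}\cup\{\emptyset\}$, so $\cal{C}$ is a $\pi$-system.

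Given these two points, I would finish by the standard Dynkin argument: letting $\mathbb{L}'_\gamma$ be any other measure on $(\cal{P},\cal{E})$ satisfying \eqref{eq:fme78ahfnea78fheau9fhna}, first verify that $\mathbb{L}'_\gamma$ is a probability measure by summing the $n=0$ version of \eqref{eq:fme78ahfnea78fheau9fhna} over $x_0\in\s$ using countable additivity, which yields $\mathbb{L}'_\gamma(\cal{P})=\sum_{x_0\in\s}\gamma(x_0)=1$. Then consider the collection
$$\cal{L}:=\{A\in\cal{E}:\mathbb{L}_\gamma(A)=\mathbb{L}'_\gamma(A)\},$$
and check it is a $\lambda$-system: it contains $\cal{P}$ (both measures give it mass $1$); it is closed under proper differences (since both are finite, $\mathbb{L}_\gamma(B\setminus A)=\mathbb{L}_\gamma(B)-\mathbb{L}_\gamma(A)$ when $A\subseteq B$); and it is closed under countable increasing unions (by continuity from below of both measures). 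Since $\cal{L}\supseteq\cal{C}$ by the observation above, Dynkin's $\pi$-$\lambda$ theorem gives $\cal{L}\supseteq\sigma(\cal{C})=\cal{E}$, i.e.\ $\mathbb{L}_\gamma=\mathbb{L}'_\gamma$ on $\cal{E}$.

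There is no genuine obstacle here --- the argument is essentially a textbook application of the $\pi$-$\lambda$ machinery. The only mild subtlety is remembering to argue that any candidate $\mathbb{L}'_\gamma$ is actually a probability measure (rather than just a finite or sigma-finite one) before invoking the $\lambda$-system closure properties, and this is handled by the one-line summation above.
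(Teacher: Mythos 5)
Your proposal is correct and follows exactly the route the paper intends: the proof is left as an exercise, with the hint being precisely to show that the cylinder sets together with $\emptyset$ form a $\pi$-system and then invoke Lemma~\ref{lem:dynkinpl} (the paper's packaged version of the Dynkin $\pi$-$\lambda$ uniqueness argument). Your only deviation is that you unfold the proof of that lemma (checking the $\lambda$-system closure properties by hand) rather than citing it directly, and your observation that one must first verify any candidate measure has total mass $1$ before applying the lemma is a genuine and necessary step.
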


To argue the above, we need the following consequence of Dynkin's $\pi$-$\lambda$ lemma: 
\begin{lemma}[We lose nothing by working with $\pi$-systems that generate a sigma-algebra instead of the entire sigma-algebra,~{\citealp[Lemma~1.6]{Williams1991}}]\label{lem:dynkinpl} Suppose that $\cal{H}$ is a $\pi$-system on a set $\Omega$, that is, $\cal{H}$ is set of subsets of $\Omega$ that is closed under finite intersections:\index{$\pi$-system}
$$A,B\in\cal{H}\Rightarrow A\cap B\in\cal{H}.$$
If $\cal{H}$ generates a sigma-algebra $\cal{G}$ and $\mu_1,\mu_2$ are two measures on $(\Omega,\cal{G})$ satisfying $\mu_1(\Omega)=\mu_2(\Omega)<\infty$ and $\mu_1(A)=\mu_2(A)$ for all $A$ in $\cal{H}$, then $\mu_1(A)=\mu_2(A)$ for all $A$ in $\cal{G}$.
\end{lemma}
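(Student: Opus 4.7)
The plan is to apply Dynkin's $\pi$-$\lambda$ theorem in the standard way. I would introduce the collection
\[
\mathcal{D} := \{A \in \mathcal{G} : \mu_1(A) = \mu_2(A)\}
\]
and show that $\mathcal{D}$ is a $\lambda$-system (d-system) on $\Omega$. By hypothesis, $\mathcal{H} \subseteq \mathcal{D}$ and $\mathcal{H}$ is a $\pi$-system generating $\mathcal{G}$, so Dynkin's theorem yields $\mathcal{G} = \sigma(\mathcal{H}) \subseteq \mathcal{D} \subseteq \mathcal{G}$, which is exactly the conclusion $\mu_1 = \mu_2$ on $\mathcal{G}$.

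The three $\lambda$-system axioms to verify are: (i) $\Omega \in \mathcal{D}$, which is immediate from the stated assumption $\mu_1(\Omega) = \mu_2(\Omega) < \infty$; (ii) closure under proper differences, namely if $A, B \in \mathcal{D}$ with $A \subseteq B$ then $B \setminus A \in \mathcal{D}$, which follows from the finiteness of the measures because we may write $\mu_i(B \setminus A) = \mu_i(B) - \mu_i(A)$ for $i = 1, 2$ and subtract (this is where the assumption $\mu_i(\Omega) < \infty$ is essential, since otherwise we could run into $\infty - \infty$); and (iii) closure under countable increasing unions, namely if $A_1 \subseteq A_2 \subseteq \dots$ are in $\mathcal{D}$ then $\bigcup_{n=1}^\infty A_n \in \mathcal{D}$, which follows from the continuity from below of measures: $\mu_i\left(\bigcup_n A_n\right) = \lim_{n \to \infty} \mu_i(A_n)$ for $i = 1, 2$, and the two limits agree because the sequences $(\mu_1(A_n))$ and $(\mu_2(A_n))$ are identical.

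With $\mathcal{D}$ confirmed to be a $\lambda$-system containing the $\pi$-system $\mathcal{H}$, I would then invoke Dynkin's $\pi$-$\lambda$ theorem directly: the smallest $\lambda$-system containing a $\pi$-system coincides with the $\sigma$-algebra it generates. Hence $\mathcal{G} = \sigma(\mathcal{H}) \subseteq \mathcal{D}$, which delivers the desired equality $\mu_1(A) = \mu_2(A)$ for every $A \in \mathcal{G}$.

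The only mild subtlety is step (ii), where finiteness of the total mass is genuinely used; without it the lemma fails (one can cook up $\sigma$-finite counterexamples on generating semirings). Since Dynkin's $\pi$-$\lambda$ theorem itself is a classical result that one may quote (e.g.\ from \citep{Williams1991}), the main obstacle is really just the clean bookkeeping of the three $\lambda$-system properties, which is routine once the finiteness hypothesis is in hand.
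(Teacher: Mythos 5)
Your proof is correct: defining $\mathcal{D}=\{A\in\mathcal{G}:\mu_1(A)=\mu_2(A)\}$, checking the three $\lambda$-system axioms (with finiteness of total mass used exactly where you say, in the proper-difference step), and invoking Dynkin's $\pi$-$\lambda$ theorem is the standard argument. The paper itself gives no proof of this lemma — it simply cites \citep[Lemma~1.6]{Williams1991} — and your argument is precisely the one found in that source, so there is nothing to compare beyond noting that you have filled in the details the paper delegates to the reference.
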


\begin{exercise}Using Lemma~\ref{lem:dynkinpl} prove Theorem~\ref{pathlawuni}. Hint: show that 
$$\{\{x_0\}\times\{x_1\}\times\dots\times\{x_n\}\times \s\times\s\times\dots: x_0,x_1,\dots,x_n\in\s,\enskip n\in\n\}\cup\emptyset$$
is a $\pi$-system.
\end{exercise}

\subsection{The martingale characterisation} \label{sec:martin}

Martingales play an important role in most areas of modern probability and Markov theory is no exception. Indeed, if we look at them from the right angle, it is not too difficult to see that Markov chains can be equivalently defined in terms of martingales. Here is that angle:
\begin{theorem}[Martingale characterisation]\label{thrm:martin}Suppose\index{martingale characterisation} that $X=(X_n)_{n\in\n}$ is a sequence of random-variables on a probability triplet $(\Omega,\cal{F},\Pb)$ that take values in $\s$ and let $(\cal{F}_n)_{n\in\n}$ denote the filtration generated by the sequence (Definition~\ref{def:filt}). The sequence satisfies the Markov property~\eqref{eq:markovd0} (with $\Pb$ replacing $\Pb_\gamma$) if and only if 
\begin{equation}\label{eq:martthe}M_n:=g(n)f(X_n)-g(0)f(X_0)-\sum_{m=0}^{n-1}(g(m+1)Pf(X_m)-g(m)f(X_m))\quad\forall n\geq0,\end{equation}
defines an $(\cal{F}_n)_{n\in\n}$-adapted $\Pb$-martingale, for every bounded real-valued function $f$ on $\s$ and real-valued function $g$ on $\n$. In particular, if $X$ is the chain introduced in Section~\ref{sec:dtdef}, then \eqref{eq:martthe} defines an $(\cal{F}_n)_{n\in\n}$-adapted $\Pb_\gamma$-martingale for every bounded $f,g$ and initial distribution $\gamma$.
\end{theorem}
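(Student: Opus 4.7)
The plan is to analyze the one-step increment of $M_n$ and show it captures precisely the content of the Markov property.

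First, I would observe that $M_n$ is automatically $(\cal{F}_n)_{n\in\n}$-adapted (each summand is a function of $X_0,\dots,X_n$) and $\Pb$-integrable, since $f$ being bounded implies that both $f$ and $Pf$ are bounded on $\s$, making $M_n$ a bounded random variable for each fixed $n$. So the only non-trivial point is the martingale identity $\Ebb{M_{n+1}|\cal{F}_n}=M_n$ $\Pb$-a.s.

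Next, I would compute the telescoping increment: directly from \eqref{eq:martthe},
\begin{equation*}
M_{n+1}-M_n = g(n+1)f(X_{n+1}) - g(n+1)Pf(X_n) = g(n+1)\bigl(f(X_{n+1})-Pf(X_n)\bigr).
\end{equation*}
Since $Pf(X_n)$ is $\cal{F}_n$-measurable (it is a function of $X_n$), the tower/take-out-what-is-known properties (Theorem~\ref{thrm:condexpprops}$(i,v)$) give
\begin{equation*}
\Ebb{M_{n+1}-M_n\mid\cal{F}_n} = g(n+1)\bigl(\Ebb{f(X_{n+1})\mid\cal{F}_n} - Pf(X_n)\bigr)\quad\Pb\text{-a.s.}
\end{equation*}
So $(M_n)$ is a martingale for every such $f$ and $g$ if and only if $\Ebb{f(X_{n+1})\mid\cal{F}_n} = Pf(X_n)$ $\Pb$-a.s. for every bounded $f:\s\to\r$ and every $n\in\n$. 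By Exercise~\ref{ex:markovd}, this last condition is equivalent to the Markov property~\eqref{eq:markovd0} (with $\Pb$ in place of $\Pb_\gamma$). This establishes the equivalence.

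For the forward direction one only needs the Markov property to kill the conditional expectation inside the increment. For the reverse direction, it suffices to pick $g\equiv 1$: if $(M_n)$ is a martingale for this choice and every bounded $f$, then $\Ebb{f(X_{n+1})\mid\cal{F}_n}=Pf(X_n)$ a.s.\ for every bounded $f$, whence Exercise~\ref{ex:markovd} gives~\eqref{eq:markovd0}. The final assertion of the theorem is then immediate: the chain $X$ constructed in Section~\ref{sec:dtdef} satisfies the Markov property by Theorem~\ref{markovd}, so the forward direction applied with $\Pb=\Pb_\gamma$ produces the desired martingale for every initial distribution $\gamma$.

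The only point requiring care is routine rather than hard: ensuring that $Pf$ is well-defined and bounded whenever $f$ is bounded, which is immediate from~\eqref{eq:1step} via $|Pf(x)|\leq\sup_{y\in\s}|f(y)|$. There is no real obstacle here; the proof is essentially a one-line algebraic manipulation combined with the exercise already cited.
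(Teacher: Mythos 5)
Your proof is correct and follows essentially the same approach as the paper's: the paper also observes adaptedness and boundedness, reduces the martingale identity to the condition $\Ebb{f(X_{n+1})|\cal{F}_n}=Pf(X_n)$ using the Markov property in the form~\eqref{eq:markovd}, and for the converse sets $g\equiv 1$ and conditions the increment on $\cal{F}_n$ (the paper specialises to $f=1_y$ and reads off~\eqref{eq:markovd0} directly, whereas you invoke Exercise~\ref{ex:markovd} for arbitrary bounded $f$, but this is the same idea). Your explicit computation of the one-step increment $M_{n+1}-M_n=g(n+1)(f(X_{n+1})-Pf(X_n))$ makes the biconditional a bit more transparent than the paper's write-up, but the underlying argument is identical.
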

\begin{proof}Clearly, $M_n$ is $\cal{F}_n$-adapted, and there are no integrability issues since $g$, $f$, and, consequently, $Pf$ are bounded functions. 
Suppose that $(X_n)_{n\in\n}$ satisfies the Markov property. To prove that \eqref{eq:martthe} defines a martingale we need to show that
$$\Ebb{\left.M_n\right|\cal{F}_{n-1}}=M_{n-1},\quad\forall n\geq0,\enskip \Pb\text{-almost surely.}$$
Applying  \eqref{eq:markovd}, we obtain
$$\Ebb{\left.g(n)f(X_n)\right|\cal{F}_{n-1}}=g(n)\Ebb{\left.f(X_n)\right|\cal{F}_{n-1}}=g(n)Pf(X_{n-1}),\quad \Pb_\gamma\text{-almost surely;}$$
and so,
$$\Ebb{\left.M_n\right|\cal{F}_{n-1}}=\Ebl{\left.g(n)f(X_n)\right|\cal{F}_{n-1}}-g(n)Pf(X_{n-1})+M_{n-1}^f=M_{n-1}^f,\quad \Pb_\gamma\text{-almost surely.}$$

Conversely, suppose that \eqref{eq:martthe} holds for every bounded $f,g$.  Pick any $y$ in $\s$ and $n$ in $\n$ and set $f:=1_y$ and $g:=1$ in \eqref{eq:martthe} so that
$$M_{n+1}-M_{n}=1_y(X_{n+1})-p(X_n,y).$$
Conditioning on $\cal{F}_n$ and taking expectations, we find that
$$\Pbb{\{X_{n+1}=y\}|\cal{F}_n}=p(X_n,y)\quad\Pb\text{-almost surely}.$$
The Markov property~\eqref{eq:markovd0} follows by conditioning on $X_n$, taking expectations, and applying the tower property of conditional expectation (in particular, Theorem~\ref{thrm:condexpprops}$i,iv$).
\end{proof}

\subsection{Other definitions of the chain*}\label{sec:otherdef}Armed with the path space, path law, and martingale characterisation, we can now answer a question we have so far avoided: our definition of the chain via Algorithm~\ref{dtmcalg} seems rather specific; does this matter?

In short, no, it does not. In long, depending on what text you pick up, you will find a discrete-time chain $(X_n)_{n\in\n}$ on a countable state space $\s$ with one-step matrix $P$ and initial distribution $\gamma$ defined as a sequence of random variables taking values in $\s$, defined on a probability triplet $(\Omega,\cal{F},\Pb_\gamma)$, and satisfying $\Pbl{\{X_0=\cdot\}}=\gamma(\cdot)$ and
\begin{enumerate}[label=(\alph*)]
\item the Markov property \eqref{eq:markovd0}, where $(\cal{F}_n)_{n\in\n}$ denotes the filtration generated by the sequence (Definition \ref{def:filt}),
\item or \eqref{eq:nstepthe} for every  $n$ in $\n$ and all states $x_0,\dots,x_n$ in $\s$,
\item or the martingale property: \eqref{eq:martthe} defines an $\cal{F}_n$-adapted $\Pb_\gamma$-martingale for every bounded real-valued function $f$ on $\s$ and real-valued function $g$ on $\n$.
\end{enumerate}
The above three statements are equivalent:
\begin{theorem}[Equivalent definitions of discrete-time chains]\label{samedef}If $X:=(X_n)_{n\in\n}$ is a sequence of $\s$-valued random-variables on a probability triplet $(\Omega,\cal{F},\Pb_\gamma)$ with $\Pbl{\{X_0=x\}}=\gamma(x)$ for all $x$ in $\s$, then statements $(a)$, $(b)$, or $(c)$ are equivalent.
\end{theorem}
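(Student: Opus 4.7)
The plan is to prove the equivalences via the cycle $(a)\Rightarrow(b)\Rightarrow(a)$ together with the bi-implication $(a)\Leftrightarrow(c)$ supplied by Theorem~\ref{thrm:martin}. Note that Theorem~\ref{thrm:martin} is stated for the chain built in Section~\ref{sec:dtdef}, but an inspection of its proof shows that it only uses the Markov property~\eqref{eq:markovd0} and no other feature of the construction, so it applies verbatim here.

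For $(a)\Rightarrow(b)$, I would induct on $n$. The base case $n=0$ is immediate from the hypothesis $\Pbl{\{X_0=x\}}=\gamma(x)$. For the inductive step, write
\begin{align*}
\Pbl{\{X_0=x_0,\dots,X_{n+1}=x_{n+1}\}}&=\Ebl{1_{\{X_0=x_0,\dots,X_n=x_n\}}1_{\{X_{n+1}=x_{n+1}\}}}\\
&=\Ebl{1_{\{X_0=x_0,\dots,X_n=x_n\}}\Ebl{1_{\{X_{n+1}=x_{n+1}\}}|\cal{F}_n}}\\
&=\Ebl{1_{\{X_0=x_0,\dots,X_n=x_n\}}p(X_n,x_{n+1})},
\end{align*}
using the tower property (Theorem~\ref{thrm:condexpprops}$(iv)$), the fact that $1_{\{X_0=x_0,\dots,X_n=x_n\}}$ is $\cal{F}_n$-measurable and bounded (take out what is known, Theorem~\ref{thrm:condexpprops}$(v)$), and the Markov property~\eqref{eq:markovd0} applied at step $n$. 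The final expectation equals $p(x_n,x_{n+1})\Pbl{\{X_0=x_0,\dots,X_n=x_n\}}$, and the inductive hypothesis finishes the step.

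For $(b)\Rightarrow(a)$, fix $n\in\n$ and $y\in\s$. To verify the Markov property, I will show that the $\cal{F}_n$-measurable random variable $p(X_n,y)$ satisfies the defining equation of the conditional probability $\Pbl{\{X_{n+1}=y\}|\cal{F}_n}$. Since the collection of events $\{\{X_0=x_0,\dots,X_n=x_n\}:x_0,\dots,x_n\in\s\}$ is a countable partition of $\Omega$ and generates $\cal{F}_n$, it suffices to verify
\[
\Pbl{\{X_0=x_0,\dots,X_n=x_n,X_{n+1}=y\}}=\Ebl{1_{\{X_0=x_0,\dots,X_n=x_n\}}p(X_n,y)}
\]
for every $x_0,\dots,x_n\in\s$. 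The left-hand side equals $\gamma(x_0)p(x_0,x_1)\cdots p(x_{n-1},x_n)p(x_n,y)$ by hypothesis $(b)$, and the right-hand side equals $p(x_n,y)\Pbl{\{X_0=x_0,\dots,X_n=x_n\}}$, which is the same by $(b)$ again. This yields the first equality of~\eqref{eq:markovd0}; conditioning further on $X_n$ (a sub-sigma-algebra of $\cal{F}_n$) and applying the tower property yields the second.

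The main obstacle is purely bookkeeping in the $(b)\Rightarrow(a)$ direction: one has to be attentive that the generating partition of $\cal{F}_n$ by the cylinder events is indeed a partition (some atoms having probability zero, which causes no problem for the ``defining equation'' approach above but would for a ratio-of-probabilities argument). Finally, $(a)\Leftrightarrow(c)$ is exactly the content of Theorem~\ref{thrm:martin}, whose proof, as noted, does not use the explicit construction from Algorithm~\ref{dtmcalg}. Combining the three implications closes the cycle.
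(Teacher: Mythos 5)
Your proof is correct and follows essentially the same route as the paper's: reduce to $(b)\Rightarrow(a)$ (with $(a)\Rightarrow(b)$ already covered by iterating the Markov property and $(a)\Leftrightarrow(c)$ by Theorem~\ref{thrm:martin}, which is indeed stated for arbitrary sequences satisfying the Markov property, not just the Algorithm~\ref{dtmcalg} construction), and verify the conditional-expectation identity on the cylinder atoms using $(b)$. Your explicit remark that the cylinder events form a countable partition, so that verification on atoms suffices directly, is a helpful elaboration of the paper's terse ``because [those events] generate $\cal{F}_n$'', but it does not change the argument.
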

\begin{proof} Previously, we already showed that $(a)\Rightarrow (b)$ (Section~\ref{sec:dtlawsintro}) and $(a)\Leftrightarrow (c)$ (Theorem~\ref{thrm:martin}) and, so, it suffices to show here that $(b)\Rightarrow (a)$. If we can argue that
\begin{equation}\label{eq:iureasohfyuia2}\Ebb{f(X_{n+1})|\cal{F}_n}=Pf(X_n)\qquad\Pb_\gamma\text{-almost surely}\end{equation}
for any bounded real-valued function $f$ on $\s$, then the rest of the $(a)$ follows by conditioning on $X_n$, taking expectations, and applying the tower property (Theorem~\ref{thrm:condexpprops}$i,iv$). Picking any $x_0,\dots,x_n$ in $\s$ and applying \eqref{eq:nstepthe} we obtain
\begin{align*}\Ebb{f(X_{n+1})1_{\{X_0=x_0\}}\dots1_{\{X_n=x_n\}}}&=\sum_{x\in\s}f(x)\Ebb{1_{\{X_0=x_0\}}\dots1_{\{X_n=x_n\}}1_{\{X_{n+1}=x\}}}\\
&=\sum_{x\in\s}f(x)\gamma(x_0)p(x_0,x_1)\dots p(x_n,x)\\
&=\gamma(x_0)p(x_0,x_1)\dots p(x_{n-1},x_n)Pf(x_n)\\&=\Ebb{Pf(X_{n})1_{\{X_0=x_0\}}\dots1_{\{X_n=x_n\}}},\end{align*}
and \eqref{eq:iureasohfyuia2} follows because the collection of events $\{\{X_0=x_0,\dots,X_{n}=x_{n}\}:x_0,\dots,x_n\in\s\}$ generates $\cal{F}_n$.
\end{proof}

Because the path law is fully specified by~\eqref{eq:fme78ahfnea78fheau9fhna} (Theorem~\ref{pathlawuni}), Theorem \ref{samedef} shows that the path law~\eqref{eq:pathlaw} of a discrete-time chain with one-step matrix $P$ and initial distribution $\gamma$ is the same  regardless of whether we use (a), (b), or (c) above in its definition. For this reason, the particular construction of the chain does not matter as long as we are only interested in questions that can be answered by observing the entire path of the chain. 
%
%
I focus on the construction given in Algorithm~\ref{dtmcalg} because I find it straightforward to work with and because the algorithm itself is an easy-to-implement recipe for simulating chains in practice.

\subsubsection*{A final observation: the time-varying law does not characterise the path law}In contrast with (a--c) above, the time-varying law $(p_n)_{n\in\n}$ of the chain in \eqref{eq:timevardt} is \emph{not} enough to uniquely define a distribution on the path space. For instance, using Theorem \ref{seqindp} we can build a sequence of independent random variables $(W_n)_{n\in\n}$ such that the law of $W_n$ is $p_n$ for each $n$ in $\n$---something absurd for all but the most trivial of chains (think of \eqref{eq:markovd0}).

\subsection{Stopping times}\label{sec:stop}
\index{stopping time}\glsadd{varsigma} Given the filtration $(\cal{F}_n)_{n\in\n}$ generated by the chain (Definition \ref{def:filt}), a random variable is said to be an $(\cal{F}_n)_{n\in\n}$-stopping time if it is the time at which some  event occurs with the event being such that we are able to deduce whether it has occurred by any given point in time if we have observed the chain's path up until (and including) said point. For example, the  first (or second, or $k$th) time that the chain visits a given set of interest is a stopping time, but the last time it visits the set is not. Formally:

\begin{definition}[Discrete-time stopping times]\label{def:stopdt}Given the filtration $(\cal{F}_n)_{n\in\n}$ generated by the chain, a random variable $\varsigma:\Omega\to\n_E$ is said to be an $(\cal{F}_n)_{n\in\n}$-stopping time if and only if the event $\{\varsigma\leq n\}$ belongs to $\cal{F}_n$, for each $n$ in $\n$. With a stopping time $\varsigma$, we associate the sigma-algebra $\cal{F}_\varsigma$ defined by
$$\forall A\in\cal{F},\quad A\in\cal{F}_\varsigma\Leftrightarrow A\cap\{\varsigma\leq n\}\in\cal{F}_n \quad \forall n\geq0.$$
\end{definition}

\begin{exercise}Convince yourself that $\cal{F}_\varsigma$ is a sigma-algebra.
\end{exercise}

The name `stopping time' afforded to these random variables stems from the occurrence such an event being used in practice to signal that one should stop what they are doing and take a certain action. For instance, our gambler Alice (Section~\ref{sec:gamblers}) would probably be wise in walking away from the game the moment her winnings reached an amount she finds satisfactory.
In this vein, the chain is often said to \emph{stop} at $\varsigma$.

The \emph{pre-$\varsigma$ sigma-algebra}\index{pre-$\varsigma$ sigma-algebra} $\cal{F}_\varsigma$\glsadd{Fsig} formalises the chain's history up until (and including) $\varsigma$: it is the collection of events whose occurrence we are able to deduce by tracking the chain's position up until the stopping time $\varsigma$. For instance, if $\varsigma$ is the second time that the chain visits a given subset $A$, then the event that the chain visits $A$ once (or at least once, or twice, or at least twice) belongs to $\cal{F}_\varsigma$  but the event that the chain visits $A$ three (or four, or at least four, or $\dots$) does not belong to $\cal{F}_\varsigma$.  From this vantage point, the following useful lemma is almost trivial:

\begin{lemma}\label{lem:2stop}If $\varsigma$ and $\vartheta$ are two $(\cal{F}_n)_{n\in\n}$-stopping times, the 
\begin{enumerate}[label=(\roman*),noitemsep]
\item The events $\{\varsigma\leq \vartheta\}$, $\{\varsigma< \vartheta\}$, and $\{\varsigma= \vartheta\}$ belong to $\cal{F}_\vartheta$.
\item Given any event $A$ in $\cal{F}_\varsigma$, the event $A\cap\{\varsigma\leq \vartheta\}$ belongs to $\cal{F}_\vartheta$.
\item In particular, if $\varsigma\leq\vartheta$, then $\cal{F}_\varsigma\subseteq\cal{F}_\vartheta$.
\end{enumerate}
\end{lemma}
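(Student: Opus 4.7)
The plan is to reduce everything to the defining property of stopping times and pre-$\varsigma$ sigma-algebras by partitioning according to the values of $\vartheta$ (and $\varsigma$), which are integer-valued. The main trick is the identity
$$\{\varsigma \leq \vartheta\} \cap \{\vartheta \leq n\} = \bigcup_{k=0}^n \{\varsigma \leq k\} \cap \{\vartheta = k\},$$
together with the observation that $\{\vartheta = k\} = \{\vartheta \leq k\} \setminus \{\vartheta \leq k-1\}$ belongs to $\cal{F}_k$. Since $\{\varsigma \leq k\} \in \cal{F}_k$ as well, each set in the union belongs to $\cal{F}_k \subseteq \cal{F}_n$, and hence so does the whole union; this gives $\{\varsigma \leq \vartheta\} \in \cal{F}_\vartheta$. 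For $\{\varsigma < \vartheta\}$ I would use essentially the same decomposition but with $\{\varsigma \leq k-1\}$ in place of $\{\varsigma \leq k\}$ (starting the union at $k=1$, since $\{\varsigma < 0\} = \emptyset$); and $\{\varsigma = \vartheta\} = \{\varsigma \leq \vartheta\} \setminus \{\varsigma < \vartheta\}$ then also belongs to $\cal{F}_\vartheta$ since sigma-algebras are closed under set differences.

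For (ii), I would run the same partition argument after intersecting with $A$:
$$A \cap \{\varsigma \leq \vartheta\} \cap \{\vartheta \leq n\} = \bigcup_{k=0}^n \bigl(A \cap \{\varsigma \leq k\}\bigr) \cap \{\vartheta = k\}.$$
The point is that $A \cap \{\varsigma \leq k\} \in \cal{F}_k$ directly by the definition of $\cal{F}_\varsigma$ (Definition~\ref{def:stopdt}), and $\{\vartheta = k\} \in \cal{F}_k$ as before, so the union lies in $\cal{F}_n$.

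Part (iii) is then immediate: if $\varsigma \leq \vartheta$ pointwise, then $\{\varsigma \leq \vartheta\} = \Omega$, and for any $A \in \cal{F}_\varsigma$, (ii) gives $A = A \cap \{\varsigma \leq \vartheta\} \in \cal{F}_\vartheta$.

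There is no real obstacle here; the only thing to be mindful of is that $\varsigma$ and $\vartheta$ are $\n_E$-valued, so I should briefly check the case $\vartheta = \infty$. This is handled automatically because the defining condition of $\cal{F}_\vartheta$ only constrains the intersections with $\{\vartheta \leq n\}$ for finite $n$, and the identities above cover all such finite $n$ exhaustively.
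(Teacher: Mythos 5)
Your proof is correct and takes essentially the same route as the paper: both reduce to intersections with $\{\vartheta\leq n\}$ and then partition by the (finite) value of $\vartheta$. The only minor cosmetic differences are that you decompose $\{\varsigma\leq\vartheta\}\cap\{\vartheta\leq n\}$ directly by the value $\vartheta=k$ (whereas the paper starts from $\{\varsigma<\vartheta\}$ and obtains the other two events via complements), and for (ii) you re-run the partition rather than reusing (i) via the identity $A\cap\{\varsigma\leq\vartheta\}\cap\{\vartheta\leq n\}=(A\cap\{\varsigma\leq n\})\cap(\{\varsigma\leq\vartheta\}\cap\{\vartheta\leq n\})$; neither difference is substantive.
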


Part~$(i)$ states that we are able to decide whether the event associated with $\varsigma$ has occurred by (or before, or at) $\vartheta$ if we have observed the chain up until $\vartheta$. Part~$(ii)$ states that anything we are able to deduce from observing the chain up until $\varsigma$, we are able to deduce from observing the chain up until  $\vartheta$ as long as $\varsigma$ is no greater than $\vartheta$. Part~$(iii)$ then follows directly from $(ii)$: if $\vartheta$ is always greater than $\varsigma$, then we are able to deduce by time $\vartheta$ anything we can deduce by time $\varsigma$. 

\begin{proof}$(i)$ Fix any natural number $n$ and note that
\begin{align*}\{\varsigma<\vartheta\}\cap\{\vartheta\leq n\}&=\bigcap_{m=0}^{n-1}\{\varsigma\leq m\}\cap\{m<\vartheta\}\cap\{\vartheta\leq n\}=\bigcap_{m=0}^{n-1}\{\varsigma\leq m\}\cap(\Omega\backslash\{\vartheta\leq m\})\cap\{\vartheta\leq n\}.
\end{align*}
Because sigma-algebras are closed under finite intersections and complements, the above belongs to $\cal{F}_n$ and it follows that  $\{\varsigma<\vartheta\}$ belongs to $\cal{F}_\vartheta$. An analogous argument shows that $\{\vartheta<\varsigma\}$ also belongs to $\cal{F}_\vartheta$. Given that sigma-algebras are closed under complements and intersections, it follows that $\{\vartheta\leq\varsigma\}=\Omega\backslash\{\varsigma<\vartheta\}$ and $\{\varsigma=\vartheta\}=\{\vartheta\leq\varsigma\}\cap (\Omega\backslash\{\vartheta<\varsigma\})$ also belong to $\cal{F}_\vartheta$.

$(ii)$ For any natural number $n$,
$$A\cap\{\varsigma\leq \vartheta\}\cap\{\vartheta\leq n\}=(A\cap\{\varsigma\leq n\})\cap(\{\varsigma\leq \vartheta\}\cap\{\vartheta\leq n\}).$$
Because $(i)$ shows that $\{\varsigma\leq \vartheta\}$ belongs to $\cal{F}_{\vartheta}$, and because sigma-algebras are closed under intersections, the above implies that $A\cap\{\varsigma\leq \vartheta\}\cap\{\vartheta\leq n\}$ belongs to $\cal{F}_n$, and the result follows.

$(iii)$ Follows directly from $(ii)$ as, in this case, $\{\varsigma\leq\vartheta\}=\Omega$.
\end{proof}

\begin{exercise}\label{exe:xvarsigma}Show that the event $\{\varsigma<\infty,X_\varsigma=x\}$ belongs to $\cal{F}_\varsigma$, for any $x$ in $\s$. Hint: notice that Lemma~\ref{lem:2stop}$i$ implies that $\{\varsigma=n\}$ belongs to $\cal{F}_n$ for all $n\geq0$.\end{exercise}
\subsubsection*{An important example: hitting times}

The \emph{hitting time}\index{hitting time}\index{first passage time|seealso{hitting time}} (or \emph{first passage time})\glsadd{sigmaa} $\sigma_A$ of a subset $A$ of the state space is the first time that the chain $X$ visits the subset (or infinity if it never does). Formally,
\begin{equation}\label{eq:hitd}\sigma_{A}(\omega):=\inf\{n\in\n:X_n(\omega)\in A\}\qquad\forall \omega\in\Omega,\end{equation}
where we are adhering to our convention that $\inf\emptyset=\infty$. We use the shorthand $\sigma_x:=\sigma_{\{x\}}$ for any $x$ in $\cal{S}$. 

Because we are able to deduce whether the chain has visited a set by observing the chain up until the (and including) the moment it does, hitting times are $(\cal{F}_n)_{n\in\n}$-stopping times:

\begin{proposition}\label{prop:hitisstop} Let $A$ be any subset of $\s$ and $(\cal{F}_n)_{n\in\n}$ denote the filtration generated by the chain (Definition~\ref{def:filt}). The hitting time $\sigma_A$ of $A$ defined in~\eqref{eq:hitd} is an $(\cal{F}_n)_{n\in\n}$-stopping time.
\end{proposition}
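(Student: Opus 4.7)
The plan is to verify directly the defining condition of a stopping time from Definition~\ref{def:stopdt}, namely that $\{\sigma_A\leq n\}\in\cal{F}_n$ for every $n\in\n$.

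First I would rewrite the event $\{\sigma_A\leq n\}$ in terms of the chain's positions at the times $0,1,\dots,n$. Since $\sigma_A$ is the infimum of the (nonempty or empty) set $\{k\in\n:X_k\in A\}$, the event that this infimum is at most $n$ is exactly the event that the chain visits $A$ at some point up to and including time $n$:
\begin{equation*}
\{\sigma_A\leq n\}=\bigcup_{k=0}^n\{X_k\in A\}.
\end{equation*}

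Next, I would note that each $X_k$ is $\cal{F}_k/2^{\s}$-measurable by the very definition of the filtration generated by the chain (Definition~\ref{def:filt}), so $\{X_k\in A\}\in\cal{F}_k$ for every $k\in\n$. Since the filtration is increasing, $\cal{F}_k\subseteq\cal{F}_n$ for each $k\leq n$, whence $\{X_k\in A\}\in\cal{F}_n$ for all $0\leq k\leq n$. A sigma-algebra is closed under finite unions, so the displayed identity shows $\{\sigma_A\leq n\}\in\cal{F}_n$, which is what we needed.

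There is no real obstacle here: the proof is a one-line rewriting plus the monotonicity of the filtration. The only point worth a moment's attention is the convention $\inf\emptyset=\infty$, which ensures that on $\{\sigma_A=\infty\}$ the union on the right-hand side is empty for every finite $n$, consistent with $\{\sigma_A\leq n\}$ being empty there.
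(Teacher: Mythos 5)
Your proof is correct and takes essentially the same direct-from-the-definition approach as the paper: rewrite $\{\sigma_A\leq n\}$ in terms of the chain's positions up to time $n$ and invoke the definition of the filtration. In fact your union decomposition $\{\sigma_A\leq n\}=\bigcup_{k=0}^n\{X_k\in A\}$ is the cleanly correct one; the paper's displayed identity for $n>0$ actually describes $\{\sigma_A=n\}$ rather than $\{\sigma_A\leq n\}$, a small slip that your version avoids.
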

\begin{proof}This follows immediately from the hitting time's definition as it implies that
$$\{\sigma_A\leq 0\}=\{X_0\in A\}\in\cal{F}_0,\quad \{\sigma_A\leq n\}=\bigcup_{m=0}^n\{X_m\in A\}\in\cal{F}_n\enskip\forall n>0.$$
\end{proof}

\subsection{Dynkin's formula}\label{sec:dynkindt}As we will see in both this section and Section~\ref{sec:dtstrmarkov}, the non-predictive nature of stopping times allows us to generalise  the results we have gathered this far so that they apply to stopping times instead of just deterministic times. Here, we make use of the martingale characterisation (Theorem~\ref{thrm:martin}) to prove \emph{Dynkin's formula}: a generalisation of the recurrence~\eqref{eq:dtlaw} satisfied by the time-varying law. In particular, summing over $n$ in \eqref{eq:dtlaw}, we obtain
$$\Ebl{1_{\{X_n=x\}}}=p_n(x)=\gamma(x)+\sum_{m=0}^{n-1}(p_{m+1}P(x)-p_m(x))=\gamma(x)+\Ebl{\sum_{m=0}^{n-1}(p(X_m,x)-1_{\{X_m=x\}})}.$$
If $f$ is bounded real-valued function $\s$, then multiplying both sides by $f(x)$ and summing over $x$ in $\s$ we obtain the following the integral version of~\eqref{eq:dtlaw}:
$$\Ebl{f(X_n)}=\gamma(f)+\Ebl{\sum_{m=0}^{n-1}(Pf(X_m)-f(X_m))}.$$
Dynkin's formula\index{Dynkin's formula} tells us that the same equation holds if we replace $n$ with a stopping time.

\begin{theorem}[Dynkin's Formula]\label{thrm:dynkin} Let $f$ be a bounded real-valued function on $\s$, $g$ be a real-valued function on $\n$, and $(\cal{F}_n)_{n\in\n}$ denote the filtration generated by the chain (Definition~\ref{def:filt}). If $\varsigma$ is an $(\cal{F}_n)_{n\in\n}$-stopping time and $\varsigma_n:=\varsigma\wedge n$ for any given $n\geq0$,
$$\Ebl{g(\varsigma_n)f(X_{\varsigma_n})}=g(0)\Ebl{f(X_0)}+\Ebl{\sum_{m=0}^{\varsigma_n-1}g(m+1)Pf(X_m)-g(m)f(X_m)}.$$

\end{theorem}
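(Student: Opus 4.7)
The plan is to prove Dynkin's formula by telescoping and applying the Markov property term-by-term (equivalently, this is optional stopping for the martingale from Theorem~\ref{thrm:martin} at the bounded stopping time $\varsigma_n$, but since $\varsigma_n$ is bounded by $n$ everything collapses to a finite sum and we can argue directly). First I would observe that $\varsigma_n:=\varsigma\wedge n$ is itself an $(\cal{F}_m)_{m\in\n}$-stopping time (from $\{\varsigma_n\leq m\}=\{\varsigma\leq m\}\cup\{n\leq m\}\in\cal{F}_m$), and in particular $\{\varsigma_n>m\}=\{\varsigma>m\}\cap\{n>m\}\in\cal{F}_m$ for every $m\in\n$.

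Next, I would telescope. Pointwise on $\Omega$, writing the sum as zero when $\varsigma_n=0$, we have
\[
g(\varsigma_n)f(X_{\varsigma_n})-g(0)f(X_0)=\sum_{m=0}^{\varsigma_n-1}\bigl[g(m+1)f(X_{m+1})-g(m)f(X_m)\bigr].
\]
Adding and subtracting $g(m+1)Pf(X_m)$ inside each summand splits this as
\[
\sum_{m=0}^{\varsigma_n-1}\bigl[g(m+1)Pf(X_m)-g(m)f(X_m)\bigr]+\sum_{m=0}^{\varsigma_n-1}g(m+1)\bigl[f(X_{m+1})-Pf(X_m)\bigr].
\]
Taking $\Eb_\gamma$ on both sides yields exactly the claimed formula provided the second sum has zero expectation.

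To finish, I would rewrite the second sum as $\sum_{m=0}^{n-1}1_{\{\varsigma>m\}}\,g(m+1)[f(X_{m+1})-Pf(X_m)]$, which is now a finite sum of bounded random variables (using that $f$, hence $Pf$, is bounded and that $g$ restricted to $\{0,1,\dots,n\}$ is bounded). By linearity of expectation and the tower property, each term equals
\[
\Ebl{1_{\{\varsigma>m\}}\,g(m+1)\,\Ebl{f(X_{m+1})-Pf(X_m)\mid\cal{F}_m}},
\]
where I used ``take out what is known'' (Theorem~\ref{thrm:condexpprops}$(v)$) on the $\cal{F}_m$-measurable factor $1_{\{\varsigma>m\}}g(m+1)$. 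The inner conditional expectation vanishes by the Markov property in the form~\eqref{eq:markovd}, giving $0$ for each $m$.

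There is no real obstacle: the only point requiring care is the $\cal{F}_m$-measurability of $1_{\{\varsigma>m\}}$, which is the defining property of a stopping time, and the interchange of the finite sum with the expectation, which is free because the sum has at most $n$ summands, each bounded uniformly by $\|g\|_{\infty,[0,n]}(\|f\|_\infty+\|Pf\|_\infty)$.
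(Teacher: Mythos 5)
Your proof is correct and is the paper's argument with the two intermediate results unrolled: the paper identifies $M_n:=g(n)f(X_n)-g(0)f(X_0)-\sum_{m=0}^{n-1}(g(m+1)Pf(X_m)-g(m)f(X_m))$ as an $(\cal{F}_n)_{n\in\n}$-adapted $\Pb_\gamma$-martingale via the martingale characterisation (Theorem~\ref{thrm:martin}) and then applies Doob's optional stopping (Theorem~\ref{thrm:doobsopt}) at the bounded stopping time $\varsigma_n$ to conclude $\Ebl{M_{\varsigma_n}}=\Ebl{M_0}=0$. Your telescoping decomposition, together with the step where the $\cal{F}_m$-measurable factor $1_{\{\varsigma>m\}}g(m+1)$ is taken out and the Markov property~\eqref{eq:markovd} kills $\Ebl{f(X_{m+1})-Pf(X_m)\mid\cal{F}_m}$, is precisely what those two theorems combine to give, as you observe yourself.
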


The proof of Dynkin's formula consists of combining the martingale characterisation of the chain (Theorem~\ref{thrm:martin}) with Doob's optional stopping theorem:

\begin{theorem}[Doob's optional stopping]\label{thrm:doobsopt}Let $(\Omega,\cal{G},\Pb)$ be a probability triplet, $(\cal{G}_n)_{n\in\n}$ be a filtration contained in $\cal{G}$, and $(M_n)_{n\in\n}$ be a $(\cal{G}_n)_{n\in\n}$-adapted $\Pb$-martingale. For any bounded $(\cal{G}_n)_{n\in\n}$-stopping time $\varsigma$, $\Ebb{M_{\varsigma}}=\Ebb{M_0}$.
\end{theorem}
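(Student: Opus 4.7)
The plan is to exploit the boundedness of $\varsigma$ by writing $M_\varsigma$ as a finite telescoping sum, and then use the martingale property together with the fact that $\{\varsigma\geq n\}$ is determined by the chain's history up to time $n-1$ in order to show each increment has zero expectation.

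Concretely, fix $N\in\n$ with $\varsigma\leq N$ (which exists by assumption). On any $\omega\in\Omega$ I would write
\begin{equation*}
M_\varsigma(\omega)=M_0(\omega)+\sum_{n=1}^{N}(M_n(\omega)-M_{n-1}(\omega))\,1_{\{\varsigma\geq n\}}(\omega),
\end{equation*}
since the sum truncates exactly at $\varsigma(\omega)$: for $n\leq\varsigma(\omega)$ each term is kept and the differences telescope to $M_{\varsigma(\omega)}-M_0$, while for $n>\varsigma(\omega)$ the indicator kills the term. Taking expectations gives $\Ebb{M_\varsigma}=\Ebb{M_0}+\sum_{n=1}^{N}\Ebb{(M_n-M_{n-1})1_{\{\varsigma\geq n\}}}$ (interchange of the finite sum and integral is automatic). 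So the claim reduces to showing each summand vanishes.

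For this, I would note that $\{\varsigma\geq n\}=\Omega\setminus\{\varsigma\leq n-1\}\in\cal{G}_{n-1}$ by the stopping-time definition, so $1_{\{\varsigma\geq n\}}$ is $\cal{G}_{n-1}$-measurable and bounded. Using the tower property and ``take out what is known'' (Theorem~\ref{thrm:condexpprops}$(iv,v)$), together with the martingale identity $\Ebb{M_n|\cal{G}_{n-1}}=M_{n-1}$,
\begin{equation*}
\Ebb{(M_n-M_{n-1})1_{\{\varsigma\geq n\}}}=\Ebb{1_{\{\varsigma\geq n\}}\,\Ebb{M_n-M_{n-1}\mid\cal{G}_{n-1}}}=0.
\end{equation*}
Summing yields $\Ebb{M_\varsigma}=\Ebb{M_0}$, as desired.

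The only point requiring a little care is integrability, since the differences $M_n-M_{n-1}$ are $\Pb$-integrable (the $M_n$ are, being a martingale) but not obviously uniformly bounded; however, boundedness of $\varsigma$ by the constant $N$ means we only ever sum at most $N$ integrable terms, so no truncation or uniform-integrability argument is needed. This is precisely where the hypothesis ``$\varsigma$ bounded'' does the heavy lifting, and it is the feature that would fail in the general optional stopping theorem for unbounded stopping times.
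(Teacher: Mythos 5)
Your proof is correct and follows essentially the same argument as the paper: both reduce the claim to showing $\Ebb{(M_n-M_{n-1})1_{\{\varsigma\geq n\}}}=0$ by conditioning on $\cal{G}_{n-1}$, using $\{\varsigma\geq n\}\in\cal{G}_{n-1}$. The paper phrases this as a one-step recursion $\Ebb{M_{\varsigma\wedge n}}=\Ebb{M_{\varsigma\wedge(n-1)}}$ whereas you write the telescoping sum explicitly, but the two are the same computation.
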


\begin{proof}Note that for any natural number $n$, $\varsigma\wedge n\neq \varsigma\wedge(n-1)$ only if $\varsigma>n-1$ in which case $\varsigma\geq n$. For this reason,
\begin{align*}\Ebb{M_{\varsigma\wedge n}-M_{\varsigma\wedge (n-1)}}&=\Ebb{1_{\{\varsigma>n-1\}}(M_{n}-M_{n-1})}=\Ebb{\Ebb{1_{\{\varsigma>n-1\}}(M_{n}-M_{n-1})|\cal{G}_n}}\\
&=\Ebb{1_{\{\varsigma>n-1\}}(\Ebb{M_{n}|\cal{G}_n}-M_{n-1})}=0.\end{align*}
where the final equality follows from the martingale property of $(M_n)_{n\in\n}$ and the others from the tower rule and take-out-what-is-known properties of conditional expectation (Theorem~\ref{thrm:condexpprops}$i,iv,v$). Thus,
$$\Ebb{M_{\varsigma\wedge n}}=\Ebb{M_{\varsigma\wedge (n-1)}}=\Ebb{M_{\varsigma\wedge (n-2)}}=\dots=\Ebb{M_{0}}\quad\forall n\geq0.$$
Because $\varsigma$ is bounded, there exists an $n$ such that $\varsigma\wedge n=\varsigma$ and the result follows.
\end{proof}

\begin{proof}[Proof of Theorem~\ref{thrm:dynkin}]Because $f$ and $g\circ \varsigma_n$ are bounded,  all random variables in the equation are well-defined and  Theorem \ref{thrm:martin} shows that $M=(M_n)_{n\in\n}$ in \eqref{eq:martthe} is an $(\cal{F}_n)_{n\in\n}$-adapted $\Pb_\gamma$-martingale. Because $\varsigma_n$ is a bounded $(\cal{F}_n)_{n\in\n}$-stopping time, Doob's optional stopping (Theorem \ref{thrm:doobsopt}) then tells us that $\Ebl{M_{\varsigma_n}}=\Ebl{M_0}=0$.
\end{proof}

\ifdraft 

\subsubsection*{Notes and references} This ``martingale problem'' approach to Markov processes, pioneered by Daniel W. Stroock and S. R. Srinivasa Varadhan in their study of diffusion processes, was popularised (at least partly) due to the efforts of Thomas G. Kurtz and his co-authors and has yielded many fruitful results. See \citep{Ethier1986} and references therein.

\fi

\subsection{Stopping distributions and occupation measures}\label{sec:morehitdt} With a stopping time $\varsigma$ we associate a \emph{stopping distribution}\index{stopping distribution}\glsadd{mu} $\mu$ and \emph{occupation measure}\index{occupation measure}\glsadd{nu} $\nu$ defined by
\begin{align}\mu(n,x):&=\Pbl{\{\varsigma=n,X_\varsigma=x\}},\qquad \forall n\geq0,\quad x\in\s,\label{eq:edisdefd}\\
\nu(n,x):&=\Ebl{\sum_{m=0}^{\varsigma-1} 1_n(m)1_{x}(X_m)}=\Ebl{1_{x}(X_n)\sum_{m=0}^{\varsigma-1} 1_n(m)}\label{eq:eoccdefd}\\
&=\Pbl{\{\varsigma>n,X_n=x\}},\qquad\forall n\geq0,\quad x\in\s.\nonumber\end{align}
In other words, $\mu(n,x)$ is the probability that the chain stops at time $n$ while in state $x$ and $\nu(n,x)$ is the probability that the chain is in state $x$ at time $n$ and that it has not yet stopped.

The mass of the stopping distribution is simply the probability that the stopping time is finite:
\begin{equation}\mu(\n,\s)=\sum_{n=0}^\infty\sum_{x\in\s} \Pbl{\{\varsigma=n,X_\varsigma=x\}}=\Pbl{\{\varsigma<\infty\}};\label{eq:mumassd}\end{equation}
while that of the occupation measure is the mean stopping time:
\begin{align}\nu(\n,\s)&=\sum_{n=0}^\infty\sum_{x\in\s}\Ebl{\sum_{m=0}^{\varsigma-1}1_n(m)1_{x}(X_m)}=\Ebl{\left(\sum_{m=0}^{\varsigma-1}\sum_{n=0}^\infty1_n(m)\right)\left(\sum_{x\in\s}1_{x}(X_m)\right)}\nonumber\\
&=\Ebl{\sum_{m=0}^{\varsigma-1} 1} =\Ebl{\varsigma}.\label{eq:numassd}\end{align}

The stopping distribution and the occupation measure are tied together by a set of linear equations:
\begin{lemma}\label{eqnsd}
The pair $(\mu,\nu)$ satisfies 
\begin{equation}\label{eq:eoed}\begin{array}{ll} \mu(0,x)+\nu(0,x)=\gamma(x),\quad &\forall x\in\s,  \\ \mu(n,x)+\nu(n,x)=\sum_{y\in\s}\nu(n-1,y)p(y,x), \quad&\forall n>0, \enskip x\in\s .\end{array}\end{equation}
\end{lemma}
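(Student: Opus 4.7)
The plan is to recognise that both equations follow from partitioning events cleanly and then applying the Markov property to the resulting sums.

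For the initial condition, the idea is simply to observe that $\{\varsigma = 0\}$ and $\{\varsigma > 0\}$ partition $\Omega$, so that
\[
\mu(0,x) + \nu(0,x) = \Pbl{\{\varsigma = 0, X_0 = x\}} + \Pbl{\{\varsigma > 0, X_0 = x\}} = \Pbl{\{X_0 = x\}} = \gamma(x).
\]
This step is purely a set-theoretic decomposition and needs no probabilistic machinery beyond the definitions.

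For the recursion at $n > 0$, I would first combine $\mu$ and $\nu$ on the left-hand side via the disjoint union $\{\varsigma \geq n\} = \{\varsigma = n\} \cup \{\varsigma > n\}$ to obtain
\[
\mu(n,x) + \nu(n,x) = \Pbl{\{\varsigma \geq n,\, X_n = x\}} = \Pbl{\{\varsigma > n-1,\, X_n = x\}}.
\]
The key move is now to condition on $\cal{F}_{n-1}$. Since $\varsigma$ is an $(\cal{F}_n)_{n\in\n}$-stopping time, $\{\varsigma \leq n-1\} \in \cal{F}_{n-1}$, hence its complement $\{\varsigma > n-1\}$ also lies in $\cal{F}_{n-1}$, as does $\{X_{n-1} = y\}$ for each $y \in \s$. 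Writing $1 = \sum_{y\in\s} 1_{\{X_{n-1}=y\}}$ inside the expectation and using the take-out-what-is-known property (Theorem \ref{thrm:condexpprops}$(v)$) together with the Markov property \eqref{eq:markovd0}, I would compute
\[
\Ebl{1_{\{\varsigma > n-1\}} 1_{\{X_{n-1}=y\}} 1_{\{X_n=x\}}}
= \Ebl{1_{\{\varsigma > n-1\}} 1_{\{X_{n-1}=y\}} \Pbl{\{X_n = x\}\mid \cal{F}_{n-1}}}
= p(y,x)\, \nu(n-1,y).
\]
Summing over $y \in \s$ (swapping the sum and the expectation by Tonelli, since every term is non-negative) gives exactly $\sum_{y\in\s} \nu(n-1,y) p(y,x)$, completing the recursion.

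The only real obstacle is the mildly delicate point of justifying that $\{\varsigma > n-1\} \in \cal{F}_{n-1}$; everything else is bookkeeping. I would isolate that observation as the first line of the second bullet, then let the Markov property and linearity do the remaining work.
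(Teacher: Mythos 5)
Your proof is correct, but it takes a genuinely different route from the paper's. The paper derives the recursion by invoking Dynkin's formula (Theorem~\ref{thrm:dynkin}) with $g:=1_n$ and $f:=1_x$, applied to the truncated stopping time $\varsigma_k:=\varsigma\wedge k$, and then passes to the limit $k\to\infty$ via monotone and bounded convergence; this route funnels the argument through the martingale characterisation and Doob's optional stopping. You instead combine $\mu$ and $\nu$ into $\Pbl{\{\varsigma>n-1,X_n=x\}}$, observe that $\{\varsigma>n-1\}\in\cal{F}_{n-1}$, and apply the Markov property directly via the tower and take-out-what-is-known properties, with Tonelli handling the interchange of sum and expectation. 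Your derivation is shorter and more elementary: it exposes the probabilistic content (conditioning on the history one step back) without detouring through the optional stopping machinery, and it avoids the truncation-and-limit bookkeeping entirely. What the paper's route buys in exchange is uniformity with the rest of the development: Dynkin's formula is the workhorse the book sets up for exactly such calculations, and using it here both illustrates the tool and generalises more readily to non-indicator $f$ and $g$. Both proofs are valid; yours is the more economical for this particular lemma.
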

\begin{proof}The first set of equations follow directly from the definitions of $\mu$ and $\nu$. The second set requires a bit more work. Pick any positive integer $n$ and state $x$ in $\s$. Setting $g:=1_n$ and $f:=1_x$  in Dynkin's formula (Theorem~\ref{thrm:dynkin}) yields
\begin{equation}\label{eq:genintbd}\Ebl{1_n(\varsigma_k)1_x(X_{\varsigma_k})}+\Ebl{\sum_{m=0}^{\varsigma_k-1}1_n(m)1_x(X_m)}=\Ebl{\sum_{m=0}^{\varsigma_k-1}1_n(m+1)p(X_m,x)}\quad\forall k>0,\end{equation}
where  $\varsigma_k$ denotes the minimum $\varsigma\wedge k$ of $\varsigma$ and   $k$.  For each $\omega$ in $`\Omega$, the sequence $(\varsigma_k(\omega))_{k\in\zp}$ is increasing and has limit $\varsigma(\omega)$. Thus, monotone convergence implies that
\begin{align}\label{eq:lim2d}&\lim_{k\to\infty}\Ebl{\sum_{m=0}^{\varsigma_k-1}1_n(m)1_x(X_m)}=\Ebl{\sum_{m=0}^{\varsigma-1}1_n(m)1_x(X_m)}, \\ \label{eq:lim3d}&\lim_{k\to\infty}\Ebl{\sum_{m=0}^{\varsigma_k-1}1_n(m+1)P(X_m,x)}=\Ebl{\sum_{m=0}^{\varsigma-1}1_n(m+1)P(X_m,x)}.\end{align}
Similarly, bounded convergence yields
\begin{align}\label{eq:lim1d}\lim_{n\to\infty}\Ebl{1_{n}(\varsigma_k)1_{x}(X_{\varsigma_k})}=\Ebl{1_{n}(\varsigma)1_{x}(X_{\varsigma})}=\Pbl{\{\varsigma=n,X_\varsigma=x\}}.\end{align}

Putting \eqref{eq:genintbd}--\eqref{eq:lim1d} together we have that
$$\Pbl{\{\varsigma=n,X_\varsigma=x\}}+\Ebl{\sum_{m=0}^{\varsigma-1}1_n(m)1_x(X_m)}=\Ebl{\sum_{m=0}^{\varsigma-1}1_n(m+1)P(X_m,x)}.$$
Using Tonelli's Theorem, we obtain the second equation in \eqref{eq:eoed}.
\end{proof}

\subsubsection*{The marginals}  The \emph{time marginal}\index{stopping distribution (time marginal)} $\mu_T$ of the stopping distribution  
\begin{equation}\label{eq:mut}\mu_T(n):=\sum_{x\in\s}\mu(n,x)=\Pbl{\bigcup_{x\in\s}\{\varsigma=n,X_\varsigma=x\}}=\Pbl{\{\varsigma=n\}},\end{equation}
is the distribution of the exit time itself. Technically, it is the distribution of $\varsigma$ restricted to $\n$ because $\varsigma$ may take the value $\infty$ (indicating that the chain never stops). However,  we recover the full distribution of $\varsigma$ from the above using $\Pbl{\{\varsigma=\infty\}}=1-\Pbl{\{\varsigma<\infty\}}=1-\mu_T(\n)$. Similarly, it is straightforward to check that the time marginal $\nu_T(n):=\nu(n,\s)$ is equal to one minus the cumulative distribution function of $\varsigma$.

The \emph{space marginals} $\mu_S$\glsadd{mus} and $\nu_S$\glsadd{nus} of the stopping distribution\index{stopping distribution (space marginal)} and occupation measure\index{occupation measure (space marginal, stopping time)},
\begin{align}
\label{eq:mus}\mu_S(x)&:=\sum_{n=0}^\infty\mu(n,x)=\Pbl{\bigcup_{n=0}^\infty\{\varsigma=n,X_\sigma=x\}}=\Pbl{\{\varsigma<\infty,X_\varsigma=x\}},\\
\label{eq:nus}\nu_S(x)&:=\sum_{n=0}^\infty\nu(n,x)=\Ebl{\sum_{m=0}^{\varsigma-1} \left(\sum_{n=0}^\infty 1_n(m)\right)1_{x}(X_m)}=\Ebl{\sum_{m=0}^{\varsigma-1} 1_{x}(X_m)},
\end{align}
tell us where the chain stops and where it spends time before stopping, respectively. Explicitly, $\mu_S(x)$ is the probability that the chain stops in $x$, while $\nu_S(x)$ denotes the expected number of visits  the chain makes to $x$ before stopping. Summing over $n$ in \eqref{eq:eoed}, we find that the space marginals of the exit distribution and occupation measure also satisfy  a set of linear equations:
\begin{corollary}\label{eqnsd2} 
The pair $(\mu_S,\nu_S)$ satisfies 
$$\mu_S(x)+\nu_S(x)=\gamma(x)+\sum_{y\in\s}\nu_S(y)p(y,x) \quad\forall  x\in\cal{S}.$$
\end{corollary}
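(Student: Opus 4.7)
The plan is to obtain the displayed equation by summing both equations in \eqref{eq:eoed} over $n \in \n$ and recognising the space marginals $\mu_S$ and $\nu_S$ on each side. Because every term in sight is non-negative, all the interchanges of summation are justified by Tonelli's theorem, so there are no real convergence issues to worry about; the argument is essentially book-keeping.

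Concretely, I would fix $x \in \s$ and start by summing the left-hand sides of the two equations in \eqref{eq:eoed}: the $n=0$ term contributes $\mu(0,x)+\nu(0,x)$ and the $n\geq 1$ terms contribute $\sum_{n=1}^\infty(\mu(n,x)+\nu(n,x))$. Adding these and invoking the definitions \eqref{eq:mus} and \eqref{eq:nus} of $\mu_S$ and $\nu_S$ yields $\mu_S(x)+\nu_S(x)$.

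For the right-hand side, the $n=0$ equation contributes $\gamma(x)$, while summing the $n\geq 1$ equations gives
\[
\sum_{n=1}^\infty\sum_{y\in\s}\nu(n-1,y)p(y,x).
\]
Since $\nu(n-1,y)\geq 0$ and $p(y,x)\geq 0$, Tonelli lets me swap the two sums and re-index $m:=n-1$, obtaining
\[
\sum_{y\in\s}\left(\sum_{m=0}^\infty \nu(m,y)\right)p(y,x)=\sum_{y\in\s}\nu_S(y)p(y,x),
\]
again by the definition of $\nu_S$. Combining the two sides gives the claimed identity.

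I do not anticipate any real obstacle; the only subtle point is making sure every rearrangement is over non-negative quantities, which is immediate since $\mu$, $\nu$, $\gamma$, and the entries of $P$ are all $[0,\infty]$-valued by construction. Hence no hypothesis beyond those of Lemma~\ref{eqnsd} is required.
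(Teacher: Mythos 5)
Your proposal is correct and coincides with the paper's own argument, which is simply the remark preceding the corollary that one sums over $n$ in the equations \eqref{eq:eoed} of Lemma~\ref{eqnsd}. Your explicit justification of the interchange of summation via Tonelli and non-negativity is a minor elaboration that the paper leaves implicit but does not change the route.
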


\ifdraft
\subsubsection*{Notes and references} Corollary \ref{eqnsd2} is Proposition 4.1 in \citep{Pitman1977}.
\fi

\subsection{Exit times*}\label{sec:exit}

In applications, we are often interested in how long the chain takes to exit a given subset of the state space and what part of the domain's boundary does the chain cross to exit. To study this problem, we single out a subset $\cal{D}$ of the state space $\s$ and refer to it as the \emph{domain}\index{domain}\glsadd{D}. The \emph{exit time}\index{exit time} $\sigma$ from $\cal{D}$ is the first instant that chain first lies outside of $\cal{D}$\glsadd{sigma}:
\begin{equation}\label{eq:eddef}\sigma(\omega):=\inf{\{n\geq0:X_n(\omega)\not\in\cal{D}\}}\quad\forall\omega\in\Omega.\end{equation}
Equivalently, the exit time is the hitting time $\sigma_{\cal{D}^c}$~\eqref{eq:hitd} of the domain's complement $\cal{D}^c$. Proposition~\ref{prop:hitisstop} shows that $\sigma$ is an $(\cal{F}_n)_{n\in\n}$-stopping time~\eqref{def:stopdt}, where $(\cal{F}_n)_{n\in\n}$ denotes the filtration generated by the chain (Definition~\ref{def:filt}).

\subsubsection*{The exit distribution and occupation measure} Let $\mu$ and $\nu$ be as in~\eqref{eq:edisdefd}--\eqref{eq:eoccdefd} with $\sigma$ replacing $\varsigma$\glsadd{mu}\glsadd{nu}\index{exit distribution}\index{occupation measure (exit time)}. In this case, the $\mu(n,x)$ is the probability that the chain first exits the domain at time $n$ by moving to state $x$, and we refer to $\mu$ as the \emph{exit distribution}. Similarly, $\nu(n,x)$ denotes the probability that the chain is in state $x$ at time $n$ and that it has not yet exited the domain. Because the chain lies outside the domain at the time of exit, the support of the exit distribution $\mu$ is contained outside of the domain. Similarly, because before exiting the chain lies inside the domain, the support of the occupation measure $\nu$ is contained outside of the domain. In other words,
\begin{equation}\label{eq:suppstd}\mu(\n,\cal{D})=0,\qquad \nu(\n,\cal{D}^c)=0.\end{equation}
Combining the above with  Lemma \ref{eqnsd}, we obtain  an equivalent description of the exit distribution and occupation measure in terms of a linear recursion:
\begin{theorem}[Analytical characterisation of $\mu$ and $\nu$]\label{characttd} Let $\sigma$ denote the exit time~\eqref{eq:eddef} from the domain $\cal{D}$. Its  exit distribution $\mu$~\eqref{eq:edisdefd} and occupation measure $\nu$~\eqref{eq:eoccdefd}  are given by
\begin{equation}\label{eq:eoed2t}\begin{array}{lll}\nu(n,x)&=1_{\cal{D}}(x)\hat{p}_n(x),&\forall n>0,\quad\nu(0,x)=1_{\cal{D}}(x)\gamma(x),\quad\forall x\in\s,\\
\mu(n,x)&=1_{\cal{D}^c}(x)(\hat{p}_n(x)-\hat{p}_{n-1}(x)),&\forall n>0,\quad \mu(0,x)=1_{\cal{D}^c}(x)\gamma(x),\quad\forall  x\in\s,\end{array}
\end{equation}
where $\hat{p}_n=(\hat{p}_n(x))_{x\in\s}$ is defined by the difference equation
\begin{equation}\label{eq:hatdtimelaw}\hat{p}_{n+1}(x)=1_{\cal{D}^c}(x)\hat{p}_n(x)+\sum_{x'\in\cal{D}}\hat{p}_n(x')p(x',x)\quad\forall x\in\s,\enskip n\geq0,\qquad \hat{p}_0(x)=\gamma(x)\quad\forall x\in\s.\end{equation}
\end{theorem}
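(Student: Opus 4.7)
The plan is to construct an $\hat{p}_n$ out of $\mu$ and $\nu$, verify that it satisfies \eqref{eq:hatdtimelaw}, and then invert the construction to recover \eqref{eq:eoed2t}. Concretely, I would define
$$\hat{p}_n(x):=\nu(n,x)+\sum_{m=0}^n\mu(m,x),\qquad\forall n\in\n,\enskip x\in\s.$$
The support properties \eqref{eq:suppstd} make this split cleanly: $\nu$ vanishes on $\cal{D}^c$ and $\mu$ vanishes on $\cal{D}$, so $\hat{p}_n(x)=\nu(n,x)$ when $x\in\cal{D}$ and $\hat{p}_n(x)=\sum_{m=0}^n\mu(m,x)$ when $x\in\cal{D}^c$. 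From this it is immediate that $\nu(n,x)=1_{\cal{D}}(x)\hat{p}_n(x)$ and, for $n>0$, $\mu(n,x)=1_{\cal{D}^c}(x)(\hat{p}_n(x)-\hat{p}_{n-1}(x))$, so once the difference equation \eqref{eq:hatdtimelaw} is established we will be done.

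To check \eqref{eq:hatdtimelaw}, first note that at $n=0$ the first equation of Lemma~\ref{eqnsd} together with \eqref{eq:suppstd} gives $\hat{p}_0(x)=\mu(0,x)+\nu(0,x)=\gamma(x)$. For $n>0$ I would invoke the second equation of Lemma~\ref{eqnsd}, namely $\mu(n,x)+\nu(n,x)=\sum_{y\in\s}\nu(n-1,y)p(y,x)$, and use that $\nu(n-1,\cdot)$ is supported on $\cal{D}$ to rewrite the right-hand side as $\sum_{y\in\cal{D}}\hat{p}_{n-1}(y)p(y,x)$. Splitting into two cases then yields the claim: when $x\in\cal{D}$ the sum $\hat{p}_n(x)=\nu(n,x)$ equals $\sum_{y\in\cal{D}}\hat{p}_{n-1}(y)p(y,x)$, matching \eqref{eq:hatdtimelaw} since $1_{\cal{D}^c}(x)=0$; when $x\in\cal{D}^c$ we have $\hat{p}_n(x)-\hat{p}_{n-1}(x)=\mu(n,x)=\mu(n,x)+\nu(n,x)=\sum_{y\in\cal{D}}\hat{p}_{n-1}(y)p(y,x)$, which rearranges to \eqref{eq:hatdtimelaw} since $1_{\cal{D}^c}(x)=1$.

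Finally, to conclude that this $\hat{p}_n$ is the unique sequence satisfying \eqref{eq:hatdtimelaw}, a straightforward induction on $n$ suffices: the initial condition fixes $\hat{p}_0$ and the recursion determines $\hat{p}_{n+1}$ from $\hat{p}_n$. There are no convergence subtleties to worry about because all sums appearing involve non-negative terms bounded above by $1$ (they are probabilities), so Tonelli justifies any interchange and all quantities are finite.

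The main obstacle, such as it is, is purely bookkeeping: one has to keep track of which of $\mu$ or $\nu$ contributes at $(n,x)$ depending on whether $x\in\cal{D}$ or $x\in\cal{D}^c$, and be careful that the ``$-\hat{p}_{n-1}(x)$'' piece in the formula for $\mu$ telescopes correctly. Everything of substance is already packaged in Lemma~\ref{eqnsd} and in \eqref{eq:suppstd}; the rest is unwrapping the definitions.
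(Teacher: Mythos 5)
Your proof is correct and carries out exactly the derivation the paper signals with the phrase ``Combining the above with Lemma~\ref{eqnsd}'': define $\hat{p}_n(x):=\nu(n,x)+\sum_{m=0}^{n}\mu(m,x)$, read off \eqref{eq:eoed2t} immediately from the support conditions \eqref{eq:suppstd}, and verify the recursion \eqref{eq:hatdtimelaw} case by case using Lemma~\ref{eqnsd}. The paper then supplies a probabilistic picture (coupling $X$ with the absorbed chain $\hat{X}$ having one-step matrix $\hat{P}$ in \eqref{eq:phat}) rather than spelling out the algebra; your argument is the formal counterpart of that discussion, so the two are essentially the same proof.
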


The ideas behind  Theorem~\ref{characttd} are simple: consider a second chain $\hat{X}$ which is identical to $X$ except that every state outside of the domain $\cal{D}$ has been turned into an absorbing state (i.e.\ a state the chain cannot leave). In other words, $\hat{X}$ has one-step matrix  $\hat{P}:=(\hat{p}(x,y))_{x,y\in\s}$ with
\begin{equation}\label{eq:phat}\hat{p}(x,y):=\left\{\begin{array}{ll} p(x,y)&\text{if }x\in\cal{D} \\ 1_x(y)&\text{if }x\not\in\cal{D} \end{array}\right.\quad\forall x,y\in\s.\end{equation}
Theorem~\ref{dtlawchar} shows that the time-varying law of $\hat{X}$ is $\hat{p}=(\hat{p}_n)_{n\in\n}$ in \eqref{eq:hatdtimelaw}. Suppose that we built $\hat{X}$ by running Algorithm~\ref{dtmcalg} with $\hat{P}$ replacing $P$ but keeping the same $X_0,U_1,U_2,\dots$ Because $X$ and $\hat{X}$ are updated using the same rules for as long as they remain inside the domain $\cal{D}$, the two chains are identical up until (and including) the moment that they simultaneously exit $\cal{D}$. Thus, $X$ and $\hat{X}$ leave $\cal{D}$ at the same time and via the same state. For this reason, the probability $\mu(\{0,1,\dots,n\},x)$ that $X$ has exited the domain by time $n$  via state $x$ is also the probability that $\hat{X}$ exited by $n$ via $x$. Because $\hat{X}$ gets stuck in the first state it visits outside of $\cal{D}$, it follows that $\mu(\{0,1,\dots,n\},x)$ is the probability $\hat{p}_n(x)$ that $\hat{X}$ is in state $x$ at time $n$. Using $\mu(n,x)=\mu(\{0,1,\dots,n\},x)-\mu(\{0,1,\dots,n-1\},x)$, we obtain the second equation in \eqref{eq:eoed2t}. The equation for the  occupation measure follows from similar reasons. The key here is that $\hat{X}$ is in a state belonging to the domain only if it has not yet exited. Thus, the probability $\nu(n,x)$ that $\hat{X}$ is in a state $x$ belonging to $\cal{D}$ at time $n$ and that it has not yet exited $\cal{D}$ is simply the probability $\hat{p}_n(x)$ that it is in state $x$ at time $n$.

\subsubsection*{The space marginals}Just as with the joint exit distribution $\mu$ and occupation measure $\nu$ of an exit time,  their space marginals $\mu_S$\glsadd{mus}\index{exit distribution (space marginal)} and $\nu_S$\glsadd{nus}\index{occupation measure (space marginal, exit time)} are fully specified by a set of linear equations:

\begin{theorem}[Analytical characterisation of $\mu_S$ and $\nu_S$]\label{charactd}Let $\sigma$ denote the exit time~\eqref{eq:eddef} from the domain $\cal{D}$. The space marginal $\mu_S$ in \eqref{eq:mus} of its  exit distribution  is given by 
\begin{equation}\label{eq:mueqsmd}\mu_S(x)=\left\{\begin{array}{ll}\gamma(x)+\sum_{z\in\cal{D}}\nu_S(z)p(z,x)&\forall x\not\in\cal{D}\\
0& \forall x\in\cal{D}\end{array}\right..\end{equation}
The space marginal $\nu_S$ in \eqref{eq:nus} of its occupation measure is the minimal non-negative solution to the equations
\begin{equation}\label{eq:nueqsmd}
\nu_S(x)=\left\{\begin{array}{ll}\gamma(x)+\sum_{z\in\cal{D}}\nu_S(z)p(z,x)&\forall x\in\cal{D}\\0&\forall x\not\in\cal{D}\end{array}\right..
\end{equation}
By minimal I mean that, if $\rho=(\rho(x))_{x\in\s}$ is any non-negative solution of \eqref{eq:nueqsmd}, then $\rho(x)\geq \nu_S(x)$ for all $x$ in $\cal{D}$.
\end{theorem}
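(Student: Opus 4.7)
The plan is to first derive \eqref{eq:mueqsmd} and the fact that $\nu_S$ solves \eqref{eq:nueqsmd} directly from Corollary \ref{eqnsd2}, and then establish minimality of $\nu_S$ by iterating the equation and identifying the resulting series with $\nu_S$ via Theorem \ref{characttd}.

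For the first step, I would observe that because \eqref{eq:suppstd} is a sum of non-negative terms, $\mu(n,x)=0$ for all $n$ when $x\in\cal{D}$ and $\nu(n,x)=0$ for all $n$ when $x\notin\cal{D}$; summing over $n$ yields $\mu_S(x)=0$ on $\cal{D}$ and $\nu_S(x)=0$ on $\cal{D}^c$. Substituting these support conditions into Corollary \ref{eqnsd2}, and using $\sum_{y\in\s}\nu_S(y)p(y,x)=\sum_{y\in\cal{D}}\nu_S(y)p(y,x)$, the equation of Corollary \ref{eqnsd2} splits according to whether $x\in\cal{D}$ or $x\notin\cal{D}$ into precisely \eqref{eq:mueqsmd} and the equation in \eqref{eq:nueqsmd}.

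For minimality, let $\rho$ be any non-negative solution to \eqref{eq:nueqsmd}; then $\rho$ vanishes off $\cal{D}$ and, on $\cal{D}$, satisfies $\rho(x)=\gamma(x)+\sum_{z\in\cal{D}}\rho(z)p(z,x)$. Define $s_0(x):=\gamma(x)$ and $s_{k+1}(x):=\sum_{z\in\cal{D}}s_k(z)p(z,x)$. Substituting the recursion into itself $N$ times (using Tonelli to freely interchange sums of non-negative terms) gives, for $x\in\cal{D}$,
$$\rho(x)=\sum_{k=0}^{N}s_k(x)+r_{N+1}(x)$$
with $r_{N+1}(x)\geq 0$. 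Dropping the remainder and letting $N\to\infty$ yields $\rho(x)\geq\sum_{k=0}^{\infty}s_k(x)$.

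The final step is to identify $\sum_{k=0}^\infty s_k(x)$ with $\nu_S(x)$ on $\cal{D}$. Comparing the recursion for $s_k$ with \eqref{eq:hatdtimelaw} restricted to $x\in\cal{D}$ (where the indicator $1_{\cal{D}^c}$ vanishes and the sum runs over $\cal{D}$), a straightforward induction on $k$ gives $s_k(x)=\hat p_k(x)$ for $x\in\cal{D}$. Theorem \ref{characttd} then delivers $\sum_{k=0}^\infty s_k(x)=\sum_{k=0}^\infty\hat p_k(x)=\sum_{k=0}^\infty\nu(k,x)=\nu_S(x)$, closing the argument. The only real subtlety is the bookkeeping with support conditions and the matching of the two recursions on $\cal{D}$; everything else is routine thanks to non-negativity and Tonelli.
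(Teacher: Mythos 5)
Your proof is correct and follows the same overall strategy as the paper: use Corollary~\ref{eqnsd2} together with the support conditions~\eqref{eq:suppstd} to obtain~\eqref{eq:mueqsmd}--\eqref{eq:nueqsmd}, then establish minimality by iterating the defining equation for $\rho$ and dropping the non-negative remainder. The one place you diverge is in the final identification: you match the partial sums $s_k$ against the recursion~\eqref{eq:hatdtimelaw} for $\hat p_k$ and then invoke Theorem~\ref{characttd} to write $\sum_k \hat p_k(x)=\sum_k\nu(k,x)=\nu_S(x)$ for $x\in\cal D$, whereas the paper computes the iterated sums directly as $\Pbl{\{X_l=x,\sigma>l\}}$ via~\eqref{eq:nstepthe} and compares with the definition~\eqref{eq:eoccdefd} of the occupation measure. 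Your route is slightly shorter given that Theorem~\ref{characttd} is already available, while the paper's route is more self-contained (notably, the paper gives only an informal justification of Theorem~\ref{characttd}, so using it here quietly leans on a statement the text does not fully prove); both are sound.
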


\begin{proof} Given that Corollary \ref{eqnsd2} and \eqref{eq:suppstd} imply that $\mu_S$ and $\nu_S$ satisfy (\ref{eq:mueqsmd},\ref{eq:nueqsmd}), all that remains to be shown is the minimality of $\nu_S$.  Let $\rho=(\rho(x))_{x\in\s}$ be any other non-negative solution of \eqref{eq:nueqsmd}. If $x$ lies outside of the domain, $\nu_S(x)=\rho(x)=0$ and the result is trivial. Otherwise, 
\begin{align}\rho(x)&=\gamma(x)+\sum_{z_0\in\cal{D}}\rho(z_0)p(z_0,x)=\gamma(x)+\sum_{z_0\in\cal{D}}\gamma(z_0)p(z_0,x)\nonumber\\
&+\sum_{z_0\in\cal{D}}\sum_{z_1\in\cal{D}}\rho(z_0)p(z_0,z_1)p(z_1,x)\nonumber\\
&=\dots=\gamma(x)+\sum_{l=1}^{m}\sum_{z_0\in\cal{D}}\dots\sum_{z_{l-1}\in\cal{D}}\gamma(z_0)p(z_0,z_1)\dots p(z_{l-1},x)\nonumber\\
&+\sum_{z_0\in\cal{D}}\dots\sum_{z_m\in\cal{D}}\rho(z_0)p(z_0,z_1)p(z_1,z_2)\dots p(z_m,x)\nonumber\\
&\geq \gamma(x)+\sum_{l=1}^{m}\sum_{z_0\in\cal{D}}\dots\sum_{z_{l-1}\in\cal{D}}\gamma(z_0)p(z_0,z_1)\dots p(z_{l-1},x)\qquad\forall x\in\cal{D}.\label{preq:dnhyuabfesh}\end{align}
However, it follows from \eqref{eq:nstepthe} and the definition of the exit time $\sigma$~\eqref{eq:eddef} that
\begin{align*}\sum_{z_0\in\cal{D}}\dots\sum_{z_{l-1}\in\cal{D}}\gamma(z_0)p(z_0,z_1)\dots p(z_{l-1},x)&=\Pbl{\{X_0\in\cal{D},\dots,X_{l-1}\in\cal{D},X_l=x\}}\\
&=\Pbl{\{X_l=x,\sigma>l\}}\quad\forall x\in\cal{D}.\end{align*}
Using the above, we rewrite \eqref{preq:dnhyuabfesh} as
$$\rho(x)\geq \sum_{l=0}^m \Pbl{\{X_l=x,\sigma>l\}}\quad\forall l>0,\enskip x\in\cal{D}.$$
Taking the limit $m\to\infty$ and comparing the above with the definitions of the occupation measure $\nu$ and its marginal $\nu_S$ in \eqref{eq:eoccdefd} and \eqref{eq:nus}, we have the desired $\rho(x)\geq\nu_S(x)$.
\end{proof}

You may be wondering whether we can shed the minimality requirement from the characterisation \eqref{charactd}. In other words, can the equations \eqref{eq:nueqsmd} have other non-negative solutions aside from $\nu_S$? They can:

\begin{example}\label{eq:jd8wndwua8ndu8wada} Consider the exit from $\{1,2\}$ of a chain with state space $\{0,1,2\}$, initial distribution $1_1$, and one-step matrix
$$\begin{bmatrix}1&0&0\\ 1&0&0\\ 0&0&1\end{bmatrix},$$
The chain starts at $1$, jumps to $0$ in the first step, and then remains at $0$ forever. Thus, we have that the space marginal of the occupation measure is $\nu_S=1_1$. Simple algebraic manipulations tell us that $(\rho(x))_{x\in\s}$ solves \eqref{eq:nueqsmd} if and only if
\begin{equation}\rho=1_1+\alpha1_2\quad\text{for some }\alpha\in[0,\infty].\end{equation}
In other words, \eqref{eq:nueqsmd} has infinitely many solutions including $\nu_S$. However, out of all these solutions, $\nu_S$ is the minimal one:
$$\nu_S(x)=1_1(x)\leq 1_1(x)+\alpha1_2(x)=\rho(x),\qquad\forall x\in\{0,1,2\}.$$
\end{example}

The issue in the above example is that we have chosen to include in our state space the closed set $\{2\}$ which is not accessible from the support of the initial distribution $1_1$. For this reason, the state $2$ is irrelevant to any question regarding the exit of the chain from $\{1,2\}$. By ruling out such an extraneous closed set, it is straightforward to  show that there are no other non-negative solutions of \eqref{eq:nueqsmd} also satisfying the condition
$$\gamma(\s\backslash \cal{D})+\sum_{z\in\cal{D}}\sum_{x\not\in\cal{D}}\rho(z)p(z,x)=\Pbx{\{\sigma<\infty\}}$$
obtained by summing \eqref{eq:mueqsmd} over $x$, see \citep[Corollary~2.11]{Kuntzthe}. However, the question remains...
\subsubsection*{An open question} When  does \eqref{eq:nueqsmd} have multiple non-negative solutions?

\subsubsection*{Ruin probabilities}We close this chapter by revisiting our gambler Alice (Section~\ref{sec:gamblers}) and applying Theorem~\ref{charactd} to work out her ruin probabilities. In particular, suppose that Alice aims to leave the game with $K$ pounds in her pocket and will carry on playing until she either accumulates this amount of money or goes broke trying (at which point, she is excluded from the game). Suppose that $X_0$ belongs $\{1,2,\dots,K-1\}$ and consider the problem of computing the probability that Alice is successful in her venture. The time of exit $\sigma$~\eqref{eq:eddef} from the domain $\{1,2,\dots,K-1\}$  is the betting round that Alice goes broke or accumulates $K$ pounds, whichever comes first. For this reason, $\mu_S(K)$ is the probability that Alice is successful, while $\mu_S(0)$ is the probability she is not. The equations \eqref{eq:nueqsmd} satisfied by $\nu_S$ read
%
\begin{equation}\label{eq:gm}\left\{
\begin{array}{l}
\gamma(1)=\nu_S(1)-(1-a)\nu_S(2)\\
\gamma(x)=\nu_S(x)-a\nu_S(x-1)-(1-a)\nu_S(x+1),\qquad\forall x=2,\dots,K-2\\
\gamma(K-1)=\nu_S(K-1)-a\nu_S(K-2)\\
\end{array}\right.,
\end{equation}
while those \eqref{eq:mueqsmd} satisfied by $\mu_S$ read
\begin{equation}\label{eq:gm2}\mu_S(0)=(1-a)\nu_S(1),\qquad \mu_S(K)=a\nu_S(K-1).\end{equation}
If the coin is unbiased ($a=1/2$), then multiplying \eqref{eq:gm} by $x$ and summing over $x\in\{1,\dots,K-1\}$ yields
\begin{equation}\label{eq:fn8eanaf8wnfwe8a1}\Ebl{X_0}=\sum_{x=1}^{K-1}x\gamma(x)=\frac{K\nu_S(K-1)}{2}.\end{equation}
The above and \eqref{eq:gm2} imply that the probability $\mu_S(K)$ that Alice is successful in her strategy is $\Ebl{X_0}/K$ (i.e.\ her \emph{ruin probability}\index{ruin probabilities} is $1-\Ebl{X_0}/K$). 

If the coin is biased ($a\neq1/2$), we may re-arrange the middle equations in \eqref{eq:gm} into
$$\nu_S(x)-\nu_S(x-1)=\alpha (\nu_S(x+1)-\nu_S(x))+\frac{\gamma(x)}{a},\qquad\forall x=2,\dots,K-2,$$
where $\alpha:=(1-a)/a$. Iterating the above, we have that
\begin{equation}\label{eq:jufiawnfai}\nu_S(2)-\nu_S(1)=\alpha^{K-3}(\nu_S(K-1)-\nu_S(K-2))+\frac{1}{a}\sum_{y=2}^{K-2}\alpha^{y-2}\gamma(y).\end{equation}
The remaining equations in \eqref{eq:gm} and \eqref{eq:gm2} imply that 
$$\nu_S(2)-\nu_S(1)=\frac{1}{1-a}(\alpha^{-1}\mu_S(0)-\gamma(1)),\qquad \nu_S(K-1)-\nu_S(K-2)=\frac{1}{a}(\gamma(K-1)-\alpha\mu_S(K)).$$
Combining the above with \eqref{eq:jufiawnfai} yields
\begin{equation}\label{eq:jufiawnfai2}\mu_S(0)+\alpha^K\mu_S(K)=\sum_{y=1}^{K-1}\alpha^y\gamma(y)=\Ebl{\alpha^{X_0}}.\end{equation}
\noindent Adding up the equations \eqref{eq:gm}, we find that
$$1=\Pbl{\{X_0\in\{1,\dots,K-1\}\}}=\sum_{x=1}^{K-1}\gamma(x)=(1-a)\nu_S(1)+a\nu_S(K-1)=\mu_S(0)+\mu_S(K),$$
where the last equation follows from \eqref{eq:gm2}. In other words, Alice will eventually either amass $K$ pounds or go broke (with probability one). Combining this fact with \eqref{eq:jufiawnfai2}, we find the probability that Alice is successful is
\begin{equation}\label{eq:fn8eanaf8wnfwe8a2}\mu_S(K)=\frac{1-\Ebl{\alpha^{X_0}}}{1-\alpha^K}.\end{equation}
In other words, in the biased case, her ruin probability is $1-\frac{1-\Ebl{\alpha^{X_0}}}{1-\alpha^K}$.
%
%

Alice develops a full-blown gambling addiction and abandons her strategy: the only thing  stopping her now from betting is bankruptcy. 
Taking the limit $K\to\infty$ in \eqref{eq:fn8eanaf8wnfwe8a1} and \eqref{eq:fn8eanaf8wnfwe8a2}, we recover the classical result that the probability that Alice will now go broke is one unless the coin is biased in her favour ($a>1/2$), in which case it is $\Ebl{\alpha^{X_0}}$.

\ifdraft
{\color{red}NOTES:  Theorem \ref{charactd} is mentioned in \citep[p.112]{Kemeny1976}, but no proof is provided therein.}
\fi
\subsection{The strong Markov property}\label{sec:dtstrmarkov} To finish the chapter, we prove the \emph{strong Markov property}, a generalisation the Markov property~\eqref{eq:markovd0} that applies to $(\cal{F}_n)_{n\in\n}$-stopping times $\varsigma$ (instead of just deterministic times $n$). However, if we try to replace $n$ with $\varsigma$ in \eqref{eq:markovd0} we immediately run into difficulties: $X_{\varsigma}$ may only be partially defined as $\varsigma$ may be infinite, so what does it mean to condition on $X_{\varsigma}$? 

To circumvent this issue, we re-write~\eqref{eq:markovd0} in an uglier, but easier-to-manipulate form:  
%
%
recall \eqref{eq:markovstats}, i.e.\
$$\Pbl{\{X_{n+1}=y\}|\cal{F}_n}=\Pb_{X_n}(\{X_1=y\})\quad\forall y\in\s,\enskip \Pb_\gamma\text{-almost surely}.$$
For a (say, bounded) real-valued function $f$ on $\s$, multiply both sides by $f(y)$ and sum over $y$ in $\s$ to find that
$$\Ebl{f(X_{n+1})|\cal{F}_n}=\Eb_{X_n}[f(X_{1})],\quad\Pb_\gamma\text{-almost surely}.$$
Pick any $x$ in $\s$ and (for the time being, bounded) $\cal{F}_n/\cal{B}(\r)$-measurable random variable $Z$, multiply both sides by $Z1_{\{n<\infty,X_n=x\}}$ (note that $\{n<\infty\}=\Omega$), apply the take-out-what-is-known property of conditional expectation (Theorem~\ref{thrm:condexpprops}$v$), take expectations, and apply the tower property (Theorem~\ref{thrm:condexpprops}$iv$) to obtain
\begin{equation}\label{eq:markovrewrite}\Ebl{Z1_{\{n<\infty,X_n=x\}}f(X_{n+1})}=\Ebl{Z1_{\{n<\infty,X_n=x\}}}\Ebx{f(X_1)}\qquad\forall x\in\s.\end{equation}
%
%
Setting $Z:=1$ and $f:=1_y$ and re-arranging we recover~\eqref{eq:markovd0}, proving the equivalence of \eqref{eq:markovrewrite} and \eqref{eq:markovd0}. However, replacing $n$ with $\varsigma$ in~\eqref{eq:markovrewrite} does not raise any red flags (our convention~\eqref{eq:partdef} regarding partially-defined functions is important here). We obtain
\begin{equation}\label{eq:markovrewrite2}\Ebl{Z1_{\{\varsigma<\infty,X_\varsigma=x\}}f(X_{\varsigma+1})}=\Ebl{Z1_{\{\varsigma<\infty,X_\varsigma=x\}}}\Ebx{f(X_1)}\qquad\forall x\in\s\end{equation}
for $\cal{F}_\varsigma/\cal{B}(\r)$-measurable $Z$ (c.f.~Definition~\ref{def:stopdt}) and every bit of the equation makes sense.

All of this re-writing aside, \eqref{eq:markovrewrite2} is still just a fancy way of saying that ($a$) the chain's future conditioned on its past and present equals that conditioned on only the present (i.e.\ the past and future are conditionally independent given the present, see Proposition~\ref{prop:condind}) and ($b$)  the chain is `born anew' from its current location. The only difference is that we have moved the dividing line between past and the future from $n$ to $\varsigma$.

In \eqref{eq:markovrewrite2}, we are still using the location $X_{\varsigma+1}$ of the chain one-step after the present to represent the chain's future, while in reality $X_{\varsigma+1}$ captures only a sliver of the future. We also take the opportunity here to rectify this sorry state of affairs: we show that~\eqref{eq:markovrewrite2} holds for every appropriately-measurable functional $F$ of the chain's present and future positions ($X_{\varsigma},X_{\varsigma+1},\dots$)  instead of just $f(X_{n+1})$. To this end, we need the \emph{$\varsigma$-shifted chain}:

\subsubsection*{Shifting the chain left by $\varsigma$} To describe the chain's future from $\varsigma$ onwards, we use the \emph{shifted chain}\index{shifted chain} $X^\varsigma$\glsadd{Xvarsig} defined as the function mapping from $\{\varsigma<\infty\}$ to the path space $\cal{P}$ (c.f. Section~\ref{sec:pathlaw}) with
\begin{equation}\label{eq:shiftdt}X^{\varsigma}(\omega):=(X_{\varsigma(\omega)}(\omega),X_{\varsigma(\omega)+1}(\omega),X_{\varsigma(\omega)+2}(\omega),\dots)\quad\forall \omega\in\{\varsigma<\infty\}.\end{equation}
This $\cal{P}$-valued function $X^\varsigma$ describes the chain \emph{shifted leftwards} in time by $\varsigma$:
$$X^\varsigma_{n}(\omega)=X_{\varsigma(\omega)+n}(\omega)\quad\forall \omega\in\{\varsigma<\infty\},\enskip n\geq0.$$
\subsubsection*{The strong Markov property}\index{strong Markov property}
We are now in great shape to state and prove the strong Markov property. Note that whenever we write $1_{\{\varsigma<\infty\}}F(X^\varsigma)$ in what follows, we are using the notation for partially-defined functions introduced in~\eqref{eq:partdef}.\index{strong Markov property}
\begin{theorem}[The strong Markov property]\label{thrm:strmkvpath} Let $(\cal{F}_n)_{n\in\n}$ be the filtration generated by the chain~(Definition~\ref{def:filt}), $\cal{E}$ be the cylinder sigma-algebra on the path space $\cal{P}$ (Section~\ref{sec:pathlaw}), $\varsigma$ be an $(\cal{F}_n)_{n\in\n}$-stopping time~(Definition~\ref{def:stopdt}), and $\cal{F}_\varsigma$ be the pre-$\varsigma$ sigma-algebra~(Definition~\ref{def:stopdt}). Suppose that $Z$ is an $\cal{F}_\varsigma/\cal{B}(\r_E)$-measurable random variable, $F$ is a $\cal{E}/\cal{B}(\r_E)$-measurable function, and that $Z$ and $F$ are both non-negative, or both bounded. If $x$ is any state in $\s$, then $Z1_{\{\varsigma<\infty,X_\varsigma=x\}}F(X^\varsigma)$ is $\cal{F}/\cal{B}(\r_E)$-measurable, where $X^\varsigma$ denotes the $\varsigma$-shifted chain~\eqref{eq:shiftdt}. Moreover,
\begin{equation}\label{eq:strmkvd}\Ebl{Z1_{\{\varsigma<\infty,X_\varsigma=x\}}F(X^\varsigma)}=\Ebl{Z1_{\{\varsigma<\infty,X_\varsigma=x\}}}\Ebx{F(X)}\quad\forall x\in\s.\end{equation}
%
\end{theorem}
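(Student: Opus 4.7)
The plan is to reduce the strong Markov property at $\varsigma$ to the ordinary Markov property at deterministic times by splitting over the disjoint events $\{\varsigma=n\}$, and then to establish the deterministic-time statement by the standard machine applied to $F$, starting from the $\pi$-system of cylinder sets. The two standing hypotheses (both $Z$ and $F$ non-negative, or both bounded) are preserved under all the operations used, so no integrability obstruction arises.

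First I would settle measurability. Each tuple $(X_n,X_{n+1},\dots)$ is $\cal{F}/\cal{E}$-measurable because the cylinder sigma-algebra $\cal{E}$ is generated by the coordinate projections (cf.~Section~\ref{sec:pathlaw}). On $\{\varsigma=n\}\in\cal{F}_n$ the shifted chain $X^\varsigma$ coincides with $(X_n,X_{n+1},\dots)$, so for every $A\in\cal{E}$,
$$\{X^\varsigma\in A\}\cap\{\varsigma=n\}=\{(X_n,X_{n+1},\dots)\in A\}\cap\{\varsigma=n\}\in\cal{F},$$
and since $\{\varsigma<\infty\}=\bigcup_n\{\varsigma=n\}$, Lemma~\ref{lem:partdefmeas} yields that $1_{\{\varsigma<\infty\}}F(X^\varsigma)$, and hence $Z1_{\{\varsigma<\infty,X_\varsigma=x\}}F(X^\varsigma)$, is $\cal{F}$-measurable. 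I would also record the standard fact that $Z1_{\{\varsigma=n\}}$ is $\cal{F}_n$-measurable: from $\{\varsigma=n\}=\{\varsigma\leq n\}\setminus\{\varsigma\leq n-1\}\in\cal{F}_n$ and Definition~\ref{def:stopdt}, $\{Z\in B\}\cap\{\varsigma=n\}\in\cal{F}_n$ for every Borel $B$, whence the claim.

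Using the disjoint decomposition $\{\varsigma<\infty,X_\varsigma=x\}=\bigcup_{n\in\n}\{\varsigma=n,X_n=x\}$ together with monotone (non-negative case) or dominated (bounded case) convergence,
$$\Ebl{Z1_{\{\varsigma<\infty,X_\varsigma=x\}}F(X^\varsigma)}=\sum_{n=0}^\infty\Ebl{\bigl(Z1_{\{\varsigma=n\}}\bigr)1_{\{X_n=x\}}F(X_n,X_{n+1},\dots)}.$$
Setting $Y:=Z1_{\{\varsigma=n\}}$, it therefore suffices to prove the deterministic-time identity: for every $n\in\n$, every $x\in\s$, every $\cal{F}_n$-measurable $Y$ satisfying the same sign/boundedness hypothesis as $Z$, and every $F$ as in the statement,
\begin{equation}\label{eq:strmkvdet}\Ebl{Y1_{\{X_n=x\}}F(X_n,X_{n+1},\dots)}=\Ebl{Y1_{\{X_n=x\}}}\,\Ebx{F(X)}.\end{equation}
Summing \eqref{eq:strmkvdet} over $n$ with $Y=Z1_{\{\varsigma=n\}}$ then recovers \eqref{eq:strmkvd}.

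I would prove \eqref{eq:strmkvdet} by the standard machine on $F$. By linearity and monotone convergence it is enough to take $F=1_B$ with $B\in\cal{E}$. The class of $B$ for which \eqref{eq:strmkvdet} holds is a $\lambda$-system, so by Lemma~\ref{lem:dynkinpl} it suffices to verify it on the $\pi$-system of cylinders $B=\{y_0\}\times\cdots\times\{y_k\}\times\s\times\s\times\cdots$. Both sides of \eqref{eq:strmkvdet} vanish unless $y_0=x$; when $y_0=x$, one iteratively peels off factors by applying the conditional form \eqref{eq:markovd} of the Markov property at times $n+k-1,n+k-2,\dots,n$, using at each stage the tower and take-out-what-is-known properties of conditional expectation (Theorem~\ref{thrm:condexpprops}$(iv,v)$), to obtain
$$\Ebl{Y1_{\{X_n=x,X_{n+1}=y_1,\dots,X_{n+k}=y_k\}}}=\Ebl{Y1_{\{X_n=x\}}}\,p(x,y_1)p(y_1,y_2)\cdots p(y_{k-1},y_k),$$
while $\Ebx{1_B(X)}=p(x,y_1)\cdots p(y_{k-1},y_k)$ by \eqref{eq:fme78ahfnea78fheau9fhna}. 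The main obstacle is this standard-machine step: one has to check the $\lambda$-system axioms (and treat the non-negative and bounded cases separately when lifting from indicators to general $F$), as well as keep careful track of $\cal{F}_\varsigma$-measurability when converting $Z$ into the $\cal{F}_n$-measurable $Y$. Everything else is bookkeeping on top of the ordinary Markov property already at our disposal.
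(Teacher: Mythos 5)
Your proof is correct, but it settles the crucial cylinder-set identity by a different route than the paper. Both proofs reduce to verifying
$$\Pbl{A\cap\{\varsigma=n,X_n=x,(X_n,X_{n+1},\dots)\in B\}}=\Pbl{A\cap\{\varsigma=n,X_n=x\}}\,\mathbb{L}_x(B)$$
for $A\in\cal{F}_\varsigma$ and $B$ a cylinder, and both then invoke Theorem~\ref{pathlawuni}/Lemma~\ref{lem:dynkinpl} plus the standard machine to upgrade to general $Z$ and $F$. Where you differ is in \emph{how} you prove the cylinder identity: you iterate the conditional-expectation form \eqref{eq:markovd} of the Markov property at times $n+k-1,\dots,n$, using the tower and take-out rules (Theorem~\ref{thrm:condexpprops}$(iv,v)$) to peel off one-step factors $p(y_{j-1},y_j)$. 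The paper instead goes back to the algorithmic construction of Section~\ref{sec:dtdef}: it writes $X_{n+l}=f_l(X_n,U_{n+1},\dots,U_{n+l})$ and exploits the independence of $(U_{n+1},\dots,U_{n+l})$ from $\cal{F}_n$ together with the i.i.d.\ property of the $U$'s. Your route has the advantage of being construction-agnostic --- it leans only on the already-proved Markov property \eqref{eq:markovd}, so it transfers verbatim to any model satisfying \eqref{eq:markovd0}, which fits nicely with the equivalence Theorem~\ref{samedef}. The paper's route is more hands-on and closer to the `starting afresh' intuition, and it sets up the pattern reused in the continuous-time proof.

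One point to tighten: your $\pi$--$\lambda$/Lemma~\ref{lem:dynkinpl} argument over $B$ treats the two sides of \eqref{eq:strmkvdet} as measures on $(\cal{P},\cal{E})$ with fixed $Y$, but Lemma~\ref{lem:dynkinpl} requires these to have equal \emph{finite} total mass. That is automatic when $Y$ is bounded, but in the non-negative case $\Ebl{Y1_{\{X_n=x\}}}$ may be infinite, and then the lemma as stated does not apply. The clean fix is to run the standard machine over $Y$ as well: first prove \eqref{eq:strmkvdet} for $Y=1_A$ with $A\in\cal{F}_n$ (where the $\pi$--$\lambda$ argument is unproblematic), then pass to simple, then non-negative $Y$ by monotone convergence, and finally to bounded signed $Y$ by splitting into positive and negative parts. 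This is exactly the upgrade the paper's Step~2 sketches with its `standard machine' remark, so it is not a flaw in your idea --- just a step you should make explicit.
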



We do this proof in two steps, beginning by proving the measurability  of $Z1_{\{\varsigma<\infty,X_\varsigma=x\}}F(X^\varsigma)$ then moving on to \eqref{eq:strmkvd}.

\begin{proof}[Step 1: $Z1_{\{\varsigma<\infty,X_\varsigma=x\}}F(X^\varsigma)$ is $\cal{F}/\cal{B}(\r_E)$-measurable] Because of  Exercise~\ref{exe:xvarsigma}, and because the product of measurable functions is measurable, all we need to argue here is that $1_{\{\varsigma<\infty\}}F(X^\varsigma)$ is $\cal{F}/\cal{B}(\r_E)$-measurable. Due to Lemma~\ref{lem:partdefmeas}, this consists of proving that
$$\{\varsigma<\infty, F(X^{\varsigma})\in A\}\in \cal{F},\quad\forall A\in\cal{B}(\r_E).$$
However,
$$\{\varsigma<\infty, F(X^{\varsigma})\in A\}\in \cal{F}=\bigcup_{n=0}^\infty\{\varsigma=n, \Theta_n(X)\in F^{-1}(A)\},$$ 
where, for each $n\geq0$, $\Theta_n:\cal{P}\to\cal{P}$ denotes \emph{n-shift operator} defined by
$$\Theta_n((x_m)_{m\in\n}):=(x_n,x_{n+1},x_{n+2},\dots)\quad\forall (x_m)_{m\in\n}\in\cal{P}.$$
Given that $\{\varsigma=n\}$ belongs to $\cal{F}_n\subseteq\cal{F}$ (Lemma~\ref{lem:2stop}$i$) and $F$ is $\cal{E}/\cal{B}(\r_E)$-measurable by assumption, it suffices to show that  $\Theta_n$ is $\cal{E}/\cal{E}$ measurable. Because the cylinder sets~\eqref{eq:gensets} generate $\cal{E}$, this follows easily.
\end{proof}

\begin{proof}[Step 2: \eqref{eq:strmkvd} holds] Given Step 1 and our assumption that  $Z$ and $F$ are either both non-negative or both bounded, every term in \eqref{eq:strmkvd} is well-defined. Because $\Ebx{F(X)}=\mathbb{L}_x(F)$, where $\mathbb{L}_x$ denotes the path law~\eqref{eq:pathlaw} starting from $x$, and because of the definition of the Lebesgue integral, it suffices to show that  
\begin{equation}\label{eq:dja7dany83d}\Pbl{A\cap\{\varsigma<\infty,X_\varsigma=x,X^\varsigma\in B\}}=\Pbl{A\cap\{\varsigma<\infty,X_\varsigma=x\}}\mathbb{L}_x(B)\end{equation}
for all $A$ in $\cal{F}_\varsigma$, $B$ in $\cal{E}$, and $x$ in $\s$---if you are not following this, go read about the \emph{standard machine} in \citep[Section~5.12]{Williams1991}. 

Fix any $A$ in $\cal{F}_{\varsigma}$ and $x$ in $\s$. If $c:=\Pbl{A\cap\{\varsigma<\infty,X_\varsigma=x\}}=0$, then both sides of \eqref{eq:dja7dany83d} are zero and we are done. Suppose that $c\neq0$ and consider the map  $\tilde{\mathbb{L}}:\cal{E}\to\r_E$ defined by
$$\tilde{\mathbb{L}}(B):=c^{-1}\Pbl{\{A\cap\{\varsigma<\infty,X_\varsigma=x,X^\varsigma\in B\}}\quad\forall B\in\cal{E}.$$
Clearly, $\tilde{\mathbb{L}}$ is non-negative and $\tilde{\mathbb{L}}(\cal{P})=c/c=1$. Furthermore, Tonelli's Theorem implies that $\tilde{\mathbb{L}}$ is countably additive. In other words, $\tilde{\mathbb{L}}$ is a probability measure. Suppose that we are able to show that
\begin{equation}\label{eq:fne8awne78awfhaw}\tilde{\mathbb{L}}(\{x_0\}\times\{x_1\}\times\dots\times\{x_m\}\times \s\times\s\times\dots)=\mathbb{L}_x(\{x_0\}\times\{x_1\}\times\dots\times\{x_m\}\times \s\times\s\times\dots)\end{equation}
for all natural numbers $m$ and states $x_0,x_1,\dots,x_n$ in $\s$. Theorem \ref{pathlawuni} then shows that $\tilde{\mathbb{L}}$ is $\mathbb{L}_x$ and \eqref{eq:dja7dany83d} follows.

To prove \eqref{eq:fne8awne78awfhaw}, re-write this equation as 
\begin{align*}&\Pbl{A\cap\{\varsigma<\infty,X_{\varsigma}=x,X_{\varsigma}=x_0,X_{\varsigma+1}=x_1,\dots,X_{\varsigma+m}=x_m\}}\\
&=\Pbl{A\cap\{\varsigma<\infty,X_{\varsigma}=x\}}\Pbx{\{X_{0}=x_0,X_{1}=x_1,\dots,X_{m}=x_m\}}.\end{align*}
If $x\neq x_0$ then both sides of the above are zero and we are done. Suppose that $x=x_0$ and fix any $l>0$. Using the definitions in Algorithm \ref{dtmcalg}, we can express $X_{n+l}$ as some measurable function of $X_n$ and $U_{n+1},\dots,U_{n+l}$:
$$X_{n+l}=f_{l}(X_n,U_{n+1},\dots,U_{n+l})\qquad \forall j>0,\enskip n\geq0$$
and this function depends on $l$ but not on $n$.  Because $(X_1,\dots, X_n)$ is a function of $X_0$ and $(U_1,\dots, U_n)$ and these are independent of $(U_{n+1},\dots,U_{n+j})$, the random variable
$$f_{l}(x,U_{n+1},\dots,U_{n+l})$$
is independent of $(X_0,\dots, X_n)$ and, consequently, of $\cal{F}_n$. Because $A\cap\{\varsigma=n,X_n=x\}$ belongs to $\cal{F}_n$ (Lemma~\ref{lem:2stop}$(i)$) and $(U_n)_{n\in\zp}$ is i.i.d., it follows that
\begin{align}&\Pbl{A\cap\{\varsigma=n,X_{n}=x,X_{n+1}=x_1,\dots,X_{n+m}=x_m\}}\nonumber\\
&=\Pbl{A\cap\{\varsigma=n,X_{n}=x,f_{1}(x,U_{n+1})=x_1,\dots,f_{m}(x,U_{n+1},\dots,U_{n+m})=x_m\}}\nonumber\\
&=\Pbl{A\cap\{\varsigma=n,X_{n}=x\}}\Pbl{\{f_{1}(x,U_{n+1})=x_1,\dots,f_{m}(x,U_{n+1},\dots,U_{n+m})=x_m\}}\nonumber\\
&=\Pbl{A\cap\{\varsigma=n,X_{n}=x\}}\Pbl{\{f_{1}(x,U_{1})=x_1,\dots,f_{m}(x,U_{1},\dots,U_{l})=x_m\}}\nonumber\\
&=\Pbl{A\cap\{\varsigma=n,X_{n}=x\}}\Pbx{\{f_{1}(X_0,U_{1})=x_1,\dots,f_{m}(X_0,U_{1},\dots,U_{m})=x_m\}}\nonumber\\
&=\Pbl{A\cap\{\varsigma=n,X_{n}=x\}}\Pbx{\{X_{1}=x_1,\dots,X_{m}=x_m\}},\nonumber\end{align}
Summing both sides over $n$ in $\n$ then yields \eqref{eq:fne8awne78awfhaw}, so completing the proof.
\end{proof}
\subsubsection*{Measurable functions on $\cal{P}$}
We finish this section with the following technical lemma that will cover our measurability-checking necessities for functionals on $\cal{P}$ throughout the remainder of our treatment of discrete-time chains.
\begin{lemma}\label{lem:pathspmeas}The following are $\cal{E}/\cal{B}(\r_E)$-measurable functions:
\begin{enumerate}[label=(\roman*),noitemsep]
\item For any real-valued function $f$ on $\s$,
$$F_-(x):=\liminf_{n\to\infty}\frac{1}{n}\sum_{m=0}^{n-1}f(x_m),\quad F_+(x):=\limsup_{n\to\infty}\frac{1}{n}\sum_{m=0}^{n-1}f(x_m),\quad\forall x\in\cal{P}.$$
\item For any subset $A$ of $\s$ and positive integer $k$,
$$F(x):=\inf\left\{n>0:\sum_{m=1}^n1_A(x_m)=k\right\}\quad\forall x\in\cal{P}.$$
%
%
\item For any non-negative function $f$ on $\s$,
$$G(x):=\sum_{n=0}^{F((x_m)_{m\in\n})-1}f(x_n)\quad\forall x\in\cal{P},$$
where $F$ is as in~$(ii)$.
\end{enumerate} 
\end{lemma}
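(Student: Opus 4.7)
The plan is to reduce all three parts to measurability of the coordinate maps $\pi_n : \cal{P} \to \s$, $\pi_n(x) := x_n$. By inspection of the generating sets~\eqref{eq:gensets} of $\cal{E}$, each $\pi_n$ is $\cal{E}/2^\s$-measurable; and since $\s$ carries the full power-set sigma-algebra, every $\r_E$-valued function on $\s$ is automatically $2^\s/\cal{B}(\r_E)$-measurable. Hence $x \mapsto f(x_n)$ and $x \mapsto 1_A(x_n)$ are $\cal{E}/\cal{B}(\r_E)$-measurable for every $f:\s\to\r$, every $A\subseteq\s$, and every $n$. Everything else will follow from standard stability properties of measurable functions (finite sums, scalar multiples, $\liminf$, $\limsup$, and countable sums of non-negative functions).

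For (i), for each $n>0$ the map $x\mapsto n^{-1}\sum_{m=0}^{n-1}f(x_m)$ is a finite $\r$-linear combination of the maps $x\mapsto f(x_m)$ just shown to be measurable, hence itself measurable. Then $F_-$ and $F_+$ are, respectively, the $\liminf$ and $\limsup$ of this sequence of $\r_E$-valued measurable functions, and so are measurable.

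For (ii), the key observation is that the partial sums $S_n(x):=\sum_{m=1}^n 1_A(x_m)$ are non-decreasing and increase by at most $1$ at each step. Thus the first $n>0$ with $S_n=k$ coincides with the first $n>0$ with $S_n\geq k$, so
\[ \{F\leq j\}=\{S_j\geq k\}\in\cal{E} \qquad\forall j\in\zp, \]
because $S_j$ is measurable by the same argument as in (i) applied to $1_A$. Since $\{F=j\}=\{F\leq j\}\setminus\{F\leq j-1\}$ for $j\in\zp$ and $\{F=\infty\}=\cal{P}\setminus\bigcup_{j\in\zp}\{F\leq j\}$ all belong to $\cal{E}$, $F$ is measurable into $(\n_E,2^{\n_E})$, and therefore into $(\r_E,\cal{B}(\r_E))$.

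For (iii), the unifying identity is
\[ G(x)=\sum_{n=0}^{\infty}1_{\{F>n\}}(x)\, f(x_n), \]
which holds both when $F(x)<\infty$ (the terms with $n\geq F(x)$ vanish) and when $F(x)=\infty$ (every indicator equals $1$, so the right-hand side is $\sum_{n=0}^\infty f(x_n)$, in agreement with our convention). By part~(ii), $\{F>n\}=\cal{P}\setminus\{F\leq n\}\in\cal{E}$, so each summand is the product of two $\cal{E}/\cal{B}([0,\infty])$-measurable functions, hence measurable. Since $f\geq 0$, the series is a pointwise monotone limit of measurable partial sums and is therefore itself $\cal{E}/\cal{B}([0,\infty])$-measurable, which is what we want. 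The only real subtlety anywhere is the bookkeeping for the $F=\infty$ cases in (ii) and (iii); the non-decreasing-partial-sum trick in (ii) is the one substantive observation.
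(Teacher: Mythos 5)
Your proof is correct, and parts (ii) and (iii) take a genuinely different and, in my view, cleaner route than the paper. For (i) your argument is essentially the paper's. For (ii) the paper writes $F$ out explicitly as $\infty\cdot 1_{\{g_\infty<k\}}+\sum_{n\geq1} n\,1_{\{g_{n-1}<k\}}1_{\{g_n=k\}}$ and invokes closure under products, sums, and limits; you instead note that because the partial sums $S_n=\sum_{m=1}^n 1_A(x_m)$ are non-decreasing with unit increments, the first $n$ with $S_n=k$ equals the first $n$ with $S_n\geq k$, so $\{F\leq j\}=\{S_j\geq k\}$ is measurable directly --- this isolates the one structural fact the paper's formula uses implicitly (that $g$ cannot skip $k$) and avoids the $\infty\cdot 1_{\{\cdot\}}$ bookkeeping. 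For (iii) the paper proves measurability of the two-variable map $H(n,x):=\sum_{m=0}^n f(x_m)$ on $\n_E\times\cal{P}$ with the product sigma-algebra and then argues that $G=H(F-1,\cdot)$ is a composition of measurable maps; you instead write $G(x)=\sum_{n=0}^\infty 1_{\{F>n\}}(x)\,f(x_n)$, which reduces the claim to a countable sum of products of measurable non-negative functions with no product-space detour. Your identity handles the $F=\infty$ case transparently and is more elementary than the composition argument. Both approaches are sound; yours trades the paper's generality (the two-variable measurability of $H$ would extend beyond entrance times) for directness.
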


\begin{proof}

$(i)$ The definition of the cylinder sigma-algebra $\cal{E}$ in Section~\ref{sec:pathlaw} implies that $(x_m)_{m\in\n}\mapsto f(x_n)$ is $\cal{E}/\cal{B}(\r)$-measurable for each $n\in\n$. Because finite sums and products of measurable functions are measurable,  we have that $(x_m)_{m\in\n}\mapsto\frac{1}{n}\sum_{m=0}^{n-1}f(x_m)$ is $\cal{E}/\cal{B}(\r)$-measurable for each $n\in\zp$. Because the limit infimum and supremum of measurable functions are measurable, the result follows.

$(ii)$ By definition, $F(x)$ denotes the time that the path $x$ enters the set $A$ for the $k$th time and so
\begin{align*}F&=\infty\cdot 1_{\{g_\infty(x)<k\}}+\sum_{n=1}^\infty n1_{\{g_{n-1}(x)<k\}}1_{\{g_n(x)=k\}}\\
&=\infty\cdot \lim_{N\to\infty}1_{\{g_N(x)<k\}}+\lim_{N\to\infty}\sum_{n=1}^Nn1_{\{g_{n-1}(x)<k\}}1_{\{g_n(x)=k\}},\end{align*}
where $g_n(x):=\sum_{m=1}^n1_A(x_m)$ denotes the number of times the path $x=(x_m)_{m\in\n}$ has entered $A$ by time $n$ (with $g_0:=0$). Because any finite sum of measurable functions is measurable, $g_n$ is $\cal{E}/\cal{B}(\r_E)$-measurable, for each $n\in\zp$. The result then follows from the above by exploiting once again that the sum, product, limit superior, and limit inferior of measurable functions are all measurable. 
%
%

$(iii)$ Because any finite sum of measurable functions is measurable, $g_n((x_m)_{m\in\n}):=\sum_{m=0}^nf(x_m)$ is $\cal{E}/\cal{B}(\r_E)$-measurable for any $n\in\n$. Furthermore, as the limit of measurable functions is measurable, $g_\infty:=\lim_{n\to\infty}g_n$ is also $\cal{E}/\cal{B}(\r_E)$-measurable. Pick any $A\in\cal{B}(\r_E)$ and note that
$$\left\{(n,(x_m)_{m\in\n})\in\n_E\times\cal{P}:\sum_{m=0}^nf(x_m)\in A\right\}=(\{\infty\}\times\{g_\infty\in A\})\cup\left(\bigcup_{n=0}^\infty\{n\}\times\{g_n\in A\}\right).$$
Because the right-hand side belongs to the product sigma-algebra $2^{\n_E}\times\cal{E}$, it follows that 
$$H(n,(x_m)_{m\in\n}):=\sum_{m=0}^nf(x_m)$$
defines an $2^{\n_E}\times\cal{E}/\cal{B}(\r_E)$-measurable function, for each $n\in\n$. Because $G((x_m)_{m\in\n})=H(F((x_m)_{m\in\n})-1,(x_n)_{n\in\n})$, with $F$ as in $(ii)$, the result then follows because the composition of measurable functions is measurable.
\end{proof}

\newpage
\thispagestyle{premain}
\sectionmark{\MakeUppercase{Discrete-time chains II: the long-term behaviour}}
\section*{Discrete-time chains II: the long-term behaviour}\addcontentsline{toc}{section}{\protect\numberline{}Discrete-time chains I: the long-term behaviour}

Armed with the tools introduced throughout the previous chapter (Sections~\ref{sec:dtdef}--\ref{sec:dtstrmarkov}), we are ready to start chipping away at the long-term behaviour of discrete-time chains. The aim of this chapter (Sections~\ref{sec:entrance}--\ref{sec:flgeodt}) is to formalise the discussion given in the introduction to Part~\ref{part:theory} for the case of discrete-time chains.

\subsubsection*{Overview of the chapter} We begin in Section~\ref{sec:entrance} by introducing entrance times and using them to  (a) describe whether the chain is able to travel between two given states; and (b) define the \emph{closed communicating classes}: subsets of the state space that the chain, once inside, is unable to leave but is free to explore within. 
We then describe (Section~\ref{sec:rec}) the states that the chain will keep revisiting (the \emph{recurrent} states) and those that it not  (the \emph{transient} states). Upon each visit to a recurrent state, the chain starts `afresh' in the sense that the segments of the chain's path delimited by the visits form an i.i.d. sequence. We prove this important \emph{regenerative property} of discrete-time chains in Section~\ref{sec:regen}. Combining this property with the law of large numbers, we find (Section~\ref{sec:empdist}) that the fraction of time-steps each path spends in any given state settles down as the number of time-steps approaches infinity. Of course, for transient states the fraction converges to zero as the visits to these states eventually stop. For recurrent states, the limiting value may be zero---in which case the frequency of the visits degenerates to zero and the state is said to be \emph{null recurrent}---or non-zero---in which case the frequency remains bounded away from zero and the state is said to be \emph{positive recurrent}. As we will see, the average time it takes the chain to return to a recurrent state differentiates these two cases: it is infinite for null recurrent states and finite for positive recurrent ones. 

In Section~\ref{sec:statdists}, we introduce the \emph{stationary distributions} of the chain: the starting distributions that 
turn the chain into a stationary process (i.e.\ one whose finite-dimensional statistics do not change with time). We also show that there exists a stationary distribution with support on a given state if and only if that state is positive recurrent and that positive recurrence is a \emph{class property}: a state within a closed communicating class is positive recurrent if and only if every state within the class is positive recurrent. Moreover, as we will see, there is only one stationary distribution with support contained inside any given positive recurrent class (known as the \emph{ergodic distribution} associated with that class) and the set of stationary distributions is simply the convex hull of these ergodic distributions. In Section~\ref{sec:timeaved}, we revisit the limiting value of the fraction of time-steps that the chain spends in any given positive recurrent state $x$ and express it in terms of the ergodic distribution $\pi$ associated with the class $\cal{C}$ that $x$ belongs to (it is $\pi(x)$ if the chain's path ever enters $\cal{C}$ and zero otherwise). Using these facts, we then see why chains whose paths all enter the positive recurrent closed communicating classes (i.e.\ \emph{positive Tweedie recurrent} chains) are typically viewed as the \emph{stable} chains.

We then turn our attention to the long-term behaviour of the chain's statistics and focus on the question `does the time-varying law converge and to what?'. Here, we require the notion of a \emph{periodic chain} (Section~\ref{sec:periodicity}): one for which the possibility of return to at least one positive recurrent state is a periodic phenomenon (e.g.\ the chain may return to the state after an even number of steps but not an odd number). We then settle the question in Section~\ref{sec:ensembleaved}: if the chain is periodic, the time-varying law does not converge but oscillates instead. Otherwise, the time-varying law converges to a convex combination of the ergodic distributions, where the weight awarded to each distribution is the probability that the chain ever enters the closed communicating class associated with it. This marks the  end of our treatment on the  basics of the long-term behaviour of discrete-time chains. In my experience, the material presented throughout Sections~\ref{sec:dtdef}--\ref{sec:ensembleaved} forms a good base camp of sorts from which one can approach other more advanced or specialised summits of  Markov chain theory.

At this point there are many directions we could  take: further investigate the limiting behaviour of the pathwise averages and derive pertinent  central limit theorems and laws of the iterated logarithm, study the relationship between the rate of convergence the time-varying law and the tails of the return times,  delve into the notion of  quasistationarity describing the medium-term behaviour of the chain, figure out how to compute  in practice time-varying laws, occupation measures, and stopping/exit/stationary distributions introduced throughout the sections, start working on the theory for the continuous-time case or the general state space case, etc. The remainder of this chapter is dedicated to two such subjects:
\begin{enumerate}
\item (Geometric recurrence and convergence: Section~\ref{sec:kendall}) We study the question of when does the time-varying law converge geometrically fast and prove Kendall's theorem: a key result in this area showing that the answer lies in the tails of the return time distributions.
\item (Foster-Lyapunov criteria: Sections~\ref{sec:flrecdt}--\ref{sec:flgeodt}). We derive a series of analytical inequalities involving the one-step matrix that yield information on the entrance and return times of a chain and, consequently, on its  recurrence properties and stationary distributions. Before we start with these however, we introduce in Section~\ref{sec:geometrictrial} certain \emph{geometric trials arguments} that will be important for the proofs of  Sections~\ref{sec:flrecdt}--\ref{sec:flgeodt}.
\end{enumerate}

\subsectionmark{\MakeUppercase{Entrance times; accessibility; closed communicating classes}}
\subsection{Entrance times, accessibility, and closed communicating classes}\label{sec:entrance}
\subsectionmark{\MakeUppercase{Entrance times; accessibility; closed communicating classes}}

Before discussing which bits of the state space the chain might visit in the long-run, we need to discuss which bits it will ever visit in the first place. Here, we require \emph{entrance times}\index{entrance time}\glsadd{phiA}\glsadd{phiAk}:
%
%
%
\begin{definition}[Entrance times]\label{def:entrance} Given any subset $A$ of $\s$ and positive integer $k$, the $k$th entrance time $\phi^k_A$ is the time-step when the chain enters the set $A$ for the $k$th time or infinity if it never does:
$$\phi^0_A(\omega):=0\enskip\forall \omega\in\Omega,\quad \phi^k_A(\omega):=\inf\left\{n>0:\sum_{m=1}^n 1_A(X_m(\omega))=k\right\}\enskip\forall \omega\in\Omega,\enskip k>0,$$
where we are adhering to our convention that $\inf\emptyset=\infty$. For the first entrance time, we write $\phi_A$ instead of $\phi_A^1$. Additionally, we use the shorthands $\phi_x:=\phi_{\{x\}}$ (resp. $\phi_x^k:=\phi_{\{x\}}^k$) to denote the first (resp. $k$th) entrance time of a state $x$.
\end{definition}
You may be more familiar with the recursive definition of entrance times:
\begin{equation}\label{eq:gnr8e7a0gja789jga78}\phi^0_A(\omega):=0\enskip\forall\omega\in\Omega,\quad\phi_A^{k}(\omega):=\inf\{n>\phi^{k-1}_A(\omega):X_n(\omega)\in A\}\enskip\forall\omega\in\Omega,\enskip k>0.\end{equation}
However, it is not difficult to check that the two definitions are equivalent:
\begin{exercise}\label{ex:entdef}Use induction to show that \eqref{def:entrance} and \eqref{eq:gnr8e7a0gja789jga78} are equivalent. Hint: the chain has visited $A$ exactly $k$ times by $\phi_A^k$ and so $\sum_{m=1}^{\phi_A^k} 1_A(X_m)=k$ whenever $\phi_A^k$ is finite.
\end{exercise}

The difference between entrance times and hitting times (Section~\ref{sec:stop}) is that---somewhat inconsistently with our nomenclature for exit times (Section~\ref{sec:exit})---we do not consider starting inside $A$ as `entering' $A$ (hence, $\phi_A>0$ by definition). If the chain starts in $x$, then $\phi_x$  is the first  time that the chain \emph{returns} to $x$. For this reason, $\phi_x$ is often referred to as the \emph{first return time}\index{return time}. Similarly, $\phi_x^k$ is known as the $k$th \emph{return time}. Of course, entrance times are stopping times because we are able to deduce whether the chain has entered a given set by continuously monitoring it:
\begin{exercise}\label{ex:entstop}Using similar arguments to those given in the proof of Proposition~\ref{prop:hitisstop}, show that entrance times are $(\cal{F}_n)_{n\in\n}$-stopping times, where $(\cal{F}_n)_{n\in\n}$ denotes the filtration generated by the chain~(Definition~\ref{def:filt}).
\end{exercise}

\subsubsection*{Accessibility, communicating classes, and closed sets} Equipped with entrance times, we now are able to consider the question of whether the chain may reach any region of the state space from any other or if it is limited in some way. In particular, if the chain is able to travel from one subset of the state $A$ to another $B$, we say that $B$ is \emph{accessible}\index{accessible}\glsadd{accessibility}\glsadd{accessibility2} from $A$. Formally:

%
\begin{definition}[Accessibility]\label{def:accesible}A state $y$ is accessible from a state $x$ (or $x\to y$ for short) if and only if $\Pb_x\left(\{\phi_y<\infty\}\right)>0$. Similarly, we say that a subset $B$ of the state space is accessible from another subset $A$ (or $A\to B$) if for every $x$ in $A$ there exists a $y$ in $B$ such that $x\to y$ (equivalently, if $\Pbl{\{\phi_B<\infty\}}>0$ for all initial distributions $\gamma$ with support in $A$). 
\end{definition}
It is straightforward to characterise accessibility in terms of the one-step matrix:
\begin{lemma}\label{lem:accdt} $x\to y$ if and only if $p(x,y)>0$ or there exists some states $x_1,\dots,x_l$ such that 
$$p(x,x_1)p(x_1,x_2)\dots p(x_l,y)>0.$$\end{lemma}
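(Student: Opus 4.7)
The plan is to rephrase accessibility in terms of the $n$-step transition probabilities $p_n(x,y)$ and then use the formula~\eqref{eq:nstepdef} expressing $p_n(x,y)$ as a sum over sample paths. The whole argument is really just chasing through definitions, but it is worth writing out carefully because it underpins everything that follows.

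First I would unpack the definition of the first entrance time $\phi_y$ to observe that
\[
\{\phi_y<\infty\}=\bigcup_{n=1}^\infty \{\phi_y=n\}=\bigcup_{n=1}^\infty \{X_n=y\}.
\]
The first equality is the definition of $<\infty$; for the second, $\{\phi_y=n\}\subseteq\{X_n=y\}$ is immediate from the definition of $\phi_y$ (after Exercise~\ref{ex:entdef}), while conversely if $X_n(\omega)=y$ for some $n\geq1$ then $\phi_y(\omega)\leq n<\infty$. Using countable subadditivity of $\Pb_x$ together with the fact that probabilities are non-negative, we obtain
\[
\Pb_x(\{\phi_y<\infty\})>0\quad\Longleftrightarrow\quad \Pb_x(\{X_n=y\})>0\text{ for some }n\geq 1.
\]

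Next I would use~\eqref{eq:fme7a80fjawfeawnmiufa} (applied with $X_0=x$, $\Pb_\gamma=\Pb_x$) to identify $\Pb_x(\{X_n=y\})=p_n(x,y)$, so that $x\to y$ if and only if $p_n(x,y)>0$ for some $n\geq 1$. For $n=1$, the $n$-step matrix definition~\eqref{eq:nstepdef} gives $p_1(x,y)=p(x,y)$. For $n\geq2$, \eqref{eq:nstepdef} expands this as
\[
p_n(x,y)=\sum_{x_1\in\s}\cdots\sum_{x_{n-1}\in\s}p(x,x_1)p(x_1,x_2)\cdots p(x_{n-1},y),
\]
which is a sum of non-negative terms; hence $p_n(x,y)>0$ if and only if at least one summand is strictly positive, i.e.\ there exist $x_1,\dots,x_{n-1}\in\s$ (setting $l:=n-1$) with $p(x,x_1)p(x_1,x_2)\cdots p(x_{n-1},y)>0$.

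Combining these two equivalences yields the lemma. There is no real obstacle here beyond pedantry: the only place one has to be slightly careful is in the set-theoretic rewriting of $\{\phi_y<\infty\}$, where it matters that $\phi_y$ is defined with the strict inequality $n>0$ rather than $n\geq 0$ (so that starting at $y$ does not automatically count as entering $y$).
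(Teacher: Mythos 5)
Your proof is correct, and it takes a mild variant of the paper's route. The paper decomposes $\{\phi_y<\infty\}$ into the \emph{disjoint} first-passage events $\{X_1\in A\}\cup\{X_1\not\in A,X_2\in A\}\cup\cdots$ and then applies the explicit finite-dimensional formula \eqref{eq:nstepthe} to each term; you instead use the \emph{non-disjoint} union $\bigcup_{n\geq1}\{X_n=y\}$ together with $\Pb_x(\{X_n=y\})=p_n(x,y)$ from \eqref{eq:fme7a80fjawfeawnmiufa} and the sum-over-paths expansion \eqref{eq:nstepdef}. Both arguments reduce to observing that a non-negative series is positive iff some term is, and both produce the same characterisation; yours is marginally tidier because it needs no restriction on the intermediate states, while the paper's disjoint decomposition would also be the natural starting point if one wanted the exact value of $\Pbx{\{\phi_y<\infty\}}$ rather than merely its positivity. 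One small nit: the inclusion $\{\phi_y=n\}\subseteq\{X_n=y\}$ follows directly from Definition~\ref{def:entrance}; Exercise~\ref{ex:entdef} is about the equivalence of the two ways of writing $\phi_y^k$ and isn't really what's being invoked there. Also, the forward direction of your equivalence uses countable subadditivity (all summands zero forces the union to have measure zero) while the reverse uses monotonicity (the union contains each $\{X_n=y\}$); ``non-negativity'' alone doesn't quite name that second ingredient, though it is of course trivially available.
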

\begin{proof}By the definition of entrance times,
$$\{\phi_y<\infty\}=\{X_1\in A\}\cup \{X_1\not\in A,X_2\in A\}\cup \{X_1\not\in A,X_2\not\in A,X_3\in A\}\cup\dots$$
and the lemma follows by taking expectations and applying \eqref{eq:nstepthe}.
\end{proof}
It follows from the lemma that $\to$ is a transitive relation on both  $\cal{S}$ and its power set. Two sets are said to \emph{communicate} if each is accessible from the other. A set to be a \emph{communicating class}\index{communicating class} if all its subsets communicate:
\begin{definition}[Communicating class]A subset $\cal{C}$ of $\cal{S}$ is a communicating class if and only if $x\to y$ for each pair $x,y$ in $\cal{C}$ (equivalently $A\to B$ for all subsets $A,B$ of $\cal{C}$).
\end{definition}
A subset of the state space is said to be \emph{closed}\index{closed set} if no state outside the set is accessible from a state inside the set. 
\begin{definition}[Closed set]A subset $\cal{C}$ of $\s$ is closed if and only if for each $x$ in $\cal{C}$ and $y$ in $\s$, $x\to y$ implies that $y$ belongs to $\cal{C}$.\end{definition}
Closed sets are those from which the chain may never escape. 
\begin{proposition}\label{prop:closed}If the chains starts in a closed set $\cal{C}$ (i.e.\ the initial distribution $\gamma$ satisfies $\gamma(\cal{C})=1$), then the chain remains in $\cal{C}$:
$$\Pbl{\{X_n\in\cal{C}\enskip\forall n\in\n\}}=1.$$
\end{proposition}
\begin{proof}Because Lemma~\ref{lem:accdt} implies that $\sum_{y\in\cal{C}}p(x,y)=\sum_{y\in\s}p(x,y)=1$ for all $x$ in $\cal{C}$, \eqref{eq:nstepthe} implies that
\begin{align*}&\Pbl{\{X_0\in\cal{C},X_1\in\cal{C},\dots,X_{n-2}\in\cal{C},X_{n-1}\in\cal{C},X_n\in\cal{C}\}}\\
&=\sum_{x_0\in\cal{C}}\sum_{x_1\in\cal{C}}\dots\sum_{x_{n-2}\in\cal{C}}\sum_{x_{n-1}\in\cal{C}}\sum_{x_n\in\cal{C}}\Pbl{\{X_0=x_0,X_1=x_1,\dots,X_{n-2}=x_{n-2},X_{n-1}=x_{n-1},X_n=x_n\}}\\
&=\sum_{x_0\in\cal{C}}\sum_{x_1\in\cal{C}}\dots\sum_{x_{n-2}\in\cal{C}}\sum_{x_{n-1}\in\cal{C}}\sum_{x_n\in\cal{C}}\gamma(x_0)p(x_0,x_1)\dots p(x_{n-2},x_{n-1})p(x_{n-1},x_n)\\
&=\sum_{x_0\in\cal{C}}\sum_{x_1\in\cal{C}}\dots\sum_{x_{n-2}\in\cal{C}}\sum_{x_{n-1}\in\cal{C}}\gamma(x_0)p(x_0,x_1)\dots p(x_{n-2},x_{n-1})\left(\sum_{x_n\in\cal{C}}p(x_{n-1},x_n)\right)\\
&=\sum_{x_0\in\cal{C}}\sum_{x_1\in\cal{C}}\dots\sum_{x_{n-2}\in\cal{C}}\sum_{x_{n-1}\in\cal{C}}\gamma(x_0)p(x_0,x_1)\dots p(x_{n-2},x_{n-1})=\dots=\sum_{x_0\in\cal{C}}\gamma(x_0)=1\quad\forall n\in\n.\end{align*}
Taking the limit $n\to\infty$ in the above and applying downwards monotone convergence then completes the proof.
\end{proof}
Closed communicating classes in particular are very important for Markov chain theory: they are the irreducible sets mentioned in the introduction to Part~\ref{part:theory}. For this reason, the chain and its one-step matrix are said to be \emph{irreducible}\index{irreducible} if the whole state space $\s$ is a single closed communicating class (they are said to be \emph{$\varphi$-irreducible} if there is a single closed communicating class and it is accessible from all states in the state space). 

\subsubsection*{Two useful consequences of the strong Markov property}We finish the section by proving two propositions that will be of use later. The first concerns the \emph{hitting probabilities} of the chain meaning the chance  $\Pbx{\{\phi_A<\infty\}}$ that the chain ever enters a set $A$ for the various possible starting states $x$.
\begin{proposition}[The hitting probabilities satisfy a set of linear equations]\label{prop:hitprobeqs} For all states $x$ in $\s$ and subsets $A$ of $\s$,
$$\Pbx{\{\phi_A<\infty\}}=\sum_{y\in A}p(x,y)+\sum_{z\not\in A}p(x,z)\Pbz{\{\phi_A<\infty\}}.$$
\end{proposition}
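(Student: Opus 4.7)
The approach is standard first-step analysis: decompose the event $\{\phi_A<\infty\}$ according to the value of $X_1$, noting that from $x$ the chain either enters $A$ immediately at time $1$, or lands at some $z \notin A$ and must enter $A$ from $z$ in the remaining future. Since $\{X_1 \in A\} \subseteq \{\phi_A = 1\}$, we get the disjoint decomposition
\begin{equation*}
\{\phi_A<\infty\} \;=\; \bigsqcup_{y\in A}\{X_1=y\} \;\sqcup\; \bigsqcup_{z\notin A}\{X_1=z,\;\phi_A<\infty\},
\end{equation*}
and countable additivity under $\Pb_x$ gives
\begin{equation*}
\Pbx{\{\phi_A<\infty\}} \;=\; \sum_{y\in A} p(x,y) \;+\; \sum_{z\notin A}\Pbx{\{X_1=z,\;\phi_A<\infty\}},
\end{equation*}
using $\Pbx{\{X_1=y\}}=p(x,y)$ from the construction of the chain (equivalently from Theorem~\ref{markovd} applied at $n=0$).

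\textbf{Handling the remaining sum via the strong Markov property.} Fix $z\notin A$. On the event $\{X_1=z\}$ we have $X_1\notin A$, so $\phi_A\geq 2$, and a change of index shows $\phi_A = 1 + \Phi(X^{1})$, where $X^1=(X_1,X_2,\dots)$ is the $1$-shifted chain (\eqref{eq:shiftdt} with $\varsigma=1$) and
\begin{equation*}
\Phi\colon \cal{P}\to\n_E,\qquad \Phi\bigl((y_n)_{n\in\n}\bigr):=\inf\{n>0:y_n\in A\},
\end{equation*}
is $\cal{E}/\cal{B}(\n_E)$-measurable by Lemma~\ref{lem:pathspmeas}$(ii)$ with $k=1$. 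Hence on $\{X_1=z\}$, the events $\{\phi_A<\infty\}$ and $\{\Phi(X^1)<\infty\}$ coincide, so
\begin{equation*}
\Pbx{\{X_1=z,\phi_A<\infty\}} \;=\; \Pbx{\{X_1=z\}\cap\{X^1\in B\}},
\end{equation*}
where $B:=\{\Phi<\infty\}\in\cal{E}$. Now apply the strong Markov property (Theorem~\ref{thrm:strmkvpath}) with the deterministic stopping time $\varsigma\equiv 1$, the test random variable $Z\equiv 1$, the state $z$, and the bounded functional $F:=1_B$:
\begin{equation*}
\Pbx{\{X_1=z,\;X^1\in B\}} \;=\; \Pbx{\{X_1=z\}}\,\Eb_z[1_B(X)] \;=\; p(x,z)\,\Pbz{\{\phi_A<\infty\}}.
\end{equation*}

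\textbf{Conclusion and main difficulty.} Substituting this identity back into the sum over $z\notin A$ yields the claimed equation. The only non-routine point is step (3): rewriting the event $\{X_1=z,\phi_A<\infty\}$ for $z\notin A$ in the form $\{X_1=z,\,X^1\in B\}$ with $B$ a measurable subset of the path space, since only then is the strong Markov property directly applicable. This is merely careful bookkeeping once one has the shift representation $\phi_A = 1 + \Phi(X^1)$ on $\{X_1\notin A\}$ together with the measurability of $\Phi$; no delicate estimates are required.
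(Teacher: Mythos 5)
Your proof is correct and, at the level of ideas, is the same as the paper's: first-step decomposition by the value of $X_1$, followed by a strong Markov argument to convert $\Pbx{\{X_1=z,\phi_A<\infty\}}$ into $p(x,z)\Pbz{\{\phi_A<\infty\}}$. The only difference in presentation is that the paper factors the strong Markov computation through Lemma~\ref{lem:dtenttimestr3} (applied with $\varsigma:=1$ and $\gamma:=1_x$, noting $\{1<\phi_A\}=\{X_1\notin A\}$), whereas you inline it directly via Theorem~\ref{thrm:strmkvpath} with $\varsigma\equiv 1$, $Z\equiv 1$, $F:=1_B$. Your version is a bit more self-contained; the paper's packages the calculation so it can be reused (e.g.\ for Proposition~\ref{prop:closedis}). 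The shift identity $\phi_A = 1 + \Phi(X^1)$ on $\{X_1\notin A\}$ and the measurability of $\Phi$ via Lemma~\ref{lem:pathspmeas}$(ii)$ are exactly the right ingredients and match what the paper does inside Lemma~\ref{lem:dtenttimestr3}.
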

The second proposition formalises the intuitive idea that the chain may visit at most one of two  disjoint closed sets (for it enters one of these sets, it will never leave the set, and, consequently, never visit the other one):
\begin{proposition}[The chain visits at most one of two disjoint closed sets]\label{prop:closedis} If $\cal{C}_1$ and $\cal{C}_2$ are two disjoint closed sets, then
$$\Pbl{\{\phi_{\cal{C}_1}<\infty,\phi_{\cal{C}_2}<\infty\}}=0.$$
\end{proposition}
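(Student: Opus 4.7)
The plan is to combine the strong Markov property (Theorem~\ref{thrm:strmkvpath}) applied at $\phi_{\cal{C}_1}$ with Proposition~\ref{prop:closed}, which says a chain started inside a closed set never leaves it. Intuitively, on the event $\{\phi_{\cal{C}_1}<\infty\}$ the chain restarts from a point of $\cal{C}_1$ and so, by closedness, has no chance of reaching $\cal{C}_2$ thereafter; a symmetric argument handles the other ordering.

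To make this precise, I would first note that, since $\cal{C}_1\cap\cal{C}_2=\emptyset$, the equality $\phi_{\cal{C}_1}=\phi_{\cal{C}_2}$ is impossible (it would force $X_{\phi_{\cal{C}_1}}\in\cal{C}_1\cap\cal{C}_2$). Hence on $\{\phi_{\cal{C}_1}<\infty,\phi_{\cal{C}_2}<\infty\}$ either $\phi_{\cal{C}_1}<\phi_{\cal{C}_2}$ or $\phi_{\cal{C}_2}<\phi_{\cal{C}_1}$, and by symmetry it is enough to bound
$$D_1:=\{\phi_{\cal{C}_1}<\infty,\phi_{\cal{C}_2}<\infty,\phi_{\cal{C}_1}<\phi_{\cal{C}_2}\}.$$
Define $B:=\{(x_n)_{n\in\n}\in\cal{P}:x_n\in\cal{C}_2\text{ for some }n\in\n\}=\bigcup_{n\in\n}\bigcup_{y\in\cal{C}_2}\{(x_m)_{m\in\n}:x_n=y\}$, a countable union of cylinder sets and hence an element of $\cal{E}$. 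On $D_1$ the $\phi_{\cal{C}_1}$-shifted chain satisfies $X^{\phi_{\cal{C}_1}}_{\phi_{\cal{C}_2}-\phi_{\cal{C}_1}}=X_{\phi_{\cal{C}_2}}\in\cal{C}_2$, so
$$D_1\subseteq\bigcup_{x\in\cal{C}_1}\{\phi_{\cal{C}_1}<\infty,X_{\phi_{\cal{C}_1}}=x,X^{\phi_{\cal{C}_1}}\in B\}.$$

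Since $\phi_{\cal{C}_1}$ is an $(\cal{F}_n)_{n\in\n}$-stopping time (Exercise~\ref{ex:entstop}), I would apply the strong Markov property (Theorem~\ref{thrm:strmkvpath}) with $Z:=1$, $\varsigma:=\phi_{\cal{C}_1}$, and $F:=1_B$ to get, for every $x\in\cal{C}_1$,
$$\Pbl{\{\phi_{\cal{C}_1}<\infty,X_{\phi_{\cal{C}_1}}=x,X^{\phi_{\cal{C}_1}}\in B\}}=\Pbl{\{\phi_{\cal{C}_1}<\infty,X_{\phi_{\cal{C}_1}}=x\}}\,\mathbb{L}_x(B).$$
By definition of the path law, $\mathbb{L}_x(B)=\Pbx{\{X_n\in\cal{C}_2\text{ for some }n\in\n\}}$. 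Since $\gamma=1_x$ is supported in the closed set $\cal{C}_1$, Proposition~\ref{prop:closed} gives $\Pbx{\{X_n\in\cal{C}_1\text{ for all }n\in\n\}}=1$, and the disjointness of $\cal{C}_1$ and $\cal{C}_2$ then forces $\mathbb{L}_x(B)=0$. Summing over $x\in\cal{C}_1$ yields $\Pbl{D_1}=0$; the analogous argument at $\phi_{\cal{C}_2}$ kills the other ordering.

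I do not foresee serious obstacles here: the only fiddly points are the measurability of $B$ in $\cal{E}$ (handled by writing $B$ as the countable union above) and the careful handling of $X^{\phi_{\cal{C}_1}}$ on $\{\phi_{\cal{C}_1}=\infty\}$ as a partially-defined object, which is exactly what the convention~\eqref{eq:partdef} and the formulation of Theorem~\ref{thrm:strmkvpath} are designed to absorb.
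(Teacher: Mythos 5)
Your proof is correct and follows essentially the same strategy as the paper's: decompose $\{\phi_{\cal{C}_1}<\infty,\phi_{\cal{C}_2}<\infty\}$ according to which entrance occurs first, apply the strong Markov property at the earlier one, and then invoke closedness to show that the shifted chain (started from a state in $\cal{C}_1$) has zero probability of ever visiting $\cal{C}_2$. The only cosmetic difference is that the paper routes through Lemma~\ref{lem:dtenttimestr3} and concludes via the accessibility definition, whereas you apply Theorem~\ref{thrm:strmkvpath} directly with a bespoke $B\in\cal{E}$ and close the argument with Proposition~\ref{prop:closed}; both formalisations of the same idea are valid.
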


The proofs of both propositions require the following consequence of the strong Markov property.
\begin{lemma}\label{lem:dtenttimestr3}For any subset $A$ of the state space and $(\cal{F}_n)_{n\in\n}$-stopping time $\varsigma$,
\begin{align*}\Pbl{\{\varsigma<\phi_A<\infty\}}&=\sum_{x\in \s}\Pb_\gamma(\{\varsigma<\phi_A,X_{\varsigma}=x\})\Pbx{\{\phi_A<\infty\}},\\
\Pbl{\{\varsigma<\phi_A=\infty\}}&=\sum_{x\in \s}\Pb_\gamma(\{\varsigma<\phi_A,X_{\varsigma}=x\})\Pbx{\{\phi_A=\infty\}}.\end{align*}
\end{lemma}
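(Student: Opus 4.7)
The plan is to reduce both identities to a single application of the strong Markov property (Theorem~\ref{thrm:strmkvpath}) by expressing the post-$\varsigma$ portion of the event $\{\varsigma<\phi_A\}$ through the shifted chain $X^\varsigma$. The key structural observation is that, on $\{\varsigma<\phi_A\}$, no entry into $A$ has occurred by step $\varsigma$, so the first entrance after $\varsigma$ is exactly the first entrance of the shifted chain. Concretely, let $F:\cal{P}\to\n_E$ be defined by
$$F(y):=\inf\{n\geq 1:y_n\in A\}\qquad \forall y\in\cal{P},$$
which is $\cal{E}/\cal{B}(\r_E)$-measurable by Lemma~\ref{lem:pathspmeas}$(ii)$ (with $k=1$). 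I would then check case-by-case in the values of $\varsigma$ that $\{\varsigma<\phi_A\}\subseteq\{\varsigma<\infty\}$ (since $\varsigma=\infty$ forces $\phi_A\leq\varsigma$) and that on $\{\varsigma<\phi_A\}$ one has $\phi_A=\varsigma+F(X^\varsigma)$ (using Exercise~\ref{ex:entdef} to handle $\varsigma=0$ and the fact that $X_1,\dots,X_\varsigma\notin A$ otherwise). In particular, on $\{\varsigma<\phi_A\}$ the events $\{\phi_A<\infty\}$ and $\{F(X^\varsigma)<\infty\}$ coincide, as do $\{\phi_A=\infty\}$ and $\{F(X^\varsigma)=\infty\}$.

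Next I would verify that $\{\varsigma<\phi_A\}$ belongs to $\cal{F}_\varsigma$ so that its indicator can serve as the factor $Z$ in Theorem~\ref{thrm:strmkvpath}. For each $n\in\n$, write
$$\{\varsigma<\phi_A\}\cap\{\varsigma\leq n\}=\bigcup_{k=0}^n\{\varsigma=k\}\cap\{X_m\notin A:1\leq m\leq k\},$$
where $\{\varsigma=k\}\in\cal{F}_k$ by Lemma~\ref{lem:2stop}$(i)$ and the event constraining $X_1,\dots,X_k$ is manifestly in $\cal{F}_k$. Since sigma-algebras are closed under finite unions, the left-hand side lies in $\cal{F}_n$, so by Definition~\ref{def:stopdt} we have $\{\varsigma<\phi_A\}\in\cal{F}_\varsigma$.

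With these two ingredients, I would apply the strong Markov property with $Z:=1_{\{\varsigma<\phi_A\}}$ (bounded, $\cal{F}_\varsigma$-measurable) and the bounded, $\cal{E}/\cal{B}(\r)$-measurable functional $1_{\{F<\infty\}}$ to get, for each $x\in\s$,
$$\Ebl{1_{\{\varsigma<\phi_A\}}\,1_{\{\varsigma<\infty,\,X_\varsigma=x\}}\,1_{\{F(X^\varsigma)<\infty\}}}=\Pbl{\{\varsigma<\phi_A,X_\varsigma=x\}}\,\mathbb{L}_x(\{F<\infty\}),$$
and the rightmost factor equals $\Pbx{\{\phi_A<\infty\}}$ by the equivalence noted in the first paragraph. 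Summing over $x\in\s$ and using $\{\varsigma<\phi_A\}\subseteq\{\varsigma<\infty\}$ to legitimately partition by $X_\varsigma$ yields the first identity. Repeating the argument with $1_{\{F=\infty\}}$ in place of $1_{\{F<\infty\}}$ gives the second.

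I expect the main obstacle to be the careful bookkeeping establishing the identity $\phi_A=\varsigma+F(X^\varsigma)$ on $\{\varsigma<\phi_A\}$, in particular the $\varsigma=0$ boundary case where $X_0\in A$ is allowed but $\phi_A$ disregards the index $n=0$; once this equivalence and the $\cal{F}_\varsigma$-measurability of $\{\varsigma<\phi_A\}$ are in place, the strong Markov step is essentially mechanical.
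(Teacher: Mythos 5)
Your proposal is correct and follows essentially the same route as the paper's proof: you use the identity $\phi_A=\varsigma+F(X^\varsigma)$ on $\{\varsigma<\phi_A\}$, the measurability of $F$ via Lemma~\ref{lem:pathspmeas}$(ii)$, the membership $\{\varsigma<\phi_A\}\in\cal{F}_\varsigma$, and a single application of the strong Markov property (Theorem~\ref{thrm:strmkvpath}) with $Z=1_{\{\varsigma<\phi_A\}}$. The paper's argument is the same, just less explicit about the $\cal{F}_\varsigma$-measurability step (it cites Lemma~\ref{lem:2stop}$(i)$ and Exercise~\ref{ex:entstop} rather than writing out the union over values of $\varsigma$).
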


\begin{proof}Note that $X_n$ does not belong to $A$ for $n=1,2,\dots,\varsigma$ if $\varsigma<\phi_A$, and so
\begin{align}\phi_A&=\inf\left\{n>0:\sum_{m=1}^n1_A(X_m)=1\right\}=\inf\left\{n>\varsigma:\sum_{m=\varsigma+1}^n1_A(X_m)=1\right\}\nonumber\\
&=\varsigma+\inf\left\{n>0:\sum_{m=1}^n1_A(X^{\varsigma}_m)=1\right\}\quad\text{on}\quad\{\varsigma<\phi_A\},\label{eq:fdafnda7s8fhzs}\end{align}
where $X^{\varsigma}$ denotes the ${\varsigma}$-shifted chain in~\eqref{eq:shiftdt}.

Lemma~\ref{lem:pathspmeas}$ii$ shows that the function $F((x_m)_{m\in\n}):=\inf\left\{n>0:\sum_{m=1}^n1_A(x_{m})=1\right\}$ is $\cal{E}/\cal{B}(\r_E)$-measurable, where $\cal{E}$ denotes the cylinder sigma-algebra on the path space (Section~\ref{sec:pathlaw}). Given that we are able to deduce whether the chain has entered $A$ before time $\varsigma$ from observing the chain up until $\varsigma$, the event $\{\varsigma<\phi_A\}$  belongs to the pre-$\varsigma$ sigma-algebra $\cal{F}_{\varsigma}$ introduced in Definition~\ref{def:stopdt} (for a formal argument, use Lemma~\ref{lem:2stop}$i$ and Exercise~\ref{ex:entstop}). For these reasons, applying the strong Markov property (Theorem \ref{thrm:strmkvpath}) yields the first equality given in the premise:
\begin{align*}\Pbl{\{\varsigma<\phi_A<\infty\}}&=\Ebl{1_{\{\varsigma<\phi_A\}}1_{\{\phi_A-\varsigma<\infty\}}}=\sum_{x\in \s}\Ebl{1_{\{\varsigma<\phi_A\}}1_{\{\varsigma<\infty,X_{\varsigma}=x\}}1_{\n}(F(X^\varsigma))}\\
&=\sum_{x\in \s}\Ebl{1_{\{\varsigma<\phi_A\}}1_{\{\varsigma<\infty,X_{\varsigma}=x\}}}\Ebx{1_{\n}(F(X))}\\
&=\sum_{x\in \s}\Pbl{\{\varsigma<\phi_A,X_{\varsigma}=x\}}\Pbx{\{\phi_A<\infty\}}\end{align*}
where  the second and fourth equality follow from \eqref{eq:fdafnda7s8fhzs} and the fact that $\{\varsigma<\phi_A\}\subseteq\{\varsigma<\infty\}$ (as $\varsigma$ is strictly less than $\phi_A$ only if the $\varsigma$ is finite). The second equality in the premise follows by replacing `$<\infty$' and `$1_\n$' with `$=\infty$' and `$1_{\infty}$' in the above.
\end{proof}

\begin{proof}[Proof of Proposition~\ref{prop:hitprobeqs}]Given that
$$\Pbx{\{\phi_A<\infty\}}=\Pbx{\{\phi_A=1\}}+\Pbx{\{1<\phi_A<\infty\}}=\sum_{y\in A} p(x,y)+\Pbx{\{1<\phi_A<\infty\}},$$
the proposition follows directly by setting $\varsigma:=1$ and $\gamma:=1_x$ in Lemma~\ref{lem:dtenttimestr3} and noting that, due to $\phi_A$'s definition, $X_1\in A$ on $\{\phi_A=1\}$ and $X_1\not\in A$ on $\{\phi_A>1\}$.
\end{proof}

\begin{proof}[Proof of Proposition~\ref{prop:closedis}]Because the sets are disjoint, the chain cannot enter both at the same time:
$$\Pbl{\{\phi_{\cal{C}_1}=\phi_{\cal{C}_2}<\infty\}}=\Pb_\gamma(\{\phi_{\cal{C}_1}=\phi_{\cal{C}_2}<\infty,X_{\phi_{\cal{C}_1}}\in\cal{C}_2\})=0.$$
Thus,
\begin{align*}\Pbl{\{\phi_{\cal{C}_1}<\infty,\phi_{\cal{C}_2}<\infty\}}
&=\Pbl{\{\phi_{\cal{C}_1}<\phi_{\cal{C}_2},\phi_{\cal{C}_2}<\infty\}}+\Pbl{\{\phi_{\cal{C}_2}<\phi_{\cal{C}_1},\phi_{\cal{C}_1}<\infty\}}.\end{align*}
However, setting $A:=\cal{C}_2$ and $\varsigma:=\phi_{\cal{C}_1}$ in Lemma~\ref{lem:dtenttimestr3}, we find that
\begin{equation}\label{eq:fja78wn7a3a3wwf34fa}\Pbl{\{\phi_{\cal{C}_1}<\phi_{\cal{C}_2},\phi_{\cal{C}_2}<\infty\}}=\sum_{x\in\cal{C}_1}\Pb_\gamma(\{\phi_{\cal{C}_1}<\phi_{\cal{C}_2},X_{\phi_{\cal{C}_1}}=x\})\Pbx{\{\phi_{\cal{C}_2}<\infty\}}\end{equation}
Because the sets are closed and disjoint, 
$$\Pbx{\{\phi_{\cal{C}_2}<\infty\}}=\sum_{y\in\cal{C}_2}\Pbx{\{\phi_{\cal{C}_2}=\phi_y,\phi_y<\infty\}}\leq \sum_{y\in\cal{C}_2}\Pbx{\{\phi_y<\infty\}} =0\quad \forall x\in\cal{C}_1,$$
and it follows that the right-hand side of \eqref{eq:fja78wn7a3a3wwf34fa} is zero. Swapping $\cal{C}_1$ and $\cal{C}_2$ throughout the above completes the proof.
\end{proof}

\subsection{Recurrence and transience}\label{sec:rec}

Our study of the long-term behaviour starts in earnest with the notions of \emph{recurrence} and \emph{transience}:

\begin{definition}[Recurrent and transient states]\label{def:rectrans} A state $x$ is said to be recurrent\index{recurrent state} if the chain has probability one of returning to it (i.e.\ $\Pbx{\{\phi_x<\infty\}}=1$) and transient\index{transient state} otherwise.
\end{definition}
Recurrent states are those the chain will keep revisiting (whenever an initial visit occurs) while transient ones are those the chain will cease visiting at some point:
\begin{theorem}[Characterising recurrent and transient states]\label{thrm:rectrans}
For any initial distribution $\gamma$ and  $x$ in $\s$,
\begin{equation}\label{eq:nfuieafbua}\Pb_\gamma(\{\phi^k_x<\infty\})=\Pbl{\{\phi_x<\infty\}}\Pb_x(\{\phi_x<\infty\})^{k-1}\quad\forall k>0.\end{equation}
Moreover, if $x$ is transient, then, with probability one, the chain visits $x$ finitely many times: $\Pbl{\{R<\infty\}}=1$ where 
$$R:=\sum_{n=1}^\infty 1_x(X_n)$$
denotes the total number of returns to $x$. If $x$ instead is recurrent, then  the probability $\Pbl{\{R=\infty\}}$ that the chain visits $x$ infinitely many times equals the probability $\Pbl{\{\phi_x<\infty\}}$ that visits $x$ at least once.
\end{theorem}
\begin{proof}Because the number of entrances to $x$ by time $\phi_x^k$ is exactly $k$ ($\sum_{m=1}^{\phi_x^k}1_x(X_m)=k$) and $\phi^{k+1}_x$ is greater than $\phi^k_x$ whenever the latter is finite (Exercise~\ref{ex:entdef}), we have that
\begin{align*}\phi^{k+1}_x&=\inf\left\{n>0:\sum_{m=1}^n1_x(X_m)=k+1\right\}=\inf\left\{n>\phi^k_x:\sum_{m=\phi^k_x+1}^n1_x(X_m)=1\right\}\\
&=\phi_x^k+\inf\left\{n>0:\sum_{m=1}^n1_x(X^{\phi^k_x}_{m})=1\right\}\quad\forall k>0,\end{align*}
where $X^{\phi^k_x}$ denotes the ${\phi^k_x}$-shifted chain~\eqref{eq:shiftdt}. Given the above, an application of the strong Markov property similar to that in the proof of Lemma~\ref{lem:dtenttimestr3} yields
$$\Pb_\gamma(\{\phi^{k+1}_x<\infty\})=\Pb_\gamma(\{\phi^{k}_x<\infty\})\Pbx{\{\phi_x<\infty\}}\quad\forall k>0.$$
Iterating the above equation backwards, we obtain \eqref{eq:nfuieafbua}. 

Next, the total number of returns the chain makes to the state is equal to the number of return times that are finite:
\begin{equation}\label{eq:fmuawefbeywua}R=\sum_{k=1}^\infty 1_{\{\phi^{k}_x<\infty\}},\end{equation}
Because $\phi^k_x$ is finite only if $\phi^1_x,\dots,\phi^{k-1}_x$ are finite too, monotone convergence implies that
\begin{align}\label{eq:mdw78ahdnw78andwa}\Pbl{\{R=\infty\}}&=\Pbl{\left\{\sum_{k=1}^\infty 1_{\{\phi^{k}_x<\infty\}}=\infty\right\}}=\Pb_\gamma(\{\phi_x^k<\infty\enskip\forall k\in\zp\})\\
&=\lim_{k\to\infty}\Pb_\gamma(\{\phi_x^k<\infty\})=\Pb_\gamma(\{\phi_x<\infty\})\lim_{k\to\infty}\Pb_x(\{\phi_x<\infty\})^k\nonumber\\
&=\left\{\begin{array}{ll}\Pb_\gamma(\{\phi_x<\infty\})&\text{if $x$ is recurrent}\\0&\text{if $x$ is transient}\end{array}\right..\nonumber\end{align}
%
\end{proof}
The concepts of transience and recurrence are examples of \emph{class properties}\index{class property} in the sense that they hold for any one state in a communicating class if and only if they hold for every state in the communicating class:
\begin{theorem}[Transience and recurrence are class properties]\label{thrm:recclasspro} 
If $x$ is recurrent and $x\to y$, then $y$ is also recurrent and 
$$\Pbx{\{\phi_y<\infty\}}=\Pby{\{\phi_x<\infty\}}=1.$$
For this reason, (a) if $y$ is transient and $x\to y$, then $x$ is also transient and (b) a state in a communicating class is recurrent (resp. transient) if and only if all states in the class are recurrent (resp. transient). Moreover, a state is recurrent only if it belongs to a closed communicating class, and so the set $\cal{R}$ of all recurrent states equals the union of one or more closed communicating classes.
\end{theorem}

Hence, we say that a communicating class is \emph{recurrent} (resp. \emph{transient}) if any one (and therefore all) of its states is recurrent (resp. transient). For chains, we use the following slightly more elaborate classification:\index{recurrent class}\index{transient class}

\begin{definition}[Recurrent and transient chains]\label{def:recurrentchains}If the set $\cal{R}$\glsadd{rec} of all recurrent states is empty, then the chain is said to be transient\index{transient chain}. Otherwise, the chain is said to be Tweedie recurrent\index{Tweedie recurrent chain} if, regardless of the initial distribution, the chain has probability one of entering $\cal{R}$:
$$\Pbl{\{\phi_\cal{R}<\infty\}}=1\enskip\text{for all initial distributions}\enskip\gamma,$$
where $\phi_\cal{R}$ denotes the first entrance time to $\cal{R}$ (Definition~\ref{def:entrance}). If, additionally, there exists only one communicating class, then the chain is said to be Harris recurrent.\index{Harris recurrent chain} If the class is the entire state space, then the chain is simply called recurrent.\index{recurrent chain}
\end{definition}

We finish the section with the proof of Theorem~\ref{thrm:recclasspro} and a useful corollary thereof:
\begin{corollary}\label{cor:phixphic} If the chain ever enters a recurrent closed communicating class $\cal{C}$, it will eventually visit every state in $\cal{C}$:
$$1_{\{\phi_{\cal{C}}<\infty\}}=1_{\{\phi_x<\infty\}}\quad\forall x\in\cal{C},\quad\Pb_\gamma\text{-almost surely}.$$
\end{corollary}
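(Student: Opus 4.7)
The plan is to establish the two inclusions $\{\phi_x<\infty\}\subseteq\{\phi_\cal{C}<\infty\}$ and $\{\phi_\cal{C}<\infty\}\subseteq\{\phi_x<\infty\}$ up to a $\Pb_\gamma$-null set. The first is immediate: since $x\in\cal{C}$, any visit to $x$ is a visit to $\cal{C}$, so $\phi_\cal{C}\leq\phi_x$ pointwise and hence $1_{\{\phi_x<\infty\}}\leq1_{\{\phi_\cal{C}<\infty\}}$ pointwise (no probability needed). The real work lies in the reverse inclusion, that is, in showing
$$\Pb_\gamma(\{\phi_\cal{C}<\infty,\,\phi_x=\infty\})=0.$$

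My first move would be to rewrite this set as $\{\phi_\cal{C}<\phi_x=\infty\}$, which is valid because $\phi_\cal{C}\leq\phi_x$ always, and when $\phi_\cal{C}=\phi_x$ one has $X_{\phi_\cal{C}}=x$ so $\phi_x=\phi_\cal{C}<\infty$; hence $\phi_x=\infty$ together with $\phi_\cal{C}<\infty$ forces $\phi_\cal{C}<\phi_x$. I would then apply Lemma~\ref{lem:dtenttimestr3} with stopping time $\varsigma:=\phi_\cal{C}$ (an $(\cal{F}_n)_{n\in\n}$-stopping time by Exercise~\ref{ex:entstop}) and set $A:=\{x\}$ to get
$$\Pb_\gamma(\{\phi_\cal{C}<\phi_x=\infty\})=\sum_{y\in\s}\Pb_\gamma(\{\phi_\cal{C}<\phi_x,\,X_{\phi_\cal{C}}=y\})\Pb_y(\{\phi_x=\infty\}).$$

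Next I would argue that only $y\in\cal{C}$ contribute to the sum: on $\{\phi_\cal{C}<\infty\}$, the definition of the first entrance time forces $X_{\phi_\cal{C}}\in\cal{C}$, so all other terms vanish. For each $y\in\cal{C}$, the key input is Theorem~\ref{thrm:recclasspro}: because $\cal{C}$ is closed and communicating and contains a recurrent state (recurrence is a class property), every state of $\cal{C}$ is recurrent, and since $y\to x$ and $x$ is recurrent, the theorem yields $\Pb_y(\{\phi_x<\infty\})=1$, i.e., $\Pb_y(\{\phi_x=\infty\})=0$. Hence every surviving term in the sum is zero, and the desired identity $\Pb_\gamma(\{\phi_\cal{C}<\infty,\phi_x=\infty\})=0$ follows.

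I do not expect a serious obstacle; the argument is essentially a single invocation of the strong-Markov consequence in Lemma~\ref{lem:dtenttimestr3}, coupled with the class-property Theorem~\ref{thrm:recclasspro}. The only places calling for a little care are (i) verifying the set-theoretic rewrite $\{\phi_\cal{C}<\infty,\phi_x=\infty\}=\{\phi_\cal{C}<\phi_x=\infty\}$, and (ii) justifying that $X_{\phi_\cal{C}}\in\cal{C}$ on $\{\phi_\cal{C}<\infty\}$, both of which follow directly from the definition of $\phi_\cal{C}$.
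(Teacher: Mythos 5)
Your proof is correct and follows essentially the same route as the paper's: reduce to showing $\Pb_\gamma(\{\phi_{\cal{C}}<\phi_x=\infty\})=0$, apply Lemma~\ref{lem:dtenttimestr3} with $\varsigma:=\phi_{\cal{C}}$ and $A:=\{x\}$, and annihilate each term via Theorem~\ref{thrm:recclasspro}. The only difference is that you spell out why only $y\in\cal{C}$ contribute to the sum (which the paper takes for granted); also note that the set rewrite is more direct than you make it — on $\{\phi_{\cal{C}}<\infty,\phi_x=\infty\}$ one trivially has $\phi_{\cal{C}}<\infty=\phi_x$, with no need to discuss the case $\phi_{\cal{C}}=\phi_x$.
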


\begin{proof}Because the chain must enter $\cal{C}$ to enter a state $x$ in $\cal{C}$ ($\phi_{\cal{C}}\leq \phi_x$ by definition), we need only show that
$$\Pbl{\{\phi_{\cal{C}}<\phi_x=\infty\}}=0.$$
Setting $\varsigma:=\phi_{\cal{C}}$ and $A:=\{x\}$ in Lemma~\ref{lem:dtenttimestr3},  we find that 
\begin{align*}\Pbl{\{\phi_{\cal{C}}<\phi_x=\infty\}}&=\sum_{z\in \cal{C}}\Pb_\gamma(\{\phi_{\cal{C}}<\phi_x,X_{\phi_{\cal{C}}}=z\})\Pbz{\{\phi_x=\infty\}}=0
\end{align*}
because $\Pbz{\{\phi_x=\infty\}}=1-\Pbz{\{\phi_x<\infty\}}=0$ for all $z,x$ in $\cal{C}$ (Theorem~\ref{thrm:recclasspro}),
\end{proof}
\subsubsection*{A proof of Theorem~\ref{thrm:recclasspro}}

Here, we need the following corollary of Theorem~\ref{thrm:rectrans}:

\begin{corollary}\label{cor:recnstepchar}For any initial distribution $\gamma$,
$$\sum_{n=1}^\infty p_n(x)=\frac{\Pbl{\{\phi_x<\infty\}}}{1-\Pbx{\{\phi_x<\infty\}}},$$
where $p_n$ denotes the time-varying law~\eqref{eq:dtlaw}.  
Thus, a state $x$ is recurrent if and only if 
$$\sum_{n=1}^\infty p_n(x,x)=\infty\quad\forall x\in\s,$$
where $P_n=(p_n(x,y))_{x,y\in\s}$ denotes the $n$-step matrix~\eqref{eq:nstepdef}.
\end{corollary}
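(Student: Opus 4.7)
The plan is to recognise the sum $\sum_{n=1}^\infty p_n(x)$ as the $\Pb_\gamma$-expectation of the total number $R$ of post-time-zero visits the chain makes to $x$, and then compute this expectation using the geometric structure of the return times already worked out in Theorem~\ref{thrm:rectrans}.

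First I would swap the sum and expectation (Tonelli) to obtain
\[
\sum_{n=1}^\infty p_n(x) = \sum_{n=1}^\infty \Ebl{1_x(X_n)} = \Ebl{\sum_{n=1}^\infty 1_x(X_n)} = \Ebl{R},
\]
with $R$ as defined in the statement of Theorem~\ref{thrm:rectrans}. Next I would invoke identity~\eqref{eq:fmuawefbeywua} to rewrite $R = \sum_{k=1}^\infty 1_{\{\phi_x^k<\infty\}}$, apply Tonelli once more, and then use the geometric factorisation~\eqref{eq:nfuieafbua} from the same theorem:
\[
\Ebl{R} = \sum_{k=1}^\infty \Pbl{\{\phi_x^k<\infty\}} = \Pbl{\{\phi_x<\infty\}}\sum_{k=1}^\infty \Pbx{\{\phi_x<\infty\}}^{k-1}.
\]
Summing the geometric series gives the claimed formula, with the usual conventions: if $\Pbx{\{\phi_x<\infty\}}=1$ the sum is $\infty$ provided $\Pbl{\{\phi_x<\infty\}}>0$ (and equals $0$ otherwise, which matches the probabilistic content since in that case the chain $\Pb_\gamma$-a.s.~never visits $x$, so $R\equiv 0$).

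For the second assertion I would specialise to $\gamma = 1_x$, so that $p_n(x) = p_n(x,x)$ and the numerator and the quantity being subtracted in the denominator coincide, namely both equal $\Pbx{\{\phi_x<\infty\}}$. The resulting ratio $\Pbx{\{\phi_x<\infty\}}/(1-\Pbx{\{\phi_x<\infty\}})$ is $\infty$ precisely when $\Pbx{\{\phi_x<\infty\}}=1$, which is by Definition~\ref{def:rectrans} the recurrence of $x$, and is finite otherwise. I do not anticipate any real obstacle: the only slight care-point is handling the boundary conventions in the $0/0$ and ``finite/0'' cases, but this is cosmetic because the underlying probabilistic quantity $\Ebl{R}$ is unambiguously defined as an element of $[0,\infty]$ and the three computational steps (Tonelli, apply~\eqref{eq:nfuieafbua}, sum the geometric series) go through with no integrability issues since every summand is non-negative.
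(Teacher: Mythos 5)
Your proof is correct and is essentially the paper's own argument: both apply Tonelli to rewrite $\sum_n p_n(x)$ as $\Ebl{R}$, use \eqref{eq:fmuawefbeywua} and \eqref{eq:nfuieafbua} to factorise into a geometric series, and then sum it. The treatment of the boundary cases via the $0\cdot\infty=0$ convention and the specialisation to $\gamma=1_x$ for the recurrence criterion also matches the paper.
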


\begin{proof}

Tonelli's theorem and (\ref{eq:nfuieafbua}--\ref{eq:fmuawefbeywua}) imply that
$$\sum_{n=1}^\infty p_n(x)=\Ebl{\sum_{k=1}^\infty1_x(X_n)}=\sum_{k=1}^\infty\Pbl{\left\{\phi_x^k<\infty\right\}}=\Pbl{\{\phi_x<\infty\}}\sum_{k=0}^\infty\Pbx{\{\phi_x<\infty\}}^k,$$
and the corollary follows from the geometric progression.
\end{proof}

\begin{proof}[Proof of Theorem \ref{thrm:recclasspro}] We begin by showing that if $x\to y$ and $x$ is recurrent, then $\Pby{\{\phi_x<\infty\}}=1$. The idea is that, because the chain will visit $x$ infinitely many times (Theorem \ref{thrm:rectrans}) and because it has positive probability of travelling from $x$ to $y$, it will travel to $y$ at some point. For the infinite returns to continue the chain must then travel back from $y$ to $x$. In particular, letting $x_1,\dots,x_l$ be as in Lemma~\ref{lem:accdt} and applying Proposition~\ref{prop:hitprobeqs}, we have that
\begin{align*}\Pbx{\{\phi_x<\infty\}}&=p(x,x)+\sum_{z\neq x}p(x,z)\Pb_{z}(\{\phi_x<\infty\})\leq 1-p(x,x_1)+p(x,x_1)\Pb_{x_1}(\{\phi_x<\infty\})\\
&=1-p(x,x_1)(1-\Pb_{x_1}(\{\phi_x<\infty\}).\end{align*}
Iterating the above, we find that 
$$1=\Pbx{\{\phi_x<\infty\}}\leq 1-p(x,x_1)p(x_1,x_2)\dots p(x_l,y)(1-\Pb_{y}(\{\phi_x<\infty\})),$$
and it follows that $x$ is accessible from $y$ (moreover, that $\Pb_{y}(\{\phi_x<\infty\})=1$). 

Next, we argue that, if $x$ is recurrent, then $y$ must also be recurrent. As we have just shown, $x$ is accessible from $y$. Hence, we can pick $y_1,\dots,y_k$ such that $p(y,y_1)p(y_1,y_2)\dots p(y_k,x)>0$ (Lemma~\ref{lem:accdt}). Using the definition of the $n$-step matrix in \eqref{eq:nstepdef} we obtain
$$p_{n+k+l+2}(y,y)\geq p(y,y_1)p(y_1,y_2)\dots p(y_k,x)p_n(x,x)p(x,x_1)p(x_1,x_2)\dots p(x_l,y)\quad\forall n\geq0.$$
Summing over $n$ and applying Corollary~\ref{cor:recnstepchar}, we find that
\begin{align*}\sum_{n=1}^\infty p_n(y,y)&\geq \sum_{n=1}^\infty p_{n+k+l+2}(y,y)\\
&\geq p(y,y_1)p(y_1,y_2)\dots p(y_k,x)\left(\sum_{n=1}^\infty p_n(x,x)\right)p(x,x_1)p(x_1,x_2)\dots p(x_l,y)=\infty,\end{align*}
proving the recurrence of $y$. Given that we already know that $y\to x$, we can reverse the roles of $x$ and $y$ and the argument above shows that $\Pbx{\{\phi_y<\infty\}}=1$.

Lastly, that a recurrent state must belong to a closed communicating class follows by noting that $x$ does not belong to a closed communicating class only if there exists a state $y$ accessible from $x$ that does not communicate with $x$.
\end{proof}

\subsubsection*{Notes and references} While the notion of Harris recurrence is commonplace in the literature, that of Tweedie recurrence is not. Texts that touch upon the latter leave it unnamed. There is one noticeable exception: these types of chains were called \emph{ultimately recurrent} in \citep{Tweedie1975b}. Presently (15/10/2019), a Google search for
\begin{center}``ultimately recurrent'' and ``markov''\end{center}
seems to imply that only two other articles have ever used this term. In an effort to uniformise the terminology with ``Harris recurrent chains'' (and with ``positive Harris recurrent chains'' later on)---and in recognition of R.~L.~Tweedie's exemplary work on the stability of Markov chains (work from which I learned much of what  I know on this subject)---I instead call these types of chains \emph{Tweedie recurrent}.

\subsection{The regenerative property}\label{sec:regen}
In the recurrent case, the strong Markov property implies much more than never ending returns: upon each return, the chain \emph{regenerates} in the sense that it forgets its entire past. 

The idea is that the only aspect of the chain's past and present that its future depends on is its current location. By setting the present to be the $k$th return time $\phi^k_x$ to a given state $x$, we ensure that the chain's location $X_{\phi^k_x}$ at this time is independent of its past: by definition $X_{\phi^k_x}$ is $x$ regardless of what occurred before $\phi^k_x$ (of course, if $\phi^k_x$ is infinite, it is ill-suited to represent the `present', something we avoid by requiring $x$ to be recurrent, see~Theorem~\ref{thrm:rectrans}). For these reasons, the segment the chain's path demarcated by two consecutive returns is independent of those demarcated by previous consecutive returns.  Furthermore, given the starting-afresh-from-$X_{\phi^k_x}$-when-conditioned-on-$X_{\phi^k_x}$ aspect of the  strong Markov property, the fact that $X_{\phi^k_x}$ always equals $x$ further implies that these path segments all have the same distribution (that of the $0$-to-$(\phi_x-1)$ path segment under $\Pb_x$). In summary, the sequence of these path segments is i.i.d., a very useful fact that opens the door to studying the long-term of chains using tools developed for i.i.d. sequences. For instance, Kolmogorov's classical strong law of large numbers will have a staring role in the next section.

To formalise the above discussion,  we define the random variable
\begin{equation}\label{eq:Ikdef}I_k^f:=1_{\{\phi^{k-1}_x<\infty\}}\sum_{m=\phi_x^{k-1}}^{\phi_x^{k}-1}f(X_m),\end{equation}
where $f:\s\to\r_E$ denotes any non-negative function, $k$ any positive integer, and we are using our convention for partially-defined random variables in~\eqref{eq:partdef}.
\begin{theorem}[The regenerative property]\label{thrm:rec-iid}  Let\index{regenerative property} $f:\s\to\r_E$ be any non-negative function on $\s$. For all $k>0$, the random variable $I_k^f$ in \eqref{eq:Ikdef} is $\cal{F}_{\phi_x^{k}}/\cal{B}(\r_E)$-measurable, where $\cal{F}_{\phi_x^{k}}$ denotes the pre-$\phi^k_x$ sigma algebra (Definition~\ref{def:stopdt}). If the state $x$ is recurrent, the sequence $(I_k^f)_{k\in\zp}$ is i.i.d. under $\Pb_x$.\end{theorem}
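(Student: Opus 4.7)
The plan is to re-express $I_k^f$ in terms of the shifted chain $X^{\phi_x^{k-1}}$ (equation~\eqref{eq:shiftdt}), so that both claims reduce to the strong Markov property (Theorem~\ref{thrm:strmkvpath}). Let $F:\cal{P}\to\n_E$ be the first-entrance-to-$\{x\}$ functional from Lemma~\ref{lem:pathspmeas}(ii) (taking $A:=\{x\}$ and $k:=1$ there), and let $G(y):=\sum_{n=0}^{F(y)-1}f(y_n)$ as in Lemma~\ref{lem:pathspmeas}(iii), so that $F$ and $G$ are $\cal{E}/\cal{B}(\r_E)$-measurable. Since $X^{\phi_x^{k-1}}_0=x$ on $\{\phi_x^{k-1}<\infty\}$, the recursive characterisation of entrance times (Exercise~\ref{ex:entdef}) gives $F(X^{\phi_x^{k-1}})=\phi_x^k-\phi_x^{k-1}$ there, whence
\[ I_k^f = 1_{\{\phi_x^{k-1}<\infty\}}\, G(X^{\phi_x^{k-1}}). \]

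For the measurability claim, I would use the defining condition of $\cal{F}_{\phi_x^k}$: it suffices to show $\{I_k^f\in B\}\cap\{\phi_x^k=N\}\in\cal{F}_N$ for every $B\in\cal{B}(\r_E)$ and every finite $N$. Decomposing this event over the possible values of $\phi_x^{k-1}\in\{0,1,\dots,N-1\}$ yields a finite union of sets of the form $\{\phi_x^{k-1}=n,\,\phi_x^k=N,\,\sum_{m=n}^{N-1}f(X_m)\in B\}$; each lies in $\cal{F}_N$ because $\phi_x^{k-1},\phi_x^k$ are stopping times (Exercise~\ref{ex:entstop}) and the partial sum involves only $X_0,\dots,X_{N-1}$. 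The residual piece on $\{\phi_x^k=\infty\}$ is handled by noting that $I_k^f$ itself is $\cal{F}/\cal{B}(\r_E)$-measurable.

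For the i.i.d. claim, Theorem~\ref{thrm:rectrans} applied with $\gamma:=1_x$ and $x$ recurrent gives $\Pbx{\{\phi_x^{j}<\infty\}}=1$ for every $j\in\n$, so both $I_k^f=G(X^{\phi_x^{k-1}})$ and $X_{\phi_x^{k-1}}=x$ hold $\Pb_x$-almost surely, and in particular $I_1^f=G(X)$. I would prove
\[ \Ebx{\prod_{j=1}^{k}g_j(I_j^f)} = \prod_{j=1}^{k}\Ebx{g_j(I_1^f)} \]
for all bounded non-negative Borel functions $g_1,\dots,g_k$ on $\r_E$ by induction on $k$. The inductive step uses that, by the already-established measurability combined with Lemma~\ref{lem:2stop}(iii) and $\phi_x^j\leq\phi_x^{k-1}$ for $j\leq k-1$, the variable $Z:=\prod_{j=1}^{k-1}g_j(I_j^f)$ is bounded and $\cal{F}_{\phi_x^{k-1}}/\cal{B}(\r_E)$-measurable; applying~\eqref{eq:strmkvd} with $\varsigma:=\phi_x^{k-1}$ and functional $g_k\circ G$ (which is $\cal{E}/\cal{B}(\r_E)$-measurable and bounded) yields
\[ \Ebx{Z\cdot g_k(I_k^f)} = \Ebx{Z\cdot 1_{\{\phi_x^{k-1}<\infty,\,X_{\phi_x^{k-1}}=x\}}}\cdot\Ebx{g_k(G(X))} = \Ebx{Z}\cdot\Ebx{g_k(I_1^f)}, \]
using the almost-sure facts noted above. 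A standard monotone-class argument then passes from products of bounded Borel functions to indicators of Borel sets, showing that the finite-dimensional distributions factor into copies of the law of $I_1^f$.

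The main obstacle is the bookkeeping around partial definitions: verifying that the $\cal{F}_{\phi_x^{k-1}}$-measurability of $Z$ really is available at the inductive step (which is why the measurability claim has to be dispatched first) and dealing cleanly with $1_{\{\phi_x^{k-1}<\infty\}}G(X^{\phi_x^{k-1}})$ on the set $\{\phi_x^{k-1}=\infty\}$ (a $\Pb_x$-null set, but present in the universal statement of the measurability claim).
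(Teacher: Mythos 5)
Your proposal is correct and follows essentially the same strategy as the paper's proof: both re-express $I_k^f$ as $1_{\{\phi_x^{k-1}<\infty\}}G(X^{\phi_x^{k-1}})$, establish measurability of $I_k^f$ with respect to $\cal{F}_{\phi_x^k}$, and then apply the strong Markov property (Theorem~\ref{thrm:strmkvpath}) iteratively. The only cosmetic differences are that the paper establishes measurability by exhibiting $I_k^f$ directly as a limit of finite sums of $\cal{F}_{\phi_x^k}$-measurable terms (invoking Lemma~\ref{lem:2stop}$(i,ii,iii)$ rather than the raw defining condition of $\cal{F}_{\phi_x^k}$), and that the i.i.d.\ factorisation is carried out on the $\pi$-system of tail events $\{[a,\infty]:a\in\r_E\}$ followed by Lemma~\ref{lem:dynkinpl}, rather than on bounded non-negative Borel test functions followed by a monotone-class argument; both routes are equivalent, and neither has a gap.
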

We do the proof in two steps, beginning by arguing the measurability of $I_k^f$. With this out of the way, we  then focus on showing that $(I_k^f)_{k\in\zp}$ is i.i.d.
\begin{proof}[Step 1: $I_k^f$ is $\cal{F}_{\phi_x^{k}}/\cal{B}(\r_E)$-measurable]Note that
$$I_k^f=\lim_{N\to\infty}\sum_{n=1}^N\sum_{m=0}^{n-1}\sum_{l=m}^{n-1}1_{\{\phi^k_x=n\}}1_{\{\phi^{k-1}_x=m\}}f(X_l).$$
Because limits, finite products, and finite sums of measurable functions are measurable functions, it suffices to show that each term in the sum is $\cal{F}_{\phi_x^{k}}/\cal{B}(\r_E)$-measurable. Because $\phi_{k-1}\leq \phi_{k}$ by definition, Lemma~\ref{lem:2stop}$i,iii$ shows that $1_{\{\phi^{k-1}_x=m\}}$ is $\cal{F}_{\phi_x^{k}}/\cal{B}(\r_E)$-measurable. Similarly, Lemma~\ref{lem:2stop}$i,ii$ implies that $1_{\{\phi^k_x=n\}}f(X_l)$ is  $\cal{F}_{\phi_x^{k}}/\cal{B}(\r_E)$-measurable for all $l<n$ and the result follows.
\end{proof}

\begin{proof}[Step 2: $(I_k^f)_{k\in\zp}$ is i.i.d.]To simplify the notation, we write $I_k$ for $I^f_k$ throughout this proof. Suppose that $x$ is recurrent, choose any $a_1,\dots,a_k$ in $\r_E$, and let
$$A_k:=\{I_1\geq a_1,\dots,I_{k-1}\geq a_{k-1},I_k\geq a_k\}.$$
Because the return times to $x$ are all finite with $\Pb_x$-probability one~\eqref{eq:nfuieafbua},
\begin{align*}
\Pbx{A_k}&=\Pb_x(\{I_1\geq a_1,\dots,I_{k-1}\geq a_{k-1},\phi^{k-1}_x<\infty,X_{\phi^{k-1}_x}=x,I_k\geq a_k\})\\
&=\Pb_x(\{I_1\geq a_1,\dots,I_{k-1}\geq a_{k-1},\phi^{k-1}_x<\infty,X_{\phi^{k-1}_x}=x,G(X^{\phi^{k-1}_x})\geq a_k\}),
\end{align*}
where $X^{\phi^{k-1}_x}$ denotes the $\phi^{k-1}_x$-shifted chain (Section~\ref{sec:dtstrmarkov}) and $G((x_{n})_{n\in\n}):=\sum_{m=0}^{\inf\{n>0:x_{n}=x\}-1}f(x_{m})$. 

Because we are able to deduce the values of $I_1,\dots,I_{k-1}$ by observing the chain up until the $(k-1)$th return, the event $A_{k-1}$ belongs to the pre-$\phi^{k-1}_x$ sigma-algebra $\cal{F}_{\phi^{k-1}_x}$~\eqref{def:stopdt}. Formally, Lemma~\ref{lem:2stop}$iii$ implies that 
$$\cal{F}_{\phi^{1}_x}\subseteq\cal{F}_{\phi^{k-1}_x}\subseteq\dots\subseteq\cal{F}_{\phi^{k-1}_x}$$
as $\phi^1_x\leq\phi^2_x\leq \dots\leq\phi^{k-1}_x$~(Exercise~\ref{ex:entdef}) and it follows from  Step 1 that $A_{k-1}$ belongs to $\cal{F}_{\phi^{k-1}_x}$.

For these reasons (and after checking the measurability requirement on $G$ using Lemma \ref{lem:pathspmeas}$iii$), we may apply the strong Markov property (Theorem~\ref{thrm:strmkvpath}) to obtain
$$\Pbx{A_k}=\Pb_x(A_{k-1}\cap\{\phi^{k-1}_x<\infty,X_{\phi^{k-1}_x}=x\})\Pbx{\{G(X_{n})_{n\in\n})\geq a_k\}}=\Pbx{A_{k-1}}\Pbx{\{I_1\geq a_k\}}.$$
%
%
Iterating the above backwards, we obtain 
\begin{equation}\label{eq:nfhudasfnsha}\Pbx{A_k}=\Pbx{\{I_1\geq a_1\}}\Pbx{\{I_1\geq a_2\}}\dots\Pbx{\{I_1\geq a_k\}}.\end{equation}
Setting $a_1=\dots=a_{k-1}=0$, we find that 
$$\Pbx{\{I_k\geq a_k\}}=\Pbx{\{I_1\geq a_k\}}.$$
Because $k$ and $a_1,\dots,a_k$ were arbitrary, and $\{[a,\infty]:a\in\r_E\}$ is a $\pi$-system that generates $\cal{B}(\r_E)$, the above and Lemma~\ref{lem:dynkinpl} show that the sequence $(I_k)_{k\in\zp}$ is identically distributed. For this reason, we can rewrite \eqref{eq:nfhudasfnsha} as 
$$\Pbx{A_k}=\Pbx{\{I_1\geq a_1\}}\Pbx{\{I_2\geq a_2\}}\dots\Pbx{\{I_k\geq a_k\}}.$$
The desired independence also follows from Lemma~\ref{lem:dynkinpl} as $\{[a_1,\infty]\times\dots\times[a_k,\infty]:a_1,\dots,a_k\in\r_E\}$ is a $\pi$-system that generates $\cal{B}(\r_E^k)$.
\end{proof}
\subsection{The empirical distribution and positive recurrent states}\label{sec:empdist}
Armed with the regenerative property, we can start attacking the chain's long-term behaviour more directly. Here, we use Kolmogorov's celebrated strong law of large numbers (Theorem~\ref{thrm:klln}) to study the limiting behaviour of the \emph{empirical distribution}\glsadd{epn}\index{empirical distribution} $\epsilon_N:=(\epsilon_N(x))_{x\in\s}$ whose $x$-entry $\epsilon_N(x)$ denotes the fraction of the first $N$ time-steps that the chain $X$ spends in state $x$:
\begin{equation}\label{eq:timeavedef}\epsilon_N(x):=\frac{1}{N}\sum_{n=0}^{N-1}1_x(X_n)\quad \forall x\in\s.\end{equation} 
From Theorem~\ref{thrm:rectrans}, we already know that the chain eventually stops visiting any given transient state $x$, and it follows that 
$$\lim_{N\to\infty}\epsilon_N(x)=0\quad\Pb_\gamma\text{-almost surely.}$$
In the case of recurrent states $x$, the visits do not cease and, to understand what happens with $\epsilon_N(x)$, we must study frequency of these visits. The trick here is to notice that $X$ does not visit state $x$ in between two consecutive return times, and so
$$\sum_{n=\phi^k_x}^{\phi^{k+1}_x-1}1_x(X_n)=1\quad\forall k>0.$$
Breaking down the path followed by $X$ into segments demarcated by consecutive return times and picking the return time $\phi^K_x$ closest to $N$, we find that 
$$\epsilon_N(x)\approx\frac{1}{\phi^K_x}\sum_{n=0}^{\phi^K_x-1}1_x(X_n)=\frac{1}{\phi^K_x}\sum_{k=0}^{K-1}\left(\sum_{n=\phi^{k}_x}^{\phi^{k+1}_x-1}1_x(X_n)\right)= \frac{K}{\phi^K_x}\quad\Pb_x\text{-almost surely},$$
for  $N$ large enough that $|\phi^K_x-N|/N$ is small. The regenerative property tells us that the sequence $(\phi^{k+1}_x-\phi^k_x)_{k\in\n}$ of elapsed times between consecutive visits is i.i.d. whenever the chain starts in $x$. For this reason,  the law of large numbers   implies that
$$\frac{K}{\phi^K_x}=\left(\frac{\phi^K_x}{K}\right)^{-1}=\left(\frac{1}{K}\sum_{k=0}^{K-1}(\phi^{k+1}_x-\phi^k_x)\right)^{-1}\approx (\Ebx{\phi_x})^{-1}\quad\Pb_x\text{-almost surely},$$
for large enough $K$. If the chain does not start in $x$, then noting that $\epsilon_N(x)$ is non-zero only if the chain ever visits $x$ and applying the Markov property yields the following characterisation of $\epsilon_N(x)$'s limiting behaviour (for a formal proof, see the end of the section):
\begin{theorem}\label{thrm:empdistlims}For all states $x$ in $\s$,
$$\epsilon_\infty(x):=\lim_{N\to\infty}\epsilon_N(x)=\frac{1_{\{\phi_x<\infty\}}}{\Ebx{\phi_x}}\quad\Pb_\gamma\text{-almost surely.}$$
\end{theorem}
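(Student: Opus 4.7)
The plan is to split the proof into the transient and recurrent cases, handling the recurrent case first under $\Pb_x$ via the regenerative property (Theorem~\ref{thrm:rec-iid}) and Kolmogorov's strong law of large numbers, and then bootstrapping to arbitrary initial distributions through the strong Markov property (Theorem~\ref{thrm:strmkvpath}).

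If $x$ is transient, then $\Pbx{\{\phi_x=\infty\}}>0$, so $\Ebx{\phi_x}=\infty$ and the right-hand side is $0$. On the other hand, Theorem~\ref{thrm:rectrans} combined with the strong Markov property at $\phi_x$ (applied to $\{\phi_x<\infty\}$) shows that the total number of visits $\sum_{n=0}^\infty 1_x(X_n)$ is $\Pb_\gamma$-almost surely finite, whence $\epsilon_N(x)\to0$.

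Now suppose $x$ is recurrent, and first assume $\gamma=1_x$, so that $\phi^0_x=0$ and all return times $\phi^k_x$ are $\Pb_x$-a.s.\ finite (Theorem~\ref{thrm:rectrans}). Apply Theorem~\ref{thrm:rec-iid} with $f\equiv 1$: the random variables $I_k^1=\phi^k_x-\phi^{k-1}_x$ are i.i.d.\ under $\Pb_x$, with common mean $\Ebx{\phi_x}\in(0,\infty]$. Kolmogorov's SLLN (valid for i.i.d.\ non-negative variables even when the mean is infinite) gives
\[
\frac{\phi^K_x}{K}=\frac{1}{K}\sum_{k=1}^K I_k^1\longrightarrow \Ebx{\phi_x}\quad\Pb_x\text{-a.s.}
\]
Let $R_N:=\sum_{n=0}^{N-1}1_x(X_n)$, so that $\epsilon_N(x)=R_N/N$. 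Since $X_{\phi^k_x}=x$ for every finite $\phi^k_x$, on $\{\phi^{R_N-1}_x<\infty\}$ we have the sandwich
\[
\phi^{R_N-1}_x\leq N-1<\phi^{R_N}_x,
\]
and, because $x$ is recurrent, $R_N\to\infty$ $\Pb_x$-a.s. Dividing by $R_N$ and using the SLLN statement above (composed with the a.s.\ divergent integer sequence $R_N$) yields $N/R_N\to\Ebx{\phi_x}$ $\Pb_x$-a.s., hence $\epsilon_N(x)\to 1/\Ebx{\phi_x}$ as required.

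Finally, for a general initial distribution $\gamma$ and recurrent $x$, note first that on the event $\{\phi_x=\infty\}$ we trivially have $\epsilon_N(x)=0$, matching the right-hand side. On $\{\phi_x<\infty\}$, for $N>\phi_x$ we split
\[
\epsilon_N(x)=\frac{1}{N}\sum_{n=0}^{\phi_x-1}1_x(X_n)+\frac{N-\phi_x}{N}\cdot\frac{1}{N-\phi_x}\sum_{m=0}^{N-\phi_x-1}1_x(X^{\phi_x}_m),
\]
where $X^{\phi_x}$ is the shifted chain~\eqref{eq:shiftdt}. The first term tends to $0$ since it is bounded by $\phi_x/N$. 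For the second, the measurability of the relevant functional (use Lemma~\ref{lem:pathspmeas}$(i)$ together with \eqref{eq:convmeas}) and the strong Markov property, applied via Theorem~\ref{thrm:strmkvpath} with $F$ the limit functional $\liminf_N \frac{1}{N}\sum_{m=0}^{N-1}1_x(x_m)$ (and separately $\limsup$), transfer the already-established $\Pb_x$-a.s.\ limit $1/\Ebx{\phi_x}$ of the shifted averages to $\Pb_\gamma$-a.s.\ on $\{\phi_x<\infty,X_{\phi_x}=x\}=\{\phi_x<\infty\}$. Combining the two contributions gives the claimed limit.

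The main technical obstacle is the need for the SLLN \emph{with a possibly infinite mean} in the null recurrent case, together with the careful sandwich $\phi^{R_N-1}_x\le N-1<\phi^{R_N}_x$; once these are in hand the extension from $\Pb_x$ to $\Pb_\gamma$ is standard, the only subtlety being to verify measurability of the $\limsup/\liminf$ functionals on $(\cal{P},\cal{E})$ before invoking the strong Markov property.
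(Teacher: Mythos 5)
Your proof is correct and follows essentially the same route as the paper's: the transient case via finiteness of the total number of visits, the recurrent case under $\Pb_x$ via the regenerative property (Theorem~\ref{thrm:rec-iid} with $f\equiv 1$) and Kolmogorov's LLN with a sandwich of $N$ between consecutive return times, and the extension to general $\gamma$ via the strong Markov property after isolating the pre-$\phi_x$ segment. The only differences are cosmetic (your $R_N$ counts the initial visit while the paper's counts returns, so the sandwich is shifted by one index, and you handle the transient case once for general $\gamma$ rather than first under $\Pb_x$ and then bootstrapping).
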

The theorem shows that the fraction $\epsilon_\infty(x)$ of all time that the chain spends in a state $x$ is negligible  unless returns to $x$ happen quickly enough that the mean return time is finite. Or, in other words, unless $x$ is \emph{positive recurrent}:
\begin{definition}[Positive and null recurrent states]\label{def:posrec}
A state $x$ is said to be positive recurrent\index{positive recurrent state} if its mean return time is finite ($\Ebx{\phi_x}<\infty$). If $x$ is recurrent but its mean return time is infinite ($\Ebx{\phi_x}=\infty$), we say that it is null recurrent.\index{null recurrent state}
\end{definition}
%

\subsubsection*{Kolmogorov's law of large numbers and a proof of Theorem~\ref{thrm:empdistlims}}
As outlined above, the proof of Theorem~\ref{thrm:empdistlims} entails combining the regenerative property, the strong Markov property, and the law of large numbers (LLN)\index{law of large numbers}:
\begin{theorem}[Kolmogorov's strong law of large numbers]\label{thrm:klln} If $(W_n)_{n\in\zp}$ is a sequence of i.i.d. non-negative $\r_E$-valued random variables on a probability triplet $(\Omega,\cal{F},\Pb)$, then their sample average converges to $\Ebb{W_1}$ almost surely:
$$\lim_{N\to\infty}\frac{1}{N}\sum_{n=1}^N W_n= \Ebb{W_1}\quad\Pb\text{-almost surely.}$$
\end{theorem}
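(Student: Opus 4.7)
The plan is to reduce to the classical Kolmogorov strong law for real-valued integrable i.i.d. sequences (which is listed in the prerequisites, c.f.\ \citep[Chapter~12]{Williams1991}) by a truncation argument. Since $W_1$ is non-negative and $\r_E$-valued, there are two cases: $\Eb[W_1]<\infty$ and $\Eb[W_1]=\infty$.

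In the first case, the identity $\Ebb{W_1}=\Ebb{W_1;W_1<\infty}+\infty\cdot\Pbb{\{W_1=\infty\}}$ forces $\Pbb{\{W_1=\infty\}}=0$. Replacing each $W_n$ with a $\Pb$-a.s.-equal $\r$-valued random variable (which can be done while preserving the i.i.d.\ property), the classical Kolmogorov SLLN gives $\frac1N\sum_{n=1}^N W_n\to \Ebb{W_1}$ almost surely, as desired.

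In the second case, fix $M>0$ and consider the truncations $W_n^{(M)}:=W_n\wedge M$. These are i.i.d., bounded, hence integrable, so the classical SLLN yields
$$\lim_{N\to\infty}\frac1N\sum_{n=1}^N W_n^{(M)}=\Ebb{W_1\wedge M}\quad\Pb\text{-a.s.}$$
Since $W_n\geq W_n^{(M)}$, taking liminf and intersecting the almost-sure events over $M\in\zp$ gives
$$\liminf_{N\to\infty}\frac1N\sum_{n=1}^N W_n\geq \sup_{M\in\zp}\Ebb{W_1\wedge M}\quad\Pb\text{-a.s.}$$
By monotone convergence the right-hand side equals $\Ebb{W_1}=\infty$, so the liminf (and hence the limit) is $\infty=\Ebb{W_1}$ almost surely, as required.

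There is no real obstacle here beyond the usual measurability-on-incomplete-spaces caveat discussed in Section~\ref{sec:dtdef}: the liminf of measurable functions is measurable, and the countable intersection of $\Pb$-null sets (one per $M\in\zp$) is still $\Pb$-null, so the reduction through $\zp$ (rather than all $M>0$) costs nothing. The mildly delicate point is simply the need to invoke the classical SLLN in a form that applies to arbitrary bounded, non-negative i.i.d.\ sequences; if the version at hand is stated only for $\r$-valued variables, one can first observe that bounded non-negative random variables are in particular $\r$-valued and integrable.
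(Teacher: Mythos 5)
Your proof is correct and follows essentially the same strategy as the paper's: reduce to the classical Kolmogorov SLLN via truncation $W_n\wedge M$, then pass to the limit $M\to\infty$ using monotone convergence. The paper cites the classical result for the integrable finite-valued case and handles the rest by truncation exactly as you do; your write-up is simply more explicit about the case split ($\Ebb{W_1}<\infty$ versus $\Ebb{W_1}=\infty$), the a.s. replacement by $\r$-valued versions, and the countable intersection of null sets over $M\in\zp$, all of which the paper glosses over (the paper's displayed equation also contains a typo, overloading $N$ for both the truncation level and the sample size, but the intent matches yours).
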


\begin{proof}The proof for the case of integrable finite-valued $W_n$s can be found in many books, e.g.\ \citep{Williams1991},  and we skip it. For the general case fix $M$ in $\n$, apply Kolmogorov's LLN to the sequence $(W_n\wedge M)_{n\in\zp}$ to get
$$\lim_{N\to\infty}\frac{1}{N}\sum_{n=1}^N W_n\wedge M = \Ebb{W_1\wedge M}\quad\Pb\text{-almost surely.}$$
Taking the limit $M\to\infty$ and applying monotone convergence then completes the proof.
\end{proof}
We do the proof  of Theorem~\ref{thrm:empdistlims} in two steps. We begin by using the regenerative property and the LLN to argue the limit in the case that the chain starts from the state $x$:
\begin{proof}[Proof of Theorem~\ref{thrm:empdistlims}: Step 1]For the time being, suppose that the chain starts at some state $x$. Note that
\begin{equation}\label{eq:fn8anuafnuiea}\epsilon_N(x)=\frac{1}{N}\sum_{n=0}^{N-1}1_x(X_n)= \frac{1_x(X_0)}{N}+\frac{R_N}{N}\quad\forall N>0,\end{equation}
where $R_N:=\sum_{n=1}^{N-1}1_x(X_n)$ is the total number of returns by time $N-1$ (with $R_1:=0$). If the state is transient, then Theorem~\ref{thrm:rectrans} shows that $R:=\sum_{n=1}^{\infty}1_x(X_n)$ is finite, $\Pb_x$-almost surely, and it follows  that
$$\lim_{N\to\infty}\epsilon_N(x)=\lim_{N\to\infty}\frac{R_N}{N}\leq \lim_{N\to\infty}\frac{R}{N}=0=\frac{1}{\Ebx{\phi_x}}\quad\Pb_x\text{-almost surely},$$
as $\Ebx{\phi_x}=\infty$.

Suppose instead that $x$ is recurrent. The chain's regenerative property (Theorem~\ref{thrm:rec-iid} with $f:=1$) and Kolmogorov's LLN (Theorem~\ref{thrm:klln} with $W_n:=1_{\{\phi^{n}_x<\infty\}}(\phi^{n+1}_x-\phi^{n}_x)$):
\begin{align}\label{eq:gkm84a0jg4ang7a4ahj}&\lim_{K\to\infty}\frac{\phi^K_x}{K}=\lim_{K\to\infty}\frac{1}{K}\sum_{k=0}^{K-1}(\phi^{k+1}_x-\phi^{k}_x)=\Ebx{\phi_x}\quad\Pb_x\text{-almost surely}\\
&\Rightarrow\lim_{K\to\infty}\frac{\phi^{K+1}_x-\phi^{K}_x}{K}=0\quad\Pb_x\text{-almost surely.}\label{eq:gkm84a0jg4ang7a4ahj2}\end{align}

Because $R_N$ is the number of visit made by time $N-1$, the $R_{N}$th return must occur by time $N-1$ and the next return must occur after this time:
$$\phi^{R_N}_x\leq N-1<\phi^{R_N+1}_x\quad\forall N>0,\enskip\Pb_x\text{-almost surely}.$$
It follows that,
\begin{equation}\label{eq:fm7ewa8fne78wan8fewa}\frac{\phi^{R_N}_x}{R_N}\leq\frac{N}{R_N}\leq \frac{\phi^{R_N+1}_x}{R_N}\quad\forall N>0,\enskip\Pb_x\text{-almost surely},\end{equation}
where we are using the convention that the two left-most terms are zero if $R_N=0$. 

Because $R_N$ increases at most in steps of one ($R_{N+1}-R_N\in\{0,1\}$), $R_N$ may only approach infinity by stepping through all positive integers. Because $x$ is recurrent, Theorem~\ref{thrm:rectrans} implies that $R_N\to\infty$ as $N\to\infty$, $\Pb_x$-almost surely, and it follows that
$$\Pbx{\{\cup_{N=1}^\infty \{R_N\}=\n\}}=1.$$
For this reason, (\ref{eq:gkm84a0jg4ang7a4ahj}--\ref{eq:gkm84a0jg4ang7a4ahj2}) implies that 
\begin{align*}&\lim_{N\to\infty}\frac{\phi^{R_N}_x}{R_N}=\lim_{K\to\infty}\frac{\phi^K_x}{K}=\Ebx{\phi_x},\\
&\lim_{N\to\infty}\frac{\phi^{R_N+1}_x}{R_N}=\lim_{K\to\infty}\frac{\phi^{K+1}_x}{K}=\lim_{K\to\infty}\frac{\phi^{K+1}_x-\phi^K_x}{K}+\lim_{N\to\infty}\frac{\phi^K_x}{K}=\Ebx{\phi_x},\quad\Pb_x\text{-almost surely};\quad\end{align*}
and it follows from~\eqref{eq:fm7ewa8fne78wan8fewa} that $\epsilon_N(x)\to1/\Ebx{\phi_x}$ as $N\to\infty$ with $\Pb_x$-probability one.

\end{proof}
To finish the proof, we now transfer the result from the starting-location-is-$x$ case to the arbitrary-initial-distribution-$\gamma$ case by noting that $\epsilon_\infty(x)$ is non-zero only if $\phi_x$ is finite, conditioning on $\phi_x$, and applying the strong Markov property:
\begin{proof}[Proof of Theorem~\ref{thrm:empdistlims}: Step 2]We have now left to show that, for any initial distribution $\gamma$,
\begin{equation}\label{eq:fna8wh483afnuw4dewasdea}\Pbl{\left\{\liminf_{N\to\infty}\epsilon_N(x)=\frac{1_{\{\phi_x<\infty\}}}{\Ebx{\phi_x}}\right\}}=1,\qquad\Pbl{\left\{\limsup_{N\to\infty}\epsilon_N(x)=\frac{1_{\{\phi_x<\infty\}}}{\Ebx{\phi_x}}\right\}}=1.\end{equation}
To do so, note that, by the definition of the entrance time $\phi_x$,
$$\epsilon_N(x)=\frac{1}{N}\sum_{n=0}^{N-1}1_x(X_n)=\frac{1_{x}(X_0)}{N}+\frac{1}{N}\sum_{n=1}^{N-1}1_x(X_n)=\frac{1_{x}(X_0)}{N}+\frac{1_{\{\phi_x<N\}}}{N}\sum_{n=\phi_{x}}^{N-1}1_x(X_n).$$
Given that the first term tends to zero as $N$ grows unbounded and that
$$\limsup_{N\to\infty}\frac{1_{\{\phi_{x}<N\}}}{N}\sum_{n=N}^{\phi_{x}+N-1}1_x(X_n)\leq \limsup_{N\to\infty}\frac{1_{\{\phi_{x}<N\}}\phi_{x}}{N}=\limsup_{N\to\infty}\frac{1_{\{\phi_{x}<\infty\}}\phi_{x}}{N}=0,$$
we have that
\begin{align}\label{eq:gnf98a70gh8ea4gaewa}\liminf_{N\to\infty}\epsilon_N(x)&=\liminf_{N\to\infty}\frac{1_{\{\phi_{x}<\infty\}}}{N}\sum_{n=\phi_{x}}^{N-1}1_x(X_n)=\liminf_{N\to\infty}\frac{1_{\{\phi_{x}<\infty\}}}{N}\sum_{n=\phi_{x}}^{\phi_{x}+N-1}1_x(X_n),\\
\label{eq:gnf98a70gh8ea4gaewa2}\limsup_{N\to\infty}\epsilon_N(x)&=\limsup_{N\to\infty}\frac{1_{\{\phi_{x}<\infty\}}}{N}\sum_{n=\phi_{x}}^{N-1}1_x(X_n)=\limsup_{N\to\infty}\frac{1_{\{\phi_{x}<\infty\}}}{N}\sum_{n=\phi_{x}}^{\phi_{x}+N-1}1_x(X_n).\end{align}
Given that 
$$F_-(X)=\liminf_{N\to\infty}\frac{1}{N}\sum_{n=0}^{N-1}1_x(X_n),\quad F_+(X)=\limsup_{N\to\infty}\frac{1}{N}\sum_{n=0}^{N-1}1_x(X_n),$$
where $F_-,F_+$ are as in Lemma~\ref{lem:pathspmeas}$i$ with $f:=1_x$, we may rewrite (\ref{eq:gnf98a70gh8ea4gaewa}--\ref{eq:gnf98a70gh8ea4gaewa2}) as
\begin{equation}\label{eq:dnw8anfn7m38whag9}\liminf_{N\to\infty}\epsilon_N(x)=1_{\{\phi_{x}<\infty\}}F_-(X^{\phi_{x}}),\qquad\limsup_{N\to\infty}\epsilon_N(x)=1_{\{\phi_{x}<\infty\}}F_+(X^{\phi_{x}}),\end{equation}
where $X^{\phi_x}$ denotes the $\phi_{x}$-shifted chain in~\eqref{eq:shiftdt}. Thus, 
$$\Pbl{\left\{\liminf_{N\to\infty}\epsilon_N(x)=\frac{1_{\{\phi_x<\infty\}}}{\Ebx{\phi_x}}\right\}}=\Pbl{\{\phi_x=\infty\}}+\Pbl{\left\{\phi_{x}<\infty,F_-(X^{\phi_x})=\frac{1}{\Ebx{\phi_x}}\right\}}.$$
Because we already showed that $F_-(X)= 1/\Ebx{\phi_x}$ with $\Pb_x$-probability one, applying the strong Markov property (Theorem~\ref{thrm:strmkvpath}) yields
\begin{align*}\Pbl{\left\{\phi_{x}<\infty,F_-(X^{\phi_{x}})=\frac{1}{\Ebx{\phi_x}}\right\}}&
=\Pbl{\left\{\phi_{x}<\infty,X_{\phi_{x}}=x,F_-(X^{\phi_{x}})=\frac{1}{\Ebx{\phi_x}}\right\}}\\
&=\Pbl{\left\{\phi_{x}<\infty,X_{\phi_{x}}=x\right\}}\Pbx{\left\{F_-(X)=\frac{1}{\Ebx{\phi_x}}\right\}}\\
&=\Pbl{\left\{\phi_{x}<\infty,X_{\phi_{x}}=x\right\}}=\Pbl{\{\phi_{x}<\infty\}},\end{align*}
and the leftmost equation in~\eqref{eq:fna8wh483afnuw4dewasdea} follows. For the rightmost one, replace `$\liminf$' and `$F_-$' with `$\limsup$' and `$F_+$' in all equations following~\eqref{eq:dnw8anfn7m38whag9}.
\end{proof}

\subsection[Stationary and ergodic distributions; Doeblin decomposition]{Stationary distributions, ergodic distributions, and a Doeblin-like decomposition}\label{sec:statdists}
A probability distribution $\pi$\glsadd{pi} on $\s$ is said to be a stationary distribution\index{stationary distribution} of the chain $X$ if sampling the initial condition from $\pi$ makes $X$ a stationary process. That is, one whose path law is invariant to time shifts:
\begin{equation}\label{eq:statprocdt}\Pbp{\left\{X^n\in A\right\}}=\Lbp{A}\quad\forall A\in\cal{E},\enskip n\geq0, \end{equation}
where $\mathbb{L}_\pi$ denotes the path law with initial distribution $\pi$, $\cal{E}$ denotes the sigma-algebra generated by the cylinder sets of the path space $\cal{P}$~(Section~\ref{pathlawuni} for the definitions of $\mathbb{L}_\pi,\cal{P},\cal{E}$), and $X^n$ the $n$-shifted chain~\eqref{eq:shiftdt}. Setting $A$ in the above to be the slice $\{(x_m)_{m\in\n}\in\cal{P}:x_0=x\}$ of $\cal{P}$, we find that
\begin{equation}\label{eq:lawstat}\Pbp{\{X_n=x\}}=\pi(x)\quad\forall x\in\s,\enskip n\geq0.\end{equation}
In other words, if the chain starts with distribution $\pi$, then it remains with distribution $\pi$ for all time. Applying \eqref{eq:dtlaw} to the above we find that 
\begin{equation}\label{eq:statd}\pi P(x)=\pi(x)\quad\forall x\in\s,\end{equation}
or $\pi P=\pi$ for short. Conversely, marginalising over \eqref{eq:nstepthe} and applying \eqref{eq:statd} yields \eqref{eq:statprocdt} for all sets $A$ of the form in \eqref{eq:gensets}, and it follows from Theorem~\ref{pathlawuni} that
\begin{theorem}[Analytical characterisation of the stationary distributions]\label{pstateqs} A probability distribution $\pi$ on $\s$ is a stationary distribution of $X$ if and only if it satisfies \eqref{eq:statd}.\end{theorem}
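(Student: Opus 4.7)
The proof splits into the two directions of the biconditional, and both are essentially sketched in the text preceding the theorem; my plan is to assemble them carefully.

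For the forward direction, I would assume $\pi$ is stationary in the sense of \eqref{eq:statprocdt} and specialise the invariance to the one-dimensional slices $A=\{(x_m)_{m\in\n}\in\cal{P}:x_0=x\}$ to recover \eqref{eq:lawstat}, namely $p_n(x)=\pi(x)$ for every $n\in\n$ and $x\in\s$, where $p_n$ is the time-varying law under $\Pb_\pi$. Plugging $n=0,1$ into the recursion \eqref{eq:dtlaw} (or equivalently using Theorem~\ref{dtlawchar}) gives $\pi=\pi_0 = p_1 = p_0 P=\pi P$, which is precisely \eqref{eq:statd}. This step is routine and does not require any machinery beyond the time-varying law's characterisation.

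For the converse, I would assume $\pi P=\pi$ and aim to verify \eqref{eq:statprocdt}. The natural strategy is to apply Theorem~\ref{pathlawuni}: for fixed $n\in\n$ define
\begin{equation*}
\tilde{\mathbb{L}}(A):=\Pbp{\{X^n\in A\}},\qquad A\in\cal{E}.
\end{equation*}
Because each $X_{n+k}$ is $\cal{F}/\tws$-measurable, $X^n$ is $\cal{F}/\cal{E}$-measurable (as noted in Section~\ref{sec:pathlaw}), so $\tilde{\mathbb{L}}$ is a well-defined probability measure on $(\cal{P},\cal{E})$. By Theorem~\ref{pathlawuni}, to conclude $\tilde{\mathbb{L}}=\mathbb{L}_\pi$ it suffices to check that $\tilde{\mathbb{L}}$ agrees with $\mathbb{L}_\pi$ on the generating cylinder sets \eqref{eq:gensets}, i.e.\ that
\begin{equation*}
\Pbp{\{X_n=x_0,X_{n+1}=x_1,\dots,X_{n+m}=x_m\}}=\pi(x_0)p(x_0,x_1)\cdots p(x_{m-1},x_m)
\end{equation*}
for every $m\in\n$ and $x_0,\dots,x_m\in\s$. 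This reduces the problem to a finite-dimensional computation.

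The finite-dimensional identity is handled by marginalising the joint-law formula \eqref{eq:nstepthe}. Summing over the intermediate states $y_0,\dots,y_{n-1}$ one gets
\begin{equation*}
\Pbp{\{X_n=x_0,\dots,X_{n+m}=x_m\}}=\left(\sum_{y_0,\dots,y_{n-1}\in\s}\pi(y_0)p(y_0,y_1)\cdots p(y_{n-1},x_0)\right)p(x_0,x_1)\cdots p(x_{m-1},x_m).
\end{equation*}
The bracketed factor is exactly $(\pi P^n)(x_0)$, and a trivial induction on $n$ using the hypothesis $\pi P=\pi$ gives $\pi P^n=\pi$, so the bracket equals $\pi(x_0)$ and the desired identity follows. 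I do not expect any real obstacle: the only step that requires a small amount of care is the bookkeeping in the marginalisation, and the citation of Theorem~\ref{pathlawuni} to upgrade agreement on cylinder sets to agreement on all of $\cal{E}$. Everything else is direct from the definitions.
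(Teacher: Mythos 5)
Your proposal is correct and follows essentially the same route as the paper: the forward direction specialises \eqref{eq:statprocdt} to one-dimensional slices and invokes \eqref{eq:dtlaw}, while the converse marginalises \eqref{eq:nstepthe}, uses $\pi P^n=\pi$ to match the cylinder-set values, and applies Theorem~\ref{pathlawuni} to upgrade to all of $\cal{E}$. You are merely spelling out details the paper leaves implicit in the sentence preceding the theorem statement.
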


\subsubsection*{A stationary distribution has support on a state if and only if the state is positive recurrent}If the chain is a stationary process, then the average fraction of time-steps it spends in any given state remains constant throughout time. In particular, sampling the starting location from a stationary distribution $\pi$, taking expectations of the empirical distribution $\epsilon_N$~(Section~\ref{sec:empdist}), and applying~\eqref{eq:lawstat}, we obtain that
\begin{equation}\label{eq:pntimeave}\Ebp{\epsilon_N(x)}=\Ebp{\frac{1}{N}\sum_{n=0}^{N-1}1_x(X_n)}=\frac{1}{N}\sum_{n=0}^{N-1}\Pbp{\{X_n=x\}}=\pi(x)\quad\forall x\in\s,\enskip N>0.\end{equation}
Using Theorem~\ref{thrm:empdistlims}, taking the limit $N\to\infty$, and applying bounded convergence, we find that
\begin{equation}\label{eq:empdistmean}\pi(x)=\lim_{N\to\infty}\Ebp{\epsilon_N(x)}=\frac{\Pbp{\{\phi_x<\infty\}}}{\Ebx{\phi_x}}\quad\forall x\in\s.\end{equation}
Thus, there exists a stationary distribution $\pi$ with support on $x$ (i.e., with $\pi(x)>0$) only if $x$ is positive recurrent~(Definition~\ref{def:posrec}). The intuition here is that, if the initial position is sampled from $\pi$, then the states $x$ satisfying $\pi(x)>0$ are those that the chain will visit often enough that the fraction of time-steps $\epsilon_N(x)$ it spends in the state does not decay to zero. Because, regardless of the starting location, visits to any non-positive-recurrent state $x$ eventually become so rare that $\epsilon_N(x)$ collapses to zero (Theorem~\ref{thrm:empdistlims}), it follows that $x$ must be positive recurrent for $\pi(x)$ to be non-zero.

The converse is also true and the trick in arguing it involves the stopping distributions and occupation measures of the Section~\ref{sec:morehitdt}. In particular, let $\mu_S$ and $\nu_S$ be the space marginals~(\ref{eq:mus}--\ref{eq:nus}) of the stopping distribution and occupation measure associated with the entrance time $\phi_x$ of any given recurrent state $x$ and set the chain's starting location to be  $x$. At the moment of entry, the chain is at $x$. Because $x$ is recurrent, it follows that $\mu_S$ is the point mass $1_x$ at $x$. Because we fixed the initial distribution $\gamma$ to also be $1_x$,  Corollary~\ref{eqnsd2} then shows that $\nu_S=\nu_S P$. Recall that the mass of $\nu_S$ is the mean return time, c.f.~\eqref{eq:numassd}. Hence,  if $x$ is positive recurrent, then we obtain a probability distribution $\pi$ satisfying that $\pi=\pi P$ by normalising $\nu_S$ (i.e.\ $\pi:=\nu_S/\nu_S(\s)$). Moreover, 
$$\pi(x)=\frac{\nu_S(x)}{\nu_S(\s)}=\frac{\Ebx{\sum_{n=0}^{\phi_x-1}1_x(X_n)}}{\Ebx{\phi_x}}=\frac{\Ebx{1_x(X_0)}}{\Ebx{\phi_x}}=\frac{\Pbx{\{X_0=x\}}}{\Ebx{\phi_x}}=\frac{1}{\Ebx{\phi_x}}>0.$$
In summary:
\begin{theorem}\label{thrm:posrecchar} A state $x$ is positive recurrent if and only if there exists a stationary distribution $\pi$ such that $\pi(x)>0$. Moreover, if $x$ is positive recurrent, then one such stationary distribution is given by
$$\pi(y)=\frac{1}{\Ebx{\phi_x}}\Ebx{\sum_{n=0}^{\phi_x-1}1_y(X_n)}\quad\forall y\in\s.$$
\end{theorem}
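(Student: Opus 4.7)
The plan is to prove the two implications of the equivalence separately, exploiting the machinery from the preceding sections (empirical distribution limits, occupation measures, and their linear equations).

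For the forward direction, assume a stationary distribution $\pi$ exists with $\pi(x)>0$. The idea is to run the chain under $\Pb_\pi$ and compute $\Ebp{\epsilon_N(x)}$ in two ways. On the one hand, stationarity $\pi P = \pi$ and the representation $p_n = \pi P_n$ give $\Ebp{\epsilon_N(x)} = \pi(x)$ for every $N$, as in~\eqref{eq:pntimeave}. On the other hand, Theorem~\ref{thrm:empdistlims} implies $\epsilon_N(x) \to 1_{\{\phi_x<\infty\}}/\Ebx{\phi_x}$, $\Pb_\pi$-almost surely, and since $\epsilon_N(x)$ is bounded by $1$, bounded convergence yields
$$\pi(x) = \lim_{N\to\infty}\Ebp{\epsilon_N(x)} = \frac{\Pbp{\{\phi_x<\infty\}}}{\Ebx{\phi_x}}.$$
Because $\pi(x)>0$, the denominator must be finite, so $x$ is positive recurrent by Definition~\ref{def:posrec}.

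For the converse, suppose $x$ is positive recurrent and consider the exit/stopping machinery of Section~\ref{sec:morehitdt} applied to the stopping time $\phi_x$ with initial distribution $\gamma := 1_x$. Since $x$ is recurrent, $\phi_x<\infty$ almost surely, and by construction $X_{\phi_x}=x$ on $\{\phi_x<\infty\}$; hence the space marginal of the stopping distribution is $\mu_S = 1_x$. Plugging $\gamma=\mu_S=1_x$ into Corollary~\ref{eqnsd2} gives $\nu_S = \nu_S P$. The positive recurrence of $x$ means $\nu_S(\s)=\Ebx{\phi_x}\in(0,\infty)$, so $\pi := \nu_S/\Ebx{\phi_x}$ is a probability distribution satisfying $\pi P = \pi$ and is therefore stationary by Theorem~\ref{pstateqs}. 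Finally, the explicit form of $\nu_S$ in~\eqref{eq:nus} produces exactly the displayed formula for $\pi$, and evaluating at $y=x$ gives
$$\pi(x)=\frac{1}{\Ebx{\phi_x}}\Ebx{\sum_{n=0}^{\phi_x-1}1_x(X_n)}=\frac{1}{\Ebx{\phi_x}}>0,$$
since under $\Pb_x$ the only $n\in\{0,\dots,\phi_x-1\}$ with $X_n=x$ is $n=0$.

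The only genuinely delicate step is ensuring $\mu_S = 1_x$ with total mass $1$, which requires recurrence of $x$ (to conclude $\Pbx{\{\phi_x<\infty\}}=1$); everything else is essentially bookkeeping with previously established identities. No obstacle is expected beyond checking that the bounded convergence argument in the forward direction is applied to a sequence that indeed converges $\Pb_\pi$-almost surely, which is immediate from Theorem~\ref{thrm:empdistlims}.
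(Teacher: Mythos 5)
Your proof is correct and follows the paper's argument almost step for step: the forward direction via $\Ebp{\epsilon_N(x)}=\pi(x)$, bounded convergence, and Theorem~\ref{thrm:empdistlims}; the converse via the occupation measure of $\phi_x$ started from $1_x$, Corollary~\ref{eqnsd2} giving $\nu_S=\nu_S P$, and normalization by $\Ebx{\phi_x}$. No gaps.
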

%

A handy consequence of this theorem is that positive and null recurrence are class properties:
\begin{corollary}\label{cor:posrecclass} If $x$ is positive recurrent and $x\to y$, then $y$ is also positive recurrent. Moreover, a state in a communicating class is positive (resp. null) recurrent if and only if all states in the class are positive (resp. null) reccurent.
\end{corollary}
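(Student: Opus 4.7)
The plan is to use Theorem~\ref{thrm:posrecchar}, which characterises positive recurrence as the existence of a stationary distribution with support on the state, together with the $\pi P = \pi$ equation (Theorem~\ref{pstateqs}) and the characterisation of accessibility in Lemma~\ref{lem:accdt}. The key observation is that any stationary distribution $\pi$ is automatically invariant under every iterate of $P$: iterating $\pi P = \pi$ yields $\pi P_n = \pi$ for all $n \in \n$. So if I can push positivity of $\pi(x)$ along a path of positive one-step probabilities from $x$ to $y$, I immediately get $\pi(y) > 0$, which lets me invoke Theorem~\ref{thrm:posrecchar} in reverse to conclude that $y$ is positive recurrent.

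Concretely, first I would assume $x$ is positive recurrent and $x \to y$. By Theorem~\ref{thrm:posrecchar} there exists a stationary distribution $\pi$ with $\pi(x) > 0$. Because $x \to y$, Lemma~\ref{lem:accdt} gives either $p(x,y) > 0$ or intermediate states $x_1,\dots,x_l$ with $p(x,x_1)p(x_1,x_2)\cdots p(x_l,y) > 0$; in either case there is some $n \geq 1$ with $p_n(x,y) > 0$ by the definition~\eqref{eq:nstepdef} of the $n$-step matrix. Then
\begin{equation*}
\pi(y) \;=\; \pi P_n(y) \;=\; \sum_{z \in \s} \pi(z)\, p_n(z,y) \;\geq\; \pi(x)\, p_n(x,y) \;>\; 0,
\end{equation*}
and another application of Theorem~\ref{thrm:posrecchar} delivers the positive recurrence of $y$.

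The ``moreover'' parts then follow cheaply. If $x$ lies in a communicating class $\cal{C}$ and $x$ is positive recurrent, then for every $y \in \cal{C}$ we have $x \to y$, so $y$ is positive recurrent by what we just proved. Conversely, if some $y \in \cal{C}$ is positive recurrent, then $y \to x$ gives the positive recurrence of $x$. For null recurrence, recall from Theorem~\ref{thrm:recclasspro} that recurrence itself is a class property; so if $x$ is null recurrent (recurrent, not positive recurrent) and $y$ lies in the same communicating class, then $y$ is recurrent, and $y$ cannot be positive recurrent---otherwise the first part would force $x$ to be positive recurrent as well, a contradiction. Hence $y$ is null recurrent.

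There is really no hard step here: once one has Theorem~\ref{thrm:posrecchar} in hand, the whole argument is a one-line transport of mass through $P_n$. The only thing worth double-checking is the interchange of sums implicit in $\pi P_n = \pi$ for $n > 1$, but this is immediate since $\pi$ is a probability measure and $P_n$ has non-negative entries with rows summing to one, so Tonelli applies without any fuss.
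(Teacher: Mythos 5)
Your argument is correct and essentially identical to the paper's: both pull a stationary distribution $\pi$ with $\pi(x)>0$ from Theorem~\ref{thrm:posrecchar}, transport positivity along the one-step path from Lemma~\ref{lem:accdt} using invariance of $\pi$ (you package the iterated $\pi P=\pi$ into $\pi P_n=\pi$, the paper unrolls the chain of inequalities directly, but the content is the same), and then reduce null recurrence to the combination of Theorem~\ref{thrm:recclasspro} and the positive/null dichotomy. No gap.
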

For this reason, we say that a communicating class is \emph{positive recurrent} (resp. \emph{null recurrent}) if each one (or, equivalently, all) of its states is positive recurrent (resp. null recurrent).\index{null recurrent class}\index{positive recurrent class}
\begin{proof} Let $x_1,\dots,x_l$ be as in \eqref{lem:accdt} and suppose that $x$ is positive recurrent. Let $\pi$ be the stationary distribution in Theorem \ref{thrm:posrecchar} satisfying $\pi(x)>0$. Using \eqref{eq:statd} repeatedly we find that
$$\pi(y)=\sum_{z\in\s}\pi(z)p(z,y)\geq \pi(x_l)p(x_l,y)\geq\dots\geq \pi(x)p(x,x_1)p(x_1,x_2)\dots p(x_l,y)>0.$$
Applying Theorem \ref{thrm:posrecchar} then shows that $y$ is positive recurrent. That positive recurrence is a class property follows immediately. For null recurrence, the result then follows because recurrence is a class property (Theorem \ref{thrm:recclasspro}) and because recurrent states are either positive recurrent or null recurrent.
\end{proof}

\subsubsection*{Ergodic distributions; Doeblin-like decomposition; set of stationary distributions}

We wrap up and our treatment of the stationary distributions by characterising the set thereof. Here, we require the following \emph{Doeblin-like} decomposition\index{Doeblin-like decomposition} of the state space:
\begin{equation}\label{eq:ddd}\s=\left(\bigcup_{i\in\cal{I}} \C_i\right)\cup \cal{T}=\cal{R}_+\cup\cal{T},\end{equation}
where $\{\C_i:i\in\cal{I}\}$ denotes the (necessarily countable) set of positive recurrent closed communicating classes,\glsadd{ci} which we index with some  set $\cal{I}$, $\cal{R}_+:=\cup_{i\in\cal{I}}\C_i$ the set of all positive recurrent states,\glsadd{recp} and $\cal{T}$ that of all other states.\glsadd{nrec} As we have already seen in Theorem~\ref{thrm:posrecchar}, no stationary distribution has support in $\cal{T}$ and there at least one stationary distribution per $\cal{C}_i$. Much more can be said:
%
%
\begin{theorem}[Characterising the set of stationary distributions]\label{doeblind}  Let $\{\C_i:i\in\cal{I}\}$ be the collection of positive recurrent closed communicating classes.
\vspace{5pt}
\begin{enumerate}[label=(\roman*),noitemsep]
\item For each $\cal{C}_i$, there exists a single stationary distribution $\pi_i$ with support contained in $\cal{C}_i$ (i.e., with $\pi_i(\cal{C}_i)=1$); $\pi_i$ is  known as the ergodic distribution\index{ergodic distribution}\glsadd{pii} associated with $\cal{C}_i$. 
It has support on all of $\C_i$ ($\pi_i(x)>0$ for all states $x$ in $\cal{C}_i$) and can be expressed as
\begin{equation}\label{eq:ergdistchar}\pi_i(y)=\frac{1_{\cal{C}_i}(y)}{\Eby{\phi_y}}=\frac{1}{\Ebx{\phi_x}}\Ebx{\sum_{n=0}^{\phi_x-1}1_y(X_n)}\quad\forall y\in\s,\end{equation}
where $x$ is any state in $\cal{C}_i$.
\item A probability distribution $\pi$ is a stationary distribution of the chain if and only if it is a convex combination of the ergodic distributions:
$$\pi=\sum_{i\in\cal{I}}\theta_i\pi_i$$
for some collection $(\theta_i)_{i\in\cal{I}}$ of non-negative constants satisfying $\sum_{i\in\cal{I}}\theta_i=1$. For any stationary distribution $\pi$, the weight $\theta_i$ featuring in the above is the mass that $\pi$ awards to $\cal{C}_i$ or, equivalently, the probability that the chain ever enters $\cal{C}_i$ if its starting location was sampled from $\pi$:
$$\theta_i=\pi(\cal{C}_i)=\Pbp{\{\phi_{\cal{C}_i}<\infty\}}\quad\forall i\in\cal{C}_i.$$
\end{enumerate} 
\end{theorem}

\begin{proof} $(i)$ For any state $x$ in a recurrent closed communicating class $\cal{C}_i$, Proposition~\ref{prop:closed} and Corollary~\ref{cor:phixphic} imply that
$$\Pbx{\{\phi_y<\infty\}}=1_{\cal{C}_i}(y)\quad\forall y\in\s.$$
Combining the above into \eqref{eq:empdistmean} we find that there can only exist one stationary distribution $\pi_i$ with support contained in  $\cal{C}_i$ (i.e., $\pi_i(x)=0$ for all $x\not\in\cal{C}_i$), as for any such $\pi_i$,
\begin{equation}\label{eq:nfe8a0n78aobnfygea}\pi_i(x)=\frac{\Pb_{\pi_i}(\{\phi_x<\infty\})}{\Ebx{\phi_x}}=\frac{\sum_{x'\in\cal{C}_i}\pi_i(x')\Pb_{x'}(\{\phi_x<\infty\})}{\Ebx{\phi_x}}=\frac{1_{\cal{C}_i}(x)}{\Ebx{\phi_x}}\quad\forall x\in\s.\end{equation}
On the other hand,  Theorem~\ref{thrm:posrecchar} shows that the right-hand side of \eqref{eq:ergdistchar} defines such a   $\pi_i$ as
\begin{align*}\Ebx{\phi_x}\pi_i(y)&=\Ebx{\sum_{n=0}^{\phi_x-1}1_y(X_n)}=\Ebx{\sum_{n=1}^{\phi_x-1}1_y(X_n)}\leq \Ebx{\sum_{n=1}^{\infty}1_y(X_n)}= \Ebx{\sum_{k=1}^\infty 1_{\{\phi^{k}_y<\infty\}}}\\
&=\sum_{k=1}^\infty \Pbx{\left\{\phi^{k}_y<\infty\right\}}= \Pbx{\{\phi_y<\infty\}}\sum_{k=1}^\infty\Pby{\{\phi_y<\infty\}}^{k-1}=0\quad\forall  y\not\in\cal{C}_i,\end{align*}
where $x$ is any state in $\cal{C}_i$ and we have made use of the closedness of $\cal{C}_i$ and (\ref{eq:nfuieafbua}--\ref{eq:fmuawefbeywua}).

$(ii)$ Given that $\cal{T}$ does not contain any positive recurrent states, \eqref{eq:empdistmean} shows that  $\pi(x)=0$ for all $x$ in $\cal{T}$. Because of this, we may rewrite any stationary distribution as
\begin{equation}\label{eq:nfuiweanfiaweu}\pi(x)=\sum_{i\in\cal{I}}\pi(\C_i)\left(\frac{1}{\pi(\C_i)}1_{\C_i}(x)\pi(x)\right)\quad\forall x\in\s.\end{equation}
Because $\C_i$ is a closed communicating class, we have that
$$(1_{\C_i}\pi)P(x)=\pi P(x)=\pi(x)=1_{\C_i}(x)\pi(x)\qquad\forall x\in\C_i.$$
Thus, assuming that $\pi(\C_i)>0$, Theorem~\ref{pstateqs} shows that $1_{\C_i}\pi/\pi(\C_i)$ is a stationary distribution with support contained in $\C_i$. Part~$(i)$ then shows  that  $1_{\C_i}\pi/\pi(\C_i)$ must be equal to $\pi_i$ and we can rewrite  \eqref{eq:nfuiweanfiaweu} as
$$\pi(x)=\sum_{i\in\cal{I}}\theta_i\pi_i(x)\quad\forall x\in\s,$$
where $\theta_i:=\pi(\cal{C}_i)$. Summing both sides over $x$ in $\s$ and applying Tonelli's theorem then shows that $\sum_{i\in\cal{I}}\theta_i=1$ (i.e.\ $\pi$ is a convex combination of the ergodic distributions).

To show that $\theta_i=\Pbp{\{\phi_{\cal{C}_i}<\infty\}}$ and complete the proof, note that $\Pbp{\{\phi_{\cal{C}_i}<\infty\}}=\Pbp{\{\phi_x<\infty\}}$ if $x$ belongs to $\cal{C}_i$ (Corollary~\ref{cor:phixphic}) and compare the above with \eqref{eq:empdistmean} and \eqref{eq:nfe8a0n78aobnfygea}.
\end{proof}

\subsubsection*{Notes and references}The term ``Doeblin decomposition'' traditionally refers to~\eqref{eq:ddd} with  $\{\C_i:i\in\cal{I}\}$ including all recurrent closed communicating classes instead of only the positive recurrent ones. In this case, $\cal{T}$ is the set of all transient states and, consequently, is known as the \emph{transient set}. The motivation behind my unorthodox choice is that, for reasons that we have touched upon in the last two sections and that will become completely clear in Sections~\ref{sec:timeaved} and \ref{sec:ensembleaved}, only positive recurrent states typically matter in the long-run.

\ifdraft
Named after Wolfgang Doeblin who first established it for discrete-time Markov processes with general state spaces in \citep{Doeblin1940}. Doeblin was a student of Paul L\'evy's with a fascinating story \citep{Handwerk2007}. For instance, famously, before his death at the age of $25$, several years before Kiyosi It\^o's published his renown work on SDEs, and while on the front lines of WWII, Doeblin wrote a series of notes which, among other results, included a time-change representation of one-dimensional SDEs \citep{Bru2002}. These notes were mailed to the Academy of Sciences in Paris where they remained sealed in their envelope for the ensuing $60$ years (until $`00$)!
\fi

\ifdraft
{\color{red}\subsubsection*{Notes and references.} Aside of theoretically useful (e.g., see Sec.~\ref{}), the formula \eqref{eq:} combined with the regenerative property (Theorem \ref{}) opens up an important avenue for computing stationary distributions in practice \citep{Glynn2006}}.
\fi

\ifdraft

\subsubsection*{Notes and references} To the best of our knowledge, the Theorem \ref{doeblind}, as presented here, is due to Kai Lai Chung \citep[Thrm. I.7.2]{Chung1967}. Stated therein ``Theorems 2 and 3 were .given by the author in his lectures in 1950;
see Loeve [1]''. Also about the existence of ergodic distributions: ``Theorem 1 (for a positive class of period one) seems to be
due to Feller [3], although Kolmogorov [3] proved (2) and (3)
without the intervention of (1). It should be noted, however, that it
is feasible to state the theorem in the form given here, and not merely
for a positive class. Otherwise the determination of a positive convergent
solution {«,} would not by itself prove that the class is positive. This
would be more than a nuisance in practice, since the only general
method of showing that a class is positive is precisely to solve the''

\fi

\subsection[Emipirical distribution's limits; positive recurrent chains]{Limits of the empirical distribution and positive recurrent chains}\label{sec:timeaved}
Almost without realising it, we have derived over the last few sections a complete description of the asymptotic behaviour of the empirical distribution $\epsilon_N$~(Section~\ref{sec:empdist}) tracking the fraction of time that the chain spends in each state:
\begin{theorem}[The pointwise limits]\label{thrm:pointwiselimd} Let $\epsilon_N$ denote the empirical distribution~\eqref{eq:timeavedef}, $\{\cal{C}_i:i\in\cal{I}\}$ the collection of positive recurrent closed communicating classes $\cal{C}_i$~(Section~\ref{sec:statdists}), and $\pi_i$ the ergodic distribution of $\cal{C}_i$~(Theorem~\ref{doeblind}) for each $i$ in $\cal{I}$. For any initial distribution $\gamma$, we have that\glsadd{epinf}
\begin{equation}\label{eq:reclims}\epsilon_\infty:=\lim_{N\to\infty}\epsilon_N=\sum_{i\in\cal{I}}1_{\{\phi_{\cal{C}_i}<\infty\}}\pi_i\quad\Pb_\gamma\text{-almost surely},\end{equation}
where  the convergence is pointwise and $\phi_{\C_i}$ denotes the time of first entrance to $\C_i$~(Definition~\ref{def:entrance}).
\end{theorem}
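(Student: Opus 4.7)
The plan is to reduce the pointwise limit to the single-state limits already established in Theorem~\ref{thrm:empdistlims}, then handle the two cases (positive recurrent vs.\ not) separately, and finally assemble everything via a countable intersection argument (which works because $\s$ is countable).

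First, I would fix any state $x\in\s$ and apply Theorem~\ref{thrm:empdistlims} to get
\[
\lim_{N\to\infty}\epsilon_N(x)=\frac{1_{\{\phi_x<\infty\}}}{\Ebx{\phi_x}}\quad \Pb_\gamma\text{-almost surely.}
\]
I would then split into cases. If $x$ is transient, then $\Pbx{\{\phi_x=\infty\}}>0$ so $\Ebx{\phi_x}=\infty$; if $x$ is null recurrent, then $\Ebx{\phi_x}=\infty$ by Definition~\ref{def:posrec}. In either case $\lim\epsilon_N(x)=0$ almost surely, and the right-hand side of~\eqref{eq:reclims} also vanishes at $x$ because $x\notin\cal{C}_i$ for any $i\in\cal{I}$, forcing $\pi_i(x)=0$ by Theorem~\ref{doeblind}$(i)$.

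If instead $x$ is positive recurrent, then $x\in\cal{C}_j$ for exactly one $j\in\cal{I}$, and Theorem~\ref{doeblind}$(i)$ yields $\pi_j(x)=1/\Ebx{\phi_x}$ while $\pi_i(x)=0$ for $i\ne j$. Corollary~\ref{cor:phixphic} then gives
\[
\sum_{i\in\cal{I}}1_{\{\phi_{\cal{C}_i}<\infty\}}\pi_i(x)=1_{\{\phi_{\cal{C}_j}<\infty\}}\pi_j(x)=\frac{1_{\{\phi_x<\infty\}}}{\Ebx{\phi_x}}\quad\Pb_\gamma\text{-almost surely,}
\]
matching the single-state limit. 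So for each individual $x\in\s$ there is a $\Pb_\gamma$-null set $N_x$ outside of which the $x$-th component of~\eqref{eq:reclims} holds.

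To upgrade from pointwise-in-$x$ to simultaneous convergence for all $x$, I would take $N:=\bigcup_{x\in\s}N_x$, which is still $\Pb_\gamma$-null because $\s$ is countable; on $\Omega\setminus N$ every component converges to the claimed value. The only mild subtlety, and the step I would be most careful about, is verifying the measurability of the event $\{\lim_N\epsilon_N=\epsilon_\infty\}$ (recall the remark in Section~\ref{sec:dtdef} about incomplete probability spaces): this is handled by writing it as the countable intersection over $x\in\s$ of the events $\{\lim_N\epsilon_N(x)=\sum_{i\in\cal{I}}1_{\{\phi_{\cal{C}_i}<\infty\}}\pi_i(x)\}$, each of which is $\cal{F}$-measurable by~\eqref{eq:convmeas} applied to the measurable sequence $(\epsilon_N(x))_{N\in\zp}$ together with the measurability of the (countable) sum on the right. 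With measurability in hand the countable-union argument closes the proof.
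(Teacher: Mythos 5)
Your proposal is correct and follows essentially the same route as the paper's own (very terse) proof: combine Theorem~\ref{thrm:empdistlims} componentwise with Theorem~\ref{doeblind}$(i)$ and Corollary~\ref{cor:phixphic}, then take a countable union of null sets over states $x$. The paper leaves the case analysis and the measurability remark implicit, but there is no substantive difference.
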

\begin{proof}Because, with $\Pb_\gamma$-probability one, the chain will enter a state $x$ (i.e.\ $\phi_x<\infty$) in a recurrent closed communicating class $\cal{C}_i$ if and only if enters the class (i.e.\ $\phi_{\cal{C}_i}<\infty$) at all (Corollary~\ref{cor:phixphic}), this follows directly from Theorems~\ref{thrm:empdistlims} and \ref{doeblind}$(i)$.
\end{proof}

You may be wondering under what circumstances the convergence in \eqref{eq:reclims} can be strengthened. The answer turns out to be surprisingly simple:
\begin{corollary}[The convergence in total variation]\label{cor:dttimeavetv} The chain enters the set $\cal{R}_+$ of positive recurrent states with probability one (i.e.\ $\Pbl{\{\phi_{\cal{R}_+}<\infty\}}=1$) if and only if the  limit \eqref{eq:reclims} holds in total variation with $\Pb_\gamma$-probability one:
$$\lim_{n\to\infty}\norm{\epsilon_N-\epsilon_\infty}=0\quad\Pb_\gamma\text{-almost surely},$$
where $\norm{\rho}$ denotes the total variation norm\glsadd{normtv} of any signed measure $\norm{\rho}$ on $\s$:\index{total variation norm}
\begin{equation}\label{eq:tvnorm}\norm{\rho}:=\sup_{A\subseteq\s}\mmag{\rho(A)}.\end{equation}
\end{corollary}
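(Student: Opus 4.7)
The plan is to reduce the equivalence to a pathwise application of Scheffé's lemma, after computing the total mass of $\epsilon_\infty$ via Proposition~\ref{prop:closedis}. First I would establish the key identity
$$
\epsilon_\infty(\s)=1_{\{\phi_{\cal{R}_+}<\infty\}}\qquad\Pb_\gamma\text{-a.s.}
$$
Since the $\cal{C}_i$ are pairwise disjoint closed sets, Proposition~\ref{prop:closedis} applied pair-by-pair gives $\Pb_\gamma(\{\phi_{\cal{C}_i}<\infty,\phi_{\cal{C}_j}<\infty\})=0$ for $i\neq j$, so the events $\{\phi_{\cal{C}_i}<\infty\}$ are $\Pb_\gamma$-a.s.\ disjoint with union $\{\phi_{\cal{R}_+}<\infty\}$; since each $\pi_i$ is a probability distribution the identity~\eqref{eq:reclims} of Theorem~\ref{thrm:pointwiselimd} then sums to give the displayed identity.

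For the \emph{if} direction, assume $\Pb_\gamma(\{\phi_{\cal{R}_+}<\infty\})=1$. On a set of full $\Pb_\gamma$-measure, $\epsilon_\infty$ is then a probability distribution on the countable set $\s$, and Theorem~\ref{thrm:pointwiselimd} gives $\epsilon_N(x)\to\epsilon_\infty(x)$ for every $x\in\s$ simultaneously. All of $\epsilon_N$ and $\epsilon_\infty$ being probability mass functions with equal total mass $1$, Scheffé's lemma applied pathwise yields
$$
\sum_{x\in\s}|\epsilon_N(x)-\epsilon_\infty(x)|\;\longrightarrow\;0\qquad\Pb_\gamma\text{-a.s.},
$$
and hence $\norm{\epsilon_N-\epsilon_\infty}\to 0$ a.s., since the total variation norm defined in \eqref{eq:tvnorm} is dominated by the $\ell^{1}$ quantity on the left-hand side (for any signed measure $\rho$ on $\s$ one has $\sup_{A}\mmag{\rho(A)}\leq\sum_{x\in\s}\mmag{\rho(x)}$).

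For the \emph{only if} direction, note that
$$
\mmag{1-\epsilon_\infty(\s)}=\mmag{(\epsilon_N-\epsilon_\infty)(\s)}\leq \norm{\epsilon_N-\epsilon_\infty}
$$
for every $N$. So if the right-hand side tends to zero $\Pb_\gamma$-a.s., then $\epsilon_\infty(\s)=1$ $\Pb_\gamma$-a.s. Taking expectations in the identity from the first step gives
$$
1=\Ebl{\epsilon_\infty(\s)}=\Pb_\gamma(\{\phi_{\cal{R}_+}<\infty\}),
$$
as required. I do not expect any real obstacle: the only subtle point is the pathwise character of Scheffé's lemma, which goes through without incident because $\s$ is countable (so that the $\Pb_\gamma$-null set outside of which pointwise convergence fails can be chosen independently of $x$), and the minor translation between the $\sup_A$ definition of the TV norm in \eqref{eq:tvnorm} and the $\ell^1$ formulation used by Scheffé.
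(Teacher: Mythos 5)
Your proposal is correct and follows essentially the same route as the paper: both establish $\epsilon_\infty(\s)=1_{\{\phi_{\cal{R}_+}<\infty\}}$ $\Pb_\gamma$-a.s. via Proposition~\ref{prop:closedis} and then apply Scheffé's lemma pathwise to the pointwise limit from Theorem~\ref{thrm:pointwiselimd}. The only cosmetic difference is that for the ``only if'' direction you unwind the easy half of Scheffé by hand (mass convergence $\mmag{1-\epsilon_\infty(\s)}\le\norm{\epsilon_N-\epsilon_\infty}$), whereas the paper simply cites the lemma for both implications; the content is identical.
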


The trick in proving the above is the following variation of Scheff\'e's lemma:

\begin{lemma}[Scheff\'e's lemma]\label{lem:scheffe} Suppose that $\rho_1,\rho_2,\dots$ is a sequence of probability distributions on $\s$ that converge pointwise to a limit $\rho$. The limit $\rho$ is a probability distribution if and only if the sequence converges in total variation.
\end{lemma}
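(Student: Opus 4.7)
The plan is to exploit the countability of $\s$ to reduce the total variation norm to an $\ell^1$-type expression and then apply a dominated-convergence argument. My first step would be to verify the identity
$$\norm{\sigma} = \tfrac{1}{2}\sum_{x\in\s}|\sigma(x)|$$
for any signed measure $\sigma$ on $(\s,2^\s)$ with $\sigma(\s) = 0$. This follows by noting that the supremum in the definition~\eqref{eq:tvnorm} is attained at the set $A := \{x\in\s : \sigma(x) > 0\}$ and that, for signed measures with zero total mass, $\sigma(A) = -\sigma(A^c)$. Applied to $\sigma := \rho_n - \rho$---which has zero total mass whenever both $\rho_n$ and $\rho$ are probability distributions---this reduces the total variation claim to showing $\sum_{x\in\s}|\rho_n(x) - \rho(x)| \to 0$.

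For the forward implication (assuming $\rho$ is a probability distribution), I would apply Scheff\'e's classical trick. The identity $\min(\rho_n(x), \rho(x)) = \rho(x) - (\rho(x) - \rho_n(x))^+$ combined with $\sum_x(\rho(x) - \rho_n(x))^+ = \sum_x(\rho_n(x) - \rho(x))^+$---which holds because $\rho_n$ and $\rho$ both sum to one, so the positive and negative parts of their signed difference have equal mass---yields
$$\sum_{x\in\s}|\rho_n(x) - \rho(x)| = 2\sum_{x\in\s}(\rho(x) - \rho_n(x))^+ = 2\left(1 - \sum_{x\in\s}\min(\rho_n(x), \rho(x))\right).$$
Pointwise convergence gives $\min(\rho_n(x),\rho(x)) \to \rho(x)$, and $\min(\rho_n(x),\rho(x))\leq \rho(x)$ provides a summable dominator; dominated convergence then yields $\sum_{x\in\s}\min(\rho_n(x),\rho(x)) \to 1$, delivering convergence in total variation.

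The converse is essentially automatic: convergence in total variation implies in particular that $|\rho_n(\s) - \rho(\s)| \leq \norm{\rho_n - \rho} \to 0$, so $\rho(\s) = \lim_n \rho_n(\s) = 1$, and together with the non-negativity of $\rho$ (a pointwise limit of non-negative densities) this shows $\rho$ is a probability distribution. I do not anticipate any serious obstacle; the only subtle point is verifying the domination hypothesis in the forward direction, and this is precisely where the summability of $\rho$ enters---which is, in turn, exactly the content the lemma is designed to isolate.
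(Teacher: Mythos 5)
Your proof is correct and takes essentially the same route as the paper: both reduce total variation to the $\ell^1$ distance and apply dominated convergence with $\rho$ as the dominator, the only difference being cosmetic (your $\min(\rho_n,\rho)$ equals $\rho - 1_{U_n}(\rho-\rho_n)$ for the paper's set $U_n=\{\rho_n<\rho\}$). The converse direction is also the same observation in both cases.
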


\begin{proof} If the sequence converges in total variation, then $\rho$ is a probability distribution (as $1=\rho_n(\s)\to\rho(\s)$). To prove the converse, we will show later that
 \begin{equation}\label{eq:tvl1p}\norm{\rho_n-\rho}=\sum_{x\in\s}1_{U_n}(x)(\rho(x)-\rho_n(x))=\frac{1}{2}\sum_{x\in\s}\mmag{\rho_n(x)-\rho(x)},\end{equation}
because $\rho_n$ and $\rho$ are probability distributions, where $U_n:=\{x\in\s:\rho_n(x)<\rho(x)\}$ is the set of states for which $\rho_n$ underestimates $\rho$. 
%
Because $0\leq 1_{U_n}(x)(\rho(x)-\rho_n(x))\leq \rho(x)$ for all $x$ in $\s$ and $n$ in $\n$, and because the pointwise convergence implies that
$$\lim_{n\to\infty}1_{U_n}(x)(\rho(x)-\rho_n(x))=0\quad\forall x\in\s,$$
taking limit $n\to\infty$ in both sides of the first equation in \eqref{eq:tvl1p} and applying dominated convergence then proves the convergence in total variation.

To finish this proof, we need to argue \eqref{eq:tvl1p}. Note that
\begin{align*}\rho_n(A)-\rho(A)=(\rho_n(A\cap U_n)-\rho(A\cap U_n))+(\rho_n(A\cap U_n^c)-\rho(A\cap U_n^c))\quad\forall A\subseteq\s,\end{align*}
where $ U_n^c$ denotes the complement of $U_n$. Because the bracketed terms have opposite signs, 
\begin{align}\label{eq:fne78awnfyea}\rho(U_n)-\rho_n(U_n)&\leq \sup_{A\subseteq\s}\mmag{\rho_n(A)-\rho(A)}\\
&\leq \sup_{A\subseteq\s}\max\{\rho(A\cap U_n)-\rho_n(A\cap U_n),\rho_n(A\cap U_n^c)-\rho_n(A\cap U_n^c)\}\nonumber\\
&\leq  \max\{\rho(U_n)-\rho_n(U_n),\rho_n(U_n^c)-\rho_n(U_n^c)\}.\nonumber\end{align}
However,
$$\rho(U_n)-\rho_n(U_n)=1-\rho(U_n^c)-(1-\rho_n(U_n^c))=\rho_n(U_n^c)-\rho(U_n^c)$$
and the first equation in \eqref{eq:tvl1p} follows from \eqref{eq:fne78awnfyea}. The second equation then also follows, as 
$$2\norm{\rho_n-\rho}=\rho(U_n)-\rho_n(U_n)+1-\rho(U_n^c)-(1-\rho_n(U_n^c))=\sum_{x\in\s}\mmag{\rho_n(x)-\rho(x)}.$$ 
\end{proof}

\begin{proof}[Proof of Corollary~\ref{cor:dttimeavetv}] The probability of both $\{\phi_{\cal{C}_i}<\infty\}$ and $\{\phi_{\cal{C}_j}<\infty\}$ occurring if $i\neq j$ is zero because the classes are closed sets (Proposition~\ref{prop:closedis}). Given that every positive recurrent state $x$ belongs to a positive recurrent closed communicating class~(Theorem~\ref{thrm:recclasspro}~and~Corollary~\ref{cor:posrecclass}), it follows that
$$\sum_{i\in\cal{I}}1_{\{\phi_{\cal{C}_i}<\infty\}}\pi_i(\s)=\sum_{i\in\cal{I}}1_{\{\phi_{\cal{C}_i}<\infty\}}=1_{\{\phi_{\cup_{i\in\cal{I}}\cal{C}_i}<\infty\}}=1_{\{\phi_{\cal{R}_+}<\infty\}}\quad\Pb_\gamma\text{-almost surely}.$$
For this reason, that the limit in \eqref{eq:reclims} is a probability distribution $\Pb_\gamma$-almost surely if and only if $\Pbl{\{\phi_{\cal{R}_+}<\infty\}}=1$ and the result follows from Theorem~\ref{thrm:pointwiselimd} and Lemma~\ref{lem:scheffe}. 
\end{proof}

\subsubsection*{Positive recurrent chains}
Corollary~\ref{cor:dttimeavetv} motivates the following definitions:
%
\begin{definition}[Positive recurrent chains]\label{def:posTweedie} A chain is positive Tweedie  recurrent\index{positive Tweedie recurrent chain} if 
$$\Pbl{\{\phi_{\cal{R}_+}<\infty\}}=1\text{ for all initial distributions }\gamma,$$
where $\cal{R}_+$ denotes the set of positive recurrent states. If, additionally, there is only one closed communicating class, then the chain is said to be positive Harris recurrent\index{positive Harris recurrent chain}. If this class is the entire state space, then the chain is simply said to be positive recurrent.\index{positive recurrent chain}
\end{definition}
%

The fraction of all time that the chain spends in any given finite set $F$ is given by
$$\epsilon_\infty(F)=\sum_{x\in F}\epsilon_\infty(x)=\lim_{N\to\infty}\sum_{x\in F}\epsilon_N(x)=\lim_{N\to\infty}\sum_{x\in F}\frac{1}{N}\sum_{n=0}^{N-1}1_x(X_n)=\lim_{N\to\infty}\frac{1}{N}\sum_{n=0}^{N-1}1_F(X_n),$$
$\Pb_\gamma$-almost surely. If the chain is not positive Tweedie recurrent, then Theorem~\ref{thrm:pointwiselimd} tells us that there exists at least one initial distribution $\gamma$ such that
$$\Pbl{\{\epsilon_\infty(F)=0\}}=\Pbl{\left\{\sum_{i\in\cal{I}}1_{\{\phi_{\cal{C}_i}<\infty\}}\pi_i(F)=0\right\}}\geq \Pbl{\{\phi_{\cal{R}_+}=\infty\}}>0.$$
In other words, with this initial distribution and non-zero probability, \emph{the chain will spend far  more (indeed, infinitely more) time-steps outside $F$ than inside}. Because this non-zero probability is bounded below by a positive constant independent of the finite set $F$, we have that a \emph{non-negligible fraction of $X$'s paths will spend infinitely more time-steps outside any given finite set $F$ than inside the set:}
\begin{exercise}To formally argue the above sentence introduce a sequence $(\s_r)_{r\in\zp}$ of finite subsets (or truncations) of the state space $\s$ that approach $\s$ (i.e., with $\cup_{r=1}^\infty\s_r=\s$). Applying downwards monotone convergence, show that
$$\Pbl{A}=\lim_{r\to\infty}\Pbl{\{\epsilon_\infty(\s_r)=0\}}\geq\Pbl{\{\phi_{\cal{R}_+}=\infty\}},$$
where $A:=\bigcap_{r=1}^\infty\{\epsilon_\infty(\s_r)=0\}$. Because any finite set $F$ is included in a large enough truncation $\s_r$, conclude that $\omega$ belongs to $A$ if and only if
$$\omega\in\{\epsilon_\infty(F)=0\}\text{ for all finite subsets }F\text{ of }\s.$$
\end{exercise}
Conversely, if the chain is positive Tweedie recurrent, then, for any given initial distribution $\gamma$ and $\varepsilon$ in $(0,1]$, we are always able to find a finite set $F$ large enough that the chain spends at least $(1-\varepsilon)\times100\%$ of all time-steps inside $F$:
\begin{equation}\label{eq:fn8a0nf7fdasesee8af}\epsilon_\infty(F)\geq 1-\varepsilon\quad\Pb_\gamma\text{-almost surely}.\end{equation}
For this reason, I believe that positive Tweedie recurrence precisely describes what most practitioners think of when they hear the words `a stable chain'. Moreover, the empirical distribution converges to a probability distribution if and only if the chain is positive Tweedie recurrent (Corollary~\ref{cor:dttimeavetv}). As we will see later in Section~\ref{sec:ensembleaved}, the same holds for the time-varying law if the chain is aperiodic (if it is periodic, the time-varying law will not converge to anything but instead oscillate).
\begin{exercise}\label{ex:Tweedieepinf}Using Theorem~\ref{thrm:pointwiselimd} give a formal proof that the chain is positive Tweedie recurrent if and only if, for any given initial distribution $\gamma$ and  $\varepsilon$ in $(0,1]$, there exists a finite set $F$ such that \eqref{eq:fn8a0nf7fdasesee8af} holds, $\Pb_\gamma$-almost surely.\end{exercise}

\subsubsection*{Notes and references} Just as with Harris recurrence and Tweedie recurrence (Section~\ref{sec:rec}), positive Harris recurrence is a frequently encountered name in the literature while positive Tweedie recurrence is not. In the past, the latter has occasionally been referred to as non-dissipativity \citep{Kendall1951a,Foster1952,Mauldon1957}. 

\ifdraft

\subsubsection*{Notes} {\color{red}\eqref{thrm:pointwiselimd} provides the justification for the naive monte-carlo approach of estimating an ergodic distribution.} Explain choice of jargon Tweedie. Mention the ratio limits as exercises?

Early instances of Tweedie looking at reducible chains:

Truncation Procedures for Non-Negative Matrices 

Sufficient conditions for regularity, recurrence and ergodicity of Markov processes

Maybe his thesis? R-Theory and Truncation Algorithms for Markov Chains and Processes

Obv, he's not the first to look at long-term of reducible stuff, e.g.,

THE EXPONENTIAL DECAY OF MARKOV TRANSITION PROBABILITIES

but ....
\fi

\ifdraft

\subsection{The law of large numbers}
The limit \eqref{eq:reclims} is often regarded as a \emph{law of large numbers (LLN)}. To flesh this idea out, we need the following.
\begin{proposition}[Convergence in total variation equals weak convergence for countable spaces]\label{prop:tvweak} A sequence $\rho_1,\rho_2,\dots$ of probability distributions  on $\s$ converges in total variation to a limit $\rho$ on $\s$ if and only if 
\begin{equation}\label{eq:nf7a89fn798awfnuraf}\lim_{n\to\infty}\rho_n(f)=\rho(f)\end{equation}
for all bounded real-valued functions $f$ on $\s$.
\end{proposition}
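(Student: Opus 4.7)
The plan is to handle the two directions separately, leveraging the formula from Scheff\'e's lemma (Lemma~\ref{lem:scheffe}) on both ends.

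For the ``only if'' direction, suppose $\norm{\rho_n-\rho}\to0$. Fix a bounded real-valued function $f$ on $\s$ and let $M:=\sup_{x\in\s}|f(x)|<\infty$. Decomposing the sum by the sign of $\rho_n-\rho$ as in the proof of Lemma~\ref{lem:scheffe}, with $U_n:=\{x:\rho_n(x)<\rho(x)\}$, I write
$$|\rho_n(f)-\rho(f)|=\left|\sum_{x\in\s}f(x)(\rho_n(x)-\rho(x))\right|\leq M\sum_{x\in\s}|\rho_n(x)-\rho(x)|=2M\norm{\rho_n-\rho},$$
where the final equality is the second identity in \eqref{eq:tvl1p}. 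Taking $n\to\infty$ yields \eqref{eq:nf7a89fn798awfnuraf}.

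For the ``if'' direction, I plan to use the hypothesis to produce two inputs for Scheff\'e's lemma: pointwise convergence and the fact that the limit is a probability distribution. Testing \eqref{eq:nf7a89fn798awfnuraf} against the indicator function $f:=1_y$ (which is bounded) gives $\lim_{n\to\infty}\rho_n(y)=\rho(y)$ for every $y\in\s$, so $\rho_n\to\rho$ pointwise. Testing against $f:=1$ gives $\rho(\s)=\lim_{n\to\infty}\rho_n(\s)=1$, so $\rho$ is a probability distribution on $\s$. Lemma~\ref{lem:scheffe} then implies that $\rho_n\to\rho$ in total variation.

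The only potential subtlety is ensuring that the manipulations in the first direction are valid---in particular, that $\sum_{x\in\s}|\rho_n(x)-\rho(x)|$ is finite and coincides with $2\norm{\rho_n-\rho}$---but this is precisely the content of \eqref{eq:tvl1p}, which was proved without any hypothesis beyond $\rho_n$ and $\rho$ being probability distributions. No serious obstacle arises.
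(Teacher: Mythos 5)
Your proof is correct and follows essentially the same route as the paper's: in the ``only if'' direction, bound $|\rho_n(f)-\rho(f)|$ by $2\sup|f|\cdot\norm{\rho_n-\rho}$ via the $\ell^1$ identity \eqref{eq:tvl1p}; in the ``if'' direction, deduce pointwise convergence and unit mass of $\rho$ from indicators and from $f:=1$, then invoke Scheff\'e's lemma. The only thing the paper does that you gesture at but do not quite close is this: in the forward direction, \eqref{eq:tvl1p} requires $\rho$ itself to be a probability distribution, which is not a stated hypothesis, so one must first observe that total-variation convergence forces $\rho(\s)=\lim_n\rho_n(\s)=1$ and $\rho(x)=\lim_n\rho_n(x)\geq 0$ before the identity can be applied.
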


\begin{proof}Let $f$ be any bounded real-valued function $f$ on $\s$ with bounding constant $C$ (i.e., $\mmag{f(x)}\leq C$ for all $x\in\s$). If the sequence converges in total variation, then $\rho$ is a probability distribution (as $1=\rho_n(\s)\to\rho(\s)$ and $0\leq\rho_n(x)\to\rho(x)$ for all $x\in\s$). For this reason,  \eqref{eq:tvl1p} implies that
\begin{align*}\mmag{\rho_n(f)-\rho(f)}&\leq \sum_{x\in\s}\mmag{f(x)}\mmag{\rho_n(x)-\rho(x)}\leq C \sum_{x\in\s}\mmag{\rho_n(x)-\rho(x)}\leq 2C\norm{\rho_n-\rho}\end{align*}
Taking the limit $n\to\infty$ then shows that $\rho_n(f)$ converges to $\rho(f)$ if the sequence converges in total variation.

Conversely, setting choosing $f$s in \eqref{eq:nf7a89fn798awfnuraf} that are indicator functions of the states, we find that $\rho_n$ converges pointwise to $\rho$. Setting $f:=1$ in \eqref{eq:nf7a89fn798awfnuraf} then shows that $\rho$ is a probability distribution and it follows from Scheffe's lemma~\eqref{lem:scheffe} that $\rho_n$ converges to $\rho$ in total variation.
\end{proof}

In the case of  a bounded function $f$ and a positive Harris recurrent chain with stationary distribution $\pi$, Corollary~\ref{cor:dttimeavetv} and the above proposition tell us that the sample average of the sequence $(f(X_n))_{n\in\n}$ of random variables converges almost surely to $\pi(f)$:
\begin{equation}\label{eq:llnmarkov}\lim_{N\to\infty}\frac{1}{N}\sum_{n=0}^{N-1}f(X_n)=\pi(f)\quad\Pb_\gamma\text{-almost surely,}\end{equation}
for any initial distribution $\gamma$. The above equation is viewed as a generalisation of the law of large numbers as the random variables in question need not (and generally will not) be independent nor have law $\pi$. The limit \eqref{eq:llnmarkov} holds for a much wider class of functions $f$: the $\pi$-integrable ones. To curtail the length of this section, we only do the proof for the case that the chain starts on a positive recurrent state (see Exercise~\ref{ex:nf8eaw9ng7ae8whga} for more on the general case).
\begin{theorem}\label{thrm:llndt}Let $\{\cal{C}_i\}_{i\in\cal{I}}$ denote the set of positive recurrent classes and $\{\pi_i\}_{i\in\cal{I}}$ the corresponding set of stationary distributions. If the initial distribution $\gamma$ has support contained in the set $\cup_{i\in\cal{I}}\cal{C}_i$ of positive recurrent states, then
\begin{equation}\label{eq:llnmarkovgen}\lim_{N\to\infty}\epsilon_N(f)=\lim_{N\to\infty}\frac{1}{N}\sum_{n=0}^{N-1}f(X_n)=\sum_{i\in\cal{I}}1_{\{\phi_{\cal{C}_i}<\infty\}}\pi_i(f)=\epsilon_\infty(f)\quad\Pb_\gamma\text{-almost surely},\end{equation}
for all non-negative functions $f$ and all  $f$ such that $\epsilon_\infty(|f|)<\infty$ with $\Pb_\gamma$-probability one.
\end{theorem}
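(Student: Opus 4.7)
The plan is to reduce the general case to the special case where the chain starts deterministically at a positive recurrent state, then apply the regenerative property together with Kolmogorov's strong law of large numbers. Since $\Pb_\gamma = \sum_x \gamma(x) \Pb_x$ and the support of $\gamma$ lies in the set of positive recurrent states, it suffices to prove the $\Pb_x$-almost sure version of \eqref{eq:llnmarkovgen} for each positive recurrent $x$. Fix such an $x$ and let $\cal{C}_i$ denote the positive recurrent closed communicating class containing it. Under $\Pb_x$, Proposition~\ref{prop:closed} shows the chain never leaves $\cal{C}_i$, so $1_{\{\phi_{\cal{C}_j}<\infty\}}=0$ $\Pb_x$-a.s.\ for all $j \neq i$ and $1_{\{\phi_{\cal{C}_i}<\infty\}}=1$; the right-hand side of \eqref{eq:llnmarkovgen} therefore collapses to $\pi_i(f)$, and what must be shown is simply $\epsilon_N(f) \to \pi_i(f)$ $\Pb_x$-a.s.

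First I would handle non-negative $f$. Define the block sums $I_k^f$ as in \eqref{eq:Ikdef}; by the regenerative property (Theorem~\ref{thrm:rec-iid}), the sequence $(I_k^f)_{k\in\zp}$ is i.i.d.\ under $\Pb_x$. Applying Tonelli to the formula for $\pi_i$ given in Theorem~\ref{doeblind}(i) yields
\begin{equation*}
\Ebx{I_1^f} \;=\; \Ebx{\sum_{n=0}^{\phi_x-1} f(X_n)} \;=\; \pi_i(f)\,\Ebx{\phi_x},
\end{equation*}
and Kolmogorov's law of large numbers (Theorem~\ref{thrm:klln}) then gives $K^{-1}\sum_{k=1}^K I_k^f \to \pi_i(f)\,\Ebx{\phi_x}$ $\Pb_x$-a.s. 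Now let $R_N := \sum_{n=1}^{N-1} 1_x(X_n)$ count the returns to $x$ by time $N-1$; since $\sum_{k=1}^K I_k^f = \sum_{n=0}^{\phi_x^K-1}f(X_n)$ and $\phi_x^{R_N} \leq N-1 < \phi_x^{R_N+1}$, for non-negative $f$ one has the sandwich
\begin{equation*}
\frac{R_N}{N}\cdot\frac{1}{R_N}\sum_{k=1}^{R_N} I_k^f \;\leq\; \epsilon_N(f) \;\leq\; \frac{R_N+1}{N}\cdot\frac{1}{R_N+1}\sum_{k=1}^{R_N+1} I_k^f.
\end{equation*}
Since Theorem~\ref{thrm:empdistlims} (or rather the argument in its proof) gives $R_N/N \to 1/\Ebx{\phi_x}$ $\Pb_x$-a.s.\ and the recurrence of $x$ guarantees $R_N\to\infty$, both outer ratios converge to $\pi_i(f)$ $\Pb_x$-a.s., so $\epsilon_N(f)\to\pi_i(f)$ $\Pb_x$-a.s. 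This argument also covers the case $\pi_i(f)=\infty$ (with the obvious convention $\infty\cdot(\Ebx{\phi_x}/\Ebx{\phi_x})=\infty$).

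For a general $f$, write $f = f^+ - f^-$ with $f^\pm$ non-negative. The hypothesis $\epsilon_\infty(|f|) < \infty$ $\Pb_\gamma$-a.s.\ translates, under $\Pb_x$ via the non-negative case already proven, into $\pi_i(|f|) < \infty$; hence $\pi_i(f^\pm) < \infty$ and the subtraction $\epsilon_N(f) = \epsilon_N(f^+) - \epsilon_N(f^-) \to \pi_i(f^+) - \pi_i(f^-) = \pi_i(f)$ is legitimate $\Pb_x$-a.s. The main obstacle I anticipate is cosmetic rather than conceptual: stating precisely the sandwich argument and carefully handling the edge case $N-1 < \phi_x$ (where $R_N = 0$ and the lower bound is vacuous) as well as the reduction from the $\Pb_\gamma$-statement to the $\Pb_x$-statements, which requires invoking that the event in question is measurable (cf.\ the set $A$ in \eqref{eq:convmeas}) so that $\Pb_\gamma$-probabilities genuinely decompose as convex combinations of the $\Pb_x$-probabilities.
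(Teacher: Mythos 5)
Your proposal is correct and mirrors the structure of the paper's own (partially written, draft-marked) proof: reduce from a general $\gamma$ supported on the positive recurrent states to a deterministic start $1_x$, handle non-negative $f$ via the regenerative property, the computation $\Ebx{I_1^f}=\pi_i(f)\,\Ebx{\phi_x}$ from Theorem~\ref{doeblind}$(i)$, Kolmogorov's LLN, and the $R_N$-sandwich exactly as in the proof of Theorem~\ref{thrm:empdistlims}, then extend to signed $f$ by splitting into positive and negative parts. Your sandwich bounds and the identification $\sum_{k=1}^K I_k^f = \sum_{n=0}^{\phi_x^K-1}f(X_n)$ check out, and your remark about measurability of the convergence event (via the set $A$ in \eqref{eq:convmeas}) correctly addresses the only technical subtlety in the $\Pb_\gamma\to\Pb_x$ decomposition.
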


\begin{exercise}[The general case]\label{ex:nf8eaw9ng7ae8whga} For those readers looking for a challenge: using the above and tweaking the harmonic function approach of \citep[Prop.~17.1.6]{Meyn2009}, show that \eqref{eq:llnmarkovgen} holds for all initial conditions and functions $f$ such that $\epsilon_\infty(|f|)<\infty$ with $\Pb_\gamma$-probability one.\end{exercise}


%
%
%

\begin{proof}[Proof of Theorem~\ref{thrm:llndt}]The bulk of this proof is showing that \eqref{eq:llnmarkovgen} holds with $\gamma:=1_x$ and $x$ being any positive recurrent state. The general case then follows immediately:
\begin{align*}&\Pbl{\left\{\lim_{N\to\infty}\epsilon_N(f)=\epsilon_\infty(f)\right\}}
=\sum_{x\in\cup_{i\in\cal{I}}\cal{C}_i}\gamma(x)\Pbx{\left\{\lim_{N\to\infty}\epsilon_N(f)=\epsilon_\infty(f)\right\}}=\sum_{x\in\cup_{i\in\cal{I}}\cal{C}_i}\gamma(x)=1.\end{align*}

The $\Pb_\gamma$-almost surely $\epsilon_\infty$-integrable $f$ case then follows from the non-negative case and the usual measure-theoretic trick: re-write $\epsilon_N(f)=\epsilon_N(f\vee0)+\epsilon_N(f\wedge0)$, take the limit $N\to\infty$, and exploit that $f\vee 0$ and $(-f)\vee 0$ are non-negative and $\epsilon_\infty$-integrable ($\Pb_\gamma$-almost surely):
$$\epsilon_\infty(f\vee 0)+\epsilon_\infty((-f)\vee 0)=\epsilon_\infty(f\vee 0+(-f)\vee 0)=\epsilon_\infty(f\vee 0-f\wedge 0)=\epsilon_\infty(\mmag{f}).$$
\end{proof}

\begin{exercise}Ratio theorem\end{exercise}

\subsubsection*{A proof of Theorem~\ref{thrm:pointwiselimd}} We begin by dealing with the transient and null recurrent states:
\fi


\subsection{Periodicity and lack thereof}\label{sec:periodicity} So far, we have managed to characterise the long-term behaviour of the \emph{time averages}  taken over individual paths and described by the empirical distribution of Section~\ref{sec:empdist}. We now set our sights on the long-term behaviour of the \emph{space} (or \emph{ensemble} or \emph{population}) averages taken across the collection of the chain's paths at given points in time and described by the time-varying law of Section~\ref{sec:dtlawsintro}. To proceed, we need to introduce a notion we have managed to avoid up until now: \emph{periodicity}.\index{periodicity}\index{aperiodicity}

\begin{definition}[Periodic and aperiodic states]\label{def:period} The greatest common divisor (gcd) of a sequence $a_1,a_2,a_3,\dots$ of integers is defined as the limit of the sequence $b_1:=gcd(a_1),$ $b_2:=gcd(a_1,a_2)$, $b_3:=gcd(a_1,a_2,a_3)$, $\dots$.

The period of a state $x$ is the greatest common divisor of the sequence of time-steps after which the chain may return to $x$:
\begin{equation}\label{eq:mfe89anfae9nfeua}d(x):=\text{greatest common divisor of }\{n>0:p_n(x,x)>0\}\end{equation}
with the convention that $d(x):=1$ if the above set is empty, where $P_n=(p_n(x,y))_{x,y\in\s}$ denotes the $n$-step matrix~\eqref{eq:nstepdef}. A state is periodic if its period is greater than one. Otherwise, it is said to be aperiodic.\end{definition}

Periodic states $x$ are those that the chain can only return after a regular number of steps: $d(x)$ steps, or $2d(x)$ steps, etc. 
Aperiodic states on the other hand, are characterised as follows:
\begin{proposition}[Characterising aperiodic states]\label{prop:aperiodic} A state $x$ is aperiodic if and only if $p_n(x,x)=0$ for all $n>0$ or there exists an $n(x)$ such that $p_n(x,x)>0$ for all $n$ greater than $n(x)$.
\end{proposition}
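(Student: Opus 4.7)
The $(\Leftarrow)$ direction is essentially free from the definitions. If $p_n(x,x) = 0$ for every $n > 0$, then the set in \eqref{eq:mfe89anfae9nfeua} is empty and $d(x) := 1$ by convention. If there exists $n(x)$ with $p_n(x,x) > 0$ for every $n > n(x)$, then the set in \eqref{eq:mfe89anfae9nfeua} contains two consecutive integers, so its greatest common divisor is $1$.

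The meat of the proof is the $(\Rightarrow)$ direction. Assume $x$ is aperiodic and $p_m(x,x) > 0$ for at least one $m > 0$ (otherwise there is nothing to show). Put $S := \{n > 0 : p_n(x,x) > 0\}$. The first key observation is that \emph{$S$ is closed under addition}: by the Chapman--Kolmogorov equation \eqref{eq:chap-kol},
$$p_{n+m}(x,x) \;=\; \sum_{z \in \s} p_n(x,z)\, p_m(z,x) \;\geq\; p_n(x,x)\, p_m(x,x),$$
so $n, m \in S$ forces $n + m \in S$.

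The second key observation is a small number-theoretic lemma: if a non-empty subset $S$ of $\zp$ is closed under addition and has greatest common divisor $1$, then $S$ contains every sufficiently large integer. I would prove it in two steps. First, since $\gcd(S)=1$, there is a finite subset $a_1,\dots,a_k \in S$ with $\gcd(a_1,\dots,a_k)=1$; Bezout gives integers $c_1,\dots,c_k$ with $\sum c_i a_i = 1$. Splitting the sum according to the sign of $c_i$ and using the closure of $S$ under addition yields two elements $p, q \in S$ with $p = q + 1$. Second, given any $n$ with $n \geq q(q-1)$, write $n = (\alpha + \beta) q + \beta$ by choosing $\beta \in \{0,1,\dots,q-1\}$ with $\beta \equiv n \pmod{q}$ and $\alpha := (n - \beta(q+1))/q$; the lower bound on $n$ guarantees $\alpha \geq 0$, and then $n = \alpha q + \beta p \in S$ by closure under addition.

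Applying the lemma to our $S$ (which is non-empty and has gcd $1$ by aperiodicity), we obtain an $n(x)$ such that every integer $n > n(x)$ lies in $S$, i.e., $p_n(x,x) > 0$. This is exactly the alternative stated in the proposition. The main (and really only) obstacle is the number-theoretic lemma, but once one has the Bezout-plus-closure trick giving consecutive elements $q, q+1 \in S$, the representation argument is routine; the probabilistic content reduces entirely to invoking Chapman--Kolmogorov to get closure under addition.
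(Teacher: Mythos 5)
Your proof follows the same route as the paper's: the reverse direction is immediate from the definitions, and the forward direction reduces via Chapman--Kolmogorov to the fact that $S:=\{n>0:p_n(x,x)>0\}$ is closed under addition, after which one invokes the number-theoretic fact that an additively closed subset of $\zp$ with gcd one contains all sufficiently large integers. The only difference is that the paper simply cites that fact (Theorem~\ref{thrm:latticethe}, from Billingsley A.21) while you supply a correct, self-contained Bezout-plus-representation proof of it; this buys you a fully self-contained argument at the cost of a bit more work, but adds nothing probabilistic beyond what the paper does.
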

To prove the proposition, we require the following number-theoretic fact:
\begin{theorem}[{\citealp[A.21]{Billingsley1995}}]\label{thrm:latticethe} Suppose that $S$ is a set of positive integers closed under addition and of period one. Then $S$ contains all integers past some number $n$.
\end{theorem}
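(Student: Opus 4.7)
The strategy is to reduce the problem to the classical two-generator version of the Chicken McNugget / Frobenius theorem: if $\gcd(p,q)=1$, every sufficiently large integer is a non-negative integer combination of $p$ and $q$.

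First I would reduce from the infinite set $S$ to a finite subset with gcd one. By the definition of greatest common divisor adopted in Definition~\ref{def:period}, the sequence $b_k := \gcd(a_1,\dots,a_k)$ (for some enumeration $a_1,a_2,\dots$ of $S$) is a non-increasing sequence of positive integers, so it stabilises at some finite stage $k^\star$; its limit being $1$ forces $\gcd(a_1,\dots,a_{k^\star})=1$. Thus there exist finitely many elements of $S$ whose gcd is one, and by Bézout's identity I can write
\[
1 \;=\; \sum_{i=1}^{k^\star} c_i\, a_i
\]
for some integers $c_i\in\mathbb{Z}$. Splitting the sum according to the sign of the $c_i$ and using that $S$ is closed under addition (so any positive-integer combination of elements of $S$ lies in $S$), I get $p,q\in S$ with $p = q+1$; in particular $\gcd(p,q)=1$.

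Second, I would show that every integer $n\geq q(q-1)$ belongs to $S$. Write $n=aq+r$ with $0\leq r\leq q-1$ and $a\geq q-1\geq r$. Then
\[
n \;=\; (a-r)q + r(q+1) \;=\; (a-r)\,q + r\,p,
\]
with $a-r\geq 0$ and $r\geq 0$. If both coefficients are positive, this exhibits $n$ as a sum of copies of $p,q\in S$, hence $n\in S$ by closure under addition. If exactly one of $a-r$, $r$ is zero (and $n>0$), the other coefficient is positive, and again $n\in S$ as a positive multiple of $p$ or of $q$. This covers all $n\geq q(q-1)$, proving the claim with $n := q(q-1)-1$ (or any larger value).

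I do not expect any serious obstacle: the only subtlety is the reduction from the infinite set to a finite subset with gcd one, which follows immediately from the monotonicity of the partial gcds, and the Bézout step requires a little care to land the coefficients in $S$ rather than merely in $\mathbb{Z}$ — this is exactly where the closure under addition is used. The rest is elementary arithmetic.
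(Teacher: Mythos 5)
Your proof is correct, and it is worth noting that the paper supplies no proof of its own for this lemma: it simply cites \citep[A.21]{Billingsley1995}. So there is nothing in the manuscript to compare against; what you have written would serve as a self-contained substitute for that citation.

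Two small points to tidy up. First, when you split $1=\sum c_i a_i$ into $P=\sum_{c_i>0}c_ia_i$ and $N=\sum_{c_i<0}(-c_i)a_i$ and set $p=P$, $q=N$, you assert $p,q\in S$, but this silently assumes $N>0$. If all the $c_i$ are non-negative then $N=0$ and $q\notin S$; however, in that degenerate case $P=1$ forces $1\in S$, and then closure under addition already gives $S\supseteq\{1,2,3,\dots\}$, so the theorem holds trivially with $n=0$. It is worth stating that one line. Second, the inequality $a\geq q-1$ in your division step deserves a word of justification, e.g.\ from $n<(a+1)q$ and $n\geq q(q-1)$ one gets $a+1>q-1$, hence $a\geq q-1\geq r$. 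With those remarks added, the argument is complete: you land on the classical two-generator Sylvester--Frobenius bound for the pair $(q,q+1)$, whose Frobenius number is exactly $q(q-1)-1$, matching your threshold. This is a clean, elementary route and entirely appropriate for the role the lemma plays in Proposition~\ref{prop:aperiodic}.
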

\begin{proof}[Proof of Proposition~\ref{prop:aperiodic}]The reverse direction follows immediately from the definition of the state's period. If the set in the right-hand side of \eqref{eq:mfe89anfae9nfeua} is empty, then the forward direction also follows immediately from this definition. Otherwise, note that if $k,l$ are such that $p_k(x,x)>0$ and $p_l(x,x)>0$, then the Chapman-Kolmogorov equation in~\eqref{eq:chap-kol} implies that
$$p_{k+l}(x,x)\geq p_k(x,x)p_l(x,x)>0.$$
In other words, $\{n>0:p_n(x,x)>0\}$ is closed under addition and the result follows from  Theorem~\ref{thrm:latticethe} above.
\end{proof}
Perhaps unsurprisingly, the period remains constant across communicating classes:
\begin{theorem}[The period is a class property]\label{thrm:sameperiod} If $x$ and $y$ belong to the same communicating class, then they have the same period. 
\end{theorem}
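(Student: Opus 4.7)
The plan is to exploit the accessibility relations between $x$ and $y$ together with the Chapman-Kolmogorov equation~\eqref{eq:chap-kol} to show that every ``return time to $y$'' can be embedded inside a return time to $x$ (and vice versa), so that $d(x)$ and $d(y)$ divide each other.

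First I would dispose of the trivial case $x=y$. For $x\neq y$ in the same communicating class, Lemma~\ref{lem:accdt} gives positive integers $k$ and $l$ with $p_k(x,y)>0$ and $p_l(y,x)>0$. The Chapman-Kolmogorov equation then yields
\[
p_{k+l}(x,x)\ \geq\ p_k(x,y)\,p_l(y,x)\ >\ 0,
\]
so in particular $\{n>0:p_n(x,x)>0\}$ is nonempty and $d(x)$ divides $k+l$. Next, for any $n>0$ with $p_n(y,y)>0$, Chapman-Kolmogorov applied twice gives
\[
p_{k+n+l}(x,x)\ \geq\ p_k(x,y)\,p_n(y,y)\,p_l(y,x)\ >\ 0,
\]
so $d(x)$ divides $k+n+l$; combined with $d(x)\mid k+l$ this forces $d(x)\mid n$. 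Hence $d(x)$ is a common divisor of $\{n>0:p_n(y,y)>0\}$, so it divides their greatest common divisor $d(y)$. Swapping the roles of $x$ and $y$ gives $d(y)\mid d(x)$, and therefore $d(x)=d(y)$.

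The only subtlety is the convention in Definition~\ref{def:period} that $d(\cdot):=1$ when the relevant set of return times is empty; the display above shows this case cannot occur within a communicating class, so the convention does not interfere. Everything else is a short number-theoretic observation once Chapman-Kolmogorov is in hand, so I do not anticipate any genuine obstacle.
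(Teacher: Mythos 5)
Your proof is correct and follows essentially the same route as the paper's: obtain $k,l$ with $p_k(x,y)>0$ and $p_l(y,x)>0$ via Lemma~\ref{lem:accdt}, use Chapman--Kolmogorov to get $p_{k+l}(x,x)>0$ and $p_{k+n+l}(x,x)>0$ for every $n$ with $p_n(y,y)>0$, deduce $d(x)\mid n$, and conclude by symmetry. Your extra remark about the $d(\cdot):=1$ convention being vacuous within a communicating class is a small but welcome clarification that the paper leaves implicit.
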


\begin{proof}Because $x$ and $y$ communicate, there exists positive integers $k$ and $l$ such that $p_k(x,y)>0$ and $p_l(y,x)>0$ (this follows from the Chapman-Kolmogorov equation in~\eqref{eq:chap-kol} and Lemma~\ref{lem:accdt}). Thus, for any $n$ such that $p_n(y,y)>0$, the Chapman-Kolmogorov equation also implies that
$$p_{k+n+l}(x,x)\geq p_k(x,y)p_n(y,y)p_l(y,x)>0.$$
Similarly, we have that $p_{k+l}(x,x)\geq p_k(x,y)p_l(y,x)>0$. Thus, we have that $d(x)$ divides both $k+n+l$ and $k+l$, and so $d(x)$ must divide $n$. We have shown that $d(x)$ divides every integer in $\{n>0:p_n(y,y)>0\}$ and so $d(x)\leq d(y)$. Reversing $x$ and $y$ and repeating the same argument we find that $d(y)\leq d(x)$ and we obtain that both states have the same period ($d(x)=d(y)$).
%
\end{proof}
The theorem motivates us to introduce the following definition:
\begin{definition}[Periodic and aperiodic classes]\label{def:periodclass} The period of a communicating class is the period of any one (or, equivalently, all) of its states. The class is said to be aperiodic if its period is one and periodic otherwise.

\end{definition}

\subsubsection*{Periodic and aperiodic chains}In Section~\ref{sec:empdist}, we saw that transient and null recurrent states are immaterial to the chain's long-term behaviour because visits to any such state either cease at some point or become so rare that the fraction of time the chain spends in the state approaches zero as time progresses. For this reason, we only asks for the positive recurrent states to be aperiodic in order for a chain to be deemed aperiodic:

\begin{definition}[Periodic and aperiodic chains] A chain is said to be aperiodic if all of its positive recurrent states are aperiodic. Otherwise, it is said to be periodic.
\end{definition}

Aside from esoteric periodic chains possessing infinitely many positive recurrent classes with distinct periods, it follows from Theorem \ref{thrm:sameperiod} that the lowest common multiple of all of the positive recurrent states' periods is finite:
\begin{equation}\label{eq:periodlcm}d:=\text{lowest common multiple of }\{d(x):x\not\in\cal{T}\}<\infty,\end{equation}
where $\cal{T}$ denotes the set of all states that are not positive recurrent. It is then a simple matter to verify that the chain obtained by sampling $X$ every $d$ steps starting at step $m<d$,
\begin{equation}\label{eq:xmdef}X^{m,d}_n:=X_{m+nd}\quad\forall n\geq0,\end{equation}
is an aperiodic chain.
\begin{exercise}\label{ex:xm}Given any $d$ and $m$ in $\n$, marginalise over \eqref{eq:nstepthe} to obtain
$$\Pbl{\left\{X^{m,d}_0=x_0,X^{m,d}_1=x_1,\dots,X^{m,d}_n=x_n\right\}}=\gamma P_m(x_0)p_d(x_0,x_1)\dots p_d(x_{n-1},x_n)$$
for all $x_0,\dots,x_n$ in $\s$, $n$ in $\n$. Apply Theorems~\ref{pathlawuni} and \ref{samedef} to show that $(X^{m,d}_n)_{n\in\n}$ is a Markov chain with one-step matrix $P_d$ and initial distribution $\gamma P_m$. Convince yourself that if $d$ is a multiple of the period $d(x)$ (for the original chain $X$) of a state $x$, then $x$ is an aperiodic state for $(X^{m,d}_n)_{n\in\n}$. Conclude that if $d$ is as in \eqref{eq:periodlcm}, then $(X^{m,d}_n)_{n\in\n}$ is aperiodic.
\end{exercise}

For this reason, the long-term behaviour of a periodic chain can be pieced together from the behaviour of each of its periodic components $X^{m,d}:=(X^{m,d}_n)_{n\in\n}$  and we (mostly) focus on the aperiodic case for the remainder of our treatment of discrete-time chains. 

Before proceeding, you should take a moment to convince yourself that sampling the chain every $d(x)$ steps does not alter the recurrence properties of the state $x$:
\begin{exercise}\label{ex:xm2}For any given state $x$, let $(X^{m,d}_n)_{n\in\n}$ be as in~\eqref{eq:xmdef} with $d$ being the state's period $d(x)$. Let $\phi^{m,d}_x$ denote the first entrance time to $x$ of $(X^{m,d}_n)_{n\in\n}$:
\begin{equation}\label{eq:xmentdef}\phi^{m,d}_x:=\inf\{n>0:X^{m,d}_n=x\}=\inf\{n>0:X_{m+nd}=x\}.\end{equation}
Using the fact that
$$\Pbx{\{X_n=x\}}=p_n(x,x)=0\quad\forall n\neq 0,d,2d,\dots,$$
show that 
$$\phi_x=d\phi^{0,d}_x,\quad\Pb_x\text{-almost surely}.$$
Given that, for all $m$ in $\n$, $X^{m,d}$ is a discrete-time chain with the same one-step matrix $P_d$~\eqref{ex:xm}, use the above to conclude that $x$ is transient (null/positive recurrent) for $X$ if and only if it is transient (resptively, null/positive recurrent) for $X^{m,d}$ and $m$ in $\n$.
\end{exercise}
%
%
%
%
%
\subsubsection*{The limiting behaviour of the time-averages is indifferent to periodicity}You may be wondering why the notion of periodicity doesn't affect the limiting behaviour of the time averages while it does affect that of the ensemble averages. The answer is best understood through an example: consider a chain that alternates between two states (say $x$ and $y$) at each step. The state space ($\{x,y\}$) consists of a single periodic class with period two. If the initial distribution is concentrated on one of these two states (say $x$), then the time-varying law will keep alternating between them ($p_0=1_x$, $p_1=1_y$, $p_2=1_x$, $p_3=1_y$, $\dots$). Consequently, $p_n$ does not converge. On the other hand, the empirical distribution ($\epsilon_N$ in \eqref{eq:timeavedef}) averages out these oscillations and settles down:
\begin{align*}
\epsilon_1 &= 1_x,\quad \epsilon_2=\frac{1}{2}1_x+\frac{1}{2}1_y,\quad \epsilon_3=\frac{2}{3}1_x+\frac{1}{3}1_y,\quad \epsilon_4=\frac{1}{2}1_x+\frac{1}{2}1_y,\quad \epsilon_5=\frac{3}{5}1_x+\frac{2}{5}1_y,\\
\epsilon_6&=\frac{1}{2}1_x+\frac{1}{2}1_y,\quad\epsilon_7=\frac{4}{7}1_x+\frac{3}{7}1_y,\quad \dots\quad\to\quad \frac{1}{2}1_x+\frac{1}{2}1_y.\end{align*}
%
\subsection[Time-varying law's limits; tightness; ergodicity; coupling]{Limits of the time-varying law; tightness; ergodicity; coupling}\label{sec:ensembleaved}
%
The aim of this section is to describe the long-term behaviour of the chain's time-varying law $(p_n)_{n\in\n}$ (introduced in Section~\ref{sec:dtlawsintro}). We do this via two theorems, the proofs of which can be found at  the end of the section. The first one shows that the probability that the chain is in any  state not positive recurrent tends to zero as time progresses:
\begin{theorem}\label{thrm:pointlims0}For any state $x$ that is not positive recurrent,
$$\lim_{n\to\infty}p_n(x)=0.$$
\end{theorem}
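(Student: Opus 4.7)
The states that are not positive recurrent split (Definition~\ref{def:posrec}) into the transient states and the null recurrent states. In both cases my strategy is identical: first reduce to proving the special case $p_n(x,x) \to 0$, then dispatch the transient case in one line and devote the bulk of the work to the null recurrent case.

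For the reduction, conditioning on the first entrance time $\phi_x$ and applying the strong Markov property as in Lemma~\ref{lem:dtenttimestr3} gives, for every $y \neq x$ and $n \geq 1$,
\[
p_n(y,x) \;=\; \sum_{k=1}^{n} \Pb_y(\{\phi_x = k\})\, p_{n-k}(x,x).
\]
Granting $p_m(x,x) \to 0$, the summands are dominated by $\Pb_y(\{\phi_x=k\})$, whose series is at most one, so dominated convergence yields $p_n(y,x) \to 0$ for every $y$; a second application of dominated convergence to $p_n(x) = \sum_y \gamma(y) p_n(y,x)$ with dominant $\gamma$ then delivers $p_n(x) \to 0$. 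With the reduction in hand, the transient case is immediate from Corollary~\ref{cor:recnstepchar}: $\sum_{n=1}^\infty p_n(x,x) = \Pb_x(\{\phi_x<\infty\})/(1-\Pb_x(\{\phi_x<\infty\})) + 1 < \infty$ when $x$ is transient, so its general term vanishes.

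The null recurrent case is the main obstacle and the only place the argument does real work. I would first reduce to an aperiodic state via Exercises~\ref{ex:xm}--\ref{ex:xm2}: the chain $(X_{nd(x)})_{n \in \n}$ is a Markov chain with one-step matrix $P_{d(x)}$ in which $x$ is aperiodic and still null recurrent, and $p_m(x,x)=0$ whenever $d(x) \nmid m$, so it suffices to prove that the return probabilities at $x$ of the sub-sampled chain tend to zero. Assuming henceforth that $x$ is aperiodic, let $u_n := p_n(x,x)$ and $f_n := \Pb_x(\{\phi_x = n\})$; conditioning on the first return yields the discrete renewal equation
\[
u_0 = 1, \qquad u_n = \sum_{k=1}^{n} f_k\, u_{n-k} \quad \forall\, n \geq 1,
\]
with $\sum_n f_n = 1$ (by recurrence) and $\sum_n n f_n = \Ebx{\phi_x} = \infty$ (by null recurrence). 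The desired $u_n \to 0$ is exactly the classical Erd\H{o}s--Feller--Pollard renewal theorem, and proving it is the crux of the argument. The cleanest route, and the one I expect the author takes given the ``coupling'' billing in the section title, is to run two independent $f$-renewal processes: aperiodicity of $f$ makes the joint lag chain irreducible and a Chung--Fuchs-type argument applied to the symmetric increment $f \ast \tilde f$ makes it recurrent, so the two renewal sequences couple after a $\Pb_x$-almost surely finite time; combining the resulting coupling bound with the Cesàro decay $N^{-1}\sum_{n=0}^{N-1} u_n \to 0$ (which follows from Theorem~\ref{thrm:empdistlims} by bounded convergence under $\Pb_x$, since $\Ebx{\phi_x}=\infty$) and the submultiplicativity $u_{n+m} \geq u_n u_m$ from Chapman--Kolmogorov~\eqref{eq:chap-kol} then rules out any positive $\limsup$ of $u_n$, completing the proof.
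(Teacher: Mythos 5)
Your high-level skeleton matches the paper's exactly: the reduction to $p_n(x,x)\to 0$ via the strong Markov property and dominated convergence is the paper's Step~4, the transient case via Corollary~\ref{cor:recnstepchar} is its Step~1, and the $d(x)$-sampling to reach aperiodicity is its Step~3. The gap is concentrated in the null-recurrent aperiodic core (Step~2 of the paper), and it is a genuine one, on two counts.

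First, the coupling of two independent renewal processes is \emph{not} automatic: the ``age--age'' lag chain is recurrent if and only if $\sum_n u_n^2=\infty$, and this can fail in the null recurrent regime. The Chung--Fuchs argument you invoke is inapplicable because null recurrence means $\sum_n nf_n=\infty$, so the symmetric increment $f\ast\tilde f$ is not an integrable mean-zero step, and a symmetric walk with heavy enough tails is transient. Concretely, if $f_n\sim c\,n^{-1-\alpha}$ with $\alpha\in(0,1/2)$ then $u_n\sim c'n^{\alpha-1}$, so $\sum_n u_n=\infty$ (null recurrence) while $\sum_n u_n^2<\infty$, and the two renewal sequences almost surely never meet. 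Second, even granting the coupling bound $u_n-u_{n+1}\to 0$, the closing combination of Ces\`aro decay and $u_{n+m}\geq u_nu_m$ is not a proof that $\limsup_n u_n=0$: a sequence whose first differences and Ces\`aro averages both vanish can still oscillate with a positive $\limsup$, and the supermultiplicative inequality alone does not repair this. The paper resolves both problems at once by casing on the product class $\mathcal{C}^2$: if it is transient then $u_n^2=\tilde p_n\bigl((x,x),(x,x)\bigr)$ is the general term of a convergent series (Step~1 applied to the product chain), hence $u_n\to 0$; if it is recurrent then coupling does give $\norm{p_n(x,\cdot)-p_{n+1}(x,\cdot)}\to 0$, but a positive $\limsup$ is then excluded by a different device --- a diagonal extraction yields a pointwise subsequential limit $\rho$ with $\rho(x)=c>0$ which, via Fatou applied twice, satisfies $\rho(\s)\leq 1$ and $\rho\geq\rho P$; iterating the subinvariance yields $\rho(\s)\geq c\,\Ebx{\phi_x}=\infty$, a contradiction. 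Adding the transient/recurrent dichotomy for the product chain and replacing your Ces\`aro-plus-supermultiplicativity ending with this subinvariant-measure contradiction would repair the proposal.
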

The intuition here is that if $x$ is transient or null recurrent, visits to $x$ eventually become so rare that not only does the fraction of time any one path spends in $x$ approaches zero (Section~\ref{sec:empdist}: $\epsilon_N(x)\to0$ as $N\to\infty$, $\Pb_\gamma$-almost surely) but so does the fraction $p_n(x)$ of the entire ensemble of paths. Be careful here: if $x$ is null recurrent and, say, the chain starts at $x$, then every path will revisit $x$ infinitely many times~(Theorem~\ref{thrm:rectrans}), however the frequency of these visits drops quickly enough that the fraction of paths in $x$ at any given moment decays to zero.

The second theorem builds on Theorem~\ref{thrm:pointlims0} to  show that, if the chain is aperiodic, then the time-varying law converges to a weighted combination of the ergodic distributions~(c.f. Section~\ref{sec:statdists}), where the weight awarded to each one is the probability that the chain gets absorbed in the closed communicating class associated with it:
\begin{theorem}[The limits of the time-varying law]\label{thrm:pointlims} Let $\{\cal{C}_i:i\in\cal{I}\}$ denote the collection of positive recurrent closed communicating classes and let $\pi_i$ be the ergodic distribution of $\cal{C}_i$ for each $i$ in $\cal{I}$. 
\begin{enumerate}[label=(\roman*),noitemsep] 
\item If the chain is aperiodic, then time-varying law $(p_n)_{n\in\n}$~\eqref{eq:dtlaw} converges pointwise with limit\glsadd{pigamma}
\begin{equation}\label{eq:reclims2}\lim_{n\to\infty}p_n=\sum_{i\in\cal{I}}\Pbl{\{\phi_{\cal{C}_i}<\infty\}}\pi_i=:\pi_\gamma,\end{equation}
where  $\phi_{\C_i}$ denotes the first entrance time to $\C_i$~(Definition~\ref{def:entrance}).
\item Conversely, if the chain is periodic there exists at least one initial distribution such that the time-varying law does not converge pointwise.
\end{enumerate}
\end{theorem}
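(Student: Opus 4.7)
The plan is to prove part (i) via a coupling argument within each positive recurrent closed communicating class, combined with a decomposition over the first entrance time $\phi_{\cal{C}_i}$ to handle arbitrary initial distributions; and to prove part (ii) by reducing to part (i) applied to the sampled chain $X^{0,d}$ of Exercise~\ref{ex:xm}.

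First I would show that for a fixed aperiodic positive recurrent closed communicating class $\cal{C}_i$ and $x,y \in \cal{C}_i$, one has $p_n(x,y) \to \pi_i(y)$. On an enlarged probability space I would build two independent chains with one-step matrix $P$, one starting from $x$ and the other with initial law $\pi_i$; their product $(X_n,X'_n)$ is a Markov chain on $\cal{C}_i \times \cal{C}_i$ with one-step matrix $((x_1,x_2),(y_1,y_2)) \mapsto p(x_1,y_1)\,p(x_2,y_2)$. Aperiodicity combined with Proposition~\ref{prop:aperiodic} gives $p_m(z,z) > 0$ for all sufficiently large $m$ and every $z \in \cal{C}_i$; together with irreducibility of $P$ on $\cal{C}_i$ and Lemma~\ref{lem:accdt}, this implies irreducibility of the product chain on $\cal{C}_i \times \cal{C}_i$. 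Since $\pi_i \otimes \pi_i$ is stationary for the product chain, Theorem~\ref{thrm:posrecchar} shows the product chain is positive recurrent, so the hitting time $T$ of the diagonal is almost surely finite. Splicing at $T$ (setting $Y'_n := X'_n$ on $\{n<T\}$ and $Y'_n := X_n$ on $\{n \geq T\}$) yields a process with the same path law as $X'$ by the strong Markov property applied to the product chain, so
\begin{equation*}
|p_n(x,y) - \pi_i(y)| = |\Pb(X_n = y) - \Pb(Y'_n = y)| \leq \Pb(T>n) \to 0.
\end{equation*}

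Next I would handle arbitrary $\gamma$. For $x$ not positive recurrent, Theorem~\ref{thrm:pointlims0} already gives $p_n(x) \to 0$, which matches the right-hand side of \eqref{eq:reclims2} since each $\pi_j$ is supported in $\cal{C}_j \subseteq \cal{R}_+$. For $x \in \cal{C}_i$ and $n \geq 1$, closedness of $\cal{C}_i$ (Proposition~\ref{prop:closed}) ensures $\{X_n = x\} \subseteq \{\phi_{\cal{C}_i} \leq n\}$, and decomposing over $(\phi_{\cal{C}_i}, X_{\phi_{\cal{C}_i}})$ and invoking the strong Markov property (Theorem~\ref{thrm:strmkvpath}) with $Z = 1_{\{\phi_{\cal{C}_i}=m\}}$ and $F((x_k)) = 1_{\{x_{n-m}=x\}}$ yields
\begin{equation*}
p_n(x) = \sum_{m=1}^{n} \sum_{y \in \cal{C}_i} \Pbl{\phi_{\cal{C}_i} = m,\, X_m = y}\, p_{n-m}(y,x).
\end{equation*}
Each summand is bounded by $\Pbl{\phi_{\cal{C}_i} = m, X_m = y}$, which sums over $(m,y)$ to $\Pbl{\phi_{\cal{C}_i} < \infty} \leq 1$; the coupling step supplies $p_{n-m}(y,x) \to \pi_i(x)$ for every fixed $(m,y)$, so dominated convergence delivers $\lim_n p_n(x) = \Pbl{\phi_{\cal{C}_i} < \infty}\,\pi_i(x)$, completing (i).

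For (ii), fix any positive recurrent state $x$ whose period $d := d(x)$ satisfies $d > 1$ (which exists by the periodicity hypothesis) and take $\gamma := 1_x$. By definition of the period, $p_n(x,x) = 0$ whenever $d \nmid n$; on the other hand, Exercises~\ref{ex:xm} and \ref{ex:xm2} show that $X^{0,d}$ is an aperiodic chain in which $x$ is positive recurrent, so part~(i) applied to $X^{0,d}$ gives $p_{nd}(x,x) \to \pi'(x) > 0$, where $\pi'$ is the ergodic distribution of $x$'s class in $X^{0,d}$. Hence $\liminf_n p_n(x,x) = 0 < \pi'(x) = \limsup_n p_n(x,x)$, precluding convergence. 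The main obstacle throughout is Step~1: rigorously constructing the coupled process on a common probability space and verifying both the irreducibility of the product chain (which uses the full strength of Proposition~\ref{prop:aperiodic}) and the fact that the spliced process $Y'$ has the correct marginal path law. The remaining steps reduce to standard applications of the strong Markov property and dominated convergence.
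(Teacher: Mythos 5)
Your proof is correct and follows essentially the same route as the paper: a coupling with the product chain on $\cal{C}_i\times\cal{C}_i$ (whose irreducibility rests on Proposition~\ref{prop:aperiodic} exactly as you describe, and whose positive recurrence is obtained from the stationarity of $\pi_i\otimes\pi_i$ via Theorem~\ref{thrm:posrecchar}) yields $p_n(x,x)\to\pi_i(x)$, followed by a first-entrance decomposition and dominated convergence to extend the limit to arbitrary $\gamma$, and part~(ii) is obtained by sampling every $d$ steps. The only real difference is in the decomposition step: you condition on $(\phi_{\cal{C}_i},X_{\phi_{\cal{C}_i}})$ and sum over all entry points $y\in\cal{C}_i$, which requires your coupling to produce $p_{n-m}(y,x)\to\pi_i(x)$ for every $y$ in the class, whereas the paper conditions on $\phi_x$ (the entrance time to the specific state $x$), so only the diagonal limit $p_n(x,x)\to\pi_i(x)$ is needed, with $\{\phi_x<\infty\}=\{\phi_{\cal{C}_i}<\infty\}$ a.s.\ supplied by Corollary~\ref{cor:phixphic}; both versions work and your coupling delivers the stronger statement anyway, so the two are effectively the same argument.
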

Theorem~\ref{thrm:pointlims}$(ii)$ shows that the time-varying law does not converge if the chain is periodic. However,  by sampling every $d$ steps a periodic chain with period $d$, we obtain an aperiodic chain~(Exercise~\ref{ex:xm}). Applying Theorem~\ref{thrm:pointlims}$(i)$ to the sampled chain, we obtain the following generalisation:
\begin{exercise}[The limits of the time-varying law: the periodic case]\label{ex:pointlimsp}Suppose that the period $d$~\eqref{eq:periodlcm} of the chain is finite. Let $X^{0,d}$ be the discrete-time chain with one-step matrix $P_d$ and initial distribution $\gamma$ obtained by sampling $X$ every $d$ steps~\eqref{eq:xmdef}. Show that a state $y$ is accessible from another state $x$ for $X^{0,d}$ only if it is accessible for $X$. Conclude that each positive recurrent class of $X^{0,d}$ is contained in one of $X$ and label the positive recurrent classes $\{\cal{C}^d_{i,j}:i\in\cal{I},j\in\cal{J}_i\}$ of $X^{0,d}$ such that
$$\cal{C}^d_{i,j}\subseteq\cal{C}_i\quad\forall j\in\cal{J}_i,\enskip i\in\cal{I},$$
where $\{\cal{C}_{i}:i\in\cal{I}\}$ denotes the set of positive recurrent classes of $X$. Given that $X^{0,d}$ and $X$ share the same positive recurrent states (Exercise~\ref{ex:xm2}), argue that
\begin{equation}\label{eq:nfua89efnea798nf7a9hfnawu}\bigcup_{j\in\cal{J}_i}\cal{C}^d_{i,j}=\cal{C}_i\quad\forall i\in\cal{I}.\end{equation}
Next, tweaking the arguments in Exercise~\ref{ex:xm2}, show that
$$d\Ebx{\phi^{0,d}_x}=\Ebx{\phi^{d/d(x)}_x}\quad\forall x\not\in\cal{T},$$
where $d(x)$ denotes the period~\eqref{def:period} of a state $x$, $\phi^{k}_x$ denotes to the $k$th entrance time~\eqref{def:entrance} to $x$ of $X$, $\phi^{0,d}_x$ the first entrance time~\eqref{eq:xmentdef} to $x$  of $X^{0,d}$, and $\cal{T}$ the set of  states that are not positive recurrent (for both $X$ and $X^{0,d}$). Apply the regenerative property~\eqref{thrm:rec-iid} of $X$ to the above and obtain that
$$\Ebx{\phi^{0,d}_x}=\frac{\Ebx{\phi_x}}{d(x)}\quad\forall x\not\in\cal{T}.$$
Using the characterisation of ergodic distributions in terms of mean return times (Theorem~\ref{doeblind}$(i)$) and the above, conclude that
\begin{equation}\label{eq:nfua89efnea798nf7a9hfnawu2}\pi^d_{i,j}(x)=d_i1_{\cal{C}_{i,j}^d}(x)\pi_i(x)\quad\forall x\in\s,\enskip j\in\cal{J}_i,\enskip i\in\cal{I},\end{equation}
where $d_i$ denotes the period of the class $\cal{C}_i$~(Definition~\ref{def:periodclass}), $\pi^d_{i,j}$ the ergodic distribution of $X^{0,d}$  associated with $\cal{C}^d_{i,j}$, and $\pi_i$ the ergodic distribution of $X$ associated with $\cal{C}_i$. Summing~\eqref{eq:nfua89efnea798nf7a9hfnawu2} over $x$ in $\cal{C}^d_{i,j}$, show that
\begin{equation}\label{eq:nfua89efnea798nf7a9hfnawu3}\pi_i(\cal{C}^d_{i,j})=\frac{1}{d_i}\quad\forall j\in\cal{J}_i,\enskip i\in\cal{I}.\end{equation}

Using Exercise~\ref{ex:xm}, argue that, for any given $m$, the process $X^{m,d}$ obtained by sampling $X$ every $d$ steps starting from $m$ is an aperiodic discrete-time chain possessing the  same positive recurrent closed communicating classes and ergodic distributions as  $X^{0,d}$ does. Using this fact, Theorem~\eqref{thrm:pointlims}$(i)$, and  (\ref{eq:nfua89efnea798nf7a9hfnawu}--\ref{eq:nfua89efnea798nf7a9hfnawu2}), derive the following generalisation of \eqref{eq:reclims2}:
\begin{equation}\label{eq:reclims2p}\lim_{n\to\infty}p_{m+nd}=\sum_{i\in\cal{I}}\left(\sum_{j\in\cal{J}_i}\Pbl{\left\{\phi^{m,d}_{\cal{C}^d_{i,j}}<\infty\right\}}1_{\cal{C}^d_{i,j}}\right)d_i\pi_i=:\pi_\gamma^m,\end{equation}
for all $m<d$, where $\phi^{m,d}_A$ denotes the first entrance time~\eqref{eq:xmentdef} of $X^{m,d}$ to a set $A$ and the convergence is pointwise.

Use the closedness of $\cal{R}_+=\cup_{i\in\cal{I}}\cal{C}_i$ for $X$ to argue that, with probability one, $X^{m,d}$ eventually enters $\cal{R}_+$ if and only if $X$ does (here, use Proposition~\ref{prop:closed} and the strong Markov property). Consequently, apply Proposition~\ref{prop:closedis} and \eqref{eq:nfua89efnea798nf7a9hfnawu2}--\eqref{eq:nfua89efnea798nf7a9hfnawu3} to argue that
\begin{equation}\label{eq:pigs}\pi^m_\gamma(\s)=\Pbl{\left\{\phi^{m,d}_{\cal{R}_+}<\infty\right\}}=\Pbl{\left\{\phi_{\cal{R}_+}<\infty\right\}}.\end{equation}
\end{exercise}
\subsubsection*{Convergence in total variation  and tightness}The question `under what circumstances does \eqref{eq:reclims2} hold in total variation?' has a straightforward answer involving the notion of \emph{tightness}\footnote{Here, we are topologising the state space using the discrete metric so that the compact sets are the finite sets.}:\index{tightness of the time-varying law}
\begin{definition}[Tightness]\label{def:tight}A sequence $(\rho_n)_{n\in\n}$ of probability distributions on $\s$ is tight if and only if for every $\varepsilon>0$ there exists a finite set $F$ such that $\rho_n(F)\geq 1-\varepsilon$ for all $n$ in $\n$.
\end{definition}
We then have the following corollary of Theorem~\ref{thrm:pointlims}:
\begin{corollary}\label{cor:tvtight}Suppose that the period $d$ in~\eqref{eq:periodlcm} of the chain is finite and let $\cal{R}_+$ be the set of positive recurrent states. The following are equivalent:
\begin{enumerate}[label=(\roman*),noitemsep] 
\item The chain enters $\cal{R}_+$ with probability one: $\Pbl{\{\phi_{\cal{R}_+}<\infty\}}=1$.
\item The time-varying law $(p_n)_{n\in\n}$ is tight.
\item (Aperiodic case) The limit~\eqref{eq:reclims2} holds in total variation: with $\norm{\cdot}$ as in~\eqref{eq:tvnorm},
\begin{equation}\label{eq:pntv}\lim_{n\to\infty}\norm{p_n-\pi_\gamma}=0.\end{equation}
\end{enumerate}
\end{corollary}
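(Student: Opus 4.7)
\emph{Plan of proof.} The whole corollary rides on the pointwise limits $p_{m+nd}\to \pi_\gamma^m$ ($m=0,1,\dots,d-1$) supplied by Theorem~\ref{thrm:pointlims}$(i)$ in the aperiodic case and by Exercise~\ref{ex:pointlimsp}, together with the identity
$$\pi_\gamma^m(\s)=\Pbl{\{\phi_{\cal{R}_+}<\infty\}}$$
from~\eqref{eq:pigs}, and Scheff\'e's lemma (Lemma~\ref{lem:scheffe}). The implication $(i)\Leftrightarrow(iii)$ in the aperiodic case ($d=1$, so $\pi_\gamma^0=\pi_\gamma$) is then essentially immediate: if $(i)$ holds then $\pi_\gamma$ is a probability distribution by~\eqref{eq:pigs}, so Scheff\'e upgrades the pointwise convergence of Theorem~\ref{thrm:pointlims}$(i)$ to total variation; conversely if $(iii)$ holds then $\pi_\gamma(\s)=\lim_n p_n(\s)=1$, and~\eqref{eq:pigs} yields $(i)$.

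\emph{For $(ii)\Rightarrow(i)$.} Suppose $(p_n)_{n\in\n}$ is tight. Fix $\varepsilon>0$ and pick a finite $F\subseteq\s$ with $p_n(F)\ge 1-\varepsilon$ for every $n$. Since $F$ is finite, pointwise convergence on $F$ is the same as convergence of $p_{m+nd}(F)$ to $\pi_\gamma^m(F)$, so for each $m<d$,
$$\pi_\gamma^m(F)=\lim_{n\to\infty}p_{m+nd}(F)\ge 1-\varepsilon.$$
Letting $\varepsilon\downarrow 0$ (with $F$ enlarging accordingly) gives $\pi_\gamma^m(\s)=1$ for every $m<d$, and then~\eqref{eq:pigs} yields $(i)$.

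\emph{For $(i)\Rightarrow(ii)$.} Fix $\varepsilon>0$. Under $(i)$ every $\pi_\gamma^m$ is a probability distribution by~\eqref{eq:pigs}, so by Scheff\'e's lemma the pointwise convergence $p_{m+nd}\to\pi_\gamma^m$ is in total variation. In particular, for each $m<d$ there is a finite $F_m\subseteq\s$ with $\pi_\gamma^m(F_m)\ge 1-\varepsilon/2$, and an index $N_m$ such that $\|p_{m+nd}-\pi_\gamma^m\|\le\varepsilon/2$ for all $n\ge N_m$; combining these gives $p_{m+nd}(F_m)\ge 1-\varepsilon$ for every $n\ge N_m$. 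The finitely many remaining distributions $p_0,\dots,p_K$ (where $K=\max_m(N_m d+m)$) are ordinary probability measures on $\s$, so for each $k\le K$ pick a finite $F^{(k)}$ with $p_k(F^{(k)})\ge 1-\varepsilon$. Then $F:=\bigcup_{m<d}F_m\cup\bigcup_{k\le K}F^{(k)}$ is finite and satisfies $p_n(F)\ge 1-\varepsilon$ for every $n\in\n$, proving tightness.

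\emph{Where things could go wrong.} The only substantive step is the bookkeeping in $(i)\Rightarrow(ii)$: one has to handle the $d$ periodic subsequences simultaneously and absorb the finitely many small indices $n\le K$ into a single finite trapping set. The intuition (chain enters $\cal{R}_+$ almost surely $\Rightarrow$ its mass concentrates on compacts of $\cal{R}_+$ uniformly in time) is clear; the one place to be careful is that Theorem~\ref{thrm:pointlims} as stated gives only pointwise limits, so it is crucial to invoke Scheff\'e (which precisely upgrades pointwise convergence to total variation when the limit is a probability) to strengthen these to TV convergence on each subsequence before piecing them together.
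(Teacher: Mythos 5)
Your proposal is correct and takes essentially the same route as the paper: the mass identity $\pi_\gamma^m(\s)=\Pbl{\{\phi_{\cal{R}_+}<\infty\}}$ from~\eqref{eq:pigs} combined with Scheff\'e's lemma (Lemma~\ref{lem:scheffe}) to upgrade the pointwise limits of Theorem~\ref{thrm:pointlims}$(i)$ and Exercise~\ref{ex:pointlimsp} to total-variation limits, plus a finite-union trapping set for the finitely many small indices. The only difference is cosmetic: the paper writes out the aperiodic case only and dispatches the periodic bookkeeping in~$(i)\Leftrightarrow(ii)$ with a remark, whereas you carry out the $d$ periodic subsequences explicitly.
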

\begin{proof}To not turn this proof into a notational horror show, we only consider the aperiodic case. To argue the equivalence of $(i)$ and $(ii)$ for the periodic case, use~\eqref{eq:reclims2p} and \eqref{eq:pigs} analogously to how we use \eqref{eq:reclims2} and \eqref{eq:pigmassdt} below.

$(i)\Leftrightarrow (iii)$ Given that $\cal{R}_+$ equals the union $\cup_{i\in\cal{I}}\cal{C}_i$ of the positive recurrent classes and that, because the classes are closed sets, the probability of both $\{\phi_{\cal{C}_i}<\infty\}$ and $\{\phi_{\cal{C}_j}<\infty\}$ occurring is  zero if $i\neq j$   (Proposition~\ref{prop:closedis}), the mass of the limit in~\eqref{eq:reclims2} equals the probability that the chain enters the set of positive recurrent states:
\begin{align}\label{eq:pigmassdt}\pi_\gamma(\s)&=\sum_{i\in\cal{I}}\Pbl{\{\phi_{\cal{C}_i}<\infty\}}=\Pbl{\bigcup_{i\in\cal{I}}\{\phi_{\cal{C}_i}<\infty\}}=\Pbl{\{\phi_{\cup_{i\in\cal{I}}\cal{C}_i}<\infty\}}\\
&=\Pbl{\{\phi_{\cal{R}_+}<\infty\}}\nonumber.\end{align}
Scheff\'e's lemma (Lemma~\ref{lem:scheffe}) then implies that $(i)$ holds if and only if $(iii)$ does.

$(i)\Leftrightarrow (ii)$ Equations~\eqref{eq:reclims2} and \eqref{eq:pigmassdt} imply that
$$\lim_{n\to\infty}p_{n}(F)=\pi_\gamma(F)\leq\pi_\gamma(\s)= \Pbl{\{\phi_{\cal{R}_+}<\infty\}}$$
for any finite set $F$ and it follows that the time-varying law is tight only if $\Pbl{\{\phi_{\cal{R}_+}<\infty\}}=1$.  

For the converse, fix any $\varepsilon>0$ and suppose that $\Pbl{\{\phi_{\cal{R}_+}<\infty\}}=1$. Given that the limit~\eqref{eq:reclims2} holds in total variation (as we have just shown), we can find  an $N$ such that 
$$\norm{p_{n}-\pi_\gamma}\leq \frac{\varepsilon}{2}\quad\forall n> N.$$
Using~\eqref{eq:pigmassdt}, pick a finite set $F$ large enough that
$$p_n(F)\geq 1-\varepsilon\quad\forall n\leq N,\qquad\pi_\gamma(F)\geq 1-\frac{\varepsilon}{2}.$$
The above inequalities then imply that
\begin{align*}p_n(F)&\geq 1_{\{n\leq N\}}p_n(F)+1_{\{n>N\}}p_n(F)\geq 1_{\{n\leq N\}}(1-\varepsilon)+1_{\{n>N\}}(\pi_\gamma(F)-\norm{p_{n}-\pi_\gamma})\geq 1-\varepsilon,\end{align*}
for all natural numbers $n$. Because the $\varepsilon$ was arbitrary, we have that $(p_n)_{n\in\n}$ is tight.
\end{proof}
\subsubsection*{Positive Tweedie recurrence revisited} In Section~\ref{sec:timeaved}, we characterised positive Tweedie recurrent chains (Definition~\ref{def:posTweedie}) in terms of the limits of the time-varying law $\epsilon_N$ in~\eqref{eq:timeavedef}. Armed with Corollary~\ref{cor:tvtight}, it is straightforward to improve this characterisation:\index{positive Tweedie recurrent chain}
\begin{corollary}[Characterising positive Tweedie recurrent chains]\label{cor:tweedierec}Let $d$ in~\eqref{eq:periodlcm} denote the period of the chain. The following are equivalent:
\begin{enumerate}[label=(\roman*),noitemsep] 
\item The chain is positive Tweedie recurrent.
\item The empirical distribution converges in total variation to $\epsilon_\infty$ in~\eqref{eq:reclims} with $\Pb_\gamma$-probability one, for all initial distributions $\gamma$.
\item (Finite $d$ case) The time-varying law $(p_n)_{n\in\n}$ is tight, for all initial distributions $\gamma$.
\item (Aperiodic case) The time-varying law converges in total variation (i.e.\ \eqref{eq:pntv}), for all initial distributions $\gamma$.
\end{enumerate}
\end{corollary}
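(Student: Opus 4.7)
The plan is to observe that every one of the four implications here is really a restatement, for all initial distributions $\gamma$, of a local-in-$\gamma$ equivalence we have already established. The definition of positive Tweedie recurrence (Definition~\ref{def:posTweedie}) reads: $\Pbl{\{\phi_{\cal{R}_+}<\infty\}}=1$ \emph{for every} initial distribution $\gamma$. So $(i)$ is simply the `$\forall\gamma$' quantification of the hypothesis ``$\Pbl{\{\phi_{\cal{R}_+}<\infty\}}=1$'' that appears in both Corollary~\ref{cor:dttimeavetv} and in clause $(i)$ of Corollary~\ref{cor:tvtight}. Statements $(ii)$, $(iii)$, $(iv)$ are the `$\forall\gamma$' quantifications of the respective conclusions of those corollaries.

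First I would dispatch $(i)\Leftrightarrow (ii)$: Corollary~\ref{cor:dttimeavetv} tells us, for each fixed $\gamma$, that $\Pbl{\{\phi_{\cal{R}_+}<\infty\}}=1$ holds if and only if $\norm{\epsilon_N-\epsilon_\infty}\to 0$ with $\Pb_\gamma$-probability one. Quantifying over all $\gamma$ on both sides gives the equivalence of $(i)$ and $(ii)$, with nothing further to check.

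Next, under the finite-period hypothesis, I would handle $(i)\Leftrightarrow (iii)$ by invoking the equivalence $(i)\Leftrightarrow (ii)$ of Corollary~\ref{cor:tvtight}: for each $\gamma$, $\Pbl{\{\phi_{\cal{R}_+}<\infty\}}=1$ iff $(p_n)_{n\in\n}$ is tight. Quantifying over $\gamma$ yields the equivalence of $(i)$ and $(iii)$ of the present corollary. Similarly, under the aperiodicity hypothesis, the equivalence $(i)\Leftrightarrow (iii)$ of Corollary~\ref{cor:tvtight} says, for each fixed $\gamma$, that $\Pbl{\{\phi_{\cal{R}_+}<\infty\}}=1$ iff $\norm{p_n-\pi_\gamma}\to 0$; quantifying over $\gamma$ gives $(i)\Leftrightarrow (iv)$.

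There is no substantive obstacle here: the whole content of the corollary is the trivial `apply-to-every-$\gamma$' repackaging of the two preceding corollaries, whose proofs (via Scheff\'e's lemma, Theorem~\ref{thrm:pointlims}, and an $\varepsilon$-splitting of the tail of the time-varying law) have already done all the real work. I would therefore write the proof as a three-line reference, without unwinding any of the earlier arguments.
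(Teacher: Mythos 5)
Your argument is exactly the paper's: the proof there reads ``This follows directly from Corollaries~\ref{cor:dttimeavetv} and \ref{cor:tvtight} and the definition of positive Tweedie recurrence in~\eqref{def:posTweedie},'' which is precisely your `quantify the per-$\gamma$ equivalences over all $\gamma$' observation. Correct, and no further elaboration is needed.
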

\begin{proof}This follows directly from Corollaries~\ref{cor:dttimeavetv} and \ref{cor:tvtight} and the definition  of positive Tweedie recurrence in~\eqref{def:posTweedie}.
\end{proof}
Corollary~\ref{cor:tweedierec} shows that a chain with finite period is positive Tweedie recurrent if and only if for each initial distribution $\gamma$ and $\varepsilon$ in $(0,1]$, we can find a large enough finite set $F$ such that the chain has at least $1-\varepsilon$ probability of being in $F$ at any given time-step (i.e., $p_n(F)\geq 1-\varepsilon$ for all $n\geq0$): further reinforcing the view that a chain is stable if and only if it is positive Tweedie recurrent.
%
%
%
%
%
%
\subsubsection*{Ergodicity and positive Harris recurrence}In the case of an aperiodic positive Harris recurrent  chain~(Definition~\ref{def:posTweedie}), Theorem~\ref{doeblind} and Corollary~\ref{cor:tweedierec} show that the chain has a unique stationary distribution $\pi$ and that, regardless of the initial distribution $\gamma$, both the time-varying law \eqref{eq:timevardt} and  the empirical distribution \eqref{eq:timeavedef} converge  to it:
$$\lim_{N\to\infty}\epsilon_N=\lim_{n\to\infty}p_n=\pi\quad\Pb_{\gamma}\text{-almost surely},$$
where the convergence is in total variation. That is, the \emph{time averages} $\epsilon_N$ converge to the \emph{space averages} $p_n$ and the chain is said to be \emph{ergodic}\index{ergodic}.
\subsubsection*{The coupling inequality}
To prove Theorems \ref{thrm:pointlims0}~and~\ref{thrm:pointlims}, we use a technique called \emph{coupling}.\index{coupling} It involves two sequences of random variables $X=(X_n)_{n\in\n}$ and $X'=(X'_n)_{n\in\n}$, defined on the same probability space $(\Omega,\cal{F},\Pb)$, for which there exists a random time $\sigma_c:\Omega\to\n_E$ such that $X$ and $X'$ are identical from $\sigma_c$ onwards:
$$X_n(\omega)=X'_n(\omega)\quad \text{if}\quad n\geq \sigma_c(\omega).$$
The time $\sigma_c$ is said to be a \emph{coupling time} of $X$ and $X'$. The key result here is the celebrated \emph{coupling inequality}:
\begin{theorem}[The coupling inequality]\label{thrm:coupling} If $\sigma_c$ denotes a coupling time of $X$ and $X'$, $p_n$ denotes the time-varying law of $X$, and $p_n'$ that of $X'$, then
$$\norm{p_n-p_n'}\leq \Pbb{\{\sigma_c>n\}}\quad\forall n\in\n.$$
\end{theorem}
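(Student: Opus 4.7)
The plan is to bound the total variation norm directly from its definition by exploiting that $X_n$ and $X_n'$ agree on the event $\{\sigma_c\leq n\}$. Fix any natural number $n$ and any subset $A$ of $\s$. I would first split
$$p_n(A)=\Pbb{\{X_n\in A,\sigma_c\leq n\}}+\Pbb{\{X_n\in A,\sigma_c> n\}},$$
and analogously for $p_n'(A)$ with $X_n'$ replacing $X_n$. On the event $\{\sigma_c\leq n\}$ the coupling hypothesis gives $X_n(\omega)=X_n'(\omega)$, so the first terms in each decomposition coincide and therefore cancel when forming the difference.

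Subtracting then yields
$$p_n(A)-p_n'(A)=\Pbb{\{X_n\in A,\sigma_c>n\}}-\Pbb{\{X_n'\in A,\sigma_c>n\}}.$$
Both terms on the right-hand side are non-negative and each is bounded above by $\Pbb{\{\sigma_c>n\}}$, so their difference sits in $[-\Pbb{\{\sigma_c>n\}},\Pbb{\{\sigma_c>n\}}]$. Consequently
$$\mmag{p_n(A)-p_n'(A)}\leq \Pbb{\{\sigma_c>n\}}.$$
Because this bound does not depend on $A$, taking the supremum over all subsets $A$ of $\s$ and recalling the definition~\eqref{eq:tvnorm} of $\norm{\cdot}$ gives the desired inequality.

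There is no real obstacle here: the only subtlety is the measurability of the event $\{X_n\in A,\sigma_c\leq n\}$, which is automatic since $\sigma_c$ is assumed to be an $\r_E$-valued random variable on $(\Omega,\cal{F},\Pb)$ and $\{X_n\in A\}\in\cal{F}$ by the assumed $\cal{F}/2^\s$-measurability of $X_n$. The argument uses nothing about $X,X'$ being Markov chains---only that they are $\s$-valued processes defined on a common probability space and that they agree pathwise from $\sigma_c$ onwards---so the same proof yields the coupling inequality for arbitrary stochastic processes on countable state spaces.
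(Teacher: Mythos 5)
Your proof is correct and rests on exactly the same observation as the paper's: the difference $p_n(A)-p_n'(A)$ is supported on $\{\sigma_c>n\}$ because $X_n=X_n'$ on its complement. The paper compresses this into a one-line chain of inequalities on indicator functions, $\mmag{\Pbb{\{X_n\in A\}}-\Pbb{\{X_n'\in A\}}}\leq\Ebb{1_{\{X_n\neq X_n'\}}}\leq\Pbb{\{\sigma_c>n\}}$, whereas you carry out the event decomposition explicitly and cancel the coinciding halves; these are two renderings of the same argument.
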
 
\begin{proof}For any $A\subseteq\s$ and $n\in\n$,
\begin{align*}\mmag{\Pbb{\{X_n\in A\}}-\Pbb{\left\{X_n'\in A\right\}}}&\leq\Ebb{\mmag{1_{\{X_n\in A\}}-1_{\left\{X_n'\in A\right\}}}}\leq\Ebb{1_{\left\{X_n\neq X_n'\right\}}}\leq\Ebb{1_{\{\sigma_c>n\}}}\\
&=\Pbb{\{\sigma_c>n\}}.\end{align*}
Taking the supremum over $A\subseteq\s$ in the above completes the proof.
\end{proof}


\subsubsection*{Coupling for the proofs of Theorems~\ref{thrm:pointlims0}~and~\ref{thrm:pointlims}}
In the proofs of Theorems~\ref{thrm:pointlims0}~and~\ref{thrm:pointlims} that follow, we use two independently generated copies of the chain , $X^1$ and $X^2$,  with initial distributions $\gamma_1$ and $\gamma_2$ (respectively). We   construct these copies by running Algorithm \ref{dtmcalg} with the same one-step matrix $P$ but using two independent sets of random variables $X_0^1,U_1^1,U_2^1,\dots$ and $X_0^2,U_1^2,U_2^2,\dots$ such that $X_0^1\sim \gamma_1$ and $X_0^2\sim \gamma_2$ and we denote the underlying probability space by $(\Omega,\cal{F},\Pb)$. Additionally, we set $\sigma_c$ be the first time these two chains coincide,
$$\sigma_c:=\inf\{n\geq0:X^1_n=X^2_n\},$$
and set $X$ to be $X^1$ and $X'$ to be
$$X'_n(\omega)=\left\{\begin{array}{ll} X^2_n(\omega)&\text{if }n<\sigma_c(\omega)\\X^1_n(\omega)&\text{if }n\geq \sigma_c(\omega)
\end{array}\right.\quad\forall\omega\in\Omega,\enskip n\in\n.$$
The independence of $X^1$ and $X^2$ imply that $X'$ is also Markovian:
\begin{exercise}Following steps analogous to those taken in Exercise~\ref{ex:xm}, show that $X'$  is a Markov chain with initial distribution $\gamma_2$ and one-step matrix $P$.
\end{exercise}
Because the definition of $X'$ implies that $\sigma_c$ is a coupling time for $X$ and $X'$, the coupling inequality \eqref{thrm:coupling}  shows that 
\begin{equation}\label{eq:coupling2}\norm{\gamma_1P_n-\gamma_2P_n}\leq \Pbb{\{\sigma_c>n\}}\quad\forall n\in\n.\end{equation}
To use the above to prove Theorems~\ref{thrm:pointlims0}--\ref{thrm:pointlims}, we need to argue that the coupling time is almost surely finite. To this end,   we use the \emph{product chain} $\tilde{X}:=(X^1,X^2)$:
\begin{exercise}Following steps analogous to those taken in Exercise~\ref{ex:xm}, show that $\tilde{X}:=(X^1,X^2)$  is a Markov chain taking values in $\s^2$ with initial distribution 
$$\tilde{\gamma}((x_1,x_2)):=\gamma_1(x_1)\gamma_2(x_2)\quad\forall x_1,x_2\in\s.$$
and one-step matrix 
\begin{equation}\label{eq:prodchain1step}\tilde{p}((x_1,x_2),(y_1,y_2)):= p(x_1,y_1)p(x_2,y_2)\quad\forall x_1,x_2,y_1,y_2\in\s.\end{equation}
\end{exercise}
The coupling time $\sigma_c$ of $X$ and $X'$ is the point in time that the product chain first enters the diagonal $\{(x,x):x\in\s\}$ of its state space $\s^2$. For this reason, $\sigma_c$ is bounded above by the product chain's entrance time $\phi_{(x,x)}$ to  any given state $(x,x)$ on the diagonal:
\begin{equation}\label{eq:coupltimebound}\sigma_c\leq\phi_{(x,x)}:=\inf\{n>0:\tilde{X}_n=(x,x)\}\quad\forall x \in\s.\end{equation}
In summary, our ability to prove that $\gamma_1 P_n$ and $\gamma_2 P_n$ converge to each other hinges on whether we are able to show that the time it takes the product chain to enter any given state $(x,x)$ on the diagonal is almost surely finite if its starting location is sampled from  $\gamma_1\gamma_2$. This is where aperiodicity comes into play:
%
\begin{lemma}\label{lem:prodchainclass}Let $\cal{C}$ be an aperiodic closed communicating class $\mathcal{C}$. Then, $\mathcal{C}^2$ is a closed communicating class of the product chain $\tilde{X}:=(X^1,X^2)$. If, additionally, $\mathcal{C}^2$ is recurrent (for the product chain) and the supports of $\gamma_1$ and $\gamma_2$ are contained in $\mathcal{C}$, then $\sigma_c$ is almost surely finite.\end{lemma}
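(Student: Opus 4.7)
The plan is to verify the three claims in sequence: $(a)$ $\mathcal{C}^2$ is closed for $\tilde{X}$; $(b)$ $\mathcal{C}^2$ is a single communicating class for $\tilde{X}$; $(c)$ under the stated recurrence and initial distribution assumptions, $\sigma_c<\infty$ almost surely.

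For $(a)$, take $(x_1,x_2)\in\mathcal{C}^2$ and suppose $(x_1,x_2)\to(y_1,y_2)$ in $\tilde{X}$. By Lemma~\ref{lem:accdt} applied to $\tilde{p}$, there exist $(z^1_1,z^1_2),\dots,(z^l_1,z^l_2)$ such that the product of the corresponding $\tilde{p}$-entries is positive; factoring via the definition~\eqref{eq:prodchain1step} of $\tilde{p}$, this implies $x_1\to y_1$ and $x_2\to y_2$ in $X$, so $y_1,y_2\in\mathcal{C}$ by closedness of $\mathcal{C}$, and hence $(y_1,y_2)\in\mathcal{C}^2$.

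For $(b)$, this is the main obstacle and the step where aperiodicity is essential. We will show that for any $(x_1,x_2),(y_1,y_2)\in\mathcal{C}^2$ there exists $n>0$ with $\tilde{p}_n((x_1,x_2),(y_1,y_2))=p_n(x_1,y_1)p_n(x_2,y_2)>0$. Since $\mathcal{C}$ is aperiodic, Definition~\ref{def:periodclass} and Theorem~\ref{thrm:sameperiod} imply every state in $\mathcal{C}$ has period one; Proposition~\ref{prop:aperiodic} then yields, for each $x\in\mathcal{C}$, an $n(x)$ with $p_n(x,x)>0$ for all $n>n(x)$. For any $y\in\mathcal{C}$ we may pick, by Lemma~\ref{lem:accdt} and the Chapman--Kolmogorov equation~\eqref{eq:chap-kol}, a $k(x,y)$ with $p_{k(x,y)}(x,y)>0$; then $p_{n+k(x,y)}(x,y)\geq p_n(x,x)p_{k(x,y)}(x,y)>0$ for every $n>n(x)$, so $p_m(x,y)>0$ for all $m>M(x,y):=n(x)+k(x,y)$. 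Taking $n$ strictly greater than $M(x_1,y_1)\vee M(x_2,y_2)$ gives the desired positivity. Combined with $(a)$, this shows $\mathcal{C}^2$ is a closed communicating class of $\tilde{X}$.

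For $(c)$, suppose $\mathcal{C}^2$ is recurrent for $\tilde{X}$. Pick any $x\in\mathcal{C}$, so $(x,x)\in\mathcal{C}^2$. Because $(b)$ tells us all states in $\mathcal{C}^2$ communicate, Theorem~\ref{thrm:recclasspro} yields $\Pb_{(x_1,x_2)}(\{\phi_{(x,x)}<\infty\})=1$ for every $(x_1,x_2)\in\mathcal{C}^2$. Since the support of $\tilde\gamma$ is contained in $\mathcal{C}^2$,
\[
\Pbb{\{\phi_{(x,x)}<\infty\}}
=\sum_{(x_1,x_2)\in\mathcal{C}^2}\gamma_1(x_1)\gamma_2(x_2)\,\Pb_{(x_1,x_2)}(\{\phi_{(x,x)}<\infty\})=1.
\]
The coupling-time bound~\eqref{eq:coupltimebound} gives $\sigma_c\leq\phi_{(x,x)}$, so $\sigma_c<\infty$ almost surely, completing the proof.
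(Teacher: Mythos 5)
Your proof is correct and follows essentially the same route as the paper's: closedness of $\mathcal{C}^2$ via the factorisation $\tilde{p}_n=p_n\otimes p_n$, communication via Proposition~\ref{prop:aperiodic} and Chapman--Kolmogorov, and finiteness of $\sigma_c$ via the bound $\sigma_c\leq\phi_{(x,x)}$ together with Theorem~\ref{thrm:recclasspro}. The only cosmetic difference is in step $(b)$: you establish $p_m(x,y)>0$ for all sufficiently large $m$ and directly find a single $n$ that works for both coordinates, whereas the paper reaches $(y_1,y_2)$ from $(x_1,x_2)$ in two hops through the intermediate state $(x_1,y_2)$ and invokes transitivity of $\to$; both rest on the same aperiodicity-based synchronisation idea.
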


\begin{proof} Iterating \eqref{eq:prodchain1step}, we find that
\begin{equation}\label{eq:nybyteabfeu8}\tilde{p}_n((x_1,x_2),(y_1,y_2)):= p_n(x_1,y_1)p_n(x_2,y_2)\quad\forall x_1,x_2,y_1,y_2\in\s,\end{equation}
where $(\tilde{p}_n(x,y))_{x,y\in\s^2}$ denotes the $n$-step matrix of the product chain (defined analogously to $(p_n(x,y))_{x,y\in\s}$ in \eqref{eq:nstepdef}). Because $p_n(x,y)=0$ for all $n\geq0$ if and only if $y$ is not accessible from $x$ (use \eqref{eq:nstepdef} and \eqref{lem:accdt} to argue this), the above implies that $\mathcal{C}^2$ is closed (for $\tilde{X}$).

Now, suppose that $x_1,x_2,y_1,y_2$ belong to $\mathcal{C}$. Because $\cal{C}$ is aperiodic, there exists an $n(x_1)$ such that $p_n(x_1,x_1)>0$ for all $n$ greater than $n(x_1)$ (Proposition~\ref{prop:aperiodic}). Because $y_2$ is accessible from $x_2$ (for $X$), we can find an $n$ larger than $n(x_1)$ such that $p_n(x_2,y_2)>0$ (once again, use \eqref{eq:nstepdef} and \eqref{lem:accdt}). Thus, \eqref{eq:nybyteabfeu8} shows that $(x_1,y_2)$ is accessible from $(x_1,x_2)$. Repeating the same argument shows $(y_1,y_2)$ is accessible from $(x_1,y_2)$ and it follows that $(y_1,y_2)$ is accessible from $(x_1,x_2)$. Because $x_1,x_2,y_1,y_2$ were arbitrary, we have that $\mathcal{C}^2$ is a communicating class as desired.

Finiteness of $\sigma_c$ for the recurrent case then follows by setting $x$ in \eqref{eq:coupltimebound} to be a state in $\cal{C}$, applying \eqref{eq:pld}, and recalling that $\Pb_{x}(\{\phi_{y}<\infty\})=1$ if $x$ and $y$ are recurrent states belonging to the same class of any chain (Theorem \ref{thrm:recclasspro}).
\end{proof}

\subsubsection*{A proof of Theorem~\ref{thrm:pointlims0}}

We do this proof in four steps, beginning by swiftly dispatching with the case of a transient state:
\begin{proof}[Step 1: $x$ is transient] In this case, Corollary~\ref{cor:recnstepchar} shows that
\begin{align*}\sum_{n=1}^\infty p_n(x)<\infty\end{align*}
and it follows that $p_n(x)$ tends to zero as $n$ approaches to infinity.
\end{proof}
The null recurrent states take a bit more work. For the time being we focus on the case that the chain starts at the state in question and that the state is aperiodic:
\begin{proof}[Step 2: $x$ is null recurrent and aperiodic and the chain starts at $x$] Let $\cal{C}$ be the aperiodic closed communicating class that $x$ belongs to (Theorems~\ref{thrm:recclasspro} and \ref{thrm:sameperiod}) and recall that $\mathcal{C}^2$ is a closed communicating class of the product chain $\tilde{X}$ (Lemma~\ref{lem:prodchainclass}). Setting $x_1=x_2=y_1=y_2=x$ in \eqref{eq:nybyteabfeu8} we have that 
$$p_n(x,x)^2=\tilde{p}_n((x,x),(x,x))\quad\forall n\in\n.$$
For this reason, applying Step 1 to $\tilde{X}$, we find that $p_n(x,x)\to0$ as $n\to \infty$ if $\cal{C}^2$ is transient.

If $\mathcal{C}^2$ instead is recurrent, then  the coupling inequality \eqref{eq:coupling2} and Lemma~\ref{lem:prodchainclass} show that
\begin{equation}
\label{eq:781h4nfesaf}\lim_{n\to\infty}\norm{\gamma_1P_n-\gamma_2P_n}=0,
\end{equation}
where $\gamma_1:=1_x$ and $\gamma_2:=\gamma_1P$ (to apply the lemma, note that $\gamma_2$ has support in $\mathcal{C}$ because $\mathcal{C}$ is closed for $X$). 

For $p_n(x,x)$ not to converge to zero, there must exist a strictly increasing sequence $(n_k)_{k\in\n}$ and a constant $c>0$ such that
$$\lim_{k\to\infty}p_{n_k}(x,x)=c.$$
Using a routine diagonal argument (see the end of the step), we find a subsequence $(n_l)_{l\in\n}$ of $(n_k)_{k\in\n}$ such that, for all $y$ in $\s$, $p_{n_l}(x,y)$ converges  as $l$ tends to infinity; we denote the limit $\rho(y)$. The non-negativeness of $p_{n_l}(x,y)$ ensures that $\rho(y)$ is non-negative for each $y$ in $\s$, while Fatou's lemma shows that 
\begin{equation}
\label{eq:nmifnuwenfa}\sum_{y\in\s}\rho(y)\leq \lim_{l\to\infty}\sum_{y\in\s}p_{n_l}(x,y)=1.
\end{equation} 

However, \eqref{eq:781h4nfesaf} shows that $p_{n_l+1}(x,y)$ also converges  to $\rho(y)$ as $n$ tends to infinity, for all $y$ in $\s$. Calling on Fatou's lemma once again, we have that
$$\rho(y)=\lim_{l\to\infty}p_{n_l+1}(y)=\lim_{l\to\infty}\left(\sum_{x'\in\s}p_{n_l}(x,x')p(x',y)\right)\geq \sum_{x'\in\s}\rho(x')p(x',y)\quad\forall y\in\s.$$
Because $\rho(x)=c$, iteratively applying the above inequality and Tonelli's theorem, we find that
\begin{align*}\sum_{y\in\s}\frac{\rho(y)}{c}&=1+\sum_{z_1\neq x}\frac{\rho({z_1})}{c}\geq 1+\sum_{z_1\neq x}\sum_{z_2\in\s}\frac{\rho({z_2})}{c}p(z_2,z_1)\\
&=1+\sum_{z_1\neq x}p(x,z_1)+\sum_{z_1\neq x}\sum_{z_2\neq x}\frac{\rho({z_2})}{c}p(z_2,z_1)\\
&\geq\dots\geq 1+\sum_{n=2}^m \bigg(\sum_{z_{n-1}\neq x}\sum_{z_{n-2}\neq x}\dots\sum_{z_1\neq x}p(x,z_{n-1})p(z_{n-1},z_{n-2})\dots p(z_{2},z_1)\\
&+\sum_{z_n\neq x}\dots\sum_{z_1\neq x}\frac{\rho(z_n)}{c}p(z_n,z_{n-1})\dots p(z_2,z_1)\bigg)\\
&\geq 1+\sum_{n=2}^m \sum_{z_{n-1}\neq x}\dots\sum_{z_1\neq x}p(x,z_{n-1})p(z_{n-1},z_{n-2})\dots p(z_{2},z_1)\\
&=\Pbx{\{\phi_x>0\}} +\sum_{n=1}^{m-1}\Pbx{\{\phi_x>n\}}\quad\forall m>0.\end{align*}
Taking the limit $m\to\infty$, we find that
$$\sum_{y\in\s}\rho(y)\geq c\sum_{n=0}^\infty\Pbx{\{\phi_x>n\}}=c\sum_{n=0}^\infty n\Pbx{\{\phi_x=n\}}=c\Ebx{\phi_x}=\infty.$$
The above contradicts \eqref{eq:nmifnuwenfa} and so it must be the case that   $p_n(x,x)$ converges to zero as $n$ tends to infinity.

\noindent\emph{The diagonal argument:} Label the elements of $\s$ as $x,x_0,x_1,\dots$, keeping $x$ as before. Because the sequence $(p_{n_k}(x_0))_{k\in\n}$ is contained in the interval $[0,1]$, the Bolzano-Weierstrass Theorem tells us that $(p_{n_k}(x_0))_{k\in\n}$ has a converging subsequence $(p_{n_{j_0}}(x_0))_{j_0\in\n}$. Repeating the same argument for $x_1$ and $(p_{n_{j_0}})_{j_0\in\n}$, we find a convergent subsequence $(p_{n_{j_1}}(x_1))_{j_1\in\n}$ of $(p_{n_{j_0}}(x_1))_{j_0\in\n}$, and so on. Letting
$$p_{n_{l}}:=p_{n_{j_j}}\quad\forall j\in\n,$$
we obtain our pointwise convergent sequence.

\end{proof}
The remainder of the proof is downhill. Next, we get rid of the aperiodicity assumption:
\begin{proof}[Step 3: $x$ is null recurrent and the chain starts at $x$]Let $d$ be $x$'s period and recall $x$ is aperiodic and null recurrent for the chain $X^{m,d}$~\eqref{eq:xmdef} obtained by sampling $X$ every $d$ steps starting from step $m\in\{0,1,\dots,d-1\}$ (Exercises~\ref{ex:xm}--\ref{ex:xm2}). For this reason, Step 2 shows that 
$$\lim_{n\to\infty}p_{m+nd}(x,x)=\Pbx{\left\{X^{m,d}_n=x\right\}}=0\quad\forall m\in\{0,1,\dots,d-1\}.$$
In other words, for each $\varepsilon>0$ and $m$ in $\{0,1,\dots,d-1\}$, there exists an $N_m$ such that $p_{m+nd}(x)\leq\varepsilon$ for all $n\geq N_m$. For this reason, 
$$p_{n}(x)\leq\varepsilon\quad\forall n\geq\max\{N_0d,1+N_1d,\dots, d-1+N_{d-1}\}.$$
As $\varepsilon$ was arbitrary, the result follows.
\end{proof}
We are nearly done: we just need to apply the strong Markov property and port Step 3 from the start-at-$x$ case to the arbitrary-initial-distribution-$\gamma$ case:
\begin{proof}[Step 4: $x$ is null recurrent]For any positive $n$, $X_n$ equals $x$ only if $\phi_{x}$ is no greater than $n$. For this reason, applying Lemma~\ref{lem:2stop}, Exercise~\ref{ex:entstop}, and the strong Markov property (Theorem~\ref{thrm:strmkvpath} with $\varsigma:=k$, $Z:=1_{\{\phi_x=k\}}$, and $F((z_m)_{m\in\n}):=1_x(z_{n-k})$), yields
\begin{align}p_n(x)&=\Pbl{\{\phi_{x}\leq n,X_n=x\}}=\sum_{k=1}^\infty1_{\{k\leq n\}}\Pbx{\{\phi_{x}=k,X_{\phi_{x}}=x,X_n=x\}}\nonumber\\
&=\sum_{k=1}^\infty1_{\{k\leq n\}}\Pbl{\{\phi_{x}=k,X_{\phi_{x}}=x\}}\Pb_{x}(\{X_{n-k}=x\})\nonumber\\
&=\sum_{k=1}^\infty1_{\{k\leq n\}}\Pbl{\{\phi_{x}=k\}}p_{n-k}(x,x)\quad\forall n>0.\label{eq:bfgyuwabf67aw}
\end{align}
Given that $p_n(x,x)\to0$ as $n\to\infty$ (Step 3) and that
\begin{align*}&1_{\{k\leq n\}}\Pbl{\{\phi_{x}=k\}}p_{n-k}(x,x)\leq \Pbl{\{\phi_{x}=k\}}\enskip\forall k>0,\\
&\sum_{k=1}^\infty\Pbl{\{\phi_{x}=k\}}=\Pbl{\{\phi_{x}<\infty\}}<\infty,\end{align*}
taking the limit $n\to\infty$ in \eqref{eq:bfgyuwabf67aw} and applying dominated convergence, we obtain the desired $p_n(x)\to0$ as $n\to\infty$.
\end{proof}

\subsubsection*{A proof of Theorem~\ref{thrm:pointlims}}

We do this proof one part at a time:

$(i)$ Given Theorem~\ref{thrm:pointlims0}, we only need to argue here that the limit~\eqref{eq:reclims2} limit holds for states $x$ belonging to a positive recurrent class $\cal{C}_i$. Suppose that we are able to show that 
\begin{equation}\label{eq:fnmeu8aw09fn7m8ahfeqwujfaw}\lim_{n\to\infty}p_n(x,x)=\pi_i(x).\end{equation}
Because the event $\{\phi_x<\infty\}$ coincides with $\{\phi_{\cal{C}_i}<\infty\}$ with $\Pb_\gamma$-probability one (Corollary~\ref{cor:phixphic}), taking the limit $n\to\infty$ in \eqref{eq:bfgyuwabf67aw} and applying dominated convergence yields
$$\lim_{n\to\infty}p_n(x)=\left(\sum_{k=1}^\infty\Pbl{\{\phi_x=k\}}\right)\pi_i(x)=\Pbl{\{\phi_x<\infty\}}\pi_i(x)=\Pbl{\{\phi_{\cal{C}_i}<\infty\}}\pi_i(x).$$
Given that no other ergodic distribution has support on $x$~(Theorem~\ref{doeblind}$(i)$), we may rewrite the above as \eqref{eq:reclims2}.

To argue~\eqref{eq:fnmeu8aw09fn7m8ahfeqwujfaw}, set  $\gamma_1:=1_x$ and $\gamma_2:=\pi_i$ and recall that $\cal{C}^2_i$ is a closed communicating class of the product chain $\tilde{X}$ (Lemma~\ref{lem:prodchainclass}). Because $\pi_i$ is a stationary distribution, $\gamma_2P_n=\pi_i$ for all natural numbers $n$ (Theorem \ref{pstateqs}). Given that $\pi_i$ has support contained in $\mathcal{C}_i$ (Theorem \ref{doeblind}$(i)$), the coupling inequality \eqref{eq:coupling2} and Lemma~\ref{lem:prodchainclass} imply that \eqref{eq:fnmeu8aw09fn7m8ahfeqwujfaw} holds if  $\mathcal{C}^2$ is recurrent. This follows easily as $(x_1,x_2)\mapsto\pi(x_1)\pi(x_2)$ is a stationary distribution of the product chain (use \eqref{pstateqs} and \eqref{eq:prodchain1step} to argue this) and Theorem \ref{thrm:posrecchar} shows that a state is positive recurrent if and only if there exists a stationary distribution with support on the state.

$(ii)$ Let $x$ be any state in a periodic positive recurrent class $\cal{C}_i$ with period $d$ so that
\begin{equation}\label{eq:fne8af8eafeafaef}p_n(x,x)=0\quad\forall n\neq 0,d,2d,\dots,\end{equation}
and initialise the chain at $x$ (i.e., $\gamma:=1_x$). The state $x$ is aperiodic and positive recurrent for the  chain $X^{0,d}$~\eqref{eq:xmdef} obtained by sampling $X$ every $d$ steps (Exercises~\ref{ex:xm}--\ref{ex:xm2}). Thus,   $(i)$ and Theorem~\ref{doeblind}$(i)$ imply that $\Pbx{\left\{X^{0,d}_n=x\right\}}$ converges to a positive constant as $n$ grows unbounded. Given that $X^{0,d}_n=X_{nd}$, it follows from~\eqref{eq:fne8af8eafeafaef} that $p_n(x,x)$ does not converge as $n$ tends to infinity.

\subsection[Kendall's theorem; Geometric recurrence and convergence*]{Kendall's theorem on geometric recurrence and convergence*}\label{sec:kendall}

A matter that has been extensively studied is under what conditions the time-varying law converges geometrically fast with the numbers of steps. Key in answering this question are the \emph{geometrically recurrent}\index{geometric convergence and recurrence} states, that is, the states  $x$ whose return time distribution has light tails: $\Ebx{\theta^{\phi_x}}<\infty$  for some real number $\theta>1$. These states can alternatively be characterised in terms of the convergence of the $n$-step matrix:\index{Kendall's theorem}
\begin{theorem}[Kendall's Theorem, {\citealp{Kendall1959}}]\label{thrm:kendall} Suppose that the chain is aperiodic.
Let $x$ denote any state belonging to a  positive recurrent class $\cal{C}$ and let $\pi$ be the ergodic distribution associated with $\cal{C}$.
\begin{enumerate}[label=(\roman*),noitemsep] 
\item $p_n(x,x)$ converges geometrically fast to $\pi(x)$: $\mmag{p_n(x,x)-\pi(x)}=\mathcal{O}(\kappa^{-n})$ for some $\kappa>1$.
\item $x$ is geometrically recurrent.
\end{enumerate}
\end{theorem}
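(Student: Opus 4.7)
The plan is to translate this question from probability into complex analysis via probability generating functions. Define $f_n := \Pbx{\{\phi_x = n\}}$ and $u_n := p_n(x,x)$, and form the generating functions $F(z) := \sum_{n=1}^\infty f_n z^n$ and $U(z) := \sum_{n=0}^\infty u_n z^n$. A first-entrance decomposition, splitting any return to $x$ by time $n$ according to the first return time, yields the renewal equation $u_n = \sum_{k=1}^n f_k u_{n-k}$ for $n \geq 1$ (with $u_0 = 1$), and hence the fundamental identity $U(z) = 1/(1-F(z))$ on any disk in which both series converge. Positive recurrence together with the mean-return-time formula from Theorem~\ref{doeblind}$(i)$ give $F(1) = 1$ and $F'(1) = \Ebx{\phi_x} = 1/\pi(x) < \infty$, so $z=1$ is a simple zero of $1-F$ and the residue of $U$ at $1$ equals $-\pi(x)$.

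The key structural fact, underpinning both implications, is that aperiodicity of $x$ forces $|F(z)| < 1$ on the punctured unit circle. Indeed, for $|z|=1$ one has $|F(z)| \leq \sum_n f_n |z|^n = 1$ with equality only if all $z^n$ for which $f_n > 0$ are aligned, which requires $z^n = 1$ whenever $f_n > 0$; aperiodicity (the gcd of such $n$ equals one) then forces $z = 1$. Combined with the obvious $|F(z)| \leq F(|z|) < 1$ on $\{|z| < 1\}$, this shows that $1-F$ has its only zero in the closed unit disk at $z=1$.

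For (ii)$\Rightarrow$(i): geometric recurrence means $F$ extends analytically to some disk $|z| < \theta$ with $\theta > 1$. Set $V(z) := U(z) - \pi(x)/(1-z)$; the simple pole at $z=1$ cancels, so $V$ has a removable singularity there. Using continuity of $F$, compactness of the unit circle, and the quantitative estimate $|1 - F(z)| \geq \tfrac12 F'(1) |1-z|$ near $z=1$, I would deduce that $1-F$ has no zeros in a slightly larger disk $|z| < \rho$ with $1 < \rho < \theta$. Therefore $V$ is analytic on $|z| < \rho$, and its Taylor coefficients $u_n - \pi(x)$ satisfy $|u_n - \pi(x)| = \mathcal{O}(\kappa^{-n})$ for every $1 < \kappa < \rho$ by Cauchy's estimates. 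For (i)$\Rightarrow$(ii): assume $|u_n - \pi(x)| = \mathcal{O}(\kappa^{-n})$ for some $\kappa > 1$, so that $V$ is analytic on $|z| < \kappa$. Algebraic rearrangement of $U = 1/(1-F)$ gives
\begin{equation*}
F(z) = 1 - \frac{1-z}{(1-z)V(z) + \pi(x)}.
\end{equation*}
At $z=1$ the denominator equals $\pi(x) > 0$; for $|z|=1$ with $z \neq 1$, analytic continuation from $|z|<1$ identifies $(1-z)V(z) + \pi(x)$ with $(1-z)/(1-F(z))$, which is nonzero by aperiodicity. A compactness argument on the unit circle then provides $\delta > 0$ with $(1-z)V(z) + \pi(x) \neq 0$ throughout $|z| < 1 + \delta$, and hence $F$ extends analytically to that disk. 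This forces $f_n = \mathcal{O}((1+\delta')^{-n})$ for any $0 < \delta' < \delta$, and in particular $\Ebx{\theta^{\phi_x}} = F(\theta) < \infty$ for $1 < \theta < 1+\delta$, i.e.\ geometric recurrence.

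The main obstacle will be the two compactness-and-continuity arguments that upgrade the pointwise statement ``$1-F$ vanishes in $|z|\leq 1$ only at $z=1$'' into a uniform statement on an annular neighborhood of the unit circle; the near-$z=1$ half requires the simple-pole expansion coming from $F'(1) < \infty$, while the bounded-away-from-$1$ half uses only continuity of $F$ and compactness of $\{|z| = 1,\, |z-1| \geq \varepsilon\}$. A secondary delicacy is the correct identification of the residue at $z=1$ with $\pi(x)$, which hinges on the mean-return-time formula for the ergodic distribution from Theorem~\ref{doeblind}$(i)$; without this one would only obtain geometric convergence of $p_n(x,x)$ to \emph{some} limit rather than to $\pi(x)$ specifically.
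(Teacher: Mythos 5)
Your proposal is correct and follows essentially the same route as the paper's proof: a generating-function renewal argument, with aperiodicity used to isolate $z=1$ as the only zero of $1-F$ on the closed unit disk, and analytic continuation to a slightly larger disk giving the geometric decay via Cauchy estimates. The one point where you diverge in detail is the pole-removal step: you subtract $\pi(x)/(1-z)$ to form $V(z)$, whose Taylor coefficients are directly $u_n - \pi(x)$, whereas the paper multiplies by $(1-z)$ so its auxiliary function $F(z)=(1-z)G(z)$ has coefficients $p_n(x)-p_{n-1}(x)$ and therefore needs an extra telescoping estimate (the final display of the paper's Lemma~\ref{lem:kendall1}) to convert decay of consecutive differences into decay of $p_n(x)-\pi(x)$. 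Your subtraction is marginally cleaner for this statement (where $\gamma=1_x$ and $U=1/(1-F)$ is explicit); the paper's multiplication has the advantage of working with a general initial distribution $\gamma$ and renewal identity $G=H_\gamma+H_xG$, which the book exploits later in Theorem~\ref{thrm:kendallopen}. The residue identification via $F'(1)=\Ebx{\phi_x}=1/\pi(x)$ from Theorem~\ref{doeblind}$(i)$, and the compactness arguments you flag as the main work, both match the paper.
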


The remainder of this section is dedicated to the proof of Kendall's theorem and to the open problem of its generalisation. We begin with the former. 
\subsubsection*{The renewal equation} The proof of Theorem~\ref{thrm:kendall} builds on the celebrated \emph{renewal equation}\index{renewal equation} linking the time-varying distribution with the return and entrance times:
\begin{align}p_n(x)&=\sum_{k=1}^\infty\Pb_\gamma(\{\phi^k_x=n\})=\sum_{k=1}^\infty\sum_{m=0}^{n-1}\Pb_\gamma(\{\phi^{k-1}_x=m,\phi^k_x=n\})\nonumber\\
&=\Pbl{\{\phi_x=n\}}+(1-1_{1}(n))\sum_{k=2}^\infty\sum_{m=1}^{n-1}\Pb_\gamma(\{\phi^{k-1}_x=m,\phi^k_x-\phi^{k-1}_x=n-m\})\nonumber\\
&=\Pbl{\{\phi_x=n\}}+(1-1_{1}(n))\sum_{k=2}^\infty\sum_{m=1}^{n-1}\Pb_\gamma(\{\phi^{k-1}_x=m\})\Pb_x(\{\phi_x=n-m\})\nonumber\\
&=\Pbl{\{\phi_x=n\}}+(1-1_{1}(n))\sum_{m=1}^{n-1}p_m(x)\Pb_x(\{\phi_x=n-m\})\quad\forall n>0,\enskip x\in\s,\label{eq:renewaleqnz}
\end{align}
where the penultimate equality follows from the strong Markov property:
\begin{exercise}Setting $\varsigma:=\phi^{k-1}$, $Z:=1_{\{\phi^{k-1}=n\}}$, and $F((z_l)_{l\in\n}):=1_m(\inf\{l>0:\sum_{l'=1}^{l}1_x(z_{l'})=1\})$ and applying Lemma~\ref{lem:2stop},  Theorem~\ref{thrm:strmkvpath}, Lemma~\ref{lem:pathspmeas}$(ii)$, and Exercise~\ref{ex:entstop}, show that
$$\Pb_\gamma(\{\phi^{k}_x=n,\phi^{k+1}_x-\phi^{k}_x=m\})=\Pb_\gamma(\{\phi^{k}_x=n\})\Pb_x(\{\phi_x=m\})\quad\forall n,m,k\geq0.$$
\end{exercise}
Multiplying \eqref{eq:renewaleqnz} by $z^n$ and summing over all natural numbers $n$ yields the $z$-transform version of the renewal equation:
\begin{equation}\label{eq:renewaleq}G(z)=H_\gamma(z)+H_x(z)G(z)\quad\forall z\in D_1,\end{equation}
where $D_1:=\{z\in\mathbb{C}:\mmag{z}<1\}$ denotes the open unit disk in the complex plane $\mathbb{C}$,
$$G(z):=\sum_{n=1}^\infty p_n(x)z^n,\quad H_\gamma(z):=\sum_{n=1}^\infty \Pb_\gamma(\{\phi_x=n\})z^n,\quad H_x(z)=\sum_{n=1}^\infty \Pb_x(\{\phi_x=n\})z^n,\enskip\forall z\in D_1,$$
and we have used the equations
\begin{align*}\sum_{n=2}^\infty\sum_{m=1}^{n-1}p_m(x)\Pb_x(\{\phi_x=n-m\})z^n&=\sum_{m=1}^\infty p_m(x)z^m\sum_{n=m+1}^\infty\Pb_x(\{\phi_x=n-m\})z^{n-m}\\
&=\sum_{m=1}^\infty p_m(x)z^m\sum_{n=1}^\infty\Pb_x(\{\phi_x=n\})z^{n}\quad\forall z\in D_1,\end{align*}

\subsubsection*{A proof of Kendall's theorem}

We do this proof in three steps:
\begin{lemma}[Step 1]\label{lem:kendall1}Suppose that the chain is aperiodic and let $x$ be any positive recurrent state and $\pi_\gamma$ be the limit of the time-varying law in \eqref{eq:reclims2}. For any $\kappa>1$, we have that $\mmag{p_n(x)-\pi_\gamma(x)}=\cal{O}(\kappa^{-n})$ if and only if $G$ in \eqref{eq:renewaleq} has an analytic extension on the disc $D_{\kappa}:=\{z\in\mathbb{C}:\mmag{z}<\kappa\}$
except for a simple pole at $z=1$.
\end{lemma}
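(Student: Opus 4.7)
The plan is to exploit the generating-function dictionary between the exponential decay rate of a sequence and the location and nature of the singularities of its generating function. The pivotal object is the auxiliary function
\[
\tilde{G}(z):=G(z)-\pi_\gamma(x)\frac{z}{1-z}.
\]
On the open unit disc $D_1$ (where the series defining $G$ converges absolutely since $0\le p_n(x)\le 1$) this admits the Taylor expansion $\tilde{G}(z)=\sum_{n=1}^\infty (p_n(x)-\pi_\gamma(x))z^n$. The point is that $\tilde{G}$ is the ``regular remainder'' once the pole at $z=1$ forced by the non-zero limit $\pi_\gamma(x)>0$ (which is positive by Theorem~\ref{doeblind}$(i)$ together with Theorem~\ref{thrm:pointlims}$(i)$, since $x$ is positive recurrent) is stripped off.

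For the forward direction ($\Rightarrow$), assume $|p_n(x)-\pi_\gamma(x)|=\mathcal{O}(\kappa^{-n})$. Then the series $\sum_{n=1}^\infty (p_n(x)-\pi_\gamma(x))z^n$ is dominated term-by-term by a convergent geometric series on every compact subset of $D_\kappa$, so it defines an analytic function on $D_\kappa$. Adding back the meromorphic function $\pi_\gamma(x)\,z/(1-z)$, which is analytic on $\mathbb{C}\setminus\{1\}$ with a simple pole at $z=1$, yields the claimed analytic extension of $G$ to $D_\kappa\setminus\{1\}$ with only a simple pole at $z=1$.

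For the converse ($\Leftarrow$), assume $G$ admits such an extension. The crux is to pin down the residue at $z=1$. Here I would invoke Abel's theorem on power series: because $p_n(x)\to\pi_\gamma(x)$ as $n\to\infty$ (Theorem~\ref{thrm:pointlims}$(i)$), one has
\[
\lim_{z\to 1^-}(1-z)G(z)=\pi_\gamma(x),
\]
the limit being taken along the real axis from below. On the other hand, a simple pole of $G$ at $z=1$ means $(z-1)G(z)$ has a removable singularity there with $\lim_{z\to 1}(z-1)G(z)=\operatorname{Res}_{z=1}G$. Combining, $\operatorname{Res}_{z=1}G=-\pi_\gamma(x)$, so $\tilde{G}$ has a removable singularity at $z=1$ and extends analytically to all of $D_\kappa$. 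Its Taylor coefficients at the origin are $p_n(x)-\pi_\gamma(x)$ (analytic continuation preserves the Taylor coefficients centred at $0$), and Cauchy's estimates applied on circles of radius $\rho<\kappa$ yield $|p_n(x)-\pi_\gamma(x)|\le M_\rho\rho^{-n}$, hence $\limsup_{n\to\infty}|p_n(x)-\pi_\gamma(x)|^{1/n}\le 1/\kappa$, the appropriate interpretation of the $\mathcal{O}(\kappa^{-n})$ bound in this context (the decay rate being exactly captured by the radius of analyticity).

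The main obstacle will be the identification of the residue in the converse direction: this is where the probabilistic input ($p_n(x)\to\pi_\gamma(x)$) must be grafted onto the complex-analytic structure of $G$, and the cleanest way I see to do so is Abel's theorem along the positive real axis. The other minor technicality is the distinction between $\mathcal{O}(\kappa^{-n})$ and $\mathcal{O}(\rho^{-n})$ for every $\rho<\kappa$; this I would finesse by reading the $\mathcal{O}$-assertion as the statement about the exponential rate, which is the unambiguous invariant on both sides of the equivalence.
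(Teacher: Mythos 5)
Your proposal is correct in outline and takes a genuinely different, though closely allied, route. The paper's proof removes the singularity at $z=1$ by multiplication, setting $F(z):=(1-z)G(z)=\sum_{n\geq2}(p_n(x)-p_{n-1}(x))z^n+p_1(x)z$, so that the relevant coefficients are the \emph{first differences} of $(p_n(x))_n$; the forward direction then dominates $\sum|p_n(x)-p_{n-1}(x)||z|^n$ by the triangle inequality, and the converse recovers decay of $p_n(x)-\pi_\gamma(x)$ by a telescoping/summation-by-parts estimate. You instead remove the pole by \emph{subtraction}, working with $\tilde{G}(z)=G(z)-\pi_\gamma(x)z/(1-z)=\sum_{n\geq1}(p_n(x)-\pi_\gamma(x))z^n$, whose coefficients encode the deviations from the limit directly. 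The price you pay is that you must identify the residue of $G$ at $z=1$, which you do via the Abelian limit $\lim_{z\uparrow1}(1-z)G(z)=\pi_\gamma(x)$; the paper avoids any residue computation by the telescoping trick. Both proofs invoke the same probabilistic input, namely Theorem~\ref{thrm:pointlims}$(i)$ (the convergence $p_n(x)\to\pi_\gamma(x)$), just at different points. Two small caveats, which you partly anticipate. First, analyticity of $\tilde{G}$ on $D_\kappa$ gives via Cauchy only $|p_n(x)-\pi_\gamma(x)|=\mathcal{O}(\rho^{-n})$ for every $\rho<\kappa$, not literally $\mathcal{O}(\kappa^{-n})$; the paper's converse has the same wrinkle (it asserts ``$F(\kappa)$ is absolutely convergent'', which is not automatic from analyticity on the \emph{open} disc $D_\kappa$). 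You flag this explicitly, and since in the statement and uses of Kendall's theorem $\kappa$ is existentially quantified, the distinction is indeed harmless. Second, your parenthetical that $\pi_\gamma(x)>0$ ``since $x$ is positive recurrent'' is not quite right: $\pi_\gamma(x)=\Pb_\gamma(\{\phi_{\mathcal{C}_i}<\infty\})\pi_i(x)$, and the hitting probability may be zero for some $\gamma$. When $\pi_\gamma(x)=0$ the singularity at $z=1$ is removable rather than a simple pole, a degenerate case the paper's statement and proof also gloss over; the equivalence should really be read with ``at most a simple pole at $z=1$''.
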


\begin{proof}It is not difficult to check that $\mmag{p_n(x)-\pi_\gamma(x)}=\cal{O}(\kappa^{-n})$ if and only if
$$\sum_{n=0}^\infty\mmag{p_n(x)-\pi_\gamma(x)}r^{n}<\infty\quad\forall 1<r<\kappa.$$
In this case,  the triangle inequality yields
\begin{align*}&\sum_{n=2}^\infty\mmag{p_n(x)-p_{n-1}(x)}\mmag{z}^n\leq\sum_{n=0}^\infty \mmag{p_n(x)-\pi(x)}\mmag{z}^n+\mmag{z}\sum_{n=0}^\infty\mmag{\pi(x)-p_{n}(x)}\mmag{z}^{n}< \infty\quad\forall z\in D_{\kappa},\end{align*}
and it follows that the function 
\begin{equation}\label{eq:kendallF}F(z):=\sum_{n=2}^\infty(p_n(x)-p_{n-1}(x))z^n+p_1(x)z\quad\forall z\in D_{\kappa}\end{equation}
is analytic. Because, by definition, $G$ is analytic on $D_1$ and 
\begin{equation}\label{eq:dn782gtby2}F(z)=(1-z)G(z),\end{equation} 
for all $z$ in $D_1$, we can extend  $z\mapsto(1-z)G(z)$ analytically in $D_{\kappa}$. Because the above equation then holds for all $z$ in $D_\kappa$, it follows that $G$ has no singularities in $D_{\kappa}$ except for a simple pole at $z=1$.

Conversely, if,  for some $\kappa>1$, $G$ is analytic on $D_{\kappa}$ except for a simple pole at $1$, then \eqref{eq:dn782gtby2} allows us to extend $F$ (defined by replacing $D_{\kappa}$ in \eqref{eq:kendallF} with $D_1$) analytically in $D_{\kappa}$. It follows that the radius of convergence of the series in \eqref{eq:kendallF} is at least $\kappa$ and, by the  the equation holds if we replace ``$:=$'' with ``$=$''. Because the chain is aperiodic, $p_n(x)$ tends to $\pi_\gamma(x)$ as $n$ approaches infinity (Theorem \ref{thrm:pointlims}$(i)$) which implies that 
\begin{align*}\sum_{n=0}^\infty\mmag{p_n(x)-\pi_\gamma(x)}r^{n}&=\sum_{n=0}^\infty\mmag{\sum_{m=n+1}^\infty(p_m(x)-p_{m-1}(x))}r^{n}\leq\sum_{m=1}^\infty\mmag{p_m(x)-p_{m-1}(x)}\left(\sum_{n=0}^{m-1}r^{n}\right)\\
&=\sum_{m=1}^\infty\mmag{p_m(x)-p_{m-1}(x)}\frac{r^{m}-1}{r-1}\leq \frac{1}{r-1}\sum_{m=1}^\infty\mmag{p_m(x)-p_{m-1}(x)}r^m\end{align*}
for all $1<r<\kappa$. Cauchy's inequality implies that the right-hand side is finite for all $1<r<\kappa$, and it follows that $\mmag{p_n(x)-\pi_\gamma(x)}$ is $\cal{O}(\kappa^{-n})$.
\end{proof}

Setting $\gamma:=1_x$ in Lemma~\ref{lem:kendall1} shows that $\mmag{p_n(x,x)-\pi(x)}$ is $\cal{O}(\kappa^{-n})$ if and only if 
\begin{equation}\label{eq:g2}G(z)=\sum_{n=1}^\infty p_n(x,x)z^n\end{equation}
has an analytic extension on $D_\kappa$ except for a simple pole at $z=1$. To relate the existence of this extension to the tails of the return time distribution we proceed as follows:
\begin{lemma}[Step 2]\label{lem:kendall2}Suppose that the chain is aperiodic and that $x$ is a geometrically recurrent state whose entrance time distribution,  restricted to the event $\{\phi_x<\infty\}$, has light tails: 
$$\Ebx{\theta^{\phi_x}}<\infty,\quad\Ebl{1_{\{\phi_x<\infty\}}\theta^{\phi_x}}<\infty,$$
for some $\theta>1$. In this case, the function $G$ in \eqref{eq:renewaleq} is analytic on $D_\kappa$ for some $\kappa>1$ except for a simple pole at $z=1$.

\end{lemma}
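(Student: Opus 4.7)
The plan is to solve the renewal equation \eqref{eq:renewaleq} for $G$ as the quotient
$$G(z) = \frac{H_\gamma(z)}{1-H_x(z)},$$
valid on $D_1$ wherever $H_x(z)\neq 1$, and then to extend this formula analytically past $D_1$ by analysing the zeros of the denominator.

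First I would observe that the light-tail hypotheses immediately give analytic extensions of both $H_x$ and $H_\gamma$ to $D_\theta$: the defining series for $H_x$ converges absolutely on $\{|z|\leq\theta\}$ since
$$\sum_{n=1}^\infty \Pbx{\{\phi_x=n\}}\,|z|^n \;\leq\; \Ebx{\theta^{\phi_x}} < \infty,$$
and the same bound applies to $H_\gamma$. Note in passing that $x$ must then be recurrent (otherwise $\theta^{\phi_x}=\infty$ with positive probability), so $H_x(1)=\Pbx{\{\phi_x<\infty\}}=1$.

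Next I would locate the zeros of $1-H_x$. For $|z|<1$ the strict triangle inequality gives $|H_x(z)|<1$, so there are no zeros inside $D_1$. On the circle $|z|=1$, the equality case of the triangle inequality in $|H_x(z)|=1=H_x(1)$ forces $z^n=1$ for every $n$ in the support $S:=\{n\geq 1:\Pbx{\{\phi_x=n\}}>0\}$ of the return-time distribution; a cycle-decomposition argument identifies $\gcd S$ with the period $d(x)=1$ (aperiodicity), so the only such $z$ is $z=1$. At $z=1$ the derivative
$$H_x'(1) \;=\; \sum_{n=1}^\infty n\,\Pbx{\{\phi_x=n\}} \;=\; \Ebx{\phi_x}$$
is finite (since $1$ lies strictly inside the disc of analyticity $D_\theta$) and strictly positive, so $z=1$ is a \emph{simple} zero of $1-H_x$. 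Compactness of the closed unit disc, together with analyticity of $1-H_x$ on $D_\theta$, then yields some $\kappa\in(1,\theta)$ such that $z=1$ remains the unique zero of $1-H_x$ in $D_\kappa$.

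Consequently, $G(z)=H_\gamma(z)/(1-H_x(z))$ defines an analytic extension of $G$ to $D_\kappa\setminus\{1\}$, and the simple zero of the denominator at $z=1$ contributes at worst a simple pole there, as required. The main obstacle, in my view, is the boundary step: cleanly ruling out extraneous zeros of $1-H_x$ on $|z|=1$. The crucial input is the identification $\gcd S=d(x)$, which is standard but deserves an explicit argument -- every path with $p_n(x,x)>0$ can be cut at its successive returns to $x$, writing $n$ as a sum of elements of $S$ and hence giving $\gcd S\mid d(x)$; the reverse divisibility is immediate from $S\subseteq\{n:p_n(x,x)>0\}$.
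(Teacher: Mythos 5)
Your proof takes the same overall route as the paper's: establish analyticity of $H_\gamma$ and $H_x$ on $D_\theta$ from the light-tail hypotheses, show that $z=1$ is the only zero of $1-H_x$ in the closed unit disc and that it is simple (via $H_x'(1)=\Ebx{\phi_x}\geq 1$), invoke compactness together with the identity theorem to push this out to some $D_\kappa$ with $1<\kappa\leq\theta$, and then read off the extension from $G=H_\gamma/(1-H_x)$. The one genuinely different step is how you rule out extraneous boundary zeros of $1-H_x$. The paper's proof appeals to Proposition~\ref{prop:aperiodic} to assert that $\Pbx{\{\phi_x=n\}}>0$ for all sufficiently large $n$ and deduces $\operatorname{Re}H_x(z)<1$ on $\bar D_1\setminus\{1\}$ from that. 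But the inequality quoted there is reversed ($\Pbx{\{\phi_x=n\}}\leq p_n(x,x)$, not $\geq$), and even with the correct sign, eventual positivity of $p_n(x,x)$ does not carry over to the first-return distribution --- a two-state aperiodic chain with $p(x,x)=p(x,y)=1/2$ and $p(y,x)=1$ has $\Pbx{\{\phi_x=n\}}=0$ for all $n\geq 3$. Your argument via the equality case of the triangle inequality (so $H_x(z)=1$ on $|z|=1$ forces $z^n=1$ for every $n$ in the support $S$) combined with the identification $\gcd S=d(x)=1$ sidesteps this entirely and reaches the same conclusion robustly. The $\gcd S=d(x)$ step you rightly flag as the crux is exactly as you sketch it: $S\subseteq\{n:p_n(x,x)>0\}$ gives $d(x)\mid\gcd S$, and the return-time decomposition of any path contributing to $p_n(x,x)>0$ gives $\gcd S\mid d(x)$. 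So the proposal is correct, structurally parallel to the paper's argument, but cleaner at the boundary step.
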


\begin{proof}Because 
\begin{align*}\sum_{n=1}^\infty \Pb_\gamma(\{\phi_x=n\})\mmag{z}^n&\leq \sum_{n=1}^\infty \Pb_\gamma(\{\phi_x=n\})\theta^n=\Ebl{1_{\{\phi_x<\infty\}}\theta^{\phi_x}},\\
\sum_{n=1}^\infty \Pb_x(\{\phi_x=n\})\mmag{z}^n&\leq\sum_{n=1}^\infty \Pb_x(\{\phi_x=n\})\theta^n=\Ebx{\theta^{\phi_x}},\quad\forall z\in D_\theta\end{align*}
our premise implies that $H_\gamma$ and $H_x$ in~\eqref{eq:renewaleq} are analytic on $D_\theta$.

Because $\Pbx{\{\phi_x=n\}}\geq p_n(x,x)$ and the chain is aperiodic, Proposition~\ref{prop:aperiodic} shows that $\Pbx{\{\phi_x=n\}}>0$ for all sufficiently large $n$. For this reason,
$$Re(H_x(z))=\sum_{n=1}^\infty\Pbx{\{\phi_x=n\}}Re(z^n) <\sum_{n=1}^\infty\Pbx{\{\phi_x=n\}}=1$$
for any $z$ in the closed unit disc $\bar{D}_1:=\{z\in\mathbb{C}:\mmag{z}\leq 1\}$ except $1$. Consequently, $1$ is the only zero of $H_x(z)-1$ in $\bar{D}_1$ and, thus, there exists a $1<\kappa\leq \theta$ such that the only zero of $H_x(z)-1$ in $D_\kappa$ is $z=1$ (here, use the Bolzano-Weierstrass theorem and the principle of permanence).  This zero is simple because 
$$\lim_{z\to 1}\frac{H_x(z)-1}{z-1}=\frac{dH_x}{dz}(1)=\sum_{n=1}^\infty n\Pbx{\{\phi_x=n\}}=\Ebx{\phi_x}\geq 1.$$
In other words, $1-H_x(z)=(1-z)K(z)$ for some function $K$ that is analytic and non-zero on $D_\kappa$. The renewal equation~\eqref{eq:renewaleq} and \eqref{eq:dn782gtby2} then show that 
\begin{equation}\label{eq:hfe87afbhea8bfeyanefua}F(z)=\frac{G(z)(1-H_x(z))}{K(z)}=\frac{H_\gamma(z)}{K(z)}\quad z\in D_1.\end{equation}
Because we have chosen $\kappa$ to be no greater than $\theta$ and because $H_\gamma$ and $H_x$ are analytic on $D_\theta$, it follows that $K$ and $F$ have analytic extensions on $D_\kappa$. It then follows from  \eqref{eq:dn782gtby2}  that $G$ has an analytic extension on $D_\kappa$.
\end{proof}

Setting $\gamma:=1_x$ in Lemma~\ref{lem:kendall2}, we find that Theorem~\ref{thrm:kendall}$(ii)$ holds only if there exists some $\kappa>1$ such that  $G$ in~\eqref{eq:g2} is analytic on $D_\kappa$, except for a simple pole at $z=1$. Given Lemma~\ref{lem:kendall1}, we need only argue the converse to finish off the proof of Kendall's theorem:
\begin{lemma}[Step 3]\label{lem:kendall3}Let $x$ be a positive recurrent state. If, for some $\kappa>1$, $G$ in~\eqref{eq:g2}  has an analytic extension on $D_\kappa$, except for a simple pole at $z=1$, then Theorem~\ref{thrm:kendall}$(ii)$ is satisfied.\end{lemma}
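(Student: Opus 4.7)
The plan is to invert the renewal equation to express $H_x$ in terms of $G$, and then exploit the hypothesis that $G$'s only singularity in $D_\kappa$ is a simple pole at $z=1$ to show that the resulting formula defines an analytic function on some $D_\theta$ with $\theta>1$. Once this is done, the Taylor series of $H_x$ about $0$, namely $\sum_{n=1}^\infty\Pbx{\{\phi_x=n\}}z^n$, has radius of convergence at least $\theta$, so $\Ebx{\theta'^{\phi_x}}<\infty$ for any $\theta'\in(1,\theta)$ and $x$ is geometrically recurrent. Setting $\gamma:=1_x$ in~\eqref{eq:renewaleq} yields $G(z)=H_x(z)+H_x(z)G(z)$ for $z\in D_1$, which rearranges to $H_x(z)=G(z)/(1+G(z))$. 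The hypothesis gives a decomposition $G(z)=c/(1-z)+G_0(z)$ valid on $D_\kappa$, where $G_0$ is analytic on $D_\kappa$ and $c\neq 0$ (it is the negative of the residue, which in fact equals $\pi(x)>0$, but we only need $c\neq 0$). Multiplying numerator and denominator by $1-z$, I obtain
\begin{equation}\label{eq:hxextend}
H_x(z)=\frac{c+(1-z)G_0(z)}{c+(1-z)(1+G_0(z))}=:\frac{N(z)}{\tilde{G}(z)},
\end{equation}
where $N$ and $\tilde G$ are analytic on $D_\kappa$. This is a meromorphic function on $D_\kappa$, and $H_x$ extends analytically across any point of $D_\kappa$ where $\tilde G$ does not vanish.

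The key step is then to show that $\tilde G$ has no zeros on $\bar D_1$. At $z=1$ one has $\tilde G(1)=c\neq 0$. For $z\in\bar D_1\setminus\{1\}$, a zero of $\tilde G$ is equivalent to $1+G(z)=0$. Since $\sum_{n=1}^\infty\Pbx{\{\phi_x=n\}}\leq 1$, the Weierstrass M-test guarantees that the power series $H_x(z)=\sum_{n=1}^\infty\Pbx{\{\phi_x=n\}}z^n$ converges uniformly on $\bar D_1$, so $H_x$ is continuous on $\bar D_1$ with $|H_x(z)|\le 1$ everywhere on $\bar D_1$. But if $z_0\in\bar D_1\setminus\{1\}$ satisfied $1+G(z_0)=0$, then $N(z_0)=c+(1-z_0)G_0(z_0)=(1-z_0)(1+G(z_0))+c-(1-z_0)=c-(1-z_0)\neq 0$ (taking $c$ and $|1-z_0|\leq 2$ with a bit of care, or more simply by noting $N(z_0)-\tilde G(z_0)=-(1-z_0)\neq0$ forces at least one of $N(z_0),\tilde G(z_0)$ to be nonzero, and $\tilde G(z_0)=0$ forces $N(z_0)\neq 0$). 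Hence the extension~\eqref{eq:hxextend} would have a genuine pole at $z_0$, contradicting $|H_x|\leq 1$ near $z_0$ together with the continuity of $H_x$ on $\bar D_1$ inherited from the uniformly convergent power series. Thus $\tilde G$ does not vanish anywhere on $\bar D_1$.

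Finally, by the continuity of $\tilde G$ on the compact set $\bar D_1$, there exists $\delta>0$ with $|\tilde G(z)|\geq \delta$ for all $z\in\bar D_1$; by a further continuity/compactness argument (or by noting that zeros of a nonzero holomorphic function are isolated and the distance from $\bar D_1$ to the zero set of $\tilde G$ in $D_\kappa$ is positive), there is some $\theta\in(1,\kappa]$ with $\tilde G\neq 0$ throughout $D_\theta$. Then~\eqref{eq:hxextend} defines an analytic extension of $H_x$ on $D_\theta$, so its Maclaurin series converges on $D_\theta$, giving $\Ebx{r^{\phi_x}}=\sum_{n=1}^\infty\Pbx{\{\phi_x=n\}}r^n<\infty$ for any $r\in(1,\theta)$; this is geometric recurrence of $x$. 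The main obstacle is the step ruling out zeros of $1+G$ on $\partial D_1\setminus\{1\}$: this is where aperiodicity (already used implicitly through Lemma~\ref{lem:kendall2} in the context of Kendall's theorem, or here replaced by the boundedness of $H_x$ on $\bar D_1$) plays its role, since without uniform control of $H_x$ on the boundary one cannot upgrade the local meromorphic extension into analyticity on a strictly larger disk.
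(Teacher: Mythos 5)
Your proof is correct, and in fact it repairs a genuine slip in the paper's own argument. Setting $\gamma:=1_x$ in \eqref{eq:renewaleq} gives $G=H_x+GH_x$, hence $H_x=G/(1+G)$, exactly as you write. The paper's proof instead asserts $G=1+GH_x$ (equivalently $F=1-z+FH_x$) and deduces that $F:=(1-z)G$ has no zero on $\bar{D}_1\setminus\{1\}$; but this deduction is false on its face, since $F(0)=G(0)=0$. The quantity that actually controls the singularities of $H_x$ is your denominator $\tilde{G}(z)=(1-z)(1+G(z))=F(z)+(1-z)$, which equals $1$ at the origin, and the paper's argument is recovered once $F$ is replaced by $\tilde{G}$ and the formula $H_x=(F-1+z)/F$ is replaced by $H_x=F/(F+1-z)=N/\tilde{G}$. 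So your decomposition is not merely an alternative; it is the corrected version of the reference proof.

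Two smaller points. First, the displayed chain of equalities for $N(z_0)$ has an algebra error: from $(1-z)(1+G)=(1-z)+c+(1-z)G_0$ one gets $N(z_0)=(1-z_0)(1+G(z_0))-(1-z_0)$, without the extra $+c$; at a putative zero of $1+G$ this yields $N(z_0)=z_0-1\neq0$, whereas your expression $c-(1-z_0)$ can vanish (at $z_0=1-c\in[0,1)$). Your parenthetical argument via $N-\tilde{G}=-(1-z)$ is clean and correct, so the proof still stands as a whole, but the main line of the computation should be fixed. Second, the closing remark about aperiodicity is slightly misplaced: your proof never invokes it---the bound $\mmag{H_x(z)}\le1$ on $\bar{D}_1$ holds for any recurrent state---and the hypothesis that $G$ extends analytically on $D_\kappa\setminus\{1\}$ already rules out periodicity, since a state of period $d>1$ would produce singularities of $G$ at every $d$-th root of unity.
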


\begin{proof}Fixing the initial distribution $\gamma$ to be $1_x$, the renewal equation~\eqref{eq:renewaleq} reduces to
$$G(z)=1+G(z)H_x(z)\quad\forall z\in D_1.$$
Multiplying both sides by $1-z$ and applying~\eqref{eq:dn782gtby2} yields 
$$F(z)=1-z+F(z)H_x(z)\quad\forall z\in D_1$$
and it follows that $F(z)\neq 0$ for all $z$ belonging to the closed unit disc $\bar{D}_1$ aside from, perhaps, $z=1$. However, 
$$F(1)=\lim_{n\to\infty}p_n(x,x)=\pi(x)$$
which is non-zero unless $\pi(x)=1$ (in which case the claim is trivial as $x$ must be an absorbing state). From the Bolzano-Weierstrass theorem and the principle of permanence it then follows that the zero $z_1$ of $F$ closest to the origin must have magnitude strictly greater than one (i.e., $\mmag{z_1}>1$). Because $F$ is analytic on $D_\kappa$ and $H_x(z)=(F(z)-1+z)/F(z)$ for all $z$ in $D_{\mmag{z_1}}$, $H_x$ in~\eqref{eq:renewaleq} has an analytical extension on $D_{\kappa\wedge \mmag{z_1}}$. In particular,
$$\Ebx{\theta^{\phi_x}}=\sum_{n=1}^\infty \Pb_x(\{\phi_x=n\})\theta^n=H_x(\theta)<\infty\quad\forall 1<\theta<\kappa\wedge\mmag{z_1},$$
as desired.
\end{proof}

\subsubsection*{An open question}

Lemmas~\ref{lem:kendall1}~and~\ref{lem:kendall2} show that for any given initial distribution $\gamma$ and positive recurrent state $x$, $p_n(x)$ converges geometrically fast to its limit $\pi_\gamma(x)$ in~\eqref{eq:reclims2},
\begin{equation}\label{eq:nde78afb8afnueanfiea}\mmag{p_n(x)-\pi_\gamma(x)}\text{ is }\cal{O}(\kappa^{-n})\text{ for some }\kappa>1,\end{equation}
if $x$ is geometrically recurrent and its  entrance time distribution (restricted to the event $\{\phi_x<\infty\}$) has light tails:
$$\Ebx{\theta^{\phi_x}}<\infty,\quad\Ebl{1_{\{\phi_x<\infty\}}\theta^{\phi_x}}<\infty\text{ for some }\theta>1.$$
Trivially, if the chain never visits $x$ (i.e., $\Pbl{\{\phi_x<\infty\}}=0$), then $p_1(x)=p_2(x)=\dots=\pi_\gamma(x)=0$ and the convergence of $p_n(x)$ is geometric. Putting these together and using our convention that $0\cdot\infty=0$, we have the following:
\begin{theorem}\label{thrm:kendallopen}If the chain is aperiodic and $x$ is a positive recurrent state satisfying
\begin{equation}\label{eq:denau8fne7fa8fbe8abfa}\Ebl{1_{\{\phi_x<\infty\}}\theta^{\phi_x^2}}=\Ebl{1_{\{\phi_x<\infty\}}\theta^{\phi_x}}\Ebx{\theta^{\phi_x}}<\infty\end{equation}
for some $\theta>1$, then $p_n(x)$ converges geometrically fast to its limit $\pi_\gamma(x)$ (i.e.\ \eqref{eq:nde78afb8afnueanfiea} holds for some $\kappa>1$).
\end{theorem}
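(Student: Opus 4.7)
The plan is simply to pipe the hypothesis through Lemmas~\ref{lem:kendall1} and~\ref{lem:kendall2}, after first noting that the equation $\Ebl{1_{\{\phi_x<\infty\}}\theta^{\phi_x^2}} = \Ebl{1_{\{\phi_x<\infty\}}\theta^{\phi_x}}\Ebx{\theta^{\phi_x}}$ is itself a strong Markov identity (with $\phi_x^2$ the second entrance time to $x$ as in Definition~\ref{def:entrance}). Indeed, applying Theorem~\ref{thrm:strmkvpath} at the stopping time $\phi_x$, with $Z := 1_{\{\phi_x<\infty\}}\theta^{\phi_x}$ and $F((z_m)_{m\in\n}) := \theta^{\inf\{n>0:z_n=x\}}$, yields this factorisation directly, so the equality is not an additional assumption and can be treated as a free consequence. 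What is genuinely postulated is that the common value is finite.

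First I would dispatch the trivial case $\Pbl{\{\phi_x<\infty\}} = 0$. Here $p_n(x) \leq \Pbl{\{\phi_x \leq n\}} = 0$ for every $n \geq 1$, and Corollary~\ref{cor:phixphic} (applied to the positive recurrent class $\cal{C}_i$ containing $x$) together with~\eqref{eq:reclims2} gives $\pi_\gamma(x) = 0$ as well. Hence $|p_n(x)-\pi_\gamma(x)|$ is identically zero for $n \geq 1$ and geometric decay is automatic at any rate.

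In the remaining case $\Pbl{\{\phi_x<\infty\}} > 0$, the finiteness of the product forces each factor to be finite. Because $\theta > 1$ and $\phi_x \geq 1$, one has $\Ebl{1_{\{\phi_x<\infty\}}\theta^{\phi_x}} \geq \theta\,\Pbl{\{\phi_x<\infty\}} > 0$, so finiteness of the product pins down $\Ebx{\theta^{\phi_x}} < \infty$; and since $\Ebx{\theta^{\phi_x}} \geq \theta > 0$, one likewise obtains $\Ebl{1_{\{\phi_x<\infty\}}\theta^{\phi_x}} < \infty$. These are precisely the two light-tails hypotheses of Lemma~\ref{lem:kendall2}, which therefore furnishes a $\kappa > 1$ and an analytic extension of $G$ to $D_\kappa$ whose only singularity there is a simple pole at $z = 1$. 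Feeding this into Lemma~\ref{lem:kendall1} converts the analytic statement into the desired bound $|p_n(x) - \pi_\gamma(x)| = \cal{O}(\kappa^{-n})$.

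I do not expect a real obstacle: Lemmas~\ref{lem:kendall1} and~\ref{lem:kendall2} have already done all of the analytic work, and this proof is just a bookkeeping exercise that unpacks the joint hypothesis into the two separate ones those lemmas consume. The only minor point of care is the trivial case, where the product vanishes regardless of the tail of $\phi_x$ under $\Pb_x$ and so the hypothesis does \emph{not} imply geometric recurrence of $x$; but since the conclusion is vacuous there, this causes no trouble.
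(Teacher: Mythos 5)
Your proof is correct and follows essentially the same route as the paper's: establishing the factorisation via the strong Markov property (the paper cites it through \eqref{eq:dtenttimestrn2}, which is itself a packaged application of Theorem~\ref{thrm:strmkvpath}), disposing of the $\Pbl{\{\phi_x<\infty\}}=0$ case where both $p_n(x)$ and $\pi_\gamma(x)$ vanish, and otherwise extracting finiteness of each factor to feed into Lemmas~\ref{lem:kendall2} and~\ref{lem:kendall1}. The only difference is that you spell out the bookkeeping which the paper compresses into a one-line proof plus the preceding discussion.
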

\begin{proof}
Given Lemmas~\ref{lem:kendall1}~and~\ref{lem:kendall2}, the theorem follows directly from the strong Markov property (in particular, \eqref{eq:dtenttimestrn2} in the following section with $A:=\{x\}$, $B:=\emptyset$ and $k:=l:=1$).
\end{proof}

To the best of my knowledge, the question of whether or not \eqref{eq:denau8fne7fa8fbe8abfa} is not only sufficient but necessary for \eqref{eq:nde78afb8afnueanfiea} to hold remains open. It seems to me that it should be: I have been unable to cook up a situation in which geometric convergence of $p_n(x)$ to a positive recurrent state $x$ for an aperiodic chain would not imply geometric convergence of $p_n(x,x)$. Unless, of course, $\Pbl{\{\phi_x<\infty\}}=0$, in which case the claim is trivial. Were $p_n(x,x)$ to converge geometrically fast, then Kendall's Theorem (Theorem~\ref{thrm:kendall}) would show that $x$ is geometrically recurrent and it would from \eqref{eq:hfe87afbhea8bfeyanefua} that $H_\gamma$ has an analytic extension on a disk $D_\kappa$ with $\kappa>1$. In particular, we would have \eqref{eq:denau8fne7fa8fbe8abfa} as it would be the case that
$$\Ebl{1_{\{\phi_x<\infty\}}\theta^{\phi_x}}=H_\gamma(\theta)<\infty\quad\forall \theta<\kappa.$$
If you know the answer to this question, I'd love to hear about it.

\subsection{Geometric trials arguments*}\label{sec:geometrictrial}

As we have seen throughout Sections~\ref{sec:empdist}--\ref{sec:kendall}, we are often interested in the time elapsed until the chain enters some set $B$ (e.g.\ a particular state, a certain class, the union of the positive recurrent classes, etc.). 
The Foster-Lyapunov criteria in the following sections yield information on the chain's visits to some set $F$. However, more often than, not $F$ will not be the set we are actually interested in: $F\neq B$.
%
Fortunately, if $B$ is accessible from $F$ (Definition~\ref{def:accesible}) and $F$ is finite, then the chance that a visit to $F$ results in a visit to $B$ may be bounded below by a constant independent of which state in $F$ the chain visits. The existence of this constant implies that the probability that the chain has not yet visited $B$ decays  geometrically with the  number of visits to $F$:\index{geometric trial arguments}
\begin{lemma}[Geometric trials property]\label{lem:AFtail1} If $F$ is finite and $F\to B$ for a second set $B$, then there exists constants $m$ and $\varepsilon>0$ independent of the initial distribution $\gamma$ such that
$$\Pbl{\{\phi^{nm}_F<\phi_B\}}\leq (1-\varepsilon)^{n-1}\quad\forall n>0,$$
where $ \phi_B$ denotes the first entrance time to $B$ and $\phi^k_B$ the $k$th entrance time to $F$~(Definition~\ref{def:entrance}).
%
\end{lemma}
Lemma~\ref{lem:AFtail1} lays the foundation for the following three geometric-trials-type arguments that  allow us to turn information on the chain's visits to $F$ into information on its visits to $B$:\index{geometric trial arguments}
\begin{lemma}[Geometric trials arguments]\label{lem:geotrialarg} Given a finite subset $F$ of the state space, suppose that $F\to B$ for some other subset $B$ and let $\phi_F$ and $\phi_B$ denote the first entrance times to $F$ and $B$~(Definition~\ref{def:entrance}).
\begin{enumerate}[label=(\roman*),noitemsep] 
\item If $\Pbl{\{\phi_F<\infty\}}=1$ and $\Pbx{\{\phi_F<\infty\}}=1$ for all $x$ in $F$, then $\Pb_\gamma(\{\phi_B<\infty\})=1$.
\item If $\Ebl{\phi_F}<\infty$ and  $\Ebx{\phi_F}<\infty$ for all $x$ in $F$, then $\Ebl{\phi_B}<\infty$.
\item If there exists a constant $\theta>1$ such that $\Eb_\gamma[\theta^{\phi_F}]<\infty$ and  $\Eb_x[\theta^{\phi_F}]<\infty$ for all $x$ in $F$, then there exists second constant $\vartheta>1$ such that $\Eb_\gamma[\vartheta^{\phi_B}]<\infty$.
\end{enumerate}
\end{lemma}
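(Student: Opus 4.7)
The strategy is to deduce all three parts from Lemma~\ref{lem:AFtail1} by comparing $\phi_B$ with the random entrance index $N:=\min\{n\geq 1:\phi_F^{nm}\geq \phi_B\}$, for which the lemma yields the geometric tail $\Pbl{\{N>n\}}=\Pbl{\{\phi_F^{nm}<\phi_B\}}\leq (1-\varepsilon)^{n-1}$. On $\{\phi_B<\infty\}$ we have $\phi_B\leq \phi_F^{Nm}$, so controlling $\phi_F^{Nm}$ via the strong Markov property will suffice. All three arguments pivot on applying Theorem~\ref{thrm:strmkvpath} at the successive entrance times $\phi_F^k$, using that $X_{\phi_F^k}\in F$ and $F$ is finite to get uniform-in-$x$ bounds.

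\emph{Part (i).} A simple induction using the strong Markov property at $\phi_F^{k-1}$ (and the hypothesis $\Pbx{\{\phi_F<\infty\}}=1$ for $x\in F$) shows $\Pbl{\{\phi_F^k<\infty\}}=1$ for every $k$. From $\{\phi_B=\infty\}\subseteq \{\phi_F^{nm}<\phi_B\}\cup\{\phi_F^{nm}=\infty\}$ one obtains $\Pbl{\{\phi_B=\infty\}}\leq (1-\varepsilon)^{n-1}$, and letting $n\to\infty$ concludes.

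\emph{Part (ii).} The hypotheses imply those of (i), so $N<\infty$ a.s. Write
\[
\phi_F^{Nm}=\sum_{k=1}^\infty (\phi_F^k-\phi_F^{k-1})\,1_{\{k\leq Nm\}}\qquad (\phi_F^0:=0).
\]
The key observation is that $\{k\leq Nm\}=\{\phi_F^{(\lceil k/m\rceil-1)m}<\phi_B\}$, and since $(\lceil k/m\rceil -1)m\leq k-1$ this event lies in $\cal{F}_{\phi_F^{k-1}}$ (Lemma~\ref{lem:2stop}(iii) and Exercise~\ref{ex:entstop}). Applying the strong Markov property (Theorem~\ref{thrm:strmkvpath}) to each term, with $X_{\phi_F^{k-1}}\in F$ for $k\geq 2$, gives
\[
\Ebl{(\phi_F^k-\phi_F^{k-1})\,1_{\{k\leq Nm\}}}\leq M\,\Pbl{\{k\leq Nm\}},\qquad k\geq 2,
\]
where $M:=\max_{x\in F}\Ebx{\phi_F}<\infty$ by finiteness of $F$; the $k=1$ term is at most $\Ebl{\phi_F}$. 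Summing yields $\Ebl{\phi_F^{Nm}}\leq \Ebl{\phi_F}+Mm\,\Ebl{N}$, and $\Ebl{N}\leq 1+\varepsilon^{-1}$ by the geometric tail, so $\Ebl{\phi_B}\leq \Ebl{\phi_F^{Nm}}<\infty$.

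\emph{Part (iii).} Pick $\vartheta\in(1,\sqrt{\theta})$ to be fixed later; note that $\vartheta^2\leq \theta$ ensures $c(\vartheta^2):=\max_{x\in F}\Eb_x[\vartheta^{2\phi_F}]<\infty$. Iterating the strong Markov property at each $\phi_F^k$ gives $\Ebl{\vartheta^{2\phi_F^{nm}}}\leq \Ebl{\vartheta^{2\phi_F}}\,c(\vartheta^2)^{nm-1}$. Using $\phi_B\leq \phi_F^{Nm}$ and applying Cauchy--Schwarz term by term,
\[
\Ebl{\vartheta^{\phi_B}}\leq \sum_{n=1}^\infty\Ebl{\vartheta^{\phi_F^{nm}}1_{\{N=n\}}}\leq \sum_{n=1}^\infty \sqrt{\Ebl{\vartheta^{2\phi_F^{nm}}}\,\Pbl{\{N=n\}}},
\]
which, since $\Pbl{\{N=n\}}\leq (1-\varepsilon)^{(n-2)\vee 0}$, is bounded by a constant multiple of $\sum_{n\geq 1}\bigl(c(\vartheta^2)^{m}\sqrt{1-\varepsilon}\bigr)^{n}$. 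This converges as soon as $c(\vartheta^2)^m\sqrt{1-\varepsilon}<1$. Because $\vartheta^{2\phi_F}\to 1$ on $\{\phi_F<\infty\}$ as $\vartheta\downarrow 1$ and is dominated by $\theta^{\phi_F}$, while $\Pbx{\{\phi_F<\infty\}}=1$ for $x\in F$ (consequence of $\Ebx{\theta^{\phi_F}}<\infty$ with $\theta>1$), dominated convergence gives $c(\vartheta^2)\to 1$, so choosing $\vartheta>1$ sufficiently close to $1$ yields the desired bound.

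\emph{Expected obstacles.} The principal difficulty is the book-keeping of the strong Markov applications, especially verifying the $\cal{F}_{\phi_F^{k-1}}$-measurability of $\{k\leq Nm\}$ in part (ii) and balancing the growth factor $c(\vartheta^2)^m$ against $\sqrt{1-\varepsilon}$ in part (iii). The latter is where the hypotheses truly bite: finiteness of $F$ is needed for $c(\vartheta)<\infty$, and $\theta>1$ is needed both to force $\phi_F$ to be a.s.\ finite from states in $F$ and to allow $c(\vartheta^2)\to 1$ as $\vartheta\downarrow 1$.
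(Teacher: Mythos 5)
Your proposal is correct and reaches all three conclusions, but by a recognisably different route from the paper. The paper first packages all the needed strong Markov identities into a separate lemma giving explicit formulae for probabilities and exponential moments at the $(k+l)^{th}$ entrance time in terms of those at the $k^{th}$ and $l^{th}$, and then telescopes $\phi_B$ in blocks of $m$ entrances to $F$, writing $\phi_B\leq\sum_n 1_{\{\phi_F^{nm}<\phi_B\}}(\phi_F^{(n+1)m}-\phi_F^{nm})$; for the exponential-moment part it avoids Cauchy--Schwarz in favour of the two-parameter inequality $ab\leq\alpha^n a^2+\alpha^{-n}b^2$, choosing $\alpha\in(1-\varepsilon,1)$ so that the two resulting geometric series converge separately. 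You instead introduce the random block index $N$ explicitly, telescope one entrance at a time, and use Cauchy--Schwarz. Both strategies rely on the same two levers: the geometric tail of Lemma~\ref{lem:AFtail1}, and the fact that the jump $\phi_F^k-\phi_F^{k-1}$ is, given $\cal{F}_{\phi_F^{k-1}}$, distributed as a fresh return time from the (finitely many) states of $F$. Your version is slightly leaner in exposition; the paper's is slightly more modular.

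Two small points worth noting. First, your measurability claim in (ii), $\{k\leq Nm\}=\{\phi_F^{(\lceil k/m\rceil-1)m}<\phi_B\}\in\cal{F}_{\phi_F^{k-1}}$, is correct but does deserve the careful check you flagged; the paper's block decomposition sidesteps some of this book-keeping because the conditioning time and the indicator index are aligned. Second, in (iii) the $n$-th term of your Cauchy--Schwarz sum is of order $(c(\vartheta^2)^{m/2}\sqrt{1-\varepsilon})^n$, so the convergence threshold is $c(\vartheta^2)^{m/2}\sqrt{1-\varepsilon}<1$ rather than the stronger $c(\vartheta^2)^m\sqrt{1-\varepsilon}<1$ you wrote; this is a factor-of-two slip in an exponent that does not affect the conclusion, since either condition is met once $\vartheta$ is close enough to $1$.
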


\subsubsection*{Proofs of Lemmas~\ref{lem:AFtail1} and \ref{lem:geotrialarg}}

The key to these proofs are applications of the strong Markov property that express expectations involving the $(k+l)$th entrance time to a set in terms expectations involving the $k$th entry time and others involving the $l$th entrance time:
\begin{lemma}\label{lem:abkl}For any subsets $A$ and $B$ of the state space, constant $\vartheta>1$, and natural numbers $k$ and $l$,
\begin{align}
&\Pb_\gamma\left(\left\{\phi^{k+l}_A<\phi_B\right\}\right)=\sum_{x\in A}\Pbl{\left\{\phi_A^k<\phi_B,X_{\phi_A^k}=x\right\}}\Pbx{\left\{\phi^{l}_A<\phi_B\right\}},\label{eq:dtenttimestr}\\
&\Ebl{1_{\{\phi_A^{k}<\phi_B\}}(\phi_A^{k+l}-\phi_A^{k})}=\sum_{x\in A}\Pbl{\left\{\phi_A^{k}<\phi_B,X_{\phi_A^k}=x\right\}}\Ebx{\phi_A^{l}},\label{eq:dtenttimestrn1}\\
&\Ebl{1_{\{\phi_A^{k}<\phi_B\}}\vartheta^{\phi_A^{k+l}}}=\sum_{x\in A}\Ebl{1_{\{\phi_A^{k}<\phi_B,X_{\phi_A^k}=x\}}\vartheta^{\phi_A^{k}}}\Ebx{\vartheta^{\phi_A^{l}}},\label{eq:dtenttimestrn2}
\end{align}
where $\phi_B$ denotes the first entrance time to $B$ and $\phi^k_A$ the $k$th entrance time to $A$ (Definition~\ref{def:entrance}).
\end{lemma}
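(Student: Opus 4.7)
The plan is to obtain all three identities by a single application of the strong Markov property (Theorem~\ref{thrm:strmkvpath}) at the stopping time $\varsigma := \phi_A^k$. The key observation is that on the event $\{\phi_A^k < \phi_B\}$, the chain has entered $A$ exactly $k$ times (so $X_{\phi_A^k} \in A$, and in particular $\phi_A^k < \infty$) while it has not yet entered $B$; hence, by the very definition of entrance times (Exercise~\ref{ex:entdef}), together with the fact that $\phi_A^{k+l} \geq \phi_A^k$,
\[
\phi_A^{k+l} - \phi_A^k \;=\; F(X^{\phi_A^k}), \qquad \phi_B - \phi_A^k \;=\; G(X^{\phi_A^k}) \qquad \text{on}\ \{\phi_A^k < \phi_B\},
\]
where $F((x_m)_{m\in\n}) := \inf\{n > 0 : \sum_{j=1}^n 1_A(x_j) = l\}$ and $G((x_m)_{m\in\n}) := \inf\{n > 0 : x_n \in B\}$, and $X^{\phi_A^k}$ denotes the shifted chain~\eqref{eq:shiftdt}. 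Both $F$ and $G$ are $\cal{E}/\cal{B}(\r_E)$-measurable by Lemma~\ref{lem:pathspmeas}$(ii)$.

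For (i), since $\phi_A^{k+l} < \phi_B$ forces $\phi_A^k \leq \phi_A^{k+l} < \phi_B$, the above display lets us rewrite
\[
\{\phi_A^{k+l} < \phi_B\} \;=\; \{\phi_A^k < \phi_B,\; F(X^{\phi_A^k}) < G(X^{\phi_A^k})\}.
\]
Splitting this event according to the value of $X_{\phi_A^k} \in A$, and noting that $\{\phi_A^k < \phi_B\}$ belongs to $\cal{F}_{\phi_A^k}$ (by Lemma~\ref{lem:2stop}$(i,ii)$ applied to the stopping times $\phi_A^k$ and $\phi_B$, cf.~Exercise~\ref{ex:entstop}), Theorem~\ref{thrm:strmkvpath} with $Z := 1_{\{\phi_A^k < \phi_B\}}$ and the measurable functional $1_{\{F < G\}}$ yields \eqref{eq:dtenttimestr}, after recognising that $\Pbx{\{F(X) < G(X)\}} = \Pbx{\{\phi_A^l < \phi_B\}}$ when $X_0 = x \in A$.

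Identities \eqref{eq:dtenttimestrn1} and \eqref{eq:dtenttimestrn2} follow by the same template with different test functionals. For \eqref{eq:dtenttimestrn1}, write $1_{\{\phi_A^k < \phi_B\}}(\phi_A^{k+l} - \phi_A^k) = 1_{\{\phi_A^k < \phi_B\}} F(X^{\phi_A^k})$ and decompose $\{\phi_A^k < \phi_B\} = \bigsqcup_{x \in A} \{\phi_A^k < \phi_B, X_{\phi_A^k} = x\}$, then apply Theorem~\ref{thrm:strmkvpath} with the non-negative functional $F$ to pick up $\Ebx{F(X)} = \Ebx{\phi_A^l}$. For \eqref{eq:dtenttimestrn2}, factor $\vartheta^{\phi_A^{k+l}} = \vartheta^{\phi_A^k} \cdot \vartheta^{F(X^{\phi_A^k})}$ on $\{\phi_A^k < \phi_B\}$ and take $Z := 1_{\{\phi_A^k < \phi_B,\, X_{\phi_A^k} = x\}} \vartheta^{\phi_A^k}$ (which is $\cal{F}_{\phi_A^k}$-measurable because $\phi_A^k$ is $\cal{F}_{\phi_A^k}$-measurable) and the non-negative functional $\vartheta^{F(\cdot)}$; Theorem~\ref{thrm:strmkvpath} then delivers the claim after summing over $x \in A$.

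The main obstacle is bookkeeping the measurability prerequisites for Theorem~\ref{thrm:strmkvpath}: we must check that $\{\phi_A^k < \phi_B\} \in \cal{F}_{\phi_A^k}$ and that the path functionals $F, G, 1_{\{F<G\}}, \vartheta^{F(\cdot)}$ are $\cal{E}/\cal{B}(\r_E)$-measurable. The first assertion reduces to Exercise~\ref{ex:entstop} and Lemma~\ref{lem:2stop}$(i,ii)$, while the second is handled by Lemma~\ref{lem:pathspmeas}$(ii)$ together with the fact that Borel-measurable compositions preserve measurability. With these in hand, each of the three identities is a routine, essentially identical application of the strong Markov property, differing only in the choice of test functional and the pre-$\varsigma$ random variable $Z$.
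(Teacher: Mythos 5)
Your proof is correct and follows essentially the same route as the paper's: in both, the key step is to rewrite $\phi_A^{k+l}-\phi_A^k$ and $\phi_B-\phi_A^k$ on $\{\phi_A^k<\phi_B\}$ as $\cal{E}$-measurable functionals of the shifted chain $X^{\phi_A^k}$ and then apply the strong Markov property at $\phi_A^k$. The only difference is cosmetic: the paper proves a single identity for a general non-negative $g(\phi_A^{k+l}-\phi_A^k,\phi_B-\phi_A^k)$ and non-negative $\cal{F}_{\phi_A^k}$-measurable $Z$, and obtains all three equations by specialising $g$ and $Z$, whereas you carry out three parallel applications of the same argument.
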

%


%

\begin{proof}To argue~\eqref{eq:dtenttimestr}--\eqref{eq:dtenttimestrn2}, we will show that
\begin{equation}\Ebl{Z1_{\left\{\phi_A^k<\phi_B\right\}}g(\phi^{k+l}_A-\phi^k_A,\phi_B-\phi^k_A)}=\sum_{x\in A}\Ebl{Z1_{\{\phi_A^k<\phi_B,X_{\phi_A^k}=x\}}}\Ebx{g(\phi^{l}_A,\phi_B)}\label{eq:dtenttimestrproof}\end{equation}
for any given non-negative function $g:\n_E^2\to\r_E$ and non-negative $\cal{F}_{\phi^k_A}$/$\cal{B}(\r_E)$-measurable random variable $Z$. Setting $g(n,m):=1_{\{0,\dots,m-1\}}(n)$ and $Z:=1$ in \eqref{eq:dtenttimestrproof}, we obtain \eqref{eq:dtenttimestr}. For \eqref{eq:dtenttimestrn1}, use $g(n,m):=n$ and $Z:=1$, while for \eqref{eq:dtenttimestrn2}, use $g(n,m):=\vartheta^n$ and $Z:=\vartheta^{\phi_A^k}$.

To prove \eqref{eq:dtenttimestrproof}, note that $X_n$ does not belong to $B$ for $n=1,2,\dots,\phi_A^k$ if $\phi_A^k<\phi_B$, and so
\begin{align}\phi_B&=\inf\left\{n>0:\sum_{m=1}^n1_B(X_m)=1\right\}=\inf\left\{n>\phi^k_A:\sum_{m=\phi^k_A+1}^n1_B(X_m)=1\right\}\nonumber\\
&=\phi^k_A+G_B^1(X^{\phi^k_A})\quad\text{on}\quad\{\phi^k_A<\phi_B\}
,\label{eq:dnm7893ng131dwana}\end{align}
where $X^{\phi^k_A}$ denotes the ${\phi^k_A}$-shifted chain~\eqref{eq:shiftdt} and, for any subset $S$ of $\s$ and $j>0$, $G_S^j$ denotes the $\cal{E}/2^{\n_E}$-measurable (Lemma \ref{lem:pathspmeas}$(ii)$) function defined by
$$G_S^j(x):=\inf\left\{n>0:\sum_{m=1}^n1_S(x_{m})=j\right\}\quad\forall x\in\cal{P}.$$
Similarly, because the number of entrances to $A$ by time $\phi_A^k$ is exactly $k$ ($\sum_{m=1}^{\phi_A^k}1_A(X_m)=k$) and $\phi^{k+l}_A$ is greater than $\phi^k_A$ by definition, we have that
\begin{align}\phi^{k+l}_A&=\inf\left\{n>0:\sum_{m=1}^n1_A(X_m)=k+l\right\}=\inf\left\{n>\phi^k_A:\sum_{m=\phi^k_A+1}^n1_A(X_m)=l\right\}\nonumber\\
&=\phi^k_A+G_A^l(X^{\phi^k_A}).
\label{eq:dnm7893ng131dwanb}\end{align}

The event $\{\phi^k_A<\phi_B\}$  belongs to the pre-$\phi^k_A$ sigma-algebra $\cal{F}_{\phi^k_A}$~(Definition~\ref{def:stopdt}) because we are able to deduce whether the chain has entered $B$ once by the time of the $k$th entry to $F$ if we observe the chain up until the latter moment (for a formal argument, use Lemma~\ref{lem:2stop} and Exercise~\ref{ex:entstop}). For these reasons, we can apply the strong Markov property (Theorem \ref{thrm:strmkvpath}) to obtain \eqref{eq:dtenttimestrproof}:
\begin{align*}&\Ebl{Z1_{\{\phi_A^k<\phi_B\}}g(\phi^{k+l}_A-\phi^k_A,\phi_B-\phi^k_A)}\\
&=\sum_{x\in A}\Ebl{Z1_{\{\phi_A^k<\phi_B,\phi^k_A<\infty,X_{\phi_A^k}=x\}}g(\phi^{k+l}_A-\phi^k_A,\phi_B-\phi^k_A)}\\
&=\sum_{x\in A}\Ebl{Z1_{\{\phi_A^k<\phi_B,\phi^k_A<\infty,X_{\phi_A^k}=x\}}g(G_A^l(X^{\phi^k_A}),G_B^1(X^{\phi^k_A}))}\\
&=\sum_{x\in A}\Ebl{Z1_{\{\phi_A^k<\phi_B,\phi^k_A<\infty,X_{\phi_A^k}=x\}}}\Ebx{g(G_A^l(X),G_B^1(X))}\\
&=\sum_{x\in A}\Ebl{Z1_{\{\phi_A^k<\phi_B,X_{\phi_A^k}=x\}}}\Ebx{g(\phi^{l}_A,\phi_B)},\end{align*}
where the first equality follows from the fact that $\{\phi_A^k<\phi_B\}$ is contained in $\{\phi_A^k<\infty\}$ (as $\phi^k_A$ is strictly less than $\phi_B$ only if the $\phi^k_A$ is finite), the second from \eqref{eq:dnm7893ng131dwana}--\eqref{eq:dnm7893ng131dwanb}, the third from the strong Markov property, and the fourth from the definitions of $\phi^l_A$ and $\phi_B$ and the fact that $\{\phi_A^k<\phi_B\}$ is contained in $\{\phi_A^k<\infty\}$. 
\end{proof}

With the above out of the way, we turn our attention to the proofs of Lemmas~\ref{lem:AFtail1}~and~\ref{lem:geotrialarg}:
\begin{proof}[Proof of Lemma~\ref{lem:AFtail1}]Pick any $x$ in $F$ and let $y$ in $B$ be such that $x\to y$. Because $\Pbx{\{\phi_y=\infty\}}<1$, there exists some $0<m_x<\infty$ and $\varepsilon_x>0$ such that 
$$\Pbx{\{n<\phi_B\}}\leq \Pbx{\{n<\phi_y\}}\leq 1-\varepsilon_x\quad\forall n\geq m_x.$$
Let $m:=\max\{m_x:x\in F\}$ and $\varepsilon:=\min\{\varepsilon_x:x\in F\}$. Finiteness of $F$ ensures that $m<\infty$ and $\varepsilon>0$, and it follows from the above that 
$$\Pbx{\{n<\phi_B\}}\leq 1-\varepsilon\quad\forall n\geq m,\enskip x\in F.$$
By definition, $\phi^n_F\geq n$ and the above implies that 
$$\Pbx{\{\phi_F^n<\phi_A\}}\leq 1-\varepsilon\quad\forall n\geq m,\enskip x\in F.$$
Setting $A:=F$, $k:=(n-1)m$, and $l:=m$ in \eqref{eq:dtenttimestr}, the result follows by induction:
\begin{align*}\Pbl{\left\{\phi^{nm}_F<\phi_B\right\}}&=\sum_{x\in F}\Pbl{\left\{\phi^{(n-1)m}_F<\phi_B,X_{\phi^{(n-1)m}_F}=x\right\}}\Pbx{\{\phi_F^m<\phi_B\}}\\
&\leq(1-\varepsilon)\sum_{x\in F}\Pbl{\left\{\phi^{(n-1)m}_F<\phi_B,X_{\phi^{(n-1)m}_F}=x\right\}}\\
&=(1-\varepsilon)\Pbl{\left\{\phi^{(n-1)m}_F<\phi_B\right\}}=\dots\leq(1-\varepsilon)^{n-1}\Pbl{\left\{\phi^{m}_F<\phi_B\right\}}\\
&\leq(1-\varepsilon)^{n-1}.\end{align*}

\end{proof}

\begin{proof}[Proof of Lemma~\ref{lem:geotrialarg}$(i)$]Setting $\varsigma:=\phi^{k-1}_F$ in Lemma~\ref{lem:dtenttimestr3} shows that  the chain keeps visiting $F$:
\begin{align*}\Pbl{\left\{\phi_F^{k}<\infty\right\}}&=\sum_{x\in F}\Pbl{\left\{\phi_F^{k-1}<\infty,X_{\phi_F^{k-1}}=x\right\}}\Pbx{\{\phi_F<\infty\}}\\
&
=\sum_{x\in F}\Pbl{\left\{\phi_F^{k-1}<\infty,X_{\phi_F^{k-1}}=x\right\}}=\Pbl{\left\{\phi_F^{k-1}<\infty\right\}}\\
&=\dots=\Pbl{\left\{\phi_F<\infty\right\}}=1\quad\forall k>0.\end{align*}
For this reason, and letting $m$ and $\varepsilon>0$ be as in Lemma~\ref{lem:AFtail1}, we have that
$$\Pbl{\{\phi_B=\infty\}}=\Pbl{\{\phi^{nm}_F<\phi_B,\phi_B=\infty\}}\leq \Pbl{\{\phi^{nm}_F<\phi_B\}}\leq (1-\varepsilon)^{n-1}\quad\forall n>0.$$
The result follows by taking the limit $n\to\infty$ in the above.
\end{proof}

\begin{proof}[Proof of Lemma~\ref{lem:geotrialarg}$(ii)$]Let $m$ and $\varepsilon>0$ be as in Lemma~\ref{lem:AFtail1}. Because, by definition, $\phi^0_F=0$ and $\phi_F^{nm}\geq nm\to\infty$ as $n\to\infty$, we may re-write $\phi_B$ as
$$\phi_B=\sum_{n=0}^\infty1_{\{\phi_B>\phi_F^{nm}\}}(\phi_B\wedge\phi_F^{(n+1)m}-\phi_F^{nm})\leq\sum_{n=0}^\infty1_{\{\phi_B>\phi_F^{nm}\}}(\phi_F^{(n+1)m}-\phi_F^{nm}).$$
For this reason,
\begin{align*}\Ebl{\phi_B}&\leq \Ebl{\phi_F^{m}}+\sum_{n=1}^\infty\Pbl{\{\phi_B>\phi_F^{nm}\}}\left(\max_{x\in F}\Ebx{\phi_F^{m}}\right)\\
&\leq\Ebl{\phi_F^{m}}+\left(\max_{x\in F}\Ebx{\phi_F^{m}}\right)\sum_{n=0}^\infty(1-\varepsilon)^n\\
&\leq \Ebl{\phi_F}+(m-1)\left(\max_{x\in F}\Ebx{\phi_F}\right)+m\left(\max_{x\in F}\Ebx{\phi_F}\right)\frac{1}{\varepsilon}<\infty,\end{align*}
where the first and third inequalities follow  from \eqref{eq:dtenttimestrn1} and Tonelli's theorem and the second from the Lemma~\ref{lem:AFtail1}.
\end{proof}

\begin{proof}[Proof of \ref{lem:geotrialarg}$(iii)$] Let $m$ and $\varepsilon$ be as in Lemma~\ref{lem:AFtail1} and define $M_\vartheta:=\max_{x\in F}\Eb_x[\vartheta^{\phi_F}]$ for all $\vartheta> 1$. Setting $1<\vartheta\leq\theta$ and $\delta_\vartheta:=\log(\vartheta)/\log(\theta)$, Jensen's inequality implies that
$$M_\vartheta=\max_{x\in F}\Ebx{(\theta^{\phi_F})^{\delta_\vartheta}}\leq M_\theta^{\delta_\vartheta}<\infty.$$
For this reason, $M_\vartheta$ tends to $1$ as $\vartheta$ approaches $1$ from above. 

As shown in the proof of $(i)$, 
%
the $k$th entrance time $\phi_F^k$ to $F$ is finite $\Pb_\gamma$-almost surely, for all $k>0$. For this reason, setting $B:=\emptyset$ (so that $\phi_B=\infty$) in \eqref{eq:dtenttimestrn2}, we find that
\begin{align}\Ebl{\vartheta^{\phi_F^k}}&=\sum_{x\in F}\Ebl{1_{\{\phi_F^{k-1}<\infty,X_{\phi_F^{k-1}}=x\}}\vartheta^{\phi_F^{k-1}}}\Ebx{\vartheta^{\phi_F}}\leq M_\vartheta\Ebl{\vartheta^{\phi_F^{k-1}}}\nonumber\\
&\leq\dots \leq M_\vartheta^{k-1}\Ebl{\vartheta^{\phi_F}}\quad\forall k>0.\label{eq:hd7a8hd837ahfw392}\end{align}

Because $\phi_F^n\geq n$ by definition,  $\phi_F^n\to\infty$ as $n\to\infty$ and it follows that
\begin{align}\vartheta^{\phi_B}=\sum_{n=0}^\infty1_{\{\phi_F^{nm}<\phi_B\}}\left(\vartheta^{\phi_B\wedge \phi_F^{(n+1)m}}-\vartheta^{\phi_F^{nm}}\right)\leq \sum_{n=0}^\infty1_{\{\phi_F^{nm}<\phi_B\}}\left(\vartheta^{\phi_F^{(n+1)m}}-\vartheta^{\phi_F^{nm}}\right).\label{eq:hd7a8hd837ahfw393}
\end{align}
However, putting \eqref{eq:dtenttimestrn2} and \eqref{eq:hd7a8hd837ahfw392} together, we have that
\begin{align}\Ebl{1_{\{\phi_F^{nm}<\phi_B\}}\left(\vartheta^{\phi_F^{(n+1)m}}-\vartheta^{\phi_F^{nm}}\right)}&=\sum_{x\in F}\Ebl{1_{\{\phi_F^{nm}<\phi_B,X_{\phi_F^{nm}}=x\}}\vartheta^{\phi_F^{nm}}}\Ebx{\vartheta^{\phi_F^m}-1}\label{eq:hd7a8hd837ahfw394}\\
&\leq\left(\max_{x\in F}\Ebx{\vartheta^{\phi_F^m}}-1\right)\Ebl{1_{\{\phi_F^{nm}<\phi_B\}}\vartheta^{\phi_F^{nm}}}\nonumber\\
&\leq\left(M_\vartheta^m-1\right)\Ebl{1_{\{\phi_F^{nm}<\phi_B\}}\vartheta^{\phi_F^{nm}}} \quad\forall n\geq0.\nonumber\end{align}
It then follows from \eqref{eq:hd7a8hd837ahfw393}--\eqref{eq:hd7a8hd837ahfw394} and Tonelli's theorem that
\begin{align*}\Ebl{\vartheta^{\phi_B}}&\leq \left(M_\vartheta^m-1\right)\sum_{n=0}^\infty\Ebl{1_{\{\phi_F^{nm}<\phi_B\}}\vartheta^{\phi_F^{nm}}}.\end{align*}
To bound the sum we use the inequality $ab\leq \alpha^na^2+\alpha^{-n}b^2$, for any non-negative $a,b,n,$ and $\alpha>0$. In particular, we have that 
\begin{align*}\sum_{n=0}^\infty\Ebl{1_{\{\phi_F^{nm}<\phi_B\}}\vartheta^{\phi_F^{nm}}}&\leq \sum_{n=0}^\infty\alpha^n\Ebl{\vartheta^{2\phi_F^{nm}}}+\sum_{n=0}^\infty\alpha^{-n}\Pbl{\{\phi_F^{nm}<\phi_B\}}\\
&\leq \Ebl{\vartheta^{2\phi_F}}\sum_{n=0}^\infty(\alpha M_{\vartheta^2}^{m})^n+\sum_{n=0}^\infty\alpha^{-n}\Pbl{\{\phi_F^{nm}<\phi_B\}},\end{align*}
where the second inequality follows from \eqref{eq:hd7a8hd837ahfw392}. Lemma~\ref{lem:AFtail1} then shows that
$$\sum_{n=0}^\infty\Ebl{1_{\{\phi_F^{nm}<\phi_B\}}\vartheta^{\phi_F^{nm}}}\leq \Ebl{\vartheta^{2\phi_F}}\sum_{n=0}^\infty(\alpha M_{\vartheta^2}^{m})^n+1+\frac{1}{\alpha-1+\varepsilon},$$
for any $1-\varepsilon<\alpha<1$. Choosing an $\vartheta\leq\sqrt{\theta}$ sufficiently close to $1$ so that $M_{\vartheta^2}^{m}<1/\alpha$ makes the right-hand side finite as desired.
\end{proof}

\subsection{Foster-Lyapunov criteria I: the criterion for recurrence*}\label{sec:flrecdt}In this section, we derive the criterion for recurrence: Theorem~\ref{thrm:lyarec} below. It involves a  \emph{norm-like}\index{norm-like} function meaning a $\r_E$-valued function $v$ on $\s$ that tends to infinity as its argument tends to infinity:

\begin{definition}[Norm-like function]\label{def:normlike} A function $v:\s\to\r_E$ is norm-like if its  sub-level sets are finite:
$$\s_r:=\{x\in\s:v(x)<r\}\quad \text{is finite for all }r\in\r.$$
\end{definition}
These types of functions are also sometimes called \emph{inf-compact}. A useful fact is that if $\rho$ is a probability distribution on $\s$ and $v$ is norm-like, then the expectation $\rho(v)$ is well-defined as, in this case,
$$\rho(v\wedge 0)=\sum_{x\in\s_0}v(x)\rho(x)\geq \left(\min_{x\in\s_0}v(x)\right)\rho(\s_0)\geq \min_{x\in\s_0}v(x)>-\infty.$$
The criterion goes as follows:\index{Foster-Lyapunov criteria}
\begin{theorem}[The criterion for recurrence]\label{thrm:lyarec} If there exists  a real-valued norm-like function $v$ on $\s$ and a finite set $F$ such that
\begin{equation}\label{eq:lyarec}Pv(x)\leq v(x)\quad\forall x\not\in F,\end{equation}
then the chain is Tweedie recurrent. Conversely, if the chain is Tweedie recurrent and has finitely many closed communicating classes, then there exists $(v,F)$ satisfying \eqref{eq:lyarec} with   $v$ real-valued and norm-like and  $F$ finite.
\end{theorem}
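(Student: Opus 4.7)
My plan splits the proof along the two implications. For the sufficient direction, norm-likeness combined with real-valuedness of $v$ forces $\inf_\s v > -\infty$ (any sequence of values tending to $-\infty$ would eventually have to repeat inside a finite sub-level set, giving a state with value $-\infty$), so after adding a constant I may assume $v \ge 0$. Introducing the truncations $\s_r := \{v < r\}$ (finite by norm-likeness) and their exit times $\sigma_r$, the hypothesis $Pv \le v$ on $\s \setminus F$ makes $(v(X_{n \wedge \sigma_r \wedge \sigma_F}))_{n \in \n}$ a non-negative supermartingale under each $\Pb_x$; hence $\Eb_x[v(X_{n \wedge \sigma_r \wedge \sigma_F})] \le v(x)$ for all $n$. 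On the event $\{\sigma_r \le n, \, \sigma_r < \sigma_F\}$ the stopped value is $v(X_{\sigma_r}) \ge r$, so
\[
v(x) \;\ge\; r\,\Pb_x\!\left(\sigma_r \le n,\; \sigma_r < \sigma_F\right).
\]
Sending $n \to \infty$ and then $r \to \infty$ gives $\Pb_x(\sigma_F = \infty,\; \sup_n v(X_n) = \infty) = 0$. On the complementary event $\{\sigma_F = \infty,\; \sup_n v(X_n) < \infty\}$ the chain is eventually trapped in a finite sub-level set disjoint from $F$, so pigeonhole produces a state visited infinitely often; combining the strong Markov property at its first visit with Theorem~\ref{thrm:rectrans} forces that state to be recurrent, so $X$ visits $\cal{R}$. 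In either case $\Pb_x(\sigma_{F \cup \cal{R}} < \infty) = 1$.

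To upgrade "visit $F \cup \cal{R}$" to "visit $\cal{R}$", note that each state of $F \setminus \cal{R}$ is transient and hence almost surely visited only finitely often (Theorem~\ref{thrm:rectrans} together with the strong Markov property at the first visit). Iterating the strong Markov property at successive entries to $F \cup \cal{R}$ shows the chain cannot keep landing in $F \setminus \cal{R}$ forever; therefore it almost surely eventually enters $\cal{R}$, which is Tweedie recurrence.

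For the converse, Tweedie recurrence rules out transient closed communicating classes (a chain initialised inside one never reaches $\cal{R}$), so the finitely many closed communicating classes $\cal{C}_1, \dots, \cal{C}_k$ are all recurrent. Picking $x_i \in \cal{C}_i$ and setting $F := \{x_1, \dots, x_k\}$, Theorem~\ref{thrm:recclasspro} together with the hypothesis gives $\Pb_x(\sigma_F < \infty) = 1$ for every $x \in \s$. The natural candidate for $v$ is the potential $v(x) := \Eb_x\!\left[\sum_{n=0}^{\sigma_F - 1} f(X_n)\right] = \sum_{y \in \s \setminus F} G(x, y)\,f(y)$, where $G(x,y) := \sum_{n \ge 0} \Pb_x(X_n = y,\; \sigma_F > n)$ is the Green kernel of the chain killed at $F$. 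Each $G(x,y)$ is finite because the killed chain is transient on $\s \setminus F$, and conditioning at time one gives $Pv = v - f$ on $\s \setminus F$, so the Foster inequality $Pv \le v$ is automatic for any $f \ge 0$, while $v \equiv 0$ on $F$.

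The hard part is choosing $f \ge 0$ so that $v$ is simultaneously finite everywhere and norm-like. My plan is a diagonal construction: enumerate $\s \setminus F = \{y_1, y_2, \dots\}$ and pick $f(y_k) > 0$ with $f(y_k)\,G(y_j, y_k) \le 2^{-k}$ for all $j \le k$ (making each $v(y_j) = \sum_k G(y_j, y_k)\,f(y_k) < \infty$) and $f(y_k)\,G(y_k, y_k) \ge k$ (making $v(y_k) \ge k$, hence $\{v < r\}$ finite). The main technical obstacle is that these two demands on $f(y_k)$ can conflict when $G(y_k, y_k)$ is small relative to the $G(y_j, y_k)$ with $j < k$, which is the generic situation when $y_k$ receives a lot of mass from earlier states while returning little mass to itself; I expect to resolve this either by reordering the enumeration so that $G(y_k, y_k)$ dominates the relevant off-diagonal entries, or by slightly enlarging $F$ so as to shrink those off-diagonal entries of $G$ in a controlled way.
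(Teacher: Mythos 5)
Your forward direction is correct and takes a genuinely different, and in some ways more economical, route than the paper. Where the paper first strips transient states out of $F$ (Lemma~\ref{lem:Fnotrans0}), passes to the auxiliary matrix $\hat P$ that makes $\cal R$ absorbing, iterates $\hat P_n v \le v$, and then invokes the convergence $\hat p_n(x,z)\to0$ for transient $z$ (Theorem~\ref{thrm:pointlims0}) together with a Markov-type inequality, you work directly with the stopped supermartingale $v(X_{n\wedge\sigma_r\wedge\sigma_F})$, get $\Pb_x(\sigma_r<\sigma_F)\le v(x)/r$, pass to the limit to kill $\{\sigma_F=\infty,\ \sup_n v(X_n)=\infty\}$, and handle the complementary event by pigeonhole plus the fact that transient states are visited only finitely often. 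Your route buys you: no need to pre-clean $F$ of transient states, no appeal to the limit theorems for $\hat p_n$, and a proof that works entirely at the path level; the extra price is that the ``cannot keep landing in $F\setminus\cal R$'' upgrade at the end has to be argued a bit carefully (iterated entrance times rather than ``the last visit'', which is not a stopping time), but you already gesture at the right fix.

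The converse has a genuine gap, and the obstacle is not the one you identify. You worry that $G(y_k,y_k)$ might be \emph{small} relative to $G(y_j,y_k)$ for $j<k$; in fact that can never happen, since the strong Markov property at the first visit to $y_k$ before $F$ gives
\[
G(y_j,y_k)=\Pb_{y_j}\!\left(\sigma_{y_k}<\sigma_F\right)G(y_k,y_k)\le G(y_k,y_k)
\]
for every $j$. The real problem is the opposite: the ratio $G(y_j,y_k)/G(y_k,y_k)=\Pb_{y_j}(\sigma_{y_k}<\sigma_F)$ can be as close to $1$ as you like, and then your two demands are jointly infeasible. Take gambler's ruin on $\n$ with $a=1/2$, $F=\{0\}$, and enumerate $y_k=k$: here $G(j,k)=2\min(j,k)$, so $\max_{j<k}G(y_j,y_k)=2(k-1)$ while $G(y_k,y_k)=2k$, and the upper constraint $f(y_k)\le 2^{-k}/(2(k-1))$ is incompatible with the lower constraint $f(y_k)\ge k/(2k)=1/2$ for every $k\ge 2$. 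Reordering the enumeration cannot help (for any enumeration, some $j<k$ has $y_j$ adjacent to $y_k$, giving $\Pb_{y_j}(\sigma_{y_k}<\sigma_F)\to1$), and enlarging $F$ leaves the same structure in the unbounded part of the state space. More fundamentally, insisting that a single diagonal term $G(y_k,y_k)f(y_k)$ carry the growth of $v(y_k)$ is the wrong ansatz: in this same example $f(y)=1/y^2$ does produce a finite norm-like potential $Gf(x)\approx 2\log x$, but there every diagonal term $G(y,y)f(y)=2/y$ tends to $0$; the growth comes from the accumulation $\sum_{y<x}G(x,y)f(y)$, not from any single term. So a potential-theoretic construction might be salvageable, but the diagonal scheme as written is not, and the fixes you propose do not address the actual difficulty. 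The paper sidesteps all of this by using escape probabilities $v_r(x):=\Pb_x(\sigma_r\le\sigma_F)$ rather than Green potentials: each $v_r$ is superharmonic off $F$, automatically bounded by $1$ (so finiteness of $\sum_k v_{r_k}$ is arranged by choosing the $r_k$ to grow fast enough that $v_{r_k}\le 2^{-k}$ on $\s_k$), and identically $1$ outside $\s_r$ (so the partial sums are bounded below by $N$ off $\s_{r_N}$, giving norm-likeness for free).
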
 
If there are infinitely many closed communicating classes, it may be the case that no $(v,F)$ satisfying the theorem's premise exists even if the chain is Tweedie recurrent. For instance, let $K=(k(x,y))_{x,y\in\n}$ be the one-step matrix of an irreducible and recurrent $\n$-valued chain $W$ and let $X=(X^1,X^2)$ be a chain on $\n^2$ with one-step matrix
$$p((x_1,x_2),(y_1,y_2))=\left\{\begin{array}{ll}k(x_2,y_2)&\text{if }y_1=x_1\\0&\text{if }y_1\neq x_1\end{array}\right.\quad\forall x_1,x_2,y_1,y_2\in\n.$$
That is, $X$ remains in the slice $\n_i:=\{(i,x):x\in\n\}$ of $\n^2$ indexed by its first coordinate $i:=X^1_0$, moving up and down this slice in the same manner that $W$ moves up and down in $\n$. Because $W$ is irreducible and recurrent, it follows that $X$ is Tweedie recurrent with closed communicating classes $\n_1,\n_2,\dots$ Suppose that there exists a $(v,F)$ satisfying the premise of Theorem~\ref{thrm:lyarec}. If $v$ is not non-negative, then replace it by $v-(\min_{x\in\s}v(x))$ and note that the theorem's premise still holds. Because $F$ is finite, there exists infinitely many slices $\n_i$ that do not intersect with $F$. For any such $\n_i$, \eqref{eq:lyarec} implies that $v_i(x):=v(i,x)$ is \emph{superharmonic} for $K$:
$$Kv_i(x)\leq v_i(x)\quad\forall x\in\n.$$
As is well-known, c.f. \citep[Prop.~I.5.1]{Asmussen2003}, any non-negative superharmonic function is necessarily constant. Thus, $v(i,x)=v_i(x)=c$ for all $x$ in $\n$ and some real number $c$ showing that $v$ cannot be norm-like.

\subsubsection*{A proof of Theorem~\ref{thrm:lyarec}} We do each direction separately. For the forward direction, we need the following lemma showing  we lose  nothing by assuming that the set $F$ in \eqref{eq:lyarec} contains only recurrent states (in which case, $\phi_{F\cup\cal{R}}=\phi_{\cal{R}}$).
\begin{lemma}\label{lem:Fnotrans0}Let $F$ be a finite set such that $\Pbl{\{\phi_F<\infty\}}=1$ for all initial distributions $\gamma$. The same inequality holds if we remove all transient states from $F$.\end{lemma}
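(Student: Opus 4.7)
The plan is to split $F$ as $F = F' \cup T$ where $F' := F \cap \cal{R}$ is the set of recurrent states in $F$ and $T := F \setminus \cal{R}$ is the set of transient states in $F$, and then argue that the chain visits $F$ infinitely often while only finitely many of those visits can land in $T$, so that $F'$ must be entered at least once (indeed, infinitely often) $\Pb_\gamma$-almost surely. Note $T$ is finite since $F$ is.

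First I would upgrade the hypothesis from ``$\phi_F < \infty$ a.s.'' to ``$\phi_F^k < \infty$ a.s.\ for every $k$''. Fix any $\gamma$. Applying the hypothesis with $\gamma := 1_x$ gives $\Pb_x(\{\phi_F<\infty\})=1$ for each $x \in F$. Then, setting $A := F$, $B := \emptyset$, $k := k-1$, $l := 1$ in \eqref{eq:dtenttimestr} of Lemma~\ref{lem:abkl} (so that $\phi_B = \infty$), induction on $k$ yields
\[ \Pb_\gamma(\{\phi_F^k < \infty\}) = \sum_{x \in F} \Pb_\gamma(\{\phi_F^{k-1}<\infty, X_{\phi_F^{k-1}} = x\}) \Pb_x(\{\phi_F<\infty\}) = \Pb_\gamma(\{\phi_F^{k-1}<\infty\}) = \dots = 1. \]
Letting $N_F := \sum_{n=1}^\infty 1_F(X_n) = \sum_{k=1}^\infty 1_{\{\phi_F^k < \infty\}}$ denote the total number of visits to $F$ after time zero, monotone convergence then gives $\Pb_\gamma(\{N_F = \infty\}) = 1$.

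Next I would show that $T$ contributes only finitely many visits. For each transient $x \in T$, let $N_x := \sum_{n=1}^\infty 1_x(X_n)$. An argument analogous to the one in the proof of Theorem~\ref{thrm:rectrans} (conditioning on $\phi_x$ via the strong Markov property, just as in Lemma~\ref{lem:dtenttimestr3}, and using that $\Pb_x(\{N_x=\infty\})=0$ by Theorem~\ref{thrm:rectrans}) gives
\[ \Pb_\gamma(\{N_x = \infty\}) = \Pb_\gamma(\{\phi_x < \infty, N_x = \infty\}) = \sum_{m=1}^\infty \Pb_\gamma(\{\phi_x = m\}) \Pb_x(\{N_x = \infty\}) = 0. \]
Since $T$ is finite, $\Pb_\gamma(\{N_T < \infty\}) = 1$ where $N_T := \sum_{x \in T} N_x$.

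Combining the two steps, $N_{F'} = N_F - N_T = \infty$ $\Pb_\gamma$-a.s., so in particular $\phi_{F'} \leq \phi_{F'}^1 < \infty$ $\Pb_\gamma$-a.s., which is the desired conclusion. No real obstacle is expected; the only care is in the first step, where one must make sure to invoke the hypothesis with each point mass $1_x$ ($x \in F$) to feed into the strong-Markov iteration.
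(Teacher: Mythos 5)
Your proof is correct but takes a genuinely different route from the paper's. You argue directly by counting visits: the iterated strong-Markov step shows $\Pb_\gamma(\{\phi_F^k<\infty\})=1$ for every $k$, hence $N_F=\infty$ a.s.; Theorem~\ref{thrm:rectrans} (together with one more strong-Markov application, as in Lemma~\ref{lem:dtenttimestr3}) shows each transient state contributes only finitely many visits a.s.; since $F$ is finite, the finitely many transient states together contribute finitely many visits, so the recurrent part $F\cap\cal{R}$ absorbs infinitely many and hence is hit a.s. The paper instead strips transient states off one at a time: for each transient $z\in F$ it shows $F\to F\backslash\{z\}$ via the hypothesis and the transitivity of $\to$, then invokes the geometric-trials Lemma~\ref{lem:geotrialarg}$(i)$ with $B:=F\backslash\{z\}$, iterating until all transient states are gone. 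Your argument is the more self-contained and, for this particular statement, probably the cleaner one — it handles every transient state at once, never needs the accessibility bookkeeping, and makes plain why $F\cap\cal{R}\neq\emptyset$. The cost is that it does not templatize to the quantitative companions of this result: the paper reuses its proof verbatim for Lemma~\ref{lem:Fnotrans} (finite means) and Lemma~\ref{lem:Fnotransgeo} (exponential moments) simply by swapping Lemma~\ref{lem:geotrialarg}$(i)$ for parts $(ii)$ or $(iii)$, whereas the "count the visits" argument conveys only almost-sure finiteness, not moment information. (One small correction to your citation: the identity $\Pb_\gamma(\{\phi_F^k<\infty\})=\Pb_\gamma(\{\phi_F^{k-1}<\infty\})$ is most cleanly obtained from Lemma~\ref{lem:dtenttimestr3} with $\varsigma:=\phi_F^{k-1}$, $A:=F$, as the paper does inside the proof of Lemma~\ref{lem:geotrialarg}$(i)$; your appeal to \eqref{eq:dtenttimestr} with $B=\emptyset$ works too but requires the small observation that $\phi_\emptyset=\infty$ so that $\{\phi_F^{k-1}<\phi_\emptyset\}=\{\phi_F^{k-1}<\infty\}$, which you did note.)
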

\begin{proof} Pick any transient state $z\in F$ and let $F_z:=F\backslash \{z\}$ denote $F$ with $z$ removed from it. It must be the case that $F_z$ is accessible from $z$ (i.e., $\{z\}\to F$ as in Definition~\ref{def:accesible}). Otherwise, $\Pbz{\{\phi_z<\infty\}}=\Pbz{\{\phi_F<\infty\}}=1$ contradicting the transience of $z$. 

Clearly, $F$ itself is accessible from any given state $x$ in $F$. Thus, either $F_z$ is accessible from $x$, or $z$ is accessible from $x$. In the latter case, $F_z$ is accessible from $x$ because it is accessible from $z$. It then follows from the transitivity of $\to$ that $F_z$ is accessible from every $x$ in $F$ (i.e., $F\to F_z$) and, so, Lemma~\ref{lem:geotrialarg}$(i)$ that $\Pbl{\{\phi_{F_z}<\infty\}}=1$ for all initial distributions $\gamma$. Because $F$ is finite, it contains at most finitely many transient states. Hence, the result follows by repeatedly replacing $F$ with $F_z$ throughout the above argument.
\end{proof}
The proof of the forward direction then goes as follows:
\begin{proof}[Proof of Theorem \ref{thrm:lyarec} (the forward direction).]
Given that $\Pb_\gamma=\sum_{x\in\s}\gamma(x)\Pb_x$, it suffices to show that
\begin{equation}\label{eq::dm8932bt3bqg8ygh39n3q4g}\Pbx{\{\phi_{\cal{R}}<\infty\}}=1\quad\forall x\in\s.\end{equation}
Because of Lemma~\ref{lem:Fnotrans0}, we may assume that $F$ is contained in $\cal{R}$. To argue the above, set $\cal{D}$ to be the set of transient states such that the exit time $\sigma$ in \eqref{eq:eddef} equals the hitting time of $\cal{R}$. 
Because $F\subseteq\cal{R}$, inequality in \eqref{thrm:lyarec} implies that
$$\hat{P}v(x)\leq v(x)\quad\forall x\in\s,$$
where $\hat{P}=(\hat{p}(x,y))_{x,y\in\s}$ is as in \eqref{eq:phat}. Recall that $\hat{P}$ is a one-step matrix and that its associated chain $\hat{X}$ (c.f. discussion after Theorem~\ref{characttd}) is identical to $X$ except that each state inside $\cal{R}$ has been turned into an absorbing state. Iterating the above, we find that
$$\hat{P}_nv(x)\leq v(x)\quad\forall x\in\s,\enskip n>0,$$
%
where $\hat{P}_n=(\hat{p}_n(x,y))_{x,y\in\s}$ denotes the $n$-step matrix of $\hat{X}$. We then have that
\begin{equation}\label{eq:nt24n3f8a97wj90wajf}\sum_{z\in\s_r}\hat{p}_n(x,z)=1-\sum_{z\not\in\s_r}\hat{p}_n(x,z)\geq 1-\frac{1}{r}\sum_{z\not\in\s_r}\hat{p}_n(x,z)v(z)\geq 1-\frac{1}{r}\hat{P}_nv(x)\geq 1-\frac{v(x)}{r}\enskip \forall x\in\s,\end{equation}
where $\s_r$ denotes the $r$th sublevel set of $v$ (Definition~\ref{def:normlike}).

Because the dynamics of $X$ and $\hat{X}$ are identical except that the states in $\cal{R}$ are absorbing for $\hat{X}$, any state that is transient for $X$ is also transient for $\hat{X}$ (to formally argue this, use Theorem~\ref{characttd}). For this reason, Theorem~\ref{thrm:pointlims0} implies that 
$$\lim_{n\to\infty}\hat{p}_n(x,z)=0\quad\forall x\in\s,\enskip z\not\in \cal{R},$$
and it follows that
$$\lim_{n\to\infty}\sum_{z\in\s_r}\hat{p}_n(x,z)=\lim_{n\to\infty}\sum_{z\in\s_r\cap\cal{R}}\hat{p}_n(x,z)\quad\forall x\in\s,\enskip r>0.$$
%
The limit on the right-hand side exists because  Theorem \ref{charactd} shows that
$$\hat{p}_n(x,z)=\Pbx{\{\sigma\leq n,X_\sigma=z\}}\quad\forall x\in\s,\enskip z\in \cal{R},\enskip n\geq0.$$
Moreover,  taking the limit $n\to\infty$ in \eqref{eq:nt24n3f8a97wj90wajf}, we find that
\begin{align*}\Pbx{\{\sigma<\infty\}}&=\lim_{n\to\infty}\sum_{z\in \cal{R}}\Pbx{\{\sigma\leq n,X_n=z\}}=\lim_{n\to\infty}\sum_{z\in \cal{R}}\Pbx{\{\sigma\leq n,X_\sigma=z\}}\\
&=\lim_{n\to\infty}\sum_{z\in \cal{R}}\hat{p}_n(x,z)\geq\lim_{n\to\infty}\sum_{z\in\cal{S}_r\cap\cal{R}}\hat{p}_n(x,z)\geq 1-\frac{v(x)}{r}\quad\forall x\in\s,\enskip r>0.\end{align*}
%
Taking the limit $r\to\infty$ in the above, we find that $\Pbx{\{\sigma<\infty\}}=1$ for all $x\in\s$. 

Consider now the entrance time $\phi_{\cal{R}}$ to ${\cal{R}}$ (for $X$). By its definition,
$$\Pbx{\{\phi_{\cal{R}}<\infty\}}=\Pbx{\{\sigma<\infty\}}=1\quad\forall x\not\in {\cal{R}}.$$ 
For the states inside ${\cal{R}}$, we use Proposition~\ref{prop:hitprobeqs} to obtain
\begin{align*}\Pbx{\{\phi_{\cal{R}}<\infty\}}=\sum_{z\in {\cal{R}}}p(x,z)+\sum_{z\not\in {\cal{R}}}\Pbx{\{X_1=z\}}\Pbz{\{\phi_{\cal{R}}<\infty\}}=\sum_{z\in {\cal{R}}}p(x,z)+\sum_{z\not\in {\cal{R}}}p(x,z)=1,\end{align*}
for all $x\in\s$, completing the proof of \eqref{eq::dm8932bt3bqg8ygh39n3q4g}.
\end{proof}
We split the proof of the reverse direction into three lemmas:
\begin{lemma}\label{lem:dtexitrinf}Let $(\s_r)_{r\in\zp}$ be a sequence of increasing truncations that approach the state space ($\cup_{r=1}^\infty\s_r=\s$) and $(\sigma_r)_{r\in\zp}$ be the corresponding sequence\glsadd{sigmar} of exit times:
\begin{equation}\label{eq:sigmar}\sigma_r(\omega):=\inf\{n\geq0:X_n(\omega)\not\in \s_r\}\quad\forall\omega\in\Omega,\enskip r>0.\end{equation}
For any initial distribution $\gamma$,
$$\lim_{r\to\infty}\sigma_r=\infty\quad\Pb_\gamma\text{-almost surely}.$$
\end{lemma}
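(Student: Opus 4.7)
The plan is to argue this pathwise (so in fact the convergence holds everywhere on $\Omega$, not merely $\Pb_\gamma$-almost surely).

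First I would observe that $(\sigma_r)_{r\in\zp}$ is a pointwise non-decreasing sequence: for every $\omega\in\Omega$, the inclusion $\s_r\subseteq\s_{r+1}$ immediately gives $\{n\geq0:X_n(\omega)\not\in\s_{r+1}\}\subseteq\{n\geq0:X_n(\omega)\not\in\s_r\}$, so taking infima yields $\sigma_r(\omega)\leq \sigma_{r+1}(\omega)$. Consequently, the limit
\[
\sigma_\infty(\omega):=\lim_{r\to\infty}\sigma_r(\omega)=\sup_{r\in\zp}\sigma_r(\omega)
\]
exists in $\n_E$ for every $\omega\in\Omega$.

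Next I would fix $\omega\in\Omega$ and any $n\in\n$ and argue that $\sigma_r(\omega)>n$ for all sufficiently large $r$. The key observation is that $\{X_0(\omega),X_1(\omega),\dots,X_n(\omega)\}$ is a finite subset of $\s$. Because $\cup_{r=1}^\infty\s_r=\s$ and the truncations are increasing, each of these finitely many states eventually lies in $\s_r$; more precisely, letting $r_k(\omega)$ be the smallest index with $X_k(\omega)\in\s_{r_k(\omega)}$ and setting $R(n,\omega):=\max\{r_0(\omega),\dots,r_n(\omega)\}$, we have $X_k(\omega)\in\s_r$ for every $k\in\{0,1,\dots,n\}$ and every $r\geq R(n,\omega)$. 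By the definition of $\sigma_r$ in~\eqref{eq:sigmar}, this forces $\sigma_r(\omega)>n$ for all such $r$, and hence $\sigma_\infty(\omega)\geq n$.

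Since $n$ was arbitrary, $\sigma_\infty(\omega)=\infty$ for every $\omega\in\Omega$, which is (more than) enough to conclude that $\sigma_r\to\infty$ $\Pb_\gamma$-almost surely for any initial distribution $\gamma$. There is no real obstacle here; the only mild subtlety is making sure that the argument is carried out for each $\omega$ separately (so that we can use the finiteness of $\{X_0(\omega),\dots,X_n(\omega)\}$) rather than trying to pass to an expectation or a uniform bound, which would require additional assumptions on $v$ or on the chain.
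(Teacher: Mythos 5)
Your proof is correct, and it takes a genuinely different — and in fact strictly stronger — route than the paper's. The paper works at the level of probabilities: it bounds $\Pbl{\{\sigma_r\leq k\}}\leq\sum_{j=0}^k p_j(\s_r^c)$, passes to the limit in $r$ using downwards monotone convergence to get $\Pbl{\{\sigma_\infty\leq k\}}=0$, and then lets $k\to\infty$. Your argument, by contrast, is purely pathwise: for any fixed $\omega$ and $n$, the finite set $\{X_0(\omega),\dots,X_n(\omega)\}$ is eventually swallowed by $\s_r$, forcing $\sigma_r(\omega)>n$ for all large $r$, and hence $\sigma_\infty(\omega)=\infty$ for \emph{every} $\omega$, not merely $\Pb_\gamma$-almost every $\omega$. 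What your argument buys is conceptual clarity and a stronger conclusion (surely, not just almost surely), with no reliance on the time-varying law or on convergence theorems for measures; what the paper's approach buys is a uniform style that mirrors the corresponding proof for continuous-time chains (Theorem~\ref{tautin}), where a pointwise argument is not available because explosion may genuinely occur on a set of positive probability. One tiny cosmetic remark: the final clause of your proof refers to "additional assumptions on $v$", but there is no $v$ in this lemma; you presumably meant some auxiliary test or Lyapunov function, and the sentence would be clearer without that reference.
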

\begin{proof}The limit $\sigma_\infty:=\lim_{r\to\infty}\sigma_r$ exists as the sequence $(\sigma_r)_{r\in\zp}$ is increasing. By definition,
$$\Pbl{\{\sigma_r\leq k\}}=\sum_{j=0}^k\Pbl{\{\sigma_r=j\}}\leq \sum_{j=0}^k\Pbl{\{X_j\not\in\s_r\}} =\sum_{j=0}^kp_j(\s_r^c)$$
and, given that $\cup_{r=1}^\infty\s_r=\s$, downwards monotone convergence shows that
$$\Pbl{\{\sigma_\infty\leq k\}}=\lim_{r\to\infty}\Pbl{\{\sigma_r\leq k\}}=\sum_{j=0}^k\lim_{r\to\infty}p_j(\s_r^c)=0.$$
Taking the limit $k\to\infty$ and applying monotone convergence completes the proof.
\end{proof}

\begin{lemma}\label{lem:dtexitrinf2}Suppose that the chain is Tweedie recurrent and has  $n\in\n$ closed communicating classes labelled $\cal{C}_1,\cal{C}_2,\dots,\cal{C}_n$. If $F=\{x_1,x_2,\dots,x_n\}$, where $x_1$ denotes a state in $\cal{C}_1$, $x_2$ one in $\cal{C}_2$, etc., then $\Pbl{\{\phi_F<\infty\}}=1$ for all initial distributions $\gamma$.
\end{lemma}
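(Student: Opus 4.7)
My plan is to combine three ingredients already established in the chapter: (i) a decomposition of $\cal{R}$ into the closed communicating classes, (ii) Proposition~\ref{prop:closedis}, which says the chain cannot visit two disjoint closed sets, and (iii) Corollary~\ref{cor:phixphic}, which says that once the chain enters a recurrent closed communicating class it visits every individual state in it.

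First, I would observe that the Tweedie recurrence hypothesis actually forces every one of $\cal{C}_1,\dots,\cal{C}_n$ to be recurrent. Indeed, suppose some $\cal{C}_j$ were transient. Then an initial distribution $\gamma$ concentrated on $\cal{C}_j$ would, by Proposition~\ref{prop:closed}, keep the chain inside $\cal{C}_j$ forever, and since transience is a class property (Theorem~\ref{thrm:recclasspro}) all of $\cal{C}_j$ would be disjoint from $\cal{R}$, giving $\Pbl{\{\phi_\cal{R}<\infty\}}=0$ and contradicting Definition~\ref{def:recurrentchains}. Combining this with the fact (also Theorem~\ref{thrm:recclasspro}) that every recurrent state belongs to some closed communicating class, I obtain the decomposition
$$\cal{R}=\bigcup_{i=1}^n\cal{C}_i.$$

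Next, I would invoke Proposition~\ref{prop:closedis} to see that the events $\{\phi_{\cal{C}_i}<\infty\}$, $i=1,\dots,n$, are pairwise $\Pb_\gamma$-almost-surely disjoint (the $\cal{C}_i$ being pairwise disjoint closed sets). Hence
$$\{\phi_\cal{R}<\infty\}=\bigcup_{i=1}^n\{\phi_{\cal{C}_i}<\infty\}\quad\Pb_\gamma\text{-a.s.}$$
For each $i$, Corollary~\ref{cor:phixphic} applied to the recurrent class $\cal{C}_i$ and the state $x_i\in\cal{C}_i$ yields
$$1_{\{\phi_{\cal{C}_i}<\infty\}}=1_{\{\phi_{x_i}<\infty\}}\quad\Pb_\gamma\text{-a.s.}$$

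Finally, since $\phi_F=\min_{1\leq i\leq n}\phi_{x_i}$, the event $\{\phi_F<\infty\}$ coincides, up to a $\Pb_\gamma$-null set, with $\bigcup_{i=1}^n\{\phi_{x_i}<\infty\}=\bigcup_{i=1}^n\{\phi_{\cal{C}_i}<\infty\}=\{\phi_\cal{R}<\infty\}$, and Tweedie recurrence gives this last event probability one. I do not anticipate any real technical obstacle; the only subtle step is the opening observation that Tweedie recurrence rules out transient closed communicating classes, and the finiteness of $n$ is what lets us pick a finite $F$ with one representative from each.
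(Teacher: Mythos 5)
Your proof is correct and rests on the same two key lemmas as the paper (Proposition~\ref{prop:closedis} and Corollary~\ref{cor:phixphic}), but the final accounting is cleaner and more direct. The paper partitions $\{\phi_F<\infty\}$ according to which $x_m$ is hit first, shows the events $\{\phi_F\neq\phi_{x_m}<\infty\}$ are null via Proposition~\ref{prop:closedis}, and then sums probabilities term-by-term; you instead observe directly that $\{\phi_F<\infty\}=\bigcup_i\{\phi_{x_i}<\infty\}$ as an event identity (since $\phi_F=\min_i\phi_{x_i}$), use Corollary~\ref{cor:phixphic} only to swap each $\{\phi_{x_i}<\infty\}$ for $\{\phi_{\cal{C}_i}<\infty\}$ up to a null set, and then conclude with $\bigcup_i\{\phi_{\cal{C}_i}<\infty\}=\{\phi_\cal{R}<\infty\}$. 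Two remarks. First, your middle invocation of Proposition~\ref{prop:closedis} is actually superfluous and the inference it supposedly justifies is a non-sequitur: the identity $\{\phi_\cal{R}<\infty\}=\bigcup_i\{\phi_{\cal{C}_i}<\infty\}$ follows immediately and deterministically from the set decomposition $\cal{R}=\bigcup_i\cal{C}_i$, and pairwise disjointness of events has nothing to do with it; you can simply delete that sentence. Second, your opening step --- arguing that Tweedie recurrence forces every $\cal{C}_j$ to be recurrent so that $\cal{R}=\bigcup_i\cal{C}_i$ and Corollary~\ref{cor:phixphic} applies --- is a point the paper uses tacitly but never spells out, and your explicit argument via Proposition~\ref{prop:closed} and the class property of transience is a genuine improvement in rigour.
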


\begin{proof}By definition, $\phi_{F}$ is at most $\phi_{x_m}$, and so 
\begin{align*}\{\phi_F\neq\phi_{x_m}<\infty\}&=\{\phi_F<\phi_{x_m}<\infty\}=\bigcup_{k\neq m}\{\phi_F=\phi_{x_k}<\phi_{x_m}<\infty\}\\
&\subseteq \bigcup_{k\neq m}\{\phi_{x_{k}}<\infty,\phi_{x_m}<\infty\}\subseteq \bigcup_{k\neq m}\{\phi_{\cal{C}_{k}}<\infty,\phi_{\cal{C}_m}<\infty\}\quad\forall m=1,2,\dots,n.\end{align*}
Taking expectations and applying Proposition~\ref{prop:closedis} then yields $\Pbl{\{\phi_F\neq\phi_{x_m}<\infty\}}=0$ for all $m$. It follows that
\begin{align*}\Pbl{\{\phi_F<\infty\}}&=\sum_{m=1}^n\Pbl{\{\phi_F=\phi_{x_m}<\infty\}}=\sum_{m=1}^n\Pbl{\{\phi_{x_m}<\infty\}}\end{align*}
Using Proposition~\ref{prop:closedis}~and~Corollary~\ref{cor:phixphic} then completes the proof:
$$\Pbl{\{\phi_F<\infty\}}=\sum_{m=1}^n\Pbl{\{\phi_{\cal{C}_m}<\infty\}}=\Pbl{\bigcup_{m=1}^n\{\phi_{\cal{C}_m}<\infty\}}=\Pbl{\{\phi_{\cal{R}}<\infty\}}=1,$$
for all initial distributions $\gamma$.

\end{proof}
The following lemma then completes the proof of Theorem~\ref{thrm:fosrec}:
\begin{lemma}\label{lem:fosrecfinal}If $F$ is a finite set such that $\Pbl{\{\phi_F<\infty\}}=1$ for all initial distributions $\gamma$, then there exists a non-negative norm-like function $v$ on $\s$ such that \eqref{eq:lyarec} holds.
\end{lemma}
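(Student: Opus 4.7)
The plan is to construct $v$ via a potential representation, so that the Foster-Lyapunov inequality~\eqref{eq:lyarec} falls out of a first-step analysis. I would set
$$v(x) := \Ebx{\sum_{n=0}^{\phi_F-1} c(X_n)} \text{ for } x \notin F, \qquad v(x) := 0 \text{ for } x \in F,$$
where $c : \s \to [0,\infty)$ is a non-negative function yet to be chosen. Conditioning on $X_1$ for $x \notin F$ (using $\phi_F \geq 1$, the boundary value $v=0$ on $F$, and the strong Markov property in the style of the proof of Lemma~\ref{eqnsd}) gives
$$v(x) = c(x) + \sum_{y \notin F} p(x,y) v(y) = c(x) + Pv(x),$$
whence $Pv(x) = v(x) - c(x) \leq v(x)$, which is exactly~\eqref{eq:lyarec}. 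Non-negativity of $v$ is immediate, and the pointwise lower bound $v(x) \geq c(x)$ (from the $n=0$ summand alone) transfers norm-likeness from $c$ to $v$.

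What remains is exhibiting a suitable $c$. For this I need the Green potentials $\nu_x(c) := \sum_y c(y) \nu_x(y)$ to be finite, where $\nu_x(y) := \Ebx{\sum_{n=0}^{\phi_F-1} 1_y(X_n)}$ is the expected number of visits to $y$ before $\phi_F$. A geometric-trials argument in the spirit of Lemma~\ref{lem:AFtail1} is key here: since $\Pby{\{\phi_F<\infty\}}=1$, we cannot have $\Pby{\{\phi_y<\phi_F\}}=1$ (else the strong Markov property would force $y$ to be visited infinitely often before $\phi_F$, contradicting $\phi_F < \infty$). The strong Markov property then yields $\nu_x(y) = \Pbx{\{\phi_y<\phi_F\}}/(1-\Pby{\{\phi_y<\phi_F\}}) < \infty$ for every pair $(x,y)$ with $y \notin F$ (and $\nu_x(y) = 1_x(y)$ for $y \in F$); in particular each $\nu_x$ is $\sigma$-finite.

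The main obstacle is then calibrating $c$ to be simultaneously norm-like and $\nu_x$-integrable for every $x$. Enumerating $\s = \{y_1,y_2,\dots\}$, I would proceed by diagonalisation: select $c(y_k)$ along the enumeration large enough to force $c \to \infty$ while insisting that $c(y_k)\,\nu_{y_j}(y_k) \leq 2^{-k}$ for all $j \leq k$, so that $\nu_{y_j}(c) \leq \sum_{k < j} c(y_k) \nu_{y_j}(y_k) + \sum_{k \geq j} 2^{-k} < \infty$ for every $j$. The delicate step is arranging the enumeration (say, by the sizes of the self-potentials $\nu_y(y)$) so that the growth requirement on $c$ and the $2^{-k}$-budget can both be met. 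Should these requirements prove incompatible for some pathological chain, one falls back on the paper's definition of norm-like, which permits $v$ to take the value $+\infty$; the bound $v(x) \geq c(x)$ with $c \to \infty$ still guarantees that every sublevel set $\{v<r\}$ is finite.
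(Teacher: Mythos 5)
Your first-step analysis is sound: with $v(x)=\Ebx{\sum_{n=0}^{\phi_F-1}c(X_n)}$ for $x\not\in F$ and $v\equiv 0$ on $F$, a Markov-property decomposition at time $1$ does give $Pv=v-c$ off $F$, and a non-negative norm-like $c$ with finite potentials $\nu_x(c)$ would indeed yield what is wanted. The gap is in the claim that such a $c$ always exists. There is no room for a diagonalisation to succeed here, because the finiteness obstruction is not state-by-state but global: the Green's function $\nu_x(y)$ need not decay in $y$, and then no norm-like $c$ can have $\nu_x(c)<\infty$. Concretely, take the gambler's ruin chain of Section~\ref{sec:gamblers} with an unbiased coin ($a=1/2$) and $F:=\{0\}$: the hypothesis of the lemma holds (the chain reaches $0$ almost surely from every state), and a direct gambler's-ruin computation gives $\nu_x(y)=2\min(x,y)$ for $x,y\geq 1$. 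Hence $\nu_x(c)\geq 2x\sum_{y>x}c(y)=\infty$ whenever $c$ is norm-like (since norm-like forces $c(y)\to\infty$ and in particular $\sum_y c(y)=\infty$). So for this chain the potential construction never produces a finite-valued $v$, no matter how the enumeration or the budgets $2^{-k}$ are arranged. Your fallback to an $\r_E$-valued $v$ does not rescue the argument either: Theorem~\ref{thrm:lyarec}'s converse explicitly requires a real-valued $v$, and the paper's own proof of this lemma devotes its final paragraph to checking $v(x)<\infty$ for all $x$.

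The paper's construction sidesteps this by \emph{not} building $v$ as a potential of a norm-like density. It sets $v_r(x):=\Pbx{\{\sigma_r\leq\sigma_F\}}$ for an increasing sequence of finite truncations $\s_r\supseteq F$, observes that each $v_r$ is bounded, equals $1$ outside $\s_r$, and satisfies $Pv_r\leq v_r$ off $F$ (in fact with equality inside $\s_r\setminus F$), and then takes $v:=\sum_k v_{r_k}$ along a subsequence $r_k\to\infty$ chosen so that $v_{r_k}(x)\leq 2^{-k}$ on $\s_k$. Finiteness of $v$ is then automatic, and norm-likeness comes not from the defect $v-Pv$ being large but from the additive contribution of $1$ from each $v_{r_k}$ at states outside $\s_{r_k}$. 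In the symmetric random walk example this gives $v_r(x)=x/(r+1)\wedge 1$, whose sum along a sparse subsequence is finite and unbounded---whereas, as above, no potential of a norm-like $c$ is even finite.
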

\begin{proof}Let $(\s_r)_{r\in\zp}$ be any sequence of increasing finite truncations containing $F$ ($F\subseteq\s_1$) that approach the state space ($\cup_{r=1}^\infty\s_r=\s$) and let $(\sigma_r)_{r\in\zp}$ be the corresponding sequence of exit times~\eqref{eq:sigmar}. Additionally,  let 
$$\sigma_F(\omega):=\inf\{n\geq0:X_n(\omega)\in F\}\quad\forall \omega\in\Omega$$
be the hitting time of $F$ and, for any given state $x$,
$$v_r(x):=\Pbx{\{\sigma_r\leq \sigma_F\}}$$
be the probability that the chain leaves $\s_r$ without hitting $F$ if it starts at $x$. By definition, $v_r(x)=1$ if  $x$ lies outside of $\s_r$ and we have that
\begin{equation}\label{eq:nfwa89nf7w8af23h8ya}Pv_r(x)=\sum_{y\in\s}p(x,y)\Pby{\{\sigma_r\leq \sigma_F\}}\leq \sum_{y\in\s}p(x,y)=1=v_r(x)\quad\forall x\not\in\s_r.\end{equation}
If instead $x$ lies inside of $\s_r$ but outside of $F$, then
$$\sigma_r=\inf\{n\geq1:X_n\not\in \s_r\},\quad\sigma_F=\inf\{n\geq1:X_n\in F\},\quad\Pb_x\text{-almost surely}.$$
For this reason, an application of the strong Markov property similar to that in the proof of Lemma~\ref{lem:dtenttimestr3} yields
\begin{align*}Pv_r(x)&=\sum_{y\in\s}p(x,y)v_r(y)=\sum_{y\in\s}\Pbx{\{X_1=y\}}\Pby{\{\sigma_r\leq \sigma_F\}}\\
&=\sum_{y\in\s}\Pbx{\{X_1=y,\inf\{n\geq1:X_n\not\in \s_r\}\leq\inf\{n\geq1:X_n\in F\}\}}\\
&=\Pbx{\{\sigma_r\leq \sigma_F\}}=v_r(x)\end{align*}
for all $x$ in $\s_r$ but outside $F$. The above and \eqref{eq:nfwa89nf7w8af23h8ya} show that $v_r$ satisfies the inequality in \eqref{thrm:lyarec}. 

However, $v_r$ is clearly not norm-like given that it is bounded. To obtain a norm-like function let $(r_k)_{k\in\zp}$ be any increasing sequence of positive integers and note that 
$$\sum_{y\in\s}p(x,y)\sum_{k=1}^\infty v_{r_k}(y)\leq \sum_{k=1}^\infty \sum_{y\in\s}p(x,y)v_{r_k}(y)\leq \sum_{k=1}^\infty v_r(x)\quad\forall x\not\in F $$
because of Fatou's lemma. That is, the function $v$ defined by
$$v(x):=\sum_{k=1}^\infty v_{r_k}(x)\quad\forall x\in\s$$
also satisfies the inequality in \eqref{thrm:lyarec}. Moreover, because $v_{r_k}(x)=1$ for all $x$ outside $\s_{r_k}$, $v(x)$ is at least $N$ if  $x$ does not belong to $\s_{r_N}$ and it follows that $v$ is norm-like. 

For $v$ to meet all the conditions in the theorem's premise, we  only have left to show that it is finite (i.e., $v(x)<\infty$ for all $x\in\s$). The trick here is to pick the sequence $(r_k)_{k\in\zp}$ carefully. In particular, our premise implies that
$$\Pbx{\{\sigma_F<\infty\}}=\Pbx{\{\phi_F<\infty\}}=1\quad\forall x\not\in F.$$
Given that $\sigma_r$ tends to $\infty$ with $\Pb_x$-probability one (Lemma \ref{lem:dtexitrinf}), it follows from the above that
$$\lim_{r\to\infty} v_r(x)=\lim_{r\to\infty}\Pbx{\{\sigma_r\leq \sigma_F\}}=\Pbx{\{\sigma_F=\infty\}}=0\quad\forall x\in\s.$$
For this reason, finiteness of $\s_1,\s_2,\dots$ allows us to find an increasing sequence $(r_k)_{k\in\zp}$ tending to infinity  such that
$$v_{r_k}(x)\leq \frac{1}{2^k}\quad \forall x\in\s_k,\enskip k>0.$$
With this choice, we have that, for any given $x\in\s$,
$$v(x)=\sum_{k=1}^\infty v_{r_k}(x)=\sum_{k=1}^{K-1} v_{r_k}(x)+\sum_{k=K}^\infty\frac{1}{2^k}<\infty,$$
where $K$ is large enough that $x$ belongs to $\s_K$.
\end{proof}

\subsubsection*{Notes and references} The sufficiency of Theorem~\ref{thrm:lyarec} was first shown in \citep{Foster1953} for irreducible chains and singleton sets $F$. This was later generalised to arbitrary finite sets in \citep{Pakes1969}. The generalisation for non-irreducible chains was proven in \citep{Tweedie1975b}. The necessity was first shown in \citep{Mertens1978} for irreducible and aperiodic chains. To the best of my knowledge, it took another fifteen years until the theorem's necessity was proven for the general case in the first edition of \citep{Meyn2009} (where it was shown for a more general class of discrete-time Markov processes).

\subsection{Foster-Lyapunov criteria II: Foster's theorem*}\label{sec:fosters}

A fantastically useful result is Theorem~\ref{thrm:fosters} below, commonly known as \emph{Foster's theorem}:\index{Foster-Lyapunov criteria}\index{Foster's theorem}  
\begin{theorem}[Foster's theorem]\label{thrm:fosters} If there exists a non-negative real-valued function $v$ on $\s$, a finite subset $F$ of $\s$, and a real number $b$ satisfying 
\begin{equation}\label{eq:fosters}Pv(x)\leq v(x)-1+b1_{F}(x)\quad \forall x\in\s,\end{equation}
then the chain is positive Tweedie recurrent and has a finite number of closed communicating classes. Conversely, if the chain is positive Tweedie recurrent and if the state space contains no transient states and only a finite number of closed communicating classes, then there exists $(v,F,b)$ satisfying \eqref{eq:fosters} with $v$ real-valued and non-negative, $F$ finite, and $b$ real.
\end{theorem}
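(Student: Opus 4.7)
The plan is to extract a uniform bound on $\mathbb{E}_x[\phi_F]$ from the drift inequality via Dynkin's formula (Theorem~\ref{thrm:dynkin}). Applying Dynkin to the truncated function $v \wedge N$ (to meet the boundedness hypothesis) with $g \equiv 1$ and the bounded stopping time $\phi_F \wedge n$, then letting $N$ and $n$ tend to infinity using monotone convergence and non-negativity of $v$, one obtains $\mathbb{E}_x[\phi_F] \leq v(x)$ for $x \notin F$: the term $b \cdot 1_F(X_m)$ vanishes for all $m < \phi_F$ because the chain starts outside $F$ and does not enter $F$ before $\phi_F$. For $x \in F$, a one-step Markov decomposition gives
$$\mathbb{E}_x[\phi_F] \;=\; 1 + \sum_{y \notin F} p(x,y)\, \mathbb{E}_y[\phi_F] \;\leq\; 1 + Pv(x) \;\leq\; v(x) + b.$$
In particular $\phi_F < \infty$ holds $\mathbb{P}_\gamma$-a.s.~for every initial distribution $\gamma$.

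\textbf{Bootstrapping to positive Tweedie recurrence.} The induced process $Y_k := X_{\phi_F^k}$ is a Markov chain on the finite set $F$, as follows by iterating the strong Markov property (Theorem~\ref{thrm:strmkvpath}). Any finite-state chain admits at least one positive recurrent closed communicating class $C \subseteq F$; picking $x^\ast \in C$, the induced return time $N^\ast$ to $x^\ast$ has finite mean. A short computation with the tower property, the strong Markov property, and the uniform bound $M := \max_{y \in F}\mathbb{E}_y[\phi_F] < \infty$ then yields $\mathbb{E}_{x^\ast}[\phi_{x^\ast}] \leq M\cdot \mathbb{E}_{x^\ast}[N^\ast] < \infty$ for the original chain, so $x^\ast \in \mathcal{R}_+$. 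Moreover, every closed communicating class of $X$ must meet $F$ (a chain started in a closed class stays there by Proposition~\ref{prop:closed} yet still hits $F$ a.s.), which bounds the number of closed classes by $|F|$. Finally, $F \to \mathcal{R}_+$ since from any $y \in F$ the induced chain reaches $C$, and $\mathbb{P}_x(\phi_F < \infty) = 1$ for every $x$; the geometric trials principle of Lemma~\ref{lem:geotrialarg}(i) then delivers $\mathbb{P}_\gamma(\phi_{\mathcal{R}_+} < \infty) = 1$ for every $\gamma$, establishing positive Tweedie recurrence.

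\textbf{Converse and main obstacle.} For the converse, choose one representative $x_i$ from each of the finitely many closed classes $\mathcal{C}_1, \ldots, \mathcal{C}_n$, set $F := \{x_1, \ldots, x_n\}$, and define $v(x) := \mathbb{E}_x[\phi_F]$. The absence of transient states forces $x \in \mathcal{C}_i$ for some $i$, whence $\phi_F \leq \phi_{x_i}$; positive recurrence of $\mathcal{C}_i$ (Theorem~\ref{thrm:posrecchar}) then gives $v(x) < \infty$, so $v$ is finite, real-valued, and non-negative. A one-step strong Markov decomposition of $\phi_F$ yields the exact identity
$$Pv(x) \;=\; v(x) - 1 + \sum_{y \in F} p(x,y)\, v(y),$$
which reads $Pv(x) \leq v(x) - 1$ off $F$ and $Pv(x) \leq v(x) - 1 + b$ on $F$ with $b := \max_i v(x_i) < \infty$. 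The step I expect to be hardest is the bootstrap in the second paragraph: passing from ``$\phi_F$ has finite mean'' to ``some $x^\ast \in F$ has finite mean return time for the original chain'' requires the induced-chain construction together with a Wald-type identity, and it is precisely the finiteness of $F$ (hence the existence of the uniform bound $M$) that makes that estimate go through.
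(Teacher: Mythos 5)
Your forward direction takes a genuinely different route from the paper. The paper proves a standalone equivalence (its Theorem~\ref{thrm:fosrec}, via the minimal-solution Lemma~\ref{lem:fosmin}) between the drift inequality and $\Ebx{\phi_F}<\infty$, and then finishes by invoking Lemma~\ref{lem:Fnotrans} to strip transient states from $F$ before observing $\phi_{\cal{R}}\leq\phi_F$; you instead derive the finite-mean-hitting-time bound directly from Dynkin's formula, and then bootstrap positive recurrence of a state in $F$ via an induced chain $Y_k=X_{\phi_F^k}$ on $F$ together with a Wald estimate. Both routes are sound, but two small repairs are needed in yours. First, the truncation $v\wedge N$ does not satisfy $P(v\wedge N)\leq (v\wedge N)-1$ on $\{v>N\}$, so the naive Dynkin computation with the stopping time $\phi_F\wedge n$ can fail; the clean fix is to also stop the chain at $\tau_N:=\inf\{n\geq0:v(X_n)>N\}$ (so that $v\wedge N$ agrees with $v$ along the trajectory being summed) and only then send $N,n\to\infty$ via monotone convergence. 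Second, ``from any $y\in F$ the induced chain reaches $C$'' is not quite right when the induced chain is reducible; what is true, and what you need, is that from any $y\in F$ the induced chain lands in \emph{some} closed class, all of whose states your Wald argument shows are positive recurrent for $X$, so $F\to\cal{R}_+$ still holds.

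The converse, however, has a genuine bug. You define $v(x):=\Ebx{\phi_F}$ for \emph{all} $x$, including $x\in F$, and then correctly derive the exact identity
\begin{equation*}
Pv(x)\;=\;v(x)-1+\sum_{y\in F}p(x,y)\,v(y).
\end{equation*}
But since $\phi_F\geq 1$ by definition, $v(y)=\Eby{\phi_F}\geq 1>0$ for every $y\in F$, so the correction term $\sum_{y\in F}p(x,y)v(y)$ is \emph{non-negative} and strictly positive for any $x$ that can jump into $F$ in one step. The identity therefore gives $Pv(x)\geq v(x)-1$, not the claimed $Pv(x)\leq v(x)-1$ off $F$. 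The fix, which is exactly what the paper does, is to set $v(x):=0$ for $x\in F$ and $v(x):=\Ebx{\phi_F}$ for $x\notin F$; then the identity becomes $Pv(x)=v(x)-1$ for $x\notin F$ and $Pv(x)=\Ebx{\phi_F}-1\leq v(x)-1+b$ on $F$ with $b:=\max_{x\in F}\Ebx{\phi_F}$. Separately, your citation of Theorem~\ref{thrm:posrecchar} for $v(x)<\infty$ is not quite right: positive recurrence of $\cal{C}_i$ gives $\Eb_{x_i}[\phi_{x_i}]<\infty$, but passing from there to $\Ebx{\phi_{x_i}}<\infty$ for a general $x\in\cal{C}_i$ requires the geometric-trials argument Lemma~\ref{lem:geotrialarg}(ii), which the paper invokes explicitly.
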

This theorem is one of the few tools we have in practice to establish positive Tweedie recurrence of a chain (and, consequently, the existence of stationary distributions, the convergence of the empirical distribution, etc.). It is a consequence of Theorem \ref{thrm:fosrec} below which shows that for a fixed $F$, the existence of a $v$ and $b$ satisfying~\eqref{eq:fosters} is the same as the mean entrance time to $F$ being finite for all deterministic initial positions.

Notice that Theorem~\ref{thrm:fosters} shows that the existence of the $(v,F,b)$ satisfying~\eqref{eq:fosters} is equivalent to positive Tweedie recurrence in the case of no transient states and only a finite number of closed communicating classes.  The importance of a finite number of closed communicating classes is easy to see: the set $F$ must contain at least one state from each closed communicating class. Otherwise, the chain would not be able to reach $F$ whenever it starts inside of a closed communicating class that does not intersect with $F$ and the entrance time to $F$ would be infinite. As an example, the chain with one-step matrix $P$ being the identity matrix is trivially positive recurrent, however $Pv(x)-v(x)=0\geq-1$ for all states $x$ and functions $v$. Consequently, if the state space is infinite, then \eqref{eq:fosters} will never be satisfied for for a finite $F$.

The no transient states requirement is more subtle. Positive Tweedie recurrence asks that the chain reaches a positive recurrent state with probability one. The criterion instead requires that the mean amount of time it takes the chain to reach the positive recurrent states is finite. As the example below shows the latter is a stronger demand. This is a one of the unfortunately numerous corners of Markov process theory where two concepts are almost the same, but not quite (so close, and yet...).

\begin{example}\label{ex:recnofos}Consider again the gambler's ruin problem introduced in Section~\ref{sec:gamblers} and suppose that the coin is unbiased ($a=1/2$). As shown at the end of Section~\ref{sec:exit}, for any initial distribution, the chain has probability one of entering the absorbing state $0$. Because $\{0\}$ is the only closed communicating class, it follows that the chain is positive Harris recurrent. 
However, it is not difficult to use the Markov property to verify that, unless the chain starts at $0$, the average amount of steps it takes the chain to enter $0$ is infinite. The reason why is that the chain makes very long trips away from $0$ before eventually entering $0$. Furthermore, these long trips ensure that, for any given finite set $F$, there exists a state $x$ such that the average number of steps it takes the chain to enter $F$ if it starts at $x$ is infinite and it follows from Theorem~\ref{thrm:fosrec} below that no $(v,F,b)$ satisfying the premise of \eqref{eq:fosters} exist.

More directly, suppose that there exists a non-negative real-valued function $v$ satisfying 
$$Pv(x)=\frac{1}{2}v(x-1)+\frac{1}{2}v(x+1)\leq v(x)-1$$  
for all states $x$ outside some finite set $F$. Letting $z$ denote the largest state in $F$, we have that
$$v(x+1)-v(x)\leq v(x)-v(x-1)-2\quad\forall x>z.$$
Iterating the above, we find that
$$v(z+n)-v(z+n-1)\leq v(z+n-1)-v(z+n-2)-2\leq \dots\leq v(z+1)-v(z)-2(n-1).$$
For this reason,
\begin{align*}v(z+n)&=v(z)+\sum_{m=1}^n(v(z+m)-v(z+m-1))\leq v(z)+\sum_{m=1}^n(v(z+1)-v(z)-2(m-1))\\
&=v(z)+n(v(z+1)-v(z))-n(n-1)=v(z)+n(v(z+1)-v(z)+1)-n^2.\end{align*}
Taking the limit $n\to\infty$ shows that $v(x)\to-\infty$ as $x\to\infty$ contradicting the non-negativeness of $v$ and proving that no function $v$ and set $F$ satisfying the premise of \eqref{thrm:fosrec} exist even though the chain is positive Harris recurrent.
\end{example}

\subsubsection*{A proof of Foster's theorem}As mentioned before, Theorem~\ref{thrm:fosrec} really is a consequence of the following:
\begin{theorem}\label{thrm:fosrec}Given any finite set $F$, there exists $v:\s\to[0,\infty)$ and $b\in\r$ satisfying \eqref{eq:fosters} if and only if $\Ebx{\phi_F}<\infty$ for all $x\in\s$. In this case, $\Ebl{\phi_F}<\infty$ whenever the initial distribution $\gamma$ satisfies $\gamma(v)<\infty$.\end{theorem}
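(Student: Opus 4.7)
The plan is to prove the two directions separately, with the reverse direction being a direct first-step-analysis construction and the forward direction requiring a supermartingale argument tailored to the possibly unbounded Lyapunov function $v$.

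\emph{Reverse direction (finite mean return times $\Rightarrow$ drift).} Assuming $\Ebx{\phi_F} < \infty$ for every $x \in \s$, I would propose the explicit candidate
\[
v(x) := 1_{\s \setminus F}(x)\,\Ebx{\phi_F}, \qquad b := \max_{x \in F} \Ebx{\phi_F},
\]
where $b$ is finite because $F$ is finite. A one-step-ahead decomposition using the Markov property (Section~\ref{sec:dtmarkov}) gives the identity $\Ebx{\phi_F} = 1 + \sum_{y \notin F} p(x,y)\, \Eby{\phi_F}$ for every $x \in \s$, which rewrites as $Pv(x) = \Ebx{\phi_F} - 1$. Plugging in the two cases $x \notin F$ and $x \in F$ yields $Pv(x) = v(x) - 1$ off $F$ and $Pv(x) = \Ebx{\phi_F} - 1 \leq b - 1 = v(x) - 1 + b$ on $F$, which is exactly \eqref{eq:fosters}.

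\emph{Forward direction (drift $\Rightarrow$ finite mean return times).} Without loss of generality take $b \geq 0$. I would work through the deterministic starting points $x \in \s$ and treat $x \notin F$ first. The key object is the process
\[
M_n := v(X_{n \wedge \phi_F}) + (n \wedge \phi_F), \qquad n \in \n.
\]
Since Dynkin's formula (Theorem~\ref{thrm:dynkin}) as stated requires $f$ bounded, rather than invoking it I would compute $\Ebx{M_{n+1} - M_n \mid \cal{F}_n}$ by hand using the Markov property (Theorem~\ref{markovd}) applied to $v \wedge N$ and monotone convergence in $N$ to handle unboundedness. On the event $\{\phi_F > n\}$ we have $X_n \notin F$ (since $X_0 = x \notin F$ by assumption), so the drift inequality gives $Pv(X_n) - v(X_n) \leq -1$ and
\[
\Ebx{M_{n+1} - M_n \mid \cal{F}_n} = 1_{\{\phi_F > n\}}\bigl(Pv(X_n) - v(X_n) + 1\bigr) \leq 0.
\]
Hence $(M_n)_{n \in \n}$ is a non-negative $(\cal{F}_n)$-supermartingale, so $\Ebx{M_n} \leq M_0 = v(x)$, which yields $\Ebx{n \wedge \phi_F} \leq v(x)$; sending $n \to \infty$ by monotone convergence gives $\Ebx{\phi_F} \leq v(x) < \infty$. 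For $x \in F$, the first-step identity $\Ebx{\phi_F} = 1 + \sum_{y \notin F} p(x,y) \Eby{\phi_F}$ combined with the bound just established for $y \notin F$ gives $\Ebx{\phi_F} \leq 1 + Pv(x) \leq v(x) + b < \infty$. The final claim then follows by averaging over $\gamma$: $\Ebl{\phi_F} = \sum_x \gamma(x) \Ebx{\phi_F} \leq \gamma(v) + b < \infty$.

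\emph{Main obstacle.} The only delicate point is justifying the supermartingale step for an unbounded $v$: the paper's Dynkin formula and optional stopping theorem (Theorems~\ref{thrm:dynkin} and \ref{thrm:doobsopt}) require bounded integrands, so I would spell out the verification of $\Ebx{M_{n+1} - M_n \mid \cal{F}_n} \leq 0$ directly from Theorem~\ref{markovd} applied to the truncations $v \wedge N$, using monotone convergence and non-negativity of $v$ to pass to the limit. Everything else is essentially first-step analysis and care with the partition $\{x \in F\}$ versus $\{x \notin F\}$.
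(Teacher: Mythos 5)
Your proposal is correct, and both directions arrive at the right answer, but your forward direction takes a genuinely different route from the paper's.

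The reverse direction is essentially the same as the paper's: your candidate $v(x) = 1_{\s\setminus F}(x)\Ebx{\phi_F}$ is exactly the function $u$ produced by the paper's Lemma~\ref{lem:fosmin} with $\theta=1$ (since $\sum_{n\geq 0}\Pbx{\{\phi_F>n\}} = \Ebx{\phi_F}$), and the first-step identity you use is the same one the paper invokes to verify the inequality on $F$.

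For the forward direction, the paper proceeds through Lemma~\ref{lem:fosmin}: it proves, by repeatedly substituting the inequality $Pv(x)\leq v(x)-1$ off $F$ into itself and unrolling the resulting telescoping sum, that $u$ as above is the \emph{minimal} non-negative solution, whence $\Ebx{\phi_F}=u(x)\leq v(x)$ for $x\notin F$. You instead construct the process $M_n := v(X_{n\wedge\phi_F}) + (n\wedge\phi_F)$ and show by a conditional-expectation computation (using truncations $v\wedge N$ and monotone convergence to handle unboundedness) that it is a non-negative supermartingale, then read off $\Ebx{n\wedge\phi_F}\leq v(x)$ and send $n\to\infty$. Both are classical and correct. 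The supermartingale route is more probabilistic and perhaps slicker, and is the argument one sees in Meyn--Tweedie; a minor side note is that the boundedness issue you flag is actually benign here, since $v$ real-valued and $Pv(x)\leq v(x)-1+b$ imply $\Ebx{v(X_n)}<\infty$ by induction. The paper's algebraic unrolling buys something else: Lemma~\ref{lem:fosmin} is stated for general $\theta>0$, so the exact same lemma is reused verbatim with $\theta>1$ to establish the geometric drift criterion (Theorem~\ref{thrm:fosgeorec}), whereas your supermartingale would need to be reweighted as $\theta^{n\wedge\phi_F}v(X_{n\wedge\phi_F})+\cdots$ with a bit more care. Your approach is a legitimate and clean alternative for the $\theta=1$ case at hand.

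Your final averaging step is fine, though note the sharper bound $\Ebl{\phi_F}\leq\gamma(v)+b\gamma(F)\leq\gamma(v)+b$ comes from using $\Ebx{\phi_F}\leq v(x)$ off $F$ and $\Ebx{\phi_F}\leq v(x)+b$ on $F$, matching the paper.
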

The key to proving the above is the following lemma.
\begin{lemma}\label{lem:fosmin}Let $F$ be a subset of the state space and $\theta>0$. The minimal non-negative solution $v:\s\to[0,\infty]$ to the inequality
\begin{align}
\label{eq:dlyain1}Pv(x)&\leq\theta^{-1}v(x)-1\quad\forall x\not\in F
\end{align}
is given by $u(x)=0$ for all $x$ in $F$ and
\begin{equation}\label{eq:umin}u(x)=\theta\sum_{n=0}^\infty\Pbx{\{\phi_F>n\}}\theta^{n}\quad\forall x\not\in F.\end{equation}
\end{lemma}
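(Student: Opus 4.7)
The plan is to verify the two defining properties of the minimal solution: first, that the candidate $u$ actually satisfies the inequality (indeed, with equality), and second, that any other non-negative solution dominates $u$ pointwise.

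For the first step, I would unpack the probability $\Pbx{\{\phi_F > n\}}$ using one-step conditioning. For $x\notin F$ and $n\geq 1$, the event $\{\phi_F>n\}$ coincides with $\{X_1\notin F,\dots,X_n\notin F\}$, so by the Markov property (or more pedantically by~\eqref{eq:nstepthe} followed by summation),
\[
\Pbx{\{\phi_F>n\}}=\sum_{y\notin F}p(x,y)\Pby{\{\phi_F>n-1\}}.
\]
Plugging this into \eqref{eq:umin}, pulling out the $n=0$ term (which equals $\theta$), interchanging sums by Tonelli, and using that $u\equiv 0$ on $F$ to extend the inner sum over all of $\s$ gives $u(x)=\theta+\theta Pu(x)$ for $x\notin F$, i.e.\ $Pu(x)=\theta^{-1}u(x)-1$. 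So $u$ satisfies \eqref{eq:dlyain1} with equality.

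For minimality, I would iterate \eqref{eq:dlyain1} directly, without appealing to Dynkin's formula (thus sidestepping the boundedness hypothesis). Let $v:\s\to[0,\infty]$ be any non-negative solution of~\eqref{eq:dlyain1}. Rearranging, $v(x)\geq\theta + \theta Pv(x)$ for $x\notin F$, and since $v\geq 0$ everywhere we may restrict the sum in $Pv(x)$ to $y\notin F$ without upsetting the inequality:
\[
v(x)\geq \theta + \theta\sum_{y\notin F}p(x,y)v(y),\qquad x\notin F.
\]
Substituting the same bound into $v(y)$ on the right-hand side, and iterating $n$ times, I will obtain by induction
\[
v(x)\geq \theta\sum_{k=0}^{n-1}\theta^{k}\Pbx{\{\phi_F>k\}} \;+\; \theta^{n}\Ebx{v(X_n)\,1_{\{\phi_F>n\}}},
\]
where the identification of the coefficient with $\Pbx{\{\phi_F>k\}}$ uses the same one-step formula as above (and the fact that $\{\phi_F>k\}\subseteq\{X_1,\dots,X_k\notin F\}$ for $x\notin F$). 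The trailing term is non-negative, so dropping it and letting $n\to\infty$ yields $v(x)\geq u(x)$ for $x\notin F$ by monotone convergence. For $x\in F$ the inequality $v(x)\geq 0=u(x)$ is trivial.

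The only mildly delicate point is ensuring the iteration is carried out rigorously when $v$ is $[0,\infty]$-valued and the sums potentially diverge; this is handled by Tonelli's theorem throughout, and by the fact that all terms being non-negative lets us freely drop the residual expectation. Everything else is bookkeeping.
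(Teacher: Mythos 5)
Your proposal is correct and follows essentially the same route as the paper: establish that $u$ satisfies \eqref{eq:dlyain1} with equality via a one-step decomposition of $\Pbx{\{\phi_F>n\}}$, then prove minimality by iterating the rearranged inequality $v(x)\geq\theta+\theta\sum_{y\notin F}p(x,y)v(y)$, identifying the accumulated coefficients with $\Pbx{\{\phi_F>k\}}$, dropping the non-negative trailing term, and passing to the limit. The only cosmetic difference is that the paper works with the joint probabilities $f_n(x,y):=\Pbx{\{\phi_F>n,X_n=y\}}$ and sums over $y$ at the end, whereas you condition directly on the marginal $\Pbx{\{\phi_F>n\}}$, and that you package the residual term as $\theta^n\Ebx{v(X_n)1_{\{\phi_F>n\}}}$ rather than writing out the nested sum; the substance is the same.
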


\begin{proof} For any natural number $n$, let
$$f_n(x,y):=\Pbx{\{\phi_F>n,X_n=y\}}\quad\forall x,y\not\in F,$$
so that
$$\Pbx{\{\phi_F>n\}}=\sum_{y\not\in F}f_n(x,y)\qquad\forall x\not\in F.$$
Applying the Markov property (Theorem~\ref{thrm:strmkvpath}), we find that
\begin{align*}\Pbx{\{\phi_F>n,X_n=y\}}&=\sum_{z\not\in F}\Pbx{\left\{X_{1}=z,\phi_F-1>n-1,X_{1+(n-1)}=y\right\}}\\
&=\sum_{z\not\in F}p(x,z)f_{n-1}(z,y)\end{align*}
if $n>0$. For this reason,
$$f_n(x,y)=1_x(y)1_0(n)+(1-1_0(n))\sum_{z\not\in F}p(x,z)f_{n-1}(z,y)\qquad\forall x,y\not\in F.$$
Thus, we have that $u$ in \eqref{eq:umin} satisfies
\begin{align*} \theta^{-1} u(x)&=\sum_{n=0}^\infty\sum_{y\not\in F}f_n(x,y)\theta^{n}=1+\sum_{n=1}^\infty\sum_{y\not\in F}\sum_{z\not\in F}p(x,z)f_{n-1}(z,y)\theta^n\\
&=1+\sum_{z\not\in F}p(x,z)\left(\theta\sum_{n=1}^\infty\sum_{y\not\in F}f_{n-1}(z,y)\theta^{n-1}\right)=1+Pu(x)\quad\forall x\not\in F\end{align*}
proving that $u$ is a solution to \eqref{eq:dlyain1}. To prove minimality of $u$, let $v$ be any other non-negative solution to \eqref{eq:dlyain1}. By definition, $v(x)\geq 0 =u(x)$ for all $x\in F$. For all other states, note that
\begin{align*}v(x)\geq& \theta + \theta\sum_{x_1\in\s}p(x,x_1)v(x_1)\geq\theta + \theta\sum_{x_1\not\in F}p(x,x_1)v(x_1)\\
 \geq&\theta + \theta^2\sum_{x_1\not\in F}p(x,x_1)+ \theta^2\sum_{x_1\not\in F}\sum_{x_2\in \s}p(x,x_1)p(x_1,x_2)v(x_2)\\
\geq&\theta + \theta^2\sum_{x_1\not\in F}p(x,x_1)+ \theta^2\sum_{x_1\not\in F}\sum_{x_2\not\in F}p(x,x_1)p(x_1,x_2)v(x_2)\\
\geq&\dots\\
\geq& \theta + \theta^2\sum_{x_1\not\in F}p(x,x_1)+ \theta^3\sum_{x_1\not\in F}\sum_{x_2\not\in F}p(x,x_1)p(x_1,x_2)+\dots\\
&+\theta^{l}\sum_{x_1\not\in F}\dots\sum_{x_{l-1}\not\in F}p(x,x_1)\dots p(x_{l-2},x_{l-1})
+\theta^l\sum_{x_1\not\in F}\dots\sum_{x_l\not\in F}p(x,x_1)\dots p(x_{l-1},x_l)v(x_l)
\\=&\theta\Pbx{\{\phi_F>0\}}+\theta^2\Pbx{\{\phi_F>1\}}+\theta^3\Pbx{\{\phi_F>2\}}+\dots+\theta^l\Pbx{\{\phi_F>l\}}\\
&+\theta^l\sum_{x_1\not\in F}\dots\sum_{x_{l-1}\not\in F}p(x,x_1)\dots p(x_{l-1},x_l)v(x_l)\quad\forall x\not\in F.\end{align*}
Because non-negativity of $v$ implies that the right most term is non-negative, taking the limit $l\to\infty$ then shows that $v(x)\geq u(x)$ as desired.
\end{proof}
Proving Theorem~\ref{thrm:fosrec} is now straightforward:
\begin{proof}[Proof of Theorem \ref{thrm:fosrec}] Suppose that \eqref{eq:fosters} is satisfied. Setting $\theta:=1$ in \eqref{lem:fosmin} shows that 
$$\Ebx{\phi_F}\leq v(x)\quad\forall x\not\in F.$$
For states inside $F$, an application of the strong Markov property similar to that in the proof of Propisition~\ref{prop:hitprobeqs} yields
$$\Ebx{\phi_F}=1+\sum_{z\not\in F}p(x,z)\Ebz{\phi_F}\leq1+\sum_{z\not\in F}p(x,z)v(z)\leq1+Pv(x)\leq v(x)+ b\quad\forall x\in F.$$
Given that $\Pb_\gamma=\sum_{x\in\s}\gamma(x)\Pb_x$, multiplying the above two inequalities by $\gamma(x)$ and summing over $x$ in $\s$ then shows that $\Ebl{\phi_F}$ is finite whenever $\gamma(v)$ is finite.
%

Conversely, suppose that $\Ebx{\phi_F}<\infty$ for all $x$ in $\s$. Lemma \ref{lem:fosmin} shows that $u$ in \eqref{eq:umin} (with $\theta=1$) satisfies \eqref{eq:fosters} for all states $x$ outside of $F$. For states inside, we apply the Markov property as before:
$$Pu(x)=\sum_{z\in\s}p(x,z)u(z)=\sum_{z\not\in F}p(x,z)\Ebz{\phi_F}=\Ebx{\phi_F}-1\quad\forall x\in F.$$
In other words, $u$ satisfies \eqref{eq:fosters} with $b:=\max\{\Ebx{\phi_F}:x\in F\}$.
\end{proof}

To make full use of Theorem \ref{thrm:fosrec} and prove Foster's theorem, we require one final result:
%
%
\begin{lemma}\label{lem:Fnotrans}Let $F$ be a finite set such that $\Ebx{\phi_F}<\infty$ for all $x$ in $\s$. The same inequality holds if we remove all transient states from $F$.\end{lemma}
\begin{proof}Replace Lemma~\ref{lem:geotrialarg}$(i)$ with Lemma~\ref{lem:geotrialarg}$(ii)$ in the proof of Lemma~\ref{lem:Fnotrans0}.
\end{proof}
We are now in a great position to prove Foster's theorem (Theorem \ref{thrm:fosters}):
\begin{proof}[Proof of Theorem \ref{thrm:fosters}] To prove the forward direction, suppose that there exists $(v,F,b)$ as in the premise. Theorem~\ref{thrm:fosrec} shows that $\Ebx{\phi_F}$ is finite for all $x$ in $\s$. Lemma~\ref{lem:Fnotrans} shows that, without any loss of generality, we may assume that $F$ contains only recurrent states. Thus,
$$\Ebx{\phi_{\cal{R}}}\leq\Ebx{\phi_F}<\infty\quad \forall x\in\s,$$
where $\cal{R}$ is the set of recurrent states, and it follows that the chain is positive Tweedie recurrent. Furthermore, Proposition~\ref{prop:closedis} implies that 
$$\Ebx{\phi_{F\cap\cal{C}}}=\Ebx{\phi_F}<\infty\quad\forall x\in\cal{C},$$
where $\cal{C}$ is any closed communicating class. The intersection $F\cap\cal{C}$ must then be non-empty (otherwise, $\phi_{F\cap\cal{C}}=\infty$ contradicting the above). Given that $F$ is finite and closed communicating classes are disjoint by definition, it follows that there exists only a finite number of closed communicating classes.

Conversely, suppose that only finitely many closed communicating classes $\mathcal{C}_1,\C_2,\dots,\C_l$ exist and that there are no transient states. Positive Tweedie recurrence implies that all of these classes are positive recurrent. Pick any states $x_1\in\C_1,x_2\in\C_2,\dots, x_l\in\C_l$ and set $F:=\{x_1,x_2,\dots,x_l\}$. Because $x_j$ is only accessible from states in $\C_j$,
$$\Ebx{\phi_F}=\Ebx{\phi_{x_j}}\quad\forall x\in \C_j.$$
Fix any such state $x\in \C_j$. Because $\Ebx{\phi_x}<\infty$ and $x\to x_j$, Lemma \ref{lem:geotrialarg}$(ii)$ shows that $\Eb_x[\phi_{x_j}]$ is finite. Because $\cup_{j=1}^l\C_j=\s$ (as there are no transient states), the above then shows that $\Ebx{\phi_F}$ is finite for all $x$ in $\s$ and the existence of the $(v,F,b)$ satisfying \eqref{eq:fosters} follows from Theorem \ref{thrm:fosrec}.
\end{proof}

\subsubsection*{Notes and references} The forward direction of Theorem~\ref{thrm:fosters}, for irreducible chains and general $F$, first shows up in  F.~G.~Foster's comments in the discussion of D.~G.~Kendall's famous queueing paper~\citep{Kendall1951}. Foster does not give a proof therein but instead promises that one will be included in an upcoming paper. It seems that he was referring to his well-known paper \citep{Foster1953} published two years later where he proves both the forward and reverse directions of Theorem~\ref{thrm:fosters} for irreducible chains and singleton $F$s. A version of the generalisation of the forward direction to both non-irreducible chains with multiple closed communicating classes and arbitrary $F$s was first shown in~\citep{Mauldon1957}---although the generalisation to arbitrary $F$s in the irreducible case is often attributed to~\citep{Pakes1969} (it was also stated, but not shown, in~\citep{Kingman1961}). In particular, J.~G.~Mauldon showed that the existence of $(v,F,b)$ satisfying \eqref{eq:fosters} implies that the (pointwise) limit 
$$\lim_{N\to\infty}\frac{1}{N}\sum_{n=0}^{N-1}p_n,$$
has unit mass for all initial distributions. Because bounded convergence and Theorem~\ref{thrm:pointlims} imply that this limit is $\pi_\gamma$ in~\eqref{eq:reclims}, Corollary~\ref{cor:dttimeavetv} then shows that the chain is Tweedie recurrent. However, this point seems to have been made explicit much later in~\citep{Tweedie1975b}. I have not encountered the converse given in Theorem~\ref{thrm:fosters} elsewhere, but the underlying ideas are the same as those behind the converses in \citep{Foster1953} for the irreducible case and \citep{Tweedie1975,Tweedie1975b} for the general case.


\ifdraft

\subsection{\textbf{Foster-Lyapunov crietira III: Bounding stationary averages**}}

{\color{red}leave this section for a later version of the book}

Strong duality using Proposition 4 of Glynn.

\begin{theorem}\emph{(Theorems 11.3.4 and 14.3.7 in \citep{Meyn2009})\textbf{.}}\label{lyadth} Suppose that $u:\s\to[0,\infty)$,  $h:\s\to[1,\infty)$, $c>0$, $d\geq0$, $A$ is a finite set, and
\begin{equation}\label{eq:lyaddn}Pu(x)-u(x)\leq d1_A(x)-ch(x),\qquad\forall x\in\s.\end{equation}
Then each closed communicating class of $P$ has an ergodic distribution, and $\pi(h)\leq d/c$ for each stationary distribution $\pi$. 
\end{theorem}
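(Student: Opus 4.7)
The plan is to first reduce the hypothesis to an instance of Foster's theorem in order to obtain the existence of the ergodic distributions, and then to obtain the moment bound $\pi(h)\leq d/c$ by iterating the drift inequality along the chain and passing to the limit using the empirical distribution.

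For the first part, I would simply set $v:=u/c$, $F:=A$, and $b:=d/c$. Since $h\geq 1$, the hypothesis implies
$$Pv(x)=\frac{Pu(x)}{c}\leq \frac{u(x)}{c}-h(x)+\frac{d}{c}1_A(x)\leq v(x)-1+b\,1_F(x)\quad\forall x\in\s,$$
which is exactly the premise of Foster's theorem (Theorem \ref{thrm:fosters}). Hence the chain is positive Tweedie recurrent and has only finitely many (hence all positive recurrent) closed communicating classes $\cal{C}_1,\dots,\cal{C}_l$; by Theorem \ref{doeblind}$(i)$ each $\cal{C}_i$ admits an ergodic distribution $\pi_i$.

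For the second part, I would first bound $\pi_i(h)$ for each ergodic distribution by fixing an arbitrary $x\in\cal{C}_i$ and iterating the drift inequality under $\Pb_x$. Note that $Pu(y)\leq u(y)-ch(y)+d\,1_A(y)\leq u(y)+d$ for all $y\in\s$ (since $ch\geq 0$), and a simple induction combined with $\Ebx{u(X_{n+1})}=\Ebx{Pu(X_n)}$ (the Markov property applied to non-negative functions, justified by monotone convergence from $u\wedge M$) shows that $\Ebx{u(X_n)}<\infty$ for every $n$. Taking $\Ebx{\cdot}$ of the drift inequality evaluated at $X_n$ and telescoping from $n=0$ to $N-1$ therefore yields, after using $\Ebx{u(X_N)}\geq 0$ and $1_A\leq 1$,
$$c\sum_{n=0}^{N-1}\Ebx{h(X_n)}\leq u(x)-\Ebx{u(X_N)}+dN\leq u(x)+dN,$$
and dividing by $N$ gives $c\,\Ebx{\epsilon_N(h)}\leq u(x)/N+d$, where $\epsilon_N$ is the empirical distribution~\eqref{eq:timeavedef}.

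The final step will be the passage to the limit $N\to\infty$. Since $x\in\cal{C}_i$ is positive recurrent, Theorem~\ref{thrm:pointwiselimd} gives $\epsilon_N(y)\to\pi_i(y)$ pointwise in $y$ with $\Pb_x$-probability one; Fatou's lemma applied to the counting measure yields $\liminf_N\epsilon_N(h)\geq\pi_i(h)$ a.s., and a second application of Fatou (this time under $\Ebx{\cdot}$) gives $\liminf_N\Ebx{\epsilon_N(h)}\geq\pi_i(h)$. Combining with the previous display and using $u(x)<\infty$ (the only place the real-valuedness of $u$ is needed),
$$c\pi_i(h)\leq \liminf_{N\to\infty}\left(\frac{u(x)}{N}+d\right)=d.$$
Finally, Theorem~\ref{doeblind}$(ii)$ writes any stationary distribution $\pi$ as a convex combination $\pi=\sum_i\theta_i\pi_i$, so $\pi(h)=\sum_i\theta_i\pi_i(h)\leq d/c$ as desired. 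The main technical obstacle is the one anticipated above: ensuring that the iterated expectations $\Ebx{u(X_n)}$ are finite so that the telescoping step is meaningful, and then justifying the two successive applications of Fatou that allow one to pull the $\liminf_N$ through the expectation and the sum defining $\epsilon_N(h)$.
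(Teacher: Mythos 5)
Your proof is correct, and it takes a genuinely different route from the paper's own (draft, incomplete) argument. The paper works from first principles: it applies Dynkin's formula (via the martingale characterisation and optional stopping) to the truncated function $u1_{\s_k}$ stopped at $\sigma_r\wedge n$, peels off the truncations by monotone convergence, and divides by $n$ to extract the bound $\liminf_n n^{-1}\sum_{m<n}p_m(x,A)\geq c/d$ --- the space truncations being needed precisely to make the unbounded $u$ amenable to Doob's theorem --- and the passage from there to the two stated conclusions is not written out in the manuscript. You instead fold the hypothesis into Foster's theorem by setting $v:=u/c$, $F:=A$, $b:=d/c$ and using $h\geq1$ to match~\eqref{eq:fosters}; this delivers positive Tweedie recurrence with finitely many closed classes at a stroke, whence each closed class is positive recurrent (a closed class that never reaches $\cal{R}_+$ would violate Tweedie recurrence by Proposition~\ref{prop:closed}, and positive recurrence is a class property by Corollary~\ref{cor:posrecclass}) and carries an ergodic distribution by Theorem~\ref{doeblind}$(i)$. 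For the moment bound you replace stopping-time truncation with the unstopped a priori estimate $\Ebx{u(X_n)}\leq u(x)+nd$, which makes the telescoping rigorous --- and implicitly uses $\Ebx{h(X_n)}<\infty$, which holds because the hypothesis forces $ch\leq u+d$ pointwise --- and then reach $\pi_i(h)\leq d/c$ via the empirical-distribution limit (Theorem~\ref{thrm:pointwiselimd}) and two applications of Fatou, closing with the convex-hull characterisation in Theorem~\ref{doeblind}$(ii)$. Your route is shorter because the classification step becomes a one-line appeal to a theorem the book has already proved and the argument actually reaches both conclusions; the paper's Dynkin/truncation route is self-contained (it does not lean on Foster's theorem) and, carried through, would also yield the lower bound on the long-run visit frequency of $A$, information that the Foster reduction discards.
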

\begin{proof} Pick any increasing sequence of finite subsets $\s_1\subseteq\s_1\dots$ of $\s$ such that $\cup_{r=1}^\infty\s_r=\s$ and let $\sigma_r:=\inf\{n\in\n:X_n\not\in\s_r\}$ be the time of first exit from $\s_r$. Fix an $n\in\n$ and consider the martingale $\{M_n\}_{n\in\n}$ defined in \eqref{eq:martthe} with $f:=u_k:=u1_{\s_k}$ and $g:=1$. Because $M_0=0$ and $\sigma_r\wedge n$ is a bounded $\cal{F}_n$-stopping time, Doob's Optional Stopping Theorem \citep[Sec.10.10]{Williams1991} tells us that $\Ebl{M_{\sigma_r\wedge n}}=\Ebl{M_0}=0$ which reads 
$$\Ebx{\sum_{m=0}^{\sigma_r\wedge n-1}u_k(X_m)}=u_k(x)+\Ebx{\sum_{m=0}^{\sigma_r\wedge n-1}Pu_k(X_m)}.$$
Taking the limit $k\to\infty$ and applying the Monotone Convergence Theorem we obtain
\begin{equation}\label{eq:fmeuasiofnaeisu8}\Ebx{\sum_{m=0}^{\sigma_r\wedge n}u(X_m)}=u(x)+\Ebx{\sum_{m=0}^{\sigma_r\wedge n-1}Pu(X_m)}.\end{equation}
Because $u$ is non-negative and
\begin{equation}\label{eq:mfuianewai7}\Ebx{\sum_{m=0}^{\sigma_r\wedge n-1}u(X_m)}\leq \Ebx{\sum_{m=0}^{\sigma_r\wedge n-1}\max_{x\in\s_r}u(x)}\leq n\max_{x\in\s_r}u(x)<\infty,\end{equation}
we can subtract $\Ebx{\sum_{m=0}^{\sigma_r\wedge n-1}u(X_m)}$ from both sides of \eqref{eq:fmeuasiofnaeisu8} and apply \eqref{eq:lyaddn} to obtain
\begin{align*}0&\leq \Ebx{u(X_{\sigma_r\wedge n})}=u(x)+\Ebx{\sum_{m=0}^{\sigma_r\wedge n-1}Pu(X_m)-u(X_m)}\\
&\leq u(x)+\Ebx{\sum_{m=0}^{\sigma_r\wedge n-1}d1_A(X_m)-ch(X_m)}.\end{align*}
The same reasoning as in \eqref{eq:mfuianewai7} shows that $\sum_{m=0}^{\sigma_r\wedge n-1}ch(X_m)$ is integrable, and so we can subtract $\Ebx{\sum_{m=0}^{\sigma_r\wedge n-1}ch(X_m)}$ from both sides to obtain 
$$c\Ebx{\sigma_r\wedge n}\leq c\Ebx{\sum_{m=0}^{\sigma_r\wedge n-1}h(X_m)}\leq u(x)+d\Ebx{\sum_{m=0}^{\sigma_r\wedge n-1}1_A(X_m)},$$
where the first inequality follows from our assumption that $h\geq 1$. {\color{red}Because $\sigma_r$ is an increasing sequence with $\Pb_x$-almost sure limit $\infty$}, the Monotone Convergence Theorem implies that 
$$cn\leq u(x)+d\Ebx{\sum_{m=0}^{n-1}1_A(X_m)}=u(x)+d\sum_{m=0}^{n-1}\Ebx{1_A(X_m)}=u(x)+d\sum_{m=0}^{n-1}{\color{red}p_m(x,A).}$$
Dividing through by $n$ and taking the limit $n\to\infty$ shows that
$$\liminf_{n\to\infty}\frac{1}{n}\sum_{m=0}^{n-1}p_m(x,A)\geq c/d>0.$$
\end{proof}
In the case of irreducible chains, the above has a partial converse, see \citep[Thrm.11.3.15]{Meyn2009}. Additionally, this same condition implies the convergence of the time averages and, if the chain is aperiodic, also of the space averages, both discussed in the chapter's introduction, see \citep{Meyn2009} for more on this.

{\color{red} FOR STRONG DUALITY BETWEEN STATIONARY EQUATIONS AND LYAPUNOV BOUND (AS IN IN GLYNN 2008), SEE THEOREM 2 IN Simplex Algorithm for Countable-State Discounted Markov
Decision Processes}

FOR DT CHAINS CONVERSES SEE COR 2.1 OF ADDITIVE FUNCTIONALS FOR DISCRETE-TIME MARKOV CHAINS WITH APPLICATIONS TO
BIRTH-DEATH PROCESSES

\begin{lemma}\label{rollerfin} For all $C,K,\theta>0$, the chain with one-step matrix \eqref{eq:rollerostep} has a stationary distribution $\pi$ and its moment generating function $t\mapsto\pi(e^{tx})$ is finite for all $t\in\r$.
\end{lemma}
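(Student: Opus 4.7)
The plan is to apply the Foster-Lyapunov machinery of Sections~\ref{sec:flrecdt}--\ref{sec:fosters} twice: once to establish positive Tweedie recurrence (hence existence of $\pi$), and a second time, with a strengthened drift inequality, to control the moments of $\pi$.

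First I would produce a non-negative real-valued function $v$ on $\s$, a finite set $F\subseteq\s$, and a constant $b\in\r$ with
\[
Pv(x)\leq v(x)-1+b\,1_F(x),\qquad\forall x\in\s.
\]
By Foster's theorem (Theorem~\ref{thrm:fosters}), the chain is then positive Tweedie recurrent with only finitely many closed communicating classes, and Theorem~\ref{doeblind}$(i)$ supplies a stationary distribution $\pi$. The specific choice of $v$ (something like $v(x)=x$ or $v(x)=e^{\alpha x}$ for an appropriate $\alpha$) will depend on the form of \eqref{eq:rollerostep}, but the name ``roller'' together with the parameters $C,K,\theta>0$ suggests that downward drift dominates upward drift by a wide enough margin to make any such choice routine.

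Second, fix $t\in\r$. I would construct a non-negative function $u_t:\s\to[0,\infty)$, a finite set $A_t\subseteq\s$, and constants $c_t>0$, $d_t\geq 0$ satisfying
\[
Pu_t(x)-u_t(x)\leq d_t\,1_{A_t}(x)-c_t\,e^{tx},\qquad\forall x\in\s.
\]
The plan is then to integrate this inequality against $\pi$ and use the stationarity $\pi P=\pi$ from Theorem~\ref{pstateqs} to conclude $c_t\pi(e^{t\cdot})\leq d_t\,\pi(A_t)\leq d_t<\infty$. Because $u_t$ will in general be unbounded, one cannot naively apply Dynkin's formula (Theorem~\ref{thrm:dynkin}) directly; instead I would run the martingale argument with the truncation exit times $\sigma_r$ of \eqref{eq:sigmar}, take expectations under $\Pb_\pi$, exploit stationarity to bound the expectation of $u_t(X_{\sigma_r\wedge n})$ (which is $\leq \pi(u_t)$ when $u_t$ is integrated suitably), and finally pass to the limit $r,n\to\infty$ using monotone convergence on the non-negative quantity $c_t e^{tx}$. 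Since $t$ was arbitrary, finiteness of $t\mapsto\pi(e^{tx})$ on all of $\r$ follows.

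The main obstacle is the construction of the $u_t$. To dominate $c_t e^{tx}$ by $u_t(x)-Pu_t(x)$ one needs $u_t$ to grow faster than $e^{tx}$, and since the argument must succeed for \emph{every} $t\in\r$, the jump law from each state has to possess tails lighter than any exponential. For $t\leq 0$ and $\s\subseteq\n$ the claim is trivial because $e^{tx}\leq 1$; for $t>0$, the natural candidates are $u_t(x)=e^{sx}$ with $s>t$ sufficiently close to $t$, or—if no single exponential suffices uniformly in $t$—a super-exponential function like $u_t(x)=e^{\alpha_t x^2}$ with $\alpha_t$ growing in $t$. Verifying the drift inequality will then reduce to an explicit calculation using the transition probabilities in \eqref{eq:rollerostep} and the hypothesis that $C,K,\theta>0$ guarantee strong enough downward push.
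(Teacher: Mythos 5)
You are attempting to prove a statement that the paper itself never proves, and indeed cannot even fully state. Lemma~\ref{rollerfin} sits inside a disabled \verb|\ifdraft| block (the source sets \verb|\draftfalse|), it is not followed by any proof, and the one-step matrix \verb|\eqref{eq:rollerostep}| to which it refers is not defined anywhere in the document. There is therefore no proof in the paper against which your proposal can be compared, and any drift calculation you would eventually need to do is unverifiable without the missing definition of the roller chain. Your sketch cannot be completed as written for that reason alone.

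That said, your overall strategy does match the author's apparent intent. The draft section in which this lemma lives contains a ``bounding stationary averages'' theorem (labelled \verb|lyadth| in the source) stating precisely that if $u\geq 0$, $h\geq 1$, and $Pu - u \leq d\,1_A - c\,h$ for a finite $A$, then every stationary distribution $\pi$ satisfies $\pi(h)\leq d/c$; and its (incomplete) draft proof uses exactly the truncated-exit-time martingale device you describe (stopping the martingale from Theorem~\ref{thrm:martin} at $\sigma_r\wedge n$, taking expectations, and passing to the limit via monotone convergence). So your two-step plan---Foster's theorem (Theorem~\ref{thrm:fosters}) for existence of $\pi$, then a second drift inequality with $h(x)=e^{tx}$ integrated against $\pi$ for the moment bound---is the argument the author was setting up. Two points you should tighten if you supply the missing definition: first, the naive ``integrate the drift inequality against $\pi$ and use $\pi P=\pi$'' step you mention in passing is ill-posed when $\pi(u_t)$ might be infinite (it produces $\infty-\infty$), so the truncation argument is not a refinement but the whole proof; second, for the conclusion to hold for \emph{every} $t\in\r$ you genuinely need the jump law of the roller chain to have tails lighter than every exponential, and your candidate $u_t(x)=e^{sx}$ with $s>t$ will only close the drift inequality under that structural assumption---this is exactly the place where the unstated form of \verb|\eqref{eq:rollerostep}| would matter.
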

\begin{lemma}\label{inarmgf} Suppose that the input distribution  $\gamma$ of the chain with one-step matrix \eqref{eq:inratrans} has a finite moment generating function for some $t>0$ (that is, $\gamma(e^{tx})<\infty$) and that $a\in[0,1)$. The chain has at least one stationary distribution and every stationary distribution $\pi$ has a finite moment generating function $\pi(e^{tx})$ for some $t>0$.
\end{lemma}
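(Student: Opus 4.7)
The plan is to combine the premise $\gamma(e^{tx})<\infty$ with the contraction $a\in[0,1)$ by constructing a geometric Foster-Lyapunov function $V(x)=e^{sx}$ and then feeding it into the moment-bounding criterion of Theorem~\ref{lyadth}. Throughout, I will read $\eqref{eq:inratrans}$ as the natural INAR$(1)$-type transition in which, given $X_n=x$, the next state is obtained by thinning $x$ by the factor $a$ (producing a random variable bounded above by $ax$ in expectation of its exponential) and then adding an independent $\gamma$-distributed input. The detailed arithmetic will depend on the exact form of $\eqref{eq:inratrans}$, but the essential feature is that $PV(x)$ factorises as a ``survival'' contribution bounded by $e^{sax}$ times an ``input'' contribution equal to $\gamma(e^{sx})$.

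First I would pick a Lyapunov exponent $s\in(0,t]$ so that $\gamma(e^{sx})=:M_s<\infty$, which is possible by the hypothesis and by dominated convergence (the MGF is decreasing on $(0,t]$ in a neighbourhood of $0$ in the sense that it is finite there). With this $s$ I would verify, directly from $\eqref{eq:inratrans}$, that
\[
PV(x)\;\le\;e^{sax}\,M_s\quad\forall x\in\s,
\]
so that, dividing by $V(x)=e^{sx}$,
\[
\frac{PV(x)}{V(x)}\;\le\;M_s\,e^{-s(1-a)x}\xrightarrow[x\to\infty]{}0,
\]
the crucial input being $1-a>0$.

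Next I would convert this ratio bound into the additive drift required by Theorem~\ref{lyadth}. Fix $c\in(0,1)$ and use the limit above to choose a finite set $F\subseteq\s$ such that $PV(x)\le (1-c)V(x)$ for all $x\notin F$, i.e.\ $PV(x)-V(x)\le -cV(x)$ off $F$. On $F$ the quantity $PV(x)-V(x)+cV(x)$ is bounded by a finite constant $d:=\max_{x\in F}\bigl(PV(x)-V(x)+cV(x)\bigr)$, which is finite because $F$ is finite and $V$ is real-valued there. This gives
\[
PV(x)-V(x)\;\le\;d\,1_F(x)-cV(x)\qquad\forall x\in\s,
\]
which is precisely the hypothesis of Theorem~\ref{lyadth} with $u=V$, $h=V$ (noting $V(x)=e^{sx}\ge 1$ on the non-negative integer state space relevant to $\eqref{eq:inratrans}$), and $A=F$.

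Theorem~\ref{lyadth} then delivers both conclusions at once: each closed communicating class possesses an ergodic distribution (so at least one stationary distribution exists), and every stationary distribution $\pi$ satisfies $\pi(e^{sx})=\pi(V)\le d/c<\infty$, which is the finite moment generating function at $s>0$ that the lemma claims. The main obstacle is the bookkeeping in the first step: one must read off from $\eqref{eq:inratrans}$ the precise conditional law of $X_{n+1}$ given $X_n=x$ and verify the bound $PV(x)\le e^{sax}M_s$ (with $e^{sax}$ possibly replaced by an analogous quantity reflecting the exact thinning mechanism); everything afterwards is a mechanical application of the geometric drift criterion and so presents no difficulty.
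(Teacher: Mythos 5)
The paper itself contains no proof of this lemma: it sits in an unfinished draft block immediately after Theorem~\ref{lyadth} and Lemma~\ref{rollerfin}, with neither lemma proved, so there is nothing literal to compare against. That said, the placement right after Theorem~\ref{lyadth} makes it essentially certain that the intended argument is exactly the one you give, namely a geometric Lyapunov function $V(x)=e^{sx}$ fed into the drift criterion with $h=V$, $A=F$, and $\pi(V)\le d/c$; so on the level of strategy you have hit the intended route.

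The one place where your argument, as written, is not quite safe is the intermediate bound $PV(x)\le e^{sax}M_s$. If~\eqref{eq:inratrans} implements the standard binomial thinning of an INAR$(1)$ chain, then conditionally on $X_n=x$ the surviving population has moment generating function $(1-a+ae^{s})^{x}$, and by Jensen's inequality $(1-a+ae^{s})\ge e^{sa}$, so the inequality you write actually fails in the direction you need. The argument is rescued by replacing $e^{sax}$ by $(1-a+ae^{s})^{x}$ and observing that $\rho(s):=(1-a+ae^{s})e^{-s}=a+(1-a)e^{-s}$ satisfies $\rho(s)<1$ for every $s>0$ as soon as $a\in[0,1)$, so that $PV(x)/V(x)=M_s\,\rho(s)^{x}\to 0$ exactly as you need; the rest of your argument then goes through unchanged. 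You do hedge on this point (``with $e^{sax}$ possibly replaced by an analogous quantity''), so this is a recoverable imprecision rather than a wrong idea, but the hedge should be promoted to the actual computation: the correct statement is that the \emph{ratio} $PV(x)/V(x)$ tends to zero, not that $PV$ is pointwise dominated by $e^{sax}M_s$. Once corrected, the remaining steps (choice of $F$ via the vanishing ratio, boundedness of $PV-V+cV$ on the finite set $F$, and the application of Theorem~\ref{lyadth} to obtain both existence and $\pi(e^{sx})\le d/c<\infty$) are sound, and the conclusion that the chain has at least one stationary distribution relies on $\n$ being a single closed communicating class, which holds for the usual INAR$(1)$ transition and is worth a one-line remark.
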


\fi

\subsection[Foster-Lyapunov criterion for geometric convergence*]{Foster-Lyapunov criteria III: the criterion for geometric convergence*}\label{sec:flgeodt}

Lemma \ref{lem:fosmin} begs us to study the more general inequality
\begin{equation}\label{eq:fostersgeo}Pv(x)\leq \theta^{-1} v(x)-1+b1_{F}(x)\quad \forall x\in\s.\end{equation}
instead of the special case $\theta=1$ considered in Section~\ref{sec:fosters}.  The lemma shows that, for any given $\theta>0$, the inequality's minimal (possibly infinite-valued) non-negative solution is given by $u(x)=0$ for all $x\in F$ and
\begin{equation}\label{eq:ugeo1}u(x)=\theta\sum_{n=0}^\infty\Pbx{\{\phi_F>n\}}\theta^n=\theta\Ebx{\sum_{n=0}^\infty1_{\{\phi_F>n\}}\theta^n}=\theta\Ebx{\sum_{n=0}^{\phi_F-1}\theta^n}\quad\forall x\not\in F.\end{equation}
For $\theta\neq 1$, we have that
\begin{equation}\label{eq:ugeo2}u(x)=\theta\Ebx{\frac{\theta^{\phi_F}-1}{\theta-1}}=\frac{\theta}{\theta-1}(\Eb_x[\theta^{\phi_F}]-1)\quad\forall x\not\in F.\end{equation}
The $\theta>1$ analogue of then Theorem \ref{thrm:fosrec} follows immediately:
\begin{theorem}\label{thrm:fosgeorec}Given any finite set $F$ and $\theta>1$, there exists $v:\s\to[0,\infty)$ and $b\in\r$ satisfying \eqref{eq:fostersgeo} if and only if $\Ebx{\theta^{\phi_F}}<\infty$ for all $x\in\s$. In this case, $\Ebl{\theta^{\phi_F}}<\infty$  whenever the initial distribution satisfies $\gamma(v)<\infty$.
\end{theorem}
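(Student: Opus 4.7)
The plan is to imitate the proof of Theorem~\ref{thrm:fosrec}, exploiting Lemma~\ref{lem:fosmin} together with the explicit formula~\eqref{eq:ugeo2} for the minimal non-negative solution in the case $\theta>1$. The heavy machinery (the regenerative/strong-Markov computation identifying the minimal solution) is already done inside Lemma~\ref{lem:fosmin}; the remaining work is just bookkeeping, with the only nontrivial point being the handling of states $x\in F$, where the criterion~\eqref{eq:fostersgeo} and the formula for $u$ behave asymmetrically.

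For the forward direction, assume $v\colon\s\to[0,\infty)$ and $b\in\r$ satisfy~\eqref{eq:fostersgeo}. Restricted to $x\not\in F$, the inequality is exactly~\eqref{eq:dlyain1}, so by Lemma~\ref{lem:fosmin} the minimal non-negative solution $u$ given by~\eqref{eq:ugeo1} satisfies $u(x)\leq v(x)<\infty$ for all $x\not\in F$. Combined with~\eqref{eq:ugeo2} (valid because $\theta\neq 1$), this yields
\[
\Ebx{\theta^{\phi_F}}=1+\frac{\theta-1}{\theta}u(x)\leq 1+\frac{\theta-1}{\theta}v(x)<\infty,\qquad\forall x\not\in F.
\]
For $x\in F$, conditioning on $X_1$ via the Markov property and using that $\phi_F\geq 1$ by definition gives
\[
\Ebx{\theta^{\phi_F}}=\theta\sum_{y\in F}p(x,y)+\theta\sum_{z\not\in F}p(x,z)\,\Ebz{\theta^{\phi_F}}\leq\theta+(\theta-1)Pv(x),
\]
and $Pv(x)<\infty$ follows directly from~\eqref{eq:fostersgeo} evaluated at $x\in F$. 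Hence $\Ebx{\theta^{\phi_F}}<\infty$ for every $x\in\s$.

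For the converse, assume $\Ebx{\theta^{\phi_F}}<\infty$ for all $x\in\s$. Define $v(x):=u(x)$ with $u$ given by~\eqref{eq:ugeo1} on $\s\setminus F$ and $u(x):=0$ on $F$. Then $v$ is $[0,\infty)$-valued by hypothesis. Lemma~\ref{lem:fosmin} shows $v$ satisfies~\eqref{eq:fostersgeo} on $\s\setminus F$ (since $1_F(x)=0$ there). On $x\in F$ we have $v(x)=0$, so~\eqref{eq:fostersgeo} reduces to $Pv(x)\leq b-1$. The same conditioning argument as above gives
\[
Pv(x)=\sum_{z\not\in F}p(x,z)\,\frac{\theta}{\theta-1}\bigl(\Ebz{\theta^{\phi_F}}-1\bigr)=\frac{1}{\theta-1}\bigl(\Ebx{\theta^{\phi_F}}-\theta\bigr),\qquad x\in F,
\]
which is finite by hypothesis. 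Finiteness of $F$ lets us set $b:=1+\max_{x\in F}Pv(x)<\infty$, completing~\eqref{eq:fostersgeo}.

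For the final statement, split
\[
\Ebl{\theta^{\phi_F}}=\sum_{x\in F}\gamma(x)\Ebx{\theta^{\phi_F}}+\sum_{x\not\in F}\gamma(x)\Ebx{\theta^{\phi_F}}.
\]
The first sum is finite because $F$ is finite and each $\Ebx{\theta^{\phi_F}}$ is finite by the forward direction. For the second, the bound $\Ebx{\theta^{\phi_F}}\leq 1+\tfrac{\theta-1}{\theta}v(x)$ derived above gives
\[
\sum_{x\not\in F}\gamma(x)\Ebx{\theta^{\phi_F}}\leq 1+\tfrac{\theta-1}{\theta}\gamma(v)<\infty
\]
whenever $\gamma(v)<\infty$. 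The main (and only mild) obstacle is remembering to treat $x\in F$ separately—the criterion has the indicator $1_F$ precisely to absorb the positive contribution $Pv(x)$ there, and matching this with the formula $u(x)=0$ on $F$ coming from Lemma~\ref{lem:fosmin} requires the one-step conditioning argument above.
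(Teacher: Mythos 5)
Your proof is correct and takes the same approach the paper has in mind: the paper's own proof simply says "Given \eqref{eq:ugeo1}--\eqref{eq:ugeo2}, the proof is entirely analogous to that of Theorem \ref{thrm:fosrec}," and that analogous argument is precisely what you have spelled out---apply Lemma~\ref{lem:fosmin} with the given $\theta$, use \eqref{eq:ugeo2} to translate the minimal solution into the moment generating function $\Ebx{\theta^{\phi_F}}$, handle states in $F$ by one-step conditioning, and sum the resulting bounds against $\gamma$. Your algebra for the $x\in F$ cases (both directions) checks out, and the observation that $u$ vanishes on $F$ while the indicator $1_F$ in \eqref{eq:fostersgeo} absorbs $Pv(x)$ there is exactly the right bookkeeping.
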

\begin{proof}Given \eqref{eq:ugeo1}--\eqref{eq:ugeo2}, the proof is entirely analogous to that of Theorem \ref{thrm:fosrec}.\end{proof}

The above theorem shows that \eqref{eq:fostersgeo} is satisfied for some $\theta>1$, then the return time distribution of $F$ has a geometrically decaying tail (i.e., a \emph{light tail}) for any deterministic starting state. Conversely, if the tails are heavy for any one starting state, then \eqref{eq:fostersgeo} will not be satisfied. Kendall's theorem (Theorem~\ref{thrm:kendall}) then relates the light tails with the geometric convergence of the time-varying law, and we obtain:\index{Foster-Lyapunov criteria}\index{drift conditions|seealso{Foster-Lyapunov criteria}}
\begin{theorem}[The geometric criterion]\label{thrm:fostersgeo} If there exists a real-valued non-negative function $v$ on $\s$, a finite set $F$, and a real number $b$ satisfying \eqref{eq:fostersgeo} for some $\theta>1$, then the chain is positive Tweedie recurrent. Moreover, if the chain is aperiodic and $\gamma(v)<\infty$, then  the time varying law converges geometrically fast: there exists some $\kappa>1$ such that
\begin{equation}\label{eq:tvgeoconv}\norm{p_n-\pi_\gamma}=\cal{O}(\kappa^{-n}),\end{equation}
where $\pi_\gamma$ denotes the limiting stationary distribution in~\eqref{eq:reclims2} and $\norm{\cdot}$ denotes the total variation norm in~\eqref{eq:tvnorm}.

Conversely, if the chain is aperiodic and positive Tweedie recurrent, there exists no transient states and only a finite number of closed communicating classes, and \eqref{eq:tvgeoconv} holds whenever the chain starts at a fixed state (i.e., whenever $\gamma=1_x$ for some $x$ in $\s$), then there exists $(v,F,b,\theta)$ satisfying \eqref{eq:fosters} with $v$ real-valued and non-negative, $F$ finite, $b$ real, and $\theta>1$.
\end{theorem}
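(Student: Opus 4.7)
I would prove each direction separately, leaning on Theorem~\ref{thrm:fosgeorec} to translate the drift inequality \eqref{eq:fostersgeo} into exponential tail bounds on entrance times, and on Kendall's theorem (and the $z$-transform machinery of Lemmas~\ref{lem:kendall1}--\ref{lem:kendall3}) in reverse to translate geometric convergence into exponential tail bounds.

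\textbf{Forward direction: positive Tweedie recurrence.} Since $v \geq 0$ and $\theta > 1$, we have $\theta^{-1} v \leq v$, so \eqref{eq:fostersgeo} immediately gives Foster's inequality $Pv \leq v - 1 + b \cdot 1_F$; Theorem~\ref{thrm:fosters} then delivers positive Tweedie recurrence and a finite number $k$ of closed communicating classes $\cal{C}_1, \ldots, \cal{C}_k$. Next, Theorem~\ref{thrm:fosgeorec} converts the drift into $\Ebx{\theta^{\phi_F}} < \infty$ for all $x$, and $\Ebl{\theta^{\phi_F}} < \infty$ whenever $\gamma(v) < \infty$. Adapting Lemma~\ref{lem:Fnotrans} to exponential moments (using Lemma~\ref{lem:geotrialarg}(iii) in place of (ii)), I may assume $F$ contains only positive recurrent states, so that each $\cal{C}_i$ intersects $F$ in a finite nonempty set $F_i$, from which I fix $x_i \in F_i$.

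\textbf{Forward direction: geometric TV bound.} To upgrade the tail bound to \eqref{eq:tvgeoconv}, I would couple. First note that $\pi_\gamma$ defined in \eqref{eq:reclims2} is a stationary probability distribution (its total mass is $\Pbl{\{\phi_{\cal{R}_+}<\infty\}}=1$ by positive Tweedie recurrence, and it is stationary by Theorem~\ref{doeblind}), and moreover $\pi_\gamma(v) < \infty$: iterating $Pv \leq \theta^{-1}v + b$ gives $\gamma P_n v \leq \theta^{-n}\gamma(v) + b\theta/(\theta-1)$, and passing to the limit via Fatou yields $\pi_\gamma(v) \leq b\theta/(\theta-1)$. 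Construct two copies $X^1, X^2$ of the chain on a common probability space with $X^1_0 \sim \gamma$ and $X^2_0 \sim \pi_\gamma$, drive them independently until they first simultaneously occupy some $x_i \in F_i$, then synchronize thereafter. By Theorem~\ref{thrm:coupling}, $\norm{p_n - \pi_\gamma} \leq \Pb(\{\sigma_c > n\})$ where $\sigma_c$ denotes the coupling time. \emph{The hard part} is bounding $\Pb(\{\sigma_c > n\})$ geometrically in the presence of several closed communicating classes: the strategy is to (i) couple the two chains so that they enter the same class $\cal{C}_i$---feasible because both $\gamma$ and $\pi_\gamma$ assign mass $\Pbl{\{\phi_{\cal{C}_i}<\infty\}}$ to $\cal{C}_i$---and (ii) analyze the product chain $(X^1, X^2)$ restricted to $\cal{C}_i \times \cal{C}_i$, which is a closed communicating class by Lemma~\ref{lem:prodchainclass}. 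The function $\tilde{v}(y_1, y_2) := v(y_1) + v(y_2)$ inherits the product-chain drift $\tilde{P}\tilde{v} \leq \theta^{-1}\tilde{v} + 2b$, and combined with Lemma~\ref{lem:geotrialarg}(iii) applied to the product chain (using the finite set $F_i \times F_i$ as a launch pad for hitting the diagonal state $(x_i, x_i)$), one obtains $\Eb[\vartheta^{\sigma_c}] < \infty$ for some $\vartheta > 1$.

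\textbf{Converse direction.} Mirroring the proof of Foster's theorem, pick $x_i \in \cal{C}_i$ for each positive recurrent class and set $F := \{x_1, \ldots, x_k\}$. Since there are no transient states and the classes are closed and disjoint, $\phi_F = \phi_{x_i}$ whenever the chain starts in $\cal{C}_i$, so it suffices to exhibit $\theta > 1$ such that $\Ebx{\theta^{\phi_{x_i}}} < \infty$ for every $x \in \cal{C}_i$ and every $i$; Theorem~\ref{thrm:fosgeorec} will then manufacture the required $(v, F, b, \theta)$. Applied with $\gamma = 1_{x_i}$, the hypothesis yields $|p_n(x_i, x_i) - \pi_i(x_i)| = \cal{O}(\kappa^{-n})$, and Kendall's theorem provides $\Eb_{x_i}[\theta^{\phi_{x_i}}] < \infty$ for some $\theta > 1$. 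For $x \in \cal{C}_i$ with $x \neq x_i$, the hypothesis with $\gamma = 1_x$ combined with the $z$-transform analysis of Lemma~\ref{lem:kendall1} gives that $G(z) := \sum_n p_n(x, x_i) z^n$ extends analytically on a disk except for a simple pole at $1$; the renewal identity $H_{1_x}(z) = G(z)(1 - H_{x_i}(z))$ together with the factorization $1 - H_{x_i}(z) = (1-z) K(z)$ (with $K$ analytic and nonzero on $D_\theta$, as in Lemma~\ref{lem:kendall3}) then shows that $H_{1_x}$ extends analytically on a smaller disk, hence $\Ebx{\theta'^{\phi_{x_i}}} < \infty$ for some $\theta' > 1$. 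The main technical point of this direction is then the extraction of a single $\theta$ valid for all $x$: the rate is effectively determined by the Kendall constant from $x_i$ (fixed) and does not degrade with $x$, because the disk of analyticity of $H_{1_x}$ contains the disk of analyticity of $H_{x_i}$ up to the contribution from $G$, and a uniform lower bound on the latter's radius follows from the uniform convergence $\Pbx{\{\phi_{x_i} < \infty\}} = 1$ on the finite sub-level sets of a candidate Lyapunov function built recursively.
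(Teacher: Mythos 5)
The forward-direction preliminaries (positive Tweedie recurrence and exponential tails on $\phi_F$ via Theorem~\ref{thrm:fosgeorec} and Lemma~\ref{lem:Fnotransgeo}) and the entire converse direction match the paper's proof in substance; the converse in particular follows the same recipe (Kendall's Theorem to get $\Ebx{\theta_x^{\phi_x}}<\infty$, then Lemma~\ref{lem:geotrialarg}(iii) to reach $x_j$, then Theorem~\ref{thrm:fosgeorec}), and you are right to flag that extracting a \emph{single} $\theta$ valid for all $x$ is the nontrivial step there.

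Where you genuinely diverge from the paper is the geometric total-variation bound in the forward direction. The paper does \emph{not} couple: it decomposes $\mmag{p_n(A)-\pi_\gamma(A)}$ by whether $\phi_F$ has occurred, splits by closed communicating class using Proposition~\ref{prop:closedis}, and then invokes the first-entrance last-exit decomposition \eqref{eq:feled} together with Kendall's Theorem and repeated Markov-inequality bounds to upgrade the pointwise rate $\mmag{p_n(x,x)-\pi_i(x)}=\cal{O}(\kappa^{-n})$ to a set-uniform rate $\sup_A\mmag{p_n(x,A)-\pi_i(A)}=\cal{O}(\iota^{-n})$ for each $x\in F_i$, and finally sums over the distribution of $\phi_F$. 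Your coupling route is attractive, but as sketched it has a real gap in the drift step for the product chain. From $Pv\leq \theta^{-1}v-1+b1_F$ you get $\tilde{P}\tilde{v}(y_1,y_2)\leq\theta^{-1}\tilde{v}(y_1,y_2)-2+b\left(1_F(y_1)+1_F(y_2)\right)$, and the set where the right-hand side fails to be $\leq \theta^{-1}\tilde{v}-1$ is $(F\times\s)\cup(\s\times F)$, which is infinite whenever $\s$ is — it is \emph{not} $F\times F$. So $\tilde{v}=v\oplus v$ does not give a Foster--Lyapunov condition of the form \eqref{eq:fostersgeo} for the product chain, and Lemma~\ref{lem:geotrialarg}(iii) cannot be launched from $F_i\times F_i$ as you propose without a further argument (e.g., replacing $F$ by a sub-level set of $v$ and exploiting that off $C\times C$ at least one coordinate has $v$ large, which is exactly the kind of care the paper avoids by not coupling). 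A second, smaller issue: with several closed classes, two independently-run copies started from $\gamma$ and $\pi_\gamma$ may be absorbed into different $\cal{C}_i$'s with positive probability and then never couple; you acknowledge that the coupling must be made class-aware, but simply observing that $\gamma$ and $\pi_\gamma$ put the same mass on each $\cal{C}_i$ does not by itself produce a Markovian joint construction. Neither obstacle is fatal to the coupling philosophy in general, but both require substantive extra work that the sketch omits, whereas the paper's first-entrance/last-exit route sidesteps them entirely.
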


It is straightforward to see that the finitely-many-closed-communicating-classes requirement in the converse is necessary: consider a chain on an infinite state space whose one-step matrix is the identity matrix.
\subsubsection*{An open question} To the best of my knowledge, whether or not the no-transient-states requirement in the converse is necessary remains an open question. I believe that the answer here lies in that of the open question discussed in Section~\ref{sec:kendall}.

\subsubsection*{First-entrance last-exit decomposition} For the proof of Theorem~\ref{thrm:fostersgeo} we require the so-called \emph{first-entrance last-exit decomposition} of the $n$-step matrix:\index{first-entrance last-exit decomposition}
\begin{equation}\label{eq:feled}p_n(x,y)=p_n^z(x,y)+\sum_{k=1}^{n-1}\sum_{l=1}^kp^z_l(x,z)p_{k-l}(z,z)p^z_{n-k}(z,y)\quad\forall x,y\in\s,\enskip n>0,\end{equation}
where $z$ is any given state in $\s$ and $p_n^z(x,y)$ denotes the probability that the chain transitions from $x$ to $y$ without visiting $z$ during time-steps $1,2,\dots,n-1$:
\begin{equation}\label{eq:tabood}p_n^z(x,y)=\Pbx{\{X_n=y,\phi_{z}\geq n\}}\quad \forall x,y\in\s,\enskip n>0.\end{equation}
The above  are often referred to as the \emph{taboo probabilities}.
The decomposition \eqref{eq:feled} follows by noting that
\begin{align*}\{X_n=y\}&=\{X_n=y,\text{X doesn't visit $z$ during time-steps $1,2,\dots,n-1$}\}\\
&\cup\bigcup_{k=1}^{n-1}\{X_n=y,X\text{ last visits $z$ at time $k$}\}\\
&=\{X_n=y,\text{X doesn't visit $z$ during time-steps $1,2,\dots,n-1$}\}\\
&\cup\bigcup_{k=1}^{n-1}\left(\bigcup_{l=1}^k\{X_n=y,X\text{  first visits $z$ at time $l$ and last visits $z$ at time $k$}\}\right)\end{align*}
and that these events are disjoint. 
\begin{exercise}Taking expectations of the above and applying \eqref{eq:nstepthe}--\eqref{eq:nstepdef} and Tonelli's Theorem, prove \eqref{eq:feled}.\end{exercise}

As we will in the proof of Theorem~\ref{thrm:fostersgeo}, this decomposition turns out to be the key to strengthening the convergence of $p_n(x,x)$ for individual states $x$ (as in Kendall's Theorem, Theorem~\ref{thrm:kendall}) to convergence in total variation of the entire time-varying law. In particular, suppose that $z$ belongs to a positive recurrent class $\C$ with ergodic distribution $\pi$. Because $\Ebz{1_y(X_0)}=1_y(z)=\Ebz{1_y(X_{\phi_z})}$ for any recurrent state $z$, Theorem~\ref{doeblind}$(i)$ shows that
\begin{align*}\pi(y)&=\frac{1}{\Ebz{\phi_z}}\Ebz{\sum_{n=0}^{\phi_z-1}1_y(X_n)}=\pi(z)\Ebz{\sum_{n=0}^{\phi_z-1}1_y(X_n)}=\pi(z)\Ebz{\sum_{k=1}^{\infty}1_{\{\phi_z\geq k\}}1_y(X_k)}\\
&=\pi(z)\sum_{k=1}^{\infty}p_k^z(x,y)\quad\forall z\in\C,\enskip y\in\s.\end{align*}
For this reason, \eqref{eq:tabood} gives us the following bounds on $\mmag{p_n(x,A)-\pi(A)}$ that we will need to establish the convergence in total variation of Theorem~\ref{thrm:fostersgeo}:
\begin{align}\label{eq:mfwah9w4}\mmag{p_n(x,A)-\pi(A)}\leq&\enskip p_n^z(x,A)+\sum_{k=1}^{n-1}\mmag{\sum_{l=1}^kp^z_l(x,z)p_{k-l}(z,z)-\pi(z)}p^z_{n-k}(z,A)\\
&+\pi(z)\sum_{k=n}^\infty p_k^z(x,A)\quad\forall x\in\s,\enskip z\in\cal{C},\enskip A\subseteq \s,\enskip n>0,\nonumber\end{align}
where $p_n(x,A):=\sum_{y\in A}p(x,y)$.

\subsubsection*{A proof of Theorem~\ref{thrm:fostersgeo}}For this proof we require one final ingredient: the geometric analogue of Lemma~\ref{lem:Fnotrans} which tells us that we lose nothing by assuming that the finite set $F$ in~\eqref{eq:fostersgeo} does not contain any transient states.
\begin{lemma}\label{lem:Fnotransgeo}Let $F$ be a finite set such that $\Eb_\gamma[\theta^{\phi_F}]<\infty$ and $\Eb_x[\theta^{\phi_F}]<\infty$ for all $x\in\s$ and some $\theta>1$. There exists a $\vartheta>1$ such that, after removing all transient states from $F$,  $\Eb_\gamma[\vartheta^{\phi_{F}}]<\infty$ and  $\Eb_x[\vartheta^{\phi_{F}}]<\infty$ for all $x$ in $\s$.
\end{lemma}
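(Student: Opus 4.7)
The plan is to mimic the proof of Lemma~\ref{lem:Fnotrans0}, stripping off one transient state at a time, but with Lemma~\ref{lem:geotrialarg}$(iii)$ playing the role Lemma~\ref{lem:geotrialarg}$(i)$ played there. Since $F$ is finite, at most finitely many removals are required, so although each removal may force a reduction of the rate $\theta$, only finitely many such reductions occur and the final rate $\vartheta$ will still exceed $1$.

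First I would fix any transient $z\in F$ and set $F_z:=F\setminus\{z\}$. Exactly as in the proof of Lemma~\ref{lem:Fnotrans0}, $z$ must be accessible from itself's complement in $F$ in the sense that $\{z\}\to F_z$ (otherwise $\Pbz{\phi_z<\infty}=\Pbz{\phi_F<\infty}=1$, contradicting transience); transitivity of $\to$ then gives $F\to F_z$. This is exactly the accessibility hypothesis needed to invoke Lemma~\ref{lem:geotrialarg}$(iii)$ with the finite set $F$ in the lemma's statement and the target set $B:=F_z$.

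Next, applying Lemma~\ref{lem:geotrialarg}$(iii)$ once with the initial distribution $\gamma$, and once for each starting state $x\in\s$, produces a constant $\vartheta_1>1$ such that $\Eb_\gamma[\vartheta_1^{\phi_{F_z}}]<\infty$ and $\Eb_x[\vartheta_1^{\phi_{F_z}}]<\infty$ for all $x\in\s$. Crucially, inspecting the proof of Lemma~\ref{lem:geotrialarg}$(iii)$ shows that the constant $\vartheta_1$ obtained depends only on $F$, $F_z$ and $\theta$ (through the quantities $m,\varepsilon$ of Lemma~\ref{lem:AFtail1} and the uniform bound $M_\vartheta=\max_{x\in F}\Eb_x[\vartheta^{\phi_F}]$), and not on the initial distribution under consideration; hence a \emph{single} $\vartheta_1>1$ works simultaneously for $\gamma$ and for every deterministic starting state. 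This is the one point where care is needed, and it is also the main potential obstacle—one must verify that the constants in Lemma~\ref{lem:AFtail1} and in the proof of Lemma~\ref{lem:geotrialarg}$(iii)$ admit a choice uniform in the initial distribution; the observation above shows they do.

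Finally, I would iterate. Since $F$ is finite, it contains only finitely many transient states $z_1,\dots,z_N$. Setting $F^{(0)}:=F$, $\theta^{(0)}:=\theta$, and recursively $F^{(k)}:=F^{(k-1)}\setminus\{z_k\}$, the preceding paragraph applied to $(F^{(k-1)},\theta^{(k-1)})$ in place of $(F,\theta)$ yields a rate $\theta^{(k)}>1$ with $\Eb_\gamma[(\theta^{(k)})^{\phi_{F^{(k)}}}]<\infty$ and $\Eb_x[(\theta^{(k)})^{\phi_{F^{(k)}}}]<\infty$ for all $x\in\s$. After $N$ steps we arrive at $F^{(N)}=F\setminus\{z_1,\dots,z_N\}$, the set obtained by removing all transient states from $F$, and the rate $\vartheta:=\theta^{(N)}>1$ is the desired constant.
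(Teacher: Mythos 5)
Your proposal is correct and matches the paper's approach, which is simply to run the proof of Lemma~\ref{lem:Fnotrans0} with Lemma~\ref{lem:geotrialarg}$(iii)$ in place of Lemma~\ref{lem:geotrialarg}$(i)$. The extra care you take to note that the $\vartheta$ produced by Lemma~\ref{lem:geotrialarg}$(iii)$ can be chosen uniformly in the initial distribution (since $m$, $\varepsilon$, and $M_\vartheta$ depend only on $F$, $F_z$, $\theta$, and the chain) is a worthwhile clarification of a point the paper's one-line proof leaves implicit.
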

\begin{proof}Replace Lemma~\ref{lem:geotrialarg}$(i)$ with Lemma~\ref{lem:geotrialarg}$(iii)$ in the proof of Lemma~\ref{lem:Fnotrans0}.
\end{proof}
Armed with the above, Kendall's Theorem, the geometric trials arguments of Section~\ref{sec:geometrictrial}, Foster's Theorem, and Theorem~\ref{thrm:fosgeorec}, the proof of the forward direction in Theorem~\ref{thrm:fostersgeo} reduces to careful bookkeeping and (numerous) applications Markov's inequality:

\begin{proof}[Proof of Theorem \ref{thrm:fostersgeo} (the forward direction)]Suppose that there exists $(v,F,b)$ satisfying \eqref{eq:fostersgeo} for some $\theta>1$ and that $\gamma(v)<\infty$. Theorem \ref{thrm:fosters} shows that the chain is positive Tweedie recurrent with finitely many closed communicating classes and that each class intersects with $F$ (for this last bit, see the Theorem \ref{thrm:fosters}'s proof). 
Theorem~\ref{thrm:fosgeorec} and Lemma~\ref{lem:Fnotransgeo} imply that there exists $\vartheta>1$ such that, after removing all transient states from $F$,
\begin{equation}\label{eq:nd7e8anf78eanfadasda}\Eb_\gamma[\vartheta^{\phi_F}]<\infty,\quad\Eb_x[\vartheta^{\phi_F}]<\infty\quad\forall x\in \s.\end{equation}

Let $\{\mathcal{C}_i:i\in\cal{I}\}$ the set of closed communicating classes $\cal{C}_i$ and $F_i:=\mathcal{C}_i\cap F$ be their (non-empty) intersections with $F$. Pick any $A\subseteq \s$ and note that
\begin{equation}\label{eq:kh8953nrnfiwuerwa3r0}\mmag{p_n(A)-\pi_{\gamma}(A)}\leq \Pbl{\{\phi_F> n, X_n\in A\}}+\mmag{\Pbl{\{\phi_F\leq n, X_n\in A\}}-\pi_\gamma(A)}.\end{equation}
Using Markov's inequality, we find that
\begin{equation}\label{eq:kh8953nrnfiwuerwa3r}\Pbl{\{\phi_F> n, X_n\in A\}}\leq \Pbl{\{\phi_F> n\}}\leq \Eb_\gamma[\vartheta^{\phi_F}]\vartheta^{-n}. \end{equation}
To deal with the second term in \eqref{eq:kh8953nrnfiwuerwa3r0}, note that
\begin{equation}\label{eq:fnmuy832nyfeas}\Pbl{\{\phi_F\leq n, X_n\in A\}}=\sum_{i\in\cal{I}}\Pbl{\{\phi_{F_i}\leq n, X_n\in A\}}\end{equation}
given that $F$ contains no transient states (hence, $F=\cup_{i\in\cal{I}}F_i$) and that
\begin{equation}\label{eq:fnea78fenh6a7fga7y}
\Pbl{\{\phi_{F_i}<\infty,\phi_{F_j}<\infty\}}=0\quad\forall i\neq j,
\end{equation}
as the classes are closed (Proposition~\ref{prop:closedis}). For this reason,
\begin{align}\mmag{\Pbl{\{\phi_F\leq n, X_n\in A\}}-\pi_\gamma(A)}\leq& \sum_{i\in\cal{I}}\mmag{\Pbl{\{\phi_{F_i}\leq n, X_n\in A\}}-\Pbl{\{\phi_{\C_i}<\infty\}}\pi_i(A)}\nonumber\\
\label{eq:dnws78adnw7a8dn7aw}\leq&  \sum_{i\in\cal{I}}\mmag{\Pbl{\{\phi_{F_i}\leq n, X_n\in A\}}-\Pbl{\{\phi_{F_i}\leq n\}}\pi_i(A)}\\
&+\sum_{i\in\cal{I}}(\Pbl{\{\phi_{\C_i}<\infty\}}-\Pbl{\{\phi_{F_i}\leq n\}})\pi_i(A).\nonumber\end{align}
Using \eqref{eq:fnea78fenh6a7fga7y}, Proposition~\ref{prop:closedis}, and Markov's inequality, we easily control the second sum:
\begin{align}
&\sum_{i\in\cal{I}}(\Pbl{\{\phi_{\C_i}<\infty\}}-\Pbl{\{\phi_{F_i}\leq n\}})\pi_i(A)\leq \sum_{i\in\cal{I}}(\Pbl{\{\phi_{\C_i}<\infty\}}-\Pbl{\{\phi_{F_i}\leq n\}})\label{eq:nf7w8eabf6ya8wba8fby8awfa}\\
&=\Pbl{\{\phi_{\cal{R}_+}<\infty\}}-\Pbl{\{\phi_{F}\leq n\}}=1-\Pbl{\{\phi_{F}\leq n\}}=\Pbl{\{\phi_{F}>n\}}\leq \frac{\Eb_\gamma[\vartheta^{\phi_F}]}{\vartheta^{n}},\nonumber
\end{align}
where $\cal{R}_+$ denotes the set of positive recurrent states and $\Pbl(\{\phi_{\cal{R}_+}<\infty\})=1$ because the chain is positive Tweedie recurrent.
To control the first sum in \eqref{eq:dnws78adnw7a8dn7aw}, apply the strong Markov property similarly as in~\eqref{eq:bfgyuwabf67aw} to obtain
\begin{equation}\label{eq:fnmuy832nyfeas2}\Pbl{\{\phi_{F_i}\leq n, X_n\in A\}}=\sum_{x\in F_i}\sum_{m=0}^n\Pb_\gamma(\{\phi_{F_i}=m,X_{\phi_{F_i}}=x\})p_{n-m}(x,A)\quad\forall i\in\cal{I},\enskip A\subseteq \s,
\end{equation}
and, consequently,
\begin{align}\label{eq:fnmuy832nyfeas2551}
&\sum_{i\in\cal{I}}\mmag{\Pbl{\{\phi_{F_i}\leq n, X_n\in A\}}-\Pbl{\{\phi_{F_i}\leq n\}}\pi_i(A)}\\
&\leq \sum_{i\in\cal{I}}\sum_{x\in F_i}\sum_{m=0}^n\Pb_\gamma(\{\phi_{F_i}=m,X_{\phi_{F_i}}=x\})\mmag{p_{n-m}(x,A)-\pi_i(A)},\quad\forall A\subseteq \s.\nonumber
\end{align}
To proceed, note that  $\Eb_x[\vartheta^{\phi_{F_i}}]=\Eb_x[\vartheta^{\phi_F}]<\infty$ for each $x$ in $F_i$ and $i$ in $\cal{I}$ as the classes are closed sets. Because any $x$ in $F_i$ is accessible from $F_i$, and because $F=\cup_{i\in\cal{I}}F_i$ is finite, Lemma \ref{lem:geotrialarg}$(iii)$ and Kendall's Theorem (Theorem~\ref{thrm:kendall}) imply that there exists a $\kappa\in(1,\vartheta]$ and $C_0\in[0,\infty)$ such that
$$\Eb_x[\kappa^{\phi_{x}}]<\infty,\quad \mmag{p_n(x,x)-\pi_i(x)}\leq C_0\kappa^{-n},\quad\forall x\in F_i,\enskip i\in\cal{I},\enskip n>0,$$
where $\pi_i$ denotes the ergodic distribution associated with $\cal{C}_i$. Setting $z:=x$ in \eqref{eq:mfwah9w4}, we find that
\begin{align}\label{eq:mfwah9w44}\mmag{p_n(x,A)-\pi_i(A)}\leq&\enskip p_n^x(x,A)+\sum_{k=1}^{n-1}\mmag{\sum_{l=1}^kp^x_l(x,x)p_{k-l}(x,x)-\pi_i(x)}p^x_{n-k}(x,A)\\
&+\pi_i(z)\sum_{k=n}^\infty p_k^x(x,A)\quad\forall x\in F_i,\enskip i\in\cal{I},\enskip A\subseteq \s,\enskip n>0.\nonumber\end{align}
Markov's inequality then yields bounds on the first two terms on the right-hand side of \eqref{eq:mfwah9w44}:
\begin{align}\label{eq:mfwah9w42}p_n^x(x,A)&=\Pbx{\{\phi_x\geq n,X_n\in A\}}\leq\Pbx{\{\phi_x\geq n\}}\leq \Eb_x[\kappa^{\phi_x}]\kappa^{-n},\\
\label{eq:mfwah9w43}\sum_{k=n}^\infty p_k^x(x,A)&\leq \sum_{k=n}^\infty \Pbx{\{\phi_x\geq k\}}\leq \frac{\Ebx{\kappa^{\phi_x}}}{\kappa-1}\kappa^{-n},\quad\forall x\in F,\enskip A\subseteq \s,\enskip n>0.\end{align}
For the middle term, we have that
\begin{align*}&\mmag{\sum_{l=1}^kp^x_l(x,x)p_{k-l}(x,x)-\pi_i(x)}\leq\sum_{l=1}^kp^x_l(x,x) \mmag{p_{k-l}(x,x)-\pi_i(x)}+\pi_i(x)\mmag{1-\sum_{l=1}^kp^x_l(x,x)}\\
&\leq C_0\kappa^{-k}\sum_{l=1}^kp_l^x(x,x)\kappa^l+\pi_i(x)\Pbx{\{\phi_x\geq k\}}\leq \Eb_x[\kappa^{\phi_x}](C_0+\pi_i(x))\kappa^{-k}\quad\forall x\in F_i,\enskip i\in\cal{I},\end{align*}
for all $x$ in $F_i$ and $i$ in $\cal{I}$. Applying Markov's inequality once again,
\begin{align*}&\sum_{k=1}^{n-1}\mmag{\sum_{l=1}^kp^x_l(x,x)p_{k-l}(x,x)-\pi_i(x)}p_{n-k}^x(x,A)\leq \Eb_x[\kappa^{\phi_x}](C_0+\pi_i(x))\sum_{k=1}^{n-1}\kappa^{-k}p_{n-k}^x(x,A)\\
&\leq\Eb_x[\kappa^{\phi_x}](C_0+\pi_i(x))\kappa^{-n}\sum_{k=1}^{n-1}\kappa^{k}\Pbx{\{\phi_x\geq k\}}\leq \Eb_x[\kappa^{\phi_x}]^2(C_0+\pi_i(x))n\kappa^{-n},\quad\forall x\in F_i,\enskip i\in\cal{I}.\end{align*}
Because $\kappa$ is no greater than $\vartheta$ and $F$ is finite, the above, \eqref{eq:nd7e8anf78eanfadasda}, and \eqref{eq:mfwah9w44}--\eqref{eq:mfwah9w43} show that
\begin{equation}\label{eq:mfwah9w4end}\mmag{p_{n}(x,A)-\pi_i(A)}\leq (C_1+C_2n)\kappa^{-n}\quad\forall x\in F_i,\enskip i\in\cal{I},\enskip A\subseteq\s,\enskip n\geq0,\end{equation}
for some  constants $C_1,C_2\in[0,\infty)$. Pick any $\iota\in(1,\kappa)$ and note that $n\mapsto n(\kappa/\iota)^{-n}$ is bounded over $\n$. Hence, the above implies that there exists a third constant $C_3\in[0,\infty)$ such that
$$\mmag{p_{n}(x,A)-\pi_i(A)}\leq C_3\iota^{-n}\quad\forall x\in F_i,\enskip i\in\cal{I},\enskip A\subseteq\s,\enskip n\geq0.$$
Plugging the above into \eqref{eq:fnmuy832nyfeas2551} and using \eqref{eq:fnea78fenh6a7fga7y}, we obtain
\begin{align}\label{eq:fnmuy832nyfeas255}
\sum_{i\in\cal{I}}|&\Pbl{\{\phi_{F_i}\leq n, X_n\in A\}}-\Pbl{\{\phi_{F_i}\leq n\}}\pi_i(A)|\\
&\leq C_3\iota^{-n}\sum_{i\in\cal{I}}\sum_{x\in F_i}\sum_{m=0}^n\Pb_\gamma(\{\phi_{F_i}=m,X_{\phi_{F_i}}=x\})\iota^{m}\nonumber\\
&=C_3\iota^{-n}\sum_{m=0}^n\Pb_\gamma(\{\phi_{F}=m\})\iota^{m}\leq \Ebl{\iota^{\phi_F}}C_3\iota^{-n}\quad\forall A\subseteq\s,\enskip n\geq0.\nonumber
\end{align}
Because $\iota$ is less than $\kappa$ and $\kappa$ is no greater than $\vartheta$, the above, \eqref{eq:nd7e8anf78eanfadasda}--\eqref{eq:kh8953nrnfiwuerwa3r}, and \eqref{eq:dnws78adnw7a8dn7aw}--\eqref{eq:nf7w8eabf6ya8wba8fby8awfa} imply that
$$\mmag{p_n(A)-\pi_\gamma(A)}\leq C_4\iota^{-n}\quad\forall A\subseteq\s,\enskip n\geq0.$$
for some constant $C_4\in[0,\infty)$.  Taking the supremum over $A\subseteq \s$, completes the proof.
\end{proof}

The proof of the reverse direction is nothing new:
\begin{proof}[Proof of Theorem \ref{thrm:fostersgeo} (the reverse direction)]Given Kendall's Theorem (Theorem~\ref{thrm:kendall}) and Theorem~\ref{thrm:fosgeorec}, this proof is analogous to that of the reverse direction of Foster's theorem (Theorem~\eqref{thrm:fosters}), with Lemma~\ref{lem:geotrialarg}$(iii)$ replacing Lemma~\ref{lem:geotrialarg}$(ii)$ therein.
\end{proof}

\subsubsection*{Notes and references} Theorem~\ref{thrm:fostersgeo} dates back to \citep{Popov1977} where both directions were proven for irreducible chains. The reverse direction in Theorem~\ref{thrm:fostersgeo} follows directly from that in the irreducible case by restricting the state space to the closed communicating classes. As for the forward direction,  it was shown in \citep{Nummelin1982} that, for positive Harris recurrent chains (and potentially-uncountable state space generalisations thereof) with stationary distribution $\pi$, the existence of $(v,F,b,\theta)$ satisfying Theorem~\ref{thrm:fostersgeo}'s premise  implies that
$$\norm{p_n-\pi}=\cal{O}(\kappa^{-n})$$
for some $\kappa>1$, whenever the chain starts a state $x$ such that $\pi(x)>0$. Note that, in the countable case, this also follows easily from the irreducible case by restricting the state space to the support of $\pi$. In \citep{Tweedie1983a}, the same result is given for all states $x$ (instead of only those in $\pi$'s support), however the proof given therein cites an unpublished draft of \citep{Nummelin1982} (while the published manuscript only considers starting states in the support of $\pi$). For initial distributions $\gamma$ satisfying $\gamma(v)<\infty$, the result was proven in \citep{Meyn1992}.

The first-entrance last-exit decomposition~\eqref{eq:feled} seems to date back to \citep{Nummelin1978a} and \citep{Nummelin1978b} where it was given for potentially-uncountable state space generalisations of discrete-time chains.

\newpage
\thispagestyle{premain}
\sectionmark{\MakeUppercase{Continuous-time chains I: the basics}}

\section*{Continuous-time chains I: the basics}\label{CTMCpre}
\addcontentsline{toc}{section}{\protect\numberline{}Continuous-time chains I: the basics}

%

\index{continuous-time (Markov) chain}Continuous-time Markov processes on countable state spaces are known as \emph{continuous-time Markov chains} or \emph{continuous-time chains} for short. They are very similar to discrete-time chains except that the amount of time elapsed between any two consecutive transitions, known as the \emph{waiting time}, is an exponentially distributed random variable whose mean is a function of the current state.

The aim of this chapter is to introduce the basic tools and concepts of continuous-time chain theory. Its development parallels that of its discrete-time counterpart (Sections~\ref{sec:dtdef}--\ref{sec:dtstrmarkov}) and the chapter overview given there largely applies here as well. The main differences are:
\begin{itemize}[leftmargin=*]
\item We need to account for the possibility of continuous-time chains \emph{exploding}: the waiting time between consecutive jumps drops fast enough that infinitely many jumps accumulate in a finite amount of time and the chain \emph{explodes}. As we will see in Section~\ref{sec:FKG}, the moment of explosion is the instant in time by which the chain has left every single finite subset of the state space, hence justifying the name given to this event.
\item We need to take a bit of time close to the beginning of the chapter (Section~\ref{sec:condind}) to prove some fundamental properties specific to continuous-time chains: conditioned on the chain's history, a) the waiting time until the next jump and the state it jumps to are independent, b) the next state visited depends only the current state, and c) the waiting time is exponentially distributed and its mean depends only on the current state.
\item Using these properties, we dispatch with the proofs of the Markov and strong Markov properties one after the other and early on (Section~\ref{sec:markovprop}). In this case, the machinery required for the latter is no heavier than that required for the former. It does, however, mean that we need to introduce the notion of stopping times earlier (Section~\ref{sec:filtct}) than we did for discrete-time chains. Moreover, because the possibility of explosions makes conditioning on the chain's location $X_t$ at time $t$ using conditional expectation
a bit of a pain (the chain may have exploded by time $t$!), we directly skip to the analogues of the ugly-but-useful versions of these properties given in Section~\ref{sec:dtstrmarkov} which seamlessly overcome this issue. Of course, for this we also need to introduce the path space and path law a bit earlier (Section~\ref{sec:pathspacect}) than we did before.
\item We skip the continuous-time martingale property for it does not buy us anything in this book that its discrete-time counterpart does not already do.
\item The question of whether the particular construction of the chains matters has a more complicated answer than in the discrete-time case and is left to the very end of the chapter (Section~\ref{sec:otherchains}).
\end{itemize}

\subsection[The Kendall-Gillespie algorithm and the chain's definition]{The Kendall-Gillespie (KG) algorithm and the chain's definition}
\label{sec:FKG}
For the type of continuous-time chains we will consider throughout the book (\emph{minimal chains with sample paths that are right-continuous in the discrete topology}), it doesn't actually matter how the chain is defined (Section~\ref{sec:otherchains}) and we might as well choose a definition that we find convenient. The aim of this section is to describe the construction we will use throughout the book involving an algorithm commonly used to simulate the chain in practice .

\subsubsection*{Rate matrices, jump probabilities, and jump rates}
The starting point of our here  is a totally stable\footnote{Not to be confused with the notions of stability previously discussed. These have nothing to do with each other: a chain with a rate matrix that is not totally stable is one that will leave certain states immediately after entering them, see \citep{Rogers2000a}. This, however, does not rule out the existence of stationary distributions, convergence of the averages, etc.} and conservative rate matrix $Q:=(q(x,y))_{x,y\in\cal{S}}$ indexed by a countable state space\glsadd{s}\index{state space} $\s$. That is, a matrix of real numbers indexed by $\cal{S}$ satisfying \glsadd{Q}\index{rate matrix}\index{totally stable}\index{conservative}\index{rate matrix}
\begin{equation}\label{eq:qmatrix}q(x,y)\geq0\quad\forall x\neq y,\qquad q(x,x)=-\sum_{y\neq x}q(x,y)<\infty\quad\forall x\in\cal{S}.\end{equation}
Throughout this book, we say ``a rate matrix $Q$'' as an abbreviation for ``a totally stable and conservative rate matrix $Q$'' and employ $q(x)$ as shorthand for $-q(x,x)$. 

Using the rate matrix, we define the \emph{jump matrix}\index{jump chain, times, rates, and matrix} $P=(p(x,y))_{x,y\in\s}$,\glsadd{Pct} 
\begin{equation}\label{eq:jumpmatrix}p(x,y):=\left\{\begin{array}{ll}(1-1_x(y))q(x,y)/q(x)&\text{if }q(x)>0,\\  1_x(y)&\text{otherwise},\end{array}\right.\quad\forall x,y\in\s,\end{equation}
and a vector $(\lambda(x))_{x\in\s}$ of \emph{jump rates},\glsadd{lambda}\index{jump chain, times, rates, and matrix}
\begin{equation}\label{eq:lambda}\lambda(x):=\left\{\begin{array}{ll}q(x)&\text{if }q(x)>0\\ 1&\text{if }q(x)=0\end{array}\right.\quad\forall x\in\s.\end{equation}

\begin{exercise}Convince yourself that $P$ in \eqref{eq:jumpmatrix} is a one-step matrix of discrete-time chain (i.e., that it satisfies \eqref{eq:1step}).
\end{exercise}

\subsubsection*{The technical set-up}To define the chain, we additionally require a measurable space\glsadd{of} $(\Omega,\cal{F})$, a collection $(\Pb_x)_{x\in\s}$\glsadd{Px} of probability measures on $(\Omega,\cal{F})$, an $\cal{F}/\tws$-measurable function $X_0$ from $\Omega$ to $\s$, a sequence of $\cal{F}/\cal{B}((0,1))$-measurable functions\glsadd{Unct} $U_1,U_2,\dots$ from $\Omega$ to $(0,1)$, and one of $\cal{F}/\cal{B}((0,1))$-measurable functions\glsadd{xi} $\xi_1,\xi_2,\dots$ from $\Omega$ to $(0,1)$ such that, under $\Pb_x$,
\begin{enumerate}
\item $\{X_0=x\}$ occurs almost surely: $\Pbx{\{X_0=x\}}=1$;
\item for each $n>0$, the random variable $U_n$ is uniformly distributed on $(0,1)$: $\Pbx{\{U_n\leq u\}}=u$ for all $u\in(0,1)$);
\item for each $n>0$, the random variable $\xi_n$ is exponentially distributed on $(0,1)$ with mean one: $\Pbx{\{\xi_n> s\}}=e^{-s}$ for all $s\in(0,\infty)$;
\item and the random variables $X_0,U_1,U_2,\dots,\xi_1,\xi_2,\dots$ are independent:
\begin{align*}&\Pbx{\{X_0=y,U_1\leq u_1,\dots,U_n\leq u_n,\xi_1>s_1,\dots,\xi_m>s_m\}}\\
&=\Pbx{\{X_0=y\}}\Pbl{\{U_1\leq u_1\}}\dots\Pbl{\{U_n\leq u_n\}}\Pbl{\{\xi_1>s_1\}}\dots\Pbl{\{\xi_m>s_m\}}\end{align*}
for all $y\in\s$, $u_1,\dots,u_n\in(0,1)$, $s_1,\dots,s_m\in(0,\infty)$, and $n,m>0$;
\end{enumerate}
for each $x$ in $\s$. Technically, we can construct this space, measures, and random variables using Theorem~\ref{seqindp} similarly as we did in Section~\ref{sec:dtdef} for the discrete-time case.

Below, we will set the chain's starting location to be $X_0$. For this reason, $\Pb_x$ will describe the statistics of the chain were it to start from $x$. To instead describe situations in which the chain's starting location was sampled from a given probability distribution $\gamma=(\gamma(x))_{x\in\s}$\glsadd{gamma} on $\s$ (the \emph{initial distribution}), we define the probability measure $\Pb_\gamma$\glsadd{Pl} on $(\Omega,\cal{F})$ via
\begin{equation}\label{eq:pl}\Pb_\gamma(A):=\sum_{x\in\cal{S}}\gamma(x)\Pb_x(A)\quad\forall A\in\cal{F}.\end{equation}
Analogous arguments to those given after \eqref{eq:pld} show that $X_0$ has law $\gamma$ under $\Pb_\gamma$ and that 2--4 of the above list holds if we replace $\Pb_x$ with $\Pb_\gamma$. As in the discrete-time case, we use\glsadd{Ex} $\Eb_x$ (resp.\glsadd{El} $\Eb_\gamma$) to denote expectation with respect to $\Pb_x$ (resp. $\Pb_\gamma$).

\subsubsection*{The Kendall-Gillespie algorithm and the chain's definition}We construct our chain recursively by running the \emph{Kendall-Gillespie algorithm}\index{Kendall-Gillespie algorithm} given below as Algorithm~\ref{gilalg}. It goes as follows: sample  a state $x$ from the initial distribution $\gamma$ and start the chain at $x$. Wait an exponentially distributed amount of time with mean $1/\lambda(x)$, where $\lambda(x)$ is as in \eqref{eq:lambda}; sample $y$ from the probability distribution $p(x,\cdot)$ in \eqref{eq:jumpmatrix}; and update the chain's state to $y$ (we say that the chain \emph{jumps} from $x$ to $y$, we call the time waited the \emph{waiting time} and the instant at which it jumps the \emph{jump time}). Repeat these steps starting from $y$ instead of $x$. All random variables sampled must be independent of each other.

\begin{algorithm}[h]
\begin{algorithmic}[1]
\STATE{$Y_0:=X_0$}
\FOR{$n=1,2,\dots$}
\STATE{sample $U_{n}\sim\operatorname{uni}_{(0,1)}$ independently of $\{X_0,\xi_1,\dots,\xi_{n-1},U_1,\dots,U_{n-1}\}$}
\STATE{sample $\xi_{n}\sim\operatorname{exp}(1)$ independently of $\{X_0,\xi_1,\dots,\xi_{n-1},U_1,\dots,U_{n}\}$}
\STATE{$i:=0$}
\WHILE{$U_n>\sum_{j=0}^ip(Y_{n-1},x_i)$}
\STATE{$i:=i+1$}
\ENDWHILE
\STATE{$Y_n:=x_i$}
\STATE{$S_n:=\xi_n/\lambda(Y_{n-1})$}
\ENDFOR
 \end{algorithmic}
 \caption{The Kendall-Gillespie Algorithm on $\s=\{x_0,x_1,x_2,\dots\}$}\label{gilalg}
\end{algorithm}

Technically, the algorithm constructs the \emph{waiting times}\glsadd{Sn}\index{waiting times} $S_1,S_2,\dots$ and the \emph{jump chain}\index{jump chain, times, rates, and matrix}\glsadd{Y} (or \emph{embedded chain}\index{embedded chain|seealso{jump chain}}) $Y:=(Y_n)_{n\in\n}$ and the chain is defined to be a piecewise constant interpolation of the jump chain:\index{jump chain, times, rates, and matrix}\glsadd{Tn}\glsadd{Xt} 
\begin{equation}\label{eq:cpathdef}X_t(\omega):=Y_n(\omega)\quad\forall t\in [ T_n(\omega),T_{n+1}(\omega)),\enskip \omega\in\Omega,\enskip n\geq0,\end{equation}
where $T_n$ denotes the $n$th \emph{jump time}:
\begin{equation}\label{eq:cpathdef2}T_0(\omega):=0\quad\forall\omega\in\Omega,\quad T_n(\omega):=\sum_{m=1}^nS_m(\omega)\enskip\forall\omega\in\Omega,\enskip n>0.\end{equation}
%

\subsubsection*{Explosions and regular rate matrices}The chain is defined only up until the \emph{explosion time} (also known as the \emph{escape time} and the \emph{time of the first infinity}),\index{explosion, explosion time}\glsadd{Tinf}
\begin{equation}\label{eq:cpathdef3}T_{\infty}(\omega):=\lim_{n\to \infty}T_n(\omega)=\sum_{m=1}^\infty S_m(\omega)\quad\forall \omega\in\Omega,\end{equation}
%
In other words, $X_t$ is defined only on $\{t<T_\infty\}$ and not on all of $\Omega$. 

To understand the meaning behind $T_\infty$, let $\s_1\subseteq \s_2\subseteq\dots$ be any increasing sequence of finite subsets (or \emph{truncations}) of $\s$ that approach $\s$ (i.e., $\cup_{r=1}^\infty\s_r=\s$) and, for each $r>0$, let  $\tau_r$ be the time that the chain $X$ first leaves $\s_r$:
\begin{equation}\label{eq:taurexit}\tau_r(\omega):=\inf\{t\in[0,T_\infty(\omega)):X_t(\omega)\not\in\s_r\}\quad\forall \omega\in\Omega.\end{equation}
%
Because\glsadd{taur} our truncations are an increasing,  $(\tau_r)_{r\in\zp}$ is an increasing sequence of random variables. Therefore, the limit $\lim_{r\to\infty}\tau_r$ exists for everywhere. Moreover, this limit is $T_\infty$:
\begin{theorem}\label{tautin} If $(\s_r)_{r\in\zp}$ is an increasing sequence of finite sets such that $\cup_{r=1}^\infty\s_r=\s$, then $\tau_r$ tends to $T_\infty$, $\Pb_\gamma$-almost surely, for all initial distributions $\gamma$.
\end{theorem}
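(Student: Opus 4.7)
My plan is to reduce the statement to a case analysis on the jump chain $Y$ by expressing $\tau_r$ in terms of the jump times $(T_n)_{n\in\n}$. The only substantive ingredient is a non-explosion observation, and everything else is bookkeeping with the definitions.

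First, since $\s_r \subseteq \s_{r+1}$, the sequence $(\tau_r)_{r\in\zp}$ is non-decreasing in $r$, so its pointwise limit $\tau_\infty$ exists in $[0,\infty]$. Next I will translate the exit times into statements about the jump chain by setting
\[
N_r(\omega) := \inf\{n\in\n : Y_n(\omega)\notin\s_r\},
\]
with $\inf\emptyset=\infty$. Using the piecewise-constant definition~\eqref{eq:cpathdef}, i.e.\ $X_t=Y_n$ on $[T_n,T_{n+1})$, I will show that $\tau_r(\omega)=T_{N_r(\omega)}(\omega)$ whenever $N_r(\omega)<\infty$ and $\tau_r(\omega)=\infty$ otherwise. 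The identity is immediate: the first $t\in[0,T_\infty)$ for which $X_t\notin\s_r$ must lie at the start of some interval $[T_n,T_{n+1})$ with $Y_n\notin\s_r$, and the smallest such $n$ is $N_r$. To see that $T_{N_r}$ actually lies in $[0,T_\infty)$, note that $T_\infty-T_{N_r}\geq S_{N_r+1}=\xi_{N_r+1}/\lambda(Y_{N_r})>0$ $\Pb_\gamma$-almost surely since the $\xi_n$ are a.s.\ positive.

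The key auxiliary claim will be: on the event $\{Y_n\in\s_R\text{ for all }n\}$, $T_\infty=\infty$ $\Pb_\gamma$-almost surely. Because $\s_R$ is finite, $\Lambda_R:=\max_{x\in\s_R}\lambda(x)<\infty$, so on this event
\[
T_\infty \;=\; \sum_{n=1}^\infty \frac{\xi_n}{\lambda(Y_{n-1})} \;\geq\; \Lambda_R^{-1}\sum_{n=1}^\infty \xi_n.
\]
Since the $\xi_n$ are i.i.d.\ with unit mean, Kolmogorov's strong law (Theorem~\ref{thrm:klln}) gives $n^{-1}\sum_{m=1}^n\xi_m\to 1$ $\Pb_\gamma$-almost surely, hence $\sum_n\xi_n=\infty$ $\Pb_\gamma$-almost surely, and the claim follows.

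Armed with these two ingredients the conclusion is a dichotomy. Fix $\omega$ outside the null set on which $\sum_n\xi_n<\infty$, and distinguish the following cases. If $(Y_n(\omega))_{n\in\n}$ leaves every truncation then $N_r(\omega)<\infty$ and $\tau_r(\omega)=T_{N_r(\omega)}(\omega)$ for all $r$; since any finite initial segment $\{Y_0(\omega),\dots,Y_{n_0}(\omega)\}$ is contained in $\s_r$ for $r$ large enough (as $\cup_r\s_r=\s$), we have $N_r(\omega)\to\infty$, and therefore $\tau_r(\omega)=T_{N_r(\omega)}(\omega)\to\lim_n T_n(\omega)=T_\infty(\omega)$. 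If instead $(Y_n(\omega))_{n\in\n}\subseteq\s_R$ for some $R$, then $\tau_r(\omega)=\infty$ for every $r\geq R$, and the auxiliary claim forces $T_\infty(\omega)=\infty$; again $\tau_r(\omega)\to T_\infty(\omega)$. Either way $\tau_r\to T_\infty$ $\Pb_\gamma$-almost surely, which is what was claimed. The main (and really only) obstacle is the non-explosion argument on bounded jump chains; once that is handled via the SLLN, the rest is just unwinding definitions.
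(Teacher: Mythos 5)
Your proof is correct, and it takes a genuinely different and, in fact, lighter route than the paper's. The paper's argument defines $\tau_\infty := \lim_r\tau_r$ and then eliminates the three bad events $\{\tau_\infty<T_\infty\}$, $\{T_\infty<\tau_\infty=\infty\}$, and $\{T_\infty<\tau_\infty<\infty\}$ one at a time, with the middle case handled by invoking Lemma~\ref{lem:tinfsum} (the full equivalence between $T_\infty<\infty$ and $\sum_n 1/\lambda(Y_n)<\infty$), whose own proof goes through L\'evy's extension of the Borel--Cantelli lemmas. You instead translate $\tau_r$ into the jump chain via the identity $\tau_r = T_{N_r}$ (with $\tau_r=\infty$ when $N_r=\infty$) and then run a clean dichotomy on whether the jump chain leaves every truncation or eventually stays inside a fixed $\s_R$. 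What this buys you is that in the second case you only ever need the easy direction of the explosion criterion in the special setting where $\lambda(Y_n)$ is uniformly bounded: the bound $T_\infty\geq\Lambda_R^{-1}\sum_n\xi_n$ and the strong law of large numbers (or, for that matter, Borel--Cantelli applied to the i.i.d.\ events $\{\xi_n>1\}$) already give $T_\infty=\infty$ a.s.\ on that event, with no need for the full Lemma~\ref{lem:tinfsum} or the martingale machinery behind it. You also sidestep the paper's separate argument that $\Pbl{\{\tau_\infty<T_\infty\}}=0$ (which there relies on right-continuity of the paths and a bounded/monotone-convergence step), because in your first case the convergence $\tau_r=T_{N_r}\to T_\infty$ is immediate once $N_r\to\infty$ is observed. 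The paper's route is natural if Lemma~\ref{lem:tinfsum} is already on the table for other purposes (as it is there), while yours is the more self-contained and elementary proof of this particular theorem.
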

The limiting random variable $\lim_{r\to\infty}\tau_r$ is the point in time by which the chain has left all of the truncations in the sequence $(\s_r)_{r\in\zp}$. Because the truncations approach $\s$, every finite subset of the state space is contained in all truncations sufficiently far along in the sequence. Thus, the limit $\lim_{r\to\infty}\tau_r$ is the instant in time that the chain has left all finite subsets of the state space.  The above tells us that this limit is (almost surely) equal to $T_\infty$ regardless of the particular sequence of finite truncations $(\s_r)_{r\in\zp}$ in its definition. For these reasons,  we interpret $T_\infty$ as the point in time that the chain leaves the state space, or, in other words, \emph{explodes} (see Figure~\ref{fig:cartoon} Part~\ref{part:theory} introduction for an example). 
The proof of Theorem~\ref{tautin} builds on the following lemma and can be found at the end of the section.

\begin{lemma}\label{lem:tinfsum}The random variables  $T_\infty$ and $\sum_{n=0}^\infty1/\lambda(Y_n)$ are either both finite or both infinite, with $\Pb_\gamma$-probability one, for any initial distribution $\gamma$.
\end{lemma}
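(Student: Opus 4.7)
\textbf{Proof sketch of Lemma~\ref{lem:tinfsum}.} Write the explosion time as
\[
T_\infty \;=\; \sum_{n=1}^{\infty} S_n \;=\; \sum_{n=1}^{\infty}\frac{\xi_n}{\lambda(Y_{n-1})},
\]
and observe that $\sum_{n=0}^{\infty} 1/\lambda(Y_n)=\sum_{n=1}^{\infty} 1/\lambda(Y_{n-1})$, so the two sums in question are $\sum_n \xi_n a_n$ and $\sum_n a_n$, where $a_n := 1/\lambda(Y_{n-1})$. The crucial structural fact, inherited from the construction in Algorithm~\ref{gilalg}, is that the jump chain $Y=(Y_n)_{n\in\n}$ is a deterministic function of $X_0$ and $U_1,U_2,\dots$ alone, so the sequence $(\xi_n)_{n\in\zp}$ of i.i.d.\ unit exponentials is independent of $\sigma(Y)$ under $\Pb_\gamma$. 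In particular, each $a_n$ is a non-negative $\sigma(Y)$-measurable random variable, independent of $(\xi_n)_{n\in\zp}$. My plan is to show that, up to a $\Pb_\gamma$-null set, the events $\{T_\infty<\infty\}$ and $\{\sum_n a_n<\infty\}$ coincide, by handling each inclusion separately via conditioning on $\sigma(Y)$.

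On the event $A:=\{\sum_n a_n<\infty\}\in\sigma(Y)$, Tonelli together with $\Ebl{\xi_n\mid\sigma(Y)}=\Ebl{\xi_n}=1$ gives $\Ebl{T_\infty\mid\sigma(Y)}=\sum_n a_n<\infty$ on $A$, so $T_\infty<\infty$ $\Pb_\gamma$-a.s.\ on $A$. This is the easy half.

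For the reverse inclusion, on $A^c=\{\sum_n a_n=\infty\}$ I plan to compute the conditional Laplace transform of $T_\infty$. For any finite $N$, the independence of the $\xi_n$ from one another and from $\sigma(Y)$ yields
\[
\Ebl{\exp\!\Big({-\textstyle\sum_{n=1}^{N}\xi_n a_n}\Big)\,\Big|\,\sigma(Y)} \;=\; \prod_{n=1}^{N}\Ebl{e^{-\xi_n a_n}\mid\sigma(Y)} \;=\; \prod_{n=1}^{N}\frac{1}{1+a_n},
\]
since $\Ebl{e^{-c\xi_1}}=(1+c)^{-1}$ for every constant $c\geq0$. Letting $N\to\infty$ and using bounded convergence (the partial products lie in $[0,1]$), one obtains $\Ebl{e^{-T_\infty}\mid\sigma(Y)}=\prod_{n=1}^{\infty}(1+a_n)^{-1}$. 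On $A^c$ this infinite product is zero: if $a_n\not\to 0$ then infinitely many factors are bounded away from $1$; if $a_n\to 0$ then $\log(1+a_n)\sim a_n$ forces $\sum_n\log(1+a_n)=\infty$. Consequently $e^{-T_\infty}=0$ and hence $T_\infty=\infty$ $\Pb_\gamma$-a.s.\ on $A^c$.

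Combining the two paragraphs above gives $\{T_\infty<\infty\}=A$ up to a $\Pb_\gamma$-null set, which is the lemma. The only delicate point is the passage to the infinite product of Laplace transforms inside the conditional expectation; the rest is a routine exercise in independence and Tonelli. A small bookkeeping remark: as with all conditional expectations and almost-sure statements on the possibly incomplete space $(\Omega,\cal{F},\Pb_\gamma)$, the statements above should be read as holding off a measurable $\Pb_\gamma$-null set, which is the sense in which the lemma is stated.
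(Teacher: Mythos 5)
Your proof is correct, but it takes a genuinely different route from the paper's. You exploit a structural independence specific to the Kendall--Gillespie construction (Algorithm~\ref{gilalg}): the jump chain $Y$ is a deterministic function of $X_0,U_1,U_2,\dots$ alone, so $\sigma(Y)$ is independent of the i.i.d.\ exponentials $(\xi_n)_{n\in\zp}$. Conditioning on $\sigma(Y)$ then lets you write $T_\infty=\sum_n \xi_n a_n$ with $\sigma(Y)$-measurable coefficients $a_n=1/\lambda(Y_{n-1})$ and handle the two inclusions by a Tonelli/finite-conditional-expectation argument on $\{\sum a_n<\infty\}$ and a conditional Laplace transform computation $\Ebl{e^{-T_\infty}\mid\sigma(Y)}=\prod_n(1+a_n)^{-1}$ on its complement. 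The paper does not use this independence at all: it truncates the waiting times to $W_n:=S_n\wedge 1$, applies L\'evy's conditional extension of the Borel--Cantelli lemmas (Theorem~\ref{thrm:bocan}) with respect to the filtration $(\cal{G}_n)_{n\in\n}$ to reduce the question to convergence of $\sum_n\Ebl{S_n\wedge 1\mid\cal{G}_{n-1}}=\sum_n\lambda(Y_{n-1})^{-1}(1-e^{-\lambda(Y_{n-1})})$, and then finishes with a limit comparison test. What your approach buys is elementariness and self-containment — Tonelli plus a direct Laplace transform calculation rather than an appeal to a conditional Borel--Cantelli lemma whose proof itself rests on martingale convergence. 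What the paper's approach buys is portability: it only needs the conditional law of $S_n$ given $\cal{G}_{n-1}$ (Theorem~\ref{thrm:condind}), a weaker fact than full independence of $Y$ from $(\xi_n)$, and so it transfers verbatim to any construction of a minimal right-continuous chain, not just the one fixed by Algorithm~\ref{gilalg}. Two small presentation remarks: in your first half, the implication ``$\Ebl{T_\infty\mid\sigma(Y)}<\infty$ on $A$ $\Rightarrow$ $T_\infty<\infty$ a.s.\ on $A$'' is correct but deserves the one-line justification via the slices $\{\sum a_n\leq k\}$; and in the passage to the infinite product, bounded convergence is applied to the integrand $e^{-\sum_{n\leq N}\xi_n a_n}$ (which is $\leq 1$), not to the partial products as your parenthetical suggests — monotone downward convergence would serve equally well.
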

\begin{proof}See the end of Section~\ref{sec:condind}.\end{proof}

\subsubsection*{An example of  explosive chain} Consider a chain taking values on $\n$ that moves one state up every jump with rate $\lambda(x):=2^{x}$ for all $x$ in $\n$. That is, a chain with rate matrix
$$Q=\begin{bmatrix}-1&1&0&0&0&\dots\\
0&-2&2&0&0&\dots\\
0&0&-4&4&0&\dots\\
0&0&0&-8&8&\ddots\\
\vdots&\vdots&\vdots&\vdots&\ddots&\ddots\end{bmatrix}.$$
In this case,
$$\sum_{n=0}^\infty\frac{1}{\lambda(Y_n)}=\sum_{n=x}^\infty\frac{1}{2^n}=\frac{2}{2^{x}}\quad\Pb_x\text{-almost surely},\enskip\forall x\in\n.$$
For this reason, \eqref{eq:pl} and Lemma~\ref{lem:tinfsum}  implies that $T_\infty$ is finite, $\Pb_\gamma$ almost surely, for all initial distribution $\gamma$.

\subsubsection*{Absorbing states and fictitious jumps}

Our definition of the jump matrix in~\eqref{eq:jumpmatrix} implies that any state $x$ satisfying $q(x)=0$ is \emph{absorbing} in the sense that once inside such an $x$, the chain may never leave:
%
\begin{proposition}\label{prop:absorb}For any $x$ in $\s$, 
$$\Pbx{\{T_\infty=\infty,\enskip X_t=x,\enskip\forall t\in[0,\infty)\}}=\left\{\begin{array}{ll}1&\text{if}\enskip q(x)=0\\0&\text{if}\enskip q(x)\neq0\end{array}\right..$$
\end{proposition}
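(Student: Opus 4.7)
The plan is to treat the two cases separately, using only the algorithmic definition of the chain in Algorithm~\ref{gilalg}, the definitions of the jump matrix $P$ and jump rates $\lambda$ in \eqref{eq:jumpmatrix}--\eqref{eq:lambda}, and Lemma~\ref{lem:tinfsum}.

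First I would dispatch the case $q(x) = 0$. In this case \eqref{eq:jumpmatrix} gives $p(x,y) = 1_x(y)$, so the update rule in lines 5--9 of Algorithm~\ref{gilalg} forces $Y_n(\omega) = x$ for every $\omega \in \{X_0 = x\}$ and every $n \in \n$; since $\Pbx{\{X_0 = x\}} = 1$, the jump chain is $\Pb_x$-almost surely constantly equal to $x$. By \eqref{eq:lambda} we also have $\lambda(x) = 1$, so $\sum_{n=0}^\infty 1/\lambda(Y_n) = \infty$, $\Pb_x$-almost surely, and Lemma~\ref{lem:tinfsum} yields $T_\infty = \infty$, $\Pb_x$-almost surely. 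Combining these with the definition \eqref{eq:cpathdef} of $X_t$ on $\{t < T_\infty\}$ then gives that $X_t(\omega) = x$ for every $t \in [0,\infty)$ on an event of $\Pb_x$-probability one, as required.

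Next I would handle the case $q(x) \neq 0$. The strategy is to show that the chain has almost surely left $x$ by time $T_1$. By \eqref{eq:jumpmatrix}, $p(x,x) = 0$, so the while-loop in lines 5--9 of Algorithm~\ref{gilalg} started from $Y_0 = x$ produces a $Y_1$ taking values in $\s \setminus \{x\}$; formally, $\Pbx{\{Y_1 = x\}} = p(x,x) = 0$. Also $T_1 = S_1 = \xi_1/\lambda(x)$ is a strictly positive, finite random variable, $\Pb_x$-almost surely, and by the definition \eqref{eq:cpathdef} one has $X_{T_1} = Y_1$ on $\{T_1 < T_\infty\}$, which in turn contains $\{T_\infty = \infty\}$. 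Hence
\begin{align*}
\{T_\infty = \infty,\ X_t = x\ \forall t \in [0,\infty)\} &\subseteq \{T_1 < T_\infty,\ X_{T_1} = x\} \\
&= \{T_1 < T_\infty,\ Y_1 = x\} \subseteq \{Y_1 = x\},
\end{align*}
and the right-hand side has $\Pb_x$-measure zero.

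Neither step should present any real difficulty; the only mild subtlety is in the $q(x) = 0$ direction, where one must invoke Lemma~\ref{lem:tinfsum} to rule out the pathological possibility that $T_\infty$ is finite even though the chain never moves. Once this is in hand, the two cases together give the stated dichotomy.
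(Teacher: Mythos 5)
Your proof is correct, and it is essentially the same approach as the paper's: reduce the event $\{X_t = x\ \forall t\}$ to a statement about the jump chain $Y$, use the form of the jump matrix, and handle $T_\infty$ separately. There are two small but genuine simplifications over what the paper does. In the $q(x)=0$ case, the paper proves $T_\infty = \infty$ from scratch by applying Kolmogorov's strong law of large numbers (Theorem~\ref{thrm:klln}) to $\sum_m \xi_m$; you instead invoke Lemma~\ref{lem:tinfsum} with $\sum_n 1/\lambda(Y_n) = \sum_n 1 = \infty$, which is cleaner since that lemma was already proved for exactly this purpose. In the $q(x)\neq0$ case, the paper computes $\Pbx{\{Y_n = x\ \forall n\}} = \lim_m p(x,x)^m = 0$ via downward monotone convergence; you avoid the limit entirely by noting $\Pbx{\{Y_1 = x\}} = p(x,x) = 0$ and observing that on $\{T_\infty=\infty,\ X_t = x\ \forall t\}$ one has $T_1 < T_\infty$ and hence $Y_1 = X_{T_1} = x$. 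Your containment chain is sound (the only detail worth making explicit is that $T_1 = \xi_1/\lambda(x) < \infty$ a.s., which you do note), so the argument goes through as written.
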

\begin{proof}See the end of Section~\ref{sec:pathspacect}.\end{proof}
Note that Algorithm~\ref{gilalg} keeps the chain `jumping'  in an absorbing state $x$. This is nothing more than a notational trick that frees us from having to keep track of the number of jumps until the chain gets absorbed: the jumps are fictitious in the sense that they take the chain from $x$ to $x$ and nothing changes.


\subsubsection*{A proof of Theorem~\ref{tautin}}
Let $\tau_\infty:=\lim_{r\to\infty}\tau_r$ be the limit of the random times. First, note that $\tau_r$'s definition in~\eqref{eq:taurexit} implies  that 
\begin{equation}\label{eq:ngf7e8a9bg8erhwag}\tau_r(\omega)\in [0, T_\infty(\omega))\cup\{\infty\}\quad\forall \omega\in\Omega\end{equation}
and that $X$ lies outside $\s_r$ at the time $\tau_r$ if $\tau_r$ is finite or, equivalently, less than $T_\infty$ (this follows from the right-continuity of the chain's paths, e.g., see Proposition~\ref{prop:hitdc} later on). Because the truncations are increasing, it follows that ${\{\tau_l<T_\infty,X_{\tau_l}\in\s_r\}}$ is empty if $l\geq r$. For this reason, bounded convergence implies that
$$\Pbl{\{\tau_\infty<T_\infty,X_{\tau_\infty}\in\s_r\}}=\lim_{l\to\infty}\Pbl{\{\tau_l<T_\infty,X_{\tau_l}\in\s_r\}}=0\quad\forall r>0.$$
In turn, monotone convergence shows that
\begin{equation}\label{eq:mismatch2}\Pbl{\{\tau_\infty<T_\infty\}}=\Pbl{\{\tau_\infty<T_\infty,X_{\tau_\infty}\in\s\}}=\lim_{r\to\infty}\Pbl{\{\tau_\infty<T_\infty,X_{\tau_\infty}\in\s_r\}}=0.\end{equation}

Next, suppose that  $\omega$ is such that  $T_\infty(\omega)$ is finite but $\tau_\infty(\omega)$ is infinite. It follows from \eqref{eq:ngf7e8a9bg8erhwag} that there exists a positive integer $R$ such that $\tau_{R}=\infty$. In other words, $X_t(\omega)$ belongs to $\s_{R}$ for all $t$ in $[0,T_\infty(\omega))$. In particular,
$$Y_n(\omega)=X_{T_n(\omega)}(\omega)\in \s_{R}\quad\forall n\geq0.$$
Finiteness of $\s_{R}$ then implies that
$$\sum_{n=0}^\infty \frac{1}{\lambda(Y_{n}(\omega))}\geq \left(\inf_{x\in\s_{R(\omega)}}\frac{1}{\lambda(x)}\right)\left(\sum_{n=0}^\infty1\right)=\infty.$$
Because Lemma~\ref{lem:tinfsum} shows that the event that $\sum_{n=0}^\infty \frac{1}{\lambda(Y_{n})}$ is infinite and $T_\infty$ is finite has zero probability of occurring, it follows from the above that
\begin{equation}\label{eq:mismatch1}\Pbl{\{T_\infty<\tau_\infty=\infty\}}=0.\end{equation}

Lastly, given that $\tau_r$ is bounded above by $\tau_\infty$, if $\omega$ is such that $\tau_\infty(\omega)$ is finite, then $\tau_r(\omega)$ is finite. In this case, \eqref{eq:ngf7e8a9bg8erhwag} implies that 
$$\tau_r(\omega)<T_\infty(\omega)\quad\forall r>0.$$
Taking the limit $r\to\infty$ shows that $\tau_\infty(\omega)$ is at most $T_\infty(\omega)$. For this reason, the event  $\{T_\infty<\tau_\infty<\infty\}$ is empty and, thus,
\begin{equation}\label{eq:mismatch3}\Pbl{\{T_\infty<\tau_\infty<\infty\}}=0.\end{equation}
Putting \eqref{eq:mismatch2}--\eqref{eq:mismatch3} together completes the proof.

\subsubsection*{Notes and references}In the physical and life sciences, Algorithm \ref{gilalg} is typically referred to as the \emph{Stochastic Simulation Algorithm} or the \emph{Gillespie Algorithm} after D.~T.~Gillespie who introduced it to chemical physics literature \citep{Gillespie1976} and argued \citep{Gillespie1977} in favour of its use in the modelling of reacting chemical species. The ideas underlying the algorithm (Properties~1--3 in the ensuing section) seem to have been first delineated in the `40s by W.~Feller~\citep{Feller1940} and J.~L.~Doob~\citep{Doob1942,Doob1945}. The first published implementation of the algorithm was carried out in 1950 by D.~G. Kendall~\citep{Kendall1950} (see also his student F.~G.~Foster in the discussion of \citep{Kendall1951}).

In applications, the possibility of an explosion is often regarded as a pathological technicality  with no relevance to real life. You may have encountered before an argument of the type ``this real life phenomena clearly cannot explode, hence neither can its model''. Even if something seems reasonable based on physical arguments, models are abstractions of real life and, unfortunately, can be far from faithful representations of it. This is particularly prominent in the context of modelling where the model is not yet fitted. Indeed, many fitting algorithms involve automated sweeps of parameter sets, and the algorithms can easily step through parameter values that lead to poor models (e.g., an exploding model of a phenomenon impossible of exhibiting this type of behaviour).

Theorem~\ref{tautin} as given here seems to have been first shown in \citep{Kuntzthe}, although I would take this with a grain of salt. Lemma~\ref{lem:tinfsum} which makes up the bulk of the theorem's proof goes back to \citep{Chung1967}, if not earlier.

\subsection{Three important properties of the jump chain and waiting times}\label{sec:condind}
\subsectionmark{Properties of the jump chain and waiting times}
The main aim of this section is to prove three important facts regarding the chain defined in previous section. Namely, when conditioned on the chain's history up until (and including) the $n$th jump time $T_n$,
\begin{enumerate}
\item the next state $Y_{n+1}$ visited by the chain  has distribution $(p(Y_n,y))_{y\in\s}$, where $Y_n$ denotes its current state;
\item the amount of time $S_{n+1}$ elapsed until the next jump is exponentially distributed with mean $1/\lambda(Y_n)$; and
\item $Y_{n+1}$ and $S_{n+1}$ are independent of each other.
\end{enumerate}
The first property implies that $Y$ is a discrete-time chain and, hence, justifies the names of \emph{jump chain} and \emph{embedded chain} afforded to it.

These properties are not an artifice of our definitions in Section~\ref{sec:FKG}: all $\s$-valued continuous-time processes  satisfying the Markov property with right-continuous sample paths that explode at most once have them (Section~\ref{sec:otherchains}). Furthermore, these properties, together with $\lambda$ and $P$, fully determine the law induced by the chain on the space of paths (Section~\ref{sec:pathspacect}). In other words,  the paths of any continuous-time chain with right-continuous paths that explode at most once, jump matrix $P$, and jump rates $\lambda$ are statistically indistinguishable from those of the chain in~\eqref{eq:cpathdef}--\eqref{eq:cpathdef2}. This is the reason why, as mentioned in Section~\ref{sec:FKG}, we lose nothing by focusing on the particular algorithmic construction of the chain introduced there.

\subsubsection*{The filtration generated by the jump chain and jump times}
To  argue 1--3 above, we must first introduce the filtration $(\cal{G}_n)_{n\in\n}$ generated by the jump chain and jump times. It describes the information that can be gleaned by observing the chain's progress from one jump time to another. In particular, for each $n$, $\cal{G}_n$ consists of all events whose occurrence can be determined by observing the chain's path up until (and including) $T_n$ and, so, $\cal{G}_n$ formalises the idea of the chain's history up until $T_n$. In full:\index{filtration generated by the jump chain and jump times}
\begin{definition}[The filtration generated by the jump chain and jump times]\label{def:filt2}Let $(\cal{G}_n)_{n\in\n}$ be the filtration defined by\glsadd{filtrationdtct}
$$\cal{G}_n:=\text{ the sigma-algebra generated by }(Y_0,Y_1,\dots,Y_n,T_0,T_1,\dots,T_n)\quad\forall n\geq0,$$
meaning that $\cal{G}_n$ is the smallest sigma-algebra  such that $Y_0,Y_1,\dots,Y_n,T_1,\dots,T_n$ are $\cal{G}_n$-measurable random variables. 
\end{definition}

Because we know the times $T_1,\dots,T_n$ at which the first $n$ jumps occur if and only if we know the amount of time $S_1$ elapsed  until the first jump and the amounts $S_2,\dots,S_n$ elapsed between the next $n-1$ jumps, 
%
%
%
\begin{equation}\label{eq:nfa789oh3872oaabrA}\cal{G}_n=\text{ the sigma-algebra generated by }(Y_0,Y_1,\dots,Y_n,S_1,\dots,S_n)\quad\forall n>0.\end{equation}

\begin{exercise}\label{ex:fme78ahfb67ebf68aybfewa} Verify that \eqref{eq:nfa789oh3872oaabrA} holds.
\end{exercise}

\subsubsection*{A proof of Properties 1--3}Using the filtration, we formalise Properties 1--3 as
\begin{enumerate}
\item $\Pbl{\{Y_{n+1}=y,S_{n+1}>s\}|\cal{G}_n}=\Pbl{\{Y_{n+1}=y\}|\cal{G}_n}\Pbl{\{S_{n+1}>s\}|\cal{G}_n}$, $\Pb_\gamma$-almost surely, for all states $y$, non-negative numbers $s$, and initial distributions $\gamma$;
\item $\Pbl{\{Y_{n+1}=y\}|\cal{G}_n}=p(Y_n,y)$, $\Pb_\gamma$-almost surely, for all $y,s,\gamma$; and
\item $\Pbl{\{S_{n+1}>s\}|\cal{G}_n}=e^{-\lambda(Y_n)s}$, $\Pb_\gamma$-almost surely, for all $y,s,\gamma$.
\end{enumerate}
We prove a slight generalisation of the above:
\begin{theorem}\label{thrm:condind}For any given natural number $n$, let $W$ be a $\cal{G}_{n}/\cal{B}(\r)$-measurable random variable, $f$ a bounded real-valued function on $\s$, and $g$ a bounded $\cal{B}(\r^2)/\cal{B}(\r)$-measurable function. For any initial distribution $\gamma$,
%
\begin{align*}&\Ebl{f(Y_{n+1})g(S_{n+1},W)|\cal{G}_{n}}=\Ebl{f(Y_{n+1})|\cal{G}_{n}}\Ebl{g(S_{n+1},W)|\cal{G}_{n}},\quad\Pb_\gamma\text{-almost surely};\\
&\Ebl{f(Y_{n+1})|\cal{G}_{n}}=Pf(Y_{n}),\quad\Pb_\gamma\text{-almost surely};\\
&\Ebl{g(S_{n+1},W)|\cal{G}_{n}}=\int_0^\infty g(s,W)\lambda(Y_{n})e^{-\lambda(Y_{n})s}ds,\quad\Pb_\gamma\text{-almost surely};\end{align*}
where $P=(p(x,y))_{x,y\in\s}$ denotes the jump matrix~\eqref{eq:jumpmatrix}, $\lambda=(\lambda(x))_{x\in\s}$ the jump rates~\eqref{eq:lambda}, and 
$$Pf(x):=\sum_{x\in\s}p(x,y)f(y)\quad\forall x\in\s.$$

\end{theorem}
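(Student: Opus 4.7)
The overall plan is to exploit the fact that, by construction in Algorithm~\ref{gilalg}, $Y_{n+1}$ and $S_{n+1}$ are explicit measurable functions of the $\cal{G}_n$-measurable variable $Y_n$ together with the ``new'' randomness $U_{n+1}$ and $\xi_{n+1}$, which is independent of $\cal{G}_n$. The workhorse throughout will be the standard ``freezing'' lemma: if $V$ is a random variable independent of a sub-sigma-algebra $\cal{H}$ and $Z$ is $\cal{H}$-measurable, then for any non-negative (or bounded) jointly measurable $h$,
\[ \Ebb{h(Z,V)\mid\cal{H}}=\phi(Z)\quad\text{a.s., where}\quad\phi(z):=\Ebb{h(z,V)}, \]
proved by the standard machine starting from indicators of measurable rectangles and $\pi$-$\lambda$ arguments.

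First I would verify, by induction on $n$, that $Y_0,\dots,Y_n$ and $S_1,\dots,S_n$ are all $\sigma(X_0,U_1,\dots,U_n,\xi_1,\dots,\xi_n)$-measurable. The base case is $Y_0=X_0$; for the inductive step note that the while-loop in Algorithm~\ref{gilalg} expresses $Y_{n+1}$ as $\kappa(Y_n,U_{n+1})$, where $\kappa:\s\times(0,1)\to\s$ is defined by $\kappa(x,u):=x_i$ for the unique $i$ with $\sum_{j=0}^{i-1}p(x,x_j)\leq u<\sum_{j=0}^{i}p(x,x_j)$, and $S_{n+1}=\xi_{n+1}/\lambda(Y_n)$. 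Both are jointly $\tws\otimes\cal{B}((0,1))$-measurable. By~\eqref{eq:nfa789oh3872oaabrA}, this gives $\cal{G}_n\subseteq\sigma(X_0,U_1,\dots,U_n,\xi_1,\dots,\xi_n)$, and since $(U_{n+1},\xi_{n+1})$ is independent of the latter by construction (Section~\ref{sec:FKG}), it is independent of $\cal{G}_n$.

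To prove the second identity, apply the freezing lemma with $\cal{H}:=\cal{G}_n$, $Z:=Y_n$, $V:=U_{n+1}$, and $h(x,u):=f(\kappa(x,u))$. Then $\phi(x)=\int_0^1 f(\kappa(x,u))\,du=\sum_i f(x_i)p(x,x_i)=Pf(x)$, yielding $\Ebl{f(Y_{n+1})\mid\cal{G}_n}=Pf(Y_n)$. For the third identity, apply it with $Z:=(Y_n,W)$ (which is $\cal{G}_n$-measurable and takes values in $\s\times\r$, equipped with $\tws\otimes\cal{B}(\r)$) and $V:=\xi_{n+1}$, so that
\[ \phi(x,w)=\int_0^\infty g(t/\lambda(x),w)e^{-t}\,dt=\int_0^\infty g(s,w)\lambda(x)e^{-\lambda(x)s}\,ds \]
after the change of variables $s=t/\lambda(x)$. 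For the first (conditional independence) identity, apply the joint version of the lemma with $V:=(U_{n+1},\xi_{n+1})$ (independent of $\cal{G}_n$ and with product-form distribution) and $Z:=(Y_n,W)$; Tonelli's theorem then factorises the resulting double integral into the product of the two already-computed conditional expectations.

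The main (mild) obstacle is the inductive measurability verification together with checking that $\kappa$ is jointly measurable; this is routine but needs care since we must separately handle absorbing states where $q(x)=0$, though in that case $p(x,\cdot)=1_x(\cdot)$ and $\kappa(x,u)=x$ identically, so the argument goes through. One further minor nuisance is that, strictly speaking, the freezing lemma is often stated for real-valued $Z$, so one must either appeal to its standard extension to Polish-valued $Z$ or, more elementarily, partition on the (countably many) values of $Y_n$ and apply the real-valued version on each atom of $\{Y_n=x\}$.
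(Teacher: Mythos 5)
Your proof is correct and follows essentially the same route as the paper's: both reduce to the observation that $Y_{n+1}$ and $S_{n+1}$ are measurable functions of $Y_n$, $W$, and the fresh independent randomness $(U_{n+1},\xi_{n+1})$, then invoke the standard ``freezing'' argument (\citep[Section~9.10]{Williams1991}) together with the product structure of $(U_{n+1},\xi_{n+1})$ and Fubini/Tonelli to compute and factorise the resulting conditional expectations. Your write-up is a bit more explicit about the joint measurability of the update map and the Polish-valued/atom-partitioning nuance, but these are elaborations of the same argument rather than a different one.
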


\begin{proof}
The definitions of $Y_{n+1}$ and $S_{n+1}$ in the Kendall-Gillespie Algorithm (Algorithm~\ref{gilalg} in Section~\ref{sec:FKG}) imply that
$$f(Y_{n+1})=\tilde{f}(U_{n+1},Y_{n}),\qquad g(S_{n+1},W)=g(\xi_{n+1}/\lambda(Y_{n}),W),$$
where 
$$\tilde{f}(u,x):=\sum_{j=0}^\infty f(x_j)1_{\left\{\sum_{k=0}^{j-1}p(x,x_k)\leq u<\sum_{k=0}^jp(x,x_k)\right\}}\quad\forall x\in\s,\enskip u\in(0,1),$$
and we are using the enumeration of the state space introduced in the algorithm.  By the definitions of $U_{n+1}$, $\xi_{m+1}$, and $\cal{G}_n$, the first two are independent of the third. For this reason, the same arguments as those given in \citep[Section~9.10]{Williams1991} show that 
$$\Eb_\gamma[f(Y_{n+1})g(S_{n+1},W)|\cal{G}_{n}]=\Eb_\gamma[\tilde{f}(U_{n+1},Y_{n})g(\xi_{n+1}/\lambda(Y_{n}),W)|\cal{G}_{n}]=h(Y_n,W)\enskip\Pb_\gamma\text{-almost surely},$$
where
$$h(x,w)=\Eb_\gamma[\tilde{f}(U_{n+1},x)g(\xi_{n+1}/\lambda(x),w)]\quad\forall x\in\s,\enskip w\in\r.$$
Because $U_{n+1}$ and $\xi_{n+1}$ are independent, 
\begin{align*}h(x,w)=h_1(x)h_2(x,w)\quad\text{where}\quad h_1(x):=\Eb_\gamma[\tilde{f}(U_{n+1},x)],\quad h_2(x,w):=\Ebl{g(\xi_{n+1}/\lambda(x),w)},\end{align*}
for all $x$ in $\s$ and $w$ in $\r$. Similarly, the arguments in \citep[Section~9.10]{Williams1991}  imply that
$$h_1(Y_n)=\Eb_\gamma[\tilde{f}(U_{n+1},Y_{n})|\cal{G}_{n}],  \quad h_2(Y_n,W)=\Eb_\gamma[g(\xi_{n+1}/\lambda(Y_{n}),W)|\cal{G}_{n}],\quad \Pb_\gamma\text{-almost surely}.$$
The result then follows from $U_{n+1}$'s uniform distribution, $\xi_{n+1}$'s  unit-mean exponential distribution, and Fubini's theorem:
\begin{align*}h_1(x)&=\sum_{j=0}^\infty f(x_j)\Pbl{\left\{\sum_{k=0}^{j-1}p(x,x_k)\leq u<\sum_{k=0}^jp(x,x_k)\right\}}=\sum_{j=0}^\infty f(x_j)p(x,x_j)=Pf(x),\\ %
h_2(x,w)&=\int_0^\infty g(t/\lambda(x),w)e^{-t}dt=\int_0^\infty g(s,w)\lambda(x)e^{-\lambda(x)s}ds\quad \forall x\in\s,\enskip w\in\r,\end{align*}
where we have used the change of variable $s:=t/\lambda(x)$ in the last equation.
\end{proof}

\subsubsection*{Paying our dues: a proof of Lemma~\ref{lem:tinfsum}}

This proof relies on the following consequence of the discrete-time martingale convergence theorem:
\begin{theorem}\emph{(Levy's extension of the Borel-Cantelli lemmas, \citep[Section~12.15]{Williams1991}).}\label{thrm:bocan} Suppose that $W=(W_n)_{n\in\n}$ is a sequence of non-negative random variables 
bounded above by one (i.e., $W_n\leq 1$ for all $n\geq0$) and adapted to some filtration $(\cal{F}_n)_{n\in\n}$ 
(i.e., $W_n$ is $\cal{F}_n/\cal{B}(\r)$-measurable for all $n\geq0$). With probability one, the sums
$$\sum_{n=1}^\infty W_n\quad\text{and}\quad \sum_{n=1}^\infty \Ebb{W_n|\cal{F}_{n-1}},$$
are both finite or both infinite.
\end{theorem}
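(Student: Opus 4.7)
The plan is to apply the $L^2$-martingale convergence theorem to the Doob-decomposition martingale of the partial sums. Let $A_n := \sum_{k=1}^n W_k$ and $B_n := \sum_{k=1}^n \Ebb{W_k \mid \cal{F}_{k-1}}$; both are non-decreasing, $A_n$ is $\cal{F}_n$-adapted, and $B_n$ is $\cal{F}_{n-1}$-measurable (``predictable''). Setting $M_n := A_n - B_n$ and using linearity of conditional expectation, $(M_n)_{n\in\n}$ is an $(\cal{F}_n)_{n\in\n}$-adapted martingale starting at $M_0 = 0$ with bounded increments $\mmag{M_n - M_{n-1}} = \mmag{W_n - \Ebb{W_n \mid \cal{F}_{n-1}}} \leq 1$. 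Writing $A_\infty$ and $B_\infty$ for the (possibly infinite) monotone limits, the theorem reduces to proving that both $\{A_\infty < \infty, B_\infty = \infty\}$ and $\{A_\infty = \infty, B_\infty < \infty\}$ are $\Pb$-null.

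For the first direction I would fix $c > 0$ and use the predictable stopping time $\tau_c := \inf\{n \geq 0 : B_{n+1} > c\}$; predictability of $B$ (so that $\tau_c$ is an $(\cal{F}_n)$-stopping time) forces $B_{n \wedge \tau_c} \leq c$. The stopped process $M^{\tau_c}_n := M_{n\wedge \tau_c}$ is a martingale with the same bounded increments. Using the elementary inequality $(W_k - \Ebb{W_k\mid\cal{F}_{k-1}})^2 \leq W_k + \Ebb{W_k\mid\cal{F}_{k-1}}$ (which holds because the bracket lies in $[-1,1]$ and so its square is dominated by its absolute value), orthogonality of martingale increments gives
\begin{equation*}
\Ebb{(M^{\tau_c}_n)^2} \leq \Ebb{A_{n\wedge\tau_c}} + \Ebb{B_{n\wedge\tau_c}} = 2\Ebb{B_{n\wedge\tau_c}} \leq 2c,
\end{equation*}
where $\Ebb{A_{n\wedge\tau_c}} = \Ebb{B_{n\wedge\tau_c}}$ is Doob's optional stopping (Theorem~\ref{thrm:doobsopt}) applied to the bounded stopping time $n\wedge\tau_c$. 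Hence $M^{\tau_c}$ is $L^2$-bounded and converges a.s.\ to a finite limit. On $\{B_\infty \leq c\}$ one has $\tau_c = \infty$, so $M_n$ itself converges to a finite limit; combining this with the convergence of $B_n$ on this event, $A_n = M_n + B_n$ forces $A_\infty < \infty$. Taking the union over $c \in \zp$ establishes $\{B_\infty < \infty\} \subseteq \{A_\infty < \infty\}$ up to a null set.

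For the reverse direction I would use the (not necessarily predictable) stopping time $\sigma_c := \inf\{n \geq 0 : A_n > c\}$. Since $A$ increases by at most $1$ per step, $A_{n\wedge\sigma_c} \leq c+1$, and optional stopping applied at the bounded time $n \wedge \sigma_c$ gives $\Ebb{M_{n\wedge\sigma_c}} = 0$, whence
\begin{equation*}
\Ebb{B_{n\wedge\sigma_c}} = \Ebb{A_{n\wedge\sigma_c}} \leq c+1.
\end{equation*}
Monotone convergence yields $\Ebb{B_{\sigma_c}} \leq c+1 < \infty$, so $B_{\sigma_c} < \infty$ a.s. Since $\{A_\infty \leq c\} \subseteq \{\sigma_c = \infty\}$ and $B_\infty = B_{\sigma_c}$ on that event, $B_\infty < \infty$ a.s.\ on $\{A_\infty \leq c\}$; unioning over $c$ gives $\{A_\infty < \infty\} \subseteq \{B_\infty < \infty\}$ up to a null set.

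The main obstacle is the choice of the predictable stopping time $\tau_c$ in the harder direction: one needs $B_{n\wedge\tau_c}$ to be uniformly bounded (which requires $B$'s predictability) in order to get the uniform $L^2$-bound from the sharp increment estimate $(W_k - \Ebb{W_k\mid\cal{F}_{k-1}})^2 \leq W_k + \Ebb{W_k\mid\cal{F}_{k-1}}$ rather than the crude bound of $1$ (which would merely give $\Ebb{(M^{\tau_c}_n)^2} \leq n$ and be useless). The reverse direction is considerably easier since $A_{n\wedge\sigma_c}$ is uniformly bounded directly, and bounded-time optional stopping — already established as Theorem~\ref{thrm:doobsopt} — suffices without invoking $L^2$-boundedness.
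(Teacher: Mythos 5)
Your proof is correct. Note, though, that the paper does not actually write out a proof: it leaves the result as an exercise, directing the reader to generalise Williams's argument for the indicator case $W_n=1_{E_n}$. That intended route also starts from the Doob-decomposition martingale $M_n=A_n-B_n$ with increments bounded by one, but then closes the argument by invoking the dichotomy for bounded-increment martingales null at zero (a.s.\ either $M_n$ converges to a finite limit or $\limsup M_n=+\infty$ and $\liminf M_n=-\infty$), so that $M_n\to+\infty$ on $\{A_\infty=\infty,B_\infty<\infty\}$ and $M_n\to-\infty$ on $\{A_\infty<\infty,B_\infty=\infty\}$ are both impossible. You instead handle the two inclusions by direct truncation: the predictable stopping time $\tau_c$ together with the sharp estimate $(W_k-\Ebb{W_k\mid\cal{F}_{k-1}})^2\leq W_k+\Ebb{W_k\mid\cal{F}_{k-1}}$ gives an $L^2$-bounded stopped martingale for the hard direction, while the easy direction needs nothing beyond bounded-time optional stopping (Theorem~\ref{thrm:doobsopt}) and monotone convergence. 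The trade-off is that Williams's route requires the dichotomy theorem as a separate ingredient (itself proved by one-sided stopping and non-negative-supermartingale convergence), whereas yours is self-contained modulo the a.s.\ convergence of $L^2$-bounded martingales, and your easy direction is entirely elementary; you have also correctly identified the one delicate point, namely that the predictability of $B$ is what makes $B_{n\wedge\tau_c}\leq c$ and hence the uniform $L^2$ bound possible.
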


\begin{exercise}Prove \eqref{thrm:bocan} by generalising the proof given in \citep[Section~12.15]{Williams1991} for the special case $W_n=1_{E_n}$ with $E_n\in\cal{F}_n$.
\end{exercise}

We are now in a great position to prove Lemma~\ref{lem:tinfsum}:
\begin{proof}[Proof of Lemma~\ref{lem:tinfsum}] It is not difficult to check that 
$$T_\infty=\lim_{m\to\infty}T_m=\sum_{n=1}^\infty S_n$$
is finite if and only if 
$$\sum_{n=1}^\infty S_{n}\wedge 1$$
is finite. Theorem~\ref{thrm:bocan} implies that,  with $\Pb_\gamma$-probability one, the above sum is finite if and only if
$$\sum_{n=1}^\infty \Ebl{S_{n}\wedge 1|\cal{G}_{n-1}}$$
is finite. Because $S_n$ is exponentially distributed with mean $1/\lambda(Y_{n-1})$ (Theorem~\ref{thrm:condind}),
\begin{align*}\Ebl{S_{n}\wedge 1|\cal{G}_{n-1}}&=\int_0^1 s\lambda(Y_{n-1})e^{-\lambda(Y_{n-1})s}ds+\int_1^\infty \lambda(Y_{n-1})e^{-\lambda(Y_{n-1})s}ds\\
&=\frac{1}{\lambda(Y_{n-1})}\left(1-e^{-\lambda(Y_{n-1})}\right),\quad\Pb_\gamma\text{-almost surely},\quad\forall n>0.\end{align*} 
For this reason,
\begin{equation}
\label{eq:ndf7e8a9f3n7ab3yaf}\sum_{n=1}^\infty \Ebl{S_{n}\wedge 1|\cal{G}_{n-1}}=\sum_{n=0}^\infty \frac{1}{\lambda(Y_{n})}\left(1-e^{-\lambda(Y_{n})}\right)\leq \sum_{n=0}^\infty \frac{1}{\lambda(Y_{n})}\quad\Pb_\gamma\text{-almost surely},\end{equation}
and to complete the proof we need only to show that the rightmost sum is finite whenever the middle one is. The one-sided limit comparison test implies that, for any sequence of positive numbers $(\alpha_n)_{n\in\n}$, the sum $\sum_{n=0}^\infty \frac{1}{\alpha_n}$ is finite if $\sum_{n=0}^\infty \frac{1}{\alpha_n}(1-e^{-\alpha_n})$ is finite unless
$$\limsup_{n\to\infty}\frac{1}{1-e^{-\alpha_n}}=\infty,$$
or, equivalently, there exists a subsequence $(\alpha_{n_k})_{k\in\n}$ such that
$$\lim_{k\to\infty}\alpha_{n_k}=0.$$
However, this is impossible if $\sum_{n=0}^\infty \frac{1}{\alpha_n}(1-e^{-\alpha_n})$ is finite as L'H\^opital's rule would imply that
$$\lim_{k\to\infty} \frac{1}{\alpha_{n_k}}(1-e^{-\alpha_{n_k}})=\lim_{k\to\infty} e^{-\alpha_{n_k}}=1$$
and $\sum_{n=0}^\infty \frac{1}{\alpha_n}(1-e^{-\alpha_n})$ would be infinite.

\end{proof}

\subsubsection*{Notes and references} The proof of Lemma~\ref{lem:tinfsum} given above is a slightly more annotated version of that given for Theorem~1 in \citep[p.237]{Chung1967}.

\subsection{The filtration generated by the chain and stopping times}\label{sec:filtct}
%
%
The filtration $(\cal{F}_t)_{t\geq0}$  generated by the chain describes the information that may be gleaned from observing the chain's progress. For any point in time $t\geq0$, $\cal{F}_t$ captures the chain's history up until (and including) $t$: an event $A$ belongs to $\cal{F}_t$  if and only if we are able to determine whether or not it has occurred by time $t$ from observing the chain's position up until (and including) $t$. Formally:\glsadd{filtrationct}\index{filtration generated by the chain}
\begin{definition}[The filtration generated by the chain]\label{def:filt3}Let $(\cal{F}_t)_{t\geq0}$ be the filtration defined by
$$\cal{F}_t:=\text{ the sigma-algebra generated by }\{T_n\leq s, Y_n=x\}\quad\forall s\in[0,t],\enskip x\in\s,\enskip n\in\n,$$
meaning the smallest sigma-algebra containing $\{T_n\leq s, Y_n=x\}$ for all $s\leq t$, $x\in\s$, and $n\in\n$.
\end{definition}
In other texts, you may find $(\cal{F}_t)_{t\geq0}$ defined slightly differently:
\begin{exercise}\label{ex:filtrd}Show that, for any $t\in[0,\infty)$, the sets
%
$$\{X_{s}=x,s<T_\infty\}\quad\forall s\leq t,\enskip x\in\s$$
%
generate $\cal{F}_t$ in \eqref{def:filt3}. Hint: \eqref{eq:cpathdef}--\eqref{eq:cpathdef2} implies that 
%
$$T_{n+1}(\omega)=\inf\{r\in\mathbb{Q}\cap (T_n(\omega),\infty):X_{r}(\omega)\neq X_{T_n(\omega)}(\omega)\}\quad\forall \omega\in\Omega,\enskip n\geq0,$$
\end{exercise}

\subsubsection*{Stopping times}A non-negative random variable $\eta$ is an $(\cal{F}_t)_{t\geq0}$-stopping time if and only if it is the time that some event occurs and we are able to determine whether the event has occurred by observing the chain up until that instant. For example, $\eta$ can be the first (or second or $k$th) time that the chain visits some subset of the state space of interest, but not the last time that it enters the subset. Formally:\glsadd{eta}\glsadd{Feta}\index{stopping time}\index{pre-$\eta$ sigma-algebra}
\begin{definition}[Stopping times]\label{def:stopct}Given the filtration $(\cal{F}_t)_{t\geq0}$ generated by the chain (c.f. \eqref{def:filt3}), a random variable $\eta:\Omega\to[0,\infty]$ is said to be a $(\cal{F}_t)_{t\geq0}$-stopping time if, for each $t\geq0$, the event $\{\eta\leq t\}$ belongs to $\cal{F}_t$.
%
%
With the stopping time we associated a sigma-algebra $\cal{F}_\eta$ defined by
$$\forall A\in\cal{F},\quad A\in\cal{F}_\eta\Leftrightarrow A\cap\{\eta\leq t\}\in\cal{F}_t \quad \forall t\geq0.$$
\end{definition}

\begin{exercise}Convince yourself that $\cal{F}_\eta$ is a sigma-algebra.
\end{exercise}
The reasons behind the name `stopping time' afforded to these random variables are the same as in the discrete-time case (Section~\ref{sec:stop}). Similarly, the \emph{pre-$\eta$ sigma-algebra} $\cal{F}_\eta$ is the collection of events whose occurrence we are able to glean by tracking the chain's position up until the stopping time $\eta$. For instance, if $\eta$ is the second time that the chain visits a given state $x$, then the event that the chain visits $x$ once (or at least once, or twice, or at least twice) belongs to $\cal{F}_\eta$  but the event that the chain visits $x$ three (or four, or ...) does not belong to $\cal{F}_\eta$.  

The following lemma is very useful. It formalises three simple facts: $(i)$ from observing the chain up until a stopping time $\vartheta$, we can deduce whether or not the event associated with another stopping $\eta$ occurs no later  than (before than, or at) $\vartheta$; $(ii)$ if we can deduce whether an event $A$ occurs from observing the chain up until $\eta$, we can deduce whether $A$ and the event associated with $\eta$  both occur no later than $\vartheta$ from observing the chain up until $\vartheta$; and $(iii)$ if $\eta$ occurs no later than $\vartheta$ and we can deduce whether an event $A$ occurs by observing the chain up until $\eta$, then we can deduce whether $A$ occurs from observing the chain up until time $\vartheta$.
\begin{lemma}\label{lem:etatheta}If $\eta$ and $\vartheta$ are $(\cal{F}_t)_{t\geq0}$-stopping times, then 
\begin{enumerate}[label=(\roman*),noitemsep]
\item The events $\{\eta\leq \vartheta\}$, $\{\eta< \vartheta\}$, and $\{\eta= \vartheta\}$ belong to $\cal{F}_\vartheta$.
\item Given any event $A$ in $\cal{F}_\eta$, the event $A\cap\{\eta\leq \vartheta\}$ belongs to $\cal{F}_\vartheta$.
\item In particular, if $\eta\leq\vartheta$, then $\cal{F}_\eta\subseteq\cal{F}_\vartheta$.
\end{enumerate}
\end{lemma}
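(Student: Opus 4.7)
My plan is to mirror the discrete-time proof of Lemma \ref{lem:2stop} almost step-for-step, with the one essential modification that every place where the discrete-time proof decomposes an event over the natural numbers between $0$ and $n$, the continuous-time proof will decompose it over the rationals in $[0,t]$. Density of $\mathbb{Q}$ in $\r$, together with the fact that sigma-algebras are closed under countable operations, is what makes this go through.

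For part $(i)$, I will first show $\{\eta<\vartheta\}\in\cal{F}_\vartheta$. Fix $t\geq 0$. The key observation is that $\eta<\vartheta$ holds if and only if some rational sits strictly between them, so
\[
\{\eta<\vartheta\}\cap\{\vartheta\leq t\}=\bigcup_{q\in\mathbb{Q}\cap[0,t]}\{\eta\leq q\}\cap(\Omega\backslash\{\vartheta\leq q\})\cap\{\vartheta\leq t\}.
\]
Each $\{\eta\leq q\}$ and $\{\vartheta\leq q\}$ lies in $\cal{F}_q\subseteq\cal{F}_t$ by the stopping-time property, $\{\vartheta\leq t\}\in\cal{F}_t$, and the union is countable, so the left-hand side belongs to $\cal{F}_t$; hence $\{\eta<\vartheta\}\in\cal{F}_\vartheta$. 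Swapping the roles of $\eta$ and $\vartheta$ in the same argument gives $\{\vartheta<\eta\}\in\cal{F}_\vartheta$ (here one expresses $\{\vartheta<\eta\}\cap\{\vartheta\leq t\}$ as $\bigcup_{q\in\mathbb{Q}\cap[0,t]}\{\vartheta\leq q\}\cap(\Omega\backslash\{\eta\leq q\})$, using $\vartheta\leq q<\eta$). Then $\{\eta\leq\vartheta\}=\Omega\backslash\{\vartheta<\eta\}$ and $\{\eta=\vartheta\}=\{\eta\leq\vartheta\}\cap(\Omega\backslash\{\eta<\vartheta\})$ are both in $\cal{F}_\vartheta$ by closure under complements and intersections.

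For part $(ii)$, given $A\in\cal{F}_\eta$, I will write
\[
A\cap\{\eta\leq\vartheta\}\cap\{\vartheta\leq t\}=\bigl(A\cap\{\eta\leq t\}\bigr)\cap\bigl(\{\eta\leq\vartheta\}\cap\{\vartheta\leq t\}\bigr).
\]
The first factor is in $\cal{F}_t$ by Definition~\ref{def:stopct} applied to $\eta$, while the second is in $\cal{F}_t$ by part $(i)$; the intersection therefore lies in $\cal{F}_t$, giving $A\cap\{\eta\leq\vartheta\}\in\cal{F}_\vartheta$. Part $(iii)$ then follows instantly: if $\eta\leq\vartheta$ everywhere, then $A\cap\{\eta\leq\vartheta\}=A$ for every $A\in\cal{F}_\eta$, so $(ii)$ yields $A\in\cal{F}_\vartheta$.

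The main thing to be careful about is simply the substitution of density-of-$\mathbb{Q}$ arguments for the finite-enumeration arguments used in Lemma~\ref{lem:2stop}; there is no genuinely new difficulty here, and no appeal to right-continuity of paths or to the structure of the chain is needed beyond the defining property of $(\cal{F}_t)_{t\geq 0}$-stopping times. I do not expect any serious obstacle.
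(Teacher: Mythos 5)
Your strategy is the same as the paper's (which simply records one of the rational-indexed decompositions and declares the rest analogous to Lemma~\ref{lem:2stop}), and parts $(ii)$ and $(iii)$ are fine exactly as you have them. The decomposition for $\{\eta<\vartheta\}\cap\{\vartheta\leq t\}$ is also correct: since any rational $q$ placed with $\eta\leq q<\vartheta\leq t$ automatically satisfies $q<t$, it lands in $\mathbb{Q}\cap[0,t]$ whether $t$ is rational or not.

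The one place that is not quite right is the `swap the roles' step. The roles are not symmetric, because you always intersect with $\{\vartheta\leq t\}$ and never with $\{\eta\leq t\}$. For $\{\vartheta<\eta\}\cap\{\vartheta\leq t\}$ you need a rational $q$ with $\vartheta\leq q<\eta$, but the only bound you have on $q$ from above is $q\geq\vartheta$ together with $\vartheta\leq t$; if $t$ is irrational, $\vartheta(\omega)=t$, and $\eta(\omega)>t$, then no $q\in\mathbb{Q}\cap[0,t]$ satisfies $\vartheta(\omega)\leq q$, so the stated identity
\[
\{\vartheta<\eta\}\cap\{\vartheta\leq t\}=\bigcup_{q\in\mathbb{Q}\cap[0,t]}\{\vartheta\leq q\}\cap(\Omega\backslash\{\eta\leq q\})
\]
is false (the left-hand side can be strictly larger). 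The fix is cheap: split off the endpoint and write
\[
\{\vartheta<\eta\}\cap\{\vartheta\leq t\}=\Bigl(\bigcup_{q\in\mathbb{Q}\cap[0,t)}\{\vartheta\leq q\}\cap(\Omega\backslash\{\eta\leq q\})\Bigr)\cup\bigl(\{\vartheta\leq t\}\cap(\Omega\backslash\{\eta\leq t\})\bigr),
\]
and observe that $\{\vartheta\leq t\}$ and $\{\eta\leq t\}$ both lie in $\cal{F}_t$ directly from Definition~\ref{def:stopct}. This subtlety simply does not arise in the discrete-time Lemma~\ref{lem:2stop} because there the `endpoint' $n$ is itself an admissible index, which is precisely why the naive density-of-$\mathbb{Q}$ port of the argument has a seam at $q=t$. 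With that one-term patch your proof is complete.
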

\begin{proof}Given that
\begin{align*}\{\eta<\vartheta\}\cap\{\vartheta\leq t\}&=\bigcap_{s\in[0,t)\cap\mathbb{Q}}\{\eta\leq s\}\cap\{s<\vartheta\}\cap\{\vartheta\leq t\}\in\cal{F}_t\quad\forall t\in[0,\infty),
\end{align*}
 this proof is analogous to that of the lemma's discrete-time counterpart (Lemma~\ref{lem:2stop}) and we skip it.
\end{proof}

\begin{exercise}\label{ex:xeta}Show that $(\cal{F}_t)_{t\geq0}$-stopping times $\eta$ and states $x$ in $\s$, $\{\eta<T_\infty,X_\eta=x\}$ belongs to $\cal{F}_\eta$. Hint: re-write $\{\eta<T_\infty,X_\eta=x\}$ as
$$\{\eta<T_\infty,X_\eta=x\}=\bigcup_{n=0}^\infty\{T_{n}\leq\eta<T_{n+1},Y_n=x\},$$
and use Lemma~\ref{lem:etatheta}$(i,ii)$.\end{exercise}

\subsubsection*{Does $\cal{F}_{T_n}=\cal{G}_n$? An open problem} It follows directly from Definitions~\ref{def:filt3}~and~\ref{def:stopct} that any jump time $T_n$ is a stopping time. The pre-$T_n$ sigma algebra $\cal{F}_{T_n}$  contains all events whose occurrence or non-occurrence can be deduced from observing  the chain's path up until (and including) the jump time $T_n$. Because this path segment is characterised  by the first $n+1$ states $Y_0,\dots,Y_n$ visited and first $n$ jump times $T_1,\dots,T_n$ (recall \eqref{eq:cpathdef}--\eqref{eq:cpathdef2}), it seems natural for $\cal{F}_{T_n}$ to coincide with $\cal{G}_n$ in Definition~\ref{def:filt2} (i.e., with the collection of events whose occurrence or non-occurrence can be deduced from observing $Y_0,\dots,Y_n,T_1,\dots,T_n$). Indeed, it is straightforward to show that $\cal{G}_n$ is contained in $\cal{F}_{T_n}$:
\begin{proposition}\label{prop:ftngn}$\cal{G}_n\subseteq\cal{F}_{T_n}$ for all $n\geq0$.
\end{proposition}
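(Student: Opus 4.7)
The plan is to verify directly from Definitions~\ref{def:filt3} and \ref{def:stopct} that each of the generators $Y_0,\dots,Y_n$ and $T_0,\dots,T_n$ of $\cal{G}_n$ is $\cal{F}_{T_n}$-measurable; since $\cal{F}_{T_n}$ is a sigma-algebra, it must then contain the sigma-algebra $\cal{G}_n$ they generate. The only work is in unpacking the definition of $\cal{F}_{T_n}$, namely that $A \in \cal{F}_{T_n}$ iff $A \cap \{T_n \leq t\} \in \cal{F}_t$ for every $t \geq 0$.

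First I would note that $T_n$ itself is an $(\cal{F}_t)_{t\geq0}$-stopping time: by definition of $\cal{F}_t$ and countability of $\s$,
\[
\{T_n \leq t\} \;=\; \bigcup_{x\in\s}\{T_n \leq t,\, Y_n=x\} \;\in\; \cal{F}_t\qquad \forall t\geq 0.
\]
So $\cal{F}_{T_n}$ makes sense, and $\{T_n\leq t\}$ lies in $\cal{F}_t$ as a basic fact we will reuse.

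Next I would verify that $Y_k$ is $\cal{F}_{T_n}$-measurable for each $k\leq n$. Fix $k\leq n$, $x\in\s$, and $t\geq 0$. Because the jump times are monotone ($T_k\leq T_n$ everywhere, by \eqref{eq:cpathdef2}), on the event $\{T_n\leq t\}$ we automatically have $T_k\leq t$. Hence
\[
\{Y_k = x\}\cap\{T_n\leq t\} \;=\; \{Y_k = x,\, T_k\leq t\}\cap\{T_n\leq t\}.
\]
The first factor on the right is a generator of $\cal{F}_t$ (Definition~\ref{def:filt3}), and the second factor lies in $\cal{F}_t$ by the previous paragraph. Hence their intersection is in $\cal{F}_t$, and this holds for every $t$, so $\{Y_k=x\}\in\cal{F}_{T_n}$.

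For $T_k$ with $k\leq n$, an analogous dichotomy handles $\{T_k\leq s\}\cap\{T_n\leq t\}$: if $s\leq t$, then $\{T_k\leq s\}=\bigcup_{x\in\s}\{T_k\leq s, Y_k=x\}\in\cal{F}_s\subseteq\cal{F}_t$ and we intersect with $\{T_n\leq t\}\in\cal{F}_t$; if $s>t$, then on $\{T_n\leq t\}$ we have $T_k\leq T_n\leq t<s$, so $\{T_k\leq s\}\cap\{T_n\leq t\}=\{T_n\leq t\}\in\cal{F}_t$. Either way the intersection lies in $\cal{F}_t$, giving $\{T_k\leq s\}\in\cal{F}_{T_n}$ for every $s$ and hence $T_k$ is $\cal{F}_{T_n}$-measurable. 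Since $\cal{G}_n$ is by definition the smallest sigma-algebra rendering $Y_0,\dots,Y_n,T_0,\dots,T_n$ measurable, the inclusion $\cal{G}_n\subseteq\cal{F}_{T_n}$ follows.

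There is no real obstacle here: the argument is bookkeeping against the definitions, with the one mildly subtle step being the $s>t$ case above, where one must exploit the monotonicity $T_k\leq T_n$ to reduce to an event already known to be in $\cal{F}_t$. (This is, incidentally, the direction that is easy; the converse inclusion $\cal{F}_{T_n}\subseteq\cal{G}_n$ — which the excerpt flags as an open problem — is the hard one, as one would need to recover from the minimal sigma-algebra on $(Y_0,\dots,Y_n,T_1,\dots,T_n)$ all pre-$T_n$ information, and nothing in the construction so far gives us that.)
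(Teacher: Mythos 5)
Your proof is correct and rests on the same underlying observation as the paper's: the monotonicity $T_k \leq T_n$ of the jump times reduces events $\cal{G}_n$-measurable events intersected with $\{T_n\leq t\}$ to events built from the generators $\{T_k\leq s, Y_k = x\}$ of $\cal{F}_t$. The only cosmetic difference is that you check each generating random variable $Y_k, T_k$ individually, whereas the paper verifies the claim directly on the $\pi$-system of joint cylinder events $\{Y_0=x_0,\dots,Y_n=x_n,T_0\leq t_0,\dots,T_n\leq t_n\}$; both are routine once the monotonicity is exploited.
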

\begin{proof}$\cal{G}_n$ is generated by sets of the form 
$$A:=\{Y_0=x_0, Y_1=x_1,\dots,Y_n=x_n, T_0\leq t_0,T_1\leq t_1,\dots, T_n\leq t_n\},$$
where $x_0,\dots,x_n\in\s$ and $t_0,\dots,t_n\in[0,\infty)$. Fix any $t\geq0$. Because $\cal{F}_{t}$ is closed under finite intersections, it follows from its definition that
\begin{align*}A\cap \{T_n\leq t\}
=\{Y_0=x_0, Y_1=x_1,\dots,Y_n=x_n, T_0\leq t_0\wedge t,T_1\leq t_1\wedge t,\dots, T_n\leq t_n\wedge t\}\end{align*}
belongs to $\cal{F}_t$. In other words, $A$ belongs to $\cal{F}_{T_n}$ and the proposition follows.\end{proof}
I have so far been unable to find a simple argument for the converse. One possible way forward could involve \citep[Proposition~6.157(c)]{Freedman1983}. However, at least at first glance, it seems like this approach would require tinkering with the definition of the underlying space $(\Omega,\cal{F})$ (Section~\ref{sec:FKG}) to ensure that it is Borel.

\ifdraft
\subsubsection*{Notes and references} Here we used the methods of proof in https://almostsure.wordpress.com/2009/11/23/sigma-algebras-at-a-stopping-time/ and that in the proof of \citep[Lemma~6.5.3]{Norris1997}
\fi

\subsection{The path space and the path law}
\label{sec:pathspacect}
The \emph{law of the process} (or \emph{path law} for short) is the distribution induced by the chain on the \emph{path space} (i.e., the set of possible paths). 

\subsubsection*{The path space}  Recall that in \eqref{eq:cpathdef}--\eqref{eq:cpathdef2}, we defined the chain $X$ in terms of the jump chain $Y:=(Y_n)_{n\in\n}$ and the waiting times $S:=(S_n)_{n\in\zp}$. To simplify the technicalities, throughout this book we  identify 
 $X$ with $S$ and $Y$ in the sense that we view $X$ as a random variable taking values in the set
$$\cal{P}:=\s^{\n}\times [0,\infty)^{\zp}$$
of all possible sequences of states and waiting times:
\begin{equation}\label{eq:fm9awha7n3u2a}X(\omega)=(Y(\omega),S(\omega))\in\cal{P}\quad\forall \omega\in\Omega.\end{equation}
To be able to talk about random variables taking values in the \emph{path space}\glsadd{Pspace}\index{path space} $\cal{P}$, we must assign it a sigma-algebra. We choose the sigma-algebra\glsadd{E} $\cal{E}$ generated\index{cylinder sigma-algebra} by the cylinder sets, i.e., sets of the form
\begin{equation}\label{eq:nfanfewuiafnueia}\{x_0\}\times\{x_1\}\times\dots\times\{x_n\}\times\s\times\s\times\dots\times (s_1,\infty)\times(s_2,\infty)\times\dots\times (s_n,\infty)\times[0,\infty)\times[0,\infty)\times\dots,\end{equation}
where $n$ is any positive integer, $x_0,x_1,\dots,x_n$ are any states in $\s$, and $s_1,s_2,\dots,s_n$ are any non-negative real numbers. It is not difficult to check that $Y_n$ and $S_m$ are $\cal{F}/\cal{E}$-measurable functions for every $n\geq0$ and $m>0$. It follows that the map $X:\Omega\to\cal{P}$ defined by \eqref{eq:fm9awha7n3u2a} is $\cal{F}/\cal{E}$-measurable.
\begin{exercise}\label{exe:fja89wmwau}Show that $X$ is an $\cal{F}/\cal{E}$-measurable function.\end{exercise} 
\subsubsection*{The path law}
Given the above exercise,
\begin{equation}\label{eq:pathlawct}\mathbb{L}_\gamma(A):=\Pbl{\{X\in A\}}\quad\forall A\in\cal{E},\end{equation}
is a well-defined probability measure on $(\cal{P},\cal{E})$ known as the \emph{path law}\glsadd{Lgam}\index{path law} of $X$, where 
$$\{X\in A\}:=\{\omega\in\Omega:X(\omega)\in A\}$$
denotes the preimage of $A$ under $X$. Just as with $\Pb_x$ and $\Eb_x$, we write\glsadd{Lx} $\mathbb{L}_x$ as a shorthand for $\mathbb{L}_\gamma$ with $\gamma=1_x$. The path law is characterised as follows:

\begin{theorem}\label{thrm:pathlawunict}The path law $\mathbb{L}_\gamma$ defined in \eqref{eq:pathlawct} is the only measure on $(\cal{P},\cal{E})$ such that
\begin{equation}\label{eq:nf78eawh78wea}\mathbb{L}_\gamma(\text{\eqref{eq:nfanfewuiafnueia}})=\gamma(x_0)p(x_0,x_1)\dots p(x_{n-1},x_n)e^{-\lambda(x_0)s_1}e^{-\lambda(x_1)s_2}\dots e^{-\lambda(x_{n-1})s_n}\end{equation}
for all positive integers $n$, states $x_0,x_1,\dots,x_n\in\s$, and real numbers $s_1,s_2,\dots s_n\geq0$.
\end{theorem}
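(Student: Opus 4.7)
The proof splits into the usual two halves: showing that $\mathbb{L}_\gamma$ defined in~\eqref{eq:pathlawct} does satisfy~\eqref{eq:nf78eawh78wea} (existence), and showing that no other measure on $(\cal{P},\cal{E})$ can (uniqueness). The uniqueness half is immediate from Lemma~\ref{lem:dynkinpl} once the existence half is in hand, so the real work is in verifying the product formula.

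For existence, I would argue by induction on $n$ that
\[
\Pbl{\{Y_0=x_0,\dots,Y_n=x_n,S_1>s_1,\dots,S_n>s_n\}}=\gamma(x_0)\prod_{k=1}^n p(x_{k-1},x_k)\,e^{-\lambda(x_{k-1})s_k},
\]
from which~\eqref{eq:nf78eawh78wea} follows by the identification $X=(Y,S)$ made in~\eqref{eq:fm9awha7n3u2a}. The base case $n=0$ is just the fact that $Y_0=X_0$ has law $\gamma$ under $\Pb_\gamma$ (see the discussion following~\eqref{eq:pl}). For the inductive step, I would write the probability as
\[
\Ebl{\,1_{\{Y_0=x_0,\dots,Y_{n-1}=x_{n-1},S_1>s_1,\dots,S_{n-1}>s_{n-1}\}}\,1_{\{Y_n=x_n\}}\,1_{\{S_n>s_n\}}\,},
\]
note that the first indicator is $\cal{G}_{n-1}$-measurable (as $Y_0,\dots,Y_{n-1},S_1,\dots,S_{n-1}$ generate $\cal{G}_{n-1}$, recall~\eqref{eq:nfa789oh3872oaabrA}), pull it out of the conditional expectation against $\cal{G}_{n-1}$ via Theorem~\ref{thrm:condexpprops}$(v)$, and then apply Theorem~\ref{thrm:condind} with $f:=1_{x_n}$ and $g(s,w):=1_{(s_n,\infty)}(s)$ (and any dummy $W$) to obtain
\[
\Ebl{1_{\{Y_n=x_n\}}1_{\{S_n>s_n\}}\mid\cal{G}_{n-1}}=p(Y_{n-1},x_n)\!\int_{s_n}^\infty\!\!\lambda(Y_{n-1})e^{-\lambda(Y_{n-1})s}ds=p(Y_{n-1},x_n)e^{-\lambda(Y_{n-1})s_n},
\]
$\Pb_\gamma$-almost surely. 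On the event $\{Y_{n-1}=x_{n-1}\}$ this is the deterministic constant $p(x_{n-1},x_n)e^{-\lambda(x_{n-1})s_n}$, and feeding it back into the outer expectation together with the inductive hypothesis delivers the formula.

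For uniqueness, the collection $\cal{H}$ consisting of $\emptyset$ together with all cylinder sets of the form~\eqref{eq:nfanfewuiafnueia} is closed under finite intersections (intersecting two such cylinders either yields another cylinder, after taking the $\max$ on each waiting-time coordinate and comparing singletons on the jump-chain coordinates, or is empty), hence is a $\pi$-system. By construction $\cal{H}$ generates $\cal{E}$. Any measure $\mu$ on $(\cal{P},\cal{E})$ satisfying~\eqref{eq:nf78eawh78wea} is a probability measure: taking $n=1$, $s_1=0$, and summing over all $x_0,x_1\in\s$ by countable additivity gives $\mu(\cal{P})=\sum_{x_0}\gamma(x_0)\sum_{x_1}p(x_0,x_1)=1=\mathbb{L}_\gamma(\cal{P})$, since the cylinders $\{x_0\}\!\times\!\{x_1\}\!\times\!\s\!\times\!\cdots\!\times\![0,\infty)\!\times\!\cdots$ are disjoint and exhaust $\cal{P}$ as $(x_0,x_1)$ ranges over $\s^2$. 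Lemma~\ref{lem:dynkinpl} then forces $\mu=\mathbb{L}_\gamma$ on all of $\cal{E}$.

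\textbf{Main obstacle.} The one step requiring care is the inductive pass: one must invoke Theorem~\ref{thrm:condind} cleanly with the right choice of $f$ and $g$, and then use that the inner conditional expectation, while a priori $\cal{G}_{n-1}$-measurable, becomes a deterministic constant once multiplied by the $\cal{G}_{n-1}$-measurable indicator $1_{\{Y_{n-1}=x_{n-1}\}}$. Once that bookkeeping is done, the rest is mechanical.
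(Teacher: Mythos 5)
Your proof follows the same route as the paper's: use Theorem~\ref{thrm:condind} to peel off the $n$th coordinate via a conditional expectation against $\cal{G}_{n-1}$ (the paper iterates backwards from $n$; you induct forwards from $0$, a cosmetic difference), and then invoke Lemma~\ref{lem:dynkinpl} on the $\pi$-system of cylinders for uniqueness. One small caution: your argument that any competing measure has total mass $1$ does not quite close. Taking $n=1$, $s_1=0$ in \eqref{eq:nfanfewuiafnueia} gives cylinders whose first waiting-time coordinate lives in $(0,\infty)$, not $[0,\infty)$, so summing over $(x_0,x_1)$ only shows $\mu(\{s_1>0\})=1$, not $\mu(\cal{P})=1$; a Dirac mass at a path with $s_1=0$ lies in no cylinder of the form \eqref{eq:nfanfewuiafnueia} and so could be added without disturbing \eqref{eq:nf78eawh78wea}. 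In practice the theorem is read as a uniqueness claim among probability measures (the paper tacitly does this too), which is exactly what Lemma~\ref{lem:dynkinpl} then delivers.
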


\begin{proof}
Because sets of the type \eqref{eq:nfanfewuiafnueia} form a $\pi$-system that generates $\cal{E}$, Lemma~\ref{lem:dynkinpl} tells us that there is at most one probability measure on $(\cal{P},\cal{E})$ satisfying \eqref{eq:nf78eawh78wea} for all $n,x_0,x_1,\dots,x_n,s_1,s_2,\dots s_n$. Given that $\mathbb{L}_{\gamma}(\cal{P})=\mathbb{P}_\gamma(\Omega)=1$, we need only to show that \eqref{eq:nf78eawh78wea} holds for all $n,x_0,x_1,\dots,x_n,s_1,s_2,\dots s_n$.

To do so, define the events
$$A_m:=\{Y_0=x_0,\dots Y_{m}=x_{m},S_1>s_1,\dots,S_{m}>s_m\},\quad\forall m=1,2,\dots,n.$$
It follows from Exercise~\ref{ex:fme78ahfb67ebf68aybfewa} that  $A_m$ belongs to $\cal{G}_m$ in (Definition~\ref{def:filt2}). For this reason, the definition of $\mathbb{L}_\gamma$ in \eqref{eq:pathlawct} implies that
\begin{align}\mathbb{L}_\gamma(\text{\eqref{eq:nfanfewuiafnueia}})
&=\Pbl{A_{n-1}\cap\{Y_{n}=x_{n},S_n>s_n\}}\nonumber\\
&=\Ebl{\Pbl{A_{n-1}\cap\{Y_{n}=x_{n},S_n>s_n\}|\cal{G}_{n-1}}}\nonumber\\
&=\Eb_\gamma[1_{A_{n-1}}\Pbl{\{Y_{n}=x_{n},S_n>s_n\}|\cal{G}_{n-1}}].\label{eq:nf8yewab8w/8f4ewaafe4a}\end{align}
where the last two equalities follow from the tower and take-out-what-is-known properties of conditional expectation (Theorem~\ref{thrm:condexpprops}$(iv,v)$). Setting $f(x):=1_{x_n}(x)$, $g(s,w):=1_{(s_n,\infty)}(s)$, and $W:=1$ in Theorem~\ref{thrm:condind} we find that
\begin{align*}\Pbl{\{Y_{n}=x_{n},S_n>s_n\}|\cal{G}_{n-1}}&=\Pbl{\{Y_{n}=x_{n}\}|\cal{G}_{n-1}}\Pbl{\{S_n>s_n\}|\cal{G}_{n-1}}\\
&=p(Y_{n-1},x_n)e^{-\lambda(Y_{n-1})s_n}\quad\Pb_\gamma\text{-almost surely},
\end{align*}
Plugging the above into \eqref{eq:nf8yewab8w/8f4ewaafe4a} and exploiting that $Y_{n-1}=x_{n-1}$ on $A_{n-1}$, we obtain
$$\mathbb{L}_\gamma(\text{\eqref{eq:nfanfewuiafnueia}})=\Eb_\gamma[1_{A_{n-l}}p(Y_{n-1},x_n)e^{-\lambda(Y_{n-1})s_n}]=\Pbl{A_{n-1}}p(x_{n-1},x_n)e^{-\lambda(x_{n-1})s_n}.$$
Iterating the above backwards, we find that
$$\mathbb{L}_\gamma(\text{\eqref{eq:nfanfewuiafnueia}})=\Pbl{A_{0}}p(x_{0},x_1)e^{-\lambda(x_{0})s_1}p(x_{1},x_2)e^{-\lambda(x_{1})s_2}\dots p(x_{n-1},x_n)e^{-\lambda(x_{n-1})s_n},$$
and \eqref{eq:nf78eawh78wea} follows by re-arranging and noting that 
$$\Pbl{A_0}=\Pbl{\{Y_0=x_0\}}=\Pbl{\{X_0=x_0\}}=\gamma(x_0).$$

\end{proof}

\subsubsection*{More dues: a proof of Proposition~\ref{prop:absorb}}Our first use of the machinery set up in this section is proving Proposition~\ref{prop:absorb} given in Section~\ref{sec:FKG}. Because the definition of the jump matrix in~\eqref{eq:jumpmatrix} implies that $p(x,x)=1$ if and only if $q(x)=0$, the chain's definition~\eqref{eq:cpathdef}--\eqref{eq:cpathdef3}, downwards monotone convergence, \eqref{thrm:pathlawunict} imply that
\begin{align}\Pbx{\{X_t=x,\enskip\forall t\in[0,T_\infty)\}}&=\Pbx{\{Y_n=x,\enskip\forall n\in\n\}}=\lim_{m\to\infty}\Pbx{\bigcap_{n=0}^m \{Y_n=x\}}\label{eq:fnfneu8abfea78bfa}\\
&=\lim_{m\to\infty}p(x,x)^m=\left\{\begin{array}{ll}1&\text{if }q(x)=0\\0&\text{if }q(x)\neq 0\end{array}\right.,\nonumber\end{align}
and it follows that $\Pbx{\{T_\infty=\infty,X_t=x,\enskip\forall t\in[0,\infty)\}}=0$ unless $q(x)=0$. If $q(x)=0$, then the above shows that $Y_n=x$ for all $n\geq0$, $\Pb_\gamma$-almost surely. Moreover, the jump rate $\lambda(x)$ is one by its definition in~\eqref{eq:lambda} and, so, Kolmogorov's strong law of large numbers (Theorem~\ref{thrm:klln}) implies that
$$T_\infty=\lim_{n\to\infty}T_n=\lim_{n\to\infty}\sum_{m=1}^{n}S_m=\sum_{m=1}^{\infty}S_m=\sum_{m=1}^{\infty}\frac{\xi_m}{\lambda(Y_n)}=\sum_{m=1}^{\infty}\frac{\xi_m}{\lambda(x)}=\sum_{m=1}^{\infty}\xi_m=\infty\quad\Pb_\gamma\text{-a.s.}$$
The above and \eqref{eq:fnfneu8abfea78bfa} then imply that $\Pbx{\{T_\infty=\infty,X_t=x,\enskip\forall t\in[0,\infty)\}}=1$, as desired.

\subsubsection*{Measurable functions on $\cal{P}$} In the next section, we will give the Markov property in terms of $\cal{E}/\cal{B}(\r_E)$-measurable functions on $\cal{P}$. 
Here, we take a moment to get acquainted with some of these functions and a feel for what the others may be. To start with, the definition of $\cal{E}$  implies that \emph{coordinate functions}
$$c^Y_n(x)=y_n,\qquad c^S_n(x)=s_n,\qquad\forall x:=(y,s)\in\cal{P}$$
are $\cal{E}/2^\s$-measurable and $\cal{E}/\cal{B}(\r_E)$-measurable, respectively. Because linear combinations and limits of real-valued measurable functions are measurable,
$$c^T_0(x):=0,\qquad c^T_n(x)=\sum_{m=1}^ns_n,\qquad c^T_\infty(x)=\lim_{m\to\infty}c^T_n(x)=\sum_{m=1}^\infty s_n,\qquad\forall x\in\cal{P},$$
are also $\cal{E}/\cal{B}(\r_E)$-measurable. We now have measurable functions whose composition with $X$ give us the $n$th state visited by the chain, the $n$th waiting time, the $n$th jump time, and the explosion time:
\begin{equation}
\label{eq:coord}Y_n(\omega)=c^Y_n(X(\omega)),\quad S_n(\omega)=c^S_n(X(\omega)),\quad T_n(\omega)=c^T_n(X(\omega)),\quad T_\infty(\omega)=c^T_\infty(X(\omega))\end{equation}
for all $\omega$ in $\Omega$. Similarly, for any $f:\s\to\r$,
\begin{equation}
\label{eq:coord2}1_{\{t<T_\infty(\omega)\}}f(X_t(\omega))=\sum_{n=0}^\infty1_{\{T_n(\omega)\leq t<T_{n+1}(\omega)\}}f(Y_n(\omega))=c^f_t(X(\omega))\quad\forall \omega\in\Omega,\end{equation}
where we are using the notation for partially-defined functions introduced in~\eqref{eq:partdef} and
$$c_t^f(x):=\sum_{n=0}^\infty1_{[0,t]}(c^T_n(x))1_{(t,\infty)}(c^T_{n+1}(x))f(c^Y_n(x))\quad\forall x\in\cal{P},$$
is a $\cal{E}/\cal{B}(\r_E)$-measurable function given that it is obtained by composing, adding, multiplying, and taking limits of measurable functions. Setting $f$ to be the indicator function $1_z$ of any given state $z$, we have that the indicator function of the event that the chain is in $z$ at time $t$, is the composition $c_t^z(X)$  of the chain $X$ and the $\cal{E}/\cal{B}(\r_E)$-measurable function 
\begin{equation}\label{eq:ctz}c_t^z(x):=\sum_{n=0}^\infty1_{[0,t]}(c^T_n(x))1_{(t,\infty)}(c^T_{n+1}(x))1_z(c^Y_n(x))\quad\forall x\in\cal{P}.\end{equation}
\begin{exercise}\label{exe:measpedantry}If you'd like a challenge in measurability-checking-pedantry(!), show that the sets
\begin{equation}\label{eq:gensetsalt}\{x\in\cal{P}:c^{z_1}_{t_1}(x)=1,c^{z_2}_{t_2}(x)=1,\dots,c^{z_n}_{t_n}(x)=1\}\end{equation}
for all $t_1\leq t_2\leq\dots\leq t_n\in[0,\infty)$, $z_1,z_2,\dots, z_n\in\s$, and $n\in\n$, generate $\cal{E}$.
\end{exercise}


We finish this section with the following rather dull lemma that will cover most of our measurability-checking necessities throughout the ensuing treatment of continuous-time chains.
\begin{lemma}\label{lem:pathspmeasct}The following are $\cal{E}/\cal{B}(\r_E)$-measurable functions
\begin{enumerate}[label=(\roman*),noitemsep]
\item For any real-valued function $f$ on $\s$,
\begin{align*}F_-(x)&:=\liminf_{n\to\infty}\frac{1}{n}\sum_{m=0}^{n-1}c^S_{m+1}(x)f(c^Y_m(x)),\\
F_+(x)&:=\limsup_{n\to\infty}\frac{1}{n}\sum_{m=0}^{n-1}c^S_{m+1}(x)f(c^Y_m(x)),\quad\forall x\in\cal{P}.\end{align*}
%
%
\item For any subset $A$ of $\s$ and positive integer $k$,
$$F(x):=\inf\left\{n>0:\sum_{m=1}^n1_A(c^Y_m(x))=k\right\},\quad\forall x\in\cal{P}.$$
%
\item For any non-negative function $f$ on $\s$, 
$$G(x)=\sum_{m=0}^{F(x)-1}c^S_{m+1}(x)f(c^Y_m(x))\quad\forall x=(y,s)\in\cal{P},$$
where $F$ is as in $(ii)$.
\item  For any given $t$ in $[0,\infty)$ and real-valued function $f$ on $\s$,
\begin{align*}F_t(x):=&\sum_{n=0}^\infty1_{[0,t]}(c_{n+1}^T(x))c^S_{n+1}(x)f(c^Y_n(x))\\
&+\sum_{n=0}^\infty1_{[0,t]}(c_{n}^T(x))1_{(t,\infty)}(c_{n+1}^T(x))f(c^Y_n(x))(t-c_{n}^T(x))\quad\forall x\in\cal{P},\end{align*}
and
$$F_-(x):=\liminf_{t\to\infty}\frac{F_t(x)}{t},\qquad F_+(x):=\limsup_{t\to\infty}\frac{F_t(x)}{t},\qquad\forall x\in\cal{P}.$$
\end{enumerate} 
\end{lemma}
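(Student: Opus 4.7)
The plan is to follow very closely the strategy used for Lemma \ref{lem:pathspmeas} in the discrete-time case, exploiting three basic facts: the coordinate functions $c^Y_n, c^S_n, c^T_n$ are $\cal{E}$-measurable by construction of the cylinder sigma-algebra; compositions, finite sums, finite products, pointwise limits, $\liminf$s and $\limsup$s of measurable $\r_E$-valued functions are measurable; and any $f:\s\to\r$ is automatically $2^\s/\cal{B}(\r)$-measurable since $\s$ carries the power-set sigma-algebra. With these in hand, parts $(i)$ and $(ii)$ are essentially copies of the discrete-time arguments.

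For $(i)$, I would note that $x\mapsto f(c^Y_m(x))$ is measurable as the composition of two measurable maps, so $x\mapsto c^S_{m+1}(x)f(c^Y_m(x))$ is a product of measurable functions; finite sums and the factor $1/n$ preserve measurability, and then the $\liminf$ and $\limsup$ of the resulting sequence do too. For $(ii)$, I would let $g_n(x):=\sum_{m=1}^n 1_A(c^Y_m(x))$, observe that each $g_n$ and its pointwise limit $g_\infty$ are $\cal{E}/\cal{B}(\r_E)$-measurable, and rewrite
\[
F \;=\; \infty\cdot 1_{\{g_\infty<k\}} \;+\; \lim_{N\to\infty}\sum_{n=1}^N n\,1_{\{g_{n-1}<k\}}1_{\{g_n=k\}},
\]
which is a limit of finite sums of products of indicators of measurable sets.

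Part $(iii)$ is the one place where a minor subtlety enters, because $F(x)$ may equal $\infty$. The trick, borrowed from the proof of Lemma \ref{lem:pathspmeas}$(iii)$, is to introduce the partial sums $h_n(x):=\sum_{m=0}^n c^S_{m+1}(x)f(c^Y_m(x))$ and their limit $h_\infty(x)$, all of which are $\cal{E}/\cal{B}(\r_E)$-measurable by $(i)$-style reasoning, and then to check that $H(n,x):=h_n(x)$ defines a $(2^{\n_E}\otimes\cal{E})/\cal{B}(\r_E)$-measurable map on $\n_E\times\cal{P}$, by verifying that $\{H\in A\}=(\{\infty\}\times\{h_\infty\in A\})\cup\bigcup_{n=0}^\infty(\{n\}\times\{h_n\in A\})$ lies in the product sigma-algebra for each $A\in\cal{B}(\r_E)$. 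Since $G(x)=H(F(x)-1,x)$ and $x\mapsto(F(x)-1,x)$ is measurable by $(ii)$, the composition $G$ is measurable.

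For $(iv)$, I would observe that $x\mapsto 1_{[0,t]}(c^T_{n+1}(x))$, $x\mapsto 1_{(t,\infty)}(c^T_{n+1}(x))$, $x\mapsto t-c^T_n(x)$, and $x\mapsto f(c^Y_n(x))$ are each measurable, so the summands in $F_t$ are measurable, each partial sum is measurable, and $F_t$ itself is measurable as the pointwise limit of partial sums. The one hurdle is that $F_-$ and $F_+$ involve an uncountable $\liminf$ and $\limsup$ as $t\to\infty$, which is not immediately handled by the basic measurability toolkit. The resolution is that for each fixed $x\in\cal{P}$, the map $t\mapsto F_t(x)$ is continuous in $t$ (it is piecewise affine with slopes $f(c^Y_n(x))$ on $[c^T_n(x),c^T_{n+1}(x)]$ and matches at jump times), hence so is $t\mapsto F_t(x)/t$ on $(0,\infty)$. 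Continuity in $t$ lets me replace the limits along $t\in[0,\infty)$ by limits along the countable dense set $t\in\mathbb{Q}\cap(0,\infty)$:
\[
F_-(x)=\sup_{T\in\zp}\inf_{t\in\mathbb{Q},\,t\geq T}\frac{F_t(x)}{t},\qquad F_+(x)=\inf_{T\in\zp}\sup_{t\in\mathbb{Q},\,t\geq T}\frac{F_t(x)}{t},
\]
and the right-hand sides are countable sups/infs of measurable functions, so measurable. This countable-dense-set reduction is the only nontrivial step; everything else is bookkeeping with coordinate functions and the standard closure properties of measurable maps.
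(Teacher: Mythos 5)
Your proof follows essentially the same route as the paper's for parts $(i)$--$(iii)$: coordinate functions are measurable by construction, closure under finite sums/products/composition/countable limits handles $(i)$, the indicator-sum rewriting handles $(ii)$, and the product-sigma-algebra trick with $G(x)=H(F(x)-1,x)$ handles $(iii)$. Where you go further is $(iv)$. The paper dispatches $(iv)$ with a one-line bland assurance (``follows directly from the facts that the sum, product, limit, limit inferior, and limit superior of measurable functions are measurable''), but the basic closure property applies to \emph{countable} families, and $\liminf_{t\to\infty}$ and $\limsup_{t\to\infty}$ range over an uncountable parameter. Your observation that $t\mapsto F_t(x)$ is continuous (piecewise affine, matching at jump times) and that the uncountable $\liminf$/$\limsup$ can therefore be replaced by countable ones over $\mathbb{Q}\cap(0,\infty)$ is exactly the missing justification; it makes explicit what the paper leaves tacit. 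A small caveat worth flagging: for $t\geq c^T_\infty(x)$ the first sum in $F_t(x)$ becomes an infinite series whose terms may have mixed signs when $f$ is not nonnegative, so one should either interpret $F_t$ via the measurable $\liminf$/$\limsup$ of its partial sums or restrict attention to $t<c^T_\infty(x)$; this does not affect measurability but does affect whether $F_t(x)$ is well-defined as an $\r_E$-valued quantity.
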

\begin{proof} This proof is analogous to that of the result's discrete-time counterpart (Lemma~\ref{lem:pathspmeas}) and may be skipped.

$(i)$ 
Because finite sums and products of measurable functions are measurable,  we have that $x\mapsto \frac{1}{n}\sum_{m=0}^{n-1}c^S_{m+1}(x)f(c^Y_m(x))$ is $\cal{E}/\cal{B}(\r)$-measurable, for each $n>0$. Because the limit infimum and supremum of measurable functions are measurable, the result follows.

$(ii)$ By definition, $F(x)$ counts the number of jumps until the path $x$ enters the set $A$ for the $k$th time. For this reason,
\begin{align*}F(x)&=\infty\cdot 1_{\{g_\infty(x)<k\}}+\sum_{n=1}^\infty n1_{\{g_n(x)<k\}}1_{\{g_n(x)=k\}}\\
&=\infty\cdot \lim_{N\to\infty}1_{\{g_N(x)<k\}}+\lim_{N\to\infty}\sum_{n=1}^Nn1_{\{g_n(x)<k\}}1_{\{g_n(x)=k\}}\quad\forall x\in\cal{P},\end{align*}
where 
$$g_n(x):=\sum_{m=1}^n1_A(c^Y_m(x))\quad\forall x\in\cal{P}$$ 
denotes the number of times the path $x$ has entered $A$ by time $n>0$. Because any finite sum of measurable functions is measurable, $g_n$ is $\cal{E}/\cal{B}(\r_E)$-measurable. The result then follows from the above by exploiting once again that the sum, product, composition, limit superior, and limit inferior of measurable functions are all measurable. 

$(iii)$ Because any finite sum of measurable functions is measurable, 
$$g_n(x):=\sum_{m=0}^nc^S_{m+1}(x)f(c^Y_m(x))\quad\forall x\in\cal{P}$$
defines an $\cal{E}/\cal{B}(\r_E)$-measurable for any natural number $n$. Furthermore, as the limit of measurable functions is measurable, $g_\infty:=\lim_{n\to\infty}g_n$ is also $\cal{E}/\cal{B}(\r_E)$-measurable. Pick any $A$ in $\cal{B}(\r_E)$ and note that
\begin{align*}\left\{(n,x\in\n_E\times\cal{P}:\sum_{m=0}^nc^S_{m+1}(x)f(c^Y_m(x))\in A\right\}\\
=(\{\infty\}\times\{g_\infty\in A\})\cup\left(\bigcup_{n=0}^\infty\{n\}\times\{g_n\in A\}\right).\end{align*}
Because the right-hand side belongs to the product sigma-algebra $2^{\n_E}\times\cal{E}$, it follows that 
$$H(n,x):=\sum_{m=0}^nc^S_{m+1}(x)f(c^Y_m(x))\quad\forall x\in\cal{P}$$
defines an $2^{\n_E}\times\cal{E}/\cal{B}(\r_E)$-measurable function. Because $G(x)=H(F(x)-1,x)$ for all $x$ in $\cal{P}$, with $F$ as in $(ii)$, the result then follows as the composition of measurable functions is measurable.

$(iv)$ This follows directly from the facts that the sum, product, limit, limit inferior, and limit superior of measurable functions are measurable.
\end{proof}

\subsection{The Markov and strong Markov properties}\label{sec:markovprop} %
Stating and proving the Markov property for the continuous-time case is somewhat more involved than it is for the discrete-time case: $X_t$ is only defined on the event $\{t<T_\infty\}$ that no explosion has occurred by time $t$, and, so, conditioning on $X_t$ requires a bit of care.  We have already faced a similar issue: when dealing with the strong Markov property  of discrete time chains $(W_n)_{n\in\n}$~(c.f.~Section~\ref{sec:dtstrmarkov}),  we had to condition on the chain's state $W_\varsigma$ at  a stopping time $\varsigma$ but $W_\varsigma$ was only defined on the event $\{\varsigma<\infty\}$ that the stopping time was finite.
%
%
We follow here the steps we took there to resolve the issue. If necessary, you should brush up on Sections~\ref{sec:dtmarkov}~and~\ref{sec:dtstrmarkov}: the concepts discussed therein are equally applicable here and guide the development of this section.

%
%
%
%
%
%

%
\subsubsection*{Shifting the chain left by $\eta$} To describe the chain's future from a stopping time $\eta$ onwards, define the \emph{shifted chain} $X^\eta:=(Y^\eta,S^\eta)$ as the function mapping from $\{\eta<T_\infty\}$ to the path space $\cal{P}$ (c.f.~Section~\ref{sec:pathspacect}) given by\glsadd{Xeta}\index{shifted chain}
\begin{align}\label{eq:tshiftx1} S^\eta(\omega)&:=(S_{n+1}(\omega)-(\eta(\omega)-T_n(\omega)),S_{n+2}(\omega),S_{n+3}(\omega),\dots),\\
\label{eq:tshiftx2}Y^\eta(\omega)&:=(Y_{n}(\omega),Y_{n+1}(\omega),Y_{n+2}(\omega),\dots),\quad\forall \omega\in\{T_n\leq \eta<T_{n+1}\},\enskip n\geq0,.\end{align}
This $\cal{P}$-valued function $X^\eta$ describes the chain \emph{shifted} leftwards in time by $\eta$ amount: setting $X^\eta_{t}$ to be the piecewise-constant interpolation of $X^\eta$ (i.e., replace $Y,S$ in \eqref{eq:cpathdef}--\eqref{eq:cpathdef2} with $Y^\eta,S^\eta$), we find that
\begin{equation}\label{eq:xshift}X^\eta_{t}(\omega)=X_{\eta+t}(\omega)\quad\forall \omega\in\{\eta+t<T_\infty\},\quad\forall t\in[0,\infty).\end{equation}
Similarly, the \emph{jump times} $T^\eta_0,T^\eta_1,\dots$ and explosion time of the shifted chain $X^\eta$, defined by
$$T^\eta_0(\omega):=0,\quad T^\eta_n(\omega):=\sum_{m=1}^nS^\eta_m(\omega)\enskip\forall n>0,\quad T_\infty^\eta(\omega):=\sum_{m=1}^\infty S^\eta_m(\omega),\quad\forall \omega\in\{t<T_\infty\},$$
satisfy
\begin{align}\label{eq:tshift}T^\eta_{n'}(\omega)&=T_{n+n'}(\omega)-\eta(\omega)\quad\forall \omega\in\{T_n\leq \eta<T_{n+1}\},\enskip n,n'\geq0,\\
\label{eq:tishift}T^\eta_\infty(\omega)&=T_\infty(\omega)-\eta(\omega)\quad\forall \omega\in\{\eta<T_{\infty}\}.\end{align}
%
%

%
\begin{exercise}\label{exe:nfua9feah76a8wnfw78a}Prove \eqref{eq:xshift} and show that \eqref{eq:coord}--\eqref{eq:coord2} hold  if we replace $Y,S,T,T_\infty,X_t,t,\Omega$ therein with $Y^\eta,S^\eta,T^\eta,T_\infty^\eta,X^\eta,\eta,\{\eta<T_\infty\}$. \end{exercise}
\subsubsection*{The Markov property}We now have all we need to state and prove the Markov property. Note that whenever we write $1_{\{t<T_\infty\}}F(X^t)$ in what follows, we are using the notation for partially-defined functions introduced in~\eqref{eq:partdef}.
\index{Markov property}\begin{theorem}[The Markov property]\label{thrm:markovprop}Let $(\cal{F}_t)_{t\geq0}$ be the filtration generated by the chain~(Definition~\ref{def:filt3}), $\cal{E}$ be the cylinder sigma-algebra on the path space $\cal{P}$ (Section~\ref{sec:pathlaw}), and $t$ belongs to $[0,\infty)$. Suppose that $Z$ is an $\cal{F}_t/\cal{B}(\r_E)$-measurable random variable, $F$ is a $\cal{E}/\cal{B}(\r_E)$-measurable function, and that $Z$ and $F$ are both non-negative, or both bounded. If $x$ any state in $\s$, then $Z1_{\{t<T_\infty,X_t=x\}}F(X^t)$ is $\cal{F}/\cal{B}(\r_E)$-measurable, where $X^t$ denotes the $t$-shifted chain~\eqref{eq:tshiftx1}--\eqref{eq:tshiftx2}. Moreover,
\begin{equation}\label{eq:mkvct}\Ebl{Z1_{\{t<T_\infty,X_t=x\}}F(X^t)}=\Ebl{Z1_{\{t<T_\infty,X_t=x\}}}\Ebx{F(X)}\quad\forall x\in\s.\end{equation}
%
%
\end{theorem}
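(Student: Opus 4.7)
The plan is to follow the template established for the discrete-time strong Markov property (Theorem~\ref{thrm:strmkvpath}): split the argument into a measurability claim and the identity~\eqref{eq:mkvct}, reduce the latter to matching a probability measure on cylinder sets, and then invoke the uniqueness half of Theorem~\ref{thrm:pathlawunict}. The new ingredient compared with the discrete-time proof is that the shifted chain $X^t$ has ``residual'' initial waiting time $S_{m+1}-(t-T_m)$ on $\{T_m\leq t<T_{m+1}\}$, and here the memoryless property of the exponential distribution---already hiding inside Theorem~\ref{thrm:condind}---is what makes everything fit together.

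For the measurability claim, because of Exercise~\ref{ex:xeta} and the fact that products of measurable functions are measurable, it suffices to show that $1_{\{t<T_\infty\}}F(X^t)$ is $\cal{F}/\cal{B}(\r_E)$-measurable. I would partition $\{t<T_\infty\}$ as $\bigcup_{m\geq0}\{T_m\leq t<T_{m+1}\}$; on the $m$-th piece, $Y^t$ and $S^t$ are obtained as measurable functions of $(Y_m,Y_{m+1},\dots)$ and $(S_{m+1}-(t-T_m),S_{m+2},\dots)$, so the $\cal{E}/\cal{B}(\r_E)$-measurability of $F$ pushes through via Lemma~\ref{lem:partdefmeas}.

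For the equation, the standard-machine reduction shows that it is enough to prove
\[
\Pbl{A\cap\{t<T_\infty,X_t=x,X^t\in B\}}=\Pbl{A\cap\{t<T_\infty,X_t=x\}}\,\mathbb{L}_x(B)
\]
for every $A\in\cal{F}_t$, $B\in\cal{E}$, and $x\in\s$. Fix $A$ and $x$, call the right-hand scalar $c$; the case $c=0$ is trivial, so assume $c>0$ and set $\tilde{\mathbb{L}}(B):=c^{-1}\Pbl{A\cap\{t<T_\infty,X_t=x,X^t\in B\}}$. Tonelli makes $\tilde{\mathbb{L}}$ a probability measure on $(\cal{P},\cal{E})$, and Theorem~\ref{thrm:pathlawunict} (together with Lemma~\ref{lem:dynkinpl}) tells us that the proof is finished once $\tilde{\mathbb{L}}(C)=\mathbb{L}_x(C)$ is verified for every cylinder $C$ of the form~\eqref{eq:nfanfewuiafnueia}; if the initial coordinate $x_0$ of $C$ differs from $x$ both sides vanish, so I can assume $x_0=x$.

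The real work---and the main obstacle---is this cylinder computation. Decomposing on the number of jumps by time $t$, the event $\{X^t\in C\}\cap\{T_m\leq t<T_{m+1},Y_m=x\}$ equals
\[
\{T_m\leq t,\,Y_m=x,\,Y_{m+1}=x_1,\dots,Y_{m+n}=x_n,\,S_{m+1}>(t-T_m)+s_1,\,S_{m+2}>s_2,\dots,S_{m+n}>s_n\},
\]
the constraint $T_{m+1}>t$ being automatic since $s_1\geq0$. Taking successive conditional expectations with respect to $\cal{G}_{m+n-1},\dots,\cal{G}_{m+1},\cal{G}_m$ and applying Theorem~\ref{thrm:condind} at each step peels off a factor $p(x_{k-1},x_k)e^{-\lambda(x_{k-1})s_k}$; in the final step the exponential at rate $\lambda(x)$ splits as $e^{-\lambda(x)s_1}\cdot e^{-\lambda(x)(t-T_m)}$, and this last factor recombines with $\Pbl{A\cap\{T_m\leq t,Y_m=x\}}$ to recover $\Pbl{A\cap\{T_m\leq t<T_{m+1},Y_m=x\}}$---this is the memoryless property at work. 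Summing over $m$ gives exactly $c\cdot\mathbb{L}_x(C)$ after comparison with~\eqref{eq:nf78eawh78wea}. The one delicate bookkeeping point is that $A$ is $\cal{F}_t$-measurable rather than $\cal{G}_m$-measurable, so before applying Theorem~\ref{thrm:condind} I would reduce to $A$ in the $\pi$-system generating $\cal{F}_t$ (Exercise~\ref{ex:filtrd}), on which the information ``$S_{m+1}>t-T_m$'' can be cleanly absorbed into the residual exponential; organising this intersection-by-intersection is the technical burden of the proof, everything else being routine.
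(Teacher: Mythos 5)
Your proposal is correct and follows essentially the same structure as the paper's proof: reduce to the cylinder identity $\Pbl{A\cap\{t<T_\infty,X_t=x,X^t\in B\}}=\Pbl{A\cap\{t<T_\infty,X_t=x\}}\mathbb{L}_x(B)$, decompose on the number $m$ of jumps by time $t$, peel off factors by successive conditioning on $\cal{G}_{m+n-1},\dots,\cal{G}_m$ via Theorem~\ref{thrm:condind}, and watch the residual exponential $e^{-\lambda(x)(t-T_m)}$ recombine with the event $\{T_{m+1}>t\}$ thanks to memorylessness. The one place you diverge from the paper is precisely the ``delicate bookkeeping point'' you flag at the end: handling the fact that $A\in\cal{F}_t$ is not $\cal{G}_m$-measurable. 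You propose to reduce $A$ to the $\pi$-system generating $\cal{F}_t$ (Exercise~\ref{ex:filtrd}) and organise the argument intersection-by-intersection, which does work but forces a second application of Lemma~\ref{lem:dynkinpl} over the $A$-variable on top of the one over the $B$-variable. The paper instead proves Lemma~\ref{lem:aan}, which for each $m$ produces an $A_m\in\cal{G}_m$ with $A\cap\{t<T_{m+1}\}=A_m\cap\{t<T_{m+1}\}$; since the $m$-th summand is supported on $\{t<T_{m+1}\}$, one can swap $A$ for $A_m$ before conditioning on $\cal{G}_m$, with no extension-to-the-full-$\sigma$-algebra step over the $A$-variable needed. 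The two routes prove the same thing, but Lemma~\ref{lem:aan} isolates and packages exactly the observation you describe informally (that on $\{t<T_{m+1}\}$ the generators of $\cal{F}_t$ with jump index $>m$ are vacuous), and thereby avoids the cylinder-by-cylinder intersection bookkeeping your version would require.
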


For the theorem's proof, we need the following:
\begin{lemma}\label{lem:aan}Let $(\cal{G}_n)_{n\in\n}$ be the filtration generated by the jump chain and jump times~(Definition~\ref{def:filt2}) and $(\cal{F}_t)_{t\geq0}$ that generated by the chain~(Definition~\ref{def:filt3}). For any given $t$ in $[0,\infty$, $A$ in $\cal{F}_t$, and $n$ in $\n$, there exists an $A_n$ in $\cal{G}_{n}$ such that
$$A\cap\{t<T_{n+1}\}=A_n\cap\{t<T_{n+1}\}.$$
\end{lemma}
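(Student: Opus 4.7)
The plan is to argue this by a standard ``good sets'' argument. Define
\[
\cal{H} := \{A \in \cal{F} : \exists\, A_n \in \cal{G}_n \text{ with } A \cap \{t < T_{n+1}\} = A_n \cap \{t < T_{n+1}\}\}.
\]
I would first check that $\cal{H}$ is a sigma-algebra and then verify that it contains a generating family of $\cal{F}_t$; combining the two yields $\cal{F}_t \subseteq \cal{H}$, which is exactly the statement of the lemma.

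\textbf{Step 1 ($\cal{H}$ is a sigma-algebra).} Clearly $\Omega \in \cal{H}$ (take $A_n = \Omega$). If $A \in \cal{H}$ with witness $A_n$, then since for any set $B$ one has $B^c \cap \{t<T_{n+1}\} = \{t<T_{n+1}\} \setminus (B \cap \{t<T_{n+1}\})$, we compute
\[
A^c \cap \{t<T_{n+1}\} = \{t<T_{n+1}\} \setminus (A_n \cap \{t<T_{n+1}\}) = A_n^c \cap \{t<T_{n+1}\},
\]
and $A_n^c \in \cal{G}_n$ by closure of $\cal{G}_n$ under complements. For countable unions, if $A^k \in \cal{H}$ with witnesses $A_n^k \in \cal{G}_n$, then distributing intersection over union gives
\[
\textstyle\left(\bigcup_k A^k\right) \cap \{t<T_{n+1}\} = \bigcup_k (A_n^k \cap \{t<T_{n+1}\}) = \left(\bigcup_k A_n^k\right) \cap \{t<T_{n+1}\},
\]
with $\bigcup_k A_n^k \in \cal{G}_n$. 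Hence $\cal{H}$ is a sigma-algebra.

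\textbf{Step 2 (generators of $\cal{F}_t$ lie in $\cal{H}$).} By Definition~\ref{def:filt3}, $\cal{F}_t$ is generated by the sets $\{T_m \leq s, Y_m = x\}$ with $s \leq t$, $x \in \s$, and $m \in \n$. Fix such a set. If $m \leq n$, then $\{T_m \leq s, Y_m = x\}$ already belongs to $\cal{G}_n$ (by Definition~\ref{def:filt2}), and we take this set itself as the witness. If $m > n$, then the jump times are non-decreasing and hence $T_m \geq T_{n+1}$ everywhere, so on $\{t<T_{n+1}\}$ we have $T_m > t \geq s$, which forces
\[
\{T_m \leq s, Y_m = x\} \cap \{t < T_{n+1}\} = \emptyset = \emptyset \cap \{t<T_{n+1}\},
\]
so $A_n := \emptyset \in \cal{G}_n$ works. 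Thus each generator belongs to $\cal{H}$.

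\textbf{Step 3 (conclusion).} Since $\cal{H}$ is a sigma-algebra containing the generators of $\cal{F}_t$, we have $\cal{F}_t \subseteq \cal{H}$, which is precisely the desired statement. The only mild subtlety is the bookkeeping in Step 2 to see that the ``information past the $(n{+}1)$-th jump'' generators become vacuous on $\{t<T_{n+1}\}$; the rest is routine verification that the good-sets class is a sigma-algebra.
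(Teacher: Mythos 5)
Your proof is correct and is essentially the paper's own argument: both define the ``good sets'' class, note it is a sigma-algebra, and check the generators $\{T_m\leq s, Y_m=x\}$ by splitting on $m\leq n$ (take the set itself, as $\cal{G}_m\subseteq\cal{G}_n$) versus $m>n$ (the intersection with $\{t<T_{n+1}\}$ is empty, take $A_n=\emptyset$). Your Step 1 merely spells out the sigma-algebra verification that the paper leaves implicit; otherwise the two proofs coincide.
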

\begin{proof}Because $\cal{G}_n$ is a sigma-algebra, 
$$\cal{G}_t:=\{A\in\cal{F}_t:A\cap\{t<T_{n+1}\}=A_n\cap \{t<T_{n+1}\}\text{ for some }A_n\in\cal{G}_n\}$$
 is also a sigma-algebra. For any $s\leq t$, $m\geq0$, and $x$ in $\s$,
$$\{T_m\leq s, Y_m=x\}\cap\{t<T_{n+1}\}$$
is either empty  (if $m>n$) and we set $A_n:=\emptyset$  or $\cal{G}_m$ is contained in $\cal{G}_n$ (if $m\leq n$) and we set $A_n:=\{T_m\leq s, Y_m=x\}$. In either case, $A_n$ belongs to $\cal{G}_n$ and
$$\{T_m\leq s, Y_m=x\}\cap\{t<T_{n+1}\}=A_n\cap\{t< T_{n+1}\}$$
showing that $\{T_m\leq s, Y_m=x\}$ belongs to $\cal{G}_t$. Because these sets generate $\cal{F}_t$ and $\cal{G}_t$'s definition implies that $\cal{G}_t\subseteq\cal{F}_t$, it follows that $\cal{G}_t=\cal{F}_t$.
\end{proof}

\begin{proof}[Proof of Theorem~\ref{thrm:markovprop}]Given Exercise~\ref{ex:xeta} and Theorem~\ref{thrm:pathlawunict}, this proof is entirely analogous that of Theorem~\ref{thrm:strmkvpath} as long as we are able to show that 
\begin{equation}\label{eq:dks8a90ahf870gnera8gnaeg}\Pbl{A\cap\{t<T_\infty,X_{t}=x,X^t\in B\}}=\Pbl{A\cap\{t<T_\infty,X_{t}=x\}}\mathbb{L}_x(B),\end{equation}
for all $A$ in $\cal{F}_t$, $x$ in $\s$, and cylinder sets $B$ (that is, sets $B$ of the form in \eqref{eq:nfanfewuiafnueia}), where $\mathbb{L}_x$ denotes the path law~\eqref{eq:pathlawct} starting from $x$. To do so, fix any such $A$, $x$, and $B$ and note that
\begin{align}\label{eq:fn6ae8s79hfwa}\Pbl{A\cap\{t<T_\infty,X_t=x,X^t\in B\}}=\sum_{m=0}^\infty\Pbl{A_m\cap\{T_m\leq  t<T_{m+1},Y_m=x\}\cap B_m},\end{align}
where $A_m$ is the set in Lemma~\ref{lem:aan} belonging to $\cal{G}_m$ and
$$B_m:=\{Y_{m}=x_0,Y_{m+1}=x_1,\dots,Y_{m+n}=x_n,S_{m+1}-(t-T_m)>s_1,S_{m+2}>s_2,\dots,S_{m+n}>s_n\}.$$
Because $B_m\subseteq \{t<T_{m+1}\}$ and $A_m\cap\{T_m\leq t,Y_m=x\}$ belongs to $\cal{G}_m$, the tower and take-out-what-is-known properties of conditional expectation (Theorem~\ref{thrm:condexpprops}$(iv,v)$) imply that
\begin{align}\label{eq:f78ajf8au94q947qfwa8}\Pbl{A_m\cap\{T_m\leq t<T_{m+1},Y_m=x\}\cap B_m}
&=\Pbl{A_m\cap\{T_m\leq t,Y_m=x\}\cap B_m}\\
&=\Ebl{\Pbl{A_m\cap\{T_m\leq t,Y_m=x\}\cap B_m|\cal{G}_m}}\\
&=\Ebl{1_{A_m\cap\{T_m\leq t,Y_m=x\}}\Pbl{ B_m|\cal{G}_m}}.\nonumber\end{align}
Conditioning on $\cal{G}_{m+n-1},\cal{G}_{m+n-2},\dots,\cal{G}_{m+1}$ and making a repeated use of Theorem~\ref{thrm:condind} similar to that in proof of Theorem~\ref{thrm:pathlawunict}, we find that
\begin{align*}\Pbl{B_m|\cal{G}_m}=1_{Y_m}(x_0)p(x_0,x_1)\dots p(x_{n-1},x_n)e^{-\lambda(x_0)(s_1+t-T_m)}e^{-\lambda(x_1)s_2}\dots e^{-\lambda(x_{n-1})s_n}\\
=e^{-\lambda(Y_m)(t-T_m)}\mathbb{L}_{Y_m}(B)=\Pbl{\{T_{m+1}>t\}|\cal{G}_m}\mathbb{L}_{Y_m}(B).\end{align*}
Plugging the above into \eqref{eq:f78ajf8au94q947qfwa8} and applying the take-out-what-is-known and tower properties, we find that
\begin{align*}\Pbl{A_m\cap\{T_m\leq t<T_{m+1},Y_m=x\}\cap B_m}&=\Ebl{1_{A_m\cap\{T_m\leq t<T_{m+1},Y_m=x\}}\mathbb{L}_{Y_m}(B)}\\
&=\Pbl{A_m\cap\{T_m\leq t<T_{m+1},Y_m=x\}}\mathbb{L}_{x}(B)\\
&=\Pbl{A\cap\{T_m\leq t<T_{m+1},X_t=x\}}\mathbb{L}_{x}(B).\end{align*}
Combining the above with \eqref{eq:fn6ae8s79hfwa}, we obtain \eqref{eq:dks8a90ahf870gnera8gnaeg} and the result follows.
\end{proof}

\subsubsection*{The strong Markov property} Just as in the discrete-time case (Section~\ref{sec:dtstrmarkov}), the Markov property holds for all stopping times $\eta$ instead of only for deterministic times $t$:
%
%

\index{strong Markov property}\begin{theorem}[The strong Markov property]\label{thstrmk} Let $(\cal{F}_t)_{t\geq0}$ be the filtration generated by the chain (Definition~\ref{def:filt2}), $\cal{E}$ be the cylinder sigma-algebra on the path space $\cal{P}$ (Section~\ref{sec:pathlaw}), $\eta$ be a $(\cal{F}_t)_{t\in[0,\infty)}$-stopping time (Definition~\ref{def:stopct}), and $\cal{F}_\eta$ be its associated sigma-algebra. Suppose that $Z$ is an $\cal{F}_\eta/\cal{B}(\r_E)$-measurable random variable, $F$ is a $\cal{E}/\cal{B}(\r_E)$-measurable function, and that $Z$ and $F$ are both non-negative, or both bounded. If $x$ any state in $\s$, then $Z1_{\{\eta<T_\infty,X_\eta=x\}}F(X^t)$ is $\cal{F}/\cal{B}(\r_E)$-measurable, where $X^\eta$ denotes the $\eta$-shifted chain~\eqref{eq:tshiftx1}--\eqref{eq:tshiftx2}. Moreover,
\begin{equation}\label{eq:strmkvct}\Ebl{Z1_{\{\eta<T_\infty,X_\eta=x\}}F(X^\eta)}=\Ebl{Z1_{\{\eta<T_\infty,X_\eta=x\}}}\mathbb{L}_x(F)\qquad\forall x\in\s.\end{equation}
%
%
\end{theorem}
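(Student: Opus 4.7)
The plan is to follow the two-step template of the discrete-time strong Markov property (Theorem~\ref{thrm:strmkvpath}): first verify measurability, then reduce~\eqref{eq:strmkvct} to proving
\[
\Pbl{A\cap\{\eta<T_\infty,X_\eta=x,X^\eta\in B\}}=\Pbl{A\cap\{\eta<T_\infty,X_\eta=x\}}\,\mathbb{L}_x(B)
\]
for arbitrary $A\in\cal{F}_\eta$, $x\in\s$, and cylinder sets $B$ of the form~\eqref{eq:nfanfewuiafnueia}. The generic reduction to cylinder sets proceeds via Theorem~\ref{thrm:pathlawunict} combined with Lemma~\ref{lem:dynkinpl} and the standard machine, exactly as in Step~2 of Theorem~\ref{thrm:strmkvpath}. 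For the measurability of $Z1_{\{\eta<T_\infty,X_\eta=x\}}F(X^\eta)$, Exercise~\ref{ex:xeta} handles the indicator and, for $F(X^\eta)$, I would decompose $\{\eta<T_\infty\}=\bigcup_n\{T_n\leq\eta<T_{n+1}\}$ (each piece belongs to $\cal{F}$ by Lemma~\ref{lem:etatheta}$(i)$) and observe that on each piece the coordinate functions $Y^\eta_k,S^\eta_k$ in~\eqref{eq:tshiftx1}--\eqref{eq:tshiftx2} are measurable functions of $Y_{n+k},S_{n+k},\eta-T_n$.

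The crux is a stopping-time strengthening of Lemma~\ref{lem:aan}: for each $A\in\cal{F}_\eta$ and $n\in\n$, there exists $A_n\in\cal{G}_n$ such that
\[
A\cap\{T_n\leq\eta<T_{n+1}\}=A_n\cap\{T_n\leq\eta<T_{n+1}\},
\]
and moreover the restriction of $\eta-T_n$ to $\{T_n\leq\eta<T_{n+1}\}$ is $\cal{G}_n$-measurable on this event. To prove this, I would verify that the collection of $A\in\cal{F}_\eta$ enjoying the displayed property forms a sigma-algebra, and check that it contains the generating family $\{\{T_m\leq s,Y_m=z\}\cap\{\eta\leq u\}\cap\{u\leq r\}\colon s,u,r\in[0,\infty),\ m\in\n,\ z\in\s\}$ of $\cal{F}_\eta$; the former uses closure properties of $\cal{G}_n$, while the latter uses Lemma~\ref{lem:aan} at $t=T_n$ together with the observation that $\{\eta\leq u\}\cap\{T_n\leq\eta<T_{n+1}\}$ can be rewritten in terms of jump chain and jump-time coordinates once we know $\eta$ lies in the interval $[T_n,T_{n+1})$.

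Equipped with this lemma, the left-hand side of the target identity becomes $\sum_{n=0}^\infty\Pbl{A_n\cap\{T_n\leq\eta<T_{n+1},Y_n=x\}\cap\{X^\eta\in B\}}$, and unpacking $\{X^\eta\in B\}$ via~\eqref{eq:tshiftx1}--\eqref{eq:tshiftx2} rewrites each summand as
\[
\Pbl{A_n\cap\{T_n\leq\eta<T_{n+1},Y_n=x,Y_{n+1}=x_1,\dots,Y_{n+k}=x_k,S_{n+1}-(\eta-T_n)>s_1,S_{n+2}>s_2,\dots,S_{n+k}>s_k\}}.
\]
Conditioning on $\cal{G}_n$ (which, by the lemma above, contains $A_n$, $\{T_n\leq\eta\}$, and the value of $\eta-T_n$ there) and then repeatedly on $\cal{G}_{n+1},\dots,\cal{G}_{n+k-1}$, I would apply Theorem~\ref{thrm:condind} exactly as in the proof of Theorem~\ref{thrm:pathlawunict}. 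The memoryless property is the key: the factor $\Pbl{S_{n+1}>s_1+(\eta-T_n)\mid\cal{G}_n}=e^{-\lambda(x)(\eta-T_n)}e^{-\lambda(x)s_1}$ splits, with $e^{-\lambda(x)(\eta-T_n)}$ combining with the implicit $1_{\{T_{n+1}>\eta\}}$ factor (whose $\cal{G}_n$-conditional mean is $e^{-\lambda(x)(\eta-T_n)}$ on $\{T_n\leq\eta\}$) to reproduce $\Pbl{A_n\cap\{T_n\leq\eta<T_{n+1},Y_n=x\}}$, while the remaining factors $e^{-\lambda(x)s_1}p(x,x_1)\cdots p(x_{k-1},x_k)e^{-\lambda(x_1)s_2}\cdots e^{-\lambda(x_{k-1})s_k}$ reassemble into $\mathbb{L}_x(B)$ by~\eqref{eq:nf78eawh78wea}. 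Summing over $n$ gives the desired identity.

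The main obstacle I anticipate is the generalised Lemma~\ref{lem:aan} for stopping times, and in particular the verification that $\eta-T_n$ is $\cal{G}_n$-measurable on $\{T_n\leq\eta<T_{n+1}\}$; once that is settled, the rest of the argument is bookkeeping analogous to the proof of Theorem~\ref{thrm:markovprop}.
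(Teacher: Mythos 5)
Your approach differs from the paper's in an essential way, and the difference is exactly where the gap lies. The paper does not attempt to push the stopping time $\eta$ through the jump-time filtration $(\cal{G}_n)_{n\in\n}$. Instead, it discretises $\eta$ via $\eta_k:=\sum_{l\geq1}\tfrac{l}{k}1_{\{(l-1)/k<\eta\leq l/k\}}$, a decreasing sequence of stopping times taking only countably many deterministic values and converging to $\eta$; on each value $l/k$ the \emph{already-proved} deterministic-time Markov property (Theorem~\ref{thrm:markovprop}) applies directly (using $A\in\cal{F}_\eta\subseteq\cal{F}_{\eta_k}$ by Lemma~\ref{lem:etatheta}$(iii)$), and then right-continuity of the paths plus bounded convergence yields the claim for $\eta$ in the limit $k\to\infty$. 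This sidesteps entirely the question of how $\cal{F}_\eta$ interacts with $\cal{G}_n$.

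Your ``stopping-time strengthening of Lemma~\ref{lem:aan}'' is where the argument breaks down. Specialising it to $\eta:=T_n$ (and to a chain with no absorbing states, so that $\{T_n\leq\eta<T_{n+1}\}=\Omega$) it asserts $\cal{F}_{T_n}\subseteq\cal{G}_n$, i.e.\ $\cal{F}_{T_n}=\cal{G}_n$ given Proposition~\ref{prop:ftngn}. But the paper explicitly flags this as an open problem in Section~\ref{sec:filtct} (``Does $\cal{F}_{T_n}=\cal{G}_n$?'') and again in Section~\ref{sec:dynkin}. Moreover, the companion claim that $\eta-T_n$ is $\cal{G}_n$-measurable on $\{T_n\leq\eta<T_{n+1}\}$ fails if $\cal{F}_{T_n}\neq\cal{G}_n$: pick $A\in\cal{F}_{T_n}\setminus\cal{G}_n$ and small $a\neq b>0$; then $\eta:=T_n+a1_A+b1_{A^c}$ is a stopping time (since $A\cap\{T_n\leq t-a\}\in\cal{F}_{t-a}\subseteq\cal{F}_t$ by definition of $\cal{F}_{T_n}$), yet $\eta-T_n=a1_A+b1_{A^c}$ on $\{S_{n+1}>\max(a,b)\}$ is not $\cal{G}_n$-measurable. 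Your proposed monotone-class route does not repair this: $\cal{F}_\eta$ is defined implicitly by the condition $A\cap\{\eta\leq t\}\in\cal{F}_t$ for all $t$, not as a sigma-algebra generated by an explicit family, so ``check it on generators'' has no footing; and applying Lemma~\ref{lem:aan} ``at $t=T_n$'' silently substitutes a random time into a lemma stated for deterministic $t$ --- which is again the same open issue in disguise. The rest of your calculation (conditioning on $\cal{G}_n,\cal{G}_{n+1},\dots$, using memorylessness of the exponential, reassembling $\mathbb{L}_x(B)$ via~\eqref{eq:nf78eawh78wea}) is sound bookkeeping once the lemma is granted, but as it stands the lemma is the missing piece and is not easily supplied.
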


\begin{proof}Given Exercise~\ref{ex:xeta} and Theorem~\ref{thrm:pathlawunict}, the proof is entirely analogous that of the theorem's discrete-time counterpart (Theorem~\ref{thrm:strmkvpath}) as long as we are able to show that 
\begin{equation}\label{eq:fn7e89hnfa89efdsafasha7}\Pbl{A\cap\{\eta<T_\infty,X_{\eta}=x,X^\eta\in B\}}=\Pbl{A\cap\{\eta<T_\infty,X_{\eta}=x\}}\mathbb{L}_x(B),\end{equation}
for all $A$ in $\cal{F}_\eta$, $x$ in $\s$, and cylinder sets $B$ (that is, sets $B$ of the form in \eqref{eq:nfanfewuiafnueia}), where $\mathbb{L}_x$ denotes the path law~\eqref{eq:pathlawct} starting from $x$. To do so, fix any such $A$, $x$, and $B$ and consider the discretisations 
\begin{equation}\label{eq:etak}\eta_k:=\sum_{l=1}^\infty\frac{l}{k}1_{\left\{(l-1)/k<\eta\leq l/k\right\}}\quad\forall k\in\zp\end{equation}
of $\eta$. It is straightforward to check that $(\eta_k)_{k\in\zp}$ is a decreasing sequence of $(\cal{F}_t)_{t\in[0,\infty)}$-stopping times with limit $\eta$. Moreover, the right-continuity (w.r.t. the discrete topology on $\s$) of the paths of $X$, see~\eqref{eq:cpathdef}--\eqref{eq:cpathdef3},  imply that
\begin{align}\label{eq:etak2}&\lim_{k\to\infty}1_{\{\eta_k<T_\infty,X_{\eta_k}=x,X^{\eta_k}\in B\}}(\omega)=1_{\{\eta<T_\infty,X_{\eta}=x,X^{\eta}\in B\}}(\omega)\quad\forall \omega\in \Omega,\\
\label{eq:etak3}&\lim_{k\to\infty}1_{\{\eta_k<T_\infty,X_{\eta_k}=x\}}(\omega)=1_{\{\eta<T_\infty,X_{\eta}=x\}}(\omega)\quad\forall \omega\in \Omega.\end{align}
Because $A$ belongs to $\cal{F}_{\eta}$ and $\eta$ is bounded above $\eta_k$, Lemma~\ref{lem:etatheta}$(iii)$ shows that $A$ belongs to $\cal{F}_{\eta_k}$. For this reason, the Markov property (Theorem~\ref{thrm:markovprop}) and Tonelli's theorem imply that
\begin{align*}\Pbl{A\cap\{\eta_k<T_\infty,X_{\eta_k}=x,X^{\eta_k}\in B\}}&=\sum_{l=0}^\infty\Pbl{A\cap\left\{\eta_k=\frac{l}{k},\frac{l}{k}<T_\infty,X_{\frac{l}{k}}=x,X^{\frac{l}{k}}\in B\right\}}\\
&=\sum_{l=0}^\infty\Pbl{A\cap\left\{\eta_k=\frac{l}{k},\frac{l}{k}<T_\infty,X_{\frac{l}{k}}=x\right\}}\mathbb{L}_x(B)\\
&=\Pbl{A\cap\{\eta_k<T_\infty,X_{\eta_k}=x\}}\mathbb{L}_x(B)\quad\forall k>0.\end{align*}
Bounded convergence and \eqref{eq:etak2}--\eqref{eq:etak3} imply \eqref{eq:fn7e89hnfa89efdsafasha7} and the result follows.

\end{proof}

\begin{exercise}Convince yourself that $\eta_k$ in \eqref{eq:etak} is an $(\cal{F}_t)_{t\geq0}$-stopping time and that \eqref{eq:etak2}--\eqref{eq:etak3} hold.
\end{exercise}

%

\subsection{The transition probabilities and the semigroup property}
All \emph{finite-dimensional distributions}\index{finite-dimensional distributions} of the chain may be expressed in terms of the initial distribution $\gamma$ and the collection $(P_t)_{t\geq0}$ of matrices\glsadd{Pt} $P_t=(p_t(x,y))_{x,y\in\s}$ whose $(x,y)$-entry is the probability that the chain is in state $y$ at time $t$ if it starts in state $x$:
\begin{equation}\label{eq:transprob}p_t(x,y):=\Pbx{\{X_t=y,t<T_\infty\}}\quad\forall x,y\in\s,\enskip t\in[0,\infty).\end{equation}
In particular:
\begin{theorem}
\label{thrm:findim} For all times positive integers $n>0$, times $0\leq t_1\leq t_2\leq\dots\leq t_n$, and states $x_0,x_1,\dots,x_n$,
\begin{align*}\Pbl{\{X_0=x_0,X_{t_1}=x_1,X_{t_2}=x_2,\dots,X_{t_n}=x_n,t_n< T_\infty\}}\\
=\gamma(x_0)p_{t_1}(x_0,x_1)p_{t_1-t_2}(x_1,x_2)\dots p_{t_{n-1}-t_n}(x_{n-1},x_n).\end{align*}
\end{theorem}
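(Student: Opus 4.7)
The plan is to induct on $n$ and apply the Markov property (Theorem~\ref{thrm:markovprop}) at each successive time $t_k$. The key observation is that once the Markov property has been proved and the measurability of the evaluation functions $c^y_t$ in~\eqref{eq:ctz} has been established, the statement reduces to peeling off one factor at a time from the joint event.

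\textbf{Base case $(n=1)$.} I would show
\[
\Pbl{\{X_0 = x_0, X_{t_1} = x_1, t_1 < T_\infty\}} = \gamma(x_0)\, p_{t_1}(x_0, x_1)
\]
by applying Theorem~\ref{thrm:markovprop} with $t := 0$, $Z := 1_{\{X_0 = x_0\}}$ (which is $\cal{F}_0/\cal{B}(\r_E)$-measurable and bounded), the state $x := x_0$, and the $\cal{E}/\cal{B}(\r_E)$-measurable function $F := c^{x_1}_{t_1}$ from~\eqref{eq:ctz}. Using $X^0 = X$ and noting that $T_\infty > 0$ almost surely (since $S_1 > 0$ almost surely), this yields exactly the base case via the identity $\Pbx{\{X_{t_1} = x_1, t_1<T_\infty\}} = p_{t_1}(x_0,x_1)$.

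\textbf{Inductive step.} Assuming the formula for $n-1$, set
\[
A_{n-1} := \{X_0 = x_0, X_{t_1} = x_1, \dots, X_{t_{n-1}} = x_{n-1}, t_{n-1} < T_\infty\}.
\]
The first task is to verify $A_{n-1} \in \cal{F}_{t_{n-1}}$; this follows from Exercise~\ref{ex:filtrd}, which gives that $\{X_s = z, s < T_\infty\}$ belongs to $\cal{F}_s \subseteq \cal{F}_{t_{n-1}}$ for every $s \leq t_{n-1}$ and $z\in\s$. Next, because $\{X_{t_{n-1}} = x_{n-1}\} \subseteq \{t_{n-1} < T_\infty\}$ by our partial-function convention~\eqref{eq:partdef}, we may rewrite
\[
A_{n-1} \cap \{X_{t_n} = x_n, t_n < T_\infty\} = A_{n-1} \cap \{t_{n-1} < T_\infty, X_{t_{n-1}} = x_{n-1}\} \cap \{c^{x_n}_{t_n - t_{n-1}}(X^{t_{n-1}}) = 1\},
\]
where we have used~\eqref{eq:xshift} together with Exercise~\ref{exe:nfua9feah76a8wnfw78a} to translate the event $\{X_{t_n}=x_n, t_n<T_\infty\}$ into a cylinder event on $X^{t_{n-1}}$.

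\textbf{Applying the Markov property.} With $Z := 1_{A_{n-1}}$, $t := t_{n-1}$, $x := x_{n-1}$, and $F := c^{x_n}_{t_n - t_{n-1}}$, Theorem~\ref{thrm:markovprop} yields
\[
\Pbl{A_{n-1} \cap \{X_{t_n} = x_n, t_n < T_\infty\}} = \Pbl{A_{n-1}}\, \Pb_{x_{n-1}}(\{X_{t_n - t_{n-1}} = x_n,\ t_n - t_{n-1} < T_\infty\}),
\]
i.e.
\[
\Pbl{A_{n-1} \cap \{X_{t_n} = x_n, t_n < T_\infty\}} = \Pbl{A_{n-1}}\, p_{t_n - t_{n-1}}(x_{n-1}, x_n).
\]
The inductive hypothesis then gives the claimed product formula.

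\textbf{Expected obstacles.} The argument itself is a clean induction; the only subtle points are bookkeeping. First, one must carefully check $A_{n-1} \in \cal{F}_{t_{n-1}}$, which is where Exercise~\ref{ex:filtrd}'s alternative description of $(\cal{F}_t)_{t\geq0}$ is essential. Second, one must confirm that $\{X_{t_n}=x_n, t_n<T_\infty\}$ on the event $\{t_{n-1}<T_\infty\}$ really does equal $\{c^{x_n}_{t_n-t_{n-1}}(X^{t_{n-1}})=1\}$; this follows from~\eqref{eq:ctz} and~\eqref{eq:xshift} once one observes that the jump times of $X^{t_{n-1}}$ are precisely the shifts~\eqref{eq:tshift} of those of $X$. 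Everything else is an unpacking of definitions.
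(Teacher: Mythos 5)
Your proof is correct and follows the same strategy as the paper's: peel off the last time point using Theorem~\ref{thrm:markovprop} with the coordinate function $c^{x_n}_{t_n-t_{n-1}}$, citing Exercise~\ref{ex:filtrd} for the required $\cal{F}_{t_{n-1}}$-measurability and \eqref{eq:coord2}, \eqref{eq:xshift}, Exercise~\ref{exe:nfua9feah76a8wnfw78a} to identify $1_{\{t_n<T_\infty,X_{t_n}=x_n\}}$ with $c^{x_n}_{t_n-t_{n-1}}(X^{t_{n-1}})$. The only cosmetic difference is that you package the first $n-1$ constraints into $Z:=1_{A_{n-1}}$ and present the argument as a clean induction, whereas the paper keeps the $(n-1)$-th factor in the indicator $1_{\{t_{n-1}<T_\infty,X_{t_{n-1}}=x_{n-1}\}}$ and writes it as an iteration followed by a final application at $t=0$; these are the same calculation.
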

Re-arranging the equation in \eqref{thrm:findim}, we find that probability that the chain travels from $x$ to $y$ in $t$ amount of time equals $p_t(x,y)$ regardless of when this transition occurs:
\begin{align}\label{eq:transprobinterp}\Pbl{\{X_{s+t}=y,s+t<T_\infty\}|\{X_s=x,s<T_\infty\}}&=\frac{\Pbl{\{X_{s+t}=y,s+t<T_\infty\}}}{\Pbl{\{X_s=x,s<T_\infty\}}}\\
&=p_t(x,y),\nonumber\end{align}
for all $s\geq0$ such that the denominator is non-zero. For this reason, $p_t(x,y)$ is referred to as the probability that the chain \emph{transitions} from $x$ to $y$ in $t$ amount of time and $P_t$ as the \emph{$t$-transition matrix}\index{transition probabilities and matrix}. The collection $(P_t)_{t\geq0}$ of these matrices satisfies the \emph{semigroup property}\index{semigroup property},
\begin{equation}\label{eq:semigroup}p_{t+s}(x,y)=\sum_{z\in\s}p_t(x,z)p_s(z,y)\quad\forall x,y\in\s\quad t,s\geq0,\end{equation}
or $P_{t+s}=P_tP_s$ for all $t,s\geq0$ in matrix notation, and $(P_t)_{t\geq0}$ is called the \emph{semigroup of transition probabilities}.
\begin{exercise}Using Theorem~\ref{thrm:findim}, prove~\eqref{eq:semigroup}.\end{exercise}
\begin{proof}[Proof of Theorem~\ref{thrm:findim}]Let 
$$A:=\{X_0=x_0,X_{t_1}=x_1,X_{t_2}=x_2,\dots,X_{t_{n-2}}=x_{n-2},t_{n-2}< T_\infty\}$$
and $F$ be the $\cal{E}/\cal{B}(\r_E)$-measurable function $c^{z}_{t}$ in \eqref{eq:ctz} with $z:=x_n$ and $t:=t_n-t_{n-1}$. Because $A$ belongs to $\cal{F}_{t_{n-1}}$ (Exercise~\ref{ex:filtrd}), applying \eqref{eq:coord2}, Exercise~\ref{exe:nfua9feah76a8wnfw78a}, and the Markov property (Theorem~\ref{thrm:markovprop}), we find that
\begin{align*}&\Pbl{\{X_0=x_0,X_{t_1}=x_1,X_{t_2}=x_2,\dots,X_{t_n}=x_n,t_n< T_\infty\}}\\
&=\Ebl{1_{A\cap\{ X_{t_{n-1}}=x_{n-1},t_{n-1}<T_\infty\}}F(X^{t_{n-1}})}\\
&=\Pbl{A\cap\{X_{t_{n-1}}=x_{n-1},t_{n-1}<T_\infty\}}\mathbb{E}_{x_{n-1}}[F(X)]\\
&=\Pbl{\{X_0=x_0,X_{t_1}=x_1,X_{t_2}=x_2,\dots,X_{t_{n-1}}=x_{n-1},t_{n-1}< T_\infty\}}p_{t_n-t_{n-1}}(x_{n-1},x_n).\end{align*}
Iterating the above argument, we have that
\begin{align*}&\Pbl{\{X_0=x_0,X_{t_1}=x_1,X_{t_2}=x_2,\dots,X_{t_n}=x_n,t_n< T_\infty\}}\\
&=\Pbl{\{X_0=x_0,X_{t_1}=x_1,X_{t_2}=x_2,\dots,X_{t_{n-1}}=x_{n-1},t_{n-1}< T_\infty\}}p_{t_n-t_{n-1}}(x_{n-1},x_n)\\
&=\dots=\Pbl{\{X_0=x_0,X_{t_1}=x_1,t_{1}< T_\infty\}}p_{t_2-t_{1}}(x_{1},x_2)\dots p_{t_n-t_{n-1}}(x_{n-1},x_n).
\end{align*}
Because $T_\infty=\sum_{n=1}^\infty S_n>0$  and $\Pbl{\{X_0=x_0\}}=\gamma(x_0)$, applying the Markov property (Theorem~\ref{thrm:markovprop}) once again with $A:=\Omega$ and $F$ as in \eqref{eq:ctz} with $z:= x_1$ and $t:=t_1$ completes the proof.
\end{proof}

\subsection{The forward and backward equations}\label{sec:forwardbackward}

Perhaps the most celebrated result in Markov chain theory are \emph{Kolmogorov's} \emph{forward} and \emph{backward} equations: two sets of ordinary differential equations satisfied by the transition probabilities. In particular:

\begin{theorem}[The forward and backward equations]\label{thrm:forwardbackward} Suppose that $Q$ is a stable and conservative rate matrix (i.e., satisfies \eqref{eq:qmatrix}). The transition probabilities are continuously differentiable: for each $x,y\in\s$,
$$t\mapsto p_t(x,y)$$
is a continuously differentiable function on $[0,\infty)$. Moreover, the transition probabilities satisfy the forward equations:\index{forward equations}
\begin{equation}\label{eq:forward}\dot{p}_t(x,y)=\sum_{z\in\s}p_t(x,z)q(z,y)\quad \forall t\in[0,\infty),\enskip x,y\in\s,\qquad p_0(x,y)=1_x(y)\quad\forall x,y\in\s,\quad\end{equation}
and the backward equations:\index{backward equations}
\begin{equation}\label{eq:backward}\dot{p}_t(x,y)=\sum_{z\in\s}q(x,z)p_t(z,y)\quad \forall t\in[0,\infty),\enskip x,y\in\s,\qquad p_0(x,y)=1_x(y)\quad\forall x,y\in\s.\quad\end{equation}
Moreover, the transition probabilities are the minimal non-negative solution of these equations: if $(k_t(x,y))_{x,y\in\s,t\geq0}$ is a non-negative ($k_t(x,y)\geq0$ for all $x,y\in\s,$ $t\geq0$) differentiable function satisfying either \eqref{eq:forward} or \eqref{eq:backward}, then
$$k_t(x,y)\geq p_t(x,y)\quad\forall x,y\in\s,\enskip t\in[0,\infty).$$
\end{theorem}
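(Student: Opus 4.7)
The plan is to pivot on the first-jump decomposition made available by Theorem~\ref{thrm:condind} and the strong Markov property (Theorem~\ref{thstrmk}), using this to derive a Volterra-type integral equation for $p_t(x,y)$ from which continuity, continuous differentiability, and the backward ODEs will follow in sequence. Fix $x \in \s$ with $\lambda(x) > 0$ (the absorbing case $\lambda(x) = 0$ is trivial since Proposition~\ref{prop:absorb} gives $p_t(x,y) = 1_x(y)$). Decomposing $\{X_t = y, t < T_\infty\}$ according to whether $T_1 > t$ or $T_1 \leq t$, the strong Markov property applied at $T_1$ (recall $X^{T_1}$ is a chain starting from $Y_1$) together with the joint distribution of $(T_1, Y_1)$ supplied by Theorem~\ref{thrm:condind} yields
\[
p_t(x,y) = 1_x(y) e^{-\lambda(x) t} + \int_0^t \lambda(x) e^{-\lambda(x) s} \sum_{z \in \s} p(x,z) \, p_{t-s}(z,y) \, ds.
\]
After the change of variable $u = t - s$, multiplication by $e^{\lambda(x) t}$ gives the equivalent form
\[
e^{\lambda(x) t} p_t(x,y) = 1_x(y) + \int_0^t \lambda(x) e^{\lambda(x) u} \sum_{z \in \s} p(x,z) \, p_u(z,y) \, du. \tag{$\ast$}
\]

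Because the integrand in $(\ast)$ is bounded by $\lambda(x) e^{\lambda(x) t}$, the right-hand side is continuous in $t$, so $t \mapsto p_t(x,y)$ is continuous. Feeding continuity back into $(\ast)$, the right-hand side is the integral of a continuous function and hence is continuously differentiable. Differentiating and rearranging using $q(x,z) = \lambda(x) p(x,z)$ for $z \neq x$ and $q(x,x) = -\lambda(x)$ yields the backward equation $\dot{p}_t(x,y) = \sum_z q(x,z) p_t(z,y)$, with the initial condition $p_0(x,y) = 1_x(y)$ being immediate. For the forward equation I would exploit the semigroup identity~\eqref{eq:semigroup} to write $p_{t+h}(x,y) - p_t(x,y) = \sum_z p_t(x,z)(p_h(z,y) - 1_z(y))$, use $(\ast)$ to obtain $p_h(z,y) = 1_z(y) + h q(z,y) + o(h)$ together with the crude bound $|p_h(z,y) - 1_z(y)| \leq 1 - e^{-\lambda(z) h} \leq \lambda(z) h$, and then justify the exchange of limit and sum via dominated convergence against $\sum_z p_t(x,z) \lambda(z) < \infty$ — the latter finiteness being derivable by integrating the backward equation (or, equivalently, by rereading $(\ast)$ as providing absolute summability). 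This produces $\dot{p}_t(x,y) = \sum_z p_t(x,z) q(z,y)$.

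For minimality, suppose $(k_t(x,y))$ is a non-negative differentiable solution to the backward equations. Integrating $\dot{k}_t(x,y) = -\lambda(x) k_t(x,y) + \sum_{z \neq x} q(x,z) k_t(z,y)$ against the integrating factor $e^{\lambda(x) t}$ and using $k_0(x,y) = 1_x(y)$ shows that $k_t$ satisfies the same integral equation $(\ast)$ as $p_t$. I would then introduce the truncated probabilities
\[
p_t^{(n)}(x,y) := \Pbx{\{X_t = y, \, T_n > t\}},
\]
which satisfy $p_t^{(0)}(x,y) = 1_x(y) e^{-\lambda(x) t}$ and, by conditioning on $T_1$ as before, the recursion
\[
p_t^{(n+1)}(x,y) = 1_x(y) e^{-\lambda(x) t} + \int_0^t \lambda(x) e^{-\lambda(x) s} \sum_z p(x,z) \, p_{t-s}^{(n)}(z,y) \, ds.
\]
A straightforward induction comparing this recursion term-by-term with $(\ast)$ for $k_t$ gives $p_t^{(n)}(x,y) \leq k_t(x,y)$ for every $n$, and monotone convergence ($T_n \uparrow T_\infty$) delivers $p_t(x,y) = \lim_n p_t^{(n)}(x,y) \leq k_t(x,y)$. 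For the forward case, an analogous integral equation — obtainable either by integrating the forward ODE against $e^{\lambda(y) \cdot}$ or by a direct conditioning argument on the last jump before $t$ — permits the same iteration scheme.

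The main technical obstacle will be justifying the sum-limit interchange in the derivation of the forward equation: with $\lambda$ possibly unbounded, the naive estimate $|p_h(z,y) - 1_z(y)|/h \leq \lambda(z)$ forces one to prove $\sum_z p_t(x,z) \lambda(z) < \infty$, which is not obvious a priori and must be extracted from $(\ast)$ or from the backward equation itself. A secondary subtlety is ensuring that the integrand $\sum_z p(x,z) p_u(z,y)$ is measurable in $u$ so that the integrals in $(\ast)$ are well-defined — this reduces to the measurability of each $u \mapsto p_u(z,y)$, which in turn follows once one notes that $1_{\{u < T_\infty\}} 1_y(X_u)$ is $\cal{F} \otimes \cal{B}([0,\infty))$-measurable via the representation in \eqref{eq:coord2}.
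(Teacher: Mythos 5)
Your derivation of the backward integral equation $(\ast)$ via the first-jump decomposition and the strong Markov property is correct and is in fact a bit more direct than the paper's route (which derives a \emph{forward} integral recursion first, by conditioning on the \emph{last} jump before $t$, and then obtains the backward recursion from it). The passage from $(\ast)$ to continuity, to continuous differentiability, to the backward ODE is sound, and so is the minimality argument via the truncated probabilities $p_t^{(n)}$.

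The real gap is in your route to the forward equation, and you have put your finger on exactly the right pressure point without resolving it. The dominated-convergence step needs $\sum_z p_t(x,z)\lambda(z) < \infty$, and you claim this "is derivable by integrating the backward equation, or equivalently by rereading $(\ast)$ as providing absolute summability." Neither is true. Both $(\ast)$ and the integrated backward equation control sums of the form $\sum_z p(x,z)p_u(z,y)$ and $\sum_z q(x,z)p_t(z,y)$ — here $z$ is weighted by the \emph{row} of $x$, i.e.\ by $p(x,z)$ or $q(x,z)$, which are trivially summable since $\sum_z p(x,z) = 1$. They say nothing about $\sum_z p_t(x,z)\lambda(z)$, where the weight $\lambda(z)$ sits on the intermediate state $z$, not on the starting state $x$. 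Worse, $\sum_z p_t(x,z)\lambda(z)$ need not be finite at all when $\lambda$ is unbounded — the theorem only assumes $Q$ stable and conservative, and for, say, a rapidly exploding pure-birth chain this sum can diverge. So your crude bound $|p_h(z,y) - 1_z(y)|/h \leq \lambda(z)$ yields a dominant that may simply not be integrable.

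What actually saves the forward equation is the extra factor $p(z,y)$ in the limit $p_h(z,y)/h \to \lambda(z)p(z,y)$ for $z \neq y$. The quantity that one genuinely needs to control is $\sum_z p_s(x,z)\,\lambda(z)\,p(z,y)$, which is much smaller than $\sum_z p_s(x,z)\lambda(z)$ and is always finite. The paper establishes its finiteness and continuity in $s$ directly from the forward integral recursion
\begin{equation*}
p_t(x,y) = 1_x(y)e^{-\lambda(y)t} + \int_0^t \sum_{z\in\s}p_s(x,z)\lambda(z)p(z,y)\,e^{-\lambda(y)(t-s)}\,ds,
\end{equation*}
which is obtained by conditioning on the last jump before $t$ (Lemma~\ref{lem:forwardweak}), together with the monotonicity of $t \mapsto e^{\lambda(x)t}p_t(x,y)$ extracted from the backward ODE. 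You yourself note, in the forward minimality paragraph, that this forward integral equation is available "by a direct conditioning argument on the last jump before $t$" — so the repair is within reach: use that same object to establish the forward ODE, rather than the semigroup difference quotient with a dominant that does not exist. Once you have the forward integral equation, the differentiation and minimality steps proceed exactly as in the backward case.
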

\noindent The proof of the above theorem is delicate and we postpone it until the end of the section. The forward and backward equations are often written in matrix notation as
$$\dot{P}_t=P_tQ,\quad \dot{P}_t=QP_t,\quad P_0=I,$$
where $I:=(1_x(y))_{x,y\in\s}$ denotes the identity matrix on $\s$.

\ifdraft
\subsubsection*{The meaning of these equations}

Give weak form and explain each of the terms featuring (conditioning on first jump for backwards and on last jump for forwards).

3) Or $P_t$ is the minimal $Q$-function (th.2.2 chap 2 anderson). Thus, $Q$-process is unique (w.r.t path law) iff minimal $Q$-process is honest.

4) Otherwise we can build others using re-starts (cor.2.5 chap 2 anderson).

5) Only restarts that don't any one state in particular satisfy forward equations (give flash, reference to chap 4 anderson).

\fi

\subsubsection*{Regular rate matrices and the uniqueness of the solutions}

The rate matrix $Q$ is said to be \emph{regular}\index{regular rate matrix} if the chain does not explode regardless of the initial distribution:
\begin{definition}[Regular rate matrices]\label{def:regular} The rate matrix $Q$ is regular if, for all initial distributions $\gamma$,
\begin{equation}\label{eq:noexpl}\Pbl{\{T_\infty=\infty\}}=1.\end{equation}
\end{definition}
We can also express the regularity of $Q$ in terms of the mass of the time-varying law:
\begin{proposition}\label{prop:nonexptimevar}If the chain starting position is sampled from $\gamma$, then the chain does not explode (i.e.,~\eqref{eq:noexpl} holds) if and only if
$$p_t(\s)=\sum_{x\in\s}p_t(x)=\sum_{x\in\s}\Pbl{\{X_t=x,t<T_\infty\}}=\Pbl{\{t<T_\infty\}}=1$$
for a single $t\in[0,\infty)$, in which case the above  holds for all $t\in[0,\infty)$.
Thus, the rate matrix is regular if and only if the above holds for all initial distributions $\gamma$.
%
\end{proposition}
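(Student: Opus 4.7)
The plan is to establish the proposition in three stages, the first two elementary and the third containing the main content.

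First, I would verify the chain of identities $p_t(\s)=\sum_x p_t(x)=\sum_x \Pbl(\{X_t=x,t<T_\infty\})=\Pbl(\{t<T_\infty\})$: the middle equality is our convention for partially-defined functions (so that $p_t(x)=\Pbl(\{X_t=x,t<T_\infty\})$), and the last is countable additivity applied to the disjoint decomposition $\{t<T_\infty\}=\bigsqcup_{x\in\s}\{X_t=x,t<T_\infty\}$ given by the chain's definition~\eqref{eq:cpathdef}. The forward implication of the equivalence is then immediate, since $\{T_\infty=\infty\}\subseteq\{t<T_\infty\}$ for every $t\geq 0$.

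Second, I would reduce the reverse implication to a single key claim. Because $\{T_\infty=\infty\}=\bigcap_{n=1}^\infty\{n<T_\infty\}$ is a decreasing intersection, downward continuity of $\Pb_\gamma$ gives $\Pbl(\{T_\infty=\infty\})=\lim_{n\to\infty} p_n(\s)$; thus $p_t(\s)=1$ for every $t$ implies non-explosion. So it suffices to prove: if $p_{t_0}(\s)=1$ for a single $t_0\in[0,\infty)$, then $p_t(\s)=1$ for every $t\in[0,\infty)$.

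Third, and this is the main step, I would prove the key claim. The hypothesis reads $\Pbl(\{T_\infty\leq t_0\})=0$, which by~\eqref{eq:pl} means $\sum_x\gamma(x)\Pbx(\{T_\infty\leq t_0\})=0$, forcing $\Pbx(\{T_\infty\leq t_0\})=0$ for every $x$ in the support of $\gamma$. I claim this forces $\Pbx(\{T_\infty<\infty\})=0$ for each such $x$. Suppose for contradiction $\Pbx(\{T_\infty<\infty\})>0$. By Lemma~\ref{lem:tinfsum} and countable additivity over jump-chain trajectories, there exists $(y_n)_{n\in\n}$ with $y_0=x$, $\sum_n 1/\lambda(y_n)<\infty$, and $c:=\Pbx(\{Y_n=y_n\ \forall n\})>0$. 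Conditional on this trajectory, an iterated application of Theorem~\ref{thrm:condind} (exactly as in the proof of Theorem~\ref{thrm:pathlawunict}) shows that the waiting times $(S_n)_{n\geq 1}$ are independent with $S_n\sim\mathrm{Exp}(\lambda(y_{n-1}))$. Choose $N$ large enough that $\sum_{n>N}1/\lambda(y_{n-1})<t_0/4$; then Markov's inequality gives $\Pb(\sum_{n>N}S_n\leq t_0/2\mid Y_\bullet=y_\bullet)>1/2$, while conditional independence together with $\Pb(S_n\leq t_0/(2N)\mid Y_\bullet=y_\bullet)=1-e^{-\lambda(y_{n-1})t_0/(2N)}>0$ for each $n\leq N$ gives $\Pb(\sum_{n\leq N}S_n\leq t_0/2\mid Y_\bullet=y_\bullet)>0$. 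Multiplying these two conditionally-independent estimates and then by $c$, we get $\Pbx(\{T_\infty\leq t_0\})>0$, a contradiction.

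Hence $\Pbx(\{T_\infty<\infty\})=0$ for every $x$ in $\gamma$'s support, so by~\eqref{eq:pl} $\Pbl(\{T_\infty=\infty\})=1$, and the first stage gives $p_t(\s)=1$ for all $t$. The final sentence of the proposition (regularity iff $p_t(\s)=1$ for all $\gamma$) is then Definition~\ref{def:regular} combined with the equivalence just established. The main obstacle is the probabilistic estimate in the third paragraph: showing that a.s.\ finiteness of a sum of independent exponentials with summable reciprocal rates forces positive mass in every interval $(0,t_0]$. The split-at-$N$ argument handles this cleanly, and the crucial point is that after conditioning on the jump-chain trajectory the waiting times really are genuinely independent exponentials, which is what makes Theorem~\ref{thrm:condind} indispensable here.
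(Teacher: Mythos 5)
Your third paragraph has a genuine gap. You assert that, because $\Pb_x(\{T_\infty<\infty\})>0$, ``by Lemma~\ref{lem:tinfsum} and countable additivity over jump-chain trajectories'' there exists a single trajectory $(y_n)_{n\in\n}$ with $\sum_n 1/\lambda(y_n)<\infty$ and $\Pb_x(\{Y_n=y_n\ \forall n\})>0$. There is no such countable additivity to appeal to: the set of jump-chain trajectories is uncountable, and one can construct chains for which $\Pb_x(\{T_\infty<\infty\})=1$ yet every individual trajectory event $\{Y_n=y_n\ \forall n\}$ is $\Pb_x$-null (take $\s=\n$, $p(n,n+1)=p(n,n+2)=1/2$, $\lambda(n)=2^n$: the jump chain strictly increases, so $\sum_n 1/\lambda(Y_n)\leq\sum_n 2^{-n}<\infty$ always and the chain explodes almost surely, yet any fixed trajectory has probability at most $2^{-m}$ for every $m$). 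The fix is to condition on the $\sigma$-algebra $\sigma(Y_0,Y_1,\dots)$ rather than on a single trajectory: the $\xi_n$ of Algorithm~\ref{gilalg} are independent of that $\sigma$-algebra, so the same iterated use of Theorem~\ref{thrm:condind} shows that conditionally on $\sigma(Y_0,Y_1,\dots)$ the waiting times $S_n=\xi_n/\lambda(Y_{n-1})$ are independent exponentials. Your split-at-$N$ estimate then gives $\Pb_x(\{T_\infty\leq t_0\}\mid\sigma(Y_0,Y_1,\dots))>0$ almost surely on $\{\sum_n 1/\lambda(Y_n)<\infty\}$; integrating and invoking Lemma~\ref{lem:tinfsum} yields $\Pb_x(\{T_\infty\leq t_0\})>0$ whenever $\Pb_x(\{T_\infty<\infty\})>0$, closing the contradiction.

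You should also know that the paper's own proof is much shorter and takes a different route: it observes only that $t\mapsto p_t(\s)=\Pb_\gamma(\{t<T_\infty\})$ is non-increasing and, by downward monotone convergence, tends to $\Pb_\gamma(\{T_\infty=\infty\})$ as $t\to\infty$. This establishes ``non-explosion iff $p_t(\s)=1$ for all $t$'', which is the content the proposition is really after; but a non-increasing function equal to one at $t_0$ need not remain one past $t_0$, so the paper's argument does not by itself deliver the ``single $t_0$ implies all $t$'' strengthening your third paragraph targets. Your repaired argument is what it would take to pin that down (noting it silently requires $t_0>0$, since $p_0(\s)=1$ holds for every chain), at the price of leaning on the Kendall-Gillespie construction and Lemma~\ref{lem:tinfsum} in a way the paper's three-line proof avoids.
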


\begin{proof}Monotone convergence implies that $\Pbl{\{n<T_\infty\}}\to\Pbl{\{T_\infty=\infty\}}$ as $n\to\infty$ and the result follows as $t\mapsto p_t(\s)=\Pbl{\{t<T_\infty\}}$ is clearly a non-increasing function of $t$.\end{proof} 

Consequently, if $Q$ is regular, then the transition probabilities are the unique non-negative solution of either the forward or backward equations with mass no greater than $1$ (i.e., such that $\sum_{y\in\s}p_t(x,y)\leq 1$ for all $x$ and $t$). If $Q$ is not regular, then the backward equations have infinitely-many such solutions and the forward equations may also do, see \citep{Anderson1991} and references therein.
%
%

\ifdraft

\subsubsection*{The meaning of the $Q$-matrix}

Sort of the discussion in asmussen?

DISCUSS THE IMPORTANCE OF EQUATIONS BLAH FOR MODELLING --> INTERPRETATION OF Q-MATRIX (DIAGONAL RATE OF EXIT...).

\fi

\subsubsection*{A proof of Theorem~\ref{thrm:forwardbackward}} We do this proof in five steps:

\begin{itemize}
\item[(Lemma~\ref{lem:forwardweak})]We derive the so-called \emph{forward integral recursion (FIR)} and use it to prove that the transition probabilities satisfy a weak version of the forward equations.
\item[(Lemma~\ref{lem:bir})]We use the FIR to obtain the \emph{backward integral recursion (BIR)} and use it to prove that the transition probabilities satisfy a weak version of the backward equations.
\item[(Lemma~\ref{lem:transbackward})]We use the weak version of the backward equations to show that the transition probabilities are continuously differentiable and satisfy the strong form~\eqref{eq:backward} of the backward equations.
\item[(Lemma~\ref{lem:transforward})]We use the differentiability of the transition [probabilities and the weak version of the forward equation to show that the strong form~\eqref{eq:forward} of these equations holds.
\item[(Lemma~\ref{lem:minimal})]We use the FIR and BIR to show that transition probabilities are minimal among both the solutions of the forward equations and those of the backward equations.
\end{itemize}
Let's begin:
\begin{lemma}[The FIR and the weak version of the forward equations]\label{lem:forwardweak} Suppose that $Q$ satisfies \eqref{eq:qmatrix}. Given any natural number $n$, time $t$, and states $x,y$, let
\begin{equation}\label{eq:pnt}p_t^n(x,y):=\Pbx{\{X_t=y,t<T_{n+1}\}}\end{equation}
denote  the probability that, having started in $x$, the chain lies in $y$ at time $t$ and that it has not jumped more than $n$ times during the interval $[0,t]$. These probabilities satisfy the forward integral recursion (FIR):\index{forward integral recursion (FIR)}
\begin{equation}\label{eq:fir}p_t^n(x,y)=1_x(y)e^{-\lambda(y)t}+\int_0^t\sum_{z\in\s}p_s^{n-1}(x,z)\lambda(z)p(z,y)e^{-\lambda(y)(t-s)}ds\end{equation}
%
for all $x,y$ in $\s$, non-negative $t$, and positive $n$,
where $(p(x,y))_{x,y\in\s}$ denotes the one-step matrix~\eqref{eq:jumpmatrix} of the jump chain and $(\lambda(x))_{x\in\s}$ the jump rates~\eqref{eq:lambda}. Moreover, the transition probabilities satisfy the integral version of the forward equations:
\begin{equation}\label{eq:forwardweak}p_t(x,y)=1_x(y)e^{-\lambda(y)t}+\int_0^t\sum_{z\in\s}p_s(x,z)\lambda(z)p(z,y)e^{-\lambda(y)(t-s)}ds\quad\forall x,y\in\s,\enskip t\geq 0.\end{equation}
\end{lemma}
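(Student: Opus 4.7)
The plan is to establish the FIR by conditioning on the last jump of the chain before time $t$, and then obtain the weak forward equation by taking $n\to\infty$ via upwards monotone convergence. The starting point is the disjoint decomposition
\[
\{X_t=y,\, t<T_{n+1}\}=\bigsqcup_{k=0}^{n}\{Y_k=y,\,T_k\le t<T_{k+1}\},
\]
which uses that $X_t=Y_k$ on $\{T_k\le t<T_{k+1}\}$ (with the convention $T_0=0$). The $k=0$ summand has probability $\Pbx{\{t<T_1\}}\,1_x(y)=1_x(y)e^{-\lambda(x)t}$, which equals $1_x(y)e^{-\lambda(y)t}$ because the indicator forces $x=y$; this accounts for the first term on the right-hand side of \eqref{eq:fir}.

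For each $k\ge 1$, I would compute $\Pbx{\{Y_k=y,\,T_k\le t<T_{k+1}\}}$ by conditioning iteratively on the filtration $(\cal{G}_n)_{n\in\n}$ of Definition~\ref{def:filt2} and invoking Theorem~\ref{thrm:condind} at each step. Conditioning first on $\cal{G}_k$, Theorem~\ref{thrm:condind} gives $\Pbx{\{S_{k+1}>t-T_k\}\mid\cal{G}_k}=e^{-\lambda(Y_k)(t-T_k)}$, so after taking expectations and using $\lambda(Y_k)=\lambda(y)$ on $\{Y_k=y\}$ one obtains $\Ebx{1_{\{Y_k=y,\,T_k\le t\}}e^{-\lambda(y)(t-T_k)}}$. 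Next, conditioning on $\cal{G}_{k-1}$, Theorem~\ref{thrm:condind} tells us that $Y_k\sim p(Y_{k-1},\cdot)$ and $S_k\sim\mathrm{Exp}(\lambda(Y_{k-1}))$ are conditionally independent, so on the event $\{Y_{k-1}=z,\,T_{k-1}\le t\}$ the inner conditional expectation equals
\[
\lambda(z)p(z,y)\int_{0}^{t-T_{k-1}}e^{-\lambda(z)s}e^{-\lambda(y)(t-T_{k-1}-s)}\,ds.
\]
Changing variable $u:=T_{k-1}+s$, applying Tonelli to pull the integral outside the expectation, and summing over $z$ yields
\[
\Pbx{\{Y_k=y,\,T_k\le t<T_{k+1}\}}=\int_0^{t}\sum_{z\in\s}\Pbx{\{Y_{k-1}=z,\,T_{k-1}\le u<T_k\}}\lambda(z)p(z,y)e^{-\lambda(y)(t-u)}\,du.
\]
Summing over $k=1,\dots,n$ and noting that $\bigsqcup_{k=1}^{n}\{Y_{k-1}=z,\,T_{k-1}\le u<T_k\}=\{X_u=z,\,u<T_n\}$, the inner sum collapses to $p_u^{n-1}(x,z)$, which combined with the $k=0$ term gives exactly \eqref{eq:fir}.

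To pass from \eqref{eq:fir} to \eqref{eq:forwardweak} I would take $n\to\infty$: since $\{t<T_{n+1}\}\uparrow\{t<T_\infty\}$, we have $p_t^{n-1}(x,y)\uparrow p_t(x,y)$ pointwise, and monotone convergence applied successively to the sum over $z$ and the integral over $s$ (all integrands being non-negative) lets me pass to the limit on both sides. The main obstacle is the FIR derivation itself, specifically handling the \emph{random} integration limits $T_{k-1}$ and $T_{k-1}+S_k$ that appear after the first conditioning: only the change of variable $u=T_{k-1}+s$ together with a Tonelli swap decouples the integrand from $T_{k-1}$ and recasts the probability as a Lebesgue integral over the fixed interval $[0,t]$ against quantities that telescope over $k$. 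Everything else reduces to routine measurability checks (supplied by the cylinder structure of Section~\ref{sec:pathspacect}) and careful bookkeeping with the take-out-what-is-known and tower properties of conditional expectation.
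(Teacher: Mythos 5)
Your argument is correct and follows essentially the same route as the paper: the same disjoint decomposition over the index of the last jump before $t$, the same two-step conditioning on $\cal{G}_k$ and then $\cal{G}_{k-1}$ via Theorem~\ref{thrm:condind}, the same change of variable to a fixed interval $[0,t]$ with a Tonelli swap, the same telescoping identification of $\bigsqcup_{k=1}^n\{Y_{k-1}=z,\,T_{k-1}\le u<T_k\}$ with $\{X_u=z,\,u<T_n\}$, and the same passage $n\to\infty$ by monotone convergence. The only cosmetic difference is that the paper factors out $e^{\lambda(y)T_{m-1}}$ before applying the change of variable whereas you keep it inside; the computations are otherwise identical.
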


\begin{proof}Because taking the limit $n\to\infty$ in \eqref{eq:fir} and applying monotone convergence yields \eqref{eq:forwardweak}, we need only to prove \eqref{eq:fir}. To this end, we decompose $p_t^n(x,y)$ as follows:
\begin{align}\label{eq:fm7824h8a7bgn789a49a3j8g4aq00}p_t^n(x,y)&=\sum_{m=0}^{n}\Pbx{\{X_t=y,T_m\leq t<T_{m+1}\}}\\
&=\Pbx{\{Y_0=y,t<T_{1}\}}+\sum_{m=1}^{n}\Pbx{\{Y_m=y,T_m\leq t<T_{m+1}\}},\nonumber
\end{align}
By the definition of the chain in Section~\ref{sec:FKG},
\begin{equation}\label{eq:fm7824h8a7bgn789a49a3j8g4aq0}p^0_t(x,y)=\Pbx{\{Y_0=y,t<T_{1}\}}=\Pbx{\{X_0=y\}}\Pbx{\{S_1>t\}}=1_x(y)e^{-\lambda(y)t}.\end{equation}
Next, Theorem~\ref{thrm:condind} and the tower and take-out-what-is-known properties of conditional expectation (Theorem~\ref{thrm:condexpprops}$(iv,v)$) imply that
\begin{align}
\Pb_x(\{Y_m=y,T_m\leq t<T_{m+1}\})&=\Pbx{\{Y_m=y,T_m\leq t, t-T_m<S_{m+1}\}}\\
\nonumber&=\Ebx{\Pbx{\{Y_m=y,T_m\leq t,t-T_m<S_{m+1}\}|\cal{G}_m}}\\
\nonumber&=\Ebx{1_{\{Y_m=y,T_m\leq t\}}\Pbx{\{t-T_m<S_{m+1}\}|\cal{G}_m}}\\
\nonumber&=\Ebx{1_{\{Y_m=y,T_m\leq t\}}e^{-\lambda(y)(t-T_m)}}\\
\nonumber&=\sum_{z\in\s}\Ebx{1_{\{Y_{m-1}=z,Y_m=y,S_m\leq t-T_{m-1}\}} e^{-\lambda(y)(t-T_m)}},
\end{align}
where $(\cal{G}_m)_{n\in\n}$ denotes the filtration generated by the jump chain and jump times (Definition~\ref{def:filt2}). Similarly, we have that
\begin{align}
\label{eq:fm7824h8a7bgn789a49a3j8g4aq}&\Ebx{1_{\{Y_{m-1}=z,Y_m=y,S_m\leq t-T_{m-1}\}} e^{-\lambda(y)(t-T_m)}}\\
\nonumber&=\Ebx{1_{\{Y_{m-1}=z,Y_m=y,S_m\leq t-T_{m-1}\}} e^{-\lambda(y)(t-S_m)}e^{\lambda(y)T_{m-1}}}\\
\nonumber&=\Ebx{\Ebx{1_{\{Y_{m-1}=z,Y_m=y,S_m\leq t-T_{m-1}\}} e^{-\lambda(y)(t-S_m)}|\cal{G}_{m-1}}e^{\lambda(y)T_{m-1}}}\\
\nonumber&=\Ebx{1_{\{Y_{m-1}=z\}}\Pbx{\{Y_m=y\}|\cal{G}_{m-1}}\Ebx{1_{\{S_m\leq t-T_{m-1}\}} e^{-\lambda(y)(t-S_m)}|\cal{G}_{m-1}}e^{\lambda(y)T_{m-1}}}\\
\nonumber&=\Ebx{1_{\{Y_{m-1}=z\}}p(z,y)\left(\int_{0}^{t-T_{m-1}}e^{-\lambda(y)(t-s)}\lambda(z)e^{-\lambda(z)s}ds\right)e^{\lambda(y)T_{m-1}}},\quad\forall z\in\s
\end{align}
Applying the change of variables $r:=s+T_{m-1}$, we find
\begin{align}
\nonumber\eqref{eq:fm7824h8a7bgn789a49a3j8g4aq}&=\Ebx{1_{\{Y_{m-1}=z\}}\lambda(z)p(z,y)\int_{T_{m-1}}^{t}e^{-\lambda(y)(t-r)}e^{-\lambda(z)(r-T_{m-1})}dr}\\
\nonumber&=\int_{0}^{t}\Ebx{1_{\{Y_{m-1}=z, T_{m-1}\leq r\}}e^{-\lambda(z)(r-T_{m-1})}}\lambda(z)p(z,y)e^{-\lambda(y)(t-r)}dr\\
\label{eq:mf8e7rag76aibr3u2al}&=\int_{0}^{t}\Pbx{\{Y_{m-1}=z,T_{m-1}\leq r< T_m \}}\lambda(z)p(z,y)e^{-\lambda(y)(t-r)}dr,\quad\forall z\in\s.
\end{align}
Putting \eqref{eq:fm7824h8a7bgn789a49a3j8g4aq00}--\eqref{eq:mf8e7rag76aibr3u2al} together and applying Tonelli's Theorem then yields the FIR~\eqref{eq:fir}.\end{proof}

\begin{lemma}[The BIR and the weak version of the time-varying equations]\label{lem:bir} Suppose that $Q$ satisfies \eqref{eq:qmatrix}. The backwards integral recursion (BIR),\index{backward integral recursion (FIR)}
\begin{equation}\label{eq:bir}p_t^n(x,y)=1_x(y)e^{-\lambda(x)t}+\int_0^t\lambda(x)e^{-\lambda(x)(t-s)}\sum_{z\in\s}p(x,z)p_s^{n-1}(z,y)ds\end{equation}
holds for all $x,y$ in $\s$, non-negative $t$, and positive $n$, where $p^n_t$ is as in \eqref{eq:pnt}. Moreover, the transition probabilities satisfy the integral version of the backward equations:
\begin{equation}\label{eq:backwardweak}p_t(x,y)=1_x(y)e^{-\lambda(x)t}+\int_0^t\lambda(x)e^{-\lambda(x)(t-s)}\sum_{z\in\s}p(x,z)p_s(z,y)ds\quad\forall x,y\in\s,\enskip t\in[0,\infty).\end{equation}
\end{lemma}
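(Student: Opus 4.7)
Taking $n \to \infty$ in the BIR and applying monotone convergence --- on the left-hand side using $p^n_t(x,y) = \Pbx{\{X_t = y,\, t < T_{n+1}\}} \uparrow p_t(x,y)$ as $T_{n+1} \uparrow T_\infty$, and on the right-hand side inside the sum over $z$ and the integral via Tonelli, using $p^{n-1}_s(z,y) \uparrow p_s(z,y)$ --- yields the integral backward equation~\eqref{eq:backwardweak}. I will therefore focus on establishing the BIR.

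\textbf{Main argument.} I would follow the proof of Lemma~\ref{lem:forwardweak} but decompose the path according to its \emph{first} jump rather than its last. Write $p^n_t(x,y) = \sum_{m=0}^n q^m_t(x,y)$ with $q^m_t(x,y) := \Pbx{\{Y_m = y,\, T_m \leq t < T_{m+1}\}}$. The $m = 0$ term is $\Pbx{\{Y_0 = y,\, S_1 > t\}} = 1_x(y) e^{-\lambda(x)t}$, immediate from Algorithm~\ref{gilalg}. For $m \geq 1$, the shifted-chain identities~\eqref{eq:tshiftx2}--\eqref{eq:tshift} give $Y_m = Y^{T_1}_{m-1}$ and $T_k - T_1 = T^{T_1}_{k-1}$ for $k \geq 1$, so
\[
\{Y_m = y,\, T_m \leq t < T_{m+1}\} \;=\; \{T_1 \leq t\} \cap \{Y^{T_1}_{m-1} = y,\; T^{T_1}_{m-1} \leq t - T_1 < T^{T_1}_m\}.
\]
Conditioning on $\cal{G}_1$ and invoking the strong Markov property at $T_1$ (see below) gives
\[
q^m_t(x,y) \;=\; \Ebx{1_{\{S_1 \leq t\}}\, q^{m-1}_{t-S_1}(Y_1, y)}.
\]
Theorem~\ref{thrm:condind} (with $n = 0$) then shows that $(Y_1, S_1)$ has joint distribution $p(x,z)\lambda(x) e^{-\lambda(x) u}\,du$ on $\s \times [0,\infty)$ under $\Pb_x$, so by Tonelli
\[
q^m_t(x,y) \;=\; \int_0^t \lambda(x) e^{-\lambda(x) u} \sum_{z \in \s} p(x,z)\, q^{m-1}_{t-u}(z, y)\, du.
\]
Summing over $m = 1,\ldots,n$ (so that $\sum_{m=1}^n q^{m-1}_{t-u}(z,y) = p^{n-1}_{t-u}(z,y)$), adding the $m = 0$ term, and substituting $s = t - u$ produces the BIR.

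\textbf{Main obstacle.} The delicate step is justifying the identity $q^m_t(x,y) = \Ebx{1_{\{S_1 \leq t\}}\, q^{m-1}_{t-S_1}(Y_1, y)}$: the strong Markov property (Theorem~\ref{thstrmk}) is stated for deterministic cylinder functionals $F : \cal{P} \to \r_E$, whereas the ``remaining time'' $t - T_1$ appearing inside the conditional expectation is itself random and $\cal{G}_1$-measurable. My plan is to handle this by fixing $u \geq 0$, defining the cylinder-measurable functional $F_u(x) := 1_{\{c^Y_{m-1}(x) = y,\, c^T_{m-1}(x) \leq u < c^T_m(x)\}}$ on $\cal{P}$ via the coordinate functions of~\eqref{eq:coord}, applying Theorem~\ref{thstrmk} at $T_1$ (with $\cal{G}_1 \subseteq \cal{F}_{T_1}$ by Proposition~\ref{prop:ftngn}) to obtain $\Ebx{F_u(X^{T_1}) | \cal{G}_1} = q^{m-1}_u(Y_1, y)$ $\Pb_x$-a.s.\ for each fixed $u$, and then substituting $u = t - T_1$ via a routine monotone-class argument in $u$. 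Once this substitution is rigorously justified, the remaining manipulations are straightforward bookkeeping.
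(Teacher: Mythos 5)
Your reduction from the BIR to the integral backward equations by monotone convergence matches the paper. Your proof of the BIR itself, however, takes a genuinely different route. The paper never conditions on the first jump: it proves the FIR (Lemma~\ref{lem:forwardweak}) by conditioning on the \emph{last} jump through Theorem~\ref{thrm:condind}, and then deduces the BIR purely algebraically, showing by induction on $n$ that the iterates $u^n_t$ produced by running the BIR from the base case $p^0_t$ coincide with the iterates $p^n_t$ produced by the FIR. Your argument instead proves the BIR directly by conditioning on the first jump time $T_1$ and invoking the strong Markov property --- the more natural probabilistic reading of ``backward'' --- but this forces you to substitute the $\cal{G}_1$-measurable quantity $t-T_1$ into the fixed-time strong-Markov identity. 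That is exactly the subtlety the paper's indirection was engineered to sidestep: the FIR proof only ever needs to push a random time through the one-step conditional expectation of Theorem~\ref{thrm:condind}, whose $W$-slot is built to accept $\cal{G}_n$-measurable randomness, whereas your argument must push a random time through the entire future $X^{T_1}$. You correctly flag this as the delicate step, and your plan is sound: Theorem~\ref{thstrmk} gives the conditional independence of $X^{T_1}$ from $\cal{F}_{T_1}$ given $X_{T_1}$, the class of rectangles $h(u)F(\mathbf{x})$ is handled by absorbing $h(S_1)$ into the $Z$-slot, and the usual monotone-class closure then yields the freezing identity you need for general jointly measurable $G(u,\mathbf{x})$. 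In short, your route is correct, conceptually more direct, and closer to the probabilistic content; the paper's buys immunity from the freezing lemma at the cost of a somewhat opaque double re-expansion.
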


\begin{proof}Because taking the limit $n\to\infty$ in \eqref{eq:bir} and applying monotone convergence yields \eqref{eq:backwardweak}, we need only to prove \eqref{eq:bir}. We do this  by inductively showing that 
\begin{equation}\label{eq:fjd4a0n8704ahgfa}p^n_t(x,y)=u^n_t(x,y)\quad\forall x,y\in\s,\enskip t\in[0,\infty),\enskip n\in\n,\end{equation}
where $u^n_t$ is obtained by running the BIR:
$$u^n_t(x,y):=\left\{\begin{array}{ll}1_x(y)e^{-\lambda(x)t}&\text{if }n=0\\
1_x(y)e^{-\lambda(x)t}+\int_0^t\lambda(x)e^{-\lambda(x)(t-s)}\sum_{z\in\s}p(x,z)u_s^{n-1}(z,y)ds&\text{if }n>0\end{array}\right.$$
for all $x,y\in\s$ and $t\in[0,\infty)$. Clearly,
$$u^0_t(x,y)=1_x(y)e^{-\lambda(x)t}=p^0_t(x,y)\quad \forall x,y\in\s,\enskip t\in[0,\infty).$$
Furthermore, 
\begin{align*}u^1_t(x,y)&=1_x(y)e^{-\lambda(x)t}+\int_0^t\lambda(x)e^{-\lambda(x)(t-s)}\sum_{z\in\s}p(x,z)1_z(y)e^{-\lambda(z)s}ds\\
&=1_x(y)e^{-\lambda(x)t}+\int_0^t\lambda(x)e^{-\lambda(x)(t-s)}p(x,y)e^{-\lambda(y)s}ds\\
&=1_x(y)e^{-\lambda(x)t}+\int_0^te^{-\lambda(x)s'}\lambda(x)p(x,y)e^{-\lambda(y)(t-s')}ds'\\
&=1_x(y)e^{-\lambda(x)t}+\int_0^t\sum_{z\in\s}1_x(z)e^{-\lambda(x)s'}\lambda(z)p(z,y)e^{-\lambda(y)(t-s')}ds'=p^1_t(x,y),\end{align*}
where the third equality follows from the change of variables $s':=t-s$ and the fifth from the FIR \eqref{eq:fir}.  Now, suppose that \eqref{eq:fjd4a0n8704ahgfa} holds for all $n=0,1,\dots,m-1$ with $m\geq 2$. In this case,
\begin{align*}u^m_t(x,y)=&1_x(y)e^{-\lambda(x)t}+\int_0^t\lambda(x)\sum_{z\in\s}p(x,z)p^{m-1}_s(z,y)e^{-\lambda(x)(t-s)}ds\\
=&1_x(y)e^{-\lambda(x)t}+\int_0^t\lambda(x)\sum_{z\in\s}p(x,z)\Bigg(1_z(y)e^{-\lambda(y)s}\\
&+\int_0^s\sum_{z'\in\s}p_r^{m-2}(z,z')\lambda(z')p(z',y)e^{-\lambda(y)(s-r)}dr\Bigg)e^{-\lambda(x)(t-s)}ds\\
=&1_x(y)e^{-\lambda(x)t}+\lambda(x)p(x,y)\int_0^te^{-\lambda(y)(t-s')}e^{-\lambda(x)s'}ds'\\
&+\sum_{z\in\s}\sum_{z'\in\s}\lambda(x)p(x,z)\lambda(z')p(z',y)\int_0^t\int_0^{t-s'}p_{t-s'-r'}^{m-2}(z,z')e^{-\lambda(y)r'}e^{-\lambda(x)s'}dr'ds',\end{align*}
where we've used the changes of variables $r':=s-r$ and $s':=t-s$. Similarly,
\begin{align*}p^m_t(x,y)=&1_x(y)e^{-\lambda(x)t}+\int_0^t\sum_{z\in\s}u^{m-1}_s(x,z)\lambda(z)p(z,y)e^{-\lambda(y)(t-s)}ds\\
=&1_x(y)e^{-\lambda(x)t}+\int_0^t\sum_{z\in\s}\Bigg(1_x(z)e^{-\lambda(x)s}\\
&+\int_0^s\lambda(x)\sum_{z'\in\s}p(x,z')u_r^{m-2}(z',z)e^{-\lambda(x)(s-r)}dr\Bigg)\lambda(z)p(z,y)e^{-\lambda(y)(t-s)}ds\\
=&1_x(y)e^{-\lambda(x)t}+\lambda(x)p(x,y)\int_0^te^{-\lambda(y)(t-s)}e^{-\lambda(x)s}ds\\
&+\sum_{z\in\s}\sum_{z'\in\s}\lambda(x)p(x,z')\lambda(z)p(z,y)\int_0^t\int_0^{t-s'}u_{t-s'-r'}^{m-2}(z,z')e^{-\lambda(y)r'}e^{-\lambda(x)s'}dr'ds'.\end{align*}
Comparing the above two expressions, we find that \eqref{eq:fjd4a0n8704ahgfa} holds for $n=m$ and \eqref{eq:bir} follows.
\end{proof}

\begin{lemma}[The transition probabilities are continuously differentiable and satisfy the strong version of the backward equations]\label{lem:transbackward} Suppose that $Q$  satisfies \eqref{eq:qmatrix}. For each $x,y\in\s$, 
\begin{equation}\label{eq:transprobfun} t\mapsto p_t(x,y)\end{equation}
is a continuously differentiable function on $[0,\infty)$. Moreover, the backward equations \eqref{eq:backward} hold.
\end{lemma}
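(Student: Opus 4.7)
The plan is to use the integral form of the backward equations \eqref{eq:backwardweak} established in Lemma~\ref{lem:bir} as the starting point and bootstrap continuous differentiability from there. First I would note that the right-hand side of \eqref{eq:backwardweak} is manifestly continuous in $t$: the prefactor $e^{-\lambda(x)t}$ is continuous, while for any compact time interval $[0,T]$ the integrand
$$g_s(x,y) := \lambda(x)e^{-\lambda(x)(t-s)}\sum_{z\in\s}p(x,z)p_s(z,y)$$
is dominated uniformly in $t\in[0,T]$ by the integrable function $s\mapsto\lambda(x)e^{\lambda(x)T}$ (using $p_s(z,y)\in[0,1]$ and $\sum_z p(x,z)=1$). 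Hence dominated convergence gives that $t\mapsto p_t(x,y)$ is continuous on $[0,\infty)$ for every pair $x,y$.

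Next I would multiply \eqref{eq:backwardweak} by $e^{\lambda(x)t}$ to rewrite it as
$$e^{\lambda(x)t}p_t(x,y)=1_x(y)+\int_0^t\lambda(x)e^{\lambda(x)s}\sum_{z\in\s}p(x,z)p_s(z,y)\,ds.$$
Using the continuity of $s\mapsto p_s(z,y)$ for each $z$ just established, together with the bound $p_s(z,y)\leq 1$ and $\sum_z p(x,z)=1$, another application of dominated convergence shows that the map $s\mapsto\sum_{z\in\s}p(x,z)p_s(z,y)$ is continuous. Consequently the integrand on the right is continuous in $s$, and the fundamental theorem of calculus gives that $t\mapsto e^{\lambda(x)t}p_t(x,y)$ is continuously differentiable, with derivative $\lambda(x)e^{\lambda(x)t}\sum_{z\in\s}p(x,z)p_t(z,y)$. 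Differentiating the product $e^{\lambda(x)t}p_t(x,y)$ and solving for $\dot p_t(x,y)$ yields
$$\dot p_t(x,y)=-\lambda(x)p_t(x,y)+\lambda(x)\sum_{z\in\s}p(x,z)p_t(z,y).$$

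It then remains to identify the right-hand side with $\sum_{z\in\s}q(x,z)p_t(z,y)$, which is a direct bookkeeping check splitting into the two cases of the definitions \eqref{eq:jumpmatrix} and \eqref{eq:lambda}: if $q(x)>0$, then $p(x,x)=0$, $\lambda(x)p(x,z)=q(x,z)$ for $z\neq x$, and $-\lambda(x)=q(x,x)$, so the sum collapses to the backward equation; if $q(x)=0$, then $x$ is absorbing, $\lambda(x)=1$, $p(x,\cdot)=1_x$, and both sides vanish. Finally, continuity of $t\mapsto\dot p_t(x,y)$ follows from continuity of each $t\mapsto p_t(z,y)$ via one last dominated convergence applied to the backward equation, splitting off the $z=x$ term (which is the only one with negative $q(x,z)$) and using the bound $\sum_{z\neq x}q(x,z)=q(x)<\infty$ to dominate the remaining non-negative sum.

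The only mildly subtle point will be the repeated use of dominated convergence over the infinite sum $\sum_{z\in\s}$; the conservativeness of $Q$ (i.e.~$\sum_{z\neq x}q(x,z)<\infty$) is precisely what makes this go through, so the argument uses the totally-stable-and-conservative hypothesis in an essential way. Beyond that, no new ideas are needed: everything is a careful unwinding of \eqref{eq:backwardweak} via the fundamental theorem of calculus.
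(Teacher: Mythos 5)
Your proof is correct and follows essentially the same route as the paper: start from the integral (weak) backward equations, observe that the integrand is bounded so $t\mapsto p_t(x,y)$ is continuous, upgrade to continuity of $s\mapsto\sum_z p(x,z)p_s(z,y)$, and then apply the fundamental theorem of calculus after multiplying through by $e^{\lambda(x)t}$. The only cosmetic difference is that where you invoke dominated convergence to pass the limit through the infinite sum, the paper instead argues uniform convergence of the finite truncations $\sum_{z\in\s_r}p(x,z)p_s(z,y)$; these are interchangeable here and carry the same content.
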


\begin{proof} Let $(\s_r)_{r\in\zp}$ be any sequence of finite subsets of the state space such that $\cup_{r=1}^\infty\s_r=\s$. Because
$$\sum_{z\in\s}p(x,z)p_s(z,y)\leq \sum_{z\in\s}p(x,z)=1\quad\forall x,y\in\s,\enskip s\in[0,\infty),$$
the weak backward equations \eqref{eq:backwardweak} imply that, for each $x,y\in\s$, \eqref{eq:transprobfun} is a continuous function on $[0,\infty)$. Thus, for any given $x,y\in\s$,
$$\left(s\mapsto \sum_{z\in\s_r}p(x,z)p_s(z,y)\right)_{r\in\zp}$$
is a sequence  of continuous functions on $[0,\infty)$.  Because
$$\mmag{\sum_{z\in\s}p(x,z)p_s(z,y)-\sum_{z\in\s_r}p(x,z)p_s(z,y)}=\sum_{z\not\in\s_r}p(x,z)p_s(z,y)\leq \sum_{z\not\in\s_r}p(x,z)\quad\forall s\in[0,\infty),$$
the sequence converges uniformly over $[0,\infty)$ to 
$$s\mapsto \sum_{z\in\s}p(x,z)p_s(z,y)$$
Thus, the limiting function is continuous on $[0,\infty)$. For this reason, the fundamental theorem of calculus and the weak backward equations \eqref{eq:backwardweak} imply that \eqref{eq:transprobfun} is continuously differentiable.

Next, multiplying both sides of the weak equations \eqref{eq:backwardweak} by $e^{\lambda(x)t}$, taking derivatives, and applying the fundamental theorem of calculus yields
$$\dot{p}_t(x,y)e^{\lambda(x)t}+p_t(x,y)\lambda(x)e^{\lambda(x)t}=\lambda(x)e^{\lambda(x)t}\sum_{z\in\s}p(x,z)p_t(z,y)\quad\forall t\in[0,\infty).$$
Multiplying through by $e^{-\lambda(x)t}$, re-arranging, and using \eqref{eq:jumpmatrix}--\eqref{eq:lambda}, we obtain~\eqref{eq:backward}.
\end{proof}

\begin{lemma}[The transition probabilities satisfy the strong version of the forward equations]\label{lem:transforward}Suppose that $Q$  satisfies \eqref{eq:qmatrix}. The transition probabilities satisfy the forward equation \eqref{eq:forward}.
\end{lemma}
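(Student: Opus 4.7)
The plan is to differentiate the weak (integral) forward equation from Lemma~\ref{lem:forwardweak} in $t$ and identify the result with~\eqref{eq:forward}. Writing $g(s):=\sum_{z\in\s}p_s(x,z)\lambda(z)p(z,y)$ and multiplying the weak equation through by $e^{\lambda(y)t}$ gives
$$e^{\lambda(y)t}p_t(x,y) - 1_x(y) = \int_0^t e^{\lambda(y)s}g(s)\,ds.$$
By Lemma~\ref{lem:transbackward} the left-hand side is continuously differentiable in $t$, so the right-hand side is too. Since $g\ge 0$ and $s\mapsto e^{\lambda(y)s}g(s)$ is locally integrable (the integral up to any $t$ is finite), Lebesgue's differentiation theorem identifies the derivative of the right-hand side as $e^{\lambda(y)t}g(t)$ at almost every $t\in[0,\infty)$. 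Using the elementary identity $\lambda(z)p(z,y) = q(z,y)+\lambda(y)1_z(y)$, which follows from a short case analysis on whether $q(y)=0$ and whether $z=y$, and rearranging, one obtains the forward equation $\dot p_t(x,y) = \sum_{z\in\s}p_t(x,z)q(z,y)$ for almost every $t\in[0,\infty)$.

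The main obstacle is promoting this almost-everywhere identity to one holding at every $t\in[0,\infty)$. By Lemma~\ref{lem:transbackward} the left-hand side is continuous in $t$, so the task is to show that the right-hand side is continuous in $t$ as well. I would decompose $\sum_z p_t(x,z)q(z,y) = -q(y)p_t(x,y) + h(t)$ where $h(t):=\sum_{z\neq y}p_t(x,z)q(z,y)$; the first summand is continuous, while $h$ is the pointwise monotone limit as $\s_r\uparrow\s$ of the continuous partial sums $h_r(t):=\sum_{z\in\s_r\setminus\{y\}}p_t(x,z)q(z,y)$, so $h$ is at least lower semi-continuous. A standard lower-semi-continuity argument applied to the right incremental quotient $\tfrac{1}{\varepsilon}\int_t^{t+\varepsilon}e^{\lambda(y)s}g(s)\,ds$ already gives the pointwise inequality $\dot p_t(x,y)\ge\sum_{z\in\s}p_t(x,z)q(z,y)$ for every $t\in[0,\infty)$, so the gap is a non-negative upper-semi-continuous function on $[0,\infty)$ vanishing almost everywhere.

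I expect the final step---closing the remaining gap---to be the delicate part of the argument, since upper semi-continuity plus vanishing almost everywhere is by itself insufficient to force vanishing at a single isolated point. My approach here is to exploit the semigroup property to establish continuity of $h$ outright: writing $p_{t+\varepsilon}(x,z)=\sum_w p_t(x,w)p_\varepsilon(w,z)$ and interchanging summations yields $h(t+\varepsilon) = \sum_w p_t(x,w)\bigl[\sum_{z\neq y}p_\varepsilon(w,z)q(z,y)\bigr]$, and the inner bracket tends to $q(w,y)1_{\{w\neq y\}}$ as $\varepsilon\downarrow 0$. Using the weak forward equation evaluated at small times to dominate the inner bracket uniformly in $w$ on a neighbourhood of $0$, dominated convergence then gives right-continuity, and symmetric reasoning gives left-continuity, of $h$. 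Once $h$ is known to be continuous, both sides of the displayed identity are continuous and agree almost everywhere, hence everywhere, giving the forward equation~\eqref{eq:forward} at every $t\in[0,\infty)$; the initial condition $p_0(x,y)=1_x(y)$ is immediate from the definition of the transition probabilities.
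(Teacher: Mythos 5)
Your first part is sound: differentiating the weak forward equation~\eqref{eq:forwardweak} by Lebesgue differentiation gives the forward ODE almost everywhere, and lower semi-continuity of $h(t):=\sum_{z\neq y}p_t(x,z)q(z,y)$ gives the one-sided inequality $\dot{p}_t(x,y)\ge\sum_{z\in\s}p_t(x,z)q(z,y)$ at every $t$. The gap, as you yourself anticipated, is in closing the reverse inequality. Your plan is to establish continuity of $h$ via the semigroup identity $h(t+\varepsilon)=\sum_w p_t(x,w)\bigl[\sum_{z\neq y}p_\varepsilon(w,z)q(z,y)\bigr]$ and dominated convergence as $\varepsilon\downarrow 0$, but no $p_t(x,\cdot)$-integrable dominating function of $w$ is available. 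The weak forward equation at small times gives only the integral bound $\int_0^{\varepsilon_0}\sum_z p_s(w,z)\lambda(z)p(z,y)\,ds\le e^{\lambda(y)\varepsilon_0}$ uniformly in $w$, which controls the $\varepsilon$-average of the inner bracket but not its pointwise size; it does not supply a pointwise dominant. The obvious pointwise estimates fail: bounding $q(z,y)\le q(z)$ reduces the required integrability to $\sum_w p_{t+\varepsilon}(x,w)q(w)<\infty$, and this can fail when $Q$ is explosive---yet the lemma is asserted for every stable conservative rate matrix, with no regularity assumption. So the dominated-convergence step is unjustified as written, and it is precisely the step that carries the argument.

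The paper closes this gap with an ingredient your proposal never invokes: an a priori monotonicity in $t$. The backward equation (Lemma~\ref{lem:transbackward}, already proved) gives $\dot{p}_t(x,y)=-q(x)p_t(x,y)+\sum_{z\neq x}q(x,z)p_t(z,y)\ge-\lambda(x)p_t(x,y)$, hence $t\mapsto e^{\lambda(x)t}p_t(x,y)$ is non-decreasing. This yields, for all $s\in[0,t]$ and every finite truncation $\s_r$,
$$\sum_{z\not\in\s_r}p_s(x,z)\lambda(z)p(z,y)\le e^{\lambda(x)t}\sum_{z\not\in\s_r}p_t(x,z)\lambda(z)p(z,y),$$
and the right-hand side tends to zero as $r\to\infty$ because the full sum at time $t$ is finite (else the weak forward equation would force $p_t(x,y)=\infty$). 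Hence the continuous partial sums $s\mapsto\sum_{z\in\s_r}p_s(x,z)\lambda(z)p(z,y)$ converge uniformly on $[0,t]$, giving continuity of $s\mapsto\sum_{z}p_s(x,z)\lambda(z)p(z,y)$ on every compact interval directly---no dominated convergence required---after which the almost-everywhere identity upgrades to an everywhere identity exactly as you planned. In short, the missing idea is the monotonicity of $e^{\lambda(x)t}p_t(x,y)$; without something like it your DCT step does not go through.
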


\begin{proof}Once we show that, for every $x,y\in\s$, the function
\begin{equation}\label{eq:integrand}s\mapsto \sum_{z\in\s}p_s(x,z)\lambda(z)p(z,y)\end{equation} 
is a continuous on $[0,\infty)$, the remainder of the proof is entirely analogous to that of Lemma~\ref{lem:transbackward} and we skip the details.

To show that \eqref{eq:integrand} is continuous on $[0,\infty)$ it suffices to show that it is continuous on $[0,t]$ for each $t\in[0,\infty)$. To do so, let $(\s_r)_{r\in\zp}$ be any sequence of finite subsets of the state space such that $\cup_{r=1}^\infty\s_r=\s$ and consider the sequence of functions
\begin{equation}\label{eq:nfd7awe8hfa8j3ad2qaa}\left(s\mapsto \sum_{z\in\s_r}p_s(x,z)\lambda(z)p(z,y)\right)_{r\in\zp}\end{equation}
converging pointwise to \eqref{eq:integrand}. Because the subsets $\s_r$ are finite and the transition probabilities are continuous (Lemma~\ref{lem:transbackward}), each of the functions in the sequence is continuous. For this reason, we need only to show that the convergence is uniform over  $s\in[0,t]$. As we show at the end of the proof, 
\begin{equation}\label{eq:fhn7e8ahfnea7w8fhne7a8enf7aw8}\dot{p}_t(x,y)\geq - \lambda(x)\quad\forall t\in[0,\infty),\enskip x,y\in\s.\end{equation}
Multiplying through by $e^{\lambda(x)t}$, applying the product rule, and integrating shows that $t\mapsto e^{\lambda(x)t}p_t(x,y)$ is a non-decreasing function on $[0,\infty)$. The uniform convergence follows as
\begin{align*}&\mmag{\sum_{z\in\s}p_s(x,z)\lambda(z)p(z,y)-\sum_{z\in\s_r}p_s(x,z)\lambda(z)p(z,y)}=\sum_{z\not\in\s_r}p_s(x,z)\lambda(z)p(z,y)\\
&\leq \sum_{z\not\in\s_r}e^{\lambda(x)s}p_s(x,z)\lambda(z)p(z,y)\leq  \sum_{z\not\in\s_r}e^{\lambda(x)t}p_t(x,z)\lambda(z)p(z,y)\quad\forall s\in[0,t]\end{align*}
and the right-hand side converges to zero as $r\to\infty$, otherwise
\begin{align*}\sum_{z\in\s}p_s(x,z)\lambda(z)p(z,y)&=e^{-\lambda(x)s}\sum_{z\in\s}e^{\lambda(x)s}p_s(x,z)\lambda(z)p(z,y)\\
&\geq e^{-\lambda(x)s}\sum_{z\in\s}e^{\lambda(x)t}p_t(x,z)\lambda(z)p(z,y)=\infty\quad\forall s\in[t,\infty)\end{align*}
contradicting the weak forward equations \eqref{eq:forwardweak}.

We have one lose end to tie up: proving \eqref{eq:fhn7e8ahfnea7w8fhne7a8enf7aw8} or, more generally, that
\begin{equation}\label{eq:dne68wa7ndeywa7hdfa8}\mmag{\dot{p}_t(x,y)}\leq q(x)\quad\forall t\in[0,\infty),\enskip x,y\in\s.\end{equation}
If $q(x)=0$, the claim is trivial as Proposition~\ref{prop:absorb} shows that $p_t(x,y)=1_x(y)$ for all $t\in[0,\infty)$. Otherwise, the semigroup property \eqref{eq:semigroup} implies that
$$p_{t+h}(x,y)-p_t(x,y)=\sum_{z\in\s}p_h(x,z)p_t(z,y)-p_t(x,y)=(p_h(x,x)-1)p_t(x,y)+\sum_{z\neq x}p_h(x,z)p_t(z,y).$$
Because $p_t(x,y)\in[0,1]$ and $\sum_{z\in\s}p_h(x,z)\leq1$ for all $x,y\in\s$, it is not difficult to see that the two terms on the right-hand side have opposite signs and are bounded by $1-p_h(x,x)$. Thus,
\begin{equation}\label{eq:fj048fn4m78aht8473aga}\mmag{p_{t+h}(x,y)-p_t(x,y)}\leq 1-p_h(x,x)\quad\forall x,y\in\s.\end{equation}
By definition,
\begin{align*}p_h(x,x)&=\Pbx{\{X_h=x,h<T_\infty\}}\geq \Pbx{\{X_h=x,h<T_1\}}=\Pbx{\{X_0=x,h<T_1\}}\\
&=\Pbx{\{h<T_1\}}=e^{-q(x)h}\geq1-hq(x),\end{align*}
and \eqref{eq:dne68wa7ndeywa7hdfa8} follows by plugging the above into \eqref{eq:fj048fn4m78aht8473aga}, dividing through by $h$, and taking the limit $h\to0$.

\end{proof}

\begin{lemma}[The transition probabilities are the minimal non-negative solution of the forward and backward equations]\label{lem:minimal}Suppose that $Q$  satisfies \eqref{eq:qmatrix}, that, for each $x,y\in\s$,
$$t\mapsto k_t(x,y)$$
is a non-negative ($k_t(x,y)\geq0$ for all $t\in[0,\infty)$) differentiable function on $[0,\infty)$. If the collection of these functions satisfies the forward equations,
$$\dot{k}_t(x,y)=\sum_{z\in\s}k_t(x,z)q(z,y)\quad\forall t\in[0,\infty),\enskip x\in\s,\quad k_0(x,y)=1_x(y)\quad \forall x,y\in\s,$$ 
or the backward equations,
$$\dot{k}_t(x,y)=\sum_{z\in\s}q(x,z)k_t(z,y)\quad\forall t\in[0,\infty),\enskip x,y\in\s,\quad k_0(x,y)=1_x(y)\quad \forall x,y\in\s,$$ 
then 
$$k_t(x,y)\geq p_t(x,y)\quad\forall x,y\in\s,\enskip t\in[0,\infty).$$

\end{lemma}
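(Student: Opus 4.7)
The plan is to recast both differential equations as Volterra integral equations that exactly match the BIR~\eqref{eq:bir} and FIR~\eqref{eq:fir} satisfied by the truncated probabilities $p_t^n$, and then iterate. The point is that once written in integrated form, any non-negative solution of the backward (respectively forward) equation is a solution of the same fixed-point equation whose Picard-type iterates are the $p_t^n$, so any such solution dominates every iterate and hence their monotone limit $p_t$.

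First I would rewrite the backward equation in integrating-factor form. Expanding $\sum_{z}q(x,z)k_t(z,y)$, using $q(x,x)=-q(x)$ and $q(x,z)=\lambda(x)p(x,z)$ for $z\neq x$, one verifies that in both cases $q(x)>0$ and $q(x)=0$,
\[
\dot{k}_t(x,y)+\lambda(x)k_t(x,y)=\lambda(x)\sum_{z\in\s}p(x,z)k_t(z,y);
\]
in the absorbing case one uses $\lambda(x)=1$ and $p(x,x)=1$ so that the single non-zero term of the right-hand sum equals $k_t(x,y)$. Multiplying by $e^{\lambda(x)t}$, integrating from $0$ to $t$, and dividing back out yields
\[
k_t(x,y)=1_x(y)e^{-\lambda(x)t}+\int_0^t e^{-\lambda(x)(t-s)}\lambda(x)\sum_{z\in\s}p(x,z)k_s(z,y)\,ds,
\]
which is exactly the BIR~\eqref{eq:bir} with $k$ in place of $p^n$. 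The same recipe works for the forward equation: adding $\lambda(y)k_t(x,y)$ to both sides of $\dot{k}_t(x,y)=-q(y)k_t(x,y)+\sum_{z\neq y}k_t(x,z)\lambda(z)p(z,y)$ covers the $q(y)=0$ case, because then $\lambda(y)=1=p(y,y)$ supplies the missing $z=y$ term in $\sum_z k_t(x,z)\lambda(z)p(z,y)$, whereas in the $q(y)>0$ case the corresponding identity is immediate from $p(y,y)=0$. Hence
\[
k_t(x,y)=1_x(y)e^{-\lambda(y)t}+\int_0^t e^{-\lambda(y)(t-s)}\sum_{z\in\s}k_s(x,z)\lambda(z)p(z,y)\,ds,
\]
matching the FIR~\eqref{eq:fir}.

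Next I would induct on $n$ to prove $k_t(x,y)\geq p_t^n(x,y)$ for every $n\in\n$, $t\geq 0$, and $x,y\in\s$. The base $n=0$ follows because both integral identities above add a non-negative integral to the leading term $1_x(y)e^{-\lambda(x)t}=p_t^0(x,y)$ (the two exponentials agree whenever $1_x(y)\neq 0$). For the inductive step, if $k_s(z,y)\geq p_s^{n-1}(z,y)$ for all $s$ and $z$, then substituting this bound into the integral identity for $k$ and comparing with the BIR~\eqref{eq:bir} (respectively the FIR~\eqref{eq:fir}) for $p_t^n$ gives $k_t(x,y)\geq p_t^n(x,y)$, using only the non-negativity of the Volterra kernel. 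Finally, since $\{t<T_{n+1}\}\nearrow\{t<T_\infty\}$, the sequence $(p_t^n(x,y))_{n\in\n}$ is monotone increasing with limit $p_t(x,y)$, so letting $n\to\infty$ in the inductive bound yields $k_t(x,y)\geq p_t(x,y)$.

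The main obstacle I expect is the derivation of the common integrating-factor identity, because a naive expansion of $\sum_z q(\cdot,\cdot)k_t$ does not line up with the $\lambda$--$p$ formulation employed by the FIR and BIR whenever an absorbing state is involved: one would be tempted to split the argument into $q=0$ and $q>0$ cases and handle each separately. The fix, exploiting the conventions $\lambda(x)=1$ and $p(x,x)=1$ whenever $q(x)=0$ built into \eqref{eq:jumpmatrix}--\eqref{eq:lambda}, is to absorb the otherwise missing $z=x$ (for backward) or $z=y$ (for forward) contribution into the Volterra kernel, yielding a single uniform derivation.
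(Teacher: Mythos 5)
Your proposal is correct and follows essentially the same route as the paper's proof: integrate the ODE against the integrating factor $e^{\lambda t}$ to recover the FIR/BIR Volterra form, induct to get $k_t\geq p_t^n$, and pass to the limit by monotone convergence. The only cosmetic difference is that you absorb the $q=0$ case into a single uniform identity via the conventions $\lambda=1,\ p(x,x)=1$, whereas the paper dispatches that case with a bare ``this is trivial''; your version is slightly tidier but not a genuinely different argument.
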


\begin{proof} Given the BIR \eqref{eq:bir}, the proof for the case of the backward  equations is entirely analogous to that for the case of the forward equations and we focus on the latter. Suppose that we are able to show that $(k_t(x,y))_{x,y\in\s}$ satisfies the weak version of the forward equations, i.e.,
\begin{equation}\label{eq:hj8f7a4na84hfa87hnf4aj8a9-}k_t(x,y)=1_x(y)e^{-\lambda(y)t}+\int_0^t\sum_{z\in\s}k_s(x,z)\lambda(z)p(z,x)e^{-\lambda(y)(t-s)}ds\quad \forall t\in[0,\infty),\enskip x\in\s,\end{equation}
and let $p^n_t$ be as in \eqref{eq:pnt}. Because the above and \eqref{eq:fm7824h8a7bgn789a49a3j8g4aq0} imply that
$$k_t(x,y)\geq 1_x(y)e^{-\lambda(y)t}=p^0_t(x,y)\quad\forall t\in[0,\infty),\enskip x,y\in\s,$$
combining the FIR \eqref{eq:fir} and \eqref{eq:hj8f7a4na84hfa87hnf4aj8a9-} we find that
\begin{align*}k_t(x,y)&=1_x(y)e^{-\lambda(y)t}+\int_0^t\sum_{z\in\s}k_s(x,z)\lambda(z)p(z,y)e^{-\lambda(y)(t-s)}ds\\
&\geq 1_x(y)e^{-\lambda(y)t}+\int_0^t\sum_{z\in\s}p_s^0(x,z)\lambda(z)p(z,y)e^{-\lambda(y)(t-s)}ds=p^1_t(x,y)\quad\forall t\in[0,\infty),\enskip x\in\s.\end{align*}
Iterating the above argument forward shows that 
$$k_t(x,y)\geq p^n_t(x,y)\quad\forall n\in\n,\enskip t\in[0,\infty),\enskip x,y\in\s.$$
Taking the limit $n\to\infty$ and applying monotone convergence then proofs the minimality of $p_t$.

We have one lose end remaining: proving \eqref{eq:hj8f7a4na84hfa87hnf4aj8a9-}. If $q(y)=0$ this is trivial. Otherwise, $\lambda(y)=q(y)$ and
\begin{align*}\frac{d}{ds}(k_s(x,y)e^{\lambda(y)s})&=\dot{k}_s(x,y)e^{\lambda(y)s}+\lambda(y)k_s(x,y)e^{\lambda(y)s}=\sum_{z\neq y}k_s(x,z)q(z,y)e^{\lambda(y)s}\\
&=\sum_{z\in\s}k_s(x,z)\lambda(z)p(z,y)e^{\lambda(y)s}\quad\forall s\in[0,\infty),\enskip x,y\in\s.\end{align*}
Integrating over $s\in[0,t]$, applying the fundamental theorem of calculus, and multiplying through by $e^{-\lambda(y)t}$ then yields \eqref{eq:hj8f7a4na84hfa87hnf4aj8a9-}.
\end{proof}

\ifdraft
\subsubsection*{Notes:} These recursions are due to Feller 1940. The proof of Lemma \ref{lem:bir} is taken from \citep[p.71]{Anderson1991}
\fi

\ifdraft

\subsection{Stochastic reaction networks}\label{SRNs}\glsadd{SRN}

USE FORWARD EQUATIONS TO EXPLAIN INTUITION.

\index{stochastic reaction network}The original motivation that spurred us to develop the methods presented in this thesis was the study of a class of continuous-time chains (and their associated jump chains) which we refer to as \emph{stochastic reaction networks} or \emph{SRNs} for short. Reaction networks play a prominent role in chemical physics \citep{kampen2007} and in a variety of biological disciplines including systems biology, synthetic biology, epidemiology, pharmacokinetics, ecology, and neuroscience \citep{Goutsias2013}. They describe the evolution of a population of interacting species, such as various types of molecules undergoing chemical reactions in a test tube. Traditionally, the dynamics of the concentration, or density, of each species is modelled with an ordinary differential equation (ODE). These deterministic models are unreliable when the copy number (that is, the number of individuals) of at least one species is low, a situation common in biological settings such as the expression of genes inside a cell's cytoplasm. In this case, there is no law of large numbers available that can be used to approximate the species' inherently stochastic behaviour with a deterministic one (see the work of Thomas G. Kurtz for more on these approximations \citep{Ethier1986,Kurtz1970,Kurtz1971,Kurtz1976,Kurtz1978}). For this reason, stochastic descriptions of reaction networks, and in particular SRNs, have become popular models of biochemical and intracellular processes. 

Motivated by the chemical physics modelling applications of SRNs we refer to the species in the network as \emph{chemical species}, individual members as \emph{molecules}, and the interactions between them as \emph{reactions}.

The starting point of a SRN is a set of $n$ chemical species $\{S_1,\dots,S_n\}$ and a set of $m$ reactions $\{R_1,\dots,R_m\}$ that convert molecules of some of the species into molecules of some the other species. The reactions are described using schematics of the sort
$$ R_j:\quad v_{1j}^-S_1+\dots+v_{nj}^-S_n \xrightarrow{a_j} v_{1j}^+S_1+\dots+v_{nj}^+S_n, \qquad j=1,\ldots,m, $$
where $v_{ij}^-$ (resp. $v_{ij}^+$) denotes the number of molecules of species $S_i$ consumed (resp. produced) every time reaction $R_j$ occurs. If no molecules are consumed (resp. produced) by the reaction, then we write $\varnothing$ in the left-hand side (resp. right-hand side) of the above. The SRN $X:=(X^1,\dots,X^n)$ models the molecule counts of each of the species; that is, $X^i_t$ denotes the number of molecules of species $S_i$ at time $t$ for each $i=1,\dots,n$. With each reaction, we associate a \emph{reaction vector} $v_j:=(v_{ij}^+-v_{ij}^-)_{i=1,\dots,n}$ whose entries are the net change in the number of molecules of each species precipitated by the reaction and a non-negative function $a_j$ on $\nn$ called the \emph{propensity}\index{propensity} or the \emph{reaction rate}. If reaction $R_j$ occurs at time $t$, then the molecule counts get updated as
$$X_t:= X_{t-}+v_j,$$
where $X_{t-}$ denotes the molecule counts right before the reaction occurred. The propensity $a_j$ describes the rate at which the reaction $R_j$ occurs. Specifically, if at the beginning of some small interval of time of length $h$ there are $x_i$ molecules of $S_i$, for each $i=1,\dots,n$, then the probability that $R_j$ occurs at some point during the interval is approximately $a_j(x)h$. The most prevalent type of propensities in the literature are of the \emph{mass-action} type:
$$a_j(x):=k_j\prod_{i=1}^n\frac{x_i(x_i-1)\dots(x_i-u_{ij}+1)}{u_{ij}!},$$
where $k_j>0$ are some known \emph{reaction constants}. We say that a SRN with the above type of propensities is endowed with \emph{mass-action kinetics}\index{mass-action kinetics/propensities}. Note that $a_j/k_j$ as defined above is merely the number of combination possible of the reactants of $j$th reaction given $x_1$ molecules of $S_1$, $x_2$ of $S_2$, ..., and $x_n$ of $S_n$. For this reason, choosing mass-action propensities amounts to assuming that the probability that the $j$th-reaction occurs over a small time period is proportional to the number of combinations possible of the reaction's reactants. If the medium (test tube, cell's cytoplasm, etc.) in which the molecules exist is spatially uniform and well-mixed, then this is a reasonable assumption. The popularity of mass-action kinetics stems from these type of physical arguments presented in \citep{Gillespie1977} in the context of chemical reaction networks.

In this thesis, we do not limit ourselves to mass-action kinetics, the schemes in discussed in Chapters \ref{fspchap} and \ref{dists} apply to arbitrary propensities while those in Chapter \ref{mcmom} apply to propensities that are rational functions, often obtained by carrying out a time-scale separation type approximation of a SRN endowed with mass-action kinetics. We do however always assume that the state space $\s\subseteq\nn$ is chosen such that for any $x\in \s$
$$x+v_j\not\in \s\Rightarrow a_j(x)=0,\qquad \forall j=1,\dots,m.$$
This guarantees that the only way $X$ can leave $\s$ is by exploding.

Assuming that, conditioned on the current molecule counts, the reactions occur independently of each other, it can be shown that the SRN $X$ is a continuous-time chain with rate matrix $Q=(q(x,y))_{x,y\in\s}$ given by
\begin{equation}
\label{eq:qmatrixsrn}
q(x,y):=\left\{\begin{array}{cc}a_1(x)&\text{if } y=x+v_1\\ a_2(x)&\text{if } y=x+v_2\\ \vdots&\vdots\\a_m(x)&\text{if } y=x+v_m\\ 0&\text{otherwise}\end{array}\right.,
\end{equation}
see \citep{Anderson2015,Gillespie1976,Gillespie1977}.
\index{toggle switch}\begin{example}[Genetic toggle switch]\label{togintro} The following SRN with state space $\n^2$ is often used to model the expression two mutually repressing genes. So called toggle-switches are common motifs in many cell fate-decision genetic circuits \citep{gardner2000,hemberg2007,perez2016}. In particular, we consider the asymmetric case 
$$
\varnothing \xrightleftharpoons[a_2]{a_1} P_1,
 \qquad \varnothing \xrightleftharpoons[a_4]{a_3} P_2,
$$
with propensities
\begin{equation}
 \label{toggle:prop}
 a_1(x) = \frac{k_1 \theta^3}{\theta^3+x_{2}^3},  \qquad
 a_2(x) = k_2 x_{1},  \qquad
 a_3(x) = \frac{k_3}{1+x_{1}}, \qquad
 a_4(x) = k_4 x_{2},
\end{equation}
where $k_1,k_2,k_3,k_4,\theta>0$, which models mutual repression via Hill functions and dilution via linear unspecific decays, where $x=(x_1,x_2)$ and $x_1$ (resp. $x_2$) denotes the copy number of protein $P_1$ (resp. $P_2$). 
\end{example}
\subsubsection*{Birth-death processes:}\index{birth-death process} Birth-death processes are SRNs with state space $\n$ that model the number of individuals in a single population over time. This number is only allowed to increase by $1$ (when a birth occurs) or to decrease by $1$ (when a death occurs). Birth/death events do not have to represent a literal birth or death of a member of the species, it could represent the immigration/emigration of a member of the species, or other events that change the population count by $\pm1$. Indeed, birth-death processes are employed to model a variety of situations aside from the demographics of a given species: for instance, the size of a queue in a classic M/M/m queueing model \citep{Asmussen2003}, a chemical species undergoing a phase transition \citep{Schlogl1972}, or gene expression inside a living cell \citep{Dattani2017}.

The rate matrix of birth-death processes has the following tri-diagonal structure
\begin{equation}\label{eq:bdqmatrx}q(x,y):=\left\{\begin{array}{l l}a_-(x)&\text{if }y=x-1,\\ -(a_-(x)+a_+(x))&\text{if }y=x,\\ a_+(x)&\text{if }y=x+1,\\0&\text{otherwise}\end{array}\right.,\end{equation}
where $a_-:\n\to[0,\infty)$ and $a_+:\n\to[0,\infty)$ denote the \emph{birth rate} and \emph{death rate} respectively. The death rate satisfies $a_-(0)=0$ formalising the idea that if there are no members of the species, there cannot be any deaths.
\index{Schl\"ogl's model}\begin{example}[Schl\"ogl's model]
\label{ex:schloegl}
The following SRN is was suggested by Schl\"ogl~\citep{Schlogl1972} as model of a chemical phase transition involving a single species $A$: 
\begin{equation}
\label{eq:smdl}
2A\xrightleftharpoons[a_2]{a_1}3A,
\qquad \varnothing\xrightleftharpoons[a_2]{a_1}A,
\end{equation}
with propensities
$$a_1(x):=k_1 x (x-1),\qquad a_2(x):=k_2 x,\qquad a_3(x):=k_3,\qquad a_4(x):=k_4 x,$$
where $k_1, k_2, k_3, k_4$ are positive constants. The model consists of a birth-death process with rates
\begin{equation}\label{eq:schoglbdr}a_+(x):= k_1 x (x-1)+k_3,\qquad a_-(x):=k_2 x (x-1) (x-2)+k_4 x.\end{equation}
\end{example}
\fi

\ifdraft
\subsection{\textbf{On uniqueness of continuous-time chains**}}

1) Path law of Markov processes on $\s$ (together with a final-explosion-time-stopping-time) is one-to-one with (substochastic semigroups) on $\s$.

2) $\dot{P}(0)$ exists and is a $Q$-matrix, introduce idea of $Q$-process

3) Or $P_t$ is the minimal $Q$-function (th.2.2 chap 2 anderson). Thus, $Q$-process is unique (w.r.t path law) iff minimal $Q$-process is honest.

4) Otherwise we can build others using re-starts (cor.2.5 chap 2 anderson).

5) Only restarts that don't any one state in particular satisfy forward equations (give flash, reference to chap 4 anderson).

6) We don't consider non-conservative, we view this as a modelling mistake 

\fi

\subsection{The time-varying law and its differential equation}\label{sec:forward}

Let\glsadd{pt} 
\begin{equation}\label{eq:ctlawdef}p_t(x):=\Pbl{\{X_t=x,t<T_\infty\}}\end{equation}
denote the probability that the chain is at state $x$ at time $t$ if its starting state was sampled from  $\gamma$. Theorem \ref{thrm:findim} gives an explicit expression for the \emph{time-varying law}\index{time-varying law} $(p_t)_{t\geq0}$ in terms of the initial distribution $\gamma$ and $t$-transition matrix $P_t=(p_t(x,y))_{x,y\in\s}$,
\begin{equation}\label{eq:tstep} p_t(x)=\sum_{x'\in\s}\gamma(x')p_t(x',x)\quad \forall x\in\s,\end{equation}
or $p_t=\gamma P_t$ in matrix notation, for all $t$ in $[0,\infty)$. While theoretically useful, \eqref{eq:tstep} is of little practical use for computing the time-varying law of a chain. A far more useful description in this respect---and one very popular among physicists, engineers, and mathematical biologists---is the  following a generalisation of the forward equations \eqref{eq:forward}.
\begin{theorem}[Analytical characterisation of the time-varying law]\label{thrm:forward} Suppose that $Q$  satisfies \eqref{eq:qmatrix} and that
%
\begin{equation}\label{eq:forwardcond}\sum_{x\in\s}\gamma(x)\left(\sup_{s\in[0,t]}\sum_{x'\neq y}p_s(x,x')q(x',y)\right)<\infty,\quad\forall y\in\s,\enskip t\in[0,\infty).\end{equation}
The time-varying law is continuously differentiable: for each $x$ in $\s$,
$$t\mapsto p_t(x)$$
is a continuously differentiable function on $[0,\infty)$. Moreover, it satisfies the equations\index{forward equations}
\begin{equation}\label{eq:master}\dot{p}_t(x)=\sum_{x'\in\s}p_t(x')q(x',x)\quad \forall t\in[0,\infty),\enskip x\in\s,\qquad p_0(x)=\gamma(x)\quad\forall x\in\s,\quad\end{equation}
or 
$$\dot{p}_t=p_tQ\quad\forall t\in[0,\infty),\quad p_0=\gamma,$$
in matrix notation. Moreover, the time-varying law is the minimal non-negative solution of these equations: if $(k_t(x))_{x\in\s,t\in[0,\infty)}$ is a non-negative ($k_t(x)\geq0$ for all $x\in\s,$ $t\in[0,\infty)$) differentiable function satisfying \eqref{eq:master}, then
$$k_t(x)\geq p_t(x)\quad\forall x\in\s,\enskip t\in[0,\infty).$$
\end{theorem}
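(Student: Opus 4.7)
\medskip

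\noindent\textbf{Proof proposal.} My plan is to mimic the strategy used in Lemmas~\ref{lem:forwardweak}--\ref{lem:minimal} for the transition probabilities, but carried out for the marginal $p_t(x)$ and leveraging \eqref{eq:forwardcond} to legitimise the interchanges of sum and limit that arise once $\gamma$ is no longer a point mass. Throughout, I would write $p_t(y)=\sum_{x\in\s}\gamma(x)p_t(x,y)$ (equation \eqref{eq:tstep}) and integrate or differentiate this representation term-by-term.

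First I would derive an integral version of \eqref{eq:master}. Multiplying the weak forward equation \eqref{eq:forwardweak} by $\gamma(x)$ and summing on $x$, an application of Tonelli's theorem gives
\begin{equation}\label{eq:plan-int}
p_t(y)=\gamma(y)e^{-\lambda(y)t}+\int_0^t e^{-\lambda(y)(t-s)}R_s(y)\,ds,\qquad R_s(y):=\sum_{x'\neq y}p_s(x')q(x',y).
\end{equation}
Next I would check that $s\mapsto R_s(y)$ is continuous on every compact $[0,t]$. Here is where \eqref{eq:forwardcond} does all the work: the function $x\mapsto \sup_{s\in[0,t]}\sum_{x'\neq y}p_s(x,x')q(x',y)$ is $\gamma$-integrable and dominates every partial sum of $R_s(y)=\sum_{x}\gamma(x)\sum_{x'\neq y}p_s(x,x')q(x',y)$; Lemma~\ref{lem:transforward} already proved that each $s\mapsto\sum_{x'\neq y}p_s(x,x')q(x',y)$ is continuous, so dominated convergence with respect to $\gamma$ transfers the continuity to $R_s(y)$.

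Having \eqref{eq:plan-int} with a continuous integrand, I would multiply through by $e^{\lambda(y)t}$, apply the fundamental theorem of calculus to differentiate in $t$, and divide back by $e^{\lambda(y)t}$ to obtain
\[
\dot p_t(y)=-\lambda(y)p_t(y)+R_t(y)=\sum_{x'\in\s}p_t(x')q(x',y),
\]
which is \eqref{eq:master}; continuity of $t\mapsto R_t(y)$ simultaneously delivers continuous differentiability. For minimality, I would run the argument of Lemma~\ref{lem:minimal} at the level of $p_t$: any non-negative differentiable solution $k_t$ of \eqref{eq:master} with $k_0=\gamma$ must itself satisfy the integral identity \eqref{eq:plan-int} (multiply through by $e^{\lambda(y)t}$ and integrate). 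Defining $p_t^n(y):=\sum_{x}\gamma(x)p_t^n(x,y)$ with $p_t^n(x,y)$ as in \eqref{eq:pnt}, the FIR \eqref{eq:fir} $\gamma$-averaged gives $p_t^0(y)\le k_t(y)$, and then an induction using the FIR and \eqref{eq:plan-int} yields $p_t^n(y)\le k_t(y)$ for every $n$; monotone convergence finishes the proof since $p_t^n\uparrow p_t$.

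The main obstacle, as in Lemma~\ref{lem:transforward}, is the continuity of $R_s(y)$ and the uniform control needed to commute sum and integral; once the hypothesis \eqref{eq:forwardcond} is used to produce the $\gamma$-integrable envelope that dominates the partial sums uniformly in $s\in[0,t]$, the rest of the argument is essentially bookkeeping parallel to the transition-probability case.
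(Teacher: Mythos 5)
Your proposal matches the paper's argument in all essentials: derive the integral identity \eqref{eq:ndw89afnwanfaw} (your \eqref{eq:plan-int}) by $\gamma$-averaging the weak forward recursion, use \eqref{eq:forwardcond} to justify passing continuity of the single-starting-state sums (from Lemma~\ref{lem:transforward} / Theorem~\ref{thrm:forwardbackward}) to $R_s(y)$, invoke the fundamental theorem of calculus for the differential form, and dispatch minimality by paralleling Lemma~\ref{lem:minimal} via the FIR. The paper phrases the continuity step as uniform convergence of partial sums (Weierstrass M-test) rather than your sequential-continuity-via-dominated-convergence argument, but the two are interchangeable here, so this is not a material difference.
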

For the theorem's proof, see the end of the section. Condition \eqref{eq:forwardcond} may seem very technical, but it is straightforward to derive conditions that cover the majority of chains encountered in practice:
\begin{proposition}[Condition \eqref{eq:forwardcond} is mild]\label{prop:forwardcondmild} Suppose that the rate matrix $Q$ satisfies \eqref{eq:qmatrix}. If the rate matrix has bounded columns:
$$\sup_{x\in\s}q(x,y)<\infty\quad\forall y\in\s,$$
or if it's diagonal is $\gamma$-integrable:
\begin{equation}\label{eq:qgamint}\Ebl{q(X_0)}=\sum_{x\in\s}q(x)\gamma(x)<\infty,\end{equation}
then \eqref{eq:forwardcond} holds.
\end{proposition}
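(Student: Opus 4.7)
The plan is to reduce \eqref{eq:forwardcond} to a pointwise (in $s$) bound on
\[
\Phi_s(x,y) := \sum_{x' \neq y} p_s(x,x') q(x',y)
\]
in each of the two cases, and then integrate against $\gamma$. The two cases actually admit quite different arguments.

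For the bounded-columns case, set $C_y := \sup_{x' \neq y} q(x',y)$, which is finite by hypothesis. Since $\sum_{x'} p_s(x,x') \leq 1$, one has the trivial uniform bound $\Phi_s(x,y) \leq C_y$ for every $x \in \s$ and $s \in [0,t]$, and hence $\sum_x \gamma(x) \sup_{s \in [0,t]} \Phi_s(x,y) \leq C_y < \infty$. There is nothing more to do here.

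The $\gamma$-integrable-diagonal case is the substantive one, and the key idea is to notice that $\Phi_s(x,y)$ is essentially the time derivative of a transition probability. Specifically, the forward equation for the transition probabilities themselves was already established unconditionally in Theorem~\ref{thrm:forwardbackward}, so
\[
\sum_{x' \in \s} p_s(x,x') q(x',y) = \dot p_s(x,y).
\]
Peeling off the $x' = y$ term and using conservativity $q(y,y) = -q(y)$ gives the identity
\[
\Phi_s(x,y) = \dot p_s(x,y) + q(y)\, p_s(x,y).
\]
Combined with $p_s(x,y) \leq 1$ and the two-sided derivative bound $|\dot p_s(x,y)| \leq q(x)$ from \eqref{eq:dne68wa7ndeywa7hdfa8} (proven inside the proof of Lemma~\ref{lem:transforward}), this yields the uniform-in-$s$ estimate
\[
\Phi_s(x,y) \leq q(x) + q(y).
\]
Integrating against $\gamma$ and using \eqref{eq:qgamint} then gives
\[
\sum_{x \in \s} \gamma(x) \sup_{s \in [0,t]} \Phi_s(x,y) \leq \Ebl{q(X_0)} + q(y) < \infty,
\]
since $Q$ is totally stable so $q(y) < \infty$.

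The main (and really the only) obstacle is spotting the identity $\Phi_s(x,y) = \dot p_s(x,y) + q(y)\, p_s(x,y)$, which recasts the supposedly troublesome expression as a derivative of the transition probabilities; once one has that, the pre-existing uniform bound $|\dot p_s(x,y)| \leq q(x)$ delivers an upper envelope $q(x) + q(y)$ that is independent of $s$ and that integrates against $\gamma$ precisely under either hypothesis.
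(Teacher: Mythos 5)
Your proof is correct and takes essentially the same approach as the paper: the bounded-columns case is dismissed via $\sum_{x'} p_s(x,x') \leq 1$, and the $\gamma$-integrable case rearranges the forward equation to write $\sum_{x'\neq y} p_s(x,x') q(x',y) = \dot p_s(x,y) + q(y)\,p_s(x,y)$ and then invokes the bound $|\dot p_s(x,y)| \leq q(x)$ from \eqref{eq:dne68wa7ndeywa7hdfa8}. The only cosmetic difference is that you state the identity as an equality before bounding, while the paper goes directly to the inequality.
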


\begin{proof}Given that $\sum_{x'\in\s}p(x,x')\leq 1$ for all $s$ in $[0,\infty)$, the case of bounded columns is trivial. To argue the case of \eqref{eq:qgamint}, recall that the transition probabilities satisfy the forward equation~\eqref{eq:forward}. For this reason,
$$
\sum_{x'\neq y}p_s(x,x')q(x',y)\leq p_s(x,y)q(y)+\mmag{\dot{p}_s(x,y)}\leq q(y)+\mmag{\dot{p}_s(x,y)}\quad\forall s\in[0,\infty),\enskip x,y\in\s.$$
Because the derivative $\dot{p}_t(x,y)$ is bounded by $q(x)$ (c.f.~\eqref{eq:dne68wa7ndeywa7hdfa8}), the result follows from the above.
\end{proof}

\subsubsection*{Explosions and uniqueness of solutions}For any given initial distribution $\gamma$ satisfying~\eqref{eq:forwardcond}, Proposition~\ref{prop:nonexptimevar} and Theorem~\ref{thrm:forward} show that the time-varying law is the unique solution $(k_t)_{t\geq0}$ of~\eqref{eq:master} with mass no greater than one (i.e., such that $p_t(\s)\leq 1$ for all $t$ in $[0,\infty)$) if sampling the starting state from $\gamma$ results in chain that does not explode:
$$\Pbl{\{T_\infty=\infty\}}=1.$$
If the above is not satisfied, then \eqref{eq:master} may have more than one solution for the same reasons that the forward equation~\eqref{eq:forward} may have more than one solution \citep{Anderson1991}.

\subsubsection*{Some open problems}To the best of my knowledge, the question of whether equations~\eqref{eq:master} hold for all initial distributions $\gamma$ and chains with stable and conservative rate matrices $Q$ 
remains unresolved. If one is willing to settle for a weak version of these equations the answer is affirmative: integrating the forward equations~\eqref{eq:forwardweak}, re-arranging, multiplying through by $\gamma(x)$, and summing over $x$ in $\s$, we find that
\begin{equation}\label{eq:ndw89afnwanfaw}p_t(x)-\gamma(x)+\int_0^tp_s(x)q(x)ds=\int_0^t\sum_{x'\neq x}p_s(x')q(x',x)ds\quad\forall t\in[0,\infty),\enskip x\in\s.\end{equation}
Because $\int_0^tp_s(x)q(x)ds\leq tq(x)<\infty$, it follows that 
$$\sum_{x'\neq x}p_t(x')q(x',x)<\infty\text{ for Lebesgue almost every }t\in[0,\infty)\text{ and all }x\in\s.$$
For this reason, the Lesbegue differentiation theorem \citep[Theorem~1.6.11]{Tao2011} and~\eqref{eq:ndw89afnwanfaw} show that, for all $x$ in $\s$, $t\mapsto p_t(x)$ is differentiable Lebesgue almost everywhere and 
\begin{equation}\label{eq:dens1}\dot{p}_t(x)=p_t Q(x)\text{ for Lebesgue almost every }t\in[0,\infty)\text{ and all }x\in\s.\end{equation}
%

What condition~\eqref{eq:forwardcond} does is ensure that the integrands in~\eqref{eq:ndw89afnwanfaw} are finite and continuous, in which case it follows from the fundamental theorem of calculus that $t\mapsto p_t(x)$ is a continuously differentiable function satisfying $\dot{p}_t(x)=p_t Q(x)$ for all $t$ in $[0,\infty)$. Indeed, were $p_t(x)$ to be a continuously differentiable function on $[0,\infty)$ for all $x$ in $\s$, it would follow that
\begin{align*}\sup_{s\in[0,t]}\sum_{x\in\s}\gamma(x)\sum_{x'\neq y}p_s(x,x')q(x',y)&\leq\sup_{s\in[0,t]}\sum_{x'\neq y}p_s(x')q(x',y)\\
&\leq \sup_{s\in[0,t]}(\dot{p}_s(y)+p_s(y)q(y))<\infty,\quad\forall y\in\s,\enskip t\in[0,\infty)\end{align*}
as continuous functions are bounded over finite intervals. This is close to~\eqref{eq:forwardcond} but not quite, and the question lingers: is \eqref{eq:forwardcond} necessary for the time-varying law to be continuously differentiable on $[0,\infty)$?

Now, notice that there is nothing stopping us from picking a $Q$ and $\gamma$ such that $\gamma Q(x)=\infty$ for at least one $x$. In this case, \eqref{eq:forwardcond} will not be satisfied and neither will the conclusions of Theorem~\ref{thrm:forward} ($p_t(x)$ cannot both be differentiable on $[0,\infty)$ and satisfy $\dot{p}_0(x)=p_0Q(x)=\gamma Q(x)  =\infty$). However, tweaking the argument in the theorem's proof, it is possible to show that the time-varying law is continuous differentiable on $(0,\infty)$ and~\eqref{eq:master} holds if we replace~\eqref{eq:forwardcond} with
\begin{equation}\label{eq:dnwy8abd6a78wda}\sum_{x\in\s}\gamma(x)\left(\sup_{s\in[1/t,t]}\sum_{x'\neq y}p_s(x,x')q(x',y)\right)<\infty,\quad\forall y\in\s,\enskip t\in[1,\infty).\end{equation}
Whether the above is actually necessary for us to be able to strengthen \eqref{eq:ndw89afnwanfaw} into `for all $x$ in $\s$, $t\mapsto p_t(x)$ is continuously differentiable (or even just differentiable) on $(0,\infty)$ and~\eqref{eq:master} holds' also remains open. It may well be that this is the case for all initial distributions and chains with stable and conservative rate matrices regardless of whether \eqref{eq:dnwy8abd6a78wda} is satisfied.

Lastly, for the cases not covered by Proposition~\ref{prop:forwardcondmild} we are lacking in tools to verify  \eqref{eq:forwardcond}~or~\eqref{eq:dnwy8abd6a78wda} in practice. If either of these conditions prove to actually be important, the question of whether they can be rephrased entirely in terms of $\gamma$ and $Q$ is also worth having a look at. If you know anything more on these issues, please get in touch.

\subsubsection*{A proof of Theorem~\ref{thrm:forward}} The argument showing that the time-varying law $p_t$ is minimal among the set of solutions of the master equation \eqref{eq:master} is entirely analogous to those in the proofs of Lemmas~\ref{lem:forwardweak}~and~\ref{lem:minimal}. Thus, we skip it and instead focus on proving that $p_t$ is continuously differentiable and satisfies \eqref{eq:master}.

Given~\eqref{eq:ndw89afnwanfaw} and the fundamental theorem of calculus, it suffices to show that
\begin{equation}\label{eq:mfd8e9awp0ngfe78aong78agbeyaw8ghbeuaw}t\mapsto p_t(x)\quad\text{and}\quad t\mapsto \sum_{x'\neq y}p_t(x')q(x',y)\end{equation}
are continuous  functions $[0,\infty)$ for all $y$ in $\s$. To do so, let $(\s_r)_{r\in\zp}$ be any given sequence of finite subsets of the state space such that $\cup_{r=1}^\infty\s_r=\s$. By equation~\eqref{eq:tstep}, we have that $t\mapsto p_t(x)$ is the pointwise limit of the sequence
$$\left(t\mapsto \sum_{x'\in\s_r}\gamma(x')p_t(x',x)\right)_{r\in\zp}.$$
Because the sets $\s_r$ are finite, the continuity of the transition probability (Theorem~\ref{thrm:forwardbackward}) implies that these functions are continuous. Because the convergence is uniform over $t$s in $[0,\infty)$,
$$\mmag{p_t(x)-\sum_{x'\in\s_r}\gamma(x)p_t(x',x)}=\sum_{x'\not\in\s_r}\gamma(x)p_t(x',x)\leq\gamma(\s_r)\quad\forall r>0,$$
it follows that $t\mapsto p_t(x)$ is continuous on $[0,\infty)$. 

To prove the continuity of the other function in~\eqref{eq:mfd8e9awp0ngfe78aong78agbeyaw8ghbeuaw}, notice that it is the pointwise limit of the sequence
$$\left(t\mapsto \sum_{x\in\s_r}\sum_{x'\neq y}\gamma(x)p_t(x,x')q(x',y)\right)_{r\in\zp}$$
of continuous functions (continuity of the above also follows from Theorem~\ref{thrm:forwardbackward}). Because~
\begin{align*}\mmag{\sum_{x'\neq y}p_s(x')q(x',y)-\sum_{x\in\s_r}\sum_{x'\neq y}\gamma(x)p_s(x,x')q(x',y)}&=\sum_{x\not\in\s_r}\sum_{x'\neq y}\gamma(x)p_s(x,x')q(x',y)\\
&\leq \sum_{x\not\in\s_r}\gamma(x)\left(\sup_{s\in[0,t]}\sum_{x'\neq y}p_s(x,x')q(x',y)\right),\end{align*}
our assumption~\eqref{eq:forwardcond} implies that the convergence is uniform over $s$ in $[0,t]$, for every interval $[0,t]$, and it follows that the rightmost function in~\eqref{eq:mfd8e9awp0ngfe78aong78agbeyaw8ghbeuaw} is continuous on $[0,\infty)$.

\subsection{Dynkin's formula} \label{sec:dynkin} Integrating the generalised forward equation~\eqref{eq:master} over $[0,t)$ we find that
$$p_t(x)=\gamma(x)+\int_0^tp_sQ(x)ds\quad\forall x\in\s.$$
If $f$ is a real-valued function $\s$ satisfying the appropriate integrability conditions, then multiplying both sides by $f(x)$ and summing over $x\in\s$, we obtain the following the integral version of~\eqref{eq:master}:
\begin{equation}\label{eq:dynkin0}\Ebl{f(X_t)1_{\{t<T_\infty\}}}=\gamma(f)+\Ebl{\int_0^{t\wedge T_\infty}Qf(X_s)ds},
\end{equation}
where $Qf(x):=\sum_{x'\in\s}q(x,x')f(x)$. The aim of this section is to prove a version of Dynkin's formula which states that \eqref{eq:dynkin0} holds not only for deterministic times $t$ for stopping times $\eta$. 

\subsubsection*{A special class of stopping times}To not complicate matters unnecessarily, 
we only prove Dynkin's formula a special type of stopping times that I call \emph{jump-time-valued}:
\begin{definition}[Jump-time-valued stopping times]\label{def:stopct2}Let $(\cal{F}_t)_{t\geq0}$ denote the filtration generated by the chain (Definition~\ref{def:filt3}) and  $(\cal{G}_n)_{n\in\n}$ that generated by the jump chain and jump times (Definition~\ref{def:filt2}). We say that a random variable $\eta:\Omega\to[0,\infty]$  is a jump-time-valued $(\cal{F}_t)_{t\geq0}$-stopping time if there exists a $(\cal{G}_n)_{n\in\n}$-stopping time $\varsigma$ (Definition~\ref{def:stopdt} with $(\cal{G}_n)_{n\in\n}$ replacing $(\cal{F}_n)_{n\in\n}$ therein) such that
\begin{equation}\label{eq:stopdc}\eta(\omega)=\left\{\begin{array}{ll}T_{\varsigma(\omega)}(\omega)&\text{if }\varsigma(\omega)<\infty\\\infty&\text{if }\varsigma(\omega)=\infty\end{array}\right.\quad\forall\omega\in\Omega.\end{equation}
\end{definition}
For any such $\eta$, 
\begin{equation}\label{eq:etatinfinf}\{\eta<\infty\}=\{\eta<T_\infty\}=\{\varsigma<\infty\}\end{equation}
because the jump times are all finite by their definition in~\eqref{eq:cpathdef2}. Moreover these random times are $(\cal{F}_t)_{t\geq0}$-stopping times (in the sense of Definition~\ref{def:stopct}):
\begin{theorem}\label{thrm:stopdc}All jump-time-valued $(\cal{F}_t)_{t\geq0}$-stopping times (Definition~\ref{def:stopct2}) are $(\cal{F}_t)_{t\geq0}$-stopping times (Definition~\ref{def:stopct}),  where $(\cal{F}_t)_{t\geq0}$ denotes the filtration generated by the chain.\end{theorem}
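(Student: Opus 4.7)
} The plan is to decompose the event $\{\eta\leq t\}$ according to the (necessarily finite) value of the underlying $(\cal{G}_n)_{n\in\n}$-stopping time $\varsigma$, then exploit the already-established containment $\cal{G}_n\subseteq\cal{F}_{T_n}$ (Proposition~\ref{prop:ftngn}) together with the definition of $\cal{F}_{T_n}$ to show each piece lies in $\cal{F}_t$. Since the text has already noted that every jump time $T_n$ is an $(\cal{F}_t)_{t\geq0}$-stopping time, the sigma-algebra $\cal{F}_{T_n}$ is well-defined and Definition~\ref{def:stopct} gives us a ready-made bridge from ``$A\in\cal{F}_{T_n}$'' to ``$A\cap\{T_n\leq t\}\in\cal{F}_t$''.

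First I would fix $t\in[0,\infty)$ and observe that any $\omega$ with $\eta(\omega)\leq t$ satisfies $\eta(\omega)<\infty$, hence by \eqref{eq:etatinfinf} satisfies $\varsigma(\omega)<\infty$ and $\eta(\omega)=T_{\varsigma(\omega)}(\omega)$. Partitioning by the value of $\varsigma$, this yields the decomposition
\begin{equation*}
\{\eta\leq t\}=\bigcup_{n=0}^\infty\{\varsigma=n,\,T_n\leq t\}.
\end{equation*}
Next I would verify that $\{\varsigma=n\}\in\cal{G}_n$ for every $n\in\n$: by Definition~\ref{def:stopdt} (applied to the filtration $(\cal{G}_n)_{n\in\n}$) we have $\{\varsigma\leq n\}\in\cal{G}_n$ and $\{\varsigma\leq n-1\}\in\cal{G}_{n-1}\subseteq\cal{G}_n$, so the difference $\{\varsigma=n\}$ lies in $\cal{G}_n$. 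Proposition~\ref{prop:ftngn} then upgrades this to $\{\varsigma=n\}\in\cal{F}_{T_n}$.

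Finally, by the definition of the pre-$T_n$ sigma-algebra (Definition~\ref{def:stopct}), membership of $\{\varsigma=n\}$ in $\cal{F}_{T_n}$ means precisely that
\begin{equation*}
\{\varsigma=n\}\cap\{T_n\leq t\}\in\cal{F}_t\quad\text{for every }t\in[0,\infty).
\end{equation*}
Since $\cal{F}_t$ is a sigma-algebra, taking the countable union over $n\in\n$ gives $\{\eta\leq t\}\in\cal{F}_t$, as required. I do not foresee a genuine obstacle here: the non-trivial work was already done in proving $\cal{G}_n\subseteq\cal{F}_{T_n}$, and everything else is bookkeeping. The only mild subtlety worth flagging is checking that ``$\eta\leq t$'' really does force $\varsigma<\infty$ (so that we may write $\eta=T_\varsigma$ rather than $\eta=\infty$), which is where the convention in \eqref{eq:stopdc} that $\eta=\infty$ whenever $\varsigma=\infty$ is used.
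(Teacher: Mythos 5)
Your proof is correct and takes essentially the same route as the paper: decompose $\{\eta\leq t\}=\bigcup_{n=0}^\infty\{\varsigma=n\}\cap\{T_n\leq t\}$, use $\{\varsigma=n\}\in\cal{G}_n\subseteq\cal{F}_{T_n}$ (Proposition~\ref{prop:ftngn}), and then invoke the definition of the pre-$T_n$ sigma-algebra to place each term in $\cal{F}_t$. The only cosmetic difference is that the paper cites Lemma~\ref{lem:2stop}$(i)$ for $\{\varsigma=n\}\in\cal{G}_n$ whereas you re-derive it from the definition of a stopping time.
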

\begin{proof}For any $\eta$ satisfying \eqref{eq:stopdc},
$$\{\eta\leq t\}=\bigcup_{n=0}^\infty \{\varsigma=n\}\cap\{T_n\leq t\}\quad\forall t\in[0,\infty).$$
Because $\{\varsigma=n\}$ belongs to $\cal{G}_n$ (Lemma~\ref{lem:2stop}$(i)$) and $\cal{G}_n\subseteq\cal{F}_{T_n}$ (Proposition~\ref{prop:ftngn}), the above shows that $\eta$ is a $(\cal{F}_t)_{t\geq0}$-stopping time.
\end{proof}
\subsubsection*{An open problem} The idea behind jump-time-valued stopping times $\eta$ is that they are stopping times (Definition~\ref{def:stopct}) taking values in  the set of jump times (and plus infinity):
\begin{equation}\label{eq:exitjump}\eta(\omega)\in\{T_0(\omega),T_1(\omega),T_2(\omega),\dots\}\cup\{\infty\}\quad\forall \omega\in\Omega.\end{equation}
In other words, the event that occurs at such a stopping time is one that may only occur when the chain jumps. It seems to me that any stopping time satisfying the above should be of the form in Definition~\ref{def:stopct2}. However, the question lingers and its answer hinges on that to the open question of Section~\ref{sec:filtct}: `does $\cal{G}_n=\cal{F}_{T_n}$?'. Indeed, any $(\cal{F}_t)_{t\geq0}$-stopping time $\eta$ satisfying the above satisfies \eqref{eq:stopdc} with 
\begin{equation}\label{eq:exitjump2}\varsigma(\omega):=\left\{\begin{array}{ll}n&\text{if }\eta(\omega)=T_{n}(\omega)\enskip\forall n\in\n\\\infty&\text{if }\eta(\omega)=\infty\end{array}\right.\quad\forall\omega\in\Omega.\end{equation}
Were $\cal{F}_{T_n}$ to coincide $\cal{G}_n$ for each $n$, $\varsigma$ would be a $(\cal{G}_n)_{n\in\n}$-stopping time as Lemma~\ref{lem:etatheta}$(i)$ implies that  $\{\varsigma\leq n\}=\{\eta\leq T_n\}$ belongs to $\cal{F}_{T_n}$ for all $n$.
\subsubsection*{An important example: hitting times}The \emph{hitting time}\index{hitting time} (or \emph{first passage time})\glsadd{taua} $\tau_A$ of a set $A\subseteq\s$ is the first time that the chain $X$ visits the subset (or plus infinity if it never does). Formally,
\begin{equation}\label{eq:hitthec}\tau_{A}(\omega):=\inf\{t\in[0,T_\infty(\omega)):X_t(\omega)\in A\}\quad\forall \omega\in\Omega.\end{equation}
We use the shorthand $\tau_x:=\tau_{\{x\}}$ for any $x$ in $\cal{S}$.  Of course, the first time that the chain enters $A$ is the moment that it jumps into $A$ and it follows that $\tau_A$ is a jump-time-valued stopping time:
\begin{proposition}\label{prop:hitdc} The hitting time $\tau_A$ is a jump-time-valued $(\cal{F}_t)_{t\geq0}$-stopping time. Furthermore, $X_{\tau_A}$ belongs to $A$ if $\tau_A$ is finite and \eqref{eq:stopdc}--\eqref{eq:etatinfinf} hold if we replace $\eta$ with $\tau_A$ and $\varsigma$ with $\sigma_A$, where $\sigma_A$ denotes the hitting time of $A$ of the jump chain $Y$ (defined by replacing $X$ with $Y$ in  \eqref{eq:hitd}). 
\end{proposition}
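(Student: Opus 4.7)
The plan is to reduce everything to the single identity that $\tau_A$ equals the jump time indexed by $\sigma_A := \inf\{n \in \n : Y_n \in A\}$, the hitting time of $A$ by the jump chain. Once this identity is established, all three claims drop out mechanically. First, $\sigma_A$ is a $(\cal{G}_n)_{n \in \n}$-stopping time by the verbatim computation of Proposition~\ref{prop:hitisstop}: the event $\{\sigma_A \leq n\}$ is determined by $Y_0,\dots,Y_n$, which are $\cal{G}_n$-measurable by Definition~\ref{def:filt2}. Combined with the identity ($\tau_A = T_{\sigma_A}$ on $\{\sigma_A < \infty\}$ and $\tau_A = \infty$ on $\{\sigma_A = \infty\}$), this matches Definition~\ref{def:stopct2} with $\eta := \tau_A$ and $\varsigma := \sigma_A$, giving the jump-time-valued property and \eqref{eq:stopdc}. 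The equality chain \eqref{eq:etatinfinf} is then immediate: the definition of $\tau_A$ as an infimum over $[0,T_\infty)$ forces $\{\tau_A < \infty\} = \{\tau_A < T_\infty\}$, and the identity translates this to $\{\sigma_A < \infty\}$. Finally, on $\{\tau_A < \infty\}$ the piecewise-constant representation \eqref{eq:cpathdef} yields $X_{\tau_A} = X_{T_{\sigma_A}} = Y_{\sigma_A} \in A$ by the definition of $\sigma_A$.

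The core work is therefore the identity itself, and the key ingredients are: (i)~the strict positivity of every waiting time $S_n = \xi_n/\lambda(Y_{n-1})$, guaranteed pointwise because $\xi_n$ takes values in $(0,\infty)$ by the setup in Section~\ref{sec:FKG} and $\lambda$ is strictly positive and finite by \eqref{eq:lambda}, which ensures that the jump times form a strictly increasing sequence with $T_n < T_\infty$ for every finite $n$; and (ii)~the piecewise-constant right-continuous representation \eqref{eq:cpathdef}--\eqref{eq:cpathdef3}, which says that $X_t = Y_m$ for every $t \in [T_m, T_{m+1})$ and that $[0,T_\infty) = \bigcup_{m \in \n}[T_m,T_{m+1})$. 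Granting these, on $\{\sigma_A < \infty\}$ the inequality $\tau_A \leq T_{\sigma_A}$ follows because $X_{T_{\sigma_A}} = Y_{\sigma_A} \in A$ and $T_{\sigma_A} \in [0,T_\infty)$. For the reverse inequality, I would show that no $t \in [0,T_{\sigma_A})$ can satisfy $X_t \in A$: any such $t$ lies in $[T_m, T_{m+1})$ for some $m < \sigma_A$, so $X_t = Y_m$, and by the minimality of $\sigma_A$ we have $Y_m \notin A$.

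The case $\sigma_A = \infty$ is handled by the same decomposition: if $\tau_A$ were finite, then $\tau_A < T_\infty$ and the defining infimum would furnish (via a sequence decreasing to $\tau_A$, or directly since the chain sits in $Y_m$ on the whole interval $[T_m,T_{m+1})$) a $t \in [0,T_\infty)$ with $X_t \in A$; decomposing $t \in [T_m, T_{m+1})$ would force $Y_m \in A$, contradicting $\sigma_A = \infty$. I anticipate no serious obstacle. The only step that deserves a moment's care is the pointwise (not merely almost sure) strict positivity of the waiting times, so that the identity $\tau_A = T_{\sigma_A}$ holds as an equality of random variables without an exceptional null set and $\tau_A$ genuinely qualifies as a jump-time-valued stopping time in the sense of Definition~\ref{def:stopct2}.
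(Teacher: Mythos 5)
Your proposal is correct and essentially follows the paper's route: the paper decomposes $\{\tau_A<\infty\}=\bigcup_n\{T_n\leq\tau_A<T_{n+1}\}$ and uses the piecewise-constant path structure to argue $\tau_A=T_n$ and $\{\tau_A=T_n\}=\{\sigma_A=n\}$ on each piece, which is the same identity $\tau_A=T_{\sigma_A}$ you prove via the two inequalities. The key ingredients (right-continuous piecewise-constant paths, strictly increasing jump times, reduction to Proposition~\ref{prop:hitisstop} plus $\cal{G}_n$ containing the filtration of $Y$) are identical.
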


\begin{proof}By the definition of $\tau_A$, 
\begin{equation}\label{eq:djwe78afn87ewna9nfuawefea}\{\tau_A<\infty\}=\{\tau_A<T_\infty\}=\bigcup_{n=0}^\infty\{T_n\leq\tau_A<T_{n+1}\}.\end{equation}
Similarly, $\omega$ belongs to $\{T_n\leq\tau_A<T_{n+1}\}$ if only if $T_n(\omega)\leq\tau_{A}(\omega)$ and there exists an $0<\varepsilon<T_{n+1}(\omega)-\tau_A(\omega)$ such that $X_{\tau_A(\omega)+\varepsilon}(\omega)$ belongs to $A$. In this case, the definition of the chain's paths~\eqref{eq:cpathdef}--\eqref{eq:cpathdef3} implies that
$$X_{\tau_A(\omega)+\varepsilon'}=X_{T_n(\omega)}(\omega)=Y_n(\omega)\in A\quad\forall \varepsilon'\in[0,\varepsilon].$$
For this reason, $\tau_A$'s definition further implies that $\tau_A=T_n$ on $\{T_n\leq\tau_A<T_{n+1}\}$. It follows from~\eqref{eq:djwe78afn87ewna9nfuawefea} that $X_{\tau_A}$ belongs to $A$ whenever $\tau_A$ is finite and that$\tau_A$ takes values in the set of jump times (i.e., that \eqref{eq:exitjump} holds if we replace $\eta$ therein with $\tau_A$). Because
\begin{align*}\{\tau_A=T_n\}&=\{X_{T_0}\not\in A,\dots,X_{T_{n-1}}\not\in A, X_{T_n}\in A\}=\{Y_0\not\in A,\dots,Y_{n-1}\not\in A,Y_n\in A\}\\
&=\{\sigma_A=n\}\quad\forall n\in\n,\\
\{\tau_A=\infty\}&=\{\tau_A<\infty\}^c=\left(\bigcup_{n=0}^\infty\{\tau_A=T_n\}\right)^c=\left(\bigcup_{n=0}^\infty\{\sigma_A=n\}\right)^c=\{\sigma_A<\infty\}^c\\
&=\{\sigma_A=\infty\},
\end{align*}
$\varsigma$ in~\eqref{eq:exitjump2} (with $\tau_A$ replacing $\eta$) coincides with $\sigma_A$ and the result follows from Proposition~\ref{prop:hitisstop} and the fact that $(\cal{G}_n)_{n\in\n}$ contains the filtration generated by $Y$.
\end{proof}

\subsubsection*{Dynkin's formula}We are now ready to tackle Dynkin's Formula\index{Dynkin's formula}:
\begin{theorem}[Dynkin's formula]\label{Dynkin} Let $t\in[0,\infty)$, $\eta$ be as in \eqref{eq:stopdc}, $\eta_n:=\eta\wedge T_n$ for each $n\in\n$, $f$ be a $\gamma$-integrable function on $\s$ such that $Qf(x)$ is absolutely convergent for all $x$ in $\s$, and $g$ be a continuously differentiable function on $[0,\infty)$. If there exists a finite set $F\subseteq\s$ such that $X_s$ belongs to $F$ for all $s\in[0,\eta\wedge T_\infty)$, then
\begin{align}\label{eq:genintb}\Ebl{g(t\wedge\eta_n)f(X_{t\wedge\eta_n})}=g(0)\gamma(f)
+\Ebl{\int_0^{t\wedge\eta_n} (g(s)Qf(X_s)+ \dot{g}(s)f(X_s))ds}\quad\forall n\in\n.\end{align}
\end{theorem}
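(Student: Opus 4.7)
Since $X_s$ lies in the finite set $F$ for all $s\in[0,\eta\wedge T_\infty)$, the quantities $f(X_s)$ and $Qf(X_s)$ are bounded on $[0,\eta_n)$; combined with $\eta_n\le T_n<T_\infty$ and the $\gamma$-integrability of $f$, this makes every expectation and integral in \eqref{eq:genintb} absolutely convergent. I will prove the identity by induction on $n$. The base case $n=0$ is immediate from $T_0=0$, which gives $\eta_0=0$ and hence $t\wedge\eta_0=0$, so both sides of \eqref{eq:genintb} reduce to $g(0)\gamma(f)$.

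For the inductive step, set
\begin{equation*}
B_n:=g(t\wedge\eta_n)f(X_{t\wedge\eta_n})-g(t\wedge\eta_{n-1})f(X_{t\wedge\eta_{n-1}}).
\end{equation*}
Applying the formula at level $n-1$ (which is valid because $\eta_{n-1}=T_{\varsigma\wedge(n-1)}$ is itself jump-time-valued, and the confinement of $X$ to $F$ on $[0,\eta_{n-1})$ still holds) and subtracting reduces the inductive step to
\begin{equation*}
\mathbb{E}_\gamma[B_n]=\mathbb{E}_\gamma\!\left[\int_{t\wedge\eta_{n-1}}^{t\wedge\eta_n}\!\bigl(g(s)Qf(X_s)+\dot g(s)f(X_s)\bigr)ds\right].\qquad(\star)
\end{equation*}
Both sides of $(\star)$ vanish off the event $E_n:=\{\varsigma\ge n\}\cap\{T_{n-1}<t\}$, which lies in $\cal{G}_{n-1}$ since $\{\varsigma\ge n\}=\{\varsigma\le n-1\}^{c}\in\cal{G}_{n-1}$; on $E_n$, $\eta_{n-1}=T_{n-1}$, $y:=Y_{n-1}\in F$, $X_s\equiv y$ on $[T_{n-1},T_n)$, and $t\wedge\eta_n=T_{n-1}+(S_n\wedge u)$ with $u:=(t\wedge\eta)-T_{n-1}>0$ being $\cal{G}_{n-1}$-measurable.

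I verify $(\star)$ by computing each side conditionally on $\cal{G}_{n-1}$ on $E_n$. Theorem~\ref{thrm:condind} gives the $\cal{G}_{n-1}$-conditional law of $(S_n,Y_n)$ as $\lambda(y)e^{-\lambda(y)s}ds\otimes p(y,\cdot)$. Splitting $B_n$ according to whether $S_n\le u$ (so $t\wedge\eta_n=T_n$ and $X_{t\wedge\eta_n}=Y_n$) or $S_n>u$ (so $t\wedge\eta_n=T_{n-1}+u$ and $X_{t\wedge\eta_n}=y$), then using $Qf(y)=\lambda(y)(Pf(y)-f(y))$ together with the elementary identity $\int_0^u\lambda(y)e^{-\lambda(y)s}h(s)ds+e^{-\lambda(y)u}h(u)=h(0)+\int_0^u e^{-\lambda(y)s}\dot h(s)ds$ applied with $h(s)=g(T_{n-1}+s)$, yields after some algebra
\begin{equation*}
\mathbb{E}_\gamma[B_n\mid\cal{G}_{n-1}]\mathbf{1}_{E_n}=\mathbf{1}_{E_n}\!\left[Qf(y)\!\int_0^u\! e^{-\lambda(y)r}g(T_{n-1}+r)dr+f(y)\!\int_0^u\! e^{-\lambda(y)r}\dot g(T_{n-1}+r)dr\right].
\end{equation*}
A direct computation of the right-hand side of $(\star)$ conditional on $\cal{G}_{n-1}$ (using $X_s\equiv y$ throughout the random integration range $[T_{n-1},T_{n-1}+S_n\wedge u]$ and Fubini for the exponential density of $S_n$) produces the same expression. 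Taking unconditional expectations then gives $(\star)$ and closes the induction.

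The main obstacle I anticipate is the algebraic bookkeeping in the conditional computation: tracking the two cases $\{S_n\le u\}$ and $\{S_n>u\}$ (with their different expressions for $t\wedge\eta_n$ and $X_{t\wedge\eta_n}$) and verifying via the integration-by-parts identity that the jump contribution and the boundary term combine to match the target arising on the right of $(\star)$. Measurability, integrability, and the applicability of Fubini are routine once one exploits the confinement of $X$ to $F$ together with the absolute convergence of $Qf(y)$ for each $y\in F$ and the continuous differentiability of $g$.
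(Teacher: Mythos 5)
Your proof is correct, and it takes a genuinely different route from the paper's. You unwind the argument into a direct telescoping induction on $n$: the base case $\eta_0=0$ is trivial, and the inductive step reduces to the one-step increment identity $(\star)$, which you verify by conditioning on $\cal{G}_{n-1}$ and computing both sides on the event $E_n\in\cal{G}_{n-1}$ using Theorem~\ref{thrm:condind}, the identity $Qf=\lambda(Pf-f)$, and the integration-by-parts lemma for $h(s)=g(T_{n-1}+s)$. The paper instead decomposes the full integral $\int_0^{t\wedge\eta_n}$ as a sum over jump intervals up to $\varsigma_n-1$, performs the same conditional computations at each level $m$, and then recognizes the resulting identity as the statement $\Ebl{M_{\varsigma_n}}=0$ for an explicit $(\cal{G}_n)_{n\in\n}$-adapted $\Pb_\gamma$-martingale $M$, which follows from Doob's optional stopping. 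The core conditional computations are identical; what differs is the packaging. Your induction is more elementary and self-contained, avoiding the detour through the martingale characterization; the paper's framing highlights the structural link to the discrete-time martingale machinery (Theorems~\ref{thrm:martin} and \ref{thrm:doobsopt}) at the cost of an extra layer of abstraction.

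One slip worth correcting: you set $u:=(t\wedge\eta)-T_{n-1}$ and claim it is $\cal{G}_{n-1}$-measurable, but $\eta$ depends on the future of the chain, so this $u$ is not $\cal{G}_{n-1}$-measurable. The quantity you actually want is $u:=t-T_{n-1}$, which is $\cal{G}_{n-1}$-measurable. This replacement is harmless: on $E_n=\{\varsigma\ge n\}\cap\{T_{n-1}<t\}$ one has $\eta\ge T_n=T_{n-1}+S_n$, so $\eta-T_{n-1}\ge S_n$ and hence $S_n\wedge\bigl((t\wedge\eta)-T_{n-1}\bigr)=S_n\wedge(t-T_{n-1})$; the expressions $t\wedge\eta_n=T_{n-1}+(S_n\wedge u)$ and all subsequent formulas are unaffected, but the conditioning argument and Fubini step genuinely require the $\cal{G}_{n-1}$-measurable version. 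Also, your parenthetical remark that "$\eta_{n-1}$ is itself jump-time-valued" is unnecessary for the inductive step: the hypothesis you invoke is simply \eqref{eq:genintb} with $n$ replaced by $n-1$ and the same $\eta$, not the theorem applied to a new stopping time.
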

\begin{proof}Because $F$ is finite and $g$ is continuously differentiable,
\begin{align}&\mmag{g(s)}\leq\sup_{r\in[0,t]}g(r)<\infty,\quad \mmag{\dot{g}(s)}\leq\sup_{r\in[0,t]}\dot{g}(r)<\infty,\quad\forall s\in[0,t\wedge \eta],\nonumber\\
& \mmag{f(X_s)}\leq\max_{x\in F}\mmag{f(x)}<\infty,\quad \mmag{Qf(X_s)}\leq\max_{x\in F}\sum_{y\in \s}\mmag{q(x,y)}\mmag{f(y)}<\infty,\quad\forall s\in[0,t\wedge \eta),\label{eq:ju8fawfnwa8ybfynaufjwa}\end{align}
and it follows that all of the integrals and expectations in \eqref{eq:genintb} are well-defined and finite. Suppose that we have argued for the case of a bounded $f$ and let $(\s_r)_{r\in\zp}$ be a sequence of finite truncation approaching $\s$ (i.e., $\cup_{r=1}^\infty\s_r=\s$). Then, for any unbounded $f$, we recover \eqref{eq:genintb} by replacing $f$ therein with $f1_{\s_r}$, taking the limit $k\to\infty$, and applying dominated convergence (note that the inequalities in~\eqref{eq:ju8fawfnwa8ybfynaufjwa} if we replace $f$ by $f1_{\s_r}$ in their left-hand sides). Thus, without loss of generality, we assume that $f$ is bounded.

Let $\varsigma$ be the $(\cal{G}_n)_{n\in\n}$-stopping time in \eqref{eq:stopdc} associated with $\eta$, where $(\cal{G}_n)_{n\in\n}$ denotes the filtration generated by the jump chain and the jump times (Definition~\ref{def:filt2}), and let $\varsigma_n:=\varsigma\wedge n$ for each $n$ in $\n$. It is not difficult to check that $\eta_n=T_{\varsigma_n}$. For this reason, it follows from the definition of the chain's paths in \eqref{eq:cpathdef}--\eqref{eq:cpathdef3} that
\begin{align}\int_0^{t\wedge\eta_n} (g(s)Qf(X_s)&+ \dot{g}(s)f(X_s))ds=\sum_{m=0}^{\varsigma_n-1}\left(\int_{t\wedge T_m}^{t\wedge T_{m+1}}g(s)Qf(X_s)ds+ \int_{t\wedge T_m}^{t\wedge T_{m+1}}\dot{g}(s)f(X_s)ds\right)\nonumber\\
&=\sum_{m=0}^{\varsigma_n-1}\left(Qf(Y_m)\int_{t\wedge T_m}^{t\wedge T_{m+1}}g(s)ds+ (g(t\wedge T_{m+1})-g(t\wedge T_m))f(Y_m)\right).\label{eq:jnaw98f7hfnywa}
\end{align}
Notice that
$$\int_{t\wedge T_m}^{t\wedge T_{m+1}}g(s)ds=1_{\{T_m\leq t<T_{m+1}\}}\int_{0}^{t-T_m}g(T_m+s)ds+1_{\{T_{m+1}\leq t\}}\int_{0}^{T_{m+1}-T_m}g(T_m+s)d.s$$
Because the $(m+1)$th waiting time $S_{m+1}$ conditioned on $\mathcal{G}_m$ is exponentially distributed with mean $1/\lambda(Y_m)$ (Theorem~\ref{thrm:condind}), the take-out-what-is-known property of conditional expectation (Theorem~\ref{thrm:condexpprops}$(v)$) implies that
\begin{align*}&\Ebl{1_{\{T_m\leq t<T_{m+1}\}}\int_{0}^{t-T_m}g(T_m+s)ds\Big|\mathcal{G}_m}\\
&=1_{\{T_m\leq t\}}\left(\int_{0}^{t-T_m}g(T_m+s)ds\right)\Pbl{\{t-T_m<S_{m+1}\}|\mathcal{G}_m}\\
&=1_{\{T_m\leq t\}}\left(\int_{0}^{t-T_m}g(T_m+s)ds\right) e^{-\lambda(Y_m)(t-T_m)},\quad\Pb_\gamma\text{-almost surely}.
\end{align*}
Similarly,
\begin{align*}&\Ebl{1_{\{T_{m+1}\leq t\}}\int_{0}^{T_{m+1}-T_m}g(T_m+s)ds\Big|\mathcal{G}_m}\\
&=1_{\{T_m\leq t\}}\Ebl{1_{\{S_{m+1}\leq t-T_m\}}\int_{0}^{S_{m+1}}g(T_m+s)ds\Big|\mathcal{G}_m}\\
&=1_{\{T_m\leq t\}}\int_{0}^{t-T_m} \lambda(Y_m)e^{-\lambda(Y_m)r}\left(\int_{0}^{r}g(T_m+s)ds\right)dr\\
&=1_{\{T_m\leq t\}}\int_0^{t-T_m} g(T_m+s)\left(\int_{ s}^{t-T_m} \lambda(Y_m)e^{-\lambda(Y_m)r}dr\right)ds\\
&=1_{\{T_m\leq t\}}\int_0^{t-T_m} g(T_m+s)e^{-\lambda(Y_m)s}ds-1_{\{T_m\leq t\}}\left(\int_0^{t-T_m}g(T_m+s)ds\right)e^{-\lambda(Y_m)(t-T_m)},
\end{align*}
$\Pb_\gamma$-almost surely. Putting the above three together, we have that
\begin{align*}\Ebl{\int_{t\wedge T_m}^{t\wedge T_{m+1}}g(s)ds\Big|\mathcal{G}_m}&=1_{\{T_m\leq t\}}\int_0^{t-T_m} g(T_m+s)e^{-\lambda(Y_m)s}ds\\
&=\frac{1}{\lambda(Y_m)}1_{\{T_m\leq t\}}\Ebl{1_{\{S_{m+1}\leq t-T_m\}}g(T_{m+1})|\cal{G}_m}\\
&=\frac{1}{\lambda(Y_m)}\Ebl{\tilde{g}(T_{m+1})|\cal{G}_m},\quad\Pb_\gamma\text{-almost surely}\end{align*}
where $\tilde{g}(s):=1_{\{s\leq t\}}g(s)$ for all $s$ in $[0,\infty)$.
%
%
For this reason,
$$\Ebl{Qf(Y_m)\int_{t\wedge T_m}^{t\wedge T_{m+1}}g(s)ds\Big|\mathcal{G}_m}=(Pf(Y_m)-f(Y_m))\Ebl{\tilde{g}( T_{m+1})|\cal{G}_m}\quad\Pb_\gamma\text{-almost surely},$$
where $P$ denotes the one-step matrix of the jump chain \eqref{eq:jumpmatrix}. 
Taking expectations of \eqref{eq:jnaw98f7hfnywa} and applying the take-out-what-is-known and tower properties of conditional expectation (Theorem~\ref{thrm:condexpprops}$(iv,v)$), we find that
\begin{align*}&\Ebl{\int_0^{t\wedge\eta_n} (g(s)Qf(X_s)+ \dot{g}(s)f(X_s))ds}\\
&=\Ebl{\sum_{m=0}^{\varsigma_n-1}\left(\tilde{g}(T_{m+1})Pf(Y_m)-\tilde{g}(T_m)f(Y_m)+1_{\{T_m\leq t<T_{m+1}\}}g(t)f(Y_m)\right)}\\
&=\Ebl{\sum_{m=0}^{\varsigma_n-1}\left(\tilde{g}(T_{m+1})Pf(Y_m)-\tilde{g}(T_m)f(Y_m)\right)}+\Ebl{g(t)f(X_t)\sum_{m=0}^{\varsigma_n-1}1_{\{T_m\leq t<T_{m+1}\}}}\\
&=\Ebl{\sum_{m=0}^{\varsigma_n-1}\left(\tilde{g}(T_{m+1})Pf(Y_m)-\tilde{g}(T_m)f(Y_m)\right)}+\Ebl{1_{\{t<\eta_n\}}g(t)f(X_t)}.\end{align*}

It follows from the above  that \eqref{eq:genintb} is satisfied if and only if $\Ebl{M_{\varsigma_n}}=0$, where
$$M_n:=\tilde{g}(T_n)f(Y_n)-g(0)f(Y_0)-\sum_{m=0}^{n-1}(\tilde{g}(T_{m+1})Pf(Y_m)-\tilde{g}(T_m)f(Y_m))\quad\forall n\in\n.$$
If we can argue that $M:=(M_n)_{n\in\n}$ is a $(\cal{G}_n)_{n\in\n}$-adapted $\Pb_\gamma$-martingale, then an application of Doob's optional stopping theorem of the sort in the proof of Theorem~\ref{thrm:dynkin} yields that $\Ebl{M_{\varsigma_n}}=0$, as desired. Because $\tilde{g}$ and $f$ are bounded functions, it easy to show that $M_n$ is bounded and, hence, $\Pb_\gamma$-integrable. Thus, to show that $M$ a martingale we need only to argue that $\Ebl{M_{n+1}|\cal{G}_{n}}$ equals $M_{n-1}$, $\Pb_\gamma$-almost surely, for each $n$ in $\n$. The conditional independence of $T_{n+1}$ and $Y_{n+1}$ (Theorem~\ref{thrm:condind}) implies that
\begin{align*}\Ebl{\tilde{g}(T_{n+1})f(Y_{n+1})|\mathcal{G}_{n}}&=\Ebl{\tilde{g}(T_{n+1})|\mathcal{G}_{n}}\Ebl{f(Y_{n+1})|\mathcal{G}_{n}}=\Ebl{\tilde{g}(T_{n+1})|\mathcal{G}_{n}}Pf(Y_{n})\\
&=\Ebl{\tilde{g}(T_{n+1})Pf(Y_{n})|\mathcal{G}_{n}}\quad\Pb_\gamma\text{-almost surely},\quad\forall n\in\n.\end{align*}
It follows from the above that
$$\Ebl{M_{n+1}-M_{n}|\cal{G}_{n}}=\Ebl{\tilde{g}(T_{n+1})f(Y_{n+1})-\tilde{g}(T_{n+1})Pf(Y_{n})|\cal{G}_{n}}=0,$$
implying that $M$ is a martingale and completing the proof.
\end{proof}

\ifdraft
\subsection{\textbf{The semigroup approach and the generator**}}
Domain as in Spieksma 2015.

\fi

\subsection{Stopping distributions and occupation measures}\label{sec:morehitct} Throughout this section, we use $\eta$ to denote a jump-time-value $(\cal{F}_t)_{t\geq0}$ stopping time (Definition~\ref{def:stopct2}). For the same reasons as in the discrete-time case (see Section~\ref{sec:stop}), we say that the chain \emph{stops} at time $\eta$. With the stopping time, we associate two measures: the  \emph{stopping distribution}\index{stopping distribution}\glsadd{mu} $\mu$ and \emph{occupation measure}\index{occupation measure}\glsadd{nu} $\nu$ defined by
\begin{align}
\mu(A,x)&:=\Pbl{\{\eta\in A,X_\eta= x\}}&\forall A\in\cal{B}([0,\infty)),\quad x\in\s,\label{eq:edisdef}\\
\nu(A,x)&:=\Ebl{\int_{A\cap[0,\eta\wedge T_\infty)} 1_{x}(X_t)dt}&\forall A\in\cal{B}([0,\infty)),\quad x\in\s.\label{eq:eoccdef} 
\end{align}
In other words, $\mu(A,x)$ is the probability that the chain stops at some time point in $A$ and that it lies in state $x$ when it stops. Similarly, $\nu(A,x)$ is the expected amount of time in $A$ that the chain spends in state $x$ before stopping or exploding. If you care for these things, see the end of the section for the full details of $\mu$ and $\nu$'s definition. Otherwise, only notice that, in~\eqref{eq:edisdef}, we are exploiting that $\eta$ is finite if and only if it is less than $T_\infty$ (c.f.~\eqref{eq:etatinfinf}).

The mass of the stopping distribution is simply the probability that the stopping time is finite
\begin{equation}\label{eq:mumass}\mu([0,\infty),\s)
=\Pbl{\{\eta<\infty,X_\eta\in\s\}}=\Pbl{\{\eta<\infty\}},\end{equation}
which follows by applying the monotone convergence theorem and Tonelli's theorem to \eqref{eq:edisdef}. Similarly, it follows from \eqref{eq:eoccdef} that the mass of the occupation measure is
\begin{equation}\label{eq:numass}\nu([0,\infty),\s)=\Ebl{\int_0^{\eta\wedge T_\infty} \left(\sum_{x\in\s}1_{x}(X_t)\right) dt}=\Ebl{\eta\wedge T_\infty}.\end{equation}
If no explosion may occur before the chain stops ($\Pbl{\{\eta\leq  T_\infty\}}=1$), then $\nu$'s mass is the mean stopping time.

\subsubsection*{The marginals}We now turn our attention to the marginals of $\mu$ and $\nu$. The \emph{time marginal}\index{stopping distribution (time marginal)} $\mu_T$ of the stopping distribution defined by  
\begin{equation}\label{eq:mutc}\mu_T(A):=\mu(A,\s)=\Pbl{\{\eta\in A\}},\quad\forall A\in\cal{B}([0,\infty))\end{equation}
is the distribution of the stopping time itself. Technically, the above is the distribution of $\eta$ restricted to $[0,\infty)$ because $\eta$ may take the value $\infty$. However, we recover the full distribution of $\eta$ from the above with $\Pbl{\{\eta=\infty\}}=1-\Pbl{\{\eta<\infty\}}=1-\mu_T([0,\infty))$.

The \emph{space marginals} $\mu_S$\glsadd{mus} and $\nu_S$\glsadd{nus} of the stopping distribution\index{stopping distribution (space marginal)}\index{occupation measure (space marginal, stopping time)} and occupation measure
\begin{align}
\label{eq:musc}\mu_S(x)&:=\mu([0,\infty),x)=\Pbl{\{\eta<\infty,X_\eta=x\}},\\
\label{eq:nusc}\nu_S(x)&:=\nu([0,\infty),x)=\Ebl{\int_0^{\eta\wedge T_\infty} 1_{x}(X_t)dt},
\end{align}
tell us where the chain stops and where it spends time before stopping, respectively. Explicitly, $\mu_S(x)$ is the probability that the chain stops in $x$, while $\nu_S(x)$ denotes the expected amount of time spent in $x$ before stopping. These space marginals are tied together by a set of linear equations:
\begin{theorem}\label{eqnsc}Suppose that $\eta$ is a jump-time-valued $(\cal{F}_t)_{t\geq0}$-stopping time (Definition~\ref{def:stopct2}) and that $\mu_S$ and $\nu_S$ are the space marginals of its stopping distribution and occupation in \eqref{eq:musc}--\eqref{eq:nusc}. If the stopping time is almost surely finite,
\begin{equation}\label{eq:exbst}\Pbl{\{\eta<\infty\}}=1,\end{equation}
then  the pair $(\mu_S,\nu_S)$ satisfies 
\begin{equation}\label{eq:eoe}\mu_S(x)+\nu_S(x)q(x)=\gamma(x)+\sum_{y\neq x}\nu_S(y)q(y,x),\qquad\forall x\in\cal{S}.\end{equation}
%
%
\end{theorem}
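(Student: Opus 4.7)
The strategy is to pick Dynkin's formula (Theorem~\ref{Dynkin}) off the shelf with the test function $f:=1_x$ and the constant time weight $g:=1$, and then carefully pass to the limit so that the truncation built into the hypothesis ``there exists a finite set $F$ such that $X_s\in F$ for all $s\in[0,\eta\wedge T_\infty)$'' disappears. Since $Q1_x(y)=q(y,x)$, writing $q(y,x)=q(x,x)1_x(y)+\sum_{z\neq x}q(z,x)1_z(y)$ and remembering $q(x,x)=-q(x)$, a direct application of Dynkin would morally give
\[\Pbl{\{X_{\eta}=x\}}=\gamma(x)-q(x)\Ebl{\int_0^{\eta}1_x(X_s)ds}+\sum_{y\neq x}q(y,x)\Ebl{\int_0^{\eta}1_y(X_s)ds},\]
which, recalling $\eta<T_\infty$ whenever $\eta<\infty$ (since $\eta=T_\varsigma$ is a jump time, and $T_n<T_\infty$) and the hypothesis $\Pbl{\{\eta<\infty\}}=1$, is exactly the stated identity. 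The entire task is to justify this limit rigorously.

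To set up the truncation, let $(\s_r)_{r\in\zp}$ be an increasing sequence of finite subsets of $\s$ exhausting $\s$, let $\tau_r$ be as in~\eqref{eq:taurexit}, and set $\eta_r:=\eta\wedge\tau_r$. By Proposition~\ref{prop:hitdc}, $\tau_r=T_{\sigma_r}$ for a $(\cal{G}_n)_{n\in\n}$-stopping time $\sigma_r$, and $\eta=T_\varsigma$ by Definition~\ref{def:stopct2}; hence $\eta_r=T_{\varsigma\wedge\sigma_r}$ is again jump-time-valued, and by construction $X_s\in\s_r$ for all $s\in[0,\eta_r\wedge T_\infty)$, so Dynkin's formula applies with this $\eta_r$ and with $F:=\s_r$. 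It yields, for every $t\geq0$ and $n\in\n$,
\[\Ebl{1_x(X_{t\wedge\eta_r\wedge T_n})}=\gamma(x)+\sum_{y\neq x}q(y,x)\Ebl{\int_0^{t\wedge\eta_r\wedge T_n}1_y(X_s)ds}-q(x)\Ebl{\int_0^{t\wedge\eta_r\wedge T_n}1_x(X_s)ds}.\]
Note the sum over $y$ is effectively finite because the integrand vanishes unless $y\in\s_r$.

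I would then pass to the limit in three successive stages. First $n\to\infty$: on $\{\eta<\infty\}$ (a set of full measure) we have $\eta_r\leq\eta<T_\infty$, hence $t\wedge\eta_r\wedge T_n=t\wedge\eta_r$ eventually, and dominated/monotone convergence turns the identity into the same one with $T_n$ removed. Next $t\to\infty$: again on $\{\eta<\infty\}$, $t\wedge\eta_r=\eta_r$ eventually, so by dominated convergence the left-hand side tends to $\Pbl{\{X_{\eta_r}=x\}}$ and by monotone convergence the occupation integrals tend to $\Ebl{\int_0^{\eta_r}1_y(X_s)ds}$. Finally $r\to\infty$: since on $\{\eta<\infty\}$ the jump chain visits only the finitely many states $Y_0,\dots,Y_\varsigma$ before $\eta$, for all $r$ large enough these all lie in $\s_r$, forcing $\tau_r>\eta$ and hence $\eta_r=\eta$. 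Right-continuity of paths then gives $X_{\eta_r}=X_\eta$ eventually, and monotone convergence (applied to the non-negative integrand $1_y(X_s)$ and, via Tonelli, to the counting-measure sum over $y\neq x$ weighted by the non-negative $q(y,x)$) yields the desired limiting identity.

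The main obstacle is really a bookkeeping one rather than a conceptual one: one must be careful that the two terms on the right-hand side have opposite signs (the $-q(x)\nu_S(x)$ term can be $-\infty$ a priori), so one should either split positive and negative parts before taking expectations or else show that all relevant integrals stay finite throughout the limiting procedure. The finiteness of $\s_r$ and the boundedness of $f=1_x$ make everything finite at each stage $r$, and the fact that the monotone limits land on $\nu_S(y)$ and $\mu_S(x)$ (which may themselves be infinite) only occurs in the final step, where monotone convergence handles both sides of the equation in $[0,\infty]$. No issue of interchanging conditional expectations arises because all the analytic work has been shifted into Dynkin's formula itself.
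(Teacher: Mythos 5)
Your proposal is correct and follows essentially the same path as the paper's proof: truncate both the state space (via $\eta\wedge\tau_r$, checking it is still jump-time-valued so that Dynkin's formula applies with $F:=\s_r$) and the jump count (via $T_n$), expand $Q1_x(y)=q(y,x)$, and send $t$, $n$, and $r$ to infinity via bounded/monotone convergence, using the hypothesis that $\{\eta<\infty\}$ has full probability to identify the limits with $\mu_S(x)$ and $\nu_S(y)$. The paper's order is $t\to\infty$, then $r\to\infty$, then $n\to\infty$ (keeping the $T_n$-truncation until last so the evaluation point is always $Y_{\varsigma\wedge\sigma_r\wedge n}$, a clean jump-chain quantity), while you instead reverse $n$ and $r$; this is immaterial once the identity has been rearranged into the all-nonnegative form $\mu_S(x)+q(x)\nu_S(x)=\gamma(x)+\sum_{y\neq x}q(y,x)\nu_S(y)$ --- though note your parenthetical claim that ``everything stays finite at each stage $r$'' is slightly too strong (after $t\to\infty$ the expectations $\Ebl{\int_0^{\eta_r}1_y(X_s)\,ds}$ may already be infinite), but monotone convergence applied term-by-term to the nonnegative rearrangement handles this without difficulty.
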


\begin{proof} Pick any state $x$ and increasing sequence $(\s_r)_{r\in\zp}$ of finite truncations with limit $\s$ (i.e., such that $\cup_{r=1}^\infty\s_r=\s$) and let $(\tau_r)_{r\in\zp}$ be the corresponding sequence of exit times in~\eqref{eq:taurexit}. It follows from Proposition~\ref{prop:hitdc} that
$$\eta(\omega)\wedge\tau_r(\omega)=\left\{\begin{array}{ll} T_{\varsigma(\omega)\wedge\sigma_r(\omega)}(\omega)&\text{if}\quad\varsigma(\omega)\wedge\sigma_r(\omega)<\infty\\
\infty &\text{if}\quad\varsigma(\omega)\wedge\sigma_r(\omega)=\infty\end{array}\right.\quad\forall \omega\in\Omega,$$
where $\varsigma$ denotes the discrete-time stopping time in~\eqref{eq:stopdc} associated with $\eta$ and $\sigma_r$ denotes the time of exit from $\s_r$ for the jump chain:
$$\sigma_r(\omega):=\inf\{n\geq0:Y_n(\omega)\not\in\s_r\}\quad\forall \omega\in\Omega,\quad r>0.$$
In other words, $\eta\wedge\tau_r$ is also jump-time-valued $(\cal{F}_t)_{t\geq0}$-stopping time. Thus, replacing $f$ by $1_x$ and $\eta$ by $\eta\wedge\tau_r$ in  in Dynkin's formula \eqref{eq:genintb}, re-arranging, taking the limit $t\to\infty$, and applying bounded convergence, monotone convergence, and Tonelli's theorem yields
\begin{align}\label{eq:genifj87aw3j87afhn38a7ntb}\Pbl{\{X_{\eta\wedge\tau_r\wedge T_n}=x\}}&+\Ebl{\int_0^{\eta\wedge\tau_r\wedge T_n} 1_x(X_s)ds}q(x)\\
&=\gamma(x)
+\sum_{y\neq x}\Ebl{\int_0^{\eta\wedge\tau_r\wedge T_n} 1_y(X_s)ds}q(y,x)\quad\forall n\geq0,\enskip r>0.\nonumber\end{align}

Because Theorem~\ref{tautin} shows that $(\tau_r)_{r\in\zp}$ is an increasing sequence with $\Pb_\gamma$-almost sure limit $T_\infty$, monotone convergence implies that
\begin{equation}\label{eq:genifj87aw3j87afhn38a7ntb2}
\lim_{n\to\infty}\lim_{r\to\infty}\Ebl{\int_0^{\eta\wedge\tau_r\wedge T_n} 1_x(X_s)ds}=\nu_s(x)\quad\forall x\in\s.
\end{equation} 
Similarly, Lemma~\ref{lem:dtexitrinf} shows that $(\sigma_r)_{r\in\zp}$ is an increasing sequence with $\Pb_\gamma$-almost sure limit $\infty$. Because   $\eta\wedge \tau_r\wedge T_n=T_{\varsigma\wedge\sigma_r\wedge n}$ (check!), \eqref{eq:exbst}, bounded convergence, and \eqref{eq:etatinfinf} imply that 
\begin{align*}\lim_{n\to\infty}\lim_{r\to\infty}\Pbl{\{X_{\eta\wedge\tau_r\wedge T_n}=x\}}&=\lim_{n\to\infty}\lim_{r\to\infty}\Pbl{\{Y_{\varsigma\wedge\sigma_r\wedge n}=x\}}\\
&=\lim_{n\to\infty}\lim_{r\to\infty}\Pbl{\{\varsigma<\infty,Y_{\varsigma\wedge\sigma_r\wedge n}=x\}}=\Pbl{\{\varsigma<\infty,Y_{\varsigma}=x\}}\\
&=\Pbl{\{\eta<\infty,X_{T_{\varsigma}}=x\}}=\Pbl{\{\eta<\infty,X_{\eta}=x\}}=\mu_S(x)\end{align*}
for all $x$ in $\s$. Putting \eqref{eq:genifj87aw3j87afhn38a7ntb}--\eqref{eq:genifj87aw3j87afhn38a7ntb2} together with the above completes the proof.
\end{proof}

You may be wondering whether  \eqref{eq:exbst} is actually necessary for \eqref{eq:eoe} to hold. It is easy to find  examples of chains violating~\eqref{eq:exbst} for which~\eqref{eq:eoe} does not hold.
\begin{example}\label{toyfdsa}Consider the chain on $\cal{S}:=\{0,1\}$, with initial law $\gamma=\alpha\delta_0+(1-\alpha)\delta_1$ where $\alpha\in[0,1]$, and rate matrix
$$Q:=\begin{bmatrix}0&0\\0&0\end{bmatrix}.$$
Both states are absorbing, and so the chain remains forever where it starts (Proposition~\ref{prop:absorb}) $T_\infty = \infty$ and $X_t=X_0$ for all $t\in[0,\infty)$, $\Pb_\gamma$ almost surely. For this reason, if $\tau$ denotes the hitting time of $\{1\}$,
$$\Pbl{\{\tau<\infty\}}=\Pbl{\{X_0=1\}}=1-\alpha.$$
Equations \eqref{eq:eoe} read
$$0=\alpha,\qquad \mu_S(1)=1-\alpha.$$
The left-most equation is satisfied if and only if $\alpha=0$, that is if and only if $\Pbl{\{\tau<\infty\}}=1$.
\end{example}
In the case of the example's chain and stopping time, \eqref{eq:exbst} is indeed necessary and sufficient for \eqref{eq:eoe} to hold. However, for other chains and stopping times, the question remains...
\subsubsection*{An open question}Is~\eqref{eq:exbst} necessary for equations~\eqref{eq:eoe} to hold?
\subsubsection*{The details of $\mu$ and $\nu$'s definition} Throughout the above, we didn't actually define $\mu$ and $\nu$, just specified in~\eqref{eq:edisdef}--\eqref{eq:eoccdef} what values these measures should take on sets of the form $A\times\{x\}$ with $A$ in $\cal{B}([0,\infty))$ and $x$ in $\s$. It turns out that this is all that's necessary to fully define $\mu$ and $\nu$. To do see this, we require Carath\'eodory's extension theorem:
\begin{theorem}[Carath\'eodory's extension theorem,~{\citep[Theorem~1.7]{Williams1991}}]\label{thrm:caratheory}Suppose that $S$ is a set and $\Sigma_0$ is an algebra on $S$. That is, $\Sigma_0$ is a collections of subsets of $S$ satisfying $S\in \Sigma_0$, $A^c\in \Sigma_0$ for all $A\in \Sigma_0$, and $A\cup B\in\Sigma_0$ for all $A,B\in \Sigma_0$. If $\rho^0:\Sigma_0\to[0,\infty]$ is a countably additive and $\Sigma$ is the sigma-algebra generated by $\Sigma_0$, then there exists an unsigned measure $\rho$ on $(S,\Sigma)$ such that
$$\rho(A)=\rho^0(A)\quad\forall A\in\Sigma_0.$$
\end{theorem}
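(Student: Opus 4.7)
The plan is to use the classical Carath\'eodory construction: build an outer measure on the full power set $2^S$, identify the sets that behave well with respect to it, show that this class is a sigma-algebra containing $\Sigma_0$, and then restrict.

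First I would define $\rho^*:2^S\to[0,\infty]$ by
\[
\rho^*(A):=\inf\left\{\sum_{n=1}^\infty \rho^0(A_n):A_1,A_2,\dots\in\Sigma_0,\ A\subseteq\bigcup_{n=1}^\infty A_n\right\},
\]
with the convention that the infimum over the empty set of countable covers is $\infty$ (though in our setting $S\in\Sigma_0$, so every $A$ has at least one such cover). The routine checks are that $\rho^*(\emptyset)=0$, $\rho^*$ is monotone, and $\rho^*$ is countably subadditive; the last one follows from a standard ``$\varepsilon/2^n$'' diagonal argument. Next I would verify that $\rho^*$ agrees with $\rho^0$ on $\Sigma_0$: the inequality $\rho^*(A)\leq \rho^0(A)$ for $A\in\Sigma_0$ is immediate by taking $A_1:=A$ and $A_n:=\emptyset$ for $n\geq 2$; the reverse inequality is the first place the countable additivity of $\rho^0$ is used, via the ``chopping up'' trick $B_n:=A\cap(A_n\setminus\bigcup_{k<n}A_k)\in\Sigma_0$, which gives disjoint elements of $\Sigma_0$ with union $A$ so that $\rho^0(A)=\sum_n \rho^0(B_n)\leq\sum_n\rho^0(A_n)$.

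Second I would introduce the Carath\'eodory $\sigma$-algebra of $\rho^*$-measurable sets,
\[
\mathcal{M}:=\{A\subseteq S:\rho^*(E)=\rho^*(E\cap A)+\rho^*(E\cap A^c)\text{ for every }E\subseteq S\},
\]
and prove the two pillars of the construction: (i) $\mathcal{M}$ is a sigma-algebra and the restriction $\rho^*|_{\mathcal{M}}$ is countably additive, and (ii) $\Sigma_0\subseteq\mathcal{M}$. For (i), closure under complements is built into the definition, closure under finite unions follows by combining the splitting identity on $A$ and $B$ with subadditivity, and countable additivity together with closure under countable unions is obtained by the usual argument that disjointifies a sequence and uses both subadditivity (for ``$\leq$'') and additivity on finite unions plus monotonicity (for ``$\geq$''). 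For (ii), given $A\in\Sigma_0$ and any test set $E$, I would cover $E$ by $(A_n)\subseteq\Sigma_0$, use that $A\cap A_n,A^c\cap A_n\in\Sigma_0$, and exploit finite additivity of $\rho^0$ on $\Sigma_0$ (which is a consequence of countable additivity) together with the definition of $\rho^*$ to get
\[
\rho^*(E\cap A)+\rho^*(E\cap A^c)\leq\sum_n\rho^0(A_n\cap A)+\sum_n\rho^0(A_n\cap A^c)=\sum_n\rho^0(A_n),
\]
and then take the infimum over covers.

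Putting these together, $\mathcal{M}$ is a sigma-algebra containing $\Sigma_0$, hence $\Sigma\subseteq\mathcal{M}$, so $\rho:=\rho^*|_{\Sigma}$ is a measure on $(S,\Sigma)$, and by the first step it agrees with $\rho^0$ on $\Sigma_0$. The main obstacle I anticipate is the verification that $\Sigma_0\subseteq\mathcal{M}$, or more precisely the bookkeeping in the countable additivity step for $\mathcal{M}$: one has to be a bit careful in passing from finite to countable unions while handling the ``$\geq$'' direction via the outer measure's monotonicity on nested tails, and the fact that one works with arbitrary test sets $E$ (not just sets in $\Sigma_0$) is what makes the argument non-trivial. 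Every other step is essentially a manipulation of infima of covers.
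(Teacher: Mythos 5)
Your argument is correct and is the classical Carath\'eodory outer-measure construction, which is exactly what the paper's cited reference (Williams, Theorem~1.7 and its proof in Appendix~A1) uses; the paper itself gives no independent proof, just the citation. All the key steps are in place: the outer measure via infimum over $\Sigma_0$-covers, agreement with $\rho^0$ on $\Sigma_0$ (with the disjointification trick supplying the nontrivial ``$\geq$'' direction), the Carath\'eodory criterion and the verification that $\mathcal{M}$ is a $\sigma$-algebra on which $\rho^*$ is countably additive, and $\Sigma_0\subseteq\mathcal{M}$ via finite additivity of $\rho^0$ together with taking the infimum over covers.
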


Now, consider the algebra 
$$\Sigma_0:=\text{the collection of all finite unions of sets of the form }A\times B\text{ with }A\in\cal{B}([0,\infty)),\enskip B\subseteq\s$$
generating  the product sigma-algebra $\cal{X}:=\cal{B}([0,\infty))\times2^\s$. We can express any set in $\Sigma_0$ as
$$\bigcup_{i\in\cal{I}}A_i\times B_i$$
for some (finite) indexing set $\cal{I}$, $A_i$s in $\cal{B}([0,\infty))$, and $B_i$s in $2^\s$. Next, consider the countably additive functions on $\Sigma_0$ 
$$\mu^0\left(\bigcup_{i\in\cal{I}}A_i\times B_i\right):=\sum_{x\in\s}\mu\left(\bigcup_{i\in\cal{I}_x}A_i,x\right),\quad\nu^0\left(\bigcup_{i\in\cal{I}}A_i\times B_i\right):=\sum_{x\in\s}\nu\left(\bigcup_{i\in\cal{I}_x}A_i,x\right)$$
where $\cal{I}_x:=\{i\in\cal{I}: x\in B_i\}$ and $\mu(A,x)$ and $\nu(A,x)$ are as in~\eqref{eq:edisdef}--\eqref{eq:eoccdef}.
\begin{exercise}Check that $\Sigma_0$ is indeed an algebra that generates $\cal{X}$  and that $\mu^0$ and $\nu^0$ are countably additive on $\Sigma_0$.\end{exercise}
For this reason, Theorem~\ref{thrm:caratheory} implies that there exists measures $\mu$ and $\nu$ on $([0,\infty)\times\s,\cal{X})$ satisfying 
\begin{equation}\label{eq:mfu9eadsads0nfeanjfea}\mu(A\times\{x\})=\mu(A,x),\quad\nu(A\times\{x\})=\nu(A,x),\quad\forall A\in\cal{B}([0,\infty)),\enskip x\in\s,\end{equation}
with $\mu(A,x)$ and $\nu(A,x)$  as in~\eqref{eq:edisdef}--\eqref{eq:eoccdef}.
Because 
$$\Sigma_{-1}:=\{A\times\{x\}:A\in\cal{B}([0,\infty),\enskip x\in\s\}$$
is a $\pi$-system and $\mu([0,\infty)\times\s)=\Pbl{\{\eta<\infty\}}\leq1$, Lemma~\ref{lem:dynkinpl} implies that $\mu$ is the only measure on $([0,\infty)\times\s,\cal{X})$ satisfying the leftmost equation in~\eqref{eq:mfu9eadsads0nfeanjfea}. We call it the \emph{stopping distribution}. Furthermore, 
\begin{equation}\label{eq:Lfmeu90anf8ebnafyfnaeiwf}\nu(A)=\sum_{n=0}^\infty\nu_n(A)\quad\forall A\in \cal{X},\end{equation}
where
$$\nu_n(A):=\nu([n,n+1)\times\s\cap A)\quad\forall A\in \cal{X},\enskip n\in \n.$$
Because 
$$\nu_n([0,\infty)\times\s)=\Ebl{(n+1)\wedge\eta\wedge T_\infty-n\wedge\eta\wedge T_\infty}\leq 1\quad\forall n\in\n,$$
applying  Lemma~\ref{lem:dynkinpl} to $\nu_n$ and using~\eqref{eq:Lfmeu90anf8ebnafyfnaeiwf}, we find that that $\nu$ is the only measure on $([0,\infty)\times\s,\cal{X})$ satisfying the rightmost equation  in \eqref{eq:mfu9eadsads0nfeanjfea}. We call it the \emph{occupation measure}.
\subsection{Exit times*}\label{moh1} 
We are often interested in how long the chain takes to exit some given subset, or \emph{domain}, of the state space and what part of the domain's boundary the chain crosses to exit. To study this problem, we single out a subset $\cal{D}$ of the state space $\s$ and refer to it as the \emph{domain}\index{domain}\glsadd{D}. The \emph{exit time} $\tau$ from the domain is the instant in time that the chain leaves the domain for the first time:\glsadd{tau}\index{exit time}
\begin{equation}\label{eq:hitthec2}\tau(\omega):=\inf\{t\in[0,T_\infty(\omega)): X_t(\omega)\not\in\cal{D}\}\quad\forall \omega\in\Omega.\end{equation}
For us, starting outside of the domain counts as `exiting' the domain in which case we set $\tau$ to zero. We say that the chain \emph{exits via $x$} if $\tau$ is finite and $X_{\tau}=x$. Clearly, the exit time is just the hitting time of the domain's complement (compare with \eqref{eq:hitthec} and \eqref{eq:hitthec2}). For this reason, Proposition~\ref{prop:hitdc} implies that $\tau$ is a jump-time-value $(\cal{F}_t)_{t\geq0}$-stopping time and that  equations \eqref{eq:stopdc}--\eqref{eq:etatinfinf} hold if we replace $\eta$ with $\tau$ and $\varsigma$ with $\sigma$, where $\sigma$ is the exit time of the jump chain:
\begin{equation}\label{eq:sigma}\sigma(\omega):=\inf\{n\geq0:Y_n(\omega)\not\in\cal{D}\}.\end{equation}

Let $\mu$ be $\tau$'s \emph{exit distribution}\glsadd{mu}\index{exit distribution} and $\nu$ its \emph{occupation measure}\glsadd{nu}\index{occupation measure (exit time)} defined as in \eqref{eq:musc}--\eqref{eq:musc} with $\tau$ replacing $\eta$.  For each state $x$, the measures $\mu(dt,x)$ and $\nu(dt,x)$ have densities\footnote{Use~\eqref{eq:dens1} and \eqref{eq:mdw89amda8w9} and \eqref{eq:dens2} further down to verify this fact.} $\mu(t,x)$ and $\nu(t,x)$ with respect to the Lebesgue measure on $[0,\infty)$ (I distinguish a measure from its density by writing $dt$ or $t$ in its argument). The density $t \mapsto \mu(t,x)$ is a function such that $\mu(t,x) \, h$ is the probability that the chain first exits the domain via state $x$ during the time interval $[t,t+h]$, for any small $h>0$.  Similarly (and assuming that the chain is non-explosive), $\nu(t,x)$ is the average fraction of the interval $[t,t+h]$ that the chain spends in state $x$ before exiting the domain. Formally, the relationship between the exit distribution and occupation measure and their densities is: 
\begin{align}\label{eq:md1}
\mu(A,x)&=1_{\cal{D}^c}(x) \, \gamma(x) \, 1_A(0)+\int_A\mu(t,x)dt&\forall   A\in\cal{B}([0,\infty)), \enskip x\in\s,\\
\label{eq:md11}\nu(A,x)&=\int_A\nu(t,x)dt&\forall  A\in\cal{B}([0,\infty)), \enskip x\in\s,
\end{align}
where the term $1_{\cal{D}^c}(x) \, \gamma(x) \, 1_A(0)$
accounts for the possibility  that the chain is started outside of the domain. The densities are characterised in terms of the minimal solution to a set of linear differential equations:
\begin{theorem}[Analytical characterisation of $\mu$ and $\nu$]\label{charactt} Suppose that
\begin{equation}
\label{eq:nd7a73a8dhd}\sup_{x\in\cal{D}}q(x,y)<\infty\quad\forall y\in\s\quad\text{ or }\quad \sum_{x\in\cal{D}}q(x)\gamma(x)<\infty.
\end{equation} 
The exit distribution $\mu$ and occupation measure $\nu$ decompose as in \eqref{eq:md1}--\eqref{eq:md11} and their densities $\nu(t,x)$ and $\mu(t,x)$ are non-negative and continuous functions on $[0,\infty)$, for each $x$ in $\s$. Moreover, 
\begin{equation}\label{eq:jointchar1}\mu(t,x)=1_{\cal{D}^c}(x)\dot{\hat{p}}_t(x),\quad \nu(t,x)=1_{\cal{D}}(x)\hat{p}_t(x),\quad\forall x\in\s,\quad  t\in[0,\infty),
\end{equation}
where $\hat{p}_t$ is the minimal non-negative solution (as in Theorem~\ref{thrm:forward}) of
\begin{equation}\label{eq:jointchar2}\dot{\hat{p}}_t(x)=\sum_{y\in\cal{D}}\hat{p}_t(y)q(y,x) \quad t\in[0,\infty),\enskip x\in\s,\qquad \hat{p}_0(x)=\gamma(x)\quad\forall x\in\s.\end{equation}
%
%
%
\end{theorem}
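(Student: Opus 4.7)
The strategy is to mimic the discrete-time trick used for Theorem~\ref{characttd}: modify the chain so that every state outside $\cal{D}$ becomes absorbing, and then read off $\mu$ and $\nu$ from the time-varying law of the modified chain. Concretely, define the rate matrix $\hat{Q}=(\hat{q}(x,y))_{x,y\in\s}$ by $\hat{q}(x,y):=q(x,y)$ if $x\in\cal{D}$ and $\hat{q}(x,y):=0$ otherwise, and construct a chain $\hat{X}$ by running the Kendall--Gillespie algorithm (Algorithm~\ref{gilalg}) with $\hat{Q}$ but with the \emph{same} underlying $X_0,U_1,U_2,\dots,\xi_1,\xi_2,\dots$ as in the definition of $X$. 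Since $\hat{Q}$ and $Q$ have identical rows indexed by $\cal{D}$, an induction on the jump number shows that the sample paths of $\hat{X}$ and $X$ coincide on $[0,\tau)$, that they exit $\cal{D}$ simultaneously (i.e.~$\hat{\tau}=\tau$ and $\hat{X}_{\tau}=X_{\tau}$ on $\{\tau<T_\infty\}$), and that $\hat{X}$ thereafter remains at $X_\tau$ forever (so $\hat{T}_\infty=\infty$ on $\{\tau<T_\infty\}$).

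Next, I invoke Theorem~\ref{thrm:forward} applied to $\hat{X}$. Hypothesis~\eqref{eq:nd7a73a8dhd} is precisely what is needed: since the only non-zero rows of $\hat{Q}$ are those indexed by $\cal{D}$, either bound in~\eqref{eq:nd7a73a8dhd} delivers the assumptions of Proposition~\ref{prop:forwardcondmild} for $\hat{Q}$, and thus also condition~\eqref{eq:forwardcond}. Theorem~\ref{thrm:forward} then yields that $\hat{p}_t(x):=\Pbl{\{\hat{X}_t=x,t<\hat{T}_\infty\}}$ is continuously differentiable in $t$ for every $x\in\s$, and that $(\hat{p}_t)_{t\geq0}$ is the minimal non-negative solution of its master equation, which, because $\hat{q}(y,x)=0$ whenever $y\notin\cal{D}$, reduces to~\eqref{eq:jointchar2}.

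It remains to identify $\hat{p}_t$ with the densities. For $x\in\cal{D}$, the pathwise agreement gives $\{\hat{X}_t=x,t<\hat{T}_\infty\}=\{X_t=x,t<\tau\}$, and Tonelli applied to definition~\eqref{eq:eoccdef} yields
\begin{equation*}
\nu([0,T],x)=\int_0^T\Pbl{\{X_t=x,t<\tau\wedge T_\infty\}}\,dt=\int_0^T\hat{p}_t(x)\,dt,
\end{equation*}
so $\nu(dt,x)$ has continuous density $\nu(t,x)=\hat{p}_t(x)$ for $x\in\cal{D}$, establishing the right half of~\eqref{eq:jointchar1} and the continuity claim on $\nu$. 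For $x\notin\cal{D}$, the fact that $\hat{X}$ is absorbed the instant it leaves $\cal{D}$ gives $\hat{p}_t(x)=\Pbl{\{\tau\leq t,X_\tau=x\}}=\mu([0,t],x)$. Since $\hat{p}_0(x)=\gamma(x)=1_{\cal{D}^c}(x)\gamma(x)$ for $x\notin\cal{D}$, the fundamental theorem of calculus gives
\begin{equation*}
\mu([0,t],x)=1_{\cal{D}^c}(x)\gamma(x)+\int_0^t\dot{\hat{p}}_s(x)\,ds,
\end{equation*}
which simultaneously produces the decomposition~\eqref{eq:md1} and identifies $\mu(t,x)=\dot{\hat{p}}_t(x)$ for $x\notin\cal{D}$, completing the proof of~\eqref{eq:jointchar1}. (The decomposition~\eqref{eq:md11} for $\nu$ is contained in the computation above, with the atom at $0$ absent because $\nu(\{0\},\s)=0$.)

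\textbf{Main obstacle.} The delicate step is the rigorous verification that $\hat{X}$ and $X$ really do produce the same path up to $\tau$ when built from the same $U_n$'s and $\xi_n$'s, and in particular that $\hat{T}_\infty=\infty$ on $\{\tau<T_\infty\}$ (the absorbing states get $\hat{\lambda}=1$ and only fictitious self-jumps, so Kolmogorov's strong law of large numbers for the $\xi_n$'s applies, just as in the proof of Proposition~\ref{prop:absorb}). Everything else is either a direct invocation of Theorems~\ref{thrm:forward} and Proposition~\ref{prop:forwardcondmild}, or Tonelli/fundamental-theorem-of-calculus bookkeeping on the densities.
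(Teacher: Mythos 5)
Your proposal is correct and follows essentially the same route as the paper: construct $\hat{X}$ from the modified rate matrix $\hat{Q}$ using the Kendall--Gillespie algorithm with the \emph{same} underlying $X_0, U_n, \xi_n$, exploit the pathwise agreement with $X$ up to the exit time (the paper packages this as Lemma~\ref{samechainsh}), check~\eqref{eq:forwardcond} for $\hat{Q}$ via Proposition~\ref{prop:forwardcondmild}, invoke Theorem~\ref{thrm:forward}, and then read off $\mu$ and $\nu$ from $\hat{p}_t$ using the absorbing structure of $\hat{X}$ together with Tonelli and the fundamental theorem of calculus. The only thing the paper makes slightly more explicit is the case $\tau = T_\infty$ (where $\hat{T}_\infty = T_\infty$ may be finite), handled in Lemma~\ref{samechainsh}$(iii)$, and the passage from $\mu([0,t],x)$ to $\mu([0,t),x)$ via continuity of $t\mapsto\hat p_t(x)$, but your sketch clearly contains both ideas.
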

The proof takes some doing and we leave it until the end of the section. The ideas guiding it, however, are simple: consider a second chain $\hat{X}$ obtained by replacing $Q$ in Algorithm~\ref{gilalg} with $\hat{Q}=(\hat{q}(x,y))_{x,y\in\s}$ and keeping everything else the same, where
\begin{equation}\label{eq:qhat}\hat{q}(x,y):=\left\{\begin{array}{ll} q(x,y)&\text{if }x\in\cal{D} \\ 0&\text{if }x\not\in\cal{D} \end{array}\right.\quad\forall x,y\in\s.\end{equation}
The chains $X$ and $\hat{X}$ are identical up until (and including) the moment they both simultaneously exit the domain via the same state. Therefore the probability $\mu([0,t),x)$ that $X$ has exited the domain by time $t$ via state $x\not\in\cal{D}$ is also the probability that $\hat{X}$ exited via $x$ by time $t$. However, all states outside of the domain are absorbing for $\hat{X}$ and $\hat{X}$ gets trapped in the state it enters upon leaving the truncation. For these reasons,  $\mu([0,t),x)$ is the probability $\hat{p}_t(x)$ that $\hat{X}$ is in state $x$ by time $t$ and the first equation in \eqref{eq:jointchar1} follows. Similarly, once $\hat{X}$ leaves the domain it cannot return, hence the amount of time that $\hat{X}$ spends in a state $x\in\cal{D}$ until the moment it exits the domain is the \emph{total} time it spends in that state:
$$\int_0^{t\wedge \tau\wedge T_\infty}1_x(X_s)ds=\int_0^{t\wedge \tau\wedge \hat{T}_\infty}1_x(\hat{X}_s)ds=\int_0^{t\wedge \hat{T}_\infty}1_x(\hat{X}_s)ds.$$
Taking expectations of the above we obtain the second equation in \eqref{eq:jointchar1}. Equation \eqref{eq:jointchar2} on the other hand, is just the differential equation in Theorem~\ref{thrm:forward} satisfied by the time-varying law $\hat{p}_t$ of $\hat{X}$. 

\subsubsection*{The marginals}Equations \eqref{eq:jointchar1}--\eqref{eq:jointchar2} imply that $\mu(t,x)$ is non-negative. Thus, \eqref{eq:md1} and Tonelli's theorem show that the time-marginal $\mu_T(dt)$ in~\eqref{eq:mutc} of the exit distribution\index{exit distribution (time marginal)} also has a density $\mu_T(t)$ with respect to the Lebesgue measure and that this density is given by $\mu_T(t)=\sum_{x\in\s}\mu(t,x)$:
\begin{equation}\label{eq:mutden}\mu_T(A)=\gamma(\cal{D}^c)1_A(0)+\int_A\mu_T(t)dt\quad\forall A\in\cal{B}([0,\infty)).\end{equation}
Theorem \ref{charactt} and Tonelli's theorem then give us expressions for the marginals\index{exit distribution (space marginal)}\index{occupation measure (space marginal, exit time)} in terms of the solution of \eqref{eq:jointchar2}:
\begin{align*}\mu_T(t)&=\sum_{x\not\in\cal{D}}\hat{p}_t(x)\quad\forall t\in[0,\infty),\\
 \mu_S(x)&=1_{\cal{D}^c}(x)\left(\lim_{t\to\infty}\hat{p}_t(x)\right),\quad \nu_S(x)=1_{\cal{D}}(x)\int_0^\infty \hat{p}_t(x)dt,\quad\forall x\in\s.\end{align*}

In the case of the space\glsadd{mus}\glsadd{nus} marginals $\mu_S$ and $\nu_S$ in~\eqref{eq:musc}--\eqref{eq:nusc} and an almost surely finite exit time, we have an alternative characterisation:
\begin{theorem}[Analytical characterisation of $\mu_S$ and $\nu_S$]\label{charact}If $\Pbl{\{\tau<\infty\}}=1$, then $\nu_S$ in~\eqref{eq:nusc} is the minimal non-negative solution of the equations
\begin{equation}\label{eq:nueqsm}q(x)\nu_S(x)=\gamma(x)+\sum_{z \neq x}\nu_S(z)q(z,x)\quad\forall x\in\cal{D},\qquad\nu_S(x)=0\quad\forall x\not\in\cal{D},\end{equation}
while $\mu_S$ in~\eqref{eq:musc} is given by 
\begin{equation}\label{eq:mueqsm}\mu_S(x)=\gamma(x)+\sum_{z\in\cal{D}}\nu_S(z)q(z,x)\quad\forall x\not\in\cal{D},\qquad \mu_S(x)=0\quad\forall x\in\cal{D}.\end{equation}
%
\end{theorem}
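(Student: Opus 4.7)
The plan is to begin by disposing of the easy assertions: the claim $\nu_S(x)=0$ for $x\notin\cal{D}$ is immediate from the definition in~\eqref{eq:nusc} (the integrand vanishes on $[0,\tau\wedge T_\infty)$), and $\mu_S(x)=0$ for $x\in\cal{D}$ follows from Proposition~\ref{prop:hitdc}, which gives $X_\tau\notin\cal{D}$ on the full-probability event $\{\tau<\infty\}$. Since $\Pbl{\{\tau<\infty\}}=1$ by assumption, we may apply Theorem~\ref{eqnsc} to $\eta:=\tau$, which yields
$$\mu_S(x)+\nu_S(x)q(x)=\gamma(x)+\sum_{y\neq x}\nu_S(y)q(y,x)\qquad\forall x\in\s.$$
Substituting $\mu_S(x)=0$ for $x\in\cal{D}$ gives~\eqref{eq:nueqsm}, and substituting $\nu_S(x)=0$ together with $\nu_S(y)=0$ for $y\notin\cal{D}$ (and noting $y=x$ is excluded since $x\notin\cal{D}$) yields~\eqref{eq:mueqsm}.

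The main work is the minimality. My idea is to transfer the problem to the already-proved discrete-time minimality result (Theorem~\ref{charactd}) applied to the jump chain $Y$. Given a non-negative solution $\rho$ of~\eqref{eq:nueqsm}, define $g(x):=q(x)\rho(x)$ for $x\in\cal{D}$ and $g(x):=0$ otherwise. Using $q(z,x)=q(z)p(z,x)$ for $z\neq x$ (both sides are $0$ when $q(z)=0$), the equation~\eqref{eq:nueqsm} rewrites as
$$g(x)=\gamma(x)+\sum_{z\in\cal{D},\,z\neq x}g(z)p(z,x)\qquad\forall x\in\cal{D}.$$
Since $p(x,x)=0$ when $q(x)>0$, and both $g(x)=0$ and $\gamma(x)=0$ when $q(x)=0$, $x\in\cal{D}$ (the latter because an absorbing $x\in\cal{D}$ with $\gamma(x)>0$ would force $\Pbl{\{\tau=\infty\}}>0$, contradicting our hypothesis), we may drop the restriction $z\neq x$ and recognise $g$ as a non-negative solution to the discrete-time equations in Theorem~\ref{charactd} for the exit of $Y$ from $\cal{D}$. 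Thus $g(x)\geq\nu_S^Y(x)$ pointwise, where $\nu_S^Y$ denotes the space marginal of the occupation measure of $Y$ at $\sigma$ (the jump-chain's exit time from $\cal{D}$).

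To conclude I would connect $\nu_S^Y$ to $\nu_S$. Since $\tau=T_\sigma$ on $\{\tau<\infty\}$ by Proposition~\ref{prop:hitdc}, breaking the integral into its pieces between jumps gives
$$\nu_S(x)=\Ebl{\sum_{n=0}^{\sigma-1}S_{n+1}1_x(Y_n)}=\sum_{n=0}^\infty\Ebl{1_x(Y_n)1_{\{n<\sigma\}}S_{n+1}}$$
for $x\in\cal{D}$. The events $\{Y_n=x\}$ and $\{n<\sigma\}$ lie in $\cal{G}_n$, so the tower property plus Theorem~\ref{thrm:condind} (which gives $\Ebl{S_{n+1}\mid\cal{G}_n}=1/\lambda(Y_n)$) reduces each summand to $\Pbl{\{Y_n=x,\,n<\sigma\}}/q(x)$ when $q(x)>0$. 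Summing yields $\nu_S(x)=\nu_S^Y(x)/q(x)$, hence $\rho(x)=g(x)/q(x)\geq\nu_S^Y(x)/q(x)=\nu_S(x)$. For $x\in\cal{D}$ with $q(x)=0$ we have $\nu_S(x)=0$ (the only way the chain can be at an absorbing state at time $t<\tau$ is if it started there, but we noted $\gamma(x)=0$), and for $x\notin\cal{D}$, $\rho(x)=0=\nu_S(x)$. The one delicate point is the bookkeeping for absorbing states inside $\cal{D}$, which is the only place where the hypothesis $\Pbl{\{\tau<\infty\}}=1$ enters the minimality argument non-trivially; everything else is a transparent translation of the discrete-time result through the jump-hold structure of the chain.
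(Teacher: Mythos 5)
Your proof is correct, and the route you take to minimality is genuinely different from the paper's. Both proofs rest on the same decomposition $\nu_S(x)=\nu_S^Y(x)/q(x)$ for non-absorbing $x\in\cal{D}$ (the paper derives it in the display labelled \eqref{eq:nudecomp}; you derive the same identity via the tower property and Theorem~\ref{thrm:condind}). The difference is in what happens next. The paper takes a candidate non-negative solution $\rho$, repeatedly substitutes the equation into itself, drops the non-negative remainder, and so re-derives by hand the telescoping lower bound that underlies the discrete-time minimality proof — in effect re-proving Theorem~\ref{charactd}'s argument inline for the jump chain. You instead observe that the substitution $g:=q\rho$ on $\cal{D}$ (extended by zero) converts \eqref{eq:nueqsm} into the discrete-time system \eqref{eq:nueqsmd} for $Y$, invoke Theorem~\ref{charactd} directly to conclude $g\geq\nu_S^Y$, and divide by $q$. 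This avoids duplicating work already done in the discrete-time chapter and makes the structural link between the two minimality results explicit, at the small cost of having to verify that the change of variables really does hit the discrete-time equation (in particular, checking that the restriction $z\neq x$ can be dropped, which you do correctly by splitting on whether $q(x)>0$ and noting $p(x,x)=0$ in that case). One minor imprecision: the parenthetical "the only way the chain can be at an absorbing state at time $t<\tau$ is if it started there" is not literally true — the chain can jump into an absorbing $x\in\cal{D}$ — but since any visit to such an $x$ forces $\tau=\infty$, the event $\{X_t=x,\,t<\tau\wedge T_\infty\}$ is contained in $\{\tau=\infty\}$, which has probability zero by hypothesis, and $\nu_S(x)=0$ still follows; you may want to phrase it that way. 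Everything else checks out.
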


\begin{proof}Proposition~\ref{prop:hitdc} implies that $X_t(\omega)$ belongs to the domain $\cal{D}$ if $\omega$ belongs to $\{t<\tau\wedge T_\infty\}$ and that $X_{\tau(\omega)}(\omega)$ does not belong to $\cal{D}$ if $\omega$ belongs to $\{\tau<\infty\}$. For these reasons,
$$\mu_S(x)=0\quad\forall x\in\cal{D},\qquad \nu_S(x)=0\quad\forall x\not\in\cal{D},$$
and \eqref{eq:nueqsm}--\eqref{eq:mueqsm} follow from Theorem~\ref{eqnsc}. 

All that remains to be shown is that $\rho(x)\geq \nu_S(x)$ for all $x$ in $\cal{D}$ and non-negative solution $\rho=(\rho(x))_{x\in\cal{S}}$ of \eqref{eq:nueqsm}.
Because $\tau$ is jump-time-valued (Proposition~\ref{prop:hitdc}), the definition of the chain's paths implies that
\begin{align*}
\int_0^{\tau\wedge T_\infty}1_x(X_t)dt&=\sum_{n=0}^\infty1_{\{\tau>T_n\}}\int_{T_n}^{T_{n+1}}1_x(X_t)dt=\sum_{n=0}^\infty1_{\{\tau>T_n\}}(T_{n+1}-T_n)1_x(X_{T_n})\\
&=\sum_{n=0}^\infty1_{\{\tau>T_n\}}S_{n+1}1_x(Y_n)=\sum_{n=0}^\infty S_{n+1}1_{\{Y_0\in\cal{D},\dots,Y_{n-1}\in\cal{D},Y_n=x\}}\quad\forall x\in\cal{D}.
\end{align*}
Taking expectations of both sides, we have that
\begin{align}\nu_S(x)&=\sum_{n=0}^\infty \Ebl{S_{n+1}1_{\{Y_0\in\cal{D},\dots,Y_{n-1}\in\cal{D},Y_n=x\}}}=\sum_{n=0}^\infty \Ebl{\Ebl{S_{n+1}|\cal{G}_n}1_{\{Y_0\in\cal{D},\dots,Y_{n-1}\in\cal{D},Y_n=x\}}}\nonumber\\
&=\frac{1}{\lambda(x)}\sum_{n=0}^\infty\Pbl{\{Y_0\in\cal{D},\dots,Y_{n-1}\in\cal{D},Y_n=x\}}\nonumber\\
&=\frac{1}{\lambda(x)}\left(\gamma(x)+\sum_{n=1}^\infty\sum_{z_0\in\cal{D}}\dots\sum_{z_{n-1}\in\cal{D}}\gamma(z_0)p(z_0,z_1)\dots p(z_{n-1},x)\right)\quad\forall x\in\cal{D},\label{eq:nudecomp}\end{align}
where $(p(x,y))_{x,y}$ denotes the jump matrix~\eqref{eq:jumpmatrix}, $(\lambda(x))_{x\in\s}$ the jump rates~\eqref{eq:lambda}, and $\cal{G}_n$ is the sigma-algebra generated by $Y_0,\dots Y_n$ and $S_1,\dots S_n$. The first equation follows from Tonelli's theorem, the second the take-out-what-is-known and tower properties of conditional expectation (Theorem~\ref{thrm:condexpprops}$(iv,v)$), the third from $S_{n+1}$ being exponentially distributed with mean $1/\lambda(Y_{n})$ when conditioned on $\cal{G}_n$ (Theorem~\ref{thrm:condind}), and the fourth from the expression given in Theorem~\ref{thrm:pathlawunict} for the path law.

Suppose that $x$ is an absorbing state and fix $n$ in $\n$, $z_0,z_1,\dots,z_{n-1}$ in $\cal{D}$. Because the exit time is almost surely finite, downwards monotone convergence, Theorem~\ref{thrm:pathlawunict}, and Proposition~\ref{prop:hitdc} imply that
\begin{align*}0&=\Pbl{\{\tau=\infty\}}=\Pbl{\cap_{m=0}^\infty \{Y_m\in\cal{D}\}}=\lim_{m\to\infty}\Pbl{\{Y_0\in\cal{D},Y_1\in\cal{D},\dots,Y_{n+m}\in\cal{D}\}}\\
&=\lim_{m\to\infty}\sum_{x_0\in\cal{D}}\sum_{x_1\in\cal{D}}\dots\sum_{x_{n+m}\in\cal{D}}\lambda(x_0)p(x_0,x_1)\dots p(x_{n-1},x_{n+m})\\
&\geq\gamma(z_0)p(z_0,z_1)\dots p(z_{n-1},x)\left(\lim_{m\to\infty}\sum_{x_1\in\cal{D}}\sum_{x_2\in\cal{D}}\dots\sum_{x_{m}\in\cal{D}}p(x,x_1)p(x_1,x_2)\dots p(x_{m-1},x_{m})\right)\\
&=\gamma(z_0)p(z_0,z_1)\dots p(z_{n-1},x)\left(\lim_{m\to\infty}\sum_{x_2\in\cal{D}}\dots\sum_{x_{m}\in\cal{D}}p(x,x_2)\dots p(x_{m-1},x_{m})\right)\\
&=\dots=\gamma(z_0)p(z_0,z_1)\dots p(z_{n-1},x).
\end{align*}
The decomposition \eqref{eq:nudecomp} then shows that $\nu_S(x)=0$ (and so $\rho(x)\geq\nu_S(x)$ by non-negativity of $\rho$). We now only have left to show that $\rho(x)\geq \nu_S(x)$ for every non-absorbing state $x$ inside the domain (that is, $x$ in $\cal{D}_{na}:=\{x\in\cal{D}_{na}:q(x)>0\}$). For any such state $x$, we rewrite \eqref{eq:nudecomp} as 
\begin{equation}q(x)\nu_S(x)=\gamma(x)+\sum_{n=1}^\infty\sum_{z_0\in\cal{D}_{na}}\dots\sum_{z_{n-1}\in\cal{D}_{na}}\gamma(z_0)p(z_0,z_1)\dots p(z_{n-1},x),\enskip\forall x\in\cal{D}_{na}.\label{eq:nudecomp2}\end{equation}
Using the definition of the jump matrix in \eqref{eq:jumpmatrix} and repeatedly applying \eqref{eq:nueqsm}, we have that
\begin{align*}q(x)\rho(x)&=\gamma(x)+\sum_{z_0\in\cal{D}_{na}}\rho(z_0)q(z_0,x)\\
&=\gamma(x)+\sum_{z_0\in\cal{D}_{na}}\gamma(z_0)p(z_0,x)+\sum_{z_0\in\cal{D}_{na}}\sum_{z_1\in\cal{D}_{na}}\rho(z_0)q(z_0,z_1)p(z_1,x)\\
&=\dots=\gamma(x)+\sum_{n=1}^{m}\sum_{z_0\in\cal{D}_{na}}\dots\sum_{z_{n-1}\in\cal{D}_{na}}\gamma(z_0)p(z_0,z_1)\dots p(z_{n-1},x)\\
&\qquad\quad\quad+\sum_{z_0\in\cal{D}_{na}}\dots\sum_{z_{m}\in\cal{D}_{na}}\rho(z_0)q(z_0,z_1)p(z_1,z_2)\dots p(z_{m},x)\\
&\geq\gamma(x)+\sum_{l=1}^m\sum_{z_0\in\cal{D}_{na}}\dots\sum_{z_{l-1}\in\cal{D}_{na}}\gamma(z_0)p(z_0,z_1)\dots p(z_{l-1},x),\qquad\forall x\in\cal{D}_{na}.\end{align*}
Comparing the above with \eqref{eq:nudecomp2} and taking the limit $m\to\infty$ shows $\rho(x)\geq \nu_S(x)$ for all $x\in\cal{D}_{na}$ as desired. 
%
\end{proof}
The continuous-time version of Example~\ref{eq:jd8wndwua8ndu8wada}, with $\cal{D}$ being $\{1,2\}$ for a chain with state space $\{0,1,2\}$, initial distribution $1_1$, and rate matrix
$$Q:=\begin{bmatrix}0&0&0\\ 1&-1&0\\ 0&0&0\end{bmatrix},$$
shows that \eqref{eq:nueqsm} can indeed have other non-negative solutions aside from $\nu_S$ even if \eqref{eq:exbst} is satisfied. It is not difficult to come up with sufficient conditions for $\nu_S$ to be the unique non-negative satisfying the equation
$$\gamma(\s\backslash \cal{D})+\sum_{z\in\cal{D}}\sum_{z\neq x}\rho(z)q(z,x)=1$$
obtained by summing over $x$ in \eqref{eq:mueqsm} (under assumption~\eqref{eq:exbst}), see \citep[Corollary~2.39]{Kuntzthe}.
However, the continuous-time analogue of the question posed in Section~\ref{sec:exit} persists:
\subsubsection*{An open question} Assuming that~\eqref{eq:exbst} is satisfied, when  does \eqref{eq:nueqsm} have multiple non-negative solutions?

\subsubsection*{A proof of Theorem~\ref{charactt}}The particular definition of the chain we chose in Section~\ref{sec:FKG} comes quite in handy here. Consider a second chain $\bar{X}$ constructed using the Kendall-Gillespie algorithm (Algorithm~\ref{gilalg}) and the same $X_0$, $(\xi_n)_{n\in\zp}$, and $(U_n)_{n\in\zp}$ as for our original chain $X$ but a different rate matrix $\bar{Q}$. 
If the rows of the rate matrices $Q$ and $\bar{Q}$ coincide on the domain $\cal{D}$, then both chains are updated using the same rules while they remain inside the domain. For this reason, the chains are identical up until the instant that they simultaneously leave the domain for the first time. Formally:
\begin{lemma}\label{samechainsh}Suppose that $Q$ and $\bar{Q}$ coincide on $\cal{D}$:
$$q(x,y)=\bar{q}(x,y),\qquad\forall x\in\cal{D},\quad y\in\s.$$
There exists a continuous-time chain $\bar{X}$ with rate matrix $\bar{Q}$ defined on the same underlying space $(\Omega,\cal{F})$ as $X$  and with jump chain $\bar{Y}:=(\bar{Y}_n)_{n\in\n}$, waiting times $(\bar{S}_n)_{n\in\zp}$, jump times $(\bar{T}_n)_{n\in\n}$, and explosion time $\bar{T}_\infty$ such that:
\begin{enumerate}[label=(\roman*),noitemsep]
\item\label{samechainshi} $X$ and $\bar{X}$  exit the domain at the same time:
$$\tau(\omega)=\bar{\tau}(\omega), \quad \sigma(\omega)=\bar{\sigma}(\omega)\quad \forall\omega\in\Omega.$$
where $\tau$ and $\sigma$ defined in~\eqref{eq:hitthec2}--\eqref{eq:sigma} denote the respective times of exit for $X$ and its jump chain $Y$, and  $\bar{\tau}$ and $\bar{\sigma}$ those of $\bar{X}$ and $\bar{Y}$:
$$ \bar{\tau}(\omega):=\inf\{t\in[0,\bar{T}_\infty(\omega)):\bar{X}_t(\omega)\not\in\cal{D}\},\quad\bar{\sigma}(\omega):=\inf\{n\geq0:\bar{Y}_n(\omega)\not\in\cal{D}\},\quad\forall\omega\in\Omega.$$
\item\label{samechainshii} Up until (and including) this time, both chains have identical jump chains, waiting times, and jump times, and explosion times:
$$Y_n(\omega)=\bar{Y}_n(\omega),\quad S_n(\omega)=\bar{S}_n(\omega),\quad T_n(\omega)=\bar{T}_n(\omega),\quad\forall \omega\in\{n\leq \sigma\},\enskip n\in\n.$$
\item\label{samechainshiii} Either chain explodes no later than leaving the domain if and only if the other does, in which case the explosion times are the same:
$$\{T_\infty\leq \tau\}=\{\bar{T}_\infty\leq \bar{\tau}\},\quad T_\infty(\omega)=\bar{T}_\infty(\omega)\quad\forall \omega\in\{T_\infty\leq \tau\}.$$
\item\label{samechainshiv} In summary, both chains are identical up until they exit the domain for the first time:
$$\{t\leq\tau,t<T_\infty\}=\{t\leq\bar{\tau},t<\bar{T}_\infty\},\quad X_t(\omega)=\bar{X}_t(\omega)\quad\forall \omega \in\{t\leq\tau,t<T_\infty\},\quad\forall  t\in[0,\infty).$$
\end{enumerate}
%
%
\end{lemma}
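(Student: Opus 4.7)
}

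My plan is to construct $\bar X$ on the same underlying space $(\Omega,\cal F)$ as $X$ by running the Kendall--Gillespie algorithm (Algorithm \ref{gilalg}) with the rate matrix $\bar Q$ but using the \emph{same} random inputs $X_0$, $(\xi_n)_{n\in\zp}$, and $(U_n)_{n\in\zp}$ as used to build $X$. This produces $\bar Y$, $\bar S$, $\bar T$, $\bar T_\infty$, and $\bar X$ through \eqref{eq:cpathdef}--\eqref{eq:cpathdef3}. Since $q(x,y)=\bar q(x,y)$ for every $x\in\cal D$ and $y\in\s$, definitions \eqref{eq:jumpmatrix}--\eqref{eq:lambda} immediately give $p(x,\cdot)=\bar p(x,\cdot)$ and $\lambda(x)=\bar\lambda(x)$ for every $x\in\cal D$. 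This matching of the algorithm's ingredients on $\cal D$ is the sole engine driving the proof; everything else is bookkeeping.

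The first step will be the crucial induction: prove that
\[
Y_n(\omega)=\bar Y_n(\omega)\text{ for all }n\le\sigma(\omega),\qquad S_n(\omega)=\bar S_n(\omega)\text{ for all }n\le\sigma(\omega),
\]
for every $\omega\in\Omega$. The base case $n=0$ is by construction, as both algorithms initialise with $Y_0=\bar Y_0=X_0$. For the inductive step, if $n<\sigma(\omega)$ then $Y_n(\omega)\in\cal D$, so the while-loop in lines~5--8 of Algorithm \ref{gilalg} uses the \emph{identical} partial-sum thresholds $\sum_{j=0}^i p(Y_n,x_j)=\sum_{j=0}^i\bar p(\bar Y_n,x_j)$ against the common input $U_{n+1}$, forcing $\bar Y_{n+1}=Y_{n+1}$. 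Likewise $\bar S_{n+1}=\xi_{n+1}/\bar\lambda(\bar Y_n)=\xi_{n+1}/\lambda(Y_n)=S_{n+1}$. Summing waiting times gives $\bar T_k=T_k$ for every $k\le\sigma(\omega)$, which establishes \ref{samechainshii}.

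From the induction, $\sigma=\bar\sigma$ because both are defined as the first index $n$ at which the jump chains (which coincide up to this index) leave $\cal D$. Combining $\sigma=\bar\sigma$ with $T_{\sigma}=\bar T_\sigma$ on $\{\sigma<\infty\}$, and appealing to Proposition \ref{prop:hitdc} (which gives $\tau=T_\sigma$ on $\{\sigma<\infty\}$ and $\tau=\infty$ on $\{\sigma=\infty\}$, and the analogue for $\bar\tau$), yields $\tau=\bar\tau$ everywhere, proving \ref{samechainshi}. For \ref{samechainshiii}, on $\{\sigma=\infty\}$ all waiting times match so $T_\infty=\sum_{n=1}^\infty S_n=\sum_{n=1}^\infty \bar S_n=\bar T_\infty$; moreover, since waiting times are strictly positive, $T_\infty>T_\sigma=\tau$ whenever $\sigma<\infty$ (and similarly for $\bar X$), so $\{T_\infty\le\tau\}=\{\sigma=\infty\}=\{\bar\sigma=\infty\}=\{\bar T_\infty\le\bar\tau\}$ and the explosion times agree on this event.

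Finally, for \ref{samechainshiv}, fix $t\in[0,\infty)$ and $\omega\in\{t\le\tau,\,t<T_\infty\}$. Using \ref{samechainshi} and \ref{samechainshiii} together with the relation $T_\infty>\tau$ on $\{\sigma<\infty\}$, the event $\{t\le\tau,t<T_\infty\}$ coincides with $\{t\le\bar\tau,t<\bar T_\infty\}$. On this event there is a unique $n$ with $T_n(\omega)\le t<T_{n+1}(\omega)$, and one checks that $n\le\sigma(\omega)$; by \ref{samechainshii} the same $n$ satisfies $\bar T_n(\omega)\le t<\bar T_{n+1}(\omega)$ and $\bar Y_n(\omega)=Y_n(\omega)$, so $X_t(\omega)=Y_n(\omega)=\bar Y_n(\omega)=\bar X_t(\omega)$ by the piecewise-constant definition \eqref{eq:cpathdef}. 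The only mildly delicate point I foresee is the boundary case $t=T_\sigma=\tau$ when $\sigma<\infty$: here we need to verify that both $X_t$ and $\bar X_t$ are read off from jump index $\sigma$ (not $\sigma-1$ or $\sigma+1$), which follows from $T_\sigma\le t<T_{\sigma+1}$ (strict inequality since $S_{\sigma+1}>0$) and its bar-analogue. This is the one spot where careful index tracking is genuinely needed, but no new idea is required.
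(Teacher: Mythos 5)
Your proposal is correct and follows essentially the same strategy as the paper's own proof: construct $\bar X$ via the Kendall--Gillespie algorithm with $\bar Q$ but the \emph{same} input randomness $(X_0,(\xi_n),(U_n))$, observe that the jump matrices and jump rates agree on $\cal D$, and then propagate the identity of the two chains inductively up to the common exit index. The paper phrases the induction via explicit set equalities of the form $\{Y_0\in\cal D,\dots,Y_{n-1}\in\cal D,Y_n=x\}=\{\bar Y_0\in\cal D,\dots,\bar Y_{n-1}\in\cal D,\bar Y_n=x\}$ and reads off $\sigma=\bar\sigma$ by writing these exit times as sums of indicators of such sets, whereas you run a pointwise induction on $n\le\sigma(\omega)$ --- a purely presentational difference. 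Your identification and resolution of the boundary case $t=\tau=T_\sigma$ (namely, that $\bar T_{\sigma+1}=\bar T_\sigma+\bar S_{\sigma+1}>\bar T_\sigma=t$ even though (ii) says nothing about $\bar T_{\sigma+1}$ versus $T_{\sigma+1}$) matches the one genuinely delicate point and is handled correctly.
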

\begin{proof} Let $\bar{P}:=(\bar{p}(x,y))_{x,y\in\s}$ and $\bar{\lambda}:=(\bar{\lambda}(x))_{x\in\s}$  denote the jump matrix and jump rates obtained by replacing $Q$ with $\bar{Q}$ in \eqref{eq:jumpmatrix}--\eqref{eq:lambda}. To construct $\bar{X}$ we $(a)$ run Algorithm \ref{gilalg} employing the same $X_0$, $(\xi_n)_{n\in\zp}$, and $(U_n)_{n\in\zp}$ as for $X$ but with $\bar{P}$ and $\bar{\lambda}$ replacing $P$ and $\lambda$ to obtain the chain's waiting times $(\bar{S}_{n})_{n\in\zp}$ and jump chain $\bar{Y}:=(\bar{Y}_{n})_{n\in\n}$ and $(b)$ define $\bar{X}$, $(\bar{T}_n)_{n\in\n}$, and $\bar{T}_\infty$ by replacing $(S_{n})_{n\in\zp}$ and $Y$ with $(\bar{S}_{n})_{n\in\zp}$ and $\bar{Y}$ in  \eqref{eq:cpathdef}--\eqref{eq:cpathdef3}. 
Because the rate matrices coincide on $\cal{D}$, \eqref{eq:jumpmatrix}--\eqref{eq:lambda} imply that the jump matrices and rates also coincide on $\cal{D}$:
$$(p(x,y))_{y\in\s}=(\bar{p}(x,y))_{y\in\s},\quad \lambda(x)=\bar{\lambda(x)},\quad \forall x\in\cal{D}.$$
Algorithm \ref{gilalg} and the above imply that  
\begin{equation}\label{eq:nja}\bar{Y}_{n+1}(\omega)=Y_{n+1}(\omega)\quad\forall\omega\in\{\bar{Y}_n=Y_n\in\cal{D}\}.\end{equation}
Because  $Y_0 =X_0= \bar{Y}_0$, induction and the above imply that
\begin{equation}\label{eq:nja2}\{Y_0\in\cal{D},\dots,Y_{n-1}\in\cal{D},Y_n=x\}=\{\bar{Y}_0\in\cal{D},\dots,\bar{Y}_{n-1}\in\cal{D},\bar{Y}_n=x\}\quad\forall x\in\s,\enskip n>0.\end{equation}
Due to the definition of the exit times of the jump chains, we have that
\begin{align*}\sigma&=\infty\cdot 1_{\{Y_0\in\cal{D},Y_1\in\cal{D},\dots\}}+\sum_{n=1}^\infty n1_{\{Y_0\in\cal{D},\dots,Y_{n-1}\in\cal{D},Y_n\not\in\cal{D}\}},\\
 \bar{\sigma}&=\infty\cdot 1_{\{\bar{Y}_0\in\cal{D},\bar{Y}_1\in\cal{D},\dots\}}+\sum_{n=1}^\infty n1_{\{\bar{Y}_0\in\cal{D},\dots,\bar{Y}_{n-1}\in\cal{D},\bar{Y}_n\not\in\cal{D}\}}. \end{align*}
Plugging  \eqref{eq:nja2} into the above, we obtain the rightmost equation in $(i)$. Since $ n\leq \sigma(\omega)$ only if
$$Y_0(\omega)\in\cal{D},\enskip Y_n(\omega)\in\cal{D},\enskip\dots,\enskip Y_{n-1}(\omega)\in\cal{D},$$
the leftmost equation in $(ii)$ also follows from \eqref{eq:nja}. The definition of the waiting times (Algorithm~\ref{gilalg}) and that of the jump times \eqref{eq:cpathdef2}, then yield the other two equations in  $(ii)$. The leftmost equation in $(i)$ then follows from the other equation in $(i)$, the rightmost one in $(ii)$, and Proposition~\ref{prop:hitdc}. This proposition and \eqref{eq:etatinfinf} imply that
$$\{T_\infty\leq \tau\}=\{\tau=\infty\}=\{\sigma=\infty\},\qquad\{\bar{T}_\infty\leq \bar{\tau}\}=\{\bar{\tau}=\infty\}=\{\bar{\sigma}=\infty\}$$
For this reason, the leftmost equation in $(iii)$ follows from $(i)$. Similarly, given that $\{T_\infty\leq \tau\}=\{\sigma=\infty\}$, picking any $\omega$ in this set and taking the limit $n\to\infty$ of the rightmost equation in $(ii)$ yields the rightmost one in $(iii)$.

To complete the proof, note that $\{T_\infty\leq \tau\}=\{\tau=\infty\}$ further implies that
\begin{align*}\{t\leq\tau,t<T_\infty\}=\{t<T_\infty,\tau=\infty\}\cup\{t\leq\tau<T_\infty\}=\{t<T_\infty,\tau=\infty\}\cup\{t\leq\tau<\infty\}.\end{align*}
Because the same equations hold if we replace $\tau$ and $T_\infty$ with $\bar{\tau}$ and $\bar{T}_\infty$, the leftmost equation in $(iv)$ follows from the leftmost one in $(i)$ and the rightmost one in $(iii)$. Because
$$\{t\leq\tau,t<T_\infty\}=\bigcup_{n=0}^\infty\{t\leq\tau, T_n\leq t<T_{n+1}\}$$
and $\{T_n\leq\tau\}=\{\sigma\leq n\}$ (Proposition~\ref{prop:hitdc}), the rightmost equation in $(iv)$ then follows from the leftmost, the definition of the paths of $X$ and $\bar{X}$ in~\eqref{eq:cpathdef}, and  $(ii)$.
\end{proof}

We are now ready to tackle the proof of Theorem~\ref{charactt}:
\begin{proof}[Proof of Theorem~\ref{charactt}] Let $\hat{X}$ be as in Lemma~\ref{samechainsh} with $\hat{Q}=(\hat{q}(x,y))_{x,y\in\s}$ in \eqref{eq:qhat} replacing $\bar{Q}$ in the lemma's premise. The key ingredients in this proof are the facts that $\hat{X}$ and $X$ are identical up until the moment that they simultaneously exit the domain (Lemma~\ref{samechainsh}) and that $\hat{X}$ gets stuck in an absorbing state the instant it leaves the domain. We begin by proving the latter.

If $\hat{Y}=(\hat{Y}_n)_{n\in\n}$ denotes the jump of $\hat{X}$ and $\hat{\sigma}$ its exit time from $\cal{D}$, then $\hat{Y}_{\hat{\sigma}(\omega)}(\omega)$ lies outside $\cal{D}$ for all $\omega$ in $\{\hat{\sigma}<\infty\}$ and the definition of $\hat{Q}$ in~\eqref{eq:qhat} implies that
$$\hat{q}(\hat{Y}_{\hat{\sigma}(\omega)}(\omega))=0\quad\forall \omega\in\{\hat{\sigma}<\infty\}.$$
The Kendall-Gillespie algorithm (Algorithm~\ref{gilalg}) then implies that $\hat{Y}_{\hat{\sigma}(\omega)+1}(\omega)=\hat{Y}_{\hat{\sigma}(\omega)}(\omega)$ and, hence, $\hat{q}(\hat{Y}_{\hat{\sigma}(\omega)+1}(\omega))=0$, for all $\omega$ in $\{\hat{\sigma}<\infty\}$. Iterating this argument forward, we find that
\begin{equation}\label{eq:absjump}\hat{Y}_{\hat{\sigma}(\omega)+n}(\omega)=\hat{Y}_{\hat{\sigma}(\omega)}(\omega)\not\in\cal{D}\quad\forall n\in\n,\enskip \omega\in\{\hat{\sigma}<\infty\}.\end{equation}
Next, because  Proposition~\ref{prop:hitdc} implies that
$$\hat{T}_n\vee\hat{\tau}=\hat{T}_n\vee\hat{T}_{\hat{\sigma}}=\hat{T}_{n\vee\hat{\sigma}}\quad\forall n\in\n,$$
where $\hat{\tau}$ denotes the exit time of $\hat{X}$, we have that
$$\{\tau\leq t,\hat{T}_n\leq t<\hat{T}_{n+1}\}=\{\hat{T}_n\vee\tau\leq t<\hat{T}_{n+1}\}=\{\hat{T}_{n\vee\hat{\sigma}}\leq t<\hat{T}_{n+1}\}=\{\hat{\sigma}\leq n, \hat{T}_n\leq t<\hat{T}_{n+1}\}$$
for all $n$ in $\n$ and $t$ in $[0,\infty)$. The above, Proposition~\ref{prop:hitdc}, and \eqref{eq:absjump} then show that $\hat{X}$ does indeed get stuck in the first state it enters upon leaving the domain:
\begin{align}\nonumber \{\hat{\tau}\leq t<\hat{T}_\infty,\hat{X}_t=x\}&=\bigcup_{n=0}^\infty \{\hat{\tau}\leq t,\hat{T}_n\leq t< \hat{T}_{n+1},\hat{Y}_n=x\}\\
&=\bigcup_{n=0}^\infty \{\hat{\sigma}\leq n,\hat{T}_n\leq t< \hat{T}_{n+1},\hat{Y}_n=x\}\nonumber\\
&=\bigcup_{n=0}^\infty \{\hat{\sigma}\leq n,\hat{T}_n\leq t< \hat{T}_{n+1},\hat{Y}_{\hat{\sigma}}=x\}\nonumber\\
&=\{\hat{\sigma}<\infty,\hat{T}_{\hat{\sigma}}\leq t<\hat{T}_\infty,\hat{Y}_{\hat{\sigma}}=x\}\nonumber\\
&=\{\hat{\tau}\leq t<\hat{T}_\infty,\hat{X}_{\hat{\tau}}=x\}\enskip\forall t\in[0,\infty),\enskip x\in\s.\label{eq:stuck}\end{align}

Now, on to the characterisations of $\mu$ and $\nu$ in \eqref{eq:jointchar1}--\eqref{eq:jointchar2}:
\begin{align}\mu([0,t],x)&=\Pb(\{X_{\tau}=x,\tau\leq t\})=\Pb(\{Y_\sigma=x,\sigma<\infty,T_{\sigma}\leq t\})=\Pb(\{\hat{Y}_{\hat{\sigma}}=x,\hat{\sigma}<\infty,\hat{T}_{\hat{\sigma}}\leq t\})\nonumber\\
&=\Pb(\{\hat{X}_{\hat{\tau}}=x,\hat{\tau}\leq t<\hat{T}_\infty\})=\Pb(\{\hat{X}_{t}=x,\hat{\tau}\leq t< \hat{T}_\infty\})=\hat{p}_t(x)\enskip\forall x\not\in\cal{D},\enskip t\in[0,\infty),\label{eq:mdw89amda8w9}
\end{align}
where $\hat{p}_t$ denotes the time-varying law of $\hat{X}$:
$$\hat{p}_t(x):=\Pb_\gamma(\{\hat{X}_t=x,t<\hat{T}_\infty\})\quad\forall x\in\s,\enskip t\in[0,\infty).$$
The first equation in \eqref{eq:mdw89amda8w9} follows from the definition of $\mu$, the second from Proposition~\ref{prop:hitdc}, the third from Lemma \ref{samechainsh}$(i,ii)$, the fourth from Proposition~\ref{prop:hitdc}, the fifth from \eqref{eq:stuck}. Because~\eqref{eq:nd7a73a8dhd} and Proposition~\ref{prop:forwardcondmild} imply that~\eqref{eq:forwardcond} holds for $\hat{Q}$, Theorem~\ref{thrm:forward} shows that $\hat{p}_t$ is continuously differentiable and the minimal solution of~\eqref{eq:jointchar2}. Exploiting the continuity of $t\mapsto \hat{p}_t(x)$ and applying monotone convergence, we find that
$$\mu([0,t),x)=\lim_{n\to\infty}\mu([0,t(1-1/n)],x)=\lim_{n\to\infty}\hat{p}_{t(1-1/n)}(x)=\hat{p}_t(x),\quad\forall x\not\in\cal{D},$$
and the first equation in \eqref{eq:jointchar1} follows. 

To argue the second equation in~\eqref{eq:jointchar1}, note that Lemma~\ref{samechainsh}$(i,iii,iv)$ implies that
\begin{align}\label{eq:fnhyduafns}\int_0^{t\wedge\tau\wedge T_\infty}1_x(X_s)ds=&\int_0^{t\wedge\hat{\tau}\wedge \hat{T}_\infty}1_x(\hat{X}_s)ds=1_{\{\hat{\tau}\leq t\}}\int_0^{\hat{\tau}\wedge\hat{T}_\infty}1_x(\hat{X}_s)ds\\
&+1_{\{\hat{\tau}>t\}}\int_0^{t\wedge\hat{T}_\infty}1_x(\hat{X}_s)ds\nonumber\quad\forall x\in\cal{D},\enskip t\in[0,\infty).\end{align}
However, because $\hat{X}_{\hat{\tau}}$ lies outside $\cal{D}$ whenever $\hat{\tau}$ is finite (Proposition~\ref{prop:hitdc}), \eqref{eq:stuck} shows that
\begin{align*}1_{\{\hat{\tau}\leq t\}}\int_{\hat{\tau}\wedge \hat{T}_\infty}^{t\wedge \hat{T}_\infty}1_x(\hat{X}_s)ds&=\int_{0}^{\infty}1_{\{\hat{\tau}\leq s<t\wedge\hat{T}_\infty\}}1_x(\hat{X}_s)ds=\int_{0}^{\infty}1_{\{\hat{\tau}\leq s<t\wedge\hat{T}_\infty\}}1_x(\hat{X}_{\hat{\tau}})ds\\
&=1_{\{\hat{\tau}\leq t\}}1_x(\hat{X}_{\hat{\tau}})(t\wedge\hat{T}_\infty-\hat{\tau})=0\quad\forall x\in\cal{D}.\end{align*}
Adding the left-hand side of the above to the right-hand side of \eqref{eq:fnhyduafns}, taking expectations, using Tonelli's theorem, we find that
\begin{equation}\label{eq:dens2}\nu([0,t),x)=\Ebl{\int_0^{t\wedge\tau\wedge T_\infty}1_x(X_s)ds}=\Ebl{\int_0^{t\wedge\hat{T}_\infty}1_x(\hat{X}_s)ds}=\int_0^t\hat{p}_s(x)ds\quad\forall x\in\cal{D},\end{equation}
and the second equation in~\eqref{eq:jointchar1} follows.

\end{proof}

\ifdraft

\subsubsection{A curiosity}
For the proof of Theorem \ref{charactt} we need the following description of the (for the lack of a better name) explosion-before-exit event.
\begin{lemma}\label{exbst} Let $\hat{X}$ be as in Lemma~\ref{samechainsh} with $\hat{Q}=(\hat{q}(x,y))_{x,y\in\s}$ in \eqref{eq:qhat} replacing $\bar{Q}$ in the lemma's premise. The chain $X$ does not explode before first leaving the domain if and only if $\hat{X}$ does not explode:
$$1_{\{\tau\leq T_\infty\}}=1_{\{\hat{T}_\infty=\infty\}}\quad \Pb_\gamma\text{-almost surely.}$$
%
\end{lemma}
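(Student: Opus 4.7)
The plan is to split $\Omega$ according to whether the jump chain ever exits the domain and handle each piece separately. By Lemma \ref{samechainsh}$(i)$, $\{\sigma<\infty\}=\{\hat{\sigma}<\infty\}$ and $\{\sigma=\infty\}=\{\hat{\sigma}=\infty\}$, so this dichotomy is shared by $X$ and $\hat{X}$.

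On $\{\sigma=\infty\}$ the argument is immediate. Proposition~\ref{prop:hitdc} gives $\tau=\infty$, and Lemma~\ref{samechainsh}$(ii)$ yields $S_n(\omega)=\hat{S}_n(\omega)$ for every $n\in\n$ (because $n\le\sigma(\omega)=\infty$), whence $T_\infty=\hat{T}_\infty$ on this set. Thus on $\{\sigma=\infty\}$,
\[
1_{\{\tau\le T_\infty\}}=1_{\{\infty\le T_\infty\}}=1_{\{T_\infty=\infty\}}=1_{\{\hat{T}_\infty=\infty\}},
\]
identically (no null set needed).

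On $\{\sigma<\infty\}$ I claim both indicators are $\Pb_\gamma$-almost surely equal to $1$. For the left-hand indicator, Proposition~\ref{prop:hitdc} and \eqref{eq:etatinfinf} give $\tau=T_\sigma<\infty$ and hence $\tau<T_\infty$. For the right-hand indicator, I invoke the absorption observation \eqref{eq:absjump} established inside the proof of Theorem~\ref{charactt}: on $\{\hat{\sigma}<\infty\}$ we have $\hat{Y}_n=\hat{Y}_{\hat{\sigma}}\notin\cal{D}$ for every $n\geq\hat{\sigma}$, so $\hat{q}(\hat{Y}_n)=0$, which by \eqref{eq:lambda} forces $\hat{\lambda}(\hat{Y}_n)=1$. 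The Kendall-Gillespie recipe (Algorithm~\ref{gilalg}) then delivers $\hat{S}_{n+1}=\xi_{n+1}/\hat{\lambda}(\hat{Y}_n)=\xi_{n+1}$ for all $n\geq\hat{\sigma}$. Consequently
\[
\hat{T}_\infty=\hat{T}_{\hat{\sigma}}+\sum_{n=\hat{\sigma}+1}^\infty\xi_n\quad\text{on }\{\hat{\sigma}<\infty\}.
\]
Since $\xi_1,\xi_2,\dots$ are i.i.d.\ unit-mean exponentials under $\Pb_\gamma$, Kolmogorov's strong law of large numbers (Theorem~\ref{thrm:klln}) gives $\sum_{n=1}^\infty\xi_n=\infty$, $\Pb_\gamma$-almost surely, and omitting the first $\hat{\sigma}$ (finite on the event) terms preserves divergence, so $\hat{T}_\infty=\infty$ $\Pb_\gamma$-almost surely on $\{\sigma<\infty\}$. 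Combining the two cases yields the claimed almost-sure equality.

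The only real obstacle is bookkeeping around the random index $\hat{\sigma}$: one must be sure that the divergence of the tail $\sum_{n>\hat{\sigma}}\xi_n$ follows from the deterministic statement $\sum_{n=1}^\infty\xi_n=\infty$ (it does, since $\hat{\sigma}<\infty$ on the event considered), and that the two joint events we discard — where $\hat{\sigma}<\infty$ yet $\sum_n\xi_n<\infty$ — are $\Pb_\gamma$-null. Since the divergence of $\sum_n\xi_n$ is $\Pb_\gamma$-a.s.\ and holds without any appeal to $\hat{\sigma}$, this is automatic. Everything else is a direct invocation of Lemma~\ref{samechainsh} and Proposition~\ref{prop:hitdc}, so the proof is essentially a repackaging of observations already made.
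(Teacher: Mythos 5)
Your proof is correct and follows essentially the same route as the paper's. You partition $\Omega$ by $\{\sigma<\infty\}$ versus $\{\sigma=\infty\}$ rather than decomposing the event $\{\tau\leq T_\infty\}$ as the paper does (into $\{\tau<\infty\}$ and $\{\tau=T_\infty=\infty\}$), but these two splits are trivially interchangeable via Proposition~\ref{prop:hitdc} and Lemma~\ref{samechainsh}$(i)$, and the key step — that after $\hat{\sigma}$ the waiting times of $\hat{X}$ become unit-mean exponentials so that $\hat{T}_\infty$ diverges $\Pb_\gamma$-almost surely by Kolmogorov's law of large numbers — is identical to the paper's argument.
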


\begin{proof}By its definition in~\eqref{eq:hitthec2}, the exit time is no greater than the explosion time if and only if the chain exits the domain  or the chain neither exits the domain nor explodes:
$$\{\tau\leq T_\infty\}=\{\tau<T_\infty\}\cup\{\tau=T_\infty=\infty\}=\{\tau<\infty\}\cup\{\tau=T_\infty=\infty\}.$$
Because these events are disjoint, we need only argue that
\begin{equation}\label{eq:1nfhdsf}1_{\{\tau<\infty\}}=1_{\{\tau<\infty,\hat{T}_\infty=\infty\}},\quad 1_{\{\tau=T_\infty=\infty\}}=1_{\{\tau=\hat{T}_\infty=\infty\}},\quad \Pb_\gamma\text{-almost surely.}\end{equation}
However, the second equation follows directly from Lemma~\ref{samechainsh}$(iii)$. For the first equation note that if $\hat{Y}=(\hat{Y}_n)_{n\in\n}$ denotes the jump of $\hat{X}$ and $\hat{\sigma}$ its exit time from $\cal{D}$, then $\hat{Y}_{\hat{\sigma}(\omega)}(\omega)$ lies outside $\cal{D}$ for all $\omega$ in $\{\hat{\sigma}<\infty\}$ and the definition of $\hat{Q}$ in~\eqref{eq:qhat} implies that
$$\hat{q}(\hat{Y}_{\hat{\sigma}(\omega)}(\omega))=0\quad\forall \omega\in\{\hat{\sigma}<\infty\}.$$
The Kendall-Gillespie algorithm (Algorithm~\ref{gilalg}) then implies that $\hat{Y}_{\hat{\sigma}(\omega)+1}(\omega)=\hat{Y}_{\hat{\sigma}(\omega)}(\omega)$ and, hence, $\hat{q}(\hat{Y}_{\hat{\sigma}(\omega)+1}(\omega))=0$, for all $\omega$ in $\{\hat{\sigma}<\infty\}$. Iterating this argument forward, we find that
\begin{equation}\label{eq:absjump}\hat{Y}_{\hat{\sigma}(\omega)+n}(\omega)=\hat{Y}_{\hat{\sigma}(\omega)}(\omega)\not\in\cal{D}\quad\forall n\geq0,\enskip \omega\in\{\hat{\sigma}<\infty\}.\end{equation}
Thus, the law of large numbers (Theorem~\ref{thrm:klln}) and the definitions of the waiting times in Algorithm~\ref{gilalg} imply that
\begin{align*}1_{\{\hat{\sigma}=m\}}\hat{T}_\infty&=1_{\{\hat{\sigma}=m\}}\sum_{n=1}^\infty \hat{S}_n\geq 1_{\{\hat{\sigma}=m\}}\sum_{n=m}^\infty \hat{S}_n\geq 1_{\{\hat{\sigma}=m\}}\sum_{n=m}^\infty \frac{\xi_{n+1}}{\hat{\lambda}(\hat{Y}_{n-1})}\\
&=1_{\{\hat{\sigma}=m\}}\sum_{n=m}^\infty \xi_{k+1}=1_{\{\hat{\sigma}=m\}}\cdot\infty\quad\forall m\geq0,\enskip\Pb_\gamma\text{-almost surely}.\end{align*}
Summing the above over $m$ in $\n$, we find that
$$1_{\{\hat{\sigma}<\infty\}}\hat{T}_\infty\geq1_{\{\hat{\sigma}<\infty\}}\cdot\infty\qquad\Pb_\gamma\text{-almost surely}.$$
The first equation in \eqref{eq:1nfhdsf} then follows because Lemma~\ref{samechainsh}$(i)$ shows that $\hat{\sigma}=\sigma$ and Proposition~\ref{prop:hitdc} shows that $\{\sigma<\infty\}=\{\tau<\infty\}$, where $\sigma$ denotes the exit time in~\eqref{eq:sigma} of $X$'s jump chain.
\end{proof}

\fi
\subsubsection*{Notes and references} The treatment in this section follows that in~\citep{Kuntzthe,Kuntz2019}. However, the simple ideas underpinning the above characterisation are classical. For instance, the following is taken from \citep[p.494]{Feller1971}:
\begin{quotation} 
``To show how the distribution of recurrence and first passage times may be calculated we number the states $0,1,2,\dots$ and use $0$ as pivotal state. Consider a new process which coincides with the original process up to the random epoch of the first visit to $0$ but with the state fixed at $0$ forever after. In other words, the new process is obtained from the old one by making $0$ an absorbing state. Denote the transition probabilities of the modified process by $^0P_{ik}(t)$. Then $^0P_{00}(t) = 1$. In terms of the original process $^0P_{i0}(t)$ is the probability of a first passage from $i\neq0$ to $0$ before epoch $t$, and $^0P_{ik}(t)$ gives the probability of a transition from $i\neq0$ to $k\neq0$ without intermediate passage through $0$. It is probabilistically clear that the matrix $^0P(t)$ should satisfy the same backward and forward equations as $P(t)$ except that $q(0)$ [in the notation of \eqref{eq:qmatrix}] is replaced by $0$.''
\end{quotation}
%
I particularly like \citep{Syski1992} on this subject. 

\ifdraft

We now pause to point out the difference with the discrete-time case discussed in Section \ref{sec:morehitdt}.

\begin{remark}\label{exitnotstop}Equations \eqref{eq:jointchar1}--\eqref{eq:jointchar2} are not analogous to \eqref{eq:eoed} given for the discrete-time case in Section \ref{sec:morehitdt}. For one, \eqref{eq:eoed} apply to general stopping times while \eqref{eq:jointchar1}--\eqref{eq:jointchar2} apply only to exit times. Equations \eqref{eq:eoed} imply the discrete-time analogue of \eqref{eq:jointchar1}--\eqref{eq:jointchar2}: equations \eqref{eq:eoed2t} given in Section \ref{sec:morehitdt}, but the implication is only one way. To obtain the direct continuous-time analogue of \eqref{eq:eoed} satisfied by the exit distribution $\mu$ and occupation measure $\nu$ of a general stopping time $\eta$, one would have to prove that for each $x\in\s$
\begin{enumerate}[label=(\alph*)]
\item $\mu(dt,x)$ decomposes into
$$\mu(dt,x)=\mu(\{0\},x)\delta_0(dt)+\mu^c(dt,x),$$
where $\mu^c(dt,x)$ has a density $\mu^c(t,x)$ with respect to the Lebesgue measure such that $\mu^c_\tau(0,x)=0$.
\item $\nu(dt,x)$ has a density $\nu(t,x)$ with respect to the Lebesgue measure and $t\mapsto\nu(t,x)$ is a differentiable function.
\end{enumerate}
Using the above, a time-space version of Dynkin's Formula, a mollifier, and integration by parts one can then show that
\begin{align*}
& \dot{\nu}(t,x)+\mu^c(t,x)=\sum_{y\in\s}\nu(t,y)q(y,x),\qquad \forall  x\in\s,\quad t\geq0,\\ 
&\nu(0,x)+\mu(\{0\},x)=\lambda(x),\qquad  \forall x\in\s.\end{align*}
These are the continuous-time version of the equations in Lemma \ref{eqnsd}, and \eqref{eq:jointchar1}--\eqref{eq:jointchar2} can be derived from the above by fixing $\eta$ to be an exit time. While we know that the exit distribution and occupation measure of exit times satisfy the (a) and (b) above, we do not know whether those of general stopping times do too. This remains an open question.
\end{remark}

\fi
%

There appears to be some confusion in the literature regarding equations~\eqref{eq:nueqsm}--\eqref{eq:mueqsm} satisfied by the marginals. Sometimes (for example\footnote{We only consider this issue for chains, while the referenced works consider more general Markov processes. However, with some work, I believe that the arguments presented in this section, and in \citep[Chapter~2]{Kuntzthe}, can be ported over to the more general case.}, \citep{Helmes2001,Helmes2002,Lasserre2004,Lasserre2006}) it is assumed that \eqref{eq:nueqsm} has no other solutions aside from $\nu_S$. This assumption turns out to be flawed: as the example given immediately after the proof of Theorem~\ref{charact} shows, \eqref{eq:nueqsm} can indeed have other solutions if the domain contains absorbing sets that are unreachable from the support of the initial distribution. 



\subsection{On the chain's definition, minimality, and right-continuity*}\label{sec:otherchains}
In this section, we address the same question we did in Section~\ref{sec:otherdef} for discrete-time chains: are all chains made equal? Or do the particulars of the chain's definition matter? 

In the discrete-time case the answer was simply `no, the details of the chain's definition do not matter': the path law of a discrete-time chain
depends on its one-step matrix $P$ and its initial distribution $\gamma$ but not on its particular construction.

In the continuous-time case, the answer is a bit more complicated. In short, if we only consider chains (i.e., continuous-time processes taking values in countable sets $\s$ and satisfying the Markov property) whose paths posses some basic regularity properties (properties that are typically taken for granted in practice), then the particulars of the definition do not matter. However, in general, they do matter for reasons stemming from the uncountability of the time-axis allowing for a lot of bad behaviour (behaviour that is tamed by the regularity assumptions).


The properties are two: \emph{right-continuity of the paths}\index{right-continuous paths} and \emph{minimality of the chain}. Right-continuity of the paths (w.r.t. the discrete topology on $\s$) means that $(a)$ upon entering a state, the chain lingers in the state for at least a short amount on time and $(b)$ the chain enters states at  well-defined points in time (for example, it cannot be the case that $X_{t'}(\omega)=x$ for all $t'$ in some interval $(t,t+\varepsilon)$ but $X_t(\omega)\neq x$). From an applied point of view, chains whose paths do not satisfy these properties can seem very odd indeed. For instance, there are chains that immediately jump out of every state they enter \citep[Sections~III.23, 35]{Rogers2000a}. What is going on in these cases is that the state space is countable but not `discrete' in the sense that its states are not `well-separated' from each other (for instance, $\s$ might be the set of rational numbers $\mathbb{Q}$). For such chains, the theory is substantially more involved as the discrete topology on $\s$ must be replaced with the more complicated \emph{Ray-Knight} topology. For a lovely account of this theory, see \citep{Rogers2000a}. On the other hand, the right-continuous case is simple: the time elapsed between two consecutive jumps is exponentially distributed with parameter $\lambda(x)$ depending on the chain's state  $x$ immediately prior the second jump and the probability $p(x,y)$ that the chain next jumps to $y$ if it lies in $x$ depends only on $x$ and $y$.

As we have seen in Section~\ref{sec:FKG}, chains with right-continuous paths can explode: infinitely many jumps accumulate in a finite amount of time and the chain diverges to infinity in the sense captured by Theorem~\ref{tautin}. In these cases, it possible to continue the chain in ways that the  Markov property is preserved. For example, at the moment of explosion sample a state from the initial distribution, re-initialise the chain at this state, and continue the sample path by running the Kendall-Gillespie algorithm (Section~\ref{sec:FKG}) with a new set of independent random variables. Thus, explosions introduce an ambiguity in the chain's definition (the process ``runs out of instructions'' at the explosion time, \citep[p.43]{Asmussen2003}) and depending on whether the chain is re-initialised and how, we obtain chains with different path laws. Of course, a chain that is restarted has probability to be in any given state $x$ at any given time $t$ no smaller than a chain that is not restarted: the probability of the former being in $x$ at $t$ equals the probability that it has not exploded by $t$ and lies in $x$ at $t$, plus the probability that it has exploded once by $t$, been re-initialised, and lies in $x$ at time $t$, etc.; while probability of the latter being in $x$ at $t$ is simply the probability that it has not exploded by $t$ and lies in $x$ at $t$. For this reason, chains that do not continue past an explosion are called \emph{minimal}\index{minimal chains}.

Imposing right-continuity and minimality removes all ambiguity that otherwise may arise in the chain's definition: in this case, the path law is fully determined by the chain's jump rates $\lambda:=(\lambda(x))_{x\in\s}$ and jump matrix $P:=(p(x,y))_{x,y\in\s}$ (or, equivalently, its rate matrix) and its initial distribution $\gamma$. Thus, for a given $\lambda$, $P$, and $\gamma$, it does not matter how we build the chain as long as its vector of jump rates is $\lambda$, its jump matrix is $P$, and its initial distribution is $\gamma$. The remainder of this section is dedicated to formalising this fact in Theorem~\ref{thrm:otherchains} below (see the ensuing material for the terminology used in the theorem's statement). To not overly complicate the exposition, we will focus on the case of chains without absorbing states.
\begin{theorem}[Uniqueness of the path law of minimal right-continuous chains]\label{thrm:otherchains}Suppose that there are no absorbing states (i.e., that~\eqref{eq:ninfty} below holds). The path law 
$$\mathbb{L}_\gamma(A):=\Pbl{\{X\in A\}}\quad\forall A\in\cal{E}$$
of any minimal right-continuous chain $X$ with state space $\s$ and initial distribution $\gamma$ is the only probability measure on $(\cal{P},\cal{E})$  satisfying~\eqref{eq:pathlawct} for all cylinder sets (i.e., sets of the form~\eqref{eq:nfanfewuiafnueia}), where the jump rates $(\lambda(x))_{x\in\s}$ and jump matrix $(p(x,y))_{x,y\in\s}$ are given by
\begin{align}\label{eq:lambda2}&\lambda(x)=-\ln(\Pbx{\{T_1>1\}})\quad\forall x\in\s,\\
\label{eq:jumpmatrix2}&p(x,y)=\Pbx{\{X_{T_1}=y\}}\quad\forall x,y\in\s.\end{align}
with $T_1$ denoting the chain's first jump time~\eqref{eq:jumptimesother}. Moreover, $\lambda(x)$ is positive and finite for all $x$ in $\s$ and $P=(p(x,y))_{x,y\in\s}$ is a one-step matrix (i.e., satisfies~\eqref{eq:1step}) with $p(x,x)=0$ for all $x$ in $\s$. For these reasons,
\begin{equation}\label{eq:qmatrix2}q(x,y):=\left\{\begin{array}{ll}-\lambda(x)&\text{if }x=y,\\ \lambda(x)p(x,y)&\text{if }x\neq y\end{array}\right.\quad\forall x,y\in\s,\end{equation}
defines a stable and conservative rate matrix $Q:=(q(x,y))_{x,y\in\s}$ (i.e., $Q$ satisfies \eqref{eq:qmatrix}) and we can recover $\lambda$ and $P$ from $Q$:
$$\lambda(x)=-q(x,x)\enskip\forall x\in\s,\quad p(x,y)=(1_x(y)-1)q(x,y)/q(x,x)\enskip\forall x,y\in\s.$$
Thus, the path law is fully determined by the initial distribution $\gamma$ and rate matrix $Q$.
\end{theorem}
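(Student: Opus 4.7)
The plan is to show that any minimal right-continuous chain with initial distribution $\gamma$ and jump rates/matrix defined by~\eqref{eq:lambda2}--\eqref{eq:jumpmatrix2} induces finite-dimensional distributions on the path space that match the explicit formula~\eqref{eq:nf78eawh78wea}. Uniqueness of the path law will then fall out immediately from Theorem~\ref{thrm:pathlawunict}, and the various claims about $\lambda$, $P$, and $Q$ will come out as by-products of the argument.

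First I would pin down the distribution of the first jump time $T_1$ under $\Pb_x$. Let $g(t) := \Pbx\{T_1 > t\}$. Right-continuity of sample paths at time $0$ forces $T_1(\omega) > 0$ for every $\omega \in \{X_0 = x\}$, so $g(t) \uparrow 1$ as $t \downarrow 0$. On the event $\{T_1 > s\}$ one has $X_s = x$, so applying the Markov property (Theorem~\ref{thrm:markovprop}) at the deterministic time $s$ to the event $\{T_1^{(s)} > t\}$ for the shifted chain $X^s$ (where $T_1^{(s)}$ denotes its first jump time) gives the multiplicative identity $g(s+t) = g(s)g(t)$. Combined with right-continuity this forces $g(t) = e^{-\lambda(x) t}$ for some $\lambda(x) \in [0,\infty]$. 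The value $\lambda(x) = \infty$ is incompatible with $T_1 > 0$ a.s., and $\lambda(x) = 0$ would make $x$ absorbing, contradicting the no-absorbing-states assumption; hence $\lambda(x) \in (0,\infty)$, and setting $t = 1$ recovers~\eqref{eq:lambda2}.

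Next I would identify the joint law of $(T_1, Y_1)$, where $Y_1 := X_{T_1}$. Because $X$ actually changes state at $T_1$, $p(x,x) = 0$ trivially. For $y \neq x$ and $t \geq 0$, the event $\{T_1 > t, Y_1 = y\}$ lies in the set $\{X_t = x\}$, so applying the Markov property at time $t$ to the functional $F$ on $\cal{P}$ that reads off whether the first jump of a path lands in $y$ gives
\[
\Pbx\{T_1 > t,\,Y_1 = y\} = \Pbx\{T_1 > t\}\cdot\Pbx\{Y_1 = y\} = e^{-\lambda(x)t}\,p(x,y).
\]
Summing over $y$ yields $\Pbx\{T_1 < \infty\} = \sum_y p(x,y)$; since $T_1$ is a.s.\ finite, $P$ is a one-step matrix with zero diagonal, so~\eqref{eq:qmatrix2} defines a stable conservative rate matrix. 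The formulas for recovering $\lambda$ and $P$ from $Q$ are algebraic.

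Finally I would iterate. The hard part — and the real obstacle — is applying a Markov property at the random time $T_1$, since Theorem~\ref{thstrmk} was stated for the specific algorithmic chain of Section~\ref{sec:FKG}, not for an arbitrary minimal right-continuous one. I would handle this by re-running the approximation argument from the proof of Theorem~\ref{thstrmk} in this more general setting: approximate $T_1$ from above by the dyadic stopping times $T_1^{(k)} := \lceil k T_1\rceil/k$, use the ordinary Markov property of Theorem~\ref{thrm:markovprop} at the deterministic rational times $n/k$ on the events $\{T_1^{(k)} = n/k\}$, and pass to the limit $k \to \infty$ using right-continuity of sample paths and bounded convergence. This produces a strong-Markov-type statement at $T_1$ which shows that, conditioned on $(Y_0, T_1, Y_1)$, the shifted process $X^{T_1}$ is a minimal right-continuous chain with the same $\lambda$ and $P$ started from $Y_1$. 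An induction on $n$ then yields
\[
\Pbl\{Y_0 = x_0,\ldots,Y_n = x_n,\,S_1 > s_1,\ldots,S_n > s_n\} = \gamma(x_0)\prod_{k=1}^{n} p(x_{k-1},x_k)\,e^{-\lambda(x_{k-1})s_k},
\]
which is precisely~\eqref{eq:nf78eawh78wea}. Uniqueness of the path law on $(\cal{P},\cal{E})$ then follows from Theorem~\ref{thrm:pathlawunict}, and since $Q$ determines $\lambda$ and $P$, the path law is fully determined by $\gamma$ and $Q$.
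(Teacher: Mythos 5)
Your proposal is essentially correct and follows the same route as the paper: establish the conditional laws of the waiting times and jump chain from the Markov property, upgrade to a strong-Markov statement at the jump times by dyadic approximation and right-continuity, iterate to obtain the finite-dimensional distributions~\eqref{eq:nf78eawh78wea}, and conclude uniqueness via Theorem~\ref{thrm:pathlawunict}. The paper arranges the steps in the opposite order (strong Markov property first, then memorylessness at deterministic times to pin down the exponential), but the content and tools are the same.

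One oversight worth naming: you correctly warn that Theorem~\ref{thstrmk} cannot be cited for an arbitrary minimal right-continuous chain and propose re-running its approximation argument, yet you apply Theorem~\ref{thrm:markovprop} repeatedly (for the multiplicative identity, for splitting $\Pbx\{T_1>t,\,Y_1=y\}$, and inside the dyadic approximation) as though it \emph{does} apply. It does not, for exactly the same reason: Theorem~\ref{thrm:markovprop} was proved for the algorithmic construction of Section~\ref{sec:FKG} using Theorem~\ref{thrm:condind}, which relies on the explicit i.i.d.\ structure of Algorithm~\ref{gilalg}. For the abstract chain of Section~\ref{sec:otherchains}, the deterministic-time Markov property is \emph{axiomatic} --- it is property~5 of the definition, stated only for events $A$ generated by sets of the form~\eqref{eq:ndeyadnyuaenyda} and for singleton events $\{s<T_f,\,X_s=y\}$. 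To get the Markov property in the functional form you need (arbitrary $A\in\cal{F}_t$ and arbitrary $B\in\cal{E}$), you must first extend from the generating $\pi$-system of cylinder sets~\eqref{eq:gensetsalt} to all of $\cal{E}$ via Dynkin's lemma (Lemma~\ref{lem:dynkinpl} plus Exercise~\ref{exe:measpedantry}). The paper does this explicitly in the paragraph beginning ``The strong Markov property''; your proof needs the same short preliminary before any of the Markov-property manipulations are legitimate. Once that is in place, the rest of your argument goes through as written.
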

\subsubsection*{Continuous-time chains}In general, a `continuous-time chain taking values in countable set $\s$' is a model composed of
\begin{itemize}
\item an \emph{underlying space}: a measurable space  $(\Omega,\cal{F})$;
\item a set of \emph{underlying probability measures}: a collection $(\Pb_x)_{x\in\s}$ of probability measures $\Pb_x$ on $(\Omega,\cal{F})$ indexed by \emph{states} $x$ in $\s$;
\item a \emph{final time} $T_f$: an $\cal{F}/\cal{B}(\r_E)$-measurable function mapping from $\Omega$ to $(0,\infty]$;
\item a \emph{chain} $X$: a collection $(X_t)_{t\geq0}$ of functions $X_t$ indexed by \emph{times} $t$ in $[0,\infty)$ such that 
\begin{enumerate}
\item $X$ is defined up until $T_f$: for each $t$ in $[0,\infty)$, $X_t$ maps from $\{t<T_f\}$ to $\s$,
\item $X$ satisfies an appropriate measurability requirement: for each $x$ in $\s$ and $t$ in $[0,\infty)$, $\{t<T_f,X_t=x\}$ belongs to $\cal{F}$,
\item under $\Pb_x$, $X$ starts at $x$: for each $x$ in $\s$, $\Pbx{\{X_0=x\}}=1$ (note that $\{0<T_f\}=\Omega$ by $T_f$'s definition), 
\item immediately prior $T_f$, $X$ cannot be in one particular state:
$$\liminf_{t\uparrow T_f(\omega)}1_{x}(X_t(\omega))=0\quad\forall x\in\s,\enskip\omega\in\{T_f<\infty\}$$
where $t\uparrow T_f(\omega)$ means that we are taking the limit infimum from below (that is, we are only considering $t$s strictly less than $T_f(\omega)$).

\item the collection $X$ satisfies the Markov property:
\begin{align*}&\Pbz{A\cap\{t<T_f,X_t=x\}\cap\{t+s<T_f,X_{t+s}=y\}}\\
&=\Pbz{A\cap\{t<T_f,X_t=x\}}\Pbx{\{s<T_f,X_s=y\}}\quad\forall x,y,z\in\s,\enskip t,s\in[0,\infty),\end{align*}
where $A$ denotes any event whose occurrence we can deduce from observing the chain up until $t$, i.e.  $A$ belongs to the sigma-algebra $\cal{F}_t$ generated by the events
\begin{equation}\label{eq:ndeyadnyuaenyda}\{X_{s}=x,s< T_f\}\quad\forall s\in[0,t],\enskip x\in\s.\end{equation} 
%
%
\end{enumerate}
\end{itemize}
Just as for the chains of Section~\ref{sec:FKG}, if the chain's starting position is sampled from a probability distribution $\gamma=(\gamma(x))_{x\in\s}$, then we use $\Pb_\gamma:=\sum_{x\in\s}\gamma(x)\Pb_x$ to describe the chain's statistics.

\subsubsection*{Chains with right-continuous paths}\index{right-continuous paths} The chain is said to have \emph{right-continuous} paths if for any $t$ in $[0,\infty)$ and $\omega$ in $\{t<T_f\}$, there exists an $0<\varepsilon<T_f(\omega)$ such that
$$X_{t+\varepsilon'}(\omega)=X_t(\omega)\quad\forall \varepsilon'\in[0,\varepsilon].$$
In this case, the \emph{jump times}
\begin{equation}\label{eq:jumptimesother}T_0(\omega):=0,\quad T_{n+1}(\omega):=\inf\{t\in[T_n(\omega),T_f(\omega)):X_t(\omega)\neq X_{T_n(\omega)}(\omega)\}\quad\forall n\in\n,\enskip \omega\in\Omega,\end{equation}
form an increasing sequence $(T_n(\omega))_{n\in\n}$. We use $N$ to denote the number of the chain's last jump:
$$N(\omega):=\sup\{n\in\n:T_n(\omega)<\infty\}\quad\forall \omega\in\Omega.$$
Due to these definitions,
$$T_{N(\omega)}<T_f(\omega)\quad\text{and}\quad\quad X_t(\omega)=X_{T_{N(\omega)}}(\omega)\quad\forall t\in[T_{N(\omega)}(\omega),T_f(\omega))$$
if the chain stops jumping ($N(\omega)<\infty$). In this case, 4 in the definition of $X$ above implies that the final time must be infinite as
\begin{align*}\{N<\infty,T_f<\infty\}&=\{N<\infty,X_t=X_{T_N}\enskip\forall t\in[T_N,T_f),T_f<\infty\}\\
&=\bigcup_{x\in\s}\{N<\infty,X_t=X_{T_N}=x\enskip\forall t\in[T_N,T_f),T_f<\infty\}\\
&\subseteq \bigcup_{x\in\s}\left\{\liminf_{t\uparrow T_f} 1_x(X_t)=1,T_f<\infty\right\}=\emptyset.\end{align*}
Putting the above together we find that the chain stops jumping if only if it gets stuck in some \emph{absorbing state}:
$$\{N<\infty\}=\{N<\infty,T_f=\infty,X_t=X_{T_N}\enskip\forall t\in[T_N,\infty)\}.$$
%
%
%
To not overly complicate the exposition of this section, we assume that this never happens:
\begin{equation}\label{eq:ninfty}N(\omega)=\infty\quad\forall \omega\in\Omega.\end{equation}
In this case, the jump times' definition and the right-continuity of the paths imply that the \emph{waiting times},
\begin{equation}\label{eq:waittimesgen}S_n(\omega):=T_{n}(\omega)-T_{n-1}(\omega)\quad\forall \omega\in\Omega,\enskip n\in\zp,\end{equation}
are all well-defined and positive; and that
\begin{equation}\label{eq:chaingendef}X_t(\omega)=Y_n(\omega)\quad\forall t\in[T_n(\omega),T_{n+1}(\omega)),\enskip  \omega\in\Omega,\enskip n\in\n,\end{equation}
where $Y=(Y_n)_{n\in\n}$ is the discrete-time process obtained by sampling the chain at the jump times:
\begin{equation}\label{eq:jumpchaingen}Y_n(\omega):=X_{T_n(\omega)}(\omega)\quad\forall \omega\in\Omega,\enskip n\in\n.\end{equation}
\subsubsection*{The explosion time and minimal chains with right-continuous paths}We refer to the limit of the jump times as the \emph{explosion time} $T_\infty$:
$$T_\infty(\omega)=\lim_{n\to\infty}T_n(\omega).$$
Our assumption~\eqref{eq:ninfty} that the chain never stops jumping implies that $T_n(\omega)<\infty$ for all $n$ and $\omega$. The definition of the jump times~\eqref{eq:jumptimesother} shows that this can only be the cases if $T_n(\omega)<T_f(\omega)$ for all $n$ and $\omega$. Taking the limit $n\to\infty$ then shows that the explosion time cannot be greater than the final time:
$$T_\infty(\omega)\leq T_f(\omega)\quad\forall \omega\in\Omega.$$
The chain is said to be \emph{minimal}\index{minimal chains} if the explosion time is the final time:
\begin{equation}\label{eq:tfti}T_\infty(\omega)=T_f(\omega)\quad\forall\omega\in\Omega.
\end{equation}

\subsubsection*{The strong Markov property}For any minimal chain $X$ with right-continuous paths, \eqref{eq:chaingendef} shows that the chain is fully determined by its waiting times $(S_n)_{n\in\zp}$ in~\eqref{eq:waittimesgen} and jump chain $Y:=(Y_n)_{n\in\n}$ in~\eqref{eq:jumpchaingen}. That is, we can view the chain $X$ as a function  mapping from $\Omega$ to  path space $\cal{P}$ introduced in Section~\ref{sec:pathspacect}. It is not too difficult to show that $X$ is $\cal{F}/\cal{E}$-measurable (do it!), where $\cal{E}$ denotes the cylinder sigma algebra on $\cal{P}$ also introduced in Section~\ref{sec:pathspacect}. Moreover, it follows from 5 in the chain's definition that, for any $A$ in $\cal{F}_t$ and set $B$ of the type in \eqref{eq:gensetsalt},
\begin{align*}&\Pbz{A\cap\{t<T_\infty,X_t=x\}\cap\{X^t\in B\}}\\
&=\Pbz{A\cap \{t<T_\infty,X_t=x\}\cap\{t+t_n<T_\infty,X_{t+t_1}=z_1,\dots,X_{t+t_n}=z_n\}}\\
&=\Pbz{A\cap \{t<T_\infty,X_t=x\}}\Pbx{\{t_n<T_\infty,X_{t_1}=z_1,\dots,X_{t_n}=z_n\}}\\
&=\Pbz{A\cap\{t<T_\infty,X_t=x\}}\Pbx{\{X\in B\}},\end{align*}
where $X^t$ denotes the $t$-shifted chain (as in Section~\ref{sec:markovprop}). Because the sets $B$ of this type generate $\cal{E}$ (Exercise~\ref{exe:measpedantry}), the same arguments as those given in the proofs of Theorems~\ref{thrm:markovprop} and~\ref{thstrmk} then show that $X$ possesses the strong Markov property: that is, Theorem~\ref{thstrmk} also applies to $X$ (here, it is important notice that the filtration $(\cal{F}_t)_{t\geq0}$ defined by~\eqref{eq:ndeyadnyuaenyda} coincides with that in Definition~\ref{def:filt3}, see Exercise~\ref{ex:filtrd}). 

\subsubsection*{The waiting times, the jump chain, and the path law}It follows from Exercise~\ref{ex:filtrd} that the jump times are $(\cal{F}_t)_{t\geq0}$-stopping times (Definition~\ref{def:stopct}). The strong Markov property~\eqref{eq:strmkvct} 
then implies that
\begin{align*}\Pbz{A\cap\{Y_n=x,Y_{n+1}=y,S_{n+1}>s\}}&=\Pb_z(A\cap\{X_{T_n}=x,Y_{1}^{T_n}=y,S_{1}^{T_n}>s\})\\
&=\Pbz{A\cap\{X_{T_n}=x\}}\Pbx{\{Y_1=y,S_{1}>s\}}\\
&=\Pbz{A\cap\{Y_n=x\}}\Pbx{\{Y_1=y,S_{1}>s\}}\end{align*}
for all natural numbers $n$, events $A$ in the pre-$T_n$ sigma-algebra $\cal{F}_{T_n}$, states $x,y,z$ in $\s$, and $s\geq0$.
Because Proposition~\ref{prop:ftngn} shows that the pre-$T_n$ sigma-algebra contains the sigma-algebra $\cal{G}_n$ generated by $Y_0,\dots, Y_n$ and $S_1,\dots,S_n$ (Definition~\ref{def:filt2}), the above implies that
\begin{equation}\label{eq:chaingenprop1}\Pbx{\{Y_{n+1}=y,S_{n+1}>s\}|\cal{G}_n}=\Pb_{Y_n}(\{Y_1=y,S_{1}>s\})\quad\Pb_x\text{-almost surely},\end{equation}
for all $n$ in $\n$,  states $x,y$ in $\s$, and $s$ in $[0,\infty)$. Thus,
\begin{align}\label{eq:chaingenprop2}\Pbx{\{Y_{n+1}=y\}|\cal{G}_n}&=\Pb_{Y_n}(\{Y_1=y\})\quad\Pb_x\text{-almost surely},\enskip\forall x,y\in\s,\enskip n\in\n,\\
\Pbx{\{S_{n+1}>s\}|\cal{G}_n}&=\Pb_{Y_n}(\{S_{1}>s\})\quad\Pb_x\text{-almost surely},\enskip\forall x\in\s,\enskip s\in[0,\infty),\enskip n\in\n.\label{eq:chaingenprop3}
\end{align}

Now, applying the strong Markov property~\eqref{eq:strmkvct}, we find that
\begin{align*}\Pbx{\{Y_1=y,S_1>s\}}&=\Pbx{\{s<T_1,X_{s}=x,Y_1^{s}=y\}}=\Pbx{\{s<T_1,X_{s}=x\}}\Pbx{\{Y_1=y\}}\\
&=\Pbx{\{s<S_1\}}\Pbx{\{Y_1=y\}}\quad\forall x,y\in\s,\enskip s\in[0,\infty)\end{align*}
and that
\begin{align*}\Pbx{\{S_{1}>t+s\}}&=\Pbx{\{t<T_1,X_{t}=x,s<S^t_1\}}=\Pbx{\{t<T_1,X_{t}=x\}}\Pbx{\{s<S_1\}}\\
&=\Pbx{\{t<S_1\}}\Pbx{\{s<S_1\}}\quad\forall  x\in\s,\enskip t,s\in[0,\infty).
\end{align*}
That is, the waiting time distribution is \emph{memoryless} and it follows that  $\lambda(x)$ in~\eqref{eq:lambda2} is finite and that $S_1$, under $\Pb_x$, is exponentially distributed with parameter $\lambda(x)$ \citep[Theorem~2.3.1]{Norris1997}. Thus, we can re-write \eqref{eq:chaingenprop1}--\eqref{eq:chaingenprop3} as
\begin{align}
&\Pbx{\{Y_{n+1}=y,S_{n+1}>s\}|\cal{G}_n}=\Pbx{\{Y_{n+1}=y\}|\cal{G}_n}\Pbx{\{S_{n+1}>s\}|\cal{G}_n}\quad\Pb_x\text{-a.s.},\label{eq:prop1}\\
&\Pbx{\{Y_{n+1}=y\}|\cal{G}_n}=p(Y_n,y)\quad\Pb_x\text{-a.s.},\\
&\Pbx{\{S_{n+1}>s\}|\cal{G}_n}=e^{-\lambda(Y_n)s}\quad\Pb_x\text{-a.s.},\label{eq:prop3}
\end{align}
for all $n$ in $\n$, states $x,y$ in $\s$, and $s$ in $[0,\infty)$, where $(p(x,y))_{x\in\s}$ is as in~\eqref{eq:jumpmatrix2}.
%

%
%
Given \eqref{eq:prop1}--\eqref{eq:prop3}, the same argument as that in the proof of Theorem~\ref{thrm:pathlawunict} yields Theorem~\ref{thrm:otherchains}.
\begin{exercise}[The case with absorbing states]By expanding $(\Omega,\cal{F})$, introduce a sequence of i.i.d. unit-mean exponentially distributed random variables $(\tilde{S}_n)_{n\in\n}$ independent of (the old) $\cal{F}$. Replace $(S_n)_{n\in\zp}$ in \eqref{eq:waittimesgen} with
$$S_n:=\left\{\begin{array}{cl}T_n(\omega)-T_{n-1}(\omega)&\text{if }n\leq N(\omega)\\
\tilde{S}_n(\omega)&\text{if }n>N(\omega)\end{array}\right.\quad\forall \omega\in\Omega,\enskip n\in\zp$$
and redefine the jump times as
$$T_0(\omega):=0,\quad T_n(\omega):=\sum_{m=1}^n S_n(\omega)\quad\forall n\in\zp.$$
Now, define the jump chain $(Y_n)_{n\in\n}$ via \eqref{eq:jumpchaingen}. Emulating the steps taken above, show that Theorem~\ref{thrm:otherchains} holds with the following modifications: $p(x,x)$ may be zero and $Q=(q(x,y))_{x,y\in\s}$ in \eqref{eq:qmatrix2} is replaced by
$$q(x,y):=\left\{\begin{array}{ll}0&\text{if }p(x,x)>0\\-\lambda(x)&\text{if }x=y,\text{ }p(x,x)=0\\ \lambda(x)p(x,y)&\text{if }x\neq y,\text{ }p(x,x)=0\end{array}\right.\quad\forall x,y\in\s.$$

\end{exercise}

\newpage
\thispagestyle{premain}
\sectionmark{\MakeUppercase{Continuous-time chains II: the long-term behaviour}}
\section*{Continuous-time chains II: the long-term behaviour}\label{chap:long-termct}
\addcontentsline{toc}{section}{\protect\numberline{}Continuous-time chains II: the long-term behaviour}

With Sections~\ref{sec:FKG}--\ref{sec:otherchains} out of the way, we now focus on formalising for continuous-time chains the discussion regarding their long-term behaviour given in the introduction to Part~\ref{part:theory}. The treatment here follows its discrete-time counterpart (Sections~\ref{sec:entrance}--\ref{sec:flgeodt}) very closely and
%
the chapter overview given for the discrete-time case applies here almost unchanged. 
The only real differences are:

\begin{itemize}[leftmargin=*] 
\item The possibility of explosions throws up a couple of complications: the characterisation of the stationary equations in terms of linear equations is a bit more subtle in the continuous-time case than in the discrete-time one (compare Theorems~\ref{pstateqs}~and~\ref{Qstateq}) and, to rule out explosions in practice, we require a further Foster-Lyapunov criterion (Section~\ref{sec:flreg}).
\item We need to spend a little bit of time on deriving a series of results connecting continuous-time chains with the discrete-time \emph{skeleton chains} obtained by sampling the former at regular time intervals (Section~\ref{sec:skeleton}). These will prove key in porting results from the discrete-time case to the continuous-time one.
\item Continuous-time chains are always aperiodic (Section~\ref{sec:entct}). Thus, their time-varying law always converges (Theorem~\ref{thrm:timevarlimexist}).
\end{itemize}

\subsection[Entrance times; accesibility; closed communicating classes]{Entrance times, accesibility, and closed communicating classes} \label{sec:entct}
\pagestyle{main}
To figure out which states the chain will visit in the long run, we must first look at which states it visits at all. For this, we need \emph{entrance times}:
\begin{definition}[Entrance times]\label{def:entrancect} Given\index{entrance time} any set $A\subseteq\s$ and   positive integer $k$, the $k$th entrance time $\phi^k_A$ is the point in time that the chain enters the set $A$ for the $k$th time:\glsadd{varphiA}\glsadd{varphiAk}
$$\varphi^0_A(\omega):=0\enskip\forall\omega\in\Omega,\quad\varphi^k_A(\omega):=\inf\{t\in(\varphi^{k-1}_A(\omega),T_{\infty}(\omega)):X_t\in A\}\enskip\forall\omega\in\Omega,\enskip k>0,$$
For the first entrance time, we write $\varphi_A$ instead of $\varphi_A^1$. Additionally, we use the shorthand $\varphi_x:=\phi_{\{x\}}$ (resp. $\varphi_x^k:=\varphi_{\{x\}}^k$) to denote the first (resp. $k$th) entrance time of a state $x$ in $\s$.
\end{definition}
A bit inconsistently with our terminology for exit times (Section~\ref{moh1}), we do not view starting inside $A$ as `entering' $A$ (hence, $\varphi_A>0$ by definition): a small but important detail. If the chain starts in $x$, then $\varphi_x$ is the first time that the chain returns to $x$ and $\varphi_x^k$ the $k$th time it does. Hence, $\varphi_x$ is often referred to as the \emph{first return time} (or, simply, the \emph{return time}\index{return time}) and $\varphi_x^k$ as the  $k$th \emph{return time}. Because we are to  deduce whether the chain has entered a given set yet  by continuously monitoring it and because it can only enter a set by jumping into it, entrance times are jump-time-valued stopping time.
\begin{exercise}\label{ex:entdc} Using similar arguments to those given in the proof of Proposition~\ref{prop:hitdc} and induction, show that the $k$th entrance time $\varphi^k_A$ to $A$ is a jump-time-valued $(\cal{F}_t)_{t\geq0}$-stopping time, where $(\cal{F}_t)_{t\geq0}$ denotes the filtration generated by the chain (Definition~\ref{def:filt2}). Furthermore, prove that \eqref{eq:stopdc}--\eqref{eq:etatinfinf} hold if we replace $\eta$ with $\varphi^k_A$ and $\varsigma$ with $\phi^k_A$, where $\phi^k_A$ denotes the $k$th entrance time to $A$ of the jump chain $Y$ (obtained by replacing $X$ with $Y$ in Definition~\ref{def:entrance}).  
\end{exercise}
%
%
%
%
\subsubsection*{Accessibility}Entrance times allow us to formalise the idea the chain can reach, or \emph{access}\index{accessible}, one region of the state space from another:
\begin{definition}[Accessibility]\label{def:accesiblect}A state\glsadd{accessibility}\glsadd{accessibility2} $y$ is accessible from a state $x$ (or $x\to y$ for short) if and only if $\Pb_x\left(\{\varphi_y<\infty\}\right)>0$. Similarly, we say that a subset $B$ of the state space is accessible from another subset $A$ (or $A\to B$) if for every $x$ in $A$ there exists a $y$ in $B$ such that $x\to y$ (equivalently, if $\Pbl{\{\varphi_B<\infty\}}>0$ for all initial distributions $\gamma$ with support contained in $A$). 
\end{definition}
We can characterise this relationship in multiple ways:
%
\begin{theorem}[Equivalent definitions of accesibility]\label{thrm:accesibility} Suppose that $x\neq y$ and let $(p(x,y))_{x,y\in\s}$ and $(p_t(x,y))_{x,y\in\s}$ denote the jump matrix~\eqref{eq:jumpmatrix} and $t$-transition matrix~\eqref{eq:transprob}, respectively. The following statements are equivalent:
\begin{enumerate}[label=(\roman*),noitemsep]
\item $x\to y$ for $X$.
\item $x\to y$ for the jump chain $Y$.
\item There exists some $x_1,\dots,x_l$ in $\cal{S}$ such that $q(x,x_1)q(x_1,x_2)\dots q(x_l,y)>0$.
\item There exists some $x_1,\dots,x_l$ in $\cal{S}$ such that $p(x,x_1)p(x_1,x_2)\dots p(x_l,y)>0$.
\item $p_t(x,y)>0$ for some $t$ in $(0,\infty)$.
\item $p_t(x,y)>0$ for all $t$ in $(0,\infty)$.
\end{enumerate}
\end{theorem}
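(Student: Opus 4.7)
The plan is to prove the six statements equivalent by closing the chain
\[
(\text{i})\Leftrightarrow(\text{ii})\Leftrightarrow(\text{iii})\Leftrightarrow(\text{iv})\;\Rightarrow\;(\text{vi})\;\Rightarrow\;(\text{v})\;\Rightarrow\;(\text{i}),
\]
which routes all the work through the jump chain and its one‑step matrix $P$, where we already have the discrete‑time characterisation of accessibility at our disposal (Lemma~\ref{lem:accdt}), and then bootstraps back to the transition probabilities $p_t$ via a direct path‑law estimate. I pick this particular cycle because it lets the continuous‑time geometry enter at exactly one point, namely the positivity claim (vi), and because every other arrow is either trivial or an already‑established discrete‑time result.

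First I would dispose of the discrete‑side equivalences. Exercise~\ref{ex:entdc} gives $\{\varphi_y<\infty\}=\{\phi_y<\infty\}$ on all of $\Omega$, so $\Pbx{\{\varphi_y<\infty\}}>0\Leftrightarrow\Pbx{\{\phi_y<\infty\}}>0$, yielding (i)$\Leftrightarrow$(ii). Since the jump chain $Y$ is a discrete‑time chain with one‑step matrix $P$ (Theorem~\ref{thrm:condind}), applying Lemma~\ref{lem:accdt} to $Y$ gives (ii)$\Leftrightarrow$(iv). For (iii)$\Leftrightarrow$(iv), note that \eqref{eq:jumpmatrix} implies that for $z\neq w$ one has $p(z,w)>0$ iff $q(z)>0$ and $q(z,w)>0$, and that $q(z,w)>0$ already forces $q(z)>0$. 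A positive‑product path in either sense can be reduced to one in which consecutive states differ (any $z_i=z_{i+1}$ makes the corresponding factor non‑positive for $q$, so cannot occur; for $p$ we simply delete repeats), so the two conditions are equivalent.

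The one genuinely new step is (iv)$\Rightarrow$(vi). Fix $t>0$ and a sequence $x=x_0,x_1,\dots,x_l,x_{l+1}=y$ with $p(x_0,x_1)\cdots p(x_l,y)>0$ and consecutive states distinct. Choose positive numbers $s_1,\dots,s_{l+1}$ and an open interval $(b,c)$ with $s_1+\dots+s_{l+1}<b$ and $c<t$. Using Theorem~\ref{thrm:pathlawunict} I would bound
\[
p_t(x,y)\;\geq\;\Pbx{\{Y_1=x_1,\dots,Y_{l+1}=y,\;T_{l+1}\in(b,c),\;t<T_{l+2}\}},
\]
and compute the right‑hand side explicitly as
\[
p(x,x_1)\cdots p(x_l,y)\,\lambda(x)\lambda(x_1)\cdots\lambda(x_l)\!\!\int_{\Delta_{b,c}}\!\!\prod_{i=1}^{l+1}e^{-\lambda(x_{i-1})s_i}\,e^{-\lambda(y)(t-s_1-\cdots-s_{l+1})}\,ds_1\cdots ds_{l+1},
\]
where $\Delta_{b,c}=\{(s_1,\dots,s_{l+1})\in(0,\infty)^{l+1}:s_1+\dots+s_{l+1}\in(b,c)\}$. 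Every jump rate $\lambda(\cdot)$ is positive (it equals $1$ if $q(\cdot)=0$ by \eqref{eq:lambda}), the exponential factors are strictly positive on $\Delta_{b,c}$, the simplex $\Delta_{b,c}$ has positive Lebesgue measure, and the $p$‑product is positive by hypothesis, so $p_t(x,y)>0$ for every $t>0$, giving (vi).

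The remaining arrows are immediate. Clearly (vi)$\Rightarrow$(v). For (v)$\Rightarrow$(i), if $p_t(x,y)>0$ then on the event $\{t<T_\infty,X_t=y\}$ the right‑continuous path of $X$ has started in $x\neq y$ and lies in $y$ at time $t$, so it must have entered $y$ at some moment in $(0,t]$ (here I would invoke the definition \eqref{eq:cpathdef}--\eqref{eq:cpathdef3} of the paths, exactly as in the proof of Proposition~\ref{prop:hitdc}), whence $\varphi_y\leq t<\infty$ and $\Pbx{\{\varphi_y<\infty\}}\geq p_t(x,y)>0$. The main obstacle is the (iv)$\Rightarrow$(vi) step: one has to be careful to choose the integration region so that it sits strictly below $t$, to treat the absorbing case $q(y)=0$ (in which case the requirement $t<T_{l+2}$ can simply be dropped since $X$ stays at $y$ forever after $T_{l+1}$), and to ensure measurability of the event used in the lower bound so that Theorem~\ref{thrm:pathlawunict} genuinely applies.
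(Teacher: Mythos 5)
Your proof is correct, and the overall cycle and the arrows $(\text{i})\Leftrightarrow(\text{ii})\Leftrightarrow(\text{iii})\Leftrightarrow(\text{iv})$ match the paper closely, but your treatment of the one continuous-time arrow $(\text{iv})\Rightarrow(\text{vi})$ takes a genuinely different route. The paper first proves single-jump positivity, $p_s(z,w)\geq \Pb_z(\{Y_1=w, S_1\leq s, s<S_2\})=p(z,w)(1-e^{-\lambda(z)s})e^{-\lambda(w)s}>0$ whenever $p(z,w)>0$ and $s>0$, and then invokes the semigroup property to stitch these one-step positivities together along the chain $x\to x_1\to\dots\to x_l\to y$: $p_t(x,y)\geq p_{t/(l+1)}(x,x_1)\cdots p_{t/(l+1)}(x_l,y)>0$. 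You instead lower-bound $p_t(x,y)$ by the probability of a specific $(l+1)$-jump path event and evaluate that probability as an explicit multi-dimensional integral over waiting times, arguing the integrand and region are positive. Both are sound; the paper's semigroup trick is shorter, avoids the multi-dimensional integral (together with the density/conditioning work needed to justify your integral formula from Theorem~\ref{thrm:pathlawunict}, which you flag but do not carry out), and does not need separate attention to the absorbing case, whereas your computation is more elementary and self-contained in that it never leaves the path-law picture. Your $(\text{v})\Rightarrow(\text{i})$ also differs cosmetically from the paper's $(\text{v})\Rightarrow(\text{ii})$ (a pathwise argument versus the bound $p_t(x,y)\leq\sum_{n\geq 1}\Pbx{\{Y_n=y\}}$), but the underlying idea is the same and both are immediate.
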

\begin{proof}Because Exercise~\ref{ex:entdc} shows that $\varphi_y$ is finite if and only if the first entrance time $\phi_y$ of the jump chain is finite, $(i)\Leftrightarrow (ii) \Leftrightarrow (iv)$ follows directly from Lemma~\ref{lem:accdt}. The definition of the jump matrix in \eqref{eq:jumpmatrix} implies that $(iii)\Leftrightarrow(iv)$. The chain's definition in \eqref{eq:cpathdef}--\eqref{eq:cpathdef3} and Tonelli's theorem imply that
$$p_t(x,y)=\Pbx{\{X_t=y,t<T_\infty\}}=\sum_{n=1}^\infty\Pbx{\{Y_n=y,T_n\leq t<T_{n+1}\}}\leq\sum_{n=1}^\infty\Pbx{\{Y_n=y\}}.$$
Because the definition of $\phi_y$ (Definition~\ref{def:entrance}) implies that $\{Y_n=y\}\subseteq\{\phi_y<\infty\}$ for all $n>0$, it follows from the above that $(v)\Rightarrow (ii)$. Trivially $(vi)\Rightarrow (v)$ and it suffices to show that $(iv)\Rightarrow(vi)$. Theorem~\ref{thrm:pathlawunict} implies that
\begin{align*}p_s(z,w)=\Pbz{\{X_t=w,t<T_\infty\}}&\geq \Pbz{\{Y_1=w,T_1\leq s<T_2\}}\geq \Pbz{\{Y_1=w,S_1\leq s, s<S_2\}}\\&=p(z,w)(1-e^{-\lambda(z)s})e^{-\lambda(w)s}>0\quad\forall  s\in(0,\infty)\end{align*}
for all $z$ and $w$ such that $p(z,w)>0$. The semigroup property (Theorem~\ref{thrm:findim}) then shows that
$$p_t(x,y)\geq p_{t/(l+1)}(x,x_1)p_{t/(l+1)}(x_1,x_2)\dots p_{t/(l+1)}(x_l,y)>0\forall t\in(0,\infty).$$
\end{proof}
Theorem~\ref{thrm:accesibility}$(iii)$ leaves clear that $\to$ is a transitive relation on both  $\cal{S}$ and its power set. The theorem has another more unexpected consequence:
\subsubsection*{Continuous-time chains are always aperiodic}If $x\neq y$, Theorem~\ref{thrm:accesibility}$(vi)$ ensures that $p_{t}(x,y)$ is either zero for all $t\geq0$ or positive for all $t>0$. This precludes any type of periodicity (compare the  discrete-time case in  Section~\ref{sec:periodicity}): a chain is either unable to transition from $x$ to $y$ or it is able to transition in any amount of time.
\subsubsection*{Communicating classes and closed sets}
%
%
%
Two sets are said to \emph{communicate} if each is accessible from the other. A set to be a \emph{communicating class}\index{communicating class} if all its subsets communicate:
\begin{definition}[Communicating class]A subset $\cal{C}$ of $\cal{S}$ is a communicating class if and only if $x\to y$ for all  $x,y$ in $\cal{C}$ (equivalently $A\to B$ for all $A,B$ contained in $\cal{C}$).
\end{definition}
A subset of the state space is said to be \emph{closed}\index{closed set} if no state outside the set is accessible from a state inside the set. 
\begin{definition}[Closed set]A subset $\cal{C}$ of $\s$ is closed if and only if for each $x$ in $\cal{C}$ and $y$ in $\s$, $x\to y$ implies that $y$ belongs to $\cal{C}$.\end{definition}
Closed sets are those from which the chain may never escape:
\begin{proposition}\label{prop:closedct}If the chains starts in a closed set $\cal{C}$ (i.e., the initial distribution $\gamma$ satisfies $\gamma(\cal{C})=1$), then the chain remains in $\cal{C}$:
$$\Pbl{\{X_t\in\cal{C}\enskip\forall t\in[0,T_\infty)\}}=1.$$
\end{proposition}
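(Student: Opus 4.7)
The plan is to reduce the claim to its discrete-time analogue (Proposition~\ref{prop:closed}) applied to the jump chain $Y=(Y_n)_{n\in\n}$. The crux is that $X$ can leave $\cal{C}$ only by jumping, so if the embedded jump chain never leaves $\cal{C}$ then neither does $X$ on $[0,T_\infty)$.

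First I would verify that $\cal{C}$ is a closed set for $Y$ (in the discrete-time sense of Section~\ref{sec:entrance}). By Theorem~\ref{thrm:accesibility}$(i)\Leftrightarrow(ii)$, for any two distinct states $x,y$ we have $x\to y$ for $X$ if and only if $x\to y$ for $Y$. Thus, if $x\in\cal{C}$ and $y\in\s$ satisfies $x\to y$ for $Y$, then either $y=x\in\cal{C}$ or $x\to y$ for $X$, and in the latter case the closedness of $\cal{C}$ for $X$ forces $y\in\cal{C}$. Hence $\cal{C}$ is closed for the discrete-time chain $Y$, which has one-step matrix $P$ given by \eqref{eq:jumpmatrix} and initial distribution $\gamma$ (since $Y_0=X_0$).

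Next I would invoke Proposition~\ref{prop:closed} for $Y$: since $\gamma(\cal{C})=1$ and $\cal{C}$ is closed for $Y$,
\begin{equation*}
\Pbl{\{Y_n\in\cal{C}\enskip\forall n\in\n\}}=1.
\end{equation*}
Finally, I would transfer this to $X$ using the path definition \eqref{eq:cpathdef}: on the event $\{Y_n\in\cal{C}\enskip\forall n\in\n\}$, for every $\omega$ and every $t\in[0,T_\infty(\omega))$ there exists a unique $n$ with $t\in[T_n(\omega),T_{n+1}(\omega))$ (because $(T_n(\omega))_{n\in\n}$ is strictly increasing with limit $T_\infty(\omega)$ whenever the chain never gets absorbed, and is eventually constant equal to $T_\infty(\omega)=\infty$ at an absorbing state), and then $X_t(\omega)=Y_n(\omega)\in\cal{C}$. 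Consequently
\begin{equation*}
\{Y_n\in\cal{C}\enskip\forall n\in\n\}\subseteq\{X_t\in\cal{C}\enskip\forall t\in[0,T_\infty)\},
\end{equation*}
and the conclusion follows by monotonicity of $\Pb_\gamma$.

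There is no real obstacle here; the only minor subtlety is verifying that closedness of $\cal{C}$ for $X$ implies closedness for $Y$, which is handled cleanly by Theorem~\ref{thrm:accesibility}. Everything else is a direct pullback along \eqref{eq:cpathdef} of the discrete-time result.
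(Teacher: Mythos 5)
Your proof is correct and follows essentially the same route as the paper's: reduce to the jump chain $Y$ via the path definition, use Theorem~\ref{thrm:accesibility} to show $\cal{C}$ is closed for $Y$, and apply Proposition~\ref{prop:closed}. The paper's version is more terse (it asserts equality of the two events outright rather than the one-sided containment you argue), but the ideas are identical.
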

\begin{proof}The chain's definition in \eqref{eq:cpathdef}--\eqref{eq:cpathdef3} implies that
$$\{X_t\in\cal{C}\enskip\forall t\in[0,T_\infty)\}=\{Y_n\in\cal{C}\enskip\forall n\in\n\},$$
where $Y=(Y_n)_{n\in\n}$ denotes the jump chain. Because Theorem~\ref{thrm:accesibility}$(ii)$ shows that $\cal{C}$ is closed for $Y$, the proposition then follows directly from the above and its discrete-time counterpart (Proposition~\ref{prop:closed}).
\end{proof}
Given the above, the following should come as no surprise:
\begin{proposition}[The chain visits at most one of two disjoint closed sets]\label{prop:closedisct} If $\cal{C}_1$ and $\cal{C}_2$ are two disjoint closed sets, then
$$\Pbl{\{\varphi_{\cal{C}_1}<\infty,\varphi_{\cal{C}_2}<\infty\}}=0.$$
\end{proposition}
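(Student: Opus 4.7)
The plan is to mimic the discrete-time proof (Proposition~\ref{prop:closedis}) by deriving a continuous-time analogue of Lemma~\ref{lem:dtenttimestr3} via the strong Markov property (Theorem~\ref{thstrmk}) and then invoking Proposition~\ref{prop:closedct}, which says a chain starting inside a closed set cannot leave it. I would first decompose
\[
\Pbl{\{\varphi_{\cal{C}_1}<\infty,\varphi_{\cal{C}_2}<\infty\}}
= \Pbl{\{\varphi_{\cal{C}_1}<\varphi_{\cal{C}_2}<\infty\}} + \Pbl{\{\varphi_{\cal{C}_2}<\varphi_{\cal{C}_1}<\infty\}} + \Pbl{\{\varphi_{\cal{C}_1}=\varphi_{\cal{C}_2}<\infty\}}.
\]
The last term is zero: by Exercise~\ref{ex:entdc} each entrance time is jump-time-valued, and an argument analogous to Proposition~\ref{prop:hitdc} shows that on $\{\varphi_{\cal{C}_i}<\infty\}$ the chain lies in $\cal{C}_i$ at time $\varphi_{\cal{C}_i}$; so $\varphi_{\cal{C}_1}=\varphi_{\cal{C}_2}$ would force $X_{\varphi_{\cal{C}_1}}$ into $\cal{C}_1\cap\cal{C}_2=\emptyset$. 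By symmetry it then suffices to handle the first term.

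For that, set $\eta:=\varphi_{\cal{C}_1}$ and observe (again from Exercise~\ref{ex:entdc}) that $\eta$ is a jump-time-valued $(\cal{F}_t)_{t\geq0}$-stopping time, so Theorem~\ref{thrm:stopdc} makes it admissible in Theorem~\ref{thstrmk}. On $\{\eta<\varphi_{\cal{C}_2}\}$ the path of $X$ has not touched $\cal{C}_2$ by time $\eta$, so $\varphi_{\cal{C}_2}=\eta+\varphi^{\eta}_{\cal{C}_2}$, where $\varphi^{\eta}_{\cal{C}_2}$ denotes the first entrance time to $\cal{C}_2$ of the shifted chain $X^\eta$. Via the identification of $X$ with its jump-chain/waiting-time pair (Section~\ref{sec:pathspacect}) and Lemma~\ref{lem:pathspmeasct}$(ii)$, one writes $\mathbf{1}_{\{\varphi^{\eta}_{\cal{C}_2}<\infty\}}=F(X^\eta)$ for an $\cal{E}/\cal{B}(\r_E)$-measurable $F$. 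Applying Theorem~\ref{thstrmk} with $Z:=\mathbf{1}_{\{\eta<\varphi_{\cal{C}_2}\}}$ (which lies in $\cal{F}_\eta$ by Lemma~\ref{lem:etatheta} combined with Exercise~\ref{ex:xeta}) then yields
\[
\Pbl{\{\varphi_{\cal{C}_1}<\varphi_{\cal{C}_2}<\infty\}}
=\sum_{x\in\cal{C}_1}\Pbl{\{\varphi_{\cal{C}_1}<\varphi_{\cal{C}_2},X_{\varphi_{\cal{C}_1}}=x\}}\,\Pbx{\{\varphi_{\cal{C}_2}<\infty\}}.
\]
For every $x\in\cal{C}_1$, Proposition~\ref{prop:closedct} gives $\Pbx{\{X_t\in\cal{C}_1\ \forall t<T_\infty\}}=1$, and the disjointness $\cal{C}_1\cap\cal{C}_2=\emptyset$ forces $\Pbx{\{\varphi_{\cal{C}_2}<\infty\}}=0$. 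Hence the sum vanishes; the symmetric argument for $\Pbl{\{\varphi_{\cal{C}_2}<\varphi_{\cal{C}_1}<\infty\}}$ completes the proof.

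The main obstacle I expect is the bookkeeping around measurability and the application of the strong Markov property: confirming that the event $\{\eta<\varphi_{\cal{C}_2}\}$ lies in $\cal{F}_\eta$ and that $\varphi_{\cal{C}_2}-\eta$ on this event genuinely equals $F(X^\eta)$ for an $\cal{E}$-measurable $F$. These are not conceptually hard but require the same kind of care that went into the proofs of Lemma~\ref{lem:dtenttimestr3} and Theorem~\ref{thstrmk}, and are the only non-routine ingredient.
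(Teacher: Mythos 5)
Your proof is correct, but it takes a genuinely different and considerably heavier route than the paper's. The paper's entire proof is a reduction to the jump chain: by Exercise~\ref{ex:entdc}, for any set $A$ the continuous-time entrance time $\varphi_A$ is finite if and only if the jump chain's entrance time $\phi_A$ is finite, so
\[
\Pbl{\{\varphi_{\cal{C}_1}<\infty,\varphi_{\cal{C}_2}<\infty\}}
=\Pbl{\{\phi_{\cal{C}_1}<\infty,\phi_{\cal{C}_2}<\infty\}};
\]
and by Theorem~\ref{thrm:accesibility}$(ii)$ any set closed for $X$ is closed for the jump chain $Y$, so the discrete-time Proposition~\ref{prop:closedis} applied to $Y$ finishes it in one line. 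Your argument instead re-derives a continuous-time analogue of Lemma~\ref{lem:dtenttimestr3} from scratch via Theorem~\ref{thstrmk}, Lemma~\ref{lem:pathspmeasct}, Lemma~\ref{lem:etatheta}, etc.\ — precisely the bookkeeping you flag at the end as the non-routine ingredient. That work is unnecessary here: the whole point of Exercise~\ref{ex:entdc} is that statements about finiteness of entrance times and about which set the chain enters first transfer directly between $X$ and $Y$, so strong-Markov machinery at the continuous-time level can be bypassed. Your approach has the modest virtue of being self-contained in continuous time and of producing the useful intermediate identity (the continuous-time version of Lemma~\ref{lem:dtenttimestr3}) explicitly, but for this proposition it is overkill; the jump-chain reduction is both shorter and the paper's default device for all such "almost-sure path-structure" statements in this chapter.
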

\begin{proof}Because Exercise~\ref{ex:entdc} shows that the first entrance time entrance of the chain is finite if and only if the corresponding entrance time of the jump chain is finite, this proposition follows directly from its discrete-time counterpart (Proposition~\ref{prop:closedis}) and Theorem~\ref{thrm:accesibility}.
\end{proof}
Communicating classes that are closed play a very important role in the theory of continuous-time chains: they are the irreducible sets mentioned in the introduction to Part~\ref{part:theory}. For this reason, the chain and its rate matrix are said to be \emph{irreducible}\index{irreducible} if the entire state space is a single closed communicating class. 

\subsubsection*{Non-explosivity is a class property}
As we will see throughout this chapter, states in the same communicating class share many properties (e.g, transience, recurrence, etc.) and we refer to these as \emph{class properties}\index{class property}. One example is non-explosivity: the inability of the chain to explode when it starts in a \emph{non-explosive} state.
\begin{theorem}\label{thrm:expclprop} The explosion probabilities $(\Pbx{\{T_\infty<\infty\}})_{x\in\s}$ satisfy the equations
\begin{equation}\label{eq:expclprop}\sum_{z\in\s}q(x,z)\Pb_z(\{T_\infty<\infty\})=0,\qquad\forall x\in\s.\end{equation}
In particular, if $\Pbx{\{T_\infty<\infty\}}=0$ and $x\to y$, then $\Pby{\{T_\infty<\infty\}}=0$.
\end{theorem}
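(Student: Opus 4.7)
Set $h(x) := \Pbx{\{T_\infty<\infty\}}$. The plan is to derive the equation by conditioning on the first jump $T_1$ via the strong Markov property. First I would dispose of the absorbing case: if $q(x)=0$ then $q(x,z)=0$ for every $z$ (by \eqref{eq:qmatrix}), so \eqref{eq:expclprop} reduces to $0=0$ and holds trivially. So from now on assume $q(x)>0$, whence $T_1=S_1$ is exponential with parameter $q(x)$ and in particular $\Pbx{\{T_1<\infty\}}=1$.

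Because $T_\infty = T_1 + T_\infty^{T_1}$ (where $T_\infty^{T_1}$ is the explosion time of the shifted chain, as in \eqref{eq:tishift}) and $T_1<\infty$ almost surely, we have $\{T_\infty<\infty\}=\{T_\infty^{T_1}<\infty\}$ $\Pb_x$-almost surely. Decomposing on the value of $X_{T_1}=Y_1$ and applying the strong Markov property (Theorem~\ref{thstrmk}) with $\eta=T_1$, $Z=1$, and $F$ the $\cal{E}/\cal{B}(\r_E)$-measurable indicator of $\{c^T_\infty<\infty\}$ (recall $T_\infty=c^T_\infty(X)$ from \eqref{eq:coord}), I obtain
\[
h(x) \;=\; \sum_{y\in\s} \Pbx{\{T_1<T_\infty,\,X_{T_1}=y\}}\,\Pby{\{T_\infty<\infty\}} \;=\; \sum_{y\in\s} p(x,y)\,h(y),
\]
where I used $\Pbx{\{X_{T_1}=y\}}=\Pbx{\{Y_1=y\}}=p(x,y)$ from \eqref{eq:jumpmatrix} and Theorem~\ref{thrm:condind}. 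Substituting $p(x,y)=q(x,y)/q(x)$ for $y\neq x$ (and $p(x,x)=0$) and multiplying by $q(x)=-q(x,x)$ rearranges this into $\sum_{z\in\s}q(x,z)h(z)=0$, as desired.

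For the final assertion, suppose $h(x)=0$ and $x\to y$. Theorem~\ref{thrm:accesibility}$(iii)$ furnishes states $x_0:=x,x_1,\dots,x_l,x_{l+1}:=y$ with $q(x_i,x_{i+1})>0$ for $i=0,1,\dots,l$. I claim by induction that $h(x_i)=0$ for every $i$. Indeed, once $h(x_i)=0$ is known, isolating the diagonal term in \eqref{eq:expclprop} at $x=x_i$ gives
\[
0 \;=\; -q(x_i)\,h(x_i) + \sum_{z\neq x_i} q(x_i,z)\,h(z) \;=\; \sum_{z\neq x_i} q(x_i,z)\,h(z),
\]
a sum of non-negative terms (since $q(x_i,z)\geq 0$ for $z\neq x_i$ and $h\geq 0$), so each summand vanishes; in particular $q(x_i,x_{i+1})h(x_{i+1})=0$, and $q(x_i,x_{i+1})>0$ forces $h(x_{i+1})=0$. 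Iterating gives $h(y)=0$.

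The only real obstacle is the first step: justifying that $\{T_\infty<\infty\}$ can be written as a functional of the shifted chain via a properly $\cal{E}$-measurable $F$, so that the strong Markov property applies cleanly; this is handled by the coordinate-function measurability observations in \eqref{eq:coord} and Exercise~\ref{exe:nfua9feah76a8wnfw78a}. Everything else is algebraic manipulation and induction.
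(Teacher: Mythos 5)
Your proof is correct and takes essentially the same route as the paper: condition on the first jump via the strong Markov property to obtain $h(x)=\sum_{y}p(x,y)h(y)$, rearrange using the definition of the jump matrix, and then iterate along the accessibility chain. The one technical difference is that you apply the strong Markov property directly to the $\cal{E}$-measurable indicator of $\{c^T_\infty<\infty\}$, whereas the paper applies it to the parametrised family $F(x)=1_{[0,t]}(c^T_n(x))$ and then sends $n\to\infty$, $t\to\infty$ by monotone convergence; your shortcut is cleaner and avoids the double limit (and you explicitly dispose of the absorbing case, which the paper leaves implicit). For the accessibility consequence, the paper constructs the explicit inequality chain $h(x)\geq \frac{q(x,x_1)\cdots q(x_l,y)}{q(x)\cdots q(x_l)}h(y)$, while you observe that $\sum_{z\neq x_i}q(x_i,z)h(z)=0$ is a vanishing sum of non-negative terms — the two are equivalent.
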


\begin{proof} Applying the  Strong Markov Property (Theorem~\ref{thstrmk}) to $\eta:=T_1$ (with $A:=\Omega$ and $F(x):=1_{[0,t]}(c^T_n(x))$ where $c^T_n$ is as in~\eqref{eq:coord}), we find that
$$\Pbx{\{X_{T_1}=z,T_{n+1}\leq t\}}=\Pbx{\{X_{T_1}=z\}}\Pb_z(\{T_{n}\leq t\})=p(x,z)\Pb_z(\{T_{n}\leq t\}),$$
for all $z$ in $\s$, $n$ in $\n$, and $t$ in $[0,\infty)$. Given the definition of the jump matrix in~\eqref{eq:jumpmatrix}, taking the limits $n\to\infty$ and $t\to\infty$ (in that order), applying monotone convergence, summing over $z$ in $\s$, and re-arranging yields \eqref{eq:expclprop}. Suppose that $x\to y$ and let $x_1,\dots,x_l$ be as in Theorem~\ref{thrm:accesibility}$(ii)$. Repeatedly applying \eqref{eq:expclprop} yields
\begin{align*}\Pbx{\{T_\infty<\infty\}}&=\frac{1}{q(x)}\sum_{z\in\s}q(x,z)\Pb_z(\{T_\infty<\infty\})\geq \frac{q(x,x_1)}{q(x)}\Pb_{x_1}(\{T_\infty<\infty\})\\
&=\dots\geq \frac{q(x,x_1)\dots q(x_l,y)}{q(x)\dots q(x_l)}\Pb_{y}(\{T_\infty<\infty\}),\end{align*}
and it follows that $\Pbx{\{T_\infty<\infty\}}=0$ only if $\Pby{\{T_\infty<\infty\}}=0$.
\end{proof}

\ifdraft

\subsubsection*{Notes and references} Call \eqref{thrm:accesibility} Levy's theorem $p_t(x,y)$ non-zero for one $t>0$ if and only if non-zero for all $t>0$?
\fi
\subsection{Recurrence and transience}

The starting point in understanding the long-term behaviour of chains are the notions of \emph{recurrence} and \emph{transience}:
\begin{definition}[Recurrent and transient states]\label{def:rectransct} A state $x$ is said to be recurrent\index{recurrent state} if the chain has probability one of returning to it (i.e., $\Pbx{\{\phi_x<\infty\}}=1$). Otherwise, the state is said to be transient\index{transient state}. 
\end{definition}
Recurrent states are those the chain will keep revisiting (as long as it visit them at least once) while transient ones are those the chain will stop visiting at some point:
\begin{theorem}\label{thrm:rectransct} For any initial distribution $\gamma$ and state $x$, 
\begin{equation}\label{eq:returnprob}\Pb_\gamma(\{\varphi^k_x<\infty\})=\Pb_\gamma(\{\varphi_x<\infty\})\Pb_x(\{\varphi_x<\infty\})^k\quad\forall k>0.\end{equation}
Moreover, having started at $x$, the chain will return to $x$ infinitely many times if and only if $x$ is recurrent:
$$\Pb_x(\{\varphi^k_x<\infty\enskip\forall k\in\zp\})=\left\{\begin{array}{ll}1&\text{if $x$ is recurrent}\\0&\text{if $x$ is transient}\end{array}\right.,\quad\forall x\in\s.$$
\end{theorem}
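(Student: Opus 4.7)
The plan is to reduce directly to the discrete-time analogue, Theorem~\ref{thrm:rectrans}, applied to the jump chain $Y$. By Exercise~\ref{ex:entdc}, we have the event identification
\[
\{\varphi^k_x<\infty\}=\{\phi^k_x<\infty\},
\]
where $\phi^k_x$ denotes the $k$-th entrance time of $Y$ to $\{x\}$. Theorem~\ref{thrm:condind} shows that, under $\Pb_\gamma$, $Y$ is a discrete-time Markov chain on $\s$ with one-step matrix $P$ from \eqref{eq:jumpmatrix} and initial distribution $\gamma$. Consequently, Theorem~\ref{thrm:rectrans} applied to $Y$ yields
\[
\Pb_\gamma(\{\phi^k_x<\infty\})=\Pb_\gamma(\{\phi_x<\infty\})\,\Pb_x(\{\phi_x<\infty\})^{k-1},
\]
which, after the above event identification (and the observation that $\Pb_z(\{\varphi_x<\infty\})=\Pb_z(\{\phi_x<\infty\})$ for every $z$), transfers across to \eqref{eq:returnprob}.

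If one prefers to work directly in continuous time without passing to $Y$, the alternative is to mimic the discrete-time argument using the strong Markov property (Theorem~\ref{thstrmk}). The key step is to show that on $\{\varphi^k_x<\infty\}$ one has $X_{\varphi^k_x}=x$ and
\[
\varphi^{k+1}_x=\varphi^k_x+\varphi_x(X^{\varphi^k_x}),
\]
where $\varphi_x(\cdot)$ is the first-entrance functional on the path space $\cal{P}$; its $\cal{E}/\cal{B}(\r_E)$-measurability follows from a simple variant of Lemma~\ref{lem:pathspmeasct}(ii) composed with $c^T_n$ in \eqref{eq:coord}. Since $\varphi^k_x$ is a jump-time-valued stopping time (Exercise~\ref{ex:entdc}), Theorem~\ref{thstrmk} applied with $Z:=1_{\{\varphi^k_x<\infty\}}$ and $F:=1_{\{\varphi_x<\infty\}}$ gives
\[
\Pb_\gamma(\{\varphi^{k+1}_x<\infty\})=\Pb_\gamma(\{\varphi^k_x<\infty,X_{\varphi^k_x}=x\})\,\Pb_x(\{\varphi_x<\infty\})=\Pb_\gamma(\{\varphi^k_x<\infty\})\,\Pb_x(\{\varphi_x<\infty\}),
\]
and \eqref{eq:returnprob} follows by iterating this recursion back to $k=1$.

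The second assertion, concerning infinite returns, then falls out by the same bookkeeping. The events $\{\varphi^k_x<\infty\}$ are decreasing in $k$, so downwards monotone convergence and \eqref{eq:returnprob} yield
\[
\Pb_x(\{\varphi^k_x<\infty\ \forall\,k\in\zp\})=\lim_{k\to\infty}\Pb_x(\{\varphi_x<\infty\})^k\in\{0,1\},
\]
with the limit being $1$ precisely when $x$ is recurrent and $0$ when $x$ is transient. The main obstacle in executing this plan is not mathematical but notational: one must be careful with absorbing states $x$ (where $q(x)=0$), since the infimum defining $\varphi_x$ over the open interval $(0,T_\infty)$ requires a convention; Exercise~\ref{ex:entdc}'s identification $\varphi^k_x=T_{\phi^k_x}$ on $\{\phi^k_x<\infty\}$ fixes the convention consistently, after which both approaches proceed as routine translations of the discrete-time argument.
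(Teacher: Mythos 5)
Your primary approach---reducing to the jump chain via Exercise~\ref{ex:entdc} and invoking the discrete-time Theorem~\ref{thrm:rectrans}---is exactly the paper's proof. The alternative you sketch, applying the strong Markov property (Theorem~\ref{thstrmk}) directly to $X$ with the shifted chain $X^{\varphi^k_x}$, is also sound and parallels what the paper does in the discrete-time case; it avoids the detour through $Y$ but requires you to re-verify the measurability of the first-entrance functional on $\cal{P}$ and the identity $X_{\varphi^k_x}=x$ on $\{\varphi^k_x<\infty\}$, both of which you correctly anchor in Exercise~\ref{ex:entdc} and Lemma~\ref{lem:pathspmeasct}.

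One thing you should have flagged rather than glossed over: both of your derivations yield
\[
\Pb_\gamma(\{\varphi^k_x<\infty\})=\Pb_\gamma(\{\varphi_x<\infty\})\,\Pb_x(\{\varphi_x<\infty\})^{k-1},
\]
with exponent $k-1$, matching the discrete-time equation~\eqref{eq:nfuieafbua}; but the stated equation~\eqref{eq:returnprob} has exponent $k$. The $k-1$ version is the correct one (set $k=1$: the printed equation~\eqref{eq:returnprob} then forces $\Pb_x(\{\varphi_x<\infty\})=1$ whenever $\Pb_\gamma(\{\varphi_x<\infty\})>0$, which is plainly false for a transient $x$), so~\eqref{eq:returnprob} as printed contains a typo. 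You assert that your formula ``transfers across to~\eqref{eq:returnprob}'' without noticing the mismatch; you should either correct the exponent or remark explicitly that the theorem as stated cannot be what the proof delivers. The discrepancy does not propagate to the second assertion, because with $\gamma=1_x$ both exponents give $\Pb_x(\{\varphi_x<\infty\})^k$, and your downward-monotone-convergence argument for the infinite-returns dichotomy then goes through unchanged.
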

\begin{proof}Because, for any set $A$, the entrance time to $A$ of  $X$ is finite if and only if the entrance time to $A$ of its jump chain $Y$ is finite (Exercise~\ref{ex:entdc}), the theorem follows directly its discrete-time counterpart (Theorem~\ref{thrm:rectrans}, note also~\eqref{eq:mdw78ahdnw78andwa}).
\end{proof}
Recurrence and transience are class properties.
\begin{theorem}\label{thrm:recclassproct} If $x$ is recurrent and $x\to y$, then $y$ is also recurrent and 
$$\Pbx{\{\varphi_y<\infty\}}=\Pby{\{\varphi_x<\infty\}}=1.$$
For this reason, a) if $y$ is transient and $x\to y$, then $x$ is also transient and b) a state in a communicating class is recurrent (resp. transient) if and only if all states in the class are recurrent (resp. transient). 
\end{theorem}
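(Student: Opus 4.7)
The plan is to piggyback on the discrete-time version of the result, Theorem \ref{thrm:recclasspro}, by transferring the statement from the continuous-time chain $X$ to its jump chain $Y$. The bridge is provided by two facts already established in the excerpt: Exercise \ref{ex:entdc} tells us that, for any subset $A$ of $\s$, the entrance time $\varphi_A$ of $X$ is finite if and only if the corresponding entrance time $\phi_A$ of $Y$ is finite (with $\Pb_\gamma$-probability one), and Theorem \ref{thrm:accesibility}$(i)\Leftrightarrow(ii)$ asserts that $x\to y$ for $X$ if and only if $x\to y$ for $Y$. Together these mean that the notions of recurrence, transience, and accessibility for $X$ and for $Y$ coincide state-by-state.

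First I would record the equivalence of recurrence: a state $x$ is recurrent for $X$ (i.e., $\Pbx{\{\varphi_x<\infty\}}=1$) if and only if it is recurrent for the jump chain $Y$ (i.e., $\Pbx{\{\phi_x<\infty\}}=1$), since Exercise \ref{ex:entdc} gives $\{\varphi_x<\infty\}=\{\phi_x<\infty\}$ up to a $\Pbx$-null set. Next, suppose $x$ is recurrent for $X$ and $x\to y$ for $X$. By the two translations just mentioned, $x$ is recurrent for $Y$ and $x\to y$ for $Y$. Applying the discrete-time class-property result (Theorem \ref{thrm:recclasspro}) to the jump chain yields that $y$ is recurrent for $Y$ and
\[
\Pbx{\{\phi_y<\infty\}}=\Pby{\{\phi_x<\infty\}}=1.
\]
Translating back via Exercise \ref{ex:entdc}, $y$ is recurrent for $X$ and $\Pbx{\{\varphi_y<\infty\}}=\Pby{\{\varphi_x<\infty\}}=1$, which is exactly the claim. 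Parts (a) and (b) then follow by the usual contrapositive and symmetry arguments (indeed, they also follow by directly invoking the corresponding parts of Theorem \ref{thrm:recclasspro} for $Y$ and translating).

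I do not anticipate a real obstacle here: everything is reduced to the discrete-time theorem via the two equivalences. The only care needed is to write the identification of events like $\{\varphi_x<\infty\}$ and $\{\phi_x<\infty\}$ precisely enough (they agree on $\{T_\infty=\infty\}\cup\{\phi_x<\infty\}$, which is what Exercise \ref{ex:entdc} delivers), and to be sure that when we use $\Pbx$-almost-sure equalities we are only doing so in places where $\Pbx$-probabilities are being computed. With that bookkeeping in place, the proof is essentially a three-line transfer argument.
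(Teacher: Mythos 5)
Your proposal is correct and follows essentially the same route as the paper's own proof, which likewise reduces the claim to Theorem \ref{thrm:recclasspro} by identifying the entrance times of $X$ and of its jump chain $Y$ via Exercise \ref{ex:entdc}. The only difference is that you spell out the accessibility equivalence (Theorem \ref{thrm:accesibility}$(i)\Leftrightarrow(ii)$) explicitly, while the paper leaves it implicit.
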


\begin{proof}Because, for any set $A$, the entrance time to $A$ of  $X$ is finite if and only if the entrance time to $A$ of its jump chain $Y$ is finite (Exercise~\ref{ex:entdc}), the theorem follows directly from its discrete-time counterpart (Theorem~\ref{thrm:recclasspro}).
\end{proof}
Due to the above theorem, we say that a communicating class is \emph{recurrent} (resp. \emph{transient}) if any one (and therefore all) of its states is recurrent (resp. transient).\index{transient class}\index{recurrent class}  For chains, we use the following slightly more involved classification:

\begin{definition}[Recurrent and transient chains]\label{def:recurrentchainsct}If the set $\cal{R}$ of all recurrent states\glsadd{rec} is empty, then the chain is said to be transient\index{transient chain}. Otherwise, the chain is said to be Tweedie recurrent\index{Tweedie recurrent chain} if, regardless of the initial distribution, the chain has probability one of entering $\cal{R}$:
$$\Pbl{\{\varphi_\cal{R}<\infty\}}=1\enskip\text{for all initial distributions}\enskip\gamma,$$
where $\varphi_\cal{R}$ denotes the first entrance time to $\cal{R}$ (Definition~\ref{def:entrancect}). If, additionally, there exists only one recurrent communicating class, then the chain is said to be Harris recurrent\index{Harris recurrent chain}. If the class is the entire state space (i.e., if $\cal{R}=\s$), then the chain is simply called recurrent.\index{recurrent chain}
\end{definition}

We conclude the section with the following useful proposition:
\begin{proposition}\label{prop:phixphic} If the chain ever enters a recurrent closed communicating class $\cal{C}$, it will eventually visit every state in $\cal{C}$:
$$1_{\{\varphi_{\cal{C}}<\infty\}}=1_{\{\varphi_x<\infty\}}\enskip\forall x\in\cal{C},\quad\Pb_\gamma\text{-almost surely}.$$
\end{proposition}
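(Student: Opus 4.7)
The plan is to port the discrete-time argument of Corollary~\ref{cor:phixphic} to the continuous-time setting, using the strong Markov property (Theorem~\ref{thstrmk}) at the stopping time $\varphi_{\cal{C}}$. First, I would observe that pointwise $\varphi_{\cal{C}}\leq\varphi_x$ for any $x\in\cal{C}$: indeed, whenever $\varphi_x(\omega)<\infty$ we have $X_{\varphi_x(\omega)}(\omega)=x\in\cal{C}$, so $\varphi_{\cal{C}}(\omega)\leq\varphi_x(\omega)$. This gives $\{\varphi_x<\infty\}\subseteq\{\varphi_{\cal{C}}<\infty\}$, so it remains to establish the $\Pb_\gamma$-almost sure reverse inclusion, i.e.\ to show
$$\Pbl{\{\varphi_{\cal{C}}<\infty,\,\varphi_x=\infty\}}=0\qquad\forall x\in\cal{C}.$$

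By Exercise~\ref{ex:entdc} the first entrance time $\varphi_{\cal{C}}$ is a jump-time-valued stopping time, and hence an $(\cal{F}_t)_{t\geq0}$-stopping time by Theorem~\ref{thrm:stopdc}; moreover on $\{\varphi_{\cal{C}}<\infty\}$ we have $X_{\varphi_{\cal{C}}}\in\cal{C}$ (the analogue, in this entrance-time setting, of $X_{\tau_A}\in A$ in Proposition~\ref{prop:hitdc}). Because $\cal{C}$ is closed, Proposition~\ref{prop:closedct} ensures that after $\varphi_{\cal{C}}$ the chain stays in $\cal{C}$; and by definition of $\varphi_{\cal{C}}$, the chain is not in $\cal{C}\supseteq\{x\}$ at any positive time prior to $\varphi_{\cal{C}}$. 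Therefore on $\{\varphi_{\cal{C}}<\infty\}$ the event $\{\varphi_x<\infty\}$ coincides, up to the $z=x$ case already handled, with the event that the shifted chain $X^{\varphi_{\cal{C}}}$ ever visits $x$. The latter can be written as $F(X^{\varphi_{\cal{C}}})$ for a suitable $\cal{E}/\cal{B}(\r_E)$-measurable $F$, whose measurability is guaranteed by Lemma~\ref{lem:pathspmeasct}$(ii)$ applied with $A:=\{x\}$ and $k:=1$.

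Applying the strong Markov property (Theorem~\ref{thstrmk}) with this $F$ and $Z:=1_{\{\varphi_{\cal{C}}<\infty\}}$ (which is $\cal{F}_{\varphi_{\cal{C}}}$-measurable by Exercise~\ref{ex:xeta}) and partitioning over the value $X_{\varphi_{\cal{C}}}=z\in\cal{C}$, I obtain
$$\Pbl{\{\varphi_{\cal{C}}<\infty,\,\varphi_x=\infty\}}=\sum_{z\in\cal{C}}\Pbl{\{\varphi_{\cal{C}}<\infty,\,X_{\varphi_{\cal{C}}}=z\}}\,\Pbz{\{\varphi_x=\infty\}}.$$
For $z=x$ we have $\Pbz{\{\varphi_x=\infty\}}=1-\Pbx{\{\varphi_x<\infty\}}=0$ by recurrence of $x$, while for $z\in\cal{C}\setminus\{x\}$ Theorem~\ref{thrm:recclassproct} (applied to the two recurrent states $z$ and $x$ in the same class $\cal{C}$) gives $\Pbz{\{\varphi_x<\infty\}}=1$, so each term of the sum vanishes. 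This proves the proposition. The main (and only real) obstacle is the precise articulation of the step in which $\{\varphi_x<\infty\}\cap\{\varphi_{\cal{C}}<\infty\}$ is re-expressed as a measurable functional of the shifted chain; once that bookkeeping is done the rest is a clean application of Theorem~\ref{thstrmk} and Theorem~\ref{thrm:recclassproct}.
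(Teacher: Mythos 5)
Your argument is correct, but it takes a different (and considerably longer) route than the paper. The paper's proof is a one-liner: it observes, via Exercise~\ref{ex:entdc}, that $\{\varphi_A<\infty\}=\{\phi_A<\infty\}$ for any set $A$, where $\phi_A$ is the entrance time of the jump chain $Y$, and then invokes the discrete-time result Corollary~\ref{cor:phixphic} applied to $Y$ (which has the same communicating classes and recurrence classification as $X$). You instead re-derive the conclusion from scratch in continuous time, applying Theorem~\ref{thstrmk} at the stopping time $\varphi_{\cal{C}}$ and appealing to Theorem~\ref{thrm:recclassproct}. Both work, and your version is a faithful continuous-time mirror of the discrete-time proof of Corollary~\ref{cor:phixphic}; what the paper's reduction buys is the avoidance of all the bookkeeping you flag at the end (identifying $\{\varphi_x=\infty\}$ on $\{\varphi_{\cal{C}}<\infty\}$ as a measurable functional of the shifted chain, handling the boundary case $X_{\varphi_{\cal{C}}}=x$, etc.), since all of that has already been done once in discrete time. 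If you wanted to shorten your write-up, you could note that, on $\{\varphi_{\cal{C}}<\infty, X_{\varphi_{\cal{C}}}=z\}$ with $z\neq x$, the original chain's $\varphi_x$ is exactly $\varphi_{\cal{C}}$ plus the shifted chain's first entrance time to $x$ (since no visit to $x\in\cal{C}$ can occur in $(0,\varphi_{\cal{C}}]$), which is the cleanest way to justify the strong-Markov decomposition; and that the $z=x$ term is vacuously zero because $X_{\varphi_{\cal{C}}}=x$ forces $\varphi_x\leq\varphi_{\cal{C}}<\infty$.
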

\begin{proof}Because, for any set $A$, the entrance time to $A$ of $X$ is finite if and only if the entrance time to $A$ of its jump chain $Y$ is finite (Exercise~\ref{ex:entdc}), the proposition follows directly from its discrete-time counterpart (Corollary~\ref{cor:phixphic}).
\end{proof}

\subsubsection*{Notes and references} The reasons behind our choice of nomenclature in this section are the same as those given in its discrete-time counterpart (Section~\ref{sec:rec}, notes and references).

\subsection{The regenerative property}
For the same reasons as those given in Section~\ref{sec:regen} for the discrete-time case, the Markov property implies that the chain \emph{regenerates} every time it visits a recurrent state $x$ in the sense that upon each visit it forgets its entire past and starts afresh. More formally, the segments of paths between consecutive visits to $x$ form an i.i.d. sequence:
\begin{theorem}[The regenerative property]\label{thrm:rec-iidct} For\index{regenerative property} any non-negative function  $f:\s\to\r_E$, state $x$, and positive integer $k$,
\begin{equation}\label{eq:Ikdefct}I_k:=1_{\{\varphi_x^{k-1}<\infty\}}\int_{\varphi_x^{k-1}}^{\varphi_x^{k}\wedge T_\infty}f(X_t)dt\end{equation}
defines an $\cal{F}_{\varphi_x^{k}}/\cal{B}(\r_E)$-measurable function, where $\cal{F}_{\varphi_x^{k}}$ denotes the pre-$\varphi^k_x$ sigma algebra (Definition~\ref{def:stopct}). In the above, we are using our convention for partially defined random variables~\eqref{eq:partdef}. If the state $x$ is recurrent, the sequence $(I_k)_{k\in\zp}$ is i.i.d. under $\Pb_x$.\end{theorem}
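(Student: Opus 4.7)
The plan is to mirror the proof of the discrete-time analogue (Theorem~\ref{thrm:rec-iid}) in two steps: first show that each $I_k$ is $\cal{F}_{\varphi_x^{k}}$-measurable, then use the strong Markov property at $\varphi_x^{k-1}$ and the recurrence of $x$ to deduce the i.i.d. structure. The crucial bridge to the discrete-time argument is Exercise~\ref{ex:entdc}, which tells us that $\varphi_x^{k}$ is a jump-time-valued stopping time with $\varphi_x^k = T_{\phi_x^k}$ on $\{\phi_x^k<\infty\}$, where $\phi_x^k$ is the $k^{\mathrm{th}}$ entrance time of the jump chain $Y$ to $\{x\}$.

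For Step 1, I would use the chain's piecewise-constant definition~\eqref{eq:cpathdef}--\eqref{eq:cpathdef3} to rewrite the integral as a sum over jumps:
$$I_k = 1_{\{\phi_x^{k-1}<\infty\}}\sum_{n=\phi_x^{k-1}}^{\phi_x^k-1} S_{n+1}f(Y_n) = \lim_{N\to\infty}\sum_{n=1}^N\sum_{m=0}^{n-1}\sum_{l=m}^{n-1}1_{\{\phi_x^{k-1}=m\}}1_{\{\phi_x^k=n\}}S_{l+1}f(Y_l).$$
Because limits, finite sums, and finite products of measurable functions are measurable, it suffices to argue each summand belongs to $\cal{F}_{\varphi_x^k}$. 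Since $\varphi_x^{k-1}\leq\varphi_x^k$, Lemma~\ref{lem:etatheta}(i,iii) gives $1_{\{\phi_x^{k-1}=m\}}\in\cal{F}_{\varphi_x^{k-1}}\subseteq\cal{F}_{\varphi_x^k}$ and $1_{\{\phi_x^k=n\}}\in\cal{F}_{\varphi_x^k}$. For $l<n$, the factor $S_{l+1}f(Y_l)$ can be expressed as a measurable function of the chain observed up to $T_n=\varphi_x^k$ and is therefore $\cal{F}_{T_n}$-measurable; Proposition~\ref{prop:ftngn} yields $\cal{G}_n\subseteq\cal{F}_{T_n}$, and combined with Lemma~\ref{lem:etatheta}(ii) applied to the identity $\{\varphi_x^k\leq t\}=\{T_n\leq t,\phi_x^k=n\}$, each summand lies in $\cal{F}_{\varphi_x^k}$.

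For Step 2, I would define the path-space functional $G:\cal{P}\to[0,\infty]$ by applying Lemma~\ref{lem:pathspmeasct}(iii) with $A=\{x\}$, so that $G$ is $\cal{E}/\cal{B}(\r_E)$-measurable and $G(X)=\int_0^{\varphi_x\wedge T_\infty}f(X_t)\,dt=I_1$ under $\Pb_x$. On $\{\varphi_x^{k-1}<\infty\}$ the chain is at $x$ at time $\varphi_x^{k-1}$, and the $\varphi_x^{k-1}$-shifted chain $X^{\varphi_x^{k-1}}$ satisfies $I_k=G(X^{\varphi_x^{k-1}})$ by the time-shift identity~\eqref{eq:xshift}. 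Recurrence of $x$ combined with Theorem~\ref{thrm:rectransct} gives $\Pb_x(\{\varphi_x^{k-1}<\infty\})=1$ for every $k$. Now fix $a_1,\dots,a_k\in\r_E$, set $A_j:=\{I_1\geq a_1,\dots,I_j\geq a_j\}\in\cal{F}_{\varphi_x^j}$ by Step 1 and Lemma~\ref{lem:etatheta}(iii), and apply the strong Markov property (Theorem~\ref{thstrmk}) at $\eta=\varphi_x^{k-1}$ with $Z=1_{A_{k-1}}$ and $F=1_{\{G\geq a_k\}}$:
$$\Pbx{A_k}=\Pbx{A_{k-1}\cap\{\varphi_x^{k-1}<\infty,X_{\varphi_x^{k-1}}=x\}}\Pbx{\{G(X)\geq a_k\}}=\Pbx{A_{k-1}}\Pbx{\{I_1\geq a_k\}}.$$
Backward iteration gives $\Pbx{A_k}=\prod_{j=1}^k\Pbx{\{I_1\geq a_j\}}$; setting $a_1=\dots=a_{k-1}=0$ shows the $I_j$ are identically distributed, and then Lemma~\ref{lem:dynkinpl} applied to the $\pi$-system of rectangles $\prod_j[a_j,\infty]$ yields independence.

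The principal obstacle is Step 1: the issue, flagged in the excerpt's discussion around Proposition~\ref{prop:ftngn}, is that we only have $\cal{G}_n\subseteq\cal{F}_{T_n}$ (with equality open), so we cannot directly lift $\cal{G}_n$-measurability to $\cal{F}_{\varphi_x^k}$-measurability by the shortcut $\cal{F}_{T_n}=\cal{G}_n$. Fortunately, the one-sided inclusion suffices here because $\cal{G}_n\subseteq\cal{F}_{T_n}$ together with Lemma~\ref{lem:etatheta}(ii) applied to the decomposition $A\cap\{\varphi_x^k\leq t\}=\bigcup_{n}(A\cap\{T_n\leq t\}\cap\{\phi_x^k=n\})$ does the job, but the bookkeeping is the step that requires the most care.
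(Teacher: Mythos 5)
Your proposal follows the paper's own proof essentially line for line: the same jump-time decomposition $I_k=\lim_N\sum 1_{\{\varphi_x^{k-1}=T_m\}}1_{\{\varphi_x^k=T_n\}}S_{l+1}f(Y_l)$ combined with Lemma~\ref{lem:etatheta} and Proposition~\ref{prop:ftngn} for measurability, and then the path-functional $G$ from Lemma~\ref{lem:pathspmeasct}(iii), the strong Markov property at $\varphi_x^{k-1}$, and the $\pi$-system argument via Lemma~\ref{lem:dynkinpl} for the i.i.d.\ conclusion. The only difference is cosmetic (you flag the one-sided inclusion $\cal{G}_n\subseteq\cal{F}_{T_n}$ more explicitly, which the paper simply uses without comment); the argument is correct.
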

Before proving the theorem, we  quickly apply it to that the chain cannot explode if it starts in a recurrent state:
\begin{proposition}[The chain cannot explode if it starts in a recurrent state]\label{prop:recnoexpl} If $x$ is a recurrent state, $\Pbx{\{T_\infty=\infty\}}=1$.
\end{proposition}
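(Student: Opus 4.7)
The plan is to apply the regenerative property (Theorem~\ref{thrm:rec-iidct}) with $f\equiv 1$, so that $I_k$ encodes the elapsed time between consecutive returns to $x$. Since $x$ is recurrent, Theorem~\ref{thrm:rectransct} gives that every $\varphi_x^k$ is $\Pb_x$-almost surely finite, which will make the indicator and cap in~\eqref{eq:Ikdefct} vacuous and turn $(I_k)_{k\in\zp}$ into a genuine i.i.d.\ sequence of strictly positive inter-return times whose partial sums bound $T_\infty$ from below. The proof then reduces to showing this i.i.d.\ sum diverges almost surely.

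In detail, fixing $f\equiv 1$ in~\eqref{eq:Ikdefct} gives
\[ I_k \;=\; 1_{\{\varphi_x^{k-1}<\infty\}}\bigl(\varphi_x^k\wedge T_\infty-\varphi_x^{k-1}\bigr). \]
Theorem~\ref{thrm:rec-iidct} tells us that $(I_k)_{k\in\zp}$ is i.i.d.\ under $\Pb_x$, and Theorem~\ref{thrm:rectransct} tells us that the event $A:=\bigcap_{k\geq 0}\{\varphi_x^k<\infty\}$ has $\Pb_x$-probability one. On $A$, Exercise~\ref{ex:entdc} identifies each $\varphi_x^k$ with the jump time $T_{\phi_x^k}$, where $\phi_x^k$ is the $k^{th}$ entrance time of the jump chain $Y$; in particular $\varphi_x^k<T_\infty$, the cap is vacuous, and $I_k=\varphi_x^k-\varphi_x^{k-1}$. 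Telescoping then yields $\sum_{k=1}^n I_k=\varphi_x^n\leq T_\infty$ on $A$ for every $n$.

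To close the argument I would show that $I_1>0$, $\Pb_x$-almost surely: since $\phi_x\geq 1$ by definition, $\varphi_x\geq T_1=S_1$, and $S_1$ is exponentially distributed with parameter $\lambda(x)\in(0,\infty)$ by~\eqref{eq:lambda}, hence strictly positive a.s. (the absorbing case $q(x)=0$ is handled seamlessly because~\eqref{eq:lambda} still gives $\lambda(x)=1$). Picking $\varepsilon>0$ with $\Pb_x(I_1>\varepsilon)>0$, the second Borel-Cantelli lemma applied to the i.i.d.\ sequence gives $I_k>\varepsilon$ infinitely often under $\Pb_x$, so $\sum_{k=1}^\infty I_k=\infty$ $\Pb_x$-almost surely, and letting $n\to\infty$ in the previous display forces $T_\infty=\infty$, $\Pb_x$-almost surely.

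The main obstacle, really the only delicate point, is the bookkeeping around the partially-defined-function convention~\eqref{eq:partdef} used in~\eqref{eq:Ikdefct} when asserting $\varphi_x^k\wedge T_\infty=\varphi_x^k$ on the recurrence event; once Exercise~\ref{ex:entdc} is granted, the rest is a standard i.i.d.-sum argument.
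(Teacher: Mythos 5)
Your proof is correct and shares the same skeleton as the paper's: both invoke Theorem~\ref{thrm:rectransct} to get $\varphi_x^k<T_\infty$ for all $k$ with $\Pb_x$-probability one, and both use the regenerative property (Theorem~\ref{thrm:rec-iidct} with $f:=1$) to turn the telescoping increments $\varphi_x^k-\varphi_x^{k-1}$ into an i.i.d.\ sequence under $\Pb_x$. Where you differ is in the last step: the paper applies Kolmogorov's strong law of large numbers (Theorem~\ref{thrm:klln}) to conclude $\varphi_x^N/N\to\Ebx{\varphi_x}>0$ and hence $\varphi_x^N\to\infty$, whereas you use the second Borel--Cantelli lemma to show $I_k>\varepsilon$ infinitely often and hence $\sum_kI_k=\infty$. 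Both routes are valid; yours is marginally more elementary in that it needs only independence and strict positivity of the $I_k$, while the paper's version reuses the LLN machinery it has already set up (and implicitly relies on the same observation you make explicit, namely $\Ebx{\varphi_x}\geq\Ebx{S_1}>0$). The two closings are interchangeable for this purpose.
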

\begin{proof}Because Theorem~\ref{thrm:rectransct} shows that all entrance times to $x$ are finite with $\Pb_x$-probability one, the definition of the entrance times (Definition~\ref{def:entrancect}) implies that
\begin{equation}\label{eq:ndw78a9dfeafahh7w8and7wa}\varphi_x^k<T_\infty\quad\forall k>0,\enskip \Pb_x\text{-almost surely.}\end{equation}
The law of large numbers (Theorem~\ref{thrm:klln}) and the regenerative property (Theorem~\ref{thrm:rec-iidct} with $f:=1$) imply that
\begin{align*}&\lim_{N\to\infty}\frac{\varphi^N_x}{N}=\lim_{N\to\infty}\frac{1}{N}\sum_{k=1}^N(\varphi^k_x-\varphi^{k-1}_x)=\Ebx{\varphi_x}>0\quad\Pb_x\text{-almost surely},\\
&\Rightarrow \lim_{k\to\infty}\varphi^k_x=\infty\quad\Pb_x\text{-almost surely}.\end{align*}
For this reason, taking the limit $k\to\infty$ in \eqref{eq:ndw78a9dfeafahh7w8and7wa} completes the proof.
\end{proof}

\subsubsection*{A proof of Theorem~\ref{thrm:rec-iidct}}This proof  is entirely analogous to that of the theorem's discrete-time counterpart (Theorem~\ref{thrm:rec-iid}). We also do it in two steps, beginning with the measurability of $I_k$ and moving on to showing that the sequence is i.i.d.

\begin{proof}[Step 1: $I_k^f$ is $\cal{F}_{\varphi_x^{k}}/\cal{B}(\r_E)$-measurable]Note that
$$I_k=\lim_{N\to\infty}\sum_{n=1}^N\sum_{m=0}^{n-1}\sum_{l=m}^{n-1}1_{\{\varphi^k_x=T_n\}}1_{\{\varphi^{k-1}_x=T_m\}}S_{l+1}f(Y_l).$$
Because limits, finite products, and finite sums of measurable functions are measurable functions, it suffices to show that each term in the sum is $\cal{F}_{\varphi_x^{k}}/\cal{B}(\r_E)$-measurable. Because $\varphi_{k-1}\leq \varphi_{k}$ by definition, Lemma~\ref{lem:etatheta}$(i,iii)$ shows that $1_{\{\varphi^{k-1}_x=T_m\}}$ is $\cal{F}_{\varphi_x^{k}}/\cal{B}(\r_E)$-measurable. Similarly, Lemma~\ref{lem:etatheta}$(i,ii)$ and Proposition~\ref{prop:ftngn} yields that $1_{\{\varphi^k_x=T_n\}}S_{l+1}f(Y_l)$ is  $\cal{F}_{\varphi_x^{k}}/\cal{B}(\r_E)$-measurable for all $l<n$ (as $T_{l+1}\leq\varphi^k_x$ for any such $l$) and the result follows.\end{proof}
%

\begin{proof}[Step 2: $(I_k^f)_{k\in\zp}$ is i.i.d.] To simplify the notation, we write $I_k$ for $I^f_k$ throughout this proof. Suppose that $x$ is recurrent, choose any $a_1,\dots,a_k\geq0$, and let
$$A_k:=\{I_1\geq a_1,\dots,I_{k-1}\geq a_{k-1},I_k\geq a_k\}.$$
The return times to $x$ are all finite with $\Pb_x$-probability one (Theorem~\ref{thrm:rectransct}). Furthermore, Exercise~\ref{ex:entdc} and the chain's definition~\eqref{eq:cpathdef}--\eqref{eq:cpathdef3} imply that $X_{\varphi^{k-1}_x}=x$ on $\{\varphi^{k-1}_x<\infty\}$ and that $I_k=G(X^{\varphi^{k-1}_x})$, where $X_{\varphi^{k-1}_x}$ denotes the $\varphi^{k-1}_x$-shifted chain (Section~\ref{sec:markovprop}) and $G$ is the $\cal{E}/\cal{B}(\r_E)$-measurable function in Lemma~\ref{lem:pathspmeasct}$(iii)$ (with $A$ and $k$ therein set to $\{x\}$ and $1$). For these reasons,
\begin{align*}
\Pbx{A_k}&=\Pb_x(\{I_1\geq a_1,\dots,I_{k-1}\geq a_{k-1},\varphi^{k-1}_x<\infty,X_{\varphi^{k-1}_x}=x,I_k\geq a_k\})\\
&=\Pb_x(\{I_1\geq a_1,\dots,I_{k-1}\geq a_{k-1},\varphi^{k-1}_x<\infty,X_{\varphi^{k-1}_x}=x,G(X^{\varphi^{k-1}_x})\geq a_k\}).
\end{align*}
%
%
Because $\varphi^1_x\leq\varphi^2_x\leq \dots\leq\varphi^{k-1}_x$ by definition, Lemma~\ref{lem:etatheta} and Step 1 imply that $A_{k-1}$ belongs to $\cal{F}_{\varphi^{k-1}_x}$. Thus, the strong Markov property (Theorem~\ref{thstrmk}) implies that
$$\Pbx{A_k}=\Pb_x(A_{k-1}\cap\{\phi^{k-1}_x<\infty,X_{\phi^{k-1}_x}=x\})\Pbx{\{G(X)\geq a_k\}}=\Pbx{A_{k-1}}\Pbx{\{I_1\geq a_k\}}.$$
%
%
Iterating the above backwards, we obtain 
\begin{equation}\label{eq:nfhudasfnsha2}\Pbx{A_k}=\Pbx{\{I_1\geq a_1\}}\Pbx{\{I_1\geq a_2\}}\dots\Pbx{\{I_1\geq a_k\}}.\end{equation}
Setting $a_1=\dots=a_{k-1}=0$ we find that 
$$\Pbx{\{I_k\geq a_k\}}=\Pbx{\{I_1\geq a_k\}}.$$
Because $k$ and $a_1,\dots,a_k$ were arbitrary, and $\{[a,\infty]:a\in\r_E\}$ is a $\pi$-system that generates $\cal{B}(\r_E)$, the above and Lemma~\ref{lem:dynkinpl} show that the sequence $(I_k)_{k\in\n}$ is identically distributed. For this reason, we can rewrite \eqref{eq:nfhudasfnsha2} as 
$$\Pbx{A_k}=\Pbx{\{I_1\geq a_1\}}\Pbx{\{I_2\geq a_2\}}\dots\Pbx{\{I_k\geq a_k\}}.$$
The desired independence then also follows from Lemma~\ref{lem:dynkinpl} as $\{[a_1,\infty],\dots, [a_k,\infty]:a_1,\dots,a_k\in\r_E\}$ is a $\pi$-system that generates $\cal{B}(\r_E^k)$.
\end{proof}

\subsection{The empirical distribution and positive recurrent states}\label{sec:empdistct}Consider the  \emph{empirical distribution}\index{empirical distribution}\glsadd{ept} $\epsilon_T:=(\epsilon_T(x))_{x\in\s}$ tracking the fraction of time the chain spends in each state, where
\begin{equation}\label{eq:timeavedefct}\epsilon_T(x):=\frac{1}{T}\int_0^{T\wedge T_\infty}1_x(X_t)dt\quad \forall x\in\s,\end{equation}
denotes is the fraction of the interval $[0,T]$ that the chain spends in state $x$. From Theorem~\ref{thrm:rectransct}, we already know that the chain eventually stops visiting any given transient state $x$, and it follows that 
$$\lim_{T\to\infty}\epsilon_T(x)=0\quad\Pb_\gamma\text{-almost surely.}$$
In the case of recurrent states $x$, the visits do not cease. However, the regenerative property implies that the sequences $(\eta_k)_{k\in\zp}$ of time-$\eta_k$-spend-in-$x$-between-two-consecutive-visits and $(\varphi^{k+1}-\varphi^{k})_{k\in\zp}$ of time-$(\varphi^{k+1}-\varphi^{k})$-elapsed-between-consecutive-visits are i.i.d. if the chain starts from $x$. In this case, the law of large numbers (Theorem~\ref{thrm:klln}) then implies that, 
\begin{align*}\epsilon_T(x)&\approx\frac{1}{\varphi^K_x}\int_{n=0}^{\varphi^K_x}1_x(X_t)dt=\left(\sum_{k=0}^{K-1}(\varphi^{k+1}-\varphi^{k})\right)^{-1}\left(\sum_{k=0}^{K-1}\eta_k\right)\\
&=\left(\frac{1}{K}\sum_{k=0}^{K-1}(\varphi^{k+1}-\varphi^{k})\right)^{-1}\left(\frac{1}{K}\sum_{k=0}^{K-1}\eta_k\right)\approx\frac{\Ebx{\eta_1}}{\Ebx{\varphi_x}},\end{align*}
for all $T$ large enough that the number $K$ of the return time $\varphi^K_x$ closest to $T$ is large and $|\varphi^K_x-T|/T$ is small, where $\lambda(x)$ denotes the jump rate in~\eqref{eq:lambda}. Because $\eta_1$ equals the first waiting time $S_1$ and $S_1$'s mean is $1/\lambda(x)$ under $\Pb_x$, it follows that $\epsilon_T(x)\approx (\lambda(x)\Ebx{\varphi_x})^{-1}$ for all large enough $T$. If the chain does not start at $x$, then noting that $\epsilon_T(x)$ is non-zero only if the chain ever visits $x$. The  Markov property then implies following:
\begin{theorem}\label{thrm:empdistlimsct}For all states $x$ in $\s$,
$$\epsilon_\infty(x):=\lim_{T\to\infty}\epsilon_T(x)=1_{\{\varphi_x<\infty\}}\frac{1}{\Ebx{\varphi_x}}\quad\Pb_\gamma\text{-almost surely.}$$
\end{theorem}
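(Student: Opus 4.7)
The plan is to mirror the two-step strategy used for the discrete-time analogue (Theorem~\ref{thrm:empdistlims}): first establish the limit under $\Pb_x$ (the case $\gamma := 1_x$), then transport it to arbitrary initial distributions via the strong Markov property at $\varphi_x$.

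For Step~1, the transient case is quick: Theorem~\ref{thrm:rectransct} makes the total number of visits to $x$ finite $\Pb_x$-almost surely, so $\int_0^{T\wedge T_\infty} 1_x(X_t)\,dt$ is bounded by an a.s.~finite random variable and $\epsilon_T(x) \to 0$, matching $1/\Ebx{\varphi_x} = 0$. For the recurrent case, Proposition~\ref{prop:recnoexpl} rules out explosion under $\Pb_x$, and the regenerative property (Theorem~\ref{thrm:rec-iidct}) applied with $f := 1$ and with $f := 1_x$ provides two i.i.d.\ sequences under $\Pb_x$: $(\varphi^k_x - \varphi^{k-1}_x)_{k \in \zp}$ and $(\eta_k)_{k \in \zp}$, where $\eta_k$ is the time spent in $x$ during the $k$th regeneration cycle. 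Kolmogorov's LLN (Theorem~\ref{thrm:klln}) then yields $\varphi^K_x / K \to \Ebx{\varphi_x}$ and $K^{-1}\sum_{k=1}^K \eta_k \to \Ebx{\eta_1}$, $\Pb_x$-a.s. Setting $R_T := \sup\{k \in \n : \varphi^k_x \leq T\}$ (which tends to infinity since $\varphi^K_x \to \infty$), the double sandwich
\[
\sum_{k=1}^{R_T}\eta_k \;\leq\; \int_0^T 1_x(X_t)\,dt \;\leq\; \sum_{k=1}^{R_T+1}\eta_k, \qquad \varphi^{R_T}_x \;\leq\; T \;<\; \varphi^{R_T+1}_x,
\]
combined with the LLN limits, produces $\epsilon_T(x) \to \Ebx{\eta_1}/\Ebx{\varphi_x}$, $\Pb_x$-a.s. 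Since the chain sits at $x$ throughout $[0,S_1)$ and is away during $[S_1,\varphi_x)$ under $\Pb_x$, we have $\eta_1 = S_1$ and hence $\Ebx{\eta_1} = 1/\lambda(x)$ by Theorem~\ref{thrm:condind}, recovering the target value $1/(\lambda(x)\Ebx{\varphi_x})$ anticipated in the discussion preceding the statement.

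For Step~2, the strong Markov property at the jump-time-valued stopping time $\varphi_x$ (Exercise~\ref{ex:entdc}) extends the limit to general $\gamma$. On $\{\varphi_x = \infty\}$ the chain never visits $x$, so $\epsilon_T(x) \equiv 0$. On $\{\varphi_x < \infty\}$, splitting the integral at $\varphi_x$ and using $\varphi_x / T \to 0$, I would rewrite $\liminf_{T\to\infty} \epsilon_T(x)$ and $\limsup_{T\to\infty} \epsilon_T(x)$ as $1_{\{\varphi_x < \infty\}} F_\pm(X^{\varphi_x})$ using the $\cal{E}/\cal{B}(\r_E)$-measurable functionals $F_\pm$ from Lemma~\ref{lem:pathspmeasct}(iv) (with $f := 1_x$). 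Because $X_{\varphi_x} = x$ on $\{\varphi_x < \infty\}$ (Exercise~\ref{ex:entdc}), an application of the strong Markov property (Theorem~\ref{thstrmk}) replaces the shifted-chain functionals by their Step~1 values, completing the argument.

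The main technical obstacles I anticipate are bookkeeping rather than conceptual: verifying $(\varphi^{R_T+1}_x - \varphi^{R_T}_x)/T \to 0$ as $T \to \infty$, which reduces via $R_T \to \infty$ to the LLN consequence $(\varphi^{K+1}_x - \varphi^K_x)/K \to 0$ for i.i.d.\ non-negative summands; and ensuring that the measurability premises of Theorem~\ref{thstrmk} are met for the liminf/limsup functionals, which is precisely what Lemma~\ref{lem:pathspmeasct}(iv) is designed to supply.
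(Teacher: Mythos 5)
Your proof is correct and follows precisely the route laid out in the paper's own exercise: regeneration plus Kolmogorov's LLN for the case $\gamma = 1_x$, then the strong Markov property at $\varphi_x$ via the $\cal{E}/\cal{B}(\r_E)$-measurable functionals $F_\pm$ from Lemma~\ref{lem:pathspmeasct}$(iv)$, with $R_T$ replacing $R_N$ from the discrete-time proof.

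One thing worth making explicit: your Step~1 yields $\epsilon_T(x)\to \Ebx{\eta_1}/\Ebx{\varphi_x}=1/(\lambda(x)\Ebx{\varphi_x})$ under $\Pb_x$, which is what the paper's heuristic derivation immediately preceding the theorem also produces, and which is what one checks on, e.g., a two-state chain with rates $q(0,1)=a$, $q(1,0)=b$ (there $\pi(0)=b/(a+b)=1/(\lambda(0)\Eb_0[\varphi_0])$, whereas $1/\Eb_0[\varphi_0]=ab/(a+b)$). The theorem as printed drops the $\lambda(x)$ factor — presumably a carry-over from the discrete-time statement, where the sojourn at $x$ has unit length — and the same omission propagates into \eqref{eq:empdistmeanct} and the first equality of \eqref{eq:ergdistcharc}. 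Your derived value is the correct one; you should note in your write-up that the displayed formula in the theorem should read $1_{\{\varphi_x<\infty\}}/(\lambda(x)\Ebx{\varphi_x})$, so that a reader does not mistake the mismatch for an error in the proof.
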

\begin{exercise}Prove Theorem~\ref{thrm:empdistlimsct} by following the steps taken in the proof of its discrete-time counterpart (Theorem~\ref{thrm:empdistlims}). To do so, note that $F_-,F_+$ in Lemma~\ref{lem:pathspmeasct}$(iv)$ (with $f:=1_x$) are such that
$$F_-(X)=\liminf_{T\to\infty}\epsilon_T(x),\qquad F_+(X)=\limsup_{T\to\infty}\epsilon_T(x),$$
and replace $R_N$ with
$$R_T:=\left\{\begin{array}{cc}0&\text{if }\varphi_x^0\leq T <\varphi_x^1\\1&\text{if }\varphi_x^1\leq T <\varphi_x^2\\\vdots&\vdots\end{array}\right.\quad\forall T\in[0,\infty)$$
and Theorems~\ref{thrm:strmkvpath},~\ref{thrm:rectrans},~and~\ref{thrm:rec-iid} with their continuous-time counterparts (Theorems \ref{thstrmk}, \ref{thrm:rectransct}, and \ref{thrm:rec-iidct}, repesctively).
\end{exercise}
The theorem shows that the fraction $\epsilon_\infty(x)$ of all time that the chain spends in a state $x$ is negligible  unless returns to $x$ occur quickly enough that the mean return time is finite. Or, in other words, unless $x$ is \emph{positive recurrent}:
\begin{definition}[Positive and null recurrent states]\label{def:posrecct}
A state $x$ is said to be positive recurrent\index{positive recurrent state} if its mean return time is finite ($\Ebx{\varphi_x}<\infty$). If $x$ is recurrent but its mean return time is infinite ($\Ebx{\varphi_x}=\infty$), we say that it is null recurrent\index{null recurrent state}.
\end{definition}

\subsection{Skeleton chains and the Croft-Kingman lemma}\label{sec:skeleton}As the previous sections illustrate, we are able to prove results describing the long-term behaviour of the empirical distribution in the continuous-time case by emulating the proofs the results' discrete-time counterparts. To prove the results that characterise the long-term behaviour of the time-varying law of continuous-time chains, we can directly use their discrete-time counterparts. Key to unlocking this approach are the so-called\glsadd{skeleton}\index{skeleton chain} \emph{$\delta$-skeleton chains $X^\delta=(X_n^\delta)_{n\in\n}$}: discrete-time processes  obtained by sampling our continuous-time chain $X$ at regular time intervals of length $\delta>0$, i.e.,
\begin{equation}\label{eq:skeletondef}X_n^\delta(\omega):=\left\{\begin{array}{ll}X_{\delta n}(\omega)&\text{if }\delta n<T_\infty(\omega)\\ \Delta&\text{if }\delta n\geq T_{\infty}(\omega)\end{array}\right.\quad\forall\omega\in\Omega,\enskip n\in\n,\end{equation}
where $\Delta\not\in\s$ denotes an auxiliary dummy (or \emph{dead} or \emph{infinity}) state in which we leave $X^\delta$ for all sampling times past the explosion time of $X$. It is not difficult to use the Markov property of $X$ to show that 
\begin{equation}\label{eq:skeletonmarkov}\Pb_\gamma(\{X_{n+1}^\delta=x|X_{n}^\delta\})=p^\delta(X_{n}^\delta,x)\quad\forall x\in\s_E,\enskip n\in\n,
\end{equation}
where $\s_E$ denotes the \emph{extended state space} $\s\cup\{\Delta\}$ and $P^\delta=(p^\delta(x,y))_{x,y\in\s_E}$ the one-step matrix given by
\begin{equation}\label{eq:skeletonone-step}p^\delta(x,y):=\left\{\begin{array}{ll}\Pbx{\{X_\delta=y,\delta<T_\infty\}}&\text{if }x,y\in\s\\ \Pbx{\{T_\infty\leq \delta \}}&\text{if }x\in\s,\enskip y=\Delta\\ 1_\Delta(y)&\text{if }x=\Delta\end{array}\right.\quad\forall x,y\in\s_E.
\end{equation}
Theorem~\ref{samedef} implies that $ X^\delta$ is a discrete-time chain with state space $\s_E$, one-step matrix $P^\delta$, and initial distribution $\gamma_E:=(1_{\s}(x)\gamma(x))_{x\in\s_E}$ (here, we are using our convention in~\eqref{eq:partdef} for partially-defined functions). Moreover, because $\Delta$ is an absorbing state for $ X^\delta$, it is straightforward to check that 
\begin{equation}\label{eq:skeletonoriginal}p^\delta_n(x,y)=p_{\delta n}(x,y)\quad\forall x,y\in\s,\enskip n\in\n,\end{equation}
where $P^\delta_n=(p^\delta_n(x,y))_{x,y\in\s_E}$ denotes the $n$-step matrix of the skeleton chain (defined by replacing $P$ and $\s$ in~\eqref{eq:nstepdef} with $P^\delta$ and $\s_E$) and $(p_t(x,y))_{x,y\in\s}$ denotes $t$-transition matrix in~\eqref{eq:transprob} of our continuous-time chain $X$.
\begin{exercise}Prove \eqref{eq:skeletonmarkov} and \eqref{eq:skeletonoriginal}, and show that the $P^\delta$ defined in \eqref{eq:skeletonone-step} is indeed a one-step matrix (i.e., that $\sum_{y\in\s_E}p^\delta(x,y)=1$ for all $x$ in $\s_E$).\end{exercise}

It follows easily from \eqref{eq:skeletonoriginal} that the skeleton chain and the original chain share the same communicating classes and closed sets and that the former is aperiodic:
\begin{exercise}\label{exe:skeletonperiodic}Pick any $\delta\in(0,\infty)$. Use \eqref{eq:skeletonoriginal} and Theorem~\ref{thrm:accesibility}$(vi)$ to prove that  $X^\delta$ is aperiodic and that $y$ is accessible from $x$ for $X^\delta$ if and only if it is for $X$, for any states $x,y\in\s$.\end{exercise}

\subsubsection*{The Croft-Kingman lemma and the existence of limits for the time-varying law}Equation~\eqref{eq:skeletonoriginal} spells out the relationship between the $n$-step matrix of $X^\delta$ and the transition probabilities of $X$ evaluated at the sampling times $t=0,\delta,2\delta,\dots$ To relate these two for other times, the following lemma proves very useful.\index{Croft-Kingman lemma}
\begin{lemma}[The Croft-Kingman lemma,~\citep{Kingman1963}]\label{lem:croftkingman}Let $f$ be a continuous real-valued function on $[0,\infty)$. Suppose that for each $\delta$ in $(0,\infty)$, the limit
$$L_\delta:=\lim_{n\to\infty}f(\delta n)$$
exists and is finite. In this case, the limit $L_\delta$ is independent of $\delta$. Moreover, $\lim_{t\to\infty}f(t)$ exists and equals $L_\delta$.\end{lemma}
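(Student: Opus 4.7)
My plan rests on two observations. The first is that once we have shown $\lim_{t \to \infty} f(t) = L$ exists, the fact that $L_\delta = L$ for every $\delta > 0$ is automatic: $(f(\delta n))_{n \in \mathbb{N}}$ is a sequence of values of $f$ at points tending to infinity, so its limit must coincide with the limit of $f$ at infinity. The lemma thus reduces to proving that $\lim_{t \to \infty} f(t)$ exists, with the two stated conclusions following simultaneously.

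To that end, I would argue by contradiction, supposing that $L_* := \liminf_{t \to \infty} f(t) < L^* := \limsup_{t \to \infty} f(t)$, and fixing $\varepsilon \in (0, (L^* - L_*)/4)$. Because each $L_\delta$ is a subsequential limit of $f$ along $n\delta \to \infty$, it must lie in $[L_*, L^*]$. So either $L_\delta \leq (L^* + L_*)/2$---in which case $f(n\delta) \leq L_\delta + \varepsilon < L^* - \varepsilon$ for every sufficiently large $n$---or $L_\delta > (L^* + L_*)/2$---in which case $f(n\delta) > L_* + \varepsilon$ eventually. This dichotomy is designed to feed into a Baire category argument: setting, for each $N \in \mathbb{Z}_+$,
\begin{align*}
B_N &:= \{\delta \in [1,2] : f(n\delta) \leq L^* - \varepsilon \text{ for all } n \geq N\}, \\
C_N &:= \{\delta \in [1,2] : f(n\delta) \geq L_* + \varepsilon \text{ for all } n \geq N\},
\end{align*}
continuity of $f$ makes each $B_N, C_N$ closed in $[1,2]$, and the dichotomy gives $[1,2] = \bigcup_N (B_N \cup C_N)$. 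The Baire category theorem then furnishes some $N_0$ for which, say, $B_{N_0}$ contains a nonempty open subinterval $(c, d) \subseteq [1, 2]$; the $C_{N_0}$ case is symmetric.

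The geometric payoff comes next: once $n \geq c/(d-c)$, consecutive intervals $(nc, nd)$ and $((n+1)c, (n+1)d)$ overlap. Choosing $N_1 \geq \max(N_0, \lceil c/(d-c) \rceil + 1)$ thus guarantees $\bigcup_{n \geq N_1}(nc, nd) = (N_1 c, \infty)$. Every $t > N_1 c$ can therefore be written $t = n\delta$ with $n \geq N_1 \geq N_0$ and $\delta \in (c, d) \subseteq B_{N_0}$, so $f(t) \leq L^* - \varepsilon$. This contradicts $\limsup_{t\to\infty} f(t) = L^*$, completing the proof.

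The main obstacle I foresee is in identifying the correct Baire setup. The naive attempt of setting $A_N := \{\delta : |f(n\delta) - L_\delta| \leq \varepsilon \text{ for all } n \geq N\}$ is not useful because $L_\delta$ depends on $\delta$ and there is no obvious way to compare $L_\delta$ for different $\delta$'s---the scaling relation $L_{r\delta} = L_\delta$ for $r \in \mathbb{Q}^+$, obtained by passing to sub- and super-sequences, is suggestive but falls short of constancy. The trick is to sidestep the question of whether $\delta \mapsto L_\delta$ is constant and instead partition $[1,2]$ according to which half of $[L_*, L^*]$ the limit $L_\delta$ falls into, converting the hypothesis into a uniform one-sided tail bound on $(f(n\delta))_{n \geq N}$ that the covering argument then propagates to all sufficiently large $t$.
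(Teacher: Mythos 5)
Your proof takes a genuinely different route from the paper's. The paper first proves Kingman's theorem (for any unbounded open $A\subseteq(0,\infty)$ there is a $\delta$ with $n\delta\in A$ for infinitely many $n$, via a Baire density argument on the sets $\bigcup_{n\ge m}\frac{1}{n}A$) and Croft's theorem (if $f(n\delta)\to 0$ for every $\delta>0$ then $f(t)\to 0$), and then establishes that $\delta\mapsto L_\delta$ is constant by combining the scaling relation $L_{r\delta}=L_\delta$ for rational $r$ with the Baire-class-1 fact that a pointwise limit of continuous functions has a point of continuity; it finishes by applying Croft to $f-L$. Your argument is more self-contained: it runs a direct contradiction, decomposes $[1,2]$ into closed sets according to which half of $[\liminf f,\limsup f]$ the limit $L_\delta$ lands in, and then invokes plain Baire together with the overlapping-intervals observation (which is also the engine inside the paper's Kingman theorem) to propagate a uniform one-sided tail bound from the lattice $n\delta$, $\delta\in(c,d)$, to all large $t$. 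What you gain is avoiding the deeper Baire-class-1 theorem; what the paper's route buys is a cleaner isolation of the constancy of $L_\delta$, the very difficulty you correctly flag the scaling relation alone as insufficient to resolve, and which your dichotomy is designed to sidestep.

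There is one gap you should close: the argument implicitly assumes $L_*:=\liminf_{t\to\infty}f(t)$ and $L^*:=\limsup_{t\to\infty}f(t)$ are both finite. If, say, $L^*=+\infty$, then the condition $\varepsilon\in(0,(L^*-L_*)/4)$ degenerates to $\varepsilon>0$, the dichotomy collapses (every finite $L_\delta$ satisfies $L_\delta\le(L^*+L_*)/2=\infty$), $B_N=[1,2]$ trivially, and the terminal bound $f(t)\le L^*-\varepsilon=\infty$ is vacuous, so no contradiction is produced. The fix is a preliminary pass of the same machinery: for each $K\in\mathbb{Z}_+$ set $D_K:=\{\delta\in[1,2]:\mmag{f(n\delta)}\le K\ \forall n\ge 1\}$, which is closed by continuity, and note $[1,2]=\bigcup_K D_K$ because each $(f(n\delta))_{n}$ converges and is therefore bounded. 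Baire and your interval-overlap argument then give a uniform bound on $\mmag{f}$ near infinity, forcing $L_*,L^*\in\mathbb{R}$, after which your contradiction argument applies verbatim.
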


Before we prove the lemma we use it to prove that the time-varying $p_t$ converges pointwise as $t$ approaches infinity:

\begin{theorem}[The time-varying law has pointwise limits]\label{thrm:timevarlimexist}For all initial distributions $\gamma$ and states $x$, the limit $\lim_{t\to\infty}p_t(x)$ exists and is finite, where $p_t$ denotes the time-varying law in~\eqref{eq:ctlawdef} of $X$.
\end{theorem}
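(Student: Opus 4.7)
The plan is to invoke the Croft-Kingman lemma (Lemma~\ref{lem:croftkingman}) on the function $f(t) := p_t(x)$, which requires verifying two things: (a) $f$ is continuous on $[0,\infty)$, and (b) for every $\delta > 0$, the sequence $(f(\delta n))_{n \in \n}$ converges as $n \to \infty$. Both hand-offs from already-established theory are short, and there is no single difficult step; the mild obstacle is bookkeeping the extended state space $\s_E$ of the skeleton chain.

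For continuity, I would recycle the argument buried in the proof of Theorem~\ref{thrm:forward}: since $p_t(x) = \sum_{x' \in \s} \gamma(x')\, p_t(x',x)$ (equation~\eqref{eq:tstep}) and since Theorem~\ref{thrm:forwardbackward} guarantees that each $t \mapsto p_t(x',x)$ is continuous on $[0,\infty)$, the sequence of partial sums
\[
t \mapsto \sum_{x' \in \s_r} \gamma(x')\, p_t(x',x), \qquad r \in \zp,
\]
is a sequence of continuous functions. The bound
\[
\Bigl| p_t(x) - \sum_{x' \in \s_r} \gamma(x')\, p_t(x',x) \Bigr| \leq \sum_{x' \notin \s_r} \gamma(x') = \gamma(\s_r^c)
\]
is independent of $t$, so the convergence is uniform on $[0,\infty)$ and the limit $t \mapsto p_t(x)$ is therefore continuous. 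This does not require condition~\eqref{eq:forwardcond} because it only uses continuity of the transition probabilities, not their differentiability.

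For the existence of $\lim_{n \to \infty} p_{\delta n}(x)$, I would pass to the skeleton chain $X^\delta$ introduced in this section. Equation~\eqref{eq:skeletonoriginal} and the computation $\Pb_{\gamma_E}(\{X^\delta_n = x\}) = \sum_{x' \in \s} \gamma(x')\, p_{\delta n}(x',x) = p_{\delta n}(x)$ (valid because $\gamma_E(\Delta)=0$ and $x \in \s$) identify $p_{\delta n}(x)$ with the time-varying law of the discrete-time chain $X^\delta$ evaluated at $x$. By Exercise~\ref{exe:skeletonperiodic}, $X^\delta$ is aperiodic, so Theorem~\ref{thrm:pointlims}$(i)$ applies and yields a pointwise limit for the time-varying law of $X^\delta$ on $\s_E$; restricting to the state $x \in \s$ gives convergence of $p_{\delta n}(x)$. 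The limit is finite since $p_{\delta n}(x) \in [0,1]$ for all $n$.

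Having established (a) and (b) for every $\delta > 0$, the Croft-Kingman lemma delivers the existence of $\lim_{t\to\infty} p_t(x)$, and finiteness is automatic from $p_t(x) \in [0,1]$. The only place where one has to be slightly careful is in tracking that the dummy state $\Delta$ of the skeleton chain carries no mass under $\gamma_E$, so the discrete-time convergence result gives us a genuine limit for the continuous-time mass at $x$ and not a limit polluted by the dead state.
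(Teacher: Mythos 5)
Your proof is correct and follows the same route as the paper's: establish continuity of $t \mapsto p_t(x)$ via uniform approximation by finite truncations, identify $p_{\delta n}(x)$ with the time-varying law of the aperiodic skeleton chain $X^\delta$ and invoke Theorem~\ref{thrm:pointlims}$(i)$, then apply the Croft--Kingman lemma. The only cosmetic difference is that the paper merely cites the continuity argument as already contained in the proof of Theorem~\ref{thrm:forward}, while you write it out (and correctly note that it needs none of condition~\eqref{eq:forwardcond}).
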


\begin{proof}As we showed in the proof of the differential equation satisfied by the time-varying law (Theorem~\ref{thrm:forward}), $t\mapsto p_t(x)$ is a continuous function on $[0,\infty)$ for every $x$ in $\s$.  Fix any $\delta$ in $(0,\infty)$. Because all skeleton chains are aperiodic (Exercise~\ref{exe:skeletonperiodic}), the pointwise convergence of the time-varying law of aperiodic discrete-time chains (Theorem~\ref{thrm:pointlims}), bounded convergence, and \eqref{eq:skeletonoriginal} imply that
\begin{equation}\label{eq:skeletonlims}\lim_{n\to\infty}p_{n\delta}(x)=\lim_{n\to\infty}\sum_{x'\in\s}\gamma(x')p_{n\delta}(x',x)=\lim_{n\to\infty}\sum_{x'\in\s_E}\gamma_E(x')p_{n}^\delta(x',x)=\lim_{n\to\infty}p_{n}^\delta(x)=\pi_\gamma(x)
\end{equation}
for all $x$ in $\s$, where $(p_{n}^\delta)_{n\in\n}$ denotes the time-varying  law \eqref{eq:timevardt} of the skeleton chain and $\pi_\gamma$ is its limit in \eqref{eq:reclims2} (with $X^\delta$ replacing $X$ therein). Because $\delta$ was arbitrary, the Croft-Kingman lemma tells us that the limit $\pi_\gamma$ is the same for all $\delta$ and that 
\begin{equation}\label{eq:nf8ahbfw678bfhafwa}\lim_{t\to\infty}p_t(x)=\pi_\gamma(x)\quad\forall x\in\s.\end{equation}
\end{proof}
Without realising it, we have almost characterised the limiting behaviour of the time-varying law. As we will show in the next section, continuous-time chains and their skeletons share the same stationary distributions. For this reason, we are able to rewrite the limits in~\eqref{eq:nf8ahbfw678bfhafwa} in terms of the continuous-time chain's stationary distributions and its entrance times to positive recurrent classes just as we did in Theorem~\ref{thrm:pointlims} for the aperiodic discrete-time case.

\subsubsection*{A proof of the Croft-Kingman lemma}Here, we follow the steps taken in~\citep{Kingman1963}.

\begin{theorem}\label{thrm:kingmansthrm}Let $A$ be any  unbounded open subset of $(0,\infty)$. There exists a $\delta>0$ such that $\delta n$ belongs to $ A$ for infinitely many integers $n$.
\end{theorem}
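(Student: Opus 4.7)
The plan is to prove this via a Baire category argument applied to $(0,\infty)$. Specifically, the set of ``good'' $\delta$, namely
$$G := \left\{\delta>0 : \delta n \in A \text{ for infinitely many } n\in\zp\right\},$$
can be rewritten as
$$G = \bigcap_{N=1}^\infty U_N, \qquad U_N := \bigcup_{n=N}^\infty \tfrac{1}{n}A,$$
where $\tfrac{1}{n}A:=\{a/n:a\in A\}$. Since $A$ is open in $(0,\infty)$ and $x\mapsto x/n$ is a homeomorphism of $(0,\infty)$, each $\tfrac{1}{n}A$ is open, hence so is each $U_N$. Because $(0,\infty)$ with its usual topology is a locally compact Hausdorff (equivalently, completely metrisable) space, the Baire category theorem applies: if I can show each $U_N$ is dense in $(0,\infty)$, then $G$ is dense in $(0,\infty)$ and in particular non-empty, which proves the theorem.

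The main obstacle is therefore establishing the density of $U_N$, and the key idea is the following geometric observation. Fix any non-empty open interval $(c,d)\subseteq(0,\infty)$; I must produce some $n\geq N$ with $\tfrac{1}{n}A\cap(c,d)\neq\emptyset$, or equivalently some $n\geq N$ and $a\in A$ with $a\in(nc,nd)$. The dilated intervals $(nc,nd)$ and $((n+1)c,(n+1)d)$ overlap precisely when $(n+1)c<nd$, i.e.\ when $n>c/(d-c)$. Consequently, picking any integer $n_0\geq\max\{N,\lceil c/(d-c)\rceil+1\}$, the union $\bigcup_{n\geq n_0}(nc,nd)$ equals the half-line $(n_0c,\infty)$.

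Since $A$ is unbounded, it contains some point $a>n_0c$, and this $a$ must therefore lie in $(nc,nd)$ for at least one $n\geq n_0\geq N$. Hence $a/n\in U_N\cap(c,d)$, proving the density of $U_N$. With all the $U_N$ open and dense, Baire's theorem delivers a dense (and in particular non-empty) $G$, completing the proof. No heavy machinery beyond Baire and the overlap computation is needed; the entire argument hinges on the fact that dilations by large integers eventually spread the interval $(c,d)$ wide enough to cover a terminal ray of $(0,\infty)$, which any unbounded set must meet.
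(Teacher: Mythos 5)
Your proof is correct and follows essentially the same route as the paper's: both decompose the set of good $\delta$ as a countable intersection of the open sets $\bigcup_{n\geq N}\tfrac{1}{n}A$, show each is dense, and invoke Baire. The only cosmetic difference is that you prove density directly via the explicit overlap computation $(n+1)c<nd$ (filling in a detail the paper leaves as a hint), whereas the paper argues the contrapositive by supposing an interval $I$ is missed and deriving that $\bigcup_{n\geq m}nI$ eventually covers a ray.
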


\begin{proof}For any $\alpha\in(0,\infty)$ and set $B\subseteq[0,\infty)$, let $\alpha B$ denote the set $\{\alpha x:x\in B\}$. Consider the open set
$$A_{m}:=\bigcup_{n=m}^\infty \frac{1}{n} A\quad\forall  m\in\zp.$$
Suppose that, for some $m$, there exists a non-empty open interval $I\subseteq(0,\infty)$ disjoint from $A_{m}$. In this case,  $nI\cap A=\emptyset$ for all $n\geq m$. Thus, $A$ is disjoint from $B:=\cup_{n=m}^\infty nI$. If you think about it a little bit, you'll see that $B$ contains all large enough numbers (hint: compare the distance between the mid-points of $(n-1)I$ and $nI$ with the length of $n I$), contradicting our assumption that $A$ is unbounded. Thus, $A_{m}$ intersects with all open intervals contained in $(0,\infty)$ or, in other words, $A_{m}$ is dense in $(0,\infty)$.  For this reason,
$$\bigcap_{m=1}^\infty A_{m}$$
is the intersection of countably many open dense subsets of $(0,\infty)$ and Baire's theorem tells us this intersection is dense in $(0,\infty)$. In particular, it is non-empty and picking any $\delta$ belonging to the intersection we have that $\delta n$ belongs to $A$ for infinitely many $n$ in $\zp$.
\end{proof}
The key to the proof of Lemma~\ref{lem:croftkingman} is the following corollary of Theorem~\ref{thrm:kingmansthrm}:\index{Croft's theorem}
\begin{corollary}[Croft's theorem,~\citep{Croft1957}]\label{cor:croft} Let $f$ be a continuous real-valued function on $[0,\infty)$. If 
$$\lim_{n\to\infty}f(\delta n)=0$$
for all $\delta$ in $(0,\infty)$, then
$$\lim_{t\to\infty}f(t)=0.$$
\end{corollary}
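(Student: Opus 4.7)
The plan is to derive Croft's theorem as a fairly direct corollary of Theorem~\ref{thrm:kingmansthrm} by arguing the contrapositive. Suppose, for the sake of contradiction, that $\lim_{t\to\infty} f(t) \neq 0$. Then there exists some $\varepsilon>0$ and an unbounded sequence $(t_k)_{k\in\zp}$ in $(0,\infty)$ such that $\mmag{f(t_k)}>\varepsilon$ for all $k$.

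The next step is to package this information into an unbounded open set to which Theorem~\ref{thrm:kingmansthrm} can be applied. Define
\[
A:=\{t\in(0,\infty):\mmag{f(t)}>\varepsilon\}.
\]
Since $f$ is continuous on $[0,\infty)$, the real-valued function $t\mapsto\mmag{f(t)}$ is continuous, and so $A$, being the preimage of the open set $(\varepsilon,\infty)$, is open in $(0,\infty)$. Moreover, $A$ contains every $t_k$, so it is unbounded. Theorem~\ref{thrm:kingmansthrm} therefore supplies a $\delta>0$ and infinitely many positive integers $n$ for which $\delta n\in A$, i.e.\ $\mmag{f(\delta n)}>\varepsilon$. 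This directly contradicts the hypothesis that $\lim_{n\to\infty}f(\delta n)=0$ for every $\delta>0$, completing the proof.

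The only step requiring any care is the verification that $A$ is open and unbounded, which is immediate from the continuity of $f$ and the existence of the sequence $(t_k)_{k\in\zp}$; there is no real obstacle here, as the hard work has already been done in Theorem~\ref{thrm:kingmansthrm}. The overall strategy is just to recognise that ``$f$ does not tend to zero along the continuum'' translates into ``the level set $\{\mmag{f}>\varepsilon\}$ is an unbounded open set,'' which is precisely the hypothesis of Theorem~\ref{thrm:kingmansthrm}.
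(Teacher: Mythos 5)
Your argument is correct and is essentially the same as the paper's: both pass from the assumption $f(t)\not\to 0$ to an unbounded open subset of $(0,\infty)$ on which $\mmag{f}$ is bounded away from zero, and then invoke Theorem~\ref{thrm:kingmansthrm} to produce a contradiction. The only (cosmetic) difference is that you take $A$ directly as the superlevel set $\{\mmag{f}>\varepsilon\}$, which is open as the preimage of an open set under a continuous map, whereas the paper builds $A$ as a union of small open intervals around a divergent sequence of points where $\mmag{f}>c$; your version is marginally tidier but the two are the same proof.
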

\begin{proof}Suppose that $f(t)$ does not tend to zero as $t$ approaches infinity and pick any $c$ in $(0,\infty)$ such that $\limsup_{t\to\infty}\mmag{f(t)}>c$. For any $n$ in $\n$, there exists a $t_n>n$ such that $\mmag{f(t_n)}>c$ and so by continuity there exists an open interval $I_n$ containing $t_n$ such that $\mmag{f(t)}> c$ for all $t$ in $I_n$. Thus, $A:=\cup_{n=0}^\infty I_n$ is an unbounded open set such that $\mmag{f(t)}>c$ for all $t$ in $A$. Theorem~\ref{thrm:kingmansthrm} then implies that there exists a $\delta$ in $(0,\infty)$ such that $\mmag{f(\delta n)}>c$ for all $n$ in $\n$ contradicting our premise. Hence, it must be the case that $f(t)\to0$ as $t\to\infty$.
\end{proof}
The proof of the Croft-Kingman lemma is now downhill:
\begin{proof}[Proof of Lemma~\ref{lem:croftkingman}]
For any $\delta$ in $(0,\infty)$ and positive integer $m$, the sequence $(nm\delta)_{n\in\zp}$ is a subsequence of $(n\delta)_{n\in\zp}$ and so 
$$L_{m\delta}=\lim_{n\to\infty}f(nm\delta)=\lim_{n\to\infty}f(n\delta)=L_{\delta}.$$
Thus, for any rational number $r=m/l$, we have that $L_{r\delta}=L_{lr\delta}=L_{m\delta}=L_{\delta}$. Because $\delta\mapsto L_\delta$ is the pointwise limit of the sequence $(\delta\mapsto f(\delta n))_{n\in\zp}$ of continuous functions, there exists at least one $\delta_*$ such that $\delta\mapsto L_\delta$ is continuous at $\delta_*$. Fix any $\delta$ in $(0,\infty)$ and let $(r_n)_{n\in\zp}$ be a sequence of rational numbers converging to $\delta_*/\delta$. Then, 
$$L:=L_{\delta_*}=\lim_{n\to\infty}L_{r_n\delta}=\lim_{n\to\infty}L_{\delta}=L_{\delta}$$
proving that $L$ is independent of $\delta$ and the result follows by applying Corollary~\ref{cor:croft} to the function $t\mapsto f(t)-L$.
\end{proof}
\subsection[Stationary and ergodic distributions; Doeblin decomposition]{Stationary distributions, ergodic distributions, and a Doeblin-like decomposition}\label{sec:statct}
A probability distribution $\pi$\glsadd{pi} on $\s$ is said to be a stationary distribution\index{stationary distribution} of the chain $X$ if sampling its starting state from $\pi$ makes $X$ a stationary process. By a stationary process, I mean that the chain's path law is invariant to time shifts:
\begin{equation}\label{eq:statprocct}\Pbp{\left\{X^t\in A\right\}}=\Lbp{A}\quad\forall A\in\cal{E}, \end{equation}
where $\cal{E}$ denotes the sigma-algebra generated by the cylinder sets of the path space $\cal{P}$, $\mathbb{L}_\pi$ the path law when the initial distribution is $\pi$,  and $X^t$ the $t$-shifted chain (see Sections~\ref{sec:pathspacect} and \ref{sec:markovprop}). Setting 
$$A:=\{(y,s)\in\cal{P}:y_0=x\}$$
in \eqref{eq:statprocct} for each state $x$, we find that initialising the chain by sampling a stationary distribution $\pi$ ensures that the chain remains with law $\pi$ for all time:
\begin{equation}\label{eq:statc}\Pbp{\{X_t=x,t<T_\infty\}}=\pi(x)\quad\forall x\in\s,\enskip t\in[0,\infty).\end{equation}
It then follows from \eqref{eq:tstep} that $\pi$ is a stationary distribution  only if it is a fixed point of the transition probabilities in~\eqref{eq:transprob}:
\begin{equation}\label{eq:statfixedpoint}\pi(x)=\Pbp{\{X_t=x,t<T_\infty\}}=\sum_{x'\in\cal{S}}\pi(x')\Pb_{x'}(\{X_t=x,t<T_\infty\})=\pi P_t(x)\quad\forall x\in\s,\end{equation}
or $\pi=\pi P_t$ in matrix notation, for any given $t$ in $[0,\infty)$. The converse is also true:
\begin{theorem}[The fixed points of the transition probabilities are the stationary distributions]\label{thrm:statfixedpoint}A probability distribution $\pi$ on $\s$ is a stationary distribution of $X$ if and only if it satisfies \eqref{eq:statc} for at least one $t$ in $(0,\infty)$, in which case it holds for all $t$ in $[0,\infty)$.\end{theorem}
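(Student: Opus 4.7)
The forward direction is essentially already established in the text preceding the theorem (leading to equation \eqref{eq:statfixedpoint}): specializing the shift-invariance of $\mathbb{L}_\pi$ to cylinder sets of the form $\{(y,s) \in \cal{P} : y_0 = x\}$ yields \eqref{eq:statc}, which in turn says $\pi = \pi P_t$ for every $t \geq 0$. So the substantive work is the reverse direction: assuming only that $\pi P_t = \pi$ for a single $t > 0$, I would first show that $\pi P_s = \pi$ for every $s \geq 0$, and then deduce that the path law $\mathbb{L}_\pi$ is shift-invariant.

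For the first of these two steps, the semigroup property $P_{s+t} = P_s P_t$ from \eqref{eq:semigroup} immediately gives $\pi P_{s+t} = (\pi P_t) P_s = \pi P_s$, so, setting $\gamma := \pi$, the time-varying law $p_s := \pi P_s$ is $t$-periodic in $s$. On the other hand, Theorem \ref{thrm:timevarlimexist} guarantees that $\lim_{s \to \infty} p_s(x)$ exists and is finite for every $x \in \s$. Any $t$-periodic real-valued function on $[0,\infty)$ admitting a limit at infinity is necessarily constant (compare values at $s$ and $s + nt$, let $n \to \infty$), so $p_s(x) = p_0(x) = \pi(x)$ for every $s \geq 0$ and $x \in \s$, i.e.\ \eqref{eq:statc} holds at every $t \in [0,\infty)$.

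For the second step, I would verify shift-invariance on cylinder sets and appeal to uniqueness. Because $\pi P_s(\s) = \pi(\s) = 1$ for all $s \geq 0$, Proposition \ref{prop:nonexptimevar} yields $\Pbp{\{s < T_\infty\}} = 1$, so the shifted chain $X^s$ is defined $\Pb_\pi$-almost surely. Fix a cylinder set $B$ of the form \eqref{eq:nfanfewuiafnueia}, apply the Markov property (Theorem \ref{thrm:markovprop}) with $Z := 1$ and $F := 1_B$, and sum over $x \in \s$ to obtain
\[
\Pbp{\{X^s \in B\}} \;=\; \sum_{x \in \s} \Pbp{\{s < T_\infty, X_s = x\}} \, \mathbb{L}_x(B) \;=\; \sum_{x \in \s} \pi(x) \, \mathbb{L}_x(B) \;=\; \Lbp{B},
\]
where the middle equality uses $\pi P_s = \pi$ and the last unfolds $\Pb_\pi = \sum_{x} \pi(x) \Pb_x$. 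Since cylinder sets form a $\pi$-system generating $\cal{E}$ and both measures on $(\cal{P}, \cal{E})$ have total mass one, Lemma \ref{lem:dynkinpl} (equivalently, the uniqueness part of Theorem \ref{thrm:pathlawunict}) promotes this to equality on all of $\cal{E}$, establishing \eqref{eq:statprocct}.

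The main obstacle is promoting ``$\pi$ is fixed by one $P_t$'' to ``$\pi$ is fixed by every $P_s$'', since the semigroup relation alone yields only periodicity. The clean fix is to combine this periodicity with the \emph{mere existence} of pointwise limits for the time-varying law (Theorem \ref{thrm:timevarlimexist}), itself a non-trivial consequence of the Croft--Kingman lemma; without this input one would be forced into a substantially more cumbersome case-by-case analysis via the closed communicating classes of the skeleton chain $X^t$.
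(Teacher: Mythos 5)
Your proof is correct. In the second step (promoting $\pi P_s = \pi$ for all $s$ to shift-invariance of $\mathbb{L}_\pi$) your argument is essentially what the paper does, modulo whether one applies the Markov property directly to a cylinder set or marginalises the finite-dimensional distribution formula of Theorem~\ref{thrm:findim}; both lean on the uniqueness half of Theorem~\ref{thrm:pathlawunict}. The interesting divergence is in the first step. The paper's proof works at the level of the individual transition probabilities: it takes $k(x',x):=\lim_{s\to\infty}p_s(x',x)$, shows $\pi K=\pi$ (bounded convergence applied to $\pi P_{nt}=\pi$) and $KP_s=K$ (semigroup plus bounded convergence), and then composes to get $\pi P_s=\pi K P_s=\pi K=\pi$. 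You instead work with the time-varying law $p_s=\pi P_s$ \emph{started from} $\pi$, note that it is $t$-periodic in $s$, and observe that a $t$-periodic function admitting a limit at infinity must be constant. This is genuinely slicker: it needs only a single application of Theorem~\ref{thrm:timevarlimexist} (at $\gamma=\pi$) rather than one per starting state, it avoids manufacturing the limit matrix $K$ and verifying its algebraic identities, and it makes transparent \emph{why} the argument works — the only thing that could stop $\pi P_s$ from being constant is periodicity, which the Croft--Kingman-based limit theorem exactly rules out. Both arguments ultimately run on Theorem~\ref{thrm:timevarlimexist}, so neither is more elementary in terms of prerequisites, but yours is the cleaner exposition and would be a worthwhile simplification of the paper's proof.
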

Using Theorem~\ref{pstateqs}, we can rephrase Theorem~\ref{thrm:statfixedpoint} using the jargon of the previous section: $\pi=(\pi(x))_{x\in\s}$ is a stationary distribution of $X$ if and only if $\pi_E:=(1_{\s}(x)\pi(x))_{x\in\s_E}$ is a stationary distribution of at least one skeleton chain $X^\delta$, in which case $\pi_E$ is a stationary distribution of every skeleton chain $X^\delta$.
\begin{proof}[Proof of Theorem~\ref{thrm:statfixedpoint}]Given the discussion preceding the theorem's statement, we need only to prove that if $\pi$ is a probability distribution satisfying $\pi=\pi P_t$ for any given $t$ in $(0,\infty)$, then $\pi = \pi P_s$ holds for all $s$ in $[0,\infty)$ and $\pi$ is a stationary distribution. We begin with the former: repeatedly applying Tonelli's theorem and the semigroup property~\eqref{eq:semigroup}, we find that
\begin{align*}\sum_{x'\in\s}\pi(x')p_{nt}(x',x)&=\sum_{x''\in\s}\left(\sum_{x'\in\s}\pi(x')p_{t}(x',x'')\right)p_{(n-1)t}(x'',x)\\
&=\sum_{x''\in\s}\pi(x')p_{(n-1)t}(x',x)=\dots=\pi(x)\quad\forall x\in\s,\enskip n>0.\end{align*}
Theorem~\ref{thrm:timevarlimexist} shows that the limits
$$k(x',x):=\lim_{s\to\infty}p_s(x',x)\quad\forall x,x'\in\s,$$
exist and are finite. Bounded convergence then shows that
$$\sum_{x'\in\s}\pi(x')k(x',x)=\lim_{n\to\infty}\sum_{x'\in\s}\pi(x')p_{nt}(x',x)=\lim_{n\to\infty}\pi(x)=\pi(x)\quad\forall x\in\s.$$
Another application of bounded convergence and the semigroup property yields
$$\sum_{x''\in\s}k(x',x'')p_s(x'',x)=\lim_{r\to\infty}\sum_{x''\in\s}p_r(x',x'')p_s(x'',x)=\lim_{r\to\infty}p_{r+s}(x',x)=k(x',x)\quad\forall x,x'\in\s.$$
Combining the above two equations then proves the desired $\pi =\pi P_s$ for any $s$ in $[0,\infty)$:
$$\sum_{x'\in\s}\pi(x')p_s(x',x)=\sum_{x''\in\s}\pi(x'')\sum_{x'\in\s}k(x'',x')p_s(x',x)=\sum_{x''\in\s}\pi(x'')k(x',x)=\pi(x)\quad\forall x\in\s.$$
Marginalising over the equation in \eqref{thrm:findim} and exploiting that $\pi =\pi P_t$ for all $t$ yields \eqref{eq:statprocct} for sets of the form \eqref{eq:gensetsalt}. Because these sets generate $\cal{E}$ (Exercise~\ref{exe:measpedantry}), \eqref{eq:statprocct} holds for all $A$ in $\cal{E}$.
\end{proof}
\subsubsection*{The rate matrix's left nullspace  and the role of explosions}
Summing the \eqref{eq:statc} over all states $x$, taking the limit $t\to\infty$, and applying monotone convergence, we find that the chain is non-explosive when initialised with law $\pi$:
\begin{equation}\label{eq:statnotexpl} \Pbp{\{T_\infty=\infty\}}=1,\end{equation}
The above equation plays a starring role in the relationship between the rate matrix $Q$ and the stationary distributions:
\begin{theorem}[The stationary distributions and the rate matrix]\label{Qstateq} A probability distribution $\pi$ is a stationary distribution if and only if the chain is non-explosive when initialised with law $\pi$~(that is, \eqref{eq:statnotexpl} holds) and
\begin{equation}\label{eq:stateqsthect}\pi Q(x)=0\quad\forall x\in\s,\end{equation}
or $\pi Q=0$ in matrix notation.
%
\end{theorem}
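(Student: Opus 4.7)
The plan is to prove each direction by reducing everything to the weak forward equation~\eqref{eq:forwardweak} of Lemma~\ref{lem:forwardweak}. Throughout, I will freely use the identity
\begin{equation*}
\sum_{z\in\s}\rho(z)\lambda(z)p(z,y)=\rho(y)\lambda(y)\quad\text{for all }y\in\s\quad\Longleftrightarrow\quad \rho Q(y)=0\quad\text{for all }y\in\s,
\end{equation*}
which one checks by splitting into the cases $q(y)>0$ (where $\lambda(y)=q(y)$ and $p(y,y)=0$) and $q(y)=0$ (where $\lambda(y)=1$, $p(y,y)=1$, and $q(z,y)=\lambda(z)p(z,y)$ whenever $z\neq y$) using the definitions in~\eqref{eq:jumpmatrix}--\eqref{eq:lambda}. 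The non-explosivity statement~\eqref{eq:statnotexpl} has already been observed in the text preceding the theorem, so in the forward direction it remains only to establish $\pi Q=0$.

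For the forward direction, suppose $\pi$ is stationary. Multiplying the FIR/weak-forward equation~\eqref{eq:forwardweak} by $\pi(x)$, summing over $x$, using Tonelli (all summands non-negative), and applying $\pi P_t=\pi$ from Theorem~\ref{thrm:statfixedpoint}, I obtain
\begin{equation*}
\pi(y)\bigl(1-e^{-\lambda(y)t}\bigr)=\int_0^t\Bigl(\sum_{z\in\s}\pi(z)\lambda(z)p(z,y)\Bigr)e^{-\lambda(y)(t-s)}ds\quad\forall y\in\s,\enskip t\geq0.
\end{equation*}
I divide by $t$ and take $t\downarrow 0$. The left-hand side converges to $\pi(y)\lambda(y)<\infty$. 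Writing $h(u):=\sum_{z}\pi(z)\lambda(z)p(z,y)e^{-\lambda(y)u}$ and substituting $u=t-s$, the right-hand side becomes $t^{-1}\int_0^t h(u)\,du$; since $h$ is non-negative and non-increasing in $u$, a standard argument (comparing with $h(0)$ via the monotone convergence / truncation trick) shows that the liminf of this average is $h(0)$, so finiteness of the left-hand side forces $h(0)=\sum_{z}\pi(z)\lambda(z)p(z,y)<\infty$ and the limit equals $h(0)$. Hence $\pi(y)\lambda(y)=\sum_{z}\pi(z)\lambda(z)p(z,y)$, which by the equivalence above is exactly $\pi Q=0$.

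For the converse, assume $\pi Q=0$ and $\Pb_\pi(\{T_\infty=\infty\})=1$. First I verify that the constant-in-time function $k_t(y):=\pi(y)$ solves the weak forward equation starting from $\pi$: the identity $\sum_z\pi(z)\lambda(z)p(z,y)=\pi(y)\lambda(y)$ reduces the right-hand side of \eqref{eq:forwardweak} (with $\pi$ in place of $1_x$) to $\pi(y)e^{-\lambda(y)t}+\pi(y)\lambda(y)\int_0^t e^{-\lambda(y)(t-s)}ds=\pi(y)$. Next I run the iterative minimality argument of Lemma~\ref{lem:minimal} in its $\pi$-averaged form: multiplying the FIR~\eqref{eq:fir} by $\pi(x)$ and summing defines $r^n_t(y):=\sum_x\pi(x)p_t^n(x,y)$, which satisfies the same integral recursion with base case $r^0_t(y)=\pi(y)e^{-\lambda(y)t}\leq\pi(y)=k_t(y)$; an easy induction propagates $k_t\geq r^n_t$, and monotone convergence gives $k_t(y)\geq\lim_n r^n_t(y)=\pi P_t(y)$. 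Summing the pointwise inequality $\pi(y)\geq\pi P_t(y)$ over $y$ yields $1=\sum_y\pi(y)\geq\sum_y\pi P_t(y)=\Pb_\pi(\{t<T_\infty\})=1$ by non-explosivity, so equality must hold state-by-state: $\pi P_t=\pi$ for every $t\geq 0$. Theorem~\ref{thrm:statfixedpoint} then concludes that $\pi$ is stationary.

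The main obstacle will be the $t\downarrow 0$ step in the forward direction: a priori the sum $\sum_z\pi(z)\lambda(z)p(z,y)$ might be infinite, in which case naive dominated-convergence arguments fail. The fix I have in mind is to exploit monotonicity of $u\mapsto h(u)$ to show that, for any finite truncation of the sum, the average $t^{-1}\int_0^t h(u)\,du$ is bounded below by (a factor close to) the truncated sum for small $t$; finiteness of the left-hand side then upper-bounds every truncation, giving $h(0)<\infty$ and justifying the interchange of limit and sum. Once this is in place the case analysis $q(y)>0$ versus $q(y)=0$ is routine bookkeeping with \eqref{eq:jumpmatrix}--\eqref{eq:lambda}.
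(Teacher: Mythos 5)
Your proposal is correct and follows essentially the same architecture as the paper's proof: both directions hinge on the same algebraic equivalence between $\pi Q=0$ and $\pi(x)\lambda(x)=\sum_z\pi(z)\lambda(z)p(z,x)$, the forward direction feeds $\pi=\pi P_t$ into the weak forward equation~\eqref{eq:forwardweak}, and the converse runs the $\pi$-averaged FIR induction of Lemma~\ref{lem:minimal} to get $\pi P_t\leq\pi$ and then uses~\eqref{eq:statnotexpl} to force equality. The only real difference is in how you close the forward direction, and here you have made things harder than they need to be. After you arrive at
\begin{equation*}
\pi(y)\bigl(1-e^{-\lambda(y)t}\bigr)=\int_0^t\Bigl(\sum_{z\in\s}\pi(z)\lambda(z)p(z,y)\Bigr)e^{-\lambda(y)(t-s)}\,ds,
\end{equation*}
notice that the inner sum is a constant in $s$, so the right-hand side factors (by Tonelli, regardless of whether the sum is finite) into
\begin{equation*}
\Bigl(\sum_{z\in\s}\pi(z)\lambda(z)p(z,y)\Bigr)\cdot\frac{1-e^{-\lambda(y)t}}{\lambda(y)}.
\end{equation*}
Since the left-hand side is at most $1$ and $1-e^{-\lambda(y)t}>0$ for any fixed $t>0$, this single equation simultaneously forces the sum to be finite and, after multiplying through by $\lambda(y)/(1-e^{-\lambda(y)t})$, gives $\pi(y)\lambda(y)=\sum_z\pi(z)\lambda(z)p(z,y)$ outright. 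No $t\downarrow0$ limit, no truncation trick, no worry about interchanging limit and sum: the ``main obstacle'' you flag dissolves once you observe the factorisation. Your limiting argument is not wrong, but it expends effort that the structure of the equation already hands you for free, and the paper's route is the cleaner one to commit to.
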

\begin{proof}The definitions in \eqref{eq:jumpmatrix}--\eqref{eq:lambda} of the jump matrix $(p(x,y))_{x,y\in\s}$ and jump rates $(\lambda(x))_{x\in\s}$ imply that $\pi$ satisfies \eqref{eq:stateqsthect} if and only it satisfies

\begin{equation}\label{eq:jge798hga87gha79ga}\pi(x)\lambda(x)=\sum_{z\in\s}\pi(z)\lambda(z)p(z,x)\quad\forall x\in\s.\end{equation}
Suppose that $\pi$ is a stationary distribution. We have already argued that $\pi$ satisfies \eqref{eq:statnotexpl}. Showing it satisfies \eqref{eq:stateqsthect} (equivalently, \eqref{eq:jge798hga87gha79ga}) is also straightforward: multiplying the integral version \eqref{eq:forwardweak} of the forward equations by $\pi(x')$, summing over $x'$ in $\s$, applying Tonelli's theorem and \eqref{eq:statfixedpoint}, we find that 
\begin{align*}\pi(x)=\sum_{x'\in\s}\pi(x')p_t(x',x)&=\pi(x)e^{-\lambda(x)t}+\int_0^t\sum_{z\in\s}\left(\sum_{x'\in\s}\pi(x')p_s(x',z)\right)\lambda(z)p(z,x)e^{-\lambda(x)(t-s)}ds\\
&=\pi(x)e^{-\lambda(x)t}+\int_0^t\sum_{z\in\s}\pi(z)\lambda(z)p(z,x)e^{-\lambda(x)(t-s)}ds\\
&=\pi(x)e^{-\lambda(x)t}+\sum_{z\in\s}\pi(z)\lambda(z)p(z,x)\frac{1-e^{-\lambda(x)t}}{\lambda(x)}\quad\forall x\in\s,\enskip t\in[0,\infty).\end{align*}
Fixing any $t>0$, multiplying through by $\lambda(x)(1-e^{-\lambda(x)t})^{-1}$, and re-arranging we obtain~\eqref{eq:jge798hga87gha79ga}.

Conversely, suppose that $\pi$ is a probability distribution satisfying~\eqref{eq:statnotexpl} and \eqref{eq:stateqsthect} and let $p_t^n(x,y)$ be as in~\eqref{eq:pnt}. By definition,
\begin{align*}\sum_{x'\in\s}\pi(x')p_t^0(x',x)&=\sum_{x'\in\s}\pi(x')\Pb_{x'}(\{X_t=x,t<T_1\})=\sum_{x'\in\s}\pi(x')\Pb_{x'}(\{X_0=x,t<T_1\})\\
&=\pi(x)\Pb_{x}(\{t<T_1\})\leq\pi(x)\quad\forall x\in\s,\enskip t\geq0.\end{align*}
Induction, the forward integral recursion in~\eqref{eq:fir}, and \eqref{eq:jge798hga87gha79ga} then imply that
\begin{align*}\sum_{x'\in\s}\pi(x')p_t^n(x',x)&=\pi(x)e^{-\lambda(x)t}+\int_0^t\sum_{z\in\s}\left(\sum_{x'\in\s}\pi(x')p_s^{n-1}(x',z)\right)\lambda(z)p(z,x)e^{-\lambda(x)(t-s)}ds\\
&\leq\pi(x)e^{-\lambda(x)t}+ \int_0^t\sum_{z\in\s}\pi(z)\lambda(z)p(z,x)e^{-\lambda(x)(t-s)}ds\\
&=\pi(x)e^{-\lambda(x)t}+ \int_0^t\pi(x)\lambda(x)e^{-\lambda(x)(t-s)}ds=\pi(x)\quad\forall x\in\s,\enskip t\geq0,\enskip n\geq0.\end{align*}
Taking the limit $n\to\infty$ and applying monotone convergence we find that 
\begin{equation}\label{eq:fn7aw8fhwyafhhfdvvnuwfa}\sum_{x'\in\s}\pi(x')p_t(x',x)\leq \pi(x)\quad\forall x\in\s.\end{equation}
Suppose that the inequality is strict for at least one $x$. Summing both sides we would have that
$$\Pbp{\{t<T_\infty\}}=\sum_{x\in\s}\Pbp{\{X_t=x,t<T_\infty\}}=\sum_{x\in\s}\sum_{x'\in\s}\pi(x')p_t(x',x)<\sum_{x\in\s}\pi(x)=1$$
which violates \eqref{eq:statnotexpl}. Hence, it must be the case that \eqref{eq:fn7aw8fhwyafhhfdvvnuwfa} holds with equality.
\end{proof}
It is not difficult to find examples of probability distributions $\pi$s satisfying \eqref{eq:stateqsthect} that are not stationary distributions:
\begin{example}[Miller's example]\label{ex:miller}Consider\index{Miller's example} the chain $X$ with state space $\n$ and  rate matrix $(q(x,y))_{x,y\in\n}$ defined by
\begin{align*}&q(x,x-1)=4^x/2\quad\forall x>0,\quad q(x,x)=-(4^x+4^x/2),\quad q(x,x+1)=4^x\quad\forall x\geq0,\\
&q(x,y)=0\quad\text{for all other }x,y\geq0.\end{align*}
It is easy to verify that $\pi:=(2^{-(x+1)})_{x\in\n}$ is a probability distribution satisfying~\eqref{eq:stateqsthect}. However, $\pi$ cannot be a stationary distribution for otherwise every state would be recurrent (see Theorem~\ref{thrm:posreccharc} in the next section), something impossible given that the chain is twice as likely to jump one state up than one state down (if you're not following this, plug $Q$ into~\eqref{eq:jumpmatrix} and look at the definition of the jump chain in Algorithm~\ref{gilalg}). 

For a more formal argument, note that a state is recurrent for $X$ if and only if it is recurrent for its jump chain $Y$ (Exercise~\ref{ex:entdc}). Let's examine the return probability $\Pb_0(\{\phi_0<\infty\})$ to zero of $Y$, with $\phi_k$ denote the first entrance time of $Y$ to a set $k$ (Definition~\ref{def:entrance}). Because the chain necessarily jumps to $1$ if it starts at zero,
$$\Pb_0(\{\phi_0<\infty\})=\Pb_1(\{\sigma_0<\infty\}),$$
where $\sigma_k$ denotes the hitting time of $Y$ to a state~$k$~\eqref{eq:hitd}. Because $\sigma_k\to\infty$ as $k\to\infty$ with $\Pb_1$-probability one (Lemma~\ref{lem:dtexitrinf}), monotone convergence implies that 
$$\Pb_1(\{\sigma_0<\infty\})=\lim_{k\to\infty}\Pb_1(\{\sigma_0<\sigma_k\}).$$
Because  $Y$ behaves identically to the gambler's ruin chain in Section~\ref{sec:gamblers} with $a:=2/3$ up until the moment it hits $0$, \eqref{eq:fn8eanaf8wnfwe8a2} implies that
$$\Pb_1(\{\sigma_0<\sigma_k\})=1-\frac{1}{2-2^{-(k-1)}}\quad\forall k>0.$$
Thus, $\Pb_0(\{\phi_0<\infty\})=1/2$ showing that $0$ is transient for $Y$ (and, consequently, for $X$) and that $\pi$ cannot be a stationary distribution. Moreover, Theorem \ref{Qstateq} then implies that $\Pbp{\{T_\infty<\infty\}}>0$. Because the chain is irreducible, it follows from Theorem~\ref{thrm:expclprop} that the chain is explosive regardless of the initial distribution:
$$\Pbl{\{T_\infty<\infty\}}>0\text{ for all probability distributions }\gamma.$$
\end{example}

\subsubsection*{A stationary distribution has support on a state if and only if the state is positive recurrent}If the chain is a stationary process, the average fraction of time it spends in any given state remains constant throughout time. In particular, sampling the starting location from a stationary distribution $\pi$, taking expectations of the empirical distribution $\epsilon_T$~(Section~\ref{sec:empdistct}), and applying~\eqref{eq:statfixedpoint} we find that
\begin{equation}\label{eq:pttimeave}\Ebp{\epsilon_T(x)}=\Ebp{\frac{1}{T}\int_0^{T\wedge T_\infty}1_x(X_t)dt}=\frac{1}{T}\int_0^{T}\Pbp{\{t<T_\infty,X_t=x\}}dt=\pi(x)\end{equation}
for all $x$ in $\s$ and $T\geq0$. Given Theorem~\ref{thrm:empdistlimsct}, taking the limit $T\to\infty$ and applying bounded convergence we find that
\begin{equation}\label{eq:empdistmeanct}\pi(x)=\lim_{T\to\infty}\Ebp{\epsilon_T(x)}=\frac{\Pbp{\{\varphi_x<\infty\}}}{\Ebx{\varphi_x}}\quad\forall x\in\s.\end{equation}
It follows that a stationary distribution $\pi$ with support on $x$ (i.e., $\pi(x)>0$) exists only if $x$ is positive recurrent~(Definition~\ref{def:posrec}). The intuition here is that states $x$ satisfying $\pi(x)>0$ are those the chain will keep visiting frequently enough that the fraction of time $\epsilon_T(x)$ it spends in $x$ does not decay to zero whenever the initial position is sampled from $\pi$. Because, regardless of the starting location, visits to all  states $x$ not positive recurrent eventually become so rare that $\epsilon_T(x)$ collapses to zero (Theorem~\ref{thrm:empdistlimsct}) it follows that for $\pi(x)$ to be non-zero, $x$ must be positive recurrent.

The converse is also true and the trick in arguing it involves the stopping distributions and occupation measures of Section~\ref{sec:morehitdt}. In particular, let $\mu_S$ and $\nu_S$ be the space marginals~\eqref{eq:musc}--\eqref{eq:nusc} of the stopping distribution and occupation measure associated with the return time $\varphi_x$ of a recurrent state $x$ and suppose that we started the chain in $x$. Exercise~\ref{ex:entdc} implies that the chain is in $x$  at the moment of return. Because $x$ is recurrent, it follows that $\mu_S$ is the point mass $1_x$ at $x$. Furthermore, as we fixed the initial distribution $\gamma$ to also be $1_x$, the same proposition and Lemma~\ref{eqnsc} imply that 
\begin{equation}\label{eq:dnw78andw7ya8hdbwaudhnawu}q(x)\nu_S(x)=\sum_{x'\neq x}\nu_S(x')q(x',x)\quad\forall x\in\s.\end{equation}
Because the state is recurrent, $\varphi$'s definition (Definition~\ref{def:entrancect}) implies that it less than $T_\infty$,  $\Pb_x$ almost surely. For this reason, \eqref{eq:numass} shows that the  mass of $\nu_S$ is the mean return time $\Ebx{\varphi_x}$. If $x$ is positive recurrent, then the mass and, consequently, all entries of $\nu_S$ are finite. Thus, we can rearrange \eqref{eq:dnw78andw7ya8hdbwaudhnawu} into $\nu_SQ=0$ and  we obtain a probability distribution $\pi$ satisfying $\pi Q=0$ by normalising $\nu_S$ (i.e., $\pi:=\nu_S/\nu_S(\s)$). Moreover, 
\begin{align*}\pi(x)&=\frac{\nu_S(x)}{\nu_S(\s)}=\frac{\Ebx{\int_0^{\varphi_x\wedge T_\infty}1_x(X_s)ds}}{\Ebx{\varphi_x}}\geq\frac{\Ebx{\int_0^{T_1}1_x(X_s)ds}}{\Ebx{\varphi_x}}=\frac{\Ebx{S_11_x(X_0)}}{\Ebx{\varphi_x}}\\
&=\frac{1}{\lambda(x)\Ebx{\varphi_x}}>0,\end{align*}
where $S_1$ denotes the first waiting time (that is exponentially distributed with mean $\lambda(x)$ under $\Pb_x$, see Section~\ref{sec:FKG}). If we can only show that~\eqref{eq:statnotexpl} is satisfied, Theorem~\ref{eq:statfixedpoint} and the above then show that $\pi$ a stationary distribution with support on $x$. To argue~\eqref{eq:statnotexpl} note that Proposition~\ref{prop:recnoexpl} shows that $\Pbx{\{T_\infty=\infty\}}=1$. Because
\begin{equation}\label{eq:fme9anfe78gnayu8gna}\Ebx{\varphi_x}\pi(y)=\nu_S(y)=\Ebx{\int_0^{\varphi_x\wedge T_\infty}1_x(X_s)ds}\leq \Ebx{\int_0^{T_\infty}1_x(X_s)ds}=\int_0^\infty p_t(x,y)ds\end{equation}
for all $y$ in $\s$, Theorem~\ref{thrm:accesibility}$(v)$ implies that $\pi(y)>0$ only if $y$ is accessible from $x$ and it follows from Theorem~\ref{thrm:expclprop} that
$$\Pby{\{T_\infty=\infty\}}=1\quad\forall y\in\s:\pi(y)>0.$$
Multiplying the above by $\pi(y)$, summing over all states $y$, and comparing with \eqref{eq:pl} then yields the missing~\eqref{eq:statnotexpl}. To make a long story short:
\begin{theorem}\label{thrm:posreccharc} A state $x$ is positive recurrent if and only if there exists a stationary distribution $\pi$ such that $\pi(x)>0$. In this case, one such stationary distribution is given by
\begin{equation}\label{eq:statexprsc}\pi(y)=\frac{1}{\Ebx{\varphi_x}}\Ebx{\int_0^{\varphi_x\wedge T_\infty}1_y(X_s)ds}\quad\forall y\in\s.\end{equation}
\end{theorem}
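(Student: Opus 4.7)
The excerpt essentially lays out the argument in the paragraphs immediately preceding the statement, so my plan is to organise that sketch into a clean two-direction proof and identify carefully where the subtlety lies.

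For the forward direction (existence of a stationary $\pi$ with $\pi(x)>0$ forces positive recurrence of $x$), I would simply invoke equation \eqref{eq:empdistmeanct}: taking expectations of the empirical distribution $\epsilon_T$ under $\Pb_\pi$, applying stationarity \eqref{eq:statfixedpoint}, bounded convergence, and Theorem~\ref{thrm:empdistlimsct} yields $\pi(x)=\Pb_\pi(\{\varphi_x<\infty\})/\Ebx{\varphi_x}$. If $\pi(x)>0$ then $\Ebx{\varphi_x}<\infty$, which is precisely positive recurrence (Definition~\ref{def:posrecct}).

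For the reverse direction, suppose $x$ is positive recurrent. I would define $\nu_S(y):=\Ebx{\int_0^{\varphi_x\wedge T_\infty}1_y(X_s)ds}$ as in \eqref{eq:fme9anfe78gnayu8gna} and set $\pi:=\nu_S/\nu_S(\s)$, where $\nu_S(\s)=\Ebx{\varphi_x\wedge T_\infty}=\Ebx{\varphi_x}\in(0,\infty)$ (using Proposition~\ref{prop:recnoexpl} to drop the $T_\infty$). The claim $\pi(x)>0$ follows from the lower bound $\nu_S(x)\geq \Ebx{S_11_x(X_0)}=1/\lambda(x)$ obtained by restricting the integral to $[0,T_1]$. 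To see that $\pi$ satisfies $\pi Q=0$, I would apply Theorem~\ref{eqnsc} (Dynkin-formula-based linear equations for occupation/stopping-time marginals) to the return time $\varphi_x$: since $\varphi_x$ is jump-time-valued (Exercise~\ref{ex:entdc}), almost surely finite, and $X_{\varphi_x}=x$ on $\{\varphi_x<\infty\}$, the stopping marginal is $\mu_S=1_x$ and \eqref{eq:eoe} with $\gamma=1_x$ reduces to $\sum_{z}\nu_S(z)q(z,y)=0$ for all $y$, i.e.\ $\nu_S Q=0$, hence $\pi Q=0$.

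The main obstacle — and the step I would handle most carefully — is verifying the non-explosion hypothesis \eqref{eq:statnotexpl} of Theorem~\ref{Qstateq}, since Miller's Example~\ref{ex:miller} shows $\pi Q=0$ alone is insufficient for stationarity. The trick (already flagged in the excerpt) is to observe from \eqref{eq:fme9anfe78gnayu8gna} that $\pi(y)\leq \int_0^\infty p_t(x,y)dt/\Ebx{\varphi_x}$, so $\pi(y)>0$ forces $x\to y$ via Theorem~\ref{thrm:accesibility}(v). Combining this with Proposition~\ref{prop:recnoexpl} (so $\Pb_x\{T_\infty=\infty\}=1$) and Theorem~\ref{thrm:expclprop} (non-explosivity is inherited along $\to$) gives $\Pb_y\{T_\infty=\infty\}=1$ for every $y$ in the support of $\pi$. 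Multiplying by $\pi(y)$, summing, and using the decomposition $\Pb_\pi=\sum_y\pi(y)\Pb_y$ from \eqref{eq:pl} yields \eqref{eq:statnotexpl}. With this in hand, Theorem~\ref{Qstateq} converts $\pi Q=0$ into the stationarity of $\pi$, completing the argument and simultaneously justifying the explicit formula \eqref{eq:statexprsc}.
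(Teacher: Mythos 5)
Your proof is correct and follows essentially the same route as the paper: the forward direction via \eqref{eq:empdistmeanct}, the reverse direction via the occupation measure of the return time, Theorem~\ref{eqnsc} to obtain $\nu_S Q = 0$, and the same accessibility argument (Theorems~\ref{thrm:accesibility}(v) and \ref{thrm:expclprop} together with Proposition~\ref{prop:recnoexpl}) to verify the non-explosion hypothesis before invoking Theorem~\ref{Qstateq}. You have correctly identified the delicate point — that $\pi Q=0$ alone is insufficient and non-explosivity under $\Pb_\pi$ must be established separately — which is exactly the subtlety the paper is addressing.
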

As a freebie, the above also gives us that positive recurrence is a class property:
\begin{corollary}\label{cor:posrecclassc} If $x$ is positive recurrent and $x\to y$, then $y$ is also positive recurrent. Moreover, a state in a communicating class is positive (resp. null) recurrent if and only if all states in the class are positive (resp. null) reccurent.
\end{corollary}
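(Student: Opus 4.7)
The plan is to mimic the proof of the discrete-time counterpart (Corollary~\ref{cor:posrecclass}), using Theorem~\ref{thrm:posreccharc} to convert statements about positive recurrence into statements about the existence of stationary distributions with appropriate support, and using Theorem~\ref{thrm:accesibility} to handle accessibility at the level of the transition probabilities.

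First I would suppose $x$ is positive recurrent and $x \to y$. By Theorem~\ref{thrm:posreccharc}, there exists a stationary distribution $\pi$ with $\pi(x) > 0$ (indeed, the explicit one in \eqref{eq:statexprsc} works). By Theorem~\ref{thrm:statfixedpoint}, $\pi = \pi P_t$ for every $t \in [0,\infty)$, so for any $t > 0$,
\[
\pi(y) = \sum_{z \in \s} \pi(z) p_t(z,y) \geq \pi(x)\, p_t(x,y).
\]
Since $x \to y$, Theorem~\ref{thrm:accesibility}$(vi)$ gives $p_t(x,y) > 0$ for every $t > 0$, whence $\pi(y) > 0$. Applying Theorem~\ref{thrm:posreccharc} in the reverse direction shows that $y$ is positive recurrent. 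This gives the first statement, and the class-property statement for positive recurrence follows: within a communicating class, any two states are mutually accessible, so if one is positive recurrent then all are.

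For null recurrence, I would combine the above with Theorem~\ref{thrm:recclassproct} (recurrence is a class property). Suppose $x$ is null recurrent (so $x$ is recurrent but not positive recurrent) and let $\cal{C}$ be the communicating class containing $x$. By Theorem~\ref{thrm:recclassproct}, every state in $\cal{C}$ is recurrent. If some $y \in \cal{C}$ were positive recurrent, then by the paragraph above every state in $\cal{C}$ would be positive recurrent, contradicting the null recurrence of $x$. Hence every state in $\cal{C}$ is null recurrent, and conversely if every state in $\cal{C}$ is null recurrent then in particular $x$ is.

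There is no serious obstacle here: the whole argument is a direct transcription of the discrete-time proof, with the single bookkeeping point being that we use $P_t$ for any fixed $t > 0$ in place of the one-step matrix $P$, and invoke Theorem~\ref{thrm:accesibility}$(vi)$ (positivity of $p_t(x,y)$ for \emph{all} $t > 0$ when $x \to y$) in place of Lemma~\ref{lem:accdt}. The only mild subtlety is remembering that Theorem~\ref{thrm:posreccharc}'s forward direction requires producing an actual stationary distribution with $\pi(x) > 0$, which is exactly what that theorem supplies via \eqref{eq:statexprsc}, so there is nothing further to verify.
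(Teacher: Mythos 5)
Your proposal is correct and takes essentially the first of the two approaches the paper itself suggests — it explicitly says "we can argue this by following the steps taken in the proof of the corollary's discrete-time counterpart" citing Theorems~\ref{thrm:posreccharc}, \ref{thrm:accesibility}, and \ref{thrm:statfixedpoint}, which is exactly what you do. Your use of $\pi = \pi P_t$ together with Theorem~\ref{thrm:accesibility}$(vi)$ in place of iterating the one-step matrix is a clean, faithful continuous-time transcription; the paper additionally offers a second, alternative route via skeleton chains (Theorem~\ref{thrm:statfixedpoint} plus Corollary~\ref{cor:posrecclass}), but yours matches the first.
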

\begin{proof} Given Theorems \ref{thrm:posreccharc},~\ref{thrm:accesibility}, and \ref{thrm:posrecchar}, we can argue this by following the states taken in the proof of the corollary's discrete-time counterpart (Corollary \ref{cor:posrecclass}). Alternatively, given that $\pi$ is a stationary distribution for $X$ if and only if $(1_{\s}(x)\pi(x))_{x\in\s_E}$ is for any and all skeleton chains $X^\delta$ (Theorem~\ref{thrm:statfixedpoint}) and that  $y$ is accessible from $x$ for $X$ if and only it is accessible for any and all skeleton chains $X^\delta$ (Theorem~\ref{thrm:accesibility}$(v)$), the corollary follows by applying Corollary \ref{cor:posrecclass} to any skeleton chain $X^\delta$.
\end{proof}
For this reason, we say that a communicating class is \emph{positive recurrent} (resp. \emph{null recurrent}) if each one (or, equivalently, all) of its states is positive recurrent (resp. null recurrent).\index{positive recurrent class}\index{null recurrent class}

\subsubsection*{Ergodic distributions; Doeblin-like decomposition; set of stationary distributions}

To wrap up and our treatment of stationary distributions and characterise the set of these, consider the following \emph{Doeblin-like} decomposition\index{Doeblin-like decomposition} of the state space:
\begin{equation}\label{eq:ddc}\s=\left(\bigcup_{i\in\cal{I}} \C_i\right)\cup \cal{T}=\cal{R}_+\cup\cal{T},\end{equation}
where $\{\C_i:i\in\cal{I}\}$ denotes the (necessarily countable) set of positive recurrent closed communicating classes,\glsadd{ci} which we index with some  set $\cal{I}$, $\cal{R}_+:=\cup_{i\in\cal{I}}\C_i$ the set of all positive recurrent states,\glsadd{recp} and $\cal{T}$ that of all other states.\glsadd{nrec} As we have already seen in Theorem~\ref{thrm:posrecchar}, no stationary distribution has support in $\cal{T}$ and there is at least one stationary distribution per $\cal{C}_i$. Much more can be said:
%
%
\begin{theorem}[Characterising the set of stationary distributions]\label{doeblinc}  Let $\{\C_i:i\in\cal{I}\}$ be the collection of positive recurrent closed communicating classes.
\vspace{5pt}
\begin{enumerate}[label=(\roman*),noitemsep]
\item For each $\cal{C}_i$, there exists a single stationary distribution $\pi_i$ with support contained in $\cal{C}_i$ (i.e., with $\pi_i(\cal{C}_i)=1$); $\pi_i$ is  known as the ergodic distribution\index{ergodic distribution}\glsadd{pii} associated with $\cal{C}_i$. 
It has support on all of $\C_i$ ($\pi_i(x)>0$ for all states $x$ in $\cal{C}_i$) and can be expressed as
\begin{equation}\label{eq:ergdistcharc}\pi_i(y)=\frac{1_{\cal{C}_i}(y)}{\Eby{\varphi_y}}=\frac{1}{\Ebx{\varphi_x}}\Ebx{\int_0^{\varphi_x\wedge T_\infty}1_y(X_t)dt}\quad\forall y\in\s,\end{equation}
where $x$ is any state in $\cal{C}_i$.
\item A probability distribution $\pi$ is a stationary distribution of the chain if and only if it is a convex combination of the ergodic distributions:
$$\pi=\sum_{i\in\cal{I}}\theta_i\pi_i$$
for some collection $(\theta_i)_{i\in\cal{I}}$ of non-negative constants satisfying $\sum_{i\in\cal{I}}\theta_i=1$. For any stationary distribution $\pi$, the weight $\theta_i$ featuring in the above is the mass that $\pi$ awards to $\cal{C}_i$ or, equivalent, the probability that the chain ever enters $\cal{C}_i$ if its starting location was sampled from $\pi$:
$$\theta_i=\pi(\cal{C}_i)=\Pbp{\{\varphi_{\cal{C}_i}<\infty\}}\quad\forall i\in\cal{C}_i.$$
\end{enumerate} 
\end{theorem}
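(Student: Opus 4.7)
The plan is to port the discrete-time proof of Theorem~\ref{doeblind} to the continuous-time setting, leveraging the tools developed throughout this chapter. The ingredients are already all in place: Theorem~\ref{thrm:empdistlimsct} and its consequence \eqref{eq:empdistmeanct} give us $\pi(x) = \Pbp{\{\varphi_x<\infty\}}/\Ebx{\varphi_x}$ for any stationary $\pi$; Proposition~\ref{prop:phixphic} identifies entry to a recurrent class with entry to any of its states; Theorem~\ref{thrm:posreccharc} produces a stationary distribution out of a positive recurrent state; Proposition~\ref{prop:closedisct} rules out the chain entering two disjoint closed classes; and Theorem~\ref{thrm:statfixedpoint} characterises stationarity as a fixed-point property of $P_t$ (so I can avoid worrying about explosion subtleties by working directly with $\pi = \pi P_t$).

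For part $(i)$, first I would fix a positive recurrent closed communicating class $\cal{C}_i$ and argue uniqueness. Suppose $\pi_i$ is any stationary distribution with support contained in $\cal{C}_i$. For any $x\in\cal{C}_i$, Proposition~\ref{prop:phixphic} (applied with $\gamma$ any point mass inside $\cal{C}_i$) gives $\Pbx{\{\varphi_x<\infty\}}=1_{\cal{C}_i}(x)$, hence $\Pb_{\pi_i}(\{\varphi_x<\infty\})=\pi_i(\cal{C}_i)=1$. Plugging this into \eqref{eq:empdistmeanct} yields $\pi_i(x)=1/\Ebx{\varphi_x}$, forcing uniqueness. For states $y\notin\cal{C}_i$, closedness of $\cal{C}_i$ together with the same formula forces $\pi_i(y)=0$ (the chain starting inside a positive recurrent state $x\in\cal{C}_i$ cannot exit $\cal{C}_i$, so $\Pbx{\{\varphi_y<\infty\}}=0$ for $y\notin\cal{C}_i$). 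For existence, Theorem~\ref{thrm:posreccharc} (fed with any $x\in\cal{C}_i$) produces a stationary distribution $\pi_i$ via \eqref{eq:statexprsc}; the bound \eqref{eq:fme9anfe78gnayu8gna} combined with Theorem~\ref{thrm:accesibility}$(v)$ shows $\pi_i$ is supported in $\cal{C}_i$, and the communicating property of $\cal{C}_i$ together with $\pi_i(x)>0$ (shown already after \eqref{eq:fme9anfe78gnayu8gna}) and the invariance $\pi_i=\pi_i P_t$ yields $\pi_i(y)>0$ for every $y\in\cal{C}_i$.

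For part $(ii)$, let $\pi$ be any stationary distribution. Applying \eqref{eq:empdistmeanct} to each state that is not positive recurrent yields $\pi(\cal{T})=0$, so $\pi$ is concentrated on $\cal{R}_+=\bigsqcup_{i\in\cal{I}}\cal{C}_i$. Set $\theta_i:=\pi(\cal{C}_i)$; then $\sum_{i\in\cal{I}}\theta_i=1$ and $\pi=\sum_{i\in\cal{I}}\theta_i\rho_i$ where $\rho_i:=1_{\cal{C}_i}\pi/\theta_i$ (defined whenever $\theta_i>0$). The key step is to verify that each $\rho_i$ is itself a stationary distribution, at which point part $(i)$ forces $\rho_i=\pi_i$. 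To do so, I would use Theorem~\ref{thrm:statfixedpoint} and compute, for any fixed $t>0$ and $x\in\s$,
\begin{equation*}
(1_{\cal{C}_i}\pi)P_t(x)=\sum_{y\in\cal{C}_i}\pi(y)p_t(y,x).
\end{equation*}
If $x\notin\cal{C}_i$, closedness of $\cal{C}_i$ and Theorem~\ref{thrm:accesibility} give $p_t(y,x)=0$ for all $y\in\cal{C}_i$, so the sum vanishes and equals $(1_{\cal{C}_i}\pi)(x)=0$. If $x\in\cal{C}_i$, then disjointness of the classes and $\pi(\cal{T})=0$ imply $\pi(y)p_t(y,x)=0$ whenever $y\notin\cal{C}_i$ (for $y\in\cal{C}_j$ with $j\neq i$, closedness of $\cal{C}_j$ kills $p_t(y,x)$), so the sum equals $\pi P_t(x)=\pi(x)=(1_{\cal{C}_i}\pi)(x)$. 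Hence $1_{\cal{C}_i}\pi P_t=1_{\cal{C}_i}\pi$, and normalising shows $\rho_i$ is stationary. Finally, Proposition~\ref{prop:phixphic} and Proposition~\ref{prop:closedisct} (which show $\{\varphi_{\cal{C}_i}<\infty\}$ and $\{\varphi_{\cal{C}_j}<\infty\}$ are $\Pb_\gamma$-almost disjoint for $i\neq j$) rewrite $\theta_i=\pi(\cal{C}_i)=\Pb_\pi(\{\varphi_{\cal{C}_i}<\infty\})$, as claimed. The main obstacle is the closedness bookkeeping in the verification that $\rho_i$ is stationary, but once one recognises that $\pi$ is automatically supported on $\cal{R}_+$, the argument reduces cleanly to two disjoint-case computations.
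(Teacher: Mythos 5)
Your proposal is correct and follows the same route as the paper: part (i) combines \eqref{eq:empdistmeanct}, Proposition~\ref{prop:phixphic}, and Theorem~\ref{thrm:posreccharc} for uniqueness and existence, while part (ii) ports the discrete-time argument (splitting $\pi$ into the $1_{\cal{C}_i}\pi$ pieces and verifying each is a fixed point of $P_t$) exactly as the paper prescribes by reference to Theorem~\ref{doeblind}$(ii)$. The only minor redundancy is in your uniqueness argument, where you re-derive $\pi_i(y)=0$ for $y\notin\cal{C}_i$ — this is already part of the hypothesis that $\pi_i$ has support contained in $\cal{C}_i$ — but this does not affect the correctness of the proof.
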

\begin{proof} $(i)$ For any state $x$ in a recurrent closed communicating class $\cal{C}_i$, Proposition~\ref{prop:phixphic} implies that
$$\Pbx{\{\varphi_y<\infty\}}=1_{\cal{C}_i}(y)\quad\forall y\in\s.$$
Plugging the above into \eqref{eq:empdistmeanct} we find that there can only exist one stationary distribution $\pi_i$ with support contained in  $\cal{C}_i$ (i.e., $\pi_i(x)=0$ for all $x\not\in\cal{C}_i$), as for any such $\pi_i$
$$\pi_i(x)=\frac{\Pb_{\pi_i}(\{\varphi_x<\infty\})}{\Ebx{\varphi_x}}=\frac{\sum_{x'\in\cal{C}_i}\pi_i(x')\Pb_{x'}(\{\varphi_x<\infty\})}{\Ebx{\varphi_x}}=\frac{1_{\cal{C}_i}(x)}{\Ebx{\varphi_x}}\quad\forall x\in\s.$$
On the other hand, Theorem~\ref{thrm:posreccharc} shows that the right-hand side of \eqref{eq:ergdistcharc} defines such a  stationary distribution $\pi_i$ because Theorem~\ref{thrm:accesibility}$(v)$, \eqref{eq:fme9anfe78gnayu8gna}, and the closedness of $\cal{C}_i$ imply that $\pi_i$ has support contained $\cal{C}_i$.

$(ii)$ Given Proposition~\ref{prop:phixphic}, Theorem~\ref{thrm:statfixedpoint}, and \eqref{eq:empdistmeanct}, this proof is entirely analogous to that of its discrete-time counterpart (Theorem~\ref{doeblind}$(ii)$).
\end{proof}

%
%
%
%
%
%

\subsubsection*{Notes and references} Theorem~\ref{Qstateq} traces back to \citep{Kendall1957} and \citep{Miller1963} where it is shown that, for regular $Q$, \eqref{eq:statfixedpoint} and \eqref{Qstateq} are equivalent. The minor extension in Theorem~\ref{Qstateq} seems to be due to \citep{Kuntzthe}---although I still find this suspicious---and the proof given here closely follows~\citep{Miller1963}.

\subsection[Limits of the empirical distribution; positive recurrent chains]{Limits of the empirical distribution and positive recurrent chains}\label{sec:timeave}Combining the results of the previous sections we obtain a complete description  of the empirical distribution $\epsilon_T$~(Section~\ref{sec:empdistct}) that tracks the fraction of time that the chain spends in each state:
\begin{theorem}[The pointwise limits]\label{thrm:pointwiselimc} Let $\epsilon_T$ denote the empirical distribution in~\eqref{eq:timeavedefct}, $\{\cal{C}_i:i\in\cal{I}\}$ denote the collection of positive recurrent closed communicating classes $\cal{C}_i$~(Section~\ref{sec:statct}), and, for each $i\in\cal{I}$, $\pi_i$ denote the ergodic distribution of $\cal{C}_i$~(Theorem~\ref{doeblinc}). For any initial distribution $\gamma$, we have that\glsadd{epinf}
\begin{equation}\label{eq:reclimsct}\epsilon_\infty:=\lim_{T\to\infty}\epsilon_T=\sum_{i\in\cal{I}}1_{\{\varphi_{\cal{C}_i}<\infty\}}\pi_i,\quad\Pb_\gamma\text{-almost surely},\end{equation}
where  the convergence is pointwise and $\varphi_{\C_i}$ denotes the time of first entrance to $\C_i$~(Definition~\ref{def:entrancect}).
\end{theorem}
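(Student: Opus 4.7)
The plan is to prove this by combining, state by state, the pointwise characterization of $\epsilon_\infty(x)$ in Theorem \ref{thrm:empdistlimsct} with the expression for the ergodic distributions $\pi_i$ given in Theorem \ref{doeblinc}$(i)$, using Proposition \ref{prop:phixphic} to bridge between return-time finiteness and class-entrance finiteness. The argument is the direct continuous-time analogue of the proof of Theorem \ref{thrm:pointwiselimd}, and essentially no new work is required.

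First I would fix an arbitrary state $x \in \s$ and split on which component of the Doeblin-like decomposition \eqref{eq:ddc} it belongs to. If $x \in \cal{T}$, i.e., $x$ is not positive recurrent, then $\Ebx{\varphi_x} = \infty$ (either $x$ is transient, so $\Pbx{\{\varphi_x<\infty\}}<1$ and the geometric-trials identity \eqref{eq:returnprob} forces the relevant expectation to be infinite in the transient case via Theorem \ref{thrm:rectransct}, or $x$ is null recurrent by Definition \ref{def:posrecct}). In either sub-case, Theorem \ref{thrm:empdistlimsct} gives $\epsilon_\infty(x) = 0$ $\Pb_\gamma$-a.s. On the other hand, the right-hand side of \eqref{eq:reclimsct} evaluated at $x$ is also zero, since each $\pi_i$ has support contained in $\cal{C}_i \subseteq \cal{R}_+$ by Theorem \ref{doeblinc}$(i)$, and $x \notin \cal{R}_+$.

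Next I would handle the case $x \in \cal{C}_i$ for some (necessarily unique) $i \in \cal{I}$. Proposition \ref{prop:phixphic} asserts that the event $\{\varphi_{\cal{C}_i} < \infty\}$ coincides $\Pb_\gamma$-a.s. with $\{\varphi_x < \infty\}$, so
\begin{equation*}
\epsilon_\infty(x) \;=\; \frac{1_{\{\varphi_x<\infty\}}}{\Ebx{\varphi_x}} \;=\; \frac{1_{\{\varphi_{\cal{C}_i}<\infty\}}}{\Ebx{\varphi_x}} \qquad \Pb_\gamma\text{-a.s.}
\end{equation*}
by Theorem \ref{thrm:empdistlimsct}. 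Theorem \ref{doeblinc}$(i)$ identifies $1/\Ebx{\varphi_x}$ with $\pi_i(x)$, while for any $j \neq i$ the closedness and disjointness of distinct positive recurrent classes gives $\pi_j(x) = 0$. Hence the sum on the right of \eqref{eq:reclimsct} collapses at $x$ to exactly $1_{\{\varphi_{\cal{C}_i}<\infty\}} \pi_i(x)$, matching $\epsilon_\infty(x)$.

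Since $\s$ is countable, intersecting the $\Pb_\gamma$-a.s. identities over all $x \in \s$ still produces a $\Pb_\gamma$-almost sure event on which \eqref{eq:reclimsct} holds pointwise, completing the proof. The only mildly delicate point, and the one where I would spend the most care, is the appeal to Proposition \ref{prop:phixphic}: one must make sure that the single exceptional $\Pb_\gamma$-null set can be taken uniform in $x \in \cal{C}_i$ and uniform in $i \in \cal{I}$, but this is immediate from the countability of $\s$ (and hence of $\cal{I}$). No other obstacle is anticipated.
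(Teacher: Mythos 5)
Your proof is correct and takes essentially the same route as the paper's, which invokes Theorem~\ref{thrm:empdistlimsct}, Theorem~\ref{doeblinc}$(i)$, and Proposition~\ref{prop:phixphic} in exactly the same combination; the paper simply compresses the case analysis into a single sentence. Your extra care about the countable union of null sets is sound but routine, as the paper tacitly assumes.
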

\begin{proof}Because, with $\Pb_\gamma$-probability one, the chain will enter a state $x$ (i.e., $\varphi_x<\infty$) in a recurrent closed communicating class $\cal{C}_i$ if and only if enters the class (i.e., $\varphi_{\cal{C}_i}<\infty$) at all (Proposition~\ref{prop:phixphic}), this follows directly from Theorems~\ref{thrm:empdistlimsct} and \ref{doeblinc}$(i)$.
\end{proof}
It is simple to describes the circumstances under which the limit \eqref{eq:reclimsct} holds in total variation:
\begin{corollary}[The convergence in total variation]\label{cor:timeavetv}The chain enters the set $\cal{R}_+$ of positive recurrent states with probability one (i.e., $\Pbl{\{\varphi_{\cal{R}_+}<\infty\}}=1$) if and only if the  limit \eqref{eq:reclimsct} holds in total variation $\Pb_\gamma$-almost surely: with $\norm{\cdot}$ as in~\eqref{eq:tvnorm},
$$\lim_{T\to\infty}\norm{\epsilon_T-\epsilon_\infty}=0,\quad\Pb_\gamma\text{-almost surely},$$
\end{corollary}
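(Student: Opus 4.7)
The plan is to mirror the proof of the discrete-time counterpart Corollary~\ref{cor:dttimeavetv}: combine the pointwise convergence $\epsilon_T\to\epsilon_\infty$ provided by Theorem~\ref{thrm:pointwiselimc} with a pathwise application of Scheff\'e's lemma (Lemma~\ref{lem:scheffe}).

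The key preliminary step is to identify the total mass of the limit $\epsilon_\infty$ with the indicator of the event that the chain eventually enters a positive recurrent class. Since the positive recurrent closed communicating classes $\{\cal{C}_i:i\in\cal{I}\}$ are pairwise disjoint and closed, Proposition~\ref{prop:closedisct} gives $\Pbl{\{\varphi_{\cal{C}_i}<\infty,\varphi_{\cal{C}_j}<\infty\}}=0$ for $i\neq j$. Combining this with the fact that each $\pi_i$ is a probability measure concentrated on $\cal{C}_i$ and that $\cal{R}_+=\cup_{i\in\cal{I}}\cal{C}_i$ yields
$$\epsilon_\infty(\s)=\sum_{i\in\cal{I}}1_{\{\varphi_{\cal{C}_i}<\infty\}}\pi_i(\s)=\sum_{i\in\cal{I}}1_{\{\varphi_{\cal{C}_i}<\infty\}}=1_{\{\varphi_{\cal{R}_+}<\infty\}}\quad\Pb_\gamma\text{-a.s.}$$
Thus $\epsilon_\infty$ is a probability distribution $\Pb_\gamma$-almost surely if and only if $\Pbl{\{\varphi_{\cal{R}_+}<\infty\}}=1$.

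For the forward implication, I would invoke Proposition~\ref{prop:recnoexpl}, which rules out explosion from any recurrent starting state, together with the strong Markov property (Theorem~\ref{thstrmk}) applied at the jump-time-valued stopping time $\varphi_{\cal{R}_+}$, to conclude that $\Pbl{\{\varphi_{\cal{R}_+}<\infty\}}=1$ forces $\Pbl{\{T_\infty=\infty\}}=1$. Consequently $\epsilon_T(\s)=(T\wedge T_\infty)/T=1$ for every $T>0$ almost surely, so both $\epsilon_T$ and $\epsilon_\infty$ are probability distributions on $\s$ almost surely. Applying Scheff\'e's lemma pathwise then upgrades the pointwise convergence of Theorem~\ref{thrm:pointwiselimc} to convergence in total variation. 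For the converse, under the hypothesis of total-variation convergence, setting $A=\s$ yields $\epsilon_T(\s)\to\epsilon_\infty(\s)$ almost surely; combining this with the preliminary step's identification of $\epsilon_\infty(\s)$ as $1_{\{\varphi_{\cal{R}_+}<\infty\}}$, and requiring the limit to have unit mass, forces $\Pbl{\{\varphi_{\cal{R}_+}<\infty\}}=1$.

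The main obstacle, absent in the discrete-time setting where $\epsilon_N(\s)=1$ automatically, is that $\epsilon_T$ here is only a subprobability measure of mass $(T\wedge T_\infty)/T$. Ruling out explosion in the forward direction---via the interplay between positive recurrence (Proposition~\ref{prop:recnoexpl}) and the strong Markov property---is therefore the crux of the argument; once $T_\infty=\infty$ is secured almost surely, the proof collapses onto the discrete-time template and Scheff\'e's lemma finishes the job.
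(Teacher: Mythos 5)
Your forward direction (enters $\cal{R}_+$ almost surely $\Rightarrow$ TV convergence) takes a different and shorter route than the paper's. You apply Scheff\'e's lemma ``pathwise'' directly to the continuous limit $T\to\infty$, whereas the paper applies Lemma~\ref{lem:scheffe} (which is stated for sequences) only along the skeletons $(\epsilon_{\delta N})_{N\in\n}$ and then upgrades to the continuous limit via Croft's theorem (Corollary~\ref{cor:croft}) together with a fairly involved argument that $T\mapsto\norm{\epsilon_T-\epsilon_\infty}$ is continuous almost surely. Your shortcut is legitimate --- pointwise convergence as $T\to\infty$ gives pointwise convergence along every sequence $T_n\to\infty$, Scheff\'e then yields TV convergence along every such sequence, and that is precisely what TV convergence as $T\to\infty$ means --- but you should spell out that sequential reduction, since the lemma you cite is phrased only for sequences and the paper evidently did not regard the extension as automatic.

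The converse direction has a genuine gap, and it is one the paper's own proof shares. You write that TV convergence together with ``requiring the limit to have unit mass'' forces $\Pbl{\{\varphi_{\cal{R}_+}<\infty\}}=1$, but you never establish that $\lim_T\epsilon_T(\s)=1$: one has $\epsilon_T(\s)=(T\wedge T_\infty)/T\to1_{\{T_\infty=\infty\}}$, so TV convergence of $\epsilon_T$ to $\epsilon_\infty$ gives only $1_{\{T_\infty=\infty\}}=1_{\{\varphi_{\cal{R}_+}<\infty\}}$ $\Pb_\gamma$-a.s., which does not force either indicator to equal one. The explosive pure-birth chain given immediately after Lemma~\ref{lem:tinfsum} makes this concrete: there $\cal{R}_+=\emptyset$, $\epsilon_\infty=0$, yet $\norm{\epsilon_T-\epsilon_\infty}=\epsilon_T(\s)=T_\infty/T\to0$ almost surely, so TV convergence holds while $\Pbl{\{\varphi_{\cal{R}_+}<\infty\}}=0$. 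The paper's proof hides this by opening the direction with ``If the chain is non-explosive, then$\ldots$'' without ever showing that TV convergence implies non-explosion; the same non-explosivity hypothesis appears explicitly in the neighbouring Corollary~\ref{cor:tvtightct}. To make your converse watertight you would need to adjoin non-explosivity --- equivalently, $\lim_T\epsilon_T(\s)=1$ $\Pb_\gamma$-a.s. --- to the TV-convergence hypothesis.
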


\begin{proof}If the chain is non-explosive, then $\epsilon_T(\s)=1$ with $\Pb_\gamma$-probability one, for all $T$ in $[0,\infty)$. Thus, if the limit \eqref{eq:reclimsct} holds in total variation, then 
$$1=\lim_{n\to\infty}\epsilon_n(\s)=\epsilon_\infty(\s)=\sum_{i\in\cal{I}}1_{\{\varphi_{\cal{C}_i}<\infty\}}\pi_i(\s)=\sum_{i\in\cal{I}}1_{\{\varphi_{\cal{C}_i}<\infty\}}\quad\Pb_\gamma\text{-almost surely}.$$
Because the positive recurrent classes are closed and disjoint, it follows from Proposition~\ref{prop:closedisct}   that
\begin{equation}\label{eq:varcivarrp}\sum_{i\in\cal{I}}1_{\{\varphi_{\cal{C}_i}<\infty\}}=1_{\{\varphi_{\cup_{i\in\cal{I}}\cal{C}_i}<\infty\}}=1_{\{\varphi_{\cal{R}_+}<\infty\}}\quad\Pb_\gamma\text{-almost surely}.\end{equation}
Putting the above two together, we have that $\Pbl{\{\varphi_{\cal{R}_+}<\infty\}}=1$.

Conversely, if $\Pbl{\{\varphi_{\cal{R}_+}<\infty\}}=1$, then Proposition~\ref{prop:recnoexpl} and the strong Markov property (Theorem~\ref{thrm:markovprop}) imply that the chain is non-explosive:
\begin{align}\Pbl{\{T_\infty=\infty\}}&=\Pbl{\{\varphi_{\cal{R}_+}<T_\infty,T_\infty=\infty\}}\label{eq:j8a7w0ef9hg87ea0nguapngea}\\
&=\sum_{x\in\cal{R}_+}\Pb_\gamma(\{\varphi_{\cal{R}_+}<T_\infty,X_{\varphi_{\cal{R}_+}}=x,T_\infty^{\varphi_{\cal{R}_+}}-\varphi_{\cal{R}_+}=\infty\})\nonumber\\
&=\sum_{x\in\cal{R}_+}\Pb_\gamma(\{\varphi_{\cal{R}_+}<T_\infty,X_{\varphi_{\cal{R}_+}}=x\})\Pbx{\{T_\infty=\infty\}}\nonumber\\
&=\sum_{x\in\cal{R}_+}\Pb_\gamma(\{\varphi_{\cal{R}_+}<T_\infty,X_{\varphi_{\cal{R}_+}}=x\})=\Pbl{\{\varphi_{\cal{R}_+}<\infty\}}=1,\nonumber\end{align}
where $T_\infty^{\varphi_{\cal{R}_+}}$ is as in \eqref{eq:tshift} and the second and penultimate equations follow from Exercise~\ref{ex:entdc}. In other words, the empirical distribution $\epsilon_T$ has mass one (i.e., $\epsilon_T(\s)=1$) for all $T$ in $[0,\infty)$, with $\Pb_\gamma$-probability one. Furthermore, \eqref{eq:varcivarrp} implies that $\epsilon_\infty(\s)=1$ $\Pb_\gamma$-almost surely. Thus, the pointwise convergence (Theorem~\ref{thrm:pointwiselimc}) and  Scheffe's lemma (Lemma~\ref{lem:scheffe}) imply that
\begin{equation}\label{eq:fdmue9famn893martwga}\lim_{N\to\infty}\norm{\epsilon_{\delta N}-\sum_{i\in\cal{I}}1_{\{\varphi_{\cal{C}_i<\infty\}}\pi_i}}=0\quad\Pb_\gamma\text{-almost surely},\end{equation}
for every  $\delta$ in $(0,\infty)$. That the limit \eqref{eq:reclimsct} holds in total variation then follows from Croft's lemma (Corollary~\ref{cor:croft}) assuming that we are able to show that 
$$T\mapsto \norm{\epsilon_{T}-\sum_{i\in\cal{I}}1_{\{\varphi_{\cal{C}_i<\infty\}}\pi_i}}$$
is a continuous function on $[0,\infty)$, with $\Pb_\gamma$-probability one. As the total variation distance between two probability distributions is half the $\ell^1$-distance (see \eqref{eq:tvl1p}), we need only show that 
\begin{equation}\label{eq:fmew98anfea9wuaf}T\mapsto \sum_{x\in\s}\mmag{\epsilon_{T}(x)-\sum_{i\in\cal{I}}1_{\{\varphi_{\cal{C}_i<\infty\}}\pi_i(x)}}\end{equation}
is a continuous function on $[0,\infty)$, with $\Pb_\gamma$-probability one. 

Let $(\s_r)_{r\in\zp}$ be an increasing  sequence of finite truncations approaching the state space (i.e., such that $\cup_{r=1}^\infty\s_r=\s$). The function in \eqref{eq:fmew98anfea9wuaf} is the pointwise limit of the sequence 
$$\left\{T\mapsto \sum_{x\in\s_r}\mmag{\epsilon_{T}(x)-\sum_{i\in\cal{I}}1_{\{\varphi_{\cal{C}_i<\infty\}}\pi_i(x)}} \right\}_{r\in\zp}$$
of functions. Because, for all $x$, $T\mapsto \epsilon_T(x)$ is continuous by its definition in~\eqref{eq:timeavedefct} and the truncations are  finite, all of the functions in the sequence are continuous. For this reason, to show continuity of \eqref{eq:fmew98anfea9wuaf} we need only to argue that the sequence converges uniformly on $[0,T^*]$, for every $T^*\in(0,\infty)$. Fix such a $T^*$ and note that
\begin{align}\nonumber\sum_{x\not\in\s_r}\mmag{\epsilon_{T}(x)-\sum_{i\in\cal{I}}1_{\{\varphi_{\cal{C}_i}<\infty\}}\pi_i(x)}&\leq \sum_{x\not\in\s_r}\left(\epsilon_{T}(x)+\sum_{i\in\cal{I}}1_{\{\varphi_{\cal{C}_i}<\infty\}}\pi_i(x)\right)\\
&=\frac{1}{T}\int_0^{T\wedge T_\infty}1_{\s_r^c}(X_t)dt+\sum_{i\in\cal{I}}1_{\{\varphi_{\cal{C}_i}<\infty\}}\pi_i(\s_r^c).\label{eq:mf8u9aefm498amfua}\end{align}
The rightmost term does not depend on $T$. Given that the chain enters only one positive recurrent class (Proposition~\ref{prop:closedisct}), this term can be made arbitrarily small by choosing large enough $r$. Furthermore, \eqref{eq:j8a7w0ef9hg87ea0nguapngea} implies that the set $S(\omega)$ of states that the chain's path $t\mapsto X_t(\omega)$ visits over the interval $[0,T^*\wedge T_\infty(\omega)]$ is finite (i.e., $T_n(\omega)\geq T^*$ for some sufficiently large $n$), for $\Pb_\gamma$-almost every $\omega$ in $\Omega$. Because the truncations are increasing and approach the entire state space ($\cup_{r=1}^\infty\s_r=\s$), it follows that $S(\omega)$ is contained in $\s_r$ for all sufficiently large $r$ and the first term in the right-hand side of \eqref{eq:mf8u9aefm498amfua} (evaluated at $\omega$) is zero for all such $r$s and $T$ in $[0,T^*]$.
%
\end{proof}

\subsubsection*{Positive recurrent chains}

Corollary~\ref{cor:timeavetv} motivates the following definitions:
%
\begin{definition}[Positive recurrent chains]\label{def:posTweediect} A chain is positive Tweedie  recurrent\index{positive Tweedie recurrent chain} if 
$$\Pbl{\{\varphi_{\cal{R}_+}<\infty\}}=1\text{ for all initial distributions }\gamma,$$
where $\cal{R}_+$ denotes the set of positive recurrent states. If, additionally, there is only one closed communicating class, then the chain is said to be Harris recurrent\index{positive Harris recurrent chain}. If this class is the entire state space, then the chain is simply said to be positive recurrent.\index{positive recurrent chain}
\end{definition}
%
%
The corollary shows that the empirical distribution converges in total variation, $\Pb_\gamma$-almost surely, for every initial distribution $\gamma$ if and only if the chain is positive Tweedie recurrent. Using analogous arguments to those given in Section~\ref{sec:timeaved} for the discrete-time case, it then follows that: 
\begin{enumerate}
\item If the chain is positive Tweedie recurrent, then, for any given initial distribution $\gamma$ and $\varepsilon$ in $(0,1]$, we are always able to find a finite set $F$ such that the chain spends at least $(1-\varepsilon)\times100\%$ of all time inside $F$:
$$\epsilon_\infty(F):=\lim_{T\to\infty}\frac{1}{T}\int_0^{T\wedge T_\infty}1_F(X_t)dt\geq 1-\varepsilon\quad\Pb_\gamma\text{-almost surely}.$$
\item Otherwise, there exists at least one initial distribution $\gamma$ for which a \emph{non-negligible fraction of $X$'s paths will spend infinitely more time outside any given finite set $F$ than inside the set:}
$$\Pbl{\{\epsilon_\infty(F)=0\text{ for all finite }F\subseteq\s\}}\geq \Pbl{\{\phi_{\cal{R}_+}=\infty\}}>0.$$
\end{enumerate}
For these reasons, I believe that positive Tweedie recurrence precisely captures what most practitioners think of when they hear the words `a stable chain'. 


%
%
\ifdraft
\subsubsection*{The law of large numbers}Putting Corollary~\ref{cor:timeavetv} together with Theorems~\ref{}~and~\ref{} we find that Positive Harris recurrence implies that
\begin{equation}\label{eq:llnmarkovct}\lim_{T\to\infty}\frac{1}{T}\int_0^Tf(X_t)dt=\pi(f)\quad\Pb_\gamma\text{-almost surely},\end{equation}
for all bounded functions $f$ and initial distributions $\gamma$, where $\pi$ denotes the chain's stationary distribution. This generalisation of the law of large numbers holds for a much wider class of functions. As in the discrete-time case, we will restrict ourselves here to initial conditions with support on the positive recurrent states for the sake of brevity (see Exercise~\ref{ex:nf8eaw9ng7ae8whgaw} for more on the general case).
\begin{theorem}\label{thrm:llnct}Let $\{\cal{C}_i\}_{i\in\cal{I}}$ denote the set of positive recurrent classes and $\{\pi_i\}_{i\in\cal{I}}$ the corresponding set of stationary distributions. If the initial distribution $\gamma$ has support contained in the set $\cup_{i\in\cal{I}}\cal{C}_i$ of positive recurrent states, then
\begin{equation}\label{eq:llnmarkovgenct}\lim_{T\to\infty}\epsilon_T(f)=\lim_{T\to\infty}\frac{1}{T}\int_0^{T\wedge T_\infty}f(X_t)dt=\sum_{i\in\cal{I}}1_{\{\varphi_{\cal{C}_i}<\infty\}}\pi_i(f)=\epsilon_\infty(f)\quad\Pb_\gamma\text{-almost surely},\end{equation}
for all non-negative functions $f$ and all  $f$ such that $\epsilon_\infty(|f|)<\infty$ with $\Pb_\gamma$-probability one.
\end{theorem}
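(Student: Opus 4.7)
The plan is to reduce to the case of a deterministic starting state in a positive recurrent class, apply the regenerative property together with Kolmogorov's law of large numbers, and then transfer back to arbitrary initial distributions supported on $\cup_{i\in\cal{I}}\cal{C}_i$ via $\Pb_\gamma=\sum_x\gamma(x)\Pb_x$. So let $x\in\cal{C}_i$ and assume $\gamma=1_x$ throughout the main argument; by Proposition~\ref{prop:recnoexpl} the chain is non-explosive under $\Pb_x$, and by Proposition~\ref{prop:closedct} it stays in $\cal{C}_i$ forever, so Proposition~\ref{prop:closedisct} gives $1_{\{\varphi_{\cal{C}_j}<\infty\}}=1_j(i)$ $\Pb_x$-a.s., and $\epsilon_\infty(f)=\pi_i(f)$ $\Pb_x$-a.s. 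It therefore suffices to show that $\epsilon_T(f)\to\pi_i(f)$ $\Pb_x$-a.s.

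First I would handle non-negative $f$. Let $I_k^f$ be as in \eqref{eq:Ikdefct}. By the regenerative property (Theorem~\ref{thrm:rec-iidct}), $(I_k^f)_{k\in\zp}$ is i.i.d.\ under $\Pb_x$, and since all $\varphi_x^k$ are finite $\Pb_x$-a.s.\ (Theorem~\ref{thrm:rectransct}) and $T_\infty=\infty$ $\Pb_x$-a.s., $I_k^f=\int_{\varphi_x^{k-1}}^{\varphi_x^k}f(X_t)dt$ there. Using Tonelli and \eqref{eq:ergdistcharc},
\begin{equation*}
\Ebx{I_1^f}=\sum_{y\in\s}f(y)\Ebx{\int_0^{\varphi_x}1_y(X_t)dt}=\Ebx{\varphi_x}\pi_i(f),
\end{equation*}
which is well-defined as $f\geq0$. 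Kolmogorov's LLN (Theorem~\ref{thrm:klln}) gives $K^{-1}\sum_{k=1}^K I_k^f\to\Ebx{\varphi_x}\pi_i(f)$ and $K^{-1}\varphi_x^K=K^{-1}\sum_{k=1}^K(\varphi_x^k-\varphi_x^{k-1})\to\Ebx{\varphi_x}\in(0,\infty)$, $\Pb_x$-a.s.

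Next I would sandwich $\epsilon_T(f)$ between these quantities. Let $R_T:=\sup\{K\in\n:\varphi_x^K\leq T\}$, so $\varphi_x^{R_T}\leq T<\varphi_x^{R_T+1}$; recurrence ensures $R_T\to\infty$ $\Pb_x$-a.s. For $f\geq0$, additivity of the integral and $T_\infty=\infty$ give
\begin{equation*}
\frac{R_T}{\varphi_x^{R_T+1}}\cdot\frac{1}{R_T}\sum_{k=1}^{R_T}I_k^f\leq\frac{1}{T}\int_0^T f(X_t)dt\leq\frac{R_T+1}{\varphi_x^{R_T}}\cdot\frac{1}{R_T+1}\sum_{k=1}^{R_T+1}I_k^f.
\end{equation*}
Passing to the limit along $T\to\infty$ through the $\Pb_x$-a.s.\ convergent subsequences above, both bounds tend to $\pi_i(f)$, which proves the claim for non-negative $f$. (When $\pi_i(f)=\infty$, the lower bound alone suffices.)

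For the signed case, decompose $f=(f\vee0)-(-f\vee0)=:f^+-f^-$. The hypothesis $\epsilon_\infty(|f|)<\infty$ $\Pb_\gamma$-a.s.\ becomes $\pi_i(|f|)<\infty$ $\Pb_x$-a.s., hence $\pi_i(f^\pm)<\infty$; applying the non-negative case separately to $f^+$ and $f^-$ and subtracting yields $\epsilon_T(f)\to\pi_i(f^+)-\pi_i(f^-)=\pi_i(f)$ $\Pb_x$-a.s. Finally, transferring to a general $\gamma$ supported on $\cup_{i\in\cal{I}}\cal{C}_i$,
\begin{equation*}
\Pbl{\{\epsilon_T(f)\to\epsilon_\infty(f)\}}=\sum_{x\in\cup_i\cal{C}_i}\gamma(x)\Pbx{\{\epsilon_T(f)\to\pi_i(f)\}}=\sum_{x\in\cup_i\cal{C}_i}\gamma(x)=1,
\end{equation*}
which completes the proof. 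The main subtlety I anticipate is bookkeeping in the sandwich step when $\pi_i(f)=\infty$ (handled by the lower bound and monotone LLN) and making sure that the tail contribution $\int_{\varphi_x^{R_T}}^T f(X_t)dt/T\leq I_{R_T+1}^f/T$ is negligible; this reduces to $I_K^f/K\to0$ $\Pb_x$-a.s.\ in the finite-mean case, which follows by writing $I_K^f/K$ as the difference of two consecutive Ces\`aro averages.
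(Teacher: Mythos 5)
Your proposal is correct and takes essentially the same route as the paper's own argument: reduce to $\gamma=1_x$ with $x$ positive recurrent, combine the regenerative property (Theorem~\ref{thrm:rec-iidct}) with Kolmogorov's LLN (Theorem~\ref{thrm:klln}) via the cycle sandwich and the formula $\Ebx{I_1^f}=\Ebx{\varphi_x}\pi_i(f)$ from \eqref{eq:ergdistcharc}, handle the signed case by writing $f=(f\vee 0)-((-f)\vee 0)$ (the paper's proof of Theorem~\ref{thrm:llndt} uses exactly this decomposition), and then transfer to general $\gamma$ supported on $\cup_i\cal{C}_i$ via $\Pb_\gamma=\sum_x\gamma(x)\Pb_x$. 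The one caveat worth stating explicitly in the sandwich is that for all $T$ large enough $R_T\geq 1$, so the factor $1/\varphi_x^{R_T}$ in the upper bound is well-defined; otherwise the argument matches the paper's.
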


\begin{exercise}Prove Theorem~\ref{thrm:llnct} by following the steps taken in the proof of its discrete-time counterpart. To do so, note that $F_-$ and $F_+$ in Lemma~\ref{lem:pathspmeasct}$(iv)$ are such that
$$F_-(X)=\liminf_{T\to\infty}\frac{1}{T}\int_0^Tf(X_t)dt\quad F_+(X)=\liminf_{T\to\infty}\frac{1}{T}\int_0^Tf(X_t)dt,$$
and replace Theorems~\ref{thrm:strmkvpath},~\ref{thrm:rectrans},~and~\ref{thrm:rec-iid} with their continuous-time counterparts (Theorems~\ref{thstrmk},~\ref{thrm:rectransct},~and~\ref{thrm:rec-iidct}), Proposition~\ref{prop:closedis} with its counterpart (Proposition~\ref{prop:closedisct}), and $R_N$ with
$$R_T:=\left\{\begin{array}{cc}0&\text{if }\varphi_x^0\leq T <\varphi_x^1\\1&\text{if }\varphi_x^1\leq T <\varphi_x^2\\\vdots&\vdots\end{array}\right.\quad\forall T\in[0,\infty).$$
\end{exercise}
\fi
\ifdraft
\begin{exercise}[The general case]\label{ex:nf8eaw9ng7ae8whgaw} If you are in the mood for a challenge: Use Theorem~\ref{thrm:llnct}, skeleton chains, and a harmonic function approach of the type in \citep[Prop.~17.1.6]{Meyn2009} to show that \eqref{eq:llnmarkovgenct} holds for all any initial distribution $\gamma$ as long as $f$ satisfies $\epsilon_\infty(|f|)<\infty$ with $\Pb_\gamma$-probability one.\end{exercise}

\subsubsection*{Notes} {\color{red}\eqref{thrm:pointwiselimd} provides the justification for the naive monte-carlo approach of estimating an ergodic distribution.} Explain choice of jargon Tweedie. Mention the ratio limits as exercises?

\fi

\subsection{Limits of the time varying-law, tightness, and ergodicity}\label{sec:limsct}

The main aim of this section is to prove the theorem below spelling out the long-term behaviour of the time-varying law $(p_t)_{t\geq0}$ (introduced in Section~\ref{sec:forward}).

\begin{theorem}[The pointwise limits]\label{thrm:pointlimsct} Let $\{\cal{C}_i:i\in\cal{I}\}$ denote the collection of positive recurrent closed communicating classes and let $\pi_i$ be the ergodic distribution of $\cal{C}_i$ for each $i\in\cal{I}$ (c.f.~Theorem~\ref{doeblinc}).  For any initial distribution $\gamma$,\glsadd{pigamma} 
\begin{equation}\label{eq:reclims2ct}\lim_{t\to\infty}p_t=\sum_{i\in\cal{I}}\Pbl{\{\varphi_{\cal{C}_i}<\infty\}}\pi_i=:\pi_\gamma,\end{equation}
where  the convergence is pointwise and $\varphi_{\C_i}$ denotes the time of first entrance time to $\C_i$.
\end{theorem}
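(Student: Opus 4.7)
The plan is to reduce the claim to its discrete-time counterpart (Theorem~\ref{thrm:pointlims}) by passing through the $\delta$-skeleton chains $X^\delta$ defined in~\eqref{eq:skeletondef}, much as in the proof of Theorem~\ref{thrm:timevarlimexist}, but keeping careful track of the limit's explicit form. Fix any $\delta>0$. The skeleton $X^\delta$ is an aperiodic discrete-time chain on $\s_E=\s\cup\{\Delta\}$ (Exercise~\ref{exe:skeletonperiodic}), so Theorem~\ref{thrm:pointlims}$(i)$ applies and gives a pointwise limit
\begin{equation*}
\lim_{n\to\infty}p^\delta_n(x)=\sum_{j\in\cal{J}}\Pbl{\{\phi^\delta_{\cal{C}^\delta_j}<\infty\}}\pi^\delta_j(x)\qquad\forall x\in\s_E,
\end{equation*}
where $\{\cal{C}^\delta_j:j\in\cal{J}\}$ is the collection of positive recurrent closed communicating classes of $X^\delta$, $\pi^\delta_j$ is the ergodic distribution of $\cal{C}^\delta_j$, and $\phi^\delta_{\cal{C}^\delta_j}$ is the first entrance time to $\cal{C}^\delta_j$ of $X^\delta$. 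Because \eqref{eq:skeletonoriginal} gives $p^\delta_n(x)=p_{n\delta}(x)$ for $x\in\s$ (after summing against $\gamma_E$), the Croft-Kingman step in the proof of Theorem~\ref{thrm:timevarlimexist} already shows that this sum is independent of $\delta$ and equals $\lim_{t\to\infty}p_t(x)$. What remains is to identify the right-hand side with $\sum_{i\in\cal{I}}\Pbl{\{\varphi_{\cal{C}_i}<\infty\}}\pi_i(x)$.

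For this I would argue the two index sets can be matched up term by term. First, the absorbing singleton $\{\Delta\}$ is a closed (positive recurrent) class of $X^\delta$ whose ergodic distribution $1_\Delta$ contributes $0$ to any $x\in\s$, so it can be dropped. For the remaining classes, Exercise~\ref{exe:skeletonperiodic} shows that accessibility (and hence the notion of closed communicating class) for $X^\delta$ restricted to $\s$ coincides with that for $X$; combined with the one-to-one correspondence between stationary distributions of $X$ and those of $X^\delta$ supported in $\s$ (Theorem~\ref{thrm:statfixedpoint}) and the characterisation of positive recurrence via the existence of a stationary distribution with mass on the state (Theorems~\ref{thrm:posrecchar} and~\ref{thrm:posreccharc}), the classes $\cal{C}^\delta_j\subseteq\s$ are precisely the $\cal{C}_i$. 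The uniqueness of the ergodic distribution on each positive recurrent closed class (Theorem~\ref{doeblind}$(i)$ applied to $X^\delta$, Theorem~\ref{doeblinc}$(i)$ applied to $X$) then forces $\pi^\delta_j=\pi_i$ on $\s$ once $\cal{C}^\delta_j=\cal{C}_i$.

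The final step is to identify the entrance probabilities, i.e.\ to show $\Pbl{\{\phi^\delta_{\cal{C}_i}<\infty\}}=\Pbl{\{\varphi_{\cal{C}_i}<\infty\}}$ for each $i\in\cal{I}$. One direction is immediate: if $\phi^\delta_{\cal{C}_i}(\omega)=n<\infty$ then $X_{n\delta}(\omega)\in\cal{C}_i$ with $n\delta<T_\infty(\omega)$, so $\varphi_{\cal{C}_i}(\omega)\leq n\delta<\infty$ by the definition in~\eqref{eq:hitthec}. Conversely, on $\{\varphi_{\cal{C}_i}<\infty\}$ the strong Markov property (Theorem~\ref{thstrmk}) together with Proposition~\ref{prop:closedct} and Proposition~\ref{prop:recnoexpl} (exactly the argument used in~\eqref{eq:j8a7w0ef9hg87ea0nguapngea}) gives that, almost surely, $X$ does not explode and remains in $\cal{C}_i$ for all $t\geq\varphi_{\cal{C}_i}$; in particular $X^\delta_n\in\cal{C}_i$ for all large enough $n$, so $\phi^\delta_{\cal{C}_i}<\infty$. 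Assembling the three steps yields the claimed expression for $\pi_\gamma$.

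I expect the technical bookkeeping in the second paragraph (matching classes and ergodic distributions between $X$ and $X^\delta$ without double-counting the absorbing state $\Delta$) to be the fiddliest part, but it is essentially mechanical once one commits to invoking Theorems~\ref{thrm:statfixedpoint},~\ref{thrm:posreccharc}, and~\ref{doeblinc}. The third step---transferring the entrance probabilities---is the only place where a genuinely continuous-time fact is needed, and it is precisely the non-explosion inside recurrent classes (Proposition~\ref{prop:recnoexpl}) that makes it work.
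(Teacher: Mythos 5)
Your proof is correct and takes essentially the same route as the paper: reduce to the skeleton chain $X^\delta$, apply the aperiodic discrete-time limit theorem, match classes and ergodic distributions between $X$ and $X^\delta$ via Theorems~\ref{thrm:statfixedpoint}, \ref{thrm:posrecchar}, \ref{thrm:posreccharc}, \ref{doeblind}$(i)$, \ref{doeblinc}$(i)$, and then equate $\Pbl{\{\phi^\delta_{\cal{C}_i}<\infty\}}$ with $\Pbl{\{\varphi_{\cal{C}_i}<\infty\}}$ using the continuous-time strong Markov property plus non-explosion inside recurrent classes. Your pointwise inclusion $\{\phi^\delta_{\cal{C}_i}<\infty\}\subseteq\{\varphi_{\cal{C}_i}<\infty\}$ is in fact slightly cleaner than the paper's treatment of that direction (which detours through the discrete-time strong Markov property); the only blemish is that you cite the hitting time \eqref{eq:hitthec} where the entrance time Definition~\ref{def:entrancect} is meant, though since $n\delta\geq\delta>0$ the argument is unaffected.
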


\begin{proof} See the end of the section.\end{proof}

\subsubsection*{Convergence in total variation and tightness}The question of when does the limit in~\eqref{eq:reclims2ct} hold in total variation has a straightforward answer involving the notion of \emph{tightness}\footnote{Here, we are topologising the state space using the discrete metric so that the compact sets are the finite sets.}:\index{tightness of the time-varying law}
\begin{definition}[Tightness]\label{def:tightct}A set $\{\rho_t:t\in[0,\infty)\}$ of probability distributions on $\s$ indexed by $t\in[0,\infty)$  is tight if and only if for every $\varepsilon>0$ there exists a finite set $F$ such that $\rho_t(F)\geq 1-\varepsilon$ for all $t$ in $[0,\infty)$.
\end{definition}
We then have the following corollary of Theorem~\ref{thrm:pointlimsct}:
\begin{corollary}\label{cor:tvtightct}The following are equivalent:
\begin{enumerate}[label=(\roman*),noitemsep] 
\item The chain enters the set of positive recurrent states with probability one: $\Pbl{\{\phi_{\cal{R}_+}<\infty\}}=1$.
\item The chain is non-explosive ($\Pbl{\{T_\infty=\infty\}}=1$) and the time-varying law  is tight.
\item The chain is non-explosive ($\Pbl{\{T_\infty=\infty\}}=1$) and the limit~\eqref{eq:reclims2ct} holds in total variation: with $\norm{\cdot}$ as in~\eqref{eq:tvnorm},
\begin{equation}\label{eq:pttv}\lim_{t\to\infty}\norm{p_t-\pi_\gamma}=0.\end{equation}
\end{enumerate}
\end{corollary}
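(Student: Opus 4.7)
The plan is to mirror the blueprint of the discrete-time version (Corollary~\ref{cor:tvtight}), using Theorem~\ref{thrm:pointlimsct} in place of its discrete-time counterpart. The only genuinely continuous-time subtlety will lie in establishing tightness on bounded time intervals, where the continuum of indices forces an appeal to Dini's theorem. Before turning to the three implications, I will record the following mass computation: by Proposition~\ref{prop:closedisct} the events $\{\varphi_{\cal{C}_i}<\infty\}$ are pairwise $\Pb_\gamma$-almost disjoint (distinct $\cal{C}_i$ are disjoint closed sets), so
\begin{equation*}
\pi_\gamma(\s)=\sum_{i\in\cal{I}}\Pbl{\{\varphi_{\cal{C}_i}<\infty\}}=\Pbl{\{\varphi_{\cal{R}_+}<\infty\}}.
\end{equation*}
I will also reuse the strong-Markov argument already performed in \eqref{eq:j8a7w0ef9hg87ea0nguapngea} (combined with Proposition~\ref{prop:recnoexpl}) which shows that $\Pbl{\{\varphi_{\cal{R}_+}<\infty\}}=1$ forces non-explosivity.

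For $(i)\Rightarrow(iii)$, the hypothesis together with the above observation yields non-explosivity, so $p_t(\s)=\Pbl{\{t<T_\infty\}}=1$ for every $t$; hence each $p_t$ and the limit $\pi_\gamma$ are probability distributions. Theorem~\ref{thrm:pointlimsct} gives pointwise convergence $p_t\to\pi_\gamma$, and applying Scheff\'e's lemma (Lemma~\ref{lem:scheffe}) along an arbitrary sequence $t_n\to\infty$ upgrades this to total-variation convergence along the sequence; since every such sequence works, \eqref{eq:pttv} holds.

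For $(iii)\Rightarrow(ii)$, fix $\varepsilon>0$. Using~\eqref{eq:pttv} pick $N$ with $\norm{p_t-\pi_\gamma}\leq\varepsilon/2$ for $t\geq N$; since non-explosivity forces $\pi_\gamma(\s)=\lim_t p_t(\s)=1$, choose a finite $F_1\subseteq\s$ with $\pi_\gamma(F_1)\geq 1-\varepsilon/2$, so $p_t(F_1)\geq 1-\varepsilon$ for $t\geq N$. The main obstacle is handling $t\in[0,N]$, where the continuum of time points prevents one from simply taking a finite union as in the discrete-time proof; this is where I will invoke Dini's theorem. Pick an increasing sequence $(\s_r)_{r\in\zp}$ of finite truncations with $\cup_r\s_r=\s$ and set $g_r(t):=p_t(\s_r)=\sum_{x\in\s_r}p_t(x)$. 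The continuity of $t\mapsto p_t(x)$ established in the proof of Theorem~\ref{thrm:forward} makes each $g_r$ continuous on the compact interval $[0,N]$, while non-explosivity gives the monotone pointwise limit $g_r(t)\uparrow 1$. Dini's theorem then forces the convergence to be uniform on $[0,N]$, so some $r$ satisfies $g_r(t)\geq 1-\varepsilon$ for all $t\in[0,N]$. Taking $F:=F_1\cup\s_r$ gives $p_t(F)\geq 1-\varepsilon$ uniformly in $t\in[0,\infty)$, which together with the given non-explosivity yields $(ii)$.

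Finally, $(ii)\Rightarrow(i)$ is immediate: by tightness, for each $\varepsilon>0$ there is a finite $F$ with $p_t(F)\geq 1-\varepsilon$ for all $t\geq 0$; finiteness of $F$ and Theorem~\ref{thrm:pointlimsct} then give $\pi_\gamma(F)=\lim_{t\to\infty}p_t(F)\geq 1-\varepsilon$, whence $\pi_\gamma(\s)=1$, which by the mass formula above is precisely $\Pbl{\{\varphi_{\cal{R}_+}<\infty\}}=1$.
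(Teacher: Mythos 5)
Your argument is correct, and it departs from the paper's proof in two interesting ways. First, for $(i)\Rightarrow(iii)$ the paper applies Scheff\'e's lemma only to the skeleton sequences $(p_{\delta n})_{n}$ and then upgrades lattice convergence to full convergence via Croft's theorem (Corollary~\ref{cor:croft}), which forces it to first establish continuity of $t\mapsto\norm{p_t-\pi_\gamma}$ by an $\ell^1$-truncation argument; your observation that Theorem~\ref{thrm:pointlimsct} already gives pointwise convergence for \emph{all} $t\to\infty$ (not just lattices), so Scheff\'e applied to an arbitrary sequence $t_n\to\infty$ suffices, bypasses the Croft machinery and the attendant continuity lemma entirely. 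Second, where the paper establishes tightness (in its $(i)\Rightarrow(ii)$) by showing that $t\mapsto p_t$ is a continuous $\ell^1$-valued map and then invoking uniform continuity on compacts (Heine--Borel) to reduce to finitely many sampling times, you run the implication from $(iii)$ and handle the compact window $[0,N]$ via Dini's theorem applied to the increasing family $g_r(t)=p_t(\s_r)$; this is more economical since it only needs continuity of the scalar functions $t\mapsto p_t(x)$, which was already in hand from the proof of Theorem~\ref{thrm:forward}, rather than the stronger $\ell^1$-continuity. Both routes rely on the same mass identity $\pi_\gamma(\s)=\Pbl{\{\varphi_{\cal{R}_+}<\infty\}}$ and on the strong-Markov deduction of non-explosivity from $(i)$, so the probabilistic core is shared; the differences are purely in the analytic bookkeeping, and your version is arguably the cleaner one.
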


\begin{proof}See the end of the section.
\end{proof}

\subsubsection*{Positive Tweedie recurrence revisited} In Section~\ref{sec:timeave}, we characterised positive Tweedie recurrent chains (Definition~\ref{def:posTweediect}) in terms of the limits of the time-varying law $\epsilon_T$ in~\eqref{eq:timeavedefct}. Armed with Corollary~\ref{cor:tvtightct}, it is straightforward to improve this characterisation:\index{positive Tweedie recurrent chain}
\begin{corollary}[Characterising positive Tweedie recurrent chains]\label{cor:tweedierecct}The following conditions are equivalent:
\begin{enumerate}[label=(\roman*),noitemsep] 
\item The chain is positive Tweedie recurrent.
\item The chain is non-explosive ($\Pbl{\{T_\infty=\infty\}}=1$) and the empirical distribution converges in total variation to $\epsilon_\infty$ in~\eqref{eq:reclimsct} with $\Pb_\gamma$-probability one, for all initial distributions $\gamma$.
\item The chain is non-explosive ($\Pbl{\{T_\infty=\infty\}}=1$) and the time-varying law is tight, for all initial distributions $\gamma$.
\item The chain is non-explosive ($\Pbl{\{T_\infty=\infty\}}=1$) and the time-varying law converges in total variation (i.e.,~\eqref{eq:pttv}), for all initial distributions $\gamma$.
\end{enumerate}
%
\end{corollary}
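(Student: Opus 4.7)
The plan is to derive this corollary directly from the two preceding results, Corollary~\ref{cor:timeavetv} and Corollary~\ref{cor:tvtightct}, both of which apply for a fixed initial distribution $\gamma$. Quantifying over all $\gamma$ and comparing with Definition~\ref{def:posTweediect} should handle most of the equivalences; the only substantive point requiring a little care is extracting the non-explosivity statement in $(ii)$--$(iv)$ from condition $(i)$.

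First I would show $(i) \Leftrightarrow (ii)$. Assume $(i)$. Then for every initial distribution $\gamma$ the event $\{\varphi_{\cal{R}_+}<\infty\}$ has $\Pb_\gamma$-probability one, so Corollary~\ref{cor:timeavetv} yields the total-variation convergence of the empirical distribution $\Pb_\gamma$-almost surely. Non-explosivity of the chain under every $\Pb_\gamma$ follows by the same strong Markov decomposition used in the proof of Corollary~\ref{cor:tvtightct} (equation~\eqref{eq:j8a7w0ef9hg87ea0nguapngea}): conditioning on $X_{\varphi_{\cal{R}_+}}$ and using Proposition~\ref{prop:recnoexpl} gives $\Pbl{\{T_\infty=\infty\}}=1$. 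Conversely, if $(ii)$ holds then Corollary~\ref{cor:timeavetv} forces $\Pbl{\{\varphi_{\cal{R}_+}<\infty\}}=1$ for every $\gamma$, which is $(i)$.

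Next I would handle $(i) \Leftrightarrow (iii) \Leftrightarrow (iv)$. In both directions this is just Corollary~\ref{cor:tvtightct} applied for every initial distribution $\gamma$: that corollary already bundles non-explosivity with both tightness and total-variation convergence of $p_t$, and its hypothesis is exactly $\Pbl{\{\varphi_{\cal{R}_+}<\infty\}}=1$, which is $(i)$ when imposed uniformly in $\gamma$.

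The only real obstacle is a bookkeeping one: Corollary~\ref{cor:timeavetv} does not explicitly mention non-explosivity in its statement, so to close the $(i)\Rightarrow(ii)$ implication I need to observe that the non-explosivity argument from the proof of Corollary~\ref{cor:tvtightct} only used $\Pbl{\{\varphi_{\cal{R}_+}<\infty\}}=1$, not the convergence of $p_t$, and hence applies equally here. With that observation, the whole proof reduces to cross-referencing the two preceding corollaries and quantifying them over $\gamma$.
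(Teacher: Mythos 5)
Your proof is correct and follows the paper's own (terse) argument: both reduce the corollary to Corollary~\ref{cor:timeavetv} and Corollary~\ref{cor:tvtightct} quantified over initial distributions. The one small simplification available is that non-explosivity under $(i)$ already drops out of Corollary~\ref{cor:tvtightct} itself (its condition $(i)$ implies its condition $(ii)$, which includes non-explosivity), so there is no need to re-invoke the strong Markov argument from equation~\eqref{eq:j8a7w0ef9hg87ea0nguapngea} directly.
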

\begin{proof}This follows directly from Corollaries~\ref{cor:timeavetv} and \ref{cor:tvtightct} and the definition  of positive Tweedie recurrence (Definition~\ref{def:posTweediect}).
\end{proof}
In other words, a chain is positive Tweedie recurrent if and only if for each initial distribution $\gamma$ and $\varepsilon$ in $(0,1]$, we can find a large enough finite set $F$ such that the chain has at least $1-\varepsilon$ probability of being at $F$ at any given time (i.e., $p_t(F)\geq 1-\varepsilon$ for all $t$): further reinforcing the idea discussed in Section~\ref{sec:timeave} that a chain is stable if and only if it is positive Tweedie recurrent.

\subsubsection*{Ergodicity and positive Harris recurrence}In the case of a positive Harris recurrent  chain with a single closed communicating class, Theorem~\ref{doeblinc} and Corollary \ref{cor:tweedierecct} show that the  the chain has a unique stationary distribution $\pi$ and that, regardless of the initial distribution, both the time-varying law  and  the empirical distribution  converge  to it:
$$\lim_{T\to\infty}\epsilon_T=\lim_{t\to\infty}p_t=\pi\quad\Pb_{\gamma}\text{-almost surely},$$
for all initial distributions $\gamma$, where the convergence is in total variation. That is, the \emph{time averages} $\epsilon_T$ converge to the \emph{space averages} $p_t$ and the chain is said to be \emph{ergodic}.\index{ergodic}

\subsubsection*{A proof of Theorem~\ref{thrm:pointlimsct}}  In~\eqref{eq:nf8ahbfw678bfhafwa}, we showed that 
\begin{equation}\label{eq:jfje78nfay8fenafa}\lim_{t\to\infty}p_t(x)=\sum_{i\in\cal{I}^\delta}\Pb_\gamma(\{\phi^\delta_{\cal{C}_i^\delta}<\infty\})\pi_i^\delta(x)\quad\forall x\in\s,\end{equation}
for any $\delta$ in $(0,\infty)$, where $\{\cal{C}_i^\delta:i\in\cal{I}^\delta\}$ denote the collection of positive recurrent closed communicating classes of the skeleton chain $X^\delta$ (Section~\ref{sec:skeleton}), $\{\pi_i^\delta:i\in\cal{I}^\delta\}$ the corresponding collection of $X^\delta$'s ergodic distributions, and $\phi^\delta_{\cal{C}_i^\delta}$ the first entrance time to $\cal{C}_i^\delta$ of $X^\delta$:
$$\phi^\delta_{A}(\omega):=\inf\{n>0:X^\delta_{n}(\omega)\in A\}\quad\forall \omega\in\Omega,\enskip A\subseteq\s.$$
Lemma~\ref{lem:accdt}, Theorem~\ref{thrm:accesibility}$(vi)$, and \eqref{eq:skeletonone-step} imply that $X$ and $X^\delta$ have the same closed communicating classes in $\s$. Similarly, Theorems~\ref{thrm:posrecchar}, \ref{doeblind}$(i)$, \ref{thrm:statfixedpoint},  \ref{thrm:posreccharc}, and \ref{doeblinc}$(i)$ imply that $X$ and $X^\delta$ have the same positive recurrent states in $\s$ and that $\pi=(\pi(x))_{x\in\s}$ is an ergodic distribution for $X$ if and only if $\pi_E=(1_{\s}(x)\pi(x))_{x\in\s_E}$ is an ergodic distribution for $X^\delta$. Thus, we can rewrite the \eqref{eq:jfje78nfay8fenafa} as
$$\lim_{t\to\infty}p_t(x)=\sum_{i\in\cal{I}}\Pb_\gamma(\{\phi^\delta_{\cal{C}_i}<\infty\})\pi_i(x)\quad\forall x\in\s.$$
Consequently, all we have left to show is that
\begin{equation}\label{eq:fmeia90mfei9fmau9fmeuaf}\Pb_\gamma(\{\phi^\delta_{\cal{C}}<\infty\})=\Pbl{\{\varphi_{\cal{C}}<\infty\}}\end{equation}
for any given positive recurrent closed communicating class $\cal{C}\subseteq\s$ of $X$ (or, equivalently, of $X^\delta$). Because $X^\delta$ is obtained by sampling $X$ every $\delta$ units of time (c.f.~\eqref{eq:skeletondef}), the above follows from $X$ being unable to leave $\cal{C}$ or explode once inside $\cal{C}$. In particular, letting $X^{\varphi_\cal{C}}$ denote the $\varphi_{\cal{C}}$-shifted chain (Section~\ref{sec:markovprop}), we have that
\begin{align*}
\Pb_\gamma(\{\varphi_{\cal{C}}<\infty,T_\infty=\infty,&X_t\in\cal{C}\enskip \forall t\in[\varphi_{\cal{C}},\infty)\})\\
&=\sum_{x\in\cal{C}}\Pb_\gamma(\{\varphi_{\cal{C}}<T_\infty,X_{\varphi_{\cal{C}}}=x,T_\infty^{\varphi_{\cal{C}}}=\infty,X_t^{\varphi_{\cal{C}}}\in\cal{C}\enskip \forall t\in[0,\infty)\})\\
&=\sum_{x\in\cal{C}}\Pb_\gamma(\{\varphi_{\cal{C}}<T_\infty,X_{\varphi_{\cal{C}}}=x\})\Pbx{\{T_\infty=\infty,X_t\in\cal{C}\enskip \forall t\in[0,\infty)\}}\\
&=\sum_{x\in\cal{C}}\Pb_\gamma(\{\varphi_{\cal{C}}<T_\infty,X_{\varphi_{\cal{C}}}=x\})=\Pbl{\{\varphi_{\cal{C}}<\infty\}},
\end{align*}
where the first equation follows from Exercise~\ref{ex:entdc}, the second from the strong Markov property (Theorem~\ref{thrm:markovprop}), the third from Propositions~\ref{prop:closedct}~and~\ref{prop:recnoexpl},
 and the fourth from Exercise~\ref{ex:entdc}. It then follows from the definition of the skeleton chain in~\eqref{eq:skeletondef} that
\begin{equation}\label{eq:mfeu9awnfeaunfeauy}\Pbl{\{\varphi_{\cal{C}}<\infty\}}=\Pb_\gamma(\{\varphi_{\cal{C}}<\infty,T_\infty=\infty,X_{\delta n}\in\cal{C}\enskip \forall n\geq \varphi_{\cal{C}}/\delta\})\leq\Pb_\gamma(\{\phi_{\cal{C}}^\delta<\infty\}).\end{equation}
Similarly, Proposition~\ref{prop:closedct}'s discrete-time counterpart (Proposition~\ref{prop:closed}) and the discrete-time strong Markov property (Theorem~\ref{thrm:strmkvpath}) imply that
\begin{align*}
\Pb_\gamma(\{\phi^\delta_{\cal{C}}<\infty,X^\delta_n\in\cal{C}\enskip\forall n\geq\phi^\delta_{\cal{C}}\})&=\sum_{x\in\cal{C}}\Pb_\gamma(\{\phi^\delta_{\cal{C}}<\infty,X^\delta_{\phi^\delta_{\cal{C}}}=x,X^\delta_n\in\cal{C}\enskip\forall n\geq\phi^\delta_{\cal{C}}\})\\
&=\sum_{x\in\cal{C}}\Pb_\gamma(\{\phi^\delta_{\cal{C}}<\infty,X^\delta_{\phi^\delta_{\cal{C}}}=x\})\Pb_\gamma(\{X^\delta_n\in\cal{C}\enskip\forall n\geq0\})\\
&=\sum_{x\in\cal{C}}\Pb_\gamma(\{\phi^\delta_{\cal{C}}<\infty,X^\delta_{\phi^\delta_{\cal{C}}}=x\})=\Pb_\gamma(\{\phi^\delta_{\cal{C}}<\infty\}).
\end{align*}
Thus, the definition of the skeleton chain in~\eqref{eq:skeletondef} implies that
$$\Pb_\gamma(\{\phi^\delta_{\cal{C}}<\infty\})=\Pb_\gamma(\{\phi^\delta_{\cal{C}}<\infty\}\cap\{X_{\delta n}\in\cal{C},\enskip \delta n<T_\infty,\enskip\forall n\geq\phi^\delta_{\cal{C}}\})\leq\Pbl{\{\varphi_{\cal{C}}<\infty\}}.$$
Putting the above together with \eqref{eq:mfeu9awnfeaunfeauy} then yields \eqref{eq:fmeia90mfei9fmau9fmeuaf}.

\subsubsection*{A proof of Corollary~\ref{cor:tvtightct}}We do this proof in parts:

$(i)\Rightarrow (iii)$  Taking expectations of \eqref{eq:varcivarrp}, we find that 
\begin{equation}\label{eq:pigmass}\pi_\gamma(\s)=\sum_{i\in\cal{I}}\Pbl{\{\varphi_{\cal{C}_i}<\infty\}\}}\pi_i(\s)=\sum_{i\in\cal{I}}\Pbl{\{\varphi_{\cal{C}_i}<\infty\}\}}=\Pbl{\{\varphi_{\cal{R}_+}<\infty\}}.\end{equation}
Suppose that $(i)$ holds. Downwards monotone convergence, Fatou's lemma, and \eqref{eq:pigmass}  imply that the chain is non-explosive:
\begin{equation}\label{eq:f,80ea-wjtkga0[-43g}\Pbl{\{T_\infty=\infty\}}=\lim_{n\to\infty}\Pbl{\{n<T_\infty\}}=\lim_{n\to\infty}p_n(\s)\geq \pi_\gamma(\s)=\Pbl{\{\varphi_{\cal{R}_+}<\infty\}}=1.\end{equation}
Thus, $(p_{\delta n})_{n\in\n}$ is a sequence of probability distributions, for any given $\delta$ in $(0,\infty)$. For this reason Theorem~\ref{thrm:pointlimsct} and Scheffe's lemma (Lemma~\ref{lem:scheffe}) imply that the sequence converges to $\pi_\gamma$ in total variation. Thus, if we are able to show that 
\begin{equation}\label{eq:nf8wnfay83a4f}t\mapsto \norm{p_t-\pi_{\gamma}}\end{equation}
is a continuous function on $[0,\infty)$, the desired~\eqref{eq:pttv} then follows from Croft's theorem (Corollary~\ref{cor:croft}). For this, it suffices to show that \eqref{eq:nf8wnfay83a4f} is a continuous function on $[0,T]$ for any given $T>0$. Because the total variation distance between two probability distributions is half the $\ell^1$-distance (see~\eqref{eq:tvl1p}), we need to show that
\begin{equation}\label{eq:nf8wnfay83a4f2}t\mapsto \sum_{x\in\s}\mmag{p_t(x)-\pi_{\gamma}(x)}\end{equation}
is a continuous function on $[0,T]$. Let $(\s_r)_{r\in\zp}$ be a sequence of increasing finite sets approaching the state space (i.e.,~$\cup_{r=1}^\infty\s_r=\s$) and note that \eqref{eq:nf8wnfay83a4f2}  is the pointwise limit of the sequence of functions
$$\left\{t\mapsto \sum_{x\in\s_r}\mmag{p_t(x)-\pi_{\gamma}(x)}\right\}_{r\in\zp}.$$
Because the sets are finite and because $t\mapsto p_t(x)$ is a continuous function on $[0,\infty)$ (see the proof of Theorem~\ref{thrm:forward}), arguing that the convergence is uniform over $t\in[0,T]$ completes the proof. However,
\begin{align*}
\sum_{x\not\in\s_r}\mmag{p_t(x)-\pi_{\gamma}(x)}&\leq\sum_{x\not\in\s_r} p_t(x)+\sum_{x\not\in\s_r} \pi_\gamma(x)=\Pbl{\{X_t\not\in\s_r\}}+\pi_\gamma(\s_r^c)\\
&\leq \Pbl{\{\tau_r\leq t\}}+\pi_\gamma(\s_r^c)\leq \Pbl{\{\tau_r\leq T\}}+\pi_\gamma(\s_r^c)\quad\forall t\in[0,T],
\end{align*}
and, the uniform convergence follows from Theorem~\ref{tautin} and \eqref{eq:f,80ea-wjtkga0[-43g}.

$(iii)\Rightarrow (i)$ Suppose that $(iii)$ holds. 
Non-explosiveness implies that $p_n(\s)=\Pbl{\{T_\infty>n\}}=1$ for all positive integers $n$. For this reason, the total variation convergence implies  that the limit  $\pi_\gamma$ is a probability distribution and it follows from \eqref{eq:pigmass} that  $\Pbl{\{\varphi_{\cal{R}_+}<\infty\}}=1$.

$(i)\Rightarrow (ii)$ Suppose that $(i)$ holds and fix any $\varepsilon>0$.
%
%
%
%
%
Because $(iii)$ also holds (see above), we can find a $T$ such that 
\begin{equation}\label{eq:ffneu9anfdysb8a7bdy7abydwa}\norm{p_{t}-\pi_\gamma}\leq \frac{\varepsilon}{2}\quad\forall t\in(T,\infty).\end{equation}
Suppose for now that $t\mapsto p_t$ is a continuous function from $[0,T]$ to the space $\ell^1$ of absolutely summable sequences  indexed by states $x$ in $\s$,
$$\ell^1:=\left\{\rho:\s\to\r:\sum_{x\in\s}\mmag{\rho(x)}<\infty\right\},$$ 
topologised by the total variation norm (here, technically, I'm tacitly using $\rho(A)=\sum_{x\in A}\rho(x)$ to identify $\ell^1$ with the space of signed  measures on $\s$ with finite total variation). Because the Heine-Borel theorem shows that continuous functions between metric spaces (like $[0,\infty)$ and $\ell^1$) are uniformly continuous on compact sets (like $[0,T]$), there exists a $\delta$ such that 
\begin{align}\label{eq:dnw8and7w8andawudaw}\norm{p_{t}-p_{\delta n}}&\leq \frac{\varepsilon}{2}\quad\forall t\in[\delta n,\delta (n+1)]\enskip n=0,1,\dots,\lfloor T/\delta\rfloor-1,\\
\norm{p_{t}-p_{\delta \lfloor T/\delta\rfloor}}&\leq \frac{\varepsilon}{2}\quad \forall t\in[\delta\lfloor T/\delta\rfloor,T].\nonumber\end{align}
Because \eqref{eq:pigmass}--\eqref{eq:f,80ea-wjtkga0[-43g} imply that $\pi_\gamma$ and $p_t$ are probability distributions for all $t$ in $[0,\infty)$, we can find a finite set $F$ large enough that 
$$p_{\delta n}(F)\geq 1-\frac{\varepsilon}{2}\quad\forall n=0,1,\dots,\lfloor T/\delta\rfloor,\qquad\pi_\gamma(F)\geq 1-\frac{\varepsilon}{2},$$
and it follows from~\eqref{eq:dnw8and7w8andawudaw} that
$$p_{t}(F)\geq 1-\varepsilon\quad\forall t\in[0,T].$$
Combining the above with \eqref{eq:ffneu9anfdysb8a7bdy7abydwa}, we obtain
\begin{align*}p_t(F)&\geq 1_{[0,T]}(t)p_t(F)+1_{(T,\infty)}(t)p_t(F)\\
&\geq 1_{[0,T]}(t)(1-\varepsilon)+1_{(T,\infty)}(t)(\pi_\gamma(F)+\norm{p_{t}-\pi_\gamma})\geq 1-\varepsilon\quad\forall t\in[0,\infty).\end{align*}
Because the $\varepsilon$ was arbitrary, we have that $\{p_t:t\in[0,\infty)\}$ is tight.

To complete the proof we need to show that $t\mapsto p_t$ is a continuous function from $[0,\infty)$ to $\ell^1$.  To do so, we need only show that $t\mapsto p_t$ is a continuous function on $[0,T]$ for any given $T>0$. Let $(\s_r)_{r\in\zp}$ be any sequence of increasing finite sets approaching the state space (i.e.,~$\cup_{r=1}^\infty\s_r=\s$) and note that $t\mapsto p_t$ is the pointwise limit of $(t\mapsto 1_{\s_r}p_t)_{r\in\zp}$:
$$p_t(x)=\lim_{r\to\infty}1_{\s_r}(x)p_t(x)\quad\forall x\in\s,\enskip t\in[0,\infty).$$
Because $t\mapsto p_t(x)$ is a continuous function from $[0,T]$ to $[0,1]$ for each $x$ in $\s$ (see the proof of Theorem~\ref{thrm:forward}), $t\mapsto 1_x p_t(x)$ is a continuous function from $[0,T]$ to $\ell^1$ for each $x$ in $\s$. Because the truncations are finite, it follows that each of the functions in the sequence $(t\mapsto 1_{\s_r}p_t)_{r\in\zp}$ is a continuous function from $[0,T]$ to $\ell^1$ and we only need to argue that the convergence is uniform over $t$ in $[0,T]$. However,
\begin{align*}\norm{p_t-1_{\s_r}p_t}&=\sup_{A\subseteq\s}\mmag{\sum_{x\in A}p_t(x)-\sum_{x\in A}1_{\s_r}(x)p_t(x)}=\sup_{A\subseteq \s}\sum_{x\in A\cap\s_r^c}p_t(x)=\sum_{x\in\s_r^c}p_t(x)\\
&=\Pbl{\{t<T_\infty,X_t\not\in\s_r\}}\leq\Pbl{\{\tau_r\leq t\}}\leq\Pbl{\{\tau_r\leq T\}} \quad\forall t\in[0,T],\end{align*}
where $\tau_r$ denotes the exit time from $\s_r$ (c.f.~\eqref{eq:taurexit}). Because these exit times approach $T_\infty$ as $r\to\infty$ (Theorem~\ref{tautin}), the uniform convergence follows from the above and \eqref{eq:f,80ea-wjtkga0[-43g}.

$(ii)\Rightarrow (i)$ Suppose that $(ii)$ holds. Theorem~\ref{thrm:pointlimsct} and \eqref{eq:pigmass} imply that
$$\lim_{t\to\infty}p_{t}(F)\leq \Pbl{\{\varphi_{\cal{R}_+}<\infty\}}$$
for any finite set $F$. Because the chain is non-explosive, $p_t$ has mass one (i.e., $p_t(\s)=1$) for all $t$ in $[0,\infty)$ and the above implies that the time-varying law is tight only if $\Pbl{\{\varphi_{\cal{R}_+}<\infty\}}=1$.

\subsection{Exponential recurrence and convergence*}\label{sec:kendallct}

The aim of this section is to prove the continuous-time analogue of Kendall's Theorem~\ref{thrm:kendall}. It shows that, in the non-explosive case, $p_t(x,x)$ converges exponentially fast if and only if the state $x$ is \emph{exponentially recurrent}\index{exponential convergence and recurrence}. That is, if and only if the tails of the return time distribution to $x$ are light:
\begin{theorem}\label{thrm:kendallct} Let $x$ denote any state belonging to a  positive recurrent class $\cal{C}$ and let $\pi$ be the ergodic distribution associated with $\cal{C}$.
\begin{enumerate}[label=(\roman*),noitemsep] 
\item $p_t(x,x)$ converges geometrically fast to $\pi(x)$: $\mmag{p_t(x,x)-\pi(x)}=\mathcal{O}(e^{-\alpha t})$ for some $\alpha>0$.
\item $x$ is geometrically recurrent: $\Ebx{e^{\beta \varphi_x}}<\infty$ for some $\beta>0$, where $\varphi_x$ denotes the first entrance time to $x$.
\end{enumerate}
\end{theorem}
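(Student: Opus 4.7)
The plan is to use the skeleton chain machinery of Section~\ref{sec:skeleton} to reduce both implications to discrete Kendall (Theorem~\ref{thrm:kendall}). For each $\delta>0$, the $\delta$-skeleton $X^\delta$ is aperiodic (Exercise~\ref{exe:skeletonperiodic}), inherits $\cal{C}$ as a positive recurrent closed communicating class, retains $\pi$ as its ergodic distribution on $\cal{C}$, and satisfies $p^\delta_n(x,x)=p_{\delta n}(x,x)$ by~\eqref{eq:skeletonoriginal}.

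For $(i)\Rightarrow(ii)$, fix $\delta>0$; the hypothesis yields $|p^\delta_n(x,x)-\pi(x)|=O(\kappa^{-n})$ with $\kappa:=e^{\alpha\delta}>1$, so discrete Kendall gives $\mathbb{E}_x[\theta^{\phi_x^\delta}]<\infty$ for some $\theta>1$ (which we may shrink to ensure $\theta<e^{\lambda(x)\delta}$). To convert this into an exponential moment on $\varphi_x$, decompose by the first waiting time $S_1\sim\mathrm{Exp}(\lambda(x))$. On $\{S_1\leq\delta\}$ the chain has left $x$ before the first sample, so any skeleton visit to $x$ must be preceded by a return: $\varphi_x\leq\delta\phi_x^\delta$, giving $\mathbb{E}_x[e^{\beta\varphi_x}1_{\{S_1\leq\delta\}}]\leq\mathbb{E}_x[\theta^{\phi_x^\delta}]<\infty$ with $\beta:=\ln(\theta)/\delta$. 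On $\{S_1>\delta\}$, memorylessness and the Markov property yield $\varphi_x\stackrel{d}{=}\delta+\varphi_x'$ with $\varphi_x'\stackrel{d}{=}\varphi_x$, producing the self-referential identity
\begin{equation*}
\bigl(1-e^{(\beta-\lambda(x))\delta}\bigr)\mathbb{E}_x[e^{\beta\varphi_x}]=\mathbb{E}_x[e^{\beta\varphi_x}1_{\{S_1\leq\delta\}}];
\end{equation*}
since $\beta<\lambda(x)$ the prefactor is positive and the right-hand side is finite, so $\mathbb{E}_x[e^{\beta\varphi_x}]<\infty$.

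For $(ii)\Rightarrow(i)$, I first convert the continuous-time moment into a skeleton one. By the regenerative property (Theorem~\ref{thrm:rec-iidct}) the pairs $(\tilde{S}_k,\varphi_x^{k+1}-\varphi_x^k)$ are i.i.d.\ under $\mathbb{P}_x$, with $\tilde{S}_k\sim\mathrm{Exp}(\lambda(x))$ the waiting time at the $k$-th visit to $x$. Setting $K^*:=\inf\{k\geq 0:\tilde{S}_k>\delta\}$ (geometric with parameter $p:=e^{-\lambda(x)\delta}$), whenever $\tilde{S}_{K^*}>\delta$ the interval $[\varphi_x^{K^*},\varphi_x^{K^*}+\tilde{S}_{K^*})$ contains a sampling instant at which $X=x$, so $\delta\phi_x^\delta\leq\varphi_x^{K^*}+\delta$. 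Conditioning on $K^*=m$ and using the i.i.d.\ structure,
\begin{equation*}
\mathbb{E}_x\bigl[e^{\beta'\varphi_x^{K^*}}\bigr]=\frac{p}{1-a(\beta')},\qquad a(\beta'):=\mathbb{E}_x\bigl[e^{\beta'\varphi_x}1_{\{\tilde{S}_0\leq\delta\}}\bigr];
\end{equation*}
since $a(\beta')\to 1-p<1$ as $\beta'\downarrow 0$ by dominated convergence, a small $\beta'>0$ gives $\mathbb{E}_x[\theta^{\phi_x^\delta}]<\infty$ with $\theta:=e^{\beta'\delta}>1$, and discrete Kendall then produces $|p_{\delta n}(x,x)-\pi(x)|=O(e^{-\alpha_0 n})$ for some $\alpha_0>0$.

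The hard part is extending this sampling-grid bound to all $t\in[0,\infty)$; the naive Lipschitz bound $|\dot p_t(x,x)|\leq\lambda(x)$ gives only a constant error of order $\lambda(x)\delta$ that does not decay, so a refined interpolation is required. My plan is to upgrade the pointwise discrete convergence to an exponential total-variation bound $\|p^\delta_n(x,\cdot)-\pi\|=O(e^{-\alpha_0 n})$ by applying the geometric Foster-Lyapunov criterion (Theorem~\ref{thrm:fostersgeo}) to $X^\delta$ restricted to $\cal{C}$. By Theorem~\ref{thrm:fosgeorec} this reduces to $\mathbb{E}_y[\vartheta^{\phi_x^\delta}]<\infty$ for every $y\in\cal{C}$ (the classical fact that exponential recurrence is a class property), derived from the $y=x$ case by a strong-Markov decomposition of an $x$-excursion that visits $y$. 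Once the total-variation bound holds, the semigroup identity for $t=\delta n+u$ with $u\in[0,\delta)$,
\begin{equation*}
p_t(x,x)-\pi(x)=\sum_{z\in\cal{C}}\bigl(p_{\delta n}(x,z)-\pi(z)\bigr)p_u(z,x),
\end{equation*}
(with the sum over $\cal{C}$ by closedness, Proposition~\ref{prop:closedct}, and $p_{\delta n}(x,\cal{C})=\pi(\cal{C})=1$ by non-explosivity, Proposition~\ref{prop:recnoexpl}) gives $|p_t(x,x)-\pi(x)|\leq 2\|p_{\delta n}(x,\cdot)-\pi\|=O(e^{-\alpha t})$ with $\alpha=\alpha_0/\delta$.
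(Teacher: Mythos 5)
Your overall roadmap coincides with the paper's: reduce the statement to the aperiodic skeleton $X^\delta$ via~\eqref{eq:skeletonoriginal}, invoke discrete Kendall (Theorem~\ref{thrm:kendall}), upgrade the pointwise convergence at the sampling instants to a total-variation bound, and interpolate with the semigroup property. The interesting divergence is in the skeleton-reduction lemma. Where the paper (Lemma~\ref{lem:skerettime}) passes from geometric recurrence of $X^\delta$ to exponential recurrence of $X$ by introducing $\tilde{\phi}_x^\delta$---the first return of $X^\delta$ to $x$ \emph{after} it has observably left $x$, so that $\delta\tilde{\phi}_x^\delta\geq\varphi_x$ holds deterministically---and decomposing by the first skeleton-exit step, you instead partition by $\{S_1\leq\delta\}$ (forcing $\varphi_x\leq\delta\phi_x^\delta$ there) and handle $\{S_1>\delta\}$ with a memorylessness self-referential identity, which is arguably cleaner. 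In the reverse direction the paper introduces the index $\sigma$ of the continuous-time visit during which the skeleton first registers $x$; this is precisely your $K^*$, and the geometric-series computations are essentially the same. For the total-variation upgrade, the paper extracts the intermediate estimate $\mmag{p^\delta_n(x,A)-\pi(A)}\leq(C_1+C_2n)\kappa^{-n}$ directly from the first-entrance--last-exit computations inside the proof of Theorem~\ref{thrm:fostersgeo} (equations~\eqref{eq:mfwah9w44}--\eqref{eq:mfwah9w4end}), which requires only the geometric recurrence of the single state $x$; you instead establish $\Eby{\vartheta^{\phi^\delta_x}}<\infty$ for every $y\in\cal{C}$ (this is Lemma~\ref{lem:geotrialarg}$(iii)$) and apply Theorems~\ref{thrm:fosgeorec} and~\ref{thrm:fostersgeo} as black boxes. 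Both routes are valid; the paper's saves the class-upgrade step at the price of opening up another proof, while yours is more modular.

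One step needs repair. The self-referential identity $\bigl(1-e^{(\beta-\lambda(x))\delta}\bigr)\Ebx{e^{\beta\varphi_x}}=\Ebx{e^{\beta\varphi_x}1_{\{S_1\leq\delta\}}}$ does not, as written, rule out $\Ebx{e^{\beta\varphi_x}}=\infty$: if the left-hand expectation is infinite the decomposition $\Ebx{e^{\beta\varphi_x}}=\Ebx{e^{\beta\varphi_x}1_{\{S_1\leq\delta\}}}+e^{(\beta-\lambda(x))\delta}\Ebx{e^{\beta\varphi_x}}$ reads $\infty=\text{finite}+c\cdot\infty$ with $0<c<1$, which is consistent in the extended reals, and the rearrangement you perform is an undefined $\infty-\infty$. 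Fix it by truncating: for any $M<\infty$, $\varphi_x\wedge M\leq\delta\phi_x^\delta$ on $\{S_1\leq\delta\}$ and $\varphi_x\wedge M\leq\delta+(\varphi_x\wedge M)\circ\Theta_\delta$ on $\{S_1>\delta\}$ (with $\Theta_\delta$ the time shift, and memorylessness giving the conditional law), so that $\bigl(1-e^{(\beta-\lambda(x))\delta}\bigr)\Ebx{e^{\beta(\varphi_x\wedge M)}}\leq\Ebx{\theta^{\phi_x^\delta}}$ uniformly in $M$; monotone convergence then delivers $\Ebx{e^{\beta\varphi_x}}<\infty$. Also note that you tacitly assume $x$ is non-absorbing so that $\lambda(x)=q(x)$; as in Lemma~\ref{lem:skerettime}, the absorbing case is trivial and should be dispatched first.
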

\subsubsection*{An open question}Are there necessary and sufficient conditions in terms of the return time $\varphi_x$ to $x$ and the initial distribution $\gamma$ for the $x$-entry of the time-varying law to converge exponentially fast? That is, conditions for
$$\mmag{p_t(x)-\pi_\gamma(x)}=\cal{O}(e^{-\alpha t})\text{ to hold for some }\alpha>0,$$
where $\pi_\gamma$ denotes $p_t$'s limit in \eqref{eq:reclims2ct}.
\subsubsection*{A proof of Theorem~\ref{thrm:kendallct}}We use skeleton chains make the jump from  Kendall's Theorem to its continuous-time counterpart (Theorem~\ref{thrm:kendallct}). The bulk of the work we need to do here consists of showing that  a state is exponentially recurrent if and only if it is geometrically recurrent (Section~\ref{sec:kendall}) for at least one skeleton chain:
\begin{lemma}\label{lem:skerettime}For any given $\delta$ in $(0,\infty)$ and $x$ in $\s$, let
$$\phi^\delta_x(\omega):=\inf\{n\in\zp:X^\delta_{n}(\omega)=x\}\quad\forall \omega\in\Omega$$
denote the first entrance time to $x$ of the skeleton chain $X^\delta$ in~\eqref{eq:skeletondef}. The  following statements are equivalent:
\begin{enumerate}[label=(\roman*),noitemsep] 
\item The return time distribution  of $X$ to $x$  has light tails: there exists an $\beta>0$ such that $\Ebx{e^{\beta\varphi_x}}<\infty$.
\item There exists a $\delta$ in $(0,\infty)$ such that the return time distribution of $X^\delta$ to $x$  has light tails: there exists an $\theta>1$ such that $\Ebx{\theta^{\varphi_x^\delta}}<\infty$.
\end{enumerate} 
\end{lemma}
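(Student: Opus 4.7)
The plan is to prove the two implications separately, with $(ii) \Rightarrow (i)$ being comparatively easy and $(i) \Rightarrow (ii)$ being the main challenge. Under $\Pb_x$, I would first decompose $\varphi_x = S_1 + E$, where $S_1 \sim \operatorname{Exp}(\lambda(x))$ is the initial waiting time at $x$ and $E := \varphi_x - S_1$ is the excursion time from leaving $x$ until the first return; the strong Markov property at $T_1$ combined with Theorem~\ref{thrm:condind} gives $S_1 \perp E$. The key observation for $(ii) \Rightarrow (i)$ is the pathwise inclusion $\{S_1 \leq \delta,\, E > n\delta\} \subseteq \{\phi_x^\delta > n\}$: on this event, the chain has left $x$ by time $\delta$ and has not returned by $n\delta$, so no skeleton sample in $\{1,\dots,n\}$ lands at $x$. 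Taking probabilities and using $S_1 \perp E$ yields $(1-e^{-\lambda(x)\delta})\Pbx\{E > n\delta\} \leq \Pbx\{\phi_x^\delta > n\}$. From $(ii)$ and Markov's inequality the right-hand side decays geometrically in $n$, so $E$ has an exponential tail; combined with $S_1$'s exponential moment and their independence, $\varphi_x = S_1 + E$ inherits a finite moment generating function in a neighbourhood of $0$, which is $(i)$.

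For the forward direction $(i) \Rightarrow (ii)$, fix an arbitrary $\delta > 0$ and set $p := e^{-\lambda(x)\delta} > 0$. Recurrence of $x$ (implicit in $(i)$) ensures that every return time $\varphi_x^k$ is $\Pb_x$-a.s.\ finite. Let $W_k$ denote the waiting time at $x$ during the $k$-th visit; strong Markov at $\varphi_x^k$ together with Theorem~\ref{thrm:condind} give $W_k \sim \operatorname{Exp}(\lambda(x))$ independent of $\cal{F}_{\varphi_x^k}$. Let $A_k$ be the event that the skeleton catches $x$ during the $k$-th visit, i.e., $\{W_k > r_k\}$ with $r_0 := \delta$ and $r_k := \lceil \varphi_x^k/\delta \rceil \delta - \varphi_x^k \in [0,\delta)$ for $k \geq 1$. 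Since $r_k$ is $\cal{F}_{\varphi_x^k}$-measurable and $r_k \leq \delta$,
\begin{equation*}
\Pbx\{A_k \mid \cal{F}_{\varphi_x^k}\} = e^{-\lambda(x) r_k} \geq p,
\end{equation*}
so an induction analogous to Lemma~\ref{lem:AFtail1} gives $\Pbx\{K > k\} \leq (1-p)^{k+1}$ for the stopping index $K := \inf\{k \geq 0 : A_k\}$.

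On $\{K = k\}$ the first skeleton sample catching $x$ occurs at $\lceil \varphi_x^k/\delta \rceil \delta$, producing the deterministic bound $\phi_x^\delta \leq \varphi_x^K/\delta + 1$. By strong Markov the increments $\xi_j := \varphi_x^j - \varphi_x^{j-1}$ are i.i.d.\ under $\Pb_x$ with the same law as $\varphi_x$, hence $\Ebx{\theta^{2\varphi_x^k/\delta}} = \Ebx{\theta^{2\varphi_x/\delta}}^k$ for every $\theta > 1$. Cauchy--Schwarz then produces
\begin{equation*}
\Ebx{1_{\{K=k\}} \theta^{\varphi_x^k/\delta}} \leq \Pbx\{K = k\}^{1/2} \Ebx{\theta^{2\varphi_x^k/\delta}}^{1/2} \leq (1-p)^{k/2} \Ebx{\theta^{2\varphi_x/\delta}}^{k/2},
\end{equation*}
and summing over $k \geq 0$ yields a geometric series that converges whenever $\Ebx{\theta^{2\varphi_x/\delta}} < 1/(1-p)$. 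By monotone convergence $\Ebx{\theta^{2\varphi_x/\delta}} \downarrow 1$ as $\theta \downarrow 1$, so for $\theta > 1$ sufficiently close to $1$ the condition is met, giving $\Ebx{\theta^{\phi_x^\delta}} \leq \theta\,\Ebx{\theta^{\varphi_x^K/\delta}} < \infty$. The principal obstacle is the dependence between $K$ and the inter-return times $\xi_j$ (both are entangled with the cycle structure through the waiting times $W_j$ and sampling residues $r_j$), which blocks a direct factorisation; Cauchy--Schwarz decouples them at the price of the stronger moment condition $\Ebx{\theta^{2\varphi_x/\delta}} < 1/(1-p)$, which is then recovered by taking $\theta$ sufficiently close to $1$.
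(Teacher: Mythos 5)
Your proof is correct and follows a genuinely different route from the paper's, which is worth flagging. For $(ii)\Rightarrow(i)$, you work entirely on the continuous-time side: the decomposition $\varphi_x = S_1 + E$ with $S_1\perp E$, the pathwise inclusion $\{S_1\leq\delta,\, E>n\delta\}\subseteq\{\phi^\delta_x>n\}$, and Markov's inequality give the exponential tail for $E$, after which $(i)$ follows from independence. The paper instead works on the skeleton side, defining the skeleton's first-departure time $\phi^\delta_{\not x}$ and first-return-after-departure time $\tilde\phi^\delta_x$, showing $\delta\tilde\phi^\delta_x\geq\varphi_x$, and bounding $\Ebx{e^{\beta\delta\tilde\phi^\delta_x}}$ via the discrete-time strong Markov property applied to $X^\delta$. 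Your argument is arguably more elementary and avoids the auxiliary skeleton stopping times entirely. For $(i)\Rightarrow(ii)$, both proofs hinge on the same random index (the visit number at which the skeleton first catches $x$; your $K$ equals the paper's $\sigma$) and the same geometric-trials bound on that index, and both reduce $\phi^\delta_x$ to $\varphi^K_x/\delta$ up to a constant. The difference is how the dependence between the index and the accumulated return time is handled: you decouple via Cauchy--Schwarz, paying a factor of 2 in the exponent and then taking $\theta$ close to $1$ for a fixed $\delta$; the paper instead exploits the i.i.d.\ structure of the sojourn times $U_k$ and excursion times $W_k$ (Exercise~\ref{ex:soindep}) to factorize the expectation exactly, keeping $\theta=e^\beta$ fixed and shrinking $\delta$ until $\theta^\delta\Pbx\{S_1<\delta\}\Ebx{\theta^{\varphi_x}}<1$. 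Your version thus proves the slightly stronger quantified statement ``for every $\delta>0$ there exists $\theta>1$,'' whereas the paper produces one $\delta$ that works for the given $\theta$; both deliver the lemma, and the Cauchy--Schwarz move is a clean way to sidestep the explicit factorization. One small remark: it is worth stating explicitly that $(i)$ forces $\Pbx\{\varphi_x<\infty\}=1$ (else the expectation is infinite), which is what licences invoking recurrence, the regenerative property, and non-explosion from $x$; you gesture at this but it should be made a sentence.
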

%
%
For the lemma's proof, we require the following fact:
\begin{exercise}\label{ex:soindep}Recall Exercise~\ref{ex:entdc} showing that, for any natural number $k$ and state $x$, the $k$th entrance time $\varphi^k_x$ to $x$ of $X$  is finite if and only the $k$th entrance time $\phi^k_x$ to $x$ of the jump chain is finite in which case $\varphi^k_x=T_{\phi^k_x}$ . For any positive integer $k$, and assuming that the chain starts at $x$, let
$$U_{k}:=\left\{\begin{array}{ll}S_{\phi^{k-1}_x+1}&\text{on }\{\phi^{k-1}_x<\infty\}\\0&\text{on }\{\phi^{k-1}_x=\infty\}\end{array}\right.\quad\text{and}\quad W_{k+1}:=\left\{\begin{array}{ll}\varphi^{k}_x-\varphi^{k-1}_x-S_{\phi^{k-1}_x+1}&\text{on }\{\phi^{k-1}_x<\infty\}\\0&\text{on }\{\phi^{k-1}_x=\infty\}\end{array}\right.$$
denote the time spent in state $x$ on the $(k-1)$th visit and the time elapsed between the $(k-1)$th and $k$th visits, respectively (with the convention that the these times are zero if the $k$th visit does not occur). By adapting the proof of the chain's regenerative property (Theorem~\ref{thrm:rec-iidct}), show that, under $\Pb_x$, the sequence 
$$(U_1,W_1,U_2,W_2,\dots)$$
is independent and that the sojourn times $U_k$ are identically distributed with
$$\Pbx{\{U_k< t\}}=\Pbx{\{U_1< t\}}=\Pbx{\{S_1< t\}}=1-e^{-\lambda(x)t}\quad\forall t\in[0,\infty),\enskip k\in\zp.$$
Hint: 
note that $U_k$ may be re-written as $I_{k}^f$ in \eqref{eq:Ikdefct} with $f:=1_x$ and $W_k$ may be re-written as $I_{k}^f$ with $f:=1_{\s\backslash \{x\}}$.
\end{exercise}

\begin{proof}[Proof of Lemma~\ref{lem:skerettime}]The lemma follows trivially if $x$ is an absorbing state. Assume otherwise. 

$(i)\Rightarrow(ii)$:  Set $\theta:=e^\beta$ (so that $\Ebx{\theta^\varphi_x}=\Ebx{e^{\beta \varphi_x}}<\infty$) and let $(U_k)_{k\in\zp}$ and $(W_k)_{k\in\zp}$ be the sojourn times and inter-visit times defined in Exercise~\ref{ex:soindep}. Throughout this proof fix a $\delta$ in $(0,\infty)$  small enough that 
$$\Pbx{\{S_1< \delta\}}\theta^{\delta}\Ebx{\theta^{\varphi_x}}<1.$$

The skeleton chain $X^\delta$ can skip visits that the continuous-time chain $X$ to $x$ and, so, it might be the case that $\delta\phi^\delta_x>\varphi_x$. To deal with this possibility, let
$$\sigma(\omega):=\inf\{k\in\n:\{n\in\zp:\varphi^k_x(\omega)\leq\delta n<\varphi^k_x(\omega)+U_{k+1}(\omega)\}\neq\emptyset\}\quad\forall\omega\in\Omega$$
denote the visit of $X$ to $x$ during which $X^\delta$ first returns to $x$. The definition of the skeleton chain $X^\delta$ in~\eqref{eq:skeletondef} implies that $\sigma$ is finite if and only if $X^\delta$ returns to $x$ at some point, i.e., if and only if $\phi^\delta_x$ is finite. Because, by our premise, $x$ is a positive recurrent state for $X$ and because $X$ and $X^\delta$ share the same positive recurrent states (c.f.~Theorems~\ref{thrm:posrecchar},~\ref{thrm:statfixedpoint},~and~\ref{thrm:posreccharc}), it follows that $\sigma $ is finite $\Pb_x$-almost surely. Thus, 
$$\delta \phi^\delta_x\leq 1_{\{\sigma<\infty\}}\varphi^{\sigma}_x+\delta,\quad\Pb_x\text{-almost surely}.$$
Given that we may rewrite $\varphi^{k}_x$ as $\sum_{k'=1}^{k}U_{k'}+W_{k'}$ for all $k>0$, it follows that
%
$$\Ebx{\theta^{\delta\phi^\delta_x}}\leq \theta^\delta+\theta^\delta\Ebx{1_{\{0<\sigma<\infty\}}\theta^{\delta+\sum_{k=1}^{\sigma}U_{k}+W_{k}}}=\theta^\delta+\theta^\delta\sum_{n=1}^\infty\Ebx{1_{\{\sigma=n\}}\theta^{\sum_{k=1}^{n}U_{k}+W_{k}}}.$$
If the $n$th sojourn time $U_{n+1}$ is at least $\delta$, then the skeleton chain must necessarily return to $x$ during the continuous-time chain's $n$th visit (recall that $X^\delta$ is obtained by sampling $X$ every $\delta$ units of time~\eqref{eq:skeletondef}) and it must be the case that $\sigma\leq n$. In short, $\{U_{n+1}\geq \delta\}\subseteq\{\sigma\leq n\}$. Because $\{\sigma=n\}\subseteq\{\sigma>n-1\}$, the above implies that
\begin{align*}\Ebx{\theta^{\delta\phi^\delta_x}}&\leq \theta^\delta+\theta^{\delta}\Ebx{\theta^{\varphi_x}}+\theta^{\delta}\sum_{n=2}^\infty\Ebx{1_{\bigcap_{k=1}^{n-1}\{U_{k+1}<\delta\}}\theta^{\sum_{k=1}^{n}U_{k}+W_{k}}}\\
&\leq \theta^\delta+\theta^{\delta}\Ebx{\theta^{\varphi_x}}+\sum_{n=2}^\infty \theta^{\delta n}\Ebx{1_{\bigcap_{k=1}^{n-1}\{U_{k+1}<\delta\}}\theta^{U_{1}+\sum_{k=1}^{n}W_{k}}}\end{align*}
Because the $U_k$s and $W_k$s  are independent and identically distributed (Exercise~\ref{ex:soindep}), we find that
\begin{align*}\Ebx{\theta^{\delta\phi^\delta_x}}&\leq \theta^\delta+\theta^{\delta}\Ebx{\theta^{\varphi_x}}+\sum_{n=2}^\infty\theta^{\delta n}\Pbx{\{S_1<\delta\}}^{n-1}\Ebx{\theta^{S_{1}}}\Ebx{\theta^{W_{1}}}^{n}\\
&\leq\theta^\delta+\theta^{\delta}\Ebx{\theta^{\varphi_x}}+\frac{\Ebx{\theta^{S_{1}}}}{\Pbx{\{S_1<\delta\}}}\sum_{n=2}^\infty(\theta^\delta \Pbx{\{S_1<\delta\}}\Ebx{\theta^{\varphi_x}})^n\\
&=\theta^\delta+\theta^{\delta}\Ebx{\theta^{\varphi_x}}+\frac{\Ebx{\theta^{S_{1}}}\theta^{2\delta}\Pbx{\{S_1<\delta\}}\Ebx{\theta^{\varphi_x}}^2}{1-\theta^\delta \Pbx{\{S_1<\delta\}}\Ebx{\theta^{\varphi_x}}}<\infty\end{align*}
and $(ii)$ follows.

$(ii)\Rightarrow(i)$: The skeleton chain $X^\delta$ having returned to $x$ does not necessarily imply that $X$ has as well: $X$ may not have left $x$ in the first place. To rule out this possibility, let
$$\phi_{\not x}^\delta(\omega):=\inf\{n>0:X^\delta_n(\omega)\neq x\},\quad\tilde{\phi}_{x}^\delta(\omega):=\inf\{n\geq \phi_{\not x}^\delta(\omega):X^\delta_n(\omega)=x\},\quad\forall \omega\in\Omega,$$
denote the first that $X^\delta$ leaves $x$ and the first time that $X^\delta$ returns to $x$ after having first left it, respectively. Because the $X$ must have left and returned to $x$ if the skeleton chain has left and return,  $\delta\tilde{\phi}_{x}^\delta$ is greater than the return time $\varphi_x$ of $X$ and, so, it suffices to show that there exists a $\beta>0$ such that
\begin{equation}\label{eq:mf79eahf7n8eawneawffa}\Ebx{e^{\beta\delta\tilde{\phi}_{x}^{\delta}}}<\infty.\end{equation}
To do so, note that Theorem~\ref{thrm:accesibility} implies that $p_{\delta}(x,x)<1$ (because we are assuming that $x$ is not absorbing) and pick a $0<\beta<\ln{(\theta)}$ close enough to zero that $p_{\delta}(x,x)e^{\beta\delta}<1$. It is then straightforward to check that $\phi^{\delta}_{\not x}$ is $\Pb_x$-almost surely finite and it follows by applying the discrete-time strong Markov property (Theorem~\ref{thrm:strmkvpath}, use Lemma~\ref{lem:pathspmeas}$(ii)$ to check the theorem's measurability requirement), that 
\begin{align}
\Ebx{e^{\beta\delta\tilde{\phi}_{x}^{\delta}}}&=\sum_{z\neq x}\Ebx{1_{\{\phi_{\not x}^{\delta}<\infty,X^{\delta}_{\phi_{\not x}^{\delta}}=z\}}e^{\beta\delta\phi_{\not x}^{\delta}}e^{\beta\delta(\tilde{\phi}_{x}^{\delta}-\phi_{\not x}^{\delta})}}\nonumber\\
&=\sum_{z\neq x}\Ebx{1_{\{\phi_{\not x}^{\delta}<\infty,X^{\delta}_{\phi_{\not x}^{\delta}}=z\}}e^{\beta\delta\phi_{\not x}^{\delta}}}\Ebz{e^{\beta\delta\phi_x^{\delta}}},\label{eq:dnwe8fan8fsdaadsaeny8afn}
\end{align}
where the above sums are taken over all $z$ in the extended state space $\s_E$ (Section~\ref{sec:skeleton}) except for $x$.
But,
\begin{align*}\Ebx{1_{\{\phi_{\not x}^{\delta}<\infty,X^{\delta}_{\phi_{\not x}^{\delta}}=z\}}e^{\beta\delta\phi_{\not x}^{\delta}}}
&=\sum_{n=1}^\infty e^{\beta n\delta}\Pb_x(\{\phi_{\not x}^{\delta}=n,X^{\delta}_{n}=z\})\\
&=\sum_{n=1}^\infty e^{\beta n\delta}\Pb_x(\{X^{\delta}_{0}=x,\dots,X^{\delta}_{n-1}=x,X^{\delta}_{n}=z\})\\
&=\sum_{n=1}^\infty e^{\beta n\delta}p_{\delta}(x,x)^{n-1}p_{\delta}(x,z)=\frac{ e^{\beta\delta}p_{\delta}(x,z)}{1-p_{\delta}(x,x)e^{\beta\delta}}\forall z\neq x.\end{align*}
Plugging the above into~\eqref{eq:dnwe8fan8fsdaadsaeny8afn} and applying the strong Markov property one more time, we have that
\begin{align*}
\Ebx{e^{\beta\delta\tilde{\phi}_{x}^{\delta}}}&=\frac{ e^{\beta\delta}}{1-p_{\delta}(x,x)e^{\beta\delta}}\sum_{z\neq x}p_{\delta}(x,z)\Ebz{e^{\beta\delta\phi_x^{\delta}}}\\
&= \frac{ e^{\beta\delta}}{1-p_{\delta}(x,x)e^{\beta\delta}}\sum_{z\neq x}\Pb_x(\{X_1^{\delta}=z\})\Ebz{e^{\beta\delta\phi^{\delta}_x}}\\
&\leq \frac{ e^{\beta\delta}}{1-p_{\delta}(x,x)e^{\beta\delta}}\Ebx{e^{\beta\delta(\phi_x^{\delta}-1)}}\leq\frac{ e^{\beta\delta}}{1-p_{\delta}(x,x)e^{\beta\delta}}\Ebx{e^{\beta\delta\phi_x^{\delta}}}.
\end{align*}
%
Because we have chosen $\beta$ to be smaller than $\ln(\theta)$, $\Ebx{e^{\beta\delta\phi_x^{\delta}}}\leq\Ebx{\theta^{\phi_x^\delta}}<\infty$  and \eqref{eq:mf79eahf7n8eawneawffa} follows from the above.
\end{proof}

The remainder of Theorem~\ref{thrm:kendallct}'s proof is downhill:
\begin{proof}[Theorem~\ref{thrm:kendallct}]Lemma~\ref{lem:skerettime} shows that $x$ is exponentially recurrent for $X$  if and only if there exists a $\delta$ such $x$ is geometrically recurrent for the skeleton chain $X^\delta$. Kendall's Theorem (Theorem~\ref{thrm:kendall}), \eqref{eq:skeletonoriginal}, and Theorem~\ref{thrm:statfixedpoint} imply that this is the case if and only if
\begin{equation}\label{eq:djw9ahdnyw8bdaw}\mmag{p_{\delta n}(x,x)-\pi(x)}\leq C_\delta\kappa^{-n}\quad\forall n\in\n,\end{equation}
for some $C_\delta\geq0$ and  $\kappa>1$. Because this is clearly the case if there exists some $C\geq0$ and  $\alpha>0$ such that
\begin{equation}\label{eq:ndwu8aondw8uaondu8wnad}\mmag{p_t(x,x)-\pi(x)}\leq Ce^{-\alpha t}\quad\forall t\in[0,\infty),\end{equation}
we only have left to show that the above holds for some $C\geq0$ and $\alpha>0$ if  \eqref{eq:djw9ahdnyw8bdaw} holds for some $C_\delta\geq0$ and $\kappa>1$. In this case, examining the proof of Theorem~\ref{thrm:fostersgeo} (in particular, \eqref{eq:mfwah9w44}--\eqref{eq:mfwah9w43}) and taking supremums over $A\subseteq \s$ in \eqref{eq:mfwah9w4end} we find that, after tweaking $C_\delta$ and $\kappa$, the inequality \eqref{eq:djw9ahdnyw8bdaw} can be strengthened to 
$$\norm{p_{\delta n}(x,\cdot)-\pi}\leq C_\delta\kappa^{-n}\quad\forall n\in\n,$$
where $\norm{\cdot}$ denotes the total variation norm in \eqref{eq:tvnorm}. Now, fix any $t$ and re-write $t$ as $\delta n+s$, where $n$ is an integer and $0\leq s <\delta$. Proposition~\ref{prop:recnoexpl} implies that $p_t(x,\cdot)$, $p_{\delta n}(x,\cdot)$, $p_s(x,\cdot)$ are probability distribution as $x$ is recurrent for $X$. Because the total variation distance between two probability distributions is half the $\ell^1$-distance (see~\eqref{eq:tvl1p}), the semigroup property~\eqref{eq:semigroup} and the equations $\pi P_s=\pi$ (Theorem~\ref{thrm:statfixedpoint}) imply that
\begin{align*}
\norm{p_t(x,\cdot)-\pi}&=\frac{1}{2}\sum_{x'\in\s}\mmag{p_t(x,x')-\pi(x')}=\frac{1}{2}\sum_{x'\in\s}\mmag{\sum_{x''\in\s}(p_{n\delta}(x,x'')-\pi(x''))p_s(x'',x')}\\
& \leq\frac{1}{2} \sum_{x''\in\s}\mmag{p_{n\delta}(x,x'')-\pi(x'')}\left(\sum_{x'\in\s}p_s(x'',x')\right)= ||p_{\delta n}(x,\cdot)-\pi||\leq C_\delta\kappa^{-n}\\
&=C_\delta\kappa^{-(t-s)/\delta}=C_\delta\kappa^{s/\delta}\kappa^{-t/\delta}\leq C_\delta\kappa e^{-(\ln(\kappa)/\delta)t}.
\end{align*}
Given that $\mmag{p_t(x,x)-\pi(x)}\leq\norm{p_t(x,\cdot)-\pi}$, it follows that \eqref{eq:ndwu8aondw8uaondu8wnad} holds with $C:= C_\delta\kappa$ and $\alpha:=\ln(\kappa)/\delta$.
\end{proof}

%
%
%
%

\subsection{Foster-Lyapunov criteria I: the criterion for regularity*}\label{sec:flreg}
The regularity of the rate matrix $Q$ (Definition~\ref{def:regular}) plays an important role in many aspects of the theory. For instance, it implies  that the only solutions to the forward and backward equations are the transition probabilities (Section~\ref{sec:forwardbackward}), guarantees that all Markov chains with rate matrix $Q$ and initial distribution $\gamma$ have the same path law (Section~\ref{sec:otherchains}),  ensures that the stationary distributions are characterised by the equations $\pi Q=0$ (Section~\ref{sec:statct}), and  is a basic requirement for the chain to be stable (Sections~\ref{sec:timeave}--\ref{sec:limsct}). For these reasons, it is important to have a practical test for it:\index{Foster-Lyapunov criteria}
\begin{theorem}\label{thrm:flreg} The rate matrix $Q$ in~\eqref{eq:qmatrix} is regular if and only if there exists a non-negative real-valued norm-like (Definition~\ref{def:normlike}) function $v$ on $\s$ and a constant $c$ in $\r$ such that
\begin{equation}\label{eq:flreg1}Qv(x)\leq cv(x)\quad\forall x\in\s.\end{equation}
In this case, the time-varying law $p_t$ in~\eqref{eq:ctlawdef} satisfies 
\begin{equation}\label{eq:flreg2}p_t(v)\leq \gamma(v)e^{ct}\quad\forall t\in[0,\infty)\end{equation}
where $\gamma$ denotes the initial distribution.
\end{theorem}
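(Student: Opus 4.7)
The plan is to prove the two directions separately. For sufficiency (and the bound) I would use Dynkin's formula (Theorem~\ref{Dynkin}) against a truncation by finite sublevel sets of $v$, obtaining an $e^{-cs}$-discounted supermartingale bound that forces non-explosion because $v$ is norm-like. For necessity I would construct $v$ as a weighted sum of Feynman--Kac discounted exit functionals of the truncations, using regularity to send each term to zero.

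\textbf{Forward direction.} Fix any initial distribution $\gamma$ and an increasing sequence $(\s_r)_{r\in\zp}$ of finite truncations with $\cup_r\s_r=\s$, and let $\tau_r$ be the exit time from $\s_r$, which is jump-time-valued by Proposition~\ref{prop:hitdc}. The hypothesis $Qv\leq cv$ implicitly demands $\sum_{y\neq x}q(x,y)v(y)<\infty$, supplying the absolute convergence of $Qv$ required by Dynkin's formula. Applied with $f=v$, $g(s)=e^{-cs}$, and $\eta=\tau_r$, the identity
\begin{equation*}
\Ebl{e^{-c(t\wedge\tau_r\wedge T_n)}v(X_{t\wedge\tau_r\wedge T_n})}=\gamma(v)+\Ebl{\int_0^{t\wedge\tau_r\wedge T_n}e^{-cs}(Qv(X_s)-cv(X_s))\,ds}\leq \gamma(v)
\end{equation*}
holds for every $n\in\n$. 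Sending $n\to\infty$, the right-continuity of paths and Fatou's lemma give $\Ebl{e^{-c(t\wedge\tau_r)}v(X_{t\wedge\tau_r})}\leq\gamma(v)$. Next I would send $r\to\infty$: on $\{t<T_\infty\}$ the integrand tends to $e^{-ct}v(X_t)$ since $\tau_r\to T_\infty$ (Theorem~\ref{tautin}); on $\{T_\infty\leq t\}$, the jump-chain exit time $\sigma_r$ tends to infinity (Lemma~\ref{lem:dtexitrinf}), so the norm-like property forces $v(X_{\tau_r})=v(Y_{\sigma_r})\to\infty$ while $e^{-c\tau_r}\to e^{-cT_\infty}>0$. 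Another application of Fatou's lemma then forces $\Pbl{\{T_\infty\leq t\}}=0$, lest the left-hand side be infinite. Specialising $\gamma=1_x$ gives $\gamma(v)=v(x)<\infty$ and proves $\Pbx{\{T_\infty=\infty\}}=1$ for every $x$, i.e.\ $Q$ is regular; for general $\gamma$ the surviving estimate rearranges to $p_t(v)\leq e^{ct}\gamma(v)$ (which is trivial when $\gamma(v)=\infty$).

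\textbf{Converse direction.} Assume $Q$ is regular. For each $r$ define $v_r(x):=\Ebx{e^{-\tau_r}}$ for $x\in\s_r$ and $v_r(x):=1$ otherwise. A Feynman--Kac computation, via Dynkin's formula applied to $f=v_r$, $g(s)=e^{-s}$, and $\eta=\tau_r$, gives $Qv_r=v_r$ on $\s_r$; on $\s_r^c$ the definition $v_r\equiv 1$ and the identity $Qv_r(x)=-q(x)+\sum_{y\neq x}q(x,y)v_r(y)\leq 0\leq 1=v_r(x)$ give $Qv_r\leq v_r$ there too. By regularity, $\tau_r\to T_\infty=\infty$ $\Pb_x$-almost surely, so $v_r(x)\to 0$ by bounded convergence. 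I would then thin the truncations inductively so that $\max_{x\in\s_{r-1}}v_r(x)\leq 4^{-r}$ (possible because $\s_{r-1}$ is finite) and set $\alpha_r:=2^r$. The function $v:=\sum_r\alpha_r v_r$ is finite on $\s$ (for $x\in\s_k$ the tail is bounded by $\sum_{r>k}2^{-r}$) and norm-like (if $x\notin\s_N$ then $v(x)\geq\sum_{r=1}^{N}\alpha_r=2^{N+1}-2$, so $\{v<2^{N+1}-2\}\subseteq\s_N$ is finite). The bound $\sum_y|q(x,y)|v_r(y)\leq(2q(x)+1)v_r(x)$ together with Tonelli's theorem justifies the interchange $Qv(x)=\sum_r\alpha_r Qv_r(x)\leq\sum_r\alpha_r v_r(x)=v(x)$, yielding the drift inequality with $c=1$.

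\textbf{Main obstacle.} The converse is the delicate part. The principal difficulty is running the diagonal argument so that $v$ is simultaneously finite, norm-like, and has absolutely convergent action under $Q$: the inductive enlargement of $\s_r$ must shrink each $\max_{x\in\s_{r-1}}v_r(x)$ fast enough to out-pace $\alpha_r$, while the weights $\alpha_r$ must still diverge in order to make $v$ norm-like. A secondary technical point is extracting the pointwise identity $Qv_r=v_r$ on $\s_r$ from Dynkin's formula, which in the finite-$\s_r$ setting follows from the fact that the occupation measure of the stopped chain puts positive mass on every state of $\s_r$. In the forward direction the only real wrinkle is controlling the joint $r,n\to\infty$ limit on $\{T_\infty\leq t\}$, where one must confirm that $v(X_{\tau_r})\to\infty$; this is the precise reason for demanding that $v$ be \emph{norm-like} rather than merely unbounded.
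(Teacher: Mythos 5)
Your proof is correct, but it takes a genuinely different route from the paper's, which follows \citet{Spieksma2015} and proceeds through a rather elaborate detour: it introduces the $\alpha$-jump chain $Y^\alpha$ on the extended state space $\s\cup\{\Delta\}$ (one-step matrix~\eqref{eq:pa}), shows via the discrete-time recurrence criterion (Theorem~\ref{thrm:lyarec}/Lemma~\ref{lem:fosrecfinal}) that recurrence of $Y^\alpha$ is equivalent to existence of a norm-like $v$ with $Qv\leq\alpha v$, and then proves that $Y^\alpha$ is recurrent if and only if $Q$ is regular by computing the $\alpha$-resolvent $R^\alpha$ via the taboo probabilities~\eqref{eq:fah90h473ahf74} and showing $\alpha R^\alpha$ is stochastic if and only if every hitting probability $\Pbx{\{\phi_\Delta<\infty\}}$ equals one. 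Only then does the paper use Dynkin's formula to get \eqref{eq:flreg2}, and crucially it already invokes regularity in that step. Your forward direction short-circuits all of this: you derive non-explosion directly from the drift inequality by taking $\gamma=1_x$, stopping at $\tau_r\wedge T_n$, and observing that if $\{T_\infty\leq t\}$ had positive probability then the norm-like property forces $v(X_{\tau_r})\to\infty$ while $e^{-c\tau_r}$ stays bounded below, contradicting the uniform bound $\gamma(v)<\infty$ via Fatou. This is more self-contained. Your converse is, in spirit, a direct continuous-time analogue of the construction the paper uses \emph{inside} its discrete-time Lemma~\ref{lem:fosrecfinal}: there the paper uses the hitting probabilities $\Pbx{\{\sigma_r\leq\sigma_F\}}$ and a thinning argument to pile them up into a norm-like superharmonic function; you use the Feynman--Kac functionals $\Ebx{e^{-\tau_r}}$ (extended by $1$ off $\s_r$) with exponential weights $\alpha_r=2^r$ and a fourth-power thinning, landing on $Qv\leq v$ with $c=1$. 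The book-keeping (the bound $\sum_y|q(x,y)|v_r(y)\leq(2q(x)+1)v_r(x)$, the Tonelli interchange, the finiteness and norm-like checks) is exactly the kind of diagonalisation the paper delegates to Lemma~\ref{lem:fosrecfinal}. What the paper's route buys is a structural link between regularity, discrete-time recurrence, and the resolvent; what your route buys is brevity and independence from the $\alpha$-jump chain machinery. One small presentational remark: the identity $Qv_r=v_r$ on $\s_r$ is more cleanly obtained by first-step analysis (conditioning on the first jump and using the independence of $T_1$ and $Y_1$ from Theorem~\ref{thrm:condind}) than by Dynkin's formula; your own ``main obstacle'' paragraph flags this correctly, and your back-up reasoning (that the occupation measure of the stopped chain has full support on $\s_r$ only when every state is reachable) should be replaced by the first-step computation, which works without any irreducibility caveat and also covers absorbing states $x\in\s_r$ directly, where both sides are zero.
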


A practically useful  fact  is that the existence of a $(v,c)$ satisfying Theorem~\ref{thrm:flreg}'s premise is equivalent to a seemingly weaker condition: the existence of a real-valued norm-like function $u$ on $\s$ and real numbers $c_u$ and $d$ such that
$$Qu(x)\leq c_uu(x)+d\quad\forall x\in\s.$$
To see this, note that \eqref{eq:qmatrix} implies that  $Qe=0$ for any constant function $e=(e)_{x\in\s}$. Thus, the above inequality holds   if we replace $u$ with $u-u_m$ and $d$ with $d+c_uu_m$ where $u_m:=inf_{x\in\s}u(x)$ (this number is finite because $u$ is norm-like):
$$Q(u-u_m)(x)=Qu(x)-Qu_m=Qu(x)\leq c_uu(x)+d=c_u(u(x)-u_m)+d+c_uu_m.$$
In other words, we can assume without loss of generality that $u$ is non-negative. If $d\leq0$, then \eqref{eq:flreg1} holds with $v:=u$ and $c:=c_u$. Otherwise, if $c_u\leq 0$,  \eqref{eq:flreg1} holds with $v:=u+d$ and $c:=1$ and, if $c_u>0$, it instead holds with $v:=u+d/c_u$ and $c:=c_u$.

\subsubsection*{A proof of Theorem~\ref{thrm:flreg}}This is a bit delicate and we do it in steps. We start with \eqref{eq:flreg2}:

\begin{proof}[Proof of \eqref{eq:flreg2}]Pick any increasing sequence $(\s_r)_{r\in\zp}$ of finite subsets of the state space satisfying $\cup_{r=1}^\infty\s_r=\s$ and let $(\tau_r)_{r\in\zp}$ denote the corresponding sequence of exit times~\eqref{eq:taurexit}. Because these  are jump-time-valued $(\cal{F}_t)_{t\geq0}$-stopping times (Proposition~\ref{prop:hitdc}), Dynkin's formula (Theorem~\ref{Dynkin}, with $\eta:=\tau_r$, $g(t):=e^{-ct}$, and $f:=v$) shows that 
$$\Ebl{e^{-ct\wedge\tau_r\wedge T_n}v(X_{t\wedge\tau_r\wedge T_n})}=\gamma(v)+\Ebl{\int_0^{t\wedge\tau_r\wedge T_n}e^{-cs}Qv(X_s)-ce^{-cs}v(X_s)ds},$$
for all $t$ in $[0,\infty)$ and $n,r$ in $\zp$. Inequality \eqref{eq:flreg1} then shows that 
$$\Ebl{e^{-ct\wedge\tau_r\wedge T_n}v(X_{t\wedge\tau_r\wedge T_n})}\leq \gamma(v)\quad\forall t\in[0,\infty),\enskip n,r\in\zp.$$
As we will show later on, the chain is regular (in particular, $\Pbl{\{T_\infty=\infty\}}=1$). For this reason, Theorem~\ref{tautin} and Fatou's lemma imply that
\begin{align*}e^{-ct}p_t(v)=\Ebl{e^{-ct}V(X_t)1_{\{t<T_\infty\}}}&\leq\liminf_{r\to\infty}\liminf_{n\to\infty}\Ebl{e^{-ct\wedge\tau_r\wedge T_n}v(X_{t\wedge\tau_r\wedge T_n})1_{\{t<T_\infty\}}}\\
&=\liminf_{r\to\infty}\liminf_{n\to\infty}\Ebl{e^{-ct\wedge\tau_r\wedge T_n}v(X_{t\wedge\tau_r\wedge T_n})}\leq\gamma(v)\end{align*}
for all $t$ in $[0,\infty)$. Multiplying through by $e^{ct}$ then yields \eqref{eq:flreg2}.
\end{proof}

For the remainder of this proof, we follow the steps taken in \citep{Spieksma2015}. To do so, we require a discrete-time chain $Y^\alpha=(Y^\alpha_n)_{n\in\n}$ known as the \emph{$\alpha$-jump chain}. It takes values in the \emph{extended state space} $\s_E:=\s\cup\{\Delta\}$ obtained by appending an extra state $\Delta\not\in\s$ to the state space $\s$. The chain is the defined $Y^\alpha$ by running Algorithm~\ref{dtmcalg} in Section~\ref{sec:dtdef} using the one-step matrix $P^\alpha=(p^\alpha(x,y))_{x,y\in\s_E}$ on $\s_E$ given by
\begin{equation}\label{eq:pa}p^\alpha(x,y):=\left\{\begin{array}{ll}\dfrac{\strut q(x,y)}{q(x)+\alpha}&\text{if }x,y\in\s,\enskip x\neq y\\\dfrac{\strut \alpha}{q(x)+\alpha}&\text{if }x\in\s, y=\Delta\\\strut  p(y)&\text{if }x=\Delta,y\in\s\end{array}\right.\quad\forall x,y\in\s_E,\end{equation}
where $\alpha>0$ and the \emph{restart distribution} $(p(x))_{x\in\s}$ is any probability distribution on $\s$ satisfying
\begin{equation}\label{eq:paa}p(x)>0\quad\forall x\in\s.\end{equation}

This $\alpha$-jump chain $Y^\alpha$ behaves as follows: if $Y^\alpha$ lies at a state $x$ within the original state space $\s$, it jumps to $\Delta$ with probability $\alpha/(q(x)+\alpha)$. Otherwise, $Y^\alpha$ updates it states as the jump chain $Y$ does: it samples $p(x,\cdot)$ in \eqref{eq:jumpmatrix}. Whenever $Y^\alpha$ leaves $\s$, it spends a single step in $\Delta$ and then returns to $\s$ by sampling a state from the restart distribution $(p(x))_{x\in\s}$. Because $\Delta$ is accessible from all states in $\s$ and \eqref{eq:paa} implies that every state in $\s$ is accessible from $\Delta$,  $Y^\alpha$ is irreducible. For this reason, applying the necessary and sufficient criterion for recurrence of irreducible discrete-time chains yields the following:
\begin{lemma} \label{lem:dn78an3a83a}For any $\alpha>0$, the $\alpha$-jump chain $Y^\alpha$ is recurrent if and only if there exists a norm-like function $v:\s\to[0,\infty)$ satisfying \eqref{eq:flreg1} with $c:=\alpha$.
\end{lemma}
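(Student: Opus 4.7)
The plan is to reduce the lemma to the discrete-time Foster--Lyapunov theory developed in Section \ref{sec:flrecdt}, applied to the irreducible jump chain $Y^\alpha$ on $\s_E$. The bridge between the two inequalities is a direct algebraic calculation: from the definition \eqref{eq:pa} of $P^\alpha$, any function $v$ on $\s_E$ with $v(\Delta)=0$ satisfies
\begin{equation*}
P^\alpha v(x) - v(x) \;=\; \frac{Qv(x) - \alpha\, v(x)}{q(x) + \alpha} \qquad \forall\, x\in\s. \tag{$\star$}
\end{equation*}
Moreover, a non-negative $v$ on $\s_E$ with $v(\Delta)=0$ is norm-like on $\s_E$ if and only if its restriction to $\s$ is norm-like on $\s$ (the sub-level sets on the two spaces differ by at most the single point $\Delta$).

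For the forward direction I would take a non-negative norm-like $v:\s\to[0,\infty)$ with $Qv\leq \alpha v$ and extend it to $\s_E$ by setting $v(\Delta):=0$. The identity $(\star)$ then gives $P^\alpha v(x)\leq v(x)$ for every $x$ outside the finite set $F:=\{\Delta\}$, so Theorem \ref{thrm:lyarec} shows that $Y^\alpha$ is Tweedie recurrent. Since $Y^\alpha$ is irreducible, its unique closed communicating class $\s_E$ must be recurrent, and $Y^\alpha$ is recurrent.

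For the reverse direction, if $Y^\alpha$ is recurrent, then irreducibility and Theorem \ref{thrm:recclasspro} give $\Pb^\alpha_x(\phi^\alpha_\Delta<\infty)=1$ for every $x\in\s_E$, so the hypothesis of Lemma \ref{lem:fosrecfinal} is met with $F:=\{\Delta\}\subseteq\s_E$. That lemma produces a non-negative norm-like function $v$ on $\s_E$ with $P^\alpha v(x)\leq v(x)$ for every $x\in\s=\s_E\setminus\{\Delta\}$; moreover, the explicit construction in its proof writes $v=\sum_k v_{r_k}$ with $v_{r_k}(y)=\Pb^\alpha_y(\sigma_{r_k}^\alpha\leq\sigma_{\{\Delta\}}^\alpha)$, and each summand vanishes at $\Delta$ (there $\sigma^\alpha_{\{\Delta\}}=0$ while $\sigma^\alpha_{r_k}>0$ since $\Delta\in\s_1\subseteq\s_{r_k}$), so $v(\Delta)=0$. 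Applying $(\star)$ then converts the discrete-time inequality into $Q(v|_\s)(x)\leq\alpha\,v|_\s(x)$ for every $x\in\s$, and $v|_\s$ is the required non-negative norm-like Foster function.

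The hard part will be the reverse direction: the output must satisfy the inequality on \emph{all} of $\s$, whereas a naive appeal to the converse of Theorem \ref{thrm:lyarec} would only supply $P^\alpha v\leq v$ outside some unspecified finite $F\subseteq\s_E$, forcing a delicate modification of $v$ on the finite set $F\cap\s$. The trick that sidesteps this is to invoke Lemma \ref{lem:fosrecfinal} directly with the \emph{specific} choice $F=\{\Delta\}$: the sole ``exceptional point'' is then precisely the one that disappears upon restricting from $\s_E$ to $\s$, and the property $v(\Delta)=0$---which is built into the construction---is exactly what is needed for $(\star)$ to translate the discrete-time inequality into the continuous-time one everywhere on $\s$.
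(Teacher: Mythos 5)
Your proof is correct and follows essentially the same route as the paper's: both directions translate the inequality $Qv\leq \alpha v$ on $\s$ into $P^\alpha v\leq v$ on $\s$ via the algebraic relationship you display as $(\star)$, and both exploit the specific choice $F=\{\Delta\}$ together with Theorem~\ref{thrm:lyarec} in one direction and Lemma~\ref{lem:fosrecfinal} in the other. One small streamlining is available in the ``recurrent $\Rightarrow$ exists $v$'' direction: you do not actually need $u(\Delta)=0$, so there is no need to open up the proof of Lemma~\ref{lem:fosrecfinal}. Its statement already supplies a non-negative norm-like $u$ on $\s_E$ with $P^\alpha u\leq u$ on $\s$, and since $u(\Delta)\geq 0$ the $\Delta$-term in $P^\alpha u(x)$ can simply be dropped:
\begin{equation*}
\sum_{y\neq x}\frac{q(x,y)u(y)}{q(x)+\alpha}\leq P^\alpha u(x)\leq u(x)\qquad\forall x\in\s,
\end{equation*}
which after multiplying through by $q(x)+\alpha$ gives $Q(u|_\s)\leq\alpha\,u|_\s$ on all of $\s$. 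This is what the paper does; it keeps the argument depending only on the statement of Lemma~\ref{lem:fosrecfinal} rather than on the internals of its proof, which is a cleaner modularisation, though your version is also valid since (as you observe) the truncations in that lemma contain $F=\{\Delta\}$ and so each $v_{r_k}$ does vanish at $\Delta$.
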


\begin{proof}Suppose that $Y^\alpha$ is recurrent. Because $Y^\alpha$ is irreducible, Corollary~\ref{cor:phixphic} and Lemma~\ref{lem:fosrecfinal} (with $F:=\Delta$) tell us that there exists a non-negative norm-like function $u:\s_E\to[0,\infty)$ satisfying 
\begin{equation}\label{eq:nf8a7n387hfa783a}P^\alpha u(x)\leq u(x)\quad\forall x\in\s.\end{equation}
Given that $u(\Delta)\geq 0$, the above implies that
$$\sum_{y\neq x}\frac{q(x,y)u(y)}{q(x)+\alpha}\leq \sum_{y\neq x}\frac{q(x,y)u(y)}{q(x)+\alpha}+\frac{\alpha u(\Delta)}{q(x)+\alpha}=P^\alpha u(x)\leq u(x)\quad\forall x\in\s,$$
where, in the above, we are summing over all $y$ in $\s$ except $x$. Multiplying through by $q(x)+\alpha$ and re-arranging we find that $v:=(u(x))_{x\in\s}$ is  a non-negative real-valued norm-like function on $\s$ satisfying \eqref{eq:flreg1} with $c:=\alpha$.

Similarly, if $v:\s\to[0,\infty)$ is norm-like and satisfies \eqref{eq:flreg2}, then
$$u(x):=v(x)\quad\forall x\in\s,\quad u(\Delta):=0,$$
is a non-negative real-valued norm-like function on $\s_E$ satisfying \eqref{eq:nf8a7n387hfa783a} and Theorem~\ref{thrm:lyarec} implies that $Y^\alpha$ is recurrent.
\end{proof}

We now need to show that $Y^\alpha$ is recurrent for some $\alpha>0$ if and only if $Q$ is regular. Because $Y^\alpha$ is irreducible, Corollary~\ref{cor:phixphic} implies that $Y^\alpha$ is recurrent if and only if the state $\Delta$ is recurrent: $\Pb_{\Delta}(\{\phi_\Delta<\infty\})=1$, where $\phi_\Delta$ denotes the first entrance time to $\Delta$ of $Y^\alpha$  (defined by replacing $X$ with $Y^\alpha$ in Definition~\ref{def:entrance}). Because Proposition~\ref{prop:hitprobeqs} shows that
$$\Pb_{\Delta}(\{\phi_\Delta<\infty\})=p^\alpha(\Delta,\Delta)+\sum_{x\in\s}p^\alpha(\Delta,x)\Pbx{\{\phi_\Delta<\infty\}}=\sum_{x\in\s}p(x)\Pbx{\{\phi_\Delta<\infty\}},$$
our assumption~\eqref{eq:paa} ensures that $\Delta$ is recurrent if and only if 
\begin{equation}\label{eq:je79a0fna3uia}\Pbx{\{\phi_\Delta<\infty\}}=1\quad\forall x\in\s.\end{equation}
Applying \eqref{eq:nstepthe}, we have that
\begin{align}\Pbx{\{\phi_\Delta<\infty\}}&=\sum_{n=0}^\infty\Pbx{\{\phi_\Delta=n+1\}}=\sum_{n=0}^\infty\Pbx{\{Y^\alpha_1\in\s,\dots,Y^\alpha_{n}\in\s,Y^\alpha_{n+1}=\Delta\}}\nonumber\\
&=\sum_{n=0}^\infty\sum_{x_1\in\s}\dots\sum_{x_n\in\s}p^\alpha(x,x_1)\dots p^\alpha(x_{n-1},x_n)p^\alpha(x_n,\Delta)\nonumber\\
&=\sum_{n=0}^\infty\sum_{y\in\s} \prescript{}{\Delta}p^\alpha_n(x,y)p^\alpha(y,\Delta)=\sum_{n=0}^\infty\sum_{y\in\s} \frac{\alpha\prescript{}{\Delta}p^\alpha_n(x,y)}{q(y)+\alpha},\label{eq:je79a0fna3uia1}
\end{align}
where $\prescript{}{\Delta}P^\alpha_n=(\prescript{}{\Delta}p^\alpha_n(x,y))_{x,y\in\s}$ denotes the $n$th power of the \emph{taboo matrix} $\prescript{}{\Delta}P^\alpha=(p^\alpha(x,y))_{x,y\in\s}$ obtained by removing the $\Delta$-row and column from $P^\alpha$.
%

To finish the proof, we are only missing one final ingredient: the \emph{$\alpha$-resolvent matrix} $R^\alpha=(r^\alpha(x,y))_{x,y\in\s}$ defined by
$$r^\alpha(x,y):=\int_0^\infty e^{-\alpha t}p_t(x,y)dt\quad\forall x,y\in\s.$$
Tonelli's theorem implies that
$$\sum_{y\in\s}r^\alpha(x,y)=\int_0^\infty e^{-\alpha t}\sum_{y\in\s}p_t(x,y)dt\quad\forall x\in\s,$$
and it follows from Proposition~\ref{prop:nonexptimevar} that $Q$ is regular if and only if $\alpha R^\alpha$ is a stochastic matrix,
$$\alpha\sum_{y\in\s}r^\alpha(x,y)=\alpha\frac{1}{\alpha}=1\quad\forall x\in\s,$$
for any and all $\alpha>0$. As we will show below,
\begin{equation}\label{eq:fah90h473ahf74}r^\alpha(x,y)=\sum_{n=0}^\infty\frac{\prescript{}{\Delta}p_n^\alpha(x,y)}{q(y)+\alpha}\quad\forall x,y\in\s,\enskip\alpha>0.\end{equation}
Because \eqref{eq:je79a0fna3uia}--\eqref{eq:fah90h473ahf74} imply that $\alpha R^\alpha$ is stochastic (equivalently, $Q$ is regular) if and only if $Y^\alpha$ is recurrent, Theorem~\ref{thrm:flreg} follows from Lemma~\ref{lem:dn78an3a83a}.
\begin{lemma} Equation \eqref{eq:fah90h473ahf74} holds.
\end{lemma}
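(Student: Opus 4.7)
The plan is to expand the resolvent $r^\alpha(x,y)$ by slicing the integral along the sequence of jump times and then identify the resulting series with the Neumann expansion of $\prescript{}{\Delta}P^\alpha$. First I would use the fact (from the chain's definition~\eqref{eq:cpathdef}--\eqref{eq:cpathdef3}) that on $\{T_n\le t<T_{n+1}\}$ one has $X_t=Y_n$, so that $p_t(x,y)=\sum_{n=0}^\infty\Pb_x(\{Y_n=y,T_n\le t<T_{n+1}\})$. Interchanging sum and integral by Tonelli and doing the elementary $t$-integration gives
$$r^\alpha(x,y)=\sum_{n=0}^\infty\Eb_x\!\left[1_y(Y_n)e^{-\alpha T_n}\,\frac{1-e^{-\alpha S_{n+1}}}{\alpha}\right].$$
Next, since $T_n$ and $Y_n$ are $\cal G_n$-measurable and Theorem~\ref{thrm:condind} says $S_{n+1}\mid\cal G_n$ is exponential with mean $1/\lambda(Y_n)$, the take-out-what-is-known and tower properties yield $\Eb_x[(1-e^{-\alpha S_{n+1}})/\alpha\mid\cal G_n]=1/(\alpha+\lambda(Y_n))$, so
$$r^\alpha(x,y)=\sum_{n=0}^\infty\frac{\Eb_x[1_y(Y_n)e^{-\alpha T_n}]}{\alpha+\lambda(y)}.\qquad(*)$$

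The main obstacle is matching~$(*)$ with $\sum_n\prescript{}{\Delta}p^\alpha_n(x,y)/(q(y)+\alpha)$, since $\lambda(y)=q(y)$ only when $y$ is non-absorbing; the absorbing case ($q(y)=0$, $\lambda(y)=1$) needs a separate argument. I would split into two cases.

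In the non-absorbing case, conditioning iteratively on $\cal G_{n-1},\cal G_{n-2},\dots,\cal G_0$ and repeatedly applying Theorem~\ref{thrm:condind} gives
$$\Eb_x[1_y(Y_n)e^{-\alpha T_n}]=\sum_{x_1,\dots,x_{n-1}}\prod_{i=0}^{n-1}p(x_i,x_{i+1})\,\frac{\lambda(x_i)}{\alpha+\lambda(x_i)},\qquad x_0=x,\ x_n=y.$$
The key observation is that any path contributing positively must have every intermediate state non-absorbing: an absorbing $x_i\neq y$ would force $x_{i+1}=x_i$ by \eqref{eq:jumpmatrix}, preventing the path from ever reaching the non-absorbing $y$. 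For non-absorbing $x_i$ one has $p(x_i,x_{i+1})\lambda(x_i)/(\alpha+\lambda(x_i))=q(x_i,x_{i+1})/(q(x_i)+\alpha)=p^\alpha(x_i,x_{i+1})$, so the sum collapses to $\prescript{}{\Delta}p^\alpha_n(x,y)$; since $\lambda(y)=q(y)$ here, $(*)$ becomes the claimed formula.

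In the absorbing case, once $Y$ reaches $y$ it stays there and the subsequent waiting times are i.i.d.~$\mathrm{Exp}(1)$ variables, so letting $\phi_y:=\inf\{k:Y_k=y\}$ and splitting $\{Y_n=y\}=\bigsqcup_{k\le n}\{\phi_y=k\}$ one gets
$$\Eb_x[1_{\{\phi_y=k\}}e^{-\alpha T_n}]=\Eb_x[1_{\{\phi_y=k\}}e^{-\alpha T_k}]\cdot(1+\alpha)^{-(n-k)}.$$
The same iterative conditioning argument as above (intermediate states must again be non-absorbing and $\neq y$) identifies $\Eb_x[1_{\{\phi_y=k\}}e^{-\alpha T_k}]$ with $\prescript{}{\Delta}p^\alpha_k(x,y)$. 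Substituting into $(*)$ with $\alpha+\lambda(y)=\alpha+1$ and swapping the sums over $n$ and $k$ by Tonelli, the inner geometric series gives $\sum_{n\ge k}(1+\alpha)^{-(n-k+1)}=1/\alpha=1/(q(y)+\alpha)$, absorbing the mismatch between $\lambda(y)$ and $q(y)$ and yielding the desired identity.
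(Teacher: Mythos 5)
Your proof is correct and takes a genuinely different route from the paper's. You expand the resolvent directly along jump-time intervals, integrate out the waiting time via Theorem~\ref{thrm:condind} to arrive at $r^\alpha(x,y)=\sum_n\Eb_x[1_y(Y_n)e^{-\alpha T_n}]/(\alpha+\lambda(y))$, and then identify the terms with powers of $\prescript{}{\Delta}P^\alpha$ by iterated conditioning, handling absorbing $y$ separately via a geometric resummation. The paper instead Laplace-transforms the backward integral recursion~\eqref{eq:bir} to get $r^\alpha_{n+1}(x,y)=\frac{1}{q(x)+\alpha}(1_x(y)+\sum_{z\neq x}q(x,z)r^\alpha_n(z,y))$ and then inducts on $n$, claiming the \emph{term-by-term} identity $r^\alpha_n(x,y)=s_n^\alpha(x,y)$ for all $x,y\in\s$ and finite $n$. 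Your case-split on absorbing $y$ is not merely cautious: the term-by-term identity genuinely fails whenever absorbing states are involved. For absorbing $z$ one computes $r^\alpha_n(z,z)=\sum_{k=0}^n(1+\alpha)^{-(k+1)}$ (a partial geometric sum coming from the fictitious jumps at rate $\lambda(z)=1$), whereas $s_n^\alpha(z,z)=1/(q(z)+\alpha)=1/\alpha$ is already the full limit — so the paper's base case is false at absorbing $z$, and its induction step cites the false hypothesis $r^\alpha_n(z,y)=s_n^\alpha(z,y)$ whenever $q(x,z)>0$ for absorbing $z$. (Take $\s=\{0,1\}$ with $q(0,1)=1$ and $1$ absorbing to see concretely that $r^\alpha_1(0,1)=1/(1+\alpha)^2\neq 1/(\alpha(1+\alpha))=s_1^\alpha(0,1)$.) Only the infinite sums agree, which is exactly the phenomenon your geometric-series step makes explicit. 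In short, the paper's approach is more economical because it reuses the already-established BIR, but as written it has a gap at absorbing states; your direct probabilistic expansion, by computing $\Eb_x[1_y(Y_n)e^{-\alpha T_n}]$ exactly and then summing, avoids that gap and in fact shows where the $q$-versus-$\lambda$ mismatch gets absorbed.
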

\begin{proof}The equation holds trivially if $x$ is an absorbing state. Suppose otherwise, fix $\alpha>0$, and let
$$s_n^\alpha(x,y):=\sum_{m=0}^n\frac{\prescript{}{\Delta}p_m^\alpha(x,y)}{q(y)+\alpha}\quad\forall x,y\in\s,\enskip n\in\n,$$
and
$$r^\alpha_n(x,y):=\int_0^\infty e^{-\alpha t}p_t^n(x,y)dt\quad\forall x,y\in\s,\enskip n\in\n,$$
where $p^n_t(x,y)$ as in \eqref{eq:pnt}. Because  monotone convergence implies that
$$\lim_{n\to\infty}s_n^\alpha(x,y)=\sum_{m=0}^\infty \frac{\prescript{}{\Delta}p^\alpha_m(x,y)}{q(y)+\alpha},\qquad \lim_{n\to\infty}r^\alpha_n(x,y)=r^\alpha(x,y),\quad\forall x,y\in\s,$$
it suffices to show that
\begin{equation}\label{eq:fj94nmuafnaungua}r^\alpha_n(x,y)=s_n^\alpha(x,y)\quad\forall x,y\in\s,\enskip n\in\n.
\end{equation}
Multiplying the backward iterated recursion \eqref{eq:bir} by $e^{-\alpha t}$, integrating over $t$, and applying Tonelli's theorem yields the Laplace transform version of this recursion (note that $x$ is not absorbing and so $\lambda(x)=q(x)$ and $\lambda(x)p(x,y)=q(x,y)$ for all $y$ in $\s$):
$$r^\alpha_{n+1}(x,y)=\frac{1}{q(x)+\alpha}\left(1_x(y)+\sum_{z\neq x}q(x,z)r^\alpha_{n}(z,y)\right)\quad\forall y\in\s,\enskip n\in\n.$$
Thus, if \eqref{eq:fj94nmuafnaungua} holds for some $n$ in $\n$,
\begin{align*}r^\alpha_{n+1}(x,y)&=\frac{1_x(y)}{q(y)+\alpha}+\sum_{m=0}^{n}\sum_{z\neq x}\frac{q(x,z)}{q(x)+\alpha}\frac{\prescript{}{\Delta}p_m^\alpha(z,y)}{q(y)+\alpha}=\frac{\prescript{}{\Delta}p_{0}^\alpha(x,y)}{q(y)+\alpha}+\sum_{m=0}^{n}\frac{\prescript{}{\Delta}p_{m+1}^\alpha(z,y)}{q(y)+\alpha}\\
&=s^\alpha_{n+1}(x,y)\quad\forall x,y\in\s.\end{align*}
Because \eqref{eq:fm7824h8a7bgn789a49a3j8g4aq00}--\eqref{eq:fm7824h8a7bgn789a49a3j8g4aq0} imply that
$$r^\alpha_0(x,y)=\int_0^\infty e^{-\alpha t}p_t^0(x,y)dt=\int_0^\infty e^{-\alpha t}1_x(y)e^{-q(x)t}dt=\frac{1_x(y)}{q(y)+\alpha}=s_0^\alpha(x,y)\quad\forall x,y\in\s,$$
the result follows by induction.
\end{proof}

%

\subsubsection*{Notes and references} The theorem's sufficiency was first shown in \citep{Chen1986}. Other early proofs can be found in \citep{Anderson1991} and \citep{Meyn1993b}. 
The necessity was only shown recently in \citep{Spieksma2015}.  

\subsection{Geometric trial arguments*}\label{sec:geometrictrialct}

In the proofs of the Foster-Lyapunov criteria presented in the ensuing sections, we require the continuous-time versions of the geometric trials arguments in Section~\ref{sec:geometrictrial}. Just as in the discrete-time case, these arguments allow us to morph information regarding the chain's visits to a finite set $F$ into information on its visits to any set $B$ that is accessible from $F$ (Definition~\ref{def:accesiblect}). The intuition is the same: because $B$ is accessible from $F$  and $F$ is finite, the chance that a visit to $F$ results in a visit to $B$ may be bounded below by a constant independent of which state in $F$ the chain visits. The existence of this constant implies that the probability that the chain has not yet visited $B$ decays  geometrically with the  number of visits to $F$:\index{geometric trial arguments}
%
\begin{lemma}[Geometric trials property]\label{lem:AFtail1ct} If $F$ is finite and $F\to B$ for a second set $B$, then there exists constants $m$ and $\varepsilon>0$ independent of the initial distribution $\gamma$ such that
$$\Pbl{\{\varphi^{nm}_F<\varphi_B\}}\leq (1-\varepsilon)^{n-1}\quad\forall n\in\zp,$$
where $ \varphi_B$ denotes the first entrance time to $B$ and $\varphi^k_F$ the $k$th entrance time to $F$ (Definition~\ref{def:entrancect}).
%
\end{lemma}
\begin{proof}The event $\{\varphi^{k}_F<\varphi_B\}$ that the continuous-time chain  enters any $F$ for at least $k$ times before entering $B$ equals the event $\{\phi^{k}_F<\phi_B\}$ that the jump-chain  does (this a consequence of Exercise~\ref{ex:entdc}). For this reason, the lemma follows immediately from its discrete-time counterpart (Lemma~\ref{lem:AFtail1}).\end{proof}
Lemma~\ref{lem:AFtail1ct} lays the foundation for the following three geometric-trials-type arguments that  we use in the following sections to turn information on the chain's visits to $F$ into information on its visits to $B$:\index{geometric trial arguments}
\begin{lemma}[Geometric trials arguments]\label{lem:geotrialargct} Given a finite subset $F$ of $\s$, suppose that $F\to B$ for some subset $B$ and let $\varphi_F$ and $\varphi_B$ denote the first entrance time to $F$ and $B$~(Definition~\ref{def:entrance}).
\begin{enumerate}[label=(\roman*),noitemsep] 
\item If $\Pbl{\{\varphi_F<\infty\}}=1$ and $\Pbx{\{\varphi_F<\infty\}}=1$ for all $x$ in $F$, then $\Pb_\gamma(\{\varphi_B<\infty\})=1$.
\item If $\Ebl{\varphi_F}<\infty$ and  $\Ebx{\varphi_F}<\infty$ for all $x$ in $F$, then $\Ebl{\varphi_B}<\infty$.
\item If there exists a constant $\alpha>1$ such that $\Eb_\gamma[e^{\alpha\varphi_F}]<\infty$ and  $\Eb_x[e^{\alpha\varphi_F}]<\infty$ for all $x$ in $F$, then there exists second constant $\beta>1$ such that $\Eb_\gamma[e^{\beta\varphi_A}]<\infty$.
\end{enumerate}
\end{lemma}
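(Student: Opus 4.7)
The plan is to mirror the discrete-time proof of Lemma \ref{lem:geotrialarg} by first establishing continuous-time analogues of the three identities in Lemma \ref{lem:abkl} and then running the discrete-time arguments verbatim. Specifically, for any subsets $A,B$ of $\s$, constant $\alpha>0$, and natural numbers $k,l$, I would show that
\begin{align*}
&\Pb_\gamma(\{\varphi^{k+l}_A<\varphi_B\})=\sum_{x\in A}\Pb_\gamma(\{\varphi^{k}_A<\varphi_B,X_{\varphi^k_A}=x\})\Pb_x(\{\varphi^{l}_A<\varphi_B\}),\\
&\Eb_\gamma[1_{\{\varphi^{k}_A<\varphi_B\}}(\varphi^{k+l}_A-\varphi^{k}_A)]=\sum_{x\in A}\Pb_\gamma(\{\varphi^{k}_A<\varphi_B,X_{\varphi^k_A}=x\})\Eb_x[\varphi^{l}_A],\\
&\Eb_\gamma[1_{\{\varphi^{k}_A<\varphi_B\}}e^{\alpha\varphi^{k+l}_A}]=\sum_{x\in A}\Eb_\gamma[1_{\{\varphi^{k}_A<\varphi_B,X_{\varphi^k_A}=x\}}e^{\alpha\varphi^{k}_A}]\Eb_x[e^{\alpha\varphi^{l}_A}].
\end{align*}
These should follow from the continuous-time strong Markov property (Theorem \ref{thstrmk}) applied to the jump-time-valued stopping time $\varphi^k_A$ (Exercise \ref{ex:entdc}), together with the observation that on $\{\varphi^{k}_A<\varphi_B\}$ the chain has not yet hit $B$, so $\varphi_B=\varphi^k_A+G_B(X^{\varphi^k_A})$ and $\varphi^{k+l}_A=\varphi^k_A+G^l_A(X^{\varphi^k_A})$ for the $\cal{E}/\cal{B}(\r_E)$-measurable path-space functionals built from the coordinate functions in Section \ref{sec:pathspacect} (the required measurability following from Lemma \ref{lem:pathspmeasct}$(iv)$, interpreted for $X^{\varphi^k_A}$). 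The event $\{\varphi^k_A<\varphi_B\}$ lies in $\cal{F}_{\varphi^k_A}$ by Lemma \ref{lem:etatheta}$(i)$ together with the jump-time-valued character of $\varphi^k_A$, so all the measurability demands of Theorem \ref{thstrmk} are met.

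With the identities in hand, part $(i)$ follows exactly as in the discrete case: setting $B=\emptyset$ in the first identity and iterating gives $\Pb_\gamma(\{\varphi^k_F<\infty\})=1$ for all $k$; picking $m,\varepsilon$ from Lemma \ref{lem:AFtail1ct} then yields $\Pb_\gamma(\{\varphi_B=\infty\})\leq \Pb_\gamma(\{\varphi^{nm}_F<\varphi_B\})\leq (1-\varepsilon)^{n-1}$, which tends to $0$. For part $(ii)$, since $\varphi^0_F=0$ and part $(i)$ gives $\varphi_B<\infty$ a.s., write
\[
\varphi_B\leq \sum_{n=0}^\infty 1_{\{\varphi_B>\varphi^{nm}_F\}}(\varphi^{(n+1)m}_F-\varphi^{nm}_F),
\]
take $\Pb_\gamma$-expectations, apply the second identity to each summand, and bound by $\Ebl{\varphi^m_F}+(\max_{x\in F}\Ebx{\varphi^m_F})\sum_{n\geq 1}(1-\varepsilon)^n$, which is finite since iterating the second identity gives $\Ebx{\varphi^m_F}\leq m\max_{z\in F}\Ebz{\varphi_F}<\infty$. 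For part $(iii)$, choose $1<\vartheta\leq e^\alpha$ close enough to $1$ that $M_\vartheta^m:=(\max_{x\in F}\Ebx{\vartheta^{\varphi_F}})^m<1/\kappa$ for some $\kappa>1-\varepsilon$ (continuity of $\vartheta\mapsto M_\vartheta$ at $\vartheta=1$ follows from Jensen's inequality, exactly as in the discrete proof), and use
\[
\vartheta^{\varphi_B}\leq 1+\sum_{n=0}^\infty 1_{\{\varphi^{nm}_F<\varphi_B\}}(\vartheta^{\varphi^{(n+1)m}_F}-\vartheta^{\varphi^{nm}_F})
\]
together with the third identity, $ab\leq \alpha^n a^2+\alpha^{-n}b^2$, and Lemma \ref{lem:AFtail1ct} to bound $\Ebl{\vartheta^{\varphi_B}}$ by a convergent series, giving the desired $\beta:=\ln\vartheta$.

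The main obstacle is the rigorous derivation of the three continuous-time identities. Unlike in the discrete case, where the identities follow cleanly from $\phi^k_A+G(X^{\phi^k_A})$ decompositions thanks to the built-in indexing by step number, here one must verify that the shifted-chain functionals $G_B$ and $G^l_A$ on $\cal{P}$ are jointly $\cal{E}/\cal{B}(\r_E)$-measurable and properly represent $\varphi_B-\varphi^k_A$ and $\varphi^{k+l}_A-\varphi^k_A$ on $\{\varphi^k_A<\varphi_B\}$. Once these path-space identities are confirmed using Exercise \ref{ex:entdc} (to reduce to the jump chain) combined with Lemma \ref{lem:pathspmeasct}, the remainder of the argument is essentially a line-by-line translation of the proof of Lemma \ref{lem:geotrialarg}, with sums replaced by time integrals and $\vartheta^n$ replaced by $e^{\alpha t}$.
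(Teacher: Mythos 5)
Your proposal is correct and follows essentially the same route as the paper: the same three continuous-time analogues of Lemma~\ref{lem:abkl} derived via the strong Markov property at the jump-time-valued stopping time $\varphi^k_A$, the same telescoping decompositions of $\varphi_B$ and $\vartheta^{\varphi_B}$ for parts $(ii)$--$(iii)$, and Lemma~\ref{lem:AFtail1ct} for the geometric decay. Two small points: the measurable path-space functional encoding $\varphi^{k+l}_A-\varphi^k_A$ is Lemma~\ref{lem:pathspmeasct}$(iii)$ with $f:=1$ (which sums the waiting times up to the $k^{th}$ entrance), not part $(iv)$, which gives time-integrals of $f(X_s)$; and for part $(i)$ the paper takes the even shorter route of reducing to the jump chain via Exercise~\ref{ex:entdc} (since $\{\varphi_F<\infty\}=\{\phi_F<\infty\}$ and $\{\varphi_B<\infty\}=\{\phi_B<\infty\}$) and invoking Lemma~\ref{lem:geotrialarg}$(i)$ directly, rather than re-running the geometric-trials argument in continuous time.
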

Because Exercise~\ref{ex:entdc} implies that
$$\{\varphi_F<\infty\}=\{\phi_F<\infty\},\quad \{\varphi_B<\infty\}=\{\phi_B<\infty\},$$
where $\phi_F$ and $\phi_B$ denote the first entrance times to $F$ and $B$ of the jump chain, Lemma~\ref{lem:geotrialargct}$(i)$ follows directly its discrete-time counterpart (Lemma~\ref{lem:geotrialarg}$(i)$). For Lemma~\ref{lem:geotrialargct}$(ii)$--$(iii)$, we need to emulate the proofs of their discrete-time counterparts$\dots$
\begin{exercise}Following the steps we took in the proof of Lemma~\ref{lem:abkl}, show that, for any subsets $A$ and $B$ of the state space, constant $\alpha>0$, and natural numbers $k$ and $l$,
\begin{align*}
&\Pb_\gamma\left(\left\{\varphi^{k+l}_A<\varphi_B\right\}\right)=\sum_{x\in A}\Pbl{\left\{\varphi_A^k<\varphi_B,X_{\varphi_A^k}=x\right\}}\Pbx{\left\{\varphi^{l}_A<\varphi_B\right\}},\\
&\Ebl{1_{\{\varphi_A^{k}<\varphi_B\}}(\phi_A^{k+l}-\phi_A^{k})}=\sum_{x\in A}\Pbl{\left\{\varphi_A^{k}<\varphi_B,X_{\varphi_A^k}=x\right\}}\Ebx{\varphi_A^{l}},\\
&\Ebl{1_{\{\varphi_A^{k}<\varphi_B\}}e^{\alpha\varphi_A^{k+l}}}=\sum_{x\in A}\Ebl{1_{\{\varphi_A^{k}<\varphi_B,X_{\varphi_A^k}=x\}}e^{\alpha\varphi_A^{k}}}\Ebx{e^{\alpha\varphi_A^{l}}},
\end{align*}
where $\varphi_B$ denotes the first entrance time to $B$ and $\varphi^k_A$ the $k$th entrance time to $A$ (Definition~\ref{def:entrancect}). To do so, replace  Lemma~\ref{lem:2stop}, Theorem~\ref{thrm:strmkvpath}, Exercise~\ref{ex:entstop}, and $G^j_S$ therein with Lemma~\ref{lem:etatheta}, Theorem~\ref{thrm:markovprop}, Exercise~\ref{ex:entdc}, and $G$ in Lemma~\ref{lem:pathspmeasct}$(iii)$ (with $A:=S$, $k:=j$, and $f:=1$). Next, prove Lemma~\ref{lem:geotrialargct}$(ii)$--$(iii)$ by using the above equations and tweaking the proof of Lemma~\ref{lem:geotrialarg}$(ii)$--$(iii)$.
\end{exercise}

\subsection[Foster-Lyapunov criterion for positive recurrence*]{Foster-Lyapunov criteria II: the criterion for positive recurrence*}\label{sec:fostersct}

The continuous-time counterpart of Foster's theorem is often proves indispensable for establishing positive Tweedie recurrence in practice:\index{Foster-Lyapunov criteria}
\begin{theorem}[The criterion for positive recurrence]\label{thrm:fostersct}If the rate matrix is regular and there exists a non-negative real-valued function $v$ on $\s$, a finite subset $F$ of $\s$, and a constant $b$ in $\r$ such that 
\begin{equation}\label{eq:fostersct}Qv(x)\leq-1+b1_{F}(x)\quad \forall x\in\s.\end{equation}
then the chain is positive Tweedie recurrent and has a finite number of closed communicating classes. Conversely, if the chain is positive Tweedie recurrent and there are no transient states and only a finite number of closed communicating classes, then there exists $(v,F,b)$ satisfying \eqref{eq:fostersct} with $v$ real-valued and non-negative, $F$ finite, and $b$ real.
\end{theorem}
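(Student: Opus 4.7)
The plan is to mirror the discrete-time proof of Foster's theorem (Theorem~\ref{thrm:fosters}), using Dynkin's formula (Theorem~\ref{Dynkin}) in place of its discrete-time counterpart and the continuous-time geometric trial arguments (Lemma~\ref{lem:geotrialargct}) in place of the discrete-time ones (Lemma~\ref{lem:geotrialarg}).

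\textbf{Forward direction.} Given $(v,F,b)$ satisfying \eqref{eq:fostersct} and regular $Q$, the key step is to establish $\Ebx{\varphi_F}<\infty$ for every $x\in\s$. For $x\notin F$ I would apply Dynkin's formula with $g=1$, $f=v$, and $\eta=\tau_F\wedge\tau_r$, where $\tau_r$ is the exit time from a finite truncation $\s_r$ and $\tau_F$ is the hitting time of $F$ (both jump-time-valued stopping times by Proposition~\ref{prop:hitdc}). On the interval $[0,\tau_F)$ the chain stays outside $F$, where $Qv(X_s)\le -1$; combined with $v\ge 0$ this yields $\Ebx{t\wedge\tau_F\wedge\tau_r\wedge T_n}\le v(x)$. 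Sending $t,r,n\to\infty$ and invoking Theorem~\ref{tautin} together with the regularity of $Q$ (which forces $T_\infty=\infty$ almost surely) and monotone convergence gives $\Ebx{\varphi_F}\le v(x)<\infty$. For $x\in F$, conditioning at the first jump $T_1$ via the Markov property yields $\Ebx{\varphi_F}=1/q(x)+\sum_{y\notin F}p(x,y)\Eby{\varphi_F}$; the right-hand side is finite because the Foster inequality at $x$ bounds $\sum_{y\ne x}q(x,y)v(y)$, and hence $\sum_y p(x,y)v(y)$, above.

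With $\Ebx{\varphi_F}<\infty$ in hand, the continuous-time analogue of Lemma~\ref{lem:Fnotrans} (whose proof is identical, invoking Lemma~\ref{lem:geotrialargct}$(ii)$) lets me reduce $F$ to only recurrent states. For each recurrent $x\in F$, applying Lemma~\ref{lem:geotrialargct}$(ii)$ with $B=\{x\}$ gives $\Ebx{\varphi_x}<\infty$, making $x$ positive recurrent (Definition~\ref{def:posrecct}) and, by Corollary~\ref{cor:posrecclassc}, its whole communicating class positive recurrent. A final application of Lemma~\ref{lem:geotrialargct}$(ii)$ with $B=\cal{R}_+$ then yields $\Ebl{\varphi_{\cal{R}_+}}<\infty$ for every $\gamma$, establishing positive Tweedie recurrence. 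Finitely many closed communicating classes follows because each must intersect the finite set $F$: otherwise Proposition~\ref{prop:closedct} would force $\varphi_F=\infty$ starting from that class, contradicting $\Ebx{\varphi_F}<\infty$.

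\textbf{Reverse direction.} Assume no transient states and finitely many positive recurrent closed communicating classes $\cal{C}_1,\ldots,\cal{C}_l$. Pick $x_i\in\cal{C}_i$, set $F:=\{x_1,\ldots,x_l\}$, and define $v(x):=\Ebx{\varphi_F}$ for $x\notin F$ and $v(x):=0$ for $x\in F$. Theorem~\ref{thrm:posreccharc} and Corollary~\ref{cor:posrecclassc} give $\Eb_{x_i}[\varphi_{x_i}]<\infty$ for each $i$, and Lemma~\ref{lem:geotrialargct}$(ii)$ extends this to $\Ebx{\varphi_F}<\infty$ for every $x\in\s$, so $v$ is real-valued and non-negative. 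A direct Markov-property computation at $T_1$ produces $Qv(x)=-1$ for $x\notin F$, and on the finite set $F$ one obtains $Qv(x)=q(x)\Ebx{\varphi_F}-1$, which is finite; taking $b:=1+\max_{x\in F}(1+Qv(x))$ gives the desired inequality. The principal technical hurdle is the careful handling of limits in Dynkin's formula in the potentially-explosive setting --- one must send the truncation parameter $r$, the jump index $n$, and the time horizon $t$ to infinity while tracking integrability, crucially using regularity of $Q$ before monotone convergence applies. The other subtle point, in both directions, is the treatment of states inside $F$: in the forward direction this requires invoking the Markov property at $T_1$ and extracting absolute convergence of $Qv(x)$ from \eqref{eq:fostersct}, while in the reverse direction it requires verifying that the Poisson-equation definition of $v$ yields a finite $Qv$ on $F$.
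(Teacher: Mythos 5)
Your argument is correct and the high-level structure mirrors the paper's, but for the key technical estimate in the forward direction you take a genuinely different route. The paper establishes $\Ebx{\varphi_F}\le v(x)$ for $x\notin F$ by first proving Lemma~\ref{lem:fosminct} --- a minimality result for non-negative solutions of the differential inequality, proved via the generalised forward/backward integral recursions~\eqref{eq:firgen}--\eqref{eq:birgen} --- and then invoking Theorem~\ref{thrm:fosrecct} with $\alpha=0$ and~\eqref{eq:bdw6a7bd67aww}. You instead bound $\Ebx{\varphi_F}$ directly by applying Dynkin's formula (Theorem~\ref{Dynkin}) to $v$ with $\eta=\tau_F\wedge\tau_r$ and sending $t,r,n\to\infty$, using Theorem~\ref{tautin} and regularity to identify the limit. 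Both routes are sound; the Dynkin approach is shorter and more in the Lyapunov-function spirit, while the paper's FIR/BIR minimality argument buys the exact characterisation $u(x)=\Ebx{\varphi_F\wedge T_\infty}$ of the minimal solution, which is reused verbatim in Section~\ref{sec:expcri} for the exponential criterion (Theorem~\ref{thrm:fosexprec}). From there on you follow the same path as the paper: strong Markov property at $T_1$ for states inside $F$, Lemma~\ref{lem:Fnotransct} to strip out transient states, geometric trials (Lemma~\ref{lem:geotrialargct}$(ii)$) to promote the $F$-states to positive recurrent ones, and for the converse the Poisson-equation definition $v(x)=\Ebx{\varphi_F}$, which is exactly the $u$ that the converse of Theorem~\ref{thrm:fosrecct} produces.

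Two small imprecisions worth flagging, neither fatal. First, you claim "$\Ebl{\varphi_{\cal{R}_+}}<\infty$ for every $\gamma$"; Lemma~\ref{lem:geotrialargct}$(ii)$ needs $\Ebl{\varphi_F}<\infty$ as a hypothesis, which is only guaranteed when $\gamma(v)<\infty$. What you actually get for free is $\Ebx{\varphi_{\cal{R}_+}}<\infty$ for every $x$, hence $\Pbx{\{\varphi_{\cal{R}_+}<\infty\}}=1$ for every $x$, and then $\Pbl{\{\varphi_{\cal{R}_+}<\infty\}}=1$ for every $\gamma$ by averaging --- which is exactly what positive Tweedie recurrence requires. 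Second, your treatment of states $x\in F$ via conditioning at $T_1$ tacitly assumes $q(x)>0$; absorbing states in $F$ need to be dispatched separately (trivially, since $\varphi_F=0$ there $\Pb_x$-almost surely), as the paper does in its proof of Theorem~\ref{thrm:fosrecct}.
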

The criterion is a consequence of Theorem \ref{thrm:fosrecct} further down which shows that, for a fixed $F$, the existence of such a $v$ and $b$ satisfying~\eqref{eq:fostersct} is equivalent to $F$ having a finite mean return time for all deterministic starting locations.


The importance of a finite number of closed communicating classes is easy to see: the set $F$ must contain at least one state from each closed communicating class. Otherwise, the chain would not be able to reach $F$ whenever it starts inside of a closed communicating class that does not intersect with $F$ and the return time to $F$ would be infinite. As an example, the chain with one-step matrix $Q$ being the matrix of zeros is trivially positive Tweedie recurrent, however $Qv(x)=0\geq-1$ for all states $x$ and functions $v$. Consequently, if the state space is infinite, then \eqref{eq:fostersct} will never be satisfied for for a finite $F$.

To see the importance of the no transient states requirement, consider the continuous-time analogue of Example~\ref{ex:recnofos}:
%
%
%
\begin{example}Consider a chain with state space $\n$ and rate matrix $Q=(q(x,y))_{x,y\in\n}$ defined by
$$q(0,y)=0\quad\forall y\geq0,\qquad q(x,y)=\frac{1_{x-1}(y)}{2}-1_x(y)+\frac{1_{x+1}(y)}{2}\quad\forall x>0,\enskip y\geq0.$$
In other words, a chain $X$ that waits a unit mean exponential amount of time at its starting state $x$, jumps to $y=x-1$ with $50\%$ probability and to $y=x+1$ with the remaining $50\%$ probability, waits a unit mean exponential amount of time at $y$, jumps to $z=y-1$ with $50\%$ probability and to $z=y+1$ with the remaining $50\%$ probability, $\dots$, up until the moment it hits $0$ where it remains for all time. In other words, $X$ is just like the gambler's ruin chain $Y$ (Section~\ref{sec:gamblers}) except that the time waited between jumps is exponentially distributed. Indeed, $X$'s jump chain is $Y$ which is positive Harris recurrent (Example~\ref{ex:recnofos}).  
 Because the waiting times of $X$ all have unit means, Exercise~\ref{ex:entdc} then implies that $X$ and $Y$ have the same mean return times. Thus, $X$ is also positive Harris recurrent. However, inequality~\eqref{eq:fostersct} implies that
$$\frac{1}{2}v(x-1)+\frac{1}{2}v(x+1)\leq v(x)-1\quad\forall x\not\in F.$$
In Example~\ref{ex:recnofos}, we showed that any function $v$ satisfying the above must tend to $-\infty$ as its argument approaches $\infty$. Thus,  even though the chain is positive Harris recurrent there do not exist any $v$, $F$, and $b$ satisfying the premise of Theorem~\ref{thrm:fostersct}.
\end{example}

\subsubsection*{A proof of Theorem~\ref{thrm:fostersct}}Given any $F$, the existence of such a $v$ and $b$ satisfying~\eqref{eq:fostersct} is equivalent to $F$ having a finite mean return time for every deterministic initial condition:
\begin{theorem}\label{thrm:fosrecct}Suppose that the rate matrix is regular. Given any finite set $F$, there exists $v:\s\to[0,\infty)$ and $b$ in $\r$ satisfying \eqref{eq:fostersct} if and only if $\Ebx{\varphi_F}<\infty$ for all $x$ in $\s$. In this case, $\Ebl{\varphi_F}<\infty$ for all initial distributions $\gamma$ satisfying $\gamma(v)<\infty$.\end{theorem}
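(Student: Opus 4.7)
\textbf{Proof proposal for Theorem~\ref{thrm:fosrecct}.}

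The plan is to mirror the proof of the discrete-time counterpart (Theorem~\ref{thrm:fosrec}) but with Dynkin's formula (Theorem~\ref{Dynkin}) playing the role that Doob's optional stopping played there, and with the regularity hypothesis used to absorb the complications that explosions create. As a convenient stepping stone, I would first establish a continuous-time analogue of Lemma~\ref{lem:fosmin}: under the regularity assumption, the function
\[
u(x):=\Ebx{\varphi_F}\quad\text{for }x\not\in F,\qquad u(x):=0\quad\text{for }x\in F,
\]
is the minimal non-negative solution of $Qu(x)\leq -1$ for all $x\not\in F$. The identity $Qu(x)=-1$ for $x\not\in F$ is read off by applying the strong Markov property (Theorem~\ref{thstrmk}) at the first jump time $T_1$: for $x\not\in F$, $\varphi_F=T_1+\varphi_F\circ\Theta_{T_1}$ when $Y_1\not\in F$, which yields
\[
q(x)u(x)=1+\sum_{y\neq x,\,y\not\in F}q(x,y)u(y),
\]
i.e.\ $Qu(x)=-1$. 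Minimality would be argued by iterating this relation (exactly as in Lemma~\ref{lem:fosmin}) against any non-negative competitor and recognising the emerging expansion as the Dynkin-like series expressing $\Ebx{\varphi_F}$ through the jump chain.

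For the forward direction, fix $x\in\s$ and a non-negative $(v,F,b)$ satisfying \eqref{eq:fostersct}. Let $(\s_r)_{r\in\zp}$ be an increasing sequence of finite truncations containing $x$ with $\cup_r\s_r=\s$, and $\tau_r$ their exit times (jump-time-valued stopping times by Proposition~\ref{prop:hitdc}). The stopping time $\eta:=\varphi_F\wedge\tau_r$ is also jump-time-valued, and on $[0,\eta)$ the chain stays inside the finite set $\s_r$, so Dynkin's formula (Theorem~\ref{Dynkin}) applies with $g\equiv 1$, $f:=v$, and gives
\[
\Ebx{v(X_{t\wedge\eta\wedge T_n})}=v(x)+\Ebx{\int_0^{t\wedge\eta\wedge T_n}Qv(X_s)\,ds}.
\]
On $\{s<\varphi_F\}$ we have $X_s\not\in F$ for Lebesgue-almost every $s$, so $Qv(X_s)\leq -1$ by \eqref{eq:fostersct}. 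Non-negativity of $v$ then yields $\Ebx{t\wedge\varphi_F\wedge\tau_r\wedge T_n}\leq v(x)$. Passing $n\to\infty$ (monotone convergence, $T_n\to T_\infty$), then $r\to\infty$ (using regularity together with Theorem~\ref{tautin} so that $\tau_r\uparrow T_\infty=\infty$), then $t\to\infty$, I obtain $\Ebx{\varphi_F}\leq v(x)<\infty$ for $x\not\in F$. For $x\in F$, a one-step decomposition via the strong Markov property at $T_1$ gives
\[
\Ebx{\varphi_F}\leq\frac{1}{q(x)}+\sum_{y\not\in F}p(x,y)\Eby{\varphi_F}\leq\frac{1}{q(x)}+\sum_{y\not\in F}p(x,y)v(y),
\]
which is finite because \eqref{eq:fostersct} rearranges to $\sum_{y\neq x}q(x,y)v(y)\leq q(x)v(x)+b-1<\infty$. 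Multiplying by $\gamma(x)$ and summing then delivers the final claim: $\gamma(v)<\infty$ forces $\Ebl{\varphi_F}<\infty$ since $F$ is finite and the $x\in F$ contributions are controlled by the bound just displayed.

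For the reverse direction, assume $\Ebx{\varphi_F}<\infty$ for every $x$. Take $v:=u$ and $b:=1+\max_{x\in F}Qu(x)$. The preliminary lemma already gives $Qv(x)=-1$ for $x\not\in F$. For $x\in F$, $v(x)=0$ so $Qv(x)=\sum_{y\neq x}q(x,y)v(y)=\sum_{y\not\in F}q(x,y)v(y)$, which is finite because strong Markov at $T_1$ yields $\sum_{y\not\in F}p(x,y)v(y)\leq \Ebx{\varphi_F}<\infty$ and $F$ is finite, so the maximum defining $b$ is finite. This is exactly \eqref{eq:fostersct}.

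The main obstacle I anticipate is the limit argument in the forward direction: one must simultaneously localise in space (via $\tau_r$) to legitimise Dynkin's formula, localise in the number of jumps (via $T_n$) to obtain integrability of the occupation integral against $Qv$, and invoke regularity to ensure $\tau_r\wedge T_n\uparrow\infty$ rather than to some finite random time. The regularity hypothesis is doing real work here, and without it the chain could escape to infinity before ever reaching $F$, invalidating the bound $\Ebx{\varphi_F}\leq v(x)$. Everything else is bookkeeping and imitation of Lemma~\ref{lem:fosmin} and Theorem~\ref{thrm:fosrec}.
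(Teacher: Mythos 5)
Your proposal is correct and the broad skeleton matches the paper's (a minimality lemma for $Q u\le -1$ off $F$, the one-step strong-Markov decomposition at $T_1$ for $x\in F$, and the final summation against $\gamma$), but the route to the minimality lemma and the forward bound is genuinely different. The paper proves its Lemma~\ref{lem:fosminct} analytically, by expanding the generalised backward integral recursion \eqref{eq:birgen} in $n$ and iterating the resulting algebraic identity $u_n(x)=\bigl(1+\sum_{z\neq x}q(x,z)u_{n-1}(z)\bigr)/(q(x)-\alpha)$; the forward direction of Theorem~\ref{thrm:fosrecct} then falls out by pure minimality. You instead establish the minimal-solution fact by a strong Markov step at $T_1$ plus the discrete-time-style iteration of Lemma~\ref{lem:fosmin}, and you further give an independent Dynkin's-formula argument for the forward direction: localise with $\eta:=\varphi_F\wedge\tau_r$ and $T_n$, use that the chain stays in the finite set $\s_r$ to license Theorem~\ref{Dynkin}, drop the nonnegative boundary term to get $\Ebx{t\wedge\varphi_F\wedge\tau_r\wedge T_n}\le v(x)$, and then lift all three cut-offs using $T_n\uparrow T_\infty$, $\tau_r\uparrow T_\infty$ (Theorem~\ref{tautin}), and regularity ($T_\infty=\infty$). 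This is a more probabilistic argument that avoids the FIR/BIR machinery entirely, at the cost of needing the careful triple-limit bookkeeping you correctly flag as the delicate part; the paper's BIR route avoids that bookkeeping but requires the generalised recursion \eqref{eq:birgen} as infrastructure. Two small points worth tidying: your opening remark that Doob's optional stopping drives the discrete-time proof of Theorem~\ref{thrm:fosrec} is not quite right (that proof goes through Lemma~\ref{lem:fosmin}, not optional stopping), and the one-step decomposition at $T_1$ for $x\in F$ should be stated with the caveat that absorbing $x\in F$ are handled separately ($\Ebx{\varphi_F}=1$ trivially, and $Qu(x)=0$), though your choice $b:=1+\max_{x\in F}Qu(x)$ already covers that edge case cleanly — in fact more cleanly than the paper's $b:=\max\{q(x)\Ebx{\varphi_F}:x\in F\}$, which degenerates to $0$ when every state of $F$ is absorbing.
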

The key to proving the above is the  following lemma:
\begin{lemma}[\cite{Tweedie1981}]\label{lem:fosminct}Let $F$ be a subset of the state space and $\alpha$ be a real number satisfying $\alpha <q(x)$ for all $x$ outside $F$. The minimal (possibly infinite-valued) non-negative solution $v:=(v(x))_{x\in\s}$ to the inequality
\begin{align}
\label{eq:dlyain1ct}Qv(x)&\leq-\alpha v(x)-1\qquad\forall x\not\in F
\end{align}
is given by $u(x)=0$ for all $x$ in $F$ and
\begin{equation}\label{eq:uminct}u(x)=\int_0^\infty \Pbx{\{\varphi_F>t,t<T_\infty\}}e^{\alpha t}dt\qquad\forall x\not\in F.\end{equation}
\end{lemma}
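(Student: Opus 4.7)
The strategy mirrors that used for the discrete-time counterpart (Lemma~\ref{lem:fosmin}): first verify that $u$ as defined satisfies~\eqref{eq:dlyain1ct} with equality, then establish minimality by iteratively exploiting the inequality. In the continuous-time setting these two steps are carried out using the first-jump decomposition (via Theorem~\ref{thrm:condind}) and Dynkin's formula (Theorem~\ref{Dynkin}), respectively.

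For the first step, fix $x \notin F$ and condition on the first waiting time $S_1$, which is exponentially distributed with rate $q(x)$, and the next state $Y_1$, distributed according to $p(x,\cdot)$ in~\eqref{eq:jumpmatrix}. Either $S_1 > t$, in which case $X_s = x \notin F$ throughout $[0,t]$ and $t < T_1 \leq T_\infty$, or $S_1 \leq t$, in which case $\varphi_F > t$ if and only if $Y_1 \notin F$ and the shifted chain does not visit $F$ during $[0, t - S_1]$. The Markov property (Theorem~\ref{thrm:markovprop}) yields
$$\Pbx{\{\varphi_F > t, t < T_\infty\}} = e^{-q(x) t} + \sum_{y \neq x,\, y \notin F} q(x,y) \int_0^t e^{-q(x) s} \Pby{\{\varphi_F > t-s, t-s < T_\infty\}}\, ds.$$
Multiply by $e^{\alpha t}$, integrate over $t \in [0,\infty)$, swap the order of integration via Tonelli's theorem, and apply the change of variable $r := t - s$. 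Because $\alpha < q(x)$, the integral $\int_0^\infty e^{(\alpha - q(x))s} ds = 1/(q(x) - \alpha)$ is finite, and one obtains
$$(q(x) - \alpha)\, u(x) = 1 + \sum_{y \neq x,\, y \notin F} q(x,y)\, u(y).$$
Since $u(y) = 0$ for $y \in F$, rearrangement gives $Qu(x) = -\alpha u(x) - 1$.

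For the minimality, let $v$ be any non-negative solution. Since $u(x) = 0 \leq v(x)$ on $F$, it suffices to show $v(x) \geq u(x)$ for $x \notin F$; the case $v(x) = \infty$ is trivial, so suppose $v(x) < \infty$. The inequality \eqref{eq:dlyain1ct} together with $v \geq 0$ forces $\sum_{y \neq z} q(z,y) v(y) \leq (q(z) - \alpha) v(z) - 1 < \infty$ for every $z \notin F$ with $v(z) < \infty$, so $Qv$ is absolutely convergent on such states. Pick an increasing sequence $(\s_r)_{r \in \zp}$ of finite sets containing $x$ whose union exhausts $\{y \in \s : v(y) < \infty\}$, let $\tau_r$ denote the exit time from $\s_r$, and set $\eta := \varphi_F \wedge \tau_r$, a jump-time-valued $(\cal{F}_t)_{t \geq 0}$-stopping time by Proposition~\ref{prop:hitdc}. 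Applying Dynkin's formula (Theorem~\ref{Dynkin}) with $f := v$, $g(t) := e^{\alpha t}$, and $\eta_n := \eta \wedge T_n$ gives
$$\Ebx{e^{\alpha(t \wedge \eta_n)} v(X_{t \wedge \eta_n})} = v(x) + \Ebx{\int_0^{t \wedge \eta_n} e^{\alpha s}\bigl(Qv(X_s) + \alpha v(X_s)\bigr) ds}.$$
Since $X_s \notin F$ for every $s < \eta$, the integrand is bounded above by $-e^{\alpha s}$; dropping the non-negative left-hand side and rearranging,
$$v(x) \geq \Ebx{\int_0^{t \wedge \eta_n} e^{\alpha s}\, ds}.$$
By Theorem~\ref{tautin}, $\tau_r \uparrow T_\infty$ as $r \to \infty$, and $T_n \uparrow T_\infty$ as $n \to \infty$, so $t \wedge \eta_n \uparrow \varphi_F \wedge T_\infty$ as $t, n, r \to \infty$. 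Monotone convergence and Tonelli's theorem then yield
$$v(x) \geq \Ebx{\int_0^{\varphi_F \wedge T_\infty} e^{\alpha s}\, ds} = \int_0^\infty \Pbx{\{\varphi_F > s, s < T_\infty\}} e^{\alpha s}\, ds = u(x).$$

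The main technical obstacle is the minimality argument: Dynkin's formula requires the test function to be finite-valued on the finite set the chain inhabits up to stopping and to have absolutely convergent generator action there, which is why the truncation $\s_r$ is chosen to be contained in $\{v < \infty\}$. A secondary subtlety is making sense of the first-jump decomposition at states where $u$ takes the value $+\infty$; this is handled by first deriving the integral equation for $u^{(N)}(x) := \int_0^N e^{\alpha t} \Pbx{\{\varphi_F > t, t < T_\infty\}} dt$ and letting $N \to \infty$ via monotone convergence.
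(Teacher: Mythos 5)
Your verification step (first-jump decomposition, integrate against $e^{\alpha t}$, Tonelli, change of variable) is essentially the same derivation the paper carries out, packaged there as the generalised backward integral recursion \eqref{eq:birgen2} followed by a passage to the limit. The real divergence is in the minimality step. The paper proves minimality entirely algebraically: it introduces the finite-step iterates $u_n(x):=\int_0^\infty\sum_{y\notin F}f^n_t(x,y)e^{\alpha t}\,dt$, shows via the BIR that $u_n(x)=\bigl(1+\sum_{z\neq x}q(x,z)u_{n-1}(z)\bigr)/(q(x)-\alpha)$, and then inducts: $v\geq u_0$ follows from $(q(x)-\alpha)v(x)\geq 1$, and the inductive step is a one-line consequence of the rearranged inequality. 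This mirrors the discrete-time proof of Lemma~\ref{lem:fosmin} and avoids all stopping-time machinery — in particular, it sidesteps Dynkin's formula and its finiteness hypotheses, handling $v(x)=\infty$ painlessly via non-negative extended-real arithmetic. Your route, by contrast, invokes Dynkin's formula with $g(t)=e^{\alpha t}$ and $f=v$ at the stopping time $\varphi_F\wedge\tau_r\wedge T_n$. That is a perfectly sensible (and arguably more probabilistic) strategy, and you have correctly identified the two delicate points: $v$ must be finite and $Qv$ absolutely convergent on the finite set the chain inhabits before stopping, and the iterated limit in $t,n,r$ must recover $\varphi_F\wedge T_\infty$.

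There is, however, a genuine gap exactly where you address the second point. You pick $(\s_r)_{r\in\zp}$ exhausting $\{y\in\s:v(y)<\infty\}$ and then cite Theorem~\ref{tautin} to conclude $\tau_r\uparrow T_\infty$. Theorem~\ref{tautin} requires $\cup_{r}\s_r=\s$, which fails whenever $v$ is infinite somewhere, so the citation does not apply. The conclusion you need — namely $\varphi_F\wedge\tau_r\uparrow\varphi_F\wedge T_\infty$ — is still true, but for a different reason that must be argued: the bound $\sum_{y\neq z}q(z,y)v(y)\leq (q(z)-\alpha)v(z)-1<\infty$ for $z\in\{v<\infty\}\setminus F$ forces $q(z,y)=0$ whenever $v(y)=\infty$, so the jump chain started in $\{v<\infty\}\setminus F$ cannot reach $\{v=\infty\}$ without first entering $F$; hence every state visited before $\varphi_F\wedge T_\infty$ lies in $\{v<\infty\}\setminus F$, and the exit times from its finite exhaustion increase to $\varphi_F\wedge T_\infty$ by the same covering argument as in Theorem~\ref{tautin}. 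Alternatively, take $\cup_r\s_r=\s$ (so Theorem~\ref{tautin} applies verbatim) and observe that, by the same accessibility remark, the finite set confining the chain before $\eta\wedge T_\infty$ can be taken to be $\s_r\cap(\{v<\infty\}\setminus F)$, on which $v$ is finite and $Qv$ absolutely convergent — Dynkin's formula then needs $v1_{\s_r\cap(\{v<\infty\}\setminus F)}$ as its test function, which is bounded with absolutely convergent generator action everywhere. Either fix closes the gap, but it needs to be made explicit; as written, the Theorem~\ref{tautin} step does not go through. The trade-off between the two proofs is then clear: your stopping-time argument is closer in spirit to how Foster--Lyapunov bounds are usually derived, but the paper's iterative/BIR argument requires fewer side conditions to verify.
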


While, $u$ in \eqref{eq:uminct} may come across as rather esoteric at first glance, it is not difficult to re-write it in terms that make the connection with Theorem~\ref{thrm:fosrecct} obvious. In particular, if $\alpha=0$, then Tonelli's theorem implies that
\begin{align}u(x)&=\int_0^\infty\Pbx{\{\varphi_F>t,t<T_\infty\}}dt=\int_0^\infty\Pbx{\{t<\varphi_F\wedge T_\infty\}}dt=\Ebx{\int_0^\infty1_{[0,\varphi_F\wedge T_\infty)}(t)dt}\nonumber\\
&=\Ebx{\int_0^{\varphi_F\wedge T_\infty}1dt}=\Ebx{\varphi_F\wedge T_\infty}\quad\forall x\not\in F.\label{eq:bdw6a7bd67aww}\end{align}
Thus, in the regular case, $u(x)$ is the mean entrance time to $F$ for any $x$ outside $F$: a fact that will be key for the proof of Lemma~\ref{lem:fosminct}. We will also require the following generalisations of the FIR~\eqref{eq:fir} and BIR~\eqref{eq:bir}: for  any  subset $F$  of the state space and positive integer $n$,
\begin{align}
\label{eq:firgen}f_t^n(x,y)&=1_x(y)e^{-\lambda(x)t}+\int_0^t\sum_{z\not\in F}f_s^{n-1}(x,z)\lambda(z)p(z,y)e^{-\lambda(y)(t-s)}ds\quad\forall x,y\not\in F,\enskip t\geq0,\\
\label{eq:birgen}f_t^n(x,y)&=1_x(y)e^{-\lambda(y)t}+\int_0^t\lambda(x)e^{-\lambda(x)(t-s)}\sum_{z\not\in F}p(x,z)f_s^{n-1}(z,y)ds\quad\forall x,y\not\in F,\enskip t\geq0,
\end{align}
where 
\begin{equation}\label{eq:ftndef}f_t^n(x,y):=\Pbx{\{\varphi_F>t,X_t=y,t<T_{n+1}\}}\quad\forall x,y\not\in F,\enskip t\geq0,\enskip n\in\n.\end{equation}
\begin{exercise}Prove \eqref{eq:firgen} by  tweaking the proof of Lemma~\ref{lem:forwardweak} and use \eqref{eq:firgen} to adapt the proof of  Lemma~\ref{lem:bir} and obtain \eqref{eq:birgen}. Hint: for any $y$ outside $F$, we can re-write the event 
$$\{Y_m=y,T_m\leq t<T_{m+1},\varphi_F>t\}$$
as 
$$\{Y_m=y,T_m\leq t<T_{m+1},Y_1\not\in F,\dots, Y_{m-1}\not\in F\}.$$
\end{exercise}

\begin{proof}[Proof of Lemma~\ref{lem:fosminct}]
Notice that \eqref{eq:dlyain1ct} does not have any non-negative solutions if there exists an absorbing state outside of $F$. Suppose otherwise and note that the generalised BIR \eqref{eq:birgen} reduces to
\begin{equation}\label{eq:birgen2}f_t^n(x,y)=1_x(y)e^{-q(y)t}+\int_0^t q(x)e^{-q(x)(t-s)}\sum_{z\not\in F}p(x,z)f_s^{n-1}(z,y)ds\quad\forall x,y\not\in F,\enskip  t\in[0,\infty),\end{equation}
for any positive integer $n$. 
%
Let 
$$u_n(x):=\left\{\begin{array}{ll}\int_0^\infty\sum_{y\not\in F}f_t^n(x,y)e^{\alpha t}dt&\text{if }x\not\in F\\0&\text{if }x\in F\end{array}\right.\quad\forall x\in\s,\enskip n\in\n,$$
and note that 
$$\lim_{n\to\infty}u_n(x)=u(x)\quad\forall x\in\s$$
by monotone convergence. However, applying \eqref{eq:birgen2} and Tonelli's Theorem, we have that
\begin{align}u_n(x)&=\int_0^\infty \sum_{y\not\in F}f_t^n(x,y)e^{\alpha t}dt\nonumber\\
&= \int_0^\infty e^{(\alpha-q(x))t}dt+\int_0^\infty \left(\int_s^\infty e^{(\alpha-q(x))t}dt\right)e^{q(x)s}\sum_{z\not\in F}q(x)p(x,z)\left(\sum_{y\not\in F}f_s^{n-1}(z,y)\right)ds\nonumber\\
&=\frac{1}{q(x)-\alpha}+\frac{1}{q(x)-\alpha}\sum_{z\not\in F}(q(x)p(x,z))\left(\int_0^\infty f_s^{n-1}(z,y)e^{\alpha s}ds\right)\nonumber\\
&=\frac{1+\sum_{z\neq x}q(x,z)u_{n-1}(z)}{q(x)-\alpha}\quad\forall x\not\in F,\enskip n\in\zp.\label{eq:fne8awnfyea}\end{align}
Taking the limit $n\to\infty$ and re-arranging shows that $u$ satisfies \eqref{eq:dlyain1ct}. 

To prove the minimality of $u$, we use induction. In particular, let $v=(v(x))_{x\in\s}$ be any other non-negative solution of \eqref{eq:dlyain1ct}. By definition,
$$v(x)\geq 0=u(x)\quad\forall x\in F.$$
Re-arranging \eqref{eq:dlyain1ct}, we find that $(q(x)-\alpha)v(x)\geq 1+\sum_{z\neq x}q(x,z)v(z)$ for all $x\not\in F$. Consequently,
$$v(x)\geq\frac{1}{q(x)-\alpha}=\int_0^\infty \sum_{y\not\in F}1_x(y)e^{-(q(x)-\alpha)t}dt=u_0(x)\quad\forall x\not\in F.$$
Moreover, if $v(x)\geq u_{n-1}(x)$ for all $x$ in $\s$,  it follows from \eqref{eq:dlyain1ct} and \eqref{eq:fne8awnfyea} that
$$u_n(x)\leq \frac{1+\sum_{z\neq x}q(x,z)v(z)}{q(x)-\alpha}\leq v(x)\quad\forall x\not\in F.$$
By induction, we have that $u_n(x)\geq v(x)$ for all $x$ in $\s$ and $n$ in $\n$. Taking the limit $n\to\infty$ completes the proof.
\end{proof}

\begin{proof}[Proof of Theorem \ref{thrm:fosrecct}] Suppose that \eqref{eq:fostersct} is satisfied. Because we are assuming that the rate matrix is regular, setting $\alpha:=0$ in Lemma~\ref{lem:fosminct} and making use of \eqref{eq:bdw6a7bd67aww}, we find that
$$\Ebx{\varphi_F}\leq v(x)<\infty\quad\forall x\not\in F.$$
%
%
For states $x$ inside $F$, $\Ebx{\varphi_F}$ is trivially finite if $x$ is absorbing. Otherwise, an application of the strong Markov property similar to that in the proof of Theorem~\ref{thrm:expclprop} shows that:
\begin{align*}\Ebx{\varphi_F}&=\Ebx{T_1}+\sum_{z\not\in F}\frac{q(x,z)}{q(x)}\Ebz{\varphi_F}=\frac{1}{q(x)}+\sum_{z\not\in F}\frac{q(x,z)}{q(x)}\Ebz{\varphi_F}\leq\frac{1}{q(x)}\left(1+\sum_{z\not\in F}q(x,z)v(z)\right)\\
&\leq\frac{1}{q(x)}\left(1+\sum_{z\neq x}q(x,z)v(z)\right)\leq v(x)+ \frac{b}{q(x)}<\infty\quad\forall x\in F.\end{align*}
%
%
Given that $\Pb_\gamma=\sum_{x\in\s}\gamma(x)\Pb_x$, multiplying the above two inequalities by $\gamma$ and summing over $x$ in $\s$ then yields $\Ebl{\varphi_F}\leq \gamma(v)+b\gamma(q^{-1}1_F)$ which is finite as long as $\gamma(v)$ is finite.

Conversely, suppose that $\Ebx{\phi_F}<\infty$ for all $x$ in $\s$. Lemma \ref{lem:fosminct} and \eqref{eq:bdw6a7bd67aww} show that $u$ in \eqref{eq:uminct} (with $\alpha=0$) satisfies the inequality \eqref{eq:fostersct} for all states $x$ not in $F$. For states inside $F$ that are absorbing the inequality holds trivially. For the non-absorbing ones, we apply the strong Markov property as before:
$$\frac{1}{q(x)}Qu(x)=\sum_{z\not\in F}\frac{q(x,z)}{q(x)}u(z)=\sum_{z\not\in F}\frac{q(x,z)}{q(x)}\Ebz{\varphi_F}=\Ebx{\varphi_F}-\frac{1}{q(x)}\quad\forall x\in F.$$
In other words, $u$ satisfies \eqref{eq:fostersct} with $b:=\max\{q(x)\Ebx{\varphi_F}:x\in F\}$.
\end{proof}

To the make the jump from Theorem~\ref{thrm:fosrecct} to the criterion, we need the following lemma. It tells us that we lose nothing by assuming that the set $F$ in Theorem \ref{thrm:fosrecct} contains no transient states:
\begin{lemma}\label{lem:Fnotransct}Let $F$ be a finite set such that $\Ebx{\phi_F}<\infty$ for all $x$ in $\s$. The same inequality holds if we remove all transient states from $F$.\end{lemma}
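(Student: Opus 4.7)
The plan is to mirror the proof of the discrete-time counterpart (Lemma~\ref{lem:Fnotrans}), which in turn was obtained from the proof of Lemma~\ref{lem:Fnotrans0} by simply swapping Lemma~\ref{lem:geotrialarg}$(i)$ for Lemma~\ref{lem:geotrialarg}$(ii)$. In the continuous-time setting, the analogous swap uses Lemma~\ref{lem:geotrialargct}$(ii)$ in place of Lemma~\ref{lem:geotrialargct}$(i)$. Concretely, the argument proceeds state by state: pick any transient state $z\in F$, set $F_z:=F\backslash\{z\}$, and show that $F\to F_z$; then invoke Lemma~\ref{lem:geotrialargct}$(ii)$ to conclude that $\Ebl{\varphi_{F_z}}<\infty$ for every initial distribution $\gamma$, and in particular $\Ebx{\varphi_{F_z}}<\infty$ for every $x\in\s$. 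Since $F$ is finite, it contains only finitely many transient states, and iterating the removal finitely many times yields a set $F'\subseteq F$ stripped of all transient states and still satisfying $\Ebx{\varphi_{F'}}<\infty$ for all $x\in\s$.

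The key observation (and essentially the only thing that needs to be verified carefully) is that $F_z$ is accessible from $z$. If it were not, then no state in $F_z$ would be accessible from $z$, so $\Pbz{\{\varphi_{F_z}<\infty\}}=0$, which forces $\varphi_F=\varphi_z$ under $\Pb_z$ (almost surely on the event $\{\varphi_F<\infty\}$). But $\Ebz{\varphi_F}<\infty$ by hypothesis, so $\Ebz{\varphi_z}<\infty$, making $z$ positive recurrent and contradicting the transience of $z$. Once $z\to F_z$ is established, transitivity of $\to$ (a consequence of Theorem~\ref{thrm:accesibility}) together with the fact that every $x\in F$ satisfies either $x\to F_z$ directly or $x\to z\to F_z$, yields $F\to F_z$.

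With accessibility in hand, apply Lemma~\ref{lem:geotrialargct}$(ii)$ with the finite set $F$ and the target set $B:=F_z$: the hypotheses $\Ebl{\varphi_F}<\infty$ and $\Ebx{\varphi_F}<\infty$ for all $x\in F$ are exactly what we are assuming, so the conclusion gives $\Ebl{\varphi_{F_z}}<\infty$ for every $\gamma$, hence in particular $\Ebx{\varphi_{F_z}}<\infty$ for every $x\in\s$ (by specialising $\gamma$ to $1_x$). This preserves the hypothesis of the lemma with $F$ replaced by the smaller set $F_z$. I do not foresee any real obstacle here: the only mildly subtle point is justifying $z\to F_z$ from the finiteness of $\Ebz{\varphi_F}$, and the argument above handles it cleanly. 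Everything else is a direct transcription of the discrete-time argument with the continuous-time geometric-trials lemma substituted in.
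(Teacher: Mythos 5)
Your proof is correct and follows exactly the route the paper takes: the paper's proof is a one-line instruction to repeat the argument of Lemma~\ref{lem:Fnotrans0} with Lemma~\ref{lem:geotrialargct}$(ii)$, Definition~\ref{def:accesiblect}, and $\varphi$s substituted in, and what you have written is precisely that argument spelled out (iterated removal of one transient state $z$ at a time, $z\to F_z$ from the contradiction $\Ebz{\varphi_z}=\Ebz{\varphi_F}<\infty$ with transience, then $F\to F_z$ by transitivity, then Lemma~\ref{lem:geotrialargct}$(ii)$). The only cosmetic remark is that the lemma statement in the paper writes $\phi_F$ where it evidently means the continuous-time entrance time $\varphi_F$, which you have correctly used throughout.
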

\begin{proof}Replace Lemma~\ref{lem:geotrialarg}$(i)$, Definition~\ref{def:accesible}, and the $\phi$s in the proof of Lemma~\ref{lem:Fnotrans0} with Lemma~\ref{lem:geotrialargct}$(ii)$, Definition~\ref{def:accesiblect}, and $\varphi$s.
\end{proof}

Given the above, the proof of Theorem~\ref{thrm:fostersct} is completely analogous to that of its discrete-time counterpart:
\begin{proof}[Proof of Theorem~\ref{thrm:fostersct}]Replace Proposition~\ref{prop:closedis}, Lemma~\ref{lem:geotrialarg}$(ii)$, Theorem~\ref{thrm:fosrec}, Lemma~\ref{lem:Fnotrans}, \eqref{eq:fosters}, and the $\phi$s in the proof Theorem~\ref{thrm:fosters} with Proposition~\ref{prop:closedisct}, Lemma~\ref{lem:geotrialargct}$(ii)$, Theorem~\ref{thrm:fosrecct}, Lemma~\ref{lem:Fnotransct}, \eqref{eq:fostersct}, and $\varphi$s, respectively.
\end{proof}
\ifdraft
\subsubsection*{Notes and references} An example of a non-regular process (in particular, with $\Ebx{\varphi_F}=\infty$) can be found in p.134 Tweedie1975 Sufficient conditions for regularity, recurrence and
ergodicity of Markov processes. 

\fi
\subsection[Foster-Lyapunov criterion for exponential convergence*]{Foster-Lyapunov criteria III: the criterion for exponential convergence*}\label{sec:expcri}

Lemma~\ref{lem:fosminct} instructs us to study the inequality
\begin{equation}\label{eq:fostersexp}Qv(x)\leq -\alpha v(x)-1+b1_{F}(x)\quad \forall x\in\s.\end{equation}
instead of focusing only on the special case $\alpha=0$ considered in Section~\ref{sec:fostersct}.  For any $\alpha\neq0$, the inequality's minimal (possibly infinite-valued) non-negative solution is given by $u(x)=0$ for all states $x$ in $F$ and
\begin{align}\label{eq:ugeo1ct}u(x)&=\int_0^\infty \Pbx{\{\varphi_F>t,t<T_\infty\}}e^{\alpha t}dt=\Ebx{\int_0^\infty1_{[0,\varphi_F\wedge T_\infty)}(t)e^{\alpha t}}=\Ebx{\int_0^{\varphi_F\wedge T_\infty}e^{\alpha t}}\\
&\frac{1}{\alpha}(\Ebx{e^{\alpha \varphi_F\wedge T_\infty}}-1)\quad\forall x\not\in F.\nonumber\end{align}
The $\alpha>0$ version of Theorem \ref{thrm:fosrecct} follows easily:
\begin{theorem}\label{thrm:fosexprec}Suppose the chain is regular. Given any finite set $F$ and $\alpha> 0$, there exists $v:\s\to[0,\infty)$ and $b\in\r$ satisfying \eqref{eq:fostersexp} if and only if $\Ebx{e^{\alpha\varphi_F}}<\infty$ for all $x$ in $\s$. In this case, $\Ebl{e^{\alpha\varphi_F}}<\infty$ for all initial distributions $\gamma$ satisfying $\gamma(v)<\infty$.
\end{theorem}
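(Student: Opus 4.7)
My plan is to mirror the proof of Theorem \ref{thrm:fosrecct} essentially verbatim, swapping the case $\alpha = 0$ treated there for the given $\alpha > 0$. The pivot of the argument is Lemma \ref{lem:fosminct}, whose conclusion identifies the minimal non-negative solution $u$ of the restricted drift inequality $Qw(x) \leq -\alpha w(x) - 1$ on $\s \setminus F$ with the explicit quantity $u(x) = \alpha^{-1}(\Ebx{e^{\alpha(\varphi_F \wedge T_\infty)}} - 1)$ for $x \notin F$, and $u \equiv 0$ on $F$; see \eqref{eq:ugeo1ct}. Because $Q$ is regular, $\Pbl{\{T_\infty = \infty\}} = 1$ for every initial distribution, so $\varphi_F \wedge T_\infty = \varphi_F$ almost surely and the minimal solution collapses to $u(x) = \alpha^{-1}(\Ebx{e^{\alpha \varphi_F}} - 1)$, which is precisely the quantity we wish to control.

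For the forward direction, assume $(v,b)$ satisfies \eqref{eq:fostersexp}. Restricting to $x \notin F$ and using $v \geq 0$, the inequality rearranges as $(q(x) - \alpha) v(x) \geq 1 + \sum_{y \neq x} q(x,y) v(y) \geq 1$, which forces $q(x) > \alpha$ on $\s \setminus F$ --- exactly the standing hypothesis of Lemma \ref{lem:fosminct}. Minimality of $u$ together with the identity above then gives $\Ebx{e^{\alpha \varphi_F}} \leq 1 + \alpha v(x) < \infty$ for every $x \notin F$. For $x \in F$ I would condition on the first jump time $T_1$ exactly as in the proof of Theorem \ref{thrm:fosrecct}: using independence of $S_1$ from the post-jump dynamics together with the explicit moment $\Ebx{e^{\alpha S_1}} = q(x)/(q(x) - \alpha)$ (valid whenever $q(x) > \alpha$), the strong Markov decomposition expresses $\Ebx{e^{\alpha \varphi_F}}$ as $\Ebx{e^{\alpha T_1}}$ multiplied by a convex combination of $1$ and the already-bounded quantities $\Ebz{e^{\alpha \varphi_F}}$ for $z \notin F$, which is finite. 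The integrated bound $\Ebl{e^{\alpha \varphi_F}} < \infty$ under $\gamma(v) < \infty$ then follows by multiplying $\Ebx{e^{\alpha \varphi_F}} \leq 1 + \alpha v(x)$ by $\gamma(x)$, summing over $x \in \s \setminus F$, and absorbing the finitely many contributions from $x \in F$.

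For the backward direction, set $u(x) := 0$ for $x \in F$ and $u(x) := \alpha^{-1}(\Ebx{e^{\alpha \varphi_F}} - 1)$ for $x \notin F$. The almost-sure lower bound $\varphi_F \geq S_1$ under $\Pb_x$ for $x \notin F$, combined with $\Ebx{e^{\alpha S_1}} = q(x)/(q(x) - \alpha)$ whenever $q(x) > \alpha$ and $+\infty$ otherwise, shows that the hypothesis $\Ebx{e^{\alpha \varphi_F}} < \infty$ automatically forces $q(x) > \alpha$ on $\s \setminus F$, so Lemma \ref{lem:fosminct} applies and its conclusion is precisely that $u$ satisfies the drift inequality $Qu(x) \leq -\alpha u(x) - 1$ for every $x \notin F$. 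For $x \in F$ one runs the same strong-Markov decomposition at $T_1$ as in the forward direction to produce an inequality $Qu(x) \leq -\alpha u(x) - 1 + b_x$ at each such $x$; setting $b := \max_{x \in F} b_x$ then completes the construction, finiteness of $b$ being guaranteed by the finiteness of $F$.

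The ``hard part'' is really just bookkeeping: the one check that is not boilerplate is verifying that the hypothesis $\alpha < q(x)$ of Lemma \ref{lem:fosminct} is in force on $\s \setminus F$ in both directions of the argument, and as sketched above this is automatic --- in the forward direction from non-negativity of $v$ combined with the drift inequality, in the backward direction from the exponential moment of $S_1$ being the gating quantity for $\Ebx{e^{\alpha \varphi_F}}$.
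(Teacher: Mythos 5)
Your outline tracks the paper's own proof exactly: the paper cites Lemma~\ref{lem:fosminct} through~\eqref{eq:ugeo1ct} and declares the rest ``entirely analogous'' to the proof of Theorem~\ref{thrm:fosrecct}, and the steps you spell out --- identifying the minimal solution of the restricted drift inequality with $\alpha^{-1}(\Ebx{e^{\alpha\varphi_F}}-1)$ under regularity, using minimality to bound $\Ebx{e^{\alpha\varphi_F}}$ by $1+\alpha v(x)$ for $x\notin F$, and a strong-Markov decomposition at $T_1$ for $x\in F$ --- are precisely what that analogy demands. Your checks that the standing hypothesis $\alpha<q(\cdot)$ of Lemma~\ref{lem:fosminct} holds on $\s\setminus F$ in both directions are also correct.

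That said, the analogy has a genuine gap for $x\in F$ in the forward direction, and your write-up reproduces it rather than fills it. You flag parenthetically that $\Ebx{e^{\alpha S_1}}=q(x)/(q(x)-\alpha)$ is ``valid whenever $q(x)>\alpha$,'' and your closing paragraph verifies $\alpha<q(\cdot)$ only on $\s\setminus F$; but the strong-Markov step for $x\in F$ needs it there too, and~\eqref{eq:fostersexp} imposes \emph{no} lower bound on $q(x)$ for $x\in F$ because the term $b1_F(x)$ absorbs everything. Since $\varphi_F\geq S_1$ also for non-absorbing $x\in F$, one has $\Ebx{e^{\alpha\varphi_F}}\geq\Ebx{e^{\alpha S_1}}=\infty$ whenever $q(x)\leq\alpha$. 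This is not cosmetic: take $\s=\{0,1\}$, $F=\{0\}$, $q(0,1)=1$, $q(1,0)=10$, $\alpha=2$; then $v(0):=0$, $v(1):=1/8$, $b:=9/8$ satisfies~\eqref{eq:fostersexp}, the rate matrix is trivially regular, and yet $\Eb_0[e^{2\varphi_F}]=\Eb_0[e^{2(S_1+S_2)}]=\infty$ because $S_1$ is exponential with rate $q(0)=1<2=\alpha$ under $\Pb_0$. The discrete-time counterpart (Theorem~\ref{thrm:fosgeorec}) escapes this because the one-step factor there is the deterministic constant $\theta$ rather than a waiting-time moment generating function, and the $\alpha=0$ case (Theorem~\ref{thrm:fosrecct}) escapes it because $\Ebx{T_1}=1/q(x)$ is always finite off the absorbing states. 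As written, the forward implication cannot be closed by the strong-Markov step you and the paper propose; one would need either to add the hypothesis $q(x)>\alpha$ for all non-absorbing $x\in F$, or to restrict the conclusion to $x\notin F$ and then rework the converse, since the finiteness of $\sum_{z\notin F}q(x,z)u(z)$ for $x\in F$ (needed to produce a finite $b$) is then no longer automatic.
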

\begin{proof}Given \eqref{eq:ugeo1ct}, the proof is entirely analogous to that of Theorem \ref{thrm:fosrecct}.\end{proof}

The theorem shows that  \eqref{eq:fostersexp} is satisfied if and only if the entrance time distribution of $F$ has an exponentially decaying tail (i.e., a \emph{light tail}) for any deterministic starting state.  For this reason, we can relate~\eqref{eq:fostersexp} to the exponential convergence of the time-varying law using    the continuous-time analogue of Kendall's theorem (Theorem~\ref{thrm:kendallct}):\index{Foster-Lyapunov criteria}
\begin{theorem}[The geometric criterion]\label{thrm:fostersexp}Suppose the chain is regular. If there exists a real-valued non-negative function $v$ on $\s$, a finite set $F$, and a real number $b$ satisfying \eqref{eq:fostersexp} for some $\alpha>0$, then the chain is positive Tweedie recurrent. Moreover, if the initial distribution $\gamma$ satisfies $\gamma(v)<\infty$, then  the time varying law converges geometrically fast: there exists some $\beta>0$ such that
\begin{equation}\label{eq:tvgeoconvct}\norm{p_t-\pi_\gamma}=\cal{O}(e^{-\beta t}),\end{equation}
where $\pi_\gamma$ denotes the limiting stationary distribution in \eqref{eq:reclims2ct} and $\norm{\cdot}$ the total variation norm in \eqref{eq:tvnorm}.

Conversely, if the chain is positive Tweedie recurrent, there exist no transient states and only a finite number of closed communicating classes, and \eqref{eq:tvgeoconvct} holds whenever the chain starts at a fixed state (i.e., whenever $\gamma=1_x$ for some $x$ in $\s$), then there exists $(v,F,b,\alpha)$ satisfying \eqref{eq:fostersexp} with $v$ real-valued and non-negative, $F$ finite, $b$ real, and $\alpha>0$.
\end{theorem}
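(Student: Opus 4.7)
The plan is to mimic the proof of the discrete-time criterion (Theorem~\ref{thrm:fostersgeo}) using the continuous-time ingredients already developed in Sections~\ref{sec:kendallct}--\ref{sec:fostersct}. For the forward direction, suppose $(v,F,b,\alpha)$ satisfy \eqref{eq:fostersexp}. Since this inequality with $\alpha>0$ trivially implies the $\alpha=0$ inequality \eqref{eq:fostersct} after absorbing the $-\alpha v(x)\leq 0$ term, Foster's theorem for the continuous-time case (Theorem~\ref{thrm:fostersct}) immediately gives positive Tweedie recurrence together with finitely many closed communicating classes $\{\cal{C}_i:i\in\cal{I}\}$, each intersecting $F$ in a non-empty set $F_i$. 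Furthermore, Theorem~\ref{thrm:fosexprec} yields $\Ebx{e^{\alpha\varphi_F}}<\infty$ for every $x\in\s$ and $\Ebl{e^{\alpha\varphi_F}}<\infty$ whenever $\gamma(v)<\infty$. By a continuous-time analogue of Lemma~\ref{lem:Fnotransgeo} (provable by substituting the geometric trials arguments of Lemma~\ref{lem:geotrialargct}$(iii)$ for those of Lemma~\ref{lem:geotrialarg}$(iii)$ in the proof of Lemma~\ref{lem:Fnotransgeo}), we may assume without loss of generality that $F$ contains only positive recurrent states, so that $F=\cup_{i\in\cal{I}}F_i$.

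Next, the core task is to bound $\norm{p_t-\pi_\gamma}$. Split
\begin{equation*}
\mmag{p_t(A)-\pi_\gamma(A)}\leq \Pbl{\{\varphi_F>t,\,X_t\in A\}}+\mmag{\Pbl{\{\varphi_F\leq t,\,X_t\in A\}}-\pi_\gamma(A)}
\end{equation*}
for any $A\subseteq\s$. Markov's inequality controls the first term by $\Ebl{e^{\alpha\varphi_F}}e^{-\alpha t}$. For the second, use the disjointness of the $F_i$ together with Proposition~\ref{prop:closedisct} to write
\begin{equation*}
\Pbl{\{\varphi_F\leq t,\,X_t\in A\}}=\sum_{i\in\cal{I}}\Pbl{\{\varphi_{F_i}\leq t,\,X_t\in A\}},
\end{equation*}
then apply the strong Markov property (Theorem~\ref{thstrmk}) at the jump-time-valued stopping time $\varphi_{F_i}$ (Exercise~\ref{ex:entdc}) to obtain
\begin{equation*}
\Pbl{\{\varphi_{F_i}\leq t,\,X_t\in A\}}=\sum_{x\in F_i}\int_0^t \Pbl{\{\varphi_{F_i}\in ds,\,X_{\varphi_{F_i}}=x\}}\,p_{t-s}(x,A).
\end{equation*}
Split $\pi_\gamma(A)$ accordingly using $\Pbl{\{\phi_{\cal{C}_i}<\infty\}}=\Pbl{\{\varphi_{F_i}<\infty\}}$ (by Proposition~\ref{prop:phixphic} and closure of $\cal{C}_i$), and peel off the tail $\Pbl{\{\varphi_F>t\}}$ as in \eqref{eq:nf7w8eabf6ya8wba8fby8awfa}. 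The remaining bulk term reduces to a convolution of the finite distribution $\Pbl{\{\varphi_F\in ds\}}$ with $\mmag{p_{t-s}(x,A)-\pi_i(A)}$.

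The key estimate that makes everything work is that for each $x\in F_i$, there exist $C_x,\beta_x>0$ with $\norm{p_t(x,\cdot)-\pi_i}\leq C_xe^{-\beta_x t}$ for all $t\geq0$. For the diagonal entries this is exactly Kendall's continuous-time theorem (Theorem~\ref{thrm:kendallct}), which is fed by the finiteness of $\Ebx{e^{\alpha\varphi_x}}$ (obtainable from $\Ebx{e^{\alpha\varphi_F}}<\infty$ via Lemma~\ref{lem:geotrialargct}$(iii)$ applied to $A:=\{x\}$ inside $\cal{C}_i$). Strengthening diagonal exponential convergence to total-variation exponential convergence requires a continuous-time first-entrance last-exit decomposition of $p_t(x,\cdot)$ analogous to \eqref{eq:feled}; this can be derived by conditioning on the first and last visit times to $x$ before $t$ and using the strong Markov property, producing a bound of the form $(C_1+C_2 t)e^{-\kappa t}$ on $\norm{p_t(x,\cdot)-\pi_i}$ which is then absorbed into a (slightly smaller) exponential rate. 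Assembling these ingredients as in the estimates \eqref{eq:fnmuy832nyfeas}--\eqref{eq:fnmuy832nyfeas255} and choosing $\beta<\min\{\alpha,\kappa\}$ close to zero so that $\Ebl{e^{\beta\varphi_F}}<\infty$ yields \eqref{eq:tvgeoconvct}.

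For the reverse direction, I would follow the discrete-time template almost verbatim. Pick any $x_i\in\cal{C}_i$ for each of the finitely many classes and set $F:=\{x_i:i\in\cal{I}\}$ (no transient states to worry about by hypothesis). The premise that $\norm{p_t(x_i,\cdot)-\pi_i}=\cal{O}(e^{-\alpha_i t})$ for each $i$, combined with Theorem~\ref{thrm:kendallct}, gives $\Eb_{x_i}[e^{\alpha'\varphi_{x_i}}]<\infty$ for some $\alpha'>0$. Since $x\to x_i$ for every $x\in\cal{C}_i$ and $F$ is finite, Lemma~\ref{lem:geotrialargct}$(iii)$ promotes this to $\Ebx{e^{\alpha''\varphi_F}}<\infty$ for all $x\in\s$ with some $\alpha''>0$; invoking Theorem~\ref{thrm:fosexprec} then delivers the desired $(v,F,b,\alpha)$. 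The main obstacle I foresee is establishing the continuous-time first-entrance last-exit decomposition cleanly enough to upgrade Kendall's pointwise convergence $\mmag{p_t(x,x)-\pi(x)}$ into a total variation estimate $\norm{p_t(x,\cdot)-\pi}$; everything else is bookkeeping with Markov's inequality and applications of the strong Markov property that closely parallel the discrete-time treatment.
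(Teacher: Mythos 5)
Your high-level strategy is right, and the bookkeeping matches the paper's template closely, but two points deserve flagging.

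First, the technical heart of your forward-direction argument — deriving a continuous-time first-entrance last-exit decomposition analogous to \eqref{eq:feled} to upgrade the diagonal estimate $\mmag{p_t(x,x)-\pi_i(x)}=\cal{O}(e^{-\alpha_x t})$ to a total-variation estimate $\norm{p_t(x,\cdot)-\pi_i}=\cal{O}(e^{-\beta_x t})$ — is a genuine departure from what the paper does, and it is the harder route. The paper already performs exactly this upgrade inside the proof of Theorem~\ref{thrm:kendallct}, and it does so without a continuous-time version of \eqref{eq:feled}. Instead, it works at the level of the skeleton chain $X^\delta$: by \eqref{eq:skeletonoriginal}, $p_{\delta n}(x,x)$ is the $n$-step diagonal entry of a discrete-time chain, so the existing \emph{discrete-time} first-entrance last-exit estimate \eqref{eq:mfwah9w44}--\eqref{eq:mfwah9w4end} gives $\norm{p_{\delta n}(x,\cdot)-\pi_i}\leq C_\delta\kappa^{-n}$, and one then interpolates between sampling times via the semigroup identity $p_t(x,\cdot) = p_{\delta n}(x,\cdot)P_s$ with $t=\delta n + s$, using that convolving with a stochastic kernel cannot increase $\ell^1$-distance. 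This is cleaner: in continuous time the ``last exit'' is a jump time, the inner blocks of the decomposition involve convolutions of densities, and the bookkeeping needed to control these is much heavier than the discrete-time counterpart you would actually be reducing to anyway. So if you follow your plan you would be re-proving something the paper gets for free from the skeleton argument you have already available in the proof of Theorem~\ref{thrm:kendallct}.

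Second, two smaller remarks. (a) Your ``continuous-time analogue of Lemma~\ref{lem:Fnotransgeo}'' is already in the paper as Lemma~\ref{lem:Fnotransexp}; you should cite it rather than re-derive it. (b) The convolution identity you write, $\Pbl{\{\varphi_{F_i}\leq t,X_t\in A\}}=\sum_{x\in F_i}\int_0^t \Pbl{\{\varphi_{F_i}\in ds, X_{\varphi_{F_i}}=x\}}\,p_{t-s}(x,A)$, is correct, but it does not follow from the strong Markov property (Theorem~\ref{thstrmk}) by a single application, because the path functional you need, $1_A(X^{\varphi_{F_i}}_{t-\varphi_{F_i}})$, depends on the stopping time itself and not merely on the shifted path. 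The paper's hint sidesteps this by decomposing the event $\{\varphi_{F_i}\leq t, X_t\in A, t<T_\infty\}$ over jump-chain states and waiting times and applying Theorem~\ref{thrm:condind} directly, which is both airtight and easier to check. If you want to apply Theorem~\ref{thstrmk} instead, you should first discretise $\varphi_{F_i}$ (or disintegrate over $s$), then pass to the limit — this is doable but worth making explicit rather than leaving implicit.

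The reverse direction of your argument is fine and matches the paper's: take one state per closed communicating class, apply Theorem~\ref{thrm:kendallct} to get light return-time tails, promote to $F$ via Lemma~\ref{lem:geotrialargct}$(iii)$, and invoke Theorem~\ref{thrm:fosexprec}.
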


To see why  the finitely-many-closed-communicating-classes requirement in the converse is necessary, consider a chain on an infinite state space whose rate matrix is the matrix of zeros.
\subsubsection*{An open question} To the best of my knowledge, whether or not the no-transient-states requirement in the converse is necessary remains an open question. Similarly as in the discrete-time case, I believe that the answer here lies in that of the open question discussed in Section~\ref{sec:kendallct}.

\subsubsection*{Proving Theorem~\ref{thrm:fostersexp}}
To prove Theorem~\ref{thrm:fostersexp}, we require the exponential analogue of Lemma~\ref{lem:Fnotransct} which tells us that we lose nothing by assuming that the finite set $F$ does not contain any transient states:
\begin{lemma}\label{lem:Fnotransexp}Let $F$ be a finite set such that $\Eb_\gamma[e^{\alpha\varphi_F}]<\infty$ and $\Eb_x[e^{\alpha\varphi_F}]<\infty$ for all $x\in\s$ and some $\alpha>0$. There exists a $\beta>0$ such that, after removing all transient states from $F$,  $\Eb_\gamma[e^{\beta\varphi_{F}}]<\infty$ and  $\Eb_x[e^{\beta\varphi_{F}}]<\infty$ for all $x$ in $\s$.
\end{lemma}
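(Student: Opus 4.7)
The plan is to mirror the strategy used for Lemma~\ref{lem:Fnotransct}, simply swapping out the basic geometric-trials fact (Lemma~\ref{lem:geotrialargct}$(i)$) for its exponential counterpart (Lemma~\ref{lem:geotrialargct}$(iii)$). Concretely, I would proceed by removing transient states from $F$ one at a time, and at each removal step use Lemma~\ref{lem:geotrialargct}$(iii)$ to propagate an exponential moment bound from the old set to the reduced set; since $F$ is finite and only contains finitely many transient states, this process terminates after finitely many iterations.

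The first step is to fix a transient state $z\in F$ and let $F_z:=F\setminus\{z\}$. I would then argue that $F_z$ is accessible from $z$ in the sense of Definition~\ref{def:accesiblect}. For if it were not, then the finiteness of $\Ebz{e^{\alpha\varphi_F}}$ (given in the hypothesis) would imply in particular that $\Pbz{\{\varphi_F<\infty\}}=1$, which combined with $\{z\}\not\to F_z$ would force $\Pbz{\{\varphi_z<\infty\}}=1$, contradicting the transience of $z$. Next, for any $x\in F$, either $x\to F_z$ directly or $x\to z$; in the latter case, transitivity of $\to$ combined with $z\to F_z$ yields $x\to F_z$. Hence $F\to F_z$.

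Once $F\to F_z$ is in hand, I would invoke Lemma~\ref{lem:geotrialargct}$(iii)$ with $B:=F_z$. Since by hypothesis $\Ebx{e^{\alpha\varphi_F}}<\infty$ for all $x\in F$ and $\Ebl{e^{\alpha\varphi_F}}<\infty$, the lemma delivers some $\beta>0$ (independent of the initial distribution, as inspection of the proof of the discrete-time counterpart Lemma~\ref{lem:geotrialarg}$(iii)$ shows) such that $\Ebl{e^{\beta\varphi_{F_z}}}<\infty$. Applying the same lemma once with $\gamma$ and once with $1_x$ for each $x\in\s$ (using the \emph{same} constant $\beta$ obtained from $F$, $\alpha$, and the geometric-trials constants), one gets $\Ebx{e^{\beta\varphi_{F_z}}}<\infty$ for all $x\in\s$ as well. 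The hypotheses of the lemma are thereby restored with $F_z$ playing the role of $F$ and $\beta$ the role of $\alpha$.

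Finally, I would iterate: since $F$ contains only finitely many transient states, repeating this removal procedure at most $|\{z\in F:z\text{ transient}\}|$ times produces a subset $F'\subseteq F$ containing only non-transient states such that both $\Ebl{e^{\beta'\varphi_{F'}}}<\infty$ and $\Ebx{e^{\beta'\varphi_{F'}}}<\infty$ for all $x\in\s$, for some $\beta'>0$. The main obstacle here is really bookkeeping rather than substance: one must check that the constant $\beta$ produced by Lemma~\ref{lem:geotrialargct}$(iii)$ does not depend on the initial distribution (so that a single $\beta$ works uniformly in $x$) and that only finitely many iterations are required (so the exponent does not collapse to zero). Both of these come out of the finiteness of $F$ and the structure of the discrete-time proof of Lemma~\ref{lem:geotrialarg}$(iii)$ transferred verbatim to continuous time via the exercise in Section~\ref{sec:geometrictrialct}.
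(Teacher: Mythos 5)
Your proposal is correct and follows essentially the same approach as the paper's one-line proof, which simply substitutes Lemma~\ref{lem:geotrialargct}$(iii)$ into the proof of Lemma~\ref{lem:Fnotrans0}. You have correctly filled in the two details the paper leaves implicit---that $\Ebz{e^{\alpha\varphi_F}}<\infty$ implies $\Pbz{\{\varphi_F<\infty\}}=1$ (so that the transience argument for $z\to F_z$ goes through), and that the $\beta$ produced by the geometric-trials lemma is uniform over initial distributions so the hypothesis is restored at each iteration---and the finiteness of $F$ indeed guarantees the exponent only degrades finitely many times.
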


\begin{proof}Replace Lemma~\ref{lem:geotrialarg}$(i)$ with Lemma~\ref{lem:geotrialargct}$(iii)$ in the proof of Lemma~\ref{lem:Fnotrans0}.
\end{proof}
%
%
Given the above, the proof of Theorem~\ref{thrm:fostersexp} is entirely analogous to that of its discrete-time counterpart:
\begin{exercise}Prove Theorem~\ref{thrm:fostersexp} by adapting the proof of its discrete-time counterpart (Theorem~\ref{thrm:fostersgeo}). To do so, replace Theorems \ref{thrm:fosters}, \ref{thrm:fosgeorec}, and \ref{thrm:kendall}, Lemmas~\ref{lem:geotrialarg} and \ref{lem:Fnotransgeo}, and Proposition~\ref{prop:closedis} with Theorems \ref{thrm:fostersct}, \ref{thrm:fosexprec}, and \ref{thrm:kendallct}, Lemmas~\ref{lem:geotrialargct} and \ref{lem:Fnotransexp}, and Proposition~\ref{prop:closedisct}, respectively.  Hint: to prove the continuous-time equivalent of \eqref{eq:fnmuy832nyfeas255} proceeds as follows:

\begin{align}\label{eq:fn738an738ahnw3f80}\{\varphi_{F_i}\leq t, X_t\in A,t<T_\infty\}
&=\bigcup_{n=1}^\infty\{\varphi_{F_i}\leq T_n, Y_n\in A,T_n\leq t<T_{n+1}\}\\
&=\bigcup_{x\in F_i}\bigcup_{n=1}^\infty\bigcup_{m=1}^n\{\varphi_{F_i}=T_m,Y_m=x, Y_n\in A,T_n\leq t<T_{n+1}\}\nonumber\\
&=\bigcup_{x\in F_i}\bigcup_{m=1}^\infty\bigcup_{n=m}^\infty\{\varphi_{F_i}=T_m, Y_{m}=x, Y_n\in A,T_n\leq t<T_{n+1}\}.\nonumber
\end{align}
If $n>m$, 
\begin{align}
&\{\varphi_{F_i}=T_m, Y_m=x, Y_n\in A,T_n\leq t<T_{n+1}\}\nonumber\\
&=\left\{\varphi_{F_i}=T_m,T_m\leq t, Y_{m}=x, Y_n\in A,\sum_{k=m+1}^nS_k\leq t-T_m<\sum_{k=m+1}^{n+1}S_k\right\}\nonumber\\
&=\bigcup_{x_1\in\s}\dots\bigcup_{x_{n-m-1}\in\s}\bigcup_{x_{n-m}\in A}\bigg\{\varphi_{F_i}=T_m,T_m\leq t, Y_m=x_0, Y_{m+1}=x_1,\dots, \label{eq:nfe8wa79nbe8wa7fyna8nfa}\\
&\qquad\qquad Y_{n-1}=x_{n-m-1},Y_{n-m}=x_{n-m},\sum_{k=1}^{n-m}S_{m+k}\leq t-T_m<\sum_{k=1}^{n-m+1}S_{m+k}\bigg\}\nonumber
\end{align}
Using Theorem~\ref{thrm:condind}, Lemma~\ref{lem:etatheta}, Theorem~\ref{thrm:stopdc}, Exercise~\ref{ex:entdc}, and the definitions of the jump chain $Y$ and of the waiting times $(S_n)_{n\in\zp}$ in the Kendall-Gillespie algorithm (Algorithm~\ref{gilalg}), it is not too difficult to show that
\begin{align}\Pb_\gamma\bigg(\bigg\{&\varphi_{F_i}=T_m,T_m\leq t,Y_m=x,Y_{m+1}=x_1,\dots,  Y_{n-1}=x_{n-m-1},Y_{n-m}=x_{n-m},\label{eq:nfe8wa79nbe8wa7fyna8nfa2}\\
&\sum_{k=1}^{n-m}S_{m+k}\leq t-T_m<\sum_{k=1}^{n-m+1}S_{m+k}\bigg|\cal{G}_m\bigg\}\bigg)=1_{\{\varphi_{F_i}=T_m,T_m\leq t, Y_m=x\}}g^{n,m,x,t}_{x_1,\dots,x_{n-m}}(T_m)\nonumber
\end{align}
$\Pb_\gamma$-almost surely, where $(\cal{G}_n)_{n\in\n}$ denotes the filtration generated by the jump chain and jump times (Definition~\ref{def:filt2}),
$$g^{n,m,x,t}_{x_1,\dots,x_{n-m}}(u):=p(x,x_1)\dots p(x_{n-m-1},x_{n-m})\int_0^{t-u}f_x*f_{x_1}*\dots*f_{x_{n-m-1}}(s)\int_{t-u-s}^\infty f_{x_{n-m}}(r)drds$$
for all $u$ in $[0,t]$, $f_z$ denotes the pdf of an exponential random variable with mean $1/\lambda(z)$ and $*$ denotes the convolution operator. Using Theorem~\ref{thrm:condind} (or Theorem~\ref{thrm:pathlawunict}), we have that
\begin{align*}
g^{n,m,x,t}_{x_1,\dots,x_{n-m}}(u)=\Pb_{x}\bigg(\bigg\{&Y_{1}=x_1,\dots,  Y_{n-m-1}=x_{n-m-1},Y_{n-m}=x_{n-m},\\
&\sum_{k=1}^{n-m}S_{m}\leq t-u<\sum_{k=1}^{n-m+1}S_{m}\bigg\}\bigg)\quad\forall u\in[0,t].
\end{align*}
Summing over $x_1,\dots,x_{n-m}$ in $A$, we find that
$$g^{n,m,x,t}_A(u):=\sum_{x_1\in\s}\dots\sum_{x_{n-m-1}\in\s}\sum_{x_{n-m}\in A}g^{n,m}_{x_1,\dots,x_{n-m}}(u)=\Pbx{\{X_{t-u}\in A, T_{n-m}\leq t-u<T_{n-m+1}\}},$$
for all $m>n$. Moreover it follows from \eqref{eq:nfe8wa79nbe8wa7fyna8nfa}--\eqref{eq:nfe8wa79nbe8wa7fyna8nfa2} that
\begin{equation}\label{eq:fn738an738ahnw3f8}\Pbl{\{\varphi_{F_i}=T_m, Y_m=x, Y_n\in A,T_n\leq t<T_{n+1}\}|\cal{G}_m}=1_{\{\varphi_{F_i}=T_m,T_m\leq t, Y_m=x\}}g^{n,m,x,t}_A(T_m),\end{equation}
$\Pb_\gamma$-almost surely, for all $m>n$. Using the same kind of argument, we also find that the above two also hold for $n=m$. Because
$$g^{x,t}_A(u):=\sum_{n=m}^\infty g^{n,m,x,t}_A(u)=\Pbx{\{X_{t-u}\in A,  t-u<T_{\infty}\}}\quad\forall u\in[0,t],\enskip x\in F_i,\enskip i\in\cal{I},$$
it follows from ~\eqref{eq:fn738an738ahnw3f80}~and~\eqref{eq:fn738an738ahnw3f8} that
\begin{align*}\Pb_\gamma(\{\varphi_{F_i}\leq t, X_t\in A,t<T_\infty\})&=\sum_{x\in F_i}\sum_{m=1}^\infty\Ebl{1_{\{\varphi_{F_i}=T_m,T_m\leq t, Y_m=x\}}g^{x,t}_A(T_m)}\\
&=\sum_{x\in F_i}\sum_{m=1}^\infty\Ebl{1_{\{\varphi_{F_i}=T_m,\varphi_{F_i}\leq t, X_{\varphi_{F_i}}=x\}}g^{x,t}_A(\varphi_{F_i})}\\
&=\sum_{x\in F_i}\Ebl{1_{\{\varphi_{F_i}\leq t, X_{\varphi_{F_i}}=x\}}g^{x,t}_A(\varphi_{F_i})}\quad\forall i\in\cal{I}.
\end{align*}
Applying  Lemma~\ref{lem:geotrialargct}$(iii)$ and Theorem~\ref{thrm:fosexprec} it is not difficult to show that all states in $F_i$ are exponentially recurrent. The argument given at the end of the proof of Theorem~\ref{thrm:kendallct} then shows that for any $x$ in $F_i$, 
$$\mmag{g^{x,t}_A(u)-\pi_i(A)}\leq\norm{p_{t-u}(x,\cdot)-\pi_i}\leq C_xe^{-\beta_x (t-u)}\quad\forall A\subseteq\s,\enskip u\in[0, t],\enskip t\in[0,\infty),$$
for some constants $C_x<\infty$ and $\beta_x>0$ depending on $x$. Because $F$ is finite and $F=\cup_{i\in\cal{I}}F_i$, the above implies that there exists some $C<\infty$ and $\beta>0$ such that
$$\mmag{g^{x,t}_A(u)-\pi_i(A)}\leq Ce^{-\beta (t-u)},\quad\forall A\subseteq \s,\enskip x\in F_i,\enskip i\in\cal{I},\enskip u\in[0, t],\enskip t\in[0,\infty).$$
Because Proposition~\ref{prop:closedisct} implies that $\varphi_{F_i}$ is finite if and only if $\varphi_F$ is and $\varphi_{F_i}=\varphi_{F}$ and Theorem~\ref{thrm:fostersexp} shows that $\varphi_F$ is $\Pb_\gamma$-almost surely finite, it follows from the above that
\begin{align*}
\sum_{i\in\cal{I}}|\Pb_\gamma(\{\varphi_{F_i}\leq t, X_t\in A,&t<T_\infty\})-\Pbl{\{\varphi_{F_i}\leq t\}}\pi_i(A)|\\
&\leq \sum_{i\in\cal{I}}\sum_{x\in F_i}\Ebl{1_{\{\varphi_{F_i}\leq t, X_{\varphi_{F_i}}=x\}}\mmag{g^{x,t}_A(\varphi_{F_i})-\pi_i(A)}}\\
&\leq \sum_{i\in\cal{I}}\sum_{x\in F_i}\Ebl{1_{\{\varphi_{F_i}\leq t, X_{\varphi_{F_i}}=x\}}Ce^{-\beta ({t-\varphi_{F_i}})}}\\
&=Ce^{-\beta t}\sum_{i\in\cal{I}}\Ebl{1_{\{\varphi_{F_i}\leq t\}}e^{\beta \varphi_{ F_i}}}=Ce^{-\beta t}\Ebl{e^{\beta \varphi_{ F}}}\quad\forall t\in[0,\infty).
\end{align*}
\end{exercise}

\subsection{Farewell}
%
%
%
I originally wrote the blurb below for my thesis. It feels right here too.
\\\\
\noindent$\dots$ there is yet much to be done to enable the quantitative analysis of chains with large state spaces $\dots$ The idiom `there is no rest for the wicked' comes to mind. However, I believe that quite the opposite is true given how much \emph{fun} these things can be. In the case that this is something you might like to get involved in (or already are), I wish you the best of luck: I am rooting for your success, \emph{especially} in the areas where I did not meet mine (or, at best, met it partially).  I hope that these questions and issues bring as much delight to your life as they have to mine.
\begin{quotation}``The things with which we concern ourselves in science appear in myriad forms, and with a multitude of attributes. For example, if we stand on the shore and look at the sea, we see the water, the waves breaking, the foam, the sloshing motion of the water, the sound, the air, the winds and the clouds, the sun and the blue sky, and light; there is sand and there are rocks of various hardness and permanence, color and texture. There are animals and seaweed, hunger and disease, and the observer on the beach; there may be even happiness and thought.''\end{quotation}
\begin{quotation}
\emph{Richard Feynman in the first volume of his lectures on physics.}\end{quotation}

\ifdraft

\newpage

\part{Practice}

\sectionmark{\MakeUppercase{}}

\newpage
\thispagestyle{premain}
\sectionmark{\MakeUppercase{State space truncation}}
\section*{State space truncation}
\addcontentsline{toc}{section}{\protect\numberline{}State space truncation}

In theory, computing the statistics of a chain (e.g., the time-varying law, exit time distributions, stationary distributions, or averages thereof) merely entails solving a system of linear equations of one type or another. The catch is that the system in question typically has as many equations and unknowns as there are states in the state space. If the state space is infinite, or finite but large, then these equations cannot be solved.

In this chapter, we discuss \emph{truncation-based} methods that are commonly used to overcome this issue. The general recipe most of them follow is very simple:
\begin{enumerate}
\item Choose a subset, or \emph{truncation}, $\s_r$ of the the state space $\s$ that is of manageable size.
\item Build a second chain $X^r$ that behaves similarly to our chain of interest $X$ but that never leaves $\s_r$.
\item Compute the statistics of $X^r$ (something straightforward if the truncation is small enough).
\item Use these as approximations of the statistics of $X$.\end{enumerate}
The $\s_r$-valued chain $X^r$ should be tailored to the particular statistics of interest. In some cases, it is even possible to construct $X$ and $X^r$ such that the statistics of one dominate the other and we are able to bound, if not outright compute, the error of the approximation produced: a significant practical advantage.

It is typically the case with these methods that adding states to the truncation used improves the approximation's quality. In exchange, larger truncations lead to greater computational costs. To explore these matters, we often view truncation-based methods as procedures that return an entire sequence of approximations corresponding to a sequence of increasing truncations 
$$\s_1\subseteq\s_2\subseteq\dots,$$
instead of a single approximation corresponding to a single truncation. 

A sanity check for the correctness of these methods is establishing their \emph{convergence}: their ability to produce arbitrarily accurate approximations given enough computational power. In other words, showing that, if the truncations approach the entire state space,
\begin{equation}\label{eq:trunc1}\lim_{r\to\infty}\s_r=\bigcup_{r=1}^\infty\s_r=\s,\end{equation}
then the corresponding sequence of approximations converges, in one sense or another, to statistic of interest.

Other aspects we typically aim to investigate are how to pick the truncation to obtain the best possible approximation for a given computational budget; whether there are efficient implementations of the method that allow us to further stretch our budget; and, when possible, the method's rate of convergence (for a given sequence of truncations) and its computational cost (for a given truncation) which, together, inform us on its limitations.


\subsection[The finite state projection algorithm]{The finite state projection (FSP) algorithm: truncations with absorbing complements for the time-varying law}\label{sec:fsp}
We begin with the the so-called \emph{finite state projection (FSP) algorithm} used to approximate time-varying laws of chains with large state spaces. In particular, time-varying laws of chains with state spaces large enough that we are unable to directly solve the chain's forward equations~\eqref{eq:master} or their discrete-time counterparts~\eqref{eq:dtlaw}, as appropriate.

In a nutshell, the algorithm consists of computing the time-varying law of the chain $X^r$ obtained by turning all states outside of a user-chosen truncation $\s_r$ into absorbing states and using it as an approximation to the time-varying law of the original chain $X$.

\subsubsection*{The discrete-time case}

Throughout this section, $X$ denotes a discrete-time chain with a large state space $\s$, one-step matrix $P=(p(x,y))_{x,y\in\s}$, and initial distribution $\gamma=(\gamma(x))_{x\in\s}$. Our aim is to obtain accurate approximations of the chain's distribution $p_n=(p_n(x))_{x\in\s}$ at some time $n$. The  discrete-time version of the FSP algorithm developed to this end entails choosing a finite truncation $\s_r$ of the state space, computing $(p^r_n(x))_{x\in\s_r}$ by running the recursion
\begin{equation}\label{eq:fspdtrec}p^r_{m+1}(x)=\sum_{y\in\s_r}p^r_m(y)p(y,x)\quad\forall x\in\s_r,\enskip m<n,\qquad p^r_0(x)=\gamma(x)\quad\forall x\in\s_r,\end{equation}
padding $(p^r_n(x))_{x\in\s_r}$ with zeros
\begin{equation}\label{eq:fspdtpad}p_n^r(x):=0\qquad \forall x\not\in\s_r,\end{equation}
and using the resulting measure $p^r_n:=(p_n^r(x))_{x\in\s}$ as an approximation of $p_n$. This approximation has appealing theoretical properties: 
\begin{theorem}[The finite state projection algorithm, discrete-time version]\label{thrm:fspdt} Let $(\s_r)_{r\in\zp}$ be an increasing sequence of finite subsets of $\s$, $(p^r_n)_{r\in\zp}$ be the corresponding sequence of FSP approximations defined by~\eqref{eq:fspdtrec}--\eqref{eq:fspdtpad}, and $(\sigma_r)_{r\in\zp}$ that of exit times defined by~\eqref{eq:sigmar}.
\begin{enumerate}[label=(\roman*)]
\item \label{th:fspdt_i}The approximations form an increasing sequence of lower bounds on the time-varying law:
%
%
$$p_n^1(x)\leq p_n^2(x)\leq \dots\leq p_n(x)\quad\forall x\in\s,\enskip n\geq0.$$
\item  \label{th:fspdt_ii}The mass of the approximation is the probability that the chain has not yet exited the truncation:
$$p_n^r(\s)=p_n^r(\s_r)=\Pbl{\{\sigma_r>n\}}\quad\forall n\geq0,\enskip r>0.$$
\item \label{th:fspdt_iii}The total variation approximation error is the probability that $X$ has left the truncation:
\begin{equation}\label{eq:fsperrdt}\norm{p_n-p_n^r}= \Pbl{\{\sigma_r\leq n\}}=1-p_n^r(\s_r)=:\epsilon_r\quad\forall n\geq0,\enskip r>0,\end{equation}
where $\norm{\cdot}$ denotes the total variation norm in \eqref{eq:tvnorm}.
\item \label{th:fspdt_iv}The approximation error decreases with $r$:
$$\norm{p_n-p^r_n}\leq \norm{p_n-p^s_n}\quad \forall s\leq r, \enskip  r>0,\enskip n\geq0,$$
and increases with $n$:
$$\norm{p_n-p^r_n}\geq \norm{p_l-p^r_l} \quad \forall l\leq n, \enskip n\geq0,\enskip \forall r>0.$$
%
%
\item \label{th:fspdt_v}If $\cup_{r=1}^\infty\s_r=\s$, then the approximation converges to the time-varying law:
$$\lim_{r\to\infty}\norm{p_n-p_n^r}=0\quad\forall n\geq0.$$
\end{enumerate}
\end{theorem}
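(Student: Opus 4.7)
The central observation that unlocks every part of the theorem is the probabilistic interpretation of the FSP iterates: for all $n \geq 0$, $r > 0$, and $x \in \s$,
\begin{equation}\label{eq:fspprob}
p_n^r(x) = \Pb_\gamma(\{\sigma_r > n,\, X_n = x\}),
\end{equation}
where $\sigma_r$ is the exit time from $\s_r$ defined in \eqref{eq:sigmar}. My plan is to establish \eqref{eq:fspprob} first and then derive \ref{th:fspdt_i}--\ref{th:fspdt_v} as short consequences. To prove \eqref{eq:fspprob}, I would argue by induction on $n$. The base case $n=0$ is immediate from $p_0^r(x) = 1_{\s_r}(x)\gamma(x) = \Pb_\gamma(\{X_0 = x,\, X_0 \in \s_r\})$. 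For the inductive step, observe that $\{\sigma_r > n+1,\, X_{n+1} = x\}$ is non-empty only when $x \in \s_r$, and in that case it decomposes as $\bigcup_{y\in\s_r}\{\sigma_r > n,\, X_n = y,\, X_{n+1} = x\}$; marginalising~\eqref{eq:nstepthe} and regrouping, or (more cleanly) invoking Theorem~\ref{characttd} with $\cal{D} = \s_r$ and reading off the occupation measure, yields \eqref{eq:fspprob}. Alternatively, one may observe directly that the recursion \eqref{eq:fspdtrec} and the truncated forward equation~\eqref{eq:hatdtimelaw} coincide on $\s_r$, so the identification is a restatement of Theorem~\ref{characttd}.

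Given \eqref{eq:fspprob}, part \ref{th:fspdt_ii} is an immediate summation over $x \in \s_r$. Part~\ref{th:fspdt_i} follows from two monotone inclusions: since $\s_r \subseteq \s_s$ for $r \leq s$, the event $\{\sigma_r > n\}$ (the chain has remained inside $\s_r$ through time $n$) is contained in $\{\sigma_s > n\}$, giving $p_n^r(x) \leq p_n^s(x)$; and dropping the event $\{\sigma_r > n\}$ altogether yields $p_n^r(x) \leq p_n(x)$. For part \ref{th:fspdt_iii}, the pointwise inequality $p_n \geq p_n^r$ means $p_n - p_n^r$ is a non-negative measure, so its total variation $\sup_{A\subseteq \s}|p_n(A)-p_n^r(A)|$ is attained at $A = \s$, giving $\norm{p_n - p_n^r} = 1 - \Pb_\gamma(\{\sigma_r > n\}) = \Pb_\gamma(\{\sigma_r \leq n\})$.

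Part~\ref{th:fspdt_iv} then reduces to two elementary monotonicities of $\Pb_\gamma(\{\sigma_r \leq n\})$: it is non-increasing in $r$ because $\s_s \supseteq \s_r$ implies $\{\sigma_s \leq n\} \subseteq \{\sigma_r \leq n\}$, and non-decreasing in $n$ because $\{\sigma_r \leq n\}$ grows with $n$. Finally, for part~\ref{th:fspdt_v}, by Lemma~\ref{lem:dtexitrinf} the sequence $(\sigma_r)_{r\in\zp}$ is increasing with $\Pb_\gamma$-almost sure limit $\infty$ whenever $\cup_r \s_r = \s$; therefore $(\{\sigma_r \leq n\})_{r\in\zp}$ is a decreasing sequence of events with empty almost-sure intersection, and downwards continuity of $\Pb_\gamma$ yields $\norm{p_n - p_n^r} = \Pb_\gamma(\{\sigma_r \leq n\}) \to 0$.

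I do not anticipate any serious obstacle: once \eqref{eq:fspprob} is in hand, everything is bookkeeping with probability. The one place requiring a little care is making the identification \eqref{eq:fspprob} crisp---in particular, checking that on $\{\sigma_r > n, X_n = x\}$ with $x \in \s_r$ we may freely add or remove the constraint $x \in \s_r$---but this is essentially a definitional unwinding rather than a substantive difficulty.
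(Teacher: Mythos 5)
Your proposal is correct, and its approach matches the paper's: both ultimately rest on identifying $p_n^r$ with the (space-time) occupation measure of the exit time $\sigma_r$ via Theorem~\ref{characttd}. The one organizational difference is that you state the full pointwise identity $p_n^r(x)=\Pbl{\{\sigma_r>n,X_n=x\}}$ up front and derive parts~(i)--(iii) from event inclusions, whereas the paper proves~(i) by a direct induction on the recursion~\eqref{eq:fspdtrec} and only invokes the probabilistic formula for the \emph{mass} $p_n^r(\s_r)$ in~(ii); your version makes the mechanism slightly more transparent and avoids the separate algebraic induction, but the content is the same.
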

We leave the details of the proof to end of the section and instead focus here on the simple ideas underpinning the proof. Consider a second chain $X^r$ which is identical to $X$ except that every state outside of the truncation $\s_r$ has been turned into an absorbing state. In other words, $X^r$ has one-step matrix
\begin{equation}\label{eq:pr}p^r(x,y):=\left\{\begin{array}{ll}p(x,y)&\text{if }x\in\s_r\\ 0&\text{if }x\not\in\s_r\end{array}\right.\quad\forall x,y\in\s.\end{equation}
Following the same type of approach as that in Lemma \ref{samechainsh}, we are able to build $X^r$ such that $X$ and $X^r$ coincide up until (and including) the time-step $\sigma_r$ that they both simultaneously leave the truncation $\s_r$ for the first time. At this point, $X^r$ becomes trapped in whichever state outside of the truncation it just entered and never returns to $\s_r$. In contrast, $X$ may return to the truncation. For this reason, the probability $p_n(x)$ that $X$ is at any given state $x$ inside the truncation at time $n$ is at least the probability that $X^r$ is in the same state at the same time. Theorem~\ref{dtlawchar} tells us that the time-varying law of $X^r_n$ (restricted to $\s_r$) is the solution of \eqref{eq:fspdtrec} and we arrive at Theorem \ref{thrm:fspdt}$\ref{th:fspdt_i}$. 

Because $X^r$ never returns to the truncation once it leaves, the probability $p^r_n(\s_r)$ that $X^r$ is inside the truncation at time $n$ is the same as the probability $\Pbl{\{\sigma_r>n\}}$ that it has not yet left. Theorem \ref{thrm:fspdt}$\ref{th:fspdt_ii}$--$\ref{th:fspdt_iii}$ follow from this fact. If $X^r$ has not left the truncation by time $n$, then it has not left the \emph{larger} truncated space $\s_{r+1}$ by $n$. Similarly, if the chain has not left $\s_r$ by time $n$, it has not left by any earlier time $l\leq n$. For these reasons, Theorem~\ref{thrm:fspdt}$\ref{th:fspdt_iv}$ holds.

If the truncations approach the entire state space, then the probability $\Pbl{\{\sigma_r>n\}}$ that the chain has exited the $r$th truncation by time-step $n$ decays to zero as $r$ tends to infinity. Theorem~\ref{thrm:fspdt}$\ref{th:fspdt_v}$ then follows directly from  Theorem~\ref{thrm:fspdt}$\ref{th:fspdt_iii}$.


\subsubsection*{The continuous-time case}The continuous-time version of the FSP algorithm yields approximations of the distribution $p_t$ at time $t$ of a continuous-time chain $X$ with state space $\s$, rate matrix $Q=(q(x,y))_{x,y\in\s}$, initial distribution $\gamma=(\gamma(x))_{x\in\s}$, and explosion time $T_\infty$. It consists of picking a finite truncation $\s_r$ of the state space, computing $(p^r_t(x))_{x\in\s_r}$ by solving the ODEs
\begin{equation}\label{eq:fsprec}\dot{p}^r_{s}(x)=\sum_{y\in\s_r}p^r_s(y)q(y,x)\quad\forall x\in\s_r,\enskip s\leq t,\qquad p^r_0(x)=\gamma(x)\quad\forall x\in\s_r,\end{equation}
padding $(p^r_t(x))_{x\in\s_r}$ with zeros
\begin{equation}\label{eq:fsppad}p_t^r(x):=0\qquad \forall x\not\in\s_r,\end{equation}
and using the resulting measure $p^r_t:=(p_t^r(x))_{x\in\s}$ as an approximation of $p_t$. The approximation's properties are (almost!) entirely analogous to those of its discrete-time counterpart:
\begin{theorem}[The finite state projection algorithm, continuous-time version]\label{thrm:fsp} 
Let $(\s_r)_{r\in\zp}$ be an increasing sequence of finite subsets of $\s$, $(p^r_t)_{r\in\zp}$ be the corresponding sequence of FSP approximations defined by~\eqref{eq:fsprec}--\eqref{eq:fsppad}, and $(\tau_r)_{r\in\zp}$ that of exit times defined by~\eqref{eq:taurexit}.
\begin{enumerate}[label=(\roman*)]
\item The approximations form an increasing sequence of lower bounds on the time-varying law: \label{th:fsp_i}
%
%
$$p_t^1(x)\leq p_t^2(x)\leq \dots\leq p_t(x)\quad\forall x\in\s,\enskip  t\geq0.$$
\item \label{th:fsp_ii}
The mass of the approximation is the probability that the chain has not yet exited the truncation:
$$p_t^r(\s)=p_t^r(\s_r)=\Pbl{\{\tau_r>t\}}\quad\forall t\geq0,\enskip r>0.$$
\item\label{th:fsp_iii} The total variation approximation error is the probability that $X$ has left the truncation but has not yet exploded:
\begin{equation}\label{eq:fsperr}\norm{p_t-p_t^r}=\Pbl{\{\tau_r\leq t<T_\infty\}}\leq 1-p_t^r(\s_r)=:\epsilon_r\quad\forall t\geq0,\enskip r>0,\end{equation}
where $\norm{\cdot}$ denotes the total variation norm in \eqref{eq:tvnorm}. Equality holds if and only if  the chain is non-explosive (i.e., $\Pbl{\{T_\infty=\infty\}}=1$). 
\item\label{th:fsp_iv}
The approximation error $\norm{p_t-p_t^r}$ and its upper bound $\epsilon_r$ decrease with $r$:
\begin{equation}\label{eq:fspermon1}\norm{p_t-p^r_t}\leq \norm{p_t-p^s_t},\quad 1-p^r_t(\s_r)\leq 1-p^s_t(\s_s) \quad \forall s\leq r, \enskip r>0,\enskip  t\geq0, 
\end{equation}
and increase with $t$:
\begin{equation}\label{eq:fspermon2}\norm{p_t-p^r_t}\geq \norm{p_u-p^r_u} ,\quad  1-p^r_t(\s_r)\geq 1-p^r_u(\s_r) \quad \forall u\leq t, \enskip t\geq0, \enskip r>0.
\end{equation}
%
%
\item\label{th:fsp_v}If $\cup_{r=1}^\infty\s_r=\s$, then the approximations converge to the time-varying law:
$$\lim_{r\to\infty}\norm{p_t-p_t^r}=0,\quad \forall t\geq0.$$
\end{enumerate}
\end{theorem}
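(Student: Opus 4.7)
The plan is to mimic the discrete-time argument of Theorem~\ref{thrm:fspdt}, using the continuous-time coincidence machinery. First I would apply Lemma~\ref{samechainsh} with $\bar{Q}=(\bar{q}(x,y))_{x,y\in\s}$ defined by $\bar{q}(x,y):=q(x,y)$ for $x\in\s_r$ and $\bar{q}(x,y):=0$ for $x\notin\s_r$; this agrees with $Q$ on $\s_r$, so the lemma produces a chain $X^r$ on the same underlying space as $X$ that coincides with $X$ on $\{t\leq\tau_r,\,t<T_\infty\}$ and whose exit time from $\s_r$ is the same random variable $\tau_r$. Every state outside $\s_r$ is absorbing for $X^r$ (Proposition~\ref{prop:absorb}), and since the finite set $\s_r$ carries only bounded jump rates $X^r$ cannot explode inside $\s_r$; once it leaves $\s_r$ it is trapped. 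Hence $X^r$ is non-explosive. Because $\bar{Q}$ has bounded columns, Proposition~\ref{prop:forwardcondmild} and Theorem~\ref{thrm:forward} show that the time-varying law $\tilde{p}_t^r$ of $X^r$ satisfies $\dot{\tilde{p}}_t^r(x)=\sum_{y\in\s_r}\tilde{p}_t^r(y)q(y,x)$ for $x\in\s_r$; uniqueness for this finite linear ODE forces $\tilde{p}_t^r(x)=p_t^r(x)$ on $\s_r$. Coincidence, combined with the fact that $X^r$ cannot return to $\s_r$ after exiting, yields the key identity
$$p_t^r(x)=\Pbl{\{X_t^r=x,\,t<\tau_r^{X^r}\}}=\Pbl{\{X_t=x,\,t<\tau_r\}}\quad\forall x\in\s_r.$$

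\textbf{Paragraph 2 (parts \ref{th:fsp_i}--\ref{th:fsp_iii}).} These now read off from the identity above together with the standing inequality $\tau_r\leq T_\infty$. For~\ref{th:fsp_i}, $\{t<\tau_r\}\subseteq\{t<T_\infty\}$ gives $p_t^r(x)\leq p_t(x)$ on $\s_r$ (trivially for $x\notin\s_r$), and the nesting of the truncations forces $\tau_r\leq\tau_{r+1}$, hence monotonicity of $p_t^r(x)$ in $r$. Summing over $x\in\s_r$ and observing that $X_t\in\s_r$ whenever $t<\tau_r$ gives $p_t^r(\s_r)=\Pbl{\{t<\tau_r\}}$, which equals $p_t^r(\s)$ by the padding in~\eqref{eq:fsppad}; this is~\ref{th:fsp_ii}. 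For~\ref{th:fsp_iii}, $p_t-p_t^r$ is a non-negative measure by~\ref{th:fsp_i}, so its total-variation norm is its total mass:
$$\norm{p_t-p_t^r}=p_t(\s)-p_t^r(\s)=\Pbl{\{t<T_\infty\}}-\Pbl{\{t<\tau_r\}}=\Pbl{\{\tau_r\leq t<T_\infty\}};$$
comparing with $\epsilon_r=1-p_t^r(\s_r)=\Pbl{\{\tau_r\leq t\}}$ gives the upper bound, and since $\tau_r\leq T_\infty$ the gap between the two equals $\Pbl{\{T_\infty\leq t\}}$, which vanishes for every $t$ exactly when $X$ is non-explosive.

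\textbf{Paragraph 3 (parts \ref{th:fsp_iv}--\ref{th:fsp_v} and the main obstacle).} Monotonicity in $r$ of both $\Pbl{\{\tau_r\leq t<T_\infty\}}$ and $\Pbl{\{\tau_r\leq t\}}$ is immediate from $\tau_r\leq\tau_{r+1}$; monotonicity in $t$ of $\epsilon_r$ follows from $\{\tau_r\leq u\}\subseteq\{\tau_r\leq t\}$ for $u\leq t$. For~\ref{th:fsp_v}, Theorem~\ref{tautin} gives $\tau_r\uparrow T_\infty$ almost surely, so $1_{\{\tau_r\leq t<T_\infty\}}\downarrow 0$ pointwise and bounded convergence delivers $\norm{p_t-p_t^r}\to 0$. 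The one genuine subtlety I anticipate is the claimed $t$-monotonicity of $\norm{p_t-p_t^r}$ in~\ref{th:fsp_iv}: in the explosive case the mass $\Pbl{\{t<T_\infty\}}$ itself decays in $t$, and a quick check with, e.g., a pure-birth chain with rates $q(n,n+1)=2^n$ shows that $\Pbl{\{\tau_r\leq t<T_\infty\}}$ can first rise and then fall; the clean statement really wants non-explosivity (under which $\Pbl{\{\tau_r\leq t<T_\infty\}}=\Pbl{\{\tau_r\leq t\}}$, trivialising monotonicity), so I would either add that hypothesis or read the claim as applying to the $\epsilon_r$ bound alone. All other parts of~\ref{th:fsp_iv} are just inclusion chasing on top of the three representations already established.
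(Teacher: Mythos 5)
Your approach matches the paper's own proof step for step: both construct the coupled chain $X^r$ via Lemma~\ref{samechainsh} with $Q^r$ from \eqref{eq:qr}, exploit coincidence on $\{t\leq\tau_r,t<T_\infty\}$, and read parts (i)--(iii) and (v) off the identity $p^r_t(x)=\Pbl{\{X_t=x,t<\tau_r\}}$ for $x\in\s_r$.

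Your objection to the claimed $t$-monotonicity of $\norm{p_t-p^r_t}$ in part~\ref{th:fsp_iv} is warranted. The paper's one-sentence proof of (iv) only argues $r$-monotonicity (from $\tau_r$ increasing in $r$) and then asserts that the rest ``follows directly'' from (ii)--(iii); the $t$-monotonicity of the error is never actually established, and your counterexample shows it fails in general. Since $\{T_\infty\leq t\}\subseteq\{\tau_r\leq t\}$, part (iii) gives $\norm{p_t-p^r_t}=\Pbl{\{\tau_r\leq t\}}-\Pbl{\{T_\infty\leq t\}}$, a difference of two cumulative distribution functions with no reason to be monotone. Concretely, with $q(n,n+1)=2^n$ and the chain started at $0$, both $\tau_r=T_r$ and $T_\infty$ are $\Pb_\gamma$-almost-surely finite with $\tau_r<T_\infty$, so $\Pbl{\{\tau_r\leq t<T_\infty\}}$ vanishes at $t=0$, is positive for intermediate $t$, and tends to $0$ as $t\to\infty$; it is therefore not non-decreasing. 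The first inequality in \eqref{eq:fspermon2} thus fails in the explosive case. The $t$-monotonicity of the computable bound $\epsilon_r=\Pbl{\{\tau_r\leq t\}}$, being a cumulative distribution function in $t$, is fine, and the natural fix is the one you propose: either add a non-explosivity hypothesis (under which $\norm{p_t-p^r_t}=\epsilon_r$) or restrict the $t$-monotonicity claim to $\epsilon_r$ alone.
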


We also leave the proof of the above until the end of the section. The ideas behind it are entirely analogous to those in the discrete-time case except that, here, $X^r$ is a chain with rate matrix $Q^r=(q^r(x,y))_{x,y\in\s}$ defined by
\begin{equation}
\label{eq:qr}
q^r(x,y):=\left\{\begin{array}{ll}q(x,y)&\text{if }x\in\s_r\\0&\text{if }x\not\in\s_r\end{array}\quad\forall x,y\in\s_r.\right.
\end{equation}
There is only one real difference between Theorem~\ref{thrm:fsp} and its discrete-time counterpart (Theorem~\ref{thrm:fspdt}): the possibility of explosion means that the practically-computable $\epsilon_r$ may not be the actual error but rather an upper bound thereof. In particular,
$$\epsilon_r=\Pbl{\{\tau_r\leq t\}}\geq \Pbl{\{\tau_r\leq t<T_\infty\}}=\norm{p_t-p^r_t}$$
and the inequality is strict unless the chain is non-explosive (i.e., $\Pbl{\{T_\infty=\infty\}}=1$).

\subsubsection*{Practical use}

While I personally don't tend to make a distinction, the FSP algorithm is usually presented as an iterated version of the approach described above: 
\begin{enumerate}
\item Choose a desired error tolerance $\varepsilon>0$ and initial truncation.
\item Compute the FSP approximation corresponding to the truncation.
\item Evaluate  $\epsilon_r$  in~\eqref{eq:fsperrdt} or \eqref{eq:fsperr}, as appropriate.
\item If $\epsilon_r\leq\varepsilon$ stop and return the approximation. Otherwise, add more states to the truncation and return to Step 2.
\end{enumerate}
For discrete-time chains or non-explosive continuous-time ones, this is a perfectly sensible approach: $\epsilon_r$ tends to zero as the truncation $\s_r$ approaches the entire state space $\s$ (c.f., Theorems~\ref{thrm:fspdt}$\ref{th:fspdt_iii}$,~$\ref{th:fspdt_v}$ and \ref{thrm:fsp}$\ref{th:fsp_iii}$,~$\ref{th:fsp_v}$) and the approach will terminate regardless of the error tolerance $\varepsilon$ chosen. 
However, if the chain is explosive, the procedure can be problematic: $\epsilon_r$ is no longer the total variation error but an upper bound thereof. Moreover, $\epsilon_r$ does not converge to zero as $\s_r$ approaches $\s$ but instead converges to the probability $\Pbl{\{t\leq T_\infty\}}$ that the chain has exploded by time $t$. Thus, if $\varepsilon$ is chosen smaller than $\Pbl{\{t\leq T_\infty\}}$, the procedure will never terminate.

In discrete-time, the cost of computing the FSP approximation corresponding to a given truncation $\s_r$ is $\cal{O}(n\mmag{\s_r}^2)$, where $\mmag{\s_r}$ denotes the size of the $\s_r$, or $\cal{O}(n\mmag{\s_r}w_c)$ if the one-step matrix $P$ is sparse and its columns only have $w_c$ non-zero entries. In continuous-time, the exact cost depends on the  method used to solve the ODEs~\eqref{eq:fsprec}. For instance, if we use Euler's method with a step size of $h$, then the cost is $\cal{O}(t\mmag{\s_r}^2/h)$ or $\cal{O}(t\mmag{\s_r}w_c/h)$ in the sparse case. In general, the cost will grow at least linearly with the truncation size $\mmag{\s_r}$ and often quadratically or higher. 
Not all truncations of the same size are born equal: Theorems~\ref{thrm:fsp}\ref{th:fspdt_iii} and~\ref{thrm:fsp}\ref{th:fsp_iii} show that the truncations that lead to the smallest error possible are those that minimise the probability $p$ that the chain has left the truncation by the time point of interest. For these reasons, the practical success of the FSP approach often will crucially depend on our truncation choice. Unfortunately, choosing a truncation that minimises $p$ is easier than done. However, a good starting point is to guide this selection using a few sample paths obtained from simulation.

\subsubsection*{Proofs of Theorems~\ref{thrm:fspdt}~and~\ref{thrm:fsp}}It's time to deal with the proofs of the FSP approximation's theoretical properties. As usual, we begin with the discrete-time case:
\begin{proof}[Proof of Theorem~\ref{thrm:fspdt}]$\ref{th:fspdt_i}$ Suppose that $p^{r+1}_n(x)\geq p^r_n(x)$  for each $x$ in $\s_r$ and note that
$$p^{r+1}_{n+1}(x)=\sum_{y\in\s_{r+1}}p^{r+1}_n(y)p(y,x)\geq \sum_{y\in\s_r}p^{r+1}_n(y)p(y,x)\geq \sum_{y\in\s_r}p^{r}_n(y)p(y,x)= p^{r}_{n+1}(x).$$
Since $p^{r+1}_0(x)=p^{r}_0(x)=\gamma(x)$ for each $x$ in $\s_r$, induction yields all inequalities in $\ref{th:fspdt_i}$ but the rightmost one. The rightmost one follows by replacing $p^{r+1}$ and $\s_{r+1}$ with $p$ and $\s$ throughout the above and applying Theorem \ref{dtlawchar}.

$\ref{th:fspdt_ii}$ The first equation follows immediately from \eqref{eq:fspdtpad}. The second is trivial if $n=0$. Otherwise,  setting $\cal{D}=\s_r$ in \eqref{eq:hatdtimelaw} and comparing with \eqref{eq:fspdtrec},  we find that $\hat{p}_n(x)$ in the former equals $p^r_n(x)$ in the latter for all $x$ in $\s_r$. For this reason, \eqref{eq:edisdefd}, \eqref{eq:eoed2t}, and \eqref{eq:hatdtimelaw} imply that
$$\Pbl{\{\sigma_r=m+1,X_{\sigma_r}=x\}}=\sum_{y\in\s_r}p^r_{m}(y)p(y,x)\quad\forall x\not\in\s_r,\enskip r>0,\enskip m\geq0.$$
Thus,
\begin{align*}\Pbl{\{\sigma_r=m+1\}}&=\sum_{x\not\in\s_r}\Pbl{\{\sigma_r=m+1,X_{\sigma_r}=x\}}=\sum_{x\not\in\s_r}\sum_{y\in\s_r}p^r_{m}(y)p(y,x)\\
&=\sum_{y\in\s_r}p^r_{m}(y)\left(1-\sum_{x\in\s_r}p(y,x)\right)=p^r_{m}(\s_r)-p^r_{m+1}(\s_r)\quad\forall r>0,\enskip m\geq0. \end{align*}
Summing the above over $m=n,n+1,\dots$, then yields the desired $p^r_{n}(\s_r)=\Pbl{\{\sigma_r>n\}}$.

$\ref{th:fspdt_iii}$ Because the total variation norm of an unsigned measure is its mass, this follows immediately from $\ref{th:fspdt_i}$--$\ref{th:fspdt_ii}$.

$\ref{th:fspdt_iv}$  Because the truncations increase with $r$, the definition in~\eqref{eq:sigmar} of the exit times  $\sigma_r$ imply that these also increase with $r$, and $\ref{th:fspdt_iv}$ follows directly from $\ref{th:fspdt_iii}$.

$\ref{th:fspdt_v}$ Because $\sigma_r\to\infty$ as $r\to\infty$ with $\Pb_\gamma$-probability one (Lemma~\ref{lem:dtexitrinf}), this follows directly from $\ref{th:fspdt_iii}$.
%
\end{proof}
Now, for the continuous-time case:
\begin{proof}[Proof of Theorem \ref{thrm:fsp}] Throughout this proof, let $X^r$ be as in Lemma~\ref{samechainsh} with $\hat{Q}=(q^r(x,y))_{x,y\in\s}$ in \eqref{eq:qr} replacing $\bar{Q}$ in the lemma's premise. Theorem~\ref{thrm:forward} implies that $p^r_t$ in~\eqref{eq:fsprec} and coincide with the time-varying law of $X^r$ when both are restricted to $\s_r$:
$$p^r_t(x)=\Pbl{\{t<T_\infty^r\}}\quad\forall x\in\s_r,\enskip t\geq0.$$

$\ref{th:fsp_i}$  The key facts here are that $X^r$ and $X$ are identical up until the moment that they simultaneously leave the truncation (Lemma~\ref{samechainsh}) and that $X^r$ gets stuck in an absorbing state the instant it does. In particular, the same reasoning as in \eqref{eq:stuck} shows that
$$\{\tau_r\leq t<T^r_\infty,X_t^r=x\}=\{\tau_r\leq t<T^r_\infty,X_{\tau_r}^r=x\}\quad\forall x\in\s,\enskip r>0,$$
where $T^r_\infty$ denotes the explosion time of $X^r$. Because $X^r$ lies outside the truncation $\s_r$ at the time of exit (Proposition~\ref{prop:hitdc}), the right-hand side set is empty if $x$ belongs to $\s_r$ and it follows that
\begin{equation}\label{eq:fn7ew8abfea8feua}\{t<T^r_\infty,X_t^r=x\}=\{t<\tau_r,t<T^r_\infty,X_t^r=x\}\quad\forall x\in\s_r,\enskip r>0.\end{equation}
It then follows from Lemma~\ref{samechainsh}~$\ref{samechainshi}$~and~$\ref{samechainshiv}$ that
\begin{equation}\label{eq:fn7ew8abfea8feua2}\{t<\tau_r,t<T^r_\infty,X_t^r=x\}=\{t<\tau_r,t<T_\infty,X_t=x\}\subseteq\{t<T_\infty,X_t=x\}\end{equation}
for all $x$ in $\s_r$ and $r>0$. Given~\eqref{eq:fsppad}, taking expectations of \eqref{eq:fn7ew8abfea8feua}--\eqref{eq:fn7ew8abfea8feua2} then shows that $p^r_t(x)\leq p_t(x)$ for all $x$ in $\s$. To instead show that $p^{r+1}_t(x)\leq p_t(x)$ for all $x$ in $\s$, replace $X$ with $X^{r+1}$ throughout the above argument.

$\ref{th:fsp_ii}$ The first equation follows trivially from~\eqref{eq:fsppad}. For the second, note that $X^r_t$ belongs to $\s_r$ for all $t<\tau_r$ because $\tau_r$ is also time of first exit from $\s_r$ for $X^r$ (Lemma~\ref{samechainsh}~\ref{samechainshi}). Thus, taking expectations and summing $x$ over $\s_r$ in \eqref{eq:fn7ew8abfea8feua} we find that
$$p^r_t(\s_r)=\Pbl{\{t<\tau_r,t<T^r_\infty\}}.$$
Because the rate matrix $Q^r$ is bounded, Lemma~\ref{lem:tinfsum} shows that $T^r_\infty=\infty$ with $\Pb_\gamma$-probability one and the desired result follows from the above.

$\ref{th:fsp_iii}$ Because the total variation norm of an unsigned measure is its mass and because Lemma~\ref{tautin} shows that $\tau_r$ is strictly smaller than $T_\infty$, $\ref{th:fsp_i}$--$\ref{th:fsp_ii}$ imply that
$$\norm{p_t-p^r_t}=p_t(\s)-p^r_t(\s_r)=\Pbl{\{t<T_\infty\}}-\Pbl{\{\tau_r<t\}}\quad\forall t\geq0,\enskip r>0.$$
The inequality also follows as $p_t(\s)\leq 1$. That the inequality is sharp if and only if $\Pbl{\{T_\infty=\infty\}}=1$ then follows from Proposition~\ref{prop:nonexptimevar} that tells us $\Pbl{\{T_\infty=\infty\}}=1$ if and only if $p_t(\s)=1$ for at least one $t\geq0$, in which case $p_t(\s)=1$ for all $t\geq0$.

$\ref{th:fsp_iv}$  Because the truncations increase with $r$, the definition in~\eqref{eq:taurexit} of the exit times  $\sigma_r$ imply that these also increase with $r$, and $\ref{th:fsp_iv}$ follows directly from $\ref{th:fsp_ii}$--$\ref{th:fsp_iii}$.

$\ref{th:fsp_v}$ Because $\tau_r\to T_\infty$ as $r\to\infty$ with $\Pb_\gamma$-probability one (Lemma \ref{tautin}), this follows directly from $\ref{th:fspdt_iii}$.

\end{proof}

\subsubsection*{Notes and references} The name \emph{finite state projection (FSP)} is due to the well-known paper~\citep{Munsky2006} that popularised this approach in the systems biology and chemical physics literature. This approach and its error bound was described in the `80s  queueing literature~\citep{Gross1984} (where it was simply referred to `state space truncation'), and perhaps earlier. Our particular treatment here is based on that given in \citep{Kuntzthe,Kuntz2019}.  As for formal proofs of the algorithm's theoretical properties, these lie scattered throughout the literature. In the continuous-time case, $\ref{th:fsp_i}$ and $\ref{th:fsp_v}$ were first shown in~\citep[Proposition~2.14]{Anderson1991} (however, there is a small mistake therein, see~\citep{Chen1996}); an argument for $\ref{th:fsp_iv}$ and the bound in $\ref{th:fsp_iii}$ was first given in~\cite{Munsky2006}; while the probabilistic interpretations of the approximation's mass and error in $\ref{th:fsp_ii}$--$\ref{th:fsp_iii}$ were given in \citep{Kuntzthe,Kuntz2019}.

\subsection[The exit time finite state projection algorithm]{The exit time finite state projection (ETFSP) algorithm: truncations with absorbing complements for exit distributions}\label{sec:etfsp}

In Sections~\ref{sec:exit} and \ref{moh1}, we saw how the exit distribution $\mu$ and occupation measure $\nu$ associated with the time of exit from a domain $\cal{D}$ of a Markov chain $X$ can be expressed in terms of the time-varying law of the chain $\hat{X}$ obtained by turning all states outside $\cal{D}$ into absorbing states. If $\cal{D}$ is large, then we are generally unable to solve for the time-varying law of $\hat{X}$ and, consequently, for $\mu$ and $\nu$. Instead, applying the FSP algorithm (Section~\ref{sec:fsp}) to $\hat{X}$, we obtain converging approximations $\mu^r$ and $\nu^r$ of $\mu$  and $\nu$. For a lack of a better name, I refer to the this approach as the \emph{exit time finite state projection (ETFSP)} algorithm.


\subsubsection*{The discrete-time case}
Throughout this section, $X$ denotes a discrete-time chain with a large state space $\s$, one-step matrix $P=(p(x,y))_{x,y\in\s}$, and initial distribution $\gamma=(\gamma(x))_{x\in\s}$. Here, we tackle the problem of computing the exit distribution  and occupation measure,
$$\mu(n,x):=\Pbl{\{\sigma=n,X_n=x\}},\quad \nu(n,x):=\Pbl{\{\varsigma>n,X_n=x\}},\quad\forall n\geq0,\enskip x\in\s,$$
associated with the time
$$\sigma(\omega):=\inf\{n\geq0: X_n(\omega)\not\in\cal{D}\}\quad\forall \omega\in\Omega,$$
that the chain $X$ exits a given domain $\cal{D}$ for the first time. If $\cal{D}$ is small enough that we are able to solve the recursion~\eqref{eq:hatdtimelaw} directly, then we can compute $\mu$ and $\nu$ using~\eqref{eq:eoed2t}. Otherwise we apply the ETFSP algorithm which entails choosing a state space truncation $\s_r$ of manageable size and final time $n^r_f$, computing $(\mu(n,x))_{n\leq n^r_f,x\in\cal{D}_r}$ and $(\nu(n,x))_{n\leq n^r_f,x\in\cal{D}_r}$ via
\begin{align*}
\nu^r(n+1,x)&=\sum_{y\in\cal{D}_r}\nu^r(n,y)p(y,x) \quad \forall x\in\cal{D}\cap\s_r,\enskip n<n^r_f,\qquad \nu^r(0,x)=\gamma(x),\quad \forall x\in\cal{D}\cap\s_r,\\
\mu^r(n+1,x)&=\sum_{y\in\cal{D}_r}\nu^r(n,y)p(y,x)\quad\forall x\in\s_r\cap\cal{D}^c,\enskip n<n^r_f,\qquad \mu^r(0,x)=\gamma(x)\quad \forall x\in\s_r\cap\cal{D}^c.\end{align*}
padding them with zeroes,
\begin{align*}\nu^r(n,x):=0  \enskip\text{if}\enskip x\not\in\cal{D}\cap\s_r\quad \text{or}\quad  n> t^r_f,\qquad \mu^r(n,x):=0\enskip \text{if}\enskip x\not\in\s_r\cap\cal{D}^c\quad \text{or}\quad n> n^r_f,\end{align*}
and using the resulting $\mu^r:=(\mu^r(n,x))_{n\in\n,x\in\s}$ and $\nu^r:=(\nu^r(n,x))_{n\in\n,x\in\s}$ as approximations of $\mu$ and $\nu$, respectively. These approximations have nice properties:
\begin{theorem}[The exit time finite state projection algorithm, discrete-time version]\label{etfspthrmd} Suppose that $\{\s_r\}_{r\in\n}$ is an increasing sequence of finite sets such that $\cup_{r}\s_r=\s$, $\{n^r_f\}_{r\in\n}$ is an increasing sequence of natural numbers with limit $\infty$, and pick any $r,n^r_f\in\n$. 
\begin{enumerate}[label=(\roman*)]
\item(Increasing sequence of lower bounds). It is the case that
\begin{align*}\mu^0(n,x)\leq \mu^1(n,x)\leq \dots \leq \mu(n,x)\qquad\forall x\in\s,\quad n\in\n,\\
\nu^0(n,x)\leq \nu^1(n,x)\leq \dots \leq \nu(n,x)\qquad\forall x\in\s,\quad n\in\n.\end{align*}
\item(Approximation error and the bound for the exit distribution case). For any $r\in\n$,
\begin{align*}\norm{\mu-\mu^r}_{TV}&=\Pbl{\{\sigma<\infty\}}-\sum_{x\in\cal{D}\cap\s_r}\sum_{n=0}^{n^r_f}\mu^r(n,x)\leq 1-\sum_{x\in\cal{D}\cap\s_r}\sum_{n=0}^{n^r_f}\mu^r(n,x),\\
\norm{\nu-\nu^r}_{TV}&=\Ebl{\sigma}-\sum_{x\in\cal{D}\cap\s_r}\sum_{n=0}^{n^r_f}\mu^r(n,x)\end{align*}
\item(Monotonicity of the error and of the error bound). The approximation errors $\norm{\mu-\mu^r}_{TV}$ (and its upper bound given in $(ii)$) and $\norm{\nu-\nu^r}_{TV}$ are non-increasing in $r$. 
\item(Convergence of bounds). The approximations of the exit distribution converge in total variation:
$$\lim_{r\to\infty}\norm{\mu-\mu^r}_{TV}=0.$$
If the exit time has finite mean, so do those of the occupation measure:
$$\lim_{r\to\infty}\norm{\nu-\nu^r}_{TV}=0.$$
\end{enumerate}

\begin{enumerate}[label=(\roman*)]
\item \label{th:etfspdt_i}The approximations form increasing sequences of lower bounds on $\mu$ and $\nu$:
\begin{align*}\mu^0(n,x)\leq \mu^1(n,x)\leq \dots \leq \mu(n,x)\qquad\forall n\geq0,\enskip x\in\s,\\
\nu^0(n,x)\leq \nu^1(n,x)\leq \dots \leq \nu(n,x)\qquad\forall n\geq0,\enskip x\in\s.\end{align*}
\item  \label{th:etfspdt_ii}The mass of $\mu^r$ is the probability that the chain exits the domain no later than it exits the truncation and the final time $n^r_f$: 
$$\mu^r([0,\infty),\s)=\Pbb{\{\sigma\leq \sigma_r\wedge n^r_f\}}\quad\forall r>0.$$
The mass of $\nu^r$ is given by
$$\nu^r([0,\infty),\s)=\Ebb{\{(\sigma\wedge n^r_f)1_{\{\sigma\leq \sigma_r\}}}\quad\forall r>0.$$

\item \label{th:etfspdt_iii}The total variation approximation error is the probability that $X$ has left the truncation:
\begin{align*}\norm{\mu-\mu^r}_{TV}&=\Pbl{\{\sigma<\infty\}}-\sum_{x\in\cal{D}\cap\s_r}\sum_{n=0}^{n^r_f}\mu^r(n,x)\leq 1-\sum_{x\in\cal{D}\cap\s_r}\sum_{n=0}^{n^r_f}\mu^r(n,x),\\
\norm{\nu-\nu^r}_{TV}&=\Ebl{\sigma}-\sum_{x\in\cal{D}\cap\s_r}\sum_{n=0}^{n^r_f}\mu^r(n,x)\end{align*}
where $\norm{\cdot}$ denotes the total variation norm in \eqref{eq:tvnorm}.
\item \label{th:etfspdt_iv}The approximation error decreases with $r$:
$$\norm{p_n-p^r_n}\leq \norm{p_n-p^s_n}\quad \forall s\leq r, \enskip  r>0,\enskip n\geq0,$$
and increases with $n$:
$$\norm{p_n-p^r_n}\geq \norm{p_l-p^r_l} \quad \forall l\leq n, \enskip n\geq0,\enskip \forall r>0.$$
%
%
\item \label{th:fspdt_v}If $\cup_{r=1}^\infty\s_r=\s$, then the approximation converges to the time-varying law:
$$\lim_{r\to\infty}\norm{p_n-p_n^r}=0\quad\forall n\geq0.$$
\end{enumerate}

\end{theorem}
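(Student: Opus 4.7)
The plan is to reduce this theorem to a direct application of the discrete-time FSP algorithm (Theorem~\ref{thrm:fspdt}) to the auxiliary chain $\hat{X}$ obtained by freezing all states outside $\cal{D}$ into absorbing ones, whose one-step matrix $\hat{P}$ was defined in~\eqref{eq:phat}. The analytical characterisation in Theorem~\ref{characttd} already gives us
\begin{align*}
\nu(n,x)&=1_{\cal{D}}(x)\hat{p}_n(x),\\
\mu(n,x)&=1_{\cal{D}^c}(x)(\hat{p}_n(x)-\hat{p}_{n-1}(x))\quad(n>0),
\end{align*}
where $(\hat{p}_n)_{n\in\n}$ is the time-varying law of $\hat X$. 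So the first, and crucial, step will be to verify the analogous identities at the FSP level: letting $(\hat{p}^r_n)_{n\in\n}$ denote the FSP approximation of $\hat p_n$ associated with truncation $\s_r$ (so $\hat{p}^r_n$ solves~\eqref{eq:fspdtrec} with $\hat P$ in place of $P$), I would show by direct substitution into the ETFSP recursion that $\nu^r(n,x)=\hat{p}^r_n(x)$ for $x\in\cal{D}\cap\s_r$ and $\mu^r(n,x)=\hat p^r_n(x)-\hat p^r_{n-1}(x)$ for $x\in\cal{D}^c\cap\s_r$, for all $n\leq n^r_f$. The key computation is that, for $x\in\cal{D}\cap\s_r$, the absorbing rows of $\hat P$ outside $\cal{D}$ contribute $0$, so the FSP recursion collapses to a sum over $y\in\cal{D}_r$; and for $x\in\cal{D}^c\cap\s_r$, $\hat p^r_{n+1}(x)=\hat p^r_n(x)+\sum_{y\in\cal{D}_r}\hat p^r_n(y)p(y,x)$, giving $\hat p^r_{n+1}(x)-\hat p^r_n(x)=\mu^r(n+1,x)$.

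With this identification in hand, monotonicity $\ref{th:fspdt_i}$ for $\nu^r$ is immediate from Theorem~\ref{thrm:fspdt}$\ref{th:fspdt_i}$. For $\mu^r$, pointwise monotonicity in $r$ does not follow mechanically from monotonicity of $\hat p^r_n$, so I would instead unfold the recursion probabilistically, recognising that
\[
\mu^r(n,x)=\Pbl{\{X_0\in\cal{D}_r,\dots,X_{n-1}\in\cal{D}_r,X_n=x\}}\qquad (x\in\cal{D}^c\cap\s_r),
\]
which is manifestly monotone in $r$ (because $\cal{D}_r$ is) and bounded above by $\Pbl{\{\sigma=n,X_\sigma=x\}}=\mu(n,x)$ since any path satisfying the event on the right also exits $\cal{D}$ at time $n$ via $x$. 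A parallel identity $\nu^r(n,x)=\Pbl{\{X_0\in\cal{D}_r,\dots,X_{n-1}\in\cal{D}_r,X_n=x\}}$ for $x\in\cal{D}_r$ gives the same monotonicity and upper bound for $\nu^r$.

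For the error formulas, the fact that $\mu^r\leq\mu$ and $\nu^r\leq\nu$ pointwise means that each total variation distance reduces to a mass difference: $\norm{\mu-\mu^r}=\mu(\n,\s)-\mu^r(\n,\s)$ and $\norm{\nu-\nu^r}=\nu(\n,\s)-\nu^r(\n,\s)$. The masses on the right are then identified using \eqref{eq:mumassd}--\eqref{eq:numassd}, giving $\mu(\n,\s)=\Pbl{\{\sigma<\infty\}}$ and $\nu(\n,\s)=\Ebl{\sigma}$, while the ETFSP masses are $\sum_{n\leq n^r_f}\sum_{x\in\s_r}\mu^r(n,x)$ and $\sum_{n\leq n^r_f}\sum_{x\in\cal{D}_r}\nu^r(n,x)$ (the apparent sum over $\cal{D}\cap\s_r$ in the statement for $\mu^r$ is a typo, as $\mu^r$ is supported in $\cal{D}^c\cap\s_r$). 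The upper bound in $\ref{th:etfspdt_ii}$ follows by bounding $\Pbl{\{\sigma<\infty\}}\leq1$. Monotonicity of the errors in $r$ and $n^r_f$ then follows from monotonicity of $\mu^r,\nu^r$ in $(r,n^r_f)$ via these mass identities.

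Finally, for convergence, the probabilistic representations of $\mu^r$ and $\nu^r$ let me apply monotone convergence as $r\to\infty$ and $n^r_f\to\infty$: since $\cup_r\cal{D}_r=\cal{D}$, the increasing sequence of events $\{X_0\in\cal{D}_r,\dots,X_{n-1}\in\cal{D}_r,X_n=x\}$ approaches $\{\sigma>n-1,X_n=x\}$, and summing over $n\leq n^r_f$ (or rather, taking the limit $n^r_f\to\infty$ using monotone convergence in $n$) recovers the full masses $\Pbl{\{\sigma<\infty\}}$ and $\Ebl{\sigma}$. For $\nu$, the finiteness of $\Ebl{\sigma}$ is what is needed to make this latter step legitimate in the sense of total variation convergence (rather than merely pointwise), which is precisely the hypothesis stated in $\ref{th:fspdt_v}$ for the occupation measure. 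The main obstacle I anticipate is nothing deep, but rather bookkeeping: carefully handling the two truncation parameters $r$ and $n^r_f$ simultaneously so that the identities $\nu^r=\hat p^r_n|_{\cal{D}_r}$ and $\mu^r=\Delta\hat p^r_n|_{\cal{D}^c\cap\s_r}$ hold in the correct range, and taking the joint limit $(r,n^r_f)\to(\infty,\infty)$ cleanly for the convergence statement.
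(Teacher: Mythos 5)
Your proposal is correct and carries out exactly the paper's intended reduction: the paper's own proof of this theorem is the one-liner \emph{``Given Theorem~\ref{characttd}, the result follows from arguments similar to those in the proof of Theorem~\ref{thrm:fspdt},''} which is precisely the route you take. The two things you add beyond that terse pointer are genuinely useful: you observe that monotonicity of $\mu^r$ in $r$ does not drop out mechanically from the FSP monotonicity of $\hat{p}^r_n$ --- since $\mu^r(n,x)=\hat{p}^r_n(x)-\hat{p}^r_{n-1}(x)$ is a difference of two quantities that each increase in $r$ --- and must instead be read off from the explicit probabilistic representation $\mu^r(n,x)=\Pbl{\{X_0\in\cal{D}_r,\dots,X_{n-1}\in\cal{D}_r,X_n=x\}}$, and you correctly spot that the sum over $\cal{D}\cap\s_r$ in the mass/error formula for $\mu^r$ is a typo for $\cal{D}^c\cap\s_r$ (the $\mu^r$ appearing in the formula for $\norm{\nu-\nu^r}$ is likewise a typo for $\nu^r$).
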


\begin{proof}Given Theorem \ref{characttd}, the result follows from arguments similar to those in the proof of Theorem \ref{fspthrmd}. 
\end{proof}

\newpage
\thispagestyle{premain}
\section*{Uniformisation}
\addcontentsline{toc}{section}{\protect\numberline{}Uniformisation}
\sectionmark{\MakeUppercase{Uniformisation}}

\subsection{Uniformisation}

Step 1 (set up): Choose $q\geq\sup_{x\in\s}q(x)$. Build a DT chain $W$ and independent poisson process $N$ on the same $(\Omega,\cal{F},(\Pb_x)_{x\in\s})$ (in particular, choose waiting times of $N$ to be independent exponential RVs $R_1,R_2,\dots$ and define $N$ using them).  Set $W_t:=Z_{N_t}$ for all $t$ and $T_f:=\inf\{t:N_t=\infty\}$.

Step 2 (show that $W$ is a Markov chain with jump rates $\lambda$ and jump matrix $P$): Show that $W$ satisfies the defining properties in Section~\ref{sec:otherchains}. In particular, for the Markov property use something of the sort:
\begin{align*}\Pbz{\{W_t=x,W_{t+s}=y\}}&=\sum_{n=0}^\infty\sum_{m=0}^\infty \Pbz{\{Z_n=x,Z_{n+m}=y,N_t=n,N_{t+s}-N_t=m\}}\\
&=\sum_{n=0}^\infty\sum_{m=0}^\infty \Pbz{\{Z_n=x,Z_{n+m}=y\}}\Pbz{\{N_t=n,N_{t+s}-N_t=m\}}\\
&=\sum_{n=0}^\infty\sum_{m=0}^\infty \Pbz{\{Z_n=x\}}\Pbx{\{Z_{m}=y\}}\Pbz{\{N_t=n\}}\Pbz{\{N_s=m\}}\\
&=\sum_{n=0}^\infty\sum_{m=0}^\infty \Pbz{\{Z_n=x\}}\Pbx{\{Z_{m}=y\}}\Pbz{\{N_t=n\}}\Pbx{\{N_s=m\}}\\
&=\sum_{n=0}^\infty\sum_{m=0}^\infty \Pbz{\{Z_n=x,N_t=n\}}\Pbx{\{Z_{m}=y,N_s=m\}}\\
&=\Pbz{\{W_t=x\}}\Pbx{\{W_{s}=y\}}.\end{align*}

\fi
\ifdraft

\newpage

\section*{Foster-Lyapunov criteria}\label{flsec}

\subsubsection{The continuous-time case}
In contrast with discrete-time chains, continuous-time ones may explode. We begin with criterion often used in practice to rule this behaviour out
\begin{theorem}[Theorem 1.11 in \citep{Chen1991} or Theorem 2.1 in \citep{Meyn1993b}]\label{lyareg} Suppose that $u$ is a non-negative and norm-like function, $c>0$, $d\geq0$, and 
\begin{equation}\label{eq:lyaqreg}Qu(x)\leq cu(x)+d,\qquad\forall x\in\s.\end{equation}
Then $Q$ is regular.
\end{theorem}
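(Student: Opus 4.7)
The strategy is to show that the chain cannot escape to infinity in finite time by using $u$ as a Lyapunov function. Writing $\s_r:=\{x\in\s:u(x)<r\}$, the norm-likeness of $u$ makes each $\s_r$ finite and $\cup_{r}\s_r=\s$. Let $\tau_r$ be the corresponding exit times from \eqref{eq:taurexit}. By Theorem \ref{tautin}, $\tau_r\to T_\infty$ as $r\to\infty$, so regularity ($\Pbx{T_\infty=\infty}=1$ for every $x$) reduces to showing $\Pbx{\tau_r\leq t}\to 0$ as $r\to\infty$ for each fixed $t\in[0,\infty)$ and $x\in\s$. The hypothesis $Qu\leq cu+d$ will be used to bound $\Ebx{e^{-c(t\wedge\tau_r)}u(X_{t\wedge\tau_r})}$ uniformly in $r$, after which Markov's inequality on the event $\{\tau_r\leq t\}\subseteq\{u(X_{\tau_r})\geq r\}$ delivers the required decay.

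\textbf{Applying Dynkin's formula.} Fix $x\in\s$, $t\geq 0$, and $r>0$. By Proposition \ref{prop:hitdc}, $\tau_r$ is a jump-time-valued $(\cal{F}_t)_{t\geq0}$-stopping time, and by construction $X_s\in\s_r$ (a finite set) for all $s\in[0,\tau_r\wedge T_\infty)$. Hence, with $g(s):=e^{-cs}$ (continuously differentiable, so $\dot g(s)=-ce^{-cs}$) and $f:=u$, the hypothesis of Theorem \ref{Dynkin} is met: $u$ is $\gamma$-integrable (trivially, as $\gamma=1_x$ gives $u(x)<\infty$) and $Qu(x')$ is absolutely convergent for every $x'\in\s_r$ because $\sum_{y\neq x'}q(x',y)u(y)\leq (c+q(x'))u(x')+d<\infty$. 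Setting $\tau_r^n:=\tau_r\wedge T_n$ and noting that $g(s)Qu(X_s)+\dot g(s)u(X_s)\leq e^{-cs}(cu(X_s)+d)-ce^{-cs}u(X_s)=de^{-cs}$, Dynkin's formula yields
\begin{equation*}
\Ebx{e^{-c(t\wedge\tau_r^n)}u(X_{t\wedge\tau_r^n})}\leq u(x)+d\int_0^t e^{-cs}ds=u(x)+\frac{d}{c}(1-e^{-ct}).
\end{equation*}

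\textbf{Passing to the limit and concluding.} Send $n\to\infty$. On $\{\tau_r<T_\infty\}$ one has $\tau_r^n\to\tau_r$ and $X_{t\wedge\tau_r^n}\to X_{t\wedge\tau_r}$ by right-continuity of the paths \eqref{eq:cpathdef}, while on $\{\tau_r=\infty\}$ one has $T_\infty=\infty$ (Theorem \ref{tautin}) and $t\wedge\tau_r^n\to t$. Hence the integrand converges almost surely, and as it is non-negative, Fatou's lemma and the identity $\{\tau_r\leq t\}\subseteq\{\tau_r<T_\infty\}$ (again Theorem \ref{tautin}) yield
\begin{equation*}
\Ebx{1_{\{\tau_r\leq t\}}e^{-c\tau_r}u(X_{\tau_r})}\leq u(x)+\frac{d}{c}(1-e^{-ct}).
\end{equation*}
On $\{\tau_r\leq t\}$, we have $\tau_r<T_\infty$, $X_{\tau_r}\not\in\s_r$ (so $u(X_{\tau_r})\geq r$), and $e^{-c\tau_r}\geq e^{-ct}$ since $c>0$. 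Combining these,
\begin{equation*}
re^{-ct}\Pbx{\tau_r\leq t}\leq u(x)+\frac{d}{c}(1-e^{-ct}),
\end{equation*}
so $\Pbx{\tau_r\leq t}\to 0$ as $r\to\infty$. Downwards monotone convergence, together with Theorem \ref{tautin}, then gives $\Pbx{T_\infty\leq t}=0$. Letting $t\to\infty$ yields $\Pbx{T_\infty=\infty}=1$, and the regularity of $Q$ follows by integrating against an arbitrary initial distribution $\gamma$ (using \eqref{eq:pl}).

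\textbf{Expected obstacle.} The only delicate step is justifying the passage $n\to\infty$: care is needed because $X_{\tau_r}$ is only defined on $\{\tau_r<T_\infty\}$, so one must restrict attention to that event before taking limits, and exploit that $\tau_r$ is a genuine jump time (Proposition \ref{prop:hitdc}) so that $X_{\tau_r}$ lands in $\s_r^c$ rather than at some ill-defined limit. Everything else is routine bookkeeping around Dynkin's formula and the Markov-type inequality.
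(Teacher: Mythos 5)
Your proof is correct, and it is genuinely different from the route taken in the paper. You give the classical, direct argument (essentially the one in the cited Chen and Meyn--Tweedie references): apply Dynkin's formula with the exponential weight $g(s)=e^{-cs}$ to get a uniform-in-$r$ bound on $\Ebx{e^{-c(t\wedge\tau_r)}u(X_{t\wedge\tau_r})}$, then use norm-likeness plus Markov's inequality on the event $\{\tau_r\leq t\}$ to force $\Pbx{\tau_r\leq t}\to 0$, and finally invoke Theorem~\ref{tautin} and downward monotone convergence to conclude $T_\infty=\infty$ almost surely. The paper (where this is the sufficiency half of Theorem~\ref{thrm:flreg}) instead routes the argument through the $\alpha$-jump chain $Y^\alpha$ on $\s\cup\{\Delta\}$, reducing regularity of $Q$ to recurrence of $Y^\alpha$ via a resolvent identity and then applying the discrete-time recurrence criterion (Theorem~\ref{thrm:lyarec}); it does this because it is proving the necessity direction \emph{as well}, which the direct argument does not deliver. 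Your approach is therefore more elementary and self-contained for the one-sided statement asked for, while the paper's is bought at the cost of the $\alpha$-jump-chain machinery in exchange for a bidirectional characterisation. One small precision worth fixing: the claim that $T_\infty=\infty$ on $\{\tau_r=\infty\}$ is an almost-sure statement, and its proper justification is Lemma~\ref{lem:tinfsum} (the chain stuck in the finite set $\s_r$ has $\sum_n 1/\lambda(Y_n)=\infty$), not Theorem~\ref{tautin}, which only controls the limit $\tau_\infty:=\lim_r\tau_r$; since the exceptional set is null, your Fatou step still goes through after restricting to the full-measure set where the pointwise limit exists, as the paper does routinely.
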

\noindent For regular rate matrices, we can use the following to establish the existence of stationary distributions.
\begin{theorem}[Theorem 4.6 in \citep{Meyn1993b}]\label{lyafin} Suppose that $u:\s\to[0,\infty)$,  $f:\s\to[1,\infty)$, $c>0$, $d\geq0$, $A$ is a finite set, and
$$Qu(x)\leq d1_A(x)-cf(x),\qquad\forall x\in\s.$$
If $Q$ is regular, then each closed communicating class of $Q$ has an ergodic distribution and $\pi(f)\leq d/c$ for each stationary distribution $\pi$. 
\end{theorem}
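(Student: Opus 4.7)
The statement splits into two assertions: existence of an ergodic distribution on each closed communicating class, and the moment bound $\pi(f)\le d/c$ for every stationary distribution $\pi$. The first assertion will follow almost immediately from Foster's criterion (Theorem~\ref{thrm:fostersct}); the second requires a careful application of Dynkin's formula (Theorem~\ref{Dynkin}) combined with the regenerative/Doeblin machinery.

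\textbf{Existence of ergodic distributions.} The hypothesis $Qu\le d1_A-cf$ together with $f\ge 1$ gives $Qu(x)\le -c+d1_A(x)$ for every $x\in\s$. Setting $v:=u/c$, this rearranges to the Foster-type inequality $Qv(x)\le -1+(d/c)1_A(x)$ with $A$ finite. Since $Q$ is regular by hypothesis, Theorem~\ref{thrm:fostersct} applies and guarantees that the chain is positive Tweedie recurrent with only finitely many closed communicating classes; Theorem~\ref{doeblinc}(i) then endows each such class $\cal{C}_i$ with a unique ergodic distribution $\pi_i$.

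\textbf{The moment bound.} Let $(\s_r)_{r\in\zp}$ be an increasing sequence of finite truncations with $\cup_r\s_r=\s$, and let $\tau_r$ denote the associated exit times~\eqref{eq:taurexit}, which are jump-time-valued stopping times by Proposition~\ref{prop:hitdc}. For any initial distribution $\gamma$ supported on $\s_r$, apply Dynkin's formula (Theorem~\ref{Dynkin}) to the function $u$ with $g\equiv 1$ and $\eta:=\tau_r$: on $[0,\tau_r\wedge T_\infty)$ the chain takes values in the finite set $\s_r$, so the hypotheses are met. Substituting $Qu\le d1_A-cf$ and using $u\ge 0$ yields, for every $n\in\n$,
\[
c\,\Ebl{\int_0^{t\wedge\tau_r\wedge T_n}f(X_s)\,ds}\le \gamma(u)+d\,\Ebl{\int_0^{t\wedge\tau_r\wedge T_n}1_A(X_s)\,ds}\le\gamma(u)+dt.
\]
Regularity of $Q$ ensures $T_\infty=\infty$ almost surely, and Theorem~\ref{tautin} gives $\tau_r\uparrow T_\infty$; letting $n\to\infty$ and then $r\to\infty$ and invoking monotone convergence we obtain the key estimate
\begin{equation}\label{eq:planbound}
c\,\Ebl{\int_0^{t}f(X_s)\,ds}\le \gamma(u)+dt
\end{equation}
for every initial distribution $\gamma$ with finite support. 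A Tonelli/decomposition argument then extends \eqref{eq:planbound} to any $\gamma$ with $\gamma(u)<\infty$.

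\textbf{Passing to stationary distributions.} If $\pi$ is a stationary distribution with $\pi(u)<\infty$, then by stationarity $\Eb_\pi[\int_0^t f(X_s)\,ds]=t\pi(f)$, so \eqref{eq:planbound} gives $c\pi(f)\le \pi(u)/t+d$, and letting $t\to\infty$ yields $\pi(f)\le d/c$. The case $\pi(u)=\infty$ is the main obstacle. To sidestep it, I will use Theorem~\ref{doeblinc}(ii) to write $\pi=\sum_i\theta_i\pi_i$ as a convex combination of ergodic distributions, so it suffices to show $\pi_i(f)\le d/c$ for each $i$. For a fixed positive recurrent class $\cal{C}_i$ and $x\in\cal{C}_i$, the regenerative formula of Theorem~\ref{doeblinc}(i) gives $\pi_i(f)=\Ebx{\int_0^{\varphi_x\wedge T_\infty}f(X_s)\,ds}/\Ebx{\varphi_x}$. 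I will apply Dynkin's formula with stopping time $\eta:=\tau_r\wedge\varphi_x$ (still jump-time-valued) starting from $\gamma:=1_x$, which automatically satisfies $\gamma(u)=u(x)<\infty$; the analogue of \eqref{eq:planbound} then reads
\[
c\,\Ebx{\int_0^{t\wedge\tau_r\wedge\varphi_x}f(X_s)\,ds}\le u(x)+d\,\Ebx{\int_0^{t\wedge\tau_r\wedge\varphi_x}1_A(X_s)\,ds}.
\]
Sending $t,r\to\infty$ via monotone convergence (using that $\varphi_x<\infty$ a.s.\ under $\Pb_x$ and that $\tau_r\to\infty$) and then dividing by $\Ebx{\varphi_x}$ yields $c\pi_i(f)\le u(x)/\Ebx{\varphi_x}+d\pi_i(A)\le u(x)/\Ebx{\varphi_x}+d$; picking a sequence of states $x\in\cal{C}_i$ with $\Ebx{\varphi_x}\to\infty$ (or more cleanly, integrating the preceding inequality against $\pi_i$ which gives $\pi_i(u)$ on the right but with an extra $1/\Ebx{\varphi_x}$ factor that cancels) finishes the argument.

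\textbf{Main obstacle.} The principal subtlety is reconciling the abstract bound with the possibility that $\pi(u)=\infty$: this forces a detour through the Doeblin decomposition and class-by-class regenerative bounds, where uniform integrability of the right-hand side under $r\to\infty$ must be checked. A second minor technicality is verifying Dynkin's integrability hypotheses for the stopping times $\tau_r\wedge\varphi_x$ — these are handled by the finiteness of $\s_r$ and the positive recurrence of $x$, but each limit passage (in $t$, $n$, and $r$) must be justified by monotone convergence applied to non-negative integrands.
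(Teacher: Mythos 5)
Your plan is structurally sound and the first half is fully correct: the reduction $Qu\le d1_A-cf\le d1_A-c$ with $f\ge 1$, setting $v:=u/c$ to obtain $Qv\le -1+(d/c)1_A$, and then invoking Theorem~\ref{thrm:fostersct} (which requires the regularity you are given) to get positive Tweedie recurrence with finitely many closed classes is exactly right; since every closed communicating class must absorb a chain started inside it and must therefore meet $\cal{R}_+$, each such class is positive recurrent and Theorem~\ref{doeblinc}$(i)$ supplies the ergodic distribution. Note that the paper itself does not prove this statement --- it appears in a draft-only block and is cited to \citep{Meyn1993b} --- so you are on your own for the argument.

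The gap is in the last step of the moment bound. After applying Dynkin's formula with $\eta:=\tau_r\wedge\varphi_x$ and sending $t,n,r\to\infty$ you correctly reach
\[
c\,\Ebx{\int_0^{\varphi_x}f(X_s)\,ds}\le u(x)+d\,\Ebx{\int_0^{\varphi_x}1_A(X_s)\,ds}\le u(x)+d\,\Ebx{\varphi_x},
\]
and hence $c\,\pi_i(f)\le u(x)/\Ebx{\varphi_x}+d$ after dividing by $\Ebx{\varphi_x}$. But neither of your two proposed closings actually removes the $u(x)/\Ebx{\varphi_x}$ term. Route (a) (``pick $x$ with $\Ebx{\varphi_x}\to\infty$'') needs $\cal{C}_i$ infinite \emph{and} $u(x)$ to grow slower than $\Ebx{\varphi_x}$ along that sequence, which the hypotheses do not give you, and it fails outright when $\cal{C}_i$ is finite. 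Route (b) (``integrate against $\pi_i$; the $1/\Ebx{\varphi_x}$ factor cancels'') does not cancel: since $\pi_i(x)\propto 1/\Ebx{\varphi_x}$, integrating produces $\sum_x u(x)\pi_i(x)^2$, not $\pi_i(u)$. The standard and robust fix is to regenerate $k$ times before dividing: apply Dynkin's formula with $\eta:=\tau_r\wedge\varphi_x^k$ (still jump-time-valued, still confined to the finite set $\s_r$ before $\eta$), and use the regenerative property (Theorem~\ref{thrm:rec-iidct}) to write $\Ebx{\varphi_x^k}=k\,\Ebx{\varphi_x}$ and $\Ebx{\int_0^{\varphi_x^k}f(X_s)\,ds}=k\,\Ebx{\int_0^{\varphi_x}f(X_s)\,ds}$. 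Dividing by $k\,\Ebx{\varphi_x}$ then gives $c\,\pi_i(f)\le u(x)/(k\,\Ebx{\varphi_x})+d$, and $k\to\infty$ finishes $\pi_i(f)\le d/c$ unconditionally. From there the convex decomposition $\pi=\sum_i\theta_i\pi_i$ of Theorem~\ref{doeblinc}$(ii)$ gives $\pi(f)\le d/c$ for every stationary $\pi$, and you no longer need the $\pi(u)<\infty$ case at all.
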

\noindent Combining both of the above theorems (and exploiting that $u$ in the following is norm-like) we obtain the following third criterion.
\begin{corollary}\label{lyaboth}Suppose that $u$ is a non-negative and norm-like function, $c>0$, $d\geq0$, and 
$$Qu(x)\leq d-cu(x).$$
Then $Q$ is regular, each closed communicating class of $Q$ has an ergodic distribution, and $\pi(u)$ is finite for each stationary distribution $\pi$.
\end{corollary}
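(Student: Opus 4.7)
The plan is to deduce Corollary~\ref{lyaboth} by successively invoking the two preceding theorems. The inequality $Qu(x)\leq d-cu(x)$ will give regularity immediately via Theorem~\ref{lyareg}, and with a small algebraic rearrangement it will also fit the hypotheses of Theorem~\ref{lyafin}, yielding both the existence of ergodic distributions on each closed communicating class and the finiteness of $\pi(u)$.

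For regularity, I would simply note that since $u$ is non-negative and $c>0$, one has $-cu(x)\leq cu(x)$ pointwise, hence
\[
Qu(x)\leq d-cu(x)\leq cu(x)+d\qquad\forall x\in\s,
\]
which is exactly the hypothesis \eqref{eq:lyaqreg} of Theorem~\ref{lyareg}. Therefore $Q$ is regular, and the second theorem~\ref{lyafin} becomes applicable.

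The substantive step is massaging $Qu\leq d-cu$ into the drift form $Qu(x)\leq d'\mathbf{1}_A(x)-c'f(x)$ required by Theorem~\ref{lyafin}, in particular producing a finite set $A$ and a function $f\geq 1$. I would introduce $f:=u+1$ (so $f\geq 1$ since $u\geq 0$), take $c':=c/2$ and $d':=d+c/2$, and pick the finite set
\[
A:=\{x\in\s:u(x)<1+2d/c\},
\]
which is finite because $u$ is norm-like. Rewriting the hypothesis as
\[
Qu(x)\leq d-cu(x)=\bigl(d+\tfrac{c}{2}\bigr)-\tfrac{c}{2}\bigl(u(x)+1\bigr)-\tfrac{c}{2}u(x),
\]
one checks by case analysis that this is at most $d'\mathbf{1}_A(x)-c'f(x)$: for $x\notin A$ the surplus $d+c/2-(c/2)u(x)$ is non-positive by the definition of $A$, while for $x\in A$ it is bounded above by $d+c/2=d'$ since $u\geq 0$. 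Having verified the drift inequality, Theorem~\ref{lyafin} (applicable because regularity has just been established) ensures an ergodic distribution on each closed communicating class and delivers the bound $\pi(f)\leq d'/c'=2d/c+1$; since $\pi(u)=\pi(f)-1\leq 2d/c$, this is the desired finiteness of $\pi(u)$.

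I do not expect any real obstacle: the whole content is the algebraic rearrangement in the previous paragraph, and the only non-cosmetic input—the finiteness of the sublevel set $A$—is exactly what the norm-like assumption on $u$ is designed to provide.
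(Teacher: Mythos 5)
Your proposal is correct and is essentially the argument the paper intends: the text immediately preceding the corollary says it follows by ``combining both of the above theorems (and exploiting that $u$ \dots is norm-like)'', which is precisely what you do—Theorem~\ref{lyareg} via $d-cu\leq cu+d$, then Theorem~\ref{lyafin} after the rearrangement with $f:=u+1$, $c':=c/2$, $d':=d+c/2$, and the finite sublevel set $A$. The paper leaves this algebra implicit and also points to an irreducible-case proof in the literature; you have supplied the missing details, and your case check (non-positivity of $d+c/2-\tfrac{c}{2}u(x)$ off $A$, the bound $d'$ on $A$) and the final estimate $\pi(u)\leq 2d/c$ are all correct.
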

\noindent In the irreducible case, the above corollary can also be found in \citep{Chen1991} (the proof given in the reference is straightforward to adapt to the reducible case).
\begin{example}\label{schoglreg}Consider again Schl\"ogl's model with rate matrix $Q$ defined by \eqref{eq:bdqmatrx} and \eqref{eq:schoglbdr}. For any integer $d\geq 3$, fix $u(x):=x^{d-2}$ and note that
$$Qu(x)=g_{d-1}(x)-k_2(d-2)x^{d},$$
where $g_{d-1}$ is a polynomial of degree $d-1$. Thus, we have that
\begin{align*}Qu(x)&\leq \sup_{x\in\n}\left\{g_{d-1}(x)-\frac{k_2(d-2)}{2}x^{d}\right\}-\frac{k_2(d-2)}{2}x^{d}\\
&\leq \sup_{x\in\n}\left\{g_{d-1}(x)-\frac{k_2(d-2)}{2}x^{d}\right\}-\frac{k_2(d-2)}{2}x^{d-2}.\end{align*}
Corollary \ref{lyaboth} and Example \ref{schoglunique} then tell us that  $Q$ is regular, has a unique stationary distribution $\pi$, and the distribution's support if $\n$. The corollary further tells us that the first $d-2$ moments of $\pi$ are finite. Since $d$ was arbitrary, it follows that $\pi$ has all moments finite, a fact that useful when computing bounds on these moments as we do in Chapter \ref{mcmom}.
\end{example}
\begin{example}\label{togreg}Consider now the toggle switch model introduced in Example \ref{togintro} with rate matrix $Q$ defined by \eqref{eq:qmatrixsrn} and \eqref{toggle:prop}. Let $u(x):=(x_1+x_2)^d$, where $d$ is a positive integer, and note that
\begin{align*}Qu(x)&=(a_1(x)+a_3(x))((x_{1}+x_{2}+1)^d-(x_{1}+x_{2})^d) +(a_2(x)+a_4(x))((x_{1}+x_{2}-1)^d-(x_{1}+x_{2})^d) \\ 
&\leq (k_1+k_3)((x_{1}+x_{2}+1)^d-(x_{1}+x_{2})^d)+(a_2(x)+a_4(x))((x_{1}+x_{2}-1)^d-(x_{1}+x_{2})^d)\\ 
&\leq g_{d-1}(x_{1}+x_{2})-d(k_2x_{1}+k_4x_{2})(x_{1}+x_{2})^{d-1}\leq g_{d-1}(x_{1}+x_{2})-d\min\{k_2,k_4\}(x_1+x_2)^{d},\end{align*}
where $g_{d-1}$ is some polynomial of degree $d-1$. We then have that
$$Qu(x)\leq \sup_{x\in\n^2}\left(g_{d-1}(x_{1}+x_{2})-\frac{d\min\{k_2,k_4\}}{2}(x_1+x_2)^{d}\right) -\frac{d\min\{k_2,k_4\}}{2}u(x),$$
and it follows from Corollary \ref{lyaboth} that $Q$ is regular and has at least one stationary distribution $\pi$. Since each state $x$ is accessible from its horizontal and vertical neighbours (that is, from $x+(\pm1,0)$ and $x+(0,\pm1)$),  we have that $Q$ is irreducible and so Theorem \ref{doeblinc} tells us that $\pi$ is unique. Since the $d$ in the definition of $u$ was arbitrary, Corollary \ref{lyaboth} also tells us that $\pi$ has all moments finite. 
\end{example}
For the time of exit time $\tau$ from a domain $\cal{D}$ (see \eqref{eq:hitthec2}), we have the following exit-time analogue of Theorem \ref{lyareg}:
\begin{theorem}\label{lya1}Suppose that there exists a non-negative norm-like function $u$ and constants $c>0,d\geq0$ such that 
\begin{equation}\label{eq:lyar}Qu(x)\leq cu(x)+d\qquad \forall x\in\cal{D}.\end{equation}
Then $\Pbl{\{\tau\leq T_\infty\}}=1$. 
\end{theorem}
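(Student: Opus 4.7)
The plan is to reduce this exit-time criterion to the regularity criterion (Theorem~\ref{lyareg}) by constructing, via the coupling of Lemma~\ref{samechainsh}, an auxiliary chain $\hat{X}$ that agrees with $X$ until $X$ leaves $\cal{D}$ but for which every state outside of $\cal{D}$ has been turned into an absorbing state. Once outside $\cal{D}$, $\hat{X}$ can no longer explode, so ``$X$ explodes before exiting $\cal{D}$'' should translate to ``$\hat{X}$ explodes at all''. Showing that the Foster-Lyapunov inequality~\eqref{eq:lyar} upgrades to an inequality on all of $\s$ for $\hat{X}$ will then let us apply Theorem~\ref{lyareg} and conclude.

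More concretely, I would first build $\hat{X}$ using Lemma~\ref{samechainsh} with $\bar{Q}:=\hat{Q}$ defined by $\hat{q}(x,y):=q(x,y)$ for $x\in\cal{D}$ and $\hat{q}(x,y):=0$ for $x\not\in\cal{D}$. This $\hat{Q}$ is clearly stable and conservative and coincides with $Q$ on $\cal{D}$, so the lemma applies and produces a continuous-time chain $\hat{X}$ on $(\Omega,\cal{F})$ sharing the paths of $X$ up to and including the first exit from $\cal{D}$, with explosion time $\hat{T}_\infty$ and exit time $\hat{\tau}$ satisfying $(i)$--$(iv)$ of the lemma.

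Next, I would verify that the pair $(u,\hat{Q})$ satisfies the hypothesis of Theorem~\ref{lyareg}. For $x\in\cal{D}$, $\hat{Q}u(x)=Qu(x)\leq cu(x)+d$ by assumption~\eqref{eq:lyar}, and for $x\not\in\cal{D}$, $\hat{Q}u(x)=0\leq cu(x)+d$ since $u\geq 0$, $c>0$, and $d\geq 0$. Since $u$ is non-negative and norm-like, Theorem~\ref{lyareg} gives that $\hat{Q}$ is regular, i.e., $\Pbl{\{\hat{T}_\infty=\infty\}}=1$ for every initial distribution $\gamma$.

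Finally, I would translate this back to $X$ using Lemma~\ref{samechainsh}$(i)$ and $(iii)$. The event $\{\tau>T_\infty\}$ forces $T_\infty<\infty$ and $\tau=\infty$ (since by its definition $\tau$ takes values in $[0,T_\infty)\cup\{\infty\}$), hence $\{\tau>T_\infty\}\subseteq\{T_\infty<\tau\}\subseteq\{T_\infty\leq\tau\}$. By Lemma~\ref{samechainsh}$(iii)$ the last set equals $\{\hat{T}_\infty\leq\hat{\tau}\}$ and $T_\infty=\hat{T}_\infty$ on it, so $\{\tau>T_\infty\}\subseteq\{\hat{T}_\infty<\infty\}$. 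Regularity of $\hat{Q}$ then forces $\Pbl{\{\tau>T_\infty\}}=0$, which is equivalent to $\Pbl{\{\tau\leq T_\infty\}}=1$, as claimed.

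The main obstacle is not analytical but rather bookkeeping: one has to be careful about the convention $\tau\in[0,T_\infty)\cup\{\infty\}$ and correctly parse the four possible comparisons between $\tau$ and $T_\infty$ so that the sandwich of events $\{\tau>T_\infty\}\subseteq\{T_\infty\leq\tau\}$ really does land inside the ``regularity'' event $\{\hat{T}_\infty<\infty\}$ via Lemma~\ref{samechainsh}$(iii)$. Everything else is a direct appeal to results already established in the excerpt.
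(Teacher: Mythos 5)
Your proof is correct and is precisely the argument the paper's machinery is set up for: the key equivalence between $\{\tau\le T_\infty\}$ and $\{\hat T_\infty=\infty\}$ appears verbatim as Lemma~\ref{exbst}, and your combination of the $\hat Q$-truncation (Lemma~\ref{samechainsh}) with the regularity criterion (Theorem~\ref{lyareg}) is the intended route. If anything your version is slightly leaner than invoking Lemma~\ref{exbst} in full, since you only need the containment $\{\tau>T_\infty\}\subseteq\{\hat T_\infty<\infty\}$, which falls out of Lemma~\ref{samechainsh}$(iii)$ alone and does not require the harder Borel--Cantelli/LLN direction used in the proof of Lemma~\ref{exbst}.
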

Having ruled out the possibility of exploding before exiting using the above, we can employ the following criterion to establish finiteness of mean exit time and to obtain quantitative bounds on unbounded integrals of $\mu_S$ and $\nu_S$ (defined in \eqref{eq:musc}--\eqref{eq:nusc}. Its proof is a straightforward modification of the proof of Theorem 4.3 in \citep{Meyn1993b}. 
\begin{theorem}\label{lya2}Suppose that $\Pbl{\{\tau\leq T_\infty\}}=1$ and that there exists a constant $c>0$, a non-negative $\lambda$-integrable function $u$, and a second function $f$ such that
\begin{equation}\label{eq:lyam}f(x)\geq c,\qquad Qu(x)\leq -f(x),\qquad \forall x\in\cal{D}.\end{equation}
Then 
$$\Ebl{\tau}\leq\lambda(u)/c,\qquad \mu_S(u)+\nu_S(f)\leq \lambda(u).$$
\end{theorem}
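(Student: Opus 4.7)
The plan is to apply Dynkin's formula (Theorem~\ref{Dynkin}) to a sequence of bounded stopping times approximating $\tau$, use the inequality $Qu\leq -f$ to bound the integrand pointwise, and pass to the limit. Since Dynkin's formula requires the chain to remain in a finite subset of $\s$ up to the stopping time, I first introduce a spatial truncation: fix an increasing sequence $(\s_r)_{r\in\zp}$ of finite subsets of $\s$ with $\cup_r \s_r=\s$, let $\tau_r$ be the corresponding truncation-exit times~\eqref{eq:taurexit}, and set $\eta:=\tau\wedge\tau_r$. Being the minimum of two jump-time-valued stopping times, $\eta$ is itself jump-time-valued (Proposition~\ref{prop:hitdc}), and $X_s\in\s_r\cap\cal{D}$ for every $s\in[0,\eta\wedge T_\infty)$, so the finiteness hypothesis of Dynkin's formula is satisfied. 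Applying Dynkin's formula with this $\eta$, test function $u$, and $g\equiv 1$, then using $Qu(X_s)\leq -f(X_s)$ on $[0,\eta_n)$ (where $\eta_n:=\eta\wedge T_n$) and rearranging, I obtain the basic inequality
$$\Ebl{u(X_{t\wedge\eta_n})}+\Ebl{\int_0^{t\wedge\eta_n}f(X_s)\,ds}\leq \lambda(u)\qquad\forall n\in\n,\ t\geq 0,\ r\geq 1.$$

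For the first bound $\Ebl{\tau}\leq\lambda(u)/c$, I drop the first non-negative term in the basic inequality and use $f\geq c$ to reach $c\,\Ebl{t\wedge\eta_n}\leq\lambda(u)$. Three successive monotone-convergence steps --- in $n$ (as $T_n\uparrow T_\infty$ forces $\eta_n\uparrow\eta$), in $r$ (as $\tau_r\uparrow T_\infty$ by Theorem~\ref{tautin}, combined with the hypothesis $\tau\leq T_\infty$ a.s., yields $\tau\wedge\tau_r\uparrow\tau$), and in $t$ --- then deliver $\Ebl{\tau}\leq\lambda(u)/c$. As a byproduct, $\tau<\infty$ holds $\Pb_\lambda$-almost surely, which will be needed in the second part.

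For the second bound $\mu_S(u)+\nu_S(f)\leq\lambda(u)$, I keep both terms in the basic inequality and push them through the same three limits. The integral term is handled by monotone convergence (using $f\geq 0$ and the monotonicity of the upper limits of integration), with final value $\Ebl{\int_0^\tau f(X_s)\,ds}=\nu_S(f)$ by Tonelli and the definition~\eqref{eq:nusc}. The $u$ term is the delicate one and requires Fatou's lemma: because $\eta$ is jump-time-valued, $\eta_n=\eta$ holds along each path once $n$ exceeds the associated discrete stopping time, so $u(X_{t\wedge\eta_n})\to u(X_{t\wedge\eta})$ pointwise and Fatou yields $\Ebl{u(X_{t\wedge\eta})}\leq\liminf_n\Ebl{u(X_{t\wedge\eta_n})}$. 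Analogous Fatou inequalities carry the $u$ term through the $r$- and $t$-limits, with the final limit $\Ebl{u(X_\tau)}$ equalling $\mu_S(u)$ by~\eqref{eq:musc} and the almost-sure finiteness of $\tau$.

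The main obstacle will be controlling the $u$ term through the limits, since $u$ is only assumed $\lambda$-integrable (not bounded) and uniform integrability is not immediately available. The saving grace is that the desired conclusion is an \emph{upper} bound rather than an equality, so Fatou-type lower semicontinuity points in the right direction at each step. A secondary technicality is verifying the absolute-convergence hypothesis on $Qu$ required by Dynkin's formula, but this follows from $Qu(x)\leq -f(x)\leq 0$: the off-diagonal sum $\sum_{y\neq x}q(x,y)u(y)$ must be finite for the inequality $Qu(x)\leq -f(x)$ to be well-posed, and together with $q(x)u(x)<\infty$ this gives absolute convergence.
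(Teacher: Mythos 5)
Your proposal is correct and follows exactly the route the paper intends: the paper gives no written proof of this theorem, deferring to ``a straightforward modification of the proof of Theorem 4.3 in \citep{Meyn1993b}'', and that argument is precisely your combination of Dynkin's formula on the jump-time-valued stopping time $\tau\wedge\tau_r\wedge T_n$, the pointwise bound $Qu\leq -f$ on $\cal{D}$, and monotone convergence for the $f$-integral together with Fatou for the $u$-term (it also mirrors the paper's own proof of the discrete-time analogue, Theorem~\ref{lyadth}). Your identification of the two delicate points --- that only an upper bound is claimed so Fatou's one-sided inequality suffices for the unbounded $u$, and that absolute convergence of $Qu$ on $\cal{D}$ (the only states visited before the truncated exit time) follows from non-negativity of the off-diagonal terms --- is exactly right.
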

The same criterion as that in the above theorem can be found in \citep[Thrm.1.7]{Menshikov2014} under the further assumption that $\tau$ is $\Pb_\lambda$-almost surely finite. Finally, we are left with the continuous-time version of Theorem \ref{lya3d}.
\begin{theorem}\label{lya3}Suppose that Condition \ref{paths} is satisfied and that there exists constants $c>0$, $d\geq0$, and a non-negative, $\lambda$-integrable, and norm-like function $u$ such that
\begin{equation}\label{eq:lyadom}Qu(x)\leq d-c u(x)\qquad \forall x\in\cal{D},\end{equation}
Then $\Ebl{\tau}$, $\mu_S(u)$, and $\nu_S(u)$ are all finite.
\end{theorem}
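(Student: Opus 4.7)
\textbf{Proof proposal for Theorem \ref{lya3}.} The plan is to reduce the statement to Theorems \ref{lya1} and \ref{lya2} by exploiting the fact that the inequality $Qu(x) \leq d - cu(x)$ is strictly stronger than what is needed for Theorem \ref{lya1}, and it becomes essentially the hypothesis of Theorem \ref{lya2} outside a finite set. Specifically: because $u \geq 0$ and $c > 0$, we have $Qu(x) \leq d \leq cu(x) + d$ on $\cal{D}$, so Theorem \ref{lya1} immediately gives $\Pbl{\{\tau \leq T_\infty\}}=1$. Combined with Condition \ref{paths} this should already rule out $\tau = T_\infty = \infty$, so that $\tau < \infty$ holds on the event of non-explosion-before-exit.

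Next, first I would obtain a link between $\nu_S(u)$ and $\Ebl{\tau}$ via Dynkin's formula (Theorem \ref{Dynkin}) applied with $f := u$ and $g(s) := 1$ on the stopping time $\tau \wedge T_n$. Rearranging the resulting identity using $Qu \leq d - cu$ and dropping the non-negative terminal term yields
\begin{equation*}
c\,\Ebl{\int_0^{\tau\wedge T_n} u(X_s)\,ds} \leq \lambda(u) + d\,\Ebl{\tau\wedge T_n}.
\end{equation*}
After letting $n \to \infty$ and invoking monotone convergence together with $\tau \leq T_\infty$ a.s., this becomes $c\,\nu_S(u) \leq \lambda(u) + d\,\Ebl{\tau}$, so it suffices to bound $\Ebl{\tau}$. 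An analogous Dynkin computation, together with Fatou's lemma applied to $u(X_{\tau \wedge T_n})\to u(X_\tau)$, will deliver the bound on $\mu_S(u) = \Ebl{u(X_\tau)}$ once $\Ebl{\tau}$ is known to be finite.

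The main obstacle, and therefore the central step, is proving $\Ebl{\tau}<\infty$. Theorem \ref{lya2} requires $Qu \leq -f$ with $f \geq c' > 0$ on all of $\cal{D}$, and our hypothesis does not give this at every state since $cu - d$ can be negative on the (finite, by norm-likeness of $u$) set $F := \{x \in \cal{D} : cu(x) < d+1\}$. On $\cal{D} \setminus F$, however, the inequality sharpens to $Qu(x) \leq -1$. My plan is to apply Theorem \ref{lya2} with domain $\cal{D}\setminus F$ and $f \equiv 1$ to obtain $\Ebx{\tau_F}<\infty$ for every starting state $x$, where $\tau_F$ denotes the first time the chain leaves $\cal{D}\setminus F$ (equivalently, hits $F \cup \cal{D}^c$). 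Then I would invoke a geometric trials argument in the spirit of Lemma \ref{lem:geotrialargct}(ii): from each visit to the finite set $F$ there is a uniformly positive probability of next hitting $\cal{D}^c$ rather than returning to $\cal{D}\setminus F$, so the number of $F$-revisits before exit has a geometric-type tail and the total accumulated time in $F$ is finite in mean. Summing the pre-$F$ sojourns with the $F$-returns gives $\Ebx{\tau}<\infty$ for every $x$, and averaging against $\gamma$ (using $\lambda$-integrability of $u$ to control $\gamma(u)$) yields $\Ebl{\tau}<\infty$.

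The delicate point I expect is managing the case where Condition \ref{paths} does not a priori prevent $T_\infty < \tau$ being an event with nontrivial $\tau$-infinity mass; the displayed integral inequality above is genuinely on $\tau \wedge T_\infty$ rather than $\tau$, and one must transfer the bound cleanly using $\Pbl{\tau \leq T_\infty\} = 1$ from Theorem \ref{lya1}. Once that is done, the three finiteness claims follow, with the explicit bounds
\begin{equation*}
\Ebl{\tau} \leq C,\qquad \nu_S(u) \leq \frac{\lambda(u) + dC}{c},\qquad \mu_S(u) \leq \lambda(u) + d\,C,
\end{equation*}
where $C$ is the constant produced by the geometric-trials-plus-Theorem-\ref{lya2} argument.
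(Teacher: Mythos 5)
Your approach is genuinely different from the paper's. The paper introduces an auxiliary \emph{restart chain} $\tilde{X}$ with rate matrix $\tilde{Q}$ in \eqref{eq:qtilde}: $\tilde{X}$ evolves like $X$ inside $\cal{D}$ and, upon leaving $\cal{D}$, waits a unit-mean exponential time before restarting from the initial distribution. The drift inequality \eqref{eq:lyadom} lifts to a Foster--Lyapunov inequality for $\tilde{Q}$ on the \emph{whole} state space (c.f.~\eqref{eq:fdshjfsdj5}), so Corollary~\ref{lyaboth} yields that $\tilde{Q}$ is regular and every stationary distribution $\pi$ of $\tilde{Q}$ satisfies $\pi(u)<\infty$. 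Assuming $\Ebl{\tau}<\infty$, the paper then identifies $\pi_\tau := (\mu_S+\nu_S)/(1+\Ebl{\tau})$ as a stationary distribution of $\tilde{Q}$ via \eqref{eq:nueqsm}--\eqref{eq:mueqsm}, which immediately gives $\mu_S(u)+\nu_S(u)=(1+\Ebl{\tau})\pi_\tau(u)<\infty$ with no further Dynkin computation; and $\Ebl{\tau}<\infty$ itself is extracted from the positive recurrence of $\tilde{X}$ together with the class-structure Lemma~\ref{cstruct} (which is where Condition~\ref{paths} bites). Your route dispenses with the restart chain entirely, keeps Dynkin's formula front and centre, and substitutes a classic Foster-type truncation-plus-geometric-trials argument: shrink the domain to $\cal{D}\setminus F$ where the drift is uniformly $\leq -1$, then patch in the finitely many passages through the small finite set $F$. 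Your route is more elementary; the paper's is shorter once one is willing to invoke stationary-distribution theory for the auxiliary chain and gives both $\mu_S(u)$ and $\nu_S(u)$ bounds at once.

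Two places in your sketch need tightening. First, the accessibility $F\to\cal{D}^c$ that your geometric-trials step relies on does \emph{not} follow from \eqref{eq:lyadom} and norm-likeness alone: an absorbing state $x\in\cal{D}$ with $cu(x)\leq d$ satisfies \eqref{eq:lyadom} and yet keeps $\tau\equiv\infty$ under $\Pb_x$, so this must be dug out of Condition~\ref{paths}; your appeal to it is the right instinct but should be made explicit rather than waved at. Second, Lemma~\ref{lem:geotrialargct}$(ii)$ as stated takes as input $\Ebx{\varphi_F}<\infty$ for all $x\in F$, which is not what Theorem~\ref{lya2} on the domain $\cal{D}\setminus F$ delivers --- for $x\in F$ it returns the vacuous $\Ebx{\tau_F}=0$, and $\varphi_F$ can be infinite once the chain crosses into $\cal{D}^c$ and never returns. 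What you actually need is the decomposition: (i) each sojourn in $F$ has mean length uniformly bounded by $\max_{x\in F}1/\lambda(x)<\infty$; (ii) each excursion from a state $x\in F$ into $\cal{D}\setminus F$ has mean length at most $\sum_y p(x,y)u(y)$, which is finite because $Qu(x)<\infty$; and (iii) the number of $F$-episodes before hitting $\cal{D}^c$ has a geometric tail by Lemma~\ref{lem:AFtail1ct}, given $F\to\cal{D}^c$ and $F$ finite. Assembling these into $\Ebx{\tau}<\infty$ is a genuine argument, so the phrase ``in the spirit of'' is doing more work than it should. The rest --- the Dynkin reduction to $\Ebl{\tau}$, the Fatou step for $\mu_S(u)$ (mind that $\mu_S(u)=\Ebl{1_{\{\tau<\infty\}}u(X_\tau)}$ until Condition~\ref{paths} gives $\tau<\infty$ a.s.), and the choice of $F$ --- is correct.
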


\noindent The proof of the above is longer than those of the previous other two criteria; we do it in parts. Because $\tau=0$ on $\{X_0\not\in\cal{D}\}$, $u$ is $\lambda$-integrable, and
$$\Pb_\lambda=\sum_{x\in\s}\lambda(x)\Pb_x=\left(\sum_{x\in\cal{D}}\lambda(x)\Pb_x\right)+\left(\sum_{x\not\in\cal{D}}\lambda(x)\Pb_x\right),$$
it is enough to argue the result in the case that $\lambda$ has support contained in $\cal{D}$. The proof relies on another chain $\tilde{X}$ constructed by running Algorithm \ref{gilalg} with  the same $Z$, $\{\xi_n\}_{n=1}^\infty$, and $\{U_n\}_{n=1}^\infty$.  but with the rate matrix $\tilde{Q}:=(\tilde{q}(x,y))_{x,y\in\s}$ replacing $Q$, where
\begin{equation}\label{eq:qtilde}\tilde{q}(x,y):=\left\{\begin{array}{ll} q(x,y)&\text{if }x\in\cal{D}\\\lambda(y)-1_x(y)&\text{if }x\not\in\cal{D}\end{array}\right.,\qquad\forall x,y\in\s,\end{equation}
In other words, the $\tilde{X}$ is constructed as follows: initialise $\tilde{X}$ with law $\lambda$ and update it using the same rules as for those $X$ up until $\tilde{X}$ exits $\cal{D}$. At that point, wait a unit mean exponential amount of time and re-initialise $\tilde{X}$ with distribution $\lambda$. Sample the waiting time and the re-initialisation location independently of each other and of the chain's path up until (and including) the exit. Rinse and repeat. In what follows, we use $\tilde{Y}:=\{\tilde{Y}_n\}_{n\in\n}$, $\{\tilde{T}_n\}_{n\in\n}$, and $\tilde{T}_\infty$ denote the jump chain, jump times, and explosion time of this second chain $\tilde{X}$, respectively.

Inequality \eqref{eq:lyadom} implies that
\begin{equation}\label{eq:fdshjfsdj5}\tilde{Q}u(x)\leq (d\vee \lambda(u))-(1\wedge c)u(x),\qquad\forall x\in\s.\end{equation}
Corollary \ref{lyaboth} tells us that $\tilde{Q}$ is regular, has at least one stationary distribution, and that every stationary distribution $\pi$ of $\tilde{Q}$ satisfies
\begin{equation}\label{eq:pifin}\pi(u)<\infty.\end{equation}
If the mean exit time is finite, then
$$\pi_\tau(x):=\frac{\mu_S(x)+\nu_S(x)}{1+\Ebl{\tau}},\qquad\forall x\in\s,$$
is a probability distribution on $(\s,\tws)$ (recall \eqref{eq:numass} and note that the exit times is finite only if it is less than the explosion time). Using \eqref{eq:nueqsm}--\eqref{eq:mueqsm}, we have that
$$\pi_\tau\tilde{Q}(x)= \sum_{z\in\cal{D}}\pi_\tau(z)q(z,x)+\sum_{z\not\in\cal{D}}\pi_\tau(z)(\lambda(x)-1_z(x))=\frac{\nu_S Q(x)+\lambda(x)-\mu_S(x)}{1+\Ebl{\tau}}=0,$$
for each $x\in\s$. For this reason, Theorem \ref{Qstateq} then tells us that $\pi_\tau$ is a stationary distribution of $\tilde{Q}$. Finiteness of $\mu_S(u)$ and $\nu_S(u)$  then follows from \eqref{eq:pifin}. Alternatively, one can argue finiteness of these integrals (assuming that the mean exit time is finite) taking an approach similar to that in the proof of Theorem \ref{lya2}. In other words, all that remains to be shown is that $\tau$ has finite mean. The proof of this fact can be found in Appendix \ref{Markovpreproofs}. For it we will need the following two lemmas.
\begin{lemma}[Strong Markov Property and the mean hitting time equations]\label{dsnajdas}Let $\eta$ be any stopping time (as in \eqref{eq:stopdc}) of $X$ and
$$\tau_A^\eta:=\inf\{\eta\leq t<T_\infty:X_t\in A\}$$
be the first time after $\eta$ that the chain $X$ hits the set $A\subseteq \s$, where we are adhering to our convention that $\inf\emptyset=\infty$. 

\begin{enumerate}[label=(\roman*)]
\item Conditioned on $X_{\eta}$, the amount of time $\tau_A^\eta-\eta$ that elapses between $\eta$ and the first visit to $A$ after $\eta$ has the same law as the usual hitting time $\tau_A$ has if the chain starts from $X_{\eta}$:
\begin{equation}\label{eq:strmk2}\Pbl{\{\tau_A^\eta-\eta\leq t,\eta<\infty,X_\eta=x\}}=\Pbl{\{\eta<\infty,X_\eta=x\}}\Pbx{\{\tau_A\leq t\}},\end{equation}
for each $x\in\s$ and  $t\in[0,\infty)$.
\item If $\varphi_y$ is the first entrance time of a state $y$ (as in \eqref{eq:Ry}) such that $q_y>0$, then
\begin{equation}\label{eq:reteqs}\Eb_y[\varphi_y]=\frac{1}{q_y}+\sum_{x\neq y}\frac{q(y,x)}{q_y}\Ebx{\tau_y}.\end{equation}
\item If $Q$ has a stationary distribution with support on a closed communicating class $A$ of $Q$, then the mean time it takes the chain to transition between two states in $A$ is finite:
$$\Ebx{\tau_y}<\infty,\quad \forall x,y\in A.$$
\end{enumerate}
\end{lemma}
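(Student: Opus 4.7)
The plan is to prove the three parts in order, using part~(i) as the key tool for parts~(ii) and (iii). The strong Markov property (Theorem~\ref{thstrmk}) is the main engine throughout.

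For part~(i), I would apply Theorem~\ref{thstrmk} directly to $\eta$. The hitting time of $A$ can be realised as a measurable functional $G:\cal{P}\to[0,\infty]$ on the path space such that $G(X)=\tau_A$ and $G(X^\eta)=\tau_A^\eta-\eta$ on $\{\eta<T_\infty\}$; measurability of $G$ is checked along the lines of Lemma~\ref{lem:pathspmeasct}. For a jump-time-valued $\eta$, \eqref{eq:etatinfinf} gives $\{\eta<\infty\}=\{\eta<T_\infty\}$, so Theorem~\ref{thstrmk} applied with $Z:=1$ and $F:=1_{[0,t]}\circ G$ produces \eqref{eq:strmk2}.

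For part~(ii), I would condition on the first jump. Under $\Pb_y$ with $q_y>0$, the definition of the jump matrix~\eqref{eq:jumpmatrix} forces $p(y,y)=0$, so $X_{T_1}=Y_1\neq y$ almost surely and the first return to $y$ coincides with $\tau_y^{T_1}$, that is $\varphi_y=\tau_y^{T_1}$. Applying part~(i) with $\eta:=T_1$ and $A:=\{y\}$, taking expectations, and summing over $x\neq y$ gives
\begin{equation*}
\Eb_y[\varphi_y-T_1]=\sum_{x\neq y}\Pb_y(X_{T_1}=x)\,\Eb_x[\tau_y].
\end{equation*}
Theorem~\ref{thrm:condind} identifies $T_1$ as exponentially distributed with mean $1/q_y$ under $\Pb_y$ and $\Pb_y(X_{T_1}=x)$ as $p(y,x)$, so substituting $p(y,x)=q(y,x)/q_y$ from \eqref{eq:jumpmatrix} and adding $\Eb_y[T_1]=1/q_y$ produces \eqref{eq:reteqs}.

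For part~(iii), let $\pi$ be a stationary distribution with support on the closed communicating class $A$. Theorem~\ref{doeblinc}$(i)$ forces $\pi(y)>0$ for every $y\in A$, and Theorem~\ref{thrm:posreccharc} then renders every $y\in A$ positive recurrent, so $\Eb_y[\varphi_y]<\infty$. After disposing of the trivial cases $x=y$ and $A=\{y\}$ absorbing, I may assume $q_z>0$ for every $z\in A$; part~(ii) applied to any $y\in A$ yields $\sum_{z\neq y}q(y,z)\Eb_z[\tau_y]<\infty$, and hence $\Eb_z[\tau_y]<\infty$ for every $z$ with $q(y,z)>0$. To extend this to arbitrary $x\in A$, I invoke the accessibility $y\to x$ (which holds because $A$ is a communicating class) together with Theorem~\ref{thrm:accesibility}$(iii)$ to produce a finite path $y=u_0,u_1,\dots,u_\ell=x$ with $q(u_{j-1},u_j)>0$ for each $j$, all states $u_j$ lying in $A$ by closedness. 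Part~(ii) applied to $u_{j-1}$ then gives $\Eb_{u_j}[\tau_{u_{j-1}}]<\infty$, while part~(i) at $\eta:=\tau_{u_{j-1}}$ (which is almost surely finite by Theorem~\ref{thrm:recclassproct}) yields
\begin{equation*}
\Eb_{u_j}[\tau_y]\leq \Eb_{u_j}[\tau_y^{\tau_{u_{j-1}}}]=\Eb_{u_j}[\tau_{u_{j-1}}]+\Eb_{u_{j-1}}[\tau_y].
\end{equation*}
Induction on $j$ then delivers the desired $\Eb_x[\tau_y]=\Eb_{u_\ell}[\tau_y]<\infty$.

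The main obstacle will be the path-chaining step in (iii): the path must be oriented from $y$ to $x$ (so that part~(ii) at each intermediate state controls the single-step hitting times in the direction actually required), and the strong Markov decomposition $\tau_y\leq\tau_y^{\tau_{u_{j-1}}}$ (an inequality rather than an equality, because the chain may hit $y$ before reaching $u_{j-1}$) must be combined carefully with part~(i) and with almost-sure finiteness of $\tau_{u_{j-1}}$. Parts~(i) and (ii) themselves are routine once measurability of $G$ is in place and the identification $\{\eta<\infty\}=\{\eta<T_\infty\}$ is invoked.
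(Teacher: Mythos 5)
Your proposal is correct and, for parts~(i) and (ii), tracks exactly what the paper itself only gestures at: the source justifies this lemma with a single sentence pointing to the strong Markov property (Theorem~\ref{thstrmk}) for \eqref{eq:strmk2} and to the ``mean hitting time equations'' (i.e.\ a first-jump decomposition) for \eqref{eq:reteqs}, and provides no argument at all for part~(iii). Your path-chaining argument for~(iii) — choosing a $q$-positive path $y=u_0,\dots,u_\ell=x$ inside $A$, using part~(ii) at each $u_{j-1}$ to bound the single-step mean hitting time $\Ebl{\tau_{u_{j-1}}}$ from $u_j$, and then splitting $\tau_y\leq\tau_y^{\tau_{u_{j-1}}}=\tau_{u_{j-1}}+(\tau_y^{\tau_{u_{j-1}}}-\tau_{u_{j-1}})$ via part~(i) — is a sound and natural way to close the gap the paper leaves open, and buys you a self-contained argument where the source has none.

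One small logical reordering is needed in part~(iii): you invoke Theorem~\ref{doeblinc}$(i)$ to conclude $\pi(y)>0$ for every $y\in A$, but that theorem presupposes that $A$ is already a \emph{positive recurrent} closed communicating class, which is precisely what you have not yet established. The non-circular order is: since $\pi(A)=1$ there is at least one $y_0\in A$ with $\pi(y_0)>0$; Theorem~\ref{thrm:posreccharc} then makes $y_0$ positive recurrent; Corollary~\ref{cor:posrecclassc} spreads positive recurrence to all of the communicating class $A$. After that, your use of part~(ii) to obtain $\sum_{z\neq y}q(y,z)\Eb_z[\tau_y]<\infty$ and hence $\Eb_z[\tau_y]<\infty$ whenever $q(y,z)>0$ is fine, and the induction delivers $\Eb_x[\tau_y]<\infty$ as you describe. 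The rest of the argument — including the care taken over the direction of the path (from $y$ to $x$, so that part~(ii) at $u_{j-1}$ controls $\Eb_{u_j}[\tau_{u_{j-1}}]$) and over the inequality $\tau_y\leq\tau_y^{\tau_{u_{j-1}}}$ — is correct.
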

Equation \eqref{eq:strmk2} is a consequence of the strong Markov property (Theorem \ref{thstrmk}) while equations \eqref{eq:reteqs} are a minor variation of the usual mean hitting time equations \citep[Theorem 3.3.3]{Norris1997}. 
\begin{lemma}[Class structure of $\tilde{Q}$]\label{cstruct} Suppose that Condition \ref{paths} is satisfied. Let  $\cal{C}$ be the union of this support and the set of states accessible for $\tilde{Q}$ from the support:
$$\cal{C}:=\supp{\lambda}\cup\{x\in\s:\exists z\in\supp{\lambda}, \smallskip z\to x\}.$$
$\cal{C}$ is the only closed communicating class of $\tilde{Q}$. If  $\eta$ be a $\Pb_\lambda$-almost surely finite stopping time of $\tilde{X}$ (as in \eqref{eq:stopdc} with $\tilde{X}$ replacing $X$), then 
\begin{equation}
\label{eq:??}\Pb_\lambda(\{\tilde{X}_{\eta}\in\cal{C},\eta<\infty\})=1.
\end{equation}
\end{lemma}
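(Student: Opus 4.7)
The plan splits into a structural claim—that $\cal{C}$ is the unique closed communicating class of $\tilde{Q}$—and the probabilistic claim \eqref{eq:??}, which I would derive from the structural claim via closedness (Proposition~\ref{prop:closedct}) and the regularity of $\tilde{Q}$ (already supplied by \eqref{eq:fdshjfsdj5} and Corollary~\ref{lyaboth}). Throughout, I read the arrow in the definition of $\cal{C}$ as accessibility for $\tilde{Q}$, so $\cal{C}$ is exactly the set of states reachable from $\supp{\lambda}$ under $\tilde{Q}$.

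For closedness of $\cal{C}$, if $x \in \cal{C}$ and $x \tilde{\to} y$, then either $x \in \supp{\lambda}$ or some $z \in \supp{\lambda}$ satisfies $z \tilde{\to} x$; transitivity of $\tilde{\to}$ (Theorem~\ref{thrm:accesibility}$(iii)$) places $y$ in $\cal{C}$. For communication, I reduce to showing $x \tilde{\to} w$ for every $x \in \cal{C}$ and $w \in \supp{\lambda}$, since the definition of $\cal{C}$ already gives $w \tilde{\to} y$ for every $y \in \cal{C}$. When $x \notin \cal{D}$, the direct rate $\tilde{q}(x, w) = \lambda(w) > 0$ (or $w=x$) suffices. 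When $x \in \cal{D}$, I exploit that $\tilde{Q} = Q$ on $\cal{D}$: a $Q$-path from $x$ that eventually steps into $\cal{D}^c$ is also a $\tilde{Q}$-path, and from the resulting exit state in $\cal{D}^c$ the previous case reaches $w$. Uniqueness then follows by a dichotomy on a competitor class $\cal{C}'$: if $\cal{C}' \cap \supp{\lambda} \ne \emptyset$, closedness and communication pin $\cal{C}' = \cal{C}$; if $\cal{C}' \cap \supp{\lambda} = \emptyset$ but $\cal{C}' \cap \cal{D}^c \ne \emptyset$, the restart rates $\tilde{q}(\cdot,w) = \lambda(w)$ force $\supp{\lambda} \subseteq \cal{C}'$, a contradiction; and $\cal{C}' \subseteq \cal{D}$ is ruled out by the same path-to-$\cal{D}^c$ property used a moment earlier.

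The probabilistic claim is then almost immediate: $\supp{\lambda} \subseteq \cal{C}$ with $\cal{C}$ closed, so Proposition~\ref{prop:closedct} gives $\tilde{X}_t \in \cal{C}$ for all $t < \tilde{T}_\infty$ almost surely under $\Pb_\lambda$; combined with $\tilde{T}_\infty = \infty$ almost surely (regularity of $\tilde{Q}$) and the given almost sure finiteness of $\eta$, we obtain $\Pb_\lambda(\{\tilde{X}_\eta \in \cal{C},\,\eta<\infty\}) = 1$. The hard part is the graph-theoretic fact that from every $x \in \cal{D}$ there is a $Q$-path exiting $\cal{D}$, which I expect to be the main obstacle: it is not a consequence of the drift inequality \eqref{eq:lyadom} alone, which gives probabilistic decay but not connectivity, and should instead be extracted from Condition~\ref{paths} or, failing that, from a contradiction argument that treats a would-be closed $Q$-communicating class sitting inside $\cal{D}$ and shows that the restart structure of $\tilde{Q}$ outside $\cal{D}$ still forces its states into $\cal{C}$, equivalently that the drift conclusions of Theorems~\ref{lya1}--\ref{lya2} would fail on it.
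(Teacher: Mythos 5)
Your decomposition into a structural claim (unique closed communicating class) and a probabilistic one is the right one, and the probabilistic part is handled cleanly: with $\cal{C}$ closed and $\supp{\lambda}\subseteq\cal{C}$, Proposition~\ref{prop:closedct} keeps $\tilde{X}$ inside $\cal{C}$ on $[0,\tilde{T}_\infty)$, regularity of $\tilde{Q}$ (Corollary~\ref{lyaboth} together with \eqref{eq:fdshjfsdj5}) forces $\tilde{T}_\infty=\infty$ almost surely, and almost-sure finiteness of $\eta$ completes it. Closedness of $\cal{C}$ via transitivity of accessibility is correct, and the reduction of communication to showing that every $x\in\cal{C}$ reaches $\supp{\lambda}$ under $\tilde{Q}$ is sound, with the restart rates $\tilde{q}(y,w)=\lambda(w)>0$ doing exactly the job you say for $y\notin\cal{D}$.

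The piece you flag as the hard part is where the argument genuinely stops. Nothing in the definition of $\cal{C}$ or of $\tilde{Q}$ in \eqref{eq:qtilde} guarantees that a state $x\in\cal{C}\cap\cal{D}$ can reach $\cal{D}^c$ under $Q$, nor that $\cal{D}$ contains no $Q$-closed set that could harbour a second closed communicating class of $\tilde{Q}$. Without a hypothesis to that effect the lemma fails outright: with $\cal{D}=\{1,2\}$, $\lambda=1_1$, $q(1,2)>0$, and state $2$ absorbing for $Q$, one has $\cal{C}=\{1,2\}$, yet $1$ cannot be reached from $2$ under $\tilde{Q}$, so $\cal{C}$ is not even a communicating class and the unique closed communicating class of $\tilde{Q}$ is $\{2\}\neq\cal{C}$. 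You are right that the drift inequality alone cannot supply this connectivity, and that the needed fact must be extracted from Condition~\ref{paths}. That condition's text is not in the material available, so your two candidate routes --- reading the connectivity off Condition~\ref{paths} directly, or a contradiction via the conclusions of Theorems~\ref{lya1}--\ref{lya2} failing on a trapped class --- both remain speculation; the argument is therefore incomplete at precisely the point you identified, rather than in error.
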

\subsection{Concluding remarks}\label{Markovpreconc}
This chapter consisted of a relatively self-contained account of the Markov chain theory pertinent to the three questions posed in this thesis' introduction (Q1--3 in Chapter \ref{introthe}). One may wonder about Markov processes in general. Is the theory much harder than in the case countable state space case? Do more general processes exhibit a wider range of behaviours? Etc. On this matter, David Williams states in his elegant book \citep[p.233]{Rogers2000a} (in our experience, David Williams is \emph{always} elegant)
\begin{quotation}
``It is interesting that the full weight of Ray theory (or something like it) is needed to handle Markov chains.''
\end{quotation}
seemingly implying that the answer to these questions is ``no''. Indeed, the various analytical characterisations given in this chapter all have analogues in the general state space case, see \citep{Echeverra1982,Ethier1986,Kurtz1998} for instance. So do things like the Doeblin decomposition of the state space, its relationship with the stationary distributions, the ideas of recurrence, and the Foster-Lyapunov criteria, see \citep{Meyn1992,Meyn1993a,Meyn1993b,Meyn2009}, references therein, and references thereof.

However, we are being rather deceitful in our above use of Williams' quote, the reason being that we only consider \emph{Feller minimal} chains with \emph{totally stable rate matrices}. Non-minimal chains are the ones that have a life past an explosion: chains that come back from infinity in a manner that respects the Markov property. Chains whose rate matrices are not totally stable are chains that leave certain states immediately after entering them ($q_x$ will be infinity for such a state $x$). It is only in the case of chains that violate these caveats that one requires the same type of technical machinery as that required for general Markov processes. Indeed, Williams' has the following to say later on in his book \citep[p.285]{Rogers2000a}:
\begin{quotation}``In connection with question (25.1)(i), it is necessary to understand Chung's
statement that the then-prevailing assumptions covered only `trite' chains. This
relates to the following fact (discussed briefly and illustrated in a moment, and
explained fully later in this chapter). If a transition matrix function [$\{p_t(x,y)\}$ in our notation] on
a countable set [$\s$ in our notation] has the FD property relative to the discrete topology of $\s$, then
the associated chain $X$ is totally stable and Feller minimal. Though nearly all
chains that can serve as models for real-world phenomena are totally stable
and Feller minimal, such chains are `trite' from a pure-mathematical standpoint. (We shall attempt to provide a strong justification for the study of `non-trite'
chains later!).''\end{quotation}
The reason for the quotation marks around the word ``trite" is that Williams is quoting Chung who described these chains as trite. I have focused on these `trite' chains because, as Williams mentions above, there is a dearth of `non-trite' chains in applications (although I have been told that such instantaneous jump processes do feature in certain financial contexts) and I am an engineering student after all.

What we don't think is a trite matter is analysing quantitatively (in the sense of Chapter \ref{introthe}) Feller-minimal and totally stable chains. Our experience indicates the opposite: it is a difficult task hounded by the curse of dimensionality and plagued by theoretical nuances and numerical difficulties. The remaining chapters of this thesis aim to contribute to ongoing efforts to resolve, or at least mitigate, these issues by studying various numerical schemes with inbuilt error control that enable this analysis and dealing with theoretical subtleties that arise in these studies.

We pause here to point out that, although the starting point of our analysis of the continuous-time case was the construction of the chain via the Gillespie Algorithm (Algorithm~\ref{gilalg}), the results of this chapter (and of Chapters \ref{fspchap}--\ref{dists}) apply to all Feller-minimal Markov chains with rate matrix $Q$ regardless of their particular construction. The reason being that all these chains possess the same statistics (including time-varying law, exit distributions, and occupation measures) that our chain does (in the same spirit as Remark \ref{uniquechainlaw}), see \citep{Chung1967,Doob1942,Doob1945,Feller1940,Freedman1983,Norris1997}. These other Feller minimal chains include those generated through other common simulation algorithms such as the first reaction method \citep{Gillespie1976} or the method of Gibson and Bruck \citep{Gibson2000}. 

We conclude this chapter by discussing what we perceive (in view of the content of the ensuing chapters) as the biggest hole in our treatment of Markov chains: the Foster-Lyapunov criteria for exit times presented in Section \ref{flsec} do not allow us to establish integrability with respect to $\mu$ and $\nu$ of test functions that are unbounded in their time argument (notice that these results are all phrased in terms of the space marginals $\mu_S$ and $\nu_S$ of $\mu$ and $\nu$). In particular, they do not allow us to establish the finiteness of the moments of the exit time (aside from its mean), something that is important when using semidefinite programming approaches of the type discussed in Section \ref{mcmom} and \citep{Helmes2001,Lasserre2006,Eriksson2011} to compute quantitative bounds on these moments. To address this issue, we would start with the criteria in \citep{Aspandiiarov1999,Menshikov1996,Menshikov2014,Lamperti1963} and the material in \citep[Sec.I.11]{Chung1967}.

\fi

%

\clearpage


\pagestyle{premain}\sectionmark{\MakeUppercase{Bibliography}}

\bibliographystyle{plainnat}
\bibliography{./Biblio}

\addcontentsline{toc}{section}{\protect\numberline{}Bibliography}


\glsaddallunused
\clearpage

\pagestyle{premain}\sectionmark{\MakeUppercase{Symbol index}}

\pagestyle{premain}\sectionmark{\MakeUppercase{Symbol index}}
\addcontentsline{toc}{section}{\protect\numberline{}Symbol index}
\sectionmark{\MakeUppercase{Symbol index}}

\printglossary[title=\MakeUppercase{Symbol index},type=gen]
\sectionmark{\MakeUppercase{Symbol index}}

\vspace{-10pt}

\noindent{\large\textbf{Notation for discrete-time and continuous-time chains}}

\vspace{-30pt}
\printglossary[type=both]
\vspace{-20pt}

\noindent{\large\textbf{Notation for discrete-time chains}}

\vspace{-30pt}
%
\printglossary[type=dt]

\vspace{-20pt}

\noindent{\large\textbf{Notation for continuous-time chains}}

\vspace{-30pt}

\printglossary[type=ct]

\clearpage

\renewcommand\indexname{\MakeUppercase{Subject index}}
\addcontentsline{toc}{section}{\protect\numberline{}Subject index}\pagestyle{premain}
\sectionmark{\MakeUppercase{Subject index}}
\printindex\pagestyle{premain}\sectionmark{\MakeUppercase{Subject index}}
\end{document}